\documentclass[a4paper, twoside, 10pt]{scrbook}
\usepackage[headheight=15pt, height=23cm, foot=1.5cm, left=2.75cm,right=2.75cm]{geometry}
\usepackage{amsmath,amssymb,amsfonts,mathrsfs}
\usepackage {amsthm, latexsym}
\usepackage {graphicx}
\usepackage {stmaryrd}   
\usepackage[latin1]{inputenc}
\usepackage [mathscr] {eucal}
\usepackage [all] {xy}
\usepackage{ae,enumerate}
\usepackage{setspace}\spacing{1.1} 
\usepackage {pstricks}
\usepackage {pst-node, pst-plot}
\usepackage {pst-bezier}
\usepackage {pstricks-add}
\usepackage{stackengine,  scalerel} 


\def\letterskip{\hskip\interletterskip}

\def\finishgesperrt{\afterassignment\testletter\let\next= }
\def\testletter{\expandafter\ifx\space\next\relax\else\nobreak\fi
   \letterskip\next}
\newif\iffirstword
\newskip\interletterskip\interletterskip=.15em plus.044em minus.044em
\def\gesperrtend{End}

\def\gesperrt{\letterskip\igesperrt}
\def\igesperrt#1{\firstwordtrue
   \dogesperrt#1\gesperrtend\finishgesperrt}
\def\dogesperrt{\afterassignment\dodogesperrt\let\next= }
\def\dodogesperrt{\ifx\next\gesperrtend\let\next\relax\else
  \iffirstword\firstwordfalse\else
    \if\space\next\else\nobreak\fi\letterskip\fi
  \ifx\-\next\firstwordtrue\hbox{}\fi
  \ifx\bgroup\next\def\next{\nobreak
         \bgroup\aftergroup\dogesperrt}\else
    \next\let\next=\dogesperrt\fi\fi\next}

\usepackage{makeidx}
\usepackage[refpage, intoc]{nomencl}
\makenomenclature
\makeindex

\vbadness=10000

\usepackage{scrpage2}
\pagestyle{scrheadings}
\clearscrheadfoot
\rehead{\headmark}
\lehead{\pagemark}
\lohead{\headmark}
\rohead{\pagemark}

\setlength{\parindent}{5pt}

\newtheorem{Theorem}{Theorem}[section]
\newtheorem{Lemma}[Theorem]{Lemma}
\newtheorem{Corollary}[Theorem]{Corollary}
\newtheorem{Proposition}[Theorem]{Proposition}
\theoremstyle{definition}
\newtheorem{Remark}[Theorem]{Remark}
\newtheorem{Definition}[Theorem]{Definition}
\newtheorem{Example}[Theorem]{Example}
\newtheorem*{Convention}{Convention}
\newtheorem*{DefinitionOhne}{Definition} 






\newcounter{Klammerzahl}

\newenvironment {ListeTheorem}{\begin{list}{(\arabic{Klammerzahl})}{\usecounter{Klammerzahl} \topsep1,5mm \parsep0,5mm \itemsep0,5mm \labelwidth6mm \leftmargin8,5mm \labelsep2,5mm }}{\end{list}}


\newcommand {\A} {\mathbb A}
\newcommand {\C} {\mathbb C}
\newcommand {\F} {\mathbb F}
\newcommand {\N} {\mathbb N}
\newcommand {\Proj} {\mathbb P}
\newcommand {\Q} {\mathbb Q}
\newcommand {\R} {\mathbb R}
\newcommand {\Z} {\mathbb Z}


\newcommand {\Bsf} {\mathsf{B}}
\newcommand {\bBsf} {\mathsf{boolB}}
\newcommand {\cBsf} {\mathsf{comB}}
\newcommand {\sBsf} {\mathsf{simplB}}
\newcommand {\Top} {\mathsf{Top}}
\newcommand {\SCsf} {\mathsf{SC}}


\newcommand {\Acal} {\mathcal A}
\newcommand {\Bcal} {\mathcal B}

 
\newcommand {\Fcal} {\mathcal F} 
\newcommand {\Ical} {\mathcal I} 
\newcommand {\Jcal} {\mathcal J} 
\newcommand {\Pcal} {\mathcal P}
\newcommand {\Qcal} {\mathcal Q}
\newcommand {\Rcal} {\mathcal R}
\newcommand {\Scal} {\mathcal S}
\newcommand {\Tcal} {\mathcal T}
\newcommand {\Vcal} {\mathcal V}

\newcommand{\Bscr}{\mathscr B}

\newcommand{\Jscr}{\mathscr J}
\newcommand{\Qscr}{\mathscr Q}

\newcommand {\aideal} {\mathfrak a}
\newcommand {\bideal} {\mathfrak b}
\newcommand {\Iideal} {\mathfrak I}
\newcommand {\mideal} {\mathfrak m}
\newcommand {\pideal} {\mathfrak p}

\newcommand {\opD} {\operatorname{D}}
\newcommand {\opDE} {\opD^{\emptyset}}
\newcommand {\opFilt} {\operatorname{Filt}}
\newcommand{\Pset}{\operatorname P}
\newcommand {\opH} {\operatorname{H}}
\newcommand {\opT} {\operatorname{T}}
\newcommand {\opV} {\operatorname{V}}

\newcommand {\opalg} {\operatorname{alg}}
\newcommand {\opcan} {\operatorname{can}}
\newcommand {\End} {\operatorname{End}}
\newcommand {\opfe} {\operatorname{fe}}
\newcommand {\opint} {\operatorname{int}}
\newcommand {\oppos} {\operatorname{pos}}
\newcommand {\opred} {\operatorname{red}}
\newcommand {\optf} {\operatorname{tf}}
\newcommand {\opae} {\operatorname{ae}}
\newcommand {\opu} {\operatorname{u}}
\newcommand {\opc} {\operatorname{c}}

\newcommand {\sepdim} {\operatorname{\dim_{\operatorname{sep}}}}
\newcommand {\bidual} {^{^{_{\operatorname{vv}}}}}
\newcommand {\bool} {\operatorname{bool}}
\newcommand {\opchar} {\operatorname{char}}
\newcommand {\const} {\operatorname{const}}
\newcommand {\diff} {\operatorname{diff}}
\newcommand {\Dim} {\operatorname{Dim}}
\newcommand {\dual} {^{^{_{\operatorname{v}}}}}
\newcommand {\free} {\mathsf{FC}}
\newcommand {\freenc} {\mathsf{F}}
\newcommand {\opgr} {\operatorname{gr}}
\newcommand {\Hom} {\operatorname{Hom}}
\newcommand {\height} {\operatorname{ht}}
\newcommand {\id} {\operatorname{id}}
\newcommand {\im} {\operatorname{im}}

\newcommand {\lcm} {\operatorname{lcm}}
\newcommand {\nonint} {\opint^{\opc}}
\newcommand {\map} {\operatorname{map}}
\newcommand {\mult} {\operatorname{mult}}
\newcommand {\nil} {\operatorname{nil}}
\newcommand {\ord} {\operatorname{ord}}
\newcommand {\slim} {\operatorname{s}\mina\varprojlim}
\newcommand {\spec} {\operatorname{spec}}
\newcommand {\Spec} {\operatorname{Spec}}
\newcommand {\Spm} {\operatorname{Spm}}
\newcommand {\specE} {\operatorname{spec^{\emptyset}}}
\newcommand {\minspec} {\text{-}\spec}
\newcommand {\minSpec} {\text{-}\Spec}
\newcommand {\supp} {\operatorname{supp}}

\newcommand {\trivial} {\{0,\infty\}}
\newcommand {\zero} {\{\infty\}}



\newcommand {\supsetu} {\mathrel{\reflectbox{\rotatebox[origin=c]{90}{$\subset$}}}}
\newcommand {\subsetu} {\mathrel{\reflectbox{\rotatebox[origin=c]{270}{$\supset$}}}}
\newcommand {\supsetul} {\mathrel{\reflectbox{\rotatebox[origin=c]{45}{$\subset$}}}}
\newcommand {\subsetur} {\mathrel{\reflectbox{\rotatebox[origin=c]{315}{$\supset$}}}}


\newcommand {\lrto} {\leftrightarrow}
\newcommand {\rto} {\rightarrow}
\newcommand {\Rto} {\longrightarrow}
\newcommand {\lto} {\longmapsto}
\newcommand {\embto} {\hookrightarrow}
\newcommand {\mto} {\mapsto}
\newcommand {\Rarrow}{\Rightarrow}

\newcommand {\eq}{\Leftrightarrow}


\newcommand {\pkt} {\hspace{0,05cm}.}
\newcommand {\komma} {\hspace{0,05cm},}

\newcommand {\pluspkt} {+\cdots+}
\newcommand {\wedgepkt} {\wedge\cdots\wedge}

\newcommand {\timespkt} {\times\cdots\times}

\newcommand {\kpkt} {, \ldots, }

\newcommand {\onull} {^{\mathbf{0}}}
\newcommand {\opkt} {^{^{_{\bullet}}}}
\newcommand {\okreuz} {^{\times}}
\newcommand {\Uplus} {_{+}}
\newcommand {\UDelta} {_{^{_{\Delta}}}}

\newcommand {\minus} {\text{-}}
\newcommand {\mina} {\,\text{-}\,}

\newcommand {\azl}{\textquoteleft}
\newcommand {\azr}{\textquoteright\,}

\newcommand{\bigcupdot}{\mathop{\ThisStyle{\ensurestackMath{\stackinset{c}{}{c}{+.25ex}{\cdot}{\SavedStyle\bigcup}}}}}

\newcommand{\bigcupbidot}{\mathop{\ThisStyle{\ensurestackMath{\stackinset{c}{-1.4\LMpt}{c}{+.25ex}{:}{\SavedStyle\bigcup}}}}}
\newcommand{\cupbidot}{\mathop{\ensurestackMath{\stackinset{l}{-.19ex}{c}{+0.1ex}{:}{\cup}}}}

\makeatother

\begin{document}
\pagenumbering{gobble}
\titlehead{\begin {center}
Dissertation\\
zur Erlangung des Doktorgrades (Dr.\ rer.\ nat.)\\
des Fachbereichs Mathematik/Informatik\\
der Universit\"at Osnabr\"uck           
\end {center}}
 
\title{\begin {center}
Monoids with absorbing elements and their associated algebras
\end {center}}

\subtitle{
vorgelegt\\
von}

\author{Simone B\"ottger}
 
\date{Osnabr\"uck, 2015}
 
\publishers{\begin {center}
Betreuer: Prof.\ Dr.\ Holger Brenner
\end {center}}

\maketitle



\newpage

\cleardoublepage

\pagenumbering {arabic}


\markright{Introduction}
\chapter*{Introduction}
\addcontentsline{toc}{chapter}{Introduction}

In what follows, $K$ denotes a field, $P$ the polynomial algebra $K[X_{v}\mid v\in V]$, $V:=\{1\kpkt n\}$, in $n\ge 1$ commuting variables $X_{1}\kpkt X_{n}$, and $X^{\nu}$ will be shorthand for the monomial $X_{1}^{\nu_{1}}\cdots X_{n}^{\nu_{n}}$.

\bigskip

One of the main objects of concern in combinatorics, commutative algebra, and algebraic geometry are algebras $P/\Iideal$ defined by an ideal $\Iideal\subseteq P$, or more generally, varieties and schemes associated to $\Iideal$. Among the large amount of examples that appear in these areas some of the most important are the following.

\begin{ListeTheorem}
\item [(1)] The coordinate ring $K[\Vcal]=P/\Iideal_{\Vcal}$ of an affine (or projective) variety $\Vcal\subseteq \A^{n}(K)$ ($\subseteq\Proj^{n-1}(K)$), where $\Iideal_{\Vcal}=\{F\in P$ (homogeneous)$\mid F(v)=0$ for all $v\in \Vcal\}$.
\item [(2)] The Stanley-Reisner algebra $K[\Delta]:=P/\Iideal_{\Delta}$ associated to a simplicial complex $\Delta$ on $V$, where $\Iideal_{\Delta}=(\prod_{v\in F}X_{v}\mid F\not\in\Delta)$ is the so-called Stanley-Reisner ideal of $\Delta$.
\item [(3)] The toric ring $K[C]\subseteq K[Y_{1}\kpkt Y_{r}]$ of a finite set of monomials $C=\{Y^{\nu_{1}}\kpkt Y^{\nu_{n}}\}\subseteq K[Y_{1}\kpkt Y_{r}]$, which is given by  $P/\Iideal_{C}$ via $\pi:P\rto K[C]$, $X_{i}\mto Y^{\nu_{i}}$, where $\Iideal_{C}=\ker\pi$ is the so-called toric ideal associated to $C$.
\end{ListeTheorem}

As a non-commutative example, we give the following from representation theory.

\begin{ListeTheorem}
\item [(4)] The path algebra $K[Q]=P/\Iideal_{D}$ associated to a quiver $Q:=(V,A)$, where $D$ is the set of paths in $Q$ and $\Iideal_{D}=(X^{p}\mid p\not\in D)$ is the so-called path ideal of $Q$, where $P$ denotes in this case the polynomial algebra with non-commuting variables $X_{v}$, $v\in V$.
\end{ListeTheorem}

The algebras $K[\Delta]$ and $K[Q]$ are related to combinatorial objects, namely $\Delta$ and $Q$, and so is $K[C]$ since it is naturally isomorphic to the monoid algebra $KM$ of the affine monoid $M=\N\nu_{1}\pluspkt\N\nu_{n}$ generated by $\nu_{1}\kpkt\nu_{n}\in\N^{r}$. While $K[\Delta]$ and $K[Q]$ are monomial algebras (i.e.\ defined by an ideal generated by monomials), the defining ideal $\Iideal_{C}$ of $K[C]$ is a graded prime ideal generated by finitely many pure difference binomials (\cite[Proposition 7.1.2]{Villarreal}), which are polynomials of the form $X^{\nu}-X^{\mu}$. In fact, every monoid algebra is the homomorphic image of a polynomial algebra $K[X_{v}\mid v\in W]$ for a not necessarily finite set $W$ by an ideal $\Iideal$ generated by pure difference binomials (\cite[Theorem 7.11]{Gilmer}). This also shows that certain varieties are related to combinatorial objects, for instance, toric varieties (\cite{CoxLittleSchenck},\cite{Fulton}). 

\bigskip

From a conceptual point of view it seems desirable to have a theory that treats those algebras (or varieties or schemes) related to combinatorial objects within the same framework. Toric face rings are one attempt. Introduced by Stanley (\cite{StanleyToricFR}), they unify Stanley-Reisner algebras and affine monoid algebras by mimicking the connection of a monoid to its algebra with a modified operation described below (\ref{modified}). The construction of Stanley has been generalized to monoideal complexes (\cite{IchimRoemer}, \cite[Chapter 7.B]{BrunsGubeladze}) and has proven to be relevant to such a unification.

Another very powerful framework is the theory of binomial ideals. A binomial is a polynomial of the form $aX^{\nu}-bX^{\mu}$ with $a,b\in K$, and an ideal generated by binomials is a binomial ideal. By definition, a pure difference binomial is a binomial with $a=b=1$, and a monomial is a binomial with $a=1$ and $b=0$. In particular, every Stanley-Reisner algebra and monoid algebra can be realized as an algebra defined by a binomial ideal. Algebras, varieties, and schemes related to a binomial ideal constitute an immense quantity of classical and important examples in combinatorics, commuatitve algebra, and algebraic geometry, which justifies the strong interest as well as the large body of literature and active work on binomial ideals (\cite{BrunsGubeladze}, \cite{EisenbudSturmfels}, \cite{HerzogHibi}, \cite{MillerSturmfels}, \cite{Villarreal}, \cite{KahleMiller}). Moreover, they have notable applications beyond pure mathematics like coding theory, algebraic statistics, game theory, linear programming, and chemical kinetics (\cite{Sturmfels}, \cite{Miller2011}, and the references therein). Due to their combinatorial nature, a lot of algorithms and feasible computations are possible, which makes them even more attractive.

\bigskip

However, the somewhat smaller class of binomial ideals generated merely by pure difference binomials and/or monomials (i.e.\ by binomials with coefficients $a,b\in\{0,1\}$), which are independent of the base field (or ring), still cover a great part of the significant cases (e.g.\ toric face rings). This class outlines precisely the scope of this thesis. The main reason for restricting ourselves exactly to this class is that it is the largest class within which the powerful interplay of monoids and their algebras can be extended in a rather simple way.

\begin{DefinitionOhne}
Let $M$ be an additively written monoid. An \emph{absorbing element} $a\in M$ satisfies $a+b=a$ for all $b\in M$. Such an element is always unique and will be denoted by $\infty$ if it exists. A monoid with an absorbing element is called a \emph{binoid}. Its associated algebra, called the \emph{binoid algebra} of $M$, is defined to be the quotient algebra
$$K[M]\,:=\,KM/(X^{\infty})\pkt$$
\end{DefinitionOhne}

By definition, every binoid is a monoid. On the other hand, adjoining an absorbing element to an arbitrary monoid $M$ subject to its defining property yields a binoid $M\cup\zero=:M^{\infty}$ such that $K[M^{\infty}]=KM$. Thus, binoid algebras generalize monoid algebras. They are precisely the homomorphic images of polynomial algebras by ideals generated by pure difference binomials and/or monomials. Since adjoining an absorbing element has no impact on the structure of a monoid there is no loss of generality when turning from a monoid to a binoid.

As a matter of fact, dealing with binoids is more natural for several reasons. As in ring theory, one of the most fundamental tools in monoid theory is the Rees quotient 
$$M/\Ical\,=\,\{[a]\mid a\in M\setminus\Ical\}\cup\{[\Ical]\}\,\cong\,M\setminus\Ical\cup\{[\Ical]\}$$
of a monoid $M$ by an ideal $\Ical\subseteq M$ with addition given by 
$$[a]+[b]\,=\,\begin {cases}
[a+b]&\text{, if }a+b\not\in\Ical\komma\\
[\Ical]&\text{, otherwise.}
\end {cases}$$
On the one hand, the class $[\Ical]$ is an absorbing element in $M/\Ical$, which shows that one inevitably encounters binoids even when dealing solely with monoids that admit no absorbing element in first place. On the other hand, considering $M^{\infty}$ yields a canonical representative for the class $[\Ical]$, namely $[\Ical]=[\infty]$, because it reflects its characteristic feature of being absorbing. By proceeding in this way, one obtains an elegant description of $M^{\infty}/\Ical$, which is also appreciated in monoid theory but has to be enforced by the standard practice of defining $[\Ical]=:\infty$ (\cite[Chapter 1.3]{GrilletCS}).

\bigskip

Considering binoids accomplishes more than just focussing on a particular class of monoids that appear naturally in many situations, and yielding better notation when the absorbing element is emphasized. The main motivation for binoid theory was the observation that many algebras that arise in combinatorics can be realized as binoid algebras but not as monoid algebras, because of zero- divisors that can be encoded on the combinatorial level with binoids but not with monoids.

Let us outline how such algebras are typically constructed: given a collection $C$ of objects with a partial operation, that is, to some pairs $c_{1},c_{2}\in C$ there exists a unique object in $C$, say $c_{1}\ast c_{2}$, and whenever $c_{1}\ast c_{2}$ and $c_{2}\ast c_{3}$ are defined for $c_{1},c_{2},c_{3}\in C$, then $(c_{1}\ast c_{2})\ast c_{3}$ is defined if and only if $c_{1}\ast(c_{2}\ast c_{3})$ is as well, and in this case they coincide. The algebra associated to $C$ is now defined to be the $K\mina$module
$$K[C]\,:=\,\bigoplus_{c\in C}KX^{c}$$
with multiplication defined by
\[X^{c_{1}}\cdot X^{c_{2}}\,:=\,\begin {cases}
X^{c_{1}\ast c_{2}}&\text{, if }c_{1}\ast c_{2}\text{ is defined,}\\
0&\text{, otherwise.}
\end {cases}\tag{$\star$}\label{modified}\]
Obviously, if the partial operation is no operation, this algebra is not an integral domain and cannot be realized as a monoid algebra. However, it is straightforward to see that $C$ defines a binoid, namely $C\cup\zero=:M_{C}$ with addition given by
$$c_{1}+c_{2}\,:=\,\begin {cases}
c_{1}\ast c_{2}&\text{, if this is defined,}\\
\infty&\text{, otherwise,}
\end {cases}$$
and that $K[M_{C}]=K[C]$; that is, $K[C]$ is a binoid algebra. Hence, shifting the defining operation from the module to the combinatorial level creates a situation similar to the correspondence of monoids to their algebras. This is to say, with a theory of binoids and their algebras at hand, the strucure of $K[C]$ could be revealed by the combinatorics of $M_{C}$ and vice versa. 

The advantages of such a theory are obvious. Like toric face rings, binoid algebras unify different algebras that arise in combinatorial commutative algebra by studying them within the same framework. This is a very powerful and comprehensive framework since binoids being monoids allow us to adapt the well-established and elaborate theory of monoids and their algebras, given for instance in \cite{CliffordPreston}, \cite{Gilmer}, \cite{GrilletS}, \cite{GrilletCS}, \cite{Kobsa} or \cite{OkninskiSA} to mention just a very few (of which some also deal with binoids and their algebras like the books \cite{CliffordPreston} from the 1960s). 

Of course, considering the absorbing element causes new properties to arise, known concepts need to be modified, and pathologies appear compared to common structures in (commutative) algebra. However, for several reasons and motivations, working with binoids rather than arbitrary monoids seems preferable in certain situations, as can be verified by the frequency with which they appear in the literature and their profound applications in different branches of algebra and geometry (\cite{AndersonIT}, \cite{CliffordPreston}, \cite{GrilletCS}, \cite{Kist}, \cite{Kozhukhov}, \cite{Munn}, \cite{Novikov}, \cite{OkninskiSA}). There, they are usually written multiplicatively and called ``monoid with zero'' or ``pointed monoid'', while the binoid algebra is also known as a ``contracted'' or ``pointed monoid algebra''. For instance, spaces with a monoidal structure became increasingly important due to their appearance in $\F_{1}\mina$geometry and logarithmic algebraic geometry within the last two decades. Study in both areas depends largely on the commutative algebra of monoids and consequently enriches monoid theory by developing a geometric theory of monoids including sheaves of monoids and monoid schemes (\cite{Ogus}, \cite{DeitmarF1Schemes}, \cite{DeitmarToricVarieties}), with which we will not be concerned. Even there, some approaches favor working with binoids from scratch for a more general theory (\cite{ChuLorscheidSanthanam}, \cite{ConnesConsani1}, \cite{ConnesConsani2}, \cite{ConnesConsani3}, \cite{CortinasWeibelCDH}, \cite{FloresWeibel}).

Thus, binoids appear in many different branches of mathematics but, unfortunately, it seems that one is not always aware of what has already been done elsewhere. This is most likely due to terminology, since in older works on binoids, they are simply addressed as semigroups.

\bigskip

We also want to bring two more works on binoids to the attention of the reader. In \cite{BayarThesis}, Hilbert-Kunz theory for binoids is developed. It is shown that for certain binoids the Hilbert-Kunz multiplicity exists and is a rational number. A geometrical approach to binoids can be found in \cite{DavideThesis}, where invariants of a binoid scheme are related to those of the corresponding ring. The main focus of this work are divisor class groups and Picard groups of binoid schemes.

\bigskip

It is one aim of this thesis to provide a general theory of binoids, their algebras, and their spectra, but we will not pay as much attention to the algebras (and modules) as to the binoids and spectra. The core of this thesis is rather based on the familiar idea in commutative algebra to go one step further and have a view on algebraic geometry by means of the seminal relation between commutative algebras and geometric spaces. In Chapter \ref{ChapTopology} and \ref{ChapSepGradings}, we show that the space of $K\mina$points of a binoid algebra can be studied and fully understood from the combinatorics provided by the underlying binoid by making use of the crucial isomorphism
$$K\minspec K[M]\,\,\cong\,\, K\minspec M\komma$$
where $K\minspec M$ is the set of all $K\mina$valued morphisms of $M$. In this regard, we investigate several connectedness properties of such spaces. In the last chapter, we apply binoid theory to simplicial complexes, determine those binoids that realize Stanley-Reisner algebras, and show how a theory of these objects could start from a binoid point of view.

\bigskip

This thesis contains six chapters, which we will sketch now. The first two chapters cover binoid theory including the ideal theory of binoids to which we return later when we investigate the spectrum of a binoid from a topological point of view. Within the scope of this thesis, it was not possible to give a full account of the whole of binoid theory developed so far and elsewhere. We try to compensate this by giving references but do not claim completeness.

More precisely, Chapter 1 is a collection of basic definitions and constructions for binoids. Here, terminology and mainly additive notation are taken from monoid theory. The main definitions and properties of binoids are given before we focus on $N\mina$spectra of binoids, where $N$ is another binoid. In view of our main concern, the $N\mina$spectra are determined for every construction dicussed in this chapter. After recalling the well-known concept of congruences for a binoid, we collect specific congruences of interest for us. We prove that the product, coproduct, and limits exist in the category of (commutative) binoids, and describe these in detail. For binoids (resp.\ pointed sets), one can also define the pointed and bipointed union, where the latter applies only to binoids with a trivial unit group. The module and algebra counterparts in binoid theory are described by pointed sets and binoids over a binoid. We close this chapter with considerations on localization and integrality properties including the normalization of a binoid. Though we are mainly interested in finitely generated commutative binoids and devote one section to them, the first chapter is kept as general as possible, whereas in the remainder, we deal almost exclusively with commutative binoids.

In Chapter 2, we investigate the ideal theory of commutative binoids with focus on the spectrum of a binoid, which corresponds one-to-one to its $\trivial\mina$spectrum and to its filtrum, which is the set of all filters. The spectra of several binoid constructions are determined by taking advantage of our knowledge of their $N\mina$spectra. The ideal theory of binoids is very similar to that of rings with the exception that the union of (prime) ideals in a binoid is again a (prime) ideal. Moreover, every binoid can be considered to be local in the sense that there is a unique maximal ideal with respect to set inclusion. 

In Chapter 3, we move on to modules and algebras associated to binoids, $N\mina$sets, and $N\mina$binoids. After recalling useful results on monoid algebras, we introduce binoid algebras, state basic facts about them, and investigate the relationship between their ideals and the ideals of the underlying binoid. Modules and (binoid) algebras over binoid algebras generalize the concept of binoid algebras and will be treated at the end of this chapter.

In Chapter 4, we take a topological approach to commutative binoids via their spectra and $K\mina$spectra, where $K$ denotes a field. The spectrum of a binoid can be endowed with the Zariski topology in the same manner as that of a ring and the theories are very similar. This also applies to the dimension theory of binoids, which we treat in the third section. We introduce the booleanization of a binoid, whose spectrum is homeomorphic to that of the initial binoid, but which is a fairly easier binoid to study. The topology on the $K\mina$spectrum of a binoid comes from that on the $K\mina$spectrum of the binoid algebra $K[M]$. We prove some basic facts before we investigate connectedness properties of the $K\mina$spectrum of $K[M]$ that can be analyzed solely on the combinatorial level. Here, we focus in particular on the case of hypersurfaces.

In Chapter 5, we study a stronger connectedness property which involves the (special) $K\mina$point that is related to the unique maximal ideal in the binoid. We give necessary and sufficient conditions under which the special point is contained in every cancellative component of the $K\mina$spectrum, and under which every $K\mina$point is $\A^{n}\mina$connected to the special point. To obtain these results, we develop the notion of separated and graded binoids.

In Chapter 6, simplicial complexes and Stanley-Reisner algebras are studied from a binoid theoretic point of view. After recalling basic facts on simplicial complexes, we consider two binoids defined by a simplicial complex (with respect to $\cup$ and $\cap$). The first three sections deal with the basic properties of these binoids, their spectra, and morphisms between them. For a binoid $N$, the $N\mina$points of a simplicial complex $\Delta$ are introduced. These correspond one-to-one to the $N\mina$points of the binoid $M\UDelta$ associated to $\Delta$, which is the main concern of the last section. We prove that the binoid algebra of $M\UDelta$ is given by the Stanley-Reisner algebra and characterize those binoids that realize Stanley-Reisner algebras.

\bigskip

\bigskip

\subsubsection*{Conventions}
Throughout this thesis we make the following agreements. The sum over the empty set is $0$, and the product over the emptyset is $1$. By a ring, we always mean a commutative ring with identity and all ring homomorphisms preserve the identity.

\bigskip

\bigskip

\bigskip

\bigskip

\bigskip

\bigskip

\bigskip

\bigskip

\bigskip

\bigskip

\bigskip

\bigskip

\bigskip

\bigskip

\bigskip

\bigskip

\subsubsection*{Acknowledgments}
It is a great pleasure to thank all of those who helped me along my way. My deepest thanks go to Holger Brenner whose expertise, understanding, and enthusiasm added considerably to my postgraduate experience. I would like to thank him for all his support during the past years and for an excellent supervision of this thesis. I always enjoyed working with him and benefitted a lot from numerous interesting discussions. I take this oppportunity to express my gratitude to my past teacher Uwe Storch who introduced me to commutative algebra. He allowed me to work with him and to share some beautiful piece of mathematics. I also owe him the chance to spend six months of graduate years at the Department of Mathematics of the Indian Insitute of Science, Bangalore, India. I would also like to thank Lukas Katth\"an, Davide Alberelli, Sean Tilson, and Katja Brunkhorst for suggestions and proof-reading parts of this thesis. 

\clearpage

\tableofcontents

\clearpage


\chapter {Basic concepts of binoids} \label{Chap1Basics}
\markright {\ref{Chap1Basics} Basic concepts of binoids}

In this chapter, the basic terminology and notation concerning monoids with an absorbing element, which we call binoids, will be introduced. All definitions come along with examples which will frequently appear in the following chapters. 

In the remainder of this thesis, we will mainly focus on the class of commutative binoids, but in this chapter we try to keep it as general as possible and consider arbitrary binoids if not otherwise stated. At the beginning of those sections that deal only with commutative binoids, we will mention this restriction. From the beginning, we fix additive notation for arbitrary binoids, though the terminology and notation of classical concepts will not always follow this convention.

\medskip

On the one hand, every binoid is a monoid so that the concepts from monoid theory apply to binoids if the special element $\infty$ (and its inherent property of being a universal absorbing element) has no effect. In this case, definitions and results of semigroup and monoid theory may be transfered directly to (semi-) binoids, otherwise we will point out the necessity to adapt well-known concepts by making it compatible with $\infty$. For the sake of completeness, definitions and results are given. Standard references for semigroup and monoid theory are the books \cite{CliffordPreston} by A.H. Clifford and G.B Preston, as well as \cite{GrilletS} and \cite{GrilletCS} by P.A. Grillet, and \cite{Gilmer} by R. Gilmer, where the latter two focus on the commutative case.

On the other hand, every binoid can also be considered as a pointed set, where the distinguished point is given by the absorbing element. This concept yields very interesting and useful constructions for binoids, cf.\ Section \ref {SecSmashProduct} and Section \ref {SecOperation}.

\bigskip

\section {Basic objects}  \label{Sec1DefProp}
\markright{\ref{Sec1DefProp} Basic objects}

In this section, we give the definitions of the main objects of this thesis, namley binoids and their algebras. We say how to consider a binoid as a pointed set and thereby define the product and the direct sum of a family of binoids in the same way as for pointed sets. 

\begin {Definition}
A \gesperrt{semigroup} \index{semigroup}$(M,\ast)$ is a set $M$ with an associative operation 
$$\ast:M\times M\Rto M\komma\quad(a,b)\lto a\ast b\pkt$$
A \gesperrt{monoid}\index{monoid}$(M,\ast,e)$ is a semigroup that admits an element $e$ which satisfies $a\ast e=e\ast a=a$ for all $a\in M$. Such an element is always unique and will be called the \gesperrt{identity} \index{element!identity--}element of $M$. An identity element is always unique. A \gesperrt{submonoid} \index{submonoid}of a monoid $M$ is a subsemigroup that contains the identity element of $M$. In additive notation, the identity element will be denoted by $0$ and in multiplicative notation by $1$.

An element $a\in M$ in a semigroup is \gesperrt{absorbing} \index{element!absorbing --}\index{absorbing}if $a\ast x=x\ast a=a$ for all $x\in M$. An absorbing element is always unique. A \gesperrt{binoid} \index{binoid}$(M,\ast,e,a)$ (resp.\ \gesperrt{semibinoid} \index{semibinoid}$(M,\ast,a)$) is a monoid (resp.\ semigroup) with an absorbing element. A  \gesperrt{subbinoid} \index{subbinoid}(resp.\ \gesperrt{subsemibinoid}\index{subsemibinoid}) of $M$ is a submonoid (resp.\ subsemigroup) of $M$ that contains the absorbing element of $M$. In additive notation, the absorbing element will be denoted by $\infty$ and in multiplicative notation by $0$.
\end {Definition}

By definition, semibinoids and monoids are never empty and, in particular, so are binoids.

\begin {Convention}
In this thesis, arbitrary binoids will be written \emph{additively} if not otherwise stated (even if the binoid is not commutative).
\end {Convention}

By abuse of notation, we will not strictly use additive notation when referring to classical concepts such as localization and so forth. Moreover, unless there is confusion, we abbreviate 
$$na:=a\pluspkt a\quad\text{and}\quad nA:=\{a_{1}\pluspkt a_{n}\mid a_{i}\in A\}$$
for $n\in\N$, $a\in M$, and $A\subseteq M$, where $0a:=0$ and $0A:=\emptyset$.\footnote{\, The latter definitions are consistent with our convention that the sum over the empty set is $0$.}

\medskip

While we are studying binoids, we always have their associated algebras in mind and will refer to them. Then we tacitly assume little knowledge about monoid algebras since the \gesperrt{binoid algebra} \index{binoid!-- algebra}is given by\label{DefBinoidAlgebra}
$$R[M]:=RM/(T^{\infty})\komma$$
where $M$ is a binoid, $R$ a ring, $RM=\bigoplus_{a\in M}RT^{a}$ the monoid algebra, and $(T^{\infty})$ the ideal in $RM$ generated by $T^{\infty}$. An elaboration of the basics of monoid and binoid algebras will be given in Section \ref{SecMonoidAlgebra} and Section \ref{SecBinoidAlgebra}. 

\begin {Example}
Let $R$ be a ring.
\begin {ListeTheorem}
\item The binoid algebra of the \gesperrt{zero binoid} \index{binoid!zero --}$\zero$, i.e.\  $0=\infty$, over any ring is the zero ring.
\item The binoid algebra of the \gesperrt{trivial binoid} \index{binoid!trivial --}$\trivial$, i.e.\ with $0\not=\infty$, over $R$ is $R$.
\item Adjoining an absorbing element $\infty$ to the commutative monoid $(\N^{n},+,(0\kpkt0))$, $n\ge1$, \nomenclature[N]{$(\N^{n})^{\infty}$}{$=((\N^{n})\cup\{\infty\},+,(0\kpkt0),\infty)$}yields a binoid, denoted by $(\N^{n})^{\infty}$, by defining $k+\infty=:\infty$ (this construction is based on a general concept which will be discussed subsequently). The binoid algebra $R[(\N^{n})^{\infty}]=R(\N^{n})$ is isomorphic to the polynomial algebra $R[X_{1}\kpkt X_{n}]$ in $n$ variables.
\item Every ring yields a (commutative) binoid by forgetting the additive structure.
 \end {ListeTheorem}
\end {Example}

The binoids of the following example play an important role in this thesis and will frequently appear.

\begin {Example}\label{ExpBinoidsPowerSet}
Let $V$ be an arbitrary set. The power set $\Pset(V)$\nomenclature[Powerset]{$\Pset(V)$}{powerset of $V$} gives rise to two different commutative binoids, namely\nomenclature[PowersetBinoids]{$\Pset(V)_{\cap},\Pset(V)_{\cup}$}{binoids defined by the discrete topology on $V$}
$$\Pset(V)_{\cap}:=(\Pset(V), \cap,V,\emptyset)\quad\text{and}\quad\Pset(V)_{\cup}:=(\Pset(V),\cup,\emptyset,V)\pkt$$
In either case, $\Pset(\emptyset)$ yields the zero binoid and $\Pset(\{1\})$ the trivial binoid. If $V$ is finite, we abbreviate $\Pset(\{1,\kpkt n\})=:\Pset_{n}$ and write $\Pset_{n,\cap}$ and $\Pset_{n,\cup}$\nomenclature[PowersetBinoidsFinite]{$\Pset_{n,\cap},\Pset_{n,\cup}$}{binoids defined by the discrete topology on $\{1\kpkt n\}$} for the corresponding binoids, $n\ge 1$.

The subbinoids of $\Pset(V)_{\cap}$ and $\Pset(V)_{\cup}$ are given by the subsets $M\subseteq\Pset(V)$ that are closed with respect to the operation of $\cup$ and $\cap$, respectively, and contain $\emptyset$ and $V$. If $M$ is a subbinoid of $\Pset(V)_{\cup}$ (resp.\ $\Pset(V)_{\cap}$), then $$M^{\opc}:=\{U^{\opc}\mid U\in M\}\komma$$ 
where $U^{\opc}:=V\setminus U$ denotes the complement of $U$, is a subbinoid of $\Pset(V)_{\cap}$ (resp.\ $\Pset(V)_{\cup}$) since $U^{\opc}\cup W^{\opc}=(U\cap W)^{\opc}$ (resp.\ $U^{\opc}\cap W^{\opc}=(U\cup W)^{\opc}$) for all $U,W\subseteq\Pset(V)$. 

In particular, every topology $\Tcal=\{U\mid U\subseteq V$ open$\}$ on a nonempty set $V$ defines commutative binoids with respect to the join and meet operation, namely $(\Tcal,\cap,V,\emptyset)=\Tcal_{\cap}$ and $(\Tcal,\cup,\emptyset,V)=\Tcal_{\cup}$\nomenclature[Topology]{$\Tcal_{\cap},\Tcal_{\cup}$}{binoids defined by a topology $\Tcal$}, as well as the set of all closed sets $\Tcal^{\opc}=\{U^{\opc}\mid U\in\Tcal\}$. The binoids $\Pset(V)_{\cup}$ and $\Pset(V)_{\cap}$ come from the discrete topology on $V$, and the binoid due to the trivial topology on $V$ is nothing else than the trivial binoid $\{V,\emptyset\}$.
\end {Example}

By definition, every semibinoid is a semigroup and every binoid is a monoid. On the other hand, it is always possible to adjoin an identity or an absorbing element subject to their defining conditions even if such an element already exists. For instance, if $(M,+,0,\infty)$ is a binoid, then $M\cup\{\mathbf{0}\}$ subject to $a+\mathbf{0}=\mathbf{0}+a=a$ for all $a\in M$, in particular,
$$0+\mathbf{0}=\mathbf{0}+0=0\komma$$
is a binoid with identity element $\mathbf{0}$. Thus, every semigroup (or monoid or semibinoid) is embeddable in a binoid so that considering binoids amounts to no loss of generality.

\begin {Definition}
The semibinoid and monoid that arises from adjoining an absorbing and an identity element to a semigroup $M$ will be denoted by $M^{\infty}$\nomenclature[m]{$M^{\infty}$}{$=M\cup\zero$} and $M\onull$\nomenclature[m]{$M\onull$}{$=M\cup\{0\}$}, respectively. If $M$ possesses an absorbing element $\infty$, we write $M\opkt$ for the set $M\setminus\zero$\nomenclature[m]{$M\opkt$}{$=M\setminus\zero$}.\end {Definition}

Objects like monoids and semibinoids originate from a more general concept.

\begin{Definition}
A \gesperrt{pointed set} \index{pointed set}$(S,p)$ is a set $S$ with a distinguished element $p\in S$ and a map $(S,p)\rto(T,q)$ of pointed sets with $p\mto q$ is called a  \gesperrt{pointed map}\index{map!pointed}. The set of all pointed maps $S\rto T$ will be denoted by $\map_{p\mto q}(S,T)$\nomenclature[map1]{$\map_{p\mto q
}(S,T)$}{set of all pointed maps $(S,p)\rto(T,q)$}. In case $T=S$, we simply write $\map_{p}S$.\nomenclature[map2]{$\map_{p
}S$}{set of all pointed maps $(S,p)\rto(S,p)$}
\end{Definition}

The set $\map_{p}S$ is a binoid with respect to the composition of maps $S\rto S$. The identity element is given by $\id_{S}$ and the absorbing element by the constant map $c_{p}:s\mto p$, $s\in S$. In the next section, cf.\ Lemma \ref{LemMappingRealization}, we will show that every binoid can be realized as the subbinoid of such a mapping binoid
$$(\map_{p}S, \circ,\id_{S},c_{p})\pkt$$
Note that there are no conditions required from the distinguished point unless it comes to morphisms of pointed sets. Thus, a set $M$ is a monoid if and only if it is a pointed semigroup $(M,0)$ with the additional property of the identity element $0$. Similarly, a semibinoid $M$ is a pointed semigroup $(M,\infty)$ subject to the defining property of $\infty$. Having observed this, a binoid $M$ can be considered as a pointed set $(M,p)$ in two different ways: as a pointed set with $p=0$ or as a pointed set with $p=\infty$. It turns out that the latter approach is the one that makes more sense in our context, see for instance the definition of the product below or Section \ref{SecSmashProduct} and Section \ref{SecOperation}.

\medskip

Since binoids are monoids, the product and the direct sum of a family of binoids is well-defined. It remains the question if the outcome has a binoid structure as well. For the sake of completeness, we recall the definitions for pointed sets and state the main results about monoids (without giving proofs) before we consider binoids.

\begin {Definition} \label{DefProdPointedSets}
The \gesperrt{product} \index{product!-- of pointed sets}of an arbitrary family $(S_{i},p_{i})_{i\in I}$ of pointed sets is given by the cartesian product $\prod_{i\in I}S_{i}$\nomenclature[AProductProduct]{$\prod_{i\in I}S_{i}$}{product of a family of pointed sets}. The subset consisting of all tuples $(s_{i})_{i\in I}$ with $s_{i}=p_{i}$ for almost all $i\in I$ is called the \gesperrt {direct sum} \index{direct sum!-- of pointed sets}and will be denoted by $\bigoplus_{i\in I}S_{i}$.\nomenclature[AProductSum]{$\bigoplus_{i\in I}S_{i}$}{direct sum of a family of pointed sets}
\end {Definition}

The product and the direct sum of a family $(S_{i},p_{i})_{i\in I}$ of pointed sets are again pointed sets with distinguished element $(p_{i})_{i\in I}$ and they coincide if and only if $I$ is finite; in this case, we prefer the notation $\prod_{i\in I}S_{i}$ instead of $\bigoplus_{i\in I}S_{i}$. Considering a monoid as a pointed set with distinguished element $0$, we get the following result.

\begin {Lemma} \label{LemProduct}
Let $(M_{i},+,0_{i})_{i\in I}$ be a family of monoids. The product $\prod_{i\in I}(M_{i})_{i\in I}$ \index{product!-- of monoids}\nomenclature[AProductProduct]{$\prod_{i\in I}M_{i}$}{product of a family of monoids}is a monoid with the componentwise addition $(a_{i})_{i\in I}+(b_{i})_{i\in I}=(a_{i}+b_{i})_{i\in I}$ and identity element $(0_{i})_{i\in I}$. The direct sum $\bigoplus_{i\in I}M_{i}$, which consists of all tuples $(a_{i})_{i\in I}$ with $a_{i}=0_{i}$ for almost all $i\in I$, is a submonoid of $\prod_{i\in I}M_{i}$.\index{direct sum!-- of monoids}\nomenclature[AProductSum]{$\bigoplus_{i\in I}M_{i}$}{direct sum of a family monoids}
\end {Lemma}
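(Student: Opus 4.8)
The plan is to verify the monoid axioms for the product directly from the definitions, and then check that the direct sum is closed under the componentwise operation.

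First I would address the product. The underlying set is the Cartesian product $\prod_{i\in I}M_i$ of pointed sets with distinguished point $(0_i)_{i\in I}$, so it is already a well-defined pointed set by Definition \ref{DefProdPointedSets}; it remains to endow it with a monoid structure. I would define the operation componentwise, $(a_i)_{i\in I}+(b_i)_{i\in I}:=(a_i+b_i)_{i\in I}$, and observe that this is well-defined since each $a_i+b_i\in M_i$. Associativity follows immediately from the associativity in each $M_i$: for three tuples, both $((a_i)+(b_i))+(c_i)$ and $(a_i)+((b_i)+(c_i))$ have $i$-th component $(a_i+b_i)+c_i=a_i+(b_i+c_i)$, so they agree. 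For the identity, the tuple $(0_i)_{i\in I}$ satisfies $(a_i)+(0_i)=(a_i+0_i)=(a_i)$ and symmetrically on the other side, using that each $0_i$ is the identity of $M_i$.

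Next I would treat the direct sum. Let $N:=\bigoplus_{i\in I}M_i$ be the set of tuples $(a_i)_{i\in I}$ with $a_i=0_i$ for almost all $i$. To show $N$ is a submonoid, I would first note that the identity $(0_i)_{i\in I}$ clearly lies in $N$, since \emph{all} its components equal $0_i$. For closure under the operation, take $(a_i),(b_i)\in N$. Then the set $\{i\mid a_i\neq 0_i\}$ and the set $\{i\mid b_i\neq 0_i\}$ are both finite, hence so is their union. For every $i$ outside this union we have $a_i=b_i=0_i$, whence $a_i+b_i=0_i+0_i=0_i$; thus the sum $(a_i+b_i)_{i\in I}$ has only finitely many non-identity components and lies in $N$. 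Therefore $N$ is closed under addition and contains the identity, so it is a submonoid of $\prod_{i\in I}M_i$.

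None of these steps presents a genuine obstacle; the proof is entirely routine, consisting of componentwise verifications and the standard ``finite union of finite sets is finite'' argument for the direct sum. The only point requiring a modicum of care is the bookkeeping of supports in the direct sum, where one must correctly observe that the support of a sum is contained in the union of the supports, so that the almost-everywhere-identity condition is preserved.
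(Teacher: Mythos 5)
Your proof is correct and follows the only natural route: componentwise verification of associativity and the identity for the product, and the finite-union-of-supports argument for closure of the direct sum. The paper itself dismisses this as "easy to check," and your write-up is precisely the routine verification it has in mind.
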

\begin {proof}
This is easy to check.
\end {proof}

\begin {Proposition} \label {PropUEproduct}
The product in the category of monoids is given by the product monoid; that is, if $(M_{i})_{i\in I}$ is a family of monoids such that for every $i\in I$ there is a monoid homomorphism $q_{i}:Q\rto M_{i}$, then there exists a unique monoid homomorphism $q:Q\rto\prod_{i\in I}M_{i}$ with $\pi_{k}q=q_{k}$ for all $k\in I$, where $\pi_{k}$ denotes the canonical projection $\prod_{i\in I}M_{i}\rto M_{k}$ on the $k$th component.
\end {Proposition}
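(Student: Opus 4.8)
The plan is to verify the universal property directly by constructing the map $q$ componentwise and then checking uniqueness. This is the standard argument establishing that a categorical product is realized by the set-theoretic product equipped with the componentwise operation, so the proof is essentially a routine verification with no serious obstacle.

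First I would define the candidate map. Given the family of monoid homomorphisms $q_{i}:Q\rto M_{i}$, I set
$$q:Q\Rto\prod_{i\in I}M_{i}\komma\quad a\lto(q_{i}(a))_{i\in I}\pkt$$
I must check that this is a monoid homomorphism into the product monoid of Lemma \ref{LemProduct}. For the identity, since each $q_{i}$ is a homomorphism we have $q_{i}(0)=0_{i}$, hence $q(0)=(0_{i})_{i\in I}$, which is the identity element of $\prod_{i\in I}M_{i}$. For additivity, using that each $q_{i}$ is additive and that addition in the product is componentwise,
$$q(a+b)=(q_{i}(a+b))_{i\in I}=(q_{i}(a)+q_{i}(b))_{i\in I}=(q_{i}(a))_{i\in I}+(q_{i}(b))_{i\in I}=q(a)+q(b)\pkt$$
By construction $\pi_{k}q(a)=\pi_{k}((q_{i}(a))_{i\in I})=q_{k}(a)$ for every $a\in Q$ and every $k\in I$, so $\pi_{k}q=q_{k}$ as required.

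For uniqueness, suppose $q':Q\rto\prod_{i\in I}M_{i}$ is any monoid homomorphism with $\pi_{k}q'=q_{k}$ for all $k\in I$. Since an element of the product monoid is determined by its components, and for each $a\in Q$ and each $k\in I$ we have $\pi_{k}(q'(a))=q_{k}(a)=\pi_{k}(q(a))$, it follows that $q'(a)=q(a)$ for all $a\in Q$, whence $q'=q$. The only point deserving a word of care is that the projection $\pi_{k}$ is itself a monoid homomorphism (so that the compositions $\pi_{k}q$ are homomorphisms and the stated compatibility conditions make sense), which is immediate from the componentwise definition of addition. The hardest part is merely bookkeeping with the index set $I$, but since everything is componentwise and $I$ plays no structural role, there is no real difficulty.
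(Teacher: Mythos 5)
Your proof is correct and is precisely the standard componentwise verification that the paper alludes to: the paper's own "proof" consists only of the remark "This is standard." Nothing further is needed.
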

\begin {proof}
This is standard.
\end {proof}

One can also consider the subset of the product $\prod_{i\in I}M_{i}$ of a family $(M_{i})_{i\in I}$ of binoids that contains all tuples $(a_{i})_{i\in I}$ with $a_{i}=0_{i}$ for almost all $i\in I$. Clearly, this is a subsemigroup of $\prod_{i\in I}M_{i}$.

However, the next result gives reason for our convention to consider a binoid as a pointed set with distinguished element $\infty$ because, similar to rings, the direct sum of an infinite family $(M_{i},\infty_{i})_{i\in I}$ of nonzero binoids as defined in Definition \ref{DefProdPointedSets} is only a semibinoid.

\begin {Corollary} \label{CorProduct} 
Let $(M_{i})_{i\in I}$ be a family of binoids. With respect to the componentwise addition, the product $\prod_{i\in I}M_{i}$ \index{product!-- of binoids}\nomenclature[AProductProduct]{$\prod_{i\in I}M_{i}$}{product of a family of binoids}of the family $(M_{i},\infty_{i})_{i\in I}$ of pointed sets is a binoid with identity element $(0_{i})_{i\in I}=:0_{\Pi}$ and absorbing element $(\infty_{i})_{i\in I}=:\infty_{\Pi}$. The direct sum $\bigoplus_{i\in I}M_{i}$ \index{direct sum!-- of binoids}is a semibinoid that coincides with the product if and only if $M_{i}=\{\infty_{i}\}$ for almost all $i\in I$.\nomenclature[AProductSum]{$\bigoplus_{i\in I}M_{i}$}{direct sum of a family binoids}
\end {Corollary}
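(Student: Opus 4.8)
The plan is to lean entirely on Lemma \ref{LemProduct}, which already supplies the monoid structure, and then to do nothing more than track the behaviour of the absorbing elements; the three assertions (product is a binoid, direct sum is a semibinoid, coincidence criterion) are handled in turn. For the product, since each $M_i$ is a monoid, Lemma \ref{LemProduct} gives that $\prod_{i\in I}M_i$ is a monoid under componentwise addition with identity $0_\Pi=(0_i)_{i\in I}$, so the only thing left is an absorbing element. The obvious candidate is $\infty_\Pi=(\infty_i)_{i\in I}$, and for any tuple $(a_i)_{i\in I}$ one has $\infty_\Pi+(a_i)_{i\in I}=(\infty_i+a_i)_{i\in I}=(\infty_i)_{i\in I}=\infty_\Pi$, the middle equality holding componentwise because each $\infty_i$ absorbs in $M_i$ (the symmetric computation being identical). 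Hence $\prod_{i\in I}M_i$ is a binoid.

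For the direct sum I would first emphasise that, per the convention fixed above, $\bigoplus_{i\in I}M_i$ is the direct sum of the \emph{pointed sets} $(M_i,\infty_i)$, namely the tuples $(a_i)_{i\in I}$ with $a_i=\infty_i$ for almost all $i$. The step to check is closure under addition: if $a_i=\infty_i$ for $i\notin F$ and $b_i=\infty_i$ for $i\notin G$ with $F,G$ finite, then $a_i+b_i=\infty_i+\infty_i=\infty_i$ for all $i\notin F\cup G$, so the sum again lies in $\bigoplus_{i\in I}M_i$. Thus it is a subsemigroup of $\prod_{i\in I}M_i$, and it contains $\infty_\Pi$ (whose entries are all $\infty_i$), so $\bigoplus_{i\in I}M_i$ is a semibinoid.

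For the coincidence criterion the decisive observation is that the identity $0_\Pi=(0_i)_{i\in I}$ belongs to $\bigoplus_{i\in I}M_i$ if and only if $0_i=\infty_i$ for almost all $i$, equivalently $M_i=\{\infty_i\}$ (the zero binoid) for almost all $i$. If $M_i=\{\infty_i\}$ for all $i$ outside a finite set $F$, then every tuple in $\prod_{i\in I}M_i$ is forced to have $a_i=\infty_i$ for $i\notin F$, so $\prod_{i\in I}M_i\subseteq\bigoplus_{i\in I}M_i$ and the two coincide. Conversely, if $\bigoplus_{i\in I}M_i=\prod_{i\in I}M_i$, then in particular $0_\Pi\in\bigoplus_{i\in I}M_i$, and the observation forces $M_i=\{\infty_i\}$ for almost all $i$.

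The argument is essentially bookkeeping, and the one place where care is genuinely needed — the only point at which the statement could fail — is the \emph{choice of distinguished point}. In Lemma \ref{LemProduct} the direct sum is formed with respect to $0$, whereas here it is formed with respect to $\infty$; it is exactly this shift that ejects the identity $0_\Pi$ from $\bigoplus_{i\in I}M_i$ as soon as infinitely many factors are nonzero, and hence prevents the direct sum from being a binoid in general. I would therefore keep the two notions of ``direct sum'' strictly apart, since that distinction is the entire content of the final equivalence.
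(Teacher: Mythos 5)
Your proof is correct and follows exactly the route the paper intends: the paper's own proof is only the remark ``This is similar to Lemma \ref{LemProduct}'', and your argument supplies precisely the missing details (the componentwise check that $\infty_{\Pi}$ absorbs, closure of the $\infty$-pointed direct sum under addition, and the observation that $0_{\Pi}$ lies in $\bigoplus_{i\in I}M_{i}$ if and only if almost all factors are zero binoids). Your closing remark about the shift of distinguished point from $0$ to $\infty$ is exactly the point the paper makes in the paragraph preceding the corollary.
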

\begin {proof}
This is similar to Lemma \ref {LemProduct}.
\end {proof}

\begin {Lemma} \label{LemProductCom}
The product $\prod_{i\in I}M_{i}$ of a family $(M_{i})_{i\in I}$ of nonzero binoids is commutative if and only if all $M_{i}$ are commutative.
\end {Lemma}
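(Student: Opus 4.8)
The plan is to reduce commutativity of the product to a componentwise statement and then exploit the canonical coordinate embeddings. By Corollary \ref{CorProduct}, the product $\prod_{i\in I}M_{i}$ carries the componentwise addition, so two tuples $(a_{i})_{i\in I}$ and $(b_{i})_{i\in I}$ commute in $\prod_{i\in I}M_{i}$ if and only if $a_{i}+b_{i}=b_{i}+a_{i}$ holds in $M_{i}$ for every $i\in I$. This equivalence is the whole engine of the proof.

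For the easy direction I would assume that each $M_{i}$ is commutative. Then for arbitrary tuples the identity $a_{i}+b_{i}=b_{i}+a_{i}$ holds in every component, whence $(a_{i})_{i\in I}+(b_{i})_{i\in I}=(b_{i})_{i\in I}+(a_{i})_{i\in I}$, and the product is commutative.

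For the converse, assume $\prod_{i\in I}M_{i}$ is commutative and fix an index $k\in I$. For $a\in M_{k}$ let $\iota_{k}(a)\in\prod_{i\in I}M_{i}$ denote the tuple whose $k$-th entry is $a$ and whose remaining entries are the respective identity elements $0_{i}$. Since binoids always possess identity elements, $\iota_{k}$ is well-defined, and one checks at once that it is an injective monoid homomorphism (it sends $0_{k}$ to $0_{\Pi}$ and respects componentwise addition), so $M_{k}$ is isomorphic to a submonoid of the product. Now, given $a,b\in M_{k}$, commutativity of the product yields $\iota_{k}(a)+\iota_{k}(b)=\iota_{k}(b)+\iota_{k}(a)$, and reading off the $k$-th component gives $a+b=b+a$. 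As $k$ was arbitrary, every $M_{k}$ is commutative.

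I expect no genuine obstacle here: the only points to watch are that the coordinate tuples $\iota_{k}(a)$ really lie in the product (which holds because the full cartesian product imposes no support condition, in contrast to the direct sum) and that the componentwise reading of the commutativity relation is legitimate, both of which are immediate from Corollary \ref{CorProduct}. I would also remark that the nonzero hypothesis on the $M_{i}$ is not actually used in this argument; it merely keeps the standing setup of the preceding results uniform.
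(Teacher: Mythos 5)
Your proof is correct and is exactly the straightforward componentwise verification the paper has in mind (its own proof is just ``This is easily verified''). The use of the identity-element coordinate embeddings for the converse, and your observation that the nonzero hypothesis is not actually needed, are both sound.
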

\begin {proof}
This is easily verified.
\end {proof}

\bigskip

\section {Homomorphisms} \label{SecHom}
\markright{\ref{SecHom} Homomorphisms}

\begin {Definition} \label{DefKer}
Let $M$ and $N$ be binoids (or semibinoids). A map $\varphi:M\rto N$ is a (\gesperrt{semi}-) \gesperrt{binoid homomorphism} \index{binoid!-- homomorphism}\index{binoid homomorphism}\index{semibinoid!-- homomorphism}\index{homomorphism!binoid --}\index{homomorphism!semibinoid --}if it is a monoid (resp.\ semigroup) homomorphism which sends $\infty_{M}$ to $\infty_{N}$. Moreover, we call $\varphi$ a \gesperrt{monomorphism} \index{monomorphism}or \gesperrt{embedding} \index{embedding}if its is injective, an \gesperrt{epimorphism} \index{epimorphism}if it is surjective, and an \gesperrt{isomorphism} \index{isomorphism}if it is bijective. The set  $\im\varphi:=\varphi(M)$\nomenclature[Image]{$\im\varphi$}{image of $\varphi$} is the \gesperrt{image} of $\varphi$, \index{image}\index{homomorphism!image of a --}and the set\nomenclature[Kernel]{$\ker\varphi$}{kernel of $\varphi$}
$$\ker\varphi\,:=\,\{a\in M\mid\varphi(a)=\infty_{N}\}$$
is the \gesperrt{kernel} \index{homomorphism!kernel of a --}of $\varphi$. The set of all binoid homomorphisms from $M$ to $N$ is denoted by $\hom(M,N)$\nomenclature[Hom]{$\hom(M,N)$}{set of all binoid homomorphisms $M\rto N$}.
\end {Definition}

\begin {Remark}
A binoid homomorphism that satisfies $\ker=\zero$ need not be injective. For example, the binoid homomorphism $\varphi:\N^{\infty}\rto\trivial$ with $x\mto 0$ if $x\not=\infty$ and $\infty$ otherwise, fulfills $\ker\varphi=\zero$, but is not injective.
\end {Remark}

\begin {Example}
Let $M$ be a binoid.
\begin {ListeTheorem}
\item There are canonical binoid homomorphisms
$$\trivial\Rto M\Rto\zero\pkt$$
\item If $M$ is commutative, the map $M\rto M$ with $a\rto ka$ is a binoid homomorphism for every $k\in\N$.
\item Every element $a\in M$ defines a binoid homomorphism $\varphi_{a}:\N^{\infty}\rto M$ with $1\mto a$. Conversely, every binoid homomorphism $\N^{\infty}\rto M$ is determined by the image of $1$. Hence, $\hom(\N^{\infty},M)\cong M$. The image $\im\varphi_{a}$ is given by the subbinoid $\{\infty,na\mid n\in\N\}$, and $\ker\varphi_{a}\not=\zero$ if and only if $a$ is nilpotent.
\item If the operation on $M$ is given by $\cap$ or $\cup$, then $M\rto M^{\opc}$ with $A\mto A^{\opc}$ is in either case a binoid isomorphism. This holds particularly for the binoid $\Scal(V)$ and for the binoid defined by a topology on $V$, for instance, for the discrete topology $\Pset(V)\cong\Pset(V)^{\opc}$.
\end {ListeTheorem}
\end {Example}

\begin {Example} \label{ExPowersetIsom}
Let $I=\{1\kpkt n\}$, $n\ge1$.
\begin {ListeTheorem}
\item The canonical bijections $\Pset_{n,\cap}\rto\trivial^{n}$,  $A\mto(\delta_{i}(A))_{i\in I}$, and $\Pset_{n,\cup}\rto\trivial^{n}$, $A\mto(\bar{\delta}_{i}(A))_{i\in I}$, where
$$\delta_{i}(A)\,=\,\begin {cases}
0&\text{, if }i\in A\komma\\
\infty&\text{, otherwise,}
\end {cases}\quad\text{and}\quad\bar{\delta}_{i}(A)\,=\,\begin {cases}
\infty&\text{, if }i\in A\komma\\
0&\text{, otherwise,}
\end {cases}$$
for $i\in I$, are binoid isomorphisms.
\item Let $(M_{i})_{i\in I}$ be a family of binoids and $k\in I$. The projection on the $k$th component $$\prod_{i\in I}M_{i}\Rto M_{k}\komma\quad (a_{i})_{i\in I}\lto a_{k}\komma$$
is a binoid epimorphism, whereas the inclusion $M_{k}\embto\prod_{i\in I}M_{i}$ given by
$$a\lto(\infty\kpkt\infty,a,\infty\kpkt\infty)$$
is only a semibinoid homomorphism, and the inclusion given by
$$a\lto(0\kpkt0,a,0\kpkt0)$$
is only a monoid homomorphism (in either case $a$ is the $k$th component).
\end {ListeTheorem}
\end {Example}

The category of binoids $\Bsf$\nomenclature[B]{$\Bsf$}{category of binoids} possesses an initial object, namely the trivial binoid $\trivial$. The terminal object of $\Bsf$ is given by the zero binoid $\zero$. Note that there is no binoid homomorphism from the zero binoid into a nonzero binoid. The category of commutative binoids is a full subcategory of $\Bsf$ which we denote by $\cBsf$.\nomenclature[CB]{$\cBsf$}{category of commutative binoids}

\begin {Proposition}
The product in the category of binoids $\Bsf$ is given by the product.
\end {Proposition}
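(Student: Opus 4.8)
The plan is to verify directly that the product binoid $\prod_{i\in I}M_{i}$ of Corollary \ref{CorProduct}, equipped with the canonical projections $\pi_{k}$, satisfies the universal property of a categorical product in $\Bsf$. By Corollary \ref{CorProduct} this product already carries a binoid structure with identity $0_{\Pi}=(0_{i})_{i\in I}$ and absorbing element $\infty_{\Pi}=(\infty_{i})_{i\in I}$, and by Example \ref{ExPowersetIsom} each projection $\pi_{k}\colon\prod_{i\in I}M_{i}\Rto M_{k}$ is a binoid homomorphism. Thus all the data of a cone are in place, and it remains only to produce and characterize the mediating morphism.

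So, given any binoid $Q$ together with binoid homomorphisms $q_{i}\colon Q\rto M_{i}$ for every $i\in I$, I would set
$$q\colon Q\Rto\prod_{i\in I}M_{i}\komma\quad x\lto(q_{i}(x))_{i\in I}\pkt$$
By Proposition \ref{PropUEproduct}, applied to the underlying monoids, $q$ is the unique monoid homomorphism satisfying $\pi_{k}q=q_{k}$ for all $k\in I$. The only point that goes beyond the monoid statement is to confirm that $q$ is a binoid homomorphism, i.e.\ that it sends $\infty_{Q}$ to $\infty_{\Pi}$; this is immediate, since each $q_{i}$ is a binoid homomorphism and hence $q(\infty_{Q})=(q_{i}(\infty_{Q}))_{i\in I}=(\infty_{i})_{i\in I}=\infty_{\Pi}$. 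Uniqueness requires no separate argument: any binoid homomorphism $q'\colon Q\rto\prod_{i\in I}M_{i}$ with $\pi_{k}q'=q_{k}$ is in particular a monoid homomorphism with the same property, so $q'=q$ by the uniqueness clause of Proposition \ref{PropUEproduct}.

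I expect there to be essentially no obstacle here: the entire content is inherited from the monoid case, and the sole new ingredient, preservation of $\infty$, is forced by the standing assumption that the $q_{i}$ are binoid homomorphisms rather than merely monoid homomorphisms. The one subtlety worth flagging is that one genuinely needs the full product $\prod_{i\in I}M_{i}$ and not the direct sum $\bigoplus_{i\in I}M_{i}$, since by Corollary \ref{CorProduct} the latter is in general only a semibinoid and therefore not even an object of $\Bsf$ that could carry the universal property.
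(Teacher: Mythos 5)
Your proof is correct and follows exactly the route the paper intends: the paper's own proof simply says the statement "follows immediately from Proposition \ref{PropUEproduct} and Corollary \ref{CorProduct}", and you have filled in precisely those details (the mediating monoid homomorphism from Proposition \ref{PropUEproduct}, plus the observation that it preserves $\infty$ because each $q_{i}$ does). No changes needed.
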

\begin {proof}
This follows immediately from Proposition \ref {PropUEproduct} and Corollary \ref{CorProduct}.
\end {proof}

The (finite) coproduct of commutative binoids will be described in Section \ref{SecSmashProduct}. The following statement is a Cayley-type embedding theorem for binoids.

\begin {Lemma} \label{LemMappingRealization}
Given a binoid $M$, there exists a binoid embedding 
$$M\Rto\map_{\infty}M\komma\quad x\lto(t_{x}:y\mto x+y)\pkt$$
\end {Lemma}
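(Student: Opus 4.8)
The plan is to verify directly from the definitions that the prescribed assignment is a well-defined, injective binoid homomorphism; every step is a one-line unwinding of the axioms, so the proof is short. Write $\Phi\colon M\Rto\map_{\infty}M$ for the map $x\lto t_{x}$, where $t_{x}(y)=x+y$, and recall that $\map_{\infty}M$ carries the binoid structure $(\map_{\infty}M,\circ,\id_{M},c_{\infty})$ with $c_{\infty}$ the constant map $y\mto\infty$.

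First I would check that $\Phi$ actually lands in $\map_{\infty}M$, i.e.\ that each $t_{x}$ is a pointed map fixing $\infty$. This is immediate from the absorbing property of $\infty$: $t_{x}(\infty)=x+\infty=\infty$, so $t_{x}\in\map_{\infty}M$ for every $x\in M$. Next I would show that $\Phi$ respects the operation, the identity, and the absorbing element. The key identity is $t_{x_{1}+x_{2}}=t_{x_{1}}\circ t_{x_{2}}$, which comes out of associativity of $+$: for all $y\in M$,
$$(t_{x_{1}}\circ t_{x_{2}})(y)=t_{x_{1}}(x_{2}+y)=x_{1}+(x_{2}+y)=(x_{1}+x_{2})+y=t_{x_{1}+x_{2}}(y)\pkt$$
For the distinguished elements one has $t_{0}=\id_{M}$, since $t_{0}(y)=0+y=y$, and $t_{\infty}=c_{\infty}$, since $t_{\infty}(y)=\infty+y=\infty$ for all $y$. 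Thus $\Phi$ is a binoid homomorphism.

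Finally I would establish injectivity, which makes $\Phi$ an embedding: if $t_{x_{1}}=t_{x_{2}}$, then evaluating both sides at $0$ gives $x_{1}=t_{x_{1}}(0)=t_{x_{2}}(0)=x_{2}$. The only point that requires any attention is the bookkeeping of the composition order in the homomorphism identity, since $M$ need not be commutative; with the convention $(f\circ g)(y)=f(g(y))$ the translations compose in the order that matches $t_{x_{1}+x_{2}}$, so no transpose or opposite-binoid correction is needed. Everything else is a routine check, so I expect no genuine obstacle beyond this ordering subtlety.
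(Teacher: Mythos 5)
Your proof is correct and follows essentially the same route as the paper's: the homomorphism identity $t_{x_{1}}\circ t_{x_{2}}=t_{x_{1}+x_{2}}$ via associativity, the checks $t_{0}=\id_{M}$ and $t_{\infty}=c_{\infty}$, and injectivity by evaluating at $0$. Your extra remark on the composition order in the non-commutative case is a sensible precaution but adds nothing beyond what the paper's computation already encodes.
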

\begin {proof}
This map is a binoid homomorphism because $(t_{x}\circ t_{x^{\prime}})(y)=x+x^{\prime}+y=t_{x+x^{\prime}}(y)$ for all $x,y\in M$, $0\mto t_{0}=\id_{M}$, and $\infty\mto (t_{\infty}:y\mto\infty)$. The injectivity follows from the simple fact that $t_{x}=t_{x^{\prime}}$ is equivalent to $x+y=x^{\prime}+y$ for all $y\in M$, which implies $x=x^{\prime}$ for $y=0$.
\end {proof}

\bigskip

\section {Generators} \label{SecGenerators}
\markright{\ref{SecGenerators} Generators}

Now we consider generators of binoids and introduce free, free commutative, and semifree binoids as well as binoids defined by generators and relations.  Later, cf.\ Section \ref{SecFGbinoids}, finitely generated commutative binoids will be treated in more detail.

\begin {Definition} \label{DefFG}
Let $M$ be a binoid and $A\subset M$ a subset. Since the intersection of a family of subbinoids of $M$ is a subbinoid, there exists a smallest subbinoid of $M$ containing the set $A$ which will be called the binoid \gesperrt{generated} by $A$ and denoted by\nomenclature[AGenerate1]{$\langle A\rangle$}{subbinoid generated by the subset $A\subseteq M$} 
$$\langle A\rangle\pkt$$
If $M=\langle A\rangle$, we say $M$ is \gesperrt{generated} \index{binoid!-- generated by}by $A$ and call $A$ a \gesperrt{generating set} \index{binoid!generating set of a --} and its elements \gesperrt{generators} \index{binoid!generators of a --}of $M$. In this case, $A$ is called a \gesperrt {minimal generating set} \index{binoid!minimal generating set of a --}of $M$ if no proper subset of $A$ generates $M$. A binoid is \gesperrt{finitely generated} \index{binoid!finitely generated --}if it is generated by a finite subset. A finitely generated binoid that admits a (minimal) generating subset with $n$ elements and all other generating subsets consist of $\ge n$ elements is called \gesperrt{$n$-generated}. A \gesperrt{finite} \index{binoid!finite --}binoid consists of finitely many elements only. These definitions apply to a semibinoid $S$ and a subset $A\subseteq S$ if they hold for the binoid $S\onull$ and $A$.\index{semibinoid!-- generated by}\index{semibinoid!generators of a --}\index{semibinoid!generating set of a --}\index{semibinoid!finitely generated --}\index{semibinoid!finite --}
\end {Definition}

A generating set $A$ of a binoid $M$ generates $M$ as a monoid if and only if there are elements $a,b\in M\opkt$ with $a+b=\infty$ (this property will be called non-integral later). Otherwise, $A\cup\zero$ generates $M$ as a monoid. In particular, a binoid is finitely generated if and only if it is finitely generated as a monoid.

\begin {Example}\label {ExBinMonGenerators}
 \begin {ListeTheorem}
\item[]
\item The zero binoid $\zero$ is finitely generated by $\emptyset$ as a monoid and as a binoid. The trivial binoid $\trivial$ is generated as a binoid by $\emptyset$, but its monoid generator is $\infty$.
\item $\N^{\infty}=\langle1\rangle$ as a binoid but the monoid generators of $\N^{\infty}$ are $1$ and $\infty$. In general, $(\N^{n})^{\infty}$, $n\ge 1$, is finitely generated as a binoid by the elements\nomenclature[EA1]{$e_{i}$}{$=(0\kpkt 0,1,0\kpkt 0)$, where $1$ is the $i$th entry} 
$$e_{i}:=(0\kpkt 0,1,0\kpkt0)\komma$$
$i\in\{1\kpkt n\}$, where $1$ is the $i$th entry, and as a monoid by $(\infty\kpkt\infty)$ and $e_{i}$, $i\in\{1\kpkt n\}$.
\item If $(M_{i})_{i\in I}$ is a finite family of binoids and $A_{i}\subseteq M_{i}$ a generating set of $M_{i}$, $i\in I$, then $\prod_{i\in I}M_{i}$ is generated by\nomenclature[EA3]{$e_{i,\infty}$}{$=(0\kpkt0,\infty,0\kpkt 0)$, where $\infty$ is the $i$th entry} 
\nomenclature[EA2]{$ae_{i}$}{$=(0\kpkt0,a,0\kpkt 0)$, where $a$ is the $i$th entry}  
$$e_{i,\infty}:=(0\kpkt 0,\infty, 0 \kpkt0)\quad\text{and}\quad ae_{i}:=(0\kpkt 0,a,0\kpkt0)\komma$$ 
$a\in A_{i}$, $i\in I$, where $\infty$ and $a$ are the $i$th component of $e_{i,\infty}$ and $ae_{i}$, respectively. For instance, $(\N^{\infty})^{n}$, $n\ge 1$, is generated as a monoid and as a binoid by the elements $e_{i}$ and $e_{i,\infty}$, $i\in I$.
\item Subbinoids of a finitely generated binoid need not be finitely generated. For instance, the subbinoids $((\N_{\ge1}^{\infty})^{n})\onull\subseteq(\N^{\infty})^{n}$ and $((\N_{\ge1})^{n})^{\mathbf{0},\infty}\subseteq(\N^{n})^{\infty}$ are not finitely generated for $n\ge2$ since every generating set has to contain the elements $(n,1\kpkt 1)$, $n\ge1$, for instance.\nomenclature[N]{$\N_{\ge d}$}{$=\{n\in\N\mid n\ge d\}$}
\item Let $n,m\in\N$. The operation $n\diamond m:=\lcm(n,m)$ on $\N$ yields a commutative binoid $(\N,\diamond,1,0)$ that is not finitely generated since every generating set has to contain all powers of prime numbers.
\item If a topology $\Tcal$ on $V$ admits a basis $\Bscr$, then $\Bscr$ generates the binoid $(\Tcal,\cup,V,\emptyset)$ if and only if $\Tcal$ is a quasi-compact topology on $V$; that is, when every open covering admits a finite one. For instance, the binoid $\Pset(V)_{\cap}$ defined by the discrete topology on $V$ is finitely generated if and only if $V$ is finite.
\end {ListeTheorem}
\end {Example}

\begin {Lemma}
A commutative binoid is finite if and only if it is finitely generated and every one-generated subbinoid is finite.
\end {Lemma}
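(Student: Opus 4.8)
The plan is to prove both implications, of which the forward one is essentially trivial and the converse carries all the content. If $M$ is finite, then it is generated by the finite set $M$ itself, and since any subbinoid is in particular a subset of $M$, every subbinoid---hence every one-generated one---is finite. So I would dispose of this direction in a single line and concentrate entirely on the converse.

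For the converse, I would assume $M=\langle a_1\kpkt a_n\rangle$ is finitely generated and that each one-generated subbinoid is finite. Since $M$ is commutative, the subbinoid generated by $a_1\kpkt a_n$ consists exactly of $\infty$ together with all finite sums $k_1a_1\pluspkt k_na_n$ with $(k_1\kpkt k_n)\in\N^n$, the element $0$ arising from the empty sum. The first key step is therefore to control, for each fixed $i$, the set of multiples $\{ka_i\mid k\in\N\}$. By the description of one-generated subbinoids recalled earlier, $\langle a_i\rangle=\{\infty\}\cup\{ka_i\mid k\in\N\}$, and this is finite by hypothesis; hence $\{ka_i\mid k\in\N\}$ is a finite subset of $M$, say of cardinality $N_i$. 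Finiteness of this set of multiples forces the sequence $0,a_i,2a_i,\ldots$ to be eventually periodic, so that in fact $\{ka_i\mid k\in\N\}=\{ka_i\mid 0\le k<N_i\}$.

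The second step is a reduction-and-counting argument. Every element of $M$ is either $\infty$ or a sum $x=k_1a_1\pluspkt k_na_n$ with $k_i\in\N$. For such an $x$, each summand $k_ia_i$ lies in $\{ka_i\mid 0\le k<N_i\}$, so there is some $k_i'$ with $0\le k_i'<N_i$ and $k_ia_i=k_i'a_i$ in $M$. Adding these $n$ identities---here I use commutativity and associativity of the binoid operation to combine equalities of single summands into an equality of the full sums---yields $x=k_1'a_1\pluspkt k_n'a_n$ with $0\le k_i'<N_i$. Consequently every element of $M$ is $\infty$ or one of the at most $N_1\cdots N_n$ reduced sums, and $M$ is finite.

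The step I expect to require the most care is the passage from finiteness of each $\langle a_i\rangle$ to this global bound: one must be sure that the reduction $k_ia_i=k_i'a_i$, which a priori is a statement inside the one-generated subbinoid $\langle a_i\rangle$, is a genuine equality in $M$ and may be added across the different generators. Both points hold because a subbinoid carries the restriction of the ambient operation and that operation is commutative; but it is exactly here that commutativity is essential, since without it the multiples of distinct generators could not be freely collected and the counting bound would break down.
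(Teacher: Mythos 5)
Your proposal is correct and is essentially the paper's argument in unpacked form: the paper constructs the canonical epimorphism $\prod_{i=1}^{n}\langle a_{i}\rangle\rto M$, $(k_{1}a_{1}\kpkt k_{n}a_{n})\mto\sum_{i}k_{i}a_{i}$, and concludes from finiteness of each factor, while your explicit reduction of each exponent to $k_i'<N_i$ is exactly what makes that map a surjection from a finite set. The extra care you take about eventual periodicity and about adding the identities $k_ia_i=k_i'a_i$ across generators is sound but amounts to the same counting bound $\#M\le N_1\cdots N_n+1$.
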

\begin {proof}
Only the if part of the statement is not trivial. Every finitely generated commutative binoid $M=\langle x_{1}\kpkt x_{r}\rangle$, $r\in\N$, gives rise to a canonical binoid epimorphism
$$\prod_{i=1}^{r}\langle x_{i}\rangle\Rto M\quad\text{with}\quad\text (n_{1}x_{1}\kpkt n_{r}x_{r})\lto\sum_{i=1}^{r}n_{i}x_{i}\pkt$$
The product is finite since all its components are finite by assumption. Now the finiteness of $M$ follows from the surjectivity.
\end {proof}

If $M$ is a binoid generated by the (not necessarily finite) set $A$, every element $f\in M\opkt$ can be written as a finite sum of the generators. In case $M$ is commutative, we have the following notation
$$f=\sum_{a\in A}n_{a}a$$
with $n_{a}\in\N$ and $n_{a}=0$ for almost all $a\in A$. Of course, this expression need not be unique.

\begin {Definition}
Let $V$ be an arbitrary set of elements. Let $M(V)$ denote the free monoid consisting of all finite sums of elements in $V$ with addition given by
$$(x_{1}\pluspkt x_{n})+(y_{1}\pluspkt y_{m})\,:=\, x_{1}\pluspkt x_{n}+y_{1}\pluspkt y_{m}\komma$$
$x_{i},y_{j}\in V$, $i\in\{1\kpkt n\}$, $j\in\{1\kpkt m\}$, and the sum over the empty set ($:=0$) as identity element. The binoid $M(V)^{\infty}=:\freenc(V)$\nomenclature[Free]{$\freenc(V)$}{free binoid on $V$} is called the \gesperrt{free binoid} \index{binoid!free --}on $V$.
\end {Definition}

\begin {Lemma}
Every element $\not=\infty$ of $\freenc(V)$ can be written uniquely as a sum of elements of $V$.
\end {Lemma}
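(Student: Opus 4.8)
The claim concerns only the elements of $M(V) = \freenc(V) \setminus \{\infty\}$, since $\infty$ is the single element adjoined in passing from $M(V)$ to $M(V)^{\infty}$. Existence of a representation is immediate from the definition: every element of $M(V)$ is, by construction, a finite sum $x_1 \pluspkt x_n$ of elements of $V$, the empty sum (the case $n=0$) being the identity $0$. The content of the lemma is therefore the \emph{uniqueness} of such an expression.

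The plan is to realize $M(V)$ concretely as the monoid of finite words over the alphabet $V$ and to read off uniqueness from this model. Explicitly, I would identify $M(V)$ with the set $\bigcup_{n \ge 0} V^{n}$ of finite tuples, equipped with concatenation as addition and the empty tuple as identity $0$; under the identification sending $v \in V$ to the length-one tuple $(v)$, the generic element becomes the formal sum $x_1 \pluspkt x_n$. What must be justified here is that this concrete monoid really is the free monoid of the definition, i.e.\ that it carries no relations beyond associativity and the identity axiom. This is exactly the freeness (universal property) of $M(V)$, and the concatenation monoid manifestly enjoys it.

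Granting the model, uniqueness splits into two steps. First I would fix the number of summands: the constant map $V \rto \N$, $v \mto 1$, extends to a unique monoid homomorphism $\varepsilon : M(V) \Rto (\N,+,0)$ with $\varepsilon(x_1 \pluspkt x_n) = n$, so two equal sums necessarily have the same length $n = m$. Second, once the lengths agree, an equality $x_1 \pluspkt x_n = y_1 \pluspkt y_n$ is literally an equality of tuples $(x_1 \kpkt x_n) = (y_1 \kpkt y_n)$ in $\bigcup_{n \ge 0} V^{n}$, which forces $x_i = y_i$ for every $i$. Combining the two steps yields the asserted uniqueness.

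The argument is essentially formal, so I do not expect a hard technical core; the only point requiring care is conceptual, namely pinning down that ``the free monoid consisting of all finite sums'' genuinely means the relation-free word monoid rather than some quotient of it. I would settle this by invoking the universal property of $M(V)$, which simultaneously legitimizes the augmentation $\varepsilon$ used for the length and guarantees that distinct words remain distinct, thereby closing the uniqueness argument.
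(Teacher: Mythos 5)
Your proof is correct and amounts to the same observation the paper makes: the free monoid $M(V)$ is by definition the word monoid on $V$, so an element $x_{1}\pluspkt x_{n}$ \emph{is} the tuple $(x_{1}\kpkt x_{n})$ and uniqueness is built into the construction (the paper's proof is the one-liner $x_{1}\pluspkt x_{n}=(x_{1})\pluspkt(x_{n})$). Your additional length homomorphism $\varepsilon:M(V)\rto\N$ is harmless but not needed once the identification with tuples is in place.
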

\begin {proof}
This is clear since $x_{1}\pluspkt x_{n}=(x_{1})\pluspkt (x_{n})$.
\end {proof}

Obviously, $\freenc(V)$ is commutative if and only if $V=\{x\}$ is a singleton. In a commutative binoid with more than one generator, one cannot expect unique expression of elements.

\begin {Lemma} \label {LemFfrakEpsilon}
Let $M$ be a binoid. Every subset $A=\{a_{i}\mid i\in I\}\subseteq M$ gives rise to a unique binoid homomorphism 
$$\varepsilon:\freenc(I)\Rto M\komma\quad i\lto a_{i}\komma$$
$i\in I$, which is surjective if and only if $A$ generates $M$.
\end {Lemma}
\begin {proof}
This is easily verified.
\end {proof}

The preceding lemma implies, in particular, that every generating set $A=\{a_{i}\mid i\in I\}$ of a binoid $M$ yields a canonical binoid epimorphism $\varepsilon:\freenc(I)\rto M$ with $i\mto a_{i}$, $i\in I$.

\begin {Definition}
Let $M$ be a commutative binoid. We say $M$ is a \gesperrt{free commutative} \index{binoid!free commutative --}binoid if there exists a binoid isomorphism $\varepsilon:(\N^{(I)})^{\infty}\rto M$ for some (possibly infinite) set $I$. The family $(\varepsilon(e_{i}))_{i\in I}$ of elements in $M$ is called a \gesperrt{basis} \index{basis}\index{binoid!basis of a --}of $M$. The free commutative binoid with basis $V$ will be denoted by $\free(V)$ or by $\free_{n}$ if $V=\{1\kpkt n\}$\nomenclature[FreeC]{$\free(V),\free_{n}$}{free commutative binoid on $V$ and $\{1\kpkt n\}$}.
\end {Definition}

\begin {Lemma}
Every element $f\not=\infty$ of a finitely generated free commutative binoid with basis $(x_{i})_{i\in I}$ admits a unique expression as $f=\sum_{i\in I}n_{i}x_{i}$ with $n_{i}=0$ for almost all $i\in I$.
\end {Lemma}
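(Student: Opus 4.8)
The plan is to transport the statement through the defining isomorphism of a free commutative binoid and reduce everything to the evident uniqueness of coordinate representation in $\N^{(I)}$. By definition, $M$ being free commutative means there is a binoid isomorphism $\varepsilon:(\N^{(I)})^{\infty}\rto M$ with $\varepsilon(e_{i})=x_{i}$ for all $i\in I$; since $M$ is finitely generated, $I$ is finite, though this plays no essential role. The element $\infty$ of $(\N^{(I)})^{\infty}$ is the adjoined absorbing element, and because $\varepsilon$ is a binoid homomorphism it satisfies $\varepsilon(\infty)=\infty$.

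First I would establish existence. Given $f\neq\infty$ in $M$, I apply the inverse isomorphism to obtain $\varepsilon^{-1}(f)\in(\N^{(I)})^{\infty}$. Since $\varepsilon^{-1}(\infty)=\infty$ and $\varepsilon^{-1}$ is injective, $f\neq\infty$ forces $\varepsilon^{-1}(f)\neq\infty$, so $\varepsilon^{-1}(f)=(n_{i})_{i\in I}\in\N^{(I)}$ with $n_{i}=0$ for almost all $i$. In the direct sum $\N^{(I)}$ one has $(n_{i})_{i\in I}=\sum_{i\in I}n_{i}e_{i}$, a finite sum. Applying $\varepsilon$ and using additivity (a binoid homomorphism is in particular additive), I get
$$f\,=\,\varepsilon\Bigl(\sum_{i\in I}n_{i}e_{i}\Bigr)\,=\,\sum_{i\in I}n_{i}\varepsilon(e_{i})\,=\,\sum_{i\in I}n_{i}x_{i}\komma$$
which is the desired expression.

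For uniqueness, suppose $f=\sum_{i\in I}n_{i}x_{i}=\sum_{i\in I}m_{i}x_{i}$ with $n_{i},m_{i}=0$ for almost all $i$. Applying $\varepsilon^{-1}$ and using additivity together with $\varepsilon^{-1}(x_{i})=e_{i}$ yields $\sum_{i\in I}n_{i}e_{i}=\sum_{i\in I}m_{i}e_{i}$ in $\N^{(I)}$. Comparing the $i$th coordinate gives $n_{i}=m_{i}$ for every $i\in I$. There is no real obstacle here: the entire content is the uniqueness of coordinates in the free commutative monoid $\N^{(I)}$, which is immediate, and the only point requiring a word of care is that $\varepsilon$ and $\varepsilon^{-1}$ preserve both the additive structure and the absorbing element, so that the restriction $\varepsilon|_{\N^{(I)}}\colon\N^{(I)}\rto M\opkt$ is the bijection through which the standard basis representation is carried over.
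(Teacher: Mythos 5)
Your proof is correct and follows essentially the same route as the paper, which simply notes that the statement holds for $(\N^{(I)})^{\infty}$ and transports it along the defining isomorphism $\varepsilon$. You have merely made explicit the details (that $\varepsilon$ preserves $\infty$ and addition, and that coordinates in $\N^{(I)}$ are unique) that the paper leaves implicit.
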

\begin {proof}
This follows from the fact that every element $a\in(\N^{(I)})^{\infty}$ has this property for $I$ finite.
\end {proof}

\begin {Example}
The commutative binoid $\Z^{\infty}$ is not free commutative because there are non-trivial invertible elements, which yield non-unique expressions of $0\in\Z^{\infty}$. The canonical binoid epimorphism $\varphi:(\N^{2})^{\infty}\rto\Z^{\infty}$ with $(1,0)\mto 1$ and $(0,1)\mto -1$ is not injective because  $\varphi^{-1}(0)=\{(n,n)\mid n\in\Z\}$ for instance. See also Example \ref{ExpSemifree}(2).
\end {Example}

Lemma \ref{LemFfrakEpsilon} carries over to free commutative binoids.

\begin {Lemma} \label {LemHomFreeCom}
Given a family $(a_{i})_{i\in I}$ of commuting elements in a binoid $M$, i.e.\ $a_{i}+a_{j}=a_{j}+a_{i}$ for all $i,j\in I$, there exists a unique binoid homomorphism 
$$\varepsilon:\free(I)\Rto M\komma\quad i\lto a_{i}\komma$$
$i\in I$, which is surjective if and only if $\{a_{i}\mid i\in I\}$ generates $M$.
\end {Lemma}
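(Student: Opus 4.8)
The plan is to mirror the proof strategy of Lemma~\ref{LemFfrakEpsilon} exactly, since the free commutative binoid $\free(I)=(\N^{(I)})^{\infty}$ plays the same universal role for commuting families that $\freenc(I)$ plays for arbitrary families. First I would construct the map $\varepsilon$ explicitly on elements. Every $f\not=\infty$ in $\free(I)$ has, by the preceding Lemma, a unique expression $f=\sum_{i\in I}n_{i}i$ with $n_{i}\in\N$ and $n_{i}=0$ for almost all $i\in I$, so I define
$$\varepsilon(f)\,:=\,\sum_{i\in I}n_{i}a_{i}\quad\text{and}\quad\varepsilon(\infty)\,:=\,\infty_{M}\pkt$$
This sum is a finite sum in $M$, hence well-defined, and because the $a_{i}$ commute pairwise the order of summation is irrelevant, so $\varepsilon$ is genuinely a map into $M$.

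Next I would verify that $\varepsilon$ is a binoid homomorphism. It sends the identity $0=\sum_i 0\cdot i$ to $\sum_i 0\cdot a_i = 0_M$ (using the convention that the empty sum is $0$), and it sends $\infty$ to $\infty_M$ by definition. For additivity, given $f=\sum_i n_i i$ and $g=\sum_i m_i i$ with both $\not=\infty$, their sum in $\free(I)$ is $\sum_i(n_i+m_i)i$, and
$$\varepsilon(f+g)=\sum_i(n_i+m_i)a_i=\sum_i n_i a_i+\sum_i m_i a_i=\varepsilon(f)+\varepsilon(g)\komma$$
where again commutativity of the $a_i$ lets me regroup the finite sum; the cases where $f$ or $g$ equals $\infty$ are immediate since $\infty$ is absorbing in $\free(I)$ and $\infty_M$ is absorbing in $M$.

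For uniqueness I would note that any binoid homomorphism $\varepsilon':\free(I)\rto M$ with $\varepsilon'(i)=a_i$ must, by the homomorphism property, satisfy $\varepsilon'(\sum_i n_i i)=\sum_i n_i a_i$ on every element $\not=\infty$ and $\varepsilon'(\infty)=\infty_M$, so it agrees with $\varepsilon$ everywhere. Finally, the surjectivity claim follows because the image $\im\varepsilon$ is the subbinoid of $M$ generated by $\{a_i\mid i\in I\}$: an element lies in $\im\varepsilon$ precisely when it is $\infty_M$ or a finite $\N$-combination of the $a_i$, which is exactly $\langle\{a_i\mid i\in I\}\rangle$ by the description of generated subbinoids. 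Hence $\varepsilon$ is surjective if and only if $\{a_i\mid i\in I\}$ generates $M$. I expect no serious obstacle here; the only point requiring genuine care is ensuring that the commutativity hypothesis on the family $(a_i)_{i\in I}$ is used exactly where needed, namely to make the finite sums in $M$ independent of ordering so that additivity holds even though $M$ itself need not be commutative.
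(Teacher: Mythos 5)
Your proof is correct and is exactly the routine verification the paper leaves to the reader (its proof of Lemma~\ref{LemHomFreeCom} reads only ``All statements are easily verified''). You rightly isolate the one point of substance — that pairwise commutativity of the $a_{i}$ makes the finite sums in $M$ order-independent, so $\varepsilon$ is well-defined and additive even when $M$ is not commutative.
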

\begin {proof}
All statements are easily verified.
\end {proof}

In a commutative binoid $M$ every set $\{a_{i}\mid i\in I\}$ of elements in $M$ gives rise to a binoid homomorphism
$$(\N^{(I)})^{\infty}\Rto M\komma\quad e_{i}\lto a_{i}\komma$$
$i\in I$, that is surjective if and only if $\{a_{i}\mid i\in I\}$ generates  $M$. For instance, 
$$\varphi:(\N^{n})^{\infty}\Rto(\N^{\infty})^{n}\komma\quad e_{i}\lto e_{i}\komma$$
$i\in\{1\kpkt n\}$, is not surjective because $\{e_{1}\kpkt e_{n}\}$ is only half of a generating set of $(\N^{\infty})^{n}$, cf.\ Example \ref{ExBinMonGenerators}(3). Note that 
$$\pi:(\N^{\infty})^{n}\Rto(\N^{n})^{\infty}\quad\text{with}\quad e_{i}\lto e_{i}\quad\text{and}\quad e_{i,\infty}\lto\infty\komma$$
$i\in\{1\kpkt n\}$, is an epimorphism such that $\pi\varphi=\id_{(\N^{n})^{\infty}}$.

\begin {Definition}
Let $V$ be an arbitrary set. The binoids that arise from $\freenc(V)$ and $\free(V)$ by taking additional relations $\Rcal_{i}$, $i\in I$, among the elements of $V$ into account will be denoted by
$$\freenc(V)/(\Rcal_{i})_{i\in I}\quad\text{and}\quad\free(V)/(\Rcal_{i})_{i\in I}\pkt$$
\end {Definition}

Here we tacitly assume the reader to be familiar with the characterization of a monoid defined by generators and relations given in the above definition in a little sketchy way. A precise justification of the notation (in the commutative case) can be found in Example \ref{ExGen}.

\begin{Definition} \label{DefRepFree}
Let $M$ be a nonzero commutative binoid. We say $M$ is a \gesperrt{semifree} \index{binoid!semifree --}\index{semifree}binoid with \gesperrt{semibasis} \index{semibasis}$(a_{i})_{i\in I}$ if $M$ is generated by $\{a_{i}\mid i\in I\}$ such that every element $f\in M\opkt$ can be written uniquely as $f=\sum_{i\in I}n_{i}a_{i}$ with $n_{i}=0$ for almost all $i\in I$. Then the set $\{a_{i}\mid n_{i}\not=0\}=:\supp(f)$ is called the \gesperrt{support} \index{element!support of an --}\index{support}of $f$.\nomenclature[Supp]{$\supp(f)$}{support of $f$} A commutative semibinoid $S$ is semifree if the binoid $S\onull$ is semifree.\index{semibinoid!semifree --}
\end{Definition}

Obviously, every finitely generated free commutative binoid is semifree, cf.\ also Corollary \ref {CorFree}, and a semibasis is always a minimal generating set. Semifree binoids represent an important class of commutative binoids, for this see the characterization in Corollary \ref{CorQuotientRepFree}.

\begin {Example}\label{ExpSemifree}
\begin {ListeTheorem}
\item []
\item The binoid $(\N^{(I)})^{\infty}$ is semifree with semibasis $e_{i}$, $i\in I$.
\item The canonical generators $1$ and $-1$ of $\Z^{\infty}$ form no semibasis since
$0=n\cdot 1+n\cdot(-1)$ for all $n\ge1$.
In fact, $\Z^{\infty}$ is not semifree. Every minimal generating set of $\Z^{\infty}$ is given by two integers $n,m\in\Z$ with $n>0$, $m<0$, and $\gcd(n,-m)=1$. Hence, $kn+lm=1$ for some $k,l>0$, which yields $\tilde{k}n+\tilde{l}m=-1$ for some $\tilde{k},\tilde{l}>0$ by adding $mn-nm=0$ sufficiently often to $-kn-lm=-1$. By applying this to the equations above, we obtain a non-unique expression of $0$ in terms of $n$ and $m$. See also Lemma \ref{LemPropSemifree}.
\item The semibasis of the free commutative binoid $(\N,\cdot,1,0)$ is given by $\{p\mid p\in\N$ prime$\}$ by the fundamental theorem of arithmetic, cf.\ \cite[Hauptsatz 10.1]{SchejaStorch}.
\item A commutative binoid with a non-trivial idempotent (see below, Section  \ref{SecAdditivityProp}) is never semifree. For instance, $\Pset(V)_{\cup}$ is not semifree since $A=A\cup A=A\cup A\cup A=\cdots$ for every $A\subseteq V$. However, the singletons $\{v\}$, $v\in V$, $V$ finite, generate $\Pset(V)_{\cup}$ and every set $A\subseteq V$ is uniquely given as $A=\cup_{v\in A}\{v\}$. Thus, $\Pset(V)_{\cup}$ can be considered as semifree \emph{up to idempotence}. In Section  \ref{SecSimplCompl}, we will encounter these kind of binoids.

\item The generating set $A=\{1/p_{1}^{n_{1}}\cdots p_{r}^{n_{r}}\mid p_{1}\kpkt p_{r}\in\N$ prime, $n_{1}\kpkt n_{r}\ge1,r\in\N\}$ of $(\Q_{\ge0}^{\infty},+,0,\infty)$ is no semibasis because, for instance, $\frac{1}{2}=\frac{1}{4}+\frac{1}{4}=\frac{1}{8}+\frac{1}{8}+\frac{1}{8}+\frac{1}{8}=\ldots$. Since this sequence is infinite, one cannot deduce a semibasis from $A$ by omitting elements.
\item The binoid $\free(x_{1},x_{2})/(x_{1}+x_{2}=\infty)$ is semifree because every element $\not=\infty$ can be written uniquely as $nx_{1}$ or $mx_{2}$ for certain $n,m\ge0$. This example shows that Lemma \ref{LemHomFreeCom} need not be true for semifree binoids since there exists, for instance, no binoid homomorphism $$\free(x_{1},x_{2})/(x_{1}+x_{2}=\infty)\Rto\free(y_{1},y_{2})$$ 
with $x_{i}\mto y_{i}$, $i\in\{1,2\}$.
\item Another interesting class of semifree binoids are simplicial binoids, which will be introduced in Section \ref{SecSimplBinos}.
\end {ListeTheorem}
\end {Example}

\bigskip

\section {Additivity properties}  \label{SecAdditivityProp}
\markright{\ref{SecAdditivityProp} Additivity properties}

This section deals with additivity properties of binoids. Before we recall classical concepts that originate from monoid theory, we point out those properties of a binoid which have no counterpart when there is no absorbing element, namely non-integrity and nilpotence. Concerning those properties that translate directly from monoids to binoids we follow essentially \cite{Gilmer}. Then the additively closed subobjects of a binoid, its filters, are introduced. These are very useful when we are missing the theory of (prime) ideals of a binoid. The connection will be shown in Chapter \ref {ChapIdealTheory}. We close this section with the behavior of additive properties under homomorphisms. 

\medskip

The phenomena that may occur if an absorbing element exists are described in the following definitions.

\begin {Definition}
Let $M$ be a binoid (or semibinoid). An element $a\in M$ is called \gesperrt{nilpotent} \index{element!nilpotent --}\index{nilpotent}if $na=a\pluspkt a=\infty$ for some $n\ge 1$. The set of all nilpotent elements will be denoted by $\nil(M)$. We say $M$ is \gesperrt{reduced} \index{semibinoid!reduced --}\index{binoid!reduced --}\index{reduced}if $\nil(M)=\zero$\nomenclature[Nilpotent]{$\nil(M)$}{set of all nilpotent elements in $M$}, and $M$ is \gesperrt{strongly reduced}\index{semibinoid!strongly reduced --}\index{binoid!strongly reduced --}\index{reduced!strongly --} if $a+a+b=\infty$ for $a,b\in M$ implies $a+b=\infty$.
\end {Definition}

\begin {Lemma}\label{LemStrongRed=Red}
\begin {ListeTheorem}
\item[]
\item A strongly reduced binoid is reduced.
\item A commutative binoid is strongly reduced if and only if it is reduced.
\end {ListeTheorem}
\end {Lemma}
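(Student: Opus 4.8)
The plan is to observe that both assertions reduce to two implications: \emph{strongly reduced} $\Rightarrow$ \emph{reduced} (this is (1), and it holds for arbitrary binoids), and, in the commutative case, the converse \emph{reduced} $\Rightarrow$ \emph{strongly reduced}. Since (1) already supplies the forward direction of (2), the only additional work for (2) is this converse. I would therefore prove (1) first in full generality and then treat the commutative converse separately.

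For (1), suppose $M$ is strongly reduced and let $a\in\nil(M)$; I want to conclude $a=\infty$. I would choose $n\ge 1$ minimal with $na=\infty$. If $n=1$ there is nothing to prove. If $n\ge 2$, I would write $na=a+a+(n-2)a$, which is legitimate by associativity alone (no commutativity needed), and set $b:=(n-2)a$, so that $b=0$ when $n=2$ by the convention $0a=0$. Then $a+a+b=\infty$, and strong reducedness yields $a+b=\infty$, that is $(n-1)a=\infty$. This contradicts the minimality of $n$, so $n=1$ and $a=\infty$. Hence $\nil(M)=\zero$ and $M$ is reduced.

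For the converse in (2), let $M$ be commutative and reduced, and suppose $a+a+b=\infty$; I must show $a+b=\infty$. The key observation is that $a+b$ is then nilpotent: using commutativity, $2(a+b)=2a+2b=(2a+b)+b=\infty+b=\infty$, where the last step uses that $\infty$ is absorbing. Since $M$ is reduced, the only nilpotent element is $\infty$, so $a+b=\infty$, as required. Combined with (1), this gives the equivalence asserted in (2).

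The argument is essentially routine; the only point requiring a little care is the bookkeeping in (1), namely handling the boundary case $n=2$ correctly via $0a=0$ and noting that the regrouping $na=a+a+(n-2)a$ uses only associativity, so that (1) is genuinely valid for non-commutative binoids. I expect no serious obstacle beyond this.
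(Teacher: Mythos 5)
Your proof is correct and follows essentially the same route as the paper: part (1) rests on the descent $na=\infty\Rightarrow(n-1)a=\infty$ obtained by grouping $na=a+a+(n-2)a$ (the paper iterates this down to $2a$, you phrase it as a minimal-counterexample argument, which is the same computation), and part (2) uses the identical observation that adding $b$ to $a+a+b=\infty$ gives $2(a+b)=\infty$ by commutativity. No gaps.
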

\begin {proof}
(1) If $na=\infty$ for some $a$ and $n\ge2$ in a strongly reduced binoid, then by applying successively
$$\infty=na=a+a+(n-2)a=a+(n-2)a=(n-1)a\komma$$
we obtain $\infty=2a=a+a+0$, which yields $\infty=a+0=a$.
(2) Any equation $a+a+b=\infty$ in a commtative binoid yields $2(a+b)=\infty$ by adding $b$. Hence, $a+b=\infty$ if the binoid is reduced.
\end {proof}

\begin {Definition}
Let $M\not=\zero$ be a binoid (or semibinoid). An \gesperrt{integral} \index{element!integral --}\index{integral}element $a\in M\opkt$ satisfies the property that $a+b=\infty$ or $b+a=\infty$ implies $b=\infty$. The set of all integral elements will be denoted by $\opint(M)$ \nomenclature[Integral]{$\opint(M)$}{set of all integral elements in $M$}and the complement $M\setminus\opint(M)$ by $\nonint(M)$\nomenclature[Integral]{$\nonint(M)$}{set of all non-integral elements in $M$}. We say $M$ is \gesperrt{integral} \index{binoid!integral --}\index{semibinoid!integral --}\index{integral}if $M\opkt$ is a submonoid (resp.\ subsemigroup) of $M$; that is, $M\opkt$ consists only of integral elements.
\end {Definition}

Every binoid that emerges from a monoid by adjoining an absorbing element is integral, and all integral binoids are of this type.

\begin {Example} \label{ExpIntegrity}
\begin {ListeTheorem}
\item[]
\item By definition, $(\N^{n})^{\infty}$ is an integral binoid and $(\Z^{n})^{\infty}$ is a binoid group (see below). On the other hand, the binoids $(\N^{\infty})^{n}$ and $(\Z^{\infty})^{n}$, $n\ge 2$, are obviously not integral.
\item The notion of integrity for a ring $R$ and for its underlying binoid $(R,\cdot,1,0)$ coincides; that is, the non-integral elements are precisely the zero-divisors of $R$. Therefore, $\nonint(R)=\{0\}$ if and only if $R$ is an (integral) domain.
\item A binoid $\Tcal_{\cap}$ defined by a topology on $V$ is integral if and only if $V$ is irreducible with respect to $\Tcal$; \index{irreducible}\index{space!irreducible --}that is, $V\not=\emptyset$ and $U\cap V\not=\emptyset$ for arbitrary nonempty open subsets $U,V\subseteq \Tcal$. 

For instance, the binoid defined by the Zariski topology on $\A_{K}^{n}$ and $\A^{n}(K)$ for $n\ge 1$ and $K$ an algebraically closed field, is integral as well as the binoids given by the subspace topology on an (affine) variety $V\subseteq\A_{K}^{n}$, cf.\ \cite[Chapter I.1]{Hartshorne}.
\item The binoid $\Pset(V)_{\cap}$ is never integral for $\#V\ge 2$ since $A\cap A^{\opc}=\emptyset$ for every subset $A\subset V$. Similarly, $\Pset(V)_{\cup}$ with $\#V\ge 2$ is never integral. From the criterion given in (3) we can easily derive non-trivial integral subbinoids of 
$\Pset(V)_{\cap}$ for $\#V\ge 2$, like $\langle \{Y,x_{1}\}\kpkt \{Y,x_{k}\}\rangle\subseteq\Pset(V)_{\cap}$, where $\emptyset\not=Y\subsetneq V$ such that $V\setminus Y=\{x_{1}\kpkt x_{k}\}$ with $k\ge 1$.
\item The set of integral and non-integral elements in the product of a family $(M_{i})_{i\in I}$ of binoids are given by
$$\opint\Big(\prod_{i\in I}M_{i}\Big)\,\,=\,\,\prod_{i\in I}\opint(M_{i})$$
and
$$\nonint\Big(\prod_{i\in I}M_{i}\Big)\,\,=\,\,\{(a_{i})_{i\in I}\!\mid\! a_{k}\!\in\!\nonint(M_{k})\text{ for at least one }k\in I\}\pkt$$
\end {ListeTheorem}
\end {Example}

\begin {Lemma}
Nilpotent elements are not integral; that is, $\nil(M)\subseteq\nonint(M)$ for every nonzero binoid $M$. In particular, an integral binoid is reduced.
\end {Lemma}
\begin {proof}
This follows directly from the definitions.
\end {proof}

\begin {Lemma} \label {LemINT}
The subset $M_{\opint}:=\opint(M)\cup\zero\subseteq M$\nomenclature[MACongruence]{$M_{\opint}$}{$=\opint(M)\cup\zero$} is an integral subbinoid for every nonzero binoid $M$.
\end {Lemma}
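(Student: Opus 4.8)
The plan is to verify the two defining requirements separately: first that $M_{\opint}$ is a subbinoid of $M$ (it contains the identity $0$ and the absorbing element $\infty$ and is closed under addition), and then that it is integral in the sense of the preceding definition, namely that every element of $(M_{\opint})\opkt=\opint(M)$ is an integral element of $M_{\opint}$. Since $M$ is nonzero we have $0\neq\infty$, so $M_{\opint}$ is itself nonzero and the notion of integrality applies to it.

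For the subbinoid property I would first observe that $\infty\in M_{\opint}$ by construction, and that $0$ is integral: if $0+b=\infty$ then $b=\infty$ because $0$ is the identity, so $0\in\opint(M)\subseteq M_{\opint}$. The heart of the argument is closure under addition. Given $a,b\in M_{\opint}$, if either equals $\infty$ then $a+b=\infty\in M_{\opint}$ since $\infty$ is absorbing. Otherwise $a,b\in\opint(M)$, and if $a+b=\infty$ we are again done; so suppose $a+b\neq\infty$ and show it is integral. If $(a+b)+c=\infty$, then associativity gives $a+(b+c)=\infty$, and integrality of $a$ forces $b+c=\infty$, whence integrality of $b$ forces $c=\infty$; the symmetric computation $c+(a+b)=(c+a)+b=\infty$ yields $c+a=\infty$ and then $c=\infty$. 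Hence $a+b\in\opint(M)\subseteq M_{\opint}$, which establishes closure and therefore the subbinoid property.

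For integrality I would note that an element $a\in\opint(M)$ is by definition integral in $M$, i.e.\ $a+b=\infty$ or $b+a=\infty$ forces $b=\infty$ for every $b\in M$; restricting $b$ to the subset $M_{\opint}\subseteq M$ shows that $a$ is a fortiori integral as an element of the smaller binoid $M_{\opint}$. Thus $(M_{\opint})\opkt=\opint(M)$ consists entirely of integral elements of $M_{\opint}$, which is precisely the condition for $M_{\opint}$ to be an integral binoid.

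The computations are routine; the only point requiring care is the closure step, where one must use both associativity and the two-sided formulation of the integrality condition (covering $a+b=\infty$ and $b+a=\infty$ simultaneously) so that the argument remains valid even for non-commutative binoids. Beyond this bookkeeping I expect no genuine obstacle.
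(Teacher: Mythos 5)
Your proof is correct and is precisely the routine verification the paper leaves to the reader (its proof reads only ``This is easily verified''). The closure step, using associativity together with both halves of the two-sided integrality condition so that the argument also covers non-commutative binoids, is exactly the point worth writing out, and you handle it properly.
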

\begin {proof}
This is easily verified.
\end {proof}

\begin {Example}
The binoid $M:=\free(x,y)/(x+y=\infty)$ has no non-trivial integral elements (i.e.\ $M_{\opint}=\trivial$), but two non-trivial integral subbinoids, namely $\langle x\rangle$ and $\langle y\rangle$. 
\end {Example}

Now we translate common properties of monoids (resp.\ semigroups) and their elements to binoids. We start with the notions of units and idempotents which are independent of the existence of an absorbing element.

\begin {Definition}
Let $M$ be a monoid (or nonzero binoid). An element $u$ in $M$ is a \gesperrt{unit} \index{unit}if there exists an element $a\in M$ such that $a+u=u+a=0$. The element $a$ is the unique (additive) \gesperrt {inverse} \index{element!inverse --}\index{inverse}of $u$ and will be denoted by $\minus u$. The set of all units $M\okreuz$\nomenclature[M]{$M\okreuz$}{set of all units in $M$} is a submonoid of $M$ which is a group, the \gesperrt {unit group} \index{binoid!unit group of a --}\index{monoid!unit group of a --}\index{unit!-- group}of $M$. The set of all nonunits $M\setminus M\okreuz$ will be denoted by $M\Uplus$\nomenclature[M]{$M\Uplus$}{set of all nonunits in $M$}. We say $M$ is \gesperrt {positive} \index{binoid!positive --}\index{monoid!positive --}\index{positive}if it has a trivial unit group (i.e.\ $M\setminus\{0\}=M\Uplus$). A \gesperrt{binoid group} \index{binoid!-- group}\index{binoid group}is a binoid $G$ such that $G\opkt=G\okreuz$; that is, $G\opkt$ is a group. 
\end {Definition}

Among many other names for the property of a monoid or binoid to have a trivial unit group \emph{pointed} in combinatorics (\cite{MillerSturmfels}) and \emph{sharp} in geometry (\cite{Ogus}) are also very common, while we follow \cite{BrunsGubeladze}.

We fix the following notation: if $R$ is a ring, then $R^{\infty}$ denotes the (commutative) binoid group that arises by adjoining an absorbing element to $R$ with respect to its \emph{additive} structure. For instance,
$$(\Z^{n})^{\infty}\,=\,(\Z^{n}\cup\{\infty\},+,(0\kpkt0),\infty)\quad\text{and}\quad(\Z/m\Z)^{\infty}\,=\,(\Z/m\Z\cup\{\infty\},+,[0],\infty)\komma$$
where $n\ge1$, and $m\ge2$.\nomenclature[Z1]{$(\Z^{n})^{\infty}$}{$=(\Z^{n}\cup\{\infty\},+,(0\kpkt0),\infty)$, $n\ge1$}\nomenclature[Z2]{$(\Z/n\Z)^{\infty}$}{$=(\Z/n\Z\cup\{\infty\},+,[0],\infty)$, $n\ge2$}

\begin {Example} \label{ExpProdUnits}
If $(M_{i})_{i\in I}$ is a family of nonzero binoids, then $\left(\prod_{i\in I}M_{i}\right)\okreuz=\prod_{i\in I}M_{i}\okreuz$.
\end {Example}

\begin {Lemma}
Every unit is integral; that is, $M\okreuz\subseteq\opint(M)$ for a nonzero binoid $M$.
\end {Lemma}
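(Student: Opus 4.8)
The plan is to use the defining property of a unit — the existence of a two-sided additive inverse — to cancel, turning any relation of the form $u+b=\infty$ into a statement forcing $b=\infty$. Since the binoid need not be commutative, I would treat the two cases $u+b=\infty$ and $b+u=\infty$ separately, cancelling $u$ on the appropriate side.

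First I would check that a unit actually lies in $M\opkt$, so that the integrality condition is even applicable. Suppose $u\in M\okreuz$. Since $M$ is nonzero we have $0\neq\infty$. If $u=\infty$, then for any $a\in M$ the absorbing property gives $u+a=\infty+a=\infty\neq 0$, contradicting that $u$ has an inverse. Hence $u\neq\infty$, i.e.\ $u\in M\opkt$.

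Next comes the cancellation, which is the entire content of the argument. Let $\minus u$ denote the inverse of $u$, so $(\minus u)+u=u+(\minus u)=0$. If $u+b=\infty$ for some $b\in M$, then adding $\minus u$ on the left and using associativity together with the absorbing property of $\infty$ gives
$$
b\,=\,0+b\,=\,\bigl((\minus u)+u\bigr)+b\,=\,(\minus u)+(u+b)\,=\,(\minus u)+\infty\,=\,\infty\pkt
$$
Symmetrically, if $b+u=\infty$, then adding $\minus u$ on the right yields
$$
b\,=\,b+0\,=\,b+\bigl(u+(\minus u)\bigr)\,=\,(b+u)+(\minus u)\,=\,\infty+(\minus u)\,=\,\infty\pkt
$$
In either case $b=\infty$, so $u$ satisfies the defining condition of an integral element, whence $u\in\opint(M)$ and therefore $M\okreuz\subseteq\opint(M)$.

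There is no real obstacle here; the only points requiring any care are the preliminary check that $u\neq\infty$ (which uses that $M$ is nonzero) and the bookkeeping of cancelling on the correct side in the non-commutative setting. Everything else is a direct application of associativity and the absorbing law.
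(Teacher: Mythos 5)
Your proof is correct and is exactly the cancellation argument the paper has in mind when it dismisses this as ``Obvious'': add the inverse of $u$ on the appropriate side, use associativity and the absorbing law, and conclude $b=\infty$. The preliminary check that $u\neq\infty$ (using $0\neq\infty$ in a nonzero binoid) is a nice touch of care, and the two-sided treatment correctly handles the non-commutative case.
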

\begin {proof}
Obvious.
\end {proof}

\begin {Definition} \label{DefIdempotent}
Let $M$ be a semigroup. An element $f\in M$ is called \gesperrt{idempotent} \index{element!idempotent --}\index{idempotent}if $f+f=f$. In a \gesperrt{boolean} \index{semigroup!boolean --}\index{monoid!boolean --}\index{semibinoid!boolean --}\index{binoid!boolean --}\index{boolean}semigroup every element is idempotent. The set of all idempotent elements will be denoted by $\bool(M)$\nomenclature[Boolean]{$\bool(M)$}{set of all idempotent elements in $M$}. A commutative semigroup that is boolean is called a \gesperrt{semilattice}\index{semilattice}.
\end {Definition}

The operation of a semilattice $L$ is usually denoted by $\sqcup$ or $\sqcap$ and $L$ is called a \gesperrt{join-} or \gesperrt{meet-} semilattice, respectively. In either case, the operation gives rise to a partial order $\le$ on $L$ by setting $x\le y:\eq x\sqcup y=y$ or $x\le y:\eq x\sqcap y=x$.

\begin {Example}
\begin {ListeTheorem}
\item []
\item The identity element and the absorbing element are always idempotent elements. In particular, the set of idempotents of a commutative binoid $M$ is a subbinoid since $2(a+b)=2a+2b=a+b$ for idempotent elements $a, b\in M$. More precisely, $\bool(M)$ is the largest boolean subbinoid of a commutative binoid $M$.
\item The set $\Pset(V)$ is a meet- and join-semilattice with respect to $\cap$ and $\cup$, respectively.
\item The commutative binoid defined by a topology is boolean such that $\subseteq$ ($\supseteq$) is the induced partial order with respect to the operation $\cap$ ($\cup$). Moreover, Example \ref {ExpIntegrity}(3) and (4) show that a boolean binoid can be integral in contrast to a boolean algebra $\not=\{0\}$, which has characteristic $2$ always.
\end {ListeTheorem}
\end {Example}

\begin {Lemma} \label{LemBool=>PosRed}
Every boolean binoid is positive and reduced.
\end {Lemma}
\begin {proof}
Only the positivity of a boolean binoid need to be shown. For this let $a+b=0$ for two idempotent elements $a$ and $b$. Adding $a$ from the left and $b$ from the right yields $a=a+b=b$, hence $a=b=0$.
\end {proof}

The well-known notion of torsion and cancellativity in monoid theory need to be modified for binoids.

\begin {Definition}
Let $M$ be a binoid (or semibinoid). An element $a$ in $M$ is a \gesperrt{torsion} \index{element!torsion --}element if $a=\infty$ or $na=nb$ for some $b\in M$ with $b\not=a$ and $n\ge 2$. We say $M$ is \gesperrt{torsion-free} \index{binoid!torsion-free --}\index{semibinoid!torsion-free --}\index{torsion-free}if there are no other torsion elements in $M$ besides $\infty$; that is, $na=nb$ implies $a=b$ for every $a,b\in M$ and $n\ge 1$. If $na=nb\not=\infty$ implies $a=b$ for every $a,b\in M$ and $n\ge 1$, then $M$ is called \gesperrt{torsion-free up to nilpotence}.\index{binoid!torsion-free up to nilpotence --}\index{semibinoid!torsion-free up to nilpotence --}\index{torsion-free!-- up to nilpotence}
\end {Definition}

By definition, a binoid is torsion-free if and only if it is reduced and torsion-free up to nilpotence. With this notation, a monoid with no absorbing element is torsion-free if $M^{\infty}$ is a torsion-free binoid. A group $G$ is a torsion group if and only if all elements of $G^{\infty}$ are torsion elements. In particular, the unit group $M\okreuz$ need not be torsion-free. 

An important example of binoids that are torsion-free up to nilpotence but (in general) not reduced is given in Corollary \ref{CorQuotientRepFree}. The set of all torsion elements in a binoid that is torsion-free up to nilpotence is precisely $\nil M$. In general, the set of all torsion elements in $M$ has no structure as the following example shows.

\begin {Example}
Consider the binoid $M=\free(x,y)/(10x+2y=\infty)$. The elements $x$ and $y$ are no torsion elements but 
every element $nx+my$ with $n, m\ge1$ is a torsion element.
\end {Example}

\begin {Lemma} \label{LemTorsion}
\begin {ListeTheorem}
\item[]
\item Nilpotent elements are torsion elements.
\item Boolean binoids are torsion-free.
\end {ListeTheorem}
\end {Lemma}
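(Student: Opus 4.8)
The plan is to derive both parts directly from the definitions of nilpotent, torsion, and boolean element, as neither requires more than unwinding the defining conditions. For part~(1) I would fix a nonzero binoid $M$ together with a nilpotent element $a\in M$, so that $na=\infty$ for some $n\ge1$. The only point that needs care is the parameter bookkeeping in the definition of a torsion element, which demands an exponent $\ge2$ and a witness $b\ne a$. I would first dispose of the trivial case $a=\infty$, where $a$ is a torsion element by the very first clause of the definition. If instead $a\ne\infty$, then any witnessing exponent satisfies $n\ge2$, since $1\cdot a=a\ne\infty$; taking $b:=\infty$ I obtain $na=\infty=n\infty=nb$ with $b\ne a$ and $n\ge2$, so $a$ is a torsion element.

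For part~(2) I would let $M$ be a boolean binoid and first establish the cleaner intermediate fact that $na=a$ holds for every $a\in M$ and every $n\ge1$. This follows by induction from idempotence $a+a=a$: the base case is immediate, and $(k+1)a=ka+a=a+a=a$ supplies the inductive step. Given this, whenever $na=nb$ with $n\ge1$, I conclude $a=na=nb=b$, which is precisely the torsion-freeness condition from the definition. As an alternative one could combine Lemma~\ref{LemBool=>PosRed} with the stated characterization of torsion-freeness as reducedness together with torsion-freeness up to nilpotence, but the direct computation above is shorter and self-contained.

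I do not anticipate any real obstacle here. The single spot where a hasty argument could slip is the exponent bookkeeping in part~(1): one must observe that nilpotency of an element different from $\infty$ forces the witnessing exponent to be at least $2$, so that the literal hypotheses of the torsion definition (namely $n\ge2$ and $b\ne a$) are genuinely met rather than merely almost met.
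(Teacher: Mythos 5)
Your proof is correct and matches the paper, whose entire proof is that both statements follow directly from the definitions; your unwinding of those definitions (including the observation that a nilpotent element $\ne\infty$ forces the witnessing exponent to be $\ge 2$, with $b=\infty$ as the witness, and the induction $na=a$ in the boolean case) is exactly the intended argument.
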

\begin {proof}
Both statements follow directly from the definitions.
\end {proof}

The second statement of the preceding lemma shows that there are non-trivial finite binoids which are torsion-free.

\begin {Definition}
Let $M$ be a commutative monoid (or binoid). Two elements $a,b\in M$ are called \gesperrt{asymptotically equivalent} \index{asymptotically equivalent}\index{element!asymptotically equivalent --s}if there exists an $n_{0}\in\N$ with $na=nb$ for all $n\ge n_{0}$. We say that $M$ is \gesperrt{free of asymptotic torsion} \index{free of asymptotic torsion}\index{binoid!free of asymptotic torsion --}\index{asymptotic torsion}if any two distinct elements of $M$ are not asymptotic equivalent. 
\end {Definition}

Of course, asymptotically equivalent elements are torsion elements and nilpotent elements are asymptotically equivalent to $\infty$.

\begin {Lemma} \label{LemAsymEquiv}
Let $M$ be a  commutative binoid and $a,b\in M$. The following conditions on $a$ and $b$ are equivalent:
\begin {ListeTheorem}
\item $a$ and $b$ are asymptotically equivalent.
\item $na=nb$ and $(n+1)a=(n+1)b$ for an $n\in\N$.
\item $na=nb$ and $ma=mb$ for some $n,m\in\N$ with $\gcd(n,m)=1$.
\end {ListeTheorem}
\end {Lemma}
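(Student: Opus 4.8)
The plan is to prove the equivalence by a cyclic chain of implications $(1)\Rarrow(2)\Rarrow(3)\Rarrow(1)$, exploiting the commutativity of $M$ throughout so that multiplication by $n$ is a binoid endomorphism (cf.\ the Example in Section \ref{SecHom}). The implications $(1)\Rarrow(2)$ and $(2)\Rarrow(3)$ are essentially bookkeeping, while $(3)\Rarrow(1)$ is where the arithmetic content lives.

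First I would establish $(1)\Rarrow(2)$: by definition of asymptotic equivalence there is an $n_{0}\in\N$ with $na=nb$ for all $n\ge n_{0}$; picking any such $n\ge n_{0}$ gives both $na=nb$ and $(n+1)a=(n+1)b$ at once, so (2) holds trivially. Next, for $(2)\Rarrow(3)$, I would note that if $na=nb$ and $(n+1)a=(n+1)b$ hold for a single $n$, then the two exponents $n$ and $n+1$ are coprime, so one may simply take this $n$ together with $m:=n+1$; here I must treat the degenerate case $n=0$ separately, where $0a=0=0b$ is automatic and the second equation $1\cdot a=1\cdot b$ forces $a=b$, in which case all conditions hold vacuously.

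The main work is $(3)\Rarrow(1)$. Suppose $na=nb$ and $ma=mb$ with $\gcd(n,m)=1$. By commutativity, adding equal elements preserves equality, so from $na=nb$ I can form $kna=knb$ for every $k\ge1$, and likewise $\ell ma=\ell mb$ for every $\ell\ge1$; combining these additively gives $(kn+\ell m)a=(kn+\ell m)b$ for all $k,\ell\ge1$. The arithmetic heart is then the classical fact (the numerical-semigroup / Chicken McNugget bound) that since $\gcd(n,m)=1$, every sufficiently large integer — concretely every integer $\ge(n-1)(m-1)$, or more crudely every integer $>nm$ — can be written in the form $kn+\ell m$ with $k,\ell\ge1$. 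Setting $n_{0}:=nm+1$ (say), I would conclude that $ra=rb$ for all $r\ge n_{0}$, which is exactly asymptotic equivalence.

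The step I expect to require the most care is $(3)\Rarrow(1)$, specifically guaranteeing the representation $kn+\ell m$ with \emph{both} coefficients strictly positive (not merely nonnegative), since the endomorphism identity $\mu a=\mu b$ was derived from repeated addition and so needs $k,\ell\ge1$ rather than $k,\ell\ge0$. This is handled by choosing the threshold large enough to absorb the shift: from a nonnegative representation $r=kn+\ell m$ of $r-nm$ one recovers $r=(k+m)\cdot\tfrac{n}{?}\ldots$ — more cleanly, one writes every $r\ge nm+1$ as $r-nm=k'n+\ell'm$ with $k',\ell'\ge0$ and then sets $k:=k'+m\ge1$, $\ell:=\ell'+n\ge1$, giving $r=kn+\ell m$ with strictly positive coefficients. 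Everything else reduces to the observation that in a commutative binoid equalities are stable under adding a common summand, which is immediate from associativity and commutativity of $+$.
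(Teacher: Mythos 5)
Your proof is correct and follows essentially the same route as the paper: the paper also dismisses $(1)\Rarrow(2)\Rarrow(3)$ as trivial and, for $(3)\Rarrow(1)$, observes that $N=\{l\in\N\mid la=lb\}$ is closed under addition and contains $0$, then invokes the numerical-semigroup fact that every $k\ge(n-1)(m-1)$ is a nonnegative combination $rn+sm$. Your extra worry about forcing $k,\ell\ge1$ is unnecessary (nonnegative coefficients suffice, since $0a=0=0b$ by convention and the case $k=0$ or $\ell=0$ reduces to one of the two given equalities), which is fortunate because your shift $k:=k'+m$, $\ell:=\ell'+n$ actually overshoots by $nm$ --- a harmless slip in a step you did not need.
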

\begin {proof}
The implications $(1)\Rarrow(2)\Rarrow(3)$ are trivial. For $(3)\Rarrow(1)$ consider the set $N:=\{l\in\N^{\infty}\mid la=lb\}$. Obviously, $N\subseteq\N^{\infty}$ is a subbinoid because all possible combinations of elements in $N$ lie in $N$. Now take $n,m\in N$ with $\gcd(n,m)=1$. By \cite[Theorem 2.2]{Gilmer}, there are for every $k\in\N$ with $k\ge(n-1)(m-1)$ non-negative integers $r$ and $s$ such that $k=rn+sm\in N$. Hence, $ka=kb$ for all $k\ge(n-1)(m-1)$.
\end {proof}

A weaker property than being asymptotically equivalent, called functionally equivalent, will come into play at the end of Section \ref{SecKpoints}.

\begin {Definition}
Let $M\not=\zero$ be a binoid (or semibinoid). An element $a\in M\opkt$ is called \gesperrt{cancellative} \index{element!cancellative --}\index{cancellative}if from $b+a=c+a\not=\infty$ or $a+b=a+c\not=\infty$, for $b,c\in M$ it follows $b=c$. We say $M$ is \gesperrt{cancellative} \index{binoid!cancellative --}\index{semibinoid!cancellative --}if every element $\not=\infty$ is cancellative. The set of all cancellative elements in $M$ will be denoted by $\opcan(M)$\nomenclature[Cancellative]{$\opcan(M)$}{set of all cancellative elements in $M$}. $M$ is called \gesperrt{regular} \index{binoid!regular --}\index{semibinoid!regular --}\index{regular}if it is integral and cancellative element.
\end {Definition}

A cancellative binoid has no non-trivial idempotent elements. The trivial binoid is regular, whereas the zero binoid was excluded from the definitions of an integral and cancellative binoid. 

Note that a cancellative element in a binoid need not be integral, it may even be nilpotent. For instance, the binoid 
$$\free(x)/(nx=\infty)$$
with $n\ge2$ is cancellative. Thus, for binoids, cancellativity is far from being such a strong property as it is for monoids. The equivalent property for binoids is regularity because a binoid $M$ is regular if and only if $M\opkt$ is a \emph{cancellative monoid}; \index{monoid!cancellative --}that is, $M\opkt$ is a monoid in which $b+a=c+a$ or $a+b=a+c$ implies $b=c$ for all $a,b,c\in M$. See also Lemma \ref{LemmaCanc}(2) and Proposition \ref{PropCancellation} with the subsequent Remark \ref{RemCancellation}. Similar to a cancellative commutative monoid, which can always be embedded in a commutative group, cf.\ \cite[Theorem 1.2]{Gilmer}, any regular commutative binoid $M$ admits an embedding $M\embto G$ into a commutative binoid group $G$. For this result see Proposition \ref{PropGroupEmbedding}. 

\begin {Example}
The binoid $\Pset(V)_{\cap}$ is cancellative for $\#V<3$ and not cancellative for $\#V\ge 3$ since $\{1,2\}\cap\{1,3\}=\{1,2\}\cap\{1\}$. By a similar argument, a subbinoid $N\subseteq\Pset(V)_{\cap}$ is not cancellative if there are $A,B\in N$ with $A\cap B\not=\emptyset$. Analogous statements hold for $\Pset(V)_{\cup}$.
\end {Example}

\begin {Lemma} \label{LemmaCanc}
Let $M$ be a binoid.
\begin {ListeTheorem}
\item $\opcan(M)^{\infty}$ is a cancellative subbinoid of $M$.
\item Units are cancellative. Conversely, if $M$ is regular and finite, then $M$ is a binoid group.
\end {ListeTheorem}
\end {Lemma}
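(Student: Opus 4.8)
The plan is to verify each assertion in turn: part (1) by a direct check of the subbinoid axioms together with an inheritance argument, the first sentence of (2) by inverting units, and the second sentence of (2) via the classical fact that a finite cancellative monoid is a group. Throughout I assume $M\not=\zero$, since the zero binoid is excluded from the notions of cancellativity and integrality.

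For part (1), write $\opcan(M)^{\infty}=\opcan(M)\cup\zero$. First I would note that $0$ is cancellative, so $0\in\opcan(M)$, while $\infty\in\opcan(M)^{\infty}$ by construction; thus both distinguished elements lie in the set. The key step is closure under addition: given cancellative $a,a'$ with $a+a'\not=\infty$, I would show $a+a'$ is cancellative. If $b+(a+a')=c+(a+a')\not=\infty$, associativity gives $(b+a)+a'=(c+a)+a'$, and since the total sum is $\not=\infty$ the partial sum $b+a$ (resp.\ $c+a$) is also $\not=\infty$: were $b+a=\infty$, absorption would force the whole expression to equal $\infty$. Cancelling $a'$ on the right yields $b+a=c+a\not=\infty$, and cancelling $a$ yields $b=c$; the left-sided statement is entirely symmetric. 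Hence $\opcan(M)^{\infty}$ is a subbinoid, and it is cancellative because cancellation for its elements already holds in the larger binoid $M$.

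For the first claim of (2), let $u\in M\okreuz$ with inverse $\minus u$. Since $u+(\minus u)=0\not=\infty$, the unit $u$ lies in $M\opkt$. If $b+u=c+u\not=\infty$, then adding $\minus u$ on the right gives $b=b+0=(b+u)+(\minus u)=(c+u)+(\minus u)=c$; adding $\minus u$ on the left disposes of the other side. Thus $u$ is cancellative, i.e.\ $M\okreuz\subseteq\opcan(M)$.

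For the converse, suppose $M$ is regular and finite. Since $M$ is integral, $M\opkt$ is a submonoid of $M$, and since $M$ is cancellative every element of $M\opkt$ cancels; as integrality keeps sums of nonzero elements nonzero, $M\opkt$ is a \emph{finite cancellative monoid}. The main point---and the only nonformal step---is that such a monoid is a group. I would argue this by translations: for fixed $a\in M\opkt$ the map $x\mapsto a+x$ on $M\opkt$ is injective by left cancellation, hence bijective by finiteness, so some $x$ satisfies $a+x=0$; symmetrically some $y$ satisfies $y+a=0$, and $x=(y+a)+x=y+(a+x)=y$ exhibits a two-sided inverse. Therefore every element of $M\opkt$ is a unit, so $M\opkt=M\okreuz$ is a group and $M$ is a binoid group. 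The finiteness hypothesis is essential and is the crux of the argument; everything else is a routine manipulation of the binoid axioms.
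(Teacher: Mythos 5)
Your proof is correct and follows essentially the same route as the paper, which simply declares (1) and the first half of (2) "easily verified" and invokes the standard group-theoretic argument (injectivity of translations plus finiteness) for the converse of (2). The details you supply — checking that the partial sums $b+a$, $c+a$ are $\not=\infty$ before cancelling, and producing a two-sided inverse in the non-commutative setting — are exactly the ones the paper leaves implicit.
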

\begin {proof}
Both statements are easily verified except the converse of (2), which follows from the same argument that proves the analogous statement in group theory.
\end {proof}

\begin {Example}
The binoid $\free(x,y)/(x+y=y,2x=x)$ has no non-trivial cancellative elements (i.e.\ $\opcan(M)^{\infty}=\trivial$), but a non-trivial cancellative subbinoid, namely $\langle y\rangle$.
\end {Example}

\begin {Lemma} \label{LemPropSemifree}
Semifree binoids are positive, cancellative, and torsion-free up to nilpotence.
\end {Lemma}
\begin {proof}
This is easily verified.
\end {proof}

The converse of the preceding lemma is false. For instance, the binoid
$$M:=\free(x,y)/(2x=3y)$$
is positive, torsion-free, and cancellative but not semifree. Note that $\{x,y\}$ is a minimal generating set of $M$ (which is unique by Proposition \ref{PropUniqueMinSyst} below) and that every element $f\in M\opkt$ can be written uniquely as
$$f=my\quad\text{or}\quad f=x+my$$
for some $m\in\N$. The positivity is clear. To show that $M$ is torsion-free, one only needs to check that an equation like $k(x+my)=kny$ for some $k\ge2$ is not possible. This is true since otherwise it would imply $k=2k^{\prime}$ and therefore yield the impossible equation $3+2m=2n$. Similarly, one can show that the generators $x$ and $y$ are cancellative elements, which implies the cancellativity of $M$ by the following lemma.

\begin {Lemma} \label{LemIntCanGenerators}
A binoid is integral or cancellative if and only if a generating set is so.
\end {Lemma}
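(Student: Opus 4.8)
The plan is to treat the two assertions (about integrality and about cancellativity) in parallel, and in each case to observe that one implication is immediate while the converse is a one-line consequence of a closure statement already established. Throughout I interpret ``a generating set $A$ is integral (resp.\ cancellative)'' to mean that every element of $A\opkt$ is an integral (resp.\ cancellative) element of $M$. For the easy direction, suppose $M$ is integral (resp.\ cancellative); then by definition every element of $M\opkt$ is integral (resp.\ cancellative), so in particular every nonzero generator is, and hence any generating set is integral (resp.\ cancellative).

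For the converse in the integral case, I would invoke Lemma \ref{LemINT}, which tells us that $M_{\opint}=\opint(M)\cup\zero$ is a subbinoid of $M$. Assuming a generating set $A$ consists of integral elements, we have $A\opkt\subseteq\opint(M)$; moreover $0$ is a unit and hence integral, and $\infty\in M_{\opint}$ trivially, so $A\subseteq M_{\opint}$. Since $M_{\opint}$ is a subbinoid containing $A$ and $M=\langle A\rangle$ is by definition the smallest subbinoid containing $A$, we conclude $M=\langle A\rangle\subseteq M_{\opint}\subseteq M$, whence $M_{\opint}=M$. This says exactly that every element of $M\opkt$ is integral, i.e.\ that $M$ is integral.

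The cancellative case runs along the same lines, using Lemma \ref{LemmaCanc}(1) in place of Lemma \ref{LemINT}: there it is shown that $\opcan(M)^{\infty}=\opcan(M)\cup\zero$ is a (cancellative) subbinoid of $M$. If a generating set $A$ consists of cancellative elements, then $A\opkt\subseteq\opcan(M)$, and since units are cancellative by Lemma \ref{LemmaCanc}(2), in particular $0$, we again obtain $A\subseteq\opcan(M)^{\infty}$. As before, $M=\langle A\rangle\subseteq\opcan(M)^{\infty}$, so every nonzero element of $M$ is cancellative and $M$ is cancellative. The only point requiring care is the purely notational bookkeeping around the absorbing element $\infty$ and the identity $0$; the entire substance of the lemma is carried by the closure statements Lemma \ref{LemINT} and Lemma \ref{LemmaCanc}(1), so there is no genuine computational obstacle to anticipate.
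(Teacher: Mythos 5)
Your proof is correct, but it takes a genuinely different route from the one in the paper. The paper argues directly on representations of elements in terms of generators: given $a+b=\infty$ with $b\neq\infty$, it writes $b=\tilde{b}+x$ for a generator $x$, concludes $a+\tilde{b}=\infty$ from the integrality of $x$, and peels off generators one at a time until $a=\infty$ (and treats cancellativity ``similarly''), with the induction on the length of the representation left implicit. You instead reduce the whole statement to the closure lemmas: $M_{\opint}=\opint(M)\cup\zero$ is a subbinoid by Lemma \ref{LemINT} and $\opcan(M)^{\infty}$ is a subbinoid by Lemma \ref{LemmaCanc}(1), so a generating set $A$ contained in either of these forces $M=\langle A\rangle$, the smallest subbinoid containing $A$, to lie inside it, and equality of sets is exactly the assertion that every element of $M\opkt$ is integral (resp.\ cancellative). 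The mathematical core is the same --- verifying that the integral (resp.\ cancellative) elements together with $\zero$ are closed under addition is precisely the peeling step of the paper's argument --- but your version packages the induction into the universal property of $\langle A\rangle$ and makes transparent that the lemma is a formal consequence of results already on record, at the cost of leaning on two lemmas whose proofs the paper only sketches as ``easily verified''. Both arguments are sound; yours is arguably the cleaner formulation, and your explicit bookkeeping of $0$ and $\infty$ is accurate.
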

\begin {proof}
The only if part of the assertion is obvious. So let $A\subseteq M$ be a generating set consisting of integral elements and suppose that $a+b=\infty$ for some $a,b\in M$. Let $b\not=\infty$ so that $b=\tilde{b}+x$ for some $\tilde{b}\in M\opkt$ and $x\in A$. Hence, $\infty=a+b=a+\tilde{b}+x$, which implies that $a+\tilde{b}=\infty$ by the integrality of $x$. Proceeding the same way yields $a+y=\infty$ for some $y\in A$, hence $a=\infty$. The assertion for the cancellativity of $M$ follows similarly.
\end {proof}

Even if the generators of a binoid are no torsion elements, the binoid may not be torsion-free. Consider, for instance, the binoid $\free(a,b,c,d)/(n(a+b)=n(c+d))$ with $n\ge2$. Also, a binoid may have non-trivial cancellative elements though all generators are not cancellative as the following example shows.

\begin {Example}
The binoid $M:=\free(x,y)/(3x=3y=\infty,2x+y=x+2y)$ is positive, not reduced (hence not integral), and not cancellative since both generators $x$ and $y$ are not cancellative for instance. However, $a:=2x+y$ is a non-trivial cancellative element because
$$(2x+y)+x=\infty\quad\text{and}\quad(2x+y)+y=(x+2y)+y=x+3y=\infty\komma$$
and so there are no equations like $a+b=a+c\not=\infty$ with $b,c\in M$.
\end {Example}

\begin {Lemma} \label{LemCanM=0}
If $M$ is an integral commutative binoid that admits a generating set of non-can\-cel\-lative elements, then $M$ contains no non-trivial cancellative element.
\end {Lemma}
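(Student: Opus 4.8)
The plan is to show directly that every element $a\in M\opkt$ with $a\ne 0$ fails to be cancellative. Since $M$ is integral, it is nonzero, so $0\ne\infty$ and $0$ is cancellative; hence establishing that nothing else in $M\opkt$ is cancellative yields $\opcan(M)=\{0\}$, i.e.\ $\opcan(M)^{\infty}=\trivial$, which is precisely the absence of a non-trivial cancellative element. So I would fix a generating set $A\subseteq M$ consisting of non-cancellative elements and pick an arbitrary $a\in M\opkt$ with $a\ne 0$. Because $A$ generates $M$ and $a\ne 0$, the element $a$ can be written as a non-empty finite sum of generators, say $a=x_{1}\pluspkt x_{k}$ with $x_{i}\in A$ and $k\ge 1$.

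Next I would exploit the non-cancellativity of the first summand $x_{1}$: by definition there exist $b\ne c$ in $M$ with $x_{1}+b=x_{1}+c\ne\infty$. Writing $y:=x_{2}\pluspkt x_{k}$ (with the convention $y:=0$ when $k=1$) and using commutativity, I would compute
$$a+b\,=\,y+(x_{1}+b)\,=\,y+(x_{1}+c)\,=\,a+c\komma$$
so that $a+b=a+c$ holds with $b\ne c$.

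The crux of the argument — and the only place integrality is used — is to guarantee that this common value is $\ne\infty$, since the definition of cancellativity only constrains sums that avoid $\infty$. Here I would invoke integrality in the form that $M\opkt$ is a submonoid of $M$: each $x_{i}\ne\infty$, so $y\in M\opkt$ (or $y=0$), and $x_{1}+b\ne\infty$ by choice; thus $a+b=y+(x_{1}+b)$ is a sum of two elements of $M\opkt$ and is therefore again $\ne\infty$. This exhibits $a$ as non-cancellative. As $a$ ranged over all of $M\opkt\setminus\{0\}$, the only cancellative element left is $0$, which completes the proof. I expect no serious obstacle beyond this single point: the manipulation is immediate, and the substance of the lemma lies entirely in using integrality to keep the witnessing sum $a+b$ away from $\infty$ — without it, $a+b=a+c=\infty$ would be vacuously permitted and the conclusion would fail.
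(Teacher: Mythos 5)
Your proof is correct and follows essentially the same route as the paper's: both split off one non-cancellative generator from an arbitrary element of $M\opkt\setminus\{0\}$, transfer its witnessing equation $x+b=x+c\not=\infty$ to the whole element by adding the remaining summands, and use integrality exactly once to keep that sum away from $\infty$. No gaps.
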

\begin {proof}
Let $A\subseteq\opcan M$ be a generating set of $M$. By the assumption on $A$, there is for every $f\in M\opkt$ an $a\in A$ with $f=\tilde{f}+a$ and $a+b=a+c\not=\infty$ for some $c,b\in M$ with $c\not=b$. Since $M$ is integral the latter equation implies that $\tilde{f}+a+b=\tilde{f}+a+c\not=\infty$ by adding $\tilde{f}$. Hence, $f+b=f+c\not=\infty$ with $c\not=b$.
\end {proof}

The preceding result may fail to be true for an integral binoid that is not commutative, like the binoid $\freenc(x,y,z)/(z+x=z+y=x+y=x+z)$ in which the element $x+y$ is cancellative.

\begin {Proposition} \label{PropUniqueMinSyst}
Every finitely generated commutative binoid $M$ that is positive and cancellative admits a unique minimal generating set given by $M\Uplus\setminus2 M\Uplus$.
\end {Proposition}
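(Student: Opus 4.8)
The plan is to first strip away the notational clutter and reduce to a single hard claim. Write $A:=M\Uplus\setminus 2M\Uplus$. Positivity means $M\okreuz=\{0\}$, so $M\Uplus=M\setminus\{0\}$; moreover $\infty=\infty+\infty\in 2M\Uplus$, hence $A\subseteq M\setminus\{0,\infty\}$. The \emph{necessity/uniqueness} direction is then cheap and I would do it first, independently of generation: if $G$ is any generating set and $a\in A$, then $a\in\langle G\rangle$ and $a\neq 0,\infty$, so $a$ is a nonempty finite sum $a=g_1+\dots+g_k$ of elements of $G$; discarding any $g_i=0$ leaves $k\ge 1$ summands, all lying in $M\Uplus$ (none can be $\infty$). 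If $k\ge 2$ then $a=g_1+(g_2+\dots+g_k)$ exhibits $a\in 2M\Uplus$ (the tail is a nonzero, hence nonunit, element, using positivity to rule out the tail being $0$), contradicting $a\in A$. Thus $k=1$ and $a=g_1\in G$. This shows $A\subseteq G$ for every generating set $G$; in particular $A$ is contained in any finite generating set, so $A$ is finite, and once I know $A$ generates it is automatically the smallest, hence the unique minimal, generating set.

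So everything reduces to showing $\langle A\rangle=M$. Since $0$ and $\infty$ belong to every subbinoid, it suffices to prove that every finite element $f\in F:=M\setminus\{\infty\}$ lies in $\langle A\rangle$. I would do this by well-founded induction along the divisibility preorder, where $a\preceq b$ means $b=a+c$ for some $c\in M$ and $a\prec b$ means in addition $a\neq b$. The inductive step is immediate: if $f=0$ it is in $\langle A\rangle$; if $f\in A$ it is in $\langle A\rangle$; otherwise $f$ is a nonunit with $f\neq\infty$ and $f\notin A$, so $f\in 2M\Uplus$, i.e. $f=a+b$ with $a,b\in M\Uplus$. Because $f\neq\infty$ neither summand is $\infty$, so $a,b\in F\setminus\{0\}$, and cancellativity forces $a\prec f$ and $b\prec f$ (if $a=f$ then $f=f+b$ with $f\neq\infty$ cancels to $b=0$, a contradiction). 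By the induction hypothesis $a,b\in\langle A\rangle$, whence $f\in\langle A\rangle$.

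The one genuinely substantive point, and the main obstacle, is that this induction is legitimate, i.e. that $\prec$ has \emph{no infinite strictly descending chain} on $F$; without this the recursive decomposition need not terminate (note $M$ is only assumed positive and cancellative, not integral, so there is no obvious additive degree function). I would establish two facts. First, on $F$ the relation $\preceq$ is antisymmetric: if $f=g+c$ and $g=f+d$ then $f=f+c+d$, and cancelling the finite element $f$ gives $c+d=0$, so $c=d=0$ by positivity and $f=g$. Second, suppose $f_0\succ f_1\succ\cdots$ were an infinite descending chain; using the canonical epimorphism $\pi\colon\free_r\to M$ from Lemma~\ref{LemHomFreeCom} (with $\free_r\setminus\{\infty\}=\N^r$), choose lifts $u_i\in\N^r$ with $\pi(u_i)=f_i$. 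By Dickson's Lemma (see \cite{Gilmer}) there exist $i<j$ with $u_i\le u_j$ componentwise, say $u_j=u_i+w$ with $w\in\N^r$; applying $\pi$ gives $f_j=f_i+\pi(w)$, so $f_i\preceq f_j$. But $i<j$ in a strictly descending chain gives $f_j\prec f_i$, hence $f_j\preceq f_i$ with $f_i\neq f_j$, contradicting antisymmetry. Thus $\prec$ is well-founded, the induction goes through, $\langle A\rangle=M$, and combined with the first paragraph $A=M\Uplus\setminus 2M\Uplus$ is the unique minimal generating set.
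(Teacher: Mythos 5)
Your proof is correct, but the way you establish that $A=M\Uplus\setminus 2M\Uplus$ actually generates $M$ is genuinely different from the paper's. The paper sidesteps the termination issue entirely: it first picks a finite \emph{minimal} generating set $\{x_{1}\kpkt x_{r}\}$ (which exists by finite generation) and shows, by the same cancellativity-plus-positivity computation you use, that each $x_{i}$ must lie in $A$ --- if $x_{1}=y+z$ with $y,z\in M\Uplus$, expanding $y+z$ in the generators and cancelling $x_{1}$ either produces a nontrivial sum equal to $0$ (impossible by positivity) or shows $x_{1}$ can be dropped (impossible by minimality). Hence $A$ contains a generating set and therefore generates. You instead prove $\langle A\rangle=M$ directly by well-founded induction along divisibility, which forces you to show that strict divisibility has no infinite descending chain on $M\setminus\{\infty\}$; your reduction to Dickson's lemma via lifts to $(\N^{r})^{\infty}$, together with the antisymmetry of $\preceq$ that you extract from cancellativity and positivity, is a correct and clean way to close that gap. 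The uniqueness/containment half of your argument coincides with the paper's final paragraph. What the paper's route buys is brevity: minimality of the chosen generating set absorbs the descent, so no well-foundedness is ever needed. What your route buys is an explicit recursive decomposition of every element into elements of $A$, at the cost of invoking Dickson's lemma --- a trade-off worth being aware of, since your termination argument is exactly the step that would fail for non-finitely-generated binoids such as $\Q_{\ge0}^{\infty}$, where the proposition itself is false.
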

\begin {proof}
By the positivity, every generating set is contained in $M\Uplus=M\setminus\{0\}$. First we show that the set $M\Uplus\setminus 2M\Uplus$ contains any minimal generating set, which implies that it is a generating set as well. For this take an arbitrary minimal generating set $\{x_{1}\kpkt x_{r}\}$ and assume that $x_{1}\not\in M\Uplus\setminus 2M\Uplus$. Then $x_{1}=y+z$ for some $y,z\in M\Uplus$. Hence, $x_{1}=n_{1}x_{1}\pluspkt n_{r}x_{r}$ with at least two $n_{i}\ge1$ or one $n_{i}\ge2$, $i\in I$. If $n_{1}\not=0$,  the cancellativity of $M$ yields a non-trivial equation $0=(n_{1}-1)x_{1}+n_{2}x_{2}\pluspkt n_{r}x_{r}$ which is a contradiction to $M\okreuz=\{0\}$. Consequently, $n_{1}=0$ and $x_{1}$ can be dropped, contrary to the minimality of the generating set. Thus, $\{x_{1}\kpkt x_{r}\}\subseteq M\Uplus\setminus 2M\Uplus$. 

The minimality of $M\Uplus\setminus 2M\Uplus$ follows immediately because if $x\in M\Uplus\setminus 2M\Uplus$ could be omitted, there were an expression $x=n_{1}y_{1}\pluspkt n_{s}y_{s}$ with $y_{i}\in M\Uplus\setminus 2M\Uplus$ and at least one $n_{i}\not=0$, $i\in\{1\kpkt s\}$. This means $x=y_{i}$ for some $i\in\{1\kpkt s\}$ since $x\not\in 2M\Uplus$, hence $x$ cannot be omitted. The same argument shows that $M\Uplus\setminus2 M\Uplus$ must be contained in every generating set of $M$. 
\end {proof}

All conditions on a binoid assumed in Proposition \ref{PropUniqueMinSyst} are required. Counterexamples of binoids that fulfill all except one of the three conditions are given by $(\Q_{\ge0}^{\infty},+)$, $(\Z/n\Z)^{\infty}$ with $n\ge 2$, and $\free(x,y)/(x+y=x)$.

\begin {Corollary} \label {CorFree}
A finitely generated binoid is free commutative if and only if it is commutative, integral and semifree.
\end {Corollary}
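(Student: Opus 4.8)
The plan is to treat the two implications separately, the forward one being a bookkeeping of earlier results and the backward one carrying the real content.

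\emph{($\Rightarrow$)} Suppose $M$ is finitely generated and free commutative, so $M\cong\free(I)=(\N^{(I)})^{\infty}$. Finite generation forces $I$ to be finite, say $I=\{1\kpkt n\}$, whence $M\cong(\N^{n})^{\infty}$. Commutativity is clear, integrality is Example \ref{ExpIntegrity}(1), and semifreeness follows by transporting Example \ref{ExpSemifree}(1) along the isomorphism (in accordance with the remark after Definition \ref{DefRepFree} that every finitely generated free commutative binoid is semifree).

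\emph{($\Leftarrow$)} Now let $M$ be finitely generated, commutative, integral, and semifree with semibasis $(a_{i})_{i\in I}$; finite generation gives $I$ finite, say $I=\{1\kpkt n\}$. Since $M$ is commutative the $a_{i}$ commute, so Lemma \ref{LemHomFreeCom} yields a unique binoid homomorphism $\varepsilon:\free(I)=(\N^{n})^{\infty}\rto M$ with $i\mto a_{i}$, and because the $a_{i}$ generate $M$ this $\varepsilon$ is surjective. It remains to prove injectivity; then $\varepsilon$ is an isomorphism and $M$ is free commutative.

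The decisive point — and the only place where integrality enters — is that $\varepsilon$ maps every finite tuple into $M\opkt$, i.e.\ never collapses a finite element to $\infty$. Indeed, each $a_{i}$ lies in $M\opkt$ (a semifree binoid is positive by Lemma \ref{LemPropSemifree}, so generators avoid $0$, and a semibasis is a minimal generating set, so it avoids the redundant element $\infty$), and integrality says $M\opkt$ is closed under addition; hence $\sum_{i}m_{i}a_{i}\neq\infty$ for every finite tuple $(m_{i})_{i\in I}$. With this, injectivity is immediate: if $\varepsilon(x)=\varepsilon(y)$ with $x,y\neq\infty$, both sides lie in $M\opkt$ and the uniqueness clause of Definition \ref{DefRepFree} forces the coefficient tuples to coincide, so $x=y$; while if $x=\infty$ and $y\neq\infty$, then $\varepsilon(y)\in M\opkt$ would differ from $\varepsilon(x)=\infty$, a contradiction, so $y=\infty=x$.

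The hard part is precisely this control of $\infty$ via integrality. Without it the argument collapses exactly as in Example \ref{ExpSemifree}(6), where $\free(x_{1},x_{2})/(x_{1}+x_{2}=\infty)$ is commutative and semifree yet neither integral nor free commutative, since the sum $a_{1}+a_{2}$ of semibasis elements already equals $\infty$ and thereby breaks the injectivity of $\varepsilon$.
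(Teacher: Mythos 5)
Your proof is correct and follows essentially the same route as the paper: build the canonical epimorphism $(\N^{n})^{\infty}\rto M$ sending the standard generators to a semibasis and show it is injective, with integrality keeping sums of semibasis elements away from $\infty$ and the uniqueness clause of semifreeness giving injectivity on $M\opkt$. The only (harmless) difference is that you use the semibasis directly where the paper first invokes Proposition \ref{PropUniqueMinSyst} to identify it with the unique minimal generating set $M\Uplus\setminus 2M\Uplus$; you also spell out the injectivity step that the paper leaves terse.
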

\begin {proof}
All properties of a free commutative binoid follow from the fact that  $(\N^{n})^{\infty}$ has these properties. For the converse note that a binoid $M$ with the given properties admits a unique minimal generating set by Proposition \ref{PropUniqueMinSyst}, say $M\Uplus\setminus 2M\Uplus=\{x_{1}\kpkt x_{n}\}$. Since $M$ is semifree and integral, the canonical binoid epimorphism $(\N^{n})^{\infty}\rto M$ with $e_{i}\mto x_{i}$, $i\in\{1\kpkt n\}$, is injective.
\end {proof}

The proof of Proposition \ref{PropUniqueMinSyst} shows that the set $M\Uplus\setminus2 M\Uplus$ is finite. This can be generalized.

\begin{Lemma}
Let $M$ be a positive finitely generated commutative binoid. Then $M\Uplus\setminus nM\Uplus$ is a finite set for all $n\ge 1$.
\end{Lemma}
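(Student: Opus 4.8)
The plan is to exploit positivity to turn a length bound into the desired finiteness: I will show that any element admitting a representation by at least $n$ generators is automatically a sum of $n$ nonunits, so that the only elements which can fail to lie in $nM\Uplus$ are those expressible as sums of fewer than $n$ generators, of which there are only finitely many.

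First I would fix a finite generating set $A=\{x_{1}\kpkt x_{r}\}$ of $M$ and discard the identity $0$ if it occurs, so that every $x_{i}\neq 0$; by positivity ($M\okreuz=\{0\}$) each $x_{i}$ then lies in $M\Uplus$. Recall that every $f\in M\opkt$ admits an expression $f=\sum_{i=1}^{r}n_{i}x_{i}$ with $n_{i}\in\N$. The step I expect to carry the weight is the elementary positivity observation that in $M$ a nonempty sum of nonzero elements is again nonzero: if $a_{1}\pluspkt a_{k}=0$, then $a_{1}$ is a unit, hence $a_{1}=0$ by positivity, and iterating forces $a_{j}=0$ for all $j$. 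Granting this, suppose $f=\sum_{i}n_{i}x_{i}\in M\opkt$ has total multiplicity $N:=\sum_{i}n_{i}\ge n$. Then I can split the $N$ generator occurrences, all nonzero, into $n$ nonempty blocks; each block sums to a nonzero element, i.e.\ to an element of $M\Uplus$, and $f$ is the sum of these $n$ elements, so $f\in nM\Uplus$. Note also that $\infty\in M\Uplus$ and $\infty=\infty\pluspkt\infty\in nM\Uplus$, so $\infty$ never lies in the exceptional set.

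From this it follows that every $f\in M\Uplus\setminus nM\Uplus$ lies in $M\opkt$ (as $\infty\in nM\Uplus$) and, by the contrapositive of the claim above, has every representation of total multiplicity at most $n-1$; being a nonzero element it has some representation of length at least $1$, so $f\in\bigcup_{k=1}^{n-1}kA$. Each $kA$ is the image of the finite set $A^{k}$ under summation, hence finite, and therefore $\bigcup_{k=1}^{n-1}kA$ is finite; thus $M\Uplus\setminus nM\Uplus$ is finite as well. The regrouping and the finiteness of the exceptional set are routine; the only genuine content is the positivity argument guaranteeing that partial sums of generators remain nonunits. (For $n=1$ the union is empty, consistent with $1M\Uplus=M\Uplus$, and the case $n=2$ recovers the finiteness of $M\Uplus\setminus 2M\Uplus$ noted in the proof of Proposition~\ref{PropUniqueMinSyst}.)
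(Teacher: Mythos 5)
Your proof is correct and follows essentially the same route as the paper's: fix a finite generating set of nonunits (using positivity), observe that $nM\Uplus$ contains every element admitting a representation of total multiplicity $\ge n$, and conclude that the complement sits inside the finite set of sums of fewer than $n$ generators. You spell out the regrouping and the fact that nonempty sums of nonunits stay nonunits, which the paper leaves implicit, but the substance is identical.
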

\begin {proof}
If $\{x_{1}\kpkt x_{r}\}$ is a minimal generating set of $M$, then $x_{1}\kpkt x_{r}\in M\Uplus$ by the positivity of $M$, and hence $nM\Uplus=\{n_{1}x_{1}\pluspkt n_{r}x_{r}\mid \sum_{k=1}^{r}n_{i}\ge n\}$. This gives 
$$\#(M\Uplus\setminus nM\Uplus)\,\,\le\,\,\#\{(n_{1}\kpkt n_{r})\mid\ n_{1}\pluspkt n_{r}< n\}\,\,=\,\,\sum_{k=1}^{n-1}\sum_{n_{1}\pluspkt n_{r}=k}\binom{k}{n_{1}\kpkt n_{r}}\pkt$$
In particular, $M\Uplus\setminus nM\Uplus$ is finite for all $n\ge 1$.
\end {proof}

\begin {Definition}
Let $M$ be a positive finitely generated commutative binoid. The map 
$$\opH(-,M):\N\Rto\N$$
with $\opH(n,M):=\#(M\setminus nM\Uplus)$ for $n\ge1$ and $\opH(0,M):=0$, \nomenclature[HilbertSamuel]{$\opH(-,M)$}{Hilbert-Samuel function of $M$}is called the \gesperrt{Hilbert-Samuel function} \index{Hilbert-Samuel function}of $M$ and $\opH(M):=\sum_{n\in\N}\opH(n,M)T^{n}$ \nomenclature[HilbertSamuel]{$\opH(M)$}{Hilbert-Samuel series of $M$}the \gesperrt{Hilbert-Samuel series} \index{Hilbert-Samuel series}of $M$.
\end {Definition}

$\opH(1,M)=-1$ always holds.

\begin {Remark}
As we will see later in Example \ref{ExCompositionsIdeal}(2), the definition of the Hilbert-Samuel function is similar to that of a semilocal ring, cf.\ \cite[Chapter II.B \S4]{SerreLA}.
\end {Remark}

\begin {Definition}
Let $\Acal\subseteq\Pset(V)$ be a collection of subsets of an arbitrary set $V$. The collection $\Acal$ is called  \gesperrt{subset-closed} (resp.\ superset-closed) \index{subset-closed}\index{superset-closed}if $B\subseteq A$ (resp.\ $A\subseteq B$) for some $B\in\Pset(V)$ and $A\in\Acal$, implies $B\in\Acal$; that is, with every set all its subsets (resp.\ supersets) lie in $\Acal$. The set of all subset-closed subsets of $\Pset(V)$ will be denoted by $\Scal(V)$\nomenclature[Subsetclosed]{$\Scal(V)$}{set of all subset-closed subsets of $\Pset(V)$}.
\end {Definition}

\begin {Lemma} \label {LemSubsetClosed}
Let $V\not=\emptyset$ be an arbitrary set.
\begin {ListeTheorem}
\item $\Scal(V)$ defines a topology on $\Pset(V)$. In particular, $\Scal(V)$ defines two boolean commutative binoids with respect to $\cup$ and $\cap$, namely\nomenclature[Subsetclosed]{$\Scal(V)_{\cup},\Scal(V)_{\cap}$}{binoids associated to $\Scal(V)$}
$$(\Scal(V),\cup,\{\emptyset\},\Pset(V))\,=:\,\Scal(V)_{\cup}\quad\text{and}\quad(\Scal(V),\cap,\Pset(V),\{\emptyset\})\,=:\,\Scal(V)_{\cap}\komma$$
which are integral if and only if $\#V=1$.
\item A collection $\Acal\subseteq\Pset(V)$ is subset-closed if and only if $\Pset(V)\setminus\Acal$ is superset-closed. In particular, the set $\Scal^{\opc}(V):=\{\Pset(V)\setminus\Acal\mid\Acal\in\Scal(V)\}$ of all superset-closed subsets of $\Pset(V)$ defines two boolean commutative binoids with respect to $\cup$ and $\cap$, namely 
$$(\Scal^{\opc}(V),\cup,V,\Pset(V))\,=:\,\Scal^{\opc}(V)_{\cup}\quad\text{and}\quad(\Scal^{\opc}(V),\cap,\Pset(V),V)\,=:\,\Scal^{\opc}(V)_{\cap}\pkt$$
\end {ListeTheorem}
\end {Lemma}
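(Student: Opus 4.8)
The plan is to verify that $\Scal(V)$ satisfies the topology axioms, then obtain the two binoid structures from the general principle of Example~\ref{ExpBinoidsPowerSet}, and finally settle integrality by a direct analysis of the two absorbing elements, which I expect to be the delicate part.

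First I would check that $\Scal(V)$ is a topology on the set $\Pset(V)$. The empty collection and the full collection $\Pset(V)$ are trivially subset-closed, so both extreme elements lie in $\Scal(V)$. If $(\Acal_{i})_{i\in I}$ is any family of subset-closed collections and $A$ lies in $\bigcup_{i}\Acal_{i}$ (resp.\ $\bigcap_{i}\Acal_{i}$) with $B\subseteq A$, then $A\in\Acal_{i}$ for some (resp.\ every) $i$, whence $B\in\Acal_{i}$ by subset-closedness, so $B$ again lies in the union (resp.\ intersection). Thus $\Scal(V)$ is closed under arbitrary unions and intersections, and is in particular a topology. Applying Example~\ref{ExpBinoidsPowerSet} with ground set $\Pset(V)$ and topology $\Scal(V)$ then yields the two commutative binoids $\Scal(V)_{\cup}$ and $\Scal(V)_{\cap}$; they are boolean since $\cup$ and $\cap$ are idempotent (every element of a topology-binoid is idempotent), with the minimal nonempty subset-closed collection $\{\emptyset\}$ and the maximal collection $\Pset(V)$ serving as the neutral and absorbing elements as stated.

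The heart of the argument, and the step I expect to be the main obstacle, is the integrality characterization, where one must keep careful track of which collection is the absorbing element (equivalently, of whether the empty collection or $\{\emptyset\}$ plays the role of the empty open set). For $\Scal(V)_{\cup}$ the absorbing element is $\Pset(V)$: if $\Acal\cup\Bcal=\Pset(V)$, then the top set $V$ lies in $\Acal$ or in $\Bcal$, and subset-closedness forces that operand to equal all of $\Pset(V)$; hence $\Scal(V)_{\cup}$ is integral for every $V$. For $\Scal(V)_{\cap}$ the absorbing element is $\{\emptyset\}$. If $\#V=1$ then $\Scal(V)=\{\{\emptyset\},\Pset(V)\}$ is the trivial binoid $\trivial$, which is integral. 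If $\#V\ge2$, I would exhibit non-integrality by choosing distinct $v_{1},v_{2}\in V$ and observing that $\{\emptyset,\{v_{1}\}\}$ and $\{\emptyset,\{v_{2}\}\}$ are subset-closed collections different from $\{\emptyset\}$ whose meet is exactly $\{\emptyset\}$; this is precisely the statement (consistent with Example~\ref{ExpIntegrity}(3), once $\{\emptyset\}$ is identified as the empty open set of this topology) that $\Pset(V)$ fails to be irreducible. Since $\Scal(V)_{\cup}$ is always integral, the two binoids are simultaneously integral exactly when $\Scal(V)_{\cap}$ is, that is, exactly when $\#V=1$.

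For part~(2) I would first dispose of the set-theoretic equivalence: if $\Acal$ is subset-closed and $C\in\Pset(V)\setminus\Acal$ with $C\subseteq D$, then $D\notin\Acal$, for otherwise $C\in\Acal$ by subset-closedness; hence $\Pset(V)\setminus\Acal$ is superset-closed, and the converse is symmetric. Consequently $\Scal^{\opc}(V)$ is exactly the set of superset-closed collections, i.e.\ the closed sets of the topology $\Scal(V)$. The binoid statements then follow from the complementation map $\Acal\mapsto\Pset(V)\setminus\Acal$, which by De~Morgan interchanges $\cup$ and $\cap$ and is a bijection $\Scal(V)\to\Scal^{\opc}(V)$, hence a binoid isomorphism carrying $\Scal(V)_{\cup}$ onto $\Scal^{\opc}(V)_{\cap}$ and $\Scal(V)_{\cap}$ onto $\Scal^{\opc}(V)_{\cup}$ (cf.\ the complementation isomorphism $A\mapsto A^{\opc}$ and the closed-set binoids discussed in Example~\ref{ExpBinoidsPowerSet}). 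Therefore $\Scal^{\opc}(V)_{\cup}$ and $\Scal^{\opc}(V)_{\cap}$ are again boolean commutative binoids with the transported neutral and absorbing elements.
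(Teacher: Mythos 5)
Your proof is correct and, in outline, runs parallel to the paper's: the topology axioms are checked directly, the binoid structures come from the general construction of Example~\ref{ExpBinoidsPowerSet}, non-integrality of $\Scal(V)_{\cap}$ for $\#V\ge2$ is witnessed by $\{\emptyset,\{v_{1}\}\}\cap\{\emptyset,\{v_{2}\}\}=\{\emptyset\}$ (the paper uses the identical example), and part~(2) is the same complementation argument. The one substantive divergence is exactly the point you singled out as delicate, and there you are right where the paper is not. The paper asserts that $\Scal(V)_{\cup}$ also fails to be integral for $\#V\ge2$, offering the identity $\Pset(V\setminus\{i\})\cup\Pset(V\setminus\{j\})=\Pset(V)$; this identity is false, since the set $\{i,j\}$ contains both $i$ and $j$ and therefore lies in neither operand. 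Your argument settles the matter correctly: if $\Acal\cup\Bcal=\Pset(V)$ then $V$ lies in one of the two subset-closed collections, which is then forced to be all of $\Pset(V)$, so $\Scal(V)_{\cup}$ is integral for \emph{every} nonempty $V$. Consequently the supplement of the lemma is true only under your conjunctive reading (the two binoids are simultaneously integral iff $\#V=1$, the burden being carried entirely by $\Scal(V)_{\cap}$); read distributively, as the paper's own proof intends, the assertion fails for $\Scal(V)_{\cup}$. You should state this discrepancy explicitly rather than absorbing it silently into the phrase ``simultaneously integral'', but the mathematics in your write-up is sound.
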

\begin {proof}
The first statement of (1) is easily verified. For the supplement note that if there are $i,j\in V$ with $i\not=j$, then $\{\emptyset,\{i\}\}\cap\{\emptyset,\{j\}\}=\{\emptyset\}$ and $\Pset(V\setminus\{i\})\cup\Pset(V\setminus\{j\})=\Pset(V)$. (2) Note that if $\Acal\in\Scal(V)$ is subset-closed, then $A\in\Pset(V)\setminus\Acal$ means $A\not\in\Acal$. Hence, $B\not\in\Acal$ for all $B\in\Pset(V)$ with $A\subseteq B$ if $\Acal$ is subset-closed and $A\not\in\Acal$. This is equivalent to $B\in\Pset(V)\setminus\Acal$ for all $B\in\Pset(V)$ with $A\subseteq B$. Hence, $\Pset(V)\setminus\Acal$ is superset-closed. The supplement of (2) follows from (1).
\end {proof}

The cardinality of the set of all subset-closed sets significantly increases as $V$ becomes larger. A detailed computation shows, for instance, $\#\Scal(\{1\})=2$, $\#\Scal(\{1,2\})=4$, $\#\Scal(\{1,2,3\})=19$, and $\#\Scal(\{1,2,3,4\})=170$

\medskip

Now we introduce the common concept of filters to the theory binoids. 

\begin {Definition}
A subset $F$ of a binoid $M$ is a \gesperrt{filter} \index{filter}if the following conditions are satisfied.
\begin {ListeTheorem}
\item $0\in F$.
\item If $f,g\in F$, then $f+g\in F$.
\item If $f+g\in F$, then $f,g\in F$.
\end {ListeTheorem}
The \gesperrt {filtrum} \index{filtrum}of $M$ is the set of all filters in $M$ and will be denoted by $\Fcal(M)$\nomenclature[Filter]{$\Fcal(M)$}{set of all filters in $M$}. 
\end {Definition}

Loosely speaking, $F\subseteq M$ is a filter if it is a submonoid with the property  $f+g\in F\Rarrow f,g\in F$. The additional condition on a submonoid to be a filter may be called \emph{summand-closed}. Since $0$ is a summand of every element of $M$, the first condition can be replaced by $F\not=\emptyset$.

\begin {Remark} \label{RemPrimeUnionConsequence1}
If $R$ is a ring, then every proper filter of $(R,\cdot,1,0)$ is the complement of an arbitrary union of (ring) prime ideals of $R$, and conversely, every complement of a filter is the union of (ring) prime ideals. Of course, this union does not need to be unique. For instance, the set of invertible elements $R\okreuz$ is the complement of the union of all prime ideals as well as the complement of the union of all maximal ideals. After establishing the necessary ideal theory for binoids in Chapter \ref{ChapIdealTheory}, we will show that similarly, every filter $\not=M$ in a binoid $M$ is the complement of \emph{one} unique prime ideal, cf.\ Corollary \ref {CorHomFiltSpec}. Therefore, considering proper filters or prime ideals in binoids amounts to the same theory.
\end {Remark}

If $(F_{i})_{i\in I}$ is a family of filters in $M$, then $\bigcap_{i\in I}F_{i}$ is again a filter in $M$, and since every unit is a summand of $0$ the filter $M^{\times}$ is contained in any other filter of $M$. Hence, the commutative binoid\nomenclature[Filter]{$\Fcal(M)_{\cap}$}{$=(\Fcal(M),\cap,M,M\okreuz)$}
$$\Fcal(M)_{\cap}\,:=\,(\Fcal(M),\cap,M, M^{\times})$$
is a (meet-) semilattice with largest element $M$ and smallest element $M^{\times}$ with respect to set inclusion. We will encounter this binoid frequently.

\begin {Definition}
Let $A$ be a subset of $M$. The filter $\opFilt(A)$\nomenclature[Filter]{$\opFilt(A)$}{filter generated by $A$} \gesperrt{generated} \index{filter!-- generated by a subset}by $A$ is the set of all summands of sums that lie in $A$; in other words,
$$\opFilt(A)\,\,=\!\!\bigcap_{A\subseteq F\in\Fcal(M)}F\pkt$$ 
By definition, it is the smallest filter of $M$ containing $A$. The filter generated by a singleton $A=\{f\}$, $f\in M$, will be denoted by $\opFilt(f)$.
\end {Definition}

\begin {Remark} \label{RemFilterOfF}
Since $M$ is the only filter containing $\infty$, we have $\opFilt(f)=M$ if and only if $f$ is nilpotent. Moreover, $g\in\opFilt(f)$ means $g=n_{1}f_{1}\pluspkt n_{r}f_{r}$ for some summands $f_{1}\kpkt f_{r}$ of $f$. Therefore, $g\in\opFilt(f)$ is equivalent to $g+x=nf$ for some suitable $x\in M$ and $n\in\N$. 
\end {Remark}

The operation $F\star G:=\opFilt(F+G)$, $F,G\in\Fcal(M)$, on $\Fcal(M)$ turns the set $\Fcal(M)$ into a commutative binoid, namely\nomenclature[Filter]{$\Fcal(M)_{\star}$}{$=(\Fcal(M),\star,M\okreuz,M)$, where $F\star G=\opFilt(F+G)$}
$$\Fcal(M)_{\star}\,:=\,(\Fcal(M),\star,M\okreuz, M)\pkt$$
This binoid will appear later in Proposition \ref{PropLocalizationSmash}.

\begin {Example} \label{ExpFilterPowerset}
Let $V$ be a finite set. A subset $F\subseteq\Pset(V)$ is a filter in $\Pset(V)_{\cap}$ if and only if $F$ is superset-closed (which implies that $V\in F$) and contains only one minimal element $J\in\Pset(V)$ with respect to $\subseteq$. Thus, for every filter $F$, one has $F=\opFilt(J)=\{A\in\Pset(V)\mid J\subseteq A\}$ for some $J\subseteq V$. Similarly, a subset $F\subseteq\Pset(V)$ is a filter in $\Pset(V)_{\cup}$ if and only if $F$ is subset-closed (which implies that $\emptyset\in F$) and contains only one maximal element $J\in\Pset(V)$ with respect to $\subseteq$. Thus, for every filter $F$, one has $F=\opFilt(J)=\{A\in\Pset(V)\mid A\subseteq J\}=\Pset(J)$ for some $J\subseteq V$. In particular, the one-to-one correspondences

$$\begin {array} {rcccl}
\Fcal(\Pset(V)_{\cap})\!\!\!&\longleftrightarrow&\!\!\!\Pset(V)\!\!\!&\longleftrightarrow&\!\!\!\Fcal(\Pset(V)_{\cup})\\
F\!\!\!&\lto&\!\!\!\min_{\subseteq}F\!\!\!&&\\
\{A\in\Pset(V)\mid J\subseteq A\}\!\!\!&\longmapsfrom&\!\!\! J\!\!\!&\lto&\!\!\!\{A\in\Pset(V)\mid A\subseteq J\}=\Pset(J)\\
&&\!\!\!\max_{\subseteq}F\!\!\!&\longmapsfrom&\!\!\! F
\end {array}$$
yield the binoid isomorphisms $\Pset(V)_{\cup}\cong\Fcal(\Pset(V)_{\cap})_{\cap}$  and $\Pset(V)_{\cap}\cong\Fcal(\Pset(V)_{\cup})_{\cap}$.
\end {Example}

Finally we take a look on the behavior of certain properties under a binoid homomorphism.

\begin {Lemma} \label {LemHomProperties}
Let $\varphi:N\rto M$ be a binoid homomorphism.
\begin {ListeTheorem}
\item $\varphi(\nil(N))\subseteq\nil(M)$.
\item $\varphi(\bool(N))\subseteq\bool(M)$.
\item $\varphi(N\okreuz)\subseteq M\okreuz$. Moreover, if $M$ is commutative, then $\varphi^{-1}(M\okreuz)\in\Fcal(N)$.
\end {ListeTheorem}
\end {Lemma}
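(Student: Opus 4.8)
The plan is to derive all three inclusions directly from the defining properties of a binoid homomorphism $\varphi$, namely that it preserves the operation, sends the identity $0_N\mapsto 0_M$, and sends the absorbing element $\infty_N\mapsto\infty_M$. For (1), if $a\in\nil(N)$ then $na=\infty_N$ for some $n\ge1$; applying $\varphi$ and using additivity gives $n\varphi(a)=\varphi(na)=\varphi(\infty_N)=\infty_M$, so $\varphi(a)\in\nil(M)$. For (2), an idempotent $f\in\bool(N)$ satisfies $f+f=f$, whence $\varphi(f)+\varphi(f)=\varphi(f+f)=\varphi(f)$, so $\varphi(f)\in\bool(M)$. Both of these are immediate and require no hypothesis on $M$.

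For the first assertion of (3), let $u\in N\okreuz$ with inverse $\minus u$, so that $u+(\minus u)=(\minus u)+u=0_N$. Applying $\varphi$ yields $\varphi(u)+\varphi(\minus u)=\varphi(\minus u)+\varphi(u)=0_M$, which exhibits $\varphi(\minus u)$ as an inverse of $\varphi(u)$; hence $\varphi(u)\in M\okreuz$. For the moreover part I would set $F:=\varphi^{-1}(M\okreuz)$ and verify the three filter axioms. Axiom (1) holds because $\varphi(0_N)=0_M\in M\okreuz$. Axiom (2) follows since $M\okreuz$ is a submonoid: if $\varphi(f),\varphi(g)\in M\okreuz$, then $\varphi(f+g)=\varphi(f)+\varphi(g)\in M\okreuz$, so $f+g\in F$.

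The crux — and the only place the commutativity of $M$ enters — is the summand-closed axiom (3): from $f+g\in F$, i.e.\ $\varphi(f)+\varphi(g)=\varphi(f+g)\in M\okreuz$, I must deduce $\varphi(f),\varphi(g)\in M\okreuz$. The key auxiliary fact to establish is that in a \emph{commutative} monoid a summand of a unit is itself a unit: if $x+y=u$ with $u\in M\okreuz$, then $x+(y+(\minus u))=(x+y)+(\minus u)=u+(\minus u)=0_M$, so $y+(\minus u)$ is an inverse of $x$ and $x\in M\okreuz$; by commutativity $y\in M\okreuz$ as well. Applying this with $x=\varphi(f)$ and $y=\varphi(g)$ gives both $\varphi(f),\varphi(g)\in M\okreuz$, that is $f,g\in F$, completing the verification that $F$ is a filter. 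I expect this final step to be the main obstacle, precisely because it is where a noncommutative $M$ could fail: without commutativity a one-sided inverse of $x$ need not be two-sided, so a summand of a unit need not be a unit, and $\varphi^{-1}(M\okreuz)$ need not be summand-closed.
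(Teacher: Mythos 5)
Your proof is correct and follows essentially the same route as the paper: parts (1), (2), and the first half of (3) by direct verification, and the summand-closed axiom for $\varphi^{-1}(M\okreuz)$ via the observation that $\varphi(f)+(\varphi(g)+(\minus u))=0_M$, with commutativity making this a two-sided inverse for both $\varphi(f)$ and $\varphi(g)$. You also correctly isolate commutativity of $M$ as the essential hypothesis for that final step, just as the paper's argument does.
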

\begin {proof}
We only show the second part of the last assertion because all other statements are easily verified. Set $F:=\varphi^{-1}(M\okreuz)$. Clearly, $0_{N}\in F$ because $\varphi$ is a binoid homomorphism. If $a,b\in F$, then $\varphi(a),\varphi(b)\in M\okreuz$. Since $M\okreuz$ is a group, we obtain $\varphi(a+b)=\varphi(a)+\varphi(b)\in M\okreuz$, which means $a+b\in F$. Similarly, $a+b\in F$ means $\varphi(a+b)\in M\okreuz$. So there is an element $u\in M\okreuz$ with $\varphi(a+b)=$ $\varphi(a)+\varphi(b)=u$. This yields $\varphi(a)+(\varphi(b)+(\minus u))=0_{M}=\varphi(b)+(\varphi(a)+(\minus u))$, which shows that $\varphi(a),\varphi(b)\in M\okreuz$.
\end {proof}

\begin {Remark} \label {RemImageNonIntegral}
Note that the image of a non-integral element need not be non-integral under a binoid homomorphism. If, for instance, $a\in N\setminus\nil(N)$ is a non-integral element such that the set $A:=\{b\in N\mid a+b=\infty\}\subseteq\opint^{\opc}(N)$ is contained in the kernel of $\varphi:N\rto M$, then $\varphi(a)$ may even be a unit, cf.\ Example \ref {ExpBHomBoolean}(2). A trivial example is given by the binoid homomorphism $$\free(x,y)/(x+y=\infty)\Rto(\Z/n\Z)^{\infty}\komma$$
$n\ge2$, with $x\mto 1$ and $y\mto\infty$.
\end {Remark}

\begin {Proposition} \label{PropHomFilter}
Let $M$ be a binoid. A map $\varphi:M\rto\trivial$ is a binoid homomorphism if and only if $M\setminus\ker\varphi\in\Fcal(M)$.
\end {Proposition}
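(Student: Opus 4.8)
The plan is to set $F := M \setminus \ker\varphi = \varphi^{-1}(0)$ and to read everything off from the addition table of the trivial binoid $\trivial$: one has $0+0 = 0$ while $x+\infty = \infty + x = \infty$, so that $x+y = 0$ holds exactly when $x = y = 0$, and $x+y = \infty$ holds exactly when $x = \infty$ or $y = \infty$. Since $\varphi$ takes only the two values $0$ and $\infty$, membership in $F$ just records where $\varphi$ vanishes: $\varphi(a) = 0 \eq a \in F$ and $\varphi(a) = \infty \eq a \in \ker\varphi$. The proof then amounts to matching the three defining conditions of a filter against the requirements that $\varphi(0_M) = 0$ and that $\varphi$ preserve $+$, with the condition $\varphi(\infty_M) = \infty$ being the only point needing a separate word.

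For the ``only if'' direction I would assume $\varphi$ is a binoid homomorphism and verify the filter axioms for $F$. Axiom (1) holds since $\varphi(0_M) = 0$. For axiom (2), if $f,g \in F$ then $\varphi(f+g) = \varphi(f)+\varphi(g) = 0+0 = 0$, so $f+g \in F$. For axiom (3), if $f+g \in F$ then $\varphi(f)+\varphi(g) = \varphi(f+g) = 0$, which by the addition table forces $\varphi(f) = \varphi(g) = 0$, i.e.\ $f,g \in F$. Hence $F \in \Fcal(M)$.

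For the ``if'' direction I would assume $F = M \setminus \ker\varphi \in \Fcal(M)$ and check that $\varphi$ is a binoid homomorphism. That $\varphi(0_M) = 0$ is axiom (1). To see that $\varphi$ preserves the operation I would argue by cases on membership in $F$: if $a,b \in F$ then axiom (2) gives $a+b \in F$, so $\varphi(a+b) = 0 = \varphi(a)+\varphi(b)$; if instead, say, $a \notin F$, then $\varphi(a)+\varphi(b) = \infty$, while also $a+b \notin F$, since $a+b \in F$ would force $a \in F$ by axiom (3), so $\varphi(a+b) = \infty$ as well. Thus $\varphi(a+b) = \varphi(a)+\varphi(b)$ in every case. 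Finally $\varphi(\infty_M) = \infty$: as $\infty_M$ is nilpotent, $\opFilt(\infty_M) = M$ by Remark \ref{RemFilterOfF}, so $\infty_M$ lies in no proper filter and therefore $\infty_M \notin F$ (the degenerate alternative $F = M$ corresponds to the constant map with value $0$, which is not a homomorphism and is excluded once $F$ is a proper filter).

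The argument is short, and I expect the only genuinely delicate point to be the interplay of axiom (3) with operation-preservation: the ``summand-closed'' clause $f+g \in F \Rarrow f,g \in F$ is precisely what encodes ``$\varphi(a+b) = \infty$ as soon as one of $\varphi(a),\varphi(b)$ equals $\infty$'', and conversely. The one place where filter axioms (1)--(2) alone do not suffice is in securing $\varphi(\infty_M) = \infty$; there one must invoke that $M$ is the only filter containing $\infty$ to conclude $\infty_M \notin F$.
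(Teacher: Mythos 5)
Your proof is correct and follows the same direct verification the paper uses: translate the filter axioms into statements about where $\varphi$ takes the value $0$, using that $x+y=0$ in $\trivial$ forces $x=y=0$. You are in fact more thorough than the paper, whose entire proof consists of the observation that $\varphi(a+b)=0$ iff $\varphi(a)=0=\varphi(b)$ \emph{when $\varphi$ is already assumed to be a binoid homomorphism} --- that is, it only spells out the ``only if'' direction and leaves the converse implicit. Your case analysis for operation-preservation in the converse, and especially your remark that $\varphi(\infty_{M})=\infty$ does not follow from the filter axioms alone, address exactly the points the paper glosses over. You are also right that there is a genuine (if minor) edge case hiding here: for the constant map $\varphi\equiv 0$ one has $\ker\varphi=\emptyset$ and $M\setminus\ker\varphi=M\in\Fcal(M)$, yet $\varphi$ is not a binoid homomorphism since it fails to send $\infty_{M}$ to $\infty$; so the ``if'' direction as literally stated needs $M\setminus\ker\varphi$ to be a \emph{proper} filter, equivalently $\infty_{M}\in\ker\varphi$. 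The paper implicitly accounts for this only later, by restricting to $\Fcal(M)\setminus\{M\}$ in Corollary \ref{CorIsomFcalMinspec}. Your handling of this point is the right one.
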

\begin {proof}
Let $F:=M\setminus\ker\varphi$. We have $\varphi(a+b)=0$ if and only if $\varphi(a)=0=\varphi(b)$ since $\varphi$ is a binoid homomorphism and, in particular, $\varphi(0_{M})=0_{N}$. Hence, $0_{M}\in F$, and $a+b\in F$ is equivalent to $a,b\in F$.
\end {proof}

In Section \ref {SecPrime}, we will rephrase this result in terms of ideal theory, cf.\ Lemma \ref {LemCharacterizationPrime}.

\bigskip

\section {$N\mina$spectra}  \label{SecNspectra}
\markright{\ref{SecNspectra} $N\mina$spectra}

The notion of homomorphisms between binoids yields the dual and bidual of a binoid as well as its $N\mina$spectrum for another binoid $N$. The latter admits a geometric interpretation when $N=K$ is a field, namely the $K\mina$spectrum of the associated binoid algebra, which will be treated in Section \ref{SecKpoints} in more detail. All these objects are illustrated by several examples.

\medskip

By definition, every binoid homomorphism $\varphi$ between nonzero binoids fulfills $\trivial\subseteq\im\varphi$. Those homomorphisms $\varphi$ with $\im\varphi=\trivial$ are uniquely determined by $\ker\varphi$ and closely related to the prime ideals of the binoid, cf.\ Section \ref{SecPrime}.

\begin {Definition}
Let $M$ be a binoid and $N$ an arbitrary subset of $M$. The map $\chi_{N}:M\rto\trivial$ \nomenclature[AMap]{$\chi_{N}$}{indicator function with respect to the subset $N$}with $a\mto0$ if $a\in N$ and $\infty$ otherwise, is called the \gesperrt{indicator function} \index{indicator function}of $N$. The \gesperrt {anti-indicator function} \index{indicator function!anti- --}$\chi_{M\setminus N}$ of $N$ will be denoted by $\alpha_{N}$\nomenclature[AMap]{$\alpha_{N}$}{$=\chi_{M\setminus N}$, anti-indicator function with respect to the subset $N\subseteq M$}. If an (anti-) indicator function is a binoid homomorphism we call it an \gesperrt{(anti-) indicator homomorphism}\index{homomorphism!indicator --}\index{homomorphism!anti-indicator --}.
\end {Definition}

The indicator function is also known as the \emph{characteristic function} and the codomain is usually the trivial binoid with respect to the multiplication; that is, $\chi_{N}$, $N\subseteq M$, is usually defined to be the map $M\rto\{1,0\}$ with $a\mto 1$ if $a\in N$ and $0$ otherwise.

The set $\map(M,\trivial)$ can be naturally embedded into the set $\map(M,M^{\prime})$ for any two nonzero binoids $M$ and $M^{\prime}$; that is, every function $M\rto\trivial$ can be considered as a map $M\rto M^{\prime}$. Then we will use the same notation $\chi_{N}$ and $\alpha_{N}$ for the indicator and anti-indicator function of $N\subseteq M$ in $\map(M,M^{\prime})$, respectively.

\begin {Remark} \label {RemBHom}
Let $M$ and $N$ be nonzero binoids. The set $\hom(M,N)$ is a semigroup with respect to the operation
$$(\phi+\psi)(a)\,=\,\phi(a)+\psi(a)\komma$$
$\phi,\psi\in\hom(M,N)$, $a\in M$. The semigroup $\hom(M,N)$ is commutative if $M=\trivial$ or if $N$ is commutative. If $N$ is boolean, then so is $\hom(M,N)$. In particular, the set of all indicator homomorphisms on $M$ is a boolean subsemigroup of $\hom(M,N)$.

We have $\chi_{M\opkt}+\psi=\psi+\chi_{M\opkt}=\psi$ and $\chi_{\{0\}}+\psi=\psi+\chi_{\{0\}}=\chi_{\{0\}}$ for all $\psi:M\rto N$. In general, the functions $\chi_{M\opkt}, \chi_{\{0\}}:M\rto N$ are no homomorphisms. More precisely, $\chi_{M\opkt}$ is a binoid homomorphism if and only if $M\opkt$ is a monoid, and $\chi_{\{0\}}$ is a binoid homomorphism if and only if $M$ is positive. Thus, $(\hom(M,N),+,\chi_{M\opkt},$ $\chi_{\{0\}})$ is a binoid if and only $M\opkt$ is a positive monoid. However, one can always adjoin an absorbing and an identity element to get a binoid. For instance, the constant maps $\alpha_{M}:a\mto\infty$ and $\chi_{M}:a\mto 0$, $a\in M$, can serve as an absorbing and as an identity element, respectively.
\end {Remark}

\begin {Definition}
Let $M$ and $N$ be binoids. The semigroup $(\hom(M,N),+)=:N\minspec M$ \nomenclature[N]{$N\minspec M$}{set of all $N\mina$points of $M$ ($N\mina$spectrum of $M$)}is called the \gesperrt{$N\mina$spectrum} \index{spectrum!N@$N\mina$-- of a binoid}\index{N@$N\mina$spectrum!-- of a binoid}\index{binoid!N@$N\mina$spectrum of a --}of $M$ and we refer to an element of $N\minspec M$ as an \gesperrt{$N\mina$point} \index{binoid!N@$N\mina$point of a --}\index{N@$N\mina$point!-- of a binoid}of $M$\index{point!$N\mina$--}. 

With this notation, the semilattice of all indicator homomorphisms on $M$, which is contained in $N\minspec M$, is denoted by $\trivial\minspec M$. By Proposition \ref{PropHomFilter},
$$\trivial\minspec M=\{\chi_{F}\mid F\in\Fcal(M)\}\pkt$$

The $N\mina$point $\chi_{M\okreuz}:M\rto N$ is called the \gesperrt{special point} \index{point!special --}of $M$. In general, we refer to an $N\mina$point $\chi_{F}:M\rto N$ given by a filter $F\in\Fcal(M)$ as a \gesperrt{characteristic} (or \gesperrt{boolean}) \gesperrt{point} of $M$. \index{point!characteristic --}\index{point!boolean --}\index{boolean!-- point}We use this terminology, in particular, if $N$ is the trivial binoid.
\end {Definition}

\begin {Remark}
The terminology introduced here becomes clear later when we will see that the filters in $M$ are precisely the complements of the prime ideals in $M$, cf.\ Corollary \ref{CorHomFiltSpec}. Hence, the special point $\chi_{M\okreuz}=\alpha_{M\Uplus}$ corresponds to the \emph{maximal} ideal $M\Uplus$ (maximal with respect to $\subseteq$ since $M\okreuz$ is the smallest filter and contained in any other filter), and the characteristic points $\chi_{F}=\alpha_{M\setminus F}$, $F\in\Fcal(M)$, correspond to the the prime ideals $M\setminus F$.

For a field $K$, the $K\mina$spectrum of a binoid $M$ can be identified with the $K\mina$spectrum $K\minspec K[M]:=\Hom_{K\minus\opalg}(K[M],K)$ of $K[M]$, cf.\ Propostition \ref{PropUnivPropBinoidA}, which is a well-studied topological space if $M$ is commutative. Section \ref{SecKpoints} and in most parts Chapter \ref{ChapSepGradings} are devoted to a detailed description of the set of $K\mina$points of $M$ by applying the knowledge of $K\minspec K[M]$, where $K$ is a field and $M$ a commutative binoid. Once in a while, we anticipate this description of the $K\mina$spectrum of a binoid and draw a picture of the $\R\mina$spectrum.
\end {Remark}

\begin {Example} \label {ExpBHomBoolean}
Let $N$ be a commutative binoid.
\begin{ListeTheorem}
\item By Lemma \ref{LemHomProperties}(2), $N\minspec\free(x)/(2x=x)\cong\bool N$.
\item Every binoid homomorphism $\trivial^{n}\rto N$ is uniquely determined by the images of the generators $e_{1,\infty}\kpkt e_{n,\infty}$. Since the elements $e_{i,\infty}$ are idempotent their images lie in $\bool N$, and because there are no further relations than $e_{1,\infty}\pluspkt e_{n,\infty}=\infty_{\Pi}$, we get
$$N\minspec\trivial^{n}=\{(a_{1}\kpkt a_{n})\in(\bool N)^{n}\mid a_{1}\pluspkt a_{n}=\infty\}\pkt$$
In particular, $N\minspec\trivial\cong\zero$ and  $\trivial\minspec\trivial^{n}\cong\trivial^{n}\setminus\{0_{\Pi}\}$, which one may also obtain from Proposition \ref{PropNspecProd}.
\end{ListeTheorem}
\end {Example}

We have to postpone the determination of $N\minspec\prod_{i\in I}M_{i}$ for an arbitrary family $(M_{i})_{i\in I}$ of binoids because a little more theory is needed for this, cf.\ Proposition \ref {PropNspecProd}.

\begin {Proposition} \label {PropIndHomNspec}
Let $M$, $N$, and $L$ be binoids. Every binoid homomorphism $\varphi:M\rto N$ induces a semigroup homomorphism $\varphi^{\ast}:L\minspec N\Rto L\minspec M$ with $\psi\mto\psi\varphi$. Moreover, $\varphi^{\ast}$ is injective if $\varphi$ is surjective.
\end {Proposition}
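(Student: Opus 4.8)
The plan is to verify in turn that $\varphi^{\ast}$ is well-defined, that it respects the semigroup operation, and finally that surjectivity of $\varphi$ forces injectivity. Recall that $L\minspec N=\hom(N,L)$ carries the pointwise operation from Remark \ref{RemBHom}, and that the map in question sends a binoid homomorphism $\psi:N\rto L$ to the composite $\psi\varphi:M\rto L$.

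First I would check well-definedness: for $\psi\in\hom(N,L)$ the composite $\psi\varphi$ is again a binoid homomorphism, being a composition of monoid homomorphisms that sends $\infty_{M}$ to $\varphi(\infty_{M})=\infty_{N}$ and then to $\psi(\infty_{N})=\infty_{L}$. Thus $\varphi^{\ast}$ indeed maps into $L\minspec M=\hom(M,L)$.

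Next, to see that $\varphi^{\ast}$ is a semigroup homomorphism I would compute, for $\psi_{1},\psi_{2}\in L\minspec N$ and any $a\in M$,
$$\varphi^{\ast}(\psi_{1}+\psi_{2})(a)\,=\,(\psi_{1}+\psi_{2})(\varphi(a))\,=\,\psi_{1}(\varphi(a))+\psi_{2}(\varphi(a))\,=\,(\varphi^{\ast}\psi_{1}+\varphi^{\ast}\psi_{2})(a)\komma$$
where the middle equality is precisely the definition of the pointwise sum on $\hom(N,L)$. Since this holds for all $a\in M$, we obtain $\varphi^{\ast}(\psi_{1}+\psi_{2})=\varphi^{\ast}\psi_{1}+\varphi^{\ast}\psi_{2}$.

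Finally, assuming $\varphi$ surjective, I would argue injectivity directly: if $\varphi^{\ast}\psi_{1}=\varphi^{\ast}\psi_{2}$, that is $\psi_{1}\varphi=\psi_{2}\varphi$, then for an arbitrary $b\in N$ choose $a\in M$ with $\varphi(a)=b$; it follows that $\psi_{1}(b)=\psi_{1}(\varphi(a))=\psi_{2}(\varphi(a))=\psi_{2}(b)$, so $\psi_{1}=\psi_{2}$. I do not expect any genuine obstacle: the whole statement is the standard contravariant functoriality of $\hom(-,L)$ together with the fact that a surjection can be cancelled on the right. The one point deserving a moment's attention is that the semigroup structure on the $L\mina$spectrum is the pointwise one, which is exactly what makes the operation commute with precomposition by $\varphi$ in the displayed computation above.
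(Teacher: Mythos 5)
Your proof is correct and follows the same route as the paper, which simply declares the first assertion trivial and notes that injectivity follows from cancelling the surjection $\varphi$ on the right. You have merely written out the details (well-definedness, compatibility with the pointwise operation from Remark \ref{RemBHom}, and the preimage argument) that the paper omits.
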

\begin {proof}
The first assertion is trivial. For the supplement, we need to show that $\psi=\psi^{\prime}$ if $\psi\varphi=\psi^{\prime}\varphi$, but this follows from the surjectivity of $\varphi$.
\end {proof}

\begin {Lemma} \label {LemNspec}
If $N$ is a nonzero binoid, then $N\minspec (\N^{(I)})^{\infty}\cong N^{(I)}$ as binoids. In particular, for every finitely generated free commutative binoid $M$ there is a unique $n\ge 1$ such that $N\minspec M\cong N^{n}$.
\end {Lemma}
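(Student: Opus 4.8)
The plan is to exploit the universal property of the free commutative binoid recorded in Lemma \ref{LemHomFreeCom}. Writing $M:=(\N^{(I)})^{\infty}=\free(I)$, which is free commutative on the basis $(e_{i})_{i\in I}$, a binoid homomorphism $\varphi:M\rto N$ is precisely the datum of the family $(\varphi(e_{i}))_{i\in I}$ of its values on the basis: these values pairwise commute because the $e_{i}$ do, and conversely every family of pairwise commuting elements of $N$ extends uniquely to such a homomorphism. First I would define the evaluation map
$$\Phi:N\minspec M\Rto\prod_{i\in I}N\komma\quad\varphi\lto(\varphi(e_{i}))_{i\in I}\komma$$
and invoke Lemma \ref{LemHomFreeCom} to conclude that $\Phi$ is a bijection onto the set of pairwise commuting families in $N$, which is all of $\prod_{i\in I}N=N^{(I)}$ whenever the relevant images commute, in particular for commutative $N$.

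Next I would verify that $\Phi$ is an isomorphism of binoids. By Remark \ref{RemBHom}, since $M\opkt=\N^{(I)}$ is a positive monoid, the set $\hom(M,N)$ carries a genuine binoid structure with pointwise addition, identity $\chi_{M\opkt}$, and absorbing element $\chi_{\{0\}}$. For $\varphi,\psi\in N\minspec M$ one has $(\varphi+\psi)(e_{i})=\varphi(e_{i})+\psi(e_{i})$, so $\Phi$ transforms pointwise addition into componentwise addition. Moreover $\chi_{M\opkt}$ sends every $e_{i}$ to $0$ and $\chi_{\{0\}}$ sends every $e_{i}$ to $\infty$, so $\Phi$ carries the identity and absorbing element of the spectrum to $(0)_{i\in I}$ and $(\infty)_{i\in I}$, respectively. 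Hence $\Phi$ is a binoid isomorphism onto $N^{(I)}$.

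For the supplementary statement I would write a finitely generated free commutative binoid as $(\N^{n})^{\infty}$, where $n$ is the cardinality of its minimal generating set, which is unique by Proposition \ref{PropUniqueMinSyst} together with Corollary \ref{CorFree}; thus $n$ is an invariant of the binoid, and the first part applied with $I=\{1\kpkt n\}$ finite yields $N\minspec M\cong N^{n}$, product and direct sum coinciding for finite $I$. The only genuine point requiring care is the identification of the binoid structure on the spectrum from Remark \ref{RemBHom} and the correct matching of its neutral and absorbing elements with those of the product; the commutativity bookkeeping for the images of the $e_{i}$ is the place to be precise, since for a non-commutative target $N$ only pairwise commuting families arise as homomorphisms.
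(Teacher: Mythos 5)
Your proof is correct and follows essentially the same route as the paper's: both identify a homomorphism with the freely chosen images of the basis $(e_{i})_{i\in I}$ (the paper via freeness/positivity/integrality, you via Lemma \ref{LemHomFreeCom}), both obtain the binoid structure on the spectrum from Remark \ref{RemBHom}, and both reduce the supplement to the definition of a finitely generated free commutative binoid. Your extra remarks on the commuting-images caveat for non-commutative $N$ and on the uniqueness of $n$ via Proposition \ref{PropUniqueMinSyst} and Corollary \ref{CorFree} only make explicit what the paper leaves implicit.
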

\begin {proof}
Since $(\N^{(I)})^{\infty}$ is integral, positive, and $\bool((\N^{(I)})^{\infty})=\trivial$, the images of the elements of the basis $(e_{i})_{i\in I}$, which determine every homomorphism on $(\N^{(I)})^{\infty}$, can be choosen at will in $N$. Moreover, $(N\minspec(\N^{(I)})^{\infty},+,\chi_{M\opkt},$ $\chi_{\{0\}})$ is a binoid by Remark \ref {RemBHom}. The supplement is clear by the definition of a free commutative binoid.
\end {proof}

\begin {Lemma} \label {LemPosHom}
Let $M$ and $N$ be binoids. If $N$ is positive, the homomorphism $\chi_{M\okreuz}:M\rto N$ is an absorbing element in $N\minspec M$. In particular, $N\minspec M$ is a boolean semibinoid if $N$ is a boolean binoid.
\end {Lemma}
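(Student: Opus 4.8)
The plan is to verify the defining property of an absorbing element directly, by evaluating sums pointwise, and then to read off the supplement from two results already in hand. First I would record that $\chi_{M\okreuz}$ really is an element of $N\minspec M$: the unit group $M\okreuz$ is a filter (every unit is a summand of $0$), so by Proposition \ref{PropHomFilter} the indicator function $\chi_{M\okreuz}\colon M\rto\trivial$ is a binoid homomorphism, and composing with the canonical embedding $\trivial\embto N$ exhibits the special point $\chi_{M\okreuz}$ as an element of $\hom(M,N)$.

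Next I would fix an arbitrary $\psi\in\hom(M,N)$ and show $\chi_{M\okreuz}+\psi=\chi_{M\okreuz}=\psi+\chi_{M\okreuz}$ by checking that the functions agree on every $a\in M$, splitting according to whether $a$ is a unit. If $a\in M\okreuz$, then $\psi(a)\in N\okreuz$ by Lemma \ref{LemHomProperties}(3); since $N$ is positive we have $N\okreuz=\{0\}$, so $\psi(a)=0$, whence $(\chi_{M\okreuz}+\psi)(a)=0+0=0=\chi_{M\okreuz}(a)$, and symmetrically $(\psi+\chi_{M\okreuz})(a)=0$. If $a\notin M\okreuz$, then $\chi_{M\okreuz}(a)=\infty$, and since $\infty$ absorbs in $N$ we get $\infty+\psi(a)=\infty=\psi(a)+\infty$, again matching $\chi_{M\okreuz}(a)$. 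These two cases exhaust $M$ and neither uses commutativity of $N$, so $\chi_{M\okreuz}$ absorbs on both sides. The heart of the argument — and the only place positivity enters — is the step forcing $\psi(a)=0$ for units $a$; without $N\okreuz=\{0\}$ the image $\psi(a)$ could be a nontrivial unit and the absorbing property would fail. I do not expect any genuine obstacle beyond isolating this point; the remainder is a routine case check.

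For the supplement, I would invoke Lemma \ref{LemBool=>PosRed}: a boolean binoid is positive, so the first part applies and $\chi_{M\okreuz}$ is an absorbing element of $N\minspec M$, making it a semibinoid. Finally, Remark \ref{RemBHom} already records that $\hom(M,N)$ is a boolean semigroup whenever $N$ is boolean, and combining this with the absorbing element just produced shows that $N\minspec M$ is a boolean semibinoid.
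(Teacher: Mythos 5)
Your proof is correct and follows essentially the same route as the paper's: both establish that $\chi_{M\okreuz}$ is a homomorphism via Proposition \ref{PropHomFilter}, use positivity of $N$ to force $\psi(a)=0$ on units, split into the unit/nonunit cases to verify absorption on both sides, and derive the supplement from Lemma \ref{LemBool=>PosRed} together with Remark \ref{RemBHom}. The only difference is that you make the citation of Lemma \ref{LemHomProperties}(3) explicit where the paper leaves it implicit.
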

\begin {proof}
By Proposition \ref {PropHomFilter}, the characteristic function $\chi_{M\okreuz}$ is a binoid homomorphism. Since $N$ is positive, $\varphi(f)=0$ for all $f\in M\okreuz$ and all $\varphi\in N\minspec M$. It follows 
$$(\chi_{M\okreuz}+\varphi)(f)\,=\,\chi_{M\okreuz}(f)+\varphi(f)\,=\,\begin {cases}
0+0&\text{, if }f\in M^{\times}\komma\\
\infty+\varphi(f)\,=\,\infty&\text{, otherwise.}\end {cases}\,\,\,=\,\,\,\chi_{M\okreuz}(f)\komma$$
and $(\varphi+\chi_{M\okreuz})(f)=\chi_{M\okreuz}$ in the same manner. The supplement is clear by Lemma \ref{LemBool=>PosRed} and  Remark \ref{RemBHom}.
\end {proof}

\begin {Corollary}
$N\minspec(-)$ is a contravariant functor from the category of binoids $\Bsf$ into the category of (boolean) semibinoids for every positive (boolean) binoid $N$.
\end {Corollary}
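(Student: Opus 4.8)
The plan is to obtain the corollary by assembling the two immediately preceding results into the three data of a contravariant functor: an object assignment $M \mapsto N\minspec M$, a morphism assignment $\varphi \mapsto \varphi^{\ast}$, and the identity/composition laws. Throughout I fix a positive binoid $N$.

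For the object assignment, I would note that $N\minspec M = (\hom(M,N),+)$ is by definition a semigroup (Remark \ref{RemBHom}), and that Lemma \ref{LemPosHom} promotes it to a semibinoid: because $N$ is positive, the special point $\chi_{M\okreuz}$ is an absorbing element of $N\minspec M$. If $N$ is moreover boolean, then $\hom(M,N)$ is boolean by Remark \ref{RemBHom}, so $N\minspec M$ is a boolean semibinoid. This settles the parenthetical refinement in one stroke, so the remaining work is the same in both cases.

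For the morphism assignment, to each binoid homomorphism $\varphi\colon M\rto M'$ I attach $\varphi^{\ast}\colon N\minspec M'\rto N\minspec M$, $\psi\mapsto\psi\varphi$, which already reverses arrows and is a semigroup homomorphism by Proposition \ref{PropIndHomNspec}. Functoriality is then formal and needs no computation beyond associativity of composition: $\id_{M}^{\ast}(\psi)=\psi\circ\id_{M}=\psi$ gives $\id_{M}^{\ast}=\id_{N\minspec M}$, and for composable $\varphi\colon M_{1}\rto M_{2}$ and $\psi\colon M_{2}\rto M_{3}$ one has $(\psi\varphi)^{\ast}(\rho)=\rho\psi\varphi=\varphi^{\ast}(\psi^{\ast}(\rho))$, i.e.\ $(\psi\varphi)^{\ast}=\varphi^{\ast}\psi^{\ast}$, which is exactly the contravariant composition law.

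The point I would take care over---and what I expect to be the only genuine subtlety---is the precise sense in which $\varphi^{\ast}$ is a morphism of the target category. It is tempting to hope that $\varphi^{\ast}$ is a full semibinoid homomorphism, but it need not preserve the absorbing element: one computes $\varphi^{\ast}(\chi_{(M')\okreuz})=\chi_{\varphi^{-1}((M')\okreuz)}$, and although $\varphi^{-1}((M')\okreuz)$ always contains $M\okreuz$ (Lemma \ref{LemHomProperties}(3)), it can be strictly larger whenever $\varphi$ carries a non-unit to a unit (as in Remark \ref{RemImageNonIntegral}), so that $\chi_{\varphi^{-1}((M')\okreuz)}\neq\chi_{M\okreuz}$. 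This is precisely why Proposition \ref{PropIndHomNspec} claims only a semigroup homomorphism, and why the target is to be read as the category of (boolean) semibinoids with semigroup homomorphisms as morphisms; with that reading the assembly above is complete.
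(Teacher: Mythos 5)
Your proof is correct and follows essentially the same route as the paper: the object assignment is exactly Lemma \ref{LemPosHom} (with Remark \ref{RemBHom} for the boolean refinement), and the morphism assignment is $\psi\mto\psi\varphi$ as in Proposition \ref{PropIndHomNspec}, with the functoriality laws being formal. Your closing caveat---that $\varphi^{\ast}$ sends the absorbing element $\chi_{(M^{\prime})\okreuz}$ to $\chi_{\varphi^{-1}((M^{\prime})\okreuz)}$, which may differ from $\chi_{M\okreuz}$ when $\varphi$ is not local, so the target morphisms must be read as semigroup homomorphisms---is a genuine point that the paper leaves implicit, and it is worth making explicit.
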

\begin {proof}
This follows from Lemma \ref {LemPosHom}. If $\varphi:M\rto M^{\prime}$ is a morphism in the category of binoids, the morphism $\hom(\varphi,N):\hom(M^{\prime},N)\rto\hom(M,N)$ is given by $\psi\mto \psi\varphi$.
\end {proof}

\begin {Corollary} \label{CorIsomFcalMinspec}
The canonical maps $\chi\mto M\setminus\ker\chi$ and $\chi_{F}\mapsfrom F$ are semibinoid isomorphisms between $\trivial\minspec M$ and $(\Fcal(M)\setminus\{M\},\cap,M\okreuz)$ which are inverse to each other.
\end {Corollary}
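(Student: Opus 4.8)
The plan is to show that the two assignments $\Phi:\chi\mto M\setminus\ker\chi$ and $\Psi:F\mto\chi_{F}$ are inverse bijections between $\trivial\minspec M$ and $\Fcal(M)\setminus\{M\}$, and that they intertwine the addition of $\trivial$-points with the intersection of filters while matching the absorbing elements. The whole argument is powered by Proposition \ref{PropHomFilter}, which ties $\trivial$-homomorphisms to filters, together with the trivial arithmetic of $\trivial=\{0,\infty\}$.

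First I would settle well-definedness and bijectivity. If $\chi\in\trivial\minspec M$, then $F:=M\setminus\ker\chi$ is a filter by Proposition \ref{PropHomFilter}, and since $\trivial$ takes only the values $0$ and $\infty$ one reads off $\chi=\chi_{F}$ (indeed $\chi(a)=0\eq a\in F$); because $\chi$ is a binoid homomorphism we have $\chi(\infty)=\infty$, so $\infty\in\ker\chi$, i.e.\ $\infty\notin F$, and as $M$ is the only filter containing $\infty$ (Remark \ref{RemFilterOfF}) this forces $F\neq M$. Conversely, a filter $F\neq M$ avoids $\infty$, so $\chi_{F}(\infty)=\infty$, and the filter axioms then make $\chi_{F}$ a binoid homomorphism --- this is the content of Proposition \ref{PropHomFilter} once one notes that $F\neq M$ is exactly what guarantees $\chi_{F}$ respects $\infty$. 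The two composites are now immediate: $\Phi\Psi(F)=M\setminus\ker\chi_{F}=M\setminus(M\setminus F)=F$, and $\Psi\Phi(\chi)=\chi_{M\setminus\ker\chi}=\chi$ by the identity noted above.

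The only step with real content is compatibility with the operations, and the cleanest formulation is the pointwise identity $\chi_{F}+\chi_{G}=\chi_{F\cap G}$ for filters $F,G\neq M$. Evaluating at $a\in M$: if $a\in F\cap G$ both summands vanish and the sum is $0$; if $a\notin F\cap G$ then $a$ lies outside $F$ or outside $G$, so at least one summand is $\infty$, and $\infty$ absorbs in $\trivial$, giving $\infty$. This is precisely $\chi_{F\cap G}(a)$, and since $F\cap G\subseteq F$ again avoids $\infty$ it lies in $\Fcal(M)\setminus\{M\}$, so $\cap$ is a well-defined operation there. Reading the identity as $\Psi(F\cap G)=\Psi(F)+\Psi(G)$ shows $\Psi$ is a semigroup homomorphism, and hence so is its inverse $\Phi$.

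Finally I would match absorbing elements. The unit group $M\okreuz$ is the smallest filter, so $F\cap M\okreuz=M\okreuz$ for every $F$, i.e.\ $M\okreuz$ is absorbing for $\cap$; its partner $\chi_{M\okreuz}$ is the absorbing element of $\trivial\minspec M$ by Lemma \ref{LemPosHom}, as $\trivial$ is positive. (In any case an isomorphism of semigroups automatically carries the unique absorbing element to the unique absorbing element.) This promotes the inverse semigroup isomorphisms to semibinoid isomorphisms. I expect no genuine difficulty; the one point demanding care throughout is the systematic exclusion of the filter $M$, which is exactly the filter whose indicator function fails to send $\infty$ to $\infty$ and therefore is not a $\trivial$-point.
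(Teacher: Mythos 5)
Your proposal is correct and follows essentially the same route as the paper: bijectivity is read off from Proposition \ref{PropHomFilter}, and the homomorphism property rests on the pointwise identity $\chi_{F}+\chi_{G}=\chi_{F\cap G}$, which is exactly the paper's computation (the paper additionally verifies $\ker(\chi+\chi')=\ker\chi\cup\ker\chi'$, which you replace by the observation that the inverse of a bijective semigroup homomorphism is automatically a homomorphism). Your explicit matching of the absorbing elements $M\okreuz\leftrightarrow\chi_{M\okreuz}$ is a small but welcome addition that the paper leaves implicit.
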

\begin {proof}
The bijectivity is just a restatement of Proposition \ref{PropHomFilter}. The easy computations
\begin {align*}
\ker(\chi+\chi^{\prime})&=\{f\in M\mid\chi(f)+\chi^{\prime}(f)=\infty\}\\
&=\{f\in M\mid\chi(f)=\infty\}\cup\{f\in M\mid\chi^{\prime}(f)=\infty\}=\ker\chi\cup\ker\chi^{\prime}
\end {align*}
and 
$$\chi_{F\cap F^{\prime}}(f)=\begin {cases}
0&\text{, if }f\in F\text{ and }f\in F^{\prime}\\
\infty&\text{, otherwise,}
\end {cases}\,\,\,=\,\,\,\chi_{F}(f)+\chi_{F^{\prime}}(f)\,=\,(\chi_{F}+\chi_{F^{\prime}})(f)$$
prove the homomorphism property for both assignments.
\end {proof}

\begin {Definition}
The boolean binoid\nomenclature[M]{$M\dual$}{dual of $M$}
$$M\dual:=\,(\hom(M,\trivial)\cup\{\chi_{M}\},+,\chi_{M},\chi_{M\okreuz})$$
is the \gesperrt{dual} \index{binoid!dual of a --}\index{dual}and $M\bidual$ \nomenclature[M]{$M\bidual$}{bidual of $M$}is the \gesperrt{bidual} \index{bidual}\index{binoid!bidual of a --}of $M$.
\end {Definition}

The bidual realizes the booleanization of finitely generated commutative binoids, cf.\ Corollary \ref{CorBooleanizationFG}.

\begin {Example} \label {ExpAdjointDual}
Let $M$ be a binoid.
If $\mathbf{0}$ is an adjoint identity element, then
$(M\onull)\dual=\{\psi_{\mathbf{0}}\mid\psi\in M\dual\}$,
where $\psi_{\mathbf{0}}$ is the extension of $\psi$ to $M\onull$. In particular, $(M\onull)\dual\cong M\dual$.

If $M$ is a finitely generated free commutative binoid, then $M\dual=(\trivial^{n})\onull$ for some $n\ge 1$ by Lemma \ref {LemNspec}, and hence $M\bidual=\trivial^{n}$.
\end {Example}

\begin {Remark} \label {RemBidual}
There is a canonical binoid homomorphism $\delta:M\rto M\bidual$ with $f\mto \delta_{f}$, where $\delta_{f}:M\dual\rto\trivial$ is the evaluation  $\delta_{f}(\chi):=\chi(f)$ at $f$ for every $\chi\in M\dual$. Note that $\ker\delta=\nil(M)$, hence $\delta_{f}$ coincides with the absorbing element $\chi_{(M\dual)\okreuz}:\varphi\mto\infty$ of $M\bidual$ if and only if $f\in\nil(M)$. Moreover, $\delta_{f}$ coincides with the identity element $\chi_{M\dual}:\varphi\mto 0$ of $M\bidual$ if and only if $f\in M\okreuz$.
\end {Remark}

The Example \ref {ExpAdjointDual} above shows that $M\bidual\not=M$ happens. Later we will show that the homomorphism $\delta$ of the preceding remark is an isomorphism (i.e.\ $M\bidual\cong M$) if and only if $M$ is finitely generated and boolean, cf.\ Corollary \ref{CorBooleanizationFG}. Hence, $(M\dual)\bidual=M\dual$ for all finitely generated binoids $M$, cf.\ Corollary \ref{CorBidualDual}. In particular, $\Pset(V)\bidual_{\cap}\not=\Pset(V)_{\cap}$ if and only if $V$ is infinite. So far, we have the following examples.

\begin {Example}
By Example \ref {ExpBHomBoolean}(2), $(\trivial^{n})\dual=\trivial^{n}$, and hence $(\trivial^{n})\bidual=\trivial^{n}$. In particular, $\Pset\bidual_{n,\cap}=\Pset_{n,\cap}\dual=\Pset_{n,\cap}$ and $\Pset\bidual_{n,\cup}=\Pset_{n,\cup}\dual=\Pset_{n,\cup}$ by Example \ref{ExPowersetIsom}(1).
\end {Example}

$M\dual=M$ need not be true for every finitely generated boolean binoid as the following example shows.

\begin {Example}
For the boolean binoid $B=\free(b_{1},b_{2},b_{3})/(b_{1}+b_{2}=\infty, 2b_{i}=b_{i}, i\in\{1,2,3\})$, one has 
$B\dual=\langle \chi_{1},\chi_{2},\chi_{1,3},\chi_{2,3}\rangle$,
where $\chi_{I}$, $I\subseteq\{1,2,3\}$, is the indicator homomorphism $B\rto\trivial$ defined by $b_{i}\mto 0$ for $i\in I$. Hence, $B\dual\not=B$. If one denotes the generators of $B\dual$ by $d_{1}, d_{2},d_{3},$ and $d_{4}$, then (with the same notation as above) $B\bidual\cong\langle\chi_{1,2},\chi_{1,3},\chi_{2,4}\rangle\cong B$.
\end {Example}

\bigskip

\section {Congruences} \label{SecCong}
\markright{\ref{SecCong} Congruences}

Homomorphisms on a monoid $M$ and its homomorphic images are closely related to the notion of congruences on $M$, see \cite[Chapter I.4]{Gilmer}, \cite[Chapter I.3 and I.4]{GrilletS}, or \cite[Chapter I.2]{GrilletCS}. This relation transfers to binoids since a congruence on a binoid is nothing else than a congruence on the underlying monoid, cf.\ Proposition \ref{PropHomCong}. At the end of this section we prove R\'edei's Theorem for finitely generated commutative binoids. In the next section, we will give attention to specific congruences with which we will be concerned later.

\begin {Definition}
A \gesperrt{congruence} \index{congruence}on a binoid $M$ is an equivalence relation $\sim$ on $M$ that is compatible with the addition; that is, if $a,b\in M$ with $a\sim b$, then $a+c\sim b+c$ and $c+a\sim c+b$ for all $c\in M$. We denote the congruence class of $a\in M$ by $[a]$, sometimes also by $\bar{a}$, and the set of all congruence classes by $M/\sim$. Let $\sim_{1}$ and $\sim_{2}$ be two congruences on a binoid. The \gesperrt{intersection} \index{congruence!intersection of --s}$\sim_{1}\cap\sim_{2}$ is the congruence $\sim$ on $M$ defined by $a\sim b$ for $a,b\in M$ if $a\sim_{1}b$ and $a\sim_{2}b$. We write $\sim_{1}\,\le\,\sim_{2}$ if $a\sim_{1}b$ implies $a\sim_{2}b$ for all $a,b\in M$ .
\end {Definition}

On the set of congruences on a binoid, $\le$ defines a partial order under which there always exist a largest and a smallest congruence, which means there are congruences $\sim_{\opu}$ and $\sim_{\id}$ such that $\sim_{\id}\,\le\,\sim\,\le\,\sim_{\opu}$ for all congruences $\sim$ on the binoid. These are given by the \gesperrt{universal} \index{congruence!universal --}congruence $\sim_{\opu}$\nomenclature[ACongruenceU]{$\sim_{\opu}$}{universal congruence}, which relates any two elements with each other, and the \gesperrt{identity} \index{congruence!identity --}congruence $\sim_{\id}$, under which two elements are related only when they coincide.\nomenclature[ACongruenceId]{$\sim_{\id}$}{identity congruence}

\medskip

Before studying congruences on free commutative and finitely generated binoids, the equivalent notion of homomorphisms and congruences on a binoid is described in the following proposition.

\begin {Proposition} \label{PropHomCong}
Let $M$ be a binoid. Given a congruence $\sim$ on $M$, the quotient $M/\!\!\sim$ is a binoid with respect to the operation $[a]+[b]:=[a+b]$ such that the canonical map $M\rto M/\!\!\sim$, $a\mto[a]$, is an epimorphism of binoids. Conversely, if $\varphi:M\rto N$ is a binoid epimorphism, then the relation $\sim_{\varphi}$ \nomenclature[ACongruenceZPhi]{$\sim_{\varphi}$}{congruence on a binoid with respect to $\varphi$} on $M$ defined by $a\sim_{\varphi}\!b$ if $\varphi(a)=\varphi(b)$, is a congruence on $M$ such that $M/\!\sim_{\varphi}\,\rto N$, $[a]\mto\varphi(a)$, is a binoid isomorphism.
\end {Proposition}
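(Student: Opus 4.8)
The plan is to treat this as the binoid analogue of the first isomorphism theorem, splitting the verification along the two halves of the statement. For the forward direction, the essential point is the well-definedness of the operation $[a]+[b]:=[a+b]$ on $M/\!\!\sim$. Given $a\sim a'$ and $b\sim b'$, two-sided compatibility of the congruence yields $a+b\sim a'+b$ and $a'+b\sim a'+b'$, whence $a+b\sim a'+b'$ by transitivity; this is the one step where I genuinely use that $\sim$ is a congruence and not merely an equivalence relation, and where the (possible) non-commutativity of $M$ forces me to invoke both sides of the compatibility condition. Once the operation is seen to be well-defined, associativity is inherited from $M$, and I would check that $[0]$ is an identity and $[\infty]$ an absorbing element directly: $[0]+[a]=[0+a]=[a]$ and $[\infty]+[a]=[\infty+a]=[\infty]$, together with the symmetric identities. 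Thus $M/\!\!\sim$ is a binoid, and the canonical map $a\mto[a]$ is a binoid homomorphism by the very definition of the operation, sending $0$ to $[0]$ and $\infty$ to $[\infty]$; it is surjective by construction, hence an epimorphism.

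For the converse, I would first observe that $\sim_{\varphi}$ is an equivalence relation, with reflexivity, symmetry, and transitivity all immediate from the corresponding properties of equality in $N$. Compatibility follows because $\varphi(a)=\varphi(b)$ gives $\varphi(a+c)=\varphi(a)+\varphi(c)=\varphi(b)+\varphi(c)=\varphi(b+c)$, and symmetrically on the left, using that $\varphi$ is a homomorphism; hence $\sim_{\varphi}$ is a congruence. The induced map $M/\!\!\sim_{\varphi}\,\rto N$, $[a]\mto\varphi(a)$, is then well-defined precisely because $[a]=[b]$ means $\varphi(a)=\varphi(b)$. I would verify it is a binoid homomorphism via $\varphi(a+b)=\varphi(a)+\varphi(b)$ and the fact that it respects the identity and absorbing elements; injectivity is immediate from the definition of $\sim_{\varphi}$, and surjectivity follows from the hypothesis that $\varphi$ is an epimorphism. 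Therefore this map is a binoid isomorphism.

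I do not expect any deep obstacle here, as the whole statement reduces to routine checks. The only point demanding genuine care is the well-definedness of $+$ on $M/\!\!\sim$, where the two-sided compatibility of the congruence must be used because $M$ is not assumed commutative; once that is secured, the transport of the identity and absorbing elements to their classes is automatic, and the isomorphism theorem proceeds exactly as in the classical monoid setting.
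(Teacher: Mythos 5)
Your proof is correct and follows essentially the same route as the paper, which simply declares these verifications "easily checked" and derives the converse from the factorization lemma that immediately follows the proposition; your explicit treatment of the well-definedness of $[a]+[b]$ via two-sided compatibility is exactly the one nontrivial point, handled correctly.
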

\begin {proof}
It is easily checked that $M/\!\!\sim$ is a binoid with respect to the given operation and that $\sim_{\varphi}$ defines a congruence on $M$. The converse follows from the subsequent lemma.
\end {proof}

\begin {Lemma} \label {LemIndCong}
If $\varphi:M\rto N$ is a binoid homomorphism and $\sim$ a congruence on $M$ with $\sim\,\le\,\sim_{\varphi}$, then there is a unique binoid homomorphism $\tilde{\varphi}$ such that the diagram
$$\xymatrix{
M\ar[r]^{\varphi}\ar[d]_{\pi}&N\\
M/\sim\ar[ur]_{\tilde{\varphi}}&}$$
commutes. In particular, given two congruences $\sim_{1}$ and $\sim_{2}$ on $M$, the relation $\sim_{1}\,\le\,\sim_{2}$ is equivalent to the fact that the map $M/\!\sim_{1}\,\rto M/\!\sim_{2}$, $[a]_{1}\mto[a]_{2}$, is a well-defined binoid epimomorphism.
\end {Lemma}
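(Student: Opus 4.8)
The plan is to establish the usual universal property of a quotient by a congruence, with the sole nontrivial point being well-definedness. The only candidate compatible with the requirement $\tilde{\varphi}\pi=\varphi$ is forced: since $\pi(a)=[a]$ and $\pi$ is surjective, I must set $\tilde{\varphi}([a]):=\varphi(a)$. I would first check that this assignment does not depend on the chosen representative. If $[a]=[b]$, that is $a\sim b$, then the hypothesis $\sim\,\le\,\sim_{\varphi}$ yields $a\sim_{\varphi}b$, i.e.\ $\varphi(a)=\varphi(b)$, so $\tilde{\varphi}([a])=\tilde{\varphi}([b])$. This is the one place where the assumption $\sim\,\le\,\sim_{\varphi}$ is actually used, and it is the conceptual heart of the argument; beyond it I expect no genuine obstacle, since everything else is a direct verification.

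Next I would confirm that $\tilde{\varphi}$ is a binoid homomorphism. Using the operation $[a]+[b]=[a+b]$ on $M/\!\sim$ supplied by Proposition \ref{PropHomCong}, one computes $\tilde{\varphi}([a]+[b])=\tilde{\varphi}([a+b])=\varphi(a+b)=\varphi(a)+\varphi(b)=\tilde{\varphi}([a])+\tilde{\varphi}([b])$, while $\tilde{\varphi}([0])=\varphi(0)=0$ and $\tilde{\varphi}([\infty])=\varphi(\infty)=\infty$, so the identity and the absorbing element are preserved. Commutativity of the triangle is immediate, as $\tilde{\varphi}(\pi(a))=\tilde{\varphi}([a])=\varphi(a)$. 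For uniqueness, any binoid homomorphism $\psi$ with $\psi\pi=\varphi$ must satisfy $\psi([a])=\psi(\pi(a))=\varphi(a)=\tilde{\varphi}([a])$ for every $a\in M$, and since $\pi$ is surjective this forces $\psi=\tilde{\varphi}$.

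For the supplement I would specialize the first part to $\varphi:=\pi_{2}$, the canonical projection $M\rto M/\!\sim_{2}$, which is a binoid epimorphism by Proposition \ref{PropHomCong}. Its associated congruence $\sim_{\pi_{2}}$ coincides with $\sim_{2}$, because $\pi_{2}(a)=\pi_{2}(b)$ means $[a]_{2}=[b]_{2}$, i.e.\ $a\sim_{2}b$. Hence the hypothesis of the first part becomes exactly $\sim_{1}\,\le\,\sim_{2}$, and under this assumption the construction produces the binoid homomorphism $[a]_{1}\mto[a]_{2}$, which is surjective since $\pi_{2}$ is. Conversely, if the assignment $[a]_{1}\mto[a]_{2}$ is well defined, then $a\sim_{1}b$ gives $[a]_{1}=[b]_{1}$, so $[a]_{2}=[b]_{2}$, i.e.\ $a\sim_{2}b$; this is precisely $\sim_{1}\,\le\,\sim_{2}$, completing the equivalence.
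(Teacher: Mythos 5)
Your proof is correct and follows essentially the same route as the paper: define $\tilde{\varphi}([a]):=\varphi(a)$, use $\sim\,\le\,\sim_{\varphi}$ precisely for well-definedness, get the homomorphism property and uniqueness from surjectivity of $\pi$, and obtain the supplement by specializing to $\varphi=\pi_{2}$ (the paper's version of this is the commuting triangle with $\pi_{2}=\tilde{\varphi}\pi_{1}$). Your write-up is merely more explicit about the routine verifications that the paper leaves to the reader.
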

\begin {proof}
Define $\tilde{\varphi}([a]):=\varphi(a)$. To show that this is well-defined assume that $[a]=[b]$ for $a,b\in M$. By assumption, $a\sim_{\varphi} b$, and hence $\tilde{\varphi}([a])=\varphi(a)=\varphi(b)=\tilde{\varphi}([b])$. By definition, $\tilde{\varphi}$ is a binoid homomorphism with $\tilde{\varphi}\pi=\varphi$, and hence unique. The situation of the supplement is expressed in the following diagram 
$$\xymatrix{
M\ar[r]^{\pi_{2}}\ar[d]_{\pi_{1}}&M/\sim_{2}\\
M/\sim_{1}\ar[ur]_{\tilde{\varphi}}&}$$
with $\pi_{2}=\tilde{\varphi}\pi_{1}$ surjective. Hence, $\tilde{\varphi}$ is surjective.
\end {proof}

\begin {Remark} \label{RemKer}
In standard literature, the congruence $\sim_{\varphi}$ defined by a monoid homomorphism $\varphi:M\rto N$ as in Proposition \ref{PropHomCong} is usually denoted by $\ker\varphi$, see for example \cite{GrilletS}, \cite{GrilletCS}, or \cite{GarciaRosalesNS}. Most probably this is due to the fact that the famous isomorphism theorem 
$$M/\!\sim_{\varphi}\,\,\cong\,\,\im\varphi$$
can be stated with the common notation, cf.\ for instance \cite[Proposition 8.2]{GarciaRosalesNS}, and maybe because there is no absorbing element taken into account that justifies our definition of $\ker$. Note that (continuing with our notation) the binoid homomorphism induced by $\ker\varphi$ need \emph{not} be an isomorphism, see Remark \ref{RemIndIsom} below. On the other hand, if $\varphi$ is a binoid homomorphism such that $\varphi_{|M\setminus\ker\varphi}$ is injective, which means that $\ker\varphi$ is defined by the congruence $\sim_{\varphi}$, then the induced homomorphism is an isomorphism, cf.\ Remark \ref{RemIndIsom}. For instance, the congruences given in Lemma \ref{LemIdealCong} below reflect this situation.
\end {Remark}

It is very common to identify a congruence $\sim$ with the set $R(\sim):=\{(a,b)\mid a\sim b\}\subseteq M\times M$.\nomenclature[R]{$R(\sim)$}{$:=\{(a,b)\mid a\sim b\}$} For instance, $R(\sim_{\opu})=M\times M$ and $R(\sim_{\id})$ is the diagonal $D:=\{(a,a)\mid a\in M\}\subseteq M\times M$. On the other hand, every subset $R\subseteq M\times M$ generates a congruence on $M$. For this set 
$$R^{\prime}:=\{(a+c,b+c)\mid(a,b)\in R\cup R^{-1}\cup D,c\in M\}\komma$$
where $R^{-1}=\{(b,a)\mid (a,b)\in R\}$. Now the relation $\sim_{R}$ on $M$ given by 
$$a\sim_{R}b\quad:\Leftrightarrow\quad\exists\, a=a_{1},a_{2}\kpkt a_{n}=b\quad\text{such that}\quad(a_{i},a_{i+1})\in R^{\prime}\komma\quad i\in\{1\kpkt n-1\}\komma$$
is a congruence on $M$. \nomenclature[ACongruenceRel]{$\sim_{R}$}{congruence on a binoid $M$ generated by relations $R\subseteq M\times M$}

\begin {Definition}
Let $M$ be a binoid and $R\subseteq M\times M$ a subset. The congruence $\sim_{R}$ defined as above is the congruence \gesperrt{generated} by the \gesperrt{relations} $R$.\index{congruence!-- generated by relations}\index{relation} A congruence $\sim$ on a binoid $M$ is called \gesperrt{finitely generated} \index{congruence!finitely generated --}if there exists a finite subset $R\subseteq M\times M$ such that $\sim$ is the congruence generated by $R$.
\end {Definition}

\begin {Remark}\label{RemNoetherian=FG} (Noetherian binoids)
The set of all cogruences on a binoid $M$ is a partially ordered set with respect to $\le$, where $\sim_{1}\,\le\,\sim_{2}$ is equivalent to $R(\sim_{1})\subseteq R(\sim_{2})$.

In \cite[Chapter 1.4]{Gilmer}, Gilmer defined a commutative monoid $M$ to be \gesperrt {noetherian} \index{binoid!noetherian --}if this order on the set of congruences on $M$ is noetherian; that is to say, $M$ satisfies the ascending chain condition \index{ascending chain condition (a.c.c.)!-- on congruences}\index{congruence!a.c.c.\ on --s}(a.c.c.) on congruences, or in other words, for every chain $\,\sim_{1}\,\,\le\,\,\sim_{2}\,\,\le\,\,\sim_{3}\,\,\le\,\cdots$ of congruences there is a $k\in\N$ such that $\,\sim_{n}\,\,=\,\,\sim_{k}\,$ for all $n\ge k$. Equivalently, every congruence on $M$ is finitely generated. Moreover, Gilmer showed that a commutative monoid is noetherian if and only if it is finitely generated, cf.\ \cite[Theorem 5.10 and 7.8]{Gilmer}. Since a binoid is finitely generated if and only if it is finitely generated as a monoid, and a congruence on a binoid is a congruence on the underlying monoid, we have the same statement for commutative binoids. Thus, considering noetherian or finitely generated commutative binoids amounts to the same thing. 

There is another (weaker) definition of noetherian in use by several authors in terms of ideals, cf.\ Remark \ref{RemNakayamaACC}.
\end {Remark}

\begin {Example} \label{ExGen}
Let $M$ be a commutative binoid generated by $\{a_{i}\mid i\in I\}$. If $\varepsilon:\free(I)\rto M$ is the canonical epimorphism $i\mto a_{i}$, then $M\cong\free(I)/\!\sim_{\varepsilon}$ by Proposition \ref{PropHomCong}. Thus, $M$ is given by the generating set $\{a_{i}\mid i\in I\}$ and the family of relations $\varepsilon(y_{j})=\varepsilon(z_{j})$ (i.e.\ $\Rcal_{j}:y_{j}\sim_{\varepsilon}z_{j}$, $j\in J$), which generate $\sim_{\varepsilon}$. This justifies the notation given in Definition \ref{DefFG}, 
$$M=\free(I)/(\Rcal_{j})_{j\in J}\pkt$$
If $\#I=r$, then $M\cong(\N^{r})^{\infty}/\!\sim_{\varepsilon}$, where $\varepsilon:e_{i}\mto a_{i}$, $i\in\{1\kpkt r\}$. In Section \ref {SecFGbinoids}, finitely generated commutative binoids are studied in detail.
\end {Example}

Similar to the situation for semigroups and monoids the index set $J$ in the preceding example can be replaced by a finite subset if $M$ is a finitely generated \emph{commutative} binoid. This result is known as the Theorem of R\'edei, and the proof for binoids is identical.

\begin {Lemma}\label{LemPreston} \emph{(Preston)}
Let $\sim$ be a congruence on the free commutative binoid $(\N^{n})^{\infty}$ and $\Iideal(\sim)$ the ideal in the polynomial ring $\Z[X_{1}\kpkt X_{n}]$, which is the binoid algebra of $(\N^{n})^{\infty}$ over $\Z$, generated by the binomials $X^{a}-X^{b}$, where $a,b\in(\N^{n})^{\infty}$ such that $a\sim b$ and $X^{c}=X_{1}^{c_{1}}\cdots X_{n}^{c_{n}}$ for $c=(c_{1}\kpkt c_{n})\in\N^{n}$ and $X^{\infty}=0$. Then $a\sim b$ if and only if $X^{a}-X^{b}\in\Iideal(\sim)$.
\end {Lemma}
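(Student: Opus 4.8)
The plan is to dispatch the forward implication by inspection and to handle the substantive converse by building a single ring homomorphism out of $\Z[X_1\kpkt X_n]$ whose kernel contains $\Iideal(\sim)$ and which separates congruence classes. For the forward direction, if $a\sim b$ then $X^{a}-X^{b}$ is by definition one of the generators of $\Iideal(\sim)$, hence lies in $\Iideal(\sim)$; nothing further is needed.

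For the converse I would pass to the quotient binoid. By Proposition \ref{PropHomCong}, $\bar M:=(\N^{n})^{\infty}/\!\sim$ is a binoid and the canonical map $\pi\colon(\N^{n})^{\infty}\Rto\bar M$, $c\mapsto[c]$, is a binoid epimorphism; in particular $[c+d]=[c]+[d]$, and $[\infty]$ is the absorbing element of $\bar M$. Working with binoid algebras over $\Z$, and using the identification $\Z[(\N^{n})^{\infty}]=\Z[X_1\kpkt X_n]$ with $X^{c}=T^{c}$ and $X^{\infty}=0$, I define the $\Z$-linear map
$$\Phi\colon\Z[X_1\kpkt X_n]\Rto\Z[\bar M],\qquad X^{c}\lto T^{[c]},$$
where $T^{[\infty]}=0$ by the definition of the binoid algebra. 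Since $\{X^{c}\mid c\in\N^{n}\}$ is a $\Z$-basis, $\Phi$ is well defined, and a one-line check on monomials, $\Phi(X^{c}X^{d})=T^{[c+d]}=T^{[c]}T^{[d]}=\Phi(X^{c})\Phi(X^{d})$ together with $\Phi(X^{0})=T^{[0]}=1$, shows $\Phi$ is a ring homomorphism. Its kernel is therefore an ideal, and it contains every generator of $\Iideal(\sim)$: for $a\sim b$ one has $\Phi(X^{a}-X^{b})=T^{[a]}-T^{[b]}=0$ because $[a]=[b]$. Hence $\Iideal(\sim)\subseteq\ker\Phi$.

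Now suppose $X^{a}-X^{b}\in\Iideal(\sim)$. Applying $\Phi$ gives $T^{[a]}=T^{[b]}$ in $\Z[\bar M]$. As $\Z[\bar M]$ is free as a $\Z$-module on $\{T^{\kappa}\mid\kappa\in\bar M,\ \kappa\neq[\infty]\}$, this equality forces either $[a]=[b]$, or else both $T^{[a]}$ and $T^{[b]}$ are the zero element, i.e.\ $[a]=[\infty]=[b]$; in either case $a\sim b$, which is what we wanted. The proof has no deep obstacle, but the one genuinely delicate point is the bookkeeping of the absorbing class: finite exponents $c$ with $c\sim\infty$ are sent to $0$, so I must confirm that the linear-independence argument still separates classes correctly when one or both of $[a],[b]$ equals $[\infty]$ — which it does, exactly as noted above.
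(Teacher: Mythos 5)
Your proof is correct. The paper itself gives no argument here: it simply cites Grillet \cite[VI, Lemma 1.1]{GrilletCS} for the monoid case $\N^{n}$ and asserts that the reasoning carries over to $(\N^{n})^{\infty}$ verbatim. What you supply is the standard self-contained argument: the ring homomorphism $\Phi$ you build by hand is exactly the map $\Z[\pi]$ induced by the canonical projection $\pi\colon(\N^{n})^{\infty}\rto(\N^{n})^{\infty}/\!\sim$ via the functoriality of the binoid algebra (Corollary \ref{CorUnivPropBinoidA}, or Corollary \ref{CorPropBinoidA}(3) applied to an ideal congruence), so you could have shortened the construction by citing that machinery. The two points that genuinely need checking are both handled: multiplicativity of $\Phi$ in the degenerate case $[c]+[d]=[\infty]$ works because $T^{[\infty]}=0$ on both sides of $\Phi(X^{c}X^{d})=T^{[c]+[d]}=T^{[c]}T^{[d]}$, and the separation of classes at the end correctly uses that $\Z[\bar M]$ is $\Z$-free on $\{T^{\kappa}\mid\kappa\in\bar M,\ \kappa\neq[\infty]\}$, so $T^{[a]}=T^{[b]}$ forces either $[a]=[b]$ or $[a]=[b]=[\infty]$, and transitivity of $\sim$ closes the latter case. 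This buys a proof that is arguably cleaner than chasing the telescoping/coefficient-comparison argument one finds in the semigroup literature, at the mild cost of invoking the quotient binoid algebra.
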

\begin {proof}
The proof of the statement for the monoid $\N^{n}$ can be found in \cite[VI, Lemma 1.1]{GrilletCS} for instance. The argumentation for $(\N^{n})^{\infty}$ is exactly the same.
\end {proof}

\begin {Theorem} \label{ThRedei} \emph{(R\'edei)}
Every congruence on the free commutative binoid $(\N^{n})^{\infty}$ is finitely generated.
\end {Theorem}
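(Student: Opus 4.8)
The plan is to transfer the statement to commutative algebra by way of Lemma~\ref{LemPreston} and then invoke the Hilbert basis theorem. Let $\sim$ be an arbitrary congruence on $(\N^n)^{\infty}$, and let $\Iideal(\sim)\subseteq\Z[X_1\kpkt X_n]$ be the associated binomial ideal provided by that lemma, where $\Z[X_1\kpkt X_n]$ is the binoid algebra of $(\N^n)^{\infty}$ over $\Z$. Since $\Z$ is noetherian, the Hilbert basis theorem shows that $\Z[X_1\kpkt X_n]$ is noetherian, so $\Iideal(\sim)$ is a finitely generated ideal.

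First I would extract from this a \emph{finite} family of relations. By construction $\Iideal(\sim)$ is generated by the set $S:=\{X^a-X^b\mid a\sim b\}$, which is in general infinite. A finitely generated ideal that is generated by a set $S$ is already generated by a finite subset of $S$: one writes a finite generating system as $\Z[X_1\kpkt X_n]$-linear combinations of elements of $S$ and collects the finitely many members of $S$ that actually occur. Hence there are pairs $(a_1,b_1)\kpkt(a_m,b_m)$ with $a_i\sim b_i$ such that $\Iideal(\sim)=(X^{a_1}-X^{b_1}\kpkt X^{a_m}-X^{b_m})$.

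Next I would set $R:=\{(a_i,b_i)\mid i\in\{1\kpkt m\}\}$ and let $\sim_R$ be the congruence generated by $R$; the goal is $\sim_R\,=\,\sim$. Since each relation $a_i\sim b_i$ holds in $\sim$, minimality of $\sim_R$ gives $\sim_R\,\le\,\sim$. For the reverse inequality I would compare the associated ideals. Applying Lemma~\ref{LemPreston} to $\sim_R$, the ideal $\Iideal(\sim_R)$ is generated by the binomials $X^c-X^d$ with $c\sim_R d$; in particular it contains every $X^{a_i}-X^{b_i}$, so $\Iideal(\sim)\subseteq\Iideal(\sim_R)$. Conversely, $\sim_R\,\le\,\sim$ forces $X^c-X^d\in\Iideal(\sim)$ for each generator $X^c-X^d$ of $\Iideal(\sim_R)$, whence $\Iideal(\sim_R)\subseteq\Iideal(\sim)$ and therefore $\Iideal(\sim_R)=\Iideal(\sim)$. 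Now for any $a\sim b$, Lemma~\ref{LemPreston} yields $X^a-X^b\in\Iideal(\sim)=\Iideal(\sim_R)$, and the converse direction of the same lemma applied to $\sim_R$ gives $a\sim_R b$. Thus $\sim\,\le\,\sim_R$, so $\sim\,=\,\sim_R$ is generated by the finite set $R$.

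The only genuinely delicate point is the second step: one must obtain a finite generating set of $\Iideal(\sim)$ consisting of \emph{honest} binomials $X^a-X^b$ with $a\sim b$, not arbitrary polynomials. This is exactly where I would use the observation that a generating set of a noetherian ideal contains a finite generating subset. Everything else is a formal consequence of Preston's lemma, which supplies the order-preserving bridge between congruences on $(\N^n)^{\infty}$ and binomial ideals of $\Z[X_1\kpkt X_n]$, together with the noetherianity of the polynomial ring.
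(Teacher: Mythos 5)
Your proof is correct, and it rests on the same two pillars as the paper's argument --- Preston's Lemma \ref{LemPreston} and Hilbert's Basis Theorem --- but it runs in the opposite direction. The paper argues by contradiction: a hypothetically non-finitely-generated congruence yields a non-stationary ascending chain of congruences $\sim_{R_1}\,\le\,\sim_{R_2}\,\le\cdots$ generated by finite subsets of $R(\sim)$, hence (by the ``if and only if'' of Preston's Lemma) a non-stationary ascending chain of ideals $\Iideal(\sim_{R_1})\subseteq\Iideal(\sim_{R_2})\subseteq\cdots$ in $\Z[X_1\kpkt X_n]$, contradicting the a.c.c. You instead use the finite-generation form of noetherianity directly, extract a finite subfamily of the binomials $X^{a}-X^{b}$ that already generates $\Iideal(\sim)$, and then verify $\sim_R\,=\,\sim$ by applying Preston's Lemma to both $\sim$ and $\sim_R$. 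The price of the direct route is exactly the two steps you flag: that an ideal generated by a set $S$ which happens to be finitely generated is generated by a finite subset of $S$, and the implication $\Iideal(\sim_R)=\Iideal(\sim)\Rightarrow\ \sim_R\,=\,\sim$; both are handled correctly. The payoff is that you actually exhibit a finite set of generating relations rather than merely deducing its existence. One small point worth recording: some of your pairs $(a_i,b_i)$ may involve $\infty$, in which case $X^{a_i}-X^{b_i}$ degenerates to a monomial under the convention $X^{\infty}=0$ of Lemma \ref{LemPreston}; this is harmless, since monomial relations $a_i\sim\infty$ are perfectly admissible generators of a congruence, but it is worth a sentence so the reader sees why the lemma is stated for $(\N^{n})^{\infty}$ rather than $\N^{n}$.
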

\begin {proof}
Let $\sim$ be a congruence on $(\N^{n})^{\infty}$. If $\sim$ is not finitely generated, there exists an ascending chain $R_{1}\subset R_{2}\subset\cdots\subset R_{k}\subset\cdots$ of finite subsets of $R(\sim)=\{(a,b)\mid a\sim b\}$, which induces an ascending chain of congruences
$$\sim_{R_{1}}\,\,\le\,\,\sim_{R_{2}}\,\,\le\,\,\cdots\,\,\le\,\,\sim_{R_{k}}\,\,\le\,\,\cdots$$
on $(\N^{n})^{\infty}$. By assumption on $\sim$, this chain cannot become stationary, and hence the same holds for the ascending chain of ideals 
$$\Iideal(\sim_{R_{1}})\,\,\subseteq\,\,\Iideal(\sim_{R_{2}})\,\,\subseteq\,\,\cdots\,\,\subseteq\,\,\Iideal(\sim_{R_{k}})\,\,\subseteq\,\,\cdots$$
in $\Z[X_{1}\kpkt X_{n}]$ by Lemma \ref{LemPreston}, contrary to Hilbert's Basis Theorem, cf.\ \cite[Theorem 1.C.4]{PatilStorch}.
\end {proof}

Similar to the theory of monoids, the following consequence of Theorem \ref{ThRedei} may be called \emph{R\'edei's Theorem for finitely generated commutative binoids}. 

\begin {Corollary} \label{CorRedei}
Every finitely generated commutative binoid $M$ with generators $x_{1}\kpkt x_{r}$ admits finitely many relations $\Rcal_{1}\kpkt\Rcal_{s}$ such that $M\cong\free(x_{1}\kpkt x_{r})/(\Rcal_{j}$, $j\in\{1\kpkt s\})$.
\end {Corollary}
\begin {proof}
This follows immediately from Theorem \ref{ThRedei} since $M\cong(\N^{r})^{\infty}/\sim_{\varepsilon}$, where $\varepsilon:(\N^{r})^{\infty}\rto M$ is the canonical binoid homomorphism $e_{i}\mto x_{i}$, $i\in\{1\kpkt r\}$.
\end {proof}

\begin {Example}
Though finite, the number of relations on a finitely generated commutative binoid might be arbitrary large as the following example of binoids with only two generators shows:
$$M_{n}\,:=\,\free(x,y)/(\Rcal_{k}, k\in\{n,n+1\kpkt 2n-1\})\quad\text{with}\quad\Rcal_{k}:kx=ky\komma$$
$n\ge2$. Every such binoid $M_{n}$ is obviously positive, integral, and not torsion-free. Moreover, $M_{n}$ is not cancellative since $x$ is not cancellative as
$$x+((n-1)x+y)\,=\,nx+y\,=\,ny+y\,=\,(n+1)y\,=\,(n+1)x\,=\,x+nx$$
but $(n-1)x+y\not=nx$. Similarly, $y$ is not cancellative, which already implies that $M_{n}$ contains no non-trivial cancellative elements by Lemma \ref{LemCanM=0}.
\end {Example}

\begin {Corollary}
If $M$ is a finitely generated or commutative binoid, then so is $M/\!\sim$.
\end {Corollary}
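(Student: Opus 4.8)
The plan is to treat the two assertions separately, since they rest on different hypotheses, and to route both through the canonical epimorphism $\pi\colon M\rto M/\!\sim$, $a\mto[a]$, which is a binoid epimorphism by Proposition \ref{PropHomCong}.

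First I would dispose of commutativity, as it is immediate from the quotient operation. If $M$ is commutative, then for any $[a],[b]\in M/\!\sim$ the defining rule of the quotient gives $[a]+[b]=[a+b]=[b+a]=[b]+[a]$, where the middle equality is the commutativity of $M$; hence $M/\!\sim$ is commutative. No further argument is needed here.

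For finite generation, the key observation is that a surjective binoid homomorphism carries a generating set to a generating set. Concretely, suppose $M=\langle A\rangle$ with $A=\{a_{i}\mid i\in I\}$ a finite generating set in the sense of Definition \ref{DefFG}. By Lemma \ref{LemFfrakEpsilon} there is a surjection $\varepsilon\colon\freenc(I)\rto M$ with $i\mto a_{i}$, and composing with $\pi$ yields a homomorphism $\pi\varepsilon\colon\freenc(I)\rto M/\!\sim$, $i\mto[a_{i}]$, which is surjective as a composite of surjections. Applying the converse direction of Lemma \ref{LemFfrakEpsilon} now to the target $M/\!\sim$, the surjectivity of $\pi\varepsilon$ shows that $\{[a_{i}]\mid i\in I\}$ generates $M/\!\sim$. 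Since $I$ is finite, $M/\!\sim$ is finitely generated.

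I do not expect any genuine obstacle, since both steps are formal consequences of the quotient construction and of Lemma \ref{LemFfrakEpsilon}. The only points that deserve a moment's care are that $\pi\varepsilon$ is again surjective, which is clear, and that the generating-set criterion of Lemma \ref{LemFfrakEpsilon} is being used in its second (``if and only if'') direction with $M/\!\sim$ in place of $M$; both are entirely routine.
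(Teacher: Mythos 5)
Your proof is correct and is exactly the routine argument the paper has in mind — its own proof is simply ``This is clear.'' Both the commutativity computation $[a]+[b]=[a+b]=[b+a]=[b]+[a]$ and the observation that the classes of generators generate the quotient (via the surjection $\pi\varepsilon$) are the intended, standard reasoning.
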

\begin {proof}
This is clear. 
\end {proof}

Of course, if $\sim$ is a congruence on a binoid $M$, not all properties of $M$ transfer to $M/\!\sim$. In fact, the binoid $M/\!\sim$ might be nicer than the initial binoid $M$ if $M\not=(\N^{r})^{\infty}$, $r\ge1$.

\bigskip

\section {Important congruences} \label{SecImpCong}
\markright{\ref{SecImpCong} Important congruences}

In this section, we give a list of some congruences with which we will be concerned in this thesis. Most of them have a well-known counterpart in monoid theory, see for instance \cite[Chapter I.4]{Gilmer}.

\begin {Definition}
A congruence $\sim$ on $M$ with the property that $\pi:M\rto M/\!\sim$ is injective on 
$M\setminus\ker\pi$ is called an \gesperrt{ideal congruence}\index{congruence!ideal --}.
\end {Definition}

\begin {Example}\label{ExpKernKongruenz}
Every binoid homomorphism $\varphi:M\rto N$ defines a (very important) ideal congruence on $M$, namely $a\sim_{\ker\varphi}b$ if $a=b$ or $a,b\in\ker\varphi$. \nomenclature[ACongruenceKer]{$\sim_{\ker\varphi}$}{congruence on a binoid}The elements of $M/\!\sim_{\ker\varphi}$ are given by $[a]=\{a\}$ if $a\not\in\ker\varphi$ and $[a]=[\infty]$ otherwise. In the following, we will make use of the notation 
$$M/\ker\varphi$$
for $M/\!\sim_{\ker\varphi}$. This notation will be justified in Section \ref{SecIdeals}, where we will see that the congruence defined by an ideal is an ideal congruence and every ideal congruence is up to isomorphism given by an ideal, cf.\ Remark \ref{RemIndIsom}.
\end {Example}

\begin {Lemma} \label{LemFactoriazationKer}
Every binoid homomorphism $\varphi:M\rto N$ factors uniquely through $M/\ker\varphi$ such that $M/\ker\varphi$ is integral if $N$ is so.
\end {Lemma}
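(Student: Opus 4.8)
The plan is to obtain the factorization from the universal-type property already recorded in Lemma \ref{LemIndCong}, and then to verify integrality of the quotient by transporting the defining property of integral binoids through $\varphi$. Throughout I write $M/\ker\varphi$ for $M/\!\sim_{\ker\varphi}$, where $\sim_{\ker\varphi}$ is the ideal congruence of Example \ref{ExpKernKongruenz}; since that example already establishes $\sim_{\ker\varphi}$ is a genuine congruence, $M/\ker\varphi$ is a binoid and the canonical map $\pi:M\rto M/\ker\varphi$ is an epimorphism by Proposition \ref{PropHomCong}, with absorbing element the class $[\infty]=\ker\varphi$.

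For the factorization, the key point I would verify is the order relation $\sim_{\ker\varphi}\,\le\,\sim_{\varphi}$. Indeed, if $a\sim_{\ker\varphi}b$, then either $a=b$, whence trivially $\varphi(a)=\varphi(b)$, or $a,b\in\ker\varphi$, whence $\varphi(a)=\infty_{N}=\varphi(b)$; in both cases $a\sim_{\varphi}b$. Lemma \ref{LemIndCong} then applies verbatim and produces a \emph{unique} binoid homomorphism $\tilde\varphi:M/\ker\varphi\rto N$ with $\tilde\varphi\pi=\varphi$, which is precisely the asserted unique factorization through $M/\ker\varphi$.

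For integrality, assume $N$ is integral, so in particular $N\ne\zero$ and $0_{N}\ne\infty_{N}$. Since $\varphi(0_{M})=0_{N}\ne\infty_{N}$ we get $0_{M}\notin\ker\varphi$, hence $\ker\varphi\ne M$ and $M/\ker\varphi$ is a nonzero binoid, so that the notion of integrality applies to it. I would then take classes with $[a]+[b]=[a+b]=[\infty]$, i.e.\ $a+b\in\ker\varphi$, so that $\varphi(a)+\varphi(b)=\varphi(a+b)=\infty_{N}$; the integrality of $N$ forces $\varphi(a)=\infty_{N}$ or $\varphi(b)=\infty_{N}$, that is $a\in\ker\varphi$ or $b\in\ker\varphi$, hence $[a]=[\infty]$ or $[b]=[\infty]$. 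Thus $(M/\ker\varphi)\opkt$ is closed under addition, which is exactly integrality.

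I expect no real obstacle here: both halves are short, and the only points demanding a little care are bookkeeping ones — namely using the hypothesis ``$N$ integral'' to rule out the degenerate case $\ker\varphi=M$ (so the quotient is nonzero), and invoking that the addition $[a]+[b]=[a+b]$ is the operation furnished by Proposition \ref{PropHomCong}. The conceptual crux is the single containment $\sim_{\ker\varphi}\,\le\,\sim_{\varphi}$; once it is in hand the factorization is immediate from Lemma \ref{LemIndCong}, and integrality is a one-line consequence of pushing $a+b=\infty$ through $\varphi$.
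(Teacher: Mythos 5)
Your proof is correct and follows essentially the same route as the paper: the factorization comes from $\sim_{\ker\varphi}\,\le\,\sim_{\varphi}$ together with Lemma \ref{LemIndCong}, and the integrality of $M/\ker\varphi$ is obtained by pushing $a+b\in\ker\varphi$ through $\varphi$, which is the same computation the paper phrases via the observation that $\tilde{\varphi}([a])=\infty$ if and only if $[a]=[\infty]$. Nothing is missing.
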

\begin {proof}
The factorization $\tilde{\varphi}:M/\ker\varphi\rto N$ follows from Lemma \ref{LemIndCong} and the supplement from the fact that $\tilde{\varphi}(a)=\infty$ is equivalent to $a=\infty$.
\end {proof}

\begin {Lemma}
If $M$ is a positive binoid, then so is $M/\sim$ for every ideal congruence $\sim$.
\end {Lemma}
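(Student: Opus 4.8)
The plan is to show directly that the unit group of $M/\!\sim$ is trivial, i.e.\ $(M/\!\sim)\okreuz=\{[0]\}$, by lifting any unit relation in the quotient to an honest unit relation in $M$ and then invoking positivity of $M$. The tool I would use is the defining property of an ideal congruence recorded in Example \ref{ExpKernKongruenz}: the quotient map $\pi\colon M\rto M/\!\sim$ is injective on $M\setminus\ker\pi$, so that every class outside $\ker\pi=[\infty]$ is a singleton, while the only collapsing happens inside $\ker\pi$.

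First I would take an arbitrary unit $[a]\in(M/\!\sim)\okreuz$ with inverse $[b]$, so that $a+b\sim 0$ and $b+a\sim 0$. I would dispose of the degenerate case at the outset: if $[0]=[\infty]$ in $M/\!\sim$, then $M/\!\sim$ is the zero binoid $\zero$, whose only element is $[0]$, and such a one-element binoid is trivially positive; so nothing remains to prove. Hence I may assume $[0]\neq[\infty]$, which is exactly the statement that $0\notin\ker\pi$. The key step is then to upgrade the congruence $a+b\sim 0$ to the equation $a+b=0$ in $M$. Since $\pi(a+b)=\pi(0)=[0]\neq[\infty]$, both $a+b$ and $0$ lie in $M\setminus\ker\pi$, where $\pi$ is injective by the ideal-congruence hypothesis; therefore $a+b=0$. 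Running the same argument on $b+a\sim 0$ gives $b+a=0$. Thus $a$ is a genuine unit of $M$ with inverse $b$, so positivity of $M$ forces $a\in M\okreuz=\{0\}$, i.e.\ $a=0$ and $[a]=[0]$. This establishes $(M/\!\sim)\okreuz=\{[0]\}$, which is precisely the positivity of $M/\!\sim$.

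I do not expect a genuine obstacle here, since the lemma is elementary; the only points requiring care are bookkeeping ones. I would make sure to separate the degenerate collapse $[0]=[\infty]$ from the singleton case before exploiting injectivity, and I would verify both one-sided relations $a+b=0$ and $b+a=0$ so that the argument never tacitly assumes commutativity (the section treats arbitrary binoids). Once the injectivity of $\pi$ off $\ker\pi$ is used to convert a quotient relation into an equality in $M$, positivity of $M$ closes the argument at once.
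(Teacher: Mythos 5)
Your proof is correct and is exactly the argument the paper has in mind (the paper simply records ``This is clear''): an ideal congruence only collapses elements into $[\infty]$, so a unit relation $[a]+[b]=[0]$ with $[0]\neq[\infty]$ lifts to $a+b=b+a=0$ in $M$, and positivity of $M$ finishes the job. Your handling of the degenerate case $[0]=[\infty]$ and of both one-sided inverse relations is appropriately careful.
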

\begin {proof}
This is clear. 
\end {proof}

\begin {Proposition} \label{PropNSpecIdealCong}
Let $M$ be a binoid and $\sim$ an ideal congruence on $M$. If $\pi:M\rto M/\!\!\sim$ denotes  the canonical projection, then
$$N\minspec(M/\!\sim)\,\,\cong\,\,\{\varphi\in N\minspec M\mid\ker\pi\subseteq\ker\varphi\}$$
as semigroups.
\end {Proposition}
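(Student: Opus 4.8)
The plan is to realize the claimed isomorphism as the map induced by the canonical projection $\pi\colon M\rto M/\!\!\sim$ and then to pin down its image. Since $\pi$ is a binoid epimorphism by Proposition \ref{PropHomCong}, Proposition \ref{PropIndHomNspec} applies and yields an \emph{injective} semigroup homomorphism
$$\pi^{\ast}\colon N\minspec(M/\!\!\sim)\Rto N\minspec M\komma\quad\psi\lto\psi\pi\pkt$$
Hence $\pi^{\ast}$ maps $N\minspec(M/\!\!\sim)$ isomorphically onto its image, and it remains only to show that this image is exactly $\{\varphi\in N\minspec M\mid\ker\pi\subseteq\ker\varphi\}$. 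As the image of a semigroup homomorphism, that set is then automatically a subsemigroup, which is what makes the stated isomorphism of semigroups meaningful.

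First I would check that the image is contained in this set. Let $\psi\in\hom(M/\!\!\sim,N)$ and $a\in\ker\pi$, so that $\pi(a)=\infty_{M/\sim}$. Since $\psi$ is a binoid homomorphism, $(\psi\pi)(a)=\psi(\infty_{M/\sim})=\infty_{N}$, hence $a\in\ker(\psi\pi)=\ker(\pi^{\ast}(\psi))$. This gives $\ker\pi\subseteq\ker(\pi^{\ast}(\psi))$, as desired.

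For the reverse inclusion, let $\varphi\in N\minspec M$ satisfy $\ker\pi\subseteq\ker\varphi$; I would factor $\varphi$ through $\pi$ by means of Lemma \ref{LemIndCong}, for which it suffices to verify $\sim\,\le\,\sim_{\varphi}$. Here the hypothesis that $\sim$ is an \emph{ideal} congruence is essential: by definition $\pi$ is injective on $M\setminus\ker\pi$, so whenever $a\sim b$ with $a\neq b$, the two elements cannot both lie outside $\ker\pi$; and since $\ker\pi$ is the $\sim$-class of $\infty$ and hence closed under $\sim$, it then follows that both $a,b\in\ker\pi$. Consequently, if $a\sim b$ then either $a=b$, whence $\varphi(a)=\varphi(b)$ trivially, or $a\neq b$, whence $a,b\in\ker\pi\subseteq\ker\varphi$ and $\varphi(a)=\infty_{N}=\varphi(b)$. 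In both cases $a\sim_{\varphi}b$, so $\sim\,\le\,\sim_{\varphi}$. Lemma \ref{LemIndCong} now furnishes a unique binoid homomorphism $\tilde{\varphi}\colon M/\!\!\sim\,\rto N$ with $\tilde{\varphi}\pi=\varphi$, that is $\pi^{\ast}(\tilde{\varphi})=\varphi$, so $\varphi$ lies in the image of $\pi^{\ast}$.

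Combining the two inclusions identifies the image of $\pi^{\ast}$ with the stated set, and $\pi^{\ast}$ restricts to the desired semigroup isomorphism. The only genuine content is the reverse inclusion, and within it the one real point is the structural fact — forced precisely by the ideal-congruence hypothesis — that any two distinct $\sim$-equivalent elements must both lie in $\ker\pi$; everything else is a direct appeal to Propositions \ref{PropHomCong} and \ref{PropIndHomNspec} together with Lemma \ref{LemIndCong}.
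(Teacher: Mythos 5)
Your proof is correct and follows the same route as the paper: apply Proposition \ref{PropIndHomNspec} to the epimorphism $\pi$, and obtain surjectivity onto the stated set by verifying $\sim\,\le\,\sim_{\varphi}$ and invoking Lemma \ref{LemIndCong}. The only difference is that you spell out why the ideal-congruence hypothesis forces $\sim\,\le\,\sim_{\varphi}$ (distinct equivalent elements must both lie in $\ker\pi$), a step the paper asserts without elaboration.
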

\begin {proof}
The canonical projection $\pi:M\rto M/\!\sim$ induces by Proposition \ref{PropIndHomNspec} a semigroup homomorphism $N\minspec M/\!\sim\,\rto N\minspec M$ with $\psi\mto\psi\pi$ such that $\psi\pi(a)=\infty$ for all $a\in\pi^{-1}([\infty])$. On the other hand, if $\varphi\in N\minspec M$ with $\ker\pi\subseteq\ker\varphi$, then $\sim\,\le\,\sim_{\varphi}$, and hence $\varphi$ factors through a binoid homomorphism $\tilde{\varphi}:M/\!\sim\,\rto N$ with $\varphi=\tilde{\varphi}\pi$ by Lemma \ref{LemIndCong}. This shows that $N\minspec(M/\!\!\sim)\rto\{\psi\in N\minspec M\mid\ker\pi\subseteq\ker\varphi\}$ with $\psi\mto\psi\pi$ is surjective, and it is injective because $\phi\pi=\psi\pi$ implies $\phi=\psi$ by the surjectivity of $\pi$.
\end {proof}

\begin {Lemma} \label {LemIdealCong}
Let $M$ be a commutative binoid. 
\begin {ListeTheorem}
\item The relation $\sim_{\opint}\!$\nomenclature[ACongruenceInt]{$\sim_{\opint}$}{congruence on a binoid} on $M$ given by $a\sim_{\opint}\!b$ if $a=b$ or $a,b\in\nonint(M)$, is an ideal congruence such that $M/\!\sim_{\opint}\,\cong\, M_{\opint}$.
\item The relation $\sim_{\opred}\!$\nomenclature[ACongruenceRed]{$\sim_{\opred}$}{congruence on a binoid} on $M$ given by $a\sim_{\opred}\!b$ if $a=b$ or $a,b\in\nil(M)$, is an ideal congruence such that $M/\!\sim_{\opred}=:M_{\opred}$.\nomenclature[MACongruence]{$M_{\opred}$}{$=M/\sim_{\opred}$}
\end {ListeTheorem}
\end {Lemma}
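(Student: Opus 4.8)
The plan is to treat both parts uniformly by noting that each relation has the shape $a \sim b \iff a = b$ or $a, b \in S$, where $S = \nonint(M)$ in (1) and $S = \nil(M)$ in (2), and in both cases $\infty \in S$. Such a relation is visibly reflexive, symmetric, and transitive, so the only substantial point for the congruence property is that $a \sim b$ forces $a + c \sim b + c$ for every $c$ (the version $c + a \sim c + b$ then follows by commutativity). When $a = b$ this is immediate, so the decisive case is $a, b \in S$ with $a \neq b$: here I would deduce $a + c \in S$ and $b + c \in S$, whence $a + c \sim b + c$ directly from the definition. Thus everything reduces to the single closure statement $S + M \subseteq S$.

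Next I would verify this closure in each case. For $S = \nil(M)$ it is immediate: if $na = \infty$ for some $n \geq 1$, then $n(a + c) = na + nc = \infty + nc = \infty$ by commutativity, so $a + c \in \nil(M)$. For $S = \nonint(M)$ a short argument is needed. If $a = \infty$, then $a + c = \infty \in \nonint(M)$. Otherwise $a \in M\opkt$ is non-integral, so there is $b \neq \infty$ with $a + b = \infty$; then $(a + c) + b = (a + b) + c = \infty$ with $b \neq \infty$, so either $a + c = \infty$ or else $a + c \in M\opkt$ admits the witness $b$ showing it is non-integral. Either way $a + c \in \nonint(M)$. This closure step, specifically the non-integral case, is the only place where anything must genuinely be checked, and I expect it to be the main (albeit modest) obstacle.

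Having established that both relations are congruences, the ideal-congruence property is formal: since $\infty \in S$, the class of $\infty$ is exactly $S$, so $\ker \pi = S$, while outside $S$ every class $[a]$ equals $\{a\}$; hence $\pi$ is injective on $M \setminus \ker\pi = M \setminus S$. This shows both relations are ideal congruences.

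Finally, for the isomorphism in (1) I would restrict the canonical projection $\pi$ to the subbinoid $M_{\opint} = \opint(M) \cup \zero$, which is a subbinoid by Lemma \ref{LemINT}. The restriction $\pi|_{M_{\opint}} : M_{\opint} \to M/\!\sim_{\opint}$ is a binoid homomorphism; it is injective because distinct integral elements lie in distinct singleton classes and no integral element is equivalent to $\infty$, and it is surjective because every class is either $[a]$ with $a \in \opint(M)$ or the class $[\infty]$. Hence $\pi|_{M_{\opint}}$ is the desired isomorphism $M_{\opint} \cong M/\!\sim_{\opint}$. Part (2) requires nothing further, since $M_{\opred}$ is merely introduced as a name for the quotient $M/\!\sim_{\opred}$.
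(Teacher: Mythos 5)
Your proof is correct and follows essentially the same route as the paper: the congruence property is checked on the case $a,b\in S$ by producing a witness $x\in M\opkt$ with $(a+c)+x=\infty$ (resp.\ $n(a+c)=\infty$), and the ideal-congruence property and the identification $M/\!\sim_{\opint}\,\cong M_{\opint}$ both come from the observation that $[\infty]=S$ while all other classes are singletons. Your preliminary reduction to the closure statement $S+M\subseteq S$ is just a tidy repackaging of the same computation.
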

\begin {proof}
Clearly, $\sim_{\opint}$ and $\sim_{\opred}$ are equivalence relations. To show that $\sim_{\opint}$ is a congruence let $a,b\in M$ with $a\sim_{\opint}\! b$. Clearly, $a=b$ implies $a+c=b+c$ for all $c\in M$. So consider the case $a,b\in\nonint(M)$. Then $a+x=\infty=b+y$ for some $x,y\in M\opkt$. By the commutativity of $M$, we obtain $(a+c)+x=\infty=(b+c)+y$ for all $c\in M$, which shows $a+c\sim_{\opint}\!b+c$. The last assertion of (1) follows from the observation that $[a]=[\infty]$ if and only if $a\in\nonint(M)$ and $[a]=\{a\}$ otherwise. The poof of (2) is similar to that of (1). 
\end {proof}

\begin {Corollary} \label{CorNspecIsoMred}
If $M$ is a commutative and $N$ a reduced binoid, then $N\minspec M\cong N\minspec M_{\opred}$ as semigroups.
\end {Corollary}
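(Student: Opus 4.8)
The plan is to reduce the statement immediately to Proposition \ref{PropNSpecIdealCong} applied to the reduction congruence $\sim_{\opred}$ from Lemma \ref{LemIdealCong}(2), and then to observe that the set cut out by that proposition is in fact all of $N\minspec M$, precisely because $N$ is reduced.

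First I would pin down the kernel of the canonical projection $\pi:M\rto M_{\opred}$. By the explicit description of $\sim_{\opred}$ in Lemma \ref{LemIdealCong}(2), the class of $a$ equals $[\infty]$ exactly when $a\in\nil(M)$, so $\ker\pi=\nil(M)$. Since $\sim_{\opred}$ is an ideal congruence, Proposition \ref{PropNSpecIdealCong} then supplies a semigroup isomorphism
$$N\minspec M_{\opred}\,\cong\,\{\varphi\in N\minspec M\mid\nil(M)\subseteq\ker\varphi\}\komma$$
realized concretely by $\psi\mto\psi\pi$.

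Next I would show that the defining condition $\nil(M)\subseteq\ker\varphi$ is automatic for every $\varphi\in N\minspec M$, so that the right-hand set is the whole of $N\minspec M$. This is the only place where reducedness of $N$ is used: by Lemma \ref{LemHomProperties}(1) any binoid homomorphism satisfies $\varphi(\nil(M))\subseteq\nil(N)$, and since $N$ is reduced we have $\nil(N)=\zero$. Hence $\varphi(a)=\infty_{N}$ for every nilpotent $a\in M$, that is, $\nil(M)\subseteq\ker\varphi$. Combining this with the displayed isomorphism yields $N\minspec M\cong N\minspec M_{\opred}$ as semigroups.

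There is no genuine obstacle here; the argument is a clean splicing of two earlier results. The only point that needs a moment's care is the identification $\ker\pi=\nil(M)$ (rather than some larger set), which is immediate from the definition of $\sim_{\opred}$. I would also remark that commutativity of $M$ is invoked only to guarantee that $\sim_{\opred}$ is a congruence via Lemma \ref{LemIdealCong}, and plays no further role.
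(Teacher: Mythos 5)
Your proof is correct and follows essentially the same route as the paper: the paper establishes injectivity via Proposition \ref{PropIndHomNspec} and surjectivity by noting that reducedness of $N$ forces $\sim_{\opred}\,\le\,\sim_{\varphi}$ and then invoking Lemma \ref{LemIndCong}, which is exactly the content of Proposition \ref{PropNSpecIdealCong} that you apply. Your identification $\ker\pi=\nil(M)$ and the use of Lemma \ref{LemHomProperties}(1) together with $\nil(N)=\zero$ are precisely the paper's key step, just packaged through the earlier proposition.
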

\begin {proof}
By Proposition \ref{PropIndHomNspec}, the canonical projection $\pi:M\rto M_{\opred}$ induces an injective semigroup homomorphism $N\minspec M_{\opred}\rto N\minspec M$, $\psi\mto\psi\pi$. For the surjectivity, we need to show that every binoid homomorphism $\varphi:M\rto N$ factors through $\pi:M\rto M_{\opred}$; that is, $\varphi=\tilde{\varphi}\pi$ for a binoid homomorphism $\tilde{\varphi}:M\rto N$. Since $N$ is reduced, $\varphi(a)=\infty_{N}$ for every $a\in\nil(M)$ and $\varphi\in N\minspec M$. Hence $\sim_{\opred}\,\le\,\sim_{\varphi}$. Now the surjectivity follows from Lemma \ref {LemIndCong}.
\end {proof}

\begin {Definition}
Let $M$ be a commutative binoid. Two elements $a,b\in M$ are called \gesperrt{associated} \index{associated}\index{element!associated --s}if $a=b+u$ for some unit $u$ of $M$.
\end {Definition}

\begin {Lemma} \label{LemPosCong}
Let $M$ be a commutative binoid. The relation $\sim_{\oppos}\!$ \nomenclature[ACongruencePos]{$\sim_{\oppos}$}{congruence on a binoid} on $M$ given by $a\sim_{\oppos}\!b$ if $a,b$ are associated, is a congruence such that $M/\!\sim_{\oppos}\,=:M_{\oppos}$ \nomenclature[MACongruence]{$M_{\oppos}$}{$=M/\sim_{\oppos}$}is a positive binoid. Moreover, $M_{\oppos}\,\cong\,M$ if and only if $M$ is positive.
\end {Lemma}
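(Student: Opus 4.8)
The plan is to verify the four assertions in order: that $\sim_{\oppos}$ is an equivalence relation, that it is compatible with the addition, that the quotient $M_{\oppos}$ is positive, and finally the equivalence that $M_{\oppos}\cong M$ holds precisely when $M$ is positive. The only structural input needed is that the units $M\okreuz$ form a group and that $M$ is commutative.

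First I would check the equivalence relation properties by exploiting the group structure of $M\okreuz$. Reflexivity holds because $a=a+0$ with $0\in M\okreuz$; symmetry because $a=b+u$ implies $b=a+(\minus u)$ with $\minus u\in M\okreuz$; and transitivity because $a=b+u$, $b=c+v$ give $a=c+(u+v)$ with $u+v\in M\okreuz$. Note in passing that $\infty$ is associated only to itself, so $[\infty]$ is a singleton class and will serve as the absorbing element of the quotient. Compatibility is immediate from commutativity: if $a=b+u$ then $a+c=(b+c)+u$ for every $c\in M$, so $a+c\sim_{\oppos}b+c$ (and $c+a\sim_{\oppos}c+b$ by commutativity). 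Hence $\sim_{\oppos}$ is a congruence and $M_{\oppos}$ is a binoid by Proposition \ref{PropHomCong}, with identity $[0]$ and absorbing element $[\infty]$.

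The crux --- and the step I expect to require the most care --- is the positivity of $M_{\oppos}$. I would prove the precise statement that $[a]\in(M_{\oppos})\okreuz$ if and only if $a\in M\okreuz$, whence every unit of $M_{\oppos}$ equals $[0]$. If $a\in M\okreuz$, then $a=0+a$ shows $a\sim_{\oppos}0$, i.e.\ $[a]=[0]$. Conversely, if $[a]$ is a unit there is $[b]$ with $[a+b]=[0]$, meaning $a+b\sim_{\oppos}0$, i.e.\ $a+b$ equals a unit $w\in M\okreuz$; then $a+(b+(\minus w))=0$ exhibits $a$ as a unit of $M$. The essential point here is that a summand of a unit is again a unit, which uses commutativity. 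This gives $(M_{\oppos})\okreuz=\{[0]\}$, so $M_{\oppos}$ is positive.

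Finally, for the equivalence: if $M$ is positive then $M\okreuz=\{0\}$, so $a\sim_{\oppos}b$ forces $a=b+0=b$; thus $\sim_{\oppos}$ is the identity congruence and the canonical projection $M\rto M_{\oppos}$ is an isomorphism. For the converse I would argue through the unit group rather than through the projection (which need not coincide with the given isomorphism): any binoid isomorphism preserves units by Lemma \ref{LemHomProperties}(3) applied to it and to its inverse, so $M\okreuz\cong(M_{\oppos})\okreuz=\{[0]\}$; injectivity then forces $M\okreuz=\{0\}$, i.e.\ $M$ is positive.
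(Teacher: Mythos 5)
Your proof is correct and follows the same route as the paper's (very terse) argument: verify the congruence via the group structure of $M\okreuz$ and commutativity, observe that units collapse to $[0]$, and read off the supplement from the unit groups. You fill in two details the paper leaves as ``clear'' --- that a unit of $M_{\oppos}$ must come from a unit of $M$ (so positivity genuinely holds, not just that images of units are trivial), and that the converse of the supplement should be argued through preservation of unit groups under an arbitrary isomorphism rather than through the canonical projection --- both of which are handled correctly.
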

\begin {proof}
Clearly, $\sim_{\oppos}\!$ is an equivalence relation. To show that it is a congruence let $a\sim_{\oppos}\! b$. Then $a=b+u$ for some $u\in M\okreuz$, which implies $a+c=b+c+u$ for all $c\in M$ by the commutativity of $M$. Hence, $a+c\sim_{\oppos}\! b+c$. The last assertion follows from the observation that $[u]=[0]$ for every unit $u$. The supplement is clear.
\end {proof}

The composition of the canonical embedding $\iota$ and the canonical projection $\pi_{\oppos}$
$$(M\okreuz)^{\infty}\stackrel{\iota}{\Rto}M\stackrel{\pi_{\oppos}}{\Rto}M_{\oppos}$$
yields $\pi_{\oppos}\iota(u)=[0]$ for all $u\in M\okreuz$. In general, $M$ is not the product of $(M\okreuz)^{\infty}$ and $M_{\oppos}$, i.e.\ $M\not\cong(M\okreuz)^{\infty}\times M_{\oppos}$. This already follows for $M$ finite often from a counting argument. If $\#M\okreuz=k$ and $a\not=a+u$ for $a\not=\infty$ and $u\in M\okreuz\setminus\{0\}$ (which holds for example if $M$ is separated, cf.\ Section \ref{SecSepBinoids}), then the equivalence classes with respect to $\sim_{\oppos}$ have all $k$ elements with the exception of $[\infty]=\zero$. If $\#M_{\oppos}=n+1$, then $M$ has $kn+1$ elements and not $(k+1)(n+1)$ as the product. See also Remark \ref{RemSmashPos} and Example \ref{ExpSmashPos}.

\medskip

There are also congruences related to torsion-freeness, asymptotically equivalence, and cancellativity.

\begin {Lemma}
Let $M$ be a commutative binoid.
\begin {ListeTheorem}
\item The relation $\sim_{\optf}\!$ \nomenclature[ACongruenceTF]{$\sim_{\optf}$}{congruence on a binoid} on $M$ given by $a\sim_{\optf}\!b$ if $na=nb$ for some $n\ge 1$, is a congruence such that $M/\!\sim_{\optf}\,=:M_{\optf}$ \nomenclature[MACongruence]{$M_{\optf}$}{$=M/\sim_{\optf}$} is a torsion-free binoid. In particular, $M_{\optf}\,\cong\,M$ if and only if $M$ is torsion-free.
\item The relation $\sim_{\opae}\!$ \nomenclature[ACongruenceF]{$\sim_{\opae}$}{congruence on a binoid} on $M$ given by $a\sim_{\opae}\!b$ if $a,b$ are asymptotically equivalent, is a congruence such that $M/\!\sim_{\opae}$ is free of asymptotic torsion.
\end {ListeTheorem}
\end {Lemma}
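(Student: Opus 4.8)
The plan is to handle both parts in the same two-step pattern: first confirm that the relation is a congruence by a short computation that leans on commutativity, and then analyze the torsion behaviour of the quotient. For part (1), I would first check that $\sim_{\optf}$ is an equivalence relation. Reflexivity ($n=1$) and symmetry are immediate, and transitivity follows by multiplying through: if $na=nb$ and $mb=mc$, then $(nm)a=m(na)=m(nb)=n(mb)=n(mc)=(nm)c$, so $a\sim_{\optf}c$. To see it is a congruence, note that $na=nb$ gives $n(a+c)=na+nc=nb+nc=n(b+c)$ for every $c\in M$ by commutativity, so $a+c\sim_{\optf}b+c$.

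Next, to show that $M_{\optf}$ is torsion-free, I would unwind the definition: if $n[a]=n[b]$ in $M_{\optf}$, then $na\sim_{\optf}nb$, so $m(na)=m(nb)$ for some $m\ge1$, that is $(mn)a=(mn)b$ with $mn\ge1$; hence $a\sim_{\optf}b$ and $[a]=[b]$. For the final equivalence, if $M$ is torsion-free then $na=nb$ already forces $a=b$, so $\sim_{\optf}$ is the identity congruence and the canonical projection $M\to M_{\optf}$ is an isomorphism; conversely, since $M_{\optf}$ is always torsion-free, an isomorphism $M_{\optf}\cong M$ transports this property back to $M$.

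For part (2), the congruence check is analogous: transitivity follows by taking the larger of the two thresholds, and compatibility with addition follows since $na=nb$ for all large $n$ gives $n(a+c)=n(b+c)$ for all large $n$.

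The main obstacle is showing that $M/\!\sim_{\opae}$ is free of asymptotic torsion. Suppose $[a]$ and $[b]$ are asymptotically equivalent in the quotient, so that $na\sim_{\opae}nb$ for all $n\ge n_{0}$. Specializing to $n=n_{0}$ and $n=n_{0}+1$ and writing $N:=\{l\in\N^{\infty}\mid la=lb\}$, which is a subbinoid of $\N^{\infty}$ exactly as in the proof of Lemma \ref{LemAsymEquiv}, I obtain that $N$ contains $kn_{0}$ for all $k\ge k_{0}$ and $k(n_{0}+1)$ for all $k\ge k_{1}$, for suitable thresholds $k_{0},k_{1}$. The key point is then a B\'ezout-type observation using $\gcd(n_{0},n_{0}+1)=1$: the two elements $(k_{0}+1)n_{0}+k_{1}(n_{0}+1)$ and $k_{0}n_{0}+(k_{1}+1)(n_{0}+1)$ both lie in $N$ (each summand is already of the guaranteed form), and their difference is $-n_{0}+(n_{0}+1)=1$, so they are consecutive integers $L$ and $L+1$. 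Thus $La=Lb$ and $(L+1)a=(L+1)b$, and Lemma \ref{LemAsymEquiv}(2) yields $a\sim_{\opae}b$, i.e.\ $[a]=[b]$. The only care needed is to verify that the chosen offsets indeed meet the thresholds $k_{0},k_{1}$, which they do by construction.
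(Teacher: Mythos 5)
Your argument for part (1) is correct and is essentially the paper's own proof (same transitivity computation via $(nm)a=(nm)c$, same compatibility check, and the same one-line verification that $M_{\optf}$ is torsion-free). For part (2) the paper offers no details, stating only that the proof ``is similar to that of (1)''; your completion is correct and supplies the genuinely non-obvious step, namely extracting from $n_{0}a\sim_{\opae}n_{0}b$ and $(n_{0}+1)a\sim_{\opae}(n_{0}+1)b$ two \emph{consecutive} integers $L,L+1$ in $N=\{l\mid la=lb\}$ and then invoking Lemma \ref{LemAsymEquiv}(2). This is exactly the kind of argument the paper's ``similar'' must implicitly rely on, since the naive analogue of the part-(1) computation does not immediately close the gap; your bookkeeping of the thresholds $k_{0},k_{1}$ checks out.
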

\begin {proof}
(1) Obviously, $\sim_{\optf}$ is reflexive and symmetric. If $a\sim_{\optf}\! b$ and $b\sim_{\optf}\! c$, say $na=nb$ and $mb=mc$ for $n,m\ge 1$, then $(nm)a=(nm)c$, and hence $\sim_{\optf}$ is transitive. By the commutativity of $M$, the relation $\sim_{\optf}$ is compatible with the operation on $M$ since $na=nb$ implies $n(a+c)=na+nc=nb+nc=n(b+c)$ for every $c\in M$. Finally, $M/\!\sim_{\optf}$ is torsion-free because if $n[a]=n[b]$, then $m(na)=m(nb)$ for some $m\ge 1$, hence $[a]=[b]$. The poof of (2) is similar to that of (1).
\end {proof}

\begin {Lemma}
Let $M$ be a commutative binoid.
\begin {ListeTheorem}
\item  If $M$ is positive, then so is $M_{\optf}$.
\item If $N$ is a torsion-free binoid, then $N\minspec M\cong N\minspec M_{\optf}$ as semigroups.
\end {ListeTheorem}
\end {Lemma}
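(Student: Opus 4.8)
The plan is to treat the two parts separately, with part~(2) modelled closely on Corollary~\ref{CorNspecIsoMred}. For part~(1), I would start from a unit $[a]\in(M_{\optf})\okreuz$ and unwind the definitions. If $[a]+[b]=[0]$ for some $[b]\in M_{\optf}$, then $a+b\sim_{\optf}0$, so $n(a+b)=n\cdot 0=0$ for some $n\ge 1$, i.e.\ $na+nb=0$ by commutativity. This exhibits $na$ as a unit of $M$ (with inverse $nb$); since $M$ is positive, $na=0=n\cdot 0$, whence $a\sim_{\optf}0$ and $[a]=[0]$. Thus $(M_{\optf})\okreuz=\{[0]\}$, so $M_{\optf}$ is positive. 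The only point worth noting is the passage from ``$na$ is a unit'' to ``$a$ becomes trivial in the quotient'', which is exactly what the congruence $\sim_{\optf}$ is built to provide.

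For part~(2), the key observation is that $\sim_{\optf}$ is in general \emph{not} an ideal congruence, so Proposition~\ref{PropNSpecIdealCong} does not apply directly; instead I would reproduce the factorization argument of Corollary~\ref{CorNspecIsoMred}. By Proposition~\ref{PropIndHomNspec}, the (surjective) canonical projection $\pi:M\Rto M_{\optf}$ induces an injective semigroup homomorphism $\pi^{\ast}:N\minspec M_{\optf}\Rto N\minspec M$, $\psi\mto\psi\pi$. For surjectivity it suffices, by Lemma~\ref{LemIndCong}, to show that every $\varphi\in N\minspec M$ satisfies $\sim_{\optf}\,\le\,\sim_{\varphi}$, so that $\varphi$ factors as $\varphi=\tilde{\varphi}\pi$ through a binoid homomorphism $\tilde{\varphi}:M_{\optf}\Rto N$.

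This is exactly where the torsion-freeness of $N$ enters. If $a\sim_{\optf}b$, say $na=nb$ for some $n\ge 1$, then applying $\varphi$ gives $n\varphi(a)=\varphi(na)=\varphi(nb)=n\varphi(b)$ in $N$, and since $N$ is torsion-free this forces $\varphi(a)=\varphi(b)$, i.e.\ $a\sim_{\varphi}b$. Hence $\sim_{\optf}\,\le\,\sim_{\varphi}$ and $\pi^{\ast}$ is surjective, so it is the desired semigroup isomorphism. Neither part presents a genuine obstacle; the one subtlety is recognizing that $\sim_{\optf}$ need not be an ideal congruence, which is why the direct factorization argument replaces an appeal to Proposition~\ref{PropNSpecIdealCong}.
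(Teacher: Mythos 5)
Your proof is correct and follows essentially the same route as the paper, which dismisses part (1) as obvious and proves part (2) by the same factorization argument as Corollary~\ref{CorNspecIsoMred}: injectivity of $\pi^{\ast}$ from Proposition~\ref{PropIndHomNspec}, and surjectivity by showing $\sim_{\optf}\,\le\,\sim_{\varphi}$ via torsion-freeness of $N$ and invoking Lemma~\ref{LemIndCong}. Your explicit unwinding of part (1) and your observation that $\sim_{\optf}$ is not an ideal congruence are both accurate, but they do not constitute a different method.
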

\begin {proof}
The first statement is obvious and the second follows similar to Corollary \ref{CorNspecIsoMred}
\end {proof}

\begin {Lemma} \label{LemCancellation}
Let $M$ be a commutative binoid and $S$ a submonoid of $M$. The relation $\sim_{\opcan,S}\!$ \nomenclature[ACongruenceCan]{$\sim_{\opcan,S}$}{congruence on a binoid $M$, $S\subseteq M$ a submonoid} on $M$ given by $a\sim_{\opcan,S}\!b$ if $a+s=b+s$ for some $s\in S$, is a congruence such that $[s]$ is cancellative in $M/\!\sim_{\opcan,S}=:M_{\opcan,S}$\nomenclature[MACongruence]{$M_{\opcan,S}$}{$=M/\sim_{\opcan,S}$ for some submonoid $S\subseteq M$} for all $s\in S$.
\end {Lemma}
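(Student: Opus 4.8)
The plan is to verify in turn that $\sim_{\opcan,S}$ is an equivalence relation, that it is compatible with the addition (hence a congruence, so that $M_{\opcan,S}$ is a binoid by Proposition \ref{PropHomCong}), and finally that each class $[s]$ with $s\in S$ satisfies the cancellation law. The only properties of $S$ that will enter are that $0\in S$ and that $S$ is closed under addition, and the only property of $M$ that is needed is its commutativity.

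For the equivalence relation, reflexivity follows from $a+0=a+0$ using $0\in S$, and symmetry is immediate from the symmetry of the defining equality. For transitivity I would take $a+s=b+s$ and $b+t=c+t$ with $s,t\in S$, add $t$ to the first equality and $s$ to the second, and use commutativity to obtain $a+(s+t)=b+(s+t)=c+(s+t)$; since $s+t\in S$, this gives $a\sim_{\opcan,S}c$. Compatibility is equally direct: from $a+s=b+s$ and commutativity one gets $(a+c)+s=(a+s)+c=(b+s)+c=(b+c)+s$ for every $c\in M$, so $a+c\sim_{\opcan,S}b+c$ (checking one side suffices by commutativity).

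It then remains to show that $[s]$ is cancellative for $s\in S$. Here I would start from $[b]+[s]=[c]+[s]$, that is $b+s\sim_{\opcan,S}c+s$, which by definition provides some $t\in S$ with $(b+s)+t=(c+s)+t$. Regrouping via associativity and commutativity yields $b+(s+t)=c+(s+t)$, and because $s+t\in S$ this is exactly $b\sim_{\opcan,S}c$, i.e.\ $[b]=[c]$. Note that this argument produces $[b]=[c]$ directly from $[b]+[s]=[c]+[s]$, so it does not even require the clause that the common sum be $\neq\infty$ occurring in the definition of a cancellative element.

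The whole argument is routine; the one point to watch is that at the two places where two witnesses from $S$ are combined---transitivity and the cancellation step---one must invoke the submonoid structure of $S$ to guarantee that the combined witness $s+t$ again lies in $S$. This is precisely the role of the hypothesis that $S$ is a submonoid, and it is the only nontrivial thing the proof relies on.
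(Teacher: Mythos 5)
Your proof is correct and follows essentially the same route as the paper, whose own proof only spells out the transitivity step (combining the two witnesses into $s+t\in S$ via commutativity, exactly as you do) and declares the remaining verifications trivial. Your additional observation that the cancellation of $[s]$ holds without needing the $\neq\infty$ clause is accurate and harmless.
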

\begin {proof}
Only the transitivity is not trivial. So assume that $a\sim_{\opcan,S}b$ and $b\sim_{\opcan,S}c$ for $a, b, c\in M$. Then $a+s=b+s$ and $b+t=c+t$ for some $s,t\in S$. Since $M$ is commutative, this gives $a+(s+t)=b+s+t=c+(s+t)$ with $s+t\in S$. Hence, $a\sim_{\opcan,S}c$.
\end {proof}

If $S$ is a submonoid of $M$ such that $\infty\in S$, then all elements are congruent to each other, in which case $M_{\opcan,S}=\{[\infty]\}$. Note that the binoid $M_{\opcan,S}$ need not be positive if $M$ is so. However, we will prove later, cf.\ Lemma \ref{LemSeparatedProp2}, that this is true if $M$ contains no elements $f\in M\opkt$ with $f=f+g$ for some $g\not=0$.

\begin {Definition}
Let $M$ be a commutative binoid. The binoid $M_{\opcan}:=M_{\opcan,\opint M}$ \nomenclature[MACongruence]{$M_{\opcan}$}{$=M_{\opcan,\opint M}$}is called the \gesperrt{can\-cel\-la\-tion} (binoid) \index{binoid!cancellation -- of}of $M$.
\end {Definition}

\begin {Proposition}\label{PropCancellation}
Let $M$ be a commutative and $C$ a regular binoid.
\begin {ListeTheorem}
\item Every binoid homomorphism $\varphi:M\rto C$ with $\ker\varphi\cap\opint M=\emptyset$ factors uniquely through $M_{\opcan}$.
\item Every binoid homomorphism $\varphi:M\rto C$ has a unique factorization
$$\xymatrix{
M\ar[r]^{\varphi}\ar[d]_{\pi}&C\\
M/\ker\varphi\ar[d]_{\pi_{\opcan}}\\
(M/\ker\varphi)_{\opcan}\ar[uur]_{\tilde{\varphi}}&}\pkt$$
\end {ListeTheorem}
\end {Proposition}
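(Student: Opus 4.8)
The plan is to prove the two parts in order and to use part (1) as the main ingredient for part (2). Everything rests on the factorization criterion of Lemma \ref{LemIndCong}: a binoid homomorphism $\varphi$ factors (uniquely) through $M/\!\sim$ exactly when $\sim\,\le\,\sim_\varphi$. So in each part I only need to compare the relevant cancellation congruence with $\sim_\varphi$.

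For part (1), I would check $\sim_{\opcan}\,\le\,\sim_\varphi$ directly, where $\sim_{\opcan}\,=\,\sim_{\opcan,\opint M}$. Suppose $a\sim_{\opcan}b$, so that $a+s=b+s$ for some $s\in\opint M$; applying $\varphi$ gives $\varphi(a)+\varphi(s)=\varphi(b)+\varphi(s)$ in $C$, and the hypothesis $\ker\varphi\cap\opint M=\emptyset$ ensures $\varphi(s)\ne\infty_C$. I then split according to the common value $\varphi(a)+\varphi(s)$. If it is $\ne\infty_C$, then cancellativity of $C$ (every nonabsorbing element is cancellative, in particular $\varphi(s)$) lets me cancel $\varphi(s)$ and conclude $\varphi(a)=\varphi(b)$. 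If it equals $\infty_C$, then integrality of $C$ together with $\varphi(s)\ne\infty_C$ forces $\varphi(a)=\infty_C=\varphi(b)$. Either way $a\sim_\varphi b$, so $\sim_{\opcan}\,\le\,\sim_\varphi$ and Lemma \ref{LemIndCong} yields the unique factorization through $M_{\opcan}$. This two-case split is the only delicate point, and it is precisely where both halves of regularity of $C$ enter: cancellativity in the first case, integrality in the second.

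For part (2), I would first factor $\varphi$ through $M/\ker\varphi$ by Lemma \ref{LemFactoriazationKer}, obtaining a unique $\bar\varphi\colon M/\ker\varphi\to C$ with $\bar\varphi\pi=\varphi$; since $C$ is regular, hence integral, that lemma also tells me $M/\ker\varphi$ is integral, and it is commutative as a quotient of $M$. I would then apply part (1) to $\bar\varphi$, so the task reduces to verifying its hypothesis $\ker\bar\varphi\cap\opint(M/\ker\varphi)=\emptyset$. Here $\bar\varphi([a])=\varphi(a)$, so $[a]\in\ker\bar\varphi$ iff $a\in\ker\varphi$ iff $[a]=[\infty]$; thus $\ker\bar\varphi=\zero$, and since integral elements all lie in $(M/\ker\varphi)\opkt$ the intersection with $\{[\infty]\}$ is empty. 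Part (1) then gives a unique $\tilde\varphi\colon(M/\ker\varphi)_{\opcan}\to C$ with $\tilde\varphi\pi_{\opcan}=\bar\varphi$, and composing produces $\tilde\varphi\pi_{\opcan}\pi=\varphi$, which is the asserted factorization.

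Uniqueness of the whole factorization follows by stacking the two uniqueness statements: the factorization through $M/\ker\varphi$ is unique by Lemma \ref{LemFactoriazationKer}, and the subsequent one through $(M/\ker\varphi)_{\opcan}$ is unique by part (1). I expect no genuine obstacle beyond the case analysis in part (1); the verification in part (2) that $\ker\bar\varphi$ collapses to the absorbing class is routine bookkeeping.
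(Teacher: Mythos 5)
Your proof is correct and follows essentially the same route as the paper's: both reduce part (1) via Lemma \ref{LemIndCong} to showing $\sim_{\opcan,\opint M}\,\le\,\sim_{\varphi}$ with the same two-case split (integrality of $C$ when $\varphi(a)+\varphi(s)=\infty$, cancellativity otherwise), and both deduce part (2) from part (1) together with Lemma \ref{LemFactoriazationKer}. Your explicit verification that $\ker\bar\varphi=\zero$ is a slightly more detailed spelling-out of what the paper leaves implicit, but the argument is the same.
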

\begin {proof}
The second statement follows from the first since the factorization through $M/\ker\varphi$ satisfies the assumption of (1) by Lemma \ref{LemFactoriazationKer}. By Lemma \ref{LemIndCong}, it suffices to show that under the assumptions of (1) on $\varphi:M\rto C$ one has $\sim_{\opcan,\opint M}\,\le\,\sim_{\varphi}$. So let $\ker\varphi\cap\opint M=\emptyset$ and assume that $a\sim_{\opcan,\opint M}b$. This is equivalent to $a+s=b+s$ for $a,b\in M$ and $s\in\opint M$, which gives
$$\varphi(a)+\varphi(s)=\varphi(a+s)=\varphi(b+s)=\varphi(b)+\varphi(s)\pkt$$
If this is $\infty$, then $\varphi(a)=\varphi(b)=\infty$ because of the integrality of $C$ and $\varphi(s)\not=\infty$ by assumption. In the other case, when $\varphi(a)+\varphi(s)=\varphi(b)+\varphi(s)\not=\infty$, the cancellativity of $C$ yields $\varphi(a)=\varphi(b)$. Hence, $a\sim_{\varphi}b$.
\end {proof}

\begin {Remark}\label{RemCancellation}
The preceding result may fail when the binoid $C$ is cancellative but not integral. As an example consider $M:=\free(x,y)/(x+y=2x+y,2y=\infty)$ and the binoid homomorphism
$$\varphi:M\Rto\N^{\infty}\times(\N^{\infty}/(2=\infty))\komma$$
defined by $x\mto(1,1)$ and $y\mto (\infty,1)$. The two sets
$$\ker\varphi=\{nx+my\mid n,m\ge1\}\quad\text{and}\quad\opint M=\{nx\mid n\ge1\}$$
are disjoint, but $\varphi$ does not factor through $M_{\opcan}=M/(y=x+y)=\free(x,y)/(y=x+y,2y=\infty)$.
\end {Remark}

Finally we apply these congruences to the product of a family of binoids.

\begin{Lemma} \label {LemProdCong}
Let $(M_{i})_{i\in I}$ be a family of nonzero binoids and $S_{i}\subseteq M_{i}$, $i\in I$, a family of submonoids.
\begin {ListeTheorem}
\item $\left(\prod_{i\in I}M_{i}\right)_{\opint}=\Big(\prod_{i\in I}\opint(M_{i})\Big)\cup\{\infty_{\Pi}\}$.
\item $\left(\prod_{i\in I}M_{i}\right)_{\oppos}=\prod_{i\in I}(M_{i})_{\oppos}$.
\item  If $I$ is finite, then $\left(\prod_{i\in I}M_{i}\right)_{\optf}=\prod_{i\in I}(M_{i})_{\optf}$.
\item If $S:=\prod_{i\in I}S_{i}$, then $\left(\prod_{i\in I}M_{i}\right)_{\opcan,S}=\prod_{i\in I}(M_{i})_{\opcan,S_{i}}$.
\end {ListeTheorem}
\end {Lemma}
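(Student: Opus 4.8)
The plan is to reduce parts (2)--(4) to one general observation and to treat (1) directly. The observation is that a family of congruences $\sim_{i}$ on the $M_{i}$ assembles into a componentwise congruence $\sim$ on $\prod_{i\in I}M_{i}$, defined by $(a_{i})_{i}\sim(b_{i})_{i}$ precisely when $a_{i}\sim_{i}b_{i}$ for every $i$; all congruence axioms and compatibility with the componentwise addition hold coordinatewise, and the assignment $[(a_{i})_{i}]\mto([a_{i}])_{i}$ is a binoid isomorphism $\bigl(\prod_{i\in I}M_{i}\bigr)/\!\sim\,\cong\,\prod_{i\in I}(M_{i}/\!\sim_{i})$. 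This holds for arbitrary $I$. With it, each of (2), (3), (4) amounts to checking that the congruence named on the left is exactly this componentwise congruence built from the corresponding congruences on the factors, and then reading off the quotient.

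For (1) I would argue straight from the description $M_{\opint}=\opint(M)\cup\zero$ of Lemma \ref{LemINT}. By Example \ref{ExpIntegrity}(5) one has $\opint\bigl(\prod_{i\in I}M_{i}\bigr)=\prod_{i\in I}\opint(M_{i})$ as subsets of the product, so adjoining the absorbing element yields $\bigl(\prod_{i\in I}M_{i}\bigr)_{\opint}=\prod_{i\in I}\opint(M_{i})\cup\{\infty_{\Pi}\}$; the term $\infty_{\Pi}$ must be restored by hand because no factor $\opint(M_{i})$ contains $\infty_{i}$.

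For (2) the input is Example \ref{ExpProdUnits}, which gives $\bigl(\prod_{i\in I}M_{i}\bigr)\okreuz=\prod_{i\in I}M_{i}\okreuz$. Thus $(a_{i})_{i}\sim_{\oppos}(b_{i})_{i}$ means $(a_{i})_{i}=(b_{i})_{i}+(u_{i})_{i}$ for some unit tuple, which is equivalent to $a_{i}=b_{i}+u_{i}$ with $u_{i}\in M_{i}\okreuz$ for each $i$, i.e. to $a_{i}\sim_{\oppos}b_{i}$ componentwise; the general observation then gives the claim. For (4), writing $S=\prod_{i\in I}S_{i}$, the relation $(a_{i})_{i}\sim_{\opcan,S}(b_{i})_{i}$ says $(a_{i})_{i}+(s_{i})_{i}=(b_{i})_{i}+(s_{i})_{i}$ for some $(s_{i})_{i}\in S$; since every tuple with $s_{i}\in S_{i}$ lies in $S$, the witnesses may be chosen independently, so this is equivalent to $a_{i}\sim_{\opcan,S_{i}}b_{i}$ for each $i$, and no finiteness of $I$ is needed.

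The one part demanding real care, and the main obstacle, is (3), where the hypothesis that $I$ is finite is indispensable. Here $a\sim_{\optf}b$ requires a single exponent $n$ with $na=nb$, so the product congruence asks for one global $n$ valid in all coordinates, whereas componentwise equivalence only supplies an exponent $n_{i}$ per factor. The implication from the product congruence to the componentwise one is immediate. For the converse I would use that $\sim_{\optf}$ is stable under enlarging the exponent, since $na=nb$ implies $(kn)a=(kn)b$ for all $k$; then $N:=\lcm(n_{i}\mid i\in I)$ --- which exists exactly because $I$ is finite --- satisfies $Na_{i}=Nb_{i}$ for every $i$ at once, so $N(a_{i})_{i}=N(b_{i})_{i}$ and the two congruences coincide. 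Applying the general observation concludes (3).
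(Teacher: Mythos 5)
Your proposal is correct and follows essentially the same route as the paper: part (1) via Example \ref{ExpIntegrity}(5), part (2) via Example \ref{ExpProdUnits}, parts (2)--(4) by identifying each congruence on the product with the componentwise one, and part (3) by producing a single common exponent from the finitely many $n_{i}$ (the paper takes $\prod_{i}n_{i}$ where you take the lcm, which is immaterial). The only cosmetic difference is that you state the componentwise-congruence principle explicitly as a preliminary observation, whereas the paper leaves it implicit.
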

\begin {proof}
The first assertion follows from Example \ref{ExpIntegrity}(5). (2) By definition, $(a_{i})_{i\in I}\sim_{\oppos}(b_{i})_{i\in I}$ for $(a_{i})_{i\in I}$, $(b_{i})_{i\in I}\in\prod_{i\in I}M_{i}$ is equivalent to 
$(a_{i}+u_{i})_{i\in I}=(a_{i})_{i\in I}+(u_{i})_{i\in I}=(b_{i})_{i\in I}+(u_{i})_{i\in I}=(b_{i}+u_{i})_{i\in I}$, where $(u_{i})_{i\in I}\in(\prod_{i\in I}M_{i})\okreuz=\prod_{i\in I}M_{i}\okreuz$, cf.\ Example \ref{ExpProdUnits}, which is equivalent to $a_{i}\sim_{\oppos}b_{i}$ on $M_{i}$ for all $i\in I$. Similarly, one proves (4). (3) It is clear that $(a_{i})_{i\in I}\sim_{\optf}(b_{i})_{i\in I}$ implies $a_{i}\sim_{\optf}b_{i}$ on $M_{i}$ for all $i\in I$. Conversely, if for all $i\in I$ there is an $n_{i}\ge1$ such that $n_{i}a_{i}=n_{i}b_{i}$, then $m(a_{i})_{i\in I}=m(b_{i})_{i\in I}$ with $m:=\prod_{i\in I}n_{i}$. Hence, $(a_{i})_{i\in I}\sim_{\optf}(b_{i})_{i\in I}$ in $\prod_{i\in I}M_{i}$.
\end {proof}

\begin {Remark}\label{RemProdCongruences}
\begin {ListeTheorem}
\item[]
\item There is no similar result for $\sim_{\opred}$. For instance, if $N$ and $M$ are binoids, $a,a^{\prime}\in\nil M$, and $b\not\in\nil N$, then $(a,b)\not\sim_{\opred}(a^{\prime},b)$ on $M\times N$, but $a\sim_{\opred}a^{\prime}$ on $M$ and $b\sim_{\opred}b$ on $N$.
\item Lemma \ref{LemProdCong}(3) need not be true if $I$ is infinite. As a counterexample consider a family $(M_{n})_{n\in\N}$ of binoids with $a_{n},b_{n}\in M_{n}$ such that $na_{n}=nb_{n}$ for all $n\in\N$. Then $a_{n}\sim_{\optf}b_{n}$ on $M_{n}$ but $(a_{n})_{n\in\N}\not\sim_{\optf}(b_{n})_{n\in\N}$ on $\prod_{i\in I}M_{i}$.
\end {ListeTheorem}
\end {Remark}

In Section \ref{SecIdeals} and Section \ref{SecBooleanization}, we will encounter two more important congruences, namely the Rees congruence $\sim_{I}$ of an ideal, and the congruence $\sim_{\bool}$ that yields the booleanization of a binoid. The next sections are devoted to important equivalences and congruences on the disjoint union and the product of binoids.

\bigskip

\section{Smash product}\label {SecSmashProduct}

\markright{\ref{SecSmashProduct} Smash product}

Similar to monoid theory, we have shown in Section \ref{Sec1DefProp} that the product of a family of binoids is again a binoid and that the direct sum coincides with the product if and only if the family is finite. This and the next section is devoted to other constructions for binoids which have no counterpart in the theory of monoids, but which yield particularly interesting binoid algebras.

\medskip

As is well-known, the coproduct in the category of commutative monoids is the direct sum, whereas the (finite) coproduct in the category of arbitrary monoids is given by the free monoid on the disjoint union of the monoids modulo a particular equivalence relation that ensures the universal property, cf.\ \cite[Example 3.9]{Awodey}. The same phenomenon occurs when dealing with binoids. For this reason, we only focus on the  category of commutative binoids when determining the coproduct. Nonetheless, the construction of the coproduct makes sense for arbitrary binoids (as the direct sum for monoids) and for pointed sets in general, which we will call the smash product.\footnote{\, The terminology stems from algebraic topology.} 

Moreover, several examples show that the binoid algebra (over a ring $R$) of the smash product of binoids yields the tensor product (over $R$) of the binoid algebras, which will be proved later in Corollary \ref{CorSmash=Tensor}. In the next section, when we introduce the analogue of modules and algebras for binoids, namely $N\mina$sets and $N\mina$binoids, we will extend the definition of the smash product. For this, we need to start here with pointed sets in general.

\begin {Definition}
Let $(S_{i},p_{i})_{i\in I}$ be a family of pointed sets and denote by $\sim_{\wedge}$ \nomenclature[ACongruenceZZZSmash]{$\sim_{\wedge}$}{relation on the product of pointed sets}the relation on $\prod_{i\in I}S_{i}$ given by 
$$(s_{i})_{i\in I}\sim_{\wedge}(t_{i})_{i\in I}\quad:\eq\quad  s_{i}=t_{i}, \forall i\in I\komma\quad\text{or}\quad s_{k}=p_{k},t_{\ell}=p_{\ell}\quad\text{for some}\quad k,\ell\in I\pkt$$
Then the pointed set \nomenclature[AProductSmash]{$\bigwedge_{i\in I}S_{i}$}{smash product of a family of pointed sets}
$$\bigwedge_{i\in I}S_{i}:=\Big(\prod_{i\in I}S_{i}\Big)\Big/\!\sim_{\wedge}$$
with distinguished point $[(p_{i})_{i\in I}]=:p_{\wedge}$ is called the \gesperrt{smash product} \index{smash product}\index{product!smash --}of the family $(S_{i},p_{i})_{i\in I}$. The class $[(s_{i})_{i\in I}]\in\bigwedge_{i\in I}S_{i}$ for some $(s_{i})_{i\in I}\in\prod_{i\in I}S_{i}$ will be denoted by $\wedge_{i\in I}s_{i}$.
\end {Definition}

Let $(M_{i})_{i\in I}$ be a family of binoids. Considering every binoid as a pointed set with distinguished element $\infty_{i}$, the relation $\sim_{\wedge}$ on $\prod_{i\in I}M_{i}$ identifies all tuples with at least one entry equal to $\infty_{i}$ with the absorbing element $(\infty_{i})_{i\in I}=\infty_{\Pi}$ and leaves the rest untouched.

\begin {Example} \label{ExpSmashTensor}
The binoid $M=\free(x,y)/(3x=x,2y=0)$ is the smash product of $(M\okreuz)^{\infty}$ and $M_{\oppos}$, i.e.\ $M\cong (M\okreuz)^{\infty}\wedge M_{\oppos}$. Moreover, for a ring $K$ we get 
\begin {align*}
K[M]&=K[X,Y]/(X^{3}-X,Y^{2}-1)\\
&=K[X]/(X^{3}-X)\otimes_{K}K[Y]/(Y^{2}-1)\\
&=K[M_{\oppos}]\otimes_{K}K[(M\okreuz)^{\infty}]\pkt
\end {align*}
\end {Example}


\begin {Lemma}
Given a family $(M_{i})_{i\in I}$ of binoids, the relation $\sim_{\wedge}$ on $\prod_{i\in I}M_{i}$ 
is an ideal congruence such that $\bigwedge_{i\in I}M_{i}$ is a binoid with identity element $\wedge_{i\in I}0_{i}=:0_{\wedge}$ and absorbing element $\wedge_{i\in I}\infty_{i}=:\infty_{\wedge}$. Moreover, the canonical inclusions\nomenclature[AProductSmash]{$\bigwedge_{i\in I}M_{i}$}{smash product of a family of binoids}
$$\iota_{k}:M_{k}\Rto\bigwedge_{i\in I}M_{i}\komma\quad a\lto \wedge_{i\in I}a_{i}\komma$$
where $a_{k}=a$ and $a_{i}=0$ for $i\not=k$, are binoid homomorphisms.
\end {Lemma}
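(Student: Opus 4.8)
Given a family $(M_i)_{i\in I}$ of binoids, the relation $\sim_\wedge$ on $\prod_{i\in I} M_i$ is an ideal congruence, the quotient $\bigwedge_{i\in I} M_i$ is a binoid with identity $0_\wedge$ and absorbing element $\infty_\wedge$, and the canonical inclusions $\iota_k$ are binoid homomorphisms.

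Let me sketch the plan.

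**Step 1: $\sim_\wedge$ is an equivalence relation.** Reflexivity is built into the definition (first disjunct). Symmetry is clear since both disjuncts are symmetric in $(s_i)$ and $(t_i)$. The only real content is transitivity. Suppose $(s_i) \sim_\wedge (t_i)$ and $(t_i) \sim_\wedge (u_i)$. Each relation holds via one of the two disjuncts, giving four cases. If both hold via equality, composition gives equality. If one holds via equality, then the equality lets the "has a $p$-entry" property pass to the other tuple. The remaining case is when both hold via the absorbing disjunct: then $(s_i)$ has some entry $s_k = \infty_k$ and $(u_i)$ has some entry $u_\ell = \infty_\ell$, which is exactly what we need. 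So transitivity holds.

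**Step 2: $\sim_\wedge$ is a congruence, i.e. compatible with componentwise addition.** I would show that if $(s_i) \sim_\wedge (t_i)$ then $(s_i) + (c_i) \sim_\wedge (t_i) + (c_i)$ for all $(c_i)$ (and symmetrically on the other side, though the argument is identical). The equality case is immediate. In the absorbing case, $(s_i)$ has $s_k = \infty_k$, so $(s_i+c_i)$ has $k$-th entry $\infty_k + c_k = \infty_k$ (absorbing property in $M_k$), and similarly $(t_i+c_i)$ retains a $\infty$-entry at $\ell$; hence the two sums are again related via the absorbing disjunct.

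**Step 3: it is an ideal congruence, and identification of the classes.** Here I would invoke the Definition of ideal congruence: $\pi$ must be injective on $\prod M_i \setminus \ker\pi$. The key observation is that the absorbing disjunct collapses \emph{exactly} the tuples with at least one $\infty_i$-entry into a single class, while all other tuples (those in $\prod_{i} M_i^{\scriptscriptstyle\bullet}$) remain singletons. Concretely, if $(s_i)$ and $(t_i)$ each have all entries $\neq \infty_i$, the absorbing disjunct cannot apply, so $(s_i)\sim_\wedge (t_i)$ forces $(s_i)=(t_i)$. This shows the class $[\infty_\wedge]$ consists of all tuples with some $\infty$-entry and every other class is a singleton, which is precisely the definition of an ideal congruence. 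The identity element is the class of $(0_i)$, which is a singleton since $0_i \neq \infty_i$ in each nonzero factor (and the degenerate zero-binoid factors only force the whole product to be $\infty_\wedge$); the absorbing element is $[\infty_\wedge]$ by construction.

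**Step 4: the inclusions $\iota_k$ are binoid homomorphisms.** I would check directly that $\iota_k(a+a') = \iota_k(a) + \iota_k(a')$: placing $a+a'$ in slot $k$ and $0$ elsewhere gives the same tuple as the componentwise sum of the two inserted tuples, since $0 + 0 = 0$ in every other slot. Then $\iota_k(0_k) = 0_\wedge$ trivially, and $\iota_k(\infty_k) = \wedge_{i} a_i$ with $a_k = \infty_k$, which lies in the absorbing class $[\infty_\wedge]$; hence $\iota_k(\infty_k) = \infty_\wedge$, confirming it is a homomorphism of binoids.

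**Main obstacle.** Most of this is routine verification; the only step requiring genuine care is the transitivity argument in Step 1 (handling the mixed cases cleanly) together with the precise "singleton versus collapsed class" dichotomy in Step 3 that certifies the \emph{ideal} congruence property rather than merely a congruence. The well-definedness of the quotient binoid operation then follows formally from Proposition~\ref{PropHomCong}, so I would phrase Steps 1–2 as establishing that $\sim_\wedge$ is a congruence and then appeal to that proposition for the binoid structure on the quotient.
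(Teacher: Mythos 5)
Your proof is correct, and since the paper disposes of this lemma with ``All assertions are easily verified,'' your verification simply supplies the routine details the paper omits, following the only natural route (transitivity case analysis, compatibility with componentwise addition, the singleton-versus-collapsed-class dichotomy for the ideal property, and the direct check on $\iota_{k}$). No discrepancy with the paper's intent.
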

\begin {proof}
All assertions are easily verified.
\end {proof}

Note that the smash product is the zero binoid if one binoid of the family is so, and $\bigwedge_{i\in I}\trivial\cong\trivial$. In general, one has $M\wedge\trivial=M$ and $M\wedge\zero=\{\infty_{\wedge}\}$ for every binoid $M$. Thus, $\trivial$ is an \azl identity object\azr and $\zero$ an \azl absorbing object\azr in the category of binoids $\Bsf$ with respect to the smash product.

\begin {Example} 
If $(M_{i})_{i\in I}$ is a finite family of nonzero binoids and $A_{i}\subseteq M_{i}$ a generating set of $M_{i}$, $i\in I$, then $\bigwedge_{i\in I}M_{i}$ is generated by\nomenclature[EB]{$a\widehat{e_{i}}$}{$=0\wedgepkt 0\wedge a\wedge 0\wedgepkt
0\in\bigwedge_{k\in I}M_{k}$, where $a\in M_{i}$ is the $i$th entry} 
$$a\widehat{e_{i}}:=0\wedgepkt 0\wedge a\wedge 0\wedgepkt
0\komma$$
$a\in A_{i}$, $i\in I$, where $a$ is the $i$th entry of $a\widehat{e_{i}}$. In particular, the generators of $\bigwedge_{i=1}^{r}\N^{\infty}$ are $$\widehat{e_{i}}:=0\wedgepkt0\wedge 1\wedge0\wedgepkt 0\komma$$
\nomenclature[EB]{$\widehat{e_{i}}$}{$=0\wedgepkt0\wedge 1\wedge0\wedgepkt 0$, where $1$ is the $i$th entry}$i\in\{1\kpkt n\}$, where $1$ is the $i$th entry of $\widehat{e_{i}}$. Note that $\bigwedge_{i=1}^{r}\N^{\infty}\cong(\N^{r})^{\infty}$.
\end {Example}

\begin {Remark} \label{RemSmashPos}
Let $G$ be a binoid group and $N$ a positive binoid. Then $G\wedge N$ is a binoid whose unit group is $G\okreuz$ and such that $(G\wedge N)_{\oppos}$ is isomorphic to $N$. In particular, there is an injective and a surjective binoid homomorphism
$$(G\okreuz)^{\infty}\Rto G\wedge N\Rto N$$
such that their composition is just the characterisic point $\chi_{G\okreuz}$. 
\end {Remark}

\begin {Example} \label{ExpSmashPos}
Consider the binoid group $G=(\Z/2\Z)^{\infty}=\free(y)/(2y=0)$ and the binoid $N=\free(x)/(3x=x)$. Their smash product is
$$G\wedge N=\free(x,y)/(3x=x,2y=0)\pkt$$
Thus, $G\wedge N$ is the binoid from Example \ref{ExpSmashTensor}. Now consider the binoid $$\tilde{M}=\free(x,y)/(3x=x+y,2y=0)\pkt$$
Both binoids, $G\wedge N$ and $\tilde{M}$, have the same unit group $\{0,y\}\cong\Z/2\Z$ and coincide \emph{as sets}, more precisely, one has $G\wedge N=\{\infty,0,x,y,x+y,2x,2x+y\}=\tilde{M}$. Hence, $(G\wedge N)_{\oppos}=\{[\infty],[0],[x],2[x]\}=N=\tilde{M}_{\oppos}$ as binoids. Thus, 
$$((G\wedge N)\okreuz)^{\infty}\times (G\wedge N)_{\oppos}\,=\,(\tilde{M}\okreuz)^{\infty}\times\tilde{M}_{\oppos}\pkt$$
However, $G\wedge N\not\cong\tilde{M}$ since $3a=a$ for all $a\in G\wedge N$, but $3(x+y)=x$ in $\tilde{M}$. The corresponding binoid algebras over a ring $K$
$$K[G\wedge N]=K[X,Y]/(X^{3}-X,Y^{2}-1)$$
and 
$$K[\tilde{M}]=K[X,Y]/(X^{3}-YX, Y^{2}-1)$$
are not isomorphic either. 
\end {Example}

\begin {Lemma} \label{LemProdFactorsWedge}
Let  $(M_{i})_{i\in I}$ be a family of nonzero binoids and $N$ a binoid. Every binoid homomorphism $\varphi:\prod_{i\in I}M_{i}\rto N$ with $\varphi((a_{i})_{i\in I})=\infty$ if $a_{i}=\infty$ for at least one $i\in I$, factors through $\bigwedge_{i\in I}M_{i}$.
\end {Lemma}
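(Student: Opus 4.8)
The plan is to exhibit $\varphi$ as factoring through the canonical projection $\pi:\prod_{i\in I}M_{i}\rto\bigwedge_{i\in I}M_{i}$ by appealing to the factorization criterion of Lemma \ref{LemIndCong}. Since $\bigwedge_{i\in I}M_{i}$ is by definition the quotient of $\prod_{i\in I}M_{i}$ by the ideal congruence $\sim_{\wedge}$ (which is a congruence by the preceding lemma), it suffices to verify the inequality $\sim_{\wedge}\,\le\,\sim_{\varphi}$; then Lemma \ref{LemIndCong} provides a unique binoid homomorphism $\tilde{\varphi}:\bigwedge_{i\in I}M_{i}\rto N$ with $\tilde{\varphi}\pi=\varphi$, which is exactly the desired factorization.

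To check $\sim_{\wedge}\,\le\,\sim_{\varphi}$, recall that, viewing each $M_{i}$ as a pointed set with distinguished point $\infty_{i}$, two tuples satisfy $(s_{i})_{i\in I}\sim_{\wedge}(t_{i})_{i\in I}$ precisely when either $s_{i}=t_{i}$ for all $i\in I$, or both tuples have at least one entry equal to $\infty$. In the first case $\varphi((s_{i})_{i\in I})=\varphi((t_{i})_{i\in I})$ trivially, so $(s_{i})_{i\in I}\sim_{\varphi}(t_{i})_{i\in I}$. In the second case the standing hypothesis on $\varphi$ applies to both tuples, giving $\varphi((s_{i})_{i\in I})=\infty_{N}=\varphi((t_{i})_{i\in I})$, hence again $(s_{i})_{i\in I}\sim_{\varphi}(t_{i})_{i\in I}$. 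This establishes $\sim_{\wedge}\,\le\,\sim_{\varphi}$ and completes the argument.

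There is no genuine obstacle here: the entire content is that the defining identification of the smash product --- collapsing every tuple with an $\infty$-entry to the single absorbing element $\infty_{\wedge}$ --- is exactly what the hypothesis on $\varphi$ forces at the level of $\sim_{\varphi}$. The assumption that all $M_{i}$ are nonzero enters only to ensure that the product and smash product are honest binoids, so that $\pi$ is the quotient epimorphism of binoids to which Lemma \ref{LemIndCong} applies; commutativity and the other additive properties of the $M_{i}$ play no role. One could alternatively construct the factorization by hand, setting $\tilde{\varphi}(\wedge_{i\in I}s_{i}):=\varphi((s_{i})_{i\in I})$ and verifying well-definedness through the same case distinction, but routing through Lemma \ref{LemIndCong} renders both well-definedness and uniqueness automatic.
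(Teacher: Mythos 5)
Your proof is correct and takes essentially the same route as the paper, which simply invokes Lemma \ref{LemIndCong} after asserting the comparison of congruences; you additionally spell out the verification that $\sim_{\wedge}\,\le\,\sim_{\varphi}$, which is the inequality actually required (the paper's one-line proof writes it in the reversed order, evidently a slip). Nothing further is needed.
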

\begin {proof}
This is an immediate consequence of Lemma \ref{LemIndCong} since $\sim_{\varphi}\le\sim_{\pi}$, where $\pi$ denotes the canonical epimorphism $\prod_{i\in I}M_{i}\rto\bigwedge_{i\in I}M_{i}$.
\end {proof}

\begin {Lemma}  \label{LemSmashRules}
Let $(M_{i})_{i\in I}$ be a family of nonzero binoids.
\begin {ListeTheorem}
\item $\bigwedge_{i\in I}M_{i}$ is commutative if and only if all $M_{i}$ are commutative.
\item $\left(\bigwedge_{i\in I}M_{i}\right)\okreuz\cong\prod_{i\in I}M_{i}\okreuz$.
\item $\opint\left(\bigwedge_{i\in I}M_{i}\right)\cong\prod_{i\in I}\opint(M_{i})$ and $\nonint\left(\bigwedge_{i\in I}M_{i}\right)=\{\wedge_{i\in I} a_{i}\mid a_{k}\in \nonint(M_{k})$ for at least one $k\in I\}$. In particular, $\left(\bigwedge_{i\in I}M_{i}\right)_{\opint}=\bigwedge_{i\in I}(M_{i})_{\opint}$; that is, $\bigwedge_{i\in I}M_{i}$ is integral if and only if all $M_{i}$ are integral. In this case, $\bigwedge_{i\in I}M_{i}=(\prod_{i\in I}M_{i}\opkt)^{\infty}$.
\item $\nil\left(\bigwedge_{i\in I}M_{i}\right)=\{\wedge_{i\in I} a_{i}\mid a_{k}\in \nil(M_{k})$ for at least one $k\in I\}$.
\item $\left(\bigwedge_{i\in I}M_{i}\right)_{\oppos}=\bigwedge_{i\in I}(M_{i})_{\oppos}$.
\item If $I$ is finite, then $\left(\bigwedge_{i\in I}M_{i}\right)_{\optf}=\bigwedge_{i\in I}(M_{i})_{\optf}$.
\item Let $S_{i}\subseteq M_{i}$, $i\in I$, be a family of submonoids. If $S:=\{\wedge_{i\in I}s_{i}\mid s_{i}\in S_{i}\}\subseteq\bigwedge_{i\in I}M_{i}$, then $\left(\bigwedge_{i\in I}M_{i}\right)_{\opcan,S}=\bigwedge_{i\in I}(M_{i})_{\opcan,S_{i}}$.
\end {ListeTheorem}
\end {Lemma}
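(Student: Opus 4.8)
This lemma collects seven structural properties of the smash product, and the overarching strategy is uniform: the smash product $\bigwedge_{i\in I}M_i$ is the quotient $(\prod_{i\in I}M_i)/\!\sim_{\wedge}$, and $\sim_{\wedge}$ is an ideal congruence whose kernel consists of all tuples with at least one $\infty_i$-entry. So every computation reduces to understanding the product $\prod_{i\in I}M_i$ (for which the corresponding facts were already established in Example \ref{ExpProdUnits}, Example \ref{ExpIntegrity}(5), and Lemma \ref{LemProdCong}) and then checking how $\sim_{\wedge}$ interacts with the relevant property. The key observation throughout is that on the set $\prod_{i\in I}M_i\opkt$ of tuples with all entries finite, the projection $\pi:\prod M_i\to\bigwedge M_i$ is a bijection onto $(\bigwedge M_i)\opkt$, since $\sim_{\wedge}$ only collapses tuples having an $\infty_i$-entry.

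The plan for each part runs as follows. For (1) I would note that commutativity of the operation is tested componentwise and is preserved by the quotient map, so it follows directly from Lemma \ref{LemProductCom} applied to the product. For (2), since units are integral and hence finite-entried, the bijection $\pi$ on $\prod M_i\opkt$ restricts to a bijection between $(\prod M_i)\okreuz$ and $(\bigwedge M_i)\okreuz$; combined with $\left(\prod_{i\in I}M_i\right)\okreuz=\prod_{i\in I}M_i\okreuz$ from Example \ref{ExpProdUnits}, this gives the claim. For (3) and (4), I would use the same bijection: an element $\wedge_{i\in I}a_i$ with some $a_k=\infty$ is just $\infty_{\wedge}$, so the only way to be non-integral (resp.\ nilpotent) without being $\infty_{\wedge}$ is to have a finite-entried representative that is non-integral (resp.\ nilpotent) in the product, and Example \ref{ExpIntegrity}(5) (resp.\ the analogous nilpotence description) then transfers. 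The integrality conclusion and the identification $\bigwedge M_i=(\prod M_i\opkt)^{\infty}$ in the integral case follow because, when all $M_i$ are integral, the non-integral elements of the product are exactly the tuples with an $\infty_i$-entry, which are precisely what $\sim_{\wedge}$ collapses.

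For (5), (6), (7), the pattern is to show that the congruences commute with the smash quotient. I would argue that $\sim_{\oppos}$, $\sim_{\optf}$, and $\sim_{\opcan,S}$ each descend along $\pi$ because the product versions were computed componentwise in Lemma \ref{LemProdCong}(2),(3),(4), and because $\sim_{\wedge}$ is ``coarser only at $\infty$.'' Concretely, for (5) I would verify that $\wedge a_i\sim_{\oppos}\wedge b_i$ holds iff $a_i\sim_{\oppos}b_i$ for all $i$ (using that units are finite-entried so the associating unit lives in $(\prod M_i)\okreuz$), and then identify the double quotient $\big((\prod M_i)/\!\sim_{\wedge}\big)/\!\sim_{\oppos}$ with $\bigwedge_{i\in I}(M_i)_{\oppos}$; the analogue of Lemma \ref{LemProdCong}(2) does the bookkeeping. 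Parts (6) and (7) proceed identically, invoking Lemma \ref{LemProdCong}(3),(4), with the finiteness hypothesis on $I$ in (6) carried over verbatim from the product case.

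The only genuinely delicate point—and the part I expect to require the most care rather than cleverness—is making the ``descent of congruences'' in (5)--(7) rigorous. One must check that composing the smash congruence with, say, $\sim_{\oppos}$ on the product yields exactly the smash congruence on the family $((M_i)_{\oppos})_{i\in I}$, i.e.\ that the order of taking quotients does not matter and that no extra identifications sneak in at $\infty_{\wedge}$. The clean way is to observe that each of these three congruences is trivial on (i.e.\ fixes) the class $\infty_{\wedge}$ and agrees with the product congruence on finite-entried tuples, so the induced map $\bigwedge_{i\in I}M_i\to\bigwedge_{i\in I}(M_i)_{\oppos}$ is well-defined with the expected fibers; everything else is the routine componentwise verification already done for the product.
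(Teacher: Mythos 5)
Your proposal is correct and follows essentially the same route as the paper: the paper's proof likewise rests on the single observation that the classes of $\sim_{\wedge}$ are singletons except for the class $\infty_{\wedge}$ of tuples with an $\infty_{i}$-entry, and then transfers the corresponding product statements via Lemma \ref{LemProductCom}, Example \ref{ExpIntegrity}(5), and Lemma \ref{LemProdCong}. Your extra care in checking that the congruences in (5)--(7) descend through the smash quotient is exactly the bookkeeping the paper leaves implicit.
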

\begin {proof}
By definition, the class of an element in $\prod_{i\in I}M_{i}$ under $\sim_{\wedge}$ is a singleton if (and only if) there is no entry equal to $\infty_{i}$ and $\infty_{\wedge}$ otherwise. In particular, $\sim_{\wedge}$ is an ideal congruence. Having observed this, most statements follow by comparsion with similar statements on the product $\prod_{i\in I}M_{i}$. See, for instance, Lemma \ref {LemProdCong} for (5)-(7). The first statement is obvious by Lemma \ref{LemProductCom}. The statements (2)-(4) are easily verified. See also Example \ref {ExpIntegrity}(5). 
\end {proof}

\begin {Remark}
As for the product, cf.\ Remark \ref{RemProdCongruences}(1), there are no such results like (5)-(7) for the congruence $\sim_{\opred}$ on the smash product. For instance, since $[e_{i,\infty}]=[e_{j,\infty}]=\infty_{\wedge}$ in $\wedge_{i\in I}M_{i}$, one has $[e_{i,\infty}]\sim_{\opred}[e_{j,\infty}]$ but $0\not\sim_{\opred}\infty$ on every nonzero binoid.
\end {Remark}

The smash product is a universal object, namely, it is the (finite) coproduct in the category of commutative binoids $\cBsf$. This is contained in the following result.

\begin {Proposition} \label{PropCoproductComBinoid}
Let $(M_{i})_{i\in I}$ be a finite family of binoids and $N$ a commutative binoid. Every family $\varphi_{i}:M_{i}\rto N$, $i\in I$, of binoid homomorphisms gives rise to a unique binoid homomorphism $\varphi:\bigwedge_{i\in I}M_{i}\rto N$ such that $\varphi\iota_{k}=\varphi_{k}$, where $\iota_{k}$ denotes the canonical embedding $M_{k}\embto\bigwedge_{i\in I}M_{i}$, $a\mto a\widehat{e_{k}}$, $k\in I$.
\end {Proposition}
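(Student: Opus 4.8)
The statement to prove is the universal property of the smash product as the finite coproduct in $\cBsf$: given a finite family $(M_i)_{i\in I}$ of binoids and a commutative binoid $N$, together with binoid homomorphisms $\varphi_i:M_i\rto N$, there exists a unique $\varphi:\bigwedge_{i\in I}M_i\rto N$ with $\varphi\iota_k=\varphi_k$ for all $k\in I$.

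\textbf{Approach.} The plan is to define the candidate map directly on representatives of classes in the smash product and then invoke the factorization lemma (Lemma~\ref{LemIndCong}) to obtain a clean, well-defined homomorphism. First I would consider the product $\prod_{i\in I}M_i$ and define the auxiliary map
$$\Phi:\prod_{i\in I}M_i\Rto N\komma\quad (a_i)_{i\in I}\lto\sum_{i\in I}\varphi_i(a_i)\pkt$$
Since $I$ is finite and $N$ is commutative, this sum is a well-defined element of $N$ independent of the order of summation. I would check that $\Phi$ is a binoid homomorphism: it sends $0_\Pi=(0_i)_{i\in I}$ to $\sum_i\varphi_i(0_i)=\sum_i 0_N=0_N$, it respects addition by componentwise addition together with the commutativity of $N$, and it sends $\infty_\Pi=(\infty_i)_{i\in I}$ to $\sum_i\varphi_i(\infty_i)=\infty_N$ (using that each $\varphi_i$ is a binoid homomorphism and $\infty_N$ is absorbing).

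\textbf{Key steps.} Next I would verify that $\Phi$ satisfies the hypothesis of Lemma~\ref{LemProdFactorsWedge}, namely that $\Phi((a_i)_{i\in I})=\infty_N$ whenever some $a_k=\infty_k$. This is immediate: if $a_k=\infty_k$ then $\varphi_k(a_k)=\infty_N$, and since $\infty_N$ is absorbing in $N$, the entire sum collapses to $\infty_N$. By Lemma~\ref{LemProdFactorsWedge}, $\Phi$ factors through the canonical epimorphism $\pi:\prod_{i\in I}M_i\rto\bigwedge_{i\in I}M_i$, yielding a binoid homomorphism $\varphi:\bigwedge_{i\in I}M_i\rto N$ with $\varphi\pi=\Phi$. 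Explicitly, $\varphi(\wedge_{i\in I}a_i)=\sum_{i\in I}\varphi_i(a_i)$. To confirm the required commutativity, I would compute $\varphi\iota_k(a)$ for $a\in M_k$: here $\iota_k(a)=\wedge_{i\in I}a_i$ with $a_k=a$ and $a_i=0_i$ for $i\ne k$, so $\varphi\iota_k(a)=\varphi_k(a)+\sum_{i\ne k}\varphi_i(0_i)=\varphi_k(a)+0_N=\varphi_k(a)$, as needed.

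\textbf{Uniqueness and the main obstacle.} For uniqueness, I would argue that the images $\iota_k(M_k)$ generate $\bigwedge_{i\in I}M_i$: every element $\wedge_{i\in I}a_i$ equals $\sum_{k\in I}\iota_k(a_k)$ in the smash product (since adding $0$ in the remaining slots does nothing and the sum over slots reassembles the tuple). Therefore any homomorphism $\psi$ with $\psi\iota_k=\varphi_k$ is forced to satisfy $\psi(\wedge_{i\in I}a_i)=\sum_k\psi\iota_k(a_k)=\sum_k\varphi_k(a_k)=\varphi(\wedge_{i\in I}a_i)$, giving $\psi=\varphi$. The step requiring the most care is verifying that the generating identity $\wedge_{i\in I}a_i=\sum_{k\in I}\iota_k(a_k)$ holds in $\bigwedge_{i\in I}M_i$; this rests on the componentwise additive structure descending correctly through $\pi$, and is exactly where the finiteness of $I$ and the definition of $\sim_\wedge$ are used. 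I expect no genuine difficulty here, only bookkeeping — the essential content is carried by the already-established Lemma~\ref{LemProdFactorsWedge}, and the commutativity of $N$ is what makes the unordered sum $\sum_i\varphi_i(a_i)$ legitimate in the first place.
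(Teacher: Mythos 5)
Your proposal is correct and follows essentially the same route as the paper, whose proof simply defines $\varphi(\wedge_{i\in I}a_{i}):=\sum_{i\in I}\varphi_{i}(a_{i})$ and declares the verification routine. Your use of Lemma~\ref{LemProdFactorsWedge} for well-definedness and the identity $\wedge_{i\in I}a_{i}=\sum_{k\in I}\iota_{k}(a_{k})$ for uniqueness supplies precisely the details the paper leaves to the reader.
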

\begin {proof}
Define $\varphi(\wedge_{i\in I}a_{i}):=\sum_{i\in I}\varphi_{i}(a_{i})$ for $\wedge_{i\in I}a_{i}\in\bigwedge_{i\in I}M_{i}$. It is easily checked that this is a well-defined binoid homomorphism with $\varphi_{k}=\varphi\iota_{k}$ for all $k\in I$.
\end {proof}

Similar to the situation in the category of monoids, the induced map $\varphi$ in the proof of Corollary \ref{PropCoproductComBinoid} is a homomorphism because $N$ is commutative.

\begin {Example}
Since $\bigwedge_{i=1}^{n}\N^{\infty}=(\N^{n})^{\infty}$, one has $N\minspec\bigwedge_{i=1}^{n}\N^{\infty}=N^{n}$ by Lemma \ref{LemNspec}, which yields by Example \ref{ExpAdjointDual}
$$\Big(\bigwedge_{i=1}^{n}\N^{\infty}\Big)\dual=(\trivial^{n})\onull\quad\text{and}\quad\Big(\bigwedge_{i=1}^{n}\N^{\infty}\Big)\bidual=\trivial^{n}\pkt$$
\end {Example}

\begin {Proposition} \label {PropNspecSmash}
Let $(M_{i})_{i\in I}$ be a finite family of nonzero binoids. If $N$ is a commutative binoid, then
$$N\minspec\bigwedge_{i\in I}M_{i}\,\,\cong\,\,\prod_{i\in I}N\minspec M_{i}$$
as semigroups. The isomorphism is given by $\varphi\mto(\varphi\iota_{i})_{i\in I}$, where $\iota_{k}$ denotes the canonical embedding $M_{k}\embto\bigwedge_{i\in I}M_{i}$, $a\mto a\widehat{e_{i}}$, with inverse $\varphi\mapsfrom(\varphi_{i})_{i\in I}$, where $\varphi(\wedge_{i\in I}a_{i}):=\sum_{\in I}\varphi_{i}(a_{i})$.
\end {Proposition}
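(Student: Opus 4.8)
The plan is to recognise the asserted isomorphism as nothing more than the universal property of the smash product as a coproduct in $\cBsf$ (Proposition \ref{PropCoproductComBinoid}), read off at the level of underlying sets, and then to promote the resulting bijection to a semigroup isomorphism by a one-line check that it respects the pointwise addition of homomorphisms.

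First I would fix the two mutually inverse maps. In one direction,
$$\Phi:N\minspec\bigwedge_{i\in I}M_{i}\Rto\prod_{i\in I}N\minspec M_{i}\komma\quad\varphi\lto(\varphi\iota_{i})_{i\in I}\komma$$
is well-defined because each $\varphi\iota_{i}$ is a composite of binoid homomorphisms and hence lies in $N\minspec M_{i}$. In the other direction, given a family $(\varphi_{i})_{i\in I}$, Proposition \ref{PropCoproductComBinoid} (which applies since $I$ is finite and $N$ commutative) yields a unique $\varphi$ with $\varphi\iota_{k}=\varphi_{k}$ for all $k$, explicitly $\varphi(\wedge_{i\in I}a_{i})=\sum_{i\in I}\varphi_{i}(a_{i})$; this gives $\Psi:(\varphi_{i})_{i\in I}\lto\varphi$. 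Then $\Phi\Psi=\id$ is exactly the identity $\varphi\iota_{k}=\varphi_{k}$, and $\Psi\Phi=\id$ follows from the uniqueness clause of Proposition \ref{PropCoproductComBinoid}, since both $\varphi$ and $\Psi\Phi(\varphi)$ restrict along each $\iota_{k}$ to $\varphi\iota_{k}$.

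Next I would check that $\Phi$ is a homomorphism of semigroups, where $\prod_{i\in I}N\minspec M_{i}$ carries the componentwise operation. For $\varphi,\psi\in N\minspec\bigwedge_{i\in I}M_{i}$ and $a\in M_{i}$, the definition of addition on $\hom(-,N)$ (Remark \ref{RemBHom}) gives
$$\big((\varphi+\psi)\iota_{i}\big)(a)=(\varphi+\psi)(\iota_{i}(a))=\varphi(\iota_{i}(a))+\psi(\iota_{i}(a))=(\varphi\iota_{i}+\psi\iota_{i})(a)\komma$$
so $(\varphi+\psi)\iota_{i}=\varphi\iota_{i}+\psi\iota_{i}$, whence $\Phi(\varphi+\psi)=\Phi(\varphi)+\Phi(\psi)$.

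I do not anticipate a genuine obstacle: the whole force of the bijection is already contained in Proposition \ref{PropCoproductComBinoid}, and commutativity of $N$ — needed for $\hom(-,N)$ to be a semigroup at all — is hypothesised. The only point deserving care is bookkeeping: that the operation on the target is componentwise and that it matches the pointwise sum of the restricted homomorphisms, which is precisely the displayed computation.
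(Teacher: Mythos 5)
Your proof is correct and follows essentially the same route as the paper: both rest on Proposition \ref{PropCoproductComBinoid} to construct the inverse, differing only in that you invoke its uniqueness clause for injectivity where the paper redoes that computation by writing $\wedge_{i\in I}a_{i}=\sum_{i\in I}\iota_{i}(a_{i})$, and you verify the semigroup-homomorphism property pointwise where the paper cites the universal property of the product of semigroups. These are cosmetic differences; no gaps.
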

\begin {proof}
The canonical embeddings $\iota_{k}:M_{k}\rto\bigwedge_{i\in I}M_{i}$, $k\in I$, with $a\mto a\widehat{e_{i}}$ induce by Proposition \ref{PropIndHomNspec} a semigroup homomorphism $N\minspec\bigwedge_{i\in I}M_{i}\rto N\minspec M_{k}$ with $\varphi\mto\varphi\iota_{k}$. By the universal property of the product in the category of semigroups, this gives rise to a semigroup homomorphism
$$\psi:N\minspec\bigwedge_{i\in I}M_{i}\Rto\prod_{i\in I}N\minspec M_{i}$$
with $\varphi\mto(\varphi\iota_{i})_{i\in I}$, which is surjective by Proposition \ref{PropCoproductComBinoid}. For the injectivity suppose that $\varphi$, $\varphi^{\prime}\in N\minspec\bigwedge_{i\in I}M_{i}$ with $\varphi\iota_{k}=\varphi^{\prime}\iota_{k}$ for all $k\in I$. Since every element $\wedge_{i\in I}a_{i}\in\bigwedge_{i\in I}M_{i}$ can be written as $\sum_{i\in I}a_{i}\widehat{e_{i}}=\sum_{i\in I}\iota_{i}(a_{i})$, we get
$$\varphi(\wedge_{i\in I}a_{i})=\sum_{i\in I}\varphi\iota_{i}(a_{i})=\sum_{i\in I}\varphi^{\prime}\iota_{i}(a_{i})=\varphi^{\prime}(\wedge_{i\in I}a_{i})\pkt$$
Hence, $\varphi=\varphi^{\prime}$. For the supplement, one easily checks that $\varphi$ as given in the statement is the preimage of $(\varphi_{i})_{i\in I}$ under $\psi$.
\end {proof}

To determine the $N\mina$spectrum of the product of a family of nonzero binoids, we need the following definition.

\begin{Definition}
The \gesperrt{disjoint union} \index{disjoint union}of a family $(S_{i})_{i\in I}$ of arbitrary sets is given by the set\nomenclature[AProductDisjointUnion]{$\biguplus_{i\in I}S_{i}$}{disjoint union of a family of sets}
$$\biguplus_{i\in I}S_{i}:=\{(s;i)\mid s\in S_{i}, i\in I\}\pkt$$
\end{Definition}

\begin {Proposition} \label{PropNspecProd}
Let $(M_{i})_{i\in I}$ be a finite family of nonzero binoids. If $N$ is a commutative binoid with no non-trivial idempotents, then 
$$N\minspec\prod_{i\in I}M_{i}\,\,\cong\,\,\biguplus_{\emptyset\not=J\subseteq I}\!\!N\minspec\bigwedge_{i\in J}M_{i}\,\,\cong\,\,\biguplus_{\emptyset\not=J\subseteq I}\prod_{i\in J}N\minspec M_{i}$$
as semigroups, where the semigroup structure in the middle and on the right-hand side are given by 
$$(\psi;J)+(\psi^{\prime};J^{\prime}):=(\psi+\psi^{\prime};J\cup J^{\prime})\komma$$
where $(\psi+\psi^{\prime})((a_{i})_{i\in J\cup J^{\prime}}):=\psi((a_{i})_{i\in J})+\psi^{\prime}((a_{i})_{i\in J^{\prime}})$, and
$$((\psi_{i})_{i\in J};J)+((\psi_{i}^{\prime})_{i\in J};J^{\prime}):=((\psi_{i})_{i\in J}+(\psi_{i})_{i\in J^{\prime}}^{\prime});J\cup J^{\prime})\komma$$
where $((\psi_{i})_{i\in J}+(\psi_{i})_{i\in J^{\prime}}^{\prime})((a_{i})_{i\in J\cup J^{\prime}}):=(\psi_{i}(a_{i}))_{i\in J}+(\psi_{i}^{\prime}(a_{i}))_{i\in J^{\prime}}$, respectively.
\end {Proposition}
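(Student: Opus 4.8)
The second isomorphism requires no work beyond what is already available: applying Proposition \ref{PropNspecSmash} to each subfamily $(M_i)_{i\in J}$ with $\emptyset\neq J\subseteq I$ gives $N\minspec\bigwedge_{i\in J}M_i\cong\prod_{i\in J}N\minspec M_i$ as semigroups, and forming the disjoint union over all such $J$ transports the declared operation in the middle to the one on the right. So the plan is to concentrate on the first isomorphism, which I would construct explicitly. The governing idea is that a binoid homomorphism out of a finite product is rigidly determined by how it treats the coordinate idempotents $e_{i,\infty}=(0\kpkt0,\infty,0\kpkt0)$ together with the coordinate inclusions $\eta_i\colon M_i\to\prod_{j\in I}M_j$, $a\mapsto(0\kpkt0,a,0\kpkt0)$, which are monoid homomorphisms by Example \ref{ExPowersetIsom}(2) and satisfy $\eta_i(\infty_i)=e_{i,\infty}$.

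The first step is the classification of $\varphi\in N\minspec\prod_{i\in I}M_i$. Since each $e_{i,\infty}$ is idempotent, Lemma \ref{LemHomProperties}(2) forces $\varphi(e_{i,\infty})\in\bool(N)$, and the hypothesis that $N$ has no non-trivial idempotents collapses this to $\varphi(e_{i,\infty})\in\{0,\infty\}$. I would set $J:=\{i\in I\mid\varphi(e_{i,\infty})=\infty\}$ and $\psi_i:=\varphi\circ\eta_i$. For $i\in J$ the composite $\psi_i$ is a monoid homomorphism sending $\infty_i$ to $\varphi(e_{i,\infty})=\infty$, hence $\psi_i\in N\minspec M_i$. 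For $i\notin J$ the relation $a+\infty_i=\infty_i$ yields $\varphi(\eta_i(a))+\varphi(e_{i,\infty})=\varphi(e_{i,\infty})$, and with $\varphi(e_{i,\infty})=0$ this forces $\varphi(\eta_i(a))=0$ for all $a$, so the $i$th coordinate is annihilated. Using the monoid identity $(a_i)_{i\in I}=\sum_{i\in I}\eta_i(a_i)$ and applying $\varphi$ then gives $\varphi((a_i)_{i\in I})=\sum_{i\in J}\psi_i(a_i)$. This expression depends only on $(a_i)_{i\in J}$ and equals $\infty$ as soon as some $a_i=\infty$ with $i\in J$, so $\varphi$ factors as $\varphi=\bar\varphi_J\circ\pi_J$ through the smash projection $\pi_J\colon\prod_{i\in I}M_i\to\bigwedge_{i\in J}M_i$, where $\bar\varphi_J\in N\minspec\bigwedge_{i\in J}M_i$ is given by $\wedge_{i\in J}a_i\mapsto\sum_{i\in J}\psi_i(a_i)$. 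Because all $M_i$ are nonzero we have $\infty_\Pi\neq0_\Pi$ and $\varphi(\infty_\Pi)=\infty\neq0$, which forces $J\neq\emptyset$; this is precisely why the disjoint union ranges over the nonempty subsets.

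This assigns $\Psi(\varphi):=(\bar\varphi_J;J)$. For the inverse I would send $(\bar\varphi_J;J)$ to $\bar\varphi_J\circ\pi_J$; evaluating the reconstructed map on $e_{j,\infty}$ recovers $J$ and on $\eta_j$ recovers the $\psi_j$, so the two assignments are mutually inverse. It then remains to check that $\Psi$ respects the operations. The index set of $\Psi(\varphi+\varphi')$ is $\{j\mid\varphi(e_{j,\infty})+\varphi'(e_{j,\infty})=\infty\}=J\cup J'$, since a sum of elements of $\{0,\infty\}$ in $N$ is $\infty$ exactly when one summand is; and its factorization evaluates to $\sum_{i\in J\cup J'}(\varphi\eta_i+\varphi'\eta_i)(a_i)=\sum_{i\in J}\psi_i(a_i)+\sum_{i\in J'}\psi'_i(a_i)$ by the annihilation of coordinates outside $J$ and $J'$ respectively. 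This is exactly $(\bar\varphi_J+\bar\varphi'_{J'})(\wedge_{i\in J\cup J'}a_i)$ for the declared operation in the middle term, so $\Psi$ is a semigroup isomorphism.

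The bookkeeping — that the two constructions invert one another and that $\Psi$ is additive — is routine. The one load-bearing step, and the place I expect the main difficulty, is the reduction $\varphi(e_{i,\infty})\in\{0,\infty\}$ and the dichotomy it produces: it is here that the hypothesis ``$N$ has no non-trivial idempotents'' is indispensable, splitting $I$ into the surviving set $J$ and its annihilated complement. Without it $\varphi(e_{i,\infty})$ could be a non-trivial idempotent of $N$, the rigid ``sum over $J$'' form of $\varphi$ would break down, and the product would no longer decompose as the disjoint union of the smash-spectra.
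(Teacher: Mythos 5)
Your proposal is correct and follows essentially the same route as the paper: both rest on the observation that $\varphi(e_{i,\infty})\in\bool(N)=\trivial$ forces the dichotomy defining $J$, show that $\varphi$ depends only on the $J\mina$coordinates, and factor through the smash product $\bigwedge_{i\in J}M_{i}$, with the second isomorphism coming from Proposition \ref{PropNspecSmash}. The only difference is organizational — you classify the points of $N\minspec\prod_{i\in I}M_{i}$ first via the coordinate inclusions $\eta_{i}$, whereas the paper first assembles the map out of the disjoint union and then proves bijectivity using the identity $\phi((a_{i})_{i})=\phi((a_{i})_{i}+\sum_{i\not\in J}e_{i,\infty})$ and Lemma \ref{LemProdFactorsWedge} — and you carry out the additivity check that the paper leaves to the reader.
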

\begin{proof}
The latter semigroup isomorphism is an immediate consequence of Proposition \ref{PropNspecSmash} and given by $(\psi;J)\mto((\psi\iota_{i})_{i\in J};J)$, where $\iota_{i}$ denotes the binoid embedding $M_{i}\embto\bigwedge_{j\in J}M_{j}$. To obtain the first isomorphism observe that for every $J \subseteq I$, $J \neq \emptyset$, the natural binoid epimorphism $\pi\pi_{J}$, where
$$\prod_{i \in I} M_{i} \stackrel{\pi_J}{\Rto}\prod_{i \in J} M_{i} \stackrel{\pi}{\Rto}\bigwedge_{i\in J} M_{i}\komma$$
induces a semigroup embedding
$$\varphi_{J}:N\minspec\bigwedge_{i \in J}M_{i}\Rto N\minspec\prod_{i \in I} M_{i}$$
with $\psi\mto\psi\pi\pi_{J}$. By the universal property of the disjoined union, these embeddings give rise to a map
$$\varphi:\biguplus_{\emptyset\not=J\subseteq I}N\minspec\bigwedge_{i\in J}M_{i}\Rto N\minspec\prod_{i\in I}M_{i}$$
with $(\psi;J)\mto\psi\pi\pi_{J}$. It is easily checked that this is a semigroup homomorphism, where the disjoint union is a semigroup as described in the proposition. We show that $\varphi$ is a bijection. For this take an arbitrary $\phi\in N\minspec\prod_{i \in I} M_{i}$ and set 
$$J:=\{i\in I\mid\phi(e_{i,\infty})=\infty\}\pkt$$
By Lemma \ref{LemHomProperties}(2) and the assumption on $N$, we obtain $\phi(e_{i,\infty})\in\bool(N)=\trivial$. Since $\sum_{i \in I}e_{i,\infty} = \infty$, we have $\sum_{i \in I} \phi(e_{i,\infty})=\infty$ in $N$, which therefore implies that $\phi(e_{i,\infty})= \infty$ for at least one $i\in I$. In particular, $J\not=\emptyset$. We claim that $\phi$ factors through $\prod_{i \in J} M_i$. For this, we may assume that $I=\{1, \ldots ,n\}$  and that $J=\{1, \ldots , k\}$. Since $0=\phi(\sum_{i\not\in J}e_{i,\infty})$ ($=\phi(0\kpkt 0,\infty\kpkt\infty)$), we obtain
$$\phi(a_{1}\kpkt a_{n})\,=\,\phi\Big((a_{1}\kpkt a_{n})+\sum_{i\not\in J}e_{i,\infty}\Big)=\phi(a_{1}\kpkt a_{k},\infty\kpkt\infty)\komma$$
which shows that $\phi$ depends only on the $J\mina$components. Therefore, $\phi$ factors through the binoid homomorphism
$$\tilde{\phi}:\prod_{i\in J}M_{i}\Rto N\komma\quad(a_{1}\kpkt a_{k})\lto\phi(a_{1}\kpkt a_{k},\infty\kpkt\infty)\pkt$$ 
Hence, $\tilde{\phi}$ factors through $\wedge_{i\in J}M_{i}$ by Lemma \ref{LemProdFactorsWedge}. This shows the surjectivity of $\varphi$. The injectivity is clear since $J$ is uniquely determined by $\phi$. 
\end{proof}

The condition on $N$ containing only trivial idempotent elements is necessary in the preceding proposition. Consider, for instance, the binoid $M=\free(x_{1},x_{2})/(x_{1}+x_{2}= \infty, 2x_{1}=x_{1}, 2x_{2}=x_{2})$. The binoid homomorphism
$$\N^{\infty}\times\N^{\infty}\Rto M$$
with $e_{i}\mto 0$ and $e_{i,\infty}\mto x_{i}$, $i\in\{1,2\}$, does not factor through one of the factors.

\begin {Example} 
We apply Proposition \ref{PropNspecProd} to determine the $N\mina$spectra of the products $\trivial^{n}$ and $(\N^{\infty})^{n}$, where $N$ denotes a binoid with $\bool(N)=\trivial$. For this, recall that $\bigwedge_{i\in I}M_{i}=(\prod_{i\in I}M_{i})^{\infty}$ if all $M_{i}$ are integral, cf.\ Lemma \ref{LemSmashRules}(3), and let $I=\{1\kpkt n\}$. Since $\bigwedge_{i\in I}\trivial\cong\trivial$ and $N\minspec\trivial=\{\trivial\rto N\}\cong\zero$, we get
$$N\minspec\trivial^{n}\,\,\,\cong\,\,\,\biguplus_{\emptyset\not=  J\subseteq I}\zero\komma$$
as semigroups. Similarly, since $\bigwedge_{i=1}^{r}\N^{\infty}=(\N^{r})^{\infty}$ for every $r\ge1$, we obtain
$$N\minspec(\N^{\infty})^{n}\,\,\,\cong\,\,\,\biguplus_{\emptyset\not=J\subseteq I}N\minspec(\N^{\#J})^{\infty}\,\,\,\cong\,\,\,\biguplus_{\emptyset\not=J\subseteq I}N^{\#J}$$
as semigroups, where the latter isomorphism is due to Lemma \ref{LemNspec}. In particular, for the dual of $\trivial^{n}$ and $(\N^{\infty})^{n}$, we get
$$(\trivial^{n})\dual\,\,\cong\,\,\biguplus_{J\subseteq I}\zero\quad\text{and}\quad((\N^{\infty})^{n})\dual\,\,\cong\,\,\biguplus_{J\subseteq I}\trivial^{\#J}$$
See also Example \ref{ExpBHomBoolean}(2).
\end {Example}

\bigskip

\section {Pointed unions}  \label{SecPointedUnion}
\markright{\ref{SecPointedUnion} Pointed unions}

In this section, we construct a binoid, called the bipointed union, from the disjoint union of a family of (positive) binoids, which was introduced at the end of the last section. As a first step, we define the pointed union of a family of pointed sets.

\begin {Lemma} \label{LemGlueingPoints}
Given a family $(S_{i},p_{i})_{i\in I}$ of pointed sets, the relation $\sim_{\infty}$ on the disjoint union $\biguplus_{i\in I}S_{i}$ that glues all points $(p_{i};i)$, $i\in I$, together and leaves the rest untouched, i.e.\ \nomenclature[ACongruenceZZGlued]{$\sim_{\infty}$}{equivalence relation on the disjoint union of pointed sets}
$$(s;i)\sim_{\infty}(t;j)\quad:\eq\quad i=j\text{ and }s=t\quad\text{or}\quad a=p_{i}\text{ and }b=p_{j}\komma$$
defines an equivalence relation such that\nomenclature[AProductPointeda]{$\bigcupdot_{i\in I}S_{i}$}{pointed union of a family of pointed sets} 
$$\Big(\biguplus_{i\in I}S_{i}\Big)\Big/\!\sim_{\infty}\,\,\,=:\,\bigcupdot_{i\in I}S_{i}$$
is again a pointed set $(\bigcupdot_{i\in I}S_{i},p)$, where $p=[(p_{i};i)]$, $i\in I$.\footnote{\, Apart from the class of $p$, all classes are singletons and will therefore be written as $(a;i)$ for $a\in S_{i}$, $i\in I$, by abuse of notation.} In particular, if $(M_{i})_{i\in I}$ is a family of binoids (or semibinoids), then the addition
$$(a;i)+(b;j):=\begin {cases}
(a+b;i)&\text{, if }i=j,\\
\infty&\text{, otherwise,}
\end {cases}$$
defines a semibinoid structure on $\bigcupdot_{i\in I}M_{i}$.
\end {Lemma}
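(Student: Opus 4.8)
The plan is to verify the two claims in Lemma \ref{LemGlueingPoints} in order: first that $\sim_{\infty}$ is an equivalence relation on $\biguplus_{i\in I}S_{i}$, and second that the induced addition on $\bigcupdot_{i\in I}M_{i}$ is well-defined and endows it with a semibinoid structure. Both parts are routine verifications, so the real work is bookkeeping rather than any deep idea.

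First I would check that $\sim_{\infty}$ is reflexive, symmetric, and transitive. Reflexivity and symmetry are immediate from the symmetric form of the defining condition. For transitivity, suppose $(s;i)\sim_{\infty}(t;j)$ and $(t;j)\sim_{\infty}(r;k)$; one splits into cases according to whether each relation holds via the ``same index'' clause or the ``both distinguished points'' clause. The only subtlety is that if $(s;i)\sim_{\infty}(t;j)$ holds because $s=p_i$ and $t=p_j$, then $(t;j)$ is the class representative of the glued point, so the second relation forces $r=p_k$, giving $(s;i)\sim_{\infty}(r;k)$; the remaining combinations are handled the same way. This confirms that the quotient $\bigcupdot_{i\in I}S_{i}$ is a pointed set with distinguished point $p=[(p_i;i)]$.

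Next I would turn to the semibinoid structure, taking each $M_i$ a binoid (or semibinoid) with distinguished point $\infty_i$. The first task is well-definedness of the addition: I must check that the formula respects $\sim_{\infty}$. Since all classes except $p=\infty$ are singletons, the only thing to verify is that whenever one summand is the glued point $\infty$, the sum is again $\infty$. If $(a;i)=\infty$ in $\bigcupdot_{i\in I}M_i$, i.e.\ $a=\infty_i$, then for $i=j$ the formula gives $(a+b;i)=(\infty_i+b;i)=(\infty_i;i)=\infty$ using the absorbing property in $M_i$, and for $i\ne j$ the formula gives $\infty$ directly; so $\infty$ absorbs on the left, and symmetrically on the right. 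Hence the addition descends to the quotient.

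Finally I would verify associativity and identify the absorbing element. Associativity splits into cases by whether the three indices coincide. When all three indices equal $i$, associativity reduces to associativity in $M_i$; whenever two indices differ, both bracketings evaluate to $\infty$ because $\infty$ is absorbing, as just shown. The element $\infty=[(\infty_i;i)]$ is absorbing by construction, so $\bigcupdot_{i\in I}M_i$ is a semibinoid. (It is not in general a monoid, since the various identities $0_i$ remain distinct and unglued, which is precisely why the statement claims only a semibinoid structure; the promotion to a genuine binoid is what the following \emph{bipointed union} construction will address.) The main obstacle, such as it is, lies only in organizing the transitivity and associativity case-splits cleanly so that the absorbing clause is invoked correctly; there is no conceptual difficulty.
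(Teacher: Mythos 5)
Your proof is correct and follows the only natural route here -- the routine case-by-case verification of transitivity, well-definedness via the absorbing property, and associativity -- which is exactly what the paper leaves implicit with ``This is easily verified.'' No gaps: the key points (that every non-distinguished class is a singleton, so only the glued point needs checking, and that $\infty$ absorbs under either clause of the addition formula) are all in place.
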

\begin {proof}
This is easily verified.
\end {proof}

\begin {Definition}
Let $(S_{i})_{i\in I}$ be a family of pointed sets. The pointed set $\bigcupdot_{i\in I}S_{i}$ is called the \gesperrt{pointed union}. \index{pointed union!-- of pointed sets}By the pointed union of a family $(M_{i})_{i\in I}$ of binoids, we always mean the semibinoid $\bigcupdot_{i\in I}M_{i}$  \index{pointed union!-- of binoids}with addition given as in Lemma \ref{LemGlueingPoints}.
\end {Definition}

Note that $\bigcupdot_{i\in I}M_{i}$ contains no integral elements. There are examples where the pointed union of pointed sets turns into a binoid with respect to a certain addition different from the one of the pointed union of binoids, cf.\ Remark \ref{RemBlowupBinoid}.

\begin {Remark}
The disjoint union of a family $(M_{i})_{i\in I}$, $\#I\ge 2$, of binoids turns \emph{not} into a monoid by glueing together the identity elements using the same construction as in the lemma above with $0$ instead of $\infty$ everywhere. In fact, the operation $(a;i)+(b;j):=(a+b;i)$ if $i=j$ and $0:=[{(0;i)}]$ otherwise, is not associative since for $i\not= j$ and $a\not=0_{i}$, one has $((a;i)+(b;i))+(c;j)=0 $ and $(a;i)+((b;i)+(c;j))=(a;i)\not=0$.

For the same reason, the semibinoid $\bigcupdot_{i\in I}M_{i}$ turns not into a binoid by glueing the identity elements $(0;i)$, $i\in I$, together (so that $0:=[{(0;i)}]$ becomes the desired identity element) \emph{if at least one $M_{i}$ is not positive}. To see this let $0_{i}\not=a\in M_{i}\okreuz$ and $\infty_{j}\not=b\in M_{j}$, $j\not=i$. Then $((a;i)+(\minus a;i))+(b;j)=(b;j)$ and $(a;i)+((\minus a;i)+(b;j))=\infty$; that is, the operation on $\bigcupdot_{i\in I}M_{i}$ is not associative anymore.

There are other relations on the disjoint union of not necessarily positive binoids, where among other identifications all identity elements are glued together as well as all absorbing elements, such that the quotient is a binoid with respect to a certain addition different from those considered in this section, cf.\ Lemma \ref{LemDirectLimit}.
\end {Remark}

\begin {Lemma} \label {LemPointedUnion}
Let $(M_{i})_{i\in I}$ be a family of positive binoids. The relation $\sim_{_{\bullet}}$\nomenclature[ACongruenceZZPointed]{$\sim_{_{\bullet}}$}{equivalence relation on the disjoint union of positive binoids} on the disjoint union $\biguplus_{i\in I}M_{i}$ of the underlying sets of the binoids given by 
$$(a;i)\sim_{_{\bullet}}(b;j)\quad:\eq\quad i=j\text{ and }a=b\quad\text{or}\quad a=\infty_{i}\text{ and }b=\infty_{j}\quad\text{or}\quad a=0_{i}\text{ and }b=0_{j}\komma$$
defines an equivalence relation such that\nomenclature[AProductPointedb]{$\bigcupbidot_{i\in I}M_{i}$}{bipointed union of a family of positive binoids} 
$$\Big(\biguplus_{i\in I}M_{i}\Big)\Big/\!\sim_{_{\bullet}}\,\,\,=:\,\bigcupbidot_{i\in I}M_{i}$$
equipped with the addition
$$(a;i)+(b;j):=\begin {cases}
(a+b;i)&\text{, if }i=j,\\
(b;j)&\text{, if }a=0,\\
(a;i)&\text{, if }b=0,\\
\infty&\text{, otherwise,}
\end {cases}$$
is a binoid with absorbing element $\infty:=[(\infty_{i};i)]$ and identity element $0:=[(0_{i};i)]$. Moreover, the canonical inclusions 
$$\iota_{k}:M_{k}\Rto\bigcupbidot_{i\in I}M_{i}\quad\text{with}\quad a\lto(a;k)$$
and projections 
$$\pi_{k}:\bigcupbidot_{i\in I}M_{i}\Rto M_{k}\quad\text{with}\quad(a;i)\lto\begin {cases}
a&\text{, if }i=k\komma\\
\infty_{k}&\text{, otherwise,}
\end {cases}$$
$k\in I$, are binoid homomorphisms.
\end {Lemma}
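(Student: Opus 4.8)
The plan is to verify the statement in four stages: determine the classes of $\sim_{_{\bullet}}$, show the stated addition descends to the quotient (which also yields the identity and absorbing element), check associativity, and finally handle the inclusions and projections.

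First I would describe the class structure explicitly. By positivity each $M_{i}$ has $M_{i}\okreuz=\{0_{i}\}$ and a distinct absorbing element $\infty_{i}$, so $\sim_{_{\bullet}}$ has exactly three kinds of classes: the class $0=[(0_{i};i)]$ collecting all identities, the class $\infty=[(\infty_{i};i)]$ collecting all absorbing elements, and a singleton $\{(a;i)\}$ for each $i\in I$ and each $a\in M_{i}$ with $a\neq 0_{i},\infty_{i}$ (the interior classes). Because the three alternatives in the definition are incompatible outside the two distinguished classes, reflexivity, symmetry and transitivity follow at once from this description.

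The crux is that the addition is independent of representatives; since all classes except $0$ and $\infty$ are singletons, the only checks needed are those with a summand equal to $0$ or $\infty$. I would compute straight from the formula that $0+y=y+0=y$ for every $y$ (using the case $i=j$ or the case $a=0$ together with $0_{i}+b=b$ in $M_{i}$) and that $\infty+y=y+\infty=\infty$ (since $\infty_{i}$ is absorbing in $M_{i}$), in each instance independently of the chosen representative $(0_{i};i)$ or $(\infty_{i};i)$. This one computation establishes both well-definedness and the claim that $0$ and $\infty$ are the identity and absorbing element. I expect this to be the main obstacle, since it is precisely here that the gluing of the zero elements interacts with the operation; note that no commutativity of the $M_{i}$ is used.

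For associativity I would first eliminate any triple containing $0$ or $\infty$ by the identity and absorbing properties, leaving three interior elements $(a;i),(b;j),(c;k)$. If $i=j=k$ the identity reduces to associativity in $M_{i}$, the only subtlety being that an intermediate sum may land on $\infty_{i}$, read as the global $\infty$; here positivity guarantees that a sum of two interior elements of $M_{i}$ is never $0_{i}$, so no collapse to the identity occurs. If the three indices are not all equal, I would show both bracketings equal $\infty$; again positivity is essential, ruling out the associativity failure (caused by non-trivial units) noted in the preceding remark. Finally, $\iota_{k}$ sends $0_{k}\mapsto 0$ and $\infty_{k}\mapsto\infty$ and preserves addition because it fixes the index, hence is a binoid homomorphism; for $\pi_{k}$ I would define it on classes by $0\mapsto 0_{k}$, $\infty\mapsto\infty_{k}$, $(a;i)\mapsto a$ if $i=k$ and $(a;i)\mapsto\infty_{k}$ if $i\neq k$ for interior $(a;i)$ — so that the single formula of the statement is read with $0\mapsto 0_{k}$ on the glued zero-class — and then confirm the homomorphism property by the same three-way split on indices, using that $\infty_{k}$ is absorbing in $M_{k}$.
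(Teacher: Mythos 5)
Your proof is correct, and since the paper disposes of this lemma with ``This is easy to check,'' your direct verification -- class description, representative-independence at the two glued classes, the three-way index split for associativity (correctly isolating positivity as the reason a sum of interior elements cannot collapse onto the identity class), and the reading of $\pi_{k}$ on the glued zero class -- is exactly the argument the paper leaves to the reader.
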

\begin {proof}
This is easy to check.
\end {proof}

By definition, we have $\bigcupbidot_{i\in I}M_{i}=\big(\bigcupdot_{i\in I}M_{i}\big)\big/\!\sim$, where $\sim$ is the equivalence relation on $\bigcupdot_{i\in I}M_{i}$ that glues the identity elements $(0;i)$, $i\in I$, together.

\begin {Definition}
Let $(M_{i})_{i\in I}$ be a family of positive binoids. With the notation of Lemma \ref {LemPointedUnion}, the binoid $\bigcupbidot_{i\in I}M_{i}$ is called the \gesperrt{bipointed union} \index{bipointed union}of the family $(M_{i})_{i\in I}$.
\end {Definition}

The bipointed union of a family of positive binoids can be visualized as a family of lines, where each line represents a binoid $M_{i}$, $i\in I$, which are glued together at the ends.

\begin {center}
\begin {pspicture}(-2,-1.5)(2,1.5)
\qdisk (1.5,0){2pt}\qdisk (-1.5,0){2pt}
\uput [0] (-2.2,0){\scriptsize{$\infty$}}
\uput [0] (-0.35,0){\scriptsize{\text{$M_{i}$}}}
\uput [0] (1.6,0){\scriptsize{$0$}}
\pcarc [arcangleA=30, arcangleB=30, linewidth=.5pt] (1.5,0)(-1.5,0)
\pcarc [arcangleA=330, arcangleB=330, linewidth=.5pt] (1.5,0)(-1.5,0)
\pcarc [arcangleA=70, arcangleB=70, linewidth=.5pt] (1.5,0)(-1.5,0)
\pcarc [arcangleA=85, arcangleB=40, linewidth=.5pt] (1.5,0)(0,-1)
\pcarc [arcangleA=40, arcangleB=85, linewidth=.5pt] (0,-1)(-1.5,0)
\pcarc [arcangleA=0, arcangleB=0, linewidth=.5pt] (1.5,0)(0.3,0)
\pcarc [arcangleA=0, arcangleB=0, linewidth=.5pt] (-1.5,0)(-0.3,0)
\pcarc [arcangleA=290, arcangleB=290, linewidth=.5pt] (1.5,0)(-1.5,0)
\pcarc [arcangleA=275, arcangleB=320, linewidth=.5pt] (1.5,0)(0,1)
\pcarc [arcangleA=320, arcangleB=275, linewidth=.5pt] (0,1)(-1.5,0)
\end {pspicture}
\end {center}

Note that a trivial binoid does not contribute to the bipointed union since $M\cupbidot\trivial\cong M$.

\begin {Remark}\label{RemProdEpisPos}
If $(M_{i})_{i\in I}$ is a family of positive binoids, then 
$$\bigcupbidot_{i\in I}M_{i}\,\,\cong\,\,\Big(\bigwedge_{i\in I}M_{i}\Big)\Big/\!\sim\komma$$
where $\sim$ denotes the ideal congruence on $\bigwedge_{i\in I}M_{i}$ given by
$\wedge_{i\in I} a_{i}\sim\infty_{\wedge}$ if $a_{i}\not=0$ for at least two $i\in I$. Thus, there are canonical binoid epimorphisms
$$\prod_{i\in I}M_{i}\stackrel{\pi_{\wedge}}{\Rto}\bigwedge_{i\in I}M_{i}\stackrel{\!\!\pi_{\cupbidot}}{\Rto}\bigcupbidot_{i\in I}M_{i}\pkt$$
\end {Remark}

\begin {Lemma} \label {LemPointedComposition}
Let $(M_{i})_{i\in I}$ be a family of positive binoids.
\begin {ListeTheorem}
\item The binoid $\bigcupbidot_{i\in I}M_{i}$ is commutative if and only if all $M_{i}$ are commutative.
\item If at least two of the binoids are non-trivial, then $\opint\big(\bigcupbidot_{i\in I}M_{i}\big)=\{0\}$. In particular, $\bigcupbidot_{i\in I}M_{i}$ is a positive binoid in which all elements $\not=0$ are non-integral.
\item $\nil\big(\bigcupbidot_{i\in I}M_{i}\big)=\,\,\bigcupbidot_{i\in I}\nil(M_{i})$ and $\big(\bigcupbidot_{i\in I}M_{i}\big)_{\opred}=\,\,\bigcupbidot_{i\in I}(M_{i})_{\opred}\pkt$
\item $\big(\bigcupbidot_{i\in I}M_{i}\big)_{\oppos}=\,\,\bigcupbidot_{i\in I}M_{i}$ and  $\big(\bigcupbidot_{i\in I}M_{i}\big)_{\optf}=\,\,\bigcupbidot_{i\in I}(M_{i})_{\optf}\pkt$
\end {ListeTheorem}
\end {Lemma}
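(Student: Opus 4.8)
The four assertions are all proved by direct verification, and the feature that drives every argument is this: apart from the glued identity $0$ and absorbing element $\infty$, each class of $\bigcupbidot_{i\in I}M_{i}$ has a representative $(a;i)$ lying in a single component, and within one component the addition of $\bigcupbidot_{i\in I}M_{i}$ agrees with that of $M_{i}$. In particular $n(a;i)=(na;i)$ whenever $a\neq 0$, since $ka=0$ in a positive binoid forces $a=0$. This component-separation is exactly the structural reason the bipointed union behaves better than the product or the smash product for the congruences $\sim_{\opred}$ and $\sim_{\optf}$.

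For (1), I would observe that for $a,b$ in distinct components the value of $(a;i)+(b;j)$ is determined by a rule ($0$, $(a;i)$, $(b;j)$, or $\infty$) that is symmetric in the two summands, so commutativity can fail only inside a single component; hence $\bigcupbidot_{i\in I}M_{i}$ is commutative iff every $M_{i}$ is. For the positivity claim in (2) I would compute the units: if $(a;i)+(b;j)=0$, the addition rules force $a=0$ (whence $b=0$), or $b=0$ (whence $a=0$), or $i=j$ and $a+b=0_{i}$, the last making $a$ a unit of the positive binoid $M_{i}$, so $a=0$; thus $\big(\bigcupbidot_{i\in I}M_{i}\big)\okreuz=\{0\}$, and this holds for every family of positive binoids. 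For the non-integrality, given a class $(a;i)$ with $a\neq 0,\infty$, the hypothesis that a second component $M_{j}$, $j\neq i$, is non-trivial provides $b\in M_{j}\opkt$ with $b\neq 0$; then $(a;i)+(b;j)=\infty$ while $(b;j)\neq\infty$, so $(a;i)$ is non-integral and $\opint\big(\bigcupbidot_{i\in I}M_{i}\big)=\{0\}$.

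For (3) and (4) the plan is uniform. I would send $(a;i)$ to $([a];i)$, where $[a]$ is the class of $a$ in $(M_{i})_{\opred}$ in case (3) and in $(M_{i})_{\optf}$ in case (4), noting first that each quotient is again positive (the torsion-free quotient of a positive binoid is positive), so that the target $\bigcupbidot_{i\in I}(M_{i})_{\opred}$, resp.\ $\bigcupbidot_{i\in I}(M_{i})_{\optf}$, is defined. One checks that this assignment is a well-defined surjective binoid homomorphism $\Phi$. Since $n(a;i)=(na;i)$ for $a\neq 0$, the class $(a;i)$ is nilpotent exactly when $a\in\nil(M_{i})$, which gives the first identity of (3) and identifies $\ker\Phi$ in the reduced case. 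The decisive point is that the induced congruence $\sim_{\Phi}$ equals $\sim_{\opred}$, resp.\ $\sim_{\optf}$: for two classes in the same component this is the defining property of the quotient, while for classes in distinct components $n(a;i)=n(b;j)$ can hold only when both sides equal $0$ or both equal $\infty$, so no identification across components occurs. Proposition \ref{PropHomCong} then yields $\bigcupbidot_{i\in I}M_{i}/\!\sim_{\opred}\,\cong\,\im\Phi=\bigcupbidot_{i\in I}(M_{i})_{\opred}$ and likewise for $\sim_{\optf}$. Finally, $\big(\bigcupbidot_{i\in I}M_{i}\big)_{\oppos}=\bigcupbidot_{i\in I}M_{i}$ follows from Lemma \ref{LemPosCong}, since $\bigcupbidot_{i\in I}M_{i}$ is positive by the unit computation above.

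The only genuine subtlety, and the step I would carry out most carefully, is the verification that $\sim_{\Phi}$ coincides with $\sim_{\opred}$, resp.\ $\sim_{\optf}$, across distinct components: one must exclude that two distinct non-nilpotent (resp.\ non-torsion) classes in different components get identified by $\Phi$. This is precisely where the implication ``$(na;i)=(nb;j)$ with $i\neq j$ forces $na=nb\in\{0,\infty\}$'' is used, and it is exactly the failure of the corresponding statement for the product and the smash product that is recorded in Remark \ref{RemProdCongruences}(1); the bipointed union avoids the over-collapsing because no element other than $0$ and $\infty$ mixes several components.
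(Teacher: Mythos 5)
Your proof is correct and follows essentially the same route as the paper's, which merely records that (1)--(3) are immediate and that the second identity in (4) rests on the observation that $n(a;i)=n(b;j)$ with $i\neq j$ forces both sides into $\trivial$. Your explicit handling of the cross-component case where $a$ and $b$ are both nilpotent is in fact slightly more careful than the paper's phrasing of that same observation.
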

\begin {proof}
(1)-(3) are immediate. The first assertion of (4) follows from (2) and Lemma \ref{LemPosCong}. The latter is due to the observation that for $a\in M_{i}$ and $b\in M_{j}$ the equality $n(a;i)=n(b;j)$ for some $n\ge 1$ is equivalent to $i=j$ and $na=nb$ for some $n\ge 1$ or $a=b\in\trivial$.
\end {proof}

\begin {Lemma}\label{LemGenSystPointedUnion}
Let $(M_{i})_{i\in I}$ be a finite family of positive binoids. If $A_{i}\subseteq M_{i}$ is a generating set of $M_{i}$, $i\in I$, then $\bigcupbidot_{i\in I}M_{i}$ is generated by $(a;i)$, $a\in A_{i}$, $i\in I$.
\end {Lemma}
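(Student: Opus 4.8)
The statement to prove is that if each $A_i$ generates the positive binoid $M_i$, then the elements $(a;i)$ with $a \in A_i$, $i \in I$, generate the bipointed union $\bigcupbidot_{i\in I}M_i$.

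The plan is to show that an arbitrary element of $\bigcupbidot_{i\in I}M_i$ lies in the subbinoid generated by the proposed set. First I would recall the explicit description of the underlying set from Lemma \ref{LemPointedUnion}: apart from the glued classes $0 = [(0_i;i)]$ and $\infty = [(\infty_i;i)]$, every element is of the form $(a;i)$ for a unique $i \in I$ and a unique $a \in M_i$ with $a \neq 0_i, \infty_i$. So it suffices to express each such $(a;i)$, together with $0$ and $\infty$, using the generators $(x;i)$, $x \in A_i$, and the binoid operation.

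The key step is the identity $\iota_k(x) = (x;k)$ for the canonical inclusion $\iota_k : M_k \to \bigcupbidot_{i\in I}M_i$, which is a binoid homomorphism by Lemma \ref{LemPointedUnion}. Since $A_i$ generates $M_i$, any $a \in M_i$ can be written as a finite sum $a = \sum_j x_j$ with $x_j \in A_i$ (using that $M_i$ is positive, so $0_i$ is generated by the empty sum and $\infty_i$, if it arises, is a sum of generators that sums to $\infty_i$). Applying the homomorphism $\iota_i$ and using that addition within a single index $i$ is componentwise, namely $(x_1;i) + \cdots + (x_r;i) = (x_1 + \cdots + x_r;i) = (a;i)$, I obtain $(a;i)$ as a sum of the proposed generators. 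The identity element $0$ is the empty sum, and $\infty$ arises either as the image under $\iota_i$ of any sum of generators equal to $\infty_i$, or, when at least two $M_i$ are non-trivial, as a sum $(x;i) + (y;j)$ with $i \neq j$; in the degenerate case where all but one factor is trivial the bipointed union reduces to a single $M_i$ and the claim is immediate.

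The main obstacle, though a mild one, is bookkeeping the special elements $0$ and $\infty$ correctly under the gluing relation $\sim_{_\bullet}$: one must check that the generating expressions respect the identifications of all the $(0_i;i)$ and all the $(\infty_i;i)$, rather than treating them as living in a single fixed $M_i$. This is handled precisely by the fact that $\iota_i$ is a binoid homomorphism, so it sends $0_i \mapsto 0$ and $\infty_i \mapsto \infty$ compatibly for every $i$. Since the statement asserts only that the $(a;i)$ generate — not that they form a minimal or unique generating set — no further argument is required, and I expect the verification to be routine once the homomorphism property of the inclusions is invoked.
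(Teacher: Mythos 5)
Your proof is correct and is just the routine verification behind the paper's one-line justification ("This is obvious"): every element other than $0$ and $\infty$ is of the form $(a;i)$ with $a$ a finite sum of elements of $A_i$, and the inclusion $\iota_i$ transports that expression into the bipointed union. The only superfluous care is around $0$ and $\infty$, which lie in every subbinoid by definition, so no generating expression for them is needed.
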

\begin {proof}
This is obvious.
\end {proof}

The bipointed union admits a universal property.

\begin {Proposition} \label{PropUnivPropPUnion}
Let $(M_{i})_{i\in I}$ be a finite family of positive binoids and $N$ another binoid. Given a family of binoid homomorphisms $\varphi_{i}:M_{i}\rto N$, $i\in I$, such that $\varphi_{i}(a)+\varphi_{j}(b)=\infty$ for all $a\not=0_{i}$ and $b\not=0_{j}$, $i\not=j$, there is a unique binoid homomorphism
$$\varphi:\,\bigcupbidot_{i\in I}M_{i}\Rto N$$
with $\varphi\iota_{i}=\varphi_{i}$, where $\iota_{i}:M_{i}\rto\,\,\bigcupbidot_{i\in I}M_{i}$ denotes the canonical embedding $a\mto(a;i)$, $i\in I$.
\end {Proposition}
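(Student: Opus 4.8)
The plan is to observe first that \textbf{uniqueness is automatic}: since every element of $\bigcupbidot_{i\in I}M_{i}$ is of the form $(a;i)=\iota_{i}(a)$ for some $i\in I$ and $a\in M_{i}$, the requirement $\varphi\iota_{i}=\varphi_{i}$ forces $\varphi((a;i))=\varphi_{i}(a)$ on the whole binoid. Hence there is at most one candidate, and the task reduces to showing that this prescription is well-defined and is a binoid homomorphism.

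For \textbf{well-definedness} I would recall that, by Lemma \ref{LemPointedUnion}, the only non-singleton classes of $\sim_{_{\bullet}}$ are $0=[(0_{i};i)]$ and $\infty=[(\infty_{i};i)]$. Since each $\varphi_{i}$ is a binoid homomorphism, $\varphi_{i}(0_{i})=0_{N}$ and $\varphi_{i}(\infty_{i})=\infty_{N}$ independently of $i$, so the value $\varphi_{i}(a)$ does not depend on the chosen representative $(a;i)$ of its class. Thus $\varphi$ is a well-defined map with $\varphi(0)=0_{N}$ and $\varphi(\infty)=\infty_{N}$.

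The remaining step, which carries the whole content of the statement, is to verify that $\varphi$ \textbf{respects the addition} of Lemma \ref{LemPointedUnion}, and for this I would run through the four cases in its definition of $(a;i)+(b;j)$. When $i=j$ the claim $\varphi((a+b;i))=\varphi_{i}(a+b)=\varphi_{i}(a)+\varphi_{i}(b)$ is immediate from $\varphi_{i}$ being a homomorphism; when one of $a,b$ equals the identity, the sum collapses to the other summand and one uses $\varphi_{i}(0_{i})=0_{N}$. The essential case is $i\neq j$ with $a\neq0_{i}$ and $b\neq0_{j}$: here the sum is $\infty$, so we must check $\varphi_{i}(a)+\varphi_{j}(b)=\infty_{N}$, and this is exactly the hypothesis imposed on the family $(\varphi_{i})_{i\in I}$. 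This is the only place the compatibility condition enters, and it is precisely what the bipointed union is designed to encode.

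I expect no genuine obstacle; the subtlety to keep in mind is the bookkeeping around the glued points (treating $a=\infty_{i}$ as the class $\infty$, which is absorbing on both sides, so that the formula is consistent there as well). I would also note that, unlike the coproduct realized via the smash product in Proposition \ref{PropCoproductComBinoid}, no commutativity of $N$ is needed here, since all cross-index sums of nonzero elements are sent to $\infty_{N}$ regardless of order.
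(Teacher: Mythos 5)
Your proposal is correct and follows the same route as the paper, which simply states that the unique homomorphism is given by $\varphi((a;i)):=\varphi_{i}(a)$ and leaves the verification to the reader; you have merely filled in the case analysis that the paper declares obvious. The only place the compatibility hypothesis is used is indeed the cross-index case $i\neq j$ with $a\neq 0_{i}$, $b\neq 0_{j}$, exactly as you identify.
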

\begin {proof}
The unique binoid homomorphism is obviously given by $\varphi(a;i):=\varphi_{i}(a)$ for $a\in M_{i}$, $i\in I$.
\end {proof}

\begin {Corollary} \label{CorNSpecPUnion}
Let $(M_{i})_{i\in I}$ be a finite family of positive binoids and $N$ an integral binoid. Then we have a semibinoid isomorphism:
$$N\minspec\bigcupbidot_{i\in I}M_{i}\,\,\cong\,\,\bigcupdot_{i\in I}N\minspec M_{i}\pkt$$
\end {Corollary}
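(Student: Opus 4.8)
The plan is to realize the isomorphism through the canonical embeddings $\iota_k\colon M_k\embto\bigcupbidot_{i\in I}M_i$, $a\mto(a;k)$, of Lemma~\ref{LemPointedUnion}. First I record the ingredients. Since $N$ is integral it is nonzero, and since each $M_i$ is positive we have $M_i\okreuz=\{0_i\}$, so by Remark~\ref{RemBHom} the special point $\chi_{\{0_i\}}\colon M_i\rto N$ (sending $0_i\mto 0$ and everything else to $\infty$) is a binoid homomorphism which is absorbing in the semigroup $N\minspec M_i$; the direct check $(\chi_{\{0_i\}}+\psi)(a)=\chi_{\{0_i\}}(a)$ shows this for \emph{any} nonzero $N$. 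Thus each $N\minspec M_i$ is a semibinoid with absorbing element $\chi_{\{0_i\}}$, and these are precisely the points glued together in the pointed union $\bigcupdot_{i\in I}N\minspec M_i$. By Proposition~\ref{PropIndHomNspec} each $\iota_k$ induces a semigroup homomorphism $\varphi\mto\varphi\iota_k$, and I would assemble these into the candidate map $\Phi(\varphi):=(\varphi\iota_k;k)$, where $k$ is an index with $\varphi\iota_k\neq\chi_{\{0_k\}}$, and $\Phi(\varphi):=\infty$ (the glued point) if no such index exists.

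The crux is to show that $\Phi$ is well defined, i.e.\ that \emph{at most one} index $k$ has $\varphi\iota_k\neq\chi_{\{0_k\}}$; this is exactly where integrality of $N$ enters. If $i\neq j$ and both $\varphi\iota_i,\varphi\iota_j$ were non-absorbing, there would be $a\in M_i$, $b\in M_j$ with $a\neq 0_i$, $b\neq 0_j$ and $\varphi(a;i)\neq\infty\neq\varphi(b;j)$. But $(a;i)+(b;j)=\infty$ by the addition of Lemma~\ref{LemPointedUnion}, whence $\varphi(a;i)+\varphi(b;j)=\infty_N$, contradicting the integrality of $N$. Hence at most one factor survives and $\Phi$ is well defined; note also that $\Phi$ sends the absorbing element $\chi_{\{0\}}$ of $N\minspec\bigcupbidot_{i\in I}M_i$ (which exists since $\bigcupbidot_{i\in I}M_i$ is positive by Lemma~\ref{LemPointedComposition}) to the glued distinguished point.

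For the inverse I would invoke the universal property of the bipointed union (Proposition~\ref{PropUnivPropPUnion}): given a non-distinguished point $(\psi;k)$ I set $\varphi_k:=\psi$ and $\varphi_i:=\chi_{\{0_i\}}$ for $i\neq k$; the compatibility hypothesis $\varphi_i(a)+\varphi_j(b)=\infty$ for $a\neq 0_i,\,b\neq 0_j,\,i\neq j$ holds because at least one summand is then $\infty$. This yields a unique $\varphi$ with $\varphi\iota_i=\varphi_i$, and the assignment $(\psi;k)\mto\varphi$ is inverse to $\Phi$. Finally I would check that $\Phi$ respects addition using $(\varphi+\varphi')\iota_i=\varphi\iota_i+\varphi'\iota_i$ together with the two cases of the pointed-union addition (same active index versus distinct indices) and that it preserves the absorbing element; since a bijective homomorphism of semibinoids has homomorphic inverse, this gives the asserted isomorphism.

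I expect the main obstacle to be the bookkeeping in this last verification — matching the gluing of distinguished points on the right with the collapse $\chi_{\{0_k\}}+\psi=\chi_{\{0_k\}}$ on the left in the boundary cases (when a sum of non-absorbing points becomes absorbing, or when the two active indices differ) — rather than any single deep step; the one genuinely essential use of the hypotheses is the integrality argument of the second paragraph.
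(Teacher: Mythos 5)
Your proof is correct and is essentially the paper's argument run in the opposite direction: the paper builds the map $\bigcupdot_{i\in I}N\minspec M_{i}\rto N\minspec\bigcupbidot_{i\in I}M_{i}$ from the projections $\pi_{k}$ and uses the integrality of $N$ to prove its surjectivity, whereas you build the inverse map from the restrictions along the embeddings $\iota_{k}$ and use integrality to prove well-definedness. The decisive step --- that integrality of $N$ forces any $N$-point of the bipointed union to be non-absorbing on at most one component --- is identical in both, so the two proofs produce the same isomorphism.
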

\begin {proof}
Set $M\!:=\,\,\bigcupbidot_{i\in I}M_{i}$. Note that the characteristic functions $\chi_{M_{i}\okreuz}\!:\!M_{i}\rto N$, $i\in I$, and $\chi_{M\okreuz}\!:\!M\rto N$ are binoid homomorphisms by Proposition \ref{PropHomFilter}. In particular, $N\minspec M_{i}$, $i\in I$, and $N\minspec M$ are semibinoids such that the pointed union is well-defined, cf.\ Lemma \ref{LemGlueingPoints}. The canonical projections $\pi_{k}:M\rto M_{k}$, $k\in I$, induce semigroup homomorphisms 
$$N\minspec M_{k}\Rto N\minspec M\komma$$
$k\in I$, which by the universal property of the disjoint union give rise to a map 
$$\varphi:\biguplus_{i\in I}N\minspec M_{i}\Rto N\minspec M\pkt$$
This map factors through
$$\phi:\bigcupdot_{i\in I}N\minspec M_{i}\Rto N\minspec M\komma$$
$(\psi;i)\mto\psi\pi_{i}$, because $\varphi((\chi_{M_{i}\okreuz};i))=\chi_{M_{i}\okreuz}\pi_{i}=\chi_{M\okreuz}$ for all $i\in I$. It is easily checked that $\phi$ is a semibinoid homomorphism, which is obviously injective. For the surjectivity let $\psi\in N\minspec M$ (i.e.\ $\psi:\bigcupbidot_{i\in I}M_{i}\rto N$). If $\psi=\chi_{M\okreuz}$, then $\psi$ is the image of the absorbing element under $\phi$. So let $\psi\not=\chi_{M\okreuz}$. Then there is a $(a;k)\in M\setminus\{0\}$ with $\psi((a;k))\not=\infty$. In this case, we have for all $b\in M_{i}\setminus\{0\}$, where $i\not=k$,
$$\infty=\psi(\infty)=\psi((a;k)+(b;i))=\psi(a;k)+\psi(b;i)\komma$$ 
which implies that $\psi(b;i)=\infty$ by the integrity of $N$. Hence, $\phi:\psi\iota_{k}\mto\psi$.
\end {proof}

\begin {Example}
We will double-check Corollary \ref{CorNSpecPUnion} by determining $N\minspec\bigcupbidot_{i\in I}\N^{\infty}$ step by step for an integral binoid $N$ and $I=\{1\kpkt n\}$. A generating set of the bipointed union $\bigcupbidot_{i\in I}\N^{\infty}$ is given by $\{(1;i)\mid i\in I\}$, where the generators satisfy $(1;i)+(1;j)=\infty$ for all $i,j\in I$ when $i\not=j$. Since $N$ is integral, at most one generator lies not in the kernel of any binoid homomorphism $\bigcupbidot_{i\in I}\N^{\infty}\rto N$. If for $a\in N$,
$$(\varphi_{a};k):\bigcupbidot_{i\in I}\N^{\infty}\Rto N$$
denotes the binoid homomorphism $(1;i)\mto a$ if $i=k$ and $\infty$ otherwise, then $(\varphi_{\infty};i)=(\varphi_{\infty};j)=:\alpha$ for all $i,j\in I$ and 
$$(\varphi_{a};i)+(\varphi_{b};j)\,=\,\begin {cases}
(\varphi_{a+b};i)&\text{, if }i=j\komma\\
\alpha&\text{, otherwise,}
\end {cases}$$
From this we obtain
$$N\minspec\bigcupbidot_{i\in I}\N^{\infty}\,\,=\,\,\big(\{(\varphi_{a};k)\mid a\in N\opkt,k\in I\}\cup\{\alpha\},+,\alpha\big)\,\,\cong\,\,\bigcupdot_{i\in I}N\pkt$$
In particular, the dual of $\bigcupbidot_{i\in I}\N^{\infty}$ is given by
$$\Big(\bigcupdot_{i\in I}\trivial\Big)\onull\,\,\cong\,\,\bigcupbidot_{i\in I}\free(x)/(2x=x)\,=:\,B\pkt$$
Now the same argumentation yields $\big(\bigcupbidot_{i\in I}\N^{\infty}\big)\bidual\cong B$.
\end {Example}

\begin {Example} \label{ExCompositions}
To illustrate the differences of the constructions we have encountered so far consider the following binoids:
\begin {center}
\begin {pspicture} (-1.3,-0.5)(3,3)
\qdisk (0,0){1pt}\qdisk (0,0.5){1pt}\qdisk (0,1){1pt}\qdisk (0,1.5){1pt}\qdisk (0,2){1pt}
\qdisk (0.5,0){1pt}\qdisk (0.5,0.5){1pt}\qdisk (0.5,1){1pt}\qdisk (0.5,1.5){1pt}
\qdisk (1,0){1pt}\qdisk (1,0.5){1pt}\qdisk (1,1){1pt}\qdisk (1,1.5){1pt}
\qdisk (1.5,0){1pt}\qdisk (1.5,0.5){1pt}\qdisk (1.5,1){1pt}\qdisk (1.5,1.5){1pt}
\qdisk (2,0){1 pt}\qdisk (2,2){1pt}
\psline [linewidth=0.5 pt] (0,0)(0,1.5)
\psline [linewidth=0.5 pt] (0,0)(1.5,0)
\psline [linewidth=0.5 pt, linestyle=dotted] (0,1.5)(0,2)
\psline [linewidth=0.5 pt, linestyle=dotted] (1.5,0)(2,0)
\uput [0] (-0.7,-0.3){\scriptsize{$(0,0)$}}
\uput [0](-1.1,2){\scriptsize{$(0,\infty)$}}
\uput [0](1.4,-0.3){\scriptsize{$(\infty,0)$}}
\uput [0](2,2){\scriptsize{$(\infty,\infty)$}}
\uput [0] (-1.1,2.9){$\N^{\infty}\times\N^{\infty}=\N^{\infty}\oplus\N^{\infty}$:}
\end {pspicture}
\quad\quad\quad
\begin {pspicture}(-1.3,-0.5)(3,3)
\qdisk (0,0){1pt}\qdisk (0,0.5){1pt}\qdisk (0,1){1pt}\qdisk (0,1.5){1pt} 
\qdisk (0.5,0){1pt}\qdisk (0.5,0.5){1pt}\qdisk (0.5,1){1pt}\qdisk (0.5,1.5){1pt}\
\qdisk (1,0){1pt}\qdisk (1,0.5){1pt}\qdisk (1,1){1pt}\qdisk (1,1.5){1pt}
\qdisk (1.5,0){1pt}\qdisk (1.5,0.5){1pt}\qdisk (1.5,1){1pt}\qdisk (1.5,1.5){1pt}
\qdisk (2,2){1pt}
\psline [linewidth=0.5 pt] (0,0)(0,1.5)
\psline [linewidth=0.5 pt] (0,0)(1.5,0)
\psline [linewidth=0.5 pt, linestyle=dotted] (0,1.5)(0,2)
\psline [linewidth=0.5 pt, linestyle=dotted] (1.5,0)(2,0)
\uput [0] (-0.7,-0.3){\scriptsize{$(0,0)$}}
\uput [0](2,2){\scriptsize{$(\infty,\infty)$}}
\uput [0] (-0.65,2.9){$\N^{\infty}\wedge\N^{\infty}=(\N\times\N)^{\infty}:$}
\end {pspicture}
\quad\quad\quad
\begin {pspicture}(-1.3,-0.5)(3,3)
\qdisk (0,0){1pt}\qdisk (0,0.5){1pt}\qdisk (0,1){1pt}\qdisk (0,1.5){1pt}
\qdisk (0.5,0){1pt}\qdisk (1,0){1pt}\qdisk (1.5,0){1pt}\qdisk (2,2){1pt}
\psline [linewidth=0.5 pt] (0,0)(0,1.6)
\psline [linewidth=0.5 pt] (0,0)(1.6,0)
\uput [0] (-0.4,-0.3){\scriptsize{$0$}}
\uput [0](2,2){\scriptsize{$\infty$}}
\uput [0] (-0.5,2.9){$\N^{\infty}\cupbidot\,\N^{\infty}:$}
\pcarc [arcangleA=60, arcangleB=20, linewidth=.5pt, linestyle=dotted] (0,1.6)(2,2)
\pcarc [arcangleA=290, arcangleB=330, linewidth=.5pt, linestyle=dotted] (1.6,0)(2,2)
\end {pspicture}
\end {center}
In terms of generators, these binoids are given by
$$\free(x,y,u,v)/(u+v=\infty, x+u=u,y+v=v)\komma\quad\quad\free(x,y)\komma\quad\quad\free(x,y)/(x+y=\infty)\komma$$
with binoid algebras\footnote{\, Here $K[X]\times K[Y]\times K[X,Y]$ is the product in the category of rings.}
\begin {align*}
K[\N^{\infty}\!\times\N^{\infty}]&\,\,\cong\,\, K[X]\times K[Y]\times K[X,Y]\komma\\
K[\N^{\infty}\!\wedge\N^{\infty}]&\,\,\cong\,\, K[X,Y]\komma\\
K[\N^{\infty}\cupbidot\N^{\infty}]&\,\,\cong\,\, K[X,Y]/(XY)\komma
\end {align*}
over the ring $K$. The first isomorphism is given by $T^{(1,0)}\mto X$ and $T^{(0,1)}\mto Y$, and the last by $T^{(1;1)}\mto X$ and $T^{(1;2)}\mto Y$. The result for the smash product follows from Lemma \ref {LemSmashRules} and the theory of monoid rings since $K[\N^{\infty}\wedge\N^{\infty}]=K[(\N\times\N)^{\infty}]=K(\N\times\N)=K\N\otimes_{K}K\N\cong K[X,Y]$.

\end {Example}

\begin {Example}
We have the following $\R\mina$spectra:
\begin {center}
\begin {pspicture} (-2,-2.5)(2,2)
\qdisk (0,0){1.75pt}\qdisk (0,0.7){1.75pt}\qdisk (0.7,0){1.75pt}
\psline [linewidth=0.5 pt] (0,-1.5)(0,1.5)
\psline [linewidth=0.5 pt] (-1.5,0)(1.5,0)
\uput [0] (-1.45,-2.2){\small{$\R\minspec(\N^{\infty}\cupbidot\,\N^{\infty})$}}
\end {pspicture}
\quad\quad\quad\quad\quad
\begin {pspicture} (-2,-2.5)(2,2)
\qdisk (0,0){1.75pt}\qdisk (0,0.7){1.75pt}\qdisk (0.7,0){1.75pt}\qdisk (-0.5,-0.30
){1.75pt}
\psline [linewidth=0.5 pt] (0,-1.5)(0,1.5)
\psline [linewidth=0.5 pt] (-1.5,0)(1.5,0)
\psline [linewidth=0.5 pt] (1.3,0.75)(-1.3,-0.75)
\uput [0] (-1.95,-2.2){\small{$\R\minspec(\N^{\infty}\cupbidot\,\N^{\infty}\cupbidot\,\N^{\infty})$}}
\end {pspicture}
\end {center}
\end {Example}

\bigskip

\section {Operations} \label{SecOperation}

\markright{\ref{SecOperation} Operations}

This section deals with pointed sets and binoids on which a binoid $N$ operates. These so-called $N\mina$sets and $N\mina$binoids represent the binoid theoretic counterpart of modules and algebras over a ring. In this spirit, we extend the definition of the smash product to $N\mina$sets and $N\mina$binoids. In Section \ref{SecBinoidModules} and Section \ref{SecNBinoidAlgebra}, we will discuss the associated modules and algebras of $N\mina$sets and $N\mina$binoids, respectively. 

A thorough investigation of $N\mina$sets and their modules can be found in \cite[Section 2.2]{ChuLorscheidSanthanam}, where the notion of finitely generated, noetherian, and projective $N\mina$sets has also been introduced and studied in detail, while here we omit a treatment of the latter two.

\begin {Convention}
In this section, $N$ always denotes an arbitrary binoid.
\end {Convention}

\begin {Definition}
A (left) \gesperrt{operation} \index{operation}\index{pointed set!operation on a --}of $N$ on a pointed set $(S,p)$ is a map
$$+:N\times S\Rto S\komma\quad(a,s)\lto a+s\komma$$
such that the following conditions are fulfilled.
\begin {ListeTheorem}
\item $0+s=s$ for all $s\in S$.
\item $\infty+s=p$ for all $s\in S$.
\item $a+p=p$ for all $a\in N$.
\item $(a+b)+s=a+(b+s)$ for all $a,b\in N$ and $s\in S$.
\end {ListeTheorem}
Then $S$ is called an \gesperrt{$N\mina$set}\index{N@$N\mina$set}. An \gesperrt{$N\mina$map} \index{N@$N\mina$map}\index{map!N@$N\mina$--}is a pointed map $\varphi:S\rto T$ of $N\mina$sets such that the diagram
$$\xymatrix{
N\times S\ar[r]\ar[d]_{\id\times\varphi}&S\ar[d]^{\varphi}\\
N\times T\ar[r]&T}$$
commutes; that is, $\varphi(a+s)=a+\varphi(s)$ for all $a\in N$ and $s\in S$. We say $S$ is a \gesperrt{finitely generated} \index{N@$N\mina$set!finitely generated --}$N\mina$set if there exists a finite subset $T\subseteq S$ such that every $s\in S$ can be written as $s=a+t$ for some $a\in N$ and $t\in T$, i.e.\ $S=\bigcup_{t\in T}(N+t)$. Then $(S,p)$ is \gesperrt{generated} as an $N\mina$set by $T$.
\end {Definition}

The addition $M\times M\rto M$, $(x,y)\mto x+y$, on a binoid $M$ defines for every subbinoid $N\subseteq M$ an operation on $M$ by restricting the first component to $N$. In particular, $M$ is an $M\mina$binoid. In general, if $S$ is an $N\mina$set and $N^{\prime}\subseteq N$ a subbinoid, then $S$ is also an $N^{\prime}\mina$set. The zero binoid operates only on $S=\{p\}$, whereas the trivial binoid operates on every pointed set in the trivial way.

\begin {Remark}
Property (3) of the definition of an $N\mina$set says that $p$ is an invariant element under the operation of $N$. Of course, $p$ need not be unique with this property.

Arbitrary sets with an operation (also called action) of a semigroup or monoid $S$ are usually called $S\mina$acts. A thorough investigation of $S\mina$acts can be found in \cite{Kilp}. See also \cite{Ogus} for a treatment of the associated modules. However, when it comes to study properties and results of ring and module theory for $S\mina$acts, it is not uncommon to focus on $S\mina$acts that admit a unique invariant element and where $S$ contains an absorbing element, which resembles our setting of $N\mina$sets. In \cite{Ahmadi}, for instance, $S\mina$acts that satisfy some versions of Nakayama's lemma and Krull's (intersection) theorem are studied, which do not translate directly from rings to binoids either, cf.\ Remark \ref{RemNakayamaACC}.

In general, there is a close connection between operations of an object as defined above and its representations, see the subsequent lemma. Taking up this point of view, $S\mina$acts are studied by Clifford and Preston in \cite[Chapter 11]{CliffordPreston} and called operands over $S$. Right from the beginning they frequently point out that there is no loss of generality in dealing with acts that admit a unique invariant element (centered operands) over a semigroup $S$ with an absorbing element, which they do ``almost exclusively''.
\end {Remark}

\begin {Lemma}\label{LemNsetHom}
A pointed set $(S,p)$ is an $N\mina$set if and only if there is a binoid homomorphism
$$N\Rto(\map_{p}S,\circ,\id,\varphi_{\infty})\komma\quad a\lto\varphi_{a}\komma$$
where $\varphi_{a}:S\rto S$ denotes the translation $s\mto a+s$ by $a\in N$.
\end {Lemma}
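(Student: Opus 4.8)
The plan is to establish the equivalence by constructing the natural correspondence between operations and homomorphisms in both directions and checking that each axiom of an $N\mina$set translates into one of the defining properties of a binoid homomorphism into the mapping binoid. First I would treat the forward direction: given an operation $+:N\times S\rto S$, I define the map $\Phi:N\rto\map_{p}S$ by $a\mto\varphi_{a}$, where $\varphi_{a}(s):=a+s$. I must first verify that each $\varphi_{a}$ is indeed a pointed map $(S,p)\rto(S,p)$; this is precisely axiom (3), $a+p=p$, which says $\varphi_{a}(p)=p$. Then I check that $\Phi$ is a binoid homomorphism: axiom (1), $0+s=s$, gives $\varphi_{0}=\id_{S}$, so the identity element is preserved; axiom (2), $\infty+s=p$, gives $\varphi_{\infty}=c_{p}=\varphi_{\infty}$, the constant map, which is the absorbing element of $\map_{p}S$, so $\infty$ maps to the absorbing element; and axiom (4), associativity $(a+b)+s=a+(b+s)$, translates into $\varphi_{a+b}=\varphi_{a}\circ\varphi_{b}$, which is exactly the statement that $\Phi$ respects the operation (recall the binoid $\map_{p}S$ has composition as its operation).

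For the converse direction, given a binoid homomorphism $\Psi:N\rto(\map_{p}S,\circ,\id,\varphi_{\infty})$ with $a\mto\psi_{a}$, I define an operation by setting $a+s:=\psi_{a}(s)$. Here I reverse each of the four implications above. Since $\Psi$ sends the identity $0$ to $\id_{S}$, axiom (1) holds; since $\Psi$ sends $\infty_{N}$ to the absorbing element of $\map_{p}S$, which is the constant map $c_{p}$, axiom (2) holds; since each $\psi_{a}$ is by definition a pointed map fixing $p$, axiom (3) holds; and since $\Psi$ is a homomorphism, $\psi_{a+b}=\psi_{a}\circ\psi_{b}$, which unwinds to the associativity axiom (4). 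Each of these is a routine one-line verification, so I would present them compactly rather than belabor them.

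The two constructions are visibly mutually inverse — starting from an operation, passing to $\Phi$, and reading off the induced operation returns $a+s=\varphi_{a}(s)=a+s$, and symmetrically in the other direction — so no separate argument for the bijectivity of the correspondence is strictly needed for the stated ``if and only if,'' though I would remark on it for completeness. I do not anticipate a genuine obstacle here; the statement is essentially the pointed-set analogue of the classical fact that a (left) action of a monoid is the same as a homomorphism into an endomorphism monoid, and the only point requiring care is bookkeeping the role of the distinguished point $p$: one must confirm that the absorbing element of $\map_{p}S$ is exactly the constant map $c_{p}=\varphi_{\infty}$ and that axiom (3) is precisely what guarantees each translation lands in $\map_{p}S$ rather than merely in $\map S$. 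This is the same pattern already used in Lemma \ref{LemMappingRealization}, so the proof is parallel to that Cayley-type embedding and can simply be recorded as ``this is easily verified'' after indicating the correspondence, matching the paper's style for such routine equivalences.
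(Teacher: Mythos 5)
Your proposal is correct and follows exactly the route the paper intends: the paper's own proof is just ``This is easy to check,'' and the standard correspondence you spell out (translations $\varphi_{a}$ landing in $\map_{p}S$ via axiom (3), identity/absorbing/composition matching axioms (1), (2), (4), and the two constructions being mutually inverse) is precisely that check. Nothing is missing.
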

\begin {proof}
This is easy to check.
\end {proof}

An $\N^{\infty}\mina$set is by Lemma \ref{LemNsetHom} the same as $S$ together with a fixed pointed map $\varphi:S\rto S$, the operation being given by $n+s=\varphi^{n}(s)$.

\begin {Lemma}\label{LemProductsNsets}
The product, \index{product!-- of $N\mina$sets}the direct sum, \index{direct sum!-- of $N\mina$sets}the smash product, \index{smash product!-- of $N\mina$sets}and the pointed union \index{pointed union!-- of $N\mina$sets}of a family of $N\mina$sets are again $N\mina$sets.
\end {Lemma}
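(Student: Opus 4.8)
The plan is to treat the four constructions one at a time, in each case exhibiting the operation $+\colon N\times S\rto S$ explicitly and checking the four axioms of an $N\mina$set. Throughout, the decisive point will be axiom (3), which says that the distinguished point is invariant under the operation; this is exactly what makes the operation compatible with the identifications occurring in the two quotient constructions. For the product and the direct sum the verifications are purely componentwise and present no difficulty.

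First I would handle the product $\prod_{i\in I}S_{i}$ with distinguished point $(p_{i})_{i\in I}$, defining $a+(s_{i})_{i\in I}:=(a+s_{i})_{i\in I}$. Each of the four axioms then follows coordinate by coordinate from the corresponding axiom in the $S_{i}$; for instance, $(a+b)+(s_{i})_{i\in I}=((a+b)+s_{i})_{i\in I}=(a+(b+s_{i}))_{i\in I}=a+(b+(s_{i})_{i\in I})$. The direct sum $\bigoplus_{i\in I}S_{i}$ is then seen to be a sub-$N\mina$set: if $s_{i}=p_{i}$ for almost all $i$, then $a+s_{i}=a+p_{i}=p_{i}$ for almost all $i$ by axiom (3), so the set of tuples that are eventually equal to the distinguished point is closed under the operation and contains $(p_{i})_{i\in I}$.

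Next I would treat the smash product $\bigwedge_{i\in I}S_{i}=\big(\prod_{i\in I}S_{i}\big)/\!\sim_{\wedge}$, with the candidate operation $a+[(s_{i})_{i\in I}]:=[(a+s_{i})_{i\in I}]$. The one genuinely nontrivial step is well-definedness. Suppose $(s_{i})_{i\in I}\sim_{\wedge}(t_{i})_{i\in I}$: if $s_{i}=t_{i}$ for all $i$, then $a+s_{i}=a+t_{i}$ and the two classes coincide; otherwise there are $k,\ell\in I$ with $s_{k}=p_{k}$ and $t_{\ell}=p_{\ell}$, and then axiom (3) gives $a+s_{k}=p_{k}$ and $a+t_{\ell}=p_{\ell}$, so both $(a+s_{i})_{i\in I}$ and $(a+t_{i})_{i\in I}$ retain a distinguished-point coordinate and are again $\sim_{\wedge}$-equivalent. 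Once this is established, the four axioms descend from the product to the quotient since the projection is surjective. The pointed union $\bigcupdot_{i\in I}S_{i}=\big(\biguplus_{i\in I}S_{i}\big)/\!\sim_{\infty}$ is handled in the same spirit, with $a+(s;i):=(a+s;i)$: the only identification is the gluing of the points $(p_{i};i)$, and axiom (3) again ensures $a+(p_{i};i)=(p_{i};i)$, so the operation respects $\sim_{\infty}$ and the axioms pass to the quotient.

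I expect no serious obstacle here: the whole content is the well-definedness of the operation on the two quotient objects, and in both cases this reduces to the single fact that the distinguished point is invariant (axiom (3)). Everything else is routine bookkeeping, carried out either componentwise or on chosen representatives, which is presumably why the author's own proof records only that the statement is easily verified.
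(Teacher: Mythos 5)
Your proposal is correct and follows exactly the route of the paper's own (very terse) proof: define the componentwise operation on the product, restrict it to the direct sum, and descend it to the two quotient constructions, with axiom (3) doing the work of well-definedness. The paper merely records the two formulas $a+(s_{i})_{i\in I}=(a+s_{i})_{i\in I}$ and $a+(s;i)=(a+s;i)$ and leaves the verifications to the reader, so your write-up is simply a fuller version of the same argument.
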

\begin {proof}
Let $(S_{i},p_{i})_{i\in I}$ be a family of $N\mina$sets. The operation of $N$ on $\prod_{i\in I}S_{i}$ is given by $a+(s_{i})_{i\in I}=(a+s_{i})_{i\in I}$, which immediately yields the operation of $N$ on $\bigoplus_{i\in I}S_{i}$ and $\bigwedge_{i\in I}S_{i}$. The operation of $N$ on $\bigcupdot_{i\in I}S_{i}$ is given by $a+(s;i)=(a+s;i)$.
\end {proof}

\begin {Definition} \label{DefSmashOver}
Let $(S_{i},p_{i})_{i\in I}$ be a finite family of $N\mina$sets. If $\sim_{\wedge_{_{\!N}}}$\nomenclature[ACongruenceZZZSmashN]{$\sim_{\wedge_{_{N}}}$}{congruence on the smash product of a finite family of $N\mina$sets} denotes the equivalence relation on $\bigwedge_{i\in I}S_{i}$ generated by
$$\cdots\wedge(a+s_{i})\wedge\cdots\wedge s_{j}\wedge\cdots\quad\sim_{\wedge_{_{\!N}}}\quad\cdots\wedge s_{i}\wedge\cdots\wedge(a+s_{j})\wedge\cdots\komma$$
where $a\in N$ and $s_{k}\in S_{k}$, $k\in I$, then
$$\bigwedge_{i\in I} \!\!{}_{_{N}}S_{i}:=\Big(\bigwedge_{i\in I}S_{i}\Big)\Big/\!\sim_{\wedge_{_{\!N}}}$$
\nomenclature[AProductSmashN]{$\bigwedge_{N,i\in I}S_{i}$}{smash product over $N$ of a finite family of $N\mina$sets}is called the \gesperrt{smash product} of the family $(S_{i})_{i\in I}$ \gesperrt{over} \index{smash product!-- of $N\mina$binoids}$N$. A class $[\wedge_{i\in I}s_{i}]\in\bigwedge_{N,i\in I}S_{i}$ will be denoted by $\wedge_{N,i\in I}s_{i}$. Unless there is confusion, we will sometimes omit the index set and simply write $\bigwedge_{N}S_{i}$ and $\wedge_{N}s_{i}$.
\end {Definition}

\begin {Lemma} \label{LemSmaschPointedSets}
The smash product $\bigwedge_{N}S_{i}$ of a finite family $(S_{i},p_{i})_{i\in I}$ of $N\mina$sets is again an $N\mina$set with distinguished point $\wedge_{N}p_{i}=:p_{\wedge_{N}}$. If one component of $\wedge_{N}s_{i}$ equals $p_{i}$, then $\wedge_{N}s_{i}=p_{\wedge_{N}}$.
\end {Lemma}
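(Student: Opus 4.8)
The plan is to make $\bigwedge_{N}S_{i}$ into an $N\mina$set by letting $N$ act on a single factor. Concretely, for $a\in N$ I would set
$$a+\wedge_{N}s_{i}\,:=\,\wedge_{N}(a+s_{1},s_{2}\kpkt s_{n})\komma$$
that is, I add $a$ into the first component of a representative. The defining relation $\sim_{\wedge_{N}}$ permits shifting a scalar of $N$ from one factor to any other, so the resulting class is unchanged if $a$ is added into an arbitrary component instead of the first; the operation is therefore canonical and symmetric in the factors. The second assertion of the lemma is immediate and independent of the action: already in the smash product of pointed sets $\bigwedge_{i\in I}S_{i}$ every tuple with a component equal to some $p_{i}$ represents the distinguished point $p_{\wedge}$, and the canonical projection $\bigwedge_{i\in I}S_{i}\rto\bigwedge_{N}S_{i}$ sends $p_{\wedge}$ to $p_{\wedge_{N}}$; hence $\wedge_{N}s_{i}=p_{\wedge_{N}}$ whenever some $s_{i}=p_{i}$.

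The crux is the well-definedness of this operation, which I would establish by checking that the assignment $(s_{i})_{i\in I}\mto\wedge_{N}(a+s_{1},s_{2}\kpkt s_{n})$ on $\prod_{i\in I}S_{i}$ is constant on the classes defining $\bigwedge_{N}S_{i}$, i.e. respects both $\sim_{\wedge}$ and the generating moves of $\sim_{\wedge_{N}}$; since $\sim_{\wedge_{N}}$ is generated by these moves, it suffices to test them one at a time. Compatibility with $\sim_{\wedge}$ is clear, as adding $a$ to the first entry of a tuple that already has a $p_{i}$-entry again produces such a tuple, hence $p_{\wedge_{N}}$. For a single move of $\sim_{\wedge_{N}}$ transferring a scalar $c\in N$ between components $k$ and $\ell$, if neither equals the first component then the first entry is untouched and the two images differ again by exactly this move, so they agree. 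The main obstacle is the case where the move involves the acted-on first component: there the one image has first entry $(a+c)+s_{1}$ by axiom (4) and $\ell$-th entry $s_{\ell}$, while the other has first entry $a+s_{1}$ and $\ell$-th entry $c+s_{\ell}$. Transferring the scalar $a+c$ out of the first slot of the former and the scalar $a$ out of the first slot of the latter, both reduce --- using $a+(c+s_{\ell})=(a+c)+s_{\ell}$ from axiom (4) --- to the common representative with first entry $s_{1}$ and $\ell$-th entry $(a+c)+s_{\ell}$. Thus the two images coincide in $\bigwedge_{N}S_{i}$; this is precisely where associativity (and the freedom to move the composite scalar $a+c$) is needed, and where, notably, no commutativity of $N$ is required.

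It then remains to verify the four axioms of an $N\mina$set, each of which I would reduce to the corresponding axiom for the first factor $S_{1}$ together with the second assertion above. Indeed $0+\wedge_{N}s_{i}=\wedge_{N}(0+s_{1},s_{2}\kpkt s_{n})=\wedge_{N}s_{i}$ by axiom (1) for $S_{1}$; $\infty+\wedge_{N}s_{i}=\wedge_{N}(\infty+s_{1},s_{2}\kpkt s_{n})=\wedge_{N}(p_{1},s_{2}\kpkt s_{n})=p_{\wedge_{N}}$ by axiom (2); $a+p_{\wedge_{N}}=\wedge_{N}(a+p_{1},p_{2}\kpkt p_{n})=p_{\wedge_{N}}$ by axiom (3); and $(a+b)+\wedge_{N}s_{i}=\wedge_{N}((a+b)+s_{1},s_{2}\kpkt s_{n})=\wedge_{N}(a+(b+s_{1}),s_{2}\kpkt s_{n})=a+(b+\wedge_{N}s_{i})$ by axiom (4) for $S_{1}$. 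This shows that $\bigwedge_{N}S_{i}$ is an $N\mina$set with distinguished point $p_{\wedge_{N}}=\wedge_{N}p_{i}$, as claimed.
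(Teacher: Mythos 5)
Your proof is correct and follows the same route as the paper: the action is defined by adding $a$ into one (the first) component of a representative, and you carry out in full the well-definedness check that the paper dismisses as clear --- in particular the key case where a generating move of $\sim_{\wedge_{_{\!N}}}$ involves the acted-on slot, which you resolve by moving the composite scalar $a+c$ out of that slot using only associativity of the action; this is exactly the point that needs attention, and your treatment of it is sound. For the supplement the two arguments diverge slightly: the paper writes $p_{1}=p_{1}+\infty$ and shifts $\infty$ successively into the remaining components to turn them all into $p_{i}$, whereas you observe that a tuple with a $p_{i}$-entry already represents the distinguished point in $\bigwedge_{i\in I}S_{i}$ by the very definition of $\sim_{\wedge}$, so the claim descends along the canonical projection with no use of the $N$-action at all. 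Your version of the supplement is the more economical one (and you correctly place it before the well-definedness check, where it is needed); the paper's version has the minor virtue of illustrating how scalars are shifted inside $\bigwedge_{N}S_{i}$ itself.
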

\begin {proof}
Let $I=\{1\kpkt n\}$. It is clear that $(\bigwedge_{N}S_{i},p_{\wedge_{N}})$ is a pointed set and that the map $N\times\bigwedge_{N}S_{i}\rto\bigwedge_{N}S_{i}$ with $(a,\wedge_{N}s_{i})\mto(a+ s_{1})\wedge_{N}s_{2}\wedge_{N}\cdots\wedge_{N }s_{n}$ is a well-defined operation of $N$ on $\bigwedge_{N}S_{i}$. The supplement follows from 
\begin {align*}
p_{1}\wedge_{N}s_{2}\wedge_{N}\cdots\wedge_{N}s_{n}&=(p_{1}+\infty)\wedge_{N}s_{2}\wedge_{N}\cdots\wedge_{N}s_{n}\\
&=p_{1}\wedge_{N}(s_{2}+\infty)\wedge_{N}s_{3}\wedge_{N}\cdots\wedge_{N}s_{n}\\
&=p_{1}\wedge_{N}p_{2}\wedge_{N}s_{3}\wedge_{N}\cdots\wedge_{N}s_{n}
\end {align*}
and similar arguments.
\end {proof}

\begin {Proposition}\label{PropSmashPointed}
Let $(S_{i})_{i\in I}$ be a finite family of $N\mina$sets and $T$ another $N\mina$set. Then we have an isomorphism of $N\mina$sets:
$$T\wedge_{N}\Big(\bigcupdot_{i\in I}S_{i}\Big)\,\,\cong\,\,\bigcupdot_{i\in I}(T\wedge_{N}S_{i})\pkt$$
\end {Proposition}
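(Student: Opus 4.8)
The plan is to construct an explicit $N$-map in each direction and verify they are mutually inverse. The key observation is that both sides are quotients of smash products, so I will work with representatives and check that the natural assignments respect the equivalence relations involved (the pointed-union gluing $\sim_\infty$ from Lemma \ref{LemGlueingPoints} and the smash-over-$N$ relation $\sim_{\wedge_{_{\!N}}}$ from Definition \ref{DefSmashOver}).

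First I would describe the underlying sets. An element of $\bigcupdot_{i\in I}S_{i}$ is either the distinguished point or of the form $(s;i)$ with $s\in S_{i}\setminus\{p_{i}\}$. Hence a typical non-distinguished element of $T\wedge_{N}\big(\bigcupdot_{i\in I}S_{i}\big)$ has a representative $t\wedge_{N}(s;i)$ with $t\in T$ and $s\in S_{i}$. I would define the forward map
$$\Phi:T\wedge_{N}\Big(\bigcupdot_{i\in I}S_{i}\Big)\Rto\bigcupdot_{i\in I}(T\wedge_{N}S_{i})$$
by sending $t\wedge_{N}(s;i)$ to $(t\wedge_{N}s;i)$ and the distinguished point to the distinguished point, and the candidate inverse $\Psi$ in the opposite direction by sending $(t\wedge_{N}s;i)$ to $t\wedge_{N}(s;i)$. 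The main work is to check that $\Phi$ is well-defined: one must verify that it respects the operation of $N$ on the $\bigcupdot$-component (which by Lemma \ref{LemProductsNsets} is $a+(s;i)=(a+s;i)$) and, crucially, that the $\sim_{\wedge_{_{\!N}}}$ relation sliding an element $a\in N$ between the $T$-factor and the $(s;i)$-factor is matched on the right-hand side by the same sliding inside $T\wedge_{N}S_{i}$. Since the index $i$ is unchanged by the action of $N$, these relations correspond cleanly, and the degenerate cases (where $t=p_{T}$ or $s=p_{i}$, forcing the smash to collapse to the distinguished point by Lemma \ref{LemSmaschPointedSets}) map consistently to the distinguished point on either side.

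I expect the main obstacle to be the bookkeeping around the distinguished points and the interplay of the two quotient relations, rather than any deep difficulty. Specifically, I must confirm that the gluing $\sim_\infty$ on $\bigcupdot_{i\in I}(T\wedge_{N}S_{i})$ glues together exactly the classes $(p_{\wedge_{N}};i)$ for all $i$, and that these are precisely the images of elements of $T\wedge_{N}\big(\bigcupdot_{i\in I}S_{i}\big)$ in which some factor is a distinguished point; Lemma \ref{LemSmaschPointedSets} guarantees that any smash with a degenerate factor collapses, so both collapsing phenomena agree. Once well-definedness of $\Phi$ is established, well-definedness of $\Psi$ is entirely symmetric, and $\Phi\Psi=\id$, $\Psi\Phi=\id$ hold on representatives by inspection. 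Finally, since both maps are pointed maps commuting with the $N$-action on the unaffected first factor, they are $N$-maps, which yields the claimed isomorphism of $N$-sets.
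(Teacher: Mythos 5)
Your proposal is correct and follows exactly the route the paper takes: the paper's proof simply states that the bijection is given by $t\wedge_{N}(s;i)\leftrightarrow(t\wedge_{N}s;i)$, and your well-definedness checks are precisely the details it leaves implicit. Nothing to add.
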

\begin {proof}
The bijection is obviously given by $t\wedge_{N}(s;i)\leftrightarrow(t\wedge_{N}s;i)$ with $t\in T$ and $s\in S_{i}$, $i\in I$.
\end {proof}

\begin {Definition}
Let $(S_{i})_{i\in I}$ be a family of $N\mina$sets and $T$ another $N\mina$set. A pointed map $\psi:\prod_{i\in I}S_{i}\rto T$ is called an \emph{$N\mina$multi map} \index{N@$N\mina$multi!-- map}if
$$\psi(\ldots,s_{i-1},a+s_{i},s_{i+1},\dots)\,=\,a+\psi(\ldots,s_{i-1},s_{i},s_{i+1},\dots)$$
for all $a\in N$ and $s_{k}\in S_{k}$, $k\in I$.
\end {Definition}

\begin {Remark}\label{RemMultiProp}
Note that $\psi$ being an $N\mina$multi map implies the following two properties: for all $a\in N$ and $s_{k}\in S_{k}$, $k\in I$, one has
$$\psi(\ldots,a+s_{i}\kpkt s_{j},\dots)=\psi(\ldots,s_{i}\kpkt a+s_{j},\dots)\komma$$
and if one component of $(s_{i})_{i\in I}$ equals the distinguished point, say $s_{l}=p_{l}$, then $\psi((s_{i})_{i\in I})=\infty$ since
\begin {align*}
\psi((s_{i})_{i\in I})&=\psi(\ldots,s_{l-1},p_{l},s_{l+1},\ldots)\\
&=\psi(\ldots,s_{l-1},p_{l}+\infty,s_{l+1},\ldots)\\
&=\infty+\psi((s_{i})_{i\in I})\\
&=\infty\pkt
\end {align*}
\end{Remark}

\begin {Proposition} \label{PropUnivPropSmashN}
Let $(S_{i})_{i\in I}$ be a finite family of $N\mina$sets and $T$ another $N\mina$set. Every $N\mina$multi map $\psi:\prod_{i\in I}S_{i}\rto T$ gives rise to a unique $N\mina$map $\tilde{\psi}:\bigwedge_{N}S_{i}\rto T$ such that $\tilde{\psi}\pi=\psi$, where $\pi$ denotes the canonical projecion $\prod_{i\in I}S_{i}\rto\bigwedge_{N}S_{i}$.
\end {Proposition}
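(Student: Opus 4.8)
This is a universal property statement for the smash product over $N$, entirely analogous to the universal property of the tensor product of modules. The plan is to use Proposition \ref{PropUnivPropBinoidA}... no, rather to proceed by direct construction, following exactly the pattern of the proof of Proposition \ref{PropCoproductComBinoid}. I would first define $\tilde{\psi}$ on representatives by $\tilde{\psi}(\wedge_{N,i\in I}s_{i}):=\psi((s_{i})_{i\in I})$, and then verify that this is well-defined, is an $N\mina$map, and is unique.

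The key steps are as follows. First I would note that $\tilde{\psi}$, if it exists with $\tilde{\psi}\pi=\psi$, is forced to satisfy $\tilde{\psi}(\wedge_{N}s_{i})=\tilde{\psi}(\pi((s_{i})_{i\in I}))=\psi((s_{i})_{i\in I})$, so uniqueness is immediate once well-definedness is established; there is no freedom. The substance is therefore well-definedness: I must check that $\tilde{\psi}$ respects both equivalence relations that are quotiented out when forming $\bigwedge_{N}S_{i}$, namely $\sim_{\wedge}$ (from the plain smash product $\bigwedge_{i\in I}S_{i}$) and then $\sim_{\wedge_{_{\!N}}}$. For $\sim_{\wedge}$, two tuples are identified when either they agree componentwise or both have some entry equal to its distinguished point; in the latter case Remark \ref{RemMultiProp} guarantees $\psi$ sends both tuples to $\infty$, so $\tilde{\psi}$ is constant on that class. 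For $\sim_{\wedge_{_{\!N}}}$, the generating relations move a summand $a\in N$ from the $i$th component to the $j$th; the defining property of an $N\mina$multi map (and its consequence in Remark \ref{RemMultiProp} that $\psi(\ldots,a+s_{i},\ldots,s_{j},\ldots)=\psi(\ldots,s_{i},\ldots,a+s_{j},\ldots)$) says precisely that $\psi$ takes the same value on both sides, so $\tilde{\psi}$ descends through this quotient as well.

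Having defined $\tilde{\psi}$ consistently, it remains to confirm that it is an $N\mina$map. Using the operation on $\bigwedge_{N}S_{i}$ described in Lemma \ref{LemSmaschPointedSets}, namely $a+(\wedge_{N}s_{i})=(a+s_{1})\wedge_{N}s_{2}\wedge_{N}\cdots$, I would compute
$$\tilde{\psi}(a+\wedge_{N,i\in I}s_{i})\,=\,\psi(a+s_{1},s_{2}\kpkt s_{n})\,=\,a+\psi((s_{i})_{i\in I})\,=\,a+\tilde{\psi}(\wedge_{N,i\in I}s_{i})\komma$$
where the middle equality is the $N\mina$multi map property in the first component. That $\tilde{\psi}$ is pointed follows from Remark \ref{RemMultiProp} together with Lemma \ref{LemSmaschPointedSets}, both of which send a tuple with a distinguished-point entry to the base point.

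The main obstacle is purely bookkeeping rather than conceptual: one must be careful that the composite equivalence relation defining $\bigwedge_{N}S_{i}$ is the transitive closure of the listed generators of $\sim_{\wedge_{_{\!N}}}$ on top of $\sim_{\wedge}$, so strictly I should check invariance of $\psi$ under each generating move and then extend along chains by transitivity. Since $\psi$ being equal on adjacent members of a chain forces equality across the whole chain, this causes no trouble. I expect the entire argument to be routine verification, parallel to Proposition \ref{PropCoproductComBinoid}, and I would simply record that all assertions are easily checked after pinning down well-definedness.
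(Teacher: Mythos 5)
Your proposal is correct and follows essentially the same route as the paper: define $\tilde{\psi}$ on representatives, obtain well-definedness from Remark \ref{RemMultiProp}, get uniqueness from the surjectivity of $\pi$, and verify the $N\mina$map property via the same computation with the operation acting in the first component. The only difference is that you spell out the invariance under the generating relations of the quotient more explicitly than the paper, which simply cites the remark.
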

\begin {proof}
Let $I=\{1\kpkt n\}$. The map $\tilde{\psi}$ is a well-defined binoid homomorphism by Remark \ref{RemMultiProp}. Its uniqueness follows from $\tilde{\psi}\pi=\psi$ because $\pi$ is surjective. Moreover, an easy computation
\begin {align*}
\tilde{\psi}(a+\wedge_{N}s_{i})&=\tilde{\psi}((a+s_{1})\wedge_{N}s_{2}\wedge_{N}\cdots\wedge_{N}s_{n})\\
&=\psi(a+s_{1},s_{2},\ldots, s_{n})\\
&=a+\psi((s_{i})_{i\in I})\\
&=a+\tilde{\psi}(\wedge_{N}s_{i})
\end {align*}
proves that $\tilde{\psi}$ is an $N\mina$map.
\end {proof}

The preceding proposition yields the following bijection of sets:
$$\Big\{N\mina\text{maps }\,\bigwedge_{i\in I} \!\!{}_{_{N}}S_{i}\rto T\Big\}\Rto\Big\{N\mina\text{multi maps }\,\prod_{i\in I}S_{i}\rto T\Big\}\komma\quad\psi\lto\psi\pi\pkt$$

\begin {Definition}
Let $M$ be a binoid. If $\varphi:N\rto M$ is a binoid homomorphism with $\varphi(a)+x=x+\varphi(a)$ for all $a\in N$ and $x\in M$, then $M$ is called an \gesperrt{$N\mina$binoid} \index{N@$N\mina$binoid}with respect to the \gesperrt{structure homomorphism} \index{N@$N\mina$binoid!structure homomorphism of an --}\index{structure homomorphism!N@-- of an $N\mina$binoid}$\varphi$. An \gesperrt{$N\mina$binoid homomorphism} \index{N@$N\mina$binoid!-- homomorphism}\index{homomorphism!N@$N\mina$binoid --}is a binoid homomorphism $\psi:M\rto M^{\prime}$ of $N\mina$binoids such that $\psi\varphi=\varphi^{\prime}$, where $\varphi$ and $\varphi^{\prime}$ are the structure homomorphisms of $M$ and $M^{\prime}$, respectively. The set of all $N\mina$binoid homomorphisms $M\rto M^{\prime}$ will be denoted by $\hom_{N}(M,M^{\prime})$\nomenclature[HomN]{$\hom_{N}(M,M^{\prime})$}{set of all $N\mina$binoid homomorphisms $M\rto L$}. $N\mina$binoids together with $N\mina$binoid homomorphisms form a category $\Bsf_{N}$.\nomenclature[B]{$\Bsf_{N}$}{category of $N\mina$binoids}

Let $M$ be a commutative $N\mina$binoid via $\varphi:N\rto M$. We say that $M$ is \gesperrt{finitely generated} \index{N@$N\mina$binoid!finitely generated --} as $N\mina$binoid by $x_{1}\kpkt x_{r}$ if every element $f\in M$ can be written as $f=\varphi(a)+\sum_{i=1}^{r}n_{i}x_{i}$ for some $a\in N$ and $n_{i}\in\N$, $i\in\{1\kpkt r\}$.
\end {Definition}

Every $N\mina$binoid $M$ can be considered as an $N\mina$set in a natural way with respect to the operation given by 
$$+:N\times M\Rto M\komma\quad(a,x)\lto a+x:=\varphi(a)+x\pkt$$
Thus, an $N\mina$binoid is a binoid that is an $N\mina$set such that the operation of $N$ is compatible with the addition of the binoid. 
If $\sim$ is a congruence on an $N\mina$binoid $M$, then $M/\sim$ is again an $N\mina$binoid with respect to the structure homomorphism $\pi\varphi:N\rto M\rto M/\sim$.

\begin {Remark}
A commutative $N\mina$binoid that is finitely generated as $N\mina$set is also finitely generated as $N\mina$binoid, but the converse need not be true. For instance, $\N^{\infty}$ is finitely generated (by the element $1$) as $\trivial\mina$binoid but not as $\trivial\mina$set. In general, a binoid $M$ is finitely generated as $\trivial\mina$binoid if and only if it is finitely generated over every binoid $N$ that admits a binoid homomorphism $\varphi:N\rto M$. Equivalently, $M$ is a finitely generated binoid. 

Every commuative binoid $M$ that is finitely generated as $N\mina$binoid by $x_{1}\kpkt x_{r}$ gives rise to a canonical binoid epimorphism
$$N\wedge(\N^{r})^{\infty}\Rto M\komma\quad a\wedge(n_{1}\kpkt n_{r})\lto\varphi(a)+\sum_{i=1}^{r}n_{i}x_{i}\komma$$
where $\varphi:N\rto M$ denotes the structure homomorphism.
\end {Remark}

\begin {Corollary}
Let $(M_{i})_{i\in I}$ be a family of (positive) $N\mina$binoids. The product, the direct sum, the smash product, and the bipointed union of a family of (positive) $N\mina$binoids are $N\mina$binoids.
\end {Corollary}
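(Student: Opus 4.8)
The plan is to treat each of the four constructions by exhibiting an explicit structure homomorphism built from the given structure homomorphisms $\varphi_i\colon N\to M_i$ together with the canonical maps attached to the construction, relying on the fact that each construction already carries a (semi-)binoid structure: Corollary \ref{CorProduct} for the product and the direct sum, the lemma identifying $\bigwedge_{i\in I}M_i$ as a binoid, and Lemma \ref{LemPointedUnion} for the bipointed union (where the parenthetical positivity hypothesis enters). In every case the centrality condition $\varphi(a)+x=x+\varphi(a)$ is inherited, since the $N$-image is central in each factor by hypothesis and centrality is preserved both componentwise and under the surjective canonical maps: for a surjection $\rho$ and central $\varphi(a)$ one has $\rho(\varphi(a))+\rho(w)=\rho(\varphi(a)+w)=\rho(w+\varphi(a))=\rho(w)+\rho(\varphi(a))$.

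For the product I would take $\varphi\colon N\to\prod_{i\in I}M_i$, $a\mapsto(\varphi_i(a))_{i\in I}$, which is a binoid homomorphism by the universal property of the product (Proposition \ref{PropUEproduct} together with Corollary \ref{CorProduct}), with centrality holding componentwise. The direct sum $\bigoplus_{i\in I}M_i$ is in general only a semibinoid (Corollary \ref{CorProduct}), so strictly it is an $N$-semibinoid: the operation $a+(x_i)_{i\in I}:=(\varphi_i(a)+x_i)_{i\in I}$ is exactly the $N$-set structure of Lemma \ref{LemProductsNsets}, and one checks it preserves the direct sum (if $x_i=\infty_i$ for almost all $i$, then $\varphi_i(a)+x_i=\infty_i$ for almost all $i$) and is compatible with the addition. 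It is a genuine $N$-binoid precisely when $I$ is finite, in which case it coincides with the product.

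For the smash product I would compose the product structure map with the canonical epimorphism $\pi\colon\prod_{i\in I}M_i\to\bigwedge_{i\in I}M_i$, obtaining the structure homomorphism $a\mapsto\wedge_{i\in I}\varphi_i(a)$, which is a binoid homomorphism with central image by the observation above. For the bipointed union of positive binoids the cleanest route is to avoid a componentwise formula, which is not well-defined once the identity elements are glued, and instead invoke Remark \ref{RemProdEpisPos}, realizing $\bigcupbidot_{i\in I}M_i$ as a quotient $\big(\bigwedge_{i\in I}M_i\big)/\!\sim$ by an ideal congruence. Since the smash product is already an $N$-binoid and quotients of $N$-binoids are $N$-binoids (with structure homomorphism $\pi_{\cupbidot}\pi\varphi$, as noted directly after the definition of an $N$-binoid), the claim for the bipointed union follows immediately.

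The verifications are otherwise routine, so the two points I would emphasize as the only genuine subtleties are these: first, the direct sum is a binoid only in the finite case, so the assertion is really about an $N$-semibinoid whenever $I$ is infinite; and second, the bipointed union must be handled through its presentation as a quotient of the smash product rather than by a naive componentwise $N$-action, precisely because gluing the identity elements $(0_i;i)$ obstructs any direct definition of the operation.
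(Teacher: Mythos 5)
Your proof is correct and follows essentially the same route as the paper, which simply invokes Lemma \ref{LemProductsNsets} together with the observations that an $N\mina$binoid structure is given by a central structure homomorphism and passes to quotients by congruences; your explicit structure homomorphisms $a\mapsto(\varphi_{i}(a))_{i\in I}$, $a\mapsto\wedge_{i\in I}\varphi_{i}(a)$, and $\pi_{\cupbidot}\pi_{\wedge}\circ(\varphi_{i})_{i\in I}$ make that one-line argument precise. Your two caveats are well taken: the direct sum of an infinite family of nonzero binoids is indeed only a semibinoid (Corollary \ref{CorProduct}), so the assertion must be read in that weaker sense there, and the bipointed union is correctly handled via its presentation as a quotient of the smash product (Remark \ref{RemProdEpisPos}), since a naive componentwise action is not well-defined once the identity elements $(0_{i};i)$ are glued.
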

\begin {proof}
This is an immediate consequence of Lemma \ref{LemProductsNsets} and the preceding observation.
\end {proof}

\begin {Lemma}
Let $M,M^{\prime}$, and $L$ be $N\mina$binoids. Every $N\mina$binoid homomorphism $\varphi:M\rto M^{\prime}$ induces a canonical map of sets
$$\hom_{N}(M^{\prime},L)\Rto\hom_{N}(M,L)\komma\quad\psi\lto\psi\varphi\pkt$$ 
\end {Lemma}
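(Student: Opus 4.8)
The plan is to observe that the content of the statement is simply the \emph{well-definedness} of the assignment $\psi\mapsto\psi\varphi$; that is, the only thing requiring proof is that $\psi\varphi$ genuinely lies in $\hom_{N}(M,L)$ whenever $\psi\in\hom_{N}(M^{\prime},L)$. To keep notation clear (the symbol $\varphi$ is already occupied by the given $N\mina$binoid homomorphism $M\rto M^{\prime}$), I would write $\sigma_{M}:N\rto M$, $\sigma_{M^{\prime}}:N\rto M^{\prime}$, and $\sigma_{L}:N\rto L$ for the three structure homomorphisms of the $N\mina$binoids $M$, $M^{\prime}$, and $L$.

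First I would record that $\psi\varphi:M\rto L$ is a binoid homomorphism, being the composition of the binoid homomorphisms $\varphi:M\rto M^{\prime}$ and $\psi:M^{\prime}\rto L$; in particular it respects addition and sends $\infty_{M}$ to $\infty_{L}$. This is immediate from the definition of a binoid homomorphism (Definition \ref{DefKer}) and requires no computation.

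Second, I would verify compatibility with the structure homomorphisms. Since $\varphi$ is an $N\mina$binoid homomorphism, $\varphi\sigma_{M}=\sigma_{M^{\prime}}$, and since $\psi$ is an $N\mina$binoid homomorphism, $\psi\sigma_{M^{\prime}}=\sigma_{L}$. Using associativity of composition one then gets
$$(\psi\varphi)\sigma_{M}\,=\,\psi(\varphi\sigma_{M})\,=\,\psi\sigma_{M^{\prime}}\,=\,\sigma_{L}\komma$$
so $\psi\varphi$ satisfies the defining condition of an $N\mina$binoid homomorphism. Hence $\psi\varphi\in\hom_{N}(M,L)$, which shows that $\psi\mto\psi\varphi$ is a well-defined map of sets.

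There is no genuine obstacle here: associativity of composition is essentially the whole content, exactly as in the analogous Proposition \ref{PropIndHomNspec} for $N\mina$spectra, whose first assertion was likewise noted to be trivial. I would therefore simply remark that all assertions are immediate, mirroring the style of the surrounding proofs.
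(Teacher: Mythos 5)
Your proof is correct and matches the paper's argument, which simply exhibits the commutative diagram of structure homomorphisms $N\rto M$, $N\rto M^{\prime}$, $N\rto L$ together with $\varphi$ and $\psi$; your two displayed identities $(\psi\varphi)\sigma_{M}=\psi\sigma_{M^{\prime}}=\sigma_{L}$ are exactly what that diagram encodes. Nothing is missing.
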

\begin {proof}
This is clear from the diagram
$$\xymatrix{
&N\ar[dl]\ar[d]\ar[rd]&\\
M\ar[r]^{\varphi}&M^{\prime}\ar[r]^{\psi}&L\pkt}$$
\end {proof}

\begin {Corollary}\nomenclature[AProductSmashN]{$\bigwedge_{N,i\in I}M_{i}$}{smash product over $N$ of a finite family of $N\mina$binoids}
Given a finite family $(M_{i})_{i\in I}$ of $N\mina$binoids, the equivalence relation $\sim_{\wedge_{_{\!N}}}$ on $\bigwedge_{i\in I}M_{i}$ is a congruence. In particular, the smash product of $(M_{i})_{i\in I}$ over $N$ is again an $N\mina$binoid with identity element  $0\wedge_{N}\cdots\wedge_{N}0=:0_{\wedge_{N}}$ and absorbing element $\infty\wedge_{N}\cdots\wedge_{N}\infty=:\infty_{\wedge_{N}}$. The canonical inclusions
$$\iota_{k}:M_{k}\Rto\bigwedge_{i\in I} \!\!{}_{_{N}}M_{i}\komma\quad x\lto 0\wedge_{N}\cdots\wedge_{N}0\wedge_{N} x\wedge_{N}0\wedge_{N}\cdots\wedge_{N}0\komma$$
where $x$ is the $k$th component, $k\in I$, are $N\mina$binoid homomorphisms.
\end {Corollary}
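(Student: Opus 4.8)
The plan is to regard $\bigwedge_{N,i\in I}M_{i}$ as the quotient of the binoid $\bigwedge_{i\in I}M_{i}$ (which is a binoid by the lemma showing that $\sim_{\wedge}$ is an ideal congruence) by the relation $\sim_{\wedge_{N}}$, and to leverage Lemma \ref{LemSmaschPointedSets}, which already supplies the underlying $N$-set structure. The only genuinely new point is that $\sim_{\wedge_{N}}$ respects the binoid addition. Since $\sim_{\wedge_{N}}$ is by definition the equivalence relation generated by the family $R$ of moves $\cdots\wedge(a+s_{i})\wedge\cdots\wedge s_{j}\wedge\cdots\,\sim\,\cdots\wedge s_{i}\wedge\cdots\wedge(a+s_{j})\wedge\cdots$, it suffices to show that $R$ is stable under left and right translation by an arbitrary $w=\wedge_{i\in I}t_{i}$; a routine chain argument then propagates this stability to the generated equivalence relation, so that $\sim_{\wedge_{N}}$ is a congruence.

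For a generating pair $u,v$ as above, componentwise addition shows that the $i$-th and $j$-th slots of $u+w$ are $(\varphi_{i}(a)+s_{i})+t_{i}=a+(s_{i}+t_{i})$ and $s_{j}+t_{j}$, whereas those of $v+w$ are $s_{i}+t_{i}$ and $a+(s_{j}+t_{j})$; hence $(u+w,v+w)\in R$ with base elements $s_{\ell}+t_{\ell}$, using only associativity. For the right translation the $i$-th slot of $w+u$ is $t_{i}+(\varphi_{i}(a)+s_{i})=\varphi_{i}(a)+t_{i}+s_{i}=a+(t_{i}+s_{i})$, where the middle equality is \emph{exactly} the centrality of the structure homomorphism $\varphi_{i}\colon N\to M_{i}$; thus $(w+u,w+v)\in R$ as well. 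I expect this right-translation step to be the crux: it is the one place where the $N$-binoid hypothesis (centrality) is indispensable, left translation being valid for any family of $N$-sets.

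Granting that $\sim_{\wedge_{N}}$ is a congruence, Proposition \ref{PropHomCong} shows $\bigwedge_{N,i\in I}M_{i}=(\bigwedge_{i\in I}M_{i})/\!\sim_{\wedge_{N}}$ is a binoid whose identity and absorbing elements are the classes $0_{\wedge_{N}}=[0_{\wedge}]$ and $\infty_{\wedge_{N}}=[\infty_{\wedge}]$. To exhibit it as an $N$-binoid I would take the structure homomorphism $\Phi\colon N\to\bigwedge_{N,i\in I}M_{i}$, $a\mapsto a+0_{\wedge_{N}}=\varphi_{1}(a)\wedge_{N}0\wedge_{N}\cdots\wedge_{N}0$. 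As addition is componentwise and only the first slot is nonzero, $\Phi(a)+\Phi(b)=\varphi_{1}(a+b)\wedge_{N}0\wedge_{N}\cdots=\Phi(a+b)$, while $\Phi(0)=0_{\wedge_{N}}$ and $\Phi(\infty)=\infty_{\wedge_{N}}$ follow from the operation axioms; centrality of $\Phi$ reduces slot-wise to centrality of $\varphi_{1}$ in $M_{1}$. One checks that the resulting $N$-set structure coincides with that of Lemma \ref{LemSmaschPointedSets}.

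Finally, each inclusion $\iota_{k}$ is a binoid homomorphism, being the composite of the inclusion $M_{k}\embto\bigwedge_{i\in I}M_{i}$ (a binoid homomorphism by the smash-product lemma) with the canonical projection onto the quotient. To see that $\iota_{k}$ is an $N$-binoid homomorphism it remains to verify $\iota_{k}\varphi_{k}=\Phi$: writing $\varphi_{k}(a)=a+0_{k}$ in the $k$-th slot, a single generating move transports $a$ from slot $k$ to slot $1$, giving $\iota_{k}(\varphi_{k}(a))\sim_{\wedge_{N}}\varphi_{1}(a)\wedge_{N}0\wedge_{N}\cdots=\Phi(a)$. Apart from the congruence step, everything thus reduces to the already-established facts about the smash product of binoids and of $N$-sets.
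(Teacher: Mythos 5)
Your argument is correct and is precisely the routine verification the paper dismisses with ``This is easily verified'': one checks stability of the generating relations of $\sim_{\wedge_{_{\!N}}}$ under left and right translation, invokes Proposition \ref{PropHomCong} for the quotient binoid structure, and identifies the structure homomorphism with the $N$-operation of Lemma \ref{LemSmaschPointedSets}. Your observation that centrality of the structure homomorphisms is exactly what makes the right-translation step (and hence the two-sided congruence property) work in the not-necessarily-commutative setting is the one genuinely non-trivial point, and you have it right.
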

\begin {proof}
This is easily verified.
\end {proof}

Every binoid can be considered as a $\trivial\mina$binoid since $\trivial$ is an initial object in the category of binoids $\Bsf$. In this spirit, the smash product of a family of binoids from Section \ref{SecSmashProduct} is precisely the smash product over $\trivial$. In particular, one cannot expect better or more results for the smash product over an arbitrary binoid $N$ than for the smash product (over $\trivial$) as listed in Lemma \ref{LemSmashRules}. In fact, all these results are difficult to generalize to arbitrary $N\mina$binoids because the elements of a class $\wedge_{N}x_{i}$ depend on the structure homomorphisms $\varphi_{i}:N\rto M_{i}$, $i\in I$, see for instance Lemma \ref{LemSmashNoverN} and Example \ref{ExpNsmashNcancellative} below. More precisely, $\wedge_{N} t_{i}=\wedge_{N}x_{i}$ if $t_{i}=\sum_{j\in J}\pm\varphi_{i}(a_{j})+x_{i}$ with $a_{j}\in N$, $j\in J\subseteq I$, where the summands $\pm\varphi_{i}(a_{i})$ are distributed in a way such that they vanish after a \emph{suitable} not necessarily unique shifting. Nevertheless, generators of the smash product yield generators of the smash product over $N$ via the canonical projection $\bigwedge_{i\in I}M_{i}\rto\bigwedge_{N}M_{i}$.

\begin {Example}
Let $I$ be finite. If $(M_{i})_{i\in I}$ is a family of $N\mina$binoids with generating sets $A_{i}\subseteq M_{i}$, $i\in I$, then $\bigwedge_{N}M_{i}$ is generated by\nomenclature[EB]{$a\widehat{e}_{i,N}$}{$=0\wedge_{N}\cdots\wedge_{N}0\wedge_{N}a\wedge_{N}0\wedge_{N}\cdots\wedge_{N}0$, where $a$ is the $i$th entry}
$$a\widehat{e}_{i,N}:=0\wedge_{N}\cdots\wedge_{N}0\wedge_{N}a\wedge_{N}0\wedge_{N}\cdots\wedge_{N}0\komma$$
where $a\in A_{i}$ is the $i$th entry, $i\in I$. In particular, the smash product $\bigwedge_{N}\N^{\infty}$ of the family $(\N^{\infty})_{i\in I}$ is generated by\nomenclature[EB]{$\widehat{e}_{i,N}$}{$=0\wedge_{N}\cdots\wedge_{N}0\wedge_{N}1\wedge_{N}0\wedge_{N}\cdots\wedge_{N}0$, where $1$ is the $i$th entry}
$$\widehat{e}_{i,N}:=0\wedge_{N}\cdots\wedge_{N}0\wedge_{N}1\wedge_{N}0\wedge_{N}\cdots\wedge_{N}0\komma$$
where $1$ is the $i$th entry, $i\in I$.
\end {Example}

\begin {Lemma}\label{LemSmashNoverN}
If $M$ is an $N\mina$binoid, then $N\wedge_{N}M\cong M$.
\end {Lemma}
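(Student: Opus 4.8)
The plan is to write down explicit mutually inverse maps between $N\wedge_{N}M$ and $M$ and to check that they are binoid homomorphisms. Throughout, let $\varphi\colon N\rto M$ be the structure homomorphism of $M$, regard $N$ as an $N$-set via its own addition, and recall that the operation of $N$ on the second factor $M$ is $a+x=\varphi(a)+x$. I would take as candidate isomorphism the map $\Phi\colon N\wedge_{N}M\rto M$ determined by $a\wedge_{N}x\lto\varphi(a)+x$, with candidate inverse $\Psi\colon M\rto N\wedge_{N}M$, $x\lto 0\wedge_{N}x$.

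To see that $\Phi$ is well defined I would invoke the universal property of the smash product over $N$ (Proposition \ref{PropUnivPropSmashN}) applied to the map $\psi\colon N\times M\rto M$, $(a,x)\mto\varphi(a)+x$. This $\psi$ is a pointed map since $\varphi(\infty)=\infty$, and it is an $N\mina$multi map: in the first argument $\psi(b+a,x)=\varphi(b)+\varphi(a)+x=b+\psi(a,x)$, and in the second argument $\psi(a,b+x)=\varphi(a)+\varphi(b)+x$, which equals $b+\psi(a,x)=\varphi(b)+\varphi(a)+x$ because the image of $\varphi$ is central in $M$ by the defining property $\varphi(b)+y=y+\varphi(b)$ of an $N\mina$binoid. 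Hence $\psi$ induces a unique $N\mina$map $\Phi$ with $\Phi(a\wedge_{N}x)=\varphi(a)+x$. That $\Phi$ is moreover a binoid homomorphism is then a short check: $\Phi(0_{\wedge_{N}})=\varphi(0)+0=0$, and
$$\Phi\bigl((a\wedge_{N}x)+(a'\wedge_{N}x')\bigr)=\varphi(a+a')+(x+x')=(\varphi(a)+x)+(\varphi(a')+x'),$$
the last equality again using that $\varphi(a')$ commutes with $x$.

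For the inverse, $\Psi$ is immediately a binoid homomorphism since $\Psi(x+x')=0\wedge_{N}(x+x')=(0\wedge_{N}x)+(0\wedge_{N}x')$ and $\Psi(0)=0_{\wedge_{N}}$. The crux of the argument is the identity $a\wedge_{N}x=(a+0)\wedge_{N}x=0\wedge_{N}(\varphi(a)+x)$ in $N\wedge_{N}M$, which is exactly the instance of the generating relation of $\sim_{\wedge_{N}}$ with $a\in N$ acting, moving it from the first factor (where the factor element is $0$, so $a+0=a$) into the second via $\varphi$. Granting this, $\Psi\Phi(a\wedge_{N}x)=0\wedge_{N}(\varphi(a)+x)=a\wedge_{N}x$ and $\Phi\Psi(x)=\varphi(0)+x=x$, so $\Phi$ and $\Psi$ are mutually inverse and $N\wedge_{N}M\cong M$. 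I expect the only genuinely delicate point to be the well-definedness of $\Phi$, i.e.\ the $N\mina$multi map verification together with the correct bookkeeping of the relation $\sim_{\wedge_{N}}$; everything else reduces to the centrality of $\varphi(N)$ in $M$ and routine computation.
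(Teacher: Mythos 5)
Your proof is correct and follows essentially the same route as the paper: the paper's own argument uses exactly the maps $a\wedge_{N}x\mto\varphi(a)+x$ and $x\mto 0\wedge_{N}x$, relying on the universal property of the smash product over $N$ for well-definedness. You have merely spelled out the verifications (the $N\mina$multi map check, the centrality of $\varphi(N)$, and the identity $a\wedge_{N}x=0\wedge_{N}(\varphi(a)+x)$) that the paper leaves implicit.
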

\begin {proof}
The natural operation $N\times M\rto M$ with $(a,x)\mto \varphi(a)+x$, where $\varphi:N\rto M$ denotes the structure homomorphism of $M$, induces an $N\mina$binoid homomorphism $N\wedge_{N}M\rto M$ with $a\wedge_{N}x\mto\varphi(a)+x$. The inverse is obviously the $N\mina$binoid homomorphism $M\rto N\wedge_{N}M$, $x\mto 0\wedge_{N}x$.
\end {proof}

The preceding result also shows that the smash product of $N\mina$binoids may have nice properties which not all components need to fulfill. An easy example is given by 
$$\Z^{\infty}\wedge_{\N^{\infty}}\N^{\infty}\,\,\cong\,\,\Z^{\infty}\komma$$
where all $a\wedge_{\N^{\infty}}b\not=\infty_{\wedge}$ are units though both components need not be so. 

\begin {Example} \label{ExpNsmashNcancellative}
Consider the $\N^{\infty}\mina$binoid $\N^{\infty}\wedge_{\N^{\infty}}\N^{\infty}$, where $\N^{\infty}$ is considered as an $\N^{\infty}\mina$binoid once (say on the left) via $\varphi_{k}:1\mto k$ and once via $\varphi_{l}:1\mto l$ for some $k,l\ge 1$, which means $$(a+k)\wedge_{\N^{\infty}}b\,=\,a\wedge_{\N^{\infty}}(b+l)$$
for all $a,b\in\N^{\infty}$. In particular, $nk\wedge_{\N^{\infty}} 0=0\wedge_{\N^{\infty}} nl$ for $n\ge 1$. For instance, $\N^{\infty}\wedge_{\N^{\infty}}\N^{\infty}\cong\N^{\infty}$ if $k=0$ or $l=0$. Clearly, $\N^{\infty}\wedge_{\N^{\infty}}\N^{\infty}$ is a positive $\N^{\infty}\mina$binoid. We claim that it is also cancellative. For this note that each element $a\wedge_{\N^{\infty}}b\in\N^{\infty}\wedge_{\N^{\infty}}\N^{\infty}$ can be written uniquely as $a^{\prime}\wedge_{\N^{\infty}}b^{\prime}$, where $b^{\prime}$ is determined by $b=b^{\prime}+ql$ with $b^{\prime}<l$, $q\in\N$, and $a^{\prime}=a+qk$. In virtue of Lemma \ref{LemIntCanGenerators}, it suffices to show that the generators $1\wedge_{\N^{\infty}} 0$ and $0\wedge_{\N^{\infty}} 1$ are cancellative elements in $\N^{\infty}\wedge_{\N^{\infty}}\N^{\infty}$. So suppose that
$$(a\wedge_{\N^{\infty}} b)+(1\wedge_{\N^{\infty}} 0)\,=\,(c\wedge_{\N^{\infty}} d)+(1\wedge_{\N^{\infty}} 0)\,\not=\,\infty_{\wedge_{\N^{\infty}}}$$
in $\N^{\infty}\wedge_{\N^{\infty}}\N^{\infty}$, where $a,b,c,d\in\N$. By the observation above, we may assume that $b,d<l$. Now the equation $(a+1)\wedge_{\N^{\infty}}b=(c+1)\wedge_{\N^{\infty}}d$ implies $a+1=c+1$ and $b=d$, which shows that $a\wedge_{\N^{\infty}}b=c\wedge_{\N^{\infty}}d$. The cancellativity of $0\wedge_{\N^{\infty}}1$ follows similarly. In particular, $1\wedge_{\N^{\infty}}0$ and $0\wedge_{\N^{\infty}}1$ is the unique minimal generating set of $\N^{\infty}\wedge_{\N^{\infty}}\N^{\infty}$ by Proposition \ref{PropUniqueMinSyst}. 

In Section \ref{SecFGbinoids}, we will show that $\N^{\infty}\wedge_{\N^{\infty}}\N^{\infty}$ is torsion-free if and only if $k$ and $l$ are coprime, cf.\ Lemma \ref{LemNwedgeNtorsionfree}.
\end {Example}

\begin {Definition}
Let $(M_{i})_{i\in I}$ be a finite family of $N\mina$binoids and $L$ another $N\mina$binoid. An $N\mina$multi map $\psi:\prod_{i\in I}M_{i}\rto L$ that is also a binoid homomorphism will be called an \gesperrt{$N\mina$multi binoid homomorphism}\index{N@$N\mina$multi!-- binoid homomorphism}\index{homomorphism!N@$N\mina$multi binoid --}. The set of all $N\mina$multi binoid homomorphisms will be denoted by $\mult_{N}(\prod_{i\in I}M_{i},L)$.\nomenclature[Mult]{$\mult_{N}(\prod_{i\in I}M_{i},L)$}{set of all $N\mina$multi binoid homomorphisms $\prod_{i\in I}M_{i}\rto L$}
\end {Definition}

Let $(M_{i})_{i\in I}$ be a finite family of $N\mina$binoids and $L$ another $N\mina$binoid with  structure homomorphisms denoted by $(\varphi_{i})_{i\in I}$ and $\varphi$, respectively. An $N\mina$multi binoid homomorphism $\psi:\prod_{i\in I}M_{i}\rto L$ is a binoid homomorphism that satisfies
$$\psi(\ldots,x_{i-1},\varphi_{i}(a)+x_{i},x_{i+1},\dots)=\varphi(a)+\psi(\ldots,x_{i-1},x_{i},x_{i+1},\dots)$$
for all $a\in N$ and $x_{j}\in M_{j}$, $j\in I$. 

\begin {Corollary} \label{CorUnivPropSmashN}
Let $(M_{i})_{i\in I}$ be a finite family of $N\mina$binoids and $L$ another $N\mina$binoid. Every $N\mina$multi binoid homomorphism $\psi:\prod_{i\in I}M_{i}\rto L$ gives rise to a unique $N\mina$binoid homomorphism $\tilde{\psi}:\bigwedge_{N}M_{i}\rto L$ with $\tilde{\psi}\pi=\psi$, where $\pi:\prod_{i\in I}M_{i}\rto\bigwedge_{N}M_{i}$ denotes the canonical projection.
\end {Corollary}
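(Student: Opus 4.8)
The plan is to reduce the statement to the already-established universal property for $N\mina$sets, Proposition \ref{PropUnivPropSmashN}, and then upgrade the resulting $N\mina$map to an $N\mina$binoid homomorphism by checking compatibility with the additive structure. First I would observe that an $N\mina$multi binoid homomorphism $\psi:\prod_{i\in I}M_{i}\rto L$ is, by forgetting the binoid structure, in particular an $N\mina$multi map between the underlying $N\mina$sets. Hence Proposition \ref{PropUnivPropSmashN} applies verbatim and produces a unique $N\mina$map $\tilde{\psi}:\bigwedge_{N}M_{i}\rto L$ with $\tilde{\psi}\pi=\psi$, given explicitly on representatives by $\tilde{\psi}(\wedge_{N}x_{i})=\psi((x_{i})_{i\in I})$. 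Its well-definedness on $\sim_{\wedge_{_{\!N}}}$-classes is part of the content of that proposition (which used Remark \ref{RemMultiProp}), so it can be cited rather than repeated. Uniqueness in the present category is then automatic: $\pi$ is a composition of surjections $\prod_{i\in I}M_{i}\rto\bigwedge_{i\in I}M_{i}\rto\bigwedge_{N}M_{i}$, hence surjective, so any map $\tilde{\psi}$ with $\tilde{\psi}\pi=\psi$ is determined by $\psi$.

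The second step is to verify that this $\tilde{\psi}$ is a binoid homomorphism. Here I would invoke the corollary preceding the statement, which guarantees that $\sim_{\wedge_{_{\!N}}}$ is a congruence; consequently the addition on $\bigwedge_{N}M_{i}$ is the componentwise addition inherited from $\prod_{i\in I}M_{i}$, that is, $(\wedge_{N}x_{i})+(\wedge_{N}y_{i})=\wedge_{N}(x_{i}+y_{i})$. Preservation of the identity and absorbing element then read $\tilde{\psi}(0_{\wedge_{N}})=\psi((0_{i})_{i\in I})=0_{L}$ and $\tilde{\psi}(\infty_{\wedge_{N}})=\psi((\infty_{i})_{i\in I})=\infty_{L}$, while additivity is
\[
\tilde{\psi}\big((\wedge_{N}x_{i})+(\wedge_{N}y_{i})\big)
=\psi\big((x_{i})_{i\in I}+(y_{i})_{i\in I}\big)
=\psi((x_{i})_{i\in I})+\psi((y_{i})_{i\in I})\komma
\]
each equality being precisely the statement that $\psi$ itself is a binoid homomorphism on the product $\prod_{i\in I}M_{i}$.

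Finally I would check the structure-homomorphism condition. Writing $\varphi_{i}:N\rto M_{i}$ and $\varphi:N\rto L$ for the structure homomorphisms, the structure homomorphism of $\bigwedge_{N}M_{i}$ sends $a$ to $a+0_{\wedge_{N}}$. Since $\tilde{\psi}$ is an $N\mina$map (Step 1) and preserves the identity (Step 2), one gets $\tilde{\psi}(a+0_{\wedge_{N}})=a+\tilde{\psi}(0_{\wedge_{N}})=a+0_{L}=\varphi(a)+0_{L}=\varphi(a)$, which is exactly the required compatibility $\tilde{\psi}\circ(\text{structure hom.\ of }\bigwedge_{N}M_{i})=\varphi$. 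Thus $\tilde{\psi}$ is an $N\mina$binoid homomorphism. In fact, for a binoid homomorphism between $N\mina$binoids, being an $N\mina$map and being an $N\mina$binoid homomorphism coincide, so Steps 1 and 2 together already force this last property.

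I do not expect a genuine obstacle: the corollary is the binoid-level shadow of Proposition \ref{PropUnivPropSmashN}, and the only point requiring care is that the additive structure on the quotient $\bigwedge_{N}M_{i}$ is literally componentwise, which is exactly what the congruence property of $\sim_{\wedge_{_{\!N}}}$ supplies. The mildest subtlety is bookkeeping with representatives in verifying well-definedness, but since this is already absorbed into the cited $N\mina$set statement, the proof should amount to little more than assembling these observations.
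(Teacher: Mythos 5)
Your proposal is correct and follows essentially the same route as the paper: invoke Proposition \ref{PropUnivPropSmashN} to obtain the unique $N\mina$map $\tilde{\psi}$ with $\tilde{\psi}\pi=\psi$, then use that the addition on $\bigwedge_{N}M_{i}$ is componentwise together with the fact that $\psi$ is a binoid homomorphism to conclude that $\tilde{\psi}$ is an $N\mina$binoid homomorphism. Your extra verifications of the identity, the absorbing element, and the structure-homomorphism condition are points the paper leaves implicit, but they are consistent with its argument.
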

\begin {proof}
The existence of a unique $N\mina$multi map $\tilde{\psi}:\bigwedge_{N}M_{i}\rto L$ with $\tilde{\psi}(\wedge_{N}x_{i})=\psi((x_{i})_{i\in I})$ follows from Proposition \ref{PropUnivPropSmashN}. Since 
\begin {align*}
\tilde{\psi}(\wedge_{N}x_{i}+\wedge_{N}y_{i})&=\tilde{\psi}(\pi((x_{i}+y_{i})_{i\in I}))\\
&=\psi((x_{i}+y_{i})_{i\in I})\\
&=\psi((x_{i})_{i\in I})+\psi((y_{i})_{i\in I})\\
&=\tilde{\psi}(\wedge_{N}x_{i})+\tilde{\psi}(\wedge_{N}y_{i})
\end {align*}
for all $\wedge_{N}x_{i}, \wedge_{N}y_{i}\in\bigwedge_{N}M_{i}$, the map $\tilde{\psi}$ is an $N\mina$binoid homomorphism.
\end {proof}

\begin {Corollary}
Let $(M_{i})_{i\in I}$ be a finite family of $N\mina$binoids and $L$ another $N\mina$binoid. Then
$$\hom_{N}\Big(\bigwedge_{i\in I} \!\!{}_{_{N}}M_{i},L\Big)\,\,\cong\,\,\mult_{N}\Big(\prod_{i\in I}M_{i},L\Big)\pkt$$
as sets.
\end {Corollary}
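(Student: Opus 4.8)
The plan is to exhibit the asserted bijection explicitly, using the preceding Corollary \ref{CorUnivPropSmashN} for one direction and composition with the canonical projection for the other. Write $\pi\colon\prod_{i\in I}M_{i}\rto\bigwedge_{N}M_{i}$ for the canonical projection, let $\varphi_{i}\colon N\rto M_{i}$ and $\varphi\colon N\rto L$ denote the structure homomorphisms, and recall from Corollary \ref{CorUnivPropSmashN} that every $N\mina$multi binoid homomorphism $\psi\colon\prod_{i\in I}M_{i}\rto L$ determines a unique $N\mina$binoid homomorphism $\tilde{\psi}\colon\bigwedge_{N}M_{i}\rto L$ with $\tilde{\psi}\pi=\psi$. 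This assignment $\psi\mto\tilde{\psi}$ defines a map $\Phi\colon\mult_{N}(\prod_{i\in I}M_{i},L)\rto\hom_{N}(\bigwedge_{N}M_{i},L)$, and the task is to show it is bijective.

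For the reverse direction I would set $\Psi(h):=h\pi$ for $h\in\hom_{N}(\bigwedge_{N}M_{i},L)$ and first check that $\Psi$ takes values in $\mult_{N}(\prod_{i\in I}M_{i},L)$. As a composite of the binoid homomorphisms $\pi$ and $h$, the map $h\pi$ is certainly a binoid homomorphism, so the only point to verify is the $N\mina$multi property. Here I would use that, in the $N\mina$set structure of $\bigwedge_{N}M_{i}$ described in Lemma \ref{LemSmaschPointedSets}, the defining relations $\sim_{\wedge_{N}}$ give
$$\pi(\ldots,x_{i-1},\varphi_{i}(a)+x_{i},x_{i+1},\ldots)\,=\,a+\pi((x_{j})_{j\in I})\komma$$
since $\varphi_{i}(a)+x_{i}$ is precisely the operation $a+x_{i}$ of $N$ on $M_{i}$ and the class does not depend on which component carries the summand. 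Applying the $N\mina$map $h$ and using $h(a+z)=\varphi(a)+h(z)$ then yields $h\pi(\ldots,\varphi_{i}(a)+x_{i},\ldots)=\varphi(a)+h\pi((x_{j})_{j\in I})$, which is exactly the defining identity of an $N\mina$multi binoid homomorphism.

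It then remains to see that $\Phi$ and $\Psi$ are mutually inverse. For $\Psi\Phi=\id$ I would simply read off $\Psi(\Phi(\psi))=\tilde{\psi}\pi=\psi$ from the defining property of $\tilde{\psi}$. For $\Phi\Psi=\id$, given $h$ the homomorphism $\Phi(\Psi(h))=\widetilde{h\pi}$ is by Corollary \ref{CorUnivPropSmashN} the unique $N\mina$binoid homomorphism whose composite with $\pi$ equals $h\pi$; since $h$ itself has this property and $\pi$ is surjective, uniqueness forces $\widetilde{h\pi}=h$, which establishes the claimed bijection. I expect the only genuine work to be the verification in the middle paragraph that $h\pi$ is an $N\mina$multi binoid homomorphism, which is the same bookkeeping with the structure homomorphisms and the $\sim_{\wedge_{N}}$ relations already carried out in the proof of Corollary \ref{CorUnivPropSmashN}; everything else is formal. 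Alternatively, one could phrase the whole argument as restricting the set-level bijection following Proposition \ref{PropUnivPropSmashN} to those maps that are in addition binoid homomorphisms, checking that $\psi$ is a binoid homomorphism if and only if $\tilde{\psi}$ is, which amounts to the same computation.
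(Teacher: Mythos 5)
Your proposal is correct and follows the same route as the paper, which simply observes that $\varphi\mto\varphi\pi$ is a bijection as a consequence of Corollary \ref{CorUnivPropSmashN}; you have merely spelled out the verification that $h\pi$ is an $N\mina$multi binoid homomorphism and the mutual-inverse check, both of which are the intended (and correct) bookkeeping. No gaps.
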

\begin {proof}
As a consequence of Corollary \ref{CorUnivPropSmashN}, the map $\hom_{N}(\bigwedge_{N}M_{i},L)\rto\mult_{N}(\prod_{i\in I}M_{i},L)$ with $\varphi\mto\varphi\pi$, where $\pi:\prod_{i\in I}M_{i}\rto\bigwedge_{N}M_{i}$, is bijective.
\end {proof}

The smash product over $N$ is a universal object, namely, it is the (finite) coproduct in the category of $N\mina$binoids $\Bsf_{N}$. This is contained in the following statement.

\begin {Corollary} \label{CorCoProdSmashN}
Let $(M_{i})_{i\in I}$ be a finite family of $N\mina$binoids and $L$ a commutative $N\mina$binoid. Every family $\psi_{i}:M_{i}\rto L$, $i\in I$, of $N\mina$binoid homomorphisms gives rise to a unique $N\mina$binoid homomorphism
$$\psi:\bigwedge_{i\in I} \!\!{}_{_{N}}M_{i}\Rto L\quad\text{with}\quad\wedge_{N}x_{i}\longmapsto\sum_{i\in I}\psi_{i}(x_{i})\pkt$$
In particular, if $M$ is another binoid and $\varphi:M\rto N$ a binoid homomorphism, then the induced $M\mina$binoid homomorphism 
$$\bigwedge_{i\in I} \!\!{}_{_{M}}M_{i}\Rto\bigwedge_{i\in I} \!\!{}_{_{N}}M_{i}\quad\text{with}\quad\wedge_{M}x_{i}\lto\wedge_{N}\;x_{i}$$
is surjective.
\end {Corollary}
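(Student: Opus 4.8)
The plan is to reduce the existence and uniqueness of $\psi$ to the universal property of the smash product over $N$ already established in Corollary \ref{CorUnivPropSmashN}. Concretely, I would first build a candidate map on the honest product and then push it down. Let $\varphi_{i}\colon N\rto M_{i}$ denote the structure homomorphisms and $\varphi_{L}\colon N\rto L$ that of $L$. Define
$$\Psi\colon\prod_{i\in I}M_{i}\Rto L\komma\quad(x_{i})_{i\in I}\lto\sum_{i\in I}\psi_{i}(x_{i})\pkt$$
The claim will then be that $\Psi$ is an $N\mina$multi binoid homomorphism, so that Corollary \ref{CorUnivPropSmashN} supplies a unique $N\mina$binoid homomorphism $\psi:=\tilde{\Psi}\colon\bigwedge_{N}M_{i}\rto L$ with $\psi\pi=\Psi$, i.e.\ $\psi(\wedge_{N}x_{i})=\sum_{i\in I}\psi_{i}(x_{i})$, which is exactly the asserted map.

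\textbf{Verifying the hypotheses.} Two things must be checked, both routine. That $\Psi$ is a binoid homomorphism uses the commutativity of $L$: for $(x_{i})_{i},(y_{i})_{i}\in\prod_{i}M_{i}$ one rearranges
$$\Psi\big((x_{i})_{i}+(y_{i})_{i}\big)=\sum_{i\in I}\psi_{i}(x_{i}+y_{i})=\sum_{i\in I}\psi_{i}(x_{i})+\sum_{i\in I}\psi_{i}(y_{i})\komma$$
and $\Psi$ respects the identity and absorbing elements (if some $x_{k}=\infty$ then $\psi_{k}(x_{k})=\infty$ absorbs the whole sum). That $\Psi$ is an $N\mina$multi map is where the $N\mina$binoid hypothesis enters: since each $\psi_{i}$ is an $N\mina$binoid homomorphism we have $\psi_{i}\varphi_{i}=\varphi_{L}$, hence for $a\in N$,
$$\Psi(\ldots,\varphi_{i}(a)+x_{i},\ldots)=\psi_{i}\varphi_{i}(a)+\sum_{j\in I}\psi_{j}(x_{j})=\varphi_{L}(a)+\Psi\big((x_{j})_{j\in I}\big)\pkt$$
With these verified, Corollary \ref{CorUnivPropSmashN} applies verbatim; uniqueness of $\psi$ is inherited from the surjectivity of $\pi$ (equivalently, because the $\iota_{k}$ jointly generate $\bigwedge_{N}M_{i}$, so $\psi\iota_{k}=\psi_{k}$ pins $\psi$ down). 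This identifies $\bigwedge_{N}M_{i}$ as the coproduct in $\Bsf_{N}$.

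\textbf{The \emph{in particular} clause.} For the induced map I would argue by comparison of congruences rather than by reapplying the universal property. Via $\varphi\colon M\rto N$ each $M_{i}$ is an $M\mina$binoid with structure homomorphism $\varphi_{i}\varphi$, and both $\bigwedge_{M}M_{i}$ and $\bigwedge_{N}M_{i}$ are quotients of the \emph{same} plain smash product $\bigwedge_{i\in I}M_{i}$, by the congruences $\sim_{\wedge_{M}}$ and $\sim_{\wedge_{N}}$ respectively. Now $\sim_{\wedge_{M}}$ is generated by the shifts along $\varphi_{i}\varphi(a)$ for $a\in M$, and since $\varphi(a)\in N$ every such generating relation is among the generating relations of $\sim_{\wedge_{N}}$ (take the element $\varphi(a)\in N$). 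Hence $\sim_{\wedge_{M}}\,\le\,\sim_{\wedge_{N}}$, and Lemma \ref{LemIndCong} yields a well-defined binoid \emph{epimorphism} $\bigwedge_{M}M_{i}\rto\bigwedge_{N}M_{i}$, $\wedge_{M}x_{i}\mto\wedge_{N}x_{i}$; surjectivity is immediate since every $\wedge_{N}x_{i}$ is hit. It respects the structure homomorphisms because structure elements map to structure elements, so it is an $M\mina$binoid homomorphism, as claimed.

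\textbf{Main obstacle.} None of the steps is deep; the only place demanding care is the bookkeeping that $\Psi$ is genuinely an $N\mina$multi map, i.e.\ pinning down that the identity $\psi_{i}\varphi_{i}=\varphi_{L}$ (the defining property of an $N\mina$binoid homomorphism) is precisely what converts a shift in the $i$th slot into the global structure shift $\varphi_{L}(a)$. For the second clause the single point to get right is the inclusion of generating relations $\sim_{\wedge_{M}}\,\le\,\sim_{\wedge_{N}}$, which rests on $\varphi(M)\subseteq N$; everything else then follows mechanically from Lemma \ref{LemIndCong}.
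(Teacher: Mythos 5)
Your proof is correct and follows essentially the same route as the paper: the first assertion is obtained by defining the map on the product, verifying it is an $N\mina$multi binoid homomorphism via $\psi_{i}\varphi_{i}=\varphi_{L}$ and the commutativity of $L$, and then invoking Corollary \ref{CorUnivPropSmashN}. For the supplement the paper likewise reduces to the inclusion of congruences $\sim_{\wedge_{_{\!M}}}\,\le\,\sim_{\wedge_{_{\!N}}}$ together with Lemma \ref{LemIndCong}, so your direct comparison of generating relations is the same argument, merely stated without the detour through viewing the map as a special case of the first part.
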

\begin {proof}
The map $\tilde{\psi}:\prod_{i\in I}M_{i}\rto L$ with $(x_{i})_{i\in I}\mto\sum_{i\in I}\psi_{i}(x_{i})$ is an $N\mina$binoid homomorphism because all $\psi_{i}$, $i\in I$, are so and $L$ is commutative. If the structure homomorphisms are given by $\varphi:N\rto L$ and $\varphi_{i}:N\rto M_{i}$, $i\in I$, then $\psi_{k}\varphi_{k}(a)=\varphi(a)$ for all $a\in N$ and $k\in I$, which yields
\begin {align*}
\tilde{\psi}(\ldots,x_{k-1},\varphi_{k}(a)+x_{k}, x_{k+1},\ldots)&=\psi_{k}(\varphi_{k}(a)+x_{k})+\sum_{i\not=k}\psi_{i}(x_{i})\\
&=\varphi(a)+\sum_{i\in I}\psi_{i}(x_{i})\\
&=\varphi(a)+\tilde{\psi}((x_{i})_{i\in I})\pkt
\end {align*}
Hence, $\tilde{\psi}$ is an $N\mina$multi binoid homomorphism so that we can apply Corollary \ref{CorUnivPropSmashN} to obtain the unique $N\mina$binoid homomorphism $\psi:\bigwedge_{N}M_{i}\rto L$ with $\psi(\wedge_{N}x_{i})=\tilde{\psi}((x_{i})_{i\in I})=\sum_{i\in I}\psi_{i}(x_{i})$.

The supplement is just a special case of the first part with $(M_{i})_{i\in I}$ considered as a family of $M\mina$binoids via $\tilde{\varphi}_{i}:=\varphi_{i}\varphi:M\rto M_{i}$ for all $i\in I$, $L=\bigwedge_{N}M_{i}$, and $\psi_{i}$ the natural embeddings $M_{i}\embto\bigwedge_{N}M_{i}$ such that the statement follows from  Lemma \ref{LemIndCong} since $\sim_{\wedge_{_{\!M}}}\le\,\sim_{\wedge_{_{\!N}}}$.
\end {proof}

\begin {Example}\label{ExpNSmashNMap}
Consider again the binoid $\N^{\infty}\wedge_{\infty}\N^{\infty}$ discussed in Example \ref{ExpNsmashNcancellative} and the binoid homomorphisms $\psi_{i}:\N^{\infty}\rto\N^{\infty}$ with $1\mto i/\gcd(k,l)$, $i\in\{k,l\}$. In this situation, there is a commutative diagram
$$\xymatrix{
\N^{\infty}\ar[r]^{\varphi_{k}}\ar[d]_{\varphi_{l}}&\N^{\infty}\ar[d]^{\iota_{\text{r}}}\ar@/^/[ddr]^{\psi_{k}}\\
\N^{\infty}\ar[r]^{\!\!\!\!\!\!\!\!\!\!\!\!\!\iota_{\text{l}}}\ar@/_/[rrd]_{\psi_{l}}&\N^{\infty}\wedge_{\N^{\infty}}\N^{\infty}\ar[dr]^{\psi}\\
&&\N^{\infty}}$$
with $\psi(n\wedge_{\N^{\infty}}m)=\frac{nl+mk}{\gcd(k,l)}$.
\end {Example}

\begin {Proposition} \label{PropLSpecNSmash}
Let $(M_{i})_{i\in I}$ be a finite family of $N\mina$binoids with structure homomorphisms $(\varphi_{i})_{i\in I}$. If $L$ is a commutative binoid, then
$$L\minspec\bigwedge_{i\in I} \!\!{}_{_{N}}M_{i}\,\,\cong\,\,\Big\{(\psi_{i})_{i\in I}\in\prod_{i\in I}L\minspec M_{i}\,\,\Big|\,\,\psi_{i}\varphi_{i}=\psi_{j}\varphi_{j}, i,j\in I\Big\}$$
as semigroups
\end {Proposition}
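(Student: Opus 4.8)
The plan is to mimic the proof of Proposition \ref{PropNspecSmash} (the case $N=\trivial$), exploiting that $\bigwedge_{N}M_i$ is the coproduct in $\Bsf_N$. First I would introduce the candidate map
$$\Phi:L\minspec\bigwedge_{N}M_i\Rto\prod_{i\in I}L\minspec M_i,\quad\psi\lto(\psi\iota_i)_{i\in I},$$
where $\iota_k:M_k\embto\bigwedge_{N}M_i$ are the canonical inclusions. This is a semigroup homomorphism with respect to the operation of Remark \ref{RemBHom}: since $+$ on $\hom$ is defined pointwise, one has $(\psi+\psi')\iota_i=\psi\iota_i+\psi'\iota_i$, and the target carries the componentwise structure, on which the compatible tuples form a subsemigroup.

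The crucial observation is that $\Phi$ actually lands in the compatible subset. Applying the generating relation of $\sim_{\wedge_N}$ to the tuple all of whose entries are $0$ yields $\iota_i(\varphi_i(a))=\iota_j(\varphi_j(a))$ in $\bigwedge_{N}M_i$ for all $a\in N$ and $i,j\in I$, because the $N$-action on $M_i$ is $a+x=\varphi_i(a)+x$. Composing with $\psi$ gives $\psi_i\varphi_i=\psi_j\varphi_j$, where $\psi_i:=\psi\iota_i$, which is precisely the stated compatibility condition. Injectivity of $\Phi$ is then immediate: exactly as in Proposition \ref{PropNspecSmash}, every element of $\bigwedge_{N}M_i$ equals $\wedge_{N}x_i=\sum_{i\in I}\iota_i(x_i)$, so $\psi(\wedge_{N}x_i)=\sum_{i\in I}\psi_i(x_i)$ is already determined by the family $(\psi_i)_{i\in I}$.

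For surjectivity I would start from a compatible family $(\psi_i)_{i\in I}$ and turn $L$ into a commutative $N$-binoid via the structure homomorphism $\lambda:=\psi_i\varphi_i$, which is well-defined independently of $i$ precisely by the compatibility condition, is a binoid homomorphism as a composite, and is central because $L$ is commutative. With respect to $\lambda$, each $\psi_i:M_i\rto L$ becomes an $N$-binoid homomorphism, as $\psi_i\varphi_i=\lambda$. Now the coproduct property of the smash product over $N$, Corollary \ref{CorCoProdSmashN}, furnishes a unique $N$-binoid homomorphism $\psi:\bigwedge_{N}M_i\rto L$ with $\psi\iota_i=\psi_i$; forgetting the $N$-structure, $\psi\in L\minspec\bigwedge_{N}M_i$ and $\Phi(\psi)=(\psi_i)_{i\in I}$.

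The only genuine subtlety, and the main obstacle, is recognizing that a compatible family of $L$-points is exactly the data needed to endow $L$ with an $N$-binoid structure making every $\psi_i$ an $N$-binoid homomorphism; once this is seen, the universal property of Corollary \ref{CorCoProdSmashN} does all the work. I note that surjectivity could alternatively be verified by hand, defining $\psi(\wedge_{N}x_i):=\sum_{i\in I}\psi_i(x_i)$ and checking directly that the compatibility $\psi_i\varphi_i=\psi_j\varphi_j$ together with the commutativity of $L$ makes this respect $\sim_{\wedge_N}$, but routing through the coproduct is cleaner and reuses the machinery already established.
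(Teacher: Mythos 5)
Your proof is correct and follows essentially the same route as the paper, whose entire proof reads ``This is just a restatement of Corollary \ref{CorCoProdSmashN}''; you have simply unpacked what that restatement requires, in particular the key observation that a compatible family $(\psi_{i})_{i\in I}$ endows $L$ with the $N\mina$binoid structure $\lambda=\psi_{i}\varphi_{i}$ needed to invoke the coproduct property. The extra details you supply (the relation $\iota_{i}\varphi_{i}=\iota_{j}\varphi_{j}$ forcing compatibility, and injectivity from $\wedge_{N}x_{i}=\sum_{i}\iota_{i}(x_{i})$) are exactly the right ones.
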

\begin {proof}
This is just a restatement of Corollary \ref{CorCoProdSmashN}.

\end {proof}

\begin {Corollary} \label{CorSmashNHoms}
Let $M_{i}$ and $L_{i}$ be $N\mina$binoids, $i\in I$. Every family $\psi_{i}:M_{i}\rto L_{i}$, $i\in I$, of $N\mina$binoid homomorphisms gives rise to a unique $N\mina$binoid homomorphism
$$\psi:=\wedge_{N}\psi_{i}:\bigwedge_{i\in I} \!\!{}_{_{N}}M_{i}\Rto \bigwedge_{i\in I} \!\!{}_{_{N}}L_{i}\quad\text{with}\quad \wedge_{N} a_{i}\lto\wedge_{N}\psi_{i}(a_{i})\pkt$$ 
\end {Corollary}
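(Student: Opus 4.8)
The plan is to avoid touching the congruence $\sim_{\wedge_{_{\!N}}}$ on the source directly and instead reduce everything to the universal property of the smash product over $N$ proved in Corollary \ref{CorUnivPropSmashN}; note that this is the right tool here rather than the coproduct statement of Corollary \ref{CorCoProdSmashN}, since the latter requires the target to be commutative, whereas $\bigwedge_N L_i$ need not be. First I would assemble the map to be factored. Writing $\pi_L:\prod_{i\in I}L_i\rto\bigwedge_{i\in I} \!\!{}_{_{N}}L_i$ for the canonical projection, I set
$$\Psi:=\pi_L\circ\prod_{i\in I}\psi_i:\prod_{i\in I}M_i\Rto\bigwedge_{i\in I} \!\!{}_{_{N}}L_i\komma\qquad (x_i)_{i\in I}\lto\wedge_{N}\psi_i(x_i)\komma$$
where $\prod_{i\in I}\psi_i$ is the componentwise map. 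Since each $\psi_i$ is a binoid homomorphism, $\prod_{i\in I}\psi_i$ is one as well, and $\pi_L$ is a canonical projection of a quotient by a congruence, hence a binoid homomorphism by Proposition \ref{PropHomCong}; therefore $\Psi$ is a binoid homomorphism.

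Second, I would verify that $\Psi$ is in fact an $N\mina$multi binoid homomorphism. Let $\varphi_i:N\rto M_i$ and $\varphi_i':N\rto L_i$ denote the structure homomorphisms, and $\varphi'$ that of $\bigwedge_N L_i$. The hypothesis that each $\psi_i$ is an $N\mina$binoid homomorphism gives $\psi_i\varphi_i=\varphi_i'$, so that for $a\in N$ and $x_k\in M_k$,
$$\psi_i(\varphi_i(a)+x_i)=\psi_i\varphi_i(a)+\psi_i(x_i)=\varphi_i'(a)+\psi_i(x_i)\pkt$$
Substituting this into the $i$th slot and then using the defining relation $\sim_{\wedge_{_{\!N}}}$ to pull $\varphi_i'(a)$ out of the $i$th component and recognise it as the action of $a$ on the whole smash product (this is exactly the slot-shifting used in Lemma \ref{LemSmaschPointedSets}), one obtains
$$\Psi(\ldots,\varphi_i(a)+x_i,\ldots)=\wedge_N(\ldots,\varphi_i'(a)+\psi_i(x_i),\ldots)=\varphi'(a)+\Psi((x_i)_{i\in I})\komma$$
which is precisely the $N\mina$multi condition. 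This is the only place where the congruence on the target is used, and it is the sole genuinely computational step.

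Finally, Corollary \ref{CorUnivPropSmashN} applies to the $N\mina$multi binoid homomorphism $\Psi$ and produces a unique $N\mina$binoid homomorphism $\psi:\bigwedge_N M_i\rto\bigwedge_N L_i$ with $\psi\pi=\Psi$, that is, $\psi(\wedge_N a_i)=\wedge_N\psi_i(a_i)$; uniqueness is immediate from the surjectivity of the canonical projection $\pi:\prod_{i\in I}M_i\rto\bigwedge_N M_i$ supplied by that corollary. I expect the main obstacle, such as it is, to lie entirely in the second step: one must keep careful track of how the structure homomorphisms propagate through the $\psi_i$ and confirm that $\varphi_i'(a)$ may be moved out of the $i$th component so as to be realised as the $N\mina$action $\varphi'(a)$ on $\bigwedge_N L_i$; once this compatibility is in hand, the rest is formal.
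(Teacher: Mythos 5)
Your proof is correct, but it takes a slightly different route from the paper's. The paper disposes of this in one line: it applies Corollary \ref{CorCoProdSmashN} to the composites $M_{i}\stackrel{\psi_{i}}{\rto}L_{i}\stackrel{\iota_{i}}{\rto}\bigwedge_{N}L_{i}$, i.e.\ it invokes the coproduct property of the source smash product with target $L=\bigwedge_{N}L_{i}$. You instead descend one level and work directly with Corollary \ref{CorUnivPropSmashN}: you build $\Psi=\pi_{L}\circ\prod_{i}\psi_{i}$ on the product, check by hand that it is an $N\mina$multi binoid homomorphism (the computation $\psi_{i}(\varphi_{i}(a)+x_{i})=\varphi_{i}'(a)+\psi_{i}(x_{i})$ followed by the slot-shifting in $\bigwedge_{N}L_{i}$ is exactly right, and is the same verification that the paper buries inside the proof of Corollary \ref{CorCoProdSmashN}), and then factor through the canonical projection. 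Since Corollary \ref{CorCoProdSmashN} is itself deduced from Corollary \ref{CorUnivPropSmashN}, the two arguments are close cousins; what your version buys is precisely the point you flag at the outset: Corollary \ref{CorCoProdSmashN} is stated only for a \emph{commutative} target, so the paper's one-liner tacitly presupposes that $\bigwedge_{N}L_{i}$ (hence all the $L_{i}$) is commutative, whereas your argument needs no such hypothesis. In the commutative setting in which the paper mostly operates this makes no practical difference, but your route is the more economical one in terms of hypotheses, at the cost of redoing a computation the paper has already packaged.
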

\begin {proof}
Apply Corollary \ref{CorCoProdSmashN} to the $N\mina$binoid homomorphisms $M_{i}\stackrel{\psi_{i}}{\rto}L_{i}\stackrel{\!\!\iota_{i}}{\rto}\bigwedge_{N}L_{i}$, $i\in I$.
\end {proof}

\bigskip

\section {Direct and projective limits} \label{SecLimits}
\markright{\ref{SecLimits} Direct and projective limits}

In this section, the existence of the limit and colimit in the category of commutative $N\mina$binoids will be proved. These can be described explicitly, but due to a lack of theory so far interesting examples for so-called direct limits (limits) and projective limits (colimits) will be discussed in subsequent chapters. Furthermore, we introduce the strong projective limit for which we also give an example.

\begin {Convention}
In this section, arbitrary binoids are assumed to be \emph{commutative}.
\end {Convention}

\begin {Definition}
Let $(M_{i})_{i\in I}$ be a family of $N\mina$binoids and $(I,\ge)$ a \gesperrt{directed set}\index{directed!-- set}; that is, $I$ is partially ordered with respect to $\ge$ and for all $i,j\in I$ there is a $k\in I$ with $k\ge i,j$. Assume that for every pair $i,j\in I$ with $i\ge j$, there is an $N\mina$binoid homomorphism $\varphi_{ij}:M_{j}\rto M_{i}$ such that

\begin {ListeTheorem}
\item $\varphi_{ii}=\id_{M_{i}}$ for all $i\in I$.
\item $\varphi_{kj}=\varphi_{ki}\varphi_{ij}$ for all $k\ge i\ge j$.
\end {ListeTheorem}

Then $(M_{i},(\varphi_{ij})_{i\ge j})_{i,j\in I}$ is called a \gesperrt{directed system} \index{system!directed --}\index{directed!-- system}of $N\mina$binoids.
\end {Definition}

\begin {Lemma}\label{LemDirectLimit}
If $(M_{i},(\varphi_{ij})_{i\ge j})_{i,j\in I}$ is a directed system of $N\mina$binoids, then
$$D_{I}\,:=\,\Big(\biguplus_{i\in I}M_{i}\Big)\Big/\!\sim\komma$$
where $\sim$ denotes the equivalence relation on the disjoint union given by
$$(a;i)\,\sim\,(b;j)\quad:\Leftrightarrow\quad\varphi_{ki}(a)=\varphi_{kj}(b)\text{ for some }k\ge i,j\komma$$ 
is an $N\mina$binoid with respect to the addition defined by
$$[(a;i)]+[(b;j)]\,:=\,[(\varphi_{ki}(a)+\varphi_{kj}(b);k)]\komma$$
for some $k\ge i,j$. 
Moreover, there is a canonical $N\mina$binoid homomorphism $\varphi_{i}:M_{i}\rto D_{I}$, $ a\mto[(a;i)]$, for every $i\in I$ such that $\varphi_{i}=\varphi_{j}\varphi_{ji}$ for all $j\ge i$; in other words, the diagram
$$\xymatrix{
M_{i}\ar[dd]_{\varphi_{ji}}\ar[dr]^{\varphi_{i}}&\\
&D_{I}\\
M_{j}\ar[ur]_{\varphi_{j}}&}$$
commutes.
\end {Lemma}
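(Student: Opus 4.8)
The plan is to verify each claim in Lemma \ref{LemDirectLimit} in turn: that $\sim$ is an equivalence relation, that the addition is well-defined, that $D_{I}$ is an $N\mina$binoid, and finally that the maps $\varphi_{i}$ are $N\mina$binoid homomorphisms making the triangle commute. First I would check that $\sim$ is an equivalence relation. Reflexivity and symmetry are immediate from the definition; for transitivity, suppose $(a;i)\sim(b;j)$ and $(b;j)\sim(c;l)$, so that $\varphi_{ki}(a)=\varphi_{kj}(b)$ for some $k\ge i,j$ and $\varphi_{mj}(b)=\varphi_{ml}(c)$ for some $m\ge j,l$. Since $I$ is directed, I would pick $r\ge k,m$ and apply $\varphi_{rk}$ and $\varphi_{rm}$ respectively, using the cocycle condition $\varphi_{rk}\varphi_{ki}=\varphi_{ri}$ to push both equalities up to index $r$, obtaining $\varphi_{ri}(a)=\varphi_{rj}(b)=\varphi_{rl}(c)$, hence $(a;i)\sim(c;l)$.

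Next I would show the addition is well-defined, which has two parts: independence of the auxiliary index $k$, and independence of the representatives. For the index, if $k,k'\ge i,j$, choose $r\ge k,k'$ and apply the cocycle condition to see that $\varphi_{ki}(a)+\varphi_{kj}(b)$ and $\varphi_{k'i}(a)+\varphi_{k'j}(b)$ become equal after pushing up to $r$, so they define the same class. For representatives, if $(a;i)\sim(a';i')$ and $(b;j)\sim(b';j')$, I would use directedness to find a common upper bound of all four indices and the chosen auxiliary indices, and verify using the cocycle conditions and the fact that each $\varphi$ is an $N\mina$binoid homomorphism (hence additive) that the two sums agree there. Associativity, commutativity, the identity element $[(0;i)]$, and the absorbing element $[(\infty;i)]$ then all follow by lifting the relevant finite collection of elements to a common index and using that each $M_{i}$ is an $N\mina$binoid together with additivity of the transition maps; the $N\mina$binoid structure comes from the structure homomorphism $N\rto M_{i}\rto D_{I}$ (which is independent of $i$ by the commutativity of the triangle, proved below). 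Finally, each $\varphi_{i}:a\mto[(a;i)]$ is additive, sends $0$ and $\infty$ correctly, and commutes with the $N\mina$action, and the relation $\varphi_{j}\varphi_{ji}=\varphi_{i}$ holds because $(\varphi_{ji}(a);j)\sim(a;i)$ via the witness $\varphi_{jj}\varphi_{ji}(a)=\varphi_{ji}(a)=\varphi_{ji}(a)$ at index $j$.

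The main obstacle, and the step deserving the most care, is the well-definedness of the addition under change of representatives, since this is where all the hypotheses (directedness, the two cocycle axioms, and additivity of the $\varphi_{ij}$) must be combined simultaneously and where a careless choice of common upper bound can obscure the argument. Everything else is a routine transcription of the classical direct-limit construction for modules or monoids into the binoid setting, so I would treat those parts briefly and concentrate the written verification on this compatibility check. Since the lemma is stated with the proof replaced by \textbf{\emph{the observation that this is a standard colimit computation}}, I expect the author's proof to simply assert that these verifications are straightforward; in a fuller account one would write out the transitivity and well-definedness diagrams explicitly as above.
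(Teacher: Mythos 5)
Your proposal is correct and follows essentially the same route as the paper: the paper's proof likewise spells out transitivity of $\sim$ via a common upper bound and the cocycle condition, verifies that the sum class is independent of the auxiliary index $k$ by pushing up to some $s\ge k,l$, observes that the identity and absorbing elements are glued together, and dismisses the supplement as clear. If anything you are slightly more careful than the paper, which checks well-definedness only with respect to the choice of $k$ and leaves independence of the representatives (the point you rightly flag as the one deserving the most care) implicit.
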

\begin {proof}
Only the transivity of $\sim$ is not trivial. If $(a;i)\,\sim\,(b;j)$ and $(b;j)\,\sim\,(c;r)$, there are $k,l\in I$ with $k\ge i,j$ and $l\ge j,r$ such that $\varphi_{ki}(a)=\varphi_{kj}(b)$ and $\varphi_{lj}(b)=\varphi_{lr}(c)$.
Since $I$ is a directed set, we have an $s\in I$ with $s\ge k,l$, which in particular satisfies $s\ge i,j,r$. So we obtain 
$$\varphi_{si}(a)=\varphi_{sk}(\varphi_{ki}(a))=\varphi_{sk}(\varphi_{kj}(b))=\varphi_{sj}(b)=\varphi_{sl}(\varphi_{lj}(b))=\varphi_{sl}(\varphi_{lr}(c))=\varphi_{sr}(c)\pkt$$
Hence, $(a;i)\,\sim\,(c;r)$. To show that the addition on $D_{I}$ is well-defined, we have to verify that 
$$(\varphi_{ki}(a)+\varphi_{kj}(b);k)\,\sim\,(\varphi_{li}(a)+\varphi_{lj}(b);l)$$
whenever $k\ge i,j$ and $l\ge i,j$. Again, we have an $s\in I$ with $s\ge k,l$, which in particular satisfies $s\ge i,j$. Therefore,
\begin {align*}
\varphi_{sk}(\varphi_{ki}(a)+\varphi_{kj}(b))&=\varphi_{sk}(\varphi_{ki}(a))+\varphi_{sk}(\varphi_{kj}(b))\\
&=\varphi_{si}(a)+\varphi_{sj}(b)\\
&=\varphi_{sl}(\varphi_{li}(a))+\varphi_{sl}(\varphi_{lj}(b))\\
&=\varphi_{sl}(\varphi_{li}(a)+\varphi_{lj}(b))\pkt
\end {align*}
Since all $\varphi_{ij}$, $i\ge j$, are $N\mina$binoid homomorphisms, the identity elements $(0;i)$ of the $M_{i}$ are glued together under $\sim$ and so are their absorbing elements $(\infty;i)$, $i\in I$. Their equivalence classes $[(\infty;i)]$ and $[(0;i)]$ serve as an absorbing and as an identity element of $D_{I}$, respectively. The supplement is clear.
\end {proof}

The $N\mina$binoid $D_{I}$ together with the $N\mina$binoid homomorphisms $\varphi_{i}:M_{i}\rto D_{I}$, $i\in I$, has the following universal property.

\begin {Lemma}\label{LemUnivPropColimit}
Let $(M_{i},(\varphi_{ij})_{i\ge j})_{i,j\in I}$ be a directed system of $N\mina$binoids and $(D_{I}, (\varphi_{i})_{i\in I})$ as in Lemma \ref{LemDirectLimit}. Given another $N\mina$binoid $M^{\prime}$ and a family $\psi_{i}:M_{i}\rto M^{\prime}$ of $N\mina$binoid homomorphisms such that $\psi_{i}=\psi_{j}\varphi_{ji}$ for all $j\ge i$, there exists a unique $N\mina$binoid homomorphism $\psi:D_{I}\rto M^{\prime}$ such that the diagram
$$\xymatrix{
M_{i}\ar[dd]_{\varphi_{ji}}\ar[dr]_{\varphi_{i}}\ar@/^/[drr]^{\psi_{i}}&&\\
&D_{I}\ar[r]^{\!\!\!\psi}&M^{\prime}\\
M_{j}\ar[ur]^{\varphi_{j}}\ar@/_/[urr]_{\psi_{j}}&&}$$
commutes for all $j\ge i$.
\end {Lemma}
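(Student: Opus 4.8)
The plan is to define $\psi$ on representatives and then show that the compatibility hypothesis forces all the required properties. Concretely, I would set
$$\psi\colon D_{I}\Rto M^{\prime}\komma\quad [(a;i)]\lto\psi_{i}(a)\pkt$$
The first and essential task is \emph{well-definedness}, which is precisely where the assumption $\psi_{i}=\psi_{j}\varphi_{ji}$ (for $j\ge i$) does all the work. If $(a;i)\sim(b;j)$, then by the definition of $\sim$ in Lemma \ref{LemDirectLimit} there is a $k\ge i,j$ with $\varphi_{ki}(a)=\varphi_{kj}(b)$. Applying $\psi_{k}$ and using $\psi_{i}=\psi_{k}\varphi_{ki}$ together with $\psi_{j}=\psi_{k}\varphi_{kj}$ yields $\psi_{i}(a)=\psi_{k}(\varphi_{ki}(a))=\psi_{k}(\varphi_{kj}(b))=\psi_{j}(b)$, so the value of $\psi$ is independent of the chosen representative.

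Next I would verify that $\psi$ is an $N\mina$binoid homomorphism. For additivity, given two classes I compute their sum through a common index $k\ge i,j$ as prescribed in Lemma \ref{LemDirectLimit}, and then use that $\psi_{k}$ is a binoid homomorphism along with the same relations $\psi_{i}=\psi_{k}\varphi_{ki}$, $\psi_{j}=\psi_{k}\varphi_{kj}$:
\begin{align*}
\psi([(a;i)]+[(b;j)])&=\psi([(\varphi_{ki}(a)+\varphi_{kj}(b);k)])\\
&=\psi_{k}(\varphi_{ki}(a))+\psi_{k}(\varphi_{kj}(b))\\
&=\psi_{i}(a)+\psi_{j}(b)\\
&=\psi([(a;i)])+\psi([(b;j)])\pkt
\end{align*}
The identity and absorbing elements are preserved, since $\psi([(0;i)])=\psi_{i}(0)=0$ and $\psi([(\infty;i)])=\psi_{i}(\infty)=\infty$. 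Compatibility with the $N\mina$binoid structure holds because the structure homomorphism of $D_{I}$ factors as $\varphi_{i}$ after the structure homomorphism of $M_{i}$, and each $\psi_{i}$ is an $N\mina$binoid homomorphism; hence $\psi$ composed with the structure homomorphism of $D_{I}$ equals that of $M^{\prime}$.

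The commutativity relation $\psi\varphi_{i}=\psi_{i}$ is then immediate from the definition, as $\psi(\varphi_{i}(a))=\psi([(a;i)])=\psi_{i}(a)$; combined with $\varphi_{i}=\varphi_{j}\varphi_{ji}$ this gives the entire diagram. Finally, for \emph{uniqueness} I would observe that every element of $D_{I}$ has the form $[(a;i)]=\varphi_{i}(a)$ for some $i\in I$ and $a\in M_{i}$, so the maps $\varphi_{i}$ are jointly surjective; any $N\mina$binoid homomorphism $\psi^{\prime}$ with $\psi^{\prime}\varphi_{i}=\psi_{i}$ must then satisfy $\psi^{\prime}([(a;i)])=\psi_{i}(a)=\psi([(a;i)])$, forcing $\psi^{\prime}=\psi$. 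I do not expect a genuine obstacle here: the only point demanding care is threading the relations $\psi_{i}=\psi_{k}\varphi_{ki}$ through the common-index choices, and this is exactly the step that simultaneously underlies both the well-definedness and the additivity of $\psi$.
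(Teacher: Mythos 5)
Your proposal is correct and follows exactly the route the paper takes: the paper defines the same map $\psi([(a;k)]):=\psi_{k}(a)$ and simply asserts that it is "immediately checked" to be the unique $N\mina$binoid homomorphism, whereas you carry out those routine verifications (well-definedness via a common index $k\ge i,j$, additivity, preservation of $0$ and $\infty$, compatibility with the structure homomorphisms, and uniqueness from the joint surjectivity of the $\varphi_{i}$) explicitly and correctly.
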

\begin {proof}
It is immediately checked that the map $\psi:D_{I}\rto M^{\prime}$ with $\psi([(a;k)]):=\psi_{k}(a)$ is the unique $N\mina$binoid homomorphism. 
\end {proof}

This universal property shows that $D_{I}$ is the \gesperrt{colimit} \index{colimit}in the category of commutative $N\mina$binoids, which one also calls the \gesperrt{direct} \index{direct limit}or \gesperrt{inductive limit} \index{inductive limit}\index{limit!direct/inductive --}and denotes it by\nomenclature[Limes]{$\varinjlim M_{i}$}{direct/inductive limit} $\varinjlim M_{i}$,
cf.\ \cite[Chapter 2.6]{Borceux} or \cite[Chapter 5.6]{Awodey}.

\begin {Remark}
Though differently defined our construction of $\varinjlim M_{i}$ coincides (of course) in case $N=\trivial$ with the one given in \cite[Proposition 2.1]{ChuLorscheidSanthanam}. Note, for instance, that $(a;i)\,\sim\,(\varphi_{ki}(a);k)$ for all $k\ge i$ because $\varphi_{li}(a)=\varphi_{lk}(\varphi_{ki}(a))$ whenever $l\ge k$.
\end {Remark}

An example of a directed system of $N\mina$binoids and its direct limit is given in Example \ref{ExpDirectLimitLocalizations}.

\begin {Definition}
Let $(M_{i})_{i\in I}$ be a family of $N\mina$binoids and $(I,\ge)$ a partially ordered set with respect to $\ge$. Assume that for every pair $i,j\in I$ with $i\ge j$, there is an $N\mina$binoid homomorphisms $\varphi_{ji}:M_{i}\rto M_{j}$ such that

\begin {ListeTheorem}
\item $\varphi_{ii}=\id_{M_{i}}$ for all $i\in I$.
\item $\varphi_{jk}=\varphi_{ji}\varphi_{ik}$ for all $k\ge i\ge j$.
\end {ListeTheorem}

Then $(M_{i},(\varphi_{ij})_{i\ge j})_{i,j\in I}$ is called an \gesperrt{inverse system} \index{system!inverse --}of $N\mina$binoids.
\end {Definition}

\begin {Lemma}\label{LemProjectiveLimit}
If $(M_{i},(\varphi_{ji})_{i\ge j})_{i,j\in I}$ is an inverse system of $N\mina$binoids, then
$$P_{I}\,:=\,\Big\{(a_{i})_{i\in I}\in\prod_{i\in I}M_{i}\,\,\Big|\,\,\varphi_{ji}(a_{i})=a_{j}\text{ for all }i\ge j\Big\}$$
is an $N\mina$subbinoid of $\prod_{i\in I}M_{i}$. Moreover, for every $i\in I$ there is a canonical $N\mina$binoid homomorphism 
$$\varphi_{i}:P_{I}\Rto M_{i}\komma\quad (a_{j})_{j\in I}\lto a_{i}\komma$$
such that $\varphi_{i}=\varphi_{ij}\varphi_{j}$ for all $j\ge i$; in other words, the diagram
$$\xymatrix{
&M_{i}\\
P_{I}\ar[dr]_{\varphi_{j}}\ar[ur]^{\varphi_{i}}&\\
&M_{j}\ar[uu]_{\varphi_{ij}}}$$
commutes.
\end {Lemma}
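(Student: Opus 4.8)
The plan is to exhibit $P_{I}$ as an $N\mina$subbinoid of the ambient product $\prod_{i\in I}M_{i}$, which is itself an $N\mina$binoid by the Corollary following Lemma \ref{LemProductsNsets}, and then to read off the universal maps as restrictions of the canonical projections. First I would fix notation: let $\eta_{i}:N\rto M_{i}$ denote the structure homomorphism of $M_{i}$, so that the product carries the structure homomorphism $a\mto(\eta_{i}(a))_{i\in I}$, and recall that each transition map $\varphi_{ji}:M_{i}\rto M_{j}$ being an $N\mina$binoid homomorphism means precisely $\varphi_{ji}\eta_{i}=\eta_{j}$ for all $i\ge j$.

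Next I would check that $P_{I}$ is closed under all the structure of the product. Since every $\varphi_{ji}$ is a binoid homomorphism it preserves identity and absorbing elements, giving $\varphi_{ji}(0_{i})=0_{j}$ and $\varphi_{ji}(\infty_{i})=\infty_{j}$, so that $(0_{i})_{i\in I},(\infty_{i})_{i\in I}\in P_{I}$. For closure under addition, given $(a_{i})_{i\in I},(b_{i})_{i\in I}\in P_{I}$ and $i\ge j$, the homomorphism property yields $\varphi_{ji}(a_{i}+b_{i})=\varphi_{ji}(a_{i})+\varphi_{ji}(b_{i})=a_{j}+b_{j}$, so the componentwise sum again lies in $P_{I}$. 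For closure under the $N\mina$operation I would compute, for $a\in N$ and $(a_{i})_{i\in I}\in P_{I}$,
$$\varphi_{ji}(\eta_{i}(a)+a_{i})\,=\,\varphi_{ji}(\eta_{i}(a))+\varphi_{ji}(a_{i})\,=\,\eta_{j}(a)+a_{j}\komma$$
which shows $a+(a_{i})_{i\in I}=(\eta_{i}(a)+a_{i})_{i\in I}\in P_{I}$. Hence $P_{I}$ is an $N\mina$subbinoid of $\prod_{i\in I}M_{i}$.

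Finally, the maps $\varphi_{i}:P_{I}\rto M_{i}$ are by construction the restrictions to $P_{I}$ of the canonical projections $\prod_{i\in I}M_{i}\rto M_{i}$, hence $N\mina$binoid homomorphisms. The asserted commutativity $\varphi_{i}=\varphi_{ij}\varphi_{j}$ for $j\ge i$ is then immediate from the defining relation of $P_{I}$: for $(a_{k})_{k\in I}\in P_{I}$ one has $\varphi_{ij}(\varphi_{j}((a_{k})_{k\in I}))=\varphi_{ij}(a_{j})=a_{i}=\varphi_{i}((a_{k})_{k\in I})$.

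I expect there to be no genuine obstacle here: in contrast to the direct limit in Lemma \ref{LemDirectLimit}, where transitivity of the identification and well-definedness of the addition had to be established, the inverse limit inherits its entire binoid and $N\mina$structure from the ambient product, so only closure under the operations requires verification. The single point demanding slight care is the $N\mina$operation, where it is essential to use the $N\mina$binoid homomorphism identity $\varphi_{ji}\eta_{i}=\eta_{j}$ rather than merely the fact that the $\varphi_{ji}$ are binoid homomorphisms.
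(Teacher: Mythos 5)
Your proof is correct and follows the same route as the paper, which simply notes that both assertions are immediate from the fact that the transition maps are $N\mina$binoid homomorphisms; you have merely spelled out the closure checks (identity, absorbing element, addition, $N\mina$operation) and the commutativity computation that the paper leaves implicit. Your remark that the $N\mina$closure requires the identity $\varphi_{ji}\eta_{i}=\eta_{j}$, and not just the binoid homomorphism property, is exactly the right point of care.
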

\begin {proof}
Both assertions follow immediately from the fact that the $\varphi_{ij}$, $j\ge i$, are $N\mina$binoid homomorphisms.
\end {proof}

The binoid $P_{I}$ together with the $N\mina$binoid homomorphisms $\varphi_{i}:P_{I}\rto M_{i}$, $i\in I$, has the following universal property.

\begin {Lemma}
Let $(M_{i},(\varphi_{ji})_{i\ge j})_{i,j\in I}$ be an inverse system of $N\mina$binoids and $(P_{I}, (\varphi_{i})_{i\in I})$ as in Lemma \ref{LemProjectiveLimit}. Given another $N\mina$binoid $M^{\prime}$ and a family $\psi_{i}:M^{\prime}\rto M_{i}$ of $N\mina$binoid homomorphisms such that $\psi_{i}=\varphi_{ij}\psi_{j}$ for all $j\ge i$, there exists a unique binoid homomorphism $\psi:M^{\prime}\rto P_{I}$ such that the diagram
$$\xymatrix{
&&M_{i}\\
M^{\prime}\ar@/_/[drr]_{\,\psi_{j}}\ar@/^/[urr]^{\psi_{i}}\ar[r]^{\psi}&P_{I}\ar[dr]^{\varphi_{j}}\ar[ur]_{\varphi_{i}}&\\
&&M_{j}\ar[uu]_{\varphi_{ji}}}$$
commutes for all $j\ge i$.
\end {Lemma}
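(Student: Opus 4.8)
The statement to prove is the universal property of the projective limit $P_{I}$ in the category of commutative $N\mina$binoids: given an inverse system $(M_{i},(\varphi_{ji})_{i\ge j})$, a target $N\mina$binoid $M'$, and a compatible family $\psi_{i}:M'\rto M_{i}$ (meaning $\psi_{i}=\varphi_{ij}\psi_{j}$ for all $j\ge i$), there exists a unique $N\mina$binoid homomorphism $\psi:M'\rto P_{I}$ with $\varphi_{i}\psi=\psi_{i}$ for all $i$.

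The plan is to construct $\psi$ explicitly by the only formula that can possibly work, then verify it lands in $P_{I}$, that it is an $N\mina$binoid homomorphism, that it makes the triangles commute, and finally that it is unique. First I would define
$$\psi:M'\Rto\prod_{i\in I}M_{i}\komma\quad x\lto(\psi_{i}(x))_{i\in I}\pkt$$
To see that $\psi(x)$ actually lies in the subbinoid $P_{I}$, I check the defining condition of $P_{I}$ from Lemma \ref{LemProjectiveLimit}: for $i\ge j$ one has $\varphi_{ji}(\psi_{i}(x))=\psi_{j}(x)$, which is exactly the compatibility hypothesis $\psi_{j}=\varphi_{ji}\psi_{i}$ (written as $\psi_{i}=\varphi_{ij}\psi_{j}$ in the statement). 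Hence $(\psi_{i}(x))_{i\in I}\in P_{I}$, so $\psi$ corestricts to a map $M'\rto P_{I}$.

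Next I would verify that $\psi$ is an $N\mina$binoid homomorphism. Since $P_{I}$ carries the componentwise operation inherited from the product $\prod_{i\in I}M_{i}$, and each $\psi_{i}$ is a binoid homomorphism sending $0_{M'}\mto 0_{i}$ and $\infty_{M'}\mto\infty_{i}$, the tuple map respects addition, identity, and the absorbing element componentwise; this is the same bookkeeping as in Proposition \ref{PropUEproduct} for the product. Compatibility with the $N\mina$structure follows because each $\psi_{i}$ commutes with the structure homomorphisms, so $\psi$ does too componentwise. The commutativity $\varphi_{i}\psi=\psi_{i}$ is immediate from the definition of the canonical projection $\varphi_{i}:P_{I}\rto M_{i}$, $(a_{j})_{j\in I}\mto a_{i}$, since $\varphi_{i}(\psi(x))=\psi_{i}(x)$.

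For uniqueness, suppose $\psi':M'\rto P_{I}$ is another $N\mina$binoid homomorphism with $\varphi_{i}\psi'=\psi_{i}$ for all $i\in I$. Then for every $x\in M'$ the $i$th component of $\psi'(x)$ equals $\varphi_{i}(\psi'(x))=\psi_{i}(x)$, which is precisely the $i$th component of $\psi(x)$; as this holds for all $i\in I$, we get $\psi'(x)=\psi(x)$, hence $\psi'=\psi$. I do not expect any genuine obstacle here: this is the standard limit argument, and the only point requiring care is confirming that the tuple of the $\psi_{i}$ really satisfies the coherence constraint cutting out $P_{I}$ from the product, which is exactly what the hypothesis on the $\psi_{i}$ supplies. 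As with the other routine universal-property verifications in this section, the author will likely simply write that the map $\psi:M'\rto P_{I}$ with $\psi(x)=(\psi_{i}(x))_{i\in I}$ is the unique such homomorphism.
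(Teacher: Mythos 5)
Your proof is correct and follows exactly the paper's approach: the paper simply states that the map $\psi(a):=(\psi_{i}(a))_{i\in I}$ is "immediately checked" to be the unique $N\mina$binoid homomorphism, and you have spelled out precisely those checks (membership in $P_{I}$ via the compatibility hypothesis, componentwise homomorphism property, and uniqueness from the projections). You even correctly anticipated how tersely the author would write it.
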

\begin {proof}
It is immediately checked that the map $\psi:M^{\prime}\rto P_{I}$ with $\psi(a):=(\psi_{i}(a))_{i\in I}$ is the unique $N\mina$binoid homomorphism.
\end {proof}

This universal property shows that $P_{I}$ is the \gesperrt{limit} \index{limit}in the category of commutative binoids, which one also calls the \gesperrt{projective} \index{projective limit}\index{limit!inverse/projective --}or \gesperrt{inverse limit} \index{inverse!-- limit}and denotes it by\nomenclature[Limes]{$\varprojlim M_{i}$}{projective/inverse limit} $\varprojlim M_{i}$,
cf.\ \cite[Chapter 2.6]{Borceux} or \cite[Chapter 5.4]{Awodey}.

\begin {Example} \label{ExpProjLimTrivial}
Let $I=\{1\kpkt n\}$ and $M_{0}:=\trivial$. Every family $(M_{i})_{i\in I}$ of binoids yields an inverse system of $\trivial\mina$binoids 
$$\big((M_{0},M_{i}),\, (\chi_{M_{i}\okreuz}, \id_{M_{i}}, \id_{\trivial})\big)_{i\in I}$$
with
$$\varprojlim M_{i}\,=\,\Big(\prod_{i=1}^{n}(M_{i})\Uplus\Big)\cup\Big(\prod_{i=1}^{n}M_{i}\okreuz\Big)\pkt$$
If for instance $M_{i}=\N^{\infty}$ for all $i\in I\setminus\{0\}$, then $\varprojlim M_{i}\cong(\N_{\ge1}^{\infty})^{n}\cup\{(0)_{i\in I}\}$, which is not a finitely generated binoid for $n\ge 2$.
\end {Example}

More examples of inverse systems of ($\trivial\mina$) binoids and their projective limits are given in Example \ref {ExpProjLim} and Remark \ref{RemCompletion}.

\begin {Definition}
Let $(M_{i},(\varphi_{ji})_{i\ge j})_{i,j\in I}$ be an inverse system of $N\mina$binoids. An element $(a_{i})_{i\in I}\in\varprojlim M_{i}$ is called \gesperrt{strongly compatible} \index{stronlgy compatible}\index{element!strongly compatible --}if for any two entries $a_{i}$ and $a_{j}$ which are $\not=\infty$ there exists a $k\in I$ with $k\le i,j$ such that $\varphi_{ki}(a_{i})=\varphi_{kj}(a_{j})\not=\infty$. Denote by $\slim M_{i}$ the subbinoid of $\varprojlim M_{i}$ generated by all stronlgy compatible elements. We call $\slim M_{i}$ the \gesperrt{strong projective limit} \index{projective limit!strong --}\index{limit!strong projective --}\index{strong projective limit}of the inverse system $(M_{i},(\varphi_{ji})_{i\ge j})_{i,j\in I}$.\nomenclature[slimes]{$\slim M_{i}$}{strong projective limit}
\end {Definition}

\begin {Example}
The strongly compatible elements of the projective limit considered in Example \ref{ExpProjLimTrivial} above are given by the elements of $\prod_{i=0}^{n}M_{i}\okreuz$ and those elements of $\prod_{i=0}^{n}(M_{i})\Uplus$ that are of the form $(\infty,a_{i})_{i\in I}$ with at most one $a_{i}\not=\infty$. Since the addition of such elements is again an element of this kind, we obtain
$$\slim M_{i}\,\cong\,\Big(\prod_{i\in I}M_{i}\okreuz\Big)\cup\Big\{(a_{i})_{i\in I}\in\prod_{i\in I}(M_{i})\Uplus\,\Big|\, a_{i}\not=\infty\text{ for at most one }i\in I\Big\}\pkt$$
The special case of all $M_{i}$ positive yields
$$\slim M_{i}\,\cong\,\,\,\bigcupbidot_{i\in I}M_{i}\pkt$$
\end {Example}

In Lemma \ref{CorStrongProjLimReduzierung}, we are able to describe $M_{\opred}$ of a finitely generated binoid $M$ by means of the strong projective limit of the inverse system defined by its integral quotients.

\bigskip

\section {Finitely generated commutative binoids} \label {SecFGbinoids}
\markright{\ref{SecFGbinoids} Finitely generated commutative binoids}

Finitely generated binoids were introduced in Section \ref {Sec1DefProp}. There, cf.\ Proposition \ref{PropUniqueMinSyst}, we could show that a finitely generated commutative binoid $M$ always admits a unique minimal generating set, namely $M\Uplus\setminus 2M\Uplus$, if it is positive and cancellative. Moreover, properties of a finitely generated binoid usually only depend on the properties of the generators and the relations among them, which is a finite data by R\'edei's Theorem if $M$ is commutative, cf.\ Corollary \ref{CorRedei}. This makes finitely generated commutative binoids comparatively easy to describe and study. They will provide us with interesting examples especially when we study their associated algebras later. For this purpose, all one- and, to a certain extent, two-generated commutative binoids with their binoid algebras will be classified in this section.

\begin {Convention}
In this section, arbitrary binoids are assumed to be \emph{commutative} and, if not otherwise stated, $K$ denotes a ring.
\end {Convention}

From now on, generator will always mean binoid generator. For instance, $\Z^{\infty}$ is a two-generated binoid (group) with generators $1$ and $-1$, none can be dropped and there is no generating set with less than two elements. In general, $M$ is an $r$-generated \emph{integral} binoid if and only if $M\opkt$ is an $r$-generated monoid. Basics on finitely generated commutative monoids can be found in \cite[Chapter VI]{GrilletCS}, \cite[\S9.3]{CliffordPreston}, and in the monograph \cite{GarciaRosales}.

\medskip

Recall, cf.\ Example \ref {ExGen}, that a finitely generated binoid $M$ with generators $x_{1}\kpkt x_{r}$ is isomorphic to $(\N^{r})^{\infty}/\!\sim_{\varepsilon}$, where 
$$\varepsilon:(\N^{r})^{\infty}\Rto M\quad\text{with}\quad\varepsilon(e_{i})=x_{i}\komma$$
$i\in\{1\kpkt r\}$, is the canonical binoid epimorphism.
Now by R\'edei's Theorem, cf.\ Corollary \ref{CorRedei}, the congruence $\sim_{\varepsilon}$ can be generated by finitely many relations $\Rcal:\varepsilon(a)=\varepsilon(b)$, $a,b\in(\N^{r})^{\infty}$. Thus, $M$ is given by finitely many generators and relations, which we denote by 
$$\free(x_{1}\kpkt x_{r})/(\Rcal_{1}\kpkt \Rcal_{n})\quad\text{or}\quad(\N^{r})^{\infty}/(\Rcal_{1}\kpkt \Rcal_{n})\pkt$$
Of course, what kind of relations occur for a specific binoid depends on the chosen generating set. In general, $M/(\Rcal_{1}\kpkt \Rcal_{n})$ is the binoid that arises from the binoid $M$ by taking the additional relations $\Rcal_{1}\kpkt \Rcal_{n}$ into account.

The binoid algebra of a finitely generated binoid $\free(x_{1}\kpkt x_{r})/(\Rcal_{1}\kpkt \Rcal_{n})$ is $K[X_{1}\kpkt X_{r}]/\Iideal$, where $\Iideal$ is an ideal generated by monomials and/or binomials depending on the relations $\Rcal_{1}\kpkt \Rcal_{n}$. This observation suggests the following definition.

\begin {Definition}\label{DefMonomialBinomial}
With the notation from above, a relation on an $r$-generated binoid $M$ of the form $\Rcal:\varepsilon(x)=\infty$, $x\in\free_{r}$, is called a \gesperrt{monomial relation}\index{relation!monomial --}. A \gesperrt{binomial relation} \index{relation!binomial --}is a relation of the form $\Rcal:\varepsilon(x)=\varepsilon(y)\not=\infty$, $x,y\in\free_{r}$. In addition, we say $\Rcal$ is an \gesperrt{unmixed relation} \index{relation!unmixed --}if it is a monomial relation $\varepsilon(x)=\infty$ with $\#\supp(x)=1$, $x\in\free_{r}$, or a binomial relation $\varepsilon(x)=\varepsilon(y)\not=\infty$ such that $\supp(x)=\supp(y)$ is a singleton, $x,y\in\free_{r}$. Otherwise, $\Rcal$ is a  \gesperrt{mixed relation}.
\end {Definition}

The relation $\varepsilon(x)=\varepsilon(y)=0$ is a binomial relation. A relation like $\varepsilon(x)=\varepsilon(y)=\infty$ will be considered as two monomial relations, namely $\varepsilon(x)=\infty$ and $\varepsilon(y)=\infty$.

\begin {Remark} \label{RemICongMonRel}
A congruence on a finitely generated binoid is an ideal congruence if and only if it is generated by monomial relations.
\end {Remark}

Loosely speaking, a mixed relation is a relation where more than one generator is involved. We sometimes indicate this by writing $\Rcal^{(s)}$ when $s$ is the number of generators that appear.\nomenclature[R]{$\Rcal^{(s)}$}{relation on an $s$-generated binoid} On one-generated binoids there are no mixed relations. Precisely, one has three different kinds of (unmixed) relations which may appear, namely\nomenclature[R]{$\Rcal^{(1)}_{i}$}{, $i=1,2,3$, relations on a one-generated binoid}

\begin {ListeTheorem}
\item [$\Rcal^{(1)}_{1}:$] $nx=0$ for some $n\ge2$,
\item [$\Rcal^{(1)}_{2}:$] $nx=\infty$ for some $n\ge2$,
\item [$\Rcal^{(1)}_{3}:$] $rx=sx\not\in\trivial$ for some $1\le r<s$,
\end {ListeTheorem}

where $x$ denotes the generator. Note that $\Rcal^{(1)}_{1}$ and $\Rcal^{(1)}_{2}$ are mutually exclusive (else $\langle x\rangle$ is the zero binoid and not one-generated).

In this spirit, we say that there are no mixed \gesperrt{relations among} a finite family of elements $y_{i}\in M$, $i\in\{1\kpkt s\}$, if the relations defining the subbinoid generated by this family are unmixed. For another description of this property consider the induced binoid homomorphism, cf.\ Proposition \ref{PropCoproductComBinoid} (with $M_{i}=\langle y_{i}\rangle\embto M$),
$$\bigwedge_{i=1}^{r}\langle y_{i}\rangle\Rto M\komma\quad n_{1}y_{1}\wedge\cdots\wedge n_{r}y_{r}\lto\sum_{i=1}^{r}n_{i}y_{i}\pkt$$

\begin {Lemma} \label{LemNoRelations}
Let $M$ be a finitely generated binoid and $y_{1}\kpkt y_{r}\in M$. The binoid homomorphism
$\varphi:\bigwedge_{i=1}^{r}\langle y_{i}\rangle\rto M$, $\varphi(\wedge_{i=1}^{r}n_{i}y_{i})=\sum_{i=1}^{r}n_{i}y_{i}$, is surjective if and only if $y_{1}\kpkt y_{r}$ generate $M$, and injective if and only if there are no mixed relations among the elements $y_{i}$, $i\in I$. In particular, if $M$ is finitely generated with no mixed relations among its generators $y_{1}\kpkt y_{r}$, then $M\cong\langle y_{1}\rangle\wedge\cdots\wedge\langle y_{r}\rangle$.
\end {Lemma}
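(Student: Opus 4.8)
The plan is to treat the three assertions in turn, reducing the injectivity claim to a comparison of two congruences on $\free_{r}=(\N^{r})^{\infty}$. Throughout, recall that $\varphi$ is precisely the binoid homomorphism furnished by Proposition \ref{PropCoproductComBinoid} applied to the inclusions $\langle y_{i}\rangle\embto M$, so it is well-defined without further checking. For surjectivity I would simply observe that $\im\varphi=\{\sum_{i=1}^{r}n_{i}y_{i}\mid n_{i}\in\N\}\cup\zero$, which, since $M$ is commutative, is exactly the subbinoid $\langle y_{1}\kpkt y_{r}\rangle$. Hence $\varphi$ is surjective if and only if $\langle y_{1}\kpkt y_{r}\rangle=M$, that is, the $y_{i}$ generate $M$.

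For injectivity I would introduce the two canonical epimorphisms out of $\free_{r}$: the map $\varepsilon:\free_{r}\rto M$, $e_{i}\mto y_{i}$, with image $\langle y_{1}\kpkt y_{r}\rangle$, and the map $\pi:\free_{r}\rto\bigwedge_{i=1}^{r}\langle y_{i}\rangle$, $e_{i}\mto y_{i}\widehat{e_{i}}$, which exists by Lemma \ref{LemHomFreeCom} and is surjective because the elements $n(y_{i}\widehat{e_{i}})=\pi(ne_{i})$ exhaust a generating set of the smash product. By construction $\varphi\pi=\varepsilon$, so $\sim_{\pi}\,\le\,\sim_{\varepsilon}$, and under the identifications $\free_{r}/\!\sim_{\pi}\,\cong\bigwedge_{i}\langle y_{i}\rangle$ and $\free_{r}/\!\sim_{\varepsilon}\,\cong\langle y_{1}\kpkt y_{r}\rangle$ the map $\varphi$ becomes the canonical quotient map $\free_{r}/\!\sim_{\pi}\,\rto\free_{r}/\!\sim_{\varepsilon}$ of Lemma \ref{LemIndCong}. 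Such a map is injective if and only if $\sim_{\pi}\,=\,\sim_{\varepsilon}$, so everything reduces to deciding when these two congruences agree.

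It then remains to identify $\sim_{\pi}$ with the congruence generated by the unmixed relations among $y_{1}\kpkt y_{r}$; this is the technical heart of the argument and the step I expect to cost the most care. Using the explicit description of the smash-product congruence (a tuple with an entry equal to some $\infty_{i}$ is identified with $\infty_{\wedge}$, and every other class is a singleton), one checks that $(n_{1}\kpkt n_{r})\sim_{\pi}(m_{1}\kpkt m_{r})$ holds exactly when either some $n_{j}y_{j}=\infty$ and some $m_{k}y_{k}=\infty$, or $n_{i}y_{i}=m_{i}y_{i}$ in $\langle y_{i}\rangle$ for every $i$. Each such identification is a consequence of the one-generator, hence unmixed, relations $ne_{i}\sim me_{i}$ and $ne_{i}\sim\infty$ valid on the individual $y_{i}$, and conversely these generate $\sim_{\pi}$; thus $\sim_{\pi}$ is the congruence generated by all unmixed relations holding among the $y_{i}$.

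By definition, \emph{no mixed relations among} $y_{1}\kpkt y_{r}$ says precisely that $\sim_{\varepsilon}$ is generated by unmixed relations. Since $\sim_{\pi}\,\le\,\sim_{\varepsilon}$ always, and since every unmixed relation lying in $\sim_{\varepsilon}$ already lies in $\sim_{\pi}$, this property is equivalent to $\sim_{\varepsilon}=\sim_{\pi}$. Combined with the previous paragraph this yields injectivity of $\varphi$ if and only if there are no mixed relations among the $y_{i}$. Finally, the ``in particular'' statement is immediate: if the $y_{i}$ generate $M$ and admit no mixed relations, then $\varphi$ is simultaneously surjective and injective, hence a binoid isomorphism $\langle y_{1}\rangle\wedge\cdots\wedge\langle y_{r}\rangle\cong M$.
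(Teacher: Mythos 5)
Your proof is correct and follows essentially the same route as the paper: the surjectivity argument via the factorization $\varepsilon=\varphi\pi$ through $(\N^{r})^{\infty}$ is exactly the paper's (your $\pi$ is its $\psi$), and your identification of $\sim_{\pi}$ with the congruence generated by the unmixed relations among the $y_{i}$ supplies in full detail the injectivity step that the paper dismisses with ``follows from the definition of a mixed relation.'' No gaps.
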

\begin {proof}
The statement concerning the injectivity follows from the definition of a mixed relation. For the surjectivity consider the following composition of binoid homomorphisms
$$\varphi\psi:(\N^{r})^{\infty}\Rto\bigwedge_{i=1}^{r}\langle y_{i}\rangle\Rto M\komma$$
where 
$$\psi:=\wedge_{i=1}^{r}\psi_{i}:(\N^{r})^{\infty}\Rto\bigwedge_{i=1}^{r}\langle y_{i}\rangle$$
with $\psi_{i}:\N^{\infty}\rto\langle y_{i}\rangle$, $1\mto y_{i}$, cf.\ Corollary \ref{CorSmashNHoms} (with $N=\trivial$). Thus, $\varphi\psi=\varepsilon:(\N^{r})^{\infty}\rto M$ with $e_{i}\mto y_{i}$, $i\in\{1\kpkt r\}$, and $\varphi$ is surjective if and only if $\varepsilon$ is surjective. The supplement is clear.
\end {proof}

Since the smash product realizes the tensor product for the binoid algebras, cf.\ Corollary \ref{CorSmash=Tensor}, we have $K[\langle x_{1}\rangle\wedge\cdots\wedge\langle x_{r}\rangle]=K[\langle x_{1}\rangle]\otimes_{K}\cdots\otimes_{K} K[\langle x_{\ell}\rangle]$.

\medskip

In order to describe all finitely generated binoids that admit a generating set with no mixed relations among the generators, it suffices by Lemma \ref{LemNoRelations} to determine all one-generated binoids up to isomorphism, what we want to do now.

\begin {Definition}
Let $M$ be a binoid. An element  $x\in M$ has a \gesperrt{loop} \index{loop}if $nx=mx$ for some $n\not=m$. Otherwise $x$ is \gesperrt{loopfree}\index{loop!--free}\index{element!loopfree --}. In case $x$ has a loop, there is a smallest integer $s\ge 1$ such that $rx=sx$ for unique $0\le r<s$. Then $r<s$ is called the \gesperrt{initial pair} \index{initial pair}and $s-r$ the \gesperrt{length} \index{loop!length of a --}of the loop.
\end {Definition}

If $x\in M$ has a loop with inital pair $r<s$, then $\langle x\rangle\cong\N^{\infty}/(r=s)$.

\begin {Example} \label{ExLoop}
\begin {ListeTheorem}
\item[]
\item Nilpotent and idempotent elements, in particular absorbing and identity elements, have a loop of length $1$. The initial pair of an identity element $0$ is $0<1$ and that of an absorbing element $\infty$ (recall that $0\infty=0$ as this is the empty sum) and of all other idempotent elements $\not=0$ is $1<2$.
\item A non-trivial unit $u$ with $nu=0$ for an $n\ge 2$ that is minimal with respect to this relation has a loop with initial pair $0<n$ and length $n$. Moreover, the binoid group $\free(u)/(nu=0)\cong(\Z/n\Z)^{\infty}$ \emph{is} a loop, cf.\ the picture after Corollary \ref {CorClassificationOnegenerated} below. With the terminology of group theory we may call $u$ an element of \emph{order} $n$.
\item A cancellative nonunit is loopfree.
\end {ListeTheorem}
\end {Example}

Recall that all boolean binoids are torsion-free, cf.\ Lemma \ref{LemTorsion}(2). In particular, $\free(x)/(2x=x)$ ($=\{0,x,\infty\}$) is torsion-free.

\begin {Lemma} \label{LemLoop}
Let $M$ be a binoid and $x\in M$. If $x$ has a loop of length $\ge2$ or a loop of length $1$ with initial pair $r+1>r\ge 2$, then $x$ is a torsion element and, in particular, $M$ is not torsion-free.
\end {Lemma}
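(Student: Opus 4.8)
The plan is to verify the defining property of a torsion element directly: I would produce an integer $n\ge 2$ together with an element $b\in M$, $b\not=x$, satisfying $nx=nb$. Once this is done, the addendum that $M$ is not torsion-free follows immediately after checking $x\not=\infty$, because a torsion element distinct from $\infty$ is exactly what prevents torsion-freeness. Throughout, write the loop relation coming from the minimal $s$ as $rx=sx$ with initial pair $r<s$ and length $d:=s-r\ge 1$; the two hypotheses amount to saying that either $d\ge 2$, or $d=1$ and $r\ge 2$, so in particular the pair $(r,s)$ is neither $(0,1)$ nor $(1,2)$.

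The computational engine is the eventual periodicity of the multiples of $x$: for every $m\ge r$ one has
$$mx\,=\,(m-r)x+rx\,=\,(m-r)x+sx\,=\,(m+d)x\komma$$
and iterating yields $mx=m'x$ whenever $m,m'\ge r$ and $m\equiv m'\pmod d$. With this in hand I would set $b:=2x$ and take $n$ to be any multiple of $d$ with $n\ge\max(r,2)$, which exists since $d\ge 1$. Then $n,2n\ge r$ and $2n\equiv n\equiv 0\pmod d$, so periodicity gives $nx=(2n)x=n(2x)=nb$ with $n\ge 2$, and it only remains to know that $b\not=x$.

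The crux is therefore to rule out $2x=x$, i.e.\ that $x$ is idempotent, and this is precisely where the hypotheses are used. If $x$ were idempotent then by Example \ref{ExLoop}(1) its loop would have initial pair $1<2$ and length $1$, i.e.\ $(r,s)=(1,2)$, which is excluded both by $d\ge 2$ and by $r\ge 2$; the only other short loop $(r,s)=(0,1)$ forces $x=0$ and is likewise excluded. The same bookkeeping also shows $x\not=\infty$, since $\infty$ carries the loop with initial pair $1<2$ (Example \ref{ExLoop}(1)), again incompatible with the hypotheses. I expect this elimination of the two \azl short\azr\ loops to be the only genuine obstacle, the periodicity step being routine. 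Putting the pieces together, $x$ is a torsion element with $x\not=\infty$, whence $M$ is not torsion-free.
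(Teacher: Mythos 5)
Your proof is correct. The computational engine is the same one the paper uses -- the eventual periodicity $mx=(m+d)x$ for $m\ge r$, where $(r,s)$ is the initial pair and $d=s-r$ the length -- but you deploy it differently, and to good effect. The paper follows the case split of the hypothesis and exhibits in each case an ad hoc pair of distinct elements with equal multiples: $k(rx)=k((r+1)x)$ when $d=k\ge 2$, and $2((r-1)x)=2(rx)$ when $d=1$ and $r\ge 2$. Strictly speaking that computation produces torsion elements of the form $rx$ resp.\ $(r-1)x$, which is enough for the conclusion that $M$ is not torsion-free but verifies the torsion property of $x$ \emph{itself} only when $r\le 1$; your uniform choice of witness $b=2x$ together with $nx=(2n)x=n(2x)$ for a sufficiently large multiple $n$ of $d$ proves the stated claim about $x$ in every case, with no case distinction. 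The price is that you must rule out $2x=x$, and you do this correctly: an idempotent element carries the loop $(1,2)$ (or $(0,1)$ if it equals $0$) by Example \ref{ExLoop}(1), and both of these short loops are excluded by the hypothesis; the same observation yields $x\not=\infty$, which is exactly what is needed to pass from ``$x$ is a torsion element'' to ``$M$ is not torsion-free''. So your argument is a legitimate variant and, if anything, slightly sharper than the one in the paper.
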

\begin {proof}
If $r<s$ is the initial pair of the loop of $x$ and $k=s-r\ge 2$ its length, then $nx=nx+kx$ for all $n\ge r$. So $krx=krx+kx=k(r+1)x$ but $rx\not=(r+1)x$ by assumption. Hence, $x$ is a torsion element. In the other case, if $rx=(r+1)x=\infty$, there is nothing to show. So let $rx=(r+1)x\not=\infty$ for $r\ge 2$. We claim that $(r-1)x$ is a torsion element. By assumption, we have inductively $(r+n)x=rx$ for all $n\ge 1$. Hence, $2(r-1)x=rx=2rx$ but $(r-1)x\not=rx$. 
\end {proof}

\begin {Proposition} \label{PropOnegenerated} Let $M$ be a one-generated binoid with generator $x$.
\begin {ListeTheorem}
\item $M$ is integral if and only if it is reduced (i.e.\ if $x$ is not nilpotent).
\item $x$ is nilpotent if and only if $M\cong\N^{\infty}/(n=\infty)$ for some $n\ge 2$.
\item $x$ is a unit if and only if $M\cong(\Z/n\Z)^{\infty}$ for some $n\ge2$.
\item The following statements are equivalent if $x$ is neither nilpotent nor a unit.
\begin {ListeTheorem}
\item [(a)] $x$ is a cancellative element.
\item [(b)] $M$ is infinite.
\item [(c)] $x$ is loopfree.
\item [] If $x\not=2x$, then this is also equivalent to
\item [(d)] $M$ is torsion-free.
\item [(e)] $M\cong\N^{\infty}$.
\end {ListeTheorem}
\end {ListeTheorem}
\end {Proposition}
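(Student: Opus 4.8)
The plan is to prove each of the four parts in turn, using the basic classification machinery for one-generated binoids. Recall that if $x$ has a loop with initial pair $r<s$, then $M=\langle x\rangle\cong\N^{\infty}/(r=s)$, and if $x$ is loopfree then $M\cong\N^{\infty}$; this is the fundamental dichotomy I would invoke repeatedly.

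For (1), I would argue both implications. Since an integral binoid is always reduced (by the lemma stating $\nil(M)\subseteq\nonint(M)$), only the converse needs work. If $M$ is reduced, then $x$ is not nilpotent, so no relation $nx=\infty$ holds for $n\ge1$; hence $mx\not=\infty$ for all finite $m$, and since every element $\not=\infty$ of $M\opkt$ is of the form $mx$, the sum of any two such elements is again finite. This shows $M\opkt$ is closed under addition, i.e.\ $M$ is integral. For (2), if $x$ is nilpotent then there is a smallest $n\ge2$ with $nx=\infty$ (minimality forces $n\ge2$ unless $M$ is the zero binoid), and this single relation determines $M\cong\N^{\infty}/(n=\infty)$; conversely such a binoid obviously has nilpotent generator. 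For (3), if $x$ is a unit then $-x\in\langle x\rangle$, so $-x=nx$ for some $n\ge1$, giving $(n+1)x=0$; taking the minimal such relation yields $M\cong\free(x)/(mx=0)\cong(\Z/m\Z)^{\infty}$ as in Example~\ref{ExLoop}(2). The converse is immediate.

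The substance is in part (4), where $x$ is neither nilpotent nor a unit. I would establish a cycle of implications. For $(a)\Rarrow(b)$: a cancellative nonunit is loopfree (Example~\ref{ExLoop}(3)), and a loopfree generator yields $M\cong\N^{\infty}$, which is infinite. For $(b)\Rarrow(c)$: contrapositively, if $x$ has a loop then $M\cong\N^{\infty}/(r=s)$ is finite. For $(c)\Rarrow(a)$: a loopfree generator that is not nilpotent gives $M\cong\N^{\infty}$, whose nonzero elements are cancellative. This closes the loop $(a)\Leftrightarrow(b)\Leftrightarrow(c)$. Under the extra hypothesis $x\not=2x$ (which excludes the idempotent case, i.e.\ the loop of length $1$ with initial pair $1<2$), I would add $(d)$ and $(e)$. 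The implication $(c)\Rarrow(e)$ is again the loopfree dichotomy, and $(e)\Rarrow(d)$ holds because $\N^{\infty}$ is torsion-free. The remaining implication $(d)\Rarrow(c)$ is the crux and the main obstacle: I must show that torsion-freeness forces loopfreeness. Here I would invoke Lemma~\ref{LemLoop}, which says that a loop of length $\ge2$, or a loop of length $1$ with initial pair $r+1>r\ge2$, produces a torsion element; the only loop type not covered is the length-$1$ loop with initial pair $1<2$, namely $x=2x$, which is precisely excluded by the hypothesis $x\not=2x$. Thus every remaining loop type contradicts torsion-freeness, so $(d)$ implies $x$ is loopfree. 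The delicacy lies in carefully enumerating the loop cases and matching each against Lemma~\ref{LemLoop}, making sure the excluded idempotent case is exactly the one where the equivalences with $(d)$ and $(e)$ would otherwise fail.
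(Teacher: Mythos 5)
Your proof is correct and follows essentially the same route as the paper: parts (1)--(3) are handled by the same elementary observations, and part (4) rests on Example~\ref{ExLoop}(3) for $(a)\Rightarrow(c)$ and Lemma~\ref{LemLoop} for $(d)\Rightarrow(c)$, with the loop/loopfree structure of one-generated binoids supplying the remaining implications and the hypothesis $x\not=2x$ excluding exactly the idempotent loop that Lemma~\ref{LemLoop} does not cover. The only (harmless) divergence is that you deduce $(d)\Rightarrow(e)$ directly from loopfreeness, whereas the paper detours through regularity and Corollary~\ref{CorFree}.
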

\begin {proof}
(1)-(3) are clear. (4) The implications $(c)\eq(b)\Rarrow(a)$ are obvious and $(a)\Rarrow(c)$ follows from Example \ref{ExLoop}(3). So let $2x\not=x$. By Lemma \ref {LemLoop}, $(d)$ implies $(c)$. In particular, (d) implies (a) and (b), which shows that $M$ is regular by (1) and (2). The implication $(d)\Rarrow(e)$ therefore follows from Corollary \ref{CorFree}. Condition $(e)$ implies everything.
\end {proof}

\begin{Corollary}\label {CorClassificationOnegenerated}
Up to isomorphism there are four different types of one-generated binoids, namely

\begin{tabular}[c]{ccccclllll}
&&&&&$\N^{\infty}\komma$&$(\Z/n\Z)^{\infty}\komma$&$\N^{\infty}/(r=s)\komma$&and&$\quad\N^{\infty}/(m=\infty)$\\
\multicolumn{10}{l}{\text{with $n,m\ge 2$ and $1\le r< s$. Their binoid algebras over $K$ are}}\\
&&&&&$K[X]\komma$&$ K[X]/(X^{n}-1)\komma$&$K[X]/(X^{s}-X^{r})\komma$&and&$\quad K[X]/(X^{m})\pkt$\\
\end{tabular}

\end{Corollary}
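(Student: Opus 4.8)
The plan is to classify a one-generated binoid $M=\langle x\rangle$ by a case analysis on its generator $x$, feeding each case into Proposition~\ref{PropOnegenerated}, and then to read off the binoid algebra from the defining relation. Since $M$ is one-generated, the canonical epimorphism $\varepsilon\colon\N^{\infty}\Rto M$ with $1\mapsto x$ is surjective, so $M\cong\N^{\infty}/\!\sim_{\varepsilon}$ is determined by its generator together with the relations it satisfies, which by the discussion preceding the corollary are exactly of the three types $\Rcal^{(1)}_{1}$, $\Rcal^{(1)}_{2}$, $\Rcal^{(1)}_{3}$ (plus the case of no relation). These four possibilities will correspond bijectively to the four families in the statement.

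First I would treat the two degenerate cases. If $x$ is nilpotent, Proposition~\ref{PropOnegenerated}(2) gives $M\cong\N^{\infty}/(m=\infty)$ for some $m\ge2$; if $x$ is a unit, Proposition~\ref{PropOnegenerated}(3) gives $M\cong(\Z/n\Z)^{\infty}$ for some $n\ge2$. These two cases are disjoint, since a unit is integral and hence not nilpotent. Next, for $x$ neither nilpotent nor a unit, I would split on whether $x$ is loopfree. If $x$ is loopfree, the implication $(c)\Rarrow(e)$ of Proposition~\ref{PropOnegenerated}(4) yields $M\cong\N^{\infty}$. If $x$ has a loop with initial pair $r<s$, I would show that $1\le r$ and $rx=sx\not\in\trivial$: were $r=0$, then $sx=0$ would make $(s-1)x$ an inverse of $x$, so $x$ would be a unit; were $rx=\infty$, then $x$ would be nilpotent; and were $rx=0$, then again $x$ would be a unit. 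Hence $M\cong\N^{\infty}/(r=s)$ with $1\le r<s$, as recorded after the definition of a loop. This exhausts all one-generated binoids, since the trivial and zero binoids are generated by $\emptyset$ and so are not one-generated.

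To confirm that these are genuinely four distinct types, I would observe that the families are pairwise non-isomorphic: $\N^{\infty}$ is the only infinite one, and among the three finite families $(\Z/n\Z)^{\infty}$ is the unique binoid group (Example~\ref{ExLoop}(2)), $\N^{\infty}/(m=\infty)$ is the one with nilpotent generator, and $\N^{\infty}/(r=s)$ with $1\le r<s$ is positive with non-nilpotent generator. Finally, the binoid algebras follow from the definition $K[M]=KM/(T^{\infty})$ together with Preston's Lemma~\ref{LemPreston} specialized to $n=1$: the congruence generated by a binomial relation $rx=sx$ corresponds to the ideal $(X^{r}-X^{s})$ of $K[X]$, and the monomial relation $mx=\infty$ to $(X^{m})$, using $X^{\infty}=0$. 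This gives $K[\N^{\infty}]=K[X]$, $K[\N^{\infty}/(m=\infty)]=K[X]/(X^{m})$, and $K[\N^{\infty}/(r=s)]=K[X]/(X^{s}-X^{r})$, while $K[(\Z/n\Z)^{\infty}]$ is the group algebra $K[\Z/n\Z]=K[X]/(X^{n}-1)$. The main obstacle is the bookkeeping in the loop case---verifying that $r\ge1$ and $rx=sx\not\in\trivial$ so that the constraints $1\le r<s$ are exactly right and no binoid is counted twice; the algebra identifications are essentially formal once Preston's Lemma is invoked.
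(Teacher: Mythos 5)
Your proposal is correct and follows essentially the same route as the paper, whose proof is simply to invoke Proposition~\ref{PropOnegenerated}; your case split (nilpotent, unit, loopfree, loop with initial pair $r<s$) is exactly the content of that proposition together with the remark following the definition of a loop. The extra bookkeeping you supply --- checking $1\le r$ and $rx=sx\not\in\trivial$ in the loop case, the pairwise non-isomorphism of the four families, and the passage from relations to binomial/monomial ideals for the algebras --- is all consistent with what the paper leaves implicit.
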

\begin {proof}
This is clear by Proposition \ref{PropOnegenerated}.
\end {proof}

These one-generated binoids can be illustrated as follows (the arrows indicate addition with the generator $1$):

\begin {ListeTheorem}
\item []$\N^{\infty}$:
\begin {center}
\begin {pspicture} (-2,-0.5)(4.5,0.25)
\qdisk (-1.5,0){2.3pt}\qdisk (-0.25,0){1.3pt}\qdisk (1,0){1.3pt}\qdisk (4.9,0){2.3pt}
\psline [arrows={->},linewidth=.5 pt] (-1.5,0)(-0.8,0)
\psline [linewidth=.5 pt](-0.8,0)(-0.25,0)
\psline [arrows={->},linewidth=.5 pt] (-0.25,0)(0.45,0)
\psline [linewidth=.5 pt] (0.45,0)(1,0)
\psline [arrows={->},linewidth=.5 pt] (1,0)(1.7,0)
\psline [linewidth=.5 pt, linestyle=dotted] (1.7,0)(2.5,0)
\uput [0] (-2.1,-0){\scriptsize{$0$}}
\uput [0] (-0.5,-0.4){\scriptsize{$1$}}
\uput [0] (0.65,-0.4) {\scriptsize{$2$}}
\uput [0] (5,0){\scriptsize{$\infty$}}
\end {pspicture}
\end {center}
\item [] $(\Z/n\Z)^{\infty}$:
\begin {center}
\begin {pspicture} (-2,-1.5)(4.5,1.25)
\qdisk (-1.5,0){2.3pt}\qdisk (0,1){1.3pt}\qdisk (1.5,0){1.3pt}\qdisk (0,-1){1.3pt}\qdisk (4.9,0){2.3pt}
\psline [arrows=->,linewidth=.5 pt] (-1.5,0)(-0.75,0.5)
\psline [linewidth=.5 pt](-0.75,0.5)(0,1)
\psline [arrows=->,linewidth=.5 pt] (0,1)(0.75,0.5)
\psline [linewidth=.5 pt] (0.75,0.5)(1.5,0)
\psline [arrows=->,linewidth=.5 pt] (1.5,0)(0.75,-0.5)
\psline [linewidth=.5 pt, linestyle=dotted](0.75,-0.5) (0.3,-0.8)
\psline [linewidth=.5 pt](0.3,-0.8) (0,-1)
\psline [arrows=->,linewidth=.5 pt] (0,-1)(-0.75,-0.5)
\psline [linewidth=.5 pt] (-0.75,-0.5)(-1.5,0)
\uput [0] (-2.1,0){\scriptsize{$0$}}
\uput [0] (-0.25,1.2){\scriptsize{$1$}}
\uput [0] (1.5,0){\scriptsize{$2$}}
\uput [0] (-0.5,-1.4) {\scriptsize{$n-1$}}
\uput [0] (5,0) {\scriptsize{$\infty$}}
\end {pspicture}
\end {center}
\item [] $\N^{\infty}/(r=s)$:
\begin {center}
\begin {pspicture} (-2,-1)(4.5,0.75)
\qdisk (-1.5,0){2.3pt}\qdisk (-0.25,0){1.3pt}\qdisk (1,0){1.3pt}\qdisk (1.8,-0.5){1.3pt}\qdisk (2.6,0){1.3pt}\qdisk (4.9,0){2.3pt}
\psline [arrows=->,linewidth=.5 pt] (-1.5,0)(-0.8,0)
\psline [linewidth=.5 pt](-0.8,0)(-0.25,0)
\psline [arrows=->,linewidth=.5 pt] (-0.25,0)(0.15,0)
\psline [linewidth=.5 pt, linestyle=dotted] (0.15,0) (0.7,0)
\psline [linewidth=.5 pt] (0.7,0)(1,0)
\pcarc [arcangleA=60, arcangleB=30, linewidth=.5pt]{->}(1,0)(1.85,0.5)
\pcarc [arcangleA=30, arcangleB=60, linewidth=.5pt](1.85,0.5)(2.6,0)
\pcarc [arcangleA=30, arcangleB=15, linewidth=.5pt]{->}(2.6,0)(2.4,-0.3)
\pcarc [arcangleA=60, arcangleB=30, linestyle=dotted](2.4,-0.3)(1.8,-0.5)
\pcarc [arcangleA=180, arcangleB=30,linewidth=.5 pt]{->} (1.8,-0.5)(1.2,-0.3)
\pcarc [arcangleA=30, arcangleB=180,linewidth=.5 pt] (1.2,-0.3)(1,0)
\uput [0] (-2.1,0){\scriptsize{$0$}}
\uput [0] (-0.5,0.4){\scriptsize{$1$}}
\uput [0] (0.5,0.4){\scriptsize{$r$}}
\uput [0] (2.6,0) {\scriptsize{$r+1$}}
\uput [0] (1.35,-0.9) {\scriptsize{$s-1$}}
\uput [0] (5,0) {\scriptsize{$\infty$}}
\end {pspicture}
\end {center}
\item [] $\N^{\infty}/(m=\infty)$:
\begin {center}
\begin {pspicture} (-2,-1)(4.5,0.25)
\qdisk (-1.5,0){2.3pt}\qdisk (-0.25,0){1.3pt}\qdisk (1,0){1.3pt}\qdisk (3.65,0){1.3pt}\qdisk (4.9,0){2.3pt}
\psline [arrows=->,linewidth=.5 pt] (-1.5,0)(-0.8,0)
\psline [linewidth=.5 pt](-0.8,0)(-0.25,0)
\psline [arrows=->,linewidth=.5 pt] (-0.25,0)(0.45,0)
\psline [linewidth=.5 pt] (0.45,0)(1,0)
\psline [arrows=->,linewidth=.5 pt] (1,0)(1.7,0)
\psline [linewidth=.5 pt, linestyle=dotted] (1.7,0)(3.1,0)
\psline [linewidth=.5 pt] (3.1,0)(3.65,0)
\psline [arrows=->,linewidth=.5 pt] (3.65,0)(4.35,0)
\psline [linewidth=.5 pt] (4.35,0)(4.9,0)
\pcarc [arcangleA=60, arcangleB=60, linewidth=.5pt]{->}(4.9,0)(5.9,0)
\pcarc [arcangleA=300, arcangleB=300, linewidth=.5pt](4.9,0)(5.9,0)\uput [0] (-2.1,0){\scriptsize{$0$}}
\uput [0] (-0.5,-0.4){\scriptsize{$1$}}
\uput [0] (0.65,-0.4){\scriptsize{$2$}}
\uput [0] (3.1,-0.4) {\scriptsize{$m-1$}}
\uput [0] (4.6,-0.5) {\scriptsize{$\infty$}}
\end {pspicture}
\end {center}
\end {ListeTheorem}

\bigskip

If $M$ is a finitely generated binoid with generators $x_{1}\kpkt x_{r}$, then every subbinoid $\langle x_{k}\rangle$ is isomorphic to one of the above listed one-generated binoids, and if there are no mixed relations among the $x_{i}$, then $M\cong\langle x_{1}\rangle\wedge\cdots\wedge\langle x_{r}\rangle$ and $K[M]=K[\langle x_{1}\rangle]\otimes_{K}\cdots\otimes_{K}K[\langle x_{r}\rangle]$ by Lemma \ref{LemNoRelations} and the subsequent remark. For $r=2$, we obtain up to isomorphism  $4+\binom{4}{2}=10$ possibilities for $M$ of this unmixed type given by
$$\begin {array}{rll}
1.& \N^{\infty}\,\wedge\,\N^{\infty}\cong(\N\,\times\,\N)^{\infty}
     &\leadsto\quad K[X,Y]\\
2.& \N^{\infty}\,\wedge\,(\Z/m\Z)^{\infty}\cong(\N\,\times\,\Z/m\Z)^{\infty}
   &\leadsto\quad K[X,Y]/(Y^{m}-1)\\
3.& \N^{\infty}\,\wedge\,\N^{\infty}/(l=m) 
   &\leadsto\quad K[X,Y]/(Y^{l}-Y^{m})\\
4.& \N^{\infty}\,\wedge\,\N^{\infty}/(m=\infty) 
   &\leadsto\quad K[X,Y]/(Y^{m})\\
5.&(\Z/n\Z)^{\infty}\,\wedge\,(\Z/n\Z)^{\infty}\cong(\Z/n\Z\,\times\,\Z/m\Z)^{\infty}
   &\leadsto\quad K[X,Y]/(X^{n}-1,Y^{m}-1)\\
6.&(\Z/n\Z)^{\infty}\,\wedge\,\N^{\infty}/(l=m)
   &\leadsto\quad K[X,Y]/(X^{n}-1,Y^{l}-Y^{m})\\
7.&(\Z/n\Z)^{\infty}\,\wedge\,\N^{\infty}/(m=\infty) 
   &\leadsto\quad K[X,Y]/(X^{n}-1,Y^{m})\\
8.&\N^{\infty}/(k=n)\,\wedge\,\N^{\infty}/(l=m)
   &\leadsto\quad K[X,Y]/(X^{k}-X^{n},Y^{l}-Y^{m})\,\,\quad\quad\quad\quad\\
9.&\N^{\infty}/(k=n)\,\wedge\,\N^{\infty}/(m=\infty) 
   &\leadsto\quad K[X,Y,]/(X^{k}-X^{n},Y^{m})\\
10.&\N^{\infty}/(n=\infty)\,\wedge\,\N^{\infty}/(l=\infty)
   &\leadsto\quad K[X,Y]/(X^{n},Y^{m})\\
\end {array}$$

with $k,l\ge 1$ and $n,m\ge 2$, and where the identifications are given by $T^{1\wedge0}\leftrightarrow X$ and $T^{0\wedge1}\leftrightarrow Y$. Note that $\Z/n\Z\,\times\,\Z/m\Z\cong\Z/nm\Z$ if $n$ and $m$ are coprime so that the binoid group $(\Z/nm\Z)^{\infty}$ is a \emph{one}-generated binoid, cf.\ also Lemma \ref{LemBinGroup1} below.

\medskip

The mixed relations on a two-generated binoid are\nomenclature[R]{$\Rcal^{(2)}_{i}$}{, $i=1,2,3$, relations on a two-generated binoid}
\begin {ListeTheorem}
\item [$\Rcal^{(2)}_{1}:$] $kx+ly=0$ for some $l,k\ge 1$,
\item [$\Rcal^{(2)}_{2}:$] $kx+ly=\infty$ for some $l,k\ge 1$,
\item [$\Rcal^{(2)}_{3}:$] $kx+ly=nx+my\not\in\trivial$, where $(k,l)\not=(n,m)$ are not both $=(0,0)$, and if $(k,m)=(0,0)$, then $l,n\ge2$ and vice versa.
\end {ListeTheorem}

We have the following binoid algebras over $K$:

$$\begin {array}{ll}
\free(x,y)/(\Rcal^{(2)}_{1})&\leadsto\quad K[X,Y]/(X^{k}Y^{l}-1) \\
\free(x,y)/(\Rcal^{(2)}_{2})&\leadsto\quad K[X,Y]/(X^{k}Y^{l}) \\
\free(x,y)/(\Rcal^{(2)}_{3})&\leadsto\quad K[X,Y]/(X^{k}Y^{l}-X^{n}Y^{m})\pkt
\end {array}$$

Of course, more than one mixed relation may occur and also combinations, except the relations $\Rcal^{(2)}_{1}$ and $\Rcal^{(2)}_{2}$ which are mutually exclusive.

\medskip

In the following, two-generated binoids with respect to the mixed relations above are studied in detail. We start with the binoid groups.

\begin {Lemma} \label {LemBinGroup}
Let $M$ be a two-generated binoid with generators $x$ and $y$. 
\begin {ListeTheorem}
\item $M$ is a finite binoid group if and only if $kx=0$ and $ly=0$ for some $k,l\ge2$. In this case, $k$ and $l$ can be chosen to be minimal with this condition.
\item  Let $M$ be an infinite binoid group.
\begin {ListeTheorem}
\item [(a)] $\langle x\rangle\cap\langle y\rangle=\trivial$.
\item [(b)] There are $n,m\ge1$ minimal with respect to $nx+my=0$; that is, for any other pair $r,s\ge1$ with $rx+sy=0$, one has $r=ln$ and $s=lm$ for some $l\ge 1$. 
\item [(c)]  $M$ is torsion-free if and only if $\gcd(n,m)=1$, where $n,m\ge 1$ are minimal with respect to $nx+my=0$ (see (b)).
\end{ListeTheorem}
\end {ListeTheorem}
\end {Lemma}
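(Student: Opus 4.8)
The plan is to work throughout with the abelian group $M\opkt$ (recall binoids in this section are commutative, so a binoid group has $M\opkt=M\okreuz$ an abelian group) and to translate every binoid statement into a statement about the relation lattice of the two generators inside $\Z^2$. First I would dispatch \textbf{Part (1)}. For the forward direction, if $M$ is a finite binoid group then $M\opkt$ is a finite abelian group, so $x$ and $y$ have finite additive order; taking $k,l$ to be those orders gives $kx=0$, $ly=0$ with $k,l\ge 2$ (these orders are minimal with that property). Conversely, from $kx=0$ and $ly=0$ with $k,l\ge 2$ I would note that $(k-1)x$ and $(l-1)y$ are inverses, so $x,y\in M\okreuz$; since $M=\langle x,y\rangle$ and sums of units are units, $M\opkt=M\okreuz$ is a group, i.e. $M$ is a binoid group, and the relations let one reduce every element to $ax+by$ with $0\le a<k$, $0\le b<l$, so $\#M\opkt\le kl$ and $M$ is finite.

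For \textbf{Part (2)} I would set up the common tool. Assume $M$ is an infinite binoid group and consider
$$\phi:\Z^2\Rto M\opkt\komma\quad (a,b)\lto ax+by\komma$$
which is surjective since $x,y$ generate the group $M\opkt$. Put $L:=\{(a,b)\in\Z^2\mid ax+by=0\}$, a subgroup with $M\opkt\cong\Z^2/L$. Because $x\in M\okreuz$, its inverse lies in $\N x+\N y$, giving $-x=ax+by$ with $a,b\ge 0$, hence a nonzero element $(a+1,b)\in L$; so $L\neq 0$. As $M\opkt$ is infinite, $\Z^2/L$ has positive free rank, forcing $\operatorname{rank}L\le 1$, so $L=\Z v$ is infinite cyclic. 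I would then argue $v$ has strictly positive entries: the element $(a+1,b)$ above is strictly positive when $b\ge 1$, while $b=0$ would mean $x$ has finite order; running the symmetric argument on $-y$, if \emph{no} strictly positive relation existed both $x,y$ would have finite order and $M$ would be finite by Part (1), a contradiction. Choosing a strictly positive $(r,s)\in L$ and writing $(r,s)=\ell v$, I may flip the sign of $v$ to make $\ell>0$, so $v=:(n,m)$ with $n,m\ge 1$.

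From this the three assertions fall out, since every element of $L$ equals $\ell(n,m)$ and has both coordinates $\ge 1$ exactly when $\ell\ge 1$. For \textbf{(2b)}, a relation $rx+sy=0$ with $r,s\ge 1$ is precisely a strictly positive element $(r,s)\in L$, hence $(r,s)=\ell(n,m)$ with $\ell\ge 1$, so $(n,m)$ is the claimed minimal positive relation. For \textbf{(2a)}, if $ax=by\neq\infty$ with $a,b\ge 0$ then $(a,-b)\in L=\Z(n,m)$, and $n,m\ge 1$ with $a\ge 0\ge -b$ forces $\ell=0$, so $ax=by=0$ and $\langle x\rangle\cap\langle y\rangle=\trivial$. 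For \textbf{(2c)}, write $g:=\gcd(n,m)$ and $(n,m)=g(n',m')$ with $\gcd(n',m')=1$; the primitive vector $(n',m')$ extends to a basis of $\Z^2$ in which $v$ is $g$ times a basis vector, so $M\opkt\cong\Z\oplus\Z/g\Z$. Since $M$ is a binoid group, $na=nb\neq\infty$ is equivalent to $a-b$ being a torsion element of the group $M\opkt$, so $M$ is torsion-free iff $M\opkt$ is torsion-free iff $g=\gcd(n,m)=1$.

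The genuinely delicate step is the normalization in the second paragraph: proving that $L$ is infinite cyclic \emph{with a generator of strictly positive entries} and that this one generator captures all positive relations. This is where the monoid generation $M\opkt=\N x+\N y$ (rather than mere group generation) and the exclusion of the finite case via Part (1) are indispensable; everything else is routine lattice arithmetic and the standard dictionary between binoid torsion-freeness and group torsion-freeness.
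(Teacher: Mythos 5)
Your proof is correct, but for part (2) it follows a genuinely different route from the paper's. The paper argues entirely inside the binoid: it first proves (a) directly (a relation $rx=sy$ with $r,s\ge2$ combines with a relation $nx+my=0$ to give $(sn+rm)x=0=(sn+rm)y$, forcing finiteness via part (1)), then establishes (b) by a minimality-and-division argument on the coefficient $n_{0}$ that leans on the cancellativity of the unit $x$ and on (a), and finally deduces (c). You instead encode everything in the relation lattice $L=\ker(\Z^{2}\rto M\opkt)$, show $L$ is infinite cyclic with a generator $(n,m)$ of strictly positive entries (using monoid generation $M\opkt=\N x+\N y$ to see $L\not=0$ and part (1) to exclude the degenerate sign patterns), and then read off (a), (b), (c) as lattice arithmetic; your normalization of the generator is the step that replaces the paper's cancellation bookkeeping, and both treatments must invoke part (1) at exactly that juncture. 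What your approach buys is uniformity and a bonus: the computation $M\opkt\cong\Z\oplus\Z/\gcd(n,m)\Z$ that falls out of your proof of (c) is precisely the content of the paper's \emph{next} lemma (Lemma \ref{LemBinGroup1}(2)), which the paper has to prove separately with an explicit isomorphism; what the paper's approach buys is that it never leaves elementary binoid arithmetic and so does not presuppose the classification of subgroups of $\Z^{2}$. One small point worth making explicit in your write-up: since $M$ is two-generated, neither $x$ nor $y$ can be $0$, which is what guarantees $k,l\ge2$ in part (1) and is used implicitly when you speak of "finite order."
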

\begin {proof}
(1) $M$ is a finite binoid group if and only if $\langle x\rangle$ and $\langle y\rangle$ are finite, which is in case $x$ and $y$ are units, hence cancellative, equivalent to $kx=0$ and $ly=0$ for some $k,l\ge2$. The statement on the minimality is obvious. (2) Let $M$ be an infinite binoid group with $nx+my=0$ for some $n,m\ge 0$ (a) Suppose that $rx=sy$ for $r,s\ge2$. Then
$$(sn+rm)x\,=\,snx+smy\,=\,s(nx+my)\,=\,0$$
and similarly $(sn+rm)y=0$, which obviously contradicts the assumption on $M$ being infinite. (b) First observe that $kx,ky\not=0$ for all $k\ge2$ by (a) because if, for instance, $ky=0$ for some $k\ge2$, then $nx+my=0$ would imply $nx=sy$ for some $s\ge1$ by adding $y$ sufficiently often to it. In particular, $n,m\ge 1$. Let $n_{0}:=\min_{\le}\{n\in\N\mid nx+my=0$ for some $m\ge1\}$ and let $m_{0}\ge1$ with $n_{0}x+m_{0}y=0$. Suppose that
$$nx+my\,=\,0\quad(=n_{0}x+m_{0}y)$$
for another pair $n,m\ge1$. By the choice of $n_{0}$, we have $n\ge n_{0}$. Since $x$ is cancellative and $kx,ky\not=0$ for all $k\ge1$, $n=n_{0}$ is equivalent to $m=m_{0}$. In particular, $m_{0}$ is unique with $n_{0}x+m_{0}y=0$. The assumption $m<m_{0}$ (and $n\ge n_{0}$) yields again a contradiction to $\langle x\rangle\cap\langle y\rangle=\trivial$. Therefore, $n>n_{0}$ and $m>m_{0}$. So one finds an $l\ge1$ such that 
$$n=ln_{0}+r\quad\text{and}\quad m=lm_{0}+s\komma$$ 
with $0\le r<n_{0}$ or $0\le s<m_{0}$, which yields
$$0\,=\,nx+my\,=\,l(n_{0}x+m_{0}y)+rx+sy\,=\,rx+sy\pkt$$
By the choice of $n_{0}$, the case $0\le r<n_{0}$ yields $r=0$, and hence $s=0$. The other case, $r\ge n_{0}$ and $s<m_{0}$, is not possible as observed above. Hence, $n=ln_{0}$ and $m=lm_{0}$ for some $l\ge1$. (c) Let $n,m\ge1$ be minimal with respect to $nx+my=0$ as in (b). If $\gcd(n,m)=d>1$, then
$$d(n^{\prime}x+m^{\prime}y)\,=\,0\,=\,d\cdot0\komma$$
where $n^{\prime}=n/d$ and  $m^{\prime}=m/d$, but $n^{\prime}x+m^{\prime}y\not=0$ by the minimality of $n$ and $m$. This shows that $M$ is not torsion-free. For the converse, we need to prove that $M\opkt$ is a torsion-free group if $\gcd(n,m)=1$. So assume that $z=rx+sy\in M\opkt$ with 
$$0\,=\,kz\,=\,krx+ksy$$
for some $k\ge2$. By (b), we get $kr=ln$ and $ks=lm$ for some $l\in\N$. Since $\gcd(n,m)=1$, $k$ divides $l$, so we can write $r=n(l/k)$ and $s=m(l/k)$ with $l/k\in\N$. Hence, $z=rx+sy=(l/k)(nx+my)=0$.
\end{proof}

\begin{Lemma} \label{LemBinGroup1}
Let $M$ be a two-generated binoid that admits a generating set $\{x,y\}$ with no mixed relation of the form $\Rcal_{3}^{(2)}$.
\begin {ListeTheorem}
\item If $M$ is a finite binoid group, then $M$ is isomorphic to
$$(\Z/k\Z\times\Z/l\Z)^{\infty}\komma$$
where $k,l\ge 2$ are minimal with respect to $kx=0=ly$ and $\gcd(k,l)\ge 2$.
\item If $M$ is an infinite binoid group and $n,m\ge1$ minimal with respect to $nx+my=0$ (as in Lemma \ref{LemBinGroup}(2b)), then $M$ is isomorphic to
$$\Z^{\infty}\quad\text{or}\quad(\Z\times\Z/d\Z)^{\infty}\pkt$$
In the first case, $M$ is torsion-free with $\gcd(n,m)=1$, and in the latter $M$ is not torsion-free with $\gcd(n,m)=d\ge2$.
\end {ListeTheorem}
\end{Lemma}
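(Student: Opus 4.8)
The plan is to build explicit isomorphisms in each case by exhibiting the underlying abelian group structure of $M\opkt$ and then adjoining $\infty$, exploiting the classification of finite abelian groups generated by two elements together with Lemma \ref{LemBinGroup}. Throughout, recall that $M$ is a binoid group means $M\opkt=M\okreuz$ is a group, so it suffices to determine the group $M\opkt$ up to isomorphism; the binoid isomorphism then follows since adjoining $\infty$ is functorial ($G\cong H$ as groups implies $G^{\infty}\cong H^{\infty}$ as binoids). The hypothesis that $\{x,y\}$ admits no mixed relation of type $\Rcal_3^{(2)}$ is what prevents extra collapsing and lets us read off the group presentation directly from the relations $\Rcal_1^{(2)}$ (and the unmixed relations $\Rcal_1^{(1)}$ giving $kx=0$, $ly=0$).

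For part (1), I would argue as follows. By Lemma \ref{LemBinGroup}(1), $M$ finite binoid group is equivalent to $kx=0$ and $ly=0$ for minimal $k,l\ge 2$. Since $x,y$ are units of finite additive order $k$ and $l$ respectively, the group $M\opkt$ is a homomorphic image of $\Z/k\Z\times\Z/l\Z$ via $(a,b)\mapsto ax+by$. The absence of a relation of type $\Rcal_3^{(2)}$ means there is no relation of the form $ax+by=a'x+b'y$ with $(a,b)\neq(a',b')$ beyond those forced by $kx=0$, $ly=0$; hence this surjection is injective, giving $M\opkt\cong\Z/k\Z\times\Z/l\Z$ and $M\cong(\Z/k\Z\times\Z/l\Z)^{\infty}$. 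The condition $\gcd(k,l)\ge 2$ must hold because otherwise $\Z/k\Z\times\Z/l\Z\cong\Z/kl\Z$ would be \emph{cyclic}, hence one-generated, contradicting that $M$ is genuinely two-generated (the remark after the classification Corollary \ref{CorClassificationOnegenerated} records exactly this cyclic collapse when $\gcd=1$).

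For part (2), $M$ is an infinite binoid group, so by Lemma \ref{LemBinGroup}(2b) there are minimal $n,m\ge 1$ with $nx+my=0$, and by (2a) we have $\langle x\rangle\cap\langle y\rangle=\trivial$. The map $\Z^2\rto M\opkt$, $(a,b)\mapsto ax+by$, is surjective, and by the no-$\Rcal_3^{(2)}$ hypothesis together with Lemma \ref{LemBinGroup}(2b) its kernel is exactly the cyclic subgroup $\Z\cdot(n,m)$. Thus $M\opkt\cong\Z^2/\Z(n,m)$. Now I invoke the structure of quotients of $\Z^2$ by a rank-one subgroup: writing $d=\gcd(n,m)$ and completing $(n/d,m/d)$ to a basis of $\Z^2$ (possible since its entries are coprime), one computes $\Z^2/\Z(n,m)\cong\Z\times\Z/d\Z$. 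When $d=1$ this is $\Z$, giving $M\cong\Z^{\infty}$, torsion-free by Lemma \ref{LemBinGroup}(2c); when $d\ge 2$ this is $\Z\times\Z/d\Z$, giving $M\cong(\Z\times\Z/d\Z)^{\infty}$, not torsion-free again by (2c).

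The main obstacle will be pinning down \emph{exactly} which relations survive under the no-$\Rcal_3^{(2)}$ hypothesis, i.e.\ justifying that the kernels above are \emph{only} $\Z(n,m)$ (infinite case) or the obvious relation lattice (finite case) and nothing larger. This is really a bookkeeping argument about presentations: any additional relation among $x$ and $y$ in a binoid group is a relation of the form $ax+by=a'x+b'y$ with both sides $\neq\infty$ (since all nonzero elements are units, hence integral), and such a relation is precisely a mixed relation of type $\Rcal_3^{(2)}$ unless it is a consequence of $\Rcal_1^{(2)}$ or the unmixed relations; so the hypothesis exactly forbids the unwanted extra identifications. I would make this precise by passing to the universal two-generated binoid group with the prescribed relations and checking minimality of $n,m$ (resp.\ $k,l$) rules out any further collapse, using cancellativity of units (Lemma \ref{LemmaCanc}) to cancel common summands as in the proof of Lemma \ref{LemBinGroup}(2b).
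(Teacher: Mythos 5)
Your argument is correct, and for part (2) it takes a genuinely different route from the paper. The paper proves (1) exactly as you do in spirit, but packages it via Lemma~\ref{LemNoRelations}: the absence of mixed relations gives $M\cong\langle x\rangle\wedge\langle y\rangle\cong(\Z/k\Z)^{\infty}\wedge(\Z/l\Z)^{\infty}\cong(\Z/k\Z\times\Z/l\Z)^{\infty}$, and $\gcd(k,l)\ge2$ is forced by two-generatedness just as you say. For part (2), however, the paper does not pass through $\Z^{2}/\Z(n,m)$; it constructs the isomorphisms by hand, defining $\varphi:M\opkt\rto\Z$ by $x\mto m$, $y\mto -n$ in the coprime case and $\psi:M\opkt\rto\Z\times\Z/d\Z$ by $ix+jy\mto(i\tilde{m}-j\tilde{n},\,ir+js\bmod d)$ in the torsion case, verifying surjectivity and injectivity with explicit B\'ezout coefficients. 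Your cokernel computation $\Z^{2}/\Z(n,m)\cong\Z\times\Z/\gcd(n,m)\Z$ via Smith normal form is cleaner and makes the dichotomy $d=1$ versus $d\ge2$ transparent at once, at the price of having to justify carefully that the kernel of $\Z^{2}\rto M\opkt$ is \emph{exactly} $\Z(n,m)$. That justification is available but not free: Lemma~\ref{LemBinGroup}(2b) only controls kernel elements $(a,b)$ with $a,b\ge1$, so you must separately exclude elements of the form $(a,0)$ or $(0,b)$ with $a,b\ne0$ (this uses the observation, made inside the proof of Lemma~\ref{LemBinGroup}(2b), that $kx\ne0\ne ky$ for all $k\ge1$ in the infinite case) and elements with $a>0>b$, which would give $ax=(-b)y\not\in\trivial$ and hence a relation of type $\Rcal_{3}^{(2)}$, contradicting the hypothesis (equivalently, contradicting $\langle x\rangle\cap\langle y\rangle=\trivial$ from Lemma~\ref{LemBinGroup}(2a)). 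The same care is needed in (1): a kernel element $(a,b)$ with $1\le a<k$, $1\le b<l$ and $ax+by=0$ is a relation of type $\Rcal_{1}^{(2)}$, which the hypothesis does not forbid directly; you must add the one-line reduction $ax=(l-b)y\not\in\trivial$ to convert it into a forbidden $\Rcal_{3}^{(2)}$ relation. With those two small completions your proof is sound.
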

\begin{proof}
(1) By Lemma \ref {LemBinGroup}, there are $k,l\ge 2$ with $kx=0=ly$. If $k$ and $l$ are minimal with this property, we obtain from Lemma \ref{LemNoRelations} 
$$M\,\cong\,\langle x\rangle\wedge\langle y\rangle\,\cong\,(\Z/k\Z)^{\infty}\wedge(\Z/l\Z)^{\infty}\,\cong\,(\Z/k\Z\times\Z/l\Z)^{\infty}\pkt$$
If $\gcd(k,l)=1$, then $M\cong(\Z/kl\Z)^{\infty}$ is one-generated. Hence, $\gcd(k,l)\ge2$. 

(2) First consider the torsion-free case, which is the case when $\gcd(n,m)=1$ by Lemma \ref {LemBinGroup}. Thus, $lm+kn=1$ for some $k,l\in\Z$. We may assume that $k<0$ and $l>0$ because the other case $k>0$ and $l<0$ follows by symmetry. By adding $nm-mn=0$ to $|k|n-lm=-1$ as often as necessary, one finds a $k^{\prime}<0$ and an $l^{\prime}>0$ with $l^{\prime}m+k^{\prime}n=-1$. With this, the homomorphism 
$$\varphi:M\opkt\Rto\Z\komma\quad x\lto m\komma\quad y\lto -n\komma$$
is a well-defined group epimorphism since 
\begin {align*}
0=nx+my&\lto 0\\
 lx+|k|y&\lto lm+|k|y=1\\
l^{\prime}x+|k^{\prime}|y&\lto l^{\prime}m+|k^{\prime}|y=-1
\end {align*}
The injectivity of $\varphi$ follows from the fact that $0=\varphi(ax+by)=am-bn$ is equivalent to $a=sn$ and $b=sm$ for some $s\in\N$, which gives $ax+by=s(nx+my)=0$. Extending $\varphi$ by $\infty\mto\infty$ now yields a binoid isomorphism $M\cong\Z^{\infty}$. 

Finally, assume that $M$ is not torsion-free. By Lemma \ref{LemBinGroup}(2c), $\gcd(n,m)=d\ge 2$ so we can write $n=\tilde{n}d$ and $m=\tilde{m}d$ for $\tilde{n},\tilde{m}\ge1$ with $\gcd(\tilde{n},\tilde{m})=1$.
The latter means $1=s\tilde{m}+r\tilde{n}$ for some $r,s\in\Z$ with $r<0$ and $s>0$ or vice versa. By symmetry, we may assume that $r<0$ and $s>0$. The map 
$$\psi:M\opkt\Rto\Z\times(\Z/d\Z)\komma\quad\psi(ix+jy)\,=\,(i\tilde{m}-j\tilde{n},ir+js\mod d)\komma$$
is a well-defined group isomorphism. The surjectivity holds since 
$$\psi(sx+(-r)y)=(1,0)\quad\text{and}\quad\psi(\tilde{n}x+\tilde{m}y)=(0,1)\pkt$$
For the injectivity, assume that $\psi(ix+jy)=(0,0)$. Then 
$$i\,=\,k\tilde{n}\komma\quad j\,=\,k\tilde{m}\komma\quad\text{and}\quad ir+js\,=\,k(\tilde{n}r+\tilde{m}s)\,=\,k\,\equiv\,0\mod d$$
where the latter implies $ix+jy=0$. Therefore, we have shown that if $M$ is a two-generated infinite binoid group that is not torsion-free, then $M\cong(\Z\times\Z/d\Z)^{\infty}$ for some $d\ge 2$. 
\end {proof}

\begin {Remark}
The two-generated binoid group $\Z^{\infty}$ is isomorphic to $\free(x,y)/(x+y=0)$. In general, we have
$$\free(x_{1}\kpkt x_{r})/(x_{1}\pluspkt x_{r}=0)\,\,\cong\,\,(\Z^{r-1})^{\infty}\komma$$
where the isomorphism is given by $x_{i}\mto e_{i}$ when $i\not=r$, and $x_{r}\mto-(e_{1}\pluspkt e_{r})$ otherwise.
\end {Remark}

\begin{Proposition} \label{PropBinGroup}
A two-generated binoid is a binoid group if and only if it is isomorphic to 
$$\Z^{\infty}\quad\text{or}\quad(\Z\times\Z/d\Z)^{\infty}$$
for $d\ge2$ or
$$(\Z/k\Z\times\Z/l\Z)^{\infty}$$
for $k,l\ge2$ with $\gcd(k,l)\ge 2$.
\end{Proposition}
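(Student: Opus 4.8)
The plan is to obtain the statement as a corollary of the two preparatory lemmas, Lemma \ref{LemBinGroup} and Lemma \ref{LemBinGroup1}, the only genuine work being to certify that every two-generated binoid group admits a generating set to which Lemma \ref{LemBinGroup1} applies, i.e.\ one carrying no mixed relation of type $\Rcal^{(2)}_{3}$. The \emph{if} direction I would dispatch by direct inspection: each of the three listed binoids has its non-absorbing part a group and is generated by two elements, while none is cyclic as a group in the two-parameter cases with $\gcd(k,l)\ge 2$, so by Corollary \ref{CorClassificationOnegenerated} none is one-generated; hence each is genuinely two-generated. (For $\Z^\infty$ the two-generatedness was already noted with generators $1,-1$.)

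For the \emph{only if} direction, I would let $M$ be a two-generated binoid group with generators $x,y$, so that $M\opkt=M\okreuz$ is a finitely generated abelian group minimally generated as a binoid by $x,y$, and then split into the finite and infinite cases. In the finite case, the structure theorem for finite abelian groups supplies group generators realizing the invariant-factor decomposition $M\okreuz\cong\Z/k\Z\times\Z/l\Z$; since $M$ is finite these are also binoid generators (a unit $u$ with $nu=0$ satisfies $-u=(n-1)u$), and with respect to them the only relations are the unmixed $kx=0$ and $ly=0$, so in particular there is no $\Rcal^{(2)}_{3}$. Lemma \ref{LemBinGroup1}(1) then yields $M\cong(\Z/k\Z\times\Z/l\Z)^\infty$, with $\gcd(k,l)\ge 2$ since otherwise $M$ would be cyclic, hence one-generated.

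In the infinite case the key observation to record is that the given generators $x,y$ already carry no relation of type $\Rcal^{(2)}_{3}$. Indeed, cancelling such a relation $kx+ly=nx+my\notin\trivial$ in the group gives $(k-n)x+(l-m)y=0$; when the coefficients have equal sign this is merely a relation of type $\Rcal^{(2)}_{1}$, which is permitted, and when they have opposite sign it reads $rx=sy$ with $r,s\ge 1$, where $r,s\ge 2$ is excluded by Lemma \ref{LemBinGroup}(2a) and $r=1$ or $s=1$ would render one generator redundant, contradicting two-generatedness. Thus Lemma \ref{LemBinGroup1}(2) applies and produces $M\cong\Z^\infty$ or $M\cong(\Z\times\Z/d\Z)^\infty$ with $d\ge 2$. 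Combining the two cases gives exactly the three listed possibilities.

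I expect the main obstacle to be precisely this reduction step, namely guaranteeing that Lemma \ref{LemBinGroup1} is applicable despite its standing hypothesis on $\Rcal^{(2)}_{3}$. The delicate point is that the convenient group basis coming from the structure theorem need \emph{not} be a binoid (monoid) generating set in the infinite case, which is why I route the infinite case through Lemma \ref{LemBinGroup}(2a) rather than through a Smith-normal-form change of generators; once a generating set without $\Rcal^{(2)}_{3}$-relations is secured in each case, the classification follows immediately from the lemmas.
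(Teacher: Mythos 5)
Your proof is correct and follows the same route as the paper: reduce via the structure theorem for finitely generated abelian groups and then conclude with Lemma \ref{LemBinGroup1}. The paper's own proof is essentially a one-liner that never verifies the standing hypothesis of Lemma \ref{LemBinGroup1} (a generating set without an $\Rcal^{(2)}_{3}$-relation), so your explicit check --- standard generators in the finite case, and the cancellation argument via Lemma \ref{LemBinGroup}(2a) in the infinite case (where a cancelled relation with a vanishing coefficient is unmixed, hence also harmless, and in fact cannot occur since neither generator is torsion) --- supplies precisely the step the paper leaves implicit.
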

\begin{proof}
$M$ is a two-generated binoid group if and only if $M\opkt$ is a two-generated (abelian) group. By the structure theorem for finitely generated abelian groups, we know that
$$M\opkt\,\,\cong\,\,\Z^{d}\times\Z/n_{1}\Z\timespkt\Z/n_{r}\Z$$
for some $d\ge0$ and $n_{1}\kpkt n_{r}\ge 2$. By Lemma \ref{LemBinGroup1}, the binoid groups listed in the proposition are the only two-generated binoid groups of this kind, all other have strictly more or less generators.
\end{proof}

The associated binoid algebras of the two-generated binoid groups are given by 

$$\begin {array}{rl}
K[\Z^{\infty}]\!\!&\cong\,\, K[X,Y]/(XY-1)\komma\\
K[(\Z\times\Z/d\Z)^{\infty}]\!&\cong\,\, K[X,Y]/(X^{d}Y^{d}-1)\komma\\
K[((\Z/k\Z)\times(\Z/l\Z)^{\infty}]&\cong\,\, K[X,Y]/(X^{k}-1,Y^{l}-1)\komma
\end {array}$$

where $d\ge 2$ and $\gcd(k,l)\ge 2$. The second result is deduced from the fact that $(1,1)$ and $(-1,0)$ generate the binoid $(\Z\times\Z/d\Z)^{\infty}$, $d\ge 2$, since
$$d(1,1)+(d-1)(-1,0)=(1,0)\quad\text{and}\quad(1,1)+(-1,0)=(0,1)\pkt$$
Hence, $K[X,Y]\rto K(\Z\times\Z/d\Z)$ with $X\mto T^{(1,1)}$ and $Y\mto T^{(-1,0)}$ is an epimorphism of $K\mina$algebras with kernel generated by $X^{d}Y^{d}-1$. The other identifications are obvious.

The two-generated finite binoid groups with one mixed relation of the form $\Rcal_{3}^{(2)}$ have a very nice description in the terminology of binoids.

\begin {Proposition}  \label{PropBinGroupWith}
If $M$ is a two-generated finite binoid group that admits generators with a relation of the from $\Rcal_{3}^{(2)}$, then $M$ is isomophic to
$$(\Z/k\Z)^{\infty}\wedge_{\N^{\infty}}(\Z/l\Z)^{\infty}$$
for some $k,l\in\N$ with $\gcd(k,l)\ge2$, where $(\Z/k\Z)^{\infty}$ is a $\Z^{\infty}\mina$binoid via $\varphi_{r}:1\mto r$ and $(\Z/l\Z)^{\infty}$ is a $\Z^{\infty}\mina$binoid via $\varphi_{s}:1\mto s$ for some  $2\le r<k$ and  $2\le s<l$.
\end {Proposition}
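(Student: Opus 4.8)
The plan is to realize $M$ as the coproduct, in the category of $\N^{\infty}\mina$binoids, of its two cyclic subbinoids $\langle x\rangle$ and $\langle y\rangle$, with the amalgamation dictated by the relation $\Rcal^{(2)}_{3}$. First I would fix the discrete invariants. Since $M$ is a finite binoid group, $M\opkt=M\okreuz$ is a finite abelian group and its generators $x,y$ are units of finite order. By Lemma \ref{LemBinGroup}(1) there are minimal $k,l\ge 2$ with $kx=0$ and $ly=0$; thus $k=\ord(x)$, $l=\ord(y)$, and by Corollary \ref{CorClassificationOnegenerated} we have $\langle x\rangle\cong(\Z/k\Z)^{\infty}$ and $\langle y\rangle\cong(\Z/l\Z)^{\infty}$.

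Next comes the group-theoretic core. Writing $M\okreuz=\Z^{2}/L$ via $e_{1}\mto x$, $e_{2}\mto y$, the subgroup $L$ contains $L_{0}:=\langle(k,0),(0,l)\rangle$, so that $M\okreuz\cong(\Z/k\Z\times\Z/l\Z)/R$ with $R:=L/L_{0}$. Because $k$ and $l$ are the \emph{exact} orders of $x$ and $y$, the subgroup $R$ meets each of the two axes trivially; hence the first projection embeds $R$ into $\Z/k\Z$, which forces $R$ to be cyclic. The hypothesis that a relation $\Rcal^{(2)}_{3}$ holds shows $R\neq 0$, so a generator of $R$ may be written as $(\bar r,-\bar s)$, i.e.\ as a relation $rx=sy$; reducing modulo $k$ and $l$ and discarding the cases $r,s\in\{0,1\}$ (which would force $x\in\langle y\rangle$ or $y\in\langle x\rangle$, making $M$ one-generated) yields representatives $2\le r<k$ and $2\le s<l$. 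Finally $\gcd(k,l)\ge 2$, since otherwise $\Z/k\Z\times\Z/l\Z$, and hence every quotient of it, would be cyclic, again contradicting that $M$ is genuinely two-generated.

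With these data fixed I would assemble the isomorphism by the universal property. Put $z:=rx=sy$ and regard $M$ as an $\N^{\infty}\mina$binoid via $1\mto z$, while $(\Z/k\Z)^{\infty}$ and $(\Z/l\Z)^{\infty}$ become $\N^{\infty}\mina$binoids (by restriction of the $\Z^{\infty}\mina$structure) via $\varphi_{r}:1\mto[r]$ and $\varphi_{s}:1\mto[s]$. The inclusions $\langle x\rangle\cong(\Z/k\Z)^{\infty}\embto M$ and $\langle y\rangle\cong(\Z/l\Z)^{\infty}\embto M$ are then $\N^{\infty}\mina$binoid homomorphisms, because the composites with $\varphi_{r}$ and with $\varphi_{s}$ both equal the structure map $1\mto z$ of $M$. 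By Corollary \ref{CorCoProdSmashN} they induce a unique $\N^{\infty}\mina$binoid homomorphism
\[\Phi:(\Z/k\Z)^{\infty}\wedge_{\N^{\infty}}(\Z/l\Z)^{\infty}\Rto M\]
sending the two canonical generators to $x$ and $y$; it is surjective since $x,y$ generate $M$.

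It remains to prove injectivity, which I expect to be the main obstacle. The domain $G:=(\Z/k\Z)^{\infty}\wedge_{\N^{\infty}}(\Z/l\Z)^{\infty}$ is again a finite binoid group (the inverse of $a\wedge_{\N^{\infty}}b\neq\infty$ is $(-a)\wedge_{\N^{\infty}}(-b)$), and unravelling $\sim_{\wedge_{N}}$, whose defining identification reads $[r]\wedge_{\N^{\infty}}[0]=[0]\wedge_{\N^{\infty}}[s]$, gives $G\okreuz\cong(\Z/k\Z\times\Z/l\Z)/\langle(\bar r,-\bar s)\rangle$. Since a binoid-group homomorphism sends $\infty$ to $\infty$ and nothing else to $\infty$, the map $\Phi$ restricts to a surjection $G\okreuz\rto M\okreuz$ between two groups that the previous step identifies with the \emph{same} quotient $(\Z/k\Z\times\Z/l\Z)/\langle(\bar r,-\bar s)\rangle$; comparing cardinalities shows this surjection is bijective, whence $\Phi$ is an isomorphism. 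The delicate point throughout is the group-theoretic bookkeeping, namely proving $R$ cyclic, extracting a single generating relation $rx=sy$ with the asserted bounds, and recognizing $G\okreuz$ and $M\okreuz$ as literally the same cokernel, so that surjectivity of $\Phi$ upgrades to bijectivity.
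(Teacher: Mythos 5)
Your argument is correct, and its skeleton -- extract $k=\ord(x)$, $l=\ord(y)$ together with an amalgamating relation $rx=sy$, then invoke Corollary \ref{CorCoProdSmashN} to produce a surjection from the smash product over $\N^{\infty}$ onto $M$ -- is exactly the paper's. You diverge in two places. First, the paper normalizes the given relation $\Rcal_{3}^{(2)}$ to the form $rx=sy$ by direct manipulation ($r\equiv n+k-p\bmod k$, $s\equiv q+l-m\bmod l$) and then simply asserts that it suffices to consider $M=(\Z/k\Z)^{\infty}\wedge(\Z/l\Z)^{\infty}/(r\wedge0=0\wedge s)$, i.e.\ it tacitly takes a single generating mixed relation; your observation that the relation subgroup $R\le\Z/k\Z\times\Z/l\Z$ meets both axes trivially (by minimality of $k$ and $l$) and hence is cyclic actually \emph{proves} that one relation $rx=sy$ with $2\le r<k$, $2\le s<l$ generates everything, which tightens a point the paper glosses over. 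Second, for bijectivity the paper builds an explicit inverse: the canonical map $(\Z/k\Z)^{\infty}\wedge(\Z/l\Z)^{\infty}\rto(\Z/k\Z)^{\infty}\wedge_{\N^{\infty}}(\Z/l\Z)^{\infty}$ factors through $M$ because $r\wedge 0$ and $0\wedge s$ have the same image, and this factorization is checked to be inverse to $\psi$; you instead compute $G\okreuz\cong(\Z/k\Z\times\Z/l\Z)/\langle(\bar r,-\bar s)\rangle\cong M\okreuz$ and compare cardinalities. Your count is shorter but needs the (easy) verification that $\sim_{\wedge_{\N^{\infty}}}$ does not identify any unit with $\infty_{\wedge}$; the paper's inverse-map argument avoids computing $G\okreuz$ altogether. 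One caveat you share with the paper: a relation literally of the shape $\Rcal_{3}^{(2)}$ can be a consequence of $kx=0$ and $ly=0$ alone (e.g.\ $2x+y=y$ in $(\Z/2\Z\times\Z/2\Z)^{\infty}$), in which case $R=0$ and no $r,s\ge2$ exist; both proofs implicitly read the hypothesis as asserting a mixed relation not implied by the unmixed ones, which is what the stated bounds on $r$ and $s$ force in any case.
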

\begin {proof}
First note that on a finite two-generated binoid group with generators $x,y\in M$ a relation of the from $$\Rcal_{3}^{(2)}:\,nx+my\,=\,px+qy\,\not\in\,\trivial$$
is always equivalent to $rx=sy\not\in\trivial$ for certain $r,s\ge2$, which is a specialization of $\Rcal_{3}^{(2)}$. More precisely, if $k,l\ge2$ are minimal with respect to $kx=0$ and $ly=0$, cf.\ Lemma \ref{LemBinGroup1}(1), we may assume that $0\le n,p\le k$ and $0\le m,q\le l$. Then $\Rcal_{3}^{(2)}$ is equivalent to $rx=sy\not\in\trivial$, where $r\equiv n+k-p\mod k$ and $s\equiv q+l-m\mod l$.

In particular, instead of an arbitrary finite two-generated binoid group with one mixed relation of the form $\Rcal_{3}^{(2)}$, it suffices to consider
$$(\Z/k\Z)^{\infty}\wedge(\Z/l\Z)^{\infty}/(r\wedge0=0\wedge s)=:M$$
with $2\le r<k$ and  $2\le s<l$. In this case, we have a commutative diagram
$$\xymatrix{
&(\Z/l\Z)^{\infty}\ar[d]^{\iota_{l}}\ar[rd]^{\psi_{l}}&&\\
\N^{\infty}\ar[ru]^{\varphi_{s}}\ar[rd]_{\varphi_{r}}&(\Z/k\Z)^{\infty}\wedge_{\N^{\infty}}(\Z/l\Z)^{\infty}&M\komma\\
&(\Z/k\Z)^{\infty}\ar[u]_{\iota_{k}}\ar[ru]_{\psi_{k}}&&\\
}$$
where $\varphi_{i}$, $i\in\{r,s\}$, are as in the proposition and $\psi_{k}$ and $\psi_{l}$ are given by the compositions 
$$\psi_{j}:(\Z/j\Z)^{\infty}\Rto(\Z/k\Z)^{\infty}\wedge(\Z/l\Z)^{\infty}\stackrel{\!\!\pi}{\Rto}M\komma$$ $j\in\{k,l\}$, of the canonical embedding (on the respective component) and the canonical projection. Hence, there is by Corollary \ref{CorCoProdSmashN} a unique binoid homomorphism
$$\psi:(\Z/k\Z)^{\infty}\wedge_{\N^{\infty}}(\Z/l\Z)^{\infty}\Rto M$$
with $\iota_{j}\psi=\psi_{j}$, $j\in\{k,l\}$, which is surjective since $\psi(1\wedge_{\N^{\infty}}0)=x$ and $\psi(0\wedge_{\N^{\infty}}1)=y$. On the other hand, there is a $\trivial\mina$binoid homomorphism $$\varphi:(\Z/k\Z)^{\infty}\wedge(\Z/l\Z)^{\infty}\Rto(\Z/k\Z)^{\infty}\wedge_{\N^{\infty}}(\Z/l\Z)^{\infty}$$
by Corollary \ref{CorCoProdSmashN}, which factors through $M$ because $\varphi(r\wedge0)=r\wedge_{\N^{\infty}}0=0\wedge_{\N^{\infty}}s=\varphi(0\wedge s)$. This factorization and $\psi$ are inverse to each other. Hence, $M\cong(\Z/k\Z)^{\infty}\wedge_{\N^{\infty}}(\Z/l\Z)^{\infty}$.
\end {proof}

A two-generated (finite) binoid group as in Proposition \ref{PropBinGroupWith} with $kx=ly=0$, $\gcd(k,l)\ge 2$, and only one more generating relation of the form $rx=sy$ with $r<k$ and $s<l$, can be displayed in the following way (here $r$ divides $k$ and $s$ divides $l$):

\begin {center}
\begin{pspicture}(-4.5,-3)(6,3)
\qdisk(-3,0){2.3pt}\qdisk(-2.25,1.25){2.3pt}\qdisk(-0.9,2){2.3pt}\qdisk(0.9,2){2.3pt}\qdisk(2.25,1.25){2.3pt}\qdisk(3,0){2.3pt}
\qdisk(2.25,-1.25){2.3pt}\qdisk(0.9,-2){2.3pt}\qdisk(-0.9,-2){2.3pt}\qdisk(-2.25,-1.25){2.3pt}
\qdisk(5,0){2.3pt}
\rput(-4,0){\scriptsize{$kx=ly=0$}}
\rput(-2.3,0.4){\scriptsize{$x$}}
\rput(-3.1,0.8){\scriptsize{$y$}}
\rput(-2,1){\scriptsize{$rx$}}
\rput(-2.5,1.4){\scriptsize{$sy$}}
\rput(5.35,0){\scriptsize{$\infty$}}
\pscurve[linewidth=0.5pt]{->}(-3,0)(-2.6,0.4)(-2.45,0.6)
\pscurve[linewidth=0.5pt](-2.25,1.25)(-1.75,1.85)(-0.9,2)(0,1.85)(0.9,2)(1.75,1.85)(2.25,1.25)(2.45,0.6)(3,0)(2.95,-0.75)(2.25,-1.25)(1.45,-1.45)(0.9,-2)(0,-2.3)(-0.9,-2)(-1.45,-1.45)(-2.25,-1.25)
\pscurve[linewidth=0.5pt]{->}(-2.25,-1.25)(-2.5,-1.15)(-2.9,-0.75)
\pscurve[linewidth=0.5pt, linestyle=dotted](-2.9,-0.75)(-2.95,-0.5)(-3,0)
\pscurve[linewidth=0.5pt]{->}(-3,0)(-2.95,0.5)(-2.9,0.75)
\pscurve[linewidth=0.5pt, linestyle=dotted](-2.9,0.75)(-2.5,1.15)(-2.25,1.25)
\pscurve[linewidth=0.5pt](-2.25,1.25)(-1.45,1.45)(-0.9,2)(0,2.3)(0.9,2)(1.45,1.45)(2.25,1.25)(2.95,0.75)(3,0)(2.5,-0.5)(2.25,-1.25)(1.55,-2)(0.9,-2)(0,-1.85)(-0.9,-2)(-1.55,-2)(-2.25,-1.25)
\pscurve[linewidth=0.5pt]{->}(-2.25,-1.25)(-2.3,-0.9)(-2.45,-0.6)
\pscurve[linewidth=0.5pt, linestyle=dotted](-2.45,-0.6)(-2.6,-0.4)(-3,0)
\end{pspicture}
\end {center}

Its associated algebra over $K$ is given by 
$$K[(\Z/k\Z)^{\infty}\wedge_{\N^{\infty}}(\Z/l\Z)^{\infty}]\,\,\,\cong\,\,\, K[X,Y]/(X^{k}-1,Y^{l}-1, X^{r}-Y^{s})\pkt$$


The situation, where $kx=ly$ (possibly $\in\trivial$) with $l,k\ge2$ is the only generating relation, can be visualized in the following way:

\begin {center}
\begin {pspicture} (-2,-1)(4.5,1)
\qdisk (-1.5,0){2.3pt}\qdisk(-0.5,0.6){1.3pt}\qdisk(0.5,0){1.3pt}\qdisk(-0.5,-0.6){1.3pt}
\qdisk(1.5,0.6){1.3pt}\qdisk(1.5,-0.6){1.3pt}\qdisk(2.5,0){1.3pt}\qdisk(6.5,0){2.3pt}
\pcarc [arcangleA=30, arcangleB=10, arrows=->, linewidth=.5pt](-1.5,0)(-1,0.5)
\pcarc [arcangleA=10, arcangleB=10, linewidth=.5pt](-1,0.5)(-0.5,0.6)
\pcarc [arcangleA=10, arcangleB=10, arrows=->, linewidth=.5pt](-0.5,0.6)(0.1,0.4)
\pcarc [arcangleA=10, arcangleB=30, linestyle=dotted](0.1,0.4)(0.5,0)
\pcarc [arcangleA=185, arcangleB=340, arrows=->, linewidth=.5pt](-1.5,0)(-1,-0.5)
\pcarc [arcangleA=340, arcangleB=340, linewidth=.5pt](-1,-0.5)(-0.5,-0.6)
\pcarc [arcangleA=340, arcangleB=340, arrows=->, linewidth=.5pt](-0.5,-0.6)(0.1,-0.4)
\pcarc [arcangleA=340, arcangleB=185, linestyle=dotted](0.1,-0.4)(0.5,0)
\pcarc [arcangleA=30, arcangleB=10, arrows=->, linewidth=.5pt](0.5,0)(1,0.5)
\pcarc [arcangleA=10, arcangleB=10, linewidth=.5pt](1,0.5)(1.5,0.6)\pcarc [arcangleA=10, arcangleB=10, arrows=->, linewidth=.5pt](1.5,0.6)(2.1,0.4)
\pcarc [arcangleA=10, arcangleB=30, linestyle=dotted](2.1,0.4)(2.5,0)
\pcarc [arcangleA=185, arcangleB=340, arrows=->, linewidth=.5pt](0.5,0)(1,-0.5)
\pcarc [arcangleA=340, arcangleB=340, linewidth=.5pt](1,-0.5)(1.5,-0.6)
\pcarc [arcangleA=340, arcangleB=340, arrows=->, linewidth=.5pt](1.5,-0.6)(2.1,-0.4)
\pcarc [arcangleA=340, arcangleB=185, linestyle=dotted](2.1,-0.4)(2.5,0)
\pcarc [arcangleA=30, arcangleB=10, arrows=->, linewidth=.5pt](2.5,0)(3,0.5)
\pcarc [arcangleA=10, arcangleB=10, linestyle=dotted](3,0.5)(3.5,0.6)
\pcarc [arcangleA=185, arcangleB=340, arrows=->, linewidth=.5pt](2.5,0)(3,-0.5)
\pcarc [arcangleA=340, arcangleB=340, linestyle=dotted](3,-0.5)(3.5,-0.6)
\uput [0] (-2.1,0){\scriptsize{$0$}}
\uput [0] (-0.75,0.8){\scriptsize{$x$}}
\uput [0] (-0.75,-0.9){\scriptsize{$y$}}
\uput [0] (0.2,0.5){\scriptsize{$kx$}}
\uput [0] (0.2,-0.5){\scriptsize{$ly$}}
\uput [0] (0.9,0.8){\scriptsize{$(k+1)x$}}
\uput [0] (0.9,-0.9){\scriptsize{$(l+1)y$}}
\uput [0] (2.1,0.5){\scriptsize{$2kx$}}
\uput [0] (2.1,-0.5){\scriptsize{$2ly$}}
\uput [0] (6.6,0){\scriptsize{$\infty$}}
\end {pspicture}
\end {center}

\bigskip

Now we consider the case where a mixed relation of this type is given.

\begin {Lemma} \label{LemLoopSmash}
Let $M$ be a two-generated binoid that admits generators $x$ and $y$ such that $kx=ly$ for some $l,k\ge2$.
\begin {ListeTheorem}
\item $x$ is loopfree if and only if $y$ is so.
\item The following statements are equivalent.
\begin {ListeTheorem}
\item [(a)] $M$ is integral
\item [(b)] $M$ is reduced
\item [(c)] $x$ and $y$ are not nilpotent.
\end {ListeTheorem}
\item [(3)]There is a surjective binoid homomorphism $\phi:\langle x\rangle\wedge_{\N^{\infty}}\langle y\rangle\rto M$, where $\langle x\rangle$ and $\langle y\rangle$ are considered as $\N^{\infty}\mina$binoids via $\varphi_{k}:1\mto kx$ and $\varphi_{l}:1\mto ly$, respectively, which is an isomorphism if $kx=ly$ is the only mixed generating relation.
\end {ListeTheorem}
\end {Lemma}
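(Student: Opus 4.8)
For part (1), the plan is to show by contraposition that a loop on one generator forces a loop on the other. Assume $y$ has a loop, say $ry=sy$ with $r<s$, and set $d:=s-r\ge1$; iterating the relation gives $ny=(n+td)y$ for all $n\ge r$ and $t\ge0$. Since $kx=ly$, I have $kpx=p(kx)=p(ly)=(pl)y$ for every $p\ge1$. Choosing $p$ large enough that $pl\ge r$ and putting $p':=p+d$, the chain $(pl)y=(pl+dl)y=(p'l)y$ gives $kpx=kp'x$ with $kp\ne kp'$, so $x$ has a loop. Interchanging the roles of $x,y$ and of $k,l$ in $kx=ly$ proves the reverse implication verbatim, whence $x$ is loopfree if and only if $y$ is.

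For part (2), the implication (a)$\Rightarrow$(b) is the already recorded fact that an integral binoid is reduced, and (b)$\Rightarrow$(c) is immediate: if, say, $x$ were nilpotent then $x\in\nil(M)=\zero$, forcing $x=\infty$, impossible for a generator of a two-generated binoid. The substance is (c)$\Rightarrow$(a). By Lemma \ref{LemIntCanGenerators} it suffices that the generators $x,y$ be integral. To see $x$ is integral, suppose $x+b=\infty$ with $b=ix+jy\ne\infty$. If $j=0$ this reads $(i+1)x=\infty$, making $x$ nilpotent, against (c); so $j\ge1$. Adding a suitable $mx$ to both sides (the right-hand side $\infty$ being unchanged) I may arrange $k\mid(m+i+1)$, and then $(m+i+1)x=\tfrac{m+i+1}{k}(kx)=\tfrac{(m+i+1)l}{k}\,y$ turns the equation into $\bigl(\tfrac{(m+i+1)l}{k}+j\bigr)y=\infty$, a positive multiple of $y$, so $y$ is nilpotent — again contradicting (c). Hence $x$ is integral, and by the symmetry of $kx=ly$ so is $y$, giving (a).

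For part (3), I would produce $\phi$ via the coproduct property of the smash product over $\N^\infty$. Equipping $M$ with the structure homomorphism $\N^\infty\rto M$, $1\mto kx=ly$ (well defined precisely because $kx=ly$ holds in $M$), the inclusions $\iota_x:\langle x\rangle\embto M$ and $\iota_y:\langle y\rangle\embto M$ become $\N^\infty\mina$binoid homomorphisms, since $\iota_x\varphi_k$ and $\iota_y\varphi_l$ both equal that structure map. Corollary \ref{CorCoProdSmashN} then yields the $\N^\infty\mina$binoid homomorphism $\phi:\langle x\rangle\wedge_{\N^\infty}\langle y\rangle\rto M$ with $a\wedge_{\N^\infty}b\mto a+b$, which is surjective because its image contains $x=\phi(x\wedge_{\N^\infty}0)$ and $y=\phi(0\wedge_{\N^\infty}y)$. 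For injectivity when $kx=ly$ is the only mixed generating relation, I construct an inverse: the assignment $x\mto x\wedge_{\N^\infty}0$, $y\mto 0\wedge_{\N^\infty}y$ defines a binoid homomorphism $\free(x,y)\rto\langle x\rangle\wedge_{\N^\infty}\langle y\rangle$ by Lemma \ref{LemHomFreeCom}, and it respects every defining relation of $M$: the unmixed relations among $x$ (resp.\ $y$) hold already in $\langle x\rangle$ (resp.\ $\langle y\rangle$), while the single mixed relation $kx=ly$ is sent to the identity $kx\wedge_{\N^\infty}0=0\wedge_{\N^\infty}ly$ built into the smash product over $\N^\infty$. By Lemma \ref{LemIndCong} it factors through $M$ and is inverse to $\phi$ on generators, hence $\phi^{-1}$.

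The main obstacle I anticipate is exactly this injectivity step: it is here that the hypothesis ``$kx=ly$ is the only mixed relation'' is indispensable, since the candidate inverse $\free(x,y)\rto\langle x\rangle\wedge_{\N^\infty}\langle y\rangle$ descends to $M$ only if it respects all of $M$'s relations, and any further mixed relation would in general fail to be respected (so $\phi$ would cease to be injective, in keeping with the fact that surjections need not preserve such structure). Everything else reduces to the bookkeeping of one-generated binoids from Corollary \ref{CorClassificationOnegenerated} together with the universal properties already established.
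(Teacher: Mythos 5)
Your proof of part (3) is correct and follows essentially the same route as the paper: the coproduct property of $\wedge_{\N^{\infty}}$ (Corollary \ref{CorCoProdSmashN}) for existence and surjectivity, and an inverse built from $\free(x,y)\cong(\N^{2})^{\infty}$ via Lemma \ref{LemHomFreeCom} and Lemma \ref{LemIndCong}, with the sole mixed relation $kx=ly$ absorbed by the identification $kx\wedge_{\N^{\infty}}0=0\wedge_{\N^{\infty}}ly$. Parts (1) and (2) are dismissed in the paper as ``easy computations,'' and the arguments you supply for them (transporting a loop along $kpx=(pl)y$, and reducing $x+b=\infty$ to nilpotence of $x$ or $y$ by clearing $x$-multiples through $kx=ly$) are sound.
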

\begin {proof}
(1) and (2) follow from easy computations. (3) By assumption, we have a commutative diagram
$$\xymatrix{
&\langle y\rangle\ar[d]^{\iota}\ar[rd]&&\\
\N^{\infty}\ar[ru]^{\varphi_{l}}\ar[rd]_{\varphi_{k}}&\langle x\rangle\wedge_{\N^{\infty}}\langle y\rangle&M\komma\\
&\langle x\rangle\ar[u]_{\iota}\ar[ru]&&\\
}$$
which gives rise to a binoid homomorphism $\phi:\langle x\rangle\wedge_{\N^{\infty}}\langle y\rangle\rto M$ by Corollary \ref{CorCoProdSmashN}. $\phi$ is surjective because $x$ and $y$ generate $M$. If $kx=ly$ is the only mixed generating relation, then
$$M\cong(\N^{2})^{\infty}/(\Rcal, ke_{1}=le_{2})\komma$$
where $\Rcal$ is a (maybe empty) set of unmixed relations, i.e.\ relations that involve only one of the generators, $e_{1}$ or $e_{2}$. By Lemma \ref{LemHomFreeCom}, there is a unique binoid homomorphism
$$\psi:(\N^{2})^{\infty}\Rto\langle x\rangle\wedge_{\N^{\infty}}\langle y\rangle\komma\quad e_{1}\lto x\wedge_{\N^{\infty}}0\komma\quad e_{2}\lto 0\wedge_{\N^{\infty}}y\komma$$
which respects the defining relations of $M$ as listed above since 
$$\psi(ke_{1})=(kx)\wedge_{\N^{\infty}}0=0\wedge_{\N^{\infty}}(ly)=\psi(le_{2})\pkt$$
Hence, there is a binoid homomorphism $\tilde{\psi}:M\rto\langle x\rangle\wedge_{\N^{\infty}}\langle y\rangle$ by Lemma \ref{LemIndCong}. The homomorphisms $\phi$ and $\tilde{\psi}$ are inverse to each other.
\end {proof}

By the preceding lemma, a two-generated binoid with generating set $\{x,y\}$ and $\Rcal:kx=ly$, $l,k\ge2$ being the only mixed generating relation, is given by the $\N^{\infty}\mina$binoid $\langle x\rangle\wedge_{\N^{\infty}}\langle y\rangle$ described as in the lemma. In this situation, the subbinoids $\langle x\rangle$ and $\langle y\rangle$ are isomorphic to $\N^{\infty}$, $\N^{\infty}/(n=\infty)$, $(\Z/n)^{\infty}$, or $\N^{\infty}/(n=m)$ with $n\ge2$ and $m\ge1$. The possible combinations are 
$$(\Z/n\Z)^{\infty}\wedge_{\N^{\infty}}(\Z/m\Z)^{\infty}$$
with $\gcd(n,m)\ge2$ as in Proposition \ref{PropBinGroupWith} and
$$\begin {array}{ll}
\N^{\infty}\wedge_{\N^{\infty}}\N^{\infty}&\leadsto\quad K[X,Y]/(X^{k}-Y^{l})\\
\N^{\infty}/(p=\infty)\wedge_{\N^{\infty}}\N^{\infty}/(q=\infty)&\leadsto\quad K[X,Y]/(X^{k}-Y^{l}, X^{r}, Y^{m})\\
\N^{\infty}/(r=s)\wedge_{\N^{\infty}}\N^{\infty}/(m=n)&\leadsto\quad K[X,Y]/(X^{k}-Y^{l}, X^{s}-X^{r}, Y^{n}-Y^{m})
\end {array}$$
where $1\le p<k$ and $1\le q<l$ and $1\le r<s<k$ and $1\le m<n<l$. The first identiy follows with $K[\N^{\infty}]\otimes_{K[\N^{\infty}]}K[\N^{\infty}]\cong K[X]\otimes_{K[Z]}K[Y]$, where $X^{k}=Z=Y^{l}$, and the latter two from the first.

\begin {Remark}
The $K\mina$spectrum of $\N^{\infty}\wedge_{\N^{\infty}}\N^{\infty}$, $K$ a field, is given by $K\minSpec K[X,Y]/(X^{k}-Y^{l})$, which is for $k=3$, $l=2$ and $K=\R$ precisely, 

\begin {center}
\begin{pspicture}(-2,-2.5)(2,2)
\qdisk (0.6,0.6){1.75pt}\qdisk (0,0){1.75pt}
\uput [0] (-1.25,-2.2){\small{Neil's parabola.}}
\psline [linewidth=0.5 pt, linestyle=dotted] (0,-1.5)(0,1.5)
\psline [linewidth=0.5 pt, linestyle=dotted] (-1.5,0)(1.5,0)
\pscurve[linewidth=0.5pt](0,0)(0.3,0.2)(0.6,0.6)(0.9,1.5)
\pscurve[linewidth=0.5pt](0,0)(0.3,-0.2)(0.6,-0.6)(0.9,-1.5)
\end{pspicture}
\end {center}
\end {Remark}

In the last section, cf. Example \ref{ExpNsmashNcancellative}, we have already shown that $\N^{\infty}\wedge_{\N^{\infty}}\N^{\infty}$ is a positive and cancellative $\N^{\infty}\mina$binoid that admits a unique minimal generating set consisting of the elements $1\wedge_{\N^{\infty}}0$ and $0\wedge_{\N^{\infty}}1$. 

The same holds true for the latter two $\N^{\infty}\mina$binoids in the list above since they are isomorphic to $(\N^{\infty}\wedge_{\N^{\infty}}\N^{\infty})/(\Rcal_{i}, i\in I)$, where the respective additional relations $\Rcal_{i}$ do not affect these properties. Obviously, $\N^{\infty}/(p=\infty)\wedge_{\N^{\infty}}\N^{\infty}/(q=\infty)$ is not reduced  as $p(1\wedge_{\N^{\infty}}0)=\infty_{\wedge}$, hence not torsion-free. By Lemma \ref{LemLoop}, the binoid $\N^{\infty}/(r=s)\wedge_{\N^{\infty}}\N^{\infty}/(m=n)$ is not torsion-free if $s-r\ge2$ or $n-m\ge2$. The remaining case $r=m=1$ and $s=n=2$ (the boolean case) is excluded as the mixed relation would enforce $x=y$. As announced in the last section, we will 
now determine when $\N^{\infty}\wedge_{\N^{\infty}}\N^{\infty}$ is torsion-free.

\begin{Lemma} \label{LemNwedgeNtorsionfree}
Consider the $\N^{\infty}\mina$binoid $\N^{\infty}\wedge_{\N^{\infty}}\N^{\infty}$ from above (see also Example \ref{ExpNsmashNcancellative}). The $\N^{\infty}\mina$binoid homomorphism 
$$\psi:\N^{\infty}\wedge_{\N^{\infty}}\N^{\infty}\Rto\N^{\infty}\quad\text{with}\quad n\wedge_{\N^{\infty}} m\lto\frac{nl+mk}{\gcd(k,l)}$$
is injective if and only if $\gcd(k,l)=1$. In particular, $\N^{\infty}\wedge_{\N^{\infty}}\N^{\infty}$ is torsion-free (and a subbinoid of $\N^{\infty}$) if and only if $k$ and $l$ are coprime.
\end {Lemma}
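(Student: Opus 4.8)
The plan is to read off the equivalence directly from the normal form for elements of $\N^{\infty}\wedge_{\N^{\infty}}\N^{\infty}$ established in Example \ref{ExpNsmashNcancellative}: every element $\ne\infty_{\wedge}$ has a unique representation $n\wedge_{\N^{\infty}}m$ with $0\le m<l$. Since $\psi$ is already known to be a well-defined binoid homomorphism by Example \ref{ExpNSmashNMap}, and since $\psi(n\wedge_{\N^{\infty}}m)=\frac{nl+mk}{\gcd(k,l)}$ is finite for finite $n,m$ so that $\infty_{\wedge}$ is the only preimage of $\infty$, injectivity is a statement about the finite elements alone. Writing $d:=\gcd(k,l)$, it amounts to asking when $nl+mk=n'l+m'k$ forces $n\wedge_{\N^{\infty}}m=n'\wedge_{\N^{\infty}}m'$ for representatives in normal form.

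For the coprime case $d=1$ I would argue as follows. Suppose $n\wedge_{\N^{\infty}}m$ and $n'\wedge_{\N^{\infty}}m'$ are in normal form, so $m,m'<l$, and that $\psi(n\wedge_{\N^{\infty}}m)=\psi(n'\wedge_{\N^{\infty}}m')$, i.e.\ $nl+mk=n'l+m'k$. Rearranging gives $(n-n')l=(m'-m)k$, and $\gcd(k,l)=1$ forces $l\mid(m'-m)$; since $|m'-m|<l$ this yields $m=m'$, whence also $n=n'$. Thus the two normal forms agree and $\psi$ is injective.

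For the non-coprime case $d\ge2$ I would exhibit an explicit failure of injectivity that simultaneously produces a torsion element. Write $k=d\tilde{k}$ and $l=d\tilde{l}$ with $\gcd(\tilde{k},\tilde{l})=1$, and set $x:=1\wedge_{\N^{\infty}}0$ and $y:=0\wedge_{\N^{\infty}}1$, so that the defining relation reads $kx=ly$. The elements $\tilde{k}x=\tilde{k}\wedge_{\N^{\infty}}0$ and $\tilde{l}y=0\wedge_{\N^{\infty}}\tilde{l}$ are both already in normal form, as $0,\tilde{l}<l$, and are distinct because $\tilde{k},\tilde{l}\ge1$; yet a one-line computation gives $\psi(\tilde{k}x)=\tilde{k}\tilde{l}=\psi(\tilde{l}y)$, so $\psi$ is not injective. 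Moreover $d(\tilde{k}x)=kx=ly=d(\tilde{l}y)$ with $\tilde{k}x\ne\tilde{l}y$ and $d\ge2$ exhibits $\tilde{k}x$ as a torsion element, so the binoid is not torsion-free.

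Finally the supplement follows formally: if $d=1$ then $\psi$ embeds $\N^{\infty}\wedge_{\N^{\infty}}\N^{\infty}$ as a subbinoid of $\N^{\infty}$, which is torsion-free, and torsion-freeness passes to subbinoids; if $d\ge2$ the torsion element produced above shows that the binoid is neither torsion-free nor embeddable in $\N^{\infty}$. I expect the only real care to lie in the bookkeeping of the normal form — ensuring that the representatives compared in the coprime case genuinely satisfy $m,m'<l$, and that the witnesses $\tilde{k}x$ and $\tilde{l}y$ in the non-coprime case are in normal form — rather than in any deep argument, since the two short divisibility and computation steps do all the work once the normal form is in hand.
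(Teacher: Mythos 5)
Your proof is correct and follows essentially the same route as the paper's: the non-coprime direction is the identical torsion witness $d(\tilde{k}x)=kx=ly=d(\tilde{l}y)$ with $\tilde{k}x\ne\tilde{l}y$, and the coprime direction is the same divisibility argument, merely organized via the normal form $m<l$ from Example \ref{ExpNsmashNcancellative} rather than by applying the defining relation $(n+k)\wedge_{\N^{\infty}}m=n\wedge_{\N^{\infty}}(m+l)$ the required number of times as the paper does. No gaps.
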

\begin {proof}
Recall that $\psi$ is the well-defined $\N^{\infty}\mina$binoid homomorphism induced by the $\N^{\infty}\mina$binoid homomorphisms $\N^{\infty}\rto\N^{\infty}$, $1\mto l/\gcd(k,l)$, when the codomain is the $\N^{\infty}\mina$binoid via $1\mto k$, and $1\mto k/\gcd(k,l)$ if the codomain is the $\N^{\infty}\mina$binoid via $1\mto l$, cf.\ Example \ref{ExpNSmashNMap}.

Suppose that $\gcd(k,l)=d>1$. Then $k=k^{\prime}d$ and $l=l^{\prime}d$ for some $1\le k^{\prime}<k$ and $1\le l^{\prime}<l$. Therefore, we obtain in $\N^{\infty}\wedge_{\N^{\infty}}\N^{\infty}$,
$$d(k^{\prime}\wedge_{\N^{\infty}}0)\,=\,k\wedge_{\N^{\infty}}0\,=\,0\wedge_{\N^{\infty}}l\,=\,d(0\wedge_{\N^{\infty}}l^{\prime})$$
but $k^{\prime}\wedge_{\N^{\infty}}0\not=0\wedge_{\N^{\infty}}l^{\prime}$. This shows that $\N^{\infty}\wedge_{\N^{\infty}}\N^{\infty}$ is not torsion-free, in particular, $\psi$ is not injective. For the converse let $k$ and $l$ be coprime and
$$\psi(n\wedge_{\N^{\infty}}m)\,=\,nl+mk\,=\,rl+sk\,=\,\psi(r\wedge_{\N^{\infty}}s)$$
with, say, $n\ge r$. Then $0\le(n-r)l=(s-m)k$ which implies $(n-r)=ck$ and $(s-m)=cl$ for some $c>0$ because $\gcd(k,l)=1$. So we have $n=r+ck$ and $s=m+cl$, which yields
$$n\wedge_{\N^{\infty}}m\,=\,(r+ck)\wedge_{\N^{\infty}}m\,=\,r\wedge_{\N^{\infty}}(m+cl)\,=\,r\wedge_{\N^{\infty}}s$$
in $\N^{\infty}\wedge_{\N^{\infty}}\N^{\infty}$. Hence, $\psi$ is injective if $\gcd(k,l)=1$, which means that  $\N^{\infty}\wedge_{\N^{\infty}}\N^{\infty}$ is a subbinoid of $\N^{\infty}$ in this case and, in particular, torsion-free.
\end {proof}

\begin {Proposition} \label{PropOrbitintersection}
If $M$ is a two-generated binoid with generating set $\{x,y\}$, then $M$ is isomorphic to $\N^{\infty}\wedge_{\N^{\infty}}\N^{\infty}$, where $\N^{\infty}$ is considered as an $\N^{\infty}\mina$binoid once via $1\mto k$ and once via $1\mto l$ for some $k,l\ge2$, if and only if $M$ is a cancellative and positive binoid with $\langle x\rangle\cap\langle y\rangle\not=\trivial$.
\end {Proposition}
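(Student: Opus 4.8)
The plan is to prove both implications separately, using the characterization of $\N^{\infty}\wedge_{\N^{\infty}}\N^{\infty}$ that has already been established in Example \ref{ExpNsmashNcancellative}. That example shows that $\N^{\infty}\wedge_{\N^{\infty}}\N^{\infty}$ (with structure maps $1\mto k$ and $1\mto l$, $k,l\ge 2$) is positive and cancellative. Moreover, the two generators $1\wedge_{\N^{\infty}}0$ and $0\wedge_{\N^{\infty}}1$ satisfy $k(1\wedge_{\N^{\infty}}0)=k\wedge_{\N^{\infty}}0=0\wedge_{\N^{\infty}}l=l(0\wedge_{\N^{\infty}}1)\neq\infty_{\wedge}$, so the two cyclic subbinoids generated by the images of the generators meet nontrivially. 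Hence the forward implication is essentially a matter of transporting these properties along the isomorphism $M\cong\N^{\infty}\wedge_{\N^{\infty}}\N^{\infty}$: positivity and cancellativity are preserved by isomorphism, and if $x,y$ denote the images of the two generators then $kx=ly\neq\infty$ forces $\langle x\rangle\cap\langle y\rangle\neq\trivial$.

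For the converse, which is the substantive direction, I would start from a positive cancellative two-generated binoid $M=\langle x,y\rangle$ with $\langle x\rangle\cap\langle y\rangle\neq\trivial$, and produce the mixed relation that triggers Lemma \ref{LemLoopSmash}(3). The key first step is to show that the nontrivial intersection yields integers $k,l$ with $kx=ly\not\in\trivial$. Since $\langle x\rangle\cap\langle y\rangle\neq\trivial$, there is a common element $z\neq\infty$, $z\neq 0$, lying in both cyclic subbinoids; writing $z=kx=ly$ gives the desired relation, and positivity of $M$ (so that $x,y$ are nonunits and hence, being cancellative nonunits, loopfree by Example \ref{ExLoop}(3)) forces $k,l\ge 1$. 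I would then argue that we may take $k,l\ge 2$: if, say, $k=1$, then $x=ly\in\langle y\rangle$, so $M=\langle y\rangle$ would be one-generated, and being positive cancellative and (by Proposition \ref{PropOnegenerated}) necessarily infinite, it would be isomorphic to $\N^{\infty}$; but then $\langle x\rangle\subseteq\langle y\rangle$ collapses to a single cyclic binoid, contradicting that $\{x,y\}$ is genuinely a two-generator set. This reduction to $k,l\ge 2$ is the delicate bookkeeping part of the proof.

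Once $kx=ly$ with $k,l\ge 2$ is in hand, I would invoke Lemma \ref{LemLoopSmash}(3), which gives a surjective binoid homomorphism $\phi:\langle x\rangle\wedge_{\N^{\infty}}\langle y\rangle\rto M$ that is an isomorphism provided $kx=ly$ is the \emph{only} mixed generating relation. The remaining obstacle is therefore to verify that no further independent mixed relation can occur. Here cancellativity and positivity of $M$ do the work: any mixed relation of type $\Rcal_2^{(2)}$ ($kx+ly=\infty$) is excluded because $x,y$ are nonnilpotent (they are cancellative nonunits, hence loopfree, hence nonnilpotent, so $M$ is integral by reasoning analogous to Proposition \ref{PropOnegenerated}(1)), and any relation of type $\Rcal_1^{(2)}$ ($kx+ly=0$) is excluded by positivity. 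It remains to rule out a second relation of type $\Rcal_3^{(2)}$; I would show that in a positive cancellative binoid the cancellation of the generators forces all such relations to be consequences of the single relation $kx=ly$, so that $\langle x\rangle$ and $\langle y\rangle$ are each isomorphic to $\N^{\infty}$ (infinite, by Proposition \ref{PropOnegenerated}(4) applied to the loopfree cancellative generators). This pins down $\langle x\rangle\wedge_{\N^{\infty}}\langle y\rangle\cong\N^{\infty}\wedge_{\N^{\infty}}\N^{\infty}$ with structure maps $1\mto k$ and $1\mto l$.

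The main obstacle I anticipate is precisely the uniqueness-of-the-mixed-relation argument: establishing rigorously that in a two-generated positive cancellative binoid, a single nontrivial intersection element generates \emph{all} the mixed relations, so that the congruence defining $M$ as a quotient of $(\N^2)^{\infty}$ is generated by $ke_1=le_2$ together with unmixed relations, which (by loopfreeness of both generators) must in fact be empty. I would handle this by taking a general mixed relation $ax+by=cx+dy\neq\infty$, adding suitable multiples of $x$ and $y$ and using cancellativity to reduce it, and then comparing with $kx=ly$ via the minimality of the pair $(k,l)$ — an argument structurally parallel to the one in Lemma \ref{LemBinGroup}(2b). Everything else reduces to assembling the cited results, so the write-up should be short once this step is secured.
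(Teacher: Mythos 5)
Your proposal is correct and follows essentially the same route as the paper's proof: both directions rest on Example \ref{ExpNsmashNcancellative}, and the converse proceeds by extracting a minimal mixed relation $kx=ly$ with $k,l\ge 2$ from the nontrivial intersection and checking that it generates all mixed relations, so that the map onto $\N^{\infty}\wedge_{\N^{\infty}}\N^{\infty}$ (equivalently, Lemma \ref{LemLoopSmash}(3)) is an isomorphism. The only soft spot is your claim that integrality follows from the non-nilpotence of $x$ and $y$ alone --- this is false for general two-generated binoids (e.g.\ $\free(x,y)/(x+y=\infty)$ is reduced but not integral); here you must use the relation $kx=ly$ itself, since $ax+by=\infty$ with $a,b\ge 1$ yields $(al+kb)y=k(ax+by)=\infty$ and hence the nilpotence of $y$, which your loopfreeness argument already excludes.
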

\begin {proof}
By Example \ref{ExpNsmashNcancellative}, the binoid $\N^{\infty}\wedge_{\N^{\infty}}\N^{\infty}$ described in the proposition is positive and cancellative with unique minimal generating set $1\wedge_{\N^{\infty}}0$ and $0\wedge_{\N^{\infty}}1$. Hence, if $M\cong\N^{\infty}\wedge_{\N^{\infty}}\N^{\infty}$, the isormophism is given by $x\lrto 1\wedge_{\N^{\infty}}0$ and $y\lrto 0\wedge_{\N^{\infty}}1$ or vice versa, which implies $\langle x\rangle\cap\langle y\rangle\not=\trivial$ since $k\wedge_{\N^{\infty}}0=0\wedge_{\N^{\infty}}l$ if the structure homomorphism of $\N^{\infty}$ is given by $1\mto k$ in the left entry and by $1\mto l$ in the right entry.

For the converse note that $x$ and $y$ is the unique minimal generating set of $M$ because $M$ is positive and cancellative, cf.\ Proposition \ref{PropUniqueMinSyst}. Furthermore, $M$ is cancellative and positive if and only if $x$ and $y$ are cancellative nonunits, cf.\ Lemma \ref{LemIntCanGenerators}. By assumption, there exsits a $k\ge2$ minimal with respect to $kx=ly\not=\infty$ for some $l\ge2$. Then $l$ is unique since the subbinoid $\langle y\rangle$ generated by the cancellative nonunit $y$ is loopfree by Proposition \ref {PropOnegenerated}. To see that $l$ is also minimal with respect to $ly\in\langle x\rangle$ assume that  $nx=my$ with $m<l$. Then $mkx=mly=lnx$, where $ln>km\ge2$ because $n>k$ and $l>m$, which yields a non-trivial equation $0=(ln-km)x$ by the cancellativity of $x$ and therefore a contradiction to $x\not\in M\okreuz$. In particular, $\langle x\rangle\cap\langle y\rangle=\langle lx\rangle=\langle ky\rangle$ and $M\rto\N^{\infty}\wedge_{\N^{\infty}}\N^{\infty}$ with $x\mto 1\wedge_{\N^{\infty}}0$ and
 $y\mto0\wedge_{\N^{\infty}}1$ is a well-defined binoid isomorphism because there are by assumption no other generating relations on $M$ than $\Rcal:kx=ly$
\end {proof}

Now we consider the special case of $\Rcal_{2}^{(2)}:kx+ly=\infty$ when $k=l=1$ (i.e.\ $x+y=\infty$).

\begin {Proposition} 
Let $M$ be a finitely generated binoid with minimal generating set $\{x_{i}\mid i\in I\}$. If $x_{i}+x_{j}=\infty$ for all $i\not=j$, then $M$ is isomorphic to
$$\bigcupbidot_{k=1}^{m}M_{k}\quad\text{with}\quad M_{k}:=\langle x_{i}\mid i\in I_{k}\rangle\komma$$
where $I_{1}\kpkt I_{m}$ are the equivalence classes with respect to the relation on $I$ generated by 
$$i\sim j\quad:\eq\quad\langle x_{i}\rangle\cap\langle x_{j}\rangle\not=\trivial\pkt$$
If $\# I_{k}=1$, then $M_{k}$ is isomorphic to $\N^{\infty}$, $\N^{\infty}/(r=s)$ with $1\le r<s$, or $\N^{\infty}/(n=\infty)$ with $n\ge2$. If $\# I_{k}\ge 2$, then
$$M_{k}\,\,\cong\,\,\bigcupbidot_{i\in I_{k}}\N^{\infty}\Big/\big((n_{i};i)=(n_{j};j)\not=\infty\text{ for all }i,j\in I_{k}\big)$$
for some $n_{i}\ge 2$, $i\in I_{k}$, which may be displayed in the following way:
\begin {center}
\begin {pspicture}(-2,-1)(2,1)
\qdisk (1.5,0){2pt}\qdisk (-1.5,0){2pt}\qdisk (2.5,0){2pt}
\uput [0] (-2.2,0){\scriptsize{$0$}}
\uput [0] (-0.575,0){\scriptsize{\text{$\langle(1;i)\rangle$}}}
\uput [0] (1.3,0.25){\scriptsize{$(n_{i};i)$}}
\uput [0] (2.6,0){\scriptsize{$\infty=(n_{i}+1;i)$}}
\pcarc [arcangleA=30, arcangleB=30, linewidth=.5pt] (1.5,0)(-1.5,0)
\pcarc [arcangleA=330, arcangleB=330, linewidth=.5pt] (1.5,0)(-1.5,0)
\pcarc [arcangleA=70, arcangleB=70, linewidth=.5pt] (1.5,0)(-1.5,0)
\pcarc [arcangleA=290, arcangleB=290, linewidth=.5pt] (1.5,0)(-1.5,0)
\psline [linewidth=.5 pt](-1.5,0)(-0.5,0)
\psline [linewidth=.5 pt](0.5,0)(1.5,0)
\psline [arrows=->,linewidth=.5 pt] (1.5,0)(2.1,0)
\psline [linewidth=.5 pt](2.1,0)(2.5,0)
\end {pspicture}
\end {center}
In particular, if $\langle x_{i}\rangle\cap\langle x_{j}\rangle=\trivial$ for all $i\not=j$, then $M$ is isomorphic to the bipointed union of one-generated positive binoids.
\end {Proposition}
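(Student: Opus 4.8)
The plan is to reduce everything to the observation that the hypothesis $x_i+x_j=\infty$ for $i\neq j$ forces every non-absorbing element of $M$ to be a multiple of a \emph{single} generator. Indeed, if $f\in M\opkt$ is written as $f=\sum_i n_i x_i$ with two distinct indices $i\neq j$ having $n_i,n_j\geq 1$, then $f$ contains the summand $x_i+x_j=\infty$, so $f=\infty$; hence $f=nx_i$ for one index $i$ (and $f=0$ otherwise). First I would use this to rule out unit generators when $\#I\geq 2$: if $x_i$ were a unit, then from $-x_i+(x_i+x_j)=x_j$ together with $x_i+x_j=\infty$ one gets $x_j=-x_i+\infty=\infty$, contradicting that $x_j$ belongs to a minimal generating set. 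A short additional argument (writing a hypothetical nonzero unit in single-generator form and using $x_i+x_j=\infty$) then shows $M\okreuz=\{0\}$, so $M$ is positive; consequently every subbinoid $M_k$ is positive, and by Corollary \ref{CorClassificationOnegenerated} each $\langle x_i\rangle$ is one of the three positive one-generated types $\N^\infty$, $\N^\infty/(r=s)$, $\N^\infty/(n=\infty)$. This is what makes the bipointed union of the $M_k$ legitimate.

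Next I would analyse the relation $\sim$. The key computation is that a nontrivial intersection forces nilpotence at the top: if $nx_i=mx_j=:c\notin\trivial$ with $i\neq j$ and $n,m\geq 1$, then $c+x_i=mx_j+x_i=\infty$, so $(n+1)x_i=\infty$ and likewise $(m+1)x_j=\infty$. Thus $x_i,x_j$ are nilpotent, $\langle x_i\rangle\cong\N^\infty/((n+1)=\infty)$, and the only element of $\langle x_i\rangle\cap\langle x_j\rangle$ besides $0,\infty$ is the common \emph{top} $c$. Propagating this along a connecting path shows that in a class $I_k$ with $\#I_k\geq 2$ all generators $x_i$, $i\in I_k$, are nilpotent and share a single top element $c_k=n_ix_i$ (independent of $i$), with $c_k+x_i=\infty$. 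Since, once the tops are identified, the bipointed-union rule already sends $(n_i;i)+(1;i)=(n_j;j)+(1;i)$ to $\infty$, the quotient automatically truncates each chain, and I would conclude $M_k\cong\bigcupbidot_{i\in I_k}\N^\infty/\big((n_i;i)=(n_j;j)\neq\infty\big)$; for $\#I_k=1$ the class contributes the one-generated positive binoid $\langle x_i\rangle$ directly.

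To assemble the global isomorphism I would invoke the universal property of the bipointed union (Proposition \ref{PropUnivPropPUnion}). The inclusions $\iota_k\colon M_k\embto M$ satisfy $\iota_k(a)+\iota_l(b)=\infty$ whenever $k\neq l$ and $a,b\neq 0$: writing $a=nx_i$, $b=px_j$ in single-generator form with $i\in I_k$, $j\in I_l$ and $i\neq j$, the sum contains $x_i+x_j=\infty$. Hence there is a unique binoid homomorphism $\Phi\colon\bigcupbidot_{k}M_k\rto M$ restricting to each $\iota_k$. Surjectivity is immediate from the single-generator form. For injectivity, suppose $\Phi((a;k))=\Phi((b;l))\notin\trivial$; writing the images as $nx_i$ and $px_j$, either $i=j$, or $nx_i=px_j\notin\trivial$ forces $\langle x_i\rangle\cap\langle x_j\rangle\neq\trivial$ and hence $i\sim j$; in both cases $k=l$, whereupon injectivity of the subbinoid inclusion $\iota_k$ gives $a=b$. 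The final assertion (when $\langle x_i\rangle\cap\langle x_j\rangle=\trivial$ for all $i\neq j$) is the special case where every class is a singleton, so $M\cong\bigcupbidot_i\langle x_i\rangle$.

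The main obstacle will be the \emph{merging} bookkeeping inside a class: carefully proving that a nontrivial intersection can occur \emph{only} at the tops of the chains, that all these tops collapse to one element $c_k$, and that identifying them in the bipointed union reproduces exactly the truncation $(n_i+1)x_i=\infty$ with no spurious extra relations. The positivity step (excluding unit generators, so that the three listed one-generated types suffice and the bipointed union applies) is a necessary preliminary that is easy to overlook.
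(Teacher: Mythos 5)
Your proposal is correct and follows essentially the same route as the paper's proof: positivity of $M$, the universal property of the bipointed union for the global epimorphism, and the key computation that a nontrivial intersection $nx_{i}=mx_{j}\not=\infty$ forces $(n+1)x_{i}=(m+1)x_{j}=\infty$, so intersections occur only at the unique nilpotent tops and each $n_{i}$ is determined by $i$ alone, which also gives the injectivity. You spell out some steps the paper leaves implicit (the single-generator form of every nonzero element, the exclusion of unit generators, and the propagation of the common top along a connecting path within an equivalence class), but the decomposition and the verification that the quotient of the bipointed union introduces no spurious identifications are the same as in the paper.
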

\begin {proof}
The property $x_{i}+x_{j}=\infty$ for all $i\not=j$ implies the positivity of $M$. Let $M_{k}\subseteq M$ be as defined in the proposition. If $y\in M_{k}$ and $z\in M_{k^{\prime}}$, $k\not=k^{\prime}$, with $y,z\not=0$, then $y+z=\infty$ by the construction of $M_{k}$. Hence, there exists by Proposition \ref {PropUnivPropPUnion} a canonical epimorphism
$$\bigcupbidot_{k=1}^{m}M_{k}\Rto M\pkt$$
The injectivity follows from the injectivity of the embeddings $M_{k}\embto M$ and the fact that $M_{k}\cap M_{k^{\prime}}=\trivial$ for $k\not=k^{\prime}$. The description for $\#I_{k}=1$ is clear from Corollary \ref {CorClassificationOnegenerated}.
 
For the description of $M_{k}$ when $\#I_{k}\ge2$, we may treat the $M_{k}$ separately. So assume that $\langle x_{i}\rangle\cap\langle x_{j}\rangle\not=\trivial$ for $i,j\in I_{k}$ and $M_{k}=M$. Again by Proposition \ref{PropUnivPropPUnion}, there exists a binoid epimorphism
 $$\bigcupbidot_{i\in I}\N^{\infty} \Rto M\quad\text{with}\quad (m_{i};i)\lto m_{i}x_{i}\pkt$$
Let $i\not=j$. By assumption, there are $n_{i},n_{j}\ge2$ such that $n_{i}x_{i}=n_{j}x_{j}\not=\infty$. It follows $(n_{i}+1)x_{i}=n_{j}x_{j}+x_{i}=\infty$ and similarly $(n_{j}+1)x_{j}=\infty$. In particular, $n_{i}$  is minimal with respect to the property $n_{i}x_{i}\not=\infty$ and $(n_{i}+1)x_{i}=\infty$, hence it is in particular independent of $j$. Therefore, we get by Lemma \ref{LemIndCong} a surjective factorization
$$\Big(\bigcupbidot_{i\in I}\N^{\infty}\Big)\Big/\big((n_{i};i)=(n_{j};j)\not=\infty\text{ for all }i,j\in I_{k}\big)\Rto M\komma$$
which is obviously injective on the components. If $rx_{i}=sx_{j}$ in $M$ for $i\not=j$, then $r\ge n_{i}$ and $s\ge n_{j}$ by the definition of the $n_{i}$s, so the homomorphism is injective. The supplement is clear.
\end {proof}

By the preceding proposition, those two-generated binoids whose generators $x$ and $y$ fulfill $x+y=\infty$ are up to isomorphism given by the following seven binoids. For $n,m\ge 1$ and $l,k\ge 2$:
$$\begin {array}{ll}
\N^{\infty}\cupbidot\,\N^{\infty}&\leadsto\quad K[X,Y]/(XY)\\
\N^{\infty}\cupbidot\,\N^{\infty}/(k=\infty)&\leadsto\quad K[X,Y]/(XY,Y^{k})\\
\N^{\infty}\cupbidot\,\N^{\infty}/(m=l)&\leadsto\quad K[X,Y]/(XY,Y^{m}-Y^{l})\\
\N^{\infty}/(k=\infty)\cupbidot\,\N^{\infty}/(l=\infty)&\leadsto\quad K[X,Y]/(XY,X^{k},Y^{l})\\
\N^{\infty}/(k=\infty)\cupbidot\,\N^{\infty}/(m=l)&\leadsto\quad K[X,Y]/(XY,X^{k},Y^{m}-Y^{l})\\
\N^{\infty}/(n=k)\cupbidot\,\N^{\infty}/(m=l)&\leadsto\quad K[X,Y]/(XY,X^{n}-X^{k},Y^{m}-Y^{l})\\
(\N^{\infty}\cupbidot\N^{\infty})/((k;1)=(l;2)\not=\infty)&\leadsto\quad K[X,Y]/(XY,X^{k}-Y^{l})
\end {array}$$

We close this section with a general study of properties of two-generated binoids.

\begin {Proposition}\label{PropTwoGenerated}
Let $M$ be a two-generated binoid with generating set $\{x,y\}$.
\begin {ListeTheorem}
\item If $M$ is not cancellative but contains a non-trivial cancellative element, then $M$ is integral if and only if one generator, say $x$, is regular and the other generator, $y$, is not nilpotent. In this case,
\begin {ListeTheorem}
\item [(a)]  $\opcan(M)^{\infty}=\langle x\rangle$.
\item [(b)] $M$ is torsion-free if and only if $ky$ is no torsion element for all $k\ge1$.
\item [(c)] $M$ is finite if and only if $M^{\times}=\langle x\rangle\opkt$ and $ny=my\not=\infty$ for some $n,m\ge1$. In this situation, $\langle x\rangle\cap\langle y\rangle=\trivial$.
\end {ListeTheorem}
\item [(2)] If $M$ is finite with no non-trivial cancellative elements, then $x$ and $y$ have a loop. If the initial pairs are $1\le n<m$ and  $1\le r<s$, then
\begin {ListeTheorem}
\item [(a)] $M$ is reduced if and only if $M$ is isomorphic to
$$\N^{\infty}/(r=s)\,\bigcupbidot\,\N^{\infty}/(n=m)$$
or $nx+ry\not=\infty$.
\item [(b)] $M$ is integral if and only if $nx+ry\not=\infty$.
\item [(c)] $M$ is torsion-free if and only if $M$ is boolean.
\end {ListeTheorem}
\end{ListeTheorem}
\end {Proposition}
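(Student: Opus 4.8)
The plan is to reduce every assertion to the classification of one-generated binoids (Corollary \ref{CorClassificationOnegenerated}), the loop terminology together with Lemma \ref{LemLoop}, and the two testing criteria: integrality and cancellativity may be checked on a generating set (Lemma \ref{LemIntCanGenerators}), and an integral binoid with a non-cancellative generating set has no non-trivial cancellative element (Lemma \ref{LemCanM=0}). Throughout I write a general element as $\alpha x+\beta y$ and exploit that $\langle x\rangle,\langle y\rangle$ are each one of the four types of Corollary \ref{CorClassificationOnegenerated}.

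For part (1) I first treat the characterization of integrality. If $M$ is integral, then it is reduced, so neither generator is nilpotent; since $M$ carries a non-trivial cancellative element, Lemma \ref{LemCanM=0} forbids $\{x,y\}$ from consisting entirely of non-cancellative elements, so one generator, say $x$, is cancellative, hence regular (it is integral because $M$ is). Both generators cannot be cancellative, for then $M$ would be cancellative by Lemma \ref{LemIntCanGenerators}; thus $y$ is the non-cancellative, non-nilpotent generator. Conversely, if $x$ is regular and $y$ is not nilpotent, I show $y$ is integral: an equation $y+b=\infty$ with $b=\gamma x+\delta y\neq\infty$ must have $\gamma\ge 1$ (otherwise $(\delta+1)y=\infty$ makes $y$ nilpotent), and peeling off the summands $x$ one at a time via the integrality of $x$ forces $(\delta+1)y=\infty$, a contradiction; hence $y$ is integral and $M$ is integral by Lemma \ref{LemIntCanGenerators}. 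For (a) the inclusion $\langle x\rangle\subseteq\opcan(M)^{\infty}$ follows by cancelling one $x$ at a time (integrality keeps all partial sums finite), while the reverse inclusion holds because a cancellative $c=\alpha x+\beta y$ with $\beta\ge 1$ would inherit the non-cancellativity of $y$. Part (c) uses Lemma \ref{LemmaCanc}(2): a finite regular $\langle x\rangle$ is a binoid group, so $x$ is a unit and $M\okreuz=\langle x\rangle\opkt$ (units lie in $\opcan(M)^{\infty}=\langle x\rangle$), finiteness of $\langle y\rangle$ gives a loop $ny=my\neq\infty$, and $\langle x\rangle\cap\langle y\rangle=\trivial$ because $kx=ly\neq\infty$ would make $y$ invertible, contradicting that $y$ is not cancellative.

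For part (2), finiteness together with the absence of cancellative elements forces both $x$ and $y$ to have loops (a loopfree non-unit is cancellative, Example \ref{ExLoop}(3)), with initial pairs $n<m$ and $r<s$. Claim (c) is the cleanest: by Lemma \ref{LemLoop} torsion-freeness rules out loops of length $\ge 2$ and length-one loops with initial pair $\ge 2$, and by Lemma \ref{LemTorsion}(1) it rules out nilpotents, so the only surviving possibility is that $x$ and $y$ are idempotent, whence every element $\alpha x+\beta y$ equals one of $0,x,y,x+y,\infty$ and $M$ is boolean; the converse is Lemma \ref{LemTorsion}(2). For (b), integrality is tested on $\{x,y\}$ by Lemma \ref{LemIntCanGenerators}; the forward direction is immediate since $nx,ry\in M\opkt$ gives $nx+ry\neq\infty$, and for the converse I verify that $nx+ry\neq\infty$ keeps every product $\alpha x+\beta y$ different from $\infty$ by reducing both coordinates into their loops and comparing with the critical element $nx+ry$. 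Finally (a) splits on this critical element: if $nx+ry\neq\infty$ then $M$ is integral by (b), hence reduced; if $nx+ry=\infty$, then $M$ is reduced exactly when there is no mixed relation $kx=ly\neq\infty$, in which case the universal property of the bipointed union (Proposition \ref{PropUnivPropPUnion}) together with Lemma \ref{LemPointedComposition} identifies $M$ with $\N^{\infty}/(r=s)\cupbidot\N^{\infty}/(n=m)$, which is reduced.

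The main obstacle is the $nx+ry=\infty$ branch of (a): I must show that a mixed relation $kx=ly\neq\infty$ combined with $nx+ry=\infty$ produces a non-trivial nilpotent, so that reducedness fails and $M$ is forced to be the bipointed union. When $k\ge n$ this is quick, since $kx+ry=(k-n)x+(nx+ry)=\infty$ forces $(l+r)y=ly+ry=kx+ry=\infty$, i.e.\ $y$ nilpotent; the delicate case is $k<n$, where $kx$ lies in the pre-loop tail and one must invoke minimality of the loop data to reduce to the previous case. A secondary subtlety is the torsion criterion (1)(b): torsion-freeness of $M$ forces $\langle x\rangle\cong\N^{\infty}$ (a finite regular $\langle x\rangle$ would contribute a torsion unit), after which one cancels the free generator $x$ and reads off torsion-freeness from the multiples $ky$; separating the contribution of $x$ from that of $y$ is exactly where care is required.
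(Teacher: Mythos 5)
Parts (1), (2)(b) and (2)(c) of your proposal follow essentially the paper's own arguments (the paper re-derives the cancellativity of $x$ directly from the given cancellative element $rx+sy$ instead of quoting Lemma \ref{LemCanM=0}, but that is cosmetic). The genuine gap is in the branch $nx+ry=\infty$ of (2)(a). You assert that in this branch $M$ is reduced exactly when there is no mixed relation $kx=ly\neq\infty$, and that the absence of such a relation lets Proposition \ref{PropUnivPropPUnion} identify $M$ with the bipointed union. Neither half is tenable. The universal property of the bipointed union needs $ix+jy=\infty$ for all $i,j\ge 1$ --- in particular $x+y=\infty$ --- and the absence of identifications $kx=ly$ gives you nothing of the sort, so the comparison map you want is not even defined. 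Worse, the asserted equivalence is false: in $M=\free(x,y)/(3x=2x,\,3y=2y,\,2x+2y=\infty)$ one has $n=r=2$, $m=s=3$, hence $nx+ry=\infty$, there is no relation $kx=ly\neq\infty$, and $M$ is a finite two-generated binoid with no non-trivial cancellative elements; yet $M$ is not reduced, because $x+y\neq\infty$ while $2(x+y)=2x+2y=\infty$, and $M$ (nine elements) is not the bipointed union (six elements).

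That example also points to the missing idea: the element to test is $x+y$ itself, not the identifications $kx=ly$. If $nx+ry=\infty$ and $x+y\neq\infty$, then for $k=\max(n,r)$ one has $k(x+y)=nx+ry+\bigl((k-n)x+(k-r)y\bigr)=\infty$, so $x+y$ is a non-trivial nilpotent and $M$ is not reduced. Consequently, reducedness in this branch forces $x+y=\infty$ (and $x,y$ non-nilpotent), which annihilates every mixed element at once; one then checks that no identification $ix=jy\neq\infty$ can survive (adding $y$ to it would make $y$ nilpotent), so $M=\langle x\rangle\cupbidot\langle y\rangle\cong\N^{\infty}/(n=m)\cupbidot\N^{\infty}/(r=s)$, and the converse direction is Lemma \ref{LemPointedComposition}(3). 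With this observation your ``delicate case $k<n$'' evaporates: you never need to analyse mixed relations $kx=ly$ separately.
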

\begin {proof}
(1) By assumption, there is a cancellative element $rx+sy\not=0$, $r,s\in\N$. We may assume that $r\not=0$, the case $s\not=0$ follows similarly. The implication $\Leftarrow$ is clear because every equation $rx+sy=\infty$, $r,s\in\N$, implies $sy=\infty$ since $x$ is integral, a contradiction to $y$ not nilpotent. So let $M$ be integral. Then every equation $x+a=x+b\not=\infty$, $a,b\in M$, implies $rx+sy+a=rx+sy+b\not=\infty$, and hence $a=b$ by the cancellativity of $rx+sy$. Therefore, $x$ is cancellative and $s=0$ because $M$ is not cancellative, cf.\ Lemma \ref{LemIntCanGenerators}. In particular, $\opcan(M)^{\infty}=\langle x\rangle$, which proves (a). Since $M$ is integral, $x$ is a regular element and $y$ not nilpotent.

(b) The implication $\Rightarrow$ is trivial. So let $ky$ be no torsion element for all $k\ge 1$ and suppose that $n(rx+sy)=n(lx+ky)$ for some $r,s,l,k\in\N$ and $n\ge2$. We may assume that $r\ge l$. By the cancellativity of $x$, we deduce from this equation that $n((r-l)x+sy)=nky$. Hence, $(r-l)x+sy=ky$ since $ky$ is no torsion element. Adding $lx$ gives the desired equality $rx+sy=lx+ky$. (c) is clear since $M$ is finite if and only if the subbinoids $\langle x\rangle$ and $\langle y\rangle$ are finite. For the supplement note that $sy=rx$ implies $y\in M\okreuz$, which yields a contradiction to $M$ being not cancellative. Hence, $\langle x\rangle\cap\langle y\rangle=\trivial$.

(2) Clearly, $x$ and $y$ have a loop if $M$ is finite. If $1\le n<m$ and $1\le r<s$ are the initial pairs, then every $a\in\ M\opkt$ can be written as $a=ix+jy$ with $0\le i<m$ and $0\le j<s$.

(a) First let $M$ be reduced. Obviously, $x$ and $y$ are not nilpotent. In case $x+y=\infty$, there is no other generating relation on $x$ and/or $y$ possible, so we obtain $M\cong\N^{\infty}/(r=s)\bigcupbidot\N^{\infty}/(n=m)$.
If $x+y\not=\infty$ and $k:=\max(r,n)$, then $\infty\not=k(x+y)=nx+ry+b$ for some $b\in M$. In particular,  $nx+ry\not=\infty$. Conversely, if $M$ is  isomorphic to the given bipointed union, then $M$ is reduced. So assume that $nx+ry\not=\infty$. Note that this implies that $x$ and $y$ are not nilpotent (otherwise the loops were given by $nx=\infty$ and $ry=\infty$). If $ka=\infty$ for some $k\ge 2$ and $\infty\not=a=ix+jy\in M$  with $1\le i<m$ and $1\le j<s$, then $\infty=ka=kix+kjy=i^{\prime}x+j^{\prime}y$, where $1\le i^{\prime}\le m$ and $1\le j^{\prime}\le s$. Hence, 
$$\infty=ka+(m-i^{\prime})x+(s-j^{\prime})y=mx+sy=nx+ry\komma$$
which contradicts our assumption.
(b) The implication $\Rarrow$ is trivial. So assume that $nx+ry\not=\infty$. If $M$ is not integral, there is an equation $ix+jy=\infty$, and by assumption $n\le i<m$ and $r\le j<s$ such that $(i,j)\not=(n,r)$. Again we get a contradiction by $\infty=ix+jy+(m-i)x+(s-j)y=nx+ry$. (c) The implication $\Leftarrow$ follows from Lemma \ref{LemTorsion}(2). Conversely, if $M$ is not boolean, then $s-r\ge2$ or $m-n\ge2$, which implies that $x$ or $y$ is a torsion element by Lemma \ref {LemLoop}.
\end {proof}

\begin {Example}
Let $l\ge2$. The binoid 
$$M:=\free(x,y)/(y=y+lx)$$
is obviously positive and integral, but not cancellative because $y$ is a non-cancellative element. On the other hand, $x$ is a cancellative element. For this note that the relation $y=y+lx$ implies $sy=sy+mlx$ for all $s,m\ge1$. In particular, every element $f=rx+sy\in M$ with $s\ge1$ can be written uniquely as $r^{\prime}x+s^{\prime}y$ with $r^{\prime}<l$. Since $\langle x\rangle\cong\N^{\infty}\cong\langle y\rangle$, we only need to consider an equation like
$$nx+my=rx\quad\text{with}\quad r,n,m\ge1\quad\text{and}\quad 1\le n<l\pkt$$
Write $r=r^{\prime}+kl$ with $r^{\prime}<l$ and $k\ge0$. Adding $y$ to the equation yields
$$nx+(m+1)y=rx+y=r^{\prime}x+klx+y=r^{\prime}x+y\komma$$
which implies $n=r^{\prime}$ and $m=0$. Hence, $n=r$ since $\langle x\rangle\cong\N^{\infty}$. $M$ is also not torsion-free because $l(x+y)=lx+ly=ly$ but $y+x\not=y$. The element $y$ is a so-called unseparated element in $M$ and its existence turns $M$ (in this case) in an unseparated binoid. Unseparatedness will be discussed in Section \ref{SecSepBinoids} in more detail.  
\end {Example}

\begin {Example}
The binoid
$$M:=\free(x,y)/(2x=\infty,x+y=x,2y=y)$$
is positive, not reduced (hence not integral), and contains no non-trivial cancellative element because both generators $x$ and $y$ are not cancellative and $M=\{0,\infty, x,y\}$.
\end {Example}

At the end of Section \ref {SecIntegrality}, we are able to describe finitely generated binoids that are regular or regular and positive in detail, cf. Proposition \ref{PropClassFgRegPos}.

\bigskip

\section {Localization} \label{SecLocalization}
\markright{\ref{SecLocalization} Localization }

We want to introduce the concept of localization to binoids since this is a very powerful tool and frequently used in commutative algebra with which we will be concerned while passing to the algebra of a binoid. Due to the fact that localization itself is a problem in non-commutative algebra we make the following agreement.

\begin {Convention}
In this section, arbitrary binoids are assumed to be \emph{commutative}.
\end {Convention}

\begin {Definition}
Let $M$ be a binoid and $S$ a submonoid of $M$. We define an equivalence relation on $M\times S$ as follows: for all $a,a^{\prime}\in M$ and $s,s^{\prime}\in S$ define
$$(a,s)\sim (a^{\prime},s^{\prime})\quad:\eq\quad\exists c\in S:a+s^{\prime}+c=a^{\prime}+s+c\pkt$$
The equivalence class of $(a,s)$ will be denoted by $a\minus s$. The set of all equivalence classes \nomenclature[M5]{$M_{S}$}{localization of $M$ at $S$}
$$M_{S}:=\{a\minus s\mid a\in M, s\in S\}$$
is the \gesperrt {localization} \index{localization}of $M$ at $S$. The localization at a submonoid that is generated by a single element $f\in M$ will be abreviated by $M_{f}$.
\end {Definition}

\begin {Lemma} \label{LemLocalization}
Let $M$ be a binoid and $S$ a submonoid of $M$. The addition 
$$(a\minus s)+(a^{\prime}\minus s^{\prime}):=(a+a^{\prime})\minus (s+s^{\prime})$$
defines a binoid structure on $M_{S}$ such that $\iota_{S}:M\Rto M_{S}$, $a\lto a\minus 0$, is a binoid homomorphism, where the elements of $S$ become units and $\ker\iota_{S}=\{a\in M\mid\exists b\in S: a+b=\infty\}\subseteq\nonint(M)$, which turns $M_{S}$ in an $M\mina$binoid.
\end {Lemma}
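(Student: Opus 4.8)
The statement bundles together several routine claims about the localization $M_{S}$ that I would verify in sequence, and the whole argument is essentially a transcription of the ring-theoretic localization into the additive binoid language. First I would confirm that the proposed addition is well-defined on equivalence classes: if $a\minus s = \tilde{a}\minus\tilde{s}$ and $a'\minus s' = \tilde{a}'\minus\tilde{s}'$, then by definition there are $c,c'\in S$ with $a+\tilde{s}+c = \tilde{a}+s+c$ and $a'+\tilde{s}'+c' = \tilde{a}'+s'+c'$; adding these two equations and using commutativity of $M$ rearranges the summands to show that $(a+a')\minus(s+s')$ and $(\tilde{a}+\tilde{a}')\minus(\tilde{s}+\tilde{s}')$ are related via the element $c+c'\in S$. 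This is the step where commutativity is genuinely used, which is exactly why the convention restricting to commutative binoids is in force in this section.

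Once addition is well-defined, I would check the binoid axioms on $M_{S}$: associativity and commutativity follow immediately from those of $M$ applied componentwise; the identity element is $0\minus 0$ and the absorbing element is $\infty\minus 0$, since $(\infty\minus 0)+(a\minus s) = \infty\minus s$ and one checks $\infty\minus s = \infty\minus 0$ directly from the equivalence relation (take $c=0$, using $\infty + s = \infty$). Next I would verify that $\iota_{S}\colon a\mapsto a\minus 0$ is a binoid homomorphism --- it respects the operation because $(a\minus 0)+(a'\minus 0) = (a+a')\minus 0$, it sends $0$ to $0\minus 0$ and $\infty$ to $\infty\minus 0$. To see that every $s\in S$ becomes a unit, I would exhibit the inverse of $\iota_{S}(s) = s\minus 0$, namely $0\minus s$, since $(s\minus 0)+(0\minus s) = s\minus s = 0\minus 0$, where the last equality holds because $s + 0 + 0 = 0 + s + 0$.

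The computation of the kernel is the one place demanding a little care. By Definition \ref{DefKer}, $\ker\iota_{S} = \{a\in M\mid a\minus 0 = \infty\minus 0\}$, and $a\minus 0 = \infty\minus 0$ means there is $c\in S$ with $a + 0 + c = \infty + 0 + c$, i.e.\ $a + c = \infty$. This gives the asserted description $\ker\iota_{S} = \{a\in M\mid\exists\, b\in S : a+b=\infty\}$. The inclusion $\ker\iota_{S}\subseteq\nonint(M)$ then follows because if $a+b=\infty$ for some $b\in S$, and $b\neq\infty$ (which holds whenever $\infty\notin S$; if $\infty\in S$ the localization is the zero binoid and the claim is vacuous), then $a$ witnesses a non-integral element, so $a\in\nonint(M)$. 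Finally, $M_{S}$ is an $M\mina$binoid via the structure homomorphism $\iota_{S}$, with the operation $a + (x\minus s) := (a+x)\minus s$; this is automatic from Definition of an $N\mina$binoid once $\iota_{S}$ is known to be a binoid homomorphism into the commutative binoid $M_{S}$, since commutativity guarantees the compatibility condition $\iota_{S}(a)+z = z+\iota_{S}(a)$ for all $z\in M_{S}$. The main obstacle, such as it is, is purely bookkeeping: keeping the auxiliary cancellation elements $c\in S$ tracked correctly through the well-definedness argument, since the equivalence relation is not simply $a+s'=a'+s$ but requires an extra summand from $S$.
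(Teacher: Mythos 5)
Your proof is correct and follows exactly the route the paper takes; the paper's own proof merely declares the well-definedness and homomorphism checks "immediate" and records only the unit computation $(s\minus 0)+(0\minus s)=s\minus s\sim 0\minus 0$, which you reproduce. Your added details (summing the two defining equations for well-definedness, identifying $\infty\minus s=\infty\minus 0$, and unwinding $\ker\iota_{S}$ via $a+0+c=\infty$) are the intended verifications, and your parenthetical on the degenerate case $\infty\in S$ is consistent with the paper's remark that $M_{S}=\zero$ there.
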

\begin {proof}
It is immediate to check that the operation is well-defined and that the map is a homomorphism. If $s\in S$, then $(s\minus0)+(0\minus s)=s\minus s\sim0\minus0$. Hence, $s\minus0\in M_{S}\okreuz$.
\end {proof}

By the preceding lemma, one has $M_{S}=M_{S+M\okreuz}$. Moreover, $M_{S}=M$ if and only if $S\subseteq M^{\times}$, and $M_{S}=\zero$ if and only if $\infty\in S$.

\begin {Proposition} \label{PropUnivPropLocalization}
Given a binoid homomorphism $\varphi:M\rto N$ and a submonoid $S\subseteq M$ with $\varphi(S)\subseteq N\okreuz$, there is a unique $M\minus\,$binoid homomorphism $\varphi_{S}:M_{S}\rto N$ such that the diagram
$$\xymatrix{
M\ar[r]^{\varphi}\ar[d]_{\iota_{S}}&N\\
M_{S}\ar[ur]_{\varphi_{S}}&}$$
commutes.
\end {Proposition}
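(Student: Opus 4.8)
The plan is to define $\varphi_{S}$ by the only formula that the unit hypothesis and commutativity allow, and then to verify that it is well defined, a homomorphism of $M\mina$binoids, and the unique such map. Concretely, I would set
$$\varphi_{S}(a\minus s)\,:=\,\varphi(a)+(\minus\varphi(s))\komma$$
where $\minus\varphi(s)$ denotes the additive inverse of $\varphi(s)$ in $N$. This makes sense precisely because $\varphi(s)\in N\okreuz$ by the hypothesis $\varphi(S)\subseteq N\okreuz$, so the inverse exists and is unique.

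The main step, and the one point that requires care, is well-definedness. Suppose $(a,s)\sim(a^{\prime},s^{\prime})$, i.e.\ $a+s^{\prime}+c=a^{\prime}+s+c$ for some $c\in S$. Applying the homomorphism $\varphi$ gives $\varphi(a)+\varphi(s^{\prime})+\varphi(c)=\varphi(a^{\prime})+\varphi(s)+\varphi(c)$ in $N$. Since $\varphi(s),\varphi(s^{\prime}),\varphi(c)$ all lie in $N\okreuz$, I would add $\minus\varphi(s)+(\minus\varphi(s^{\prime}))+(\minus\varphi(c))$ to both sides; by commutativity of $N$ the pairs $\varphi(s^{\prime})+(\minus\varphi(s^{\prime}))$ and $\varphi(c)+(\minus\varphi(c))$ collapse to $0$, leaving $\varphi(a)+(\minus\varphi(s))=\varphi(a^{\prime})+(\minus\varphi(s^{\prime}))$, which is exactly $\varphi_{S}(a\minus s)=\varphi_{S}(a^{\prime}\minus s^{\prime})$. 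The cancellation of the auxiliary element $c$ from the definition of the equivalence relation, using that $\varphi(c)$ is a unit, is the crux; everything else follows from $N$ being commutative and $\varphi$ a homomorphism. The remaining checks are routine: $\varphi_{S}$ sends $0\minus0$ to $0$ and the absorbing element $\infty\minus0$ to $\varphi(\infty)=\infty$ (no unit equals $\infty$, so the formula never inverts $\infty$), and it respects addition since $\varphi_{S}((a\minus s)+(a^{\prime}\minus s^{\prime}))=\varphi(a+a^{\prime})+(\minus\varphi(s+s^{\prime}))=\varphi(a)+\varphi(a^{\prime})+(\minus\varphi(s))+(\minus\varphi(s^{\prime}))$. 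The relation $\varphi_{S}\iota_{S}=\varphi$ is immediate because $\varphi_{S}(a\minus0)=\varphi(a)$, so $\varphi_{S}$ is an $M\mina$binoid homomorphism.

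For uniqueness, I would argue that the value of any $M\mina$binoid homomorphism $\psi:M_{S}\rto N$ is forced. Such a $\psi$ satisfies $\psi\iota_{S}=\varphi$, hence $\psi(a\minus0)=\varphi(a)$; in particular $\psi(s\minus0)=\varphi(s)$. Since $(s\minus0)+(0\minus s)=0\minus0$ is the identity, applying $\psi$ shows $\psi(0\minus s)$ is the inverse of the unit $\varphi(s)$, so $\psi(0\minus s)=\minus\varphi(s)$. Writing $a\minus s=(a\minus0)+(0\minus s)$ then yields $\psi(a\minus s)=\varphi(a)+(\minus\varphi(s))=\varphi_{S}(a\minus s)$, whence $\psi=\varphi_{S}$. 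I expect no serious obstacle beyond the well-definedness computation above.
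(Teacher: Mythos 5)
Your proposal is correct and follows essentially the same route as the paper: the same formula $\varphi_{S}(a\minus s)=\varphi(a)+(\minus\varphi(s))$, the same well-definedness check by applying $\varphi$ to the defining relation and cancelling the units $\varphi(s),\varphi(s^{\prime}),\varphi(c)$, and the same uniqueness argument forcing $\varphi_{S}(0\minus s)=\minus\varphi(s)$. You merely spell out a few routine verifications that the paper leaves implicit.
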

\begin {proof}
Define $\varphi_{S}(a\minus s):=\varphi(a)+\minus\varphi(s)$ for $a\in M$ and $s\in S$, which is possible since $\varphi(s)\in N\okreuz$ by assumption. To show that this is well-defined let $a\minus s=b\minus t$ for $a,b\in M$ and $s,t\in S$. Then $a+t+c=b+s+c$ for some $c\in S$, which implies that $\varphi(a)+\varphi(t)+\varphi(c)=\varphi(b)+\varphi(s)+\varphi(c)$. Since $\varphi(S)\subseteq N\okreuz$, this is equivalent to $\varphi(a)+\minus\varphi(s)\,=\,\varphi(b)+\minus\varphi(t)$. Hence, $\varphi_{S}$ does not depend on the chosen representatives. It is an $M\mina$binoid homomorphism since $\varphi$ is a binoid homomorphism and $\varphi_{S}(a\minus 0)=\varphi(a)+\minus\varphi(0)=\varphi(a)$. The uniqueness follows since the commutativity of the diagram requires $\varphi_{S}(a\minus 0)=\varphi(a)$ and $\varphi_{S}(0\minus a)=\minus\varphi(a)$ for $s\in S$.
\end {proof}

The universal property of the localization has the following consequences.

\begin {Corollary}\label{CorNSpecLoc}
Let $M$ and $N$ be binoids. If $S\subseteq M$ is a submonoid, then 
$$N\minspec M_{S}\,\,\cong\,\,\{\varphi\in N\minspec M\mid\varphi(S)\subseteq N\okreuz\}$$
as semigroups.
\end {Corollary}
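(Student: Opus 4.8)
The statement to prove is Corollary~\ref{CorNSpecLoc}: for binoids $M$, $N$ and a submonoid $S\subseteq M$, there is a semigroup isomorphism
$$N\minspec M_{S}\,\,\cong\,\,\{\varphi\in N\minspec M\mid\varphi(S)\subseteq N\okreuz\}\pkt$$

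The plan is to read this off directly from the universal property of the localization, Proposition~\ref{PropUnivPropLocalization}, exactly as suggested by the phrase ``the following consequences''. First I would define the natural map. The canonical homomorphism $\iota_{S}:M\rto M_{S}$ induces, by Proposition~\ref{PropIndHomNspec}, a semigroup homomorphism $\iota_{S}^{\ast}:N\minspec M_{S}\rto N\minspec M$, $\psi\mto\psi\iota_{S}$. Since $\iota_{S}$ sends every element of $S$ to a unit of $M_{S}$ (Lemma~\ref{LemLocalization}) and $\psi$ preserves units by Lemma~\ref{LemHomProperties}(3), the composite $\psi\iota_{S}$ maps $S$ into $N\okreuz$. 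Hence $\iota_{S}^{\ast}$ lands in the stated subset, and I would check that this subset is a subsemigroup of $N\minspec M$: if $\varphi,\varphi'$ both send $S$ into $N\okreuz$, then $(\varphi+\varphi')(s)=\varphi(s)+\varphi'(s)\in N\okreuz$ because $N\okreuz$ is a group, so the restriction is a genuine semigroup homomorphism.

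Next I would establish bijectivity. For surjectivity onto the subset, take $\varphi\in N\minspec M$ with $\varphi(S)\subseteq N\okreuz$; Proposition~\ref{PropUnivPropLocalization} yields a unique $M\mina$binoid homomorphism $\varphi_{S}:M_{S}\rto N$ with $\varphi_{S}\iota_{S}=\varphi$, so $\varphi$ is the image of $\varphi_{S}$ under $\iota_{S}^{\ast}$. For injectivity, suppose $\psi,\psi'\in N\minspec M_{S}$ satisfy $\psi\iota_{S}=\psi'\iota_{S}$. Every element of $M_{S}$ has the form $a\minus s=(a\minus 0)+(0\minus s)$ with $a\in M$, $s\in S$, and $(0\minus s)$ is the inverse of the unit $\iota_{S}(s)=s\minus 0$. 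Thus $\psi(a\minus s)=\psi\iota_{S}(a)+(\minus\psi\iota_{S}(s))$ is completely determined by $\psi\iota_{S}$, and the same formula holds for $\psi'$; since $\psi\iota_{S}=\psi'\iota_{S}$, we get $\psi=\psi'$. This also shows that $\iota_{S}^{\ast}$ and the assignment $\varphi\mto\varphi_{S}$ are mutually inverse.

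The argument is essentially a formal unwinding of the universal property, so I do not anticipate a serious obstacle; the only point requiring a little care is the injectivity step, where one must verify that $\psi$ really is forced to send $0\minus s$ to the inverse of $\psi\iota_{S}(s)$. This is immediate once one notes that $\psi$, being a binoid homomorphism, must send the unit $s\minus 0$ to a unit and its inverse to the corresponding inverse, but it is worth spelling out because it is exactly the place where the unit condition $\varphi(S)\subseteq N\okreuz$ becomes both necessary and sufficient. I would close by remarking that the isomorphism respects the semigroup structure on both sides, which has already been checked when verifying that $\iota_{S}^{\ast}$ is a semigroup homomorphism and that the target is a subsemigroup.
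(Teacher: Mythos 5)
Your proposal is correct and follows essentially the same route as the paper: the induced map $\iota_{S}^{\ast}:\psi\mto\psi\iota_{S}$ from Proposition~\ref{PropIndHomNspec}, landing in the stated subset because $\iota_{S}(S)\subseteq M_{S}\okreuz$, with bijectivity supplied by the universal property of localization (Proposition~\ref{PropUnivPropLocalization}). The only difference is that you spell out the injectivity argument explicitly, whereas the paper absorbs it into the uniqueness clause of the universal property; this is a harmless (and arguably helpful) elaboration.
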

\begin {proof}
By Proposition \ref{PropIndHomNspec}, the canonical binoid homomorphism $\iota_{S}:M\rto M_{S}$ induces a semigroup homomorphism $\iota_{S}^{\ast}:N\minspec M_{S}\rto N\minspec M$ with $\varphi\mto\varphi\iota_{S}$ such that $\varphi\iota_{S}(S)\subseteq N\okreuz$. By Proposition \ref{PropUnivPropLocalization}, $\iota_{S}^{\ast}$ is an isomorphism.
\end {proof}

\begin {Corollary} \label {CorLocal}
If $S$ is a submonoid of $M$, then $M_{S}\cong M_{\opFilt(S)}$ as $M\minus\,$binoids. In particular, the localizations at the submonoids $S$ and $T$ of $M$ are isomorphic as $M\minus\,$binoids if $\opFilt(S)=\opFilt(T)$.
\end {Corollary}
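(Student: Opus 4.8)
The plan is to exhibit $M_{S}$ and $M_{\opFilt(S)}$ as mutually inverse localizations by feeding each canonical map into the universal property of the other. First I would record that $\opFilt(S)$ is a filter, hence in particular a submonoid of $M$, so that $M_{\opFilt(S)}$ is defined and, like $M_{S}$, is an $M\mina$binoid via the canonical homomorphisms $\iota_{S}:M\rto M_{S}$ and $\iota_{\opFilt(S)}:M\rto M_{\opFilt(S)}$ of Lemma \ref{LemLocalization}. Since $M$ is commutative in this section and the addition on a localization is defined componentwise, both localizations are commutative binoids.

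The key point, and the only nonformal step, is to show that the elements of $\opFilt(S)$ already become units in $M_{S}$, i.e.\ $\iota_{S}(\opFilt(S))\subseteq M_{S}\okreuz$. Here I would invoke Lemma \ref{LemHomProperties}(3): as $M_{S}$ is commutative, the preimage $\iota_{S}^{-1}(M_{S}\okreuz)$ is a filter of $M$. By Lemma \ref{LemLocalization} the elements of $S$ are units in $M_{S}$, so $S\subseteq\iota_{S}^{-1}(M_{S}\okreuz)$; since $\opFilt(S)$ is by definition the smallest filter containing $S$, this forces $\opFilt(S)\subseteq\iota_{S}^{-1}(M_{S}\okreuz)$, which is exactly the claim. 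The reverse inclusion $\iota_{\opFilt(S)}(S)\subseteq M_{\opFilt(S)}\okreuz$ is immediate from $S\subseteq\opFilt(S)$ together with Lemma \ref{LemLocalization}, since every element of $\opFilt(S)$ is a unit in $M_{\opFilt(S)}$.

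With these two inclusions in hand, Proposition \ref{PropUnivPropLocalization} applies in both directions. Applied to $\iota_{S}:M\rto M_{S}$ and the submonoid $\opFilt(S)$, it yields a unique $M\mina$binoid homomorphism $f:M_{\opFilt(S)}\rto M_{S}$ with $f\iota_{\opFilt(S)}=\iota_{S}$; applied to $\iota_{\opFilt(S)}:M\rto M_{\opFilt(S)}$ and the submonoid $S$, it yields a unique $M\mina$binoid homomorphism $g:M_{S}\rto M_{\opFilt(S)}$ with $g\iota_{S}=\iota_{\opFilt(S)}$. Then $f g:M_{S}\rto M_{S}$ satisfies $(fg)\iota_{S}=f\iota_{\opFilt(S)}=\iota_{S}$, and since $\id_{M_{S}}$ has the same property, the uniqueness clause of Proposition \ref{PropUnivPropLocalization} (with $\varphi=\iota_{S}$, $N=M_{S}$) gives $fg=\id_{M_{S}}$; symmetrically $gf=\id_{M_{\opFilt(S)}}$. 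Hence $f$ and $g$ are mutually inverse isomorphisms of $M\mina$binoids, proving $M_{S}\cong M_{\opFilt(S)}$. Finally, if $\opFilt(S)=\opFilt(T)$ for two submonoids $S,T$ of $M$, then $M_{S}\cong M_{\opFilt(S)}=M_{\opFilt(T)}\cong M_{T}$ as $M\mina$binoids, which is the supplement. The main obstacle is entirely concentrated in the unit claim of the second paragraph; once that is settled via Lemma \ref{LemHomProperties}(3), the rest is a formal diagram chase through the universal property.
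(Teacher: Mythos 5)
Your proof is correct, and its overall architecture -- two applications of Proposition \ref{PropUnivPropLocalization} producing maps that are mutually inverse -- is the same as the paper's. The one place where you genuinely diverge is the key unit claim $\iota_{S}(\opFilt(S))\subseteq M_{S}\okreuz$: the paper proves it by a bare-hands computation, taking $t\in\opFilt(S)$ with $t+u=s\in S$ and exhibiting the explicit inverse $u\minus s$ of $t\minus 0$ via $(t\minus 0)+(u\minus s)=(t+u)\minus s=0\minus 0$, whereas you deduce it abstractly from Lemma \ref{LemHomProperties}(3): the preimage $\iota_{S}^{-1}(M_{S}\okreuz)$ is a filter containing $S$, hence contains the smallest such filter $\opFilt(S)$. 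Your route is cleaner and makes transparent \emph{why} the filter closure is exactly the right saturation to take, at the cost of leaning on the general lemma; the paper's computation is self-contained and also hands you the explicit formula $a\minus s\mto a\minus s$ for the two maps, from which their being inverse is immediate by inspection rather than, as in your argument, by the uniqueness clause of the universal property. Both mechanisms for concluding $fg=\id$ and $gf=\id$ are valid; yours is the more formal diagram chase, the paper's the more concrete one.
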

\begin {proof}
Consider the canonical binoid homomorphisms $\iota_{S}:M\rto M_{S}$ and $\iota:M\rto M_{\opFilt(S)}$. Since $S\subseteq\opFilt(S)$, one has $\iota(S)\subseteq M_{\opFilt (S)}\okreuz$. On the other hand,  if $t\in\opFilt(S)$, say $t+u=s\in S$ for $u\in M$, then 
$$(t\minus 0)+(u\minus s)\,=\,(t+u)\minus s\,=\,0\minus0$$
with $u\minus s\in M_{S}$. Hence, $\iota_{S}(t)=t\minus 0\in M_{S}\okreuz$ for all $t\in\opFilt(S)$. Thus, $\iota_{S}$ and $\iota$ satisfy the assumptions of Proposition \ref{PropUnivPropLocalization}. So we obtain unique $M\mina$binoid homomorphisms $M_{S}\rto M_{\opFilt(S)}$, $a\minus s\mto \iota(a)+\minus\iota(s)=a\minus s$, and  $M_{\opFilt(S)}\rto M_{S}$, $a\minus s\mto \iota_{S}(a)+\minus\iota_{S}(s)=a\minus s$, which are obviously inverse to each other. The supplement follows from the first part.
\end {proof}

By the preceding result, the set of all localizations of a binoid $M$ is given by $\{M_{F}\mid F\in\Fcal(M)\}=:M_{\Fcal(M)}$\nomenclature[M1]{$M_{\Fcal(M)}$}{$=\{M_{F}\mid F\in\Fcal(M)\}$}. The operation
$$M_{F}\circ M_{G}:=M_{F\cap G}\komma$$
for $F,G\in\Fcal(M)$, turns $M_{\Fcal(M)}$ into a binoid \nomenclature[M2]{$M_{\Fcal(M),\circ}$}{$=(M_{\Fcal(M)}, \circ,\zero,M)$, where $M_{F}\circ M_{G}=M_{F\cap G}$}
$$M_{\Fcal(M),\circ}\,:=\,(M_{\Fcal(M)},\circ,M_{M},M_{M\okreuz})\komma$$
where $M_{M}=\zero$ and $M_{M\okreuz}=M$.

\begin {Corollary}
The canonical maps $F\leftrightarrow M_{F}$ are order preserving binoid isomorphisms between $\Fcal(M)_{\cap}$ and $M_{\Fcal(M),\circ}$.
\end {Corollary}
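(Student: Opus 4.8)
The plan is to exhibit $\Phi\colon F\mapsto M_{F}$ as a bijective binoid homomorphism $\Fcal(M)_{\cap}\rto M_{\Fcal(M),\circ}$ whose inverse is $M_{F}\mapsto F$, and then to read off order preservation. The homomorphism property is immediate from the definitions: the identity element $M$ of $\Fcal(M)_{\cap}$ is sent to $M_{M}=\zero$, the absorbing element $M\okreuz$ is sent to $M_{M\okreuz}=M$, and $\Phi(F\cap G)=M_{F\cap G}=M_{F}\circ M_{G}$ holds by the very definition of the operation $\circ$. Surjectivity is built into the definition of $M_{\Fcal(M)}$ as $\{M_{F}\mid F\in\Fcal(M)\}$, since by Corollary \ref{CorLocal} every localization is $M\mina$isomorphic to one of this form.

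The heart of the argument, and the only step that is not purely formal, is injectivity, for which I would establish the recovery formula
$$\iota_{F}^{-1}(M_{F}\okreuz)\,=\,F$$
for every filter $F\in\Fcal(M)$, where $\iota_{F}\colon M\rto M_{F}$ is the canonical homomorphism. The inclusion $F\subseteq\iota_{F}^{-1}(M_{F}\okreuz)$ is exactly the statement of Lemma \ref{LemLocalization} that the elements of $F$ become units. For the reverse inclusion, suppose $\iota_{F}(a)=a\minus0$ is a unit; then there are $b\in M$ and $t\in F$ with $(a\minus0)+(b\minus t)=(a+b)\minus t\sim0\minus0$, which means $a+b+c=t+c$ for some $c\in F$. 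As $t,c\in F$ and $F$ is closed under addition, the right-hand side lies in $F$, and then the summand-closedness of $F$ (condition (3) in the definition of a filter) forces $a\in F$. This proves the formula; Lemma \ref{LemHomProperties}(3) moreover guarantees that $\iota_{F}^{-1}(M_{F}\okreuz)$ is genuinely a filter, so that the recovery map $M_{F}\mapsto F$ indeed lands in $\Fcal(M)$.

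Granting the recovery formula, injectivity follows at once. An identification $M_{F}=M_{G}$ in $M_{\Fcal(M)}$ means there is an $M\mina$binoid isomorphism $\psi\colon M_{F}\rto M_{G}$ with $\psi\iota_{F}=\iota_{G}$, and such a $\psi$ carries $M_{F}\okreuz$ bijectively onto $M_{G}\okreuz$ by applying Lemma \ref{LemHomProperties}(3) to both $\psi$ and $\psi^{-1}$; in particular $M_{F}\okreuz=\psi^{-1}(M_{G}\okreuz)$. Hence $F=\iota_{F}^{-1}(M_{F}\okreuz)=\iota_{F}^{-1}(\psi^{-1}(M_{G}\okreuz))=(\psi\iota_{F})^{-1}(M_{G}\okreuz)=\iota_{G}^{-1}(M_{G}\okreuz)=G$. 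Thus $\Phi$ is a bijective binoid homomorphism, i.e.\ a binoid isomorphism, with inverse $M_{F}\mapsto F$.

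Finally, order preservation is a formal consequence of the fact that both objects are meet-semilattices whose partial order is induced by the operation via $x\le y:\eq x\sqcap y=x$. In $\Fcal(M)_{\cap}$ this gives $F\le G\eq F\cap G=F\eq F\subseteq G$, and in $M_{\Fcal(M),\circ}$ it gives $M_{F}\le M_{G}\eq M_{F}\circ M_{G}=M_{F}\eq M_{F\cap G}=M_{F}$, which by injectivity is again equivalent to $F\subseteq G$. Since a binoid isomorphism transports the operation, it transports this induced order, so $F\subseteq G$ if and only if $M_{F}\le M_{G}$, as claimed. I expect the recovery formula $\iota_{F}^{-1}(M_{F}\okreuz)=F$ to be the only genuine obstacle; all remaining steps reduce to unwinding the definitions of $\circ$, of the semilattice order, and of $M$-binoid isomorphism.
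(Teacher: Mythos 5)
Your proof is correct, and it is in fact more complete than the one in the paper. The paper disposes of everything except injectivity as ``obvious'' and then asserts that injectivity ``follows from Corollary \ref{CorLocal}''; but that corollary, read literally, only says that submonoids generating the same filter yield isomorphic localizations, i.e.\ it certifies the easy direction (well-definedness), not the converse. The substance needed for injectivity is precisely your recovery formula $\iota_{F}^{-1}(M_{F}\okreuz)=F$, whose non-trivial inclusion you prove correctly from the defining equivalence on $M\times F$ together with the summand-closedness of $F$; this is essentially the same computation the paper performs later when it identifies $(M_{\Pcal})\okreuz$ with the complement of the extended ideal $\iota_{\Pcal}(\Pcal)+M_{\Pcal}$, so your argument recovers material the paper only makes explicit elsewhere. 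Your handling of the identity and absorbing elements ($M\mapsto M_{M}=\zero$, $M\okreuz\mapsto M_{M\okreuz}=M$), of surjectivity, and of the order (via the semilattice order $x\le y\Leftrightarrow x\sqcap y=x$ on both sides, transported through the isomorphism) all check out. The one point worth being explicit about, which you do address, is what equality in the set $M_{\Fcal(M)}$ means: you read it as $M\mina$binoid isomorphism compatible with the structure maps $\iota_{F}$, which is the reading under which Corollary \ref{CorLocal} makes the set well-defined, and your use of Lemma \ref{LemHomProperties}(3) on $\psi$ and $\psi^{-1}$ to get $M_{F}\okreuz=\psi^{-1}(M_{G}\okreuz)$ is exactly what is needed there.
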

\begin {proof}
Only the injectivity of $F\mto M_{F}$ is not obvious, but this follows from Corollary \ref {CorLocal}.
\end {proof}

\begin {Corollary} \label{CorFGlocalization}
If $M$ is finitely generated, then so is $M_{S}$ for every submonoid $S\subseteq M$.
\end {Corollary}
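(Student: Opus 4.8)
The plan is to exhibit an explicit finite generating set for $M_S$. Fix a finite generating set $x_1\kpkt x_r$ of $M$, and recall from Lemma \ref{LemLocalization} that every element of $M_S$ has the form $a\minus s=\iota_S(a)+(0\minus s)$ with $a\in M$ and $s\in S$, where $0\minus s$ is the inverse $\minus\iota_S(s)$ of the unit $\iota_S(s)=s\minus 0$. My candidate generating set is
$$G:=\{\iota_S(x_i)\mid i\in\{1\kpkt r\}\}\cup\{\minus\iota_S(x_i)\mid x_i\in\opFilt(S)\},$$
which is finite, consisting of at most $2r$ elements. First I would dispose of the trivial cases: if $\infty\in S$ then $M_S=\zero$ is generated by the empty set, and in general the absorbing element $\infty\minus s=\infty\minus 0$ lies in every subbinoid, so it suffices to generate the classes $a\minus s$ with $a\in M\opkt$.

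For such a class I would proceed in two steps. Since $M$ is commutative and $a\neq\infty$, write $a=\sum_i n_i x_i$; then $\iota_S(a)=\sum_i n_i\iota_S(x_i)\in\langle G\rangle$. It remains to place $\minus\iota_S(s)$ in $\langle G\rangle$. Writing $s=\sum_i m_i x_i$, the key observation is that whenever $m_i>0$ we have $s=x_i+g$ for some $g\in M$, so $x_i$ is a summand of $s\in S\subseteq\opFilt(S)$; by the summand-closedness of the filter $\opFilt(S)$ (condition (3) in the definition of a filter, cf.\ Remark \ref{RemFilterOfF}) this forces $x_i\in\opFilt(S)$. Hence $\minus\iota_S(x_i)\in G$ for every $i$ with $m_i>0$, so $\minus\iota_S(s)=\sum_{i:\,m_i>0}m_i(\minus\iota_S(x_i))\in\langle G\rangle$. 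Combining, $a\minus s=\iota_S(a)+(\minus\iota_S(s))\in\langle G\rangle$, which shows that $G$ generates $M_S$.

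The only point requiring real care — and the step I would single out as the crux — is the identification of which generators must be inverted. One cannot simply invert all the $\iota_S(x_i)$, since individually they need not become units, and one cannot invert the elements of $S$ directly since $S$ need not be finitely generated. The filter $\opFilt(S)$ (equivalently Corollary \ref{CorLocal}, which identifies $M_S\cong M_{\opFilt(S)}$) is exactly the device that resolves this: its summand-closedness guarantees that every generator appearing in any denominator already lies in $\opFilt(S)$, so that the finitely many inverses $\minus\iota_S(x_i)$ with $x_i\in\opFilt(S)$ suffice to express all denominators. Everything else is the routine decomposition of elements of a commutative binoid into its generators.
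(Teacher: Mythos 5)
Your proof is correct and follows essentially the same route as the paper: both decompose $a\minus s$ as $\iota_{S}(a)+(\minus\iota_{S}(s))$, expand numerator and denominator in the generators of $M$, and use $\opFilt(S)$ (via Corollary \ref{CorLocal}) to see that exactly the generators occurring in denominators become units, so that $(x_{i}\minus 0)_{i}$ together with $(0\minus x_{j})_{j}$ for $x_{j}\in\opFilt(S)$ generate $M_{S}$. Your remark that summand-closedness forces every generator appearing in some $s\in S$ to lie in $\opFilt(S)$ is precisely the justification the paper leaves implicit when it asserts that the filter is the submonoid generated by a subset of the $x_{i}$.
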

\begin {proof}
Let $I$ be finite and $\{x_{i}\mid i\in I\}$ a generating set of $M$. By Corollary \ref{CorLocal}, we may assume that $S$ is a filter. Therefore, $S$ is the submonoid generated by $(x_{j})_{j \in J}$ for some subset $J\subseteq I$. If $a\minus s\in M_{S}$, where $a=\sum_{i\in I}n_{i}x_{i}$ and $s=\sum_{j\in J}m_{j}x_{j}$ with $n_{i}$, $m_{j}\in\N$, $i\in I$, $j\in J$, then 
$$a\minus s\,=\,\Big(\sum_{i\in I}n_{i}(x_{i}\minus 0)\Big)+\Big(\sum_{j\in J}m_{j}(0\minus x_{j})\Big)\pkt$$
This shows that $(x_{i}\minus 0)_{i\in I}$ and $(0\minus x_{j})_{j\in J}$ generate $M_{S}$.
\end {proof}

\begin {Example}\label{ExpDirectLimitLocalizations}
Let $S$ be a submonoid of the binoid $M$ and consider $S^{\prime}:=S/\sim$, where $f\sim g$ if $\opFilt(f)=\opFilt(g)$. By abuse of notation, we omit the brackets and write $f$ instead of $[f]$ for an element of $S^{\prime}$. Define a partial order on $S^{\prime}$ by 
$$f\ge g\,\text{ for }\,f,g\in S^{\prime}\quad:\eq\quad\opFilt(g)\subseteq\opFilt(f)$$
and denote by 
$$\varphi_{fg}:M_{g}\rto M_{f}\komma\quad a\minus s\lto a\minus s\komma$$
the canonical $M\mina$binoid homomorphism  for $f\ge g$. Note that $\varphi_{f0}:M\rto M_{f}$ (i.e.\ $f\ge0$) for all $f\in S^{\prime}$, and since $f+g\in S^{\prime}$ for every $f,g\in S^{\prime}$, we have $f+g\ge f,g$. Thus, $(M_{f},\varphi_{fg})_{f\ge g}$ is a directed system of $M\mina$binoids and
$$\varinjlim M_{f}\,=\,\biguplus_{f\in S^{\prime}}M_{f}\Big/\!\sim\komma$$
where $(a\minus s;f)\sim(b\minus t;g):\Leftrightarrow\varphi_{hf}(a\minus s)=\varphi_{hg}(b\minus t)$ for some $h\in S^{\prime}$ with $h\ge f,g$, together with the canonical $M\mina$binoid homomorphisms $$\varphi_{g}:M_{g}\rto\varinjlim M_{f}\komma\quad a\minus s\mto[(a\minus s;g)]\komma$$ 
$g\in S^{\prime}$, is the direct limit of this system, cf. Section \ref{SecLimits}. Since $\psi_{f}:M_{f}\rto M_{S}$ with $a\minus s\mto a\minus s$ is an $M\mina$binoid homomorphism for every $f\in S^{\prime}$ such that $\psi_{g}=\psi_{f}\varphi_{fg}$ if $f\ge g$, there is a unique $M\mina$binoid homomorphism $\psi:\varinjlim M_{f}\rto M_{S}$ with $\psi\varphi_{g}=\psi_{g}$, $g\in S^{\prime}$, cf.\ Lemma \ref{LemUnivPropColimit}. On the other hand, for $g\in S$ one has $\varphi_{0}(g)=[(g\minus0;g)]\in(\varinjlim M_{f})\okreuz$ because $$[(g\minus0;g)]+[(0\minus g;g)]\,=\,[(\varphi_{gg}(g\minus0)+\varphi_{gg}(0\minus g));g]\,=\,[(0\minus0;g)]\,=\,0\pkt$$ 
Hence, $S\subseteq(\varinjlim M_{f})\okreuz$. By Proposition \ref{PropUnivPropLocalization}, we therefore have a unique $M\mina$binoid homomorphism $\varphi:M_{S}\rto\varinjlim M_{f}$ with $\varphi\iota_{S}=\varphi_{0}$. Thus, for every $g\in S^{\prime}$ there is a commutative diagram
$$\xymatrix{
&M_{g}\ar[dr]^{\psi_{g}}\ar[dl]_{\varphi_{g}}&\\
\varinjlim M_{f}\ar@<1ex>[rr]^{\psi}&&M_{S}\pkt\ar@<1ex>[ll]^{\varphi}}$$
From this we deduce for $a\minus s\in M_{g}$,
$$(\varphi\psi)([(a\minus s;g)])=\varphi(\psi(\varphi_{g}(a\minus s)))=\varphi(\psi_{g}(a\minus s))=\varphi_{g}(a\minus s)=[(a\minus s;g)]$$
and 
$$(\psi\varphi)(a\minus s)=\psi(\varphi(\psi_{g}(a\minus s))=\psi(\varphi_{g}(a\minus s))=\psi_{g}(a\minus s)=a\minus s\pkt$$
This shows that $\psi$ and $\varphi$ are inverse to each other. Hence, $\varinjlim M_{f}\cong M_{S}$ as $M\mina$binoids.
\end{Example}

To describe the smash product (over $M$) of two localizations of $M$ recall the binoid structure on $\Fcal(M)$ given by $F\star G:=\opFilt(F+G)$ for $F,G\in\Fcal(M)$, in which $M\okreuz$ serves as the identity element and $M$ as the absorbing element.

\begin {Proposition}\label{PropLocalizationSmash}
Let $F,G\in\Fcal(M)$. Then
$$M_{F}\wedge_{M} M_{G}\,=\,M_{F\star G}\komma$$
where the localizations are $M\mina$binoids via the canonical binoid homomorphisms $\iota_{F}:M\rto M_{F}$ and $\iota_{G}:M\rto M_{G}$, both given by $x\rto x\minus0$. In particular, the canonical maps $F\leftrightarrow M_{F}$ are order preserving binoid isomorphisms between $\Fcal(M)_{\star}=(\Fcal(M),\star, M\okreuz,M)$ and $(M_{\Fcal(M)},\wedge,M,\zero)$.
\end {Proposition}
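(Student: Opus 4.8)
The plan is to identify $M_{F\star G}$ with the coproduct of $M_F$ and $M_G$ in the category of commutative $M\mina$binoids, which by Corollary \ref{CorCoProdSmashN} is exactly $M_F\wedge_M M_G$. As a preliminary reduction I would pass from $F\star G$ to $F+G$: since $F$ and $G$ are filters, hence submonoids, their sum $F+G=\{f+g\mid f\in F,\,g\in G\}$ is again a submonoid by commutativity, and by definition $F\star G=\opFilt(F+G)$. Corollary \ref{CorLocal} then gives the identification $M_{F\star G}=M_{\opFilt(F+G)}=M_{F+G}$ of $M\mina$binoids, so it suffices to prove $M_{F+G}=M_F\wedge_M M_G$.

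Next I would equip $M_{F+G}$ with canonical $M\mina$binoid homomorphisms $M_F\rto M_{F+G}$ and $M_G\rto M_{F+G}$ and show that these make $M_{F+G}$ a coproduct of $M_F$ and $M_G$. These maps exist because $F\subseteq F+G$ and $G\subseteq F+G$ (as $0\in F\cap G$), so applying Proposition \ref{PropUnivPropLocalization} to $\iota:M\rto M_{F+G}$, which sends $F+G$ and hence $F$ and $G$ to units, produces them. For the universal property, let $N$ be a commutative $M\mina$binoid with structure homomorphism $\varphi:M\rto N$. By Proposition \ref{PropUnivPropLocalization} an $M\mina$binoid homomorphism $M_F\rto N$ exists and is unique precisely when $\varphi(F)\subseteq N\okreuz$, and likewise for $M_G$; in either case the homomorphism is forced to extend $\varphi$. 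Thus giving a pair of $M\mina$binoid homomorphisms $M_F\rto N$ and $M_G\rto N$ amounts to the single condition $\varphi(F\cup G)\subseteq N\okreuz$. Since the units of $N$ form a group and $F,G\subseteq F+G$, this condition is equivalent to $\varphi(F+G)\subseteq N\okreuz$, so Proposition \ref{PropUnivPropLocalization} yields a unique $M\mina$binoid homomorphism $M_{F+G}\rto N$ extending $\varphi$, whose composites with the two canonical maps recover the given pair by uniqueness. Hence $M_{F+G}$ is a coproduct of $M_F$ and $M_G$. As both $M_{F+G}$ and $M_F\wedge_M M_G$ are commutative (Lemma \ref{LemSmashRules}(1)), feeding each into the other's universal property as a target produces mutually inverse canonical maps, giving the identity $M_{F+G}=M_F\wedge_M M_G$.

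For the supplement I would invoke the order-preserving bijection $F\leftrightarrow M_F$ between $\Fcal(M)$ and $M_{\Fcal(M)}$ established in the corollary preceding Corollary \ref{CorFGlocalization}. The identity just proved says precisely that this bijection carries $\star$ to $\wedge_M$, so it is a homomorphism for these operations. It only remains to match the distinguished elements: the identity $M\okreuz$ of $\Fcal(M)_\star$ maps to $M_{M\okreuz}=M$, which is the identity object for $\wedge_M$ by Lemma \ref{LemSmashNoverN}, while the absorbing element $M$ maps to $M_M=\zero$, which is absorbing for $\wedge_M$. Therefore $F\mapsto M_F$ is an order-preserving binoid isomorphism from $\Fcal(M)_\star$ onto $(M_{\Fcal(M)},\wedge,M,\zero)$.

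The main obstacle I expect is the bookkeeping in the universal-property step: verifying that a pair of homomorphisms out of $M_F$ and $M_G$ genuinely collapses to the single unit-condition on $\varphi$, and that the resulting map is an $M\mina$binoid homomorphism rather than merely a binoid homomorphism. The equivalence $\varphi(F\cup G)\subseteq N\okreuz\Leftrightarrow\varphi(F+G)\subseteq N\okreuz$ is exactly where the commutativity assumed throughout this section, together with $F,G\subseteq F+G$, is used.
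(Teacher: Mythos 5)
Your proof is correct, but it takes a genuinely different route from the paper's. The paper constructs both maps explicitly: it gets $\psi\colon M_F\wedge_M M_G\rto M_{F\star G}$ from Corollary \ref{CorCoProdSmashN}, gets $\phi\colon M_{F\star G}\rto M_F\wedge_M M_G$ from Proposition \ref{PropUnivPropLocalization} after a hands-on verification that $\varphi(F\star G)\subseteq(M_F\wedge_M M_G)\okreuz$ (exhibiting the inverse of $(x\minus 0)\wedge_M(0\minus 0)$ as $(u\minus f)\wedge_M(0\minus g)$ where $x+u=f+g$), and then checks $\psi\phi=\id$ and $\phi\psi=\id$ by direct computation with representatives. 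You instead first normalize $M_{F\star G}=M_{\opFilt(F+G)}=M_{F+G}$ via Corollary \ref{CorLocal} — a reduction the paper does not make, and which lets you avoid ever manipulating the auxiliary element $u$ — and then identify $M_{F+G}$ and $M_F\wedge_M M_G$ abstractly as two realizations of the coproduct of $M_F$ and $M_G$ in commutative $M\mina$binoids, the key observation being that a pair of $M\mina$binoid homomorphisms out of $M_F$ and $M_G$ into a commutative target $N$ exists (necessarily uniquely) iff $\varphi(F\cup G)\subseteq N\okreuz$ iff $\varphi(F+G)\subseteq N\okreuz$. Your approach buys conceptual clarity and eliminates the representative-chasing; the paper's approach buys explicit formulas for the isomorphism and its inverse, which is what makes the "order preserving" part of the supplement and later uses of the map immediate. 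Both are sound; your treatment of the supplement (identity object $M_{M\okreuz}=M$ via Lemma \ref{LemSmashNoverN}, absorbing object $M_M=\zero$) fills in a detail the paper leaves implicit.
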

\begin {proof}
By Corollary \ref{CorCoProdSmashN}, the $M\mina$binoid homomorphisms $\psi_{F}:M_{F}\rto M_{F\star G}$ with $x\minus f\mto x\minus f$ and $\psi_{G}:M_{G}\rto M_{F\star G}$ with $y\minus g\mto y\minus g$ induce an $M\mina$binoid homomorphism 
$$\psi:M_{F}\wedge_{M} M_{G}\Rto M_{F\star G}\komma\quad(x\minus f)\wedge_{M} (y\minus g)\lto\psi_{F}(x\minus f)+\psi_{G}(y\minus g)=(x+y)\minus(f+g)\komma$$
where $x,y\in M$, $f\in F$, and $g\in G$. On the other hand, we have by the universal property of localization, cf.\ Proposition \ref {PropUnivPropLocalization}, an $M\mina$binoid homomorphism 
$$\phi:M_{F\star G}\Rto M_{F}\wedge_{M} M_{G}\pkt$$ 
To see this consider the binoid homomorphism $\varphi:M\rto M_{F}\wedge_{M} M_{G}$ with 
\begin{equation}
\varphi(x)=(x\minus 0)\wedge_{M}(0\minus0)=\iota_{F}(x)\wedge_{M}(0\minus 0)=(0\minus 0)\wedge_{M}\iota_{G}(x)=(0\minus 0)\wedge_{M}(x\minus0) \tag{$\ast$}\label{DefVarphi}
\end{equation}
for $x\in M$. We want to show that $\varphi$ factors through $\phi$ by applying Proposition \ref {PropUnivPropLocalization}. For this we need to verify that $\varphi(F\star G)\subseteq(M_{F}\wedge_{M}M_{G})\okreuz$. So take an arbitrary $x\in F\star G$. Then $x+u=f+g$ for some $u\in M$, $f\in F$, and $g\in G$. Hence,
\begin {align*}
((x\minus 0)\wedge_{M}(0\minus 0))+((u\minus f)\wedge_{M} 0\minus g)&=((x+u)\minus f)\wedge_{M} (0\minus g)\\
&=((f+g)\minus f)\wedge_{M}(0\minus g)\\
&=((g\minus0)+(0\minus0))\wedge_{M}(0\minus g)\\
&=(0\minus0)\wedge_{M}((g\minus0)+(0\minus g))\\
&=(0\minus0)\wedge_{M}(0\minus0)\\
&=0_{\wedge_{M}}\pkt
\end {align*}
Hence, $\varphi(x)\in(M_{F}\wedge_{M}M_{G})\okreuz$ with inverse $-\varphi(x)=(u\minus f)\wedge_{M}(0\minus g)$. The  $M\mina$binoid homomorphism $\phi$ is therefore given by 
$$\phi(z\minus x)=\varphi(z)+(-\varphi(x))\pkt$$ 
We claim that $\phi$ and $\psi$ are inverse to each other. First consider an element $z\minus x\in M_{F\star G}$, where $x+u=f+g$ with $u\in M$, $f\in F$, and $g\in G$. Then
\begin {align*}
\psi\phi(z\minus x)&=\psi(\varphi(z))+\psi(-\varphi(x))\\
&=\psi(z\wedge_{M}0)+\psi((u\minus f)\wedge_{M}(0\minus g))\\
&=(z\minus 0)+(u\minus(f+g))\\
&=(z\minus 0)+(0\minus x)\\
&=z\minus x\pkt
\end {align*}
By making use of \eqref{DefVarphi}, we finally obtain for $x,y\in M$, $f\in F$, and $g\in G$,
\begin {align*}
 \phi\psi((x\minus f)\wedge_{M}(y\minus g))&=\phi((x+y)\minus(f+g))\\
&=\varphi(x+y)+(-\varphi(f+g))\\
&=((x\minus 0)\wedge_{M}(y\minus 0))+((0\minus f)\wedge_{M}(0\minus g))\\
&=(x\minus f)\wedge_{M}(y\minus g)\komma
\end {align*}
where $-\varphi(f+g)=(0\minus f)\wedge_{M}(0\minus g)$ because $\varphi(f+g)=((f+g)\minus0)\wedge_{M}(0\minus0)=(f\minus0)\wedge_{M}(g\minus0)$
in $M_{F}\wedge_{M}M_{G}$.
\end {proof}

\begin {Definition}
Let $M$ be a binoid and $N$ the submonoid of $M$ that is generated by $M\opkt$. The binoid $M_{N}=:\diff(M)$\nomenclature[Diff]{$\diff(M)$}{difference binoid of the binoid $M$} will be called the \gesperrt{difference binoid} \index{difference binoid}\index{binoid!difference --}of $M$.
\end {Definition}

In other words, if $M$ is integral, then $\diff(M)=M_{M\opkt}$ is a binoid group and $\diff(M)=\zero$ otherwise.

\begin {Proposition} \label{PropGroupEmbedding}
Let $S$ be a submonoid of $M$. The binoid homomorphism $\iota_{S}:M\rto M_{S}$ is injective if and only if $S$ consists only of regular elements. In particular, every regular binoid $M$ is embedded into a binoid group, namely $\diff(M)$.
\end {Proposition}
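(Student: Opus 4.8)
The statement asserts an iff for injectivity of $\iota_S\colon M\rto M_S$ together with a concrete consequence about embedding regular binoids into binoid groups. The plan is to recall from Lemma \ref{LemLocalization} that
$$\ker\iota_S=\{a\in M\mid\exists\,b\in S:\ a+b=\infty\}\komma$$
and then to characterize when $\iota_S$ is injective by analyzing exactly when the equivalence relation defining $M_S$ collapses two elements. Recall that $a\minus 0=a'\minus 0$ in $M_S$ means there is some $c\in S$ with $a+c=a'+c$. So injectivity of $\iota_S$ is equivalent to the implication $a+c=a'+c\Rarrow a=a'$ for all $a,a'\in M$ and all $c\in S$, where we must also keep the $\infty$-case in mind since $M_S=\zero$ precisely when $\infty\in S$.

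First I would prove the \emph{if} direction. Suppose every $s\in S$ is regular, i.e.\ $s\in\opcan(M)$ and $s$ is integral. Take $a,a'\in M$ with $a\minus 0=a'\minus 0$, so $a+c=a'+c$ for some $c\in S$. If $a+c=a'+c=\infty$, then by integrality of $c$ we get $a=\infty=a'$ (here $c\not=\infty$ since $\infty\notin S$, as $S$ consists of regular, hence non-absorbing, elements). If $a+c=a'+c\not=\infty$, then cancellativity of $c$ yields $a=a'$. Hence $\iota_S$ is injective.

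Next I would prove the \emph{only if} direction by contraposition: if some $s\in S$ is not regular, then $\iota_S$ is not injective. A non-regular element of $S$ fails to be integral or fails to be cancellative (recall $M$ is regular iff every element $\not=\infty$ is both integral and cancellative). If $s$ is not integral, there is some $b\not=\infty$ with $s+b=\infty$; then $b\in\ker\iota_S$ while $b\not=\infty$, so $\ker\iota_S\not=\zero$ and $\iota_S$ is not injective. If instead $s$ is cancellative but not integral the previous case applies; the genuinely new case is $s$ not cancellative, i.e.\ there exist $b\not=c$ in $M$ with $s+b=s+c\not=\infty$. Then $b\minus 0=c\minus 0$ in $M_S$ with $b\not=c$, again destroying injectivity. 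The main obstacle here is bookkeeping the logical structure of ``regular $=$ integral and cancellative'' to make sure the two failure modes are exhaustively and cleanly handled, and being careful that $\infty\notin S$ throughout (otherwise $M_S=\zero$ and the statement is vacuous/trivial).

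Finally, for the supplement: if $M$ is regular, then every element of $M\opkt$ is regular by definition, so the submonoid $N$ generated by $M\opkt$ consists of regular elements (regularity is preserved under addition by Lemma \ref{LemmaCanc}(1), since $\opcan(M)^{\infty}$ is a cancellative subbinoid and integral elements are closed under the monoid operation in an integral binoid). By the first part, $\iota_N\colon M\rto M_N=\diff(M)$ is injective, and since $M$ is integral, $\diff(M)$ is a binoid group with $\diff(M)\opkt=\diff(M)\okreuz$; thus $M$ embeds into the binoid group $\diff(M)$, as claimed.
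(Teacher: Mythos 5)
Your proof is correct and follows essentially the same route as the paper, which simply observes that $\iota_S$ is injective iff $a+s=b+s$ (for $s\in S$) always implies $a=b$, a condition equivalent to every $s\in S$ being integral and cancellative; you have merely written out the case analysis (the $=\infty$ case via integrality, the $\not=\infty$ case via cancellativity, and the two failure modes for the converse) that the paper leaves implicit. The treatment of the supplement likewise matches the paper's intent.
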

\begin {proof}
Clear, because $\iota_{S}$ is injective if and only if $a+s=b+s$ for $a,b\in M$, $s\in S$, implies $a=b$.
\end {proof}

In the regular situation, we identify $a\in M$ with $a\minus0\in M_{S}$.

\begin {Corollary} \label{CorUnivPropDiff}
Let $M$ be a binoid and $G$ a binoid group. Every binoid homomorphism $\varphi:M\rto G$ with $\varphi(a)\not=\infty$ for $a\in M\opkt$ gives rise to an $M\mina$binoid homomorphism $\tilde{\varphi}:\diff(M)\rto G$ with $\tilde{\varphi}\iota=\varphi$, where $\iota:M\rto\diff(M)$ denotes the canonical binoid homomorphism. In particular, for every binoid homomorphism $\varphi:M\rto G$ there exists a unique injective factorization 
$$\xymatrix{
M\ar[r]^{\varphi}\ar[d]_{\pi}&G\\
N\ar[d]_{\pi_{\opcan}}\\
N_{\opcan}\ar[d]_{\iota}\\
\diff(N_{\opcan})\ar[uuur]_{\tilde{\varphi}}&}\komma$$
where $N:=M/\ker\varphi$.
\end {Corollary}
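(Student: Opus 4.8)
The plan is to treat the two statements in turn, building the second from the first and from the factorizations already available in this section.

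For the first statement I would simply apply the universal property of localization, Proposition \ref{PropUnivPropLocalization}. Writing $S$ for the submonoid of $M$ generated by $M\opkt$, so that $\diff(M)=M_{S}$ and $\iota=\iota_{S}$, the only point to check is that $\varphi(S)\subseteq G\okreuz$. Every generator $a\in M\opkt$ satisfies $\varphi(a)\neq\infty$ by hypothesis, hence $\varphi(a)\in G\opkt=G\okreuz$ because $G$ is a binoid group; since a finite sum of units is again a unit, $\varphi(s)\in G\okreuz$ for every $s\in S$. (En passant this forces $\infty\notin S$ whenever $G\neq\zero$, as $\varphi(\infty)=\infty$ is not a unit.) Proposition \ref{PropUnivPropLocalization} then produces the unique $M\mina$binoid homomorphism $\tilde\varphi:=\varphi_{S}:\diff(M)\rto G$ with $\tilde\varphi\iota=\varphi$, which is exactly the asserted map.

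For the second statement I would assemble a chain of three factorizations, reading the diagram from top to bottom. First, Lemma \ref{LemFactoriazationKer} factors $\varphi$ as $\bar\varphi\pi$ through $N:=M/\ker\varphi$, and since $G$ is integral it guarantees that $N$ is integral; in particular $\opint N=N\opkt$, and by construction of the kernel congruence $\bar\varphi(a)=\infty$ holds exactly for $a=\infty_{N}$, so $\ker\bar\varphi=\zero$. Consequently $\ker\bar\varphi\cap\opint N=\emptyset$, and because a binoid group is regular (its non-absorbing elements are units, hence integral and cancellative), Proposition \ref{PropCancellation}(1) factors $\bar\varphi$ as $\hat\varphi\pi_{\opcan}$ through $N_{\opcan}$. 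Next I would check that $N_{\opcan}$ is itself regular: integrality follows since $a+b+s=\infty$ with $s\in\opint N$ forces, by integrality of $N$, that $a$ or $b$ equals $\infty$; cancellativity follows from Lemma \ref{LemCancellation}, because every non-absorbing class of $N_{\opcan}$ has a representative in $\opint N$. Hence, by Proposition \ref{PropGroupEmbedding}, $\iota:N_{\opcan}\rto\diff(N_{\opcan})$ is an injective homomorphism into a binoid group — this is the injection occurring in the factorization. Finally, any $[a]\neq\infty_{\opcan}$ in $N_{\opcan}$ has a representative $a\in N\opkt=\opint N$, which avoids $\ker\bar\varphi$, so $\hat\varphi([a])=\bar\varphi(a)\neq\infty$; thus $\hat\varphi$ satisfies the hypothesis of the first statement and yields $\tilde\varphi:\diff(N_{\opcan})\rto G$ with $\tilde\varphi\iota=\hat\varphi$. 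Composing gives $\varphi=\tilde\varphi\,\iota\,\pi_{\opcan}\,\pi$.

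Uniqueness is inherited stagewise: $\pi$, $\pi_{\opcan}$ and $\iota$ are the canonical maps, the first two surjective and the image of $\iota$ generating $\diff(N_{\opcan})$ as a localization, while $\tilde\varphi$ is pinned down by $\tilde\varphi\iota=\hat\varphi$ via the uniqueness clause of Proposition \ref{PropUnivPropLocalization} (equivalently Lemma \ref{LemIndCong}). The main obstacle I anticipate is not producing the maps but the bookkeeping that licenses each universal property: concretely, proving that $N_{\opcan}$ is genuinely regular (so that $\iota$ is an embedding into a binoid group) and that the disjointness $\ker\bar\varphi\cap\opint N=\emptyset$ together with the non-vanishing $\hat\varphi([a])\neq\infty$ hold, so that Proposition \ref{PropCancellation} and the first part of this corollary apply with no extra hypotheses. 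One should also keep in mind that $\tilde\varphi$ need \emph{not} be injective in general — the word ``injective'' refers to the embedding $\iota$ of the regular binoid $N_{\opcan}$ into its difference binoid group — so no injectivity of the final map is to be proved.
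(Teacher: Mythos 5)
Your proof is correct and follows essentially the same route as the paper: the first part via the universal property of localization (Proposition \ref{PropUnivPropLocalization}) after checking $\varphi(S)\subseteq G\okreuz$, and the second part by chaining Lemma \ref{LemFactoriazationKer}, Proposition \ref{PropCancellation} and Proposition \ref{PropGroupEmbedding}, exactly as the paper does (it invokes Proposition \ref{PropCancellation}(2), which packages your first two steps). Your reading of ``injective'' as referring to the embedding $\iota:N_{\opcan}\rto\diff(N_{\opcan})$ rather than to $\tilde{\varphi}$ is the correct one and is what the paper's appeal to Proposition \ref{PropGroupEmbedding} actually establishes.
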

\begin {proof}
Since every binoid group is regular, there exists no such binoid homomorphism $\varphi$ for a non-integral binoid $M$. Therefore, we only need to consider the case of $M$ being integral; that is, when $\diff(M)=M_{M\opkt}$. If $M$ is integral, the assumption $\varphi(a)\not=\infty$ for $a\in M\opkt$ is equivalent to $\varphi(M\opkt)\subseteq G\okreuz$. So we can apply Proposition \ref{PropUnivPropLocalization}, which provides the unique $M\mina$binoid homomorphism $\tilde{\varphi}$ with $\tilde{\varphi}\iota=\varphi$. This proves the first statement. By Proposition \ref{PropCancellation}(2), there is a factorization $\psi:N_{\opcan}\rto G$, which fulfills the assumption of the first statement since the integrality of $G$ implies that of $N$ by Lemma \ref{LemFactoriazationKer}, and hence that of $N_{\opcan}$. The injectivity of the factorization $\diff(N_{\opcan})\rto G$ follows from Proposition \ref{PropGroupEmbedding}.
\end {proof}

\begin {Lemma} \label{LemDGroupTorsionfree}
Let $M$ be a regular binoid. The binoid group $\diff(M)$ is torsion-free if and only if $M$ is torsion-free.
\end {Lemma}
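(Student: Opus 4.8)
The statement to prove is \textbf{Lemma \ref{LemDGroupTorsionfree}}: for a regular binoid $M$, the binoid group $\diff(M)$ is torsion-free if and only if $M$ is torsion-free.

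My plan is to exploit the embedding $\iota\colon M\embto\diff(M)$, which is injective by Proposition \ref{PropGroupEmbedding} since every element of $M\opkt$ is regular (as $M$ is regular). I would identify $M$ with its image and treat $M$ as a subbinoid of $\diff(M)$. The \emph{only if} direction is then immediate: a subbinoid of a torsion-free binoid is torsion-free. Indeed, if $\diff(M)$ is torsion-free and $na=nb\not=\infty$ for $a,b\in M$ and some $n\ge 1$, then the same equation holds in $\diff(M)$, whence $a=b$; since $M$ is reduced (being integral, cf.\ the lemma following Example \ref{ExpIntegrity}), the condition $na=nb$ already forces finiteness only through $\infty$, so $M$ is torsion-free. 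I should be careful here to invoke that a regular binoid is integral, hence reduced, so that torsion-freeness reduces to the cancellation of the multiplier $n$ on elements $\not=\infty$.

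For the \emph{if} direction, assume $M$ is torsion-free; I must show $\diff(M)=M_{M\opkt}$ is torsion-free. A general element of $\diff(M)$ has the form $a\minus s$ with $a\in M$ and $s\in M\opkt$, and it is $\not=\infty$ exactly when $a\not=\infty$. Suppose $n(a\minus s)=n(b\minus t)\not=\infty$ for $a,b\in M$, $s,t\in M\opkt$, and $n\ge 1$; this means $na\minus ns=nb\minus nt$ in $\diff(M)$, i.e.\ by the definition of the localization relation there is $c\in M\opkt$ with $na+nt+c=nb+ns+c$. Since $M$ is regular, $c$ is cancellative and $\not=\infty$, so I can cancel $c$ to obtain $na+nt=nb+ns$, that is $n(a+t)=n(b+s)$ in $M$. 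By torsion-freeness of $M$ this yields $a+t=b+s$ (provided the common value is $\not=\infty$, which holds because $a\minus s\not=\infty$ forces $a\not=\infty$ and $t\not=\infty$, and integrality keeps $a+t\not=\infty$), and hence $a\minus s=b\minus t$ in $\diff(M)$.

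The main obstacle I anticipate is the bookkeeping around the absorbing element and the passage through the localization equivalence relation: I need to ensure at each step that the relevant elements are $\not=\infty$ so that torsion-freeness (which only concerns equations $na=nb\not=\infty$) applies, and that the cancellative element $c$ introduced by the relation can legitimately be removed using regularity of $M$. Once the equation is cleared of $c$ and both sides are seen to be $\not=\infty$, the argument is a direct application of the torsion-freeness hypothesis together with the cancellativity built into regularity; the rest is routine verification that I would not spell out in full.
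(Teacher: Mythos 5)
Your proof is correct and follows essentially the same route as the paper: identify $M$ with its image in $\diff(M)$ via Proposition \ref{PropGroupEmbedding}, restrict for one direction, and unwind the localization relation using cancellativity and torsion-freeness for the other. The only cosmetic difference is that the paper streamlines the converse by using that a binoid group is torsion-free as soon as $n(x\minus s)=0$ forces $x\minus s=0$, whereas you verify the general equation $n(a\minus s)=n(b\minus t)$ directly; both amount to the same computation.
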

\begin {proof}
Any equation $nx=ny$ for some $x,y\in M$ and $n\ge1$ in $M$ is equivalent to $n(x\minus 0)=n(y\minus0)$ in $\diff(M)$, which implies that $x=y$ because $M$ is cancellative and $\diff(M)$ is torsion-free. For the converse assume that $n(x\minus s)=0\minus 0$ in $\diff(M)\opkt$. Since $M$ is cancellative, this is equivalent to $nx=ns$, hence $x=s$ because $M$ is torsion-free, and so $x\minus s=0\minus 0=0$.
\end {proof}

\bigskip

\section {Integrality} \label{SecIntegrality}
\markright{\ref{SecIntegrality} Integrality }

In this section, we introduce integral binoid homomorphisms and related objects such as the integral closure and the normalization of a (regular) binoid.

\begin {Definition}
A binoid homomorphism $\varphi:M\rto N$ is called \gesperrt{integral} \index{integral!-- homomorphism}\index{homomorphism!integral --}if for every $a\in N$ there is an integer $k\ge1$ such that $ka\in\varphi(M)$. In this case, we say $N$ is \gesperrt{integral over} \index{integral!-- over}\index{binoid!-- integral over another binoid}$M$ (via $\varphi$).
\end {Definition}

\begin {Lemma}\label{LemIntHomUnits}
Let $M$ be a subbinoid of the commutative binoid $N$. If $N$ is integral over $M$ with respect to the inclusion $\iota:M\embto N$, then $N\okreuz\cap M=M\okreuz$.
\end {Lemma}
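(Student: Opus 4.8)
The statement to prove is that if $M$ is a subbinoid of a commutative binoid $N$ that is integral over $M$ via the inclusion $\iota:M\embto N$, then $N\okreuz\cap M=M\okreuz$. I will prove the two inclusions separately, and the bulk of the work lies in the inclusion $N\okreuz\cap M\subseteq M\okreuz$.

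The inclusion $M\okreuz\subseteq N\okreuz\cap M$ is the easy direction: every unit of $M$ is certainly an element of $M$, and its inverse in $M$ is also an inverse in $N$, so it remains a unit in the larger binoid $N$. The plan is to record this in one line. For the reverse inclusion, suppose $u\in M$ is a unit of $N$; I must produce an inverse for $u$ that actually lies in $M$. By assumption $u\in N\okreuz$, so there is an element $v\in N$ with $u+v=0$. Since $N$ is integral over $M$, there exists an integer $k\ge 1$ with $kv\in M$, i.e.\ $kv=\iota(w)$ for some $w\in M$. The key computation is then $u+\bigl((k-1)u+kv\bigr)=ku+kv=k(u+v)=k\cdot 0=0$ by commutativity, which exhibits $(k-1)u+kv$ as the additive inverse of $u$ in $N$.

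The remaining obstacle — and the one genuinely requiring care — is to check that this inverse lives in $M$, not merely in $N$. The element $(k-1)u+kv$ is a sum of $(k-1)u$, which lies in $M$ because $u\in M$ and $M$ is closed under addition, and of $kv=w\in M$ by the integrality hypothesis; hence $(k-1)u+kv\in M$. Thus $u$ has an inverse in $M$, so $u\in M\okreuz$, completing the reverse inclusion. I should also note that the uniqueness of additive inverses (recorded in the definition of a unit earlier in the paper) guarantees this element coincides with the $v$-based inverse, so there is no ambiguity. Combining both inclusions yields $N\okreuz\cap M=M\okreuz$. The argument uses only commutativity, the submonoid/subbinoid closure properties, and the integrality of $N$ over $M$; I expect no subtler point beyond verifying membership in $M$ at each step.
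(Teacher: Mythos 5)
Your proof is correct and follows essentially the same route as the paper: take the inverse $v$ of $u$ in $N$, use integrality to get $kv\in M$, and verify that $c:=(k-1)u+kv\in M$ satisfies $u+c=0$. The paper's proof is word-for-word this computation with $v=\minus a$, so there is nothing to add.
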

\begin {proof}
Only the inclusion $\subseteq$ is not obvious. So assume that $a\in N\okreuz\cap M$ and denote by $\minus a\in N$ the inverse of $a$. By assumption, there is a $k\ge 1$ with $k(\minus a)\in M$. Then $c:=k(\minus a)+(k-1)a\in M$ satisfies $a+c=0$, hence $a\in M\okreuz$.
\end {proof}

\begin {Definition}
Let $M$ be a subbinoid of the binoid $N$. The \gesperrt{integral closure} \index{integral!-- closure}\index{binoid!integral closure of a --}of $M$ in $N$ is the binoid $\overline{M}^{N}:=\{a\in N\mid na\in M$ \nomenclature[M]{$\overline{M}^{N}$}{integral closure of $M$ in $N$}for some integer $n\ge1\}$. $M$ is \gesperrt{integrally closed} \index{integrally closed}\index{binoid!integrally closed --}in $N$ if $\overline{M}^{N}=M$. If $M$ is regular, we call $\overline{M}^{\diff(M)}$ the \gesperrt{normalization} \index{normalization}\index{binoid!normalization of a --}of $M$, which will usually be abbreviated with $\overline{M}$\nomenclature[M]{$\overline{M}$}{normalization of $M$}. We say $M$ is \gesperrt{normal} \index{normal}\index{binoid!normal --}if $M=\overline{M}$.
\end {Definition}

The integral closure is a subbinoid of $N$ because if $a,b\in\overline{M}^{N}$ with $na,mb\in M$ for some $n,m\ge1$, then $mn(a+b)\in M$ and so $a+b\in\overline{M}^{N}$.

\begin {Proposition} \label{PropLSpecIntegral}
Let $M$ be a subbinoid of the binoid $N$ such that $N$ is integral over $M$ with respect to the inclusion $\iota:M\embto N$ and let $L$ denote an arbitrary binoid. The induced semigroup homomorphism 
$$\iota^{\ast}:L\minspec N\Rto L\minspec M\komma\quad\psi\lto\psi\iota\komma$$
is a semigroup embedding if $L$ is torsion-free. If $N$ is commutative and $L$ boolean, then $\iota^{\ast}$ is a semibinoid isomorphism and, in particular,
$$L\minspec\overline{M}^{N}\,\,\cong\,\, L\minspec M$$
as semibinoids.
\end {Proposition}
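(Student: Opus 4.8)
The plan is to handle the three assertions in turn, using the torsion-free case as the backbone for the boolean case. First I would establish injectivity of $\iota^{\ast}$ under the assumption that $L$ is torsion-free. By Proposition \ref{PropIndHomNspec} the map $\iota^{\ast}$ is a semigroup homomorphism, so it suffices to show it separates points. Suppose $\psi,\psi'\in L\minspec N$ satisfy $\psi\iota=\psi'\iota$, which means precisely that $\psi$ and $\psi'$ agree on the subbinoid $M$. Given an arbitrary $a\in N$, integrality of $N$ over $M$ provides some $k\ge 1$ with $ka\in M$, whence
$$k\psi(a)=\psi(ka)=\psi'(ka)=k\psi'(a).$$
Since $L$ is torsion-free, $k\psi(a)=k\psi'(a)$ forces $\psi(a)=\psi'(a)$; as $a$ was arbitrary, $\psi=\psi'$.

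For the boolean case I would first record that a boolean binoid is torsion-free by Lemma \ref{LemTorsion}(2) and positive by Lemma \ref{LemBool=>PosRed}, so $\iota^{\ast}$ is already injective and, by Lemma \ref{LemPosHom}, both $L\minspec N$ and $L\minspec M$ carry the structure of boolean semibinoids. That $\iota^{\ast}$ respects the absorbing element amounts to the identity $\chi_{N\okreuz}\iota=\chi_{M\okreuz}$, which follows from Lemma \ref{LemIntHomUnits}: for $a\in M$ one has $\chi_{N\okreuz}(a)=0$ exactly when $a\in N\okreuz\cap M=M\okreuz$. The crux is surjectivity. Given $\varphi\in L\minspec M$, I want to extend it to $\tilde\varphi\colon N\rto L$. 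The guiding constraint is $k\tilde\varphi(a)=\varphi(ka)$ whenever $ka\in M$; in a general $L$ this would underdetermine $\tilde\varphi(a)$, but because $L$ is boolean we have $k\ell=\ell$ for all $\ell\in L$ and $k\ge 1$, so the constraint collapses to the explicit definition $\tilde\varphi(a):=\varphi(ka)$ for any $k\ge 1$ with $ka\in M$.

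It then remains to verify that $\tilde\varphi$ is a well-defined binoid homomorphism extending $\varphi$. For well-definedness, if $ka,k'a\in M$ then $kk'a\in M$ since $M$ is closed under addition, and applying $\varphi$ to $kk'a=k'(ka)=k(k'a)$ together with $k\ell=\ell$ yields $\varphi(ka)=\varphi(kk'a)=\varphi(k'a)$. The identity and absorbing element are handled by taking $k=1$ on $0_N,\infty_N\in M$, and additivity uses commutativity of $N$: for $a,b\in N$ with $ka,lb\in M$ one has $kl(a+b)=(kla)+(klb)\in M$, so that $\tilde\varphi(a+b)=\varphi(kla)+\varphi(klb)=\tilde\varphi(a)+\tilde\varphi(b)$, again absorbing the integer multiples via the boolean identity. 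Taking $k=1$ shows $\tilde\varphi|_{M}=\varphi$, that is $\iota^{\ast}(\tilde\varphi)=\varphi$. Hence $\iota^{\ast}$ is a bijective semibinoid homomorphism, i.e.\ a semibinoid isomorphism. The displayed statement for $\overline{M}^{N}$ is then the special case obtained by replacing $N$ with $\overline{M}^{N}$: the integral closure is a subbinoid of $N$ (hence commutative), it contains $M$, and it is integral over $M$ by its very definition, so the boolean case applies verbatim.

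The main obstacle is the surjectivity step, and more precisely the observation that the natural extension equation $k\tilde\varphi(a)=\varphi(ka)$ admits a unique, explicit solution exactly because $L$ is boolean; once this is recognized, the well-definedness and the homomorphism properties are routine bookkeeping in which every spurious integer multiple is erased by the idempotence of $L$.
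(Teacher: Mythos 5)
Your proof is correct and follows essentially the same route as the paper: injectivity from integrality plus torsion-freeness, surjectivity by extending $\varphi$ via $\tilde\varphi(a):=\varphi(ka)$ with the integer multiples absorbed by idempotence of $L$, and the semibinoid structure handled through Lemmas \ref{LemPosHom} and \ref{LemIntHomUnits}. The only (cosmetic) difference is that you establish well-definedness of the extension by showing independence of the chosen $k$, whereas the paper fixes $k$ minimal.
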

\begin {proof}
The induced semigroup homomorphism $\iota^{\ast}$ comes from Proposition \ref{PropIndHomNspec}. To prove the injectivity for $L$ torsion-free suppose that $\psi,\psi^{\prime}\in L\minspec N$ with $\psi_{|M}=\psi^{\prime}_{|M}$. By assumption, there is for every $a\in N\setminus M$ an integer $k\ge 2$ such that $ka=b\in M$. Then $k\psi(a)=\psi(b)=\psi^{\prime}(b)=k\psi^{\prime}(a)$, and hence $\psi(a)=\psi^{\prime}(a)$ since $L$ is torsion-free.

Now let $N$ be commutative and $L$ boolean. For the surjectivity of $\iota^{\ast}$ let $\varphi\in L\minspec M$ be given. Since the inclusion $\iota:M\embto N$ is integral and injective, there is for every $a\in N$ a unique element $b\in M$ such that $ka=b$, where we can choose the integer $k\ge1$ to be minimal with respect to the property $na\in M$, $n\ge1$. This yields a well-defined map $\tilde{\varphi}:N\rto L$ with $\tilde{\varphi}(a):=\varphi(b)$, where $b\in M$ is this unique element for $a\in N$. By definition, $\tilde{\varphi}\iota=\varphi$. To see that $\tilde{\varphi}$ is a binoid homomorphism let $a,a^{\prime}\in N$ with $ka=b$, $k^{\prime}a^{\prime}=b^{\prime}$, and $l(a+a^{\prime})=c$ with $b,b^{\prime},c\in M$, and $k,k^{\prime},l\ge1$ minimal so that $\tilde{\varphi}(a)=\varphi(b)$, $\tilde{\varphi}(a^{\prime})=\varphi(b^{\prime})$, and $\tilde{\varphi}(a+a^{\prime})=\varphi(c)$. Since $b,b^{\prime},c$, and all their multiples lie in $M$ and $L$ is boolean, we obtain
\begin {align*}
\tilde{\varphi}(a+a^{\prime})=\varphi(c)=kk^{\prime}\varphi(c)=\varphi(kk^{\prime}c)=\tilde{\varphi}(kk^{\prime}c)&=\tilde{\varphi}(kk^{\prime}la+kk^{\prime}la^{\prime})\\ 
&=\tilde{\varphi}(k^{\prime}lb+klb^{\prime})\\
&=\varphi(k^{\prime}lb+klb^{\prime})\\
&=k^{\prime}l\varphi(b)+kl\varphi(b^{\prime})\\
&=\varphi(b)+\varphi(b^{\prime})\\
&=\tilde{\varphi}(a)+\tilde{\varphi}(a^{\prime})\pkt
\end {align*}
By Lemma \ref{LemPosHom}, $L\minspec M$ and $L\minspec N$ are boolean semibinoids with absorbing elements $\chi_{M\okreuz}:M\rto L$ and $\chi_{N\okreuz}:N\rto L$, respectively. By Lemma \ref{LemIntHomUnits}, $\chi_{N\okreuz}\iota=\chi_{N\okreuz\cap M}=\chi_{M\okreuz}$, which shows that $\iota^{\ast}$ is a semibinoid isomorphism. The supplement is obvious.
\end {proof}

\begin {Example}
The binoid homomorphism $\N^{\infty}\rto\N^{\infty}$ with $n\mto kn$ is integral. If $L=\C$ and $k\ge2$, the corresponding semigroup homomorphism
$$\C\minspec\N^{\infty}\Rto\C\minspec\N^{\infty}$$
is surjective but not injective. In case $L=\R$ and $k\ge2$, the corresponding semigroup homomorphism is neither surjective nor injective.

For an algebraic closed field $L$, every integral binoid embedding $M\embto N$ yields an integral $L\mina$algebra homomorphism $L[M]\embto L[N]$, and therefore a semigroup epimorphism between the $L\mina$spectra. See the subsequent discussion of \cite[Example 2.B.16]{PatilStorch}.
\end {Example}

Now we will state well-known structure theorems on finitely generated commutative monoids translated to binoids, cf.\ \cite{GarciaRosales}. Recall that an \gesperrt{affine monoid} \index{monoid!affine --}is a monoid $M$ which is finitely generated, cancellative, and torsion-free. Equivalently, $M$ is a submonoid of $\Z^{d}$ for some $d\ge0$, cf.\ \cite[Chapter 2.A]{BrunsGubeladze}.

\begin {Proposition}\label{PropClassFgRegPos}
Let $M$ be a finitely generated binoid.
\begin {ListeTheorem}
\item $M$ is regular if and only if it is isomorphic to a subbinoid of $(\Z^{s}\times\prod_{k=1}^{r}\Z/n_{k}\Z)^{\infty}$ for some $s\ge0$ and $n_{1}\kpkt n_{r}\ge 2$.
\item $M$ is regular and torsion-free if and only if it is isomorphic to a subbinoid of $(\Z^{s})^{\infty}$ for some $s\ge0$.
\item $M$ is positive, regular, and torsion-free if and only if it is isomorphic to a subbinoid of $(\N^{d})^{\infty}$ for some $d\ge0$.
\item If $M$ is regular and positive, then it is isomorphic to a subbinoid of 
$(\N^{d}\times\prod_{k=1}^{r}\Z/n_{k}\Z)^{\infty}$ for some $d\ge1$ and $n_{1}\kpkt n_{r}\ge 2$.
\end {ListeTheorem}
\end {Proposition}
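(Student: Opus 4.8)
The plan is to reduce each equivalence to the structure theory of finitely generated commutative monoids by passing through the difference binoid. Throughout I would use that a finitely generated binoid $M$ is regular exactly when $M\opkt$ is a finitely generated cancellative commutative monoid, in which case $M=(M\opkt)^{\infty}$ and, by Proposition \ref{PropGroupEmbedding}, the canonical map $M\embto\diff(M)$ is an embedding into the binoid group $\diff(M)=G^{\infty}$, where $G:=\diff(M)\opkt$ is the group of differences of $M\opkt$. Since $M\opkt$ is finitely generated, $G$ is generated by the images of the monoid generators together with their inverses, hence is a finitely generated abelian group, so the structure theorem for finitely generated abelian groups gives $G\cong\Z^{s}\times\prod_{k=1}^{r}\Z/n_{k}\Z$ with $s\ge0$ and $n_{k}\ge2$.

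For (1) this embedding already yields $M\embto(\Z^{s}\times\prod_{k}\Z/n_{k}\Z)^{\infty}$. The converse is immediate: every subbinoid of a binoid group is regular, since integrality and cancellativity are inherited by subbinoids, and $(\Z^{s}\times\prod_{k}\Z/n_{k}\Z)^{\infty}$ is a binoid group. For (2) I would combine this with Lemma \ref{LemDGroupTorsionfree}: if $M$ is in addition torsion-free then $\diff(M)$, and hence $G$, is torsion-free, so $G\cong\Z^{s}$ and $M\embto(\Z^{s})^{\infty}$; conversely torsion-freeness descends to subbinoids of the torsion-free binoid group $(\Z^{s})^{\infty}$.

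Part (3) is where the external input enters. If $M$ is moreover positive, then $M\opkt$ is a positive (pointed) affine monoid, i.e.\ finitely generated, cancellative, torsion-free with trivial unit group, and such a monoid embeds into $\N^{d}$ for some $d\ge0$ by the structure theory of affine monoids, cf.\ \cite[Chapter 2.A]{BrunsGubeladze} and \cite{GarciaRosales}; adjoining $\infty$ gives $M\embto(\N^{d})^{\infty}$. The converse follows once more, as units, integrality, cancellativity, and torsion-freeness all descend to subbinoids of $(\N^{d})^{\infty}$.

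The main work, and the step I expect to be most delicate, is (4), where $M\opkt$ may carry torsion. Writing $G\cong\Z^{s}\times T$ with $T:=\prod_{k}\Z/n_{k}\Z$ the torsion subgroup, I would consider the projections $\pi\colon G\rto\Z^{s}$ and $\tau\colon G\rto T$. The key claim is that $\pi(M\opkt)$ is again a positive affine monoid: if $\pi(a)$ and $-\pi(a)$ both lay in $\pi(M\opkt)$ for some $a\in M\opkt$, there would be $b\in M\opkt$ with $a+b\in\ker\pi=T$, so $a+b$ would be a torsion element of $M\opkt$ and thus a unit, hence $0$ by positivity; then $a\in M\opkt\cap(-M\opkt)=\{0\}$, whence $\pi(a)=0$. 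Therefore $\pi(M\opkt)$ embeds into some $\N^{d}$ via a monoid homomorphism $\phi$ by (3), and the map $(\phi\circ\pi,\tau)\colon M\opkt\rto\N^{d}\times T$ is injective because $\pi$ and $\tau$ jointly separate the points of $G\cong\Z^{s}\times T$. Adjoining $\infty$, and if necessary enlarging $d$ by one dummy coordinate to ensure $d\ge1$, gives $M\embto(\N^{d}\times\prod_{k}\Z/n_{k}\Z)^{\infty}$. The degenerate case $s=0$, in which $M\opkt$ is finite and hence a binoid group by Lemma \ref{LemmaCanc}(2), forces $M=\trivial$ by positivity, which embeds into $(\N^{1})^{\infty}$. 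The principal obstacle is thus isolating the positivity of the projected monoid and invoking the correct affine-monoid embedding result; the remaining verifications are routine.
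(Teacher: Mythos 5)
Your proposal is correct and follows essentially the same route as the paper: embed a regular $M$ into its difference binoid group, apply the structure theorem for finitely generated abelian groups for (1)--(2), invoke the affine-monoid embedding into $\N^{d}$ for (3), and for (4) project onto the free part, verify that the projected monoid is positive by exactly the torsion-forces-a-unit argument the paper uses, and recombine with the torsion factor. The only differences are cosmetic (you phrase (4) via the pair of projections $(\phi\circ\pi,\tau)$ where the paper works inside $F\times T$, and you explicitly handle the degenerate case $s=0$ and the normalization $d\ge1$, which the paper glosses over).
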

\begin {proof}
(1) By Proposition \ref{PropGroupEmbedding} and Corollary \ref{CorFGlocalization}, a finitely generated binoid is regular if and only if it can be embedded into a finitely generated abelian binoid group, namely $\diff(M)$, which is given as displayed by the structure theorem for finitely generated abelian groups. (2) follows immediately from Lemma \ref{LemDGroupTorsionfree} and (1). 

(3) By (2), $M$ is positive, regular, and torsion-free if and only if $M\opkt$ is a positive affine monoid which is equivalent to $M\opkt$ is a submonoid of $\N^{d}$ for some $d\ge0$ by  \cite[Proposition 2.17]{BrunsGubeladze}.

(4) If $M$ is regular and positive, we may identify $M$ with its image under the canonical embedding $\iota:M\embto\diff(M)=\Z^{s}\times T$, where $T$ is a torsion group, cf.\ (1). If
$$F:=\{f\in\Z^{s}\mid (f,t)\in M\text{ for some }t\in T\}\komma$$
then $M\opkt=F\times T$. Clearly, $F$ is a submonoid of $\Z^{s}$, that is to say, $F$ is an affine monoid. Furthermore, $F$ is positive since $M=(F\times T)^{\infty}$ is so. Indeed, if $x,\minus x\in F$, then $(x,t)$, $(\minus x,t^{\prime})\in M$ for some $t,t^{\prime}\in T$. With $n:=\ord T$ this yields
$$n((x,t)+(\minus x,t^{\prime}))=n(0,t+t^{\prime})=(0,0)\komma$$
which is a contradiction to $M$ positive. Thus, $F$ is isomorphic to a submonoid of $\N^{d}$ for some $d\ge0$ by (3).
\end {proof}

\bigskip

\chapter {Ideal theory in commutative binoids} \label {ChapIdealTheory}
\markright{\ref{ChapIdealTheory} Ideal theory in commutative binoids}

In this chapter, we develop the ideal theory of commutative binoids in a manner parallel to that of rings. The main differences are that in the ideal theory of binoids the union of (prime) ideals is again a (prime) ideal and that every binoid is local in the sense that it admits a unique maximal ideal (with respect to $\subseteq$), which is prime. These two characteristics of binoids account for the main deviations from the ideal theory of rings, cf.\ Remark \ref{RemPrimeUnionConsequence1} and Remark \ref{RemPrimeUnionConsequence2}. Introducing the (Rees) quotient by an ideal as an important tool emphasizes again the benefit of an absorbing element. There are one-to-one corrrespondences between the prime ideals, filters, and $\trivial\mina$valued binoid homomorphisms of a binoid from which the spectrum of particular binoids can easily be deduced by taking advantage of the more general descriptions of $N\mina$spec given in Chapter \ref{Chap1Basics}. A treatment of the spectrum as a topological space will follow in Section \ref{SecTopology}. We close this section with the study on minimal prime ideals.

\medskip

The similarities and differences between the ideal theory of binoids and the that of rings have been intensively studied in \cite{AndersonIT}. An elaboration on the ideal theory of monoids can be found in \cite {Kobsa}. In both publications, the theory of binoids and monoids that satisfy the ascending chain condition on ideals and the theory of primary ideals and primary decompositions have been established to a great part.

\begin {Convention}
In this chapter, arbitrary binoids are assumed to be \emph{commutative}.
\end {Convention}

\bigskip

\section {Ideals} \label {SecIdeals}
\markright {\ref{SecIdeals} Ideals}

\begin {Definition} \label{DefIdeal}
Let $M$ be a binoid. An \gesperrt{ideal} \index{ideal}in $M$ is a subset $\Ical\subseteq M$ with $\infty\in\Ical$ and $x+M\subseteq\Ical$ for all $x\in\Ical$. 
\end {Definition}

Clearly, $M$ itself is an ideal, the \gesperrt{unit ideal}\index{ideal!unit --}, and an ideal coincides with $M$ if and only if it contains a unit. The condition $\infty\in\Ical$ is equivalent to $\Ical\not=\emptyset$. By definition, a nonempty subset $\Ical$ of $M$ is an ideal if and only if $\Ical$ is an $M\mina$set with respect to the addition on $M$ restricted to $\Ical$:
$$M\times\Ical\Rto\Ical\komma\quad (x,a)\lto x+a\pkt$$

\begin {Example}  \label {ExSpIdeals}
Some sets considered before turn out to be ideals.
\begin {ListeTheorem}
\item The subset $\zero$ is an ideal in every binoid $M$.
\item The set of all nonunits $M\Uplus=M\setminus M\okreuz$ of a nonzero binoid $M$ is an ideal in $M$ since $a+b\not\in M\okreuz$ if $a$ or $b\in M\Uplus$. 
\item The kernel of a binoid homomorphism $\varphi:M\rto N$ is an ideal in $M$ because $a\in\ker\varphi$ and $b\in M$ implies $\varphi(a+b)=\varphi(a)+\varphi(b)=\infty$. This is a special case of Lemma \ref{LemIdealCorrespondence}(1) below (with $\Jcal=\zero$). For instance, if $(S,p)$ is an $N\mina$set, the set $\{a\in N\mid a+s=p$ for all $s\in S\}\subseteq N$ is an ideal in $N$ since it is the kernel of the binoid homomorphism $N\rto(\map_{p}(S,S),\circ,\id,\varphi_{\infty})$, cf.\ Lemma \ref{LemNsetHom}. Also, the kernel of the canonical projection $\pi:M\rto M/\!\sim$ is an ideal for every ideal congruence $\sim$. In particular, for a nonzero binoid $M$ the set of all non-integral elements $\nonint(M)$ and the set of all nilpotent elements $\nil(M)$ are ideals in $M$, cf.\ Lemma \ref{LemIdealCong}. Algorithms for computing $\nonint(M)$ and $\nil(M)$ (in terms of generators of an ideal, see below) for a finitely generated binoid $M$ are given in \cite[Algorithm 9 and Algorithm 12]{Rosales}.
\item The set $\Ical+\Jcal=\{a+b\mid a\in\Ical, b\in\Jcal\}$ for two ideals $\Ical,\Jcal\subseteq M$ is an ideal in $M$. In particular, for any $n\ge0$,
$$n\Ical=\Ical\pluspkt\Ical=\{a_{1}\pluspkt a_{n}\mid a_{i}\in\Ical\}$$ is an ideal (note that $0\Ical=M$ according to our convention that the empty sum is $0$).
\item Clearly, every ideal in a ring $R$ is an ideal in $(R,\cdot,1,0)$. The converse is false. For instance, the set $\{a\mid a=0$ or $|a|\ge10\}$ is an ideal in the binoid $(\Z,\cdot,1,0)$ but not an ideal in the ring $\Z$.
\end {ListeTheorem}
\end {Example}

\begin {Definition}
We refer to $M\Uplus$ as the \gesperrt{maximal ideal} \index{ideal!maximal --}\index{maximal ideal}of $M$ because it is the largest ideal $\not=M$ in $M$ with respect to $\subseteq$. A binoid homomorphism $\varphi:N\rto M$ is \gesperrt{local} \index{homomorphism!local binoid --}\index{binoid homomorphism!local --}if $\varphi(N\Uplus)\subseteq M\Uplus$.
\end {Definition}

\begin {Lemma} \label {LemIdealCorrespondence}
Let $\varphi:M\rto N$ be a homomorphism of binoids.
\begin {ListeTheorem}
\item If $\Jcal$ is an ideal in $N$, then $\varphi^{-1}(\Jcal)$ is an ideal in $M$.
\item If $\Ical$ is an ideal in $M$, then $\varphi(\Ical)+N=\{a+b\mid a\in\varphi(\Ical), b\in N\}$ is an ideal in $N$.
\end {ListeTheorem}
\end {Lemma}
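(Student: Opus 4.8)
The plan is to verify the two parts of Lemma~\ref{LemIdealCorrespondence} directly from Definition~\ref{DefIdeal}, which requires for a subset $\Ical\subseteq M$ that $\infty_M\in\Ical$ and $x+M\subseteq\Ical$ for all $x\in\Ical$. Since $\varphi$ is a binoid homomorphism, it sends $\infty_M$ to $\infty_N$, and this fact together with the assumption that $\Jcal$ (resp.\ $\Ical$) is an ideal will supply the non-emptiness in each case. The main work is checking the absorption property $x+(-)\subseteq(-)$ in each of the two target sets.

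For part (1), I would first note that $\infty_M\in\varphi^{-1}(\Jcal)$ because $\varphi(\infty_M)=\infty_N\in\Jcal$ (using $\infty_N\in\Jcal$, which holds since $\Jcal$ is an ideal). For the absorption property, take $x\in\varphi^{-1}(\Jcal)$, so $\varphi(x)\in\Jcal$, and let $b\in M$ be arbitrary. Then $\varphi(x+b)=\varphi(x)+\varphi(b)$, and since $\varphi(x)\in\Jcal$ and $\Jcal$ is an ideal in $N$, we have $\varphi(x)+\varphi(b)\in\Jcal$; hence $x+b\in\varphi^{-1}(\Jcal)$. This shows $x+M\subseteq\varphi^{-1}(\Jcal)$, completing part (1).

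For part (2), set $\Jcal':=\varphi(\Ical)+N$. First, $\infty_N=\varphi(\infty_M)+\infty_N\in\Jcal'$ since $\infty_M\in\Ical$ and $\infty_N\in N$ (note $\varphi(\infty_M)=\infty_N$ is absorbing). For absorption, take a typical element $a+b\in\Jcal'$ with $a\in\varphi(\Ical)$ and $b\in N$, and let $c\in N$ be arbitrary; then $(a+b)+c=a+(b+c)$ with $a\in\varphi(\Ical)$ and $b+c\in N$, so $(a+b)+c\in\Jcal'$ again. This gives $\Jcal'+N\subseteq\Jcal'$, so $\Jcal'$ is an ideal in $N$.

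I do not anticipate any genuine obstacle here: the only subtlety is remembering that a homomorphism preserves the absorbing element, which guarantees the required non-emptiness ($\infty$-membership) of both constructed sets, and that the definition of ideal uses one-sided absorption $x+M$, which is exactly what the commutativity convention of this chapter makes unambiguous. The computations are the routine distributivity/associativity manipulations $\varphi(x+b)=\varphi(x)+\varphi(b)$ and $(a+b)+c=a+(b+c)$, so the proof is essentially immediate once the verifications are organized as above.
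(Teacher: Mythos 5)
Your proof is correct and is exactly the routine verification the paper has in mind: its own "proof" consists of the single sentence ``Both assertions are easily verified,'' and your direct check of $\infty$-membership and the absorption property $x+(-)\subseteq(-)$ from Definition~\ref{DefIdeal}, using that $\varphi(\infty_M)=\infty_N$ and $\varphi(x+b)=\varphi(x)+\varphi(b)$, is precisely the intended argument.
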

\begin {proof}
Both assertions are easily verified.
\end {proof}

\begin {Definition}
The ideal $\varphi(\Ical)+N$ \nomenclature[APhi0]{$\varphi(\Ical)+N$}{extended ideal of $\Ical\subseteq M$ with respect to $\varphi:M\rto N$}from Lemma \ref {LemIdealCorrespondence} is called the \gesperrt{extended ideal} \index{ideal!extended --}of $\Ical$ by $\varphi$. 
\end {Definition}

\begin {Corollary}
Let $M$ be a binoid and $S$ a submonoid of $M$.
\begin {ListeTheorem}
\item If $\Ical$ is an ideal in $M_{S}$, then $\iota_{S}^{-1}(\Ical)$ is an ideal in $M$.
\item If $\Ical$ is an ideal in $M$, then $\Ical_{S}:=\{a\minus s\mid a\in\Ical, s\in S\}$ is an ideal in $M_{S}$ such that  $\Ical_{S}=M_{S}$ if and only if $S\cap\Ical\not=\emptyset$.
\end {ListeTheorem}
\end {Corollary}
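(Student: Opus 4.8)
The plan is to deduce both parts from Lemma \ref{LemIdealCorrespondence}, applied to the canonical binoid homomorphism $\iota_{S}:M\rto M_{S}$ of Lemma \ref{LemLocalization}, and then to settle the extra assertion in part (2) by unwinding the defining equivalence relation on $M\times S$.

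For part (1) there is essentially nothing to do: since $\iota_{S}$ is a binoid homomorphism, Lemma \ref{LemIdealCorrespondence}(1) gives at once that $\iota_{S}^{-1}(\Ical)$ is an ideal in $M$ whenever $\Ical$ is an ideal in $M_{S}$.

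For the first statement of part (2), I would observe that $\Ical_{S}$ coincides with the extended ideal $\iota_{S}(\Ical)+M_{S}$ of $\Ical$ by $\iota_{S}$. Indeed, a typical generator $(a\minus 0)+(b\minus s)$ with $a\in\Ical$, $b\in M$, $s\in S$ equals $(a+b)\minus s$, and $a+b\in\Ical$ because $\Ical$ is an ideal; conversely every $c\minus s$ with $c\in\Ical$ and $s\in S$ arises as $(c\minus 0)+(0\minus s)$. Hence $\Ical_{S}=\iota_{S}(\Ical)+M_{S}$, and Lemma \ref{LemIdealCorrespondence}(2) shows this is an ideal in $M_{S}$. (Alternatively, one could verify the two defining properties of an ideal for $\Ical_{S}$ directly, using that $\Ical$ is $M$-absorbing and $S$ is a submonoid.)

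It remains to characterize when $\Ical_{S}$ is the unit ideal, and here I would use that an ideal equals the whole binoid exactly when it contains a unit. If $s\in S\cap\Ical$, then $s\minus s\in\Ical_{S}$ and $s\minus s=0\minus 0$ is the identity of $M_{S}$, so $\Ical_{S}=M_{S}$. The converse is the main (though short) obstacle: if $\Ical_{S}=M_{S}$, then $0\minus 0\in\Ical_{S}$, so $0\minus 0=a\minus s$ for some $a\in\Ical$, $s\in S$, and the defining relation yields a $c\in S$ with $a+c=s+c$. The point is to recognize this common value as an element of $S\cap\Ical$: it lies in $S$ because $s+c\in S$ by the submonoid property, and it lies in $\Ical$ because $a+c\in\Ical$ by the ideal property. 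Thus $S\cap\Ical\not=\emptyset$, completing the equivalence.
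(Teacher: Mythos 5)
Your proof is correct and follows essentially the same route as the paper, which likewise derives both parts from Lemma \ref{LemIdealCorrespondence} via the identification $\Ical_{S}=\iota_{S}(\Ical)+M_{S}$. The explicit verification of the equivalence $\Ical_{S}=M_{S}\Leftrightarrow S\cap\Ical\not=\emptyset$ is left implicit in the paper, and your unwinding of the defining relation to locate the common value $a+c=s+c$ in $S\cap\Ical$ is exactly the intended argument.
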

\begin {proof}
These are immediate consequences of Lemma \ref {LemIdealCorrespondence}. For (2) note that $\Ical_{S}=\iota_{S}(\Ical)+M_{S}$.
\end {proof}

To describe the structure of the set of ideals of a binoid, we need the following definition.

\begin {Definition}
A \gesperrt{lattice} \index{lattice}$L$ is a set that is both a join- and meet-semilattice; that is, $(L,\cup)$ and $(L,\cap)$ are boolean commutative semigroups, which are related by the absorption laws $a\cup(a\cap b)=a$ and $a\cap(a\cup b)=a$ for all $a,b\in L$. A lattice is \gesperrt{complete} \index{lattice!complete --}if any subset has a least upper and a greatest lower bound with respect to the partial order $\subseteq$ on $L$ defined by $a\subseteq b:\eq a\cup b=b$ (or $a\subseteq b:\eq a\cap b=a$). In particular, the lattice itself is bounded by a largest and a smallest element.
\end {Definition}

\begin {Lemma} \label {LemUnionIdeals}
The union and the intersection of ideals in a binoid $M$ is again an ideal in $M$; that is, the set of all ideals is a complete lattice, partially ordered by set inclusion with largest element $M$ and smallest element $\zero$.
\end {Lemma}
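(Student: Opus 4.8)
The plan is to verify directly that the collection of ideals is closed under arbitrary unions and arbitrary intersections, and then to observe that these two operations, restricted to ideals, satisfy the lattice axioms recalled in the preceding definition, with the stated bounds. Throughout I will use the characterization from Definition \ref{DefIdeal}: a subset $\Ical\subseteq M$ is an ideal if and only if $\infty\in\Ical$ and $x+M\subseteq\Ical$ for every $x\in\Ical$.

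First I would treat the union. Let $(\Ical_{j})_{j\in J}$ be an arbitrary family of ideals in $M$ and set $\Ical:=\bigcup_{j\in J}\Ical_{j}$. Since each $\Ical_{j}$ contains $\infty$, so does $\Ical$. If $x\in\Ical$, then $x\in\Ical_{j}$ for some $j$, whence $x+M\subseteq\Ical_{j}\subseteq\Ical$; thus $\Ical$ is an ideal. The intersection is equally immediate: with $\Jcal:=\bigcap_{j\in J}\Ical_{j}$, every $\Ical_{j}$ contains $\infty$ so $\infty\in\Jcal$, and if $x\in\Jcal$ then $x+M\subseteq\Ical_{j}$ for every $j$, hence $x+M\subseteq\Jcal$. (One should keep in mind the convention that $J$ may be empty: the empty union is $\zero$, which is an ideal by Example \ref{ExSpIdeals}(1), and the empty intersection is $M$, the unit ideal; both are legitimate, so no separate degenerate case breaks the argument.)

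Next I would identify these two operations as the lattice structure. Define, on the set of ideals, $\Ical\cup\Jcal$ as the set-theoretic union and $\Ical\cap\Jcal$ as the set-theoretic intersection; by the previous paragraph both are again ideals. That $(\,\cdot\,,\cup)$ and $(\,\cdot\,,\cap)$ are boolean commutative semigroups is clear, since set union and set intersection are associative, commutative, and idempotent. The absorption laws $\Ical\cup(\Ical\cap\Jcal)=\Ical$ and $\Ical\cap(\Ical\cup\Jcal)=\Ical$ are standard set-theoretic identities, so the ideals form a lattice in which the induced partial order $\Ical\subseteq\Jcal:\eq\Ical\cup\Jcal=\Jcal$ coincides with ordinary set inclusion. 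Completeness then follows directly from the fact that arbitrary unions and intersections of ideals are again ideals: for any collection of ideals, $\bigcup$ is its least upper bound and $\bigcap$ its greatest lower bound with respect to $\subseteq$. Finally, $M$ is the largest element, since every ideal is by definition a subset of $M$, and $\zero$ is the smallest, since every ideal contains $\infty$ and hence the set $\{\infty\}=\zero$.

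I do not anticipate a genuine obstacle here: the statement is essentially a bookkeeping verification, and the only point requiring the slightest care is the contrast with ring theory flagged in the chapter introduction, namely that the union of ideals really is an ideal in a binoid (because being an ideal imposes no additive-closure condition among the elements of the ideal, only absorption under the ambient addition $x+M\subseteq\Ical$). This is exactly what makes the union step go through verbatim, and it is the feature worth emphasizing in the write-up.
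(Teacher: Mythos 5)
Your argument is correct and follows the same route as the paper: the paper's proof consists precisely of your first paragraph's verification that unions and intersections of ideals again satisfy $x+M\subseteq\Ical$, leaving the lattice-theoretic bookkeeping implicit. The only nitpick is your parenthetical that ``the empty union is $\zero$'' --- set-theoretically the empty union is $\emptyset$, which is not an ideal; what you mean is that the join of the empty family in the lattice of ideals is the bottom element $\zero$, and with that reading everything is fine.
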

\begin {proof}
Let $(\Ical_{k})_{k\in J}$ be a family of ideals in $M$. If $x\in\bigcup_{k\in J}\Ical_{k}$, then $x\in\Ical_{l}$ for some $l\in J$. Hence,  $x+M\subseteq\Ical_{l}\subseteq\bigcup_{k\in J}\Ical_{k}$. If $x\in\bigcap_{k\in J}\Ical_{k}$, then $x\in\Ical_{k}$ for all $k\in J$. Hence, $x+M\subseteq\Ical_{k}$ for all $k\in J$, which means $x+M\subseteq\bigcap_{k\in J}\Ical_{k}$.
\end {proof}

The subset of all proper ideals in a binoid $M$ also defines a complete lattice, partially ordered by set inclusion, because $M\Uplus$ is a largest element within this set. If $R$ is a ring, the lattice of ideals of $(R,\cdot,1,0)$ is the set of all unions of ideals in the ring $R$. For instance, the ideal $\{a\mid a=0$ or $|a|\ge10\}$ in the binoid $(\Z,\cdot,1,0)$ is the union of all ideals $n\Z$, $n\ge 10$, in the ring $\Z$.

\begin {Remark}
Those lattices that arise up to isomorphism as the lattice of ideals of a binoid have been characterized by D. D. Anderson and E. W. Johnson in \cite[Theorem 2.4]{AndersonIT}.
\end {Remark}

\begin {Example}  \label {ExpPsetIdeals}
Let $V$ be an arbitrary set. The ideal lattices of the positive binoids $\Pset(V)_{\cap}$ and $\Pset(V)_{\cup}$ can be described as follows: a subset $\Acal\subseteq\Pset(V)$ is an ideal in $\Pset(V)_{\cap}$ if $A\in\Acal$ and $B\in\Pset(V)$ implies $A\cap B\in\Acal$. Since $A\cap B\subseteq A$ for all $B\in\Pset(V)$, all subsets of $A$ need to be contained in $\Acal$. Hence, $\Acal\subseteq\Pset(V)$ is an ideal in $\Pset(V)_{\cap}$ if and only if it is subset-closed; that is, the lattice of ideals is given by $(\Scal(V),\cap,\cup)$. Similarly, $\Acal\subseteq\Pset(V)$ is an ideal in $\Pset(V)_{\cup}$ if and only if $\Acal$ is superset-closed; that is, the lattice of ideals is given by $(\Scal^{\opc}(V),\cap,\cup)$.
\end {Example}

Note that if $V$ is finite, every ideal $\Acal$ in $\Pset(V)_{\cap}$ (resp.\ in $\Pset(V)_{\cup}$) is uniquely determined by the maximal (resp.\ minimal) sets in $\Acal$ with respect to $\subseteq$.

\begin {Definition}
By Lemma \ref{LemUnionIdeals}, there exists for every set of elements $V\subseteq M$ a smallest ideal in $M$ containing $V$, which will be denoted by $_{_{M}}\langle V\rangle$\nomenclature[AGenerate2]{$_{_{M}}\langle V\rangle$}{ideal in $M$ generated by $V\subseteq M$} or simply by $\langle V\rangle$ unless the context requires clarification. We say $\langle V\rangle$ is the ideal \gesperrt{generated} \index{ideal!-- generated by}by $V$ and call the elements of $V$ \gesperrt{generators} \index{ideal!generators of an --}of $\langle V\rangle$. A set $V\subseteq M$ is a \gesperrt{minimal} \index{ideal!generators of an --!minimal set of --}set of generators of an ideal $\Ical\subseteq M$ if $\langle V\rangle=\Ical$ and no proper subset of $V$ generates $\Ical$. An ideal $\Ical$ is \gesperrt{finitely generated} \index{ideal!finitely generated --}if $\Ical=\langle V\rangle$ for a finite subset $V\subseteq M$. A \gesperrt{principal ideal} \index{ideal!principal --}is an ideal generated by a 
singleton.
\end {Definition}

\begin {Example}
The extended ideal of $\Ical\subseteq M$ by $\varphi:M\rto N$ is the ideal in $N$ that is generated by $\im\Ical$. 
\end {Example}

\begin {Example}\label {ExpPsetIdealsGen}
Let $V$ be a finite set. A minimal set of generators of an ideal $\Acal\subseteq\Pset(V)_{\cap}$ is given by the maximal sets contained in $\Acal$ with respect to set inclusion, i.e. $\Acal=\langle \max_{\subseteq}\Acal\rangle$. Similarly, $\min_{\subseteq}\Acal$ is a minimal set of generators for every ideal $\Acal\subseteq\Pset(V)_{\cup}$. In particular, $(\Pset(V)_{\cap})\Uplus=\Pset(V)\setminus\{V\}=\langle V\setminus\{v\}\mid v\in V\rangle$ and $(\Pset(V)_{\cup})\Uplus=\Pset(V)\setminus\{\emptyset\}=\langle v\mid v\in V\rangle$.  For the principal ideals there are one-to-one correspondences, namely
$$\begin {array} {rcccl}
\{\text{Principal ideals of }\Pset(V)_{\cap}\}\!\!\!&\longleftrightarrow&\!\!\!\Pset_{n}\!\!\!&\longleftrightarrow&\!\!\!\{\text{Principal ideals of }\Pset(V)_{\cup}\}\komma\\
\langle J\rangle=\{A\mid A\subseteq J\}\!\!\!&\longleftrightarrow&\!\!\! J\!\!\!&\longleftrightarrow&\!\!\!\{A\mid J\subseteq A\}=\langle J\rangle
\end {array}$$
where the first is order preserving and the latter reversing.
\end {Example}

For the next result recall that the \gesperrt{product order} \index{product order}$\le$ on $\N^{n}$ is defined by $(a_{1}\kpkt a_{n})\le(b_{1}\kpkt b_{n})$ if $a_{i}\le b_{i}$ for all $i\in\{1\kpkt n\}$.

\begin {Proposition}
Every ideal in $(\N^{n})^{\infty}$, $n\ge 0$, is finitely generated. In particular, the ideals of a finitely generated binoid are finitely generated.
\end {Proposition}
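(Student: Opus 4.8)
The statement has two parts: first that every ideal in $(\N^{n})^{\infty}$ is finitely generated, and second that this implies the corresponding statement for arbitrary finitely generated binoids. The plan is to prove the first part directly via a Dickson-type argument and then derive the second by transport along the canonical epimorphism.

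For the first part, I would work with the correspondence between ideals of $(\N^{n})^{\infty}$ and certain subsets of $\N^{n}$. An ideal $\Ical\subseteq(\N^{n})^{\infty}$ always contains $\infty$, so it is determined by $\Ical\opkt=\Ical\setminus\zero\subseteq\N^{n}$, and the ideal condition $x+M\subseteq\Ical$ translates exactly into the statement that $\Ical\opkt$ is an \emph{upward-closed} (superset-closed in the sense of the product order $\le$) subset of $\N^{n}$: if $a\in\Ical\opkt$ and $a\le b$, then $b=a+(b-a)\in\Ical\opkt$. Thus finding a finite generating set of $\Ical$ amounts to finding finitely many minimal elements of $\Ical\opkt$ that dominate every element of $\Ical\opkt$ under $\le$. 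This is precisely Dickson's Lemma: every nonempty subset of $\N^{n}$ has only finitely many minimal elements with respect to the product order, and every element lies above one of them. The cleanest route is induction on $n$ (the case $n=0$ being trivial since $(\N^{0})^{\infty}=\trivial$ has only the ideals $\zero$ and $\trivial$), using the well-ordering of $\N$ in the last coordinate; alternatively one may invoke the Hilbert Basis Theorem, to which the paper has already appealed in the proof of Theorem~\ref{ThRedei}, by noting that these ideals correspond to monomial ideals in $K[X_{1}\kpkt X_{n}]$. I would phrase it combinatorially via Dickson's Lemma to keep it self-contained, then conclude that the finitely many minimal elements of $\Ical\opkt$ generate $\Ical$.

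For the second part, let $M$ be a finitely generated binoid with generators $x_{1}\kpkt x_{r}$. By Example~\ref{ExGen} there is a canonical binoid epimorphism $\varepsilon:(\N^{r})^{\infty}\rto M$. Given an ideal $\Ical\subseteq M$, its preimage $\varepsilon^{-1}(\Ical)$ is an ideal in $(\N^{r})^{\infty}$ by Lemma~\ref{LemIdealCorrespondence}(1), hence finitely generated by the first part, say by $a_{1}\kpkt a_{s}$. Since $\varepsilon$ is surjective one has $\Ical=\varepsilon(\varepsilon^{-1}(\Ical))$, and I would check that the images $\varepsilon(a_{1})\kpkt\varepsilon(a_{s})$ generate $\Ical$: for any $y\in\Ical$ pick a preimage $x\in\varepsilon^{-1}(\Ical)$, write $x=a_{j}+z$ for some $j$ and some $z\in(\N^{r})^{\infty}$, and apply $\varepsilon$ to get $y=\varepsilon(a_{j})+\varepsilon(z)\in{}_{_{M}}\langle\varepsilon(a_{1})\kpkt\varepsilon(a_{s})\rangle$.

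The main obstacle is the combinatorial core of the first part, namely establishing that upward-closed subsets of $\N^{n}$ are finitely generated under domination. Everything else is routine verification of the ideal correspondence and a surjectivity argument. I expect the induction in Dickson's Lemma (handling the passage from $\N^{n-1}$ to $\N^{n}$ by stratifying according to the last coordinate and collecting finitely many minimal elements across the first finitely many relevant slices) to be the only step requiring genuine care; if one prefers, this step can simply be cited as Dickson's Lemma or deduced from Hilbert's Basis Theorem in the same manner as in Theorem~\ref{ThRedei}.
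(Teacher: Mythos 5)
Your proof is correct and follows essentially the same route as the paper: the first part is Dickson's Lemma applied to the minimal elements of the ideal under the product order, and the second part pulls an ideal back along the canonical epimorphism $\varepsilon:(\N^{r})^{\infty}\rto M$, applies the first part, and pushes the finitely many generators forward. The only cosmetic difference is that you spell out the upward-closedness correspondence and the option of a self-contained induction, whereas the paper simply cites Dickson's Lemma.
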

\begin {proof}
It is clear that every ideal $\Ical$ in $(\N^{n})^{\infty}$ is generated by the minimal elements of $\Ical$ with respect to the product order. This is a finite set by Dickson's Lemma\index{Dickson's Lemma}, cf.\ \cite[Theorem 5.1]{GarciaRosales} or \cite[Lemma 8.6]{GarciaRosalesNS}, which proves the first statement. Let $\Jcal$ be an ideal in an $n$-generated binoid $M$. The preimage $\Ical:=\varepsilon^{-1}(\Jcal)$
under the canonical binoid epimorphism $\varepsilon:(\N^{n})^{\infty}\rto M$ is an ideal in $(\N^{n})^{\infty}$ by Lemma \ref{LemIdealCorrespondence}(1). By the first part, $\Ical$ is generated by finitely many elements $a_{1}\kpkt a_{r}$. Now the surjectivity of $\varepsilon$ implies that $\Jcal$ is generated by $\varepsilon(a_{i})$, $i\in\{1\kpkt r\}$.
\end {proof}

\begin {Example}  
By (the proof of) the preceding proposition, every ideal $\Acal\not=\zero$ in $(\N^{n})^{\infty}$ is of the form $\langle m_{1}\kpkt m_{r}\rangle$, where $\{ m_{1}\kpkt m_{r}\}=\min_{\le}\Acal$, $m_{i}\in\N^{n}$, and $\le$ denotes the product order on $\N^{n}$. In particular, the principal ideals are given by
$$\Ical_{k}\,:=\,\langle k\rangle\,=\,(\N_{\ge k_{1}}\timespkt\N_{\ge k_{n}})^{\infty}\,\subseteq\,(\N^{n})^{\infty}$$
where $k=(k_{1}\kpkt k_{n})\in\N^{n}$. If $k,l\in\N^{n}$, then $\Ical_{k}\cap \Ical_{l}=\Ical_{(\max(k_{i},l_{i}))_{i\in I}}$. Similarly, every ideal in $(\N^{\infty})^{n}$ is of the form $\langle m_{1}\kpkt m_{r}\rangle$, where $\{ m_{1}\kpkt m_{r}\}=\min_{\le}\Acal$, $m_{i}\in(\N^{\infty})^{n}$, and $\le$ is the product order on $\N^{n}$ extended to $(\N^{\infty})^{n}$ by $m\le\infty$ for all $m\in\N$. In particular, the principal ideals of $(\N^{\infty})^{n}$ are given by
$$\tilde{\Ical}_{k}\,:=\,\langle k\rangle\,=\,\N^{\infty}_{\ge k_{1}}\timespkt\N^{\infty}_{\ge k_{n}}\,\subseteq\,(\N^{\infty})^{n}$$
for some $k=(k_{1}\kpkt k_{n})\in(\N^{\infty})^{n}$, where  $\N^{\infty}_{\ge\infty}:=\zero$. Similarly, one has $\tilde{\Ical}_{k}\cap\tilde{\Ical}_{l}=\tilde{\Ical}_{(\max(k_{i},l_{i}))_{i\in I}}$ for $k,l\in(\N^{\infty})^{n}$. Note that $\varphi:(\N^{\infty})^{n}\rto(\N^{n})^{\infty}$ defined by $e_{i}\mto e_{i}$ and $e_{i,\infty}\mto\infty$, $i\in\{1\kpkt n\}$, is a binoid epimorphism with $\varphi(\tilde{\Ical}_{k})+(\N^{n})^{\infty}=\Ical_{k}$ if $k\in\N^{n}$ and $\{\infty\}$ otherwise.
\end {Example}

\begin {Definition} \label{DefReesCong}
Let $\Ical$ be an ideal in $M$. The congruence $\sim_{\Ical}$ on $M$ defined by 
$$a\sim_{\Ical} b\quad:\Leftrightarrow\quad a=b\quad\text{or}\quad a,b\in\Ical$$
is called the \gesperrt{Rees congruence} \index{congruence!Rees --}of $\Ical$ and $M/\!\sim_{\Ical}\,\;=:M/\Ical$\nomenclature[MACongruenceRees]{$M/\Ical$}{Rees quotient of $M$ with respect to $\Ical$} the \gesperrt{(Rees) quotient} of $M$ by \index{ideal!Rees quotient by an --}$\Ical$. The extended ideal of an ideal $\Jcal\subseteq M$ by the canonical projection $M\rto M/\Ical$ will be denoted by $\Jcal/\Ical$.\nomenclature[J]{$\Jcal/\Ical$}{extended ideal of $\Jcal$ with respect to $M\rto M/\Ical$}
\end {Definition}

\begin {Remark} \label{RemIndIsom}
The quotient $M/\Ical$ may be described as the result of collapsing $\Ical$ into a single element, namely $\infty$, while the elements outside $\Ical$ remain unchanged. In particular, $\sim_{\Ical}$ is an ideal congruence and every ideal congruence is of this form since the kernel of a binoid homomorphism is an ideal, cf.\ Example \ref{ExSpIdeals}(3). In this spirit, $M/\Ical$ can be identified with the binoid $(M\setminus\Ical)\cup\zero$ with addition given by
$$a+b\,\,=\,\,\begin{cases}
a+b&\text{, if }a+b\not\in\Ical\komma\\
\infty&\text{, otherwise,}
\end{cases}$$
for $a,b\in (M\setminus\Ical)\cup\zero$. Similarly, the extended ideal of $\Jcal$ under $M\rto M/\Ical$ can be identified with the subset $(\Jcal\setminus(\Jcal\cap\Ical))\cup\zero$.

Note that the congruence $\sim_{\pi_{\Ical}}$ induced by $\pi_{\Ical}:M\rto M/\Ical$ coincides with the Rees congruence of $\Ical$; that is, $M/\sim_{\pi_{\Ical}}=M/\Ical$ and $\Ical=\ker\pi_{\Ical}$. In general, if $\varphi:M\rto N$ is a binoid epimorphism, the induced homomorphism $M/\ker\varphi\rto N$ fails to be an isomorphism when $\sim_{\pi_{\ker\varphi}}\,\not=\,\sim_{\varphi}$. For instance, the kernel of the binoid homomorphism $\varphi:\N^{\infty}\!\times\!\N^{\infty}\rto\N^{\infty}$, $(a,b)\mto b$, is given by the ideal $\{(a,\infty)\mid a\in \N^{\infty}\}$ and the quotient by
$$(\N^{\infty}\times\N^{\infty})/\ker\varphi\,\,\cong\,\,(\N^{\infty}\times\N)\cup\{(\infty,\infty)\}\pkt$$
On the other hand, we have $(\N^{\infty}\!\times\!\N^{\infty})/\!\sim_{\varphi}\,\cong\N^{\infty}$, cf.\ Remark \ref{RemKer}. See also Example \ref{ExpKernKongruenz}.
\end {Remark}

\begin {Example}  \label{ExCompositionsIdeal}
\begin {ListeTheorem}
\item []
\item According to Remark \ref{RemIndIsom}, we obtain from the observations we made in Example \ref{ExSpIdeals}(3):
$$M_{\opint}=M/\nonint(M)\quad\text{and}\quad M_{\opred}=M/\nil(M)\pkt$$
\item Similar to the theory of semilocal rings, we have a description of the Hilbert-Samuel function
$\opH(-,M):\N\rto\N$ \index{Hilbert-Samuel function}in terms of the quotient by a multiple of the maximal ideal
$$\opH(n,M)=\#(M/nM\Uplus)-1$$
for a positive finitely generated binoid $M$.
\item Let $(M_{i})_{i\in I}$ be a finite family of binoids. Then  
$$\bigwedge_{i\in I}M_{i}\,\,\cong\,\,\Big(\prod_{i\in I}M_{i}\Big)\Big/\Ical\quad\text{ and }\quad\bigcupbidot_{i\in I}M_{i}\,\,\cong\,\,\Big(\bigwedge_{i\in I}M_{i}\Big)\Big/\Jcal\,\,\cong\,\,\Big(\prod_{i\in I}M_{i}\Big)\Big/\Ical\cup\Jcal\komma$$
where 
$$\Ical=\{(a_{i})_{i\in I}\mid a_{i}=\infty_{i}\text{ for some }i\in I\}$$
and 
$$\Jcal=\{\wedge_{i\in I}a_{i}\mid a_{i}\not=0_{i}\text{ for at least two different }i\in I\}\pkt$$
Note that $\Ical=\ker\pi_{\wedge}$ and $\Jcal=\ker\pi_{\cupbidot}\komma$ where $\pi_{\wedge}:\prod_{i\in I}M_{i}\rto\bigwedge_{i\in I}M_{i}$ and $\pi_{\cupbidot}:\bigwedge_{i\in I}M_{i}\rto\bigcupbidot_{i\in I}M_{i}$ are the canonical binoid epimorphisms, cf.\ Remark \ref{RemProdEpisPos}.
\end {ListeTheorem}
\end {Example}

\begin {Lemma} \label {LemQuotientCancPosRepF}
Let $\Ical\not=M$ be an ideal in $M$. If $M$ is finitely generated, positive, semifree, or cancellative, then so is $M/\Ical$.
\end {Lemma}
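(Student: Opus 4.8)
The plan is to treat each of the four properties separately, throughout using the description of the Rees quotient from Remark~\ref{RemIndIsom}, under which $M/\Ical$ is identified with $(M\setminus\Ical)\cup\zero$ and $\pi\colon M\rto M/\Ical$, $a\mto[a]$, is the canonical epimorphism (Proposition~\ref{PropHomCong}). The standing observation that makes everything work is that $0\notin\Ical$: otherwise $x=x+0\in\Ical$ for all $x$, forcing $\Ical=M$. Consequently $[0]\neq\infty$, so $M/\Ical\neq\zero$, and for $a\notin\Ical$ we have $a\neq\infty$ and the class $[a]$ is the singleton $\{a\}$; in particular, for $a,b\notin\Ical$ the equality $[a]=[b]$ holds if and only if $a=b$.

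For \emph{finite generation}, if $A$ generates $M$ then $\langle\pi(A)\rangle=\pi(\langle A\rangle)=\pi(M)=M/\Ical$, so $\pi(A)$ is a finite generating set. For \emph{positivity}, suppose $[a]+[b]=[0]$. Since $[0]\neq\infty$ this forces $a+b\notin\Ical$ and $[a+b]=[0]$, hence $a+b=0$ by the previous paragraph; positivity of $M$ then gives $a=b=0$, so $(M/\Ical)\okreuz=\{[0]\}$.

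For \emph{cancellativity}, let $[a]\neq\infty$ (i.e.\ $a\notin\Ical$) and suppose $[b]+[a]=[c]+[a]\neq\infty$. The value being $\neq\infty$ forces $b,c\notin\Ical$ and $b+a,c+a\notin\Ical$, whence $b+a=c+a$ in $M$ with this common value $\neq\infty$. Since $a\neq\infty$ is cancellative in $M$, we conclude $b=c$, i.e.\ $[b]=[c]$; thus every $[a]\neq\infty$ is cancellative.

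The substantive case is \emph{semifreeness}, which I expect to be the main obstacle. Let $(a_i)_{i\in I}$ be a semibasis of $M$ and put $J=\{i\in I\mid a_i\notin\Ical\}$. I would first show that $(\pi(a_i))_{i\in J}$ generates $M/\Ical$: any $[f]\neq\infty$ has $f\notin\Ical$ and a unique expansion $f=\sum_{i\in I}n_ia_i$, and if $n_i\geq1$ for some $i\notin J$ then $a_i\in\Ical$ would force $f\in\Ical$, a contradiction; hence $f=\sum_{i\in J}n_ia_i$ and $[f]=\sum_{i\in J}n_i\pi(a_i)$ (the case $[f]=\infty$ being trivial, as $\infty$ lies in every subbinoid). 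For uniqueness, if $\sum_{i\in J}n_i\pi(a_i)=\sum_{i\in J}m_i\pi(a_i)\neq\infty$, set $g=\sum_{i\in J}n_ia_i$ and $h=\sum_{i\in J}m_ia_i$; then $g,h\notin\Ical$ and $[g]=[h]$ force $g=h$ in $M$, and the uniqueness of the semibasis expansion in $M$ yields $n_i=m_i$ for all $i\in J$. Hence $(\pi(a_i))_{i\in J}$ is a semibasis and $M/\Ical$ is semifree. The only points requiring care are that collapsing $\Ical$ neither merges two genuine (non-$\Ical$) sums nor creates new relations among the surviving generators, and both are controlled precisely by the observation that $[g]=[h]$ with $g,h\notin\Ical$ means $g=h$.
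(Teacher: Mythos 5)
Your proof is correct and follows essentially the same route as the paper: the cancellativity argument (the only case the paper treats explicitly, remarking that the other three have ``nothing to show'') is identical, resting on the fact that classes of elements outside $\Ical$ are singletons. Your additional verifications of finite generation, positivity, and semifreeness simply fill in the details the paper omits, and they are all sound.
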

\begin{proof}
Only for the latter property there is something to show. So let $M$ be cancellative. If $[a+b]=[a+c]\not=[\infty]$ in $M/\Ical$ for some $a,b,c\in M$, then $a+b=a+c$ in $M$, which implies $b=c$ since $M$ is cancellative. Hence, $[b]=[c]$.
\end{proof}

If $M$ is reduced and $\Ical\subseteq M$ an ideal, then $M/\Ical$ need not be reduced anymore. A sufficient condition on $\Ical$ such that $M/\Ical$ is torsion-free if $M$ is so, is given in Lemma \ref{LemModuloRadical}.

\begin {Lemma}  \label {LemQuotientTorF}
If $M$ is torsion-free, then $M/\Ical$ is torsion-free up to nilpotence for every $\Ical$ in $M$.
\end {Lemma}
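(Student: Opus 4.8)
The plan is to run everything through the explicit description of the Rees quotient recorded in Remark \ref{RemIndIsom}, where $M/\Ical$ is identified with the set $(M\setminus\Ical)\cup\zero$ carrying the addition that sends $a+b$ to itself when $a+b\notin\Ical$ and to $\infty=[\Ical]$ otherwise. The single structural fact I would isolate first is that the Rees congruence $\sim_{\Ical}$ is \emph{trivial off $\Ical$}: by its very definition $a\sim_{\Ical}b$ forces $a=b$ as soon as one of $a,b$ lies outside $\Ical$. Consequently every class $[a]\neq[\infty]$ has a unique representative $a\in M\setminus\Ical$, and any equality of such classes lifts verbatim to an equality in $M$. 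This is the observation that does all the work.

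First I would check the multiplication rule $n[a]=[na]$ whenever $na\notin\Ical$. This is where the ideal property of $\Ical$ (and not merely $\infty\in\Ical$) is genuinely used: if some partial multiple $ka$ with $1\le k\le n$ belonged to $\Ical$, then $na=(n-k)a+ka\in\Ical$, contrary to $na\notin\Ical$; hence every partial sum stays outside $\Ical$ and the class of the iterated sum is exactly $[na]$.

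Then I would argue as follows. Suppose $n[a]=n[b]\neq[\infty]$ in $M/\Ical$ for some $a,b\in M$ and $n\ge1$; I want $[a]=[b]$. Since $\infty$ is absorbing, $[a]=[\infty]$ (resp.\ $[b]=[\infty]$) would give $n[a]=[\infty]$, which is excluded, so $a,b\notin\Ical$, and as $n[a]\neq[\infty]$ also $na,nb\notin\Ical$. By the multiplication rule $n[a]=[na]$ and $n[b]=[nb]$, so $[na]=[nb]$ with $na,nb\notin\Ical$; triviality of $\sim_{\Ical}$ off $\Ical$ now yields $na=nb$ in $M$. Because $M$ is torsion-free, $na=nb$ implies $a=b$, hence $[a]=[b]$. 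This is precisely the statement that $M/\Ical$ is torsion-free up to nilpotence (the degenerate case $\Ical=M$, where $M/\Ical=\zero$, holds vacuously since there are no classes $\neq[\infty]$).

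I do not expect a serious obstacle: the entire content is the bookkeeping that equalities among classes outside $\Ical$ descend from and ascend to equalities in $M$. The only point deserving care — the closest thing to a subtlety — is the backward propagation $na\notin\Ical\Rightarrow ka\notin\Ical$ for all $k\le n$, which must be invoked to justify $n[a]=[na]$ before torsion-freeness of $M$ can be applied.
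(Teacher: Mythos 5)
Your proof is correct and follows essentially the same route as the paper's, which simply observes that $n[a]=n[b]\neq[\infty]$ means $na=nb$ in $M$ and then applies torsion-freeness; you have merely made explicit the bookkeeping (singleton classes off $\Ical$ and the rule $n[a]=[na]$) that the paper leaves implicit.
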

\begin{proof}
Any equality $[na]=[nb]\not=[\infty]$ in $M/\Ical$ means $na=nb$ in $M$. Hence, $a=b$ in $M$ by assumption, in particular, $[a]=[b]$ in $M/\Ical$.
\end{proof}

\begin {Corollary} \label{CorQuotientRepFree}
Let $V$ be a (finite) set. If $\Ical\subseteq\free(V)$ is an ideal, then $\free(V)/\Ical$ is (finitely) generated by $\{v\in V\mid v\not\in\Ical\}$ and semifree. In particular, $\free(V)/\Ical$ is positive, cancellative, and torsion-free up to nilpotence. Conversely, every (finitely generated) semifree binoid is isomorphic to $\free(V)/\Ical$ for some (finite) set $V$ and an ideal $\Ical\subseteq\free(V)$.
\end {Corollary}
\begin {proof}
The first statement follows from Corollary \ref{CorFree} with Lemma \ref{LemQuotientCancPosRepF} and Lemma \ref{LemQuotientTorF}, and its supplement from Lemma \ref{LemPropSemifree}. For the converse take a minimal generating set of the semifree binoid $M$, say $V\subseteq M$. Then $M\cong\free(V)/(\Rcal_{j})_{j\in J}$, where $(\Rcal_{j})_{j\in J}$ is a family of relations of the form $\Rcal:f=g$, $f,g\in\free(V)$, cf.\ Example \ref{ExGen}. Since $M$ is semifree, all relations $\Rcal_{j}$ are monomial relations (i.e.\ $\Rcal_{j}:f_{j}=\infty$ for all $j\in J$). This shows that the congruence defined by $(\Rcal_{j})_{j\in J}$ is the Rees congruence of the ideal $\Ical=\langle f_{j}\mid j\in J\rangle$. Hence, $M\cong\free(V)/\Ical$
by Proposition \ref{PropHomCong}.
\end {proof}

\begin {Remark}\label{RemMonSemifree}
By Corollary \ref{CorQuotientRepFree}, semifree binoids are precisely those binoids that arise from free commutative binoids modulo monomial relations. Thus, semifree binoids could also be called \emph{monomial binoids} \index{binoid!monomial --}since their binoid algebras are monomial algebras and every (commutative) monomial algebra can be realized as the binoid algebra of such a binoid. A special class of monomial algebras are Stanley-Reisner algebras (or face rings), which we will encounter in Section \ref{SecSimplBinos}.
\end {Remark}

\begin {Proposition} \label{PropNspecModIdeal}
Let $M$ and $N$ be binoids and $\Ical$ an ideal in $M$. Every binoid homomorphism $\varphi:M\rto N$ with $\Ical\subseteq\ker\varphi$ factors uniquely through $M/\Ical$. In particular, we have an isomorphism of semigroups,
$$N\minspec M/\Ical\,\,\cong\,\,\{\varphi\in N\minspec M\mid \Ical\subseteq\ker\varphi\}\pkt$$
\end {Proposition}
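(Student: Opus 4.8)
The statement splits naturally into two parts: the universal factorization property, and the resulting description of the $N\mina$spectrum. The plan is to derive the first from the machinery on ideal congruences already established, and then obtain the semigroup isomorphism as a formal consequence via Proposition \ref{PropNSpecIdealCong}.

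First I would recall that the Rees congruence $\sim_{\Ical}$ of the ideal $\Ical$ is an ideal congruence, as noted in Remark \ref{RemIndIsom}, and that the canonical projection $\pi_{\Ical}:M\rto M/\Ical$ satisfies $\ker\pi_{\Ical}=\Ical$. Given a binoid homomorphism $\varphi:M\rto N$ with $\Ical\subseteq\ker\varphi$, the key observation is that $\sim_{\Ical}\,\le\,\sim_{\varphi}$: indeed, if $a\sim_{\Ical}b$ with $a\neq b$, then $a,b\in\Ical\subseteq\ker\varphi$, whence $\varphi(a)=\infty_{N}=\varphi(b)$, so $a\sim_{\varphi}b$. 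Once this inequality of congruences is in hand, Lemma \ref{LemIndCong} immediately yields a unique binoid homomorphism $\tilde{\varphi}:M/\Ical\rto N$ with $\tilde{\varphi}\pi_{\Ical}=\varphi$, which is exactly the asserted factorization. The uniqueness is forced by the surjectivity of $\pi_{\Ical}$.

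For the semigroup isomorphism, I would simply invoke Proposition \ref{PropNSpecIdealCong} applied to the ideal congruence $\sim_{\Ical}$: it gives
$$N\minspec(M/\Ical)\,\,\cong\,\,\{\varphi\in N\minspec M\mid\ker\pi_{\Ical}\subseteq\ker\varphi\}$$
as semigroups, and since $\ker\pi_{\Ical}=\Ical$, the right-hand side is precisely $\{\varphi\in N\minspec M\mid\Ical\subseteq\ker\varphi\}$. The explicit map is $\psi\mto\psi\pi_{\Ical}$, with inverse sending $\varphi$ to its factorization $\tilde{\varphi}$ from the first part.

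Since every tool is already available, there is no real obstacle here; the statement is essentially a specialization of the general ideal-congruence results to the Rees congruence. If one prefers a self-contained argument rather than citing Proposition \ref{PropNSpecIdealCong}, the only point requiring minor care is checking that the correspondence $\psi\mto\psi\pi_{\Ical}$ is a semigroup homomorphism that is both injective (from surjectivity of $\pi_{\Ical}$) and surjective (from the factorization just established); but this is exactly the content of the proposition, so the cleanest route is to reduce to it.
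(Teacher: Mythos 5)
Your proposal is correct and matches the paper's own argument, which simply observes that the statement is a special case of Proposition \ref{PropNSpecIdealCong} applied to the Rees congruence $\sim_{\Ical}$ (whose projection has kernel $\Ical$). You have merely spelled out the details that the paper leaves implicit.
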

\begin {proof}
This is a special case of Proposition \ref {PropNSpecIdealCong}.
\end {proof}

\begin {Proposition}
Let $\Ical$ and $\Jcal$ be ideals of $M$. Then
$$(M/\Ical)\wedge_{M}(M/\Jcal)\,\,\cong\,\, M/(\Ical\cup\Jcal)\komma$$
where $M/\Acal$ is an $M\mina$binoid via the canonical projection $\pi_{\Acal}:M\rto M/\Acal$, $\Acal\in\{\Ical,\Jcal\}$.
\end {Proposition}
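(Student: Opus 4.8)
The plan is to construct two mutually inverse $M\mina$binoid homomorphisms directly, using the coproduct property of the smash product over $M$ (Corollary \ref{CorCoProdSmashN}) to pass from left to right, and the universal property of the Rees quotient (Proposition \ref{PropNspecModIdeal}) to pass back. Throughout I write $\pi_{\Acal}:M\rto M/\Acal$ for the canonical projection and $[a]_{\Acal}$ for the class of $a$, where $\Acal\in\{\Ical,\Jcal,\Ical\cup\Jcal\}$.

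First I would produce $\Phi:(M/\Ical)\wedge_{M}(M/\Jcal)\rto M/(\Ical\cup\Jcal)$. Since $\Ical\subseteq\Ical\cup\Jcal=\ker\pi_{\Ical\cup\Jcal}$ and likewise $\Jcal\subseteq\Ical\cup\Jcal$, Proposition \ref{PropNspecModIdeal} yields canonical $M\mina$binoid homomorphisms $\psi_{\Ical}:M/\Ical\rto M/(\Ical\cup\Jcal)$ and $\psi_{\Jcal}:M/\Jcal\rto M/(\Ical\cup\Jcal)$ with $[a]_{\Ical}\mto[a]_{\Ical\cup\Jcal}$ and $[a]_{\Jcal}\mto[a]_{\Ical\cup\Jcal}$, both factoring $\pi_{\Ical\cup\Jcal}$. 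As $M/(\Ical\cup\Jcal)$ is a commutative $M\mina$binoid, Corollary \ref{CorCoProdSmashN} then gives a unique $M\mina$binoid homomorphism $\Phi$ determined by $[a]_{\Ical}\wedge_{M}[b]_{\Jcal}\mto\psi_{\Ical}([a]_{\Ical})+\psi_{\Jcal}([b]_{\Jcal})=[a+b]_{\Ical\cup\Jcal}$.

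Next I would go back via the structure homomorphism $\varphi:M\rto(M/\Ical)\wedge_{M}(M/\Jcal)$ of the smash product, which sends $a$ to $[a]_{\Ical}\wedge_{M}[0]_{\Jcal}=[0]_{\Ical}\wedge_{M}[a]_{\Jcal}$. I claim $\Ical\cup\Jcal\subseteq\ker\varphi$: if $a\in\Ical$ then $[a]_{\Ical}=\infty$ in $M/\Ical$, so $\varphi(a)=\infty\wedge_{M}[0]_{\Jcal}=\infty_{\wedge_{M}}$ by Lemma \ref{LemSmaschPointedSets}, and symmetrically for $a\in\Jcal$. Proposition \ref{PropNspecModIdeal} then provides a unique $M\mina$binoid homomorphism $\Psi:M/(\Ical\cup\Jcal)\rto(M/\Ical)\wedge_{M}(M/\Jcal)$ with $[a]_{\Ical\cup\Jcal}\mto[a]_{\Ical}\wedge_{M}[0]_{\Jcal}$.

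Finally, I would verify that $\Phi$ and $\Psi$ are inverse to each other. The identity $\Phi\Psi=\id$ is immediate from the formulas. The main obstacle is $\Psi\Phi=\id$: here $\Psi\Phi([a]_{\Ical}\wedge_{M}[b]_{\Jcal})=[a+b]_{\Ical}\wedge_{M}[0]_{\Jcal}$, and one must see that this equals $[a]_{\Ical}\wedge_{M}[b]_{\Jcal}$. The point is that $[a+b]_{\Ical}=\pi_{\Ical}(b)+[a]_{\Ical}$ is the $M\mina$action of $b$ on the first factor, so the defining relation $\sim_{\wedge_{M}}$ of Definition \ref{DefSmashOver} transports it to the second factor, giving $[a+b]_{\Ical}\wedge_{M}[0]_{\Jcal}=[a]_{\Ical}\wedge_{M}(\pi_{\Jcal}(b)+[0]_{\Jcal})=[a]_{\Ical}\wedge_{M}[b]_{\Jcal}$. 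The only subtlety is to keep track that the same element $b\in M$ acts via $\pi_{\Ical}$ on the left component and via $\pi_{\Jcal}$ on the right component, which is exactly what $\sim_{\wedge_{M}}$ permits.
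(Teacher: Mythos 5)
Your proof is correct. The forward map $\Phi$ is exactly the homomorphism the paper constructs (likewise via Corollary \ref{CorCoProdSmashN} applied to the factorizations $M/\Ical\rto M/(\Ical\cup\Jcal)$ and $M/\Jcal\rto M/(\Ical\cup\Jcal)$), but you certify that it is an isomorphism by a different route. The paper argues directly: surjectivity is inherited from $\psi_{\Ical},\psi_{\Jcal}$, and injectivity is obtained by rewriting a class as $\bar{a}\wedge_{M}\bar{b}=(a+b)\wedge_{M}0$, so that it is determined by $a+b$ and equals $\infty_{\wedge_{M}}$ precisely when $a+b\in\Ical\cup\Jcal$. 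You instead build the explicit inverse $\Psi$ from the universal property of the Rees quotient (Proposition \ref{PropNspecModIdeal}), after checking $\Ical\cup\Jcal\subseteq\ker\varphi$ for the structure homomorphism $\varphi$, and then verify $\Psi\Phi=\id$ using exactly the shifting relation $\sim_{\wedge_{M}}$ of Definition \ref{DefSmashOver}. The two arguments use the same two ingredients in mirror-image order. Yours has the advantage of sidestepping the slightly delicate injectivity bookkeeping --- since $\ker\varphi=\zero$ does not imply injectivity for binoid homomorphisms, a direct injectivity proof must separately track that distinct nonzero classes stay distinct, which the paper handles via the observation that nonzero Rees classes are singletons --- while the paper's version is marginally shorter because it never has to name, or prove well-definedness of, an inverse.
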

\begin {proof}
The canonical $M\mina$binoid epimorphisms $\psi_{\Acal}:M/\Acal\rto M/(\Ical\cup\Jcal)$ with $\bar{a}\mto[a]$, $\Acal\in\{\Ical,\Jcal\}$, induce by Corollary \ref{CorCoProdSmashN} an $M\mina$binoid homomorphism
$$\psi:(M/\Ical)\wedge_{M}(M/\Jcal)\Rto M/(\Ical\cup\Jcal)\quad\text{with}\quad \bar{a}\wedge_{M}\bar{b}\lto[\bar{a}+\bar{b}]\komma$$
which is surjective since $\psi_{\Ical}$ and $\psi_{\Jcal}$ are so. For the injectivity note that $\bar{a}\wedge_{M}\bar{b}=\infty$ if and only if $a+b\in\Ical\cup\Jcal$, and then $[\bar{a}+\bar{b}]=\infty$. If $a\in\Ical$ or $b\in\Jcal$ this is clear. Otherwise, $a\not\in\Ical$ and $b\not\in\Jcal$ is equivalent to the fact that $\bar{a}=\{a\}$ and $\bar{b}=\{b\}$ are singletons, so we have
$$\bar{a}\wedge_{M}\bar{b}\,=\, a\wedge_{M}b\,=\, 0\wedge_{M}(a+b)\,=\,(a+b)\wedge_{M}0$$
and this is $\infty$ if and only if $a+b\in\Ical\cup\Jcal$. In particular, $[\bar{a}+\bar{b}]=[a+b]=\infty$. Hence, $\infty\not=\bar{a}\wedge_{M}\bar{b}=a\wedge_{M}b\mto [a+b]=a+b\not=\infty$ if $a+b\not\in\Ical\cup\Jcal$, which implies the injectivity.
\end {proof}

\begin {Lemma}
Let $\varphi:M\rto N$ be a binoid homomorphism and $\Ical\subseteq M$ an ideal. Then
$$(M/\Ical)\wedge_{M}N\,\,\,\cong\,\,\, N/(\varphi(\Ical)+N)$$
as binoids.
\end {Lemma}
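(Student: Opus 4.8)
The plan is to identify $(M/\Ical)\wedge_{M}N$ with $N/(\varphi(\Ical)+N)$ by exhibiting mutually inverse binoid homomorphisms, using the universal property of the smash product over $M$ (Corollary \ref{CorCoProdSmashN}) in one direction and the universal property of the Rees quotient (Proposition \ref{PropNspecModIdeal}, or rather the factorization statement it rests on) in the other. The essential observation, which I would state first, is that in the smash product over $M$ the element $\bar{a}\wedge_{M}x$ (for $a\in M$, $x\in N$) equals $0\wedge_{M}(\varphi(a)+x)$, so that $N\wedge_{M}N$-style bookkeeping collapses: by Lemma \ref{LemSmashNoverN} one already knows $M\wedge_{M}N\cong N$, and the quotient $M/\Ical$ differs from $M$ only by collapsing $\Ical$ to $\infty$. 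This suggests the isomorphism should send $\bar{a}\wedge_{M}x\mto[\varphi(a)+x]$.

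**The forward map.** First I would construct the $M\mina$binoid homomorphisms $\psi_{1}:M/\Ical\rto N/(\varphi(\Ical)+N)$ and $\psi_{2}:N\rto N/(\varphi(\Ical)+N)$. For $\psi_{2}$ take the canonical projection $\pi:N\rto N/(\varphi(\Ical)+N)$, which is an $M\mina$binoid homomorphism since $N$ is an $M\mina$binoid via $\varphi$ and $\pi$ respects the structure map $\pi\varphi$. For $\psi_{1}$, note that the composite $M\stackrel{\varphi}{\rto}N\stackrel{\pi}{\rto}N/(\varphi(\Ical)+N)$ kills $\Ical$, because $\varphi(\Ical)\subseteq\varphi(\Ical)+N$ maps to $\infty$; hence by Proposition \ref{PropNspecModIdeal} (the factorization through a Rees quotient) it factors through a unique binoid homomorphism $\psi_{1}:M/\Ical\rto N/(\varphi(\Ical)+N)$ with $\psi_{1}(\bar{a})=[\varphi(a)]$, and one checks this is an $M\mina$binoid homomorphism. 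Since the target is commutative, Corollary \ref{CorCoProdSmashN} then yields a unique $M\mina$binoid homomorphism
$$\psi:(M/\Ical)\wedge_{M}N\Rto N/(\varphi(\Ical)+N)\komma\quad\bar{a}\wedge_{M}x\lto[\varphi(a)+x]\pkt$$

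**The inverse map.** In the other direction I would apply the universal property of the Rees quotient to the $M\mina$binoid $(M/\Ical)\wedge_{M}N$. There is a canonical $M\mina$binoid homomorphism $\eta:N\rto(M/\Ical)\wedge_{M}N$, $x\mto \bar{0}\wedge_{M}x$, namely the inclusion $\iota_{2}$ of the second factor. I claim $\eta$ kills $\varphi(\Ical)+N$: for $a\in\Ical$ and $x\in N$ one has $\bar{0}\wedge_{M}(\varphi(a)+x)=(\bar{0}+\bar{a})\wedge_{M}x=\bar{a}\wedge_{M}x$, and since $a\in\Ical$ means $\bar{a}=\infty$ in $M/\Ical$, Lemma \ref{LemSmaschPointedSets} (a smash-product element with one component the distinguished point is the distinguished point) gives $\bar{a}\wedge_{M}x=\infty_{\wedge_{M}}$. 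Hence $\eta(\varphi(\Ical)+N)=\{\infty_{\wedge_{M}}\}$, so $\varphi(\Ical)+N\subseteq\ker\eta$, and by Proposition \ref{PropNspecModIdeal} again $\eta$ factors through a unique binoid homomorphism $\phi:N/(\varphi(\Ical)+N)\rto(M/\Ical)\wedge_{M}N$ with $\phi([x])=\bar{0}\wedge_{M}x$. Finally I would verify $\psi\phi=\id$ and $\phi\psi=\id$ by a short computation: $\psi\phi([x])=\psi(\bar{0}\wedge_{M}x)=[\varphi(0)+x]=[x]$, and $\phi\psi(\bar{a}\wedge_{M}x)=\phi([\varphi(a)+x])=\bar{0}\wedge_{M}(\varphi(a)+x)=\bar{a}\wedge_{M}x$, the last equality using the defining relation $\sim_{\wedge_{_{\!M}}}$ that shifts $\varphi(a)$ across the smash.

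**Main obstacle.** The only genuinely delicate point is the well-definedness of $\psi$ on the smash product \emph{over} $M$: I must confirm that the map $\bar{a}\wedge_{M}x\mto[\varphi(a)+x]$ respects the congruence $\sim_{\wedge_{_{\!M}}}$, i.e.\ that shifting an element $b\in M$ from the $M/\Ical$-slot to the $N$-slot does not change the image. This is exactly what Corollary \ref{CorCoProdSmashN} guarantees once $\psi_{1},\psi_{2}$ are $M\mina$binoid homomorphisms with the matching structure maps $\psi_{1}\pi_{\Ical}=\psi_{2}\varphi=\pi\varphi$, so the work reduces to checking that compatibility, which is routine. The rest is bookkeeping with the identifications in Remark \ref{RemIndIsom} describing $M/\Ical$ and the extended ideal $\varphi(\Ical)+N$ as collapsing the relevant subsets to $\infty$.
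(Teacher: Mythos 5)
Your proposal is correct and follows essentially the same route as the paper: factor $\pi\varphi$ through $M/\Ical$ to get the first leg, use the coproduct property of $\wedge_{M}$ (Corollary \ref{CorCoProdSmashN}) to obtain $\psi:[a]\wedge_{M}x\mto[\varphi(a)+x]$, and invert it by factoring the canonical map $x\mto[0]\wedge_{M}x$ through $N/(\varphi(\Ical)+N)$. The only cosmetic difference is that you invoke the factorization statement of Proposition \ref{PropNspecModIdeal} where the paper cites Lemma \ref{LemIndCong}, and you spell out the computation that $\varphi(\Ical)+N$ lies in the kernel of the inclusion, which the paper merely asserts.
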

\begin {proof}
Since $\ker\pi_{\Ical}\subseteq\ker(\pi\varphi)$, and hence $\sim_{\pi_{\Ical}}\,\le\,\sim_{\pi\varphi}$, we have by Lemma \ref{LemIndCong} a commutative diagram of binoid homomorphisms
$$\xymatrix{
M\ar[d]_{\pi_{\Ical}}\ar[r]^{\varphi}&N\ar[r]^{\!\!\!\!\!\!\!\!\!\!\!\!\!\!\!\!\!\!\!\!\!\!\pi}&N/(\varphi(\Ical)+N)\\
M/\Ical\ar[urr]_{\tilde{\varphi}}&&}$$
with $\tilde{\varphi}([a])=[\varphi(a)]$. In fact, the induced homomorphism $\tilde{\varphi}$ is an $M\mina$binoid homomorphism because all others are so. Hence, $\tilde{\varphi}$ and $\pi\varphi$ induce by Corollary \ref{CorCoProdSmashN} a unique $M\mina$binoid homomorphism 
$$\psi:(M/\Ical)\wedge_{M}N\Rto N/(\varphi(\Ical)+N)\quad\text{with}\quad[a]\wedge_{M}x\lto[\varphi(a)+x]\pkt$$
On the other hand, the canonical $M\mina$binoid homomorphism $\iota:N\rto M\wedge_{M}N\rto (M/\Ical)\wedge_{M}N$ with $\iota(x)=[0]\wedge_{M}x$ factors through $N/(\varphi(\Ical)+N)$ because $\varphi(\Ical)+N\subseteq\ker\iota$. In other words, there is an $M\mina$binoid homomorphism
$$\Phi:N/(\varphi(\Ical)+N)\Rto (M/\Ical)\wedge_{M}N\quad\text{with}\quad[x]\mto[0]\wedge_{M}x\pkt$$
An easy computation shows that $\psi$ and $\Phi$ are mutually inverse.
\end {proof}

\begin{Remark} \label {RemNakayamaACC}
In \cite{AndersonIT}, Anderson and Johnson raised the question of which binoids admit an ideal theory most similar to that of arbitrary or noetherian rings. For the latter, they considered binoids satisfying the \gesperrt{ascending chain condition} \index{ideal!a.c.c.\ on --s}\index{ascending chain condition (a.c.c.)!-- on ideals}(a.c.c.) on ideals, which means that every chain of ideals 
$\Ical_{1}\subseteq\Ical_{2}\subseteq\Ical_{3}\cdots$ in such a binoid $M$ becomes stationary; that is, there is a $k\in\N$ such that $\Ical_{n}=\Ical_{k}$ for all $n\ge k$. Equivalently, every ideal in $M$ is finitely generated, which is a weaker property than being noetherian as defined in this thesis, cf.\ Remark \ref{RemNoetherian=FG} (Anderson and Johnson call binoids that satisfy the a.c.c.\ on ideals noetherian).\index{binoid!noetherian -- (ideals)}

They could prove that Krull's intersection theorem \index{Krull's intersection theorem}(\cite[Folgerung 5.6]{BruskeIschebeckVogel}) and the Artin-Rees lemma \index{Artin-Rees lemma}(\cite[Satz 5.5]{BruskeIschebeckVogel}) hold true for binoids that satisfy the a.c.c.\ on ideals, namley, for any two ideals $\Ical$ and $\Jcal$ of such a binoid, one has
$$\Ical+N\,=\,N\quad\text{with}\quad N:=\bigcap_{n\ge1}(n\Ical+\Jcal)\komma$$
cf.\ \cite[Theorem 3.10]{AndersonIT}, and there is an $m\ge0$ such that 
$$\Ical\cap k\Jcal\,=\,(\Ical\cap m\Jcal)+(k-m)\Jcal\komma\quad\forall k\ge m\komma$$
cf.\ \cite[Theorem 3.11]{AndersonIT}. An application of the Artin-Rees lemma in combination with Nakayama's lemma is Krull's theorem, \index{Krull's theorem}which says that in a noetherian ring $R$ one has $\bigcap_{n\ge1}\aideal^{n}=\{0\}$ for every ideal $\aideal$ contained in the Jacobson ideal of $R$ (\cite[Folgerung 5.8]{BruskeIschebeckVogel}).\footnote{\, Often, this is called Krull's intersection theorem.} This need not be true for binoids. For instance, the maximal ideal $M\Uplus$ of the finitely generated binoid
$$M=\free(x)/(2x=x)$$
is $\langle x\rangle$, and obviously $M\Uplus=2M\Uplus=3M\Uplus=\cdots$ so that $\bigcap_{n\ge0}nM\Uplus=M\Uplus\not=\zero$. Binoids that satisfy $\bigcap_{n\ge0}nM\Uplus=\zero$ are called separated and are the main subject of Chapter \ref{ChapSepGradings}. 

Nakayama's lemma (\cite[Satz 3.19]{BruskeIschebeckVogel}) \index{Nakayama's lemma}does not translate directly to binoids either. The canonical projection $\pi:\N^{\infty}\rto(\Z/n\Z)^{\infty}$ for $n\ge2$ turns $((\Z/n\Z)^{\infty},\infty)$ into a finitely generated $\N^{\infty}\mina$set, but for every principal ideal $\Ical_{k}:=\langle k\rangle=\N_{\ge k}^{\infty}$, $k\ge1$, in $\N^{\infty}$, one has $$\pi(\Ical_{k})+(\Z/n\Z)^{\infty}\,=\,(\Z/n\Z)^{\infty}\,\not=\,\trivial\pkt$$

For monoids that satisfy the a.c.c.\ on ideals, The Artin-Rees lemma \cite[Lemma 3.3.8]{Kobsa}, Krull's  theorem \cite[Satz 3.3.9]{Kobsa}, and some version of Nakayama's lemma \cite[Lemma 3.3.1]{Kobsa} have been proved by Kobsa, where the latter two results require in addition that the monoid $M$ is cancellative in the sense of monoid theory (i.e.\ $M^{\infty}$ has to be cancellative in our terms). Kobsa also studied monoids that satisfy the a.c.c.\ on submonoids, cf.\ \cite[Chapter 7.4]{Kobsa}. For this see also \cite[Chapter VI.7]{GrilletCS}.
\end{Remark}

\begin {Lemma}
Let $\Ical\subsetneq N$ be an ideal. If $\varphi:N\rto M$ is a local binoid homomorphism, then $\varphi(\Ical)+M\not= M$.
\end {Lemma}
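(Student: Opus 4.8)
The statement to prove is: if $\Ical \subsetneq N$ is an ideal and $\varphi: N \rto M$ is a local binoid homomorphism, then $\varphi(\Ical) + M \neq M$.

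Recall the definitions. A binoid homomorphism $\varphi: N \rto M$ is \emph{local} if $\varphi(N\Uplus) \subseteq M\Uplus$, where $N\Uplus$ and $M\Uplus$ denote the sets of nonunits (equivalently, the maximal ideals). The extended ideal $\varphi(\Ical) + M$ equals $M$ if and only if it contains a unit of $M$, since $M\Uplus$ is an ideal and any ideal containing a unit is all of $M$ (as noted right after Definition \ref{DefIdeal}).

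The plan is to argue by contraposition on the unit condition. First I would observe that since $\Ical \subsetneq N$ is a proper ideal, it contains no unit of $N$, so $\Ical \subseteq N\Uplus$. Next, suppose for contradiction that $\varphi(\Ical) + M = M$. Then $\varphi(\Ical) + M$ contains some unit $u \in M\okreuz$, which means there exist $a \in \Ical$ and $b \in M$ with $\varphi(a) + b = u \in M\okreuz$. Since $M\okreuz$ is a subgroup (indeed a filter, by the discussion following Remark \ref{RemPrimeUnionConsequence1}), the relation $\varphi(a) + b \in M\okreuz$ forces both summands to be units; explicitly, adding the inverse $-u$ gives $\varphi(a) + (b - u) = 0$, so $\varphi(a) \in M\okreuz$. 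But $a \in \Ical \subseteq N\Uplus$, and locality of $\varphi$ gives $\varphi(a) \in \varphi(N\Uplus) \subseteq M\Uplus = M \setminus M\okreuz$, so $\varphi(a)$ is a nonunit — contradicting $\varphi(a) \in M\okreuz$. Hence $\varphi(\Ical) + M \neq M$.

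The only step requiring care is the claim that $\varphi(a) + b \in M\okreuz$ implies $\varphi(a) \in M\okreuz$: this is exactly the filter/summand-closed property of $M\okreuz$, namely that the set of units is summand-closed ($f + g \in M\okreuz \Rarrow f, g \in M\okreuz$), which holds because $M\okreuz$ is the smallest filter of $M$ and every unit is a summand of $0$. I expect no real obstacle here, as this is essentially the same computation carried out in the proof of Lemma \ref{LemHomProperties}(3), where it is shown that $\varphi^{-1}(M\okreuz)$ is a filter. The main conceptual point is simply translating ``$\varphi(\Ical)+M = M$'' into ``$\varphi(\Ical)+M$ meets $M\okreuz$'' and then exploiting locality together with $\Ical \subseteq N\Uplus$.
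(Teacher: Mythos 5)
Your proof is correct and follows essentially the same route as the paper: the paper's argument notes that $\varphi(\Ical)+M=M$ forces $\varphi(a)+b=0$ for some $a\in\Ical\subseteq N\Uplus$ and $b\in M$, whence $\varphi(a)\in M\okreuz$, contradicting locality. Your detour through an arbitrary unit $u$ and the summand-closed property of $M\okreuz$ is a harmless elaboration of the same idea.
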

\begin {proof}
If $\varphi(\Ical)+M=M$, then $\varphi(a)+b=0$ for some $a\in\Ical$ and $b\in M$, and so $\varphi(a)\in M\okreuz$, which contradicts $\varphi$ being local since $a\in\Ical\subseteq N\Uplus$.
\end {proof}

In \cite{Ahmadi}, certain versions of Nakayama's lemma satisfied by $S\mina$acts are studied.

\bigskip

\section {Prime ideals} \label{SecPrime}
\markright {\ref{SecPrime} Prime ideals}

\begin {Definition}
An ideal $\Pcal\subsetneq M$ is called \gesperrt{prime} \index{ideal!prime --}\index{prime ideal}if $M\setminus\Pcal$ is a submonoid of $M$. The \gesperrt{spectrum} \index{spectrum}of $M$, denoted by $\spec M$\nomenclature[Spec1]{$\spec M$}{set of all prime ideals in the binoid $M$ (spectrum of $M$)}, is the set of all prime ideals of $M$.
\end {Definition}

\begin {Example}\label {ExMaxIdeal}
Let $M\not=\zero$.
\begin {ListeTheorem}
\item The maximal ideal $M\Uplus$ is prime since $M\setminus M\Uplus=M\okreuz$ is a subgroup of $M$ (i.e.\ a submonoid that is also a group). Therefore, $\spec M\not=\emptyset$ if and only if $M\not=\zero$.
\item The ideal $\nonint(M)$ of all non-integral elements is prime since $M\setminus\nonint(M)=\opint(M)$ is a submonoid of $M$, cf. Lemma \ref{LemINT}.
\end {ListeTheorem}
\end {Example}

The set of all ideals is a complete lattice with respect to $\cap$ and $\cup$. This translates not to the full extent to the subset $\spec M$.

\begin {Proposition} \label {PropPrimeUnion}
The union of a set of prime ideals is again a prime ideal. In particular, the spectrum of a nonzero binoid $M$ is a (join-) semilattice with largest element $M\Uplus$.
\end {Proposition}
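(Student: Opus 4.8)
The claim has two parts. The plan is to verify directly from the definitions that a union of prime ideals is prime, and then deduce the semilattice structure.

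First I would show that the union of an arbitrary family $(\Pcal_{k})_{k\in J}$ of prime ideals is a prime ideal. That $\Pcal:=\bigcup_{k\in J}\Pcal_{k}$ is an ideal is already guaranteed by Lemma \ref{LemUnionIdeals}, so it remains to check primality, i.e.\ that the complement $M\setminus\Pcal$ is a submonoid. Since each $\Pcal_{k}$ is a proper ideal it does not contain any unit, so $0\notin\Pcal_{k}$ for all $k$, whence $0\in M\setminus\Pcal$ and in particular $\Pcal\subsetneq M$. The key step is closure under addition: if $a,b\in M\setminus\Pcal$, then $a,b\notin\Pcal_{k}$ for every $k\in J$; because each $M\setminus\Pcal_{k}$ is a submonoid, $a+b\notin\Pcal_{k}$ for every $k$, and therefore $a+b\notin\bigcup_{k\in J}\Pcal_{k}=\Pcal$. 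Thus $M\setminus\Pcal$ is a submonoid and $\Pcal$ is prime. The only subtlety worth stating explicitly is that one uses the complement of a \emph{union} being the \emph{intersection} of the complements, so that membership outside $\Pcal$ means membership outside every single $\Pcal_{k}$ simultaneously — this is exactly what makes the submonoid property pass to the union, in contrast to the ring-theoretic setting where it fails.

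For the supplement I would observe that, by Example \ref{ExMaxIdeal}(1), the maximal ideal $M\Uplus$ lies in $\spec M$ and contains every proper ideal, hence in particular every prime ideal; so it is the largest element of $\spec M$ with respect to $\subseteq$. Combined with the first part, the binary union $\Pcal\cup\Qcal$ of two prime ideals is again prime, and union is an associative, commutative, idempotent operation on $\spec M$ for which $\cup$ realizes the least upper bound in the partial order $\subseteq$. This makes $(\spec M,\cup)$ a join-semilattice, bounded above by $M\Uplus$.

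I do not anticipate a genuine obstacle here, as everything reduces to the definition of prime and the elementary set-theoretic identity $M\setminus\bigcup_{k}\Pcal_{k}=\bigcap_{k}(M\setminus\Pcal_{k})$. The one point deserving care is properness: one must confirm $\Pcal\neq M$, which follows since no proper ideal meets $M\okreuz$ and in particular none contains $0$. Everything else is a routine transcription of the submonoid condition through the complement.
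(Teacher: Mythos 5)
Your proof is correct and follows the same route as the paper: Lemma \ref{LemUnionIdeals} gives that the union is an ideal, and the identity $M\setminus\bigcup_{k}\Pcal_{k}=\bigcap_{k}(M\setminus\Pcal_{k})$ exhibits the complement as an intersection of submonoids, hence a submonoid. Your additional remarks on properness and on the join-semilattice structure are routine elaborations of what the paper leaves implicit.
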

\begin {proof}
By Lemma \ref {LemUnionIdeals}, the union $\bigcup_{i\in I}\Pcal_{i}$ of a family of prime ideals $(\Pcal_{i})_{i\in I}$ is an ideal and since the intersection of submonoids of $M$ is again a submonoid, we obtain $M\setminus\bigcup_{i\in I}\Pcal_{i}=\bigcap_{i\in I}(M\setminus\Pcal_{i})$.
\end {proof}

The preceding result is the most significant difference between the ideal theories of binoids and rings because it contradicts the prime avoidance lemma (which states that in a ring an ideal contained in a union of $n$ ideals with at least $n-2$ prime lies in one of the $n$ ideals, cf.\ \cite[II \S1.2, Proposition 2]{BourbakiCA} or \cite[Lemma 3.3]{Eisenbud}). As this lemma has many consequences in ring theory, Proposition \ref{PropPrimeUnion} accounts for many of the differences between the theories of binoids and rings. See for instance Proposition \ref{PropExistencePrimes} and the subsequent remark.

\begin {Proposition}
The intersection of a finite family of different prime ideals is prime if and only if the family admits a unique minimal element with respect to set inclusion (i.e.\ a prime ideal which is contained in any other prime ideal of the family). 
\end {Proposition}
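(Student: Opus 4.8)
The plan is to translate the statement into one about the complements $F_i := M \setminus \Pcal_i$, which are submonoids by the definition of a prime ideal, and to exploit their \emph{summand-closedness}: since each $\Pcal_i$ is an ideal, $a+b \notin \Pcal_i$ forces $a \notin \Pcal_i$ and $b \notin \Pcal_i$, i.e. $a+b \in F_i \Rightarrow a,b \in F_i$. Throughout I write $\Pcal := \bigcap_{i=1}^n \Pcal_i$, which is an ideal by Lemma \ref{LemUnionIdeals} and is proper since $\Pcal \subseteq \Pcal_1 \subsetneq M$; its complement is $F := M \setminus \Pcal = \bigcup_{i=1}^n F_i$. By definition $\Pcal$ is prime precisely when $F$ is a submonoid, so the whole statement becomes: $\bigcup_i F_i$ is a submonoid if and only if some $F_k$ contains all the others.

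For the easy direction, suppose the family has a (necessarily unique) minimal element, i.e.\ some $\Pcal_k$ with $\Pcal_k \subseteq \Pcal_i$ for all $i$. Then $\bigcap_{i=1}^n \Pcal_i = \Pcal_k$, which is prime by hypothesis.

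For the converse, assume $\Pcal$ is prime, so $F = \bigcup_i F_i$ is a submonoid; I want an index $k$ with $\Pcal_k \subseteq \Pcal_i$ for all $i$, equivalently $F_i \subseteq F_k$ for all $i$. The key step is to show $F$ coincides with one of the $F_i$. Suppose not, so $F_i \subsetneq F$ for every $i$; then for each $i$ choose $x_i \in F \setminus F_i$. Since $F$ is a submonoid, the sum $s := x_1 \pluspkt x_n$ lies in $F$, hence $s \in F_k$ for some $k$. By summand-closedness of $F_k$ every summand lies in $F_k$; in particular $x_k \in F_k$, contradicting $x_k \in F \setminus F_k$. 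Therefore $F = F_k$ for some $k$, whence $F_i \subseteq F_k$ for all $i$ and $\Pcal_k \subseteq \Pcal_i$ for all $i$, so $\Pcal_k$ is the desired minimum; a minimum is automatically the unique minimal element.

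The only point demanding care — and the step I expect to be the crux — is the passage from \textbf{$F$ is a submonoid} to \textbf{$F$ equals a single $F_i$}; the single-sum trick above settles it uniformly for any finite family (the case $n=1$ being trivial). I would record explicitly that summand-closedness is exactly what converts $s \in F_k$ into $x_k \in F_k$, since this is the only place the ideal structure of the $\Pcal_i$ enters, and note that distinctness of the family is used only to phrase the conclusion as a \emph{unique} minimal element.
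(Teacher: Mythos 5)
Your proof is correct and rests on the same trick as the paper's own: form a sum with one summand chosen from each relevant prime so that the sum lies in the intersection while no summand does, then invoke primality (equivalently, the summand-closedness of the complementary filters). The paper runs this contrapositively, picking a witness $x_{ij}\in\Pcal_{i}\setminus\Pcal_{j}$ for each ordered pair of \emph{minimal} members of the family, whereas you phrase everything through the filters $F_{i}=M\setminus\Pcal_{i}$ and need only one witness per member, concluding the slightly sharper fact that the intersection equals one of the $\Pcal_{k}$ --- leaner bookkeeping, but essentially the same argument.
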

\begin {proof}
The if part is trivial. For the converse let $M$ be a binoid and $(\Pcal_{i})_{i\in I}$ a finite family of different prime ideals in $M$ that admits strictly more than one minimal element with respect to set inclusion. Say $\Pcal_{1}\kpkt\Pcal_{n}$, $n\ge2$, are those minimal elements of the family. By the minimality, there are elements $x_{ij}\in\Pcal_{i}\setminus(\Pcal_{i}\cap\Pcal_{j})$ for all $i,j\in\{1\kpkt n\}$ with $i\not=j$. Thus, $\sum_{i\not=j}x_{ij}\in\bigcap_{i=1}^{n}\Pcal_{i}=\bigcap_{i\in I}\Pcal_{i}$, but by construction none of the $x_{ij}$ is contained in the intersection. Hence, the ideal $\bigcap_{i\in I}\Pcal_{i}$ is not prime.
\end {proof}

Here is an example showing that the only if part of the preceding result may fail for an infinite familiy of prime ideals. Let $I$ be infinite and let $\Pcal_{i}$ denote the ideal $\langle e_{i}, e_{i+1}, e_{i+2}\ldots\rangle$ in $(\N^{I})^{\infty}$, where the $i$th component of $e_{i}$ is $1$ and all others are $0$, $i\in I$. It is clear that all $\Pcal_{i}$ are prime, and since $\Pcal_{i}\supset\Pcal_{i+1}\supset\Pcal_{i+2}\supset\cdots$ the family $(\Pcal_{i})_{i\in I}$ admits no minimal element. However, $\bigcap_{n\ge 1}\Pcal_{n}=\zero$ is a prime ideal in $(\N^{I})^{\infty}$. There are other very useful characterizations of prime ideals which will be used in the following without further reference.

\begin {Lemma} \label{LemCharacterizationPrime}
Let $\Pcal\not=M$ be an ideal in $M$. The following statements are equivalent:
\begin {ListeTheorem}
\item $\Pcal$ is prime.
\item $M/\Pcal$ is integral.
\item $M\setminus\Pcal\in\Fcal(M)$.
\item $\Pcal$ is the kernel of a binoid homomorphism $M\rto\trivial$.
\item $a+b\in\Pcal$ for some $a,b\in M$ implies that $a\in\Pcal$ or $b\in\Pcal$.
\end {ListeTheorem}
\end {Lemma}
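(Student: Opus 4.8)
The plan is to prove the equivalence of the five characterizations of a prime ideal $\Pcal \neq M$ by establishing a cycle of implications, which is the most economical route since each individual implication is short. I would arrange them as $(1)\Rightarrow(3)\Rightarrow(4)\Rightarrow(2)\Rightarrow(5)\Rightarrow(1)$, drawing on the filter/spectrum machinery already developed earlier in the excerpt, notably Proposition \ref{PropHomFilter} and Lemma \ref{LemCharacterizationPrime}'s neighbours such as the definition of the Rees quotient and the integrality congruence.

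First I would show $(1)\Rightarrow(3)$. By definition $\Pcal$ is prime means $M\setminus\Pcal$ is a submonoid of $M$. To upgrade ``submonoid'' to ``filter'' I must verify the summand-closed condition: if $f+g\in M\setminus\Pcal$ then both $f,g\in M\setminus\Pcal$. This follows by contraposition from the ideal property of $\Pcal$: if, say, $f\in\Pcal$, then $f+g\in f+M\subseteq\Pcal$, contradicting $f+g\notin\Pcal$. Since $0\in M\setminus\Pcal$ (as $\Pcal\neq M$ contains no unit) and the set is closed under addition by the submonoid hypothesis, all three filter axioms hold, giving $M\setminus\Pcal\in\Fcal(M)$. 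For $(3)\Rightarrow(4)$ I would invoke Proposition \ref{PropHomFilter} directly: it states that a map $\varphi:M\rto\trivial$ is a binoid homomorphism if and only if $M\setminus\ker\varphi\in\Fcal(M)$. Taking the indicator homomorphism $\chi_{M\setminus\Pcal}$ associated to the filter $F:=M\setminus\Pcal$, its kernel is exactly $\Pcal$, so $\Pcal$ arises as the kernel of a binoid homomorphism $M\rto\trivial$.

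Next, $(4)\Rightarrow(2)$: if $\Pcal=\ker\varphi$ for some $\varphi:M\rto\trivial$, then by Lemma \ref{LemFactoriazationKer} the induced homomorphism $M/\ker\varphi\rto\trivial$ is well-defined, and since the trivial binoid $\trivial$ is integral and $M/\ker\varphi$ is integral whenever the target is (again Lemma \ref{LemFactoriazationKer}), we conclude $M/\Pcal=M/\ker\varphi$ is integral. Then $(2)\Rightarrow(5)$ is a direct unwinding of the definition of the Rees quotient: in $M/\Pcal$ one has $[a]=[\infty]$ precisely when $a\in\Pcal$ (Remark \ref{RemIndIsom}), so $a+b\in\Pcal$ means $[a]+[b]=[a+b]=[\infty]$ in $M/\Pcal$; integrality of $M/\Pcal$ forces $[a]=[\infty]$ or $[b]=[\infty]$, i.e.\ $a\in\Pcal$ or $b\in\Pcal$. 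Finally $(5)\Rightarrow(1)$ closes the loop: condition (5) says exactly that the complement $M\setminus\Pcal$ is closed under addition, and since $0\notin\Pcal$ (as $\Pcal\neq M$) the complement is a submonoid, which is the definition of $\Pcal$ being prime.

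I do not anticipate a genuine obstacle here, since each arrow is routine once the right earlier result is cited; the only point requiring mild care is making sure the filter axioms in $(1)\Rightarrow(3)$ are all checked (especially $0\in M\setminus\Pcal$, which uses $\Pcal\neq M$ together with the fact that a proper ideal contains no unit), and that the appeal to Lemma \ref{LemFactoriazationKer} in $(4)\Rightarrow(2)$ correctly transfers integrality from $\trivial$ to the quotient. If one prefers to avoid relying on Proposition \ref{PropHomFilter} for $(3)\Rightarrow(4)$, an alternative is to construct $\chi_{M\setminus\Pcal}$ by hand and verify the homomorphism property using the two filter conditions directly, but citing the proposition is cleaner and keeps the proof brief.
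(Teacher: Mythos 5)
Your proof is correct and follows essentially the same route as the paper: a cycle of implications whose only nontrivial steps are the summand-closedness of $M\setminus\Pcal$ (via the ideal property) and the appeal to Proposition \ref{PropHomFilter} for the filter--homomorphism correspondence. The paper orders the cycle as $(1)\Rightarrow(2)\Rightarrow(3)\Leftrightarrow(4)\Rightarrow(5)\Rightarrow(1)$ and treats the remaining arrows as immediate, whereas you reorder to $(1)\Rightarrow(3)\Rightarrow(4)\Rightarrow(2)\Rightarrow(5)\Rightarrow(1)$ and route $(4)\Rightarrow(2)$ through Lemma \ref{LemFactoriazationKer}; this is a cosmetic difference only.
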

\begin {proof}
$(1)\Rarrow(2)$ follows from the definition. $(2)\Rarrow(3)$ If $M/\Pcal$ is integral, then $M\setminus\Pcal=:F$ is a submonoid of $M$, hence $0\in F$ and $f+g\in F$ if $f,g\in F$. On the other hand, if $f+g\in F$ (i.e.\ $f+g\not\in\Pcal$), then $f,g\not\in\Pcal$ because $\Pcal$ is a prime ideal. Hence, $f,g\in F$. The equivalence $(3)\eq(4)$ is just a restatement of  Proposition \ref {PropHomFilter} and the implications $(4)\Rarrow(5)\Rarrow(1)$ are obvious.
\end {proof}

The equivalence $(1)\eq(4)$ of Lemma \ref{LemCharacterizationPrime} together with Proposition \ref {PropHomFilter} shows, in particular, that $\Pcal$ is a prime ideal if and only if $\alpha_{\Pcal}=\chi_{M\setminus\Pcal}$ is a binoid homomorphism.

\begin {Corollary} \label{CorHomFiltSpec}
The semibinoids $\trivial\minspec M$, $\spec M$, and $\Fcal(M)_{\cap}\setminus\{M\}$ are isomorphic, namely
$$\begin {array} {rcccl}
\trivial\minspec M\!\!\!&\stackrel{\sim}{\longleftrightarrow}&\!\!\!\spec M\!\!\!&\stackrel{\sim}{\longleftrightarrow}&\!\!\!\Fcal(M)\setminus\{M\}\komma\\
\alpha\!\!\!&\lto&\!\!\!\ker\alpha&&\\
(\chi_{M\setminus\Pcal}=)\,\,\alpha_{_{\Pcal}}\!\!\!&\longmapsfrom&\!\!\!\Pcal\!\!\!&\lto&\!\!\! M\setminus\Pcal\\
&&M\setminus F\!\!\!&\longmapsfrom&\!\!\! F
\end {array}$$
where the latter correspondence is inclusion reversing.
\end {Corollary}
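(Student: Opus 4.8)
The plan is to establish the two bijections as semibinoid isomorphisms, working from the equivalences already assembled in Lemma~\ref{LemCharacterizationPrime} and the structural results in Corollary~\ref{CorIsomFcalMinspec}. First I would verify that the three sets are well-defined semibinoids and that the claimed maps are mutually inverse bijections; the heavy lifting on bijectivity is essentially done, since the equivalence $(1)\eq(4)$ of Lemma~\ref{LemCharacterizationPrime} says precisely that $\Pcal\mapsto\alpha_{\Pcal}=\chi_{M\setminus\Pcal}$ is a well-defined map $\spec M\to\trivial\minspec M$, and the equivalence $(1)\eq(3)$ gives the bijection $\Pcal\mapsto M\setminus\Pcal$ between $\spec M$ and $\Fcal(M)\setminus\{M\}$. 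What remains is to check that these bijections respect the semibinoid operations and send distinguished elements to distinguished elements.

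For the correspondence on the right, $\spec M \leftrightarrow \Fcal(M)\setminus\{M\}$ via complementation, I would invoke Proposition~\ref{PropPrimeUnion}, which tells us $\spec M$ is a join-semilattice under $\cup$ with largest element $M\Uplus$, and pair it against $\Fcal(M)_{\cap}$, whose operation is $\cap$. The key identity is that complementation turns unions into intersections: $M\setminus(\Pcal_1\cup\Pcal_2)=(M\setminus\Pcal_1)\cap(M\setminus\Pcal_2)$. Under this map the largest prime $M\Uplus$ corresponds to the smallest filter $M\setminus M\Uplus=M\okreuz$, so the correspondence is inclusion-reversing and exchanges the roles of largest and smallest element, exactly as in the statement. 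This also explains why we must delete $M$ from the filtrum: the filter $M$ corresponds under complementation to the empty set, which is not a prime ideal (primes are proper and contain $\infty$).

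For the correspondence on the left, $\trivial\minspec M \leftrightarrow \spec M$ via $\alpha\mapsto\ker\alpha$ with inverse $\Pcal\mapsto\alpha_{\Pcal}$, I would factor the verification through Corollary~\ref{CorIsomFcalMinspec}, which already establishes that $\chi\mapsto M\setminus\ker\chi$ and $\chi_F\mapsfrom F$ are mutually inverse semibinoid isomorphisms between $\trivial\minspec M$ and $(\Fcal(M)\setminus\{M\},\cap,M\okreuz)$. Composing that isomorphism with the complementation isomorphism from the previous paragraph yields the desired isomorphism $\trivial\minspec M\cong\spec M$, and chasing the definitions shows the composite is precisely $\chi_F\mapsto M\setminus F=\ker\chi_F$, i.e.\ $\alpha\mapsto\ker\alpha$. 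The semigroup-homomorphism property $\ker(\alpha+\alpha')=\ker\alpha\cup\ker\alpha'$ is recorded in the computation in the proof of Corollary~\ref{CorIsomFcalMinspec}, and the absorbing element $\chi_{M\okreuz}$ maps to the largest prime $M\Uplus$.

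The main obstacle, such as it is, is bookkeeping rather than mathematical depth: one must keep the three different operations straight (the semilattice structure on $\spec M$ is $\cup$, that on $\Fcal(M)_{\cap}$ is $\cap$, and that on $\trivial\minspec M$ is pointwise addition, which corresponds to $\cup$ on kernels) and confirm that each isomorphism in the chain sends absorbing element to absorbing element. Since every ingredient—bijectivity, the homomorphism identities, and the behaviour of distinguished points—has already appeared in Lemma~\ref{LemCharacterizationPrime}, Proposition~\ref{PropPrimeUnion}, and Corollary~\ref{CorIsomFcalMinspec}, I expect the proof to reduce to assembling these and noting that composites of semibinoid isomorphisms are semibinoid isomorphisms, with a single line recording the inclusion-reversing nature of the right-hand correspondence.
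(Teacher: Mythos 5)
Your proposal is correct and follows essentially the same route as the paper: the paper's proof likewise obtains bijectivity from Lemma \ref{LemCharacterizationPrime}, defers the homomorphism computations to Corollary \ref{CorIsomFcalMinspec}, and notes that the right-hand (complementation) correspondence follows from the same data. Your write-up merely makes explicit the bookkeeping that the paper leaves implicit.
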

\begin {proof}
The assignments on the left-hand side are well-defined and bijective by Lemma \ref{LemCharacterizationPrime}. The homomorphism property for both directions can be shown similarly as in Corollary \ref {CorIsomFcalMinspec}. This also yields the statement for the assignments on the right-hand side.
\end {proof}

The isomorphism $\trivial\minspec\cong\spec$ is due to Schwarz \cite{Schwarz}, see also \cite[Lemma 5.54]{CliffordPreston}.

\begin {Example} \label {ExSpecPowerset}
The ideal lattices of the binoids $\Pset(V)_{\cap}$ and $\Pset(V)_{\cup}$ have been described in Example \ref {ExpPsetIdeals} for arbitrary $V$ and in Example \ref{ExpPsetIdealsGen} for $V$ finite. Now we determine the subsemilattices of prime ideals for these binoids if $V$ is finite. From the description of the filters in $\Pset(V)_{\cap}$ and $\Pset(V)_{\cup}$, cf.\ Example \ref {ExpFilterPowerset}, we obtain the following description of the prime ideals and spectra:
$$\spec\Pset(V)_{\cap}=\{\Pcal_{J,\cap}\mid\emptyset\not=J\subseteq V\}\quad\text{and}\quad\spec\Pset(V)_{\cup}=\{\Pcal_{J,\cup}\mid J\subsetneq V\}\komma$$
where $\Pcal_{J,\cap}$ is the subset-closed subset $\{A\in\Pset(V)\mid J\not\subseteq A\}$. Thus,
$$\Pcal_{J,\cap}=\bigcup_{j\in J}\Pset(V\setminus\{j\})=\langle V\setminus\{j\}\mid j\in J\rangle\quad\text{with}\quad\Pcal_{J,\cap}\cup\Pcal_{I,\cap}=\Pcal_{J\cup I,\cap}$$
and $\Pcal_{J,\cup}$ is the superset-closed subset $\{A\in\Pset(V)\mid A\not\subseteq J\}$. Thus,
$$\Pcal_{J,\cup}=\Pset(V)\setminus\Pset(J)=\langle\{j\}\mid j\not\in J\rangle\quad\text{with}\quad\Pcal_{J,\cup}\cup\Pcal_{I,\cup}=\Pcal_{J\cap I,\cup}\pkt$$
In particular, 
$$\spec\Pset(V)_{\cap}\,\,\cong\,\,(\Pset(V)\setminus\{\emptyset\},\cup,V)\quad\text{and}\quad\spec\Pset(V)_{\cup}\,\,\cong\,\,(\Pset(V)\setminus\{V\},\cap,\emptyset)$$  as semibinoids, where the order-preserving correspondences are given by $\Pcal_{J,\cap}\lrto J$ and $\Pcal_{J,\cup}\lrto J$. This also shows that $\#\spec\Pset_{n,\cap}=\#\spec\Pset_{n,\cup}=2^{n}-1$, $n\ge0$. The prime ideals of $(\N^{n})^{\infty}$ and $(\N^{\infty})^{n}$, $n\ge1$, will be described in Example \ref{ExpPrimeLatticeNinftyN}.
\end {Example}

The following corollaries are consequences of the fact that $\trivial\minspec\cong\spec$ and of results we deduced from the induced homomorphism given in Proposition \ref{PropIndHomNspec}, which reads in terms of prime ideals as follows.

\begin {Corollary} \label{CorPrimeIdealHom}
Let $M$ and $N$ be binoids. Every binoid homomorphism $\varphi:M\rto N$ induces a semigroup homomorphism $\varphi^{\ast}:\spec N\rto \spec M$ with $\Pcal\mto\varphi^{-1}(\Pcal)$ such that $\varphi^{\ast}$ is injective if $\varphi$ is surjective.
\end {Corollary}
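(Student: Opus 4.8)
The statement to prove is Corollary \ref{CorPrimeIdealHom}: given a binoid homomorphism $\varphi:M\rto N$, the assignment $\Pcal\mto\varphi^{-1}(\Pcal)$ defines a semigroup homomorphism $\varphi^{\ast}:\spec N\rto\spec M$, which is injective whenever $\varphi$ is surjective. The plan is to reduce everything to the already-established correspondence $\trivial\minspec\cong\spec$ from Corollary \ref{CorHomFiltSpec} together with the functoriality of $N\minspec(-)$ recorded in Proposition \ref{PropIndHomNspec}. The whole point of the corollary is that it is a translation of a known fact about $\trivial$-spectra into the language of prime ideals, so I want to make that translation explicit rather than verify the semigroup-homomorphism property by hand.

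First I would check that $\varphi^{\ast}$ is well-defined on spectra, i.e.\ that $\varphi^{-1}(\Pcal)$ is genuinely a prime ideal of $M$ when $\Pcal$ is a prime ideal of $N$. That $\varphi^{-1}(\Pcal)$ is an ideal is exactly Lemma \ref{LemIdealCorrespondence}(1), and primeness follows from the characterization in Lemma \ref{LemCharacterizationPrime}: if $a+b\in\varphi^{-1}(\Pcal)$ then $\varphi(a)+\varphi(b)=\varphi(a+b)\in\Pcal$, so by primeness of $\Pcal$ either $\varphi(a)\in\Pcal$ or $\varphi(b)\in\Pcal$, i.e.\ $a\in\varphi^{-1}(\Pcal)$ or $b\in\varphi^{-1}(\Pcal)$. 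One must also note $\varphi^{-1}(\Pcal)\neq M$, which holds because $\Pcal\neq N$ forces $0_M\notin\varphi^{-1}(\Pcal)$ (as $\varphi(0_M)=0_N\notin\Pcal$).

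The cleanest route for the homomorphism property is to invoke Proposition \ref{PropIndHomNspec} with $L=\trivial$: the homomorphism $\varphi:M\rto N$ induces a semigroup homomorphism $\varphi^{\ast}:\trivial\minspec N\rto\trivial\minspec M$ with $\psi\mto\psi\varphi$. Under the isomorphism $\trivial\minspec\cong\spec$ of Corollary \ref{CorHomFiltSpec}, an element $\alpha\in\trivial\minspec N$ corresponds to $\ker\alpha\in\spec N$, and I would verify that the map $\psi\mto\psi\varphi$ corresponds precisely to $\Pcal\mto\varphi^{-1}(\Pcal)$ under this identification. Indeed, writing $\alpha=\alpha_\Pcal=\chi_{N\setminus\Pcal}$ for the anti-indicator homomorphism attached to $\Pcal$, one has $\ker(\alpha\varphi)=\{a\in M\mid\alpha(\varphi(a))=\infty\}=\{a\in M\mid\varphi(a)\in\Pcal\}=\varphi^{-1}(\Pcal)$. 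Thus $\varphi^{\ast}$ on $\spec$ is conjugate, via the isomorphisms of Corollary \ref{CorHomFiltSpec}, to the already-proven semigroup homomorphism on $\trivial\minspec$, and is therefore itself a semigroup homomorphism. The supplement on injectivity is immediate from the same source: Proposition \ref{PropIndHomNspec} states that the induced map on $L\minspec$ is injective when $\varphi$ is surjective, and injectivity is preserved under conjugation by the bijections of Corollary \ref{CorHomFiltSpec}.

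The only genuinely delicate point—and the step I would be most careful about—is confirming that the two descriptions of $\varphi^{\ast}$ really are intertwined by the isomorphism $\trivial\minspec\cong\spec$, since the isomorphism sends a homomorphism $\alpha$ to its kernel while the stated map sends a prime to its preimage; one has to track the direction of the arrows (both $\varphi^{\ast}$ on spectra and on $\trivial\minspec$ are contravariant) and check they match on the nose, using $\ker(\psi\varphi)=\varphi^{-1}(\ker\psi)$. This is a short computation but it is the hinge of the argument, so I would write it out rather than leave it implicit. Everything else is bookkeeping that has already been done in the cited lemmas, so the proof can be stated quite compactly.
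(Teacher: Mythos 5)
Your proposal is correct and follows essentially the same route as the paper: the paper's proof also just invokes Proposition \ref{PropIndHomNspec} with $L=\trivial$ together with the identification $\alpha_{\Pcal}\leftrightarrow\Pcal$ from Corollary \ref{CorHomFiltSpec} and the computation $\ker(\alpha_{\Pcal}\varphi)=\varphi^{-1}(\Pcal)$. Your additional direct check that $\varphi^{-1}(\Pcal)$ is a prime ideal is harmless but already subsumed by that identification.
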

\begin {proof}
By Proposition \ref{PropIndHomNspec} (with $L=\trivial$), there is semigroup homomorphism $\varphi^{\ast}:\spec N\rto \spec M$ with $\alpha_{\Pcal}\mto\alpha_{P}\varphi$, where $\alpha_{\Pcal}$ is the anti-indicator homomorphism of the prime ideal $\Pcal\in\spec N$ and $\ker(\alpha_{\Pcal}\varphi)=\varphi^{-1}(\Pcal)$.
\end {proof}

\begin {Example}
We continue with Example \ref{ExSpecPowerset} from above. The canonical binoid isomorphism $\varphi:\Pset(V)_{\cup}\stackrel{\sim}{\rto}\Pset(V)_{\cap}$ with $J\mto V\setminus J=J^{\opc}$
induces a semibinoid isomorphism $\varphi^{\ast}:\spec\Pset(V)_{\cap}\rto\spec\Pset(V)_{\cup}$ with $\Pcal_{J}\mto\Pcal_{J^{\opc}}$.
\end {Example}

\begin {Corollary} \label{CorExtIdealPrime}
If $\Ical\subseteq M$ is an ideal, then $\spec M/\Ical\cong\{\Pcal\in\spec M\mid\Ical\subseteq\Pcal\}$. More precisely, there is an order preserving semibinoid isomorphism given by
$$\begin {array}{rcl}
\spec (M/\Ical)&\stackrel{\sim}{\longleftrightarrow}&(\{\Pcal\in\spec M\mid\Ical\subseteq\Pcal\},\cup,M\Uplus)\komma\\
\Qcal&\longmapsto& \pi^{-1}(\Qcal)\\
\Pcal/\Ical&\longmapsfrom&\Pcal
\end {array}$$
where $\pi:M\rto M/\Ical$ denotes the canonical projection. In particular, the extended ideal of $\Pcal\in\spec M$ by $\pi$ is a prime ideal in $M/\Ical$ if and only if $\Ical\subseteq\Pcal$.
\end {Corollary}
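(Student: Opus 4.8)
The goal is to establish an order-preserving semibinoid isomorphism between $\spec(M/\Ical)$ and the set of prime ideals of $M$ containing $\Ical$, equipped with the join operation $\cup$ and neutral element $M\Uplus$. The plan is to obtain this as a direct specialization of the general correspondence already available for ideal congruences, since the Rees congruence $\sim_{\Ical}$ is an ideal congruence (cf.\ Remark \ref{RemIndIsom}) and $\pi:M\rto M/\Ical$ is the associated canonical projection with $\ker\pi_{\Ical}=\Ical$.

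First I would invoke the isomorphism $\trivial\minspec\cong\spec$ from Corollary \ref{CorHomFiltSpec}, together with Proposition \ref{PropNspecModIdeal} applied with $N=\trivial$. The latter gives, as semigroups,
$$\trivial\minspec(M/\Ical)\,\,\cong\,\,\{\varphi\in\trivial\minspec M\mid\Ical\subseteq\ker\varphi\}\pkt$$
Translating both sides through the bijection $\alpha\mto\ker\alpha$ of Corollary \ref{CorHomFiltSpec} turns the left-hand side into $\spec(M/\Ical)$ and the right-hand side into $\{\Pcal\in\spec M\mid\Ical\subseteq\Pcal\}$, since a prime ideal $\Pcal=\ker\alpha_{\Pcal}$ contains $\Ical$ exactly when $\Ical\subseteq\ker\alpha_{\Pcal}$. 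This already yields a bijection; the explicit maps are $\Qcal\mto\pi^{-1}(\Qcal)$ and $\Pcal\mto\Pcal/\Ical$, where $\Pcal/\Ical$ is the extended ideal from Definition \ref{DefReesCong} (identifiable with $(\Pcal\setminus\Ical)\cup\zero$ via Remark \ref{RemIndIsom}). To confirm these are mutually inverse, I would check that $\pi^{-1}(\Pcal/\Ical)=\Pcal$ when $\Ical\subseteq\Pcal$ and that $(\pi^{-1}(\Qcal))/\Ical=\Qcal$, both of which are routine from the description of $M/\Ical$ as $(M\setminus\Ical)\cup\zero$.

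Next I would verify that $\pi^{-1}(\Qcal)$ is indeed prime (hence the codomain is correct): by Corollary \ref{CorPrimeIdealHom} the induced map $\pi^{\ast}:\spec(M/\Ical)\rto\spec M$, $\Qcal\mto\pi^{-1}(\Qcal)$, is an injective semigroup homomorphism whose image lands in $\{\Pcal\mid\Ical\subseteq\Pcal\}$ because $\Ical=\ker\pi\subseteq\pi^{-1}(\Qcal)$ for every $\Qcal$. The semibinoid structure map sends the largest prime $(M/\Ical)\Uplus$ to $\pi^{-1}((M/\Ical)\Uplus)=M\Uplus$, which is exactly the distinguished element on the right, so the correspondence respects the absorbing element. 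That the join $\cup$ is preserved follows from $\pi^{-1}(\Qcal_1\cup\Qcal_2)=\pi^{-1}(\Qcal_1)\cup\pi^{-1}(\Qcal_2)$ for preimages together with Proposition \ref{PropPrimeUnion}. Order preservation is immediate since preimage under $\pi$ is inclusion-preserving and so is passage to the extended ideal.

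The main obstacle, such as it is, lies not in any deep argument but in reconciling the two possible framings so that no step is left implicit: I must make sure the semibinoid operation on $\spec$ (the join $\cup$, cf.\ Proposition \ref{PropPrimeUnion}) matches the semigroup operation transported from $\trivial\minspec$ under Corollary \ref{CorHomFiltSpec}, where the kernel of a sum of indicator homomorphisms is the union of kernels. Once that identification is in place, the supplement about $\Pcal$ being prime in $M/\Ical$ precisely when $\Ical\subseteq\Pcal$ is just the statement that the extended ideal $\Pcal/\Ical$ lies in the image of the bijection, which by construction happens if and only if $\Ical\subseteq\Pcal$. I would present the proof compactly as an application of Proposition \ref{PropNspecModIdeal} and Corollary \ref{CorHomFiltSpec}, checking the structural claims (primality of $\pi^{-1}(\Qcal)$, preservation of $\cup$ and of the largest element, order preservation) in a short paragraph.
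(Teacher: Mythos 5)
Your proposal is correct and follows essentially the same route as the paper's own proof: both obtain the bijection by specializing Proposition \ref{PropNspecModIdeal} to $N=\trivial$ and transporting it through the identification $\alpha_{\Pcal}\leftrightarrow\Pcal$ of Corollary \ref{CorHomFiltSpec}, then checking the semibinoid structure via the absorbing element $(M/\Ical)\Uplus=M\Uplus/\Ical$. The extra verifications you include (mutual inverses, primality of $\pi^{-1}(\Qcal)$, preservation of $\cup$) are sound elaborations of steps the paper leaves implicit.
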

\begin {proof}
Note that $\{\Pcal\in\spec M\mid\Ical\subseteq\Pcal\}$ is a subsemibinoid of $\spec M$. The isomorphism as semigroups is just a restatement of Proposition \ref{PropNspecModIdeal} (with $N=\trivial$) using the identification $\alpha_{\Pcal}\leftrightarrow\Pcal$ of Corollary \ref{CorHomFiltSpec}. Since $(M/\Ical)\Uplus=M\Uplus/\Ical$ this is an isomorphism of semibinoids.
\end {proof}

\begin {Corollary} \label{CorSpecMred}
$\spec M\cong \spec M_{\opred}$ as semibinoids for every binoid $M$.
\end {Corollary}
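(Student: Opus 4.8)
The plan is to realize $M_{\opred}$ as a Rees quotient and then invoke the description of the spectrum of a quotient already established. By Example~\ref{ExCompositionsIdeal}(1) we have $M_{\opred}=M/\nil(M)$, and $\nil(M)$ is an ideal of $M$ by Example~\ref{ExSpIdeals}(3) (equivalently by Lemma~\ref{LemIdealCong}(2)). Hence Corollary~\ref{CorExtIdealPrime} applies with $\Ical=\nil(M)$ and yields an order-preserving semibinoid isomorphism
$$\spec M_{\opred}\;\cong\;\{\Pcal\in\spec M\mid\nil(M)\subseteq\Pcal\}\pkt$$
It therefore suffices to show that this subset is all of $\spec M$, i.e.\ that every prime ideal contains the nilradical.

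The key step is the binoid analogue of the fact that the nilradical lies in every prime ideal. Let $\Pcal\in\spec M$ and $a\in\nil(M)$, so that $na=\infty$ for some $n\ge1$. Every ideal contains $\infty$, so $\infty\in\Pcal$. If we had $a\notin\Pcal$, then $a\in M\setminus\Pcal$, which is a submonoid of $M$ by the definition of a prime ideal; being closed under addition it would contain $na$, forcing $na=\infty\notin\Pcal$, a contradiction. Thus $a\in\Pcal$, and since $a\in\nil(M)$ was arbitrary we get $\nil(M)\subseteq\Pcal$. As $\Pcal$ was an arbitrary prime ideal, $\{\Pcal\in\spec M\mid\nil(M)\subseteq\Pcal\}=\spec M$.

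Combining the two observations gives $\spec M_{\opred}\cong\spec M$ as semibinoids, as claimed. Conceptually there is little obstacle here: the whole argument reduces to the single genuinely new input, namely that nilpotent elements sit in every prime ideal, which in the binoid setting follows at once from the submonoid characterization of primes (Lemma~\ref{LemCharacterizationPrime}) together with the fact that $\infty$ belongs to every ideal. One could alternatively bypass Corollary~\ref{CorExtIdealPrime} and combine the isomorphism $\trivial\minspec M\cong\spec M$ of Corollary~\ref{CorHomFiltSpec} with Corollary~\ref{CorNspecIsoMred} (taking $N=\trivial$, which is reduced since it is boolean, cf.\ Lemma~\ref{LemBool=>PosRed}); however, that route delivers only a semigroup isomorphism a priori, so the passage through the Rees quotient is preferable as it produces the full order-preserving semibinoid statement directly.
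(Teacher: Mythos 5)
Your proof is correct and follows essentially the same route as the paper: the paper also reduces to Corollary~\ref{CorExtIdealPrime} with $\Ical=\nil(M)$ together with the observation that $\nil(M)$ lies in every prime ideal (which the paper dismisses as obvious but you rightly justify via the submonoid characterization of primes), and it likewise notes the alternative via Corollary~\ref{CorNspecIsoMred} with $N=\trivial$. The only cosmetic difference is that the paper first disposes of the case $M=\zero$ separately, where both spectra are empty.
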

\begin {proof}
If $M=\zero$, the statement is trivial, and if $M\not=\zero$ the statement follows from Corollary \ref{CorExtIdealPrime} with $\Ical=\nil M$ and the obvious fact that $\nil M\subseteq\Pcal$ for every $\Pcal\in\spec M$. (The statement also follows from Corollary \ref{CorNspecIsoMred} with $N=\trivial$).
\end {proof}

\begin{Corollary} \label{CorSpecSmash}
Given a finite family $(M_{i})_{i\in I}$ of binoids, there is a semibinoid isomorphism
$$\spec\bigwedge_{i\in I} M_{i}\stackrel{\sim}{\longleftrightarrow}\prod_{i\in I}\spec M_{i}\quad\text{with}\quad\bigcup_{i\in I}\widehat{\Pcal}_{i}\longleftrightarrow(\Pcal_{i})_{i\in I}\komma$$
where $\widehat{\Pcal}_{i}=\wedge_{k\in I}\Acal_{k}$ with $\Acal_{k}=M_{k}$, $k\not=i$, and $\Acal_{i}=\Pcal_{i}$.
\end{Corollary}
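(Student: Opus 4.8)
The plan is to prove the claimed semibinoid isomorphism $\spec\bigwedge_{i\in I}M_i\cong\prod_{i\in I}\spec M_i$ by combining two facts already established in the excerpt: the semibinoid isomorphism $\trivial\minspec M\cong\spec M$ for every binoid $M$ (Corollary \ref{CorHomFiltSpec}), and the computation of the $N\mina$spectrum of a smash product (Proposition \ref{PropNspecSmash}), specialized to $N=\trivial$. Since $\trivial$ is a commutative binoid, Proposition \ref{PropNspecSmash} gives
$$\trivial\minspec\bigwedge_{i\in I}M_i\,\,\cong\,\,\prod_{i\in I}\trivial\minspec M_i$$
as semigroups, with the isomorphism $\varphi\mto(\varphi\iota_i)_{i\in I}$, where $\iota_k:M_k\embto\bigwedge_{i\in I}M_i$ is the canonical embedding. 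Applying Corollary \ref{CorHomFiltSpec} to each factor and to the total smash product would then translate this into the desired bijection of spectra.

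**Tracking the explicit correspondence.** First I would verify what an anti-indicator homomorphism $\alpha_{\Pcal}$ on $\bigwedge_{i\in I}M_i$ restricts to under each $\iota_i$. Under the identification $\spec M\leftrightarrow\trivial\minspec M$, a prime $\Pcal$ corresponds to $\alpha_{\Pcal}=\chi_{M\setminus\Pcal}$, and the induced map from Proposition \ref{PropIndHomNspec} sends $\alpha_{\Pcal}$ to $\alpha_{\Pcal}\iota_i$, whose kernel is $\iota_i^{-1}(\Pcal)$ by Corollary \ref{CorPrimeIdealHom}. So the composite isomorphism takes $\Pcal\in\spec\bigwedge_{i\in I}M_i$ to the tuple $(\iota_i^{-1}(\Pcal))_{i\in I}$. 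I would then check that the inverse direction, sending $(\Pcal_i)_{i\in I}$ to $\bigcup_{i\in I}\widehat{\Pcal}_i$ with $\widehat{\Pcal}_i=\wedge_{k\in I}\Acal_k$ ($\Acal_k=M_k$ for $k\neq i$, $\Acal_i=\Pcal_i$), agrees with the inverse $\varphi\mapsfrom(\varphi_i)_{i\in I}$ given in Proposition \ref{PropNspecSmash}, namely $\varphi(\wedge_{i\in I}a_i)=\sum_{i\in I}\varphi_i(a_i)$. Concretely, $\wedge_{i\in I}a_i$ lies in the kernel of $\sum_i\alpha_{\Pcal_i}\iota_i$ iff some $a_i\in\Pcal_i$, which is exactly the condition $\wedge_{i\in I}a_i\in\bigcup_i\widehat{\Pcal}_i$; this identifies the two formulas.

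**The semibinoid structure.** The main work beyond quoting the two prior results is confirming that the isomorphism respects the join-semilattice (i.e.\ semibinoid) structure, not merely the semigroup structure, since $\spec$ carries the operation $\cup$ with absorbing element the maximal ideal. I would argue that under $\trivial\minspec\cong\spec$ the semigroup operation $+$ on characteristic points corresponds to $\cup$ of the associated prime ideals (this is the content of the homomorphism property in Corollary \ref{CorHomFiltSpec}, using $\ker(\chi+\chi')=\ker\chi\cup\ker\chi'$ as in Corollary \ref{CorIsomFcalMinspec}), and that the product semigroup operation translates componentwise, matching $\bigcup_i\widehat{\Pcal}_i\,\cup\,\bigcup_i\widehat{\Pcal}'_i=\bigcup_i\widehat{(\Pcal_i\cup\Pcal'_i)}$. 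The absorbing element $(M_i{}\Uplus)_{i\in I}$ on the right corresponds to $(\bigwedge_{i\in I}M_i)\Uplus$ on the left, which by Lemma \ref{LemSmashRules}(2) has unit group $\prod_i M_i{}\okreuz$, so its complement of non-units matches.

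**Expected obstacle.** I expect the delicate point to be bookkeeping with the two formulas for the inverse map: Proposition \ref{PropNspecSmash} phrases everything in terms of homomorphisms $\varphi_i=\varphi\iota_i$, whereas the statement here is phrased purely ideal-theoretically via the sets $\widehat{\Pcal}_i$. Reconciling $\iota_i^{-1}(\Pcal)$ with the $i$th component $\Pcal_i$ and checking that $\bigcup_i\widehat{\Pcal}_i$ is genuinely a prime ideal (equivalently that its complement $\prod_i(M_i\setminus\Pcal_i)$, viewed inside the smash product, is a filter) requires using the integral-element description of the smash product from Lemma \ref{LemSmashRules}(3) together with the characterization of primes via filters in Lemma \ref{LemCharacterizationPrime}. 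Everything else should reduce to invoking the cited propositions, so the proof can be presented compactly as a corollary of Proposition \ref{PropNspecSmash} and Corollary \ref{CorHomFiltSpec}.
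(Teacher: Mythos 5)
Your proposal is correct and follows essentially the same route as the paper: specialize Proposition \ref{PropNspecSmash} to $N=\trivial$, identify $\trivial\minspec$ with $\spec$ via Corollary \ref{CorHomFiltSpec}, read off the forward map as $\Qcal\mto(\iota_i^{-1}(\Qcal))_{i\in I}$ and the inverse as $(\Pcal_i)_{i\in I}\mto\ker\bigl(\wedge_i a_i\mto\sum_i\alpha_{\Pcal_i}(a_i)\bigr)=\bigcup_i\widehat{\Pcal}_i$, and match the maximal ideals to upgrade from semigroup to semibinoid isomorphism. The primality of $\bigcup_i\widehat{\Pcal}_i$ that you flag as a possible obstacle comes for free, since it is the kernel of a binoid homomorphism to $\trivial$ (Lemma \ref{LemCharacterizationPrime}).
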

\begin {proof}
By Proposition \ref {PropNspecSmash} (with $N=\trivial$) and using the identification $\trivial\minspec=\spec$, there is a semigroup isomorphism 
$$\psi:\spec\bigwedge_{i\in I} M_{i}\Rto\prod_{i\in I}\spec M_{i}\quad\text{with}\quad\alpha_{\Qcal}\lto(\alpha_{\Qcal}\iota_{i})_{i\in I}\komma$$
where $\Qcal\in\spec\bigwedge_{i\in I}M_{i}$ and $\iota_{k}:M_{k}\embto\bigwedge_{i\in I}M_{i}$, $k\in I$, are the canonical embeddings. We have $\ker(\alpha_{\Qcal}\iota_{i})=\iota_{i}^{-1}(\Qcal)\in\spec M_{i}$, $i\in I$. Thus, $\psi(\Qcal)=(\iota_{i}^{-1}(\Qcal))_{i\in I}$. Furthermore, if $\Pcal_{i}\in\spec M_{i}$, $i\in I$, then 
$$\alpha:\bigwedge_{i\in I}M_{i}\Rto\trivial\komma\quad\wedge_{i\in I} a_{i}\lto\sum_{i\in I}\alpha_{\Pcal_{i}}(a_{i})\komma$$
where $\alpha_{\Pcal_{i}}:M_{i}\rto\trivial$, $i\in I$, is the preimage of $(\Pcal_{i})_{i\in I}$ under $\psi$ with $\ker\alpha=\bigcup_{i\in I}\widehat{\Pcal}_{i}$ and $\widehat{\Pcal}_{i}$ as described in the statement. Hence, every prime ideal $\Qcal\in\spec\bigwedge_{i\in I} M_{i}$ is of this form for unique $\Pcal_{i}\in\spec M_{i}$, $i\in I$. In particular, $\bigcup_{i\in I}(\widehat{M}_{i})\Uplus$ is the maximal ideal of $\bigwedge_{i\in I}M_{i}$, which shows that $\psi$ is an isomorphism of semibinoids.
\end {proof}

\begin {Corollary} 
Given a finite family $(M_{i})_{i\in I}$ of nonzero binoids, there is a semibinoid isomorphism
$$\spec\prod_{i\in I}M_{i}\stackrel{\sim}{\longleftrightarrow}\biguplus_{\emptyset\not=J\subseteq I}\prod_{i\in J}\spec M_{i}\komma\quad\bigcup_{i\in J}\tilde{\Pcal}_{i}\longleftrightarrow((\Pcal_{i})_{i\in J};J)\komma$$
where $\tilde{\Pcal}_{i}=(\Acal_{k})_{k\in I}$ with $\Acal_{k}=M_{k}$, $k\not=i$, and $\Acal_{i}=\Pcal_{i}$.
\end {Corollary}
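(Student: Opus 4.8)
The plan is to obtain the statement as the specialization $N=\trivial$ of Proposition \ref{PropNspecProd}, transported through the identification $\trivial\minspec\cong\spec$ of Corollary \ref{CorHomFiltSpec}. First I would check that $\trivial$ is an admissible choice for $N$: it is commutative and $\bool(\trivial)=\trivial$, so it contains no non-trivial idempotents, which is exactly the hypothesis of Proposition \ref{PropNspecProd}. Feeding $N=\trivial$ into that proposition then yields semigroup isomorphisms
$$\trivial\minspec\prod_{i\in I}M_{i}\,\,\cong\,\,\biguplus_{\emptyset\not=J\subseteq I}\trivial\minspec\bigwedge_{i\in J}M_{i}\,\,\cong\,\,\biguplus_{\emptyset\not=J\subseteq I}\prod_{i\in J}\trivial\minspec M_{i}\komma$$
and replacing each $\trivial\minspec(-)$ by $\spec(-)$ via Corollary \ref{CorHomFiltSpec} (equivalently, using Corollary \ref{CorSpecSmash} for the smash factors) produces the desired bijection onto $\biguplus_{\emptyset\not=J\subseteq I}\prod_{i\in J}\spec M_{i}$, together with its semigroup structure.

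Next I would make the correspondence explicit and verify it takes the stated form. Tracing Corollary \ref{CorHomFiltSpec}, a tuple $((\Pcal_{i})_{i\in J};J)$ corresponds under Corollary \ref{CorSpecSmash} to the prime $\bigcup_{i\in J}\widehat{\Pcal}_{i}$ of $\bigwedge_{i\in J}M_{i}$, whose anti-indicator homomorphism sends $\wedge_{i\in J}a_{i}\mapsto\sum_{i\in J}\alpha_{\Pcal_{i}}(a_{i})$. Composing with the canonical epimorphism $\prod_{i\in I}M_{i}\rto\bigwedge_{i\in J}M_{i}$ (project onto the $J$-coordinates, then pass to the smash product) gives the binoid homomorphism $(a_{k})_{k\in I}\mapsto\sum_{i\in J}\alpha_{\Pcal_{i}}(a_{i})$ into $\trivial$. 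Its kernel is the set of tuples $(a_{k})_{k\in I}$ with $a_{i}\in\Pcal_{i}$ for at least one $i\in J$, which is precisely $\bigcup_{i\in J}\tilde{\Pcal}_{i}$. Moreover each $\tilde{\Pcal}_{i}=\pi_{i}^{-1}(\Pcal_{i})$, where $\pi_{i}$ denotes the $i$-th projection, is a prime ideal by Corollary \ref{CorPrimeIdealHom} (its being an ideal is Lemma \ref{LemIdealCorrespondence}), so the description on the left-hand side is consistent.

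Finally I would upgrade the semigroup isomorphism to a semibinoid isomorphism. Since $\trivial$ is positive, Lemma \ref{LemPosHom} guarantees that both $\trivial\minspec\prod_{i\in I}M_{i}$ and the disjoint union carry an absorbing element, namely the special point, preserved by the isomorphisms above; under $\trivial\minspec\cong\spec$ this absorbing element is the maximal ideal $(\prod_{i\in I}M_{i})\Uplus$. Using $(\prod_{i\in I}M_{i})\okreuz=\prod_{i\in I}M_{i}\okreuz$ (Example \ref{ExpProdUnits}) one checks $(\prod_{i\in I}M_{i})\Uplus=\bigcup_{i\in I}\pi_{i}^{-1}((M_{i})\Uplus)$, so it corresponds to the entry with $J=I$ and all $\Pcal_{i}=(M_{i})\Uplus$, which is the absorbing element on the right-hand side. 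The only genuine bookkeeping obstacle is this explicit matching of the kernel with $\bigcup_{i\in J}\tilde{\Pcal}_{i}$ and the verification that the absorbing elements correspond; everything else is a direct transcription of results already established for $N\minspec$.
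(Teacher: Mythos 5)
Your proposal is correct and follows essentially the same route as the paper's own proof: specializing Proposition \ref{PropNspecProd} to $N=\trivial$, transporting through the identification $\alpha_{\Pcal}\leftrightarrow\Pcal$ of Corollary \ref{CorHomFiltSpec} together with Corollary \ref{CorSpecSmash}, computing the kernel of the composite $\alpha_{\Pcal}\pi\pi_{J}$ to identify the prime as $\bigcup_{i\in J}\tilde{\Pcal}_{i}$, and matching the absorbing elements to upgrade to a semibinoid isomorphism. The extra checks you include (that $\bool(\trivial)=\trivial$ and that each $\tilde{\Pcal}_{i}=\pi_{i}^{-1}(\Pcal_{i})$ is prime) are harmless refinements of the same argument.
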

\begin {proof}
Consider the semigroup isomorphisms
$$\biguplus_{\emptyset\not=J\subseteq I}\prod_{i\in J}\spec M_{i}\Rto\biguplus_{\emptyset\not=J\subseteq I}\bigwedge_{i\in J}\spec M_{i}\Rto\spec\prod_{i\in I}M_{i}$$
given in Proposition \ref{PropNspecProd} (with $N=\trivial$). We use the identification $\Pcal\leftrightarrow\alpha_{\Pcal}$ of Corollary \ref{CorHomFiltSpec} and the description of the prime ideals of the smash product, cf.\ Corollary \ref{CorSpecSmash}, for the map between the disjoint unions on the left-hand side. From these results, we deduce that the semigroup structures of the disjoint unions (as decribed in Proposition \ref{PropNspecProd} for $\alpha_{\Pcal}$) admit absorbing elements, namely $(((M_{i})\Uplus)_{i\in I};I)$ and $(\bigcup_{i\in I}(\widehat{M}_{i})\Uplus;I)$. This shows that the first map is a semibinoid isomorphism with $((\Pcal_{i})_{i\in J};J)\mto(\bigcup_{i\in J}\widehat{\Pcal}_{i};J)$, where $\widehat{\Pcal}_{i}=\wedge_{k\in I}\Acal_{k}$ with $\Acal_{k}=M_{k}$ for $k\not=i$ and $\Acal_{i}=\Pcal_{i}$. By the proof of Proposition \ref{PropNspecProd}, the semigroup isomorphism on the right-hand side is given by $(\alpha_{\Pcal};J)\mto\alpha_{\Pcal}\pi\pi_{J}$, where
$$\prod_{i \in I} M_{i} \stackrel{\pi_J}{\Rto}\prod_{i \in J} M_{i} \stackrel{\pi}{\Rto}\bigwedge_{i\in J} M_{i} \stackrel{\alpha_{\Pcal}}{\Rto}\trivial\pkt$$
Hence, if $\Pcal=(\bigcup_{i\in J}\widehat{\Pcal}_{i};J)$ as above, then $\ker\alpha_{\Pcal}\pi\pi_{J}=\bigcup_{i\in J}\tilde{\Pcal}_{i}$, where the $\tilde{\Pcal}_{i}$ are as in the statement. Since $(\bigcup_{i\in I}(\widehat{M}_{i})\Uplus;I)\mto((M_{i})\Uplus)_{i\in I}$ this is also a semibinoid isomorphism
\end {proof}

\begin {Corollary}
Let $(M_{i})_{i\in I}$ be a finite family of nonzero $N\mina$binoids with structure homomorphisms $\varphi_{i}:N\rto M_{i}$, $i\in I$. Then 
$$\spec\bigwedge_{i\in I} \!\!{}_{_{N}}M_{i}\,\,\cong\,\,\Big\{(\Pcal_{i})_{i\in I}\in\prod_{i\in I}\spec M_{i}\,\,\Big|\,\,\varphi_{i}^{-1}(\Pcal_{i})=\varphi_{j}^{-1}(\Pcal_{j}), i,j\in I\Big\}$$
as semibinoids. More precisely, every prime ideal in $\bigwedge_{N}M_{i}$ is of the form $\bigcup_{i\in I}\tilde{\Pcal_{i}}$, where $\tilde{\Pcal}_{k}=\wedge_{N}\Acal_{k}$ with $\Acal_{k}=M_{k}$ if $k\not=i$ and $\Acal_{i}=\Pcal_{i}$ otherwise, and $(\Pcal_{i})_{i\in I}$ is an element of the semibinoid on the right-hand side.
\end {Corollary}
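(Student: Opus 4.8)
The plan is to obtain this as the $L=\trivial$ specialization of Proposition~\ref{PropLSpecNSmash}, transported across the identification $\trivial\minspec(-)\cong\spec(-)$ of Corollary~\ref{CorHomFiltSpec}; this follows exactly the pattern used for the ordinary smash product in Corollary~\ref{CorSpecSmash}. First I would set $L=\trivial$ (which is commutative) in Proposition~\ref{PropLSpecNSmash}, giving a semigroup isomorphism
$$\psi:\trivial\minspec\bigwedge_{N}M_{i}\Rto\Big\{(\psi_{i})_{i\in I}\in\prod_{i\in I}\trivial\minspec M_{i}\,\Big|\,\psi_{i}\varphi_{i}=\psi_{j}\varphi_{j},\ i,j\in I\Big\}$$
with $\varphi\mapsto(\varphi\iota_{i})_{i\in I}$ and inverse sending $(\psi_{i})_{i\in I}$ to the $\trivial\mina$point $\wedge_{N}a_{i}\mapsto\sum_{i\in I}\psi_{i}(a_{i})$. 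Then, writing each $\psi_{i}=\alpha_{\Pcal_{i}}$ with $\Pcal_{i}=\ker\psi_{i}\in\spec M_{i}$, and likewise identifying the left-hand side with $\spec\bigwedge_{N}M_{i}$ via Corollary~\ref{CorHomFiltSpec}, this becomes an isomorphism between $\spec\bigwedge_{N}M_{i}$ and a subset of $\prod_{i\in I}\spec M_{i}$.

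The key step is to rewrite the compatibility condition in terms of prime ideals. Since a $\trivial\mina$valued binoid homomorphism is completely determined by its kernel (Corollary~\ref{CorHomFiltSpec}), the equality $\psi_{i}\varphi_{i}=\psi_{j}\varphi_{j}$ of maps $N\rto\trivial$ is equivalent to $\ker(\psi_{i}\varphi_{i})=\ker(\psi_{j}\varphi_{j})$, and $\ker(\alpha_{\Pcal_{i}}\varphi_{i})=\varphi_{i}^{-1}(\Pcal_{i})$ as computed in Corollary~\ref{CorPrimeIdealHom}. Hence the right-hand factor becomes precisely $\{(\Pcal_{i})_{i\in I}\in\prod_{i\in I}\spec M_{i}\mid\varphi_{i}^{-1}(\Pcal_{i})=\varphi_{j}^{-1}(\Pcal_{j})\}$, as claimed. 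I expect this translation to be the only real (though modest) obstacle, since it is where the $N\mina$structure maps $\varphi_{i}$ genuinely enter: in contrast to the case $N=\trivial$ of Corollary~\ref{CorSpecSmash}, where the preimages $\varphi_{i}^{-1}(\Pcal_{i})=\zero$ are constant and the condition is vacuous, here it cuts out a proper subset in general.

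It remains to pin down the explicit form of the prime ideals and the semibinoid structure. For a compatible tuple $(\Pcal_{i})_{i\in I}$ the associated $\trivial\mina$point is $\alpha=\psi^{-1}((\alpha_{\Pcal_{i}})_{i\in I})$, given by $\alpha(\wedge_{N}a_{i})=\sum_{i\in I}\alpha_{\Pcal_{i}}(a_{i})$ (cf.\ Corollary~\ref{CorCoProdSmashN}); since a sum in $\trivial$ equals $\infty$ iff some summand does, its kernel is $\{\wedge_{N}a_{i}\mid a_{k}\in\Pcal_{k}\text{ for some }k\in I\}=\bigcup_{i\in I}\tilde{\Pcal}_{i}$ with $\tilde{\Pcal}_{i}$ as in the statement, which identifies the corresponding prime ideal of $\bigwedge_{N}M_{i}$. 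Finally, because $\trivial$ is positive, Lemma~\ref{LemPosHom} makes $\trivial\minspec\bigwedge_{N}M_{i}$ a (boolean) semibinoid with absorbing element the special point $\chi_{(\bigwedge_{N}M_{i})\okreuz}$; a semigroup isomorphism automatically carries an absorbing element to an absorbing element, so $\psi$ is in fact a semibinoid isomorphism and the right-hand side inherits a semibinoid structure whose absorbing element is the image of $\chi_{(\bigwedge_{N}M_{i})\okreuz}$, corresponding to the maximal ideal $(\bigwedge_{N}M_{i})\Uplus$. This completes the plan.
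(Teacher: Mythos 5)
Your proposal is correct and follows essentially the same route as the paper's own proof: specialize Proposition~\ref{PropLSpecNSmash} to $L=\trivial$, transport the result across the identification $\alpha_{\Pcal}\leftrightarrow\Pcal$ of Corollary~\ref{CorHomFiltSpec}, and note that the absorbing elements correspond. The only cosmetic difference is in the supplement, where you compute the kernel of the induced $\trivial\mina$point directly, while the paper instead invokes Corollary~\ref{CorSpecSmash} via the quotient $\bigwedge_{i\in I}M_{i}\rto\bigwedge_{N}M_{i}$; both are immediate.
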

\begin {proof}
The isomorphism as semigroups is just a restatement of Proposition \ref{PropLSpecNSmash} (with $L=\trivial$) using the identification $\alpha_{\Pcal}\leftrightarrow\Pcal$ of Corollary \ref{CorHomFiltSpec}. The supplement follows from Corollary \ref{CorSpecSmash} since $\bigwedge_{N}M_{i}$ is a quotient of $\bigwedge_{i\in I}M_{i}$. This shows that this is a semibinoid isomophism because $\bigcup_{i\in I}(\tilde{M}_{i})\Uplus\leftrightarrow((M_{i})\Uplus)_{i\in I}$.
\end {proof}

\begin {Corollary}
Let $(M_{i})_{i\in I}$ be a finite family of positive binoids. Then
$$\spec\bigcupbidot_{i\in I}M_{i}\,\,\cong\,\,\bigcupdot_{i\in I}\spec M_{i}$$
as semigroups, where the pointed union is taken over $(\spec M_{i},(M_{i})\Uplus)_{i\in I}$. In particular, every prime ideal $\not=(\bigcupbidot_{i\in I}M_{i})\Uplus$ in $\bigcupbidot_{i\in I}M_{i}$ is of the form $\Pcal_{(j)}=\biguplus_{i\in I}\Acal_{i}$, where $\Acal_{i}=(M_{i};i)$ for $i\not=j$ and $\Acal_{j}=(\Pcal;j)$, $\Pcal\in\spec M_{j}\setminus\{(M_{j})\Uplus\}$ for some $j\in I$.
\end {Corollary}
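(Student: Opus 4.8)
The plan is to reduce the statement to the already-established computation of the $N\mina$spectrum of a bipointed union by specializing to $N=\trivial$, and then to translate back through the identification of $\trivial\minspec$ with $\spec$. No genuinely new argument is needed; the task is to assemble two earlier isomorphisms and to track distinguished points carefully.

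First I would observe that the trivial binoid is integral, since $\trivial\opkt=\{0\}$ is a submonoid of $\trivial$. Hence Corollary \ref{CorNSpecPUnion} applies with $N=\trivial$ and furnishes a semibinoid isomorphism
$$\trivial\minspec\bigcupbidot_{i\in I}M_{i}\,\,\cong\,\,\bigcupdot_{i\in I}\trivial\minspec M_{i}\komma$$
where on each side the distinguished point is the corresponding special point $\chi_{(-)\okreuz}$, as is built into the pointed union construction of Lemma \ref{LemGlueingPoints}.

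Next I would invoke Corollary \ref{CorHomFiltSpec}, which gives for every binoid a semibinoid isomorphism $\trivial\minspec M\cong\spec M$ via $\alpha\mto\ker\alpha$, carrying the special point $\chi_{M\okreuz}$ to the maximal ideal $M\Uplus$. Applying this to each factor $M_{i}$ and to $\bigcupbidot_{i\in I}M_{i}$, and using that the distinguished points are exactly the special points that get glued together, the composite of the two isomorphisms yields the desired $\spec\bigcupbidot_{i\in I}M_{i}\cong\bigcupdot_{i\in I}\spec M_{i}$, with the pointed union formed over $(\spec M_{i},(M_{i})\Uplus)_{i\in I}$. The semigroup structures agree because both displayed isomorphisms are semibinoid (in particular semigroup) homomorphisms.

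Finally, for the explicit description in the \emph{In particular} clause I would trace the composite map. The isomorphism of Corollary \ref{CorNSpecPUnion} sends $(\psi;i)\mto\psi\pi_{i}$, where $\pi_{i}\colon\bigcupbidot_{j\in I}M_{j}\rto M_{i}$ is the canonical projection with $(a;j)\mto a$ if $j=i$ and $\infty_{i}$ otherwise. If $\psi=\alpha_{\Pcal}$ corresponds to $\Pcal\in\spec M_{i}$, then $\psi\pi_{i}$ corresponds to the prime ideal $\pi_{i}^{-1}(\Pcal)$; computing this preimage and using that $\infty_{i}\in\Pcal$ always holds gives precisely $\Pcal_{(i)}=\biguplus_{j\in I}\Acal_{j}$ with $\Acal_{j}=(M_{j};j)$ for $j\not=i$ and $\Acal_{i}=(\Pcal;i)$. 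The restriction $\Pcal\not=(M_{i})\Uplus$ corresponds to excluding the glued special point, i.e.\ to $\Pcal_{(i)}\not=(\bigcupbidot_{i\in I}M_{i})\Uplus$. The only point demanding care is this matching of distinguished points and the routine preimage computation; since all substantive ingredients are already available, I expect no real obstacle beyond this bookkeeping.
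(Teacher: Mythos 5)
Your proposal is correct and follows exactly the paper's route: the paper's proof likewise consists of applying Corollary \ref{CorNSpecPUnion} with $N=\trivial$ and translating through the identification $\alpha_{\Pcal}\leftrightarrow\Pcal$ of Corollary \ref{CorHomFiltSpec}. Your additional bookkeeping (integrality of $\trivial$, matching of distinguished points, and the preimage computation $\pi_{i}^{-1}(\Pcal)$) is accurate and simply spells out what the paper leaves implicit.
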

\begin {proof}
The isomorphism and the description of the prime ideals follow from Corollary \ref{CorNSpecPUnion} (with $N=\trivial$) using the identification $\alpha_{\Pcal}\leftrightarrow\Pcal$ of Corollary \ref{CorHomFiltSpec}.
\end {proof}

\begin {Corollary} \label{CorSpecIntegralSubbinoid}
Given a subbinoid $M$ of $N$ such that $N$ is integral over $M$ with respect to the inclusion $\iota:M\embto N$, there is a semibinoid isomorphism
$$\begin {array}{rcl}
\spec N&\longleftrightarrow&\spec M\pkt\quad\quad\quad\quad\quad\quad\quad\quad\quad\\
\Qcal&\longmapsto& M\cap\Qcal\\
\{a\in N\mid ka\in\Pcal\text{ for some }k\in\N\}&\longmapsfrom&\Pcal
\end {array}$$
In particular, $\spec M\cong\spec\overline{M}^{N}$ as semibinoids for every subbinoid $M$ of $N$.
\end {Corollary}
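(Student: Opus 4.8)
The plan is to obtain the whole statement as a translation of Proposition~\ref{PropLSpecIntegral} through the identification $\trivial\minspec(-)\cong\spec(-)$ of Corollary~\ref{CorHomFiltSpec}. The crucial observation is that the trivial binoid $\trivial$ is simultaneously torsion-free and boolean, so both hypotheses of Proposition~\ref{PropLSpecIntegral} are met with $L=\trivial$ (recall that in this chapter all binoids are commutative). Thus $\iota^{\ast}\colon\trivial\minspec N\rto\trivial\minspec M$, $\psi\mto\psi\iota$, is already a \emph{semibinoid} isomorphism, and I would not reprove any homomorphism property from scratch.

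First I would transport $\iota^{\ast}$ along $\alpha\leftrightarrow\ker\alpha$. A point $\alpha\in\trivial\minspec N$ corresponds to $\Qcal=\ker\alpha\in\spec N$, and since $\iota$ is the inclusion, $\ker(\alpha\iota)=\{a\in M\mid\alpha(a)=\infty\}=M\cap\Qcal$. Hence the induced bijection $\spec N\rto\spec M$ is precisely $\Qcal\mto M\cap\Qcal$, and being the transport of a semibinoid isomorphism it is one as well. (Alternatively one notes directly that $M\cap\Qcal=\iota^{-1}(\Qcal)$ is prime by Corollary~\ref{CorPrimeIdealHom}.)

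The remaining work is to recognize the inverse. For $\Pcal\in\spec M$ set $\Qcal_{\Pcal}:=\{a\in N\mid ka\in\Pcal\text{ for some }k\ge1\}$. By the established bijectivity it suffices to check that $\Qcal_{\Pcal}$ is a prime ideal of $N$ with $M\cap\Qcal_{\Pcal}=\Pcal$. That $\Qcal_{\Pcal}$ is an ideal uses integrality: if $ka\in\Pcal$ and $c\in N$, choose $l$ with $lc\in M$, so that $kl(a+c)=l(ka)+k(lc)\in\Pcal+M\subseteq\Pcal$. For $M\cap\Qcal_{\Pcal}=\Pcal$, the inclusion $\supseteq$ is clear since $a=1\cdot a$, and $\subseteq$ follows from primality: if $a\in M$ and $ka=a+(k-1)a\in\Pcal$, then Lemma~\ref{LemCharacterizationPrime} forces $a\in\Pcal$.

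The step I expect to be the main obstacle is the primality of $\Qcal_{\Pcal}$, because from $k(a+b)=ka+kb\in\Pcal$ one cannot split directly: $ka$ and $kb$ need not lie in $M$. Here integrality again clears the obstruction. Choosing $s,t\ge1$ with $sa,tb\in M$, one rewrites $st\,(ka+kb)=tk(sa)+sk(tb)\in\Pcal$, where now both summands $tk(sa)$ and $sk(tb)$ lie in $M$; primality of $\Pcal$ then yields $stka\in\Pcal$ or $stkb\in\Pcal$, i.e.\ $a\in\Qcal_{\Pcal}$ or $b\in\Qcal_{\Pcal}$. Finally, the supplement is immediate: $\overline{M}^{N}$ is integral over $M$ by definition of the integral closure, so applying the isomorphism to the inclusion $M\embto\overline{M}^{N}$ (equivalently, using the supplement of Proposition~\ref{PropLSpecIntegral} with $L=\trivial$) gives $\spec M\cong\spec\overline{M}^{N}$ as semibinoids.
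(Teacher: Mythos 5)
Your proof is correct and follows the same route as the paper: the paper's entire proof is the one-line observation that the statement is Proposition~\ref{PropLSpecIntegral} with $L=\trivial$ (which is boolean and torsion-free) read through the identification $\alpha_{\Pcal}\leftrightarrow\Pcal$ of Corollary~\ref{CorHomFiltSpec}. Your additional direct verification that $\Qcal_{\Pcal}$ is a prime ideal of $N$ contracting to $\Pcal$ --- which the paper leaves implicit in the surjectivity construction of that proposition --- is sound, the rewriting $st(ka+kb)=tk(sa)+sk(tb)$ being exactly the integrality trick needed to reduce primality of $\Qcal_{\Pcal}$ to primality of $\Pcal$.
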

\begin {proof}
This is just a restatement of Proposition \ref{PropLSpecIntegral} (with $L=\trivial$), using the identification $\alpha_{\Pcal}\leftrightarrow\Pcal$ of Corollary \ref{CorHomFiltSpec}.
\end {proof}

\begin {Corollary} \label{CorIndSpecLoc}
Let $M$ be a binoid and $S$ a submonoid of $M$. The canonical homomorphism $\iota_{S}:M\rto M_{S}$ induces an order preserving semigroup isomorphism given by
$$\begin {array}{rcl}
\quad\quad\quad\quad\quad\quad\quad\quad\quad\spec M_{S}&\longleftrightarrow&(\{\Pcal\in\spec M\mid\Pcal\cap S=\emptyset\},\cup)\pkt\\
\Qcal&\longmapsto& \iota_{S}^{-1}(\Qcal)\\
\iota_{S}(\Pcal)+M_{S}&\longmapsfrom&\Pcal
\end {array}$$
\end {Corollary}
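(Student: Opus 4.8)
The claim is that for a submonoid $S$ of a (commutative) binoid $M$, the canonical homomorphism $\iota_{S}:M\rto M_{S}$ induces an order-preserving semigroup isomorphism between $\spec M_{S}$ and the sub-poset $\{\Pcal\in\spec M\mid\Pcal\cap S=\emptyset\}$ of $\spec M$ (with the join operation $\cup$), given by $\Qcal\mto\iota_{S}^{-1}(\Qcal)$ with inverse $\Pcal\mto\iota_{S}(\Pcal)+M_{S}$.

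**Overall approach.** The plan is to verify that the two stated assignments are well-defined, mutually inverse, order-preserving, and compatible with $\cup$. Much of the groundwork is already in place: Corollary \ref{CorPrimeIdealHom} tells us that $\iota_{S}^{\ast}:\spec M_{S}\rto\spec M$, $\Qcal\mto\iota_{S}^{-1}(\Qcal)$, is a well-defined semigroup homomorphism. So the first step is merely to identify the image of this map. For any $\Qcal\in\spec M_{S}$, every $s\in S$ satisfies $\iota_{S}(s)\in M_{S}\okreuz$ by Lemma \ref{LemLocalization}, hence $\iota_{S}(s)\not\in\Qcal$ (a prime ideal is proper and contains no units); therefore $s\not\in\iota_{S}^{-1}(\Qcal)$, which shows $\iota_{S}^{-1}(\Qcal)\cap S=\emptyset$. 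This places the image inside the asserted sub-poset.

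**The inverse map.** Next I would check that $\Pcal\mto\iota_{S}(\Pcal)+M_{S}=\Pcal_{S}$ sends a prime $\Pcal$ with $\Pcal\cap S=\emptyset$ to a prime ideal of $M_{S}$, and that the two maps are mutually inverse. The cleanest route uses the characterization $M\setminus\Pcal\in\Fcal(M)$ from Lemma \ref{LemCharacterizationPrime}. Concretely, $a\minus s\in\Pcal_{S}$ should be equivalent to $a\in\Pcal$: the implication from $a\in\Pcal$ is immediate, and the converse uses that $a\minus s\sim a'\minus s'$ with $a'\in\Pcal$ forces $a+s'+c=a'+s+c$ for some $c\in S$, so the right-hand side lies in $\Pcal$, whence $a\in\Pcal$ because $s',c\in S$ are disjoint from $\Pcal$ and $M\setminus\Pcal$ is summand-closed (condition (3) of a filter). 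This equivalence simultaneously shows $\Pcal_{S}\subsetneq M_{S}$ is prime (its complement is the localization of the filter $M\setminus\Pcal$), gives $\iota_{S}^{-1}(\Pcal_{S})=\Pcal$, and gives $\iota_{S}(\iota_{S}^{-1}(\Qcal))+M_{S}=\Qcal$ since every element of $M_{S}$ has the form $a\minus s$. Finally, order-preservation in both directions is clear from the elementwise descriptions, and compatibility with $\cup$ follows because $\iota_{S}^{-1}$ commutes with unions of ideals (Lemma \ref{LemUnionIdeals} guarantees the union stays prime, by Proposition \ref{PropPrimeUnion}).

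**Main obstacle.** The only genuinely delicate point is the well-definedness argument hidden in the summand-closed step: showing that membership $a\minus s\in\Pcal_{S}$ really is independent of the chosen representative and equivalent to $a\in\Pcal$. This is where the disjointness hypothesis $\Pcal\cap S=\emptyset$ and the filter property of $M\setminus\Pcal$ are essential, and it is exactly the analogue of the classical ring-theoretic fact that localizing at a multiplicative set disjoint from a prime ideal preserves primality. I expect everything else — that the maps are order-preserving and semigroup homomorphisms — to reduce to routine bookkeeping, largely subsumed by the already-proved Corollary \ref{CorPrimeIdealHom} and the universal property of localization in Proposition \ref{PropUnivPropLocalization}.
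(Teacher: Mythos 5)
Your proof is correct, but it takes a genuinely different route from the paper's. The paper disposes of this corollary in one line: it invokes Corollary \ref{CorNSpecLoc} with $N=\trivial$ (which itself rests on the universal property of localization, Proposition \ref{PropUnivPropLocalization}, applied to $\trivial$-valued binoid homomorphisms) and then translates $\trivial$-points into prime ideals via the dictionary $\alpha_{\Pcal}\leftrightarrow\Pcal$ of Corollary \ref{CorHomFiltSpec}, noting that $\alpha_{\Pcal}(S)\subseteq\trivial\okreuz$ is exactly the condition $\Pcal\cap S=\emptyset$. You instead work directly with prime ideals and verify everything by hand, the crux being the equivalence $a\minus s\in\Pcal_{S}\Leftrightarrow a\in\Pcal$, which you correctly derive from $a+s'+c=a'+s+c\in\Pcal$ together with the disjointness $\Pcal\cap S=\emptyset$ and the summand-closedness of $M\setminus\Pcal$. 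What the paper's approach buys is brevity and uniformity — the same argument immediately handles $N$-spectra for arbitrary $N$ — at the cost of leaving the explicit description of the inverse map $\Pcal\mto\iota_{S}(\Pcal)+M_{S}$ implicit in the factorization. What your approach buys is self-containedness and transparency: it exhibits concretely why $\Pcal_{S}$ is a well-defined proper prime ideal and why the two assignments are mutually inverse, which is exactly the content that the "restatement" hides. Both arguments are sound; yours is the one a reader would need to reconstruct if they wanted to see the mechanism rather than the reduction.
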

\begin {proof}
This is just a restatement of Corollary \ref{CorNSpecLoc} (with $N=\trivial$), using the identification $\alpha_{\Pcal}\leftrightarrow\Pcal$ of Corollary \ref{CorHomFiltSpec}.
\end {proof}

\begin {Definition}
If $\Pcal$ is a prime ideal, we use the common notation $M_{\Pcal}$\nomenclature[M4]{$M_{\Pcal}$}{localization of $M$ at $M\setminus\Pcal$, $P\in\spec M$} for the localization of $M$ at the filter $M\setminus\Pcal$ and $\iota_{\Pcal}$ for the canonical homomorphism $M\rto M_{\Pcal}$.
\end {Definition}

With this notation Corollary \ref {CorLocal} now translates to the following result.

\begin {Corollary}
Let $S$ be a submonoid of $M$ with $\infty\not\in S$. Then $M_{S}\cong M_{\Pcal}$, where $\Pcal$ is the prime ideal $M\setminus\opFilt(S)$. Moreover, the unique maximal prime ideal of $M_{\Pcal}$ is the extended ideal $\iota_{\Pcal}(\Pcal)+M_{\Pcal}$.
\end {Corollary}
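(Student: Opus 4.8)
The plan is to deduce both assertions entirely from the localization and spectrum results already established; no new computation is needed, and the only real content is checking that the hypothesis $\infty\notin S$ supplies exactly what is required and then assembling the cited isomorphisms. First I would verify that $\Pcal:=M\setminus\opFilt(S)$ really is a prime ideal. By Corollary \ref{CorHomFiltSpec} the complements of the proper filters of $M$ are precisely the prime ideals, so it suffices to show $\opFilt(S)\neq M$. Since $M$ is the only filter containing $\infty$ (Remark \ref{RemFilterOfF}), we have $\opFilt(S)=M$ exactly when $\infty\in\opFilt(S)$. Using the description of the generated filter as the summands of elements of $S$ (the Definition of $\opFilt$, together with the fact that $S$ is closed under addition), $\infty\in\opFilt(S)$ would mean $\infty+x\in S$ for some $x\in M$, i.e.\ $\infty\in S$, contradicting $\infty\notin S$. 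Hence $\opFilt(S)$ is a proper filter and $\Pcal\in\spec M$.

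For the first isomorphism I would simply unwind definitions: by definition $M_{\Pcal}$ is the localization of $M$ at the filter $M\setminus\Pcal=\opFilt(S)$, that is $M_{\Pcal}=M_{\opFilt(S)}$. On the other hand Corollary \ref{CorLocal} gives $M_{S}\cong M_{\opFilt(S)}$ as $M\mina$binoids. Composing these identifications yields $M_{S}\cong M_{\Pcal}$, which is the first claim.

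For the second assertion I would apply Corollary \ref{CorIndSpecLoc} to the submonoid $M\setminus\Pcal$, so that $\iota_{S}$ becomes $\iota_{\Pcal}$ and $M_{S}$ becomes $M_{\Pcal}$. This produces an order-preserving semigroup isomorphism between $\spec M_{\Pcal}$ and $\{\Qcal\in\spec M\mid\Qcal\cap(M\setminus\Pcal)=\emptyset\}=\{\Qcal\in\spec M\mid\Qcal\subseteq\Pcal\}$, whose inverse sends $\Qcal\mapsto\iota_{\Pcal}(\Qcal)+M_{\Pcal}$. The poset on the right has $\Pcal$ itself as greatest element with respect to $\subseteq$. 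Because the correspondence is order preserving, the image $\iota_{\Pcal}(\Pcal)+M_{\Pcal}$ of this greatest element must be the greatest element of $\spec M_{\Pcal}$; and by Example \ref{ExMaxIdeal}(1) together with Proposition \ref{PropPrimeUnion} the greatest prime ideal of the nonzero binoid $M_{\Pcal}$ is its maximal ideal $(M_{\Pcal})\Uplus$, i.e.\ the unique maximal prime ideal. Thus $(M_{\Pcal})\Uplus=\iota_{\Pcal}(\Pcal)+M_{\Pcal}$.

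The one step that deserves care — and the only genuine obstacle — is the passage that an order-preserving bijection carries top element to top element. Here I would note explicitly that $\{\Qcal\subseteq\Pcal\}$ does have $\Pcal$ as its top, and that $\spec M_{\Pcal}$ has a top at all: this is guaranteed because $M_{\Pcal}\neq\zero$ (indeed $\infty\in\Pcal$ forces $\infty\notin M\setminus\Pcal$, so the localization is not the zero binoid). Everything else is bookkeeping with the cited corollaries.
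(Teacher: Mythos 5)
Your argument is correct, and the first half (reducing $M_{S}\cong M_{\Pcal}$ to Corollary \ref{CorLocal} via $M_{\Pcal}=M_{\opFilt(S)}$) is exactly what the paper does; your preliminary check that $\Pcal$ is in fact a prime ideal because $\infty\notin S$ forces $\opFilt(S)\neq M$ is a sensible addition that the paper leaves implicit. For the second assertion, however, you take a genuinely different route. The paper works directly with elements: it proves $(M_{\Pcal})\okreuz=M_{\Pcal}\setminus(\iota_{\Pcal}(\Pcal)+M_{\Pcal})$ by exhibiting the inverse $s\minus a$ of $a\minus s$ when $a\notin\Pcal$, and conversely unwinding the equation $(a\minus s)+(b\minus t)=0$ to conclude $a\notin\Pcal$ from the primality of $\Pcal$; this identifies the extended ideal as the set of nonunits $(M_{\Pcal})\Uplus$, which is the unique maximal (prime) ideal by Example \ref{ExMaxIdeal}(1). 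You instead invoke the order-preserving spectrum correspondence of Corollary \ref{CorIndSpecLoc} applied to the filter $M\setminus\Pcal$, observe that $\{\Qcal\in\spec M\mid\Qcal\subseteq\Pcal\}$ has top element $\Pcal$, and transport that top element across the bijection to $(M_{\Pcal})\Uplus$ — and since that corollary precedes the statement in the paper, there is no circularity. Your route buys a shorter, purely formal deduction from already-established machinery (the only care needed is the top-to-top argument, which you handle; one could also note that the isomorphism is one of $\cup$-semigroups and so preserves the absorbing element), while the paper's computation buys an explicit description of the units of $M_{\Pcal}$, which is useful information in its own right. Both are complete proofs.
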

\begin {proof}
By Corollary \ref {CorLocal}, we have $M_{S}\cong M_{\opFilt(S)}=M_{\Pcal}$. Thus, we only need to verify that $(M_{\Pcal})\okreuz=M_{\Pcal}\setminus (\iota_{\Pcal}(\Pcal)+M_{\Pcal})$. For the inclusion $\supseteq$ let $a\minus s\in M_{\Pcal}$, $a\in M$, $s\not\in\Pcal$. If $a\not\in\Pcal$, then $s\minus a\in M_{\Pcal}$, which is the inverse of $s\minus a$, hence $(a\minus s)\in(M_{\Pcal})\okreuz$. Conversely, if $a\minus s$ is a unit, there are elements $b\in M$ and $t\not\in\Pcal$ such that $0=(a\minus s)+(b\minus t)=(a+b)\minus(s+t)$, which is equivalent to $a+b+c=s+t+c$ for some $c\not\in\Pcal$. Since $s+t+c\not\in\Pcal$, the element $a$ lies not in $\Pcal$. Hence, $a\minus s\not\in(\iota_{\Pcal}(\Pcal)+M_{\Pcal})$.
\end {proof}

\begin {Proposition} \label{PropFgPrimes}
If $M$ is a binoid with generating set $V\subseteq M$, then every prime ideal of $M$ is of the form $\langle A\rangle$ for some subset $A\subseteq V$. In particular, $\#\spec M\le2^{\#V}$.
\end {Proposition}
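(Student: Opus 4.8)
The plan is to show that the assignment $\Pcal\mapsto\Pcal\cap V$ recovers $\Pcal$ as the ideal it generates, so that every prime ideal equals $\langle A\rangle$ for $A=\Pcal\cap V$, and that this assignment is injective, which yields the cardinality bound at once.

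First I would fix a prime ideal $\Pcal$ and set $A:=\Pcal\cap V$. Since $A\subseteq\Pcal$ and $\Pcal$ is an ideal, the smallest ideal containing $A$ satisfies $\langle A\rangle\subseteq\Pcal$; here I use that $\langle A\rangle=\bigcup_{a\in A}(a+M)$. For the reverse inclusion I would take $x\in\Pcal$. If $x\neq\infty$, then because $V$ generates $M$ I may write $x$ as a finite sum of generators $x=\sum_{a\in V}n_{a}a$ (the chapter's commutativity convention makes this expression legitimate). Applying the prime characterization of Lemma \ref{LemCharacterizationPrime}(5) — namely $u+w\in\Pcal$ forces $u\in\Pcal$ or $w\in\Pcal$ — repeatedly to this finite sum shows that some generator summand lies in $\Pcal$, hence that some $a\in V$ with $n_{a}\geq1$ satisfies $a\in\Pcal$. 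That generator is then in $A$, and consequently $x\in a+M\subseteq\langle A\rangle$.

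It remains to handle $x=\infty$ and the degenerate case $A=\emptyset$. If $A\neq\emptyset$, then $\infty\in\langle A\rangle$ automatically, since every ideal contains $\infty$. If $A=\emptyset$, the argument of the previous paragraph shows that $\Pcal$ contains no element $\neq\infty$ (such an element would force a generator into $A$), so $\Pcal=\zero=\langle\emptyset\rangle$; this case occurs precisely when $M$ is integral. In all cases $\Pcal=\langle\Pcal\cap V\rangle$, which establishes the first assertion.

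Finally, the cardinality bound follows formally: the map $\spec M\rightarrow\Pset(V)$, $\Pcal\mapsto\Pcal\cap V$, is injective because $\Pcal$ is reconstructed from $\Pcal\cap V$ as $\langle\Pcal\cap V\rangle$. Hence $\#\spec M\leq\#\Pset(V)=2^{\#V}$. I do not expect a genuine obstacle here; the only point requiring care is the passage from \emph{a finite sum of generators lies in $\Pcal$} to \emph{a single generator lies in $\Pcal$}, which is exactly where primeness enters, together with the separate bookkeeping for $\infty$ and for the empty generator set.
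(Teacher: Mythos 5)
Your proposal is correct and follows essentially the same route as the paper: write a nonzero element of $\Pcal$ as a finite sum of generators, apply the prime property of Lemma \ref{LemCharacterizationPrime}(5) repeatedly to extract a generator lying in $\Pcal$, and conclude $\Pcal=\langle\Pcal\cap V\rangle$, with the bound on $\#\spec M$ following from the resulting correspondence with subsets of $V$. Your treatment is in fact slightly more careful than the paper's, in that you separate out the cases $x=\infty$ and $A=\emptyset$ explicitly and make the injectivity of $\Pcal\mapsto\Pcal\cap V$ the explicit source of the cardinality bound.
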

\begin {proof}
If $M=\zero$, the statement is clear, so let $M\not=\zero$. Note that every ideal of the form 
$\langle A\rangle$, $\emptyset\not=A\subsetneq V$, is prime. On the other hand, consider $\Pcal\in\spec M$. If $\Pcal=\zero$, then $M$ is integral and $\Pcal=\langle\emptyset\rangle$. So let $\Pcal\not=\zero$. Every $\infty\not=f\in\Pcal$ can be written as $f=\sum_{x\in V}n_{x}x$, where $n_{x}\not=0$ for at least one but only finitely many $x\in V$. By the prime property, we get $x\in\Pcal$ for at least one $x\in\{v\in V\mid n_{v}\not=0\}$, which proves the first statement. The supplement is clear.
\end {proof}

\begin{Example} \label {ExpPrimeLatticeNinftyN}
Let $I=\{1\kpkt n\}$, $n\ge 1$.
\begin {ListeTheorem}
\item The binoid $(\N^{n})^{\infty}$ admits a minimal generating set given by $e_{i}$, $i\in I$, cf.\ Proposition \ref {PropUniqueMinSyst}, and all ideals $\langle e_{i}\mid i\in J\rangle$, $J\subseteq I$, are prime, which implies that $\#\spec(\N^{n})^{\infty}=2^{n}$. This shows that the bound in Proposition \ref{PropFgPrimes} is sharp.
\item The binoid $(\N^{\infty})^{n}$ is generated by the $2n$ elements $e_{i}$, $e_{i,\infty}$, $i\in I$, cf.\ Example \ref {ExBinMonGenerators}(3). Thus, the prime ideals are of the form $\Pcal_{J^{\prime},J}=\langle e_{i},e_{j,\infty}\mid i\in J^{\prime},j\in J\rangle$, where $J^{\prime},J\subseteq\Pset_{n}$ with $J^{\prime}\not=\emptyset$ or $J\not=\emptyset$ because $(\N^{\infty})^{n}$ is not integral. Since $e_{i}+e_{i,\infty}=e_{i,\infty}$, we have $\Pcal_{J^{\prime},J}=\Pcal_{J^{\prime},J\cup J^{\prime}}$. Therefore, the different prime ideals of $(\N^{\infty})^{n}$ are given by 
$$\Pcal_{J^{\prime},J}=\langle e_{i},e_{j,\infty}\mid i\in J^{\prime},j\in J\rangle\komma$$
where $\emptyset\not=J\subseteq\Pcal_{n}$ and $J^{\prime}\subseteq J$. Hence, $\#\spec(\N^{\infty})^{n}=\sum_{k=1}^{n}2^{k}\binom{n}{k}$. For instance, the semilattice $\spec(\N^{\infty})^{2}$ consisting of $8$ prime ideals can be illustrated as follows
$$\xymatrix{ 
&\langle e_{1},e_{2}\rangle&\\
\langle e_{1},e_{2,\infty}\rangle\ar@{}[ur]|{\!\!\!\!\!\subsetur}&&\langle e_{2},e_{1\infty}\rangle\ar@{}[ul]|{\!\supsetul}\\
\langle e_{1}\rangle\ar@{}[u]|{\subsetu}&\langle e_{1,\infty},e_{2,\infty}\rangle\ar@{}[ur]|{\!\!\!\!\!\subsetur}\ar@{}[ul]|{\!\supsetul}&\langle e_{2}\rangle\ar@{}[u]|{\supsetu}\\
\langle e_{1,\infty}\rangle\ar@{}[u]|{\subsetu}\ar@{}[ur]|{\!\!\!\!\!\subsetur}&&\langle e_{2,\infty}\rangle\ar@{}[u]|{\supsetu}\ar@{}[ul]|{\!\supsetul}
}$$
\item The bipointed union $\bigcupbidot_{i\in I}\N^{\infty}$ is minimally generated by $(1;i)$, $i\in I$, cf.\ Proposition \ref{PropUniqueMinSyst}. If $\Pcal$ is a prime ideal with $(1;i)\not\in\Pcal$, then $(1;j)\in\Pcal$ for every $j\not=i$ because $(1;i)+(1;j)=\infty\in\Pcal$ when $j\not=i$. Hence, the prime ideals of $\bigcupbidot_{i\in I}\N^{\infty}$ are given by
$$\Pcal_{i}:=\langle(1;j)\mid j\in I, j\not=i\rangle\komma$$
$i\in I$, and $(\bigcupbidot_{i\in I}\N^{\infty})\Uplus=\langle (1;i)\mid i\in I\rangle$. In particular, $\#\spec\bigcupbidot_{i\in I}\N^{\infty}=n+1$.
\item Consider the binoid $M:=\free(x_{1}\kpkt x_{n})/(x_{i}+x_{i+1}=x_{i})_{i\in I\setminus\{n\}})$. If  $\emptyset\not=J\subseteq I$, then 
$$\Pcal_{J}:=\langle x_{i}\mid i\in J\rangle=\langle x_{s}\rangle\quad(=\langle x_{1}\kpkt x_{s}\rangle)$$
with $s:={\max}_{\le}J$. Thus, $\spec M$ is a totally ordered complete lattice given by the following sequence of principal ideals
$$\zero\subset\langle x_{1}\rangle\subset\langle x_{2}\rangle\subset\cdots\subset\langle x_{n}\rangle=M\Uplus\pkt$$
\item The prime ideal $\nonint(M)$ is generated by those generators of $M$ that are not integral.
\item The spectra of the four different types of one-generated binoids
\item []\begin{tabular}[c]{cccclllll}
&&&&$\N^{\infty}\komma$&$(\Z/n\Z)^{\infty}\komma$&$\N^{\infty}/(r=s)\komma$&and&$\quad\N^{\infty}/(n=\infty)\komma$\\
\multicolumn{5}{l}{\!\!cf.\ Corollary \ref{CorClassificationOnegenerated}, are}\\
&&&&$\{\zero,\langle 1\rangle\}\komma$&$\{\zero\}\komma$&$\{\zero,\langle 1\rangle\}\komma$&and&$\quad \{\langle 1\rangle\}\pkt$\\
\end{tabular}
\end {ListeTheorem}
\end {Example}

\begin {Example} \label{ExpProjLim}
The spectrum $\spec M=:I$ of a (finitely generated) binoid $M$ is a partially ordered (finite) set such that $$(M/\Pcal,(\varphi_{\Pcal\Qcal})_{\Qcal\subseteq\Pcal})_{\Pcal,\Qcal\in I}\komma$$
where
$$\varphi_{\Pcal\Qcal}:M/\Qcal\Rto M/\Pcal$$
is the canonical projection for $\Qcal\subseteq\Pcal$, is an inverse system of binoids. The projective limit of this system is given by 
$$\varprojlim_{\Pcal\in I} M/\Pcal\,=\,\Big\{(a_{\Pcal})_{\Pcal\in I}\in\prod_{\Pcal\in I}M/\Pcal\,\,\Big|\,\, \varphi_{\Pcal\Qcal}(a_{\Qcal})=a_{\Pcal}\text{ for all }\Qcal\subseteq \Pcal\Big\}$$
together with the canonical projections $\varphi_{\Qcal}:\varprojlim M/\Pcal\rto M/\Qcal$ with $(a_{\Pcal})_{\Pcal\in I}\lto a_{\Qcal}$, $\Qcal\in I$, cf.\ Section \ref{SecLimits} (here, the partial order $\ge$ on $I$ is given by $\Qcal\ge\Pcal$ if $\Qcal\subseteq\Pcal$). Note that if $M$ is positive, then $M/M\Uplus=\trivial$ and $\varphi_{M\Uplus\Pcal}:M/\Pcal\rto\trivial$ is the binoid homomorphism sending everything but $0$ to $\infty$ (i.e.\ $\varphi_{M\Uplus\Pcal}=\chi_{\{0\}}$).

If, for instance, $I$ contains one minimal element with respect to $\subseteq$, say $\Qcal$, then there is for every $\Pcal\in I$ a binoid homomorphism $\varphi_{\Pcal\Qcal}:M/\Qcal\rto M/\Pcal$, which is the identity on $\{a\in M/\Qcal\mid a\not\in\Pcal\}=(M/\Qcal)\setminus\ker\varphi_{\Pcal\Qcal}$. Hence,
\begin {align*}
\varprojlim_{\Pcal\in I} M/\Pcal&\,=\,\Big\{(\varphi_{\Pcal\Qcal}(a))_{\Pcal\in I}\in\prod_{\Pcal\in I}M/\Pcal\,\,\Big|\,\, a\in M\Big\}\\
&\,\cong\,\Big\{(a,\varphi_{\Pcal\Qcal}(a))_{\Pcal\not=\Qcal}\,\,\Big|\,\, a\in M/\Qcal, a\not=0\Big\}\cup\{(0\kpkt 0)\}\,\,\cong\,\,M/\Qcal\pkt
\end {align*}
As another example consider the binoid $M\,=\,\bigcupbidot_{i=1}^{n}\N^{\infty}$ from \ref{ExpPrimeLatticeNinftyN}(3) (with the notation given there). The inverse system of $M$ is given by the $n+1$ binoids $M/M\Uplus=\trivial$, $M/\Pcal_{i}=(\N^{\infty};i)\cong\N^{\infty}$, $i\in\{1\kpkt n\}$, together with the binoid homomorphisms $\varphi_{M\Uplus\Pcal_{i}}:(\N^{\infty};i)\rto\trivial$, $\varphi_{\Pcal_{i}\Pcal_{i}}=\id_{(\N^{\infty};i)}$ for $i\in\{1\kpkt n\}$, and $\varphi_{M\Uplus M\Uplus}=\id_{\trivial}$. Hence,
$$
\varprojlim_{\Pcal\in I} M/\Pcal\,\,=\,\,\{(\infty,a)\mid a\in(\N_{\ge1}^{\infty})^{n}\}\cup\{(0\kpkt 0\}
\,\,\cong\,\,(\N_{\ge1}^{\infty})^{n}\cup\{(0\kpkt 0\}\komma$$
which is not a finitely generated binoid if $n\ge2$, though $M$ is so. We will come back to these examples later in Example \ref{ExpProjLimitAlgebras}.
\end {Example}

\begin {Remark}\label{RemMonoidDecomposition}
Let $N$ and $M$ be binoids. Every $N\mina$point $\varphi\in N\minspec M$ factors uniquely through $\ker\varphi$, cf.\ Lemma \ref{LemFactoriazationKer}, which is a prime ideal. Indeed, we can write 
$$N\minspec M\,=\,\biguplus_{\Pcal\in\spec M}S_{\Pcal}\komma$$
where $S_{\Pcal}:=\{\varphi\in N\minspec M\mid\ker\varphi=\Pcal\}\cong N\minspec(M/\Pcal)$ is a subsemigroup of $N\minspec M$ in which the characteristic point $\chi_{M\setminus\Pcal}$ serves as an identity element. Moreover, $S_{\Pcal}+S_{\Qcal}\subseteq S_{\Pcal\cup\Qcal}$. See also \cite[Chapter 5]{CliffordPreston}. If $N$ is a binoid group, then $S_{\Pcal}\cong N\minspec\diff((M/\Pcal)_{\opcan})$ by Corollary \ref{CorUnivPropDiff}. Therefore,  
$$\#N\minspec M\,=\,\sum_{\Pcal\in\spec M}\#N\minspec\diff((M/\Pcal)_{\opcan})$$
when $\spec M$ and $N\minspec\diff((M/\Pcal)_{\opcan})$, $\Pcal\in\spec M$, are finite. Take for instance a finitely generated torsion-free binoid $M$. By Lemma \ref{LemDGroupTorsionfree}, $\diff((M/\Pcal)_{\opcan})$ is a finitely generated torsion-free group, which implies that $\diff((M/\Pcal)_{\opcan})=\Z^{l_{\Pcal}}$ for some $l_{\Pcal}\in\N$ by the structure theorem of finitely generated commutative groups. Hence, $\#\F_{q}^{\infty}\minspec M\,=\,\sum_{\Pcal\in\spec M}(q-1)^{l_{\Pcal}}$. See also Proposition \ref{PropUnionCanComp}.
\end {Remark}

Finally, we state a result which we will need later.

\begin {Proposition} \label {PropExistencePrimes} 
Let $M$ be a binoid and $\Ical\subseteq M$ an ideal. If $N$ is a submonoid of $M$ with $N\cap\Ical=\emptyset$, then $\Ical$ is contained in a unique ideal $\Pcal\subseteq M$ which is maximal with respect to $N\cap\Pcal=\emptyset$. Moreover, $\Pcal$ is prime.
\end {Proposition}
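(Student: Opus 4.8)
Looking at this proposition, I need to prove two things: existence and uniqueness of a maximal ideal $\Pcal$ disjoint from the submonoid $N$, and that this $\Pcal$ is prime.

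My thinking: This is a classic Zorn's lemma argument, but with a crucial twist specific to binoid theory. In ring theory, an ideal maximal with respect to being disjoint from a multiplicative set is prime, but it need not be *unique* — the uniqueness here is special to binoids and should follow from the fact, established earlier in the excerpt (Proposition \ref{PropPrimeUnion}), that the union of prime ideals is again a prime ideal.

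Let me plan the proof.

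The plan is to first establish existence and primality via a standard maximality argument, then deduce uniqueness from the union-of-primes phenomenon.

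\medskip

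First I would consider the set $\Sigma$ of all ideals $\Jcal\subseteq M$ with $\Ical\subseteq\Jcal$ and $N\cap\Jcal=\emptyset$. This set is nonempty since $\Ical\in\Sigma$, and it is partially ordered by inclusion. Given a chain in $\Sigma$, its union is again an ideal by Lemma \ref{LemUnionIdeals}, contains $\Ical$, and remains disjoint from $N$ (any element of $N$ meeting the union would meet some member of the chain); hence the union is an upper bound in $\Sigma$. By Zorn's lemma, $\Sigma$ has a maximal element, which I will call $\Pcal$.

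Next I would verify that $\Pcal$ is prime using the characterization in Lemma \ref{LemCharacterizationPrime}, specifically condition (5): I must show that $a+b\in\Pcal$ implies $a\in\Pcal$ or $b\in\Pcal$. Suppose for contradiction that $a+b\in\Pcal$ but $a\notin\Pcal$ and $b\notin\Pcal$. Then the ideals $\langle\Pcal\cup\{a\}\rangle$ and $\langle\Pcal\cup\{b\}\rangle$ strictly contain $\Pcal$, so by maximality neither lies in $\Sigma$; thus each meets $N$. This yields elements $s,t\in N$ with $s\in\langle\Pcal\cup\{a\}\rangle$ and $t\in\langle\Pcal\cup\{b\}\rangle$. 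Writing these out, $s$ is either in $\Pcal$ or of the form $a+x$ for some $x\in M$ (and similarly for $t$); since $N\cap\Pcal=\emptyset$, we must have $s=a+x$ and $t=b+y$. Then $s+t=a+b+x+y\in\Pcal$ because $a+b\in\Pcal$. But $s+t\in N$ since $N$ is a submonoid, contradicting $N\cap\Pcal=\emptyset$. Hence $\Pcal$ is prime.

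\medskip

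Finally, for uniqueness I would argue as follows. Let $\Qcal$ be any ideal maximal with respect to $\Ical\subseteq\Qcal$ and $N\cap\Qcal=\emptyset$; by the same argument $\Qcal$ is prime. Consider $\Pcal\cup\Qcal$. By Proposition \ref{PropPrimeUnion} this is again a prime ideal, it contains $\Ical$, and it is disjoint from $N$ (since both $\Pcal$ and $\Qcal$ are). Thus $\Pcal\cup\Qcal\in\Sigma$ contains the maximal element $\Pcal$, forcing $\Pcal\cup\Qcal=\Pcal$, i.e.\ $\Qcal\subseteq\Pcal$; by symmetry $\Pcal\subseteq\Qcal$, so $\Pcal=\Qcal$. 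The main obstacle is not the Zorn argument, which is routine, but rather being careful in the primality step about the exact form of elements in $\langle\Pcal\cup\{a\}\rangle$ — here I rely on the description of the ideal generated by a set together with the hypothesis $N\cap\Pcal=\emptyset$ to rule out the degenerate case. The elegant point, and what makes uniqueness hold for binoids unlike rings, is that Proposition \ref{PropPrimeUnion} lets me collapse any two such maximal primes into their union.
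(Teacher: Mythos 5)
Your proof is correct, but it takes a different (and somewhat longer) route than the paper. The paper does not invoke Zorn's lemma at all: it simply defines $\Pcal$ to be the union of \emph{all} ideals $\Jcal$ with $\Ical\subseteq\Jcal\subseteq M\setminus N$. Since the union of an arbitrary family of ideals in a binoid is again an ideal (Lemma \ref{LemUnionIdeals}), and this union is clearly disjoint from $N$ and contains $\Ical$, it is automatically the unique maximal element of your set $\Sigma$ --- existence and uniqueness come for free in one stroke, and only the primality argument (which is essentially identical to yours, including the decomposition $s=a+x$, $t=b+y$ and the contradiction $s+t\in N\cap\Pcal$) remains to be done. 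Your version splits this into three steps: Zorn for existence, the same primality argument, and then uniqueness via Proposition \ref{PropPrimeUnion}. The uniqueness step works, but note that you do not actually need the union of two \emph{primes} to be prime: the union $\Pcal\cup\Qcal$ of two ideals in $\Sigma$ is already an ideal in $\Sigma$ by Lemma \ref{LemUnionIdeals} alone, which forces $\Pcal=\Qcal$ by maximality of each. The underlying reason both proofs succeed is the same binoid-specific fact --- arbitrary unions of ideals are ideals --- but the paper exploits it at the outset to bypass Zorn entirely, whereas you use it only locally (for chains and for the final uniqueness step). One small point in your favour: the paper's proof writes $x=a+m$ with ``$m\in N$'', which is evidently a typo for $m\in M$; your formulation gets this right.
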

\begin {proof}
Let $\Pcal$ be the union of all ideals $\Jcal$ with $\Ical\subseteq\Jcal\subseteq M\setminus N$. Then $\Pcal$ is an ideal contained in $M\setminus N$ by Proposition \ref{PropPrimeUnion}. We have to show that $\Pcal$ is prime. For this suppose that $a+b\in\Pcal$ with $a,b\in M\setminus\Pcal$. By the maximality of $\Pcal$ among all ideals with $N\cap\Pcal=\emptyset$, there are elements $x\in(\Pcal\cup\langle a\rangle)\cap N$ and $y\in(\Pcal\cup\langle b\rangle)\cap N$. Hence, $x=a+m$ and $y=b+m^{\prime}$ for some $m,m^{\prime}\in N$ because $N\cap\Pcal=\emptyset$. Then $x+y=a+b+m+m^{\prime}\in N\cap\Pcal$, which is a contradiction to $N\cap\Pcal=\emptyset$.
\end {proof}

\begin {Remark} \label {RemPrimeUnionConsequence2}
As the analogous statement for rings, Proposition \ref {PropExistencePrimes} implies (take $N=\{0\}$) that every proper ideal $\Ical$ in a binoid $M$ is contained in a maximal prime ideal. This result is trivial since $M\Uplus$ is prime and the upper bound of the lattice of ideals in $M$, cf.\ Lemma \ref {LemUnionIdeals}. Note that Proposition \ref{PropExistencePrimes} is stronger than the analogous statement for rings since for rings the ideal $\Pcal$ need not be unique. 

When dealing with primary ideals and primary decompositions in binoids, cf.\ \cite{AndersonIT} for a thorough investigation, one encounters even more consequences of the fact that the union of prime ideals is again a prime ideal. For instance, similar to a result of ring theory (\cite[Proposition 4.7]{AtiyahMacDonald}), one can show that, if an ideal $\Ical\subseteq M$ admits a primary decomposition, then the set
$$\{a\in M\mid a+b\in\Ical\text{ for some }b\in M\setminus\Ical\}\,\,=\,\,\Ical\cup\nonint(M/\Ical)$$
is the union of the unique associated primes of $\Ical$, hence $\Ical\cup\nonint(M/\Ical)$ is again a prime ideal. This shows that a prime ideal in a binoid consisting of non-integral elements (\emph{zero-divisors} in terms of ring theory) need not be contained in an associated prime.
\end {Remark}

\bigskip

\section {Minimal prime ideals} \label{SecMinPrime}
\markright {\ref{SecMinPrime} Minimal prime ideals}

\begin {Definition}
Let $M$ be a binoid and $\Ical$ an ideal in $M$. A prime ideal $\Pcal$ containing $\Ical$ is a \gesperrt{minimal prime} \index{ideal!minimal prime of an --}of $\Ical$ if there is no prime ideal $\Qcal$ with $\Ical\subseteq\Qcal\subsetneq\Pcal$. The set of all minimal prime ideals of $\Ical$ is denoted by $\min_{M}\Ical$\nomenclature[Minimal]{$\min_{M}\Ical$}{set of all minimal prime ideals of $\Ical$}. The minimal prime ideals of the zero ideal $\zero$ are called the \gesperrt{minimal prime ideals} \index{prime ideal!minimal --}\index{minimal prime ideal}of $M$ and the set of all of them will be denoted by $\min M$\nomenclature[Minimal]{$\min M$}{set of all minimal prime ideals in $M$} (instead of $\min_{M}\zero$). The filter $M\setminus\Pcal$ defined by a minimal prime ideal $\Pcal\in\min M$ is called an \gesperrt{ultrafilter}\index{ultrafilter}.
\end {Definition}

If $M$ is integral, the zero ideal is the only minimal prime ideal of $M$. There are other very useful characterizations of minimal prime ideals which will be used without further references.

\begin {Lemma} \label {CharMinPrime}
Let $M$ be a binoid, $\Ical$ an ideal in $M$, and $\Pcal\in\spec M$ containing $\Ical$. The following statements are equivalent:
\begin {ListeTheorem}
\item $\Pcal$ is a minimal prime of $\Ical$.
\item $M\backslash\Pcal$ is a submonoid of $M$ which is maximal among all submonoids $S\subseteq M$ with $S\cap\Ical=\emptyset$.
\item For each $p\in\Pcal$, there is an element $a\in M\setminus\Pcal$ such that $a+np\in\Ical$ for some $n\ge1$.
\end {ListeTheorem}
In particular, if $p$ is an element of a minimal prime ideal $\Pcal$ of $M$, then $a+np=\infty$ for some $a\in M\setminus\Pcal$ and $n\ge1$.\end {Lemma}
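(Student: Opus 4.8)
The statement to prove is the equivalence of three characterizations of a minimal prime of an ideal $\Ical$ containing it, namely $(1)$ minimality, $(2)$ maximality of the complementary submonoid among those avoiding $\Ical$, and $(3)$ the local ``witness'' condition.

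My plan is to prove $(1)\eq(2)$ first, then $(1)\eq(3)$, and finally extract the supplement as the special case $\Ical=\zero$. For the equivalence $(1)\eq(2)$, I would use the correspondence between prime ideals and filters established in Lemma \ref{LemCharacterizationPrime} and Corollary \ref{CorHomFiltSpec}: $\Pcal$ is prime if and only if $M\setminus\Pcal$ is a filter (hence a submonoid), and inclusion of prime ideals corresponds contravariantly to inclusion of the complementary filters. Since $\Ical\subseteq\Pcal$ is equivalent to $(M\setminus\Pcal)\cap\Ical=\emptyset$, the statement ``$\Pcal$ is minimal among primes containing $\Ical$'' translates directly into ``$M\setminus\Pcal$ is maximal among filters disjoint from $\Ical$.'' The only subtlety is the difference between ``maximal submonoid disjoint from $\Ical$'' and ``maximal filter disjoint from $\Ical$''; here I would invoke Proposition \ref{PropExistencePrimes}, which guarantees that any submonoid disjoint from $\Ical$ extends to the complement of a prime ideal (a filter) still disjoint from $\Ical$, so a maximal submonoid disjoint from $\Ical$ is automatically a filter. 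This lets the two notions of maximality coincide.

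For $(1)\Rarrow(3)$, I would argue contrapositively or directly by localization. Given $p\in\Pcal$, consider the submonoid $S$ generated by $M\setminus\Pcal$ together with $p$, i.e.\ elements of the form $a+np$ with $a\in M\setminus\Pcal$ and $n\ge0$. If no such element lands in $\Ical$, then $S$ is a submonoid strictly larger than $M\setminus\Pcal$ and disjoint from $\Ical$; by Proposition \ref{PropExistencePrimes} it is contained in the complement of a prime $\Qcal\supseteq\Ical$ with $\Qcal\subsetneq\Pcal$ (strict because $p\in\Pcal\setminus\Qcal$), contradicting minimality. Hence some $a+np\in\Ical$ with $a\in M\setminus\Pcal$. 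For the converse $(3)\Rarrow(1)$, suppose $\Qcal$ is a prime with $\Ical\subseteq\Qcal\subseteq\Pcal$ and $\Qcal\neq\Pcal$; pick $p\in\Pcal\setminus\Qcal$. By $(3)$ there is $a\in M\setminus\Pcal\subseteq M\setminus\Qcal$ with $a+np\in\Ical\subseteq\Qcal$. Since $\Qcal$ is prime and $a\notin\Qcal$, the summand-splitting property (Lemma \ref{LemCharacterizationPrime}(5)) forces $np\in\Qcal$, and again by primeness $p\in\Qcal$, contradicting the choice of $p$. Thus $\Pcal$ is minimal.

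The supplement follows by specializing $\Ical=\zero$: for $p$ in a minimal prime ideal $\Pcal$ of $M$, condition $(3)$ gives $a\in M\setminus\Pcal$ and $n\ge1$ with $a+np\in\zero$, that is $a+np=\infty$. The main obstacle I anticipate is the careful handling of the passage from ``maximal submonoid disjoint from $\Ical$'' to ``maximal filter disjoint from $\Ical$'' in the $(1)\eq(2)$ step, and making sure the strictness of the inclusion $\Qcal\subsetneq\Pcal$ is genuinely produced (rather than merely $\Qcal\subseteq\Pcal$) when constructing the enlarged submonoid in $(1)\Rarrow(3)$; both hinge on using Proposition \ref{PropExistencePrimes} correctly, since that proposition is precisely what supplies a prime ideal from a submonoid avoiding $\Ical$.
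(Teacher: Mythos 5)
Your proof is correct and uses essentially the same ingredients as the paper's: Proposition \ref{PropExistencePrimes}, the auxiliary submonoid $\{a+np\mid a\in M\setminus\Pcal,\ n\in\N\}$, and the prime-splitting argument for $(3)\Rarrow(1)$. The paper organizes the equivalences as a cycle $(1)\Rarrow(2)\Rarrow(3)\Rarrow(1)$ rather than proving $(1)\eq(2)$ and $(1)\eq(3)$ separately, but your $(1)\Rarrow(3)$ is just the composition of the paper's first two implications, so the difference is purely organizational.
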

\begin {proof}
Set $F:=M\setminus\Pcal$. For the implication $(1)\Rarrow (2)$, only the maximality of $F$ has to be verified since $F$ is a filter with $F\cap\Ical=\emptyset$ by definition. So suppose that $S$ is another submonoid with $S\cap\Ical=\emptyset$ and $F\subsetneq S$. By Proposition \ref{PropExistencePrimes}, there is a unique prime ideal $\Qcal$ which is maximal with respect to $S\cap\Qcal=\emptyset$. In particular, $F\cap\Qcal=\emptyset$. The uniqueness of $\Qcal$ yields $\Qcal\subsetneq\Pcal$, but this contradicts the minimality of $\Pcal$. To prove $(2)\Rarrow (3)$ take an arbitrary $p\in\Pcal$ and define $N(p):=\{a+ip\mid a\in F, i\in\N\}\subseteq M$. The set $N(p)$ is a submonoid of $M$ with $F\subseteq N(p)$. By the maximality of $F$, we have $N(p)\cap\Ical\not=\emptyset$, hence $a+np\in\Ical$ for some $a\in S$. (Note that $n=0$ is needed to have a submonoid containing $F$, but $a\not\in\Ical$ for all $a\in F$). $(3)\Rarrow (1)$ Finally, suppose that $(3)$ holds for $\Pcal$ and there is a prime ideal $\Qcal$ with $\Ical\subseteq\Qcal
 \subsetneq\Pcal$. Choose $p\in\Pcal$ such that $p\not\in\Qcal$. By assumption, there is an element $a\in F$ and an $n\ge1$ such that $a+np\in I\subseteq\Qcal$. Hence, $p\in\Qcal$ or $a\in\Qcal$ because $\Qcal$ is prime, but neither is true. Thus, $\Pcal$ has to be minimal over $\Ical$. 
\end {proof}

Minimal prime ideals of an ideal always exist. To show this, we need the following lemma.

\begin {Lemma} \label {LemMaxSubmonoids}
Let $\Ical$ be an ideal in $M$. If $N$ is a submonoid of $M$ with $N\cap\Ical=\emptyset$, then $N$ is contained in a submonoid which is maximal with respect to this property.
\end {Lemma}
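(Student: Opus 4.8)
The plan is to prove this by a routine application of Zorn's lemma to the poset of candidate submonoids. First I would introduce the collection
\[
\Sigma\,:=\,\{S\subseteq M\mid S\text{ is a submonoid of }M,\ N\subseteq S,\ S\cap\Ical=\emptyset\}\komma
\]
partially ordered by set inclusion. This set is nonempty because $N$ itself belongs to $\Sigma$ by hypothesis. The goal is to produce a maximal element of $\Sigma$, and then to observe that such an element is automatically maximal among \emph{all} submonoids of $M$ disjoint from $\Ical$: indeed, any submonoid $S'$ with $S'\supseteq N'$ and $S'\cap\Ical=\emptyset$ contains $N$ and hence lies in $\Sigma$, so maximality of $N'$ within $\Sigma$ already rules out any proper enlargement.

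The key step is to verify the chain condition required by Zorn's lemma. Given a totally ordered subfamily $(S_{\lambda})_{\lambda\in\Lambda}$ of $\Sigma$, I would take its union $U:=\bigcup_{\lambda\in\Lambda}S_{\lambda}$ as the proposed upper bound and check that $U\in\Sigma$. It contains $N$ and the identity element $0$ since each $S_{\lambda}$ does. It is closed under addition by the usual argument: for $a,b\in U$ there are indices with $a\in S_{\lambda}$ and $b\in S_{\mu}$, and since the family is a chain one of $S_{\lambda},S_{\mu}$ contains the other, so both $a$ and $b$ lie in a single $S_{\nu}$, whence $a+b\in S_{\nu}\subseteq U$. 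Thus $U$ is a submonoid of $M$. Finally $U\cap\Ical=\emptyset$ because $S_{\lambda}\cap\Ical=\emptyset$ for every $\lambda$, so no element of $\Ical$ can enter the union. Hence $U$ is an upper bound of the chain lying in $\Sigma$.

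With the chain condition established, Zorn's lemma yields a maximal element $N'$ of $\Sigma$, and by the observation above $N'$ is a submonoid containing $N$ that is maximal with respect to the property of being disjoint from $\Ical$, as required. I do not expect any genuine obstacle in this proof; the only points demanding care are the elementary verifications that a union of a chain of submonoids is again a submonoid and that disjointness from $\Ical$ is preserved under such unions, both of which are immediate from the totality of the chain.
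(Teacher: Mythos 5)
Your proof is correct and is exactly the argument the paper has in mind: the paper's own proof reads simply ``This is an easy consequence of Zorn's Lemma,'' and your write-up supplies the standard details (the poset of submonoids containing $N$ and disjoint from $\Ical$, unions of chains as upper bounds, and the observation that maximality in that poset gives maximality among all submonoids disjoint from $\Ical$). Nothing further is needed.
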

\begin {proof}
This is an easy consequence of Zorn's Lemma.
\end {proof}

\begin {Proposition}
The set of minimal prime ideals of an ideal $\Ical\not=M$ in a binoid is not empty. In particular, $\min M\not=\emptyset$ and every prime ideal contains a minimal prime ideal.
\end {Proposition}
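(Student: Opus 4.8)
The plan is to convert the problem, via the characterization of minimal primes in Lemma~\ref{CharMinPrime}, into producing a \emph{maximal} submonoid disjoint from $\Ical$, and then to certify that its complement is a prime ideal by matching it against the unique prime supplied by Proposition~\ref{PropExistencePrimes}. Thus the whole argument rests on two earlier results: the existence of maximal disjoint submonoids (Lemma~\ref{LemMaxSubmonoids}, itself a Zorn argument) and the prime-complement correspondence of Proposition~\ref{PropExistencePrimes}.

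First I would note that $\{0\}$ is a submonoid of $M$ disjoint from $\Ical$: since $\Ical\neq M$ we must have $0\notin\Ical$, for otherwise $0+M=M\subseteq\Ical$. By Lemma~\ref{LemMaxSubmonoids}, $\{0\}$ is contained in a submonoid $S$ that is maximal with respect to $S\cap\Ical=\emptyset$. I then claim $\Pcal:=M\setminus S$ is a minimal prime of $\Ical$. To see that $\Pcal$ is prime and contains $\Ical$, I apply Proposition~\ref{PropExistencePrimes} with $N=S$: it produces a prime ideal $\Pcal'\supseteq\Ical$ that is maximal with respect to $S\cap\Pcal'=\emptyset$. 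Since $\Pcal'$ is prime, $M\setminus\Pcal'$ is a submonoid; it contains $S$ (because $S\cap\Pcal'=\emptyset$) and is disjoint from $\Ical$, as $\Ical\subseteq\Pcal'$ forces $(M\setminus\Pcal')\cap\Ical\subseteq(M\setminus\Pcal')\cap\Pcal'=\emptyset$. The maximality of $S$ now gives $S=M\setminus\Pcal'$, so $\Pcal=\Pcal'$ is a prime ideal containing $\Ical$ whose complement $M\setminus\Pcal=S$ is maximal among submonoids disjoint from $\Ical$. By the implication $(2)\Rightarrow(1)$ of Lemma~\ref{CharMinPrime}, $\Pcal\in\min_{M}\Ical$, whence $\min_{M}\Ical\neq\emptyset$.

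For the supplements I would reuse this construction. Applying the main statement to $\Ical=\zero$, which is a proper ideal whenever $M\neq\zero$ (there $0\neq\infty$), yields $\min M=\min_{M}\zero\neq\emptyset$. To show that an arbitrary prime ideal $\Qcal$ contains a minimal prime ideal of $M$, I would start the same construction not from $\{0\}$ but from the filter $F:=M\setminus\Qcal$, which is a submonoid disjoint from $\zero$ since $\infty\in\Qcal$. Extending $F$ to a maximal submonoid $S\supseteq F$ disjoint from $\zero$ and setting $\Pcal:=M\setminus S$, the argument above gives $\Pcal\in\min M$, and $S\supseteq F$ forces $\Pcal=M\setminus S\subseteq M\setminus F=\Qcal$, as required.

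The only delicate point, and the step I would be most careful with, is the identification $S=M\setminus\Pcal'$: it requires checking disjointness from $\Ical$ in both directions and confirming that the maximality condition coming from Lemma~\ref{LemMaxSubmonoids} (maximality as a submonoid) really lines up with the maximality condition in Proposition~\ref{PropExistencePrimes} (maximality as an ideal avoiding $S$). Everything else is formal, since the substantive work — Zorn's lemma and the prime property of the complement — is already carried by the cited results.
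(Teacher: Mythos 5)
Your proposal is correct and follows essentially the same route as the paper: extend a trivial submonoid disjoint from $\Ical$ (the paper uses $M\okreuz$, you use $\{0\}$) to a maximal one via Lemma~\ref{LemMaxSubmonoids}, and identify its complement as a minimal prime. You merely spell out the step the paper dismisses as ``obvious'' — that the complement really is a prime ideal containing $\Ical$ — by routing through Proposition~\ref{PropExistencePrimes} and Lemma~\ref{CharMinPrime}, which is exactly the intended justification.
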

\begin {proof}
Since the submonoid $M\okreuz$ does not meet $\Ical$, there exsists a submonoid $N\subseteq M$ which is maximal with respect to this property by Lemma \ref{LemMaxSubmonoids}. Obviously, the prime ideal $M\setminus N$ is minimal over $\Ical$.
\end {proof}

\begin {Definition} 
The \gesperrt{radical} \index{radical!-- of an ideal}\index{ideal!radical of an --}of an ideal $\Ical$, denoted by $\sqrt{\Ical}$\nomenclature[ARadical]{$\sqrt{\Ical}$}{radical of $\Ical$}, is the set of all $a\in M$ such that $na\in\Ical$ for some $n\in\N$. An Ideal $\Ical\subseteq M$ with $\sqrt{\Ical}=\Ical$ is called a \gesperrt{radical ideal}\index{ideal!radical --}\index{radical!-- ideal}.
\end {Definition}

\begin {Example}
Obviously, the unit ideal $M$ and all prime ideals are radical ideals. The radical of an ideal $\Ical$ is a radical ideal since $\sqrt{\sqrt{\Ical}}=\sqrt{\Ical}$. In particular, the ideal $\nil(M)$ is a radical ideal because it is the radical of the zero ideal (i.e.\ $\sqrt{\zero}=\nil(M)$). Therefore, $\nil(M)$ is also called the \gesperrt{nil radical}\index{radical!nil --}\index{nil radical}.
\end {Example}

\begin {Lemma} \label{LemRadikalReduced}
An ideal $\Ical$ of $M$ is a radical ideal if and only if $M/\Ical$ is reduced.
\end {Lemma}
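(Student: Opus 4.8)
The plan is to unwind both sides to a single elementary statement about elements of $M$, using the explicit description of the Rees quotient supplied in Remark \ref{RemIndIsom}, and then observe that the two statements coincide. No genuine computation is needed; everything is a matter of matching definitions.

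First I would record the description of $M/\Ical$ from Remark \ref{RemIndIsom}: the quotient may be identified with $(M\setminus\Ical)\cup\zero$, where a class $[a]$ equals $[\infty]$ precisely when $a\in\Ical$, and the operation is $[a]+[b]=[a+b]$, so that $n[a]=[na]$ for all $n\ge1$. With this in hand I would translate the reducedness of $M/\Ical$. By definition $M/\Ical$ is reduced iff $\nil(M/\Ical)=\zero$, that is, iff every nilpotent class is $[\infty]$. Now $[a]$ is nilpotent iff $n[a]=[na]=[\infty]$ for some $n\ge1$, which by the identification means $na\in\Ical$, while $[a]=[\infty]$ means $a\in\Ical$. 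Hence $M/\Ical$ is reduced if and only if the implication
$$na\in\Ical\ \text{ for some }n\ge1\quad\Rarrow\quad a\in\Ical$$
holds for every $a\in M$.

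Next I would compare this with the radical condition. By definition $\Ical$ is a radical ideal iff $\sqrt{\Ical}=\Ical$, and since $\Ical\subseteq\sqrt{\Ical}$ always holds (take $n=1$), this is equivalent to $\sqrt{\Ical}\subseteq\Ical$, i.e.\ to the implication that $na\in\Ical$ for some $n\in\N$ forces $a\in\Ical$. This is visibly the same statement as the one characterizing reducedness, so assembling the two equivalences yields the claim.

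The only delicate point — and the main, if minor, obstacle — is the discrepancy between the range $n\in\N$ appearing in the definition of $\sqrt{\Ical}$ and the range $n\ge1$ appearing in nilpotence. For $n=0$ our convention gives $0a=0$, and $0\in\Ical$ holds iff $\Ical$ contains a unit, i.e.\ iff $\Ical=M$. Thus for a proper ideal the value $n=0$ contributes nothing to $\sqrt{\Ical}$, so the radical condition may harmlessly be read with $n\ge1$ and then matches the reducedness condition exactly. In the remaining case $\Ical=M$ both assertions hold trivially: $\sqrt{M}=M$, and $M/M=\zero$ is reduced since $\nil(\zero)=\zero$. Handling this edge case explicitly is all that is required to complete the argument.
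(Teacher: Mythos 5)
Your proof is correct and follows essentially the same route as the paper: both arguments reduce each side of the equivalence to the implication that $na\in\Ical$ for some $n\ge1$ forces $a\in\Ical$, the paper simply presenting the two directions separately rather than through a common intermediate statement. Your explicit handling of the $n=0$ convention and the case $\Ical=M$ is a small tidiness the paper omits, but it does not change the substance of the argument.
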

\begin {proof}
Let $\Ical$ be a radical ideal. If $[\infty]=k[a]=[ka]$ for some $a\in M$ and $k\ge 1$, then $ka\in\Ical$, which implies that $a\in\Ical$, so $[a]=[\infty]$. This proves that $M/\Ical$ is reduced. Conversely, if $M/\Ical$ is reduced, it is enough to verify that $\sqrt{\Ical}\subseteq\Ical$ since the other inclusion is obvious. So let $a\in M$ with $ka\in\Ical$ for some $k\ge 1$. Then $k[a]=[ka]=[\infty]$ in $M/\Ical$, and hence $[a]=[\infty]$ because $M/\Ical$ is reduced. Thus, $a\in\Ical$. 
\end {proof}

\begin {Lemma} \label{LemModuloRadical}
If $M$ is torsion-free, then so is $M/\Ical$ for every radical ideal $\Ical$ of $M$.
\end {Lemma}
\begin{proof}
By Lemma \ref {LemQuotientTorF}, we only need to show that $M/\Ical$ is reduced, which is true by Lemma \ref{LemRadikalReduced}.
\end{proof}

\begin {Proposition} \label {PropRadical}
The radical of an ideal $\Ical$ in a binoid $M$ is the intersection of all prime ideals in $M$ containing $\Ical$, namely 
$$\sqrt{\Ical}\,\,\,=\!\!\bigcap_{\Ical\subseteq\Pcal\in\spec M}\Pcal\pkt$$
In particular, the radical of an ideal $\Ical$ is the intersection of its minimal primes.
\end {Proposition}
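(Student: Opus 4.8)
The plan is to prove the displayed equality by a double inclusion, and then deduce the supplement about minimal primes from it. Throughout I assume $\Ical\not=M$, since otherwise $\sqrt{\Ical}=M$ and there is no prime ideal containing $\Ical$, so the assertion is either vacuous or a matter of the empty-intersection convention.

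For the inclusion $\sqrt{\Ical}\subseteq\bigcap_{\Ical\subseteq\Pcal}\Pcal$ I would argue directly. Take $a\in\sqrt{\Ical}$, so that $na\in\Ical$ for some $n\ge1$, and let $\Pcal$ be any prime ideal with $\Ical\subseteq\Pcal$. Then $na=a\pluspkt a\in\Ical\subseteq\Pcal$, and applying the prime property of Lemma \ref{LemCharacterizationPrime}(5) repeatedly, peeling off one summand $a$ at a time, forces $a\in\Pcal$. Since $\Pcal$ was arbitrary, $a$ lies in the intersection.

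The hard part is the reverse inclusion $\bigcap_{\Ical\subseteq\Pcal}\Pcal\subseteq\sqrt{\Ical}$, which I would prove contrapositively: given $a\notin\sqrt{\Ical}$, I must exhibit a prime ideal $\Pcal\supseteq\Ical$ with $a\notin\Pcal$. The key idea is to use the cyclic submonoid $S:=\{na\mid n\ge0\}\subseteq M$ generated by $a$ (with $0a=0$). Because $a\notin\sqrt{\Ical}$ means precisely that $na\notin\Ical$ for every $n\ge1$, and $0\notin\Ical$ as $\Ical\not=M$, the submonoid $S$ is disjoint from $\Ical$. Now I would invoke Proposition \ref{PropExistencePrimes} with $N=S$: there exists a prime ideal $\Pcal$ with $\Ical\subseteq\Pcal$ that is maximal with respect to $\Pcal\cap S=\emptyset$. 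In particular $a\in S$ is not contained in $\Pcal$, which furnishes the required witness and completes the equality.

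Finally, for the supplement I would show that the intersection over all primes containing $\Ical$ coincides with the intersection over $\min_{M}\Ical$. One inclusion is trivial, since $\min_{M}\Ical\subseteq\{\Pcal\in\spec M\mid\Ical\subseteq\Pcal\}$. For the other, I would use that every prime $\Pcal\supseteq\Ical$ contains a minimal prime of $\Ical$: the filter $M\setminus\Pcal$ is a submonoid disjoint from $\Ical$, hence by Lemma \ref{LemMaxSubmonoids} it is contained in a submonoid $N$ maximal with this property, and then $M\setminus N$ is, by the characterization in Lemma \ref{CharMinPrime}, a minimal prime of $\Ical$ with $M\setminus N\subseteq\Pcal$. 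Consequently $\bigcap_{\Qcal\in\min_{M}\Ical}\Qcal\subseteq\Pcal$ for every such $\Pcal$, giving the remaining inclusion. I expect no serious obstacle beyond this bookkeeping; the genuine work is the construction of $S$ and the single application of Proposition \ref{PropExistencePrimes}.
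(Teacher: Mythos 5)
Your proof is correct and follows essentially the same route as the paper: the easy inclusion via the prime property, and the reverse inclusion by applying Proposition \ref{PropExistencePrimes} to the cyclic submonoid $\{na\mid n\ge0\}$, which is disjoint from $\Ical$ precisely because $a\notin\sqrt{\Ical}$. Your additional argument for the supplement (every prime over $\Ical$ contains a minimal prime, via Lemma \ref{LemMaxSubmonoids}) is sound and merely makes explicit what the paper leaves implicit.
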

\begin{proof}
If $\Ical=M$, the statement follows since the empty intersection is $M$. Therefore, let $\Ical\not=M$. For the inclusion $\subseteq$ let $a\in\sqrt{\Ical}$ and $\Pcal$ be a prime ideal containing $\Ical$. Hence, there is an $n\ge1$ with $na\in\Ical\subseteq\Pcal$, but this implies $a\in\Pcal$ by the prime property. Conversely, assume that $a\not\in\sqrt{\Ical}$. The submonoid $N:=\{na\mid n\in\N\}$ of $M$ satisfies $N\cap\Ical=\emptyset$. Applying Lemma \ref {PropExistencePrimes}, we find a prime ideal $\Pcal$ containing $\Ical$ with $N\cap\Pcal=\emptyset$. In particular, $a\not\in\Pcal$, which therefore lies not in the intersection of all prime ideals containing $\Ical$.
\end{proof}

\begin {Corollary} \label {CorMinPrimeIntersection}
$\bigcap_{\Pcal\in\min(M)}\Pcal=\nil(M)$ for every nonzero binoid $M$.
\end {Corollary}
\begin {proof}
This is clear by Proposition \ref{PropRadical} since $\nil(M)=\sqrt{\zero}$ is a radical ideal which is contained in every prime ideal of $M$.
\end {proof}

\begin {Corollary}
Let $M\not=\zero$ be a reduced binoid. The following statements are equivalent:
\begin {ListeTheorem}
\item $M$ is cancellative.
\item $M/\Pcal$ is cancellative for every $\Pcal\in\min M$.
\end {ListeTheorem}
\end {Corollary}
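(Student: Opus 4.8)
My plan is to treat the two implications separately. The forward implication $(1)\Rightarrow(2)$ will be essentially free: each minimal prime $\Pcal\in\min M$ is a proper ideal (prime ideals are proper by definition), so Lemma \ref{LemQuotientCancPosRepF} applied with $\Ical=\Pcal$ shows that the Rees quotient $M/\Pcal$ is cancellative whenever $M$ is. The substance lies in $(2)\Rightarrow(1)$, and here the first thing I would record is that, since $M$ is reduced, Corollary \ref{CorMinPrimeIntersection} gives $\bigcap_{\Pcal\in\min M}\Pcal=\nil(M)=\zero$; this trivial-intersection property is the engine of the whole argument.

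To prove $M$ cancellative I would fix $a\in M\opkt$ and suppose $b+a=c+a=:d\neq\infty$ for some $b,c\in M$, aiming at $b=c$ (the equation $a+b=a+c$ is handled identically by commutativity). The key local observation is: \emph{for every $\Pcal\in\min M$ with $a\notin\Pcal$ one has $[b]=[c]$ in $M/\Pcal$.} Indeed, applying the canonical projection $\pi_{\Pcal}\colon M\to M/\Pcal$ to $b+a=c+a$ yields $\pi_{\Pcal}(b)+\pi_{\Pcal}(a)=\pi_{\Pcal}(c)+\pi_{\Pcal}(a)$ with $\pi_{\Pcal}(a)\neq\infty$. If this common value is $\infty$, then integrality of $M/\Pcal$ (Lemma \ref{LemCharacterizationPrime}) forces $\pi_{\Pcal}(b)=\infty=\pi_{\Pcal}(c)$; otherwise hypothesis $(2)$ makes $\pi_{\Pcal}(a)$ a cancellative element of $M/\Pcal$, so we may cancel it to obtain $\pi_{\Pcal}(b)=\pi_{\Pcal}(c)$. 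Either way $[b]=[c]$ in $M/\Pcal$.

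The global step then combines these local facts through the Rees congruence (Definition \ref{DefReesCong}): $[b]=[c]$ in $M/\Pcal$ means precisely that $b=c$ or $b,c\in\Pcal$. Assume, for contradiction, that $b\neq c$. Then for each $\Pcal\in\min M$ with $a\notin\Pcal$ we must have $b\in\Pcal$, whence $d=a+b\in\Pcal$; and for each $\Pcal\in\min M$ with $a\in\Pcal$ we trivially get $d=a+b\in\Pcal$ since $\Pcal$ is an ideal. Thus $d$ lies in \emph{every} minimal prime, so $d\in\bigcap_{\Pcal\in\min M}\Pcal=\zero$, i.e.\ $d=\infty$, contradicting $d\neq\infty$. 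Hence $b=c$, proving $a$ cancellative and therefore $M$ cancellative. The one place demanding care---and the conceptual heart of the proof---is exactly the minimal primes containing $a$, where the quotient $M/\Pcal$ records nothing of the cancellation relation; the reduced hypothesis rescues the argument, because $d=a+b$ is automatically absorbed into those primes and this is what forces $d$ into the trivial intersection.
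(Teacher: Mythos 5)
Your proof is correct and follows essentially the same route as the paper: the forward direction via Lemma \ref{LemQuotientCancPosRepF}, and the converse via Corollary \ref{CorMinPrimeIntersection} combined with cancellation in the quotients $M/\Pcal$. The only difference is organizational---the paper directly selects one minimal prime $\Pcal$ with $a+b\notin\Pcal$ and cancels there (so that $b,c\notin\Pcal$ and the Rees classes are singletons), whereas you run the contradiction through all minimal primes to force $a+b$ into the trivial intersection; the two arguments are logically the same.
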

\begin {proof}
$(1)\Rarrow(2)$ follows from Lemma \ref{LemQuotientCancPosRepF}. To show $(2)\Rarrow(1)$ suppose that
$a+b=a+c\not=\infty$ but $b\not=c$ for $a,b,c\in M$. Since $M$ is reduced, we have $\bigcap_{\Pcal\in\min(M)}\Pcal=\zero$ by Corollary \ref{CorMinPrimeIntersection}. Thus, there is a minimal prime ideal $\Pcal$ of $M$ with $a+b=a+c\not\in\Pcal$. This is equivalent to $[a]+[b]=[a]+[c]\not=[\infty]$ in $M/\Pcal$, and implies $[b]=[c]$ in $M/\Pcal$ by the cancellativity. Since $\Pcal$ is prime, the elements $b$ and $c$ are not contained in $\Pcal$ by the choice of $\Pcal$. Hence, $b=c$ in $M$.
\end {proof}

The condition of being reduced is necessary in the preceding corollary. As an example consider the binoid
$$M:=\free(x,y,z)/(2x=\infty, x+y=x+z)\komma$$
which is neither reduced nor cancellative, but $M/\Pcal$ is cancellative for all $\Pcal\in\min M=\{\langle x\rangle,\langle y\rangle,\langle z\rangle\}$.

\begin {Definition} 
A \gesperrt{subdirect product} \index{product!subdirect --}of a family $(M_{k})_{k\in I}$ of binoids is a subbinoid $M$ of their product $\prod_{k\in I}M_{k}$ such that $\pi_{k}(M)=M_{k}$ for all $k\in I$, where $\pi_{k}$ denotes the canonical projection $\prod_{i\in I}M_{i}\rto M_{k}$.
\end {Definition}

\begin {Corollary} \label {CorSubdirectProd} 
Every reduced binoid $M$ is a subdirect product of $(M/\Pcal)_{\Pcal\in\min M}$.
\end {Corollary}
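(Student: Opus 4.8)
The plan is to exhibit the canonical map $M \to \prod_{\Pcal \in \min M} M/\Pcal$ and verify both that it is an injective binoid homomorphism and that each projection is surjective, which together say precisely that $M$ is a subdirect product of the family $(M/\Pcal)_{\Pcal \in \min M}$. First I would define $\psi : M \to \prod_{\Pcal \in \min M} M/\Pcal$ by $\psi(a) := ([a]_{\Pcal})_{\Pcal \in \min M}$, where $[a]_{\Pcal}$ denotes the class of $a$ in the Rees quotient $M/\Pcal$. By Proposition \ref{PropUEproduct} (the universal property of the product), $\psi$ is a binoid homomorphism, since each component is the canonical projection $\pi_{\Pcal} : M \to M/\Pcal$. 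The surjectivity condition $\pi_{\Pcal}(\psi(M)) = M/\Pcal$ is then immediate, because the $\Pcal$-component of $\psi$ is exactly the canonical surjection $M \to M/\Pcal$.

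The substance of the argument is the injectivity of $\psi$, which is where I expect the reducedness hypothesis to enter. Suppose $\psi(a) = \psi(b)$; I want to conclude $a = b$. By definition of the Rees quotient, cf.\ Remark \ref{RemIndIsom}, one has $[a]_{\Pcal} = [b]_{\Pcal}$ in $M/\Pcal$ if and only if either $a = b$ or both $a, b \in \Pcal$. So assume $a \neq b$; then $a, b \in \Pcal$ for every $\Pcal \in \min M$, which means $a, b \in \bigcap_{\Pcal \in \min M} \Pcal$. By Corollary \ref{CorMinPrimeIntersection}, this intersection equals $\nil(M)$, and since $M$ is reduced we have $\nil(M) = \zero$. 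Hence $a = \infty = b$, contradicting $a \neq b$. Therefore $\psi$ is injective.

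The main obstacle, and the only genuinely delicate point, is handling the absorbing element cleanly in the injectivity step: the equivalence $[a]_{\Pcal} = [b]_{\Pcal} \Leftrightarrow (a=b \text{ or } a,b \in \Pcal)$ must be applied uniformly across all $\Pcal$, and one must be careful that the alternative ``$a = b$'' may hold for some primes while ``$a, b \in \Pcal$'' holds for others. The clean way around this is precisely the case split I used: if $a \neq b$ then the first alternative fails for \emph{every} $\Pcal$, forcing $a, b$ into the intersection of all minimal primes, and Corollary \ref{CorMinPrimeIntersection} together with reducedness collapses this to $\zero$. One should also record that $\min M \neq \emptyset$ for $M \neq \zero$, which is guaranteed by the Proposition preceding Corollary \ref{CorMinPrimeIntersection}, so that the product is taken over a nonempty index set and $\psi$ genuinely lands in a nontrivial binoid; the degenerate case $M = \zero$ is trivial and can be dispatched separately. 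With these points in place the verification is routine, and no computation beyond unwinding the definition of the Rees congruence is required.
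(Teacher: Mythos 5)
Your proof is correct and follows essentially the same route as the paper's: the canonical map into $\prod_{\Pcal\in\min M}M/\Pcal$, surjectivity of each component by construction, and injectivity via the per-prime dichotomy $[a]_{\Pcal}=[b]_{\Pcal}\Leftrightarrow(a=b$ or $a,b\in\Pcal)$ combined with Corollary \ref{CorMinPrimeIntersection} and reducedness. Your explicit case split on $a\neq b$ is a slightly more careful write-up of the step the paper compresses into ``Therefore, $a=b$ or $a,b\in\bigcap_{\Pcal\in\min M}\Pcal$,'' but the argument is the same.
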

\begin {proof}
Clearly, if $M$ is the zero binoid, then $M$ is a subdirect product of the empty product. So let $M\not=\zero$ and $\pi_{\Pcal}:M\rto M/\Pcal$ be the canonical projection for $\Pcal\in\min M$. For the injectivity of $$\pi=(\pi_{\Pcal})_{\Pcal\in\min M}:M\Rto\prod_{\Pcal\in\min M}M/\Pcal$$
assume that $\pi(a)=\pi(b)$ for some $a,b\in M$. This means $a\sim_{\pi}\!b$ which is equivalent to $\pi_{\Pcal}(a)=\pi_{\Pcal}(b)$ for all $\Pcal\in\min M$. Therefore, $a=b$ or $a,b\in\bigcap_{\Pcal\in\min M}\Pcal$. Now the injectivity follows from Corollary \ref {CorMinPrimeIntersection}. In particular, $M\cong\im\pi$ is a subbinoid of $\prod_{\Pcal\in\min M}M/\Pcal$ with $\pi_{\Pcal}(M)=M/\Pcal$.
\end {proof}

\begin {Corollary}\label{CorStrongProjLimReduzierung}
Let $M$ be a binoid, $I=\spec M$, $\varphi_{\Pcal\Qcal}:M/\Qcal\rto M/\Pcal$ the canonical projection for $\Qcal\subseteq\Pcal$, and $(M/\Pcal, (\varphi_{\Pcal\Qcal})_{\Qcal\subseteq\Pcal})_{\Pcal,\Qcal\in I}$ the inverse system of integral quotient binoids considered in Example \ref{ExpProjLim}. Then $\slim M/\Pcal\cong M_{\opred}$.
\end {Corollary}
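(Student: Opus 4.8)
The plan is to realize $\slim M/\Pcal$ as the image of the canonical map from $M$ into the projective limit, and then to identify this image with $M_{\opred}$. First I would note that the projections $\pi_\Pcal:M\rto M/\Pcal$, $\Pcal\in I=\spec M$, form a cone over the inverse system, so by the universal property of the projective limit (the lemma following Lemma \ref{LemProjectiveLimit}) they induce a binoid homomorphism
$$\delta:M\Rto\varprojlim_{\Pcal\in I}M/\Pcal\komma\quad m\lto([m]_\Pcal)_{\Pcal\in I}\pkt$$
Its kernel is $\{m\in M\mid m\in\Pcal\text{ for all }\Pcal\in\spec M\}=\bigcap_{\Pcal\in\spec M}\Pcal=\nil(M)$, the last equality holding since every prime contains $\nil(M)$ while $\bigcap_{\Pcal\in\min M}\Pcal=\nil(M)$ by Corollary \ref{CorMinPrimeIntersection}. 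Hence $\delta$ factors through an injection $M_{\opred}=M/\nil(M)\embto\varprojlim M/\Pcal$, and it remains to prove that $\im\delta$ equals $\slim M/\Pcal$.

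For the inclusion $\im\delta\subseteq\slim M/\Pcal$ I would check that each $\delta(m)$ is strongly compatible: if the $\Pcal$- and $\Qcal$-entries of $\delta(m)$ are both $\not=\infty$, then $m\not\in\Pcal$ and $m\not\in\Qcal$, so $m\not\in\Pcal\cup\Qcal$, and $\Pcal\cup\Qcal$ is again a prime ideal by Proposition \ref{PropPrimeUnion}; with $\Rcal=\Pcal\cup\Qcal$ one gets $\varphi_{\Rcal\Pcal}([m]_\Pcal)=[m]_\Rcal=\varphi_{\Rcal\Qcal}([m]_\Qcal)\not=\infty$. Thus each $\delta(m)$ is strongly compatible and, $\im\delta$ being a subbinoid, $\im\delta\subseteq\slim M/\Pcal$.

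The harder inclusion, which I expect to be the main obstacle, is $\slim M/\Pcal\subseteq\im\delta$; here the key is to show that the set of strongly compatible elements is already exactly $\im\delta$, so the subbinoid it generates is $\im\delta$ itself. Let $(a_\Pcal)_{\Pcal\in I}$ be strongly compatible, and write $a_\Pcal=[m_\Pcal]$ with the unique representative $m_\Pcal\in M\setminus\Pcal$ whenever $a_\Pcal\not=\infty$ (cf.\ Remark \ref{RemIndIsom}). Strong compatibility provides, for any two non-$\infty$ entries $a_\Pcal,a_\Qcal$, a prime $\Rcal\supseteq\Pcal\cup\Qcal$ with $a_\Rcal\not=\infty$; since $(a_\Pcal)$ lies in $\varprojlim M/\Pcal$ one has $a_\Rcal=\varphi_{\Rcal\Pcal}(a_\Pcal)=[m_\Pcal]_\Rcal$ and $a_\Rcal=\varphi_{\Rcal\Qcal}(a_\Qcal)=[m_\Qcal]_\Rcal$, both $\not=\infty$, so uniqueness of representatives in $M/\Rcal$ forces $m_\Pcal=m_\Qcal$. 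Hence all non-$\infty$ entries share one representative $m\in M$ (put $m=\infty$ if every entry is $\infty$).

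Finally I would verify $(a_\Pcal)=\delta(m)$ entrywise. If $m\in\Pcal$ then $[m]_\Pcal=\infty$, and $a_\Pcal\not=\infty$ is impossible because every non-$\infty$ entry is represented by $m\not\in\Pcal$; so $a_\Pcal=\infty=[m]_\Pcal$. If $m\not\in\Pcal$ but, for contradiction, $a_\Pcal=\infty$, choose $\Qcal$ with $a_\Qcal\not=\infty$ (so $m\not\in\Qcal$); then $m\not\in\Pcal\cup\Qcal=:\Rcal\in\spec M$, whence $a_\Rcal=\varphi_{\Rcal\Pcal}(\infty)=\infty$ yet also $a_\Rcal=\varphi_{\Rcal\Qcal}([m]_\Qcal)=[m]_\Rcal\not=\infty$, a contradiction; thus $a_\Pcal=[m]_\Pcal$. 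Therefore $(a_\Pcal)=\delta(m)\in\im\delta$, and combining the two inclusions yields $\slim M/\Pcal=\im\delta\cong M_{\opred}$. The delicate points throughout are the repeated use of Proposition \ref{PropPrimeUnion}, which makes $\Pcal\cup\Qcal$ an admissible index of the system, and the interplay between strong compatibility and the ambient projective-limit relations, which together pin down every strongly compatible tuple to a single element of $M$.
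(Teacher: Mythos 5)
Your overall architecture matches the paper's: realize $\slim M/\Pcal$ as the image of the canonical map $\delta$ induced by the projections, check that every $\delta(m)$ is strongly compatible via Proposition \ref{PropPrimeUnion} (taking $\Rcal=\Pcal\cup\Qcal$ as the comparison index), and conversely pin every strongly compatible tuple down to a single $m\in M$. Your surjectivity argument is in fact somewhat more careful than the paper's, since you also verify entrywise that the $\infty$-entries of a strongly compatible tuple agree with those of $\delta(m)$.

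There is, however, one genuine gap: the step ``its kernel is $\nil(M)$, hence $\delta$ factors through an \emph{injection} $M_{\opred}\embto\varprojlim M/\Pcal$.'' For binoids, computing $\ker\delta$ only controls which elements are sent to $\infty$; it says nothing about identifications among elements outside the kernel, so a homomorphism with kernel $\nil(M)$ need not induce an injection on $M/\nil(M)$. This is exactly the pitfall flagged in Remark \ref{RemKer} and Remark \ref{RemIndIsom}, and already the map $\N^{\infty}\rto\trivial$ with kernel $\zero$ shows that a trivial kernel does not give injectivity. You must argue injectivity separately: if $([a]_{\Pcal})_{\Pcal\in I}=([b]_{\Pcal})_{\Pcal\in I}$, then in particular $[a]_{\Pcal}=[b]_{\Pcal}$ for all $\Pcal\in\min M$, and Corollary \ref{CorSubdirectProd} (every reduced binoid is a subdirect product of the $M/\Pcal$ with $\Pcal\in\min M$) then forces $a=b$ in $M_{\opred}$. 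This is precisely the ingredient the paper's proof uses for injectivity; with it inserted, your argument is complete.
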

\begin {proof}
By Corollary \ref{CorSpecMred}, we may assume that $M$ is reduced. First observe that every element in $\varprojlim M/\Pcal$ of the form $([a]_{\Pcal})_{\Pcal\in I}$, $a\in M$, is strongly compatible; here, $[a]_{\Pcal}$ denotes the image of $a\in M$ under the canonical projection $M\rto M/\Pcal$. Indeed, if $[a]_{\Pcal}$,  $[a]_{\Qcal}\not=\infty$, then $a\not\in\Pcal$ and $a\not\in\Qcal$, which implies that  $a\not\in\Pcal\cup\Qcal\in\spec M$, and hence $\infty\not=\varphi_{\Pcal\cup\Qcal,\Pcal}([a]_{\Pcal})=\varphi_{\Pcal\cup\Qcal,\Qcal}([a]_{\Qcal})$. In particular, there is a well-defined binoid homomorphism 
$$\phi:M\Rto\operatorname{s}\mina\varprojlim_{\Pcal\in I} M/\Pcal\komma\quad a\lto([a]_{\Pcal})_{\Pcal\in I}\pkt$$
We will show that $\phi$ is bijective to prove the statement. For the surjectivity assume that $(c_{\Pcal})_{\Pcal\in I}\in\varprojlim M/\Pcal$ is strongly compatible. Thus, if $c_{\Pcal}=[a]_{\Pcal}\in (M/\Pcal)\opkt$ and $c_{\Pcal^{\prime}}=[b]_{\Pcal^{\prime}}\in(M/\Pcal^{\prime})\opkt$, then $\varphi_{\Qcal\Pcal}([a]_{\Pcal})=\varphi_{\Qcal\Pcal^{\prime}}([b]_{\Pcal^{\prime}})\not=\infty$ for some $\Qcal\supseteq\Pcal,\Pcal^{\prime}$. This equivalent to $a=b$ because all $\varphi_{\Qcal\Pcal}$, $\Pcal\subseteq\Qcal$, are the identity map on $(M/\Pcal)\setminus\ker\varphi_{\Qcal\Pcal}$. The injectivity follows now from Corollary \ref{CorSubdirectProd} since $([a]_{\Pcal})_{\Pcal\in I}=([b]_{\Pcal})_{\Pcal\in I}$ for $a,b\in M$ implies that $([a]_{\Pcal})_{\Pcal\in J}=([b]_{\Pcal})_{\Pcal\in J}$, where $J:=\min M\subseteq I$, hence $a=b$.
\end {proof}



\bigskip 

\chapter {Basic concepts of binoid algebras} \label{Chap2Basics}
\markright{\ref{Chap2Basics} Basic concepts of binoid algebras}

\begin {Convention}
Throughout this chapter, $K$ denotes a ring and whenever we refer to the monoid or binoid structure of a ring we mean the one defined by the multiplication unless otherwise stated.
\end {Convention}

In this chapter, we turn to the algebras and modules associated to binoids, $N\mina$sets, and $N\mina$binoids.  Before defining the binoid algebra, we recall needed facts on monoid algebras that translate to binoid algebras. For instance, a necessary and sufficient condition for a binoid algebra being an integral domain is given. The crucial correspondence of the sets of $K\mina$points of a binoid and its algebra, on which we will focus in the next two chapters, is stated in Proposition \ref{PropUnivPropBinoidA}. Some considerations on the connection of the ideal theories of a binoid and its algebra are made in the third section. Finally, basic notions and properties of modules and algebras associated to an $N\mina$sets and $N\mina$binoids, respectively, are assembled in the last two sections.

We want to draw the attention to two observations made in this chapter. Example \ref{ExpProjLimitAlgebras} demonstrates that projective limits need not commute with $K[-]$, and Example \ref{ExpNoMonoidAlgebra} showcases how easily one drops out of the theory of monoid algebras while still remaining in the context of binoid algebras.

\section {Monoid algebras} \label{SecMonoidAlgebra}
\markright{\ref{SecMonoidAlgebra} Monoid algebras}

\begin {Definition}
A \gesperrt{$K\mina$algebra} \index{algebra}$A$ is given by a not necessarily commutative  ring $A$ and a ring homomorphism $\varphi:K\rto A$ such that $\im\varphi$ lies in the center of $A$. The ring homomorphism $\varphi$ is called the \gesperrt{structure homomorphism} \index{algebra!structure homomorphism of an --}\index{structure homomorphism!A@-- of an algebra}of the $K\mina$algebra $A$. A ring homomorphism $A\rto A^{\prime}$ into another $K\mina$algebra $A^{\prime}$ is a \gesperrt{$K\mina$algebra homomorphism} \index{algebra!-- homomorphism}if it is compatible with the structure homomorphisms of $A$ and $A^{\prime}$. The set $\Hom_{K\minus\opalg}(A,A^{\prime})$ of all $K\mina$algebra homomorphisms $A\rto A^{\prime}$ will be denoted by $K\minSpec A$ when $A^{\prime}=K$.\nomenclature[HomK]{$\Hom_{K\minus\opalg}(A,A^{\prime})$}{set of all $K\mina$algebra homomorphisms $A\rto A^{\prime}$}\nomenclature[K6]{$K\minSpec A$}{$:=\Hom_{K\minus\opalg}(A,K)$ ($K\mina$spectrum of $A$)}
\end {Definition}

We will now recall the definition of monoid algebras and list the basic results about them without giving (detailed) proofs. As a reference, we cite \cite{SchejaStorch}, but proofs can also be found in \cite{OkninskiSA} and (for commutative monoids) in \cite{Gilmer}.

\begin {Definition}
Let $M$ be an arbitrary monoid. The \gesperrt{monoid algebra} \index{monoid!-- algebra}\index{algebra!monoid --}$KM$ \nomenclature[K1]{$KM$}{monoid algebra of $M$ over $K$}of $M$ over $K$ is the associative $K\mina$algebra with $K\mina$left module basis $T^{a}\in M$. The multiplication  for these basis elements is defined by using the operation of $M$,
$$T^{a}\cdot T^{b}:=T^{a+b}$$
for $a,b\in M$, which is then extended distributively to a multiplication on $KM$. In case $M$ is a group, $KM$ is called the \gesperrt{group algebra} \index{group algebra}\index{algebra!group --}of $M$ over $K$.
\end {Definition}

The ring $K$ is a subring of $KM$ via the ring monomorphism $K\embto KM$, $r\mto rT^{0}$. If $K\not=0$, there is a monoid embedding $M\embto KM$ with $a\mto T^{a}$ such that $M$ can be considered as a submonoid of $KM$. Every element $f\in KM$ can be written uniquely as
$$f=\sum_{a\in M}r_{a}T^{a}$$
with $r_{a}\in K$ such that $r_{a}\not=0$ for only finitely many $a\in M$. Moreover, $KM$ is an $M\mina$graded $K\mina$algebra
$$KM=\bigoplus_{a\in M}KT^{a}\komma$$
where $KT^{a}:=\{rT^{a}\mid r\in K\}$. The elements of the $K\mina$module $KT^{a}$ are called \gesperrt{homogeneous} \index{element!homogeneous --}of \gesperrt{degree} $a$. The element $0$ is of every degree by convention. Clearly, unless $K=0$ the monoid algebra $KM$ is commutative if and only if $M$ is commutative (and $K$, what we always assume).

\begin {Example}
The polynomial algebra $K[X_{i}\mid i\in I]\cong K(\N^{(I)})$ over $K$ is a monoid algebra.
\end {Example}

By the following proposition, the monoid algebra is a universal object and therefore unique up to isomorphism.

\begin {Proposition} \label {PropUnivPropMonoidA}
Given a $K\mina$algebra $A$ and a monoid homomorphism $\phi:M\rto (A,\cdot)$, there exists a unique $K\mina$algebra homomorphism $\phi:KM\rto A$ with $\tilde{\phi}(T^{a})=\phi(a)$ for all $a\in M$.
\end {Proposition}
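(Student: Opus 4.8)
The statement to prove is the universal property of the monoid algebra: given a $K\mina$algebra $A$ with structure homomorphism and a monoid homomorphism $\phi:M\rto(A,\cdot)$, there is a unique $K\mina$algebra homomorphism $\tilde{\phi}:KM\rto A$ extending $\phi$ in the sense $\tilde{\phi}(T^{a})=\phi(a)$.

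The plan is to exploit the fact that $KM=\bigoplus_{a\in M}KT^{a}$ is free as a $K\mina$module with basis $\{T^{a}\mid a\in M\}$. First I would define $\tilde{\phi}$ on the basis by $\tilde{\phi}(T^{a}):=\phi(a)$ and extend $K\mina$linearly; concretely, for a general element $f=\sum_{a\in M}r_{a}T^{a}$ (a finite sum, as every element has a unique such expression) I set
$$\tilde{\phi}(f)\,:=\,\sum_{a\in M}\varphi_{A}(r_{a})\,\phi(a)\komma$$
where $\varphi_{A}:K\rto A$ is the structure homomorphism of $A$. Since the expression of $f$ is unique and the sum is finite, this is well-defined, and $K\mina$linearity is immediate from the definition. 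The additivity of $\tilde{\phi}$ as a ring map then follows directly.

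Next I would verify that $\tilde{\phi}$ respects multiplication. Because both the product on $KM$ and the map $\tilde{\phi}$ are built bilinearly (distributively) out of the basis elements, it suffices to check multiplicativity on basis elements $T^{a}$ and $T^{b}$: here one computes $\tilde{\phi}(T^{a}\cdot T^{b})=\tilde{\phi}(T^{a+b})=\phi(a+b)=\phi(a)\phi(b)=\tilde{\phi}(T^{a})\tilde{\phi}(T^{b})$, using that $\phi$ is a monoid homomorphism into $(A,\cdot)$. The general case follows by expanding both sides distributively and noting that the images $\varphi_{A}(r_{a})$ lie in the center of $A$, so the scalar factors may be freely rearranged; this is where the hypothesis that $\im\varphi_{A}$ lies in the center of $A$ is used, and it is the only genuinely delicate bookkeeping point. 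I would also note $\tilde{\phi}(T^{0})=\phi(0)=1_{A}$ since $\phi$ is a monoid homomorphism, so $\tilde{\phi}$ preserves the identity, and that $\tilde{\phi}$ is compatible with the structure homomorphisms, i.e.\ $\tilde{\phi}(rT^{0})=\varphi_{A}(r)$, making it a $K\mina$algebra homomorphism.

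Finally, uniqueness is forced: any $K\mina$algebra homomorphism $\psi:KM\rto A$ with $\psi(T^{a})=\phi(a)$ must agree with $\varphi_{A}$ on $K=KT^{0}$ and must be $K\mina$linear, so $\psi(\sum_{a}r_{a}T^{a})=\sum_{a}\varphi_{A}(r_{a})\phi(a)=\tilde{\phi}(f)$ on every element. Since the $T^{a}$ form a module basis and generate $KM$ as an algebra over $K$, their images together with $K\mina$linearity determine $\psi$ completely. I expect no serious obstacle here; the only step requiring care is the multiplicativity check, where the centrality hypothesis must be invoked explicitly to move scalars past the factors $\phi(a)$ in the non-commutative setting.
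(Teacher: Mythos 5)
Your proof is correct and is the standard argument; the paper itself does not spell out a proof but merely cites \cite[Korollar 52.2]{SchejaStorch}, and your construction (define $\tilde{\phi}$ on the free $K\mina$module basis $T^{a}$, extend linearly, check multiplicativity on basis elements and use the centrality of $\im\varphi_{A}$ to move scalars in the general case) is exactly what that reference carries out. You correctly identify the centrality hypothesis as the one delicate point in the non-commutative setting, and the uniqueness argument is complete.
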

\begin {proof}
See \cite[Korollar 52.2]{SchejaStorch}.
\end {proof}

\begin {Corollary} \label{CorPropMonoidA}
Let $M$ be a monoid.
\begin {ListeTheorem}
\item Let $\alpha:K\rto L$ be a homomorphism of rings and $\varphi:M\rto N$ a monoid homomorphism. Then there is a unique ring homomorphism $KM\rto LN$ with $ra\mto\alpha(r)\varphi(a)$.
\item If $\aideal$ is an ideal in $K$, then $KM/\aideal KM\cong(K/\aideal)M$.
\item If $S$ is a multiplicative system in $K$, then $(KM)_{S}\cong K_{S}M$.
\item Given a finite family $(M_{i})_{i\in I}$ of monoids, there is a canonical isomorphism 
$$K\Big(\prod_{i\in I}M_{i}\Big)\,\,\cong\,\,\bigotimes_{i\in I}\!\!{}_{_{K}}KM_{i}\pkt$$
\item If $A$ is a $K\mina$algebra, then $A\otimes_{K}KM\cong AM$.
\end {ListeTheorem}
\end {Corollary}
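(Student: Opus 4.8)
The final statement to prove is Corollary \ref{CorPropMonoidA}(5): if $A$ is a $K\mina$algebra, then $A\otimes_{K}KM\cong AM$.

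The plan is to exhibit both rings as solutions to the same universal problem and invoke the uniqueness guaranteed by Proposition \ref{PropUnivPropMonoidA}, rather than chasing elements through the tensor product directly. First I would observe that $AM$ is naturally an $A\mina$algebra: the structure homomorphism $K\rto A$ composed with the canonical $A\embto AM$ makes $AM$ into a $K\mina$algebra, and the monoid $M$ embeds into $(AM,\cdot)$ via $a\mto T^{a}$. On the other side, $A\otimes_{K}KM$ is a $K\mina$algebra (in fact an $A\mina$algebra via $x\mto x\otimes T^{0}$) with the componentwise multiplication $(x\otimes T^{a})(y\otimes T^{b})=xy\otimes T^{a+b}$, and it carries a monoid homomorphism $M\rto(A\otimes_{K}KM,\cdot)$ given by $a\mto 1\otimes T^{a}$.

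The key step is to construct the two mutually inverse $K\mina$algebra homomorphisms. For one direction I would apply the universal property of the monoid algebra, Proposition \ref{PropUnivPropMonoidA}, to the monoid homomorphism $\phi:M\rto(A\otimes_{K}KM,\cdot)$, $a\mto 1\otimes T^{a}$, viewing $A\otimes_{K}KM$ as an $A\mina$algebra; this yields a unique $A\mina$algebra homomorphism $\Phi:AM\rto A\otimes_{K}KM$ with $T^{a}\mto 1\otimes T^{a}$, which on a general element sends $\sum_{a}r_{a}T^{a}\mto\sum_{a}r_{a}\otimes T^{a}$. For the reverse direction I would use the universal property of the tensor product over $K$: the $K\mina$bilinear map $A\times KM\rto AM$ sending $(x,\sum_{a}s_{a}T^{a})\mto\sum_{a}xs_{a}T^{a}$ factors through a $K\mina$linear map $\Psi:A\otimes_{K}KM\rto AM$, and one checks it respects the multiplication since $(x\otimes T^{a})(y\otimes T^{b})\mto xyT^{a+b}=(xT^{a})(yT^{b})$. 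Then $\Phi$ and $\Psi$ are inverse to each other: $\Psi\Phi$ and $\Phi\Psi$ both fix the generating elements $T^{a}$ respectively $x\otimes T^{a}$, so by the uniqueness clauses of the two universal properties (or by a direct computation on elementary tensors and basis elements) both composites are the identity.

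The main obstacle, modest as it is, will be verifying that $\Psi$ is well-defined and multiplicative rather than merely additive: one must confirm that the bilinear map respects the $K\mina$balancing relation $(xr)\otimes T^{a}=x\otimes(rT^{a})$, which holds because $K$ maps into the center of $A$, and that multiplication of elementary tensors is compatible with the convolution product on $AM$. Since this is one of the standard isomorphisms for monoid algebras and the argument is purely formal, I would in practice cite \cite[Korollar 52.2 ff.]{SchejaStorch} for the details, exactly as the corollary's proof elsewhere defers to that reference, and present only the identification of the two maps on generators.
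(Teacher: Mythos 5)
Your proposal addresses only item (5) of the corollary, but the statement comprises five assertions: the functorial ring homomorphism $KM\rto LN$ in (1), the base-change isomorphisms $KM/\aideal KM\cong(K/\aideal)M$ and $(KM)_{S}\cong K_{S}M$ in (2) and (3), and the identification $K\big(\prod_{i\in I}M_{i}\big)\cong\bigotimes_{i\in I}{}_{_{K}}KM_{i}$ in (4). None of these is touched, so as a proof of the stated corollary the proposal is incomplete. The omission is not cosmetic: the paper deduces (2) directly from (1) by taking $N=M$, $\varphi=\id_{M}$ and $\alpha$ the projection $K\rto K/\aideal$, so (1) has to be established first; and (2) and (4) are each invoked later in the text (for instance in the proof of Corollary \ref{CorPropBinoidA} and in Remark \ref{RemCompositionsAlgebra}), so they cannot be silently dropped. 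Each of them does admit the same style of argument you use for (5) — apply Proposition \ref{PropUnivPropMonoidA} to an appropriate monoid homomorphism into the target and exhibit an inverse, with (4) additionally requiring a reduction to $\#I=2$ — but that work is simply absent.

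For the one item you do treat, the argument is correct and in substance coincides with the paper's: the paper merely names the canonical $K\mina$algebra homomorphism $b\otimes_{K}\sum_{a\in M}r_{a}T^{a}\mto\sum_{a\in M}(r_{a}b)T^{a}$, which is your $\Psi$, and defers the verification to \cite{SchejaStorch}, whereas you also construct the inverse $\Phi$ from the universal property and check the two composites on generators. Your worry about the balancing relation is resolved exactly as you say, since the paper's definition of a $K\mina$algebra requires the image of $K\rto A$ to lie in the center of $A$. So the portion you wrote is sound; what is missing is the other four fifths of the statement.
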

\begin {proof}
For a more precise demonstration see \cite[\S52, Beispiel 2,3 and 4]{SchejaStorch}, \cite[\S80, Beispiel 13]{SchejaStorch}, and \cite[\S81, Beispiel 11]{SchejaStorch}. Outline: the first assertion is an easy consequence of the universal property (Proposition \ref {PropUnivPropMonoidA}). The second follows from $(1)$ with $N=M$, $\varphi=\id_{M}$, and $\alpha=\pi:K\rto K/\aideal$. Finally, one may easily derive that the three remaining isomorphisms are given by the canonical $K_{S}\mina$algebra homomorphism $(\sum_{a\in M}r_{a}T^{a})/s\mto\sum_{a\in M}(r_{a}/s)T^{a}$ and  by the two canonical $K\mina$algebra homomorphisms $T^{(a_{i})_{i\in I}}\mto\otimes_{i\in I} a_{i}$ and  $b\otimes_{K}\sum_{a\in M}r_{a}T^{a}\mto\sum_{a\in M}(r_{a}b)T^{a}$.
\end {proof}

The following proposition determines under which conditions the monoid algebra contains no zero-divisors. 

\begin {Proposition} \label {PropMonoidADomain}
Let $M$ be a monoid and $K\not=0$. The monoid algebra $KM$ is a domain if and only if $K$ is an integral domain and $M^{\infty}$ is torsion-free and cancellative.
\end {Proposition}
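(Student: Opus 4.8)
The plan is to reduce both implications to the group algebra $KG$, where $G$ is the group part of the difference binoid $\diff(M^{\infty})$ of $M$, and then to exploit a total order on a torsion-free abelian group. First I would dispose of the easy implications of the forward direction. Since $K\embto KM$, $r\mapsto rT^{0}$, is a subring inclusion and $M\embto KM$, $a\mapsto T^{a}$, is a monoid embedding (as $K\not=0$), any pathology on the side of $K$ or $M$ is inherited by $KM$. Concretely, if $KM$ is a domain then so is the subring $K$; and if $M$ were not cancellative, say $s+u=t+u$ with $s\not=t$, then $(T^{s}-T^{t})T^{u}=0$ with both factors nonzero, a contradiction. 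Hence $M$ is cancellative, i.e.\ $M^{\infty}$ is regular.

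With $M$ cancellative (and commutative, per the standing assumptions), Proposition \ref{PropGroupEmbedding} embeds $M^{\infty}$ into the binoid group $\diff(M^{\infty})$; writing $G$ for its group part we get $M\embto G$, and inverting the monomials $S:=\{T^{a}\mid a\in M\}$ yields the group algebra $(KM)_{S}\cong KG$, a localization of $KM$ and therefore still a domain. If $G$ had a torsion element $g$ of exact order $m\ge2$, then
$$\Big(T^{0}-T^{g}\Big)\Big(\sum_{i=0}^{m-1}T^{ig}\Big)=T^{0}-T^{mg}=0$$
with both factors nonzero (the exponents $0,g\kpkt(m-1)g$ are distinct), contradicting that $KG$ is a domain. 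Thus $G$, and with it $M$, is torsion-free; that is, $M^{\infty}$ is torsion-free. This completes the forward direction.

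For the converse, assume $K$ is a domain and $M^{\infty}$ is torsion-free and cancellative. By Proposition \ref{PropGroupEmbedding} and Lemma \ref{LemDGroupTorsionfree}, the difference binoid $\diff(M^{\infty})$ is a torsion-free binoid group, whose group part $G$ is a torsion-free abelian group with $M\embto G$, inducing an injection $KM\embto KG$. It therefore suffices to prove that $KG$ is a domain. Here I would invoke the classical fact that a torsion-free abelian group admits a total order $\le$ compatible with its operation. Granting this, for nonzero $f=\sum_{a}r_{a}T^{a}$ and $h=\sum_{b}s_{b}T^{b}$ in $KG$, set $a_{0}:=\max_{\le}\supp(f)$ and $b_{0}:=\max_{\le}\supp(h)$. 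Compatibility together with cancellativity forces any pair with $a\le a_{0}$, $b\le b_{0}$ and $a+b=a_{0}+b_{0}$ to satisfy $a=a_{0}$ and $b=b_{0}$, so the coefficient of $T^{a_{0}+b_{0}}$ in $fh$ equals $r_{a_{0}}s_{b_{0}}\not=0$; hence $fh\not=0$.

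The main obstacle is precisely the orderability of a torsion-free abelian group, which is the one genuinely non-formal input; I would either cite it from \cite{Gilmer} or sketch the Zorn's Lemma argument producing a maximal submonoid $P\subseteq G$ with $P\cap(-P)=\{0\}$ and $P\cup(-P)=G$, and then set $x\le y:\eq y-x\in P$. Everything else is routine once $G$ and its order are available: the two zero-divisor constructions in the forward direction and the leading-term computation in the backward direction. I would also note explicitly that commutativity of $M$ enters essentially, through both the existence of the difference group and the orderability step, so the statement is read under the commutative convention governing this part of the thesis.
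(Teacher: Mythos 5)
Your proof is correct. The paper itself gives no argument for this proposition but merely cites \cite{GrilletCS} and \cite{BrunsGubeladze}, and what you write --- reduction to the group algebra $KG$ of the difference group via localization at the monomials, the telescoping identity $(T^{0}-T^{g})\sum_{i=0}^{m-1}T^{ig}=0$ to rule out torsion, and the leading-coefficient computation with respect to a compatible total order on the torsion-free abelian group $G$ --- is precisely the standard proof found in those references, with the orderability of torsion-free abelian groups correctly identified as the one genuinely external input (available in \cite{Gilmer}).
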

\begin {proof}
See \cite[Theorem 8.1]{GrilletCS} or \cite[Theorem 4.18]{BrunsGubeladze}.\end {proof}

\bigskip

\section {Binoid algebras} \label{SecBinoidAlgebra}
\markright{\ref{SecBinoidAlgebra} Binoid algebras}

For the sake of completeness, we recall the definition of the binoid algebra introduced at the beginning of Section \ref{DefBinoidAlgebra} (cf.\ page \pageref{DefBinoidAlgebra}). Since we will encounter only commutative binoid algebras in the subsequent chapters, two examples of non-commutative algebras that can be realized as binoid algebras are given at the end of this section, namely matrix algebras and path algebras.

\begin {Definition}
Let $M$ be a binoid. The \gesperrt{binoid algebra} \index{binoid!-- algebra}\index{algebra!binoid --}of $M$ is defined to be the quotient algebra 
$$KM/(T^{\infty})=:K[M]\komma$$ 
\nomenclature[K3]{$K[M]$}{binoid algebra of $M$ over $K$}where $(T^{\infty})$ is the ideal in $KM$ generated by the element $T^{\infty}$. In general, if $\Ical$ is an ideal in $M$, we will denote the ideal (resp.\ $K\mina$submodule) of $KM$ generated by $T^{a}$, $a\in\Ical$, by
$$K\Ical:=(T^{a}\mid a\in\Ical)\subseteq KM$$
and by\nomenclature[K2]{$K\Ical$}{ideal in $KM$ generated by $T^{a}$, $a\in\Ical$}
$$K[\Ical]:=K\Ical/(T^{\infty})=(T^{a}\mid a\in\Ical)\subseteq K[M]$$
the associated ideal (resp.\ $K\mina$submodule) of $K[M]$.\nomenclature[K4]{$K[\Ical]$}{ideal in $K[M]$ generated by $T^{a}$, $a\in\Ical$}
\end {Definition}

By definition, $K[M]$ may be identified with the set of all formal sums $\sum_{a\in A}r_{a}T^{a}$ with $A\subseteq M\opkt$ finite and $r_{a}\in K$, where the multiplication is generated by
$$r_{a}T^{a}\cdot s_{b}T^{b}=\begin {cases}
r_{a}s_{b}T^{a+b}&\text{, if }a+b\not=\infty\komma\\
0&\text{, otherwise.}
\end {cases}$$
The $K\mina$module isomorphism
$$K[M]\,\,\cong\,\bigoplus_{a\in M\opkt}KT^{a}$$
gives rise to a graded $K\mina$algebra homomorphism 
$$KM\Rto K[M]$$
with $\ker=KT^{\infty}$. In this vein, the binoid algebra $K[M]$ emerges from the monoid algebra $KM$ by glueing together the absorbing elements of $M$ and $KM$. Thus, the notion of binoid algebras generalizes that of monoid algebras. Given an ideal $\Ical\subseteq M$, the ideals $K\Ical$ and $K[\Ical]$ are monomial ideals of $KM$ and $K[M]$, respectively. If $K\not=0$, the composition $M\embto KM\rto K[M]$ yields a binoid embedding 
$$\iota_{M}:M\Rto K[M]\komma\quad a\lto T^{a}\komma$$ 
such that $M$ can be considered as a subbinoid of $(K[M],\cdot,1,0)$.

\begin {Example}
\begin {ListeTheorem}
\item[]
\item For the zero binoid, we obtain $K[\{\infty\}]=0$ and for the trivial binoid $K[\trivial]=K$.
\item If $M$ is an integral binoid, then $K[M]\cong KM\opkt$. In particular,  $K[M/M\Uplus]$ is the group algebra $KM\okreuz$, see also Corollary \ref{CorPropBinoidA}(3) bellow.
\item The binoid algebras of the one-generated binoids, cf.\ Corollary \ref{CorClassificationOnegenerated}, as well as those of some two-generated binoids were described up to isomorphism in Section \ref{SecFGbinoids}.
\end {ListeTheorem}
\end {Example}

\begin {Proposition} \label{PropUnivPropBinoidA}
Given a binoid $M$, a $K\mina$algebra $A$, and a binoid homomorphism $\varphi:M\rto A$, there is a unique $K\mina$algebra homomorphism $\phi:K[M]\rto A$ such that the diagram
$$\xymatrix{
M\ar[d]_{\iota}\ar[r]^{\varphi}&A\\
K[M]\ar[ur]_{\phi}&}$$
commutes. 

In particular, if $A=K$, then 
$$K\minspec M\,\,\cong\,\, K\minSpec K[M]$$
as semigroups.
\end {Proposition}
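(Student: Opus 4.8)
The plan is to derive the universal property of the binoid algebra directly from that of the monoid algebra (Proposition \ref{PropUnivPropMonoidA}) by passing to the Rees quotient, and then to obtain the isomorphism of spectra as the special case $A=K$.

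First I would regard the target $K$-algebra $A$ as a binoid via its multiplicative structure $(A,\cdot,1,0)$, so that a binoid homomorphism $\varphi:M\rto A$ is precisely a monoid homomorphism $\varphi:M\rto(A,\cdot)$ satisfying $\varphi(\infty)=0$, the absorbing element of $(A,\cdot)$ being $0$. By Proposition \ref{PropUnivPropMonoidA} there is then a unique $K$-algebra homomorphism $\tilde{\varphi}:KM\rto A$ with $\tilde{\varphi}(T^{a})=\varphi(a)$ for all $a\in M$. The binoid condition $\varphi(\infty)=0$ gives $\tilde{\varphi}(T^{\infty})=0$, hence the ideal $(T^{\infty})\subseteq KM$ lies in $\ker\tilde{\varphi}$, so $\tilde{\varphi}$ factors uniquely through the canonical projection $\pi:KM\rto KM/(T^{\infty})=K[M]$. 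This yields the desired $\phi:K[M]\rto A$ with $\phi\iota=\varphi$, since $\phi(\iota(a))=\phi(T^{a})=\tilde{\varphi}(T^{a})=\varphi(a)$. Uniqueness of $\phi$ follows because the elements $T^{a}=\iota(a)$, $a\in M\opkt$, generate $K[M]$ as a $K$-module, so any $K$-algebra homomorphism out of $K[M]$ is determined by its values on $\iota(M)$.

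For the supplement I would specialise to $A=K$. A binoid homomorphism $M\rto K$ is an element of $K\minspec M$, a $K$-algebra homomorphism $K[M]\rto K$ is an element of $K\minSpec K[M]$, and the first part provides mutually inverse assignments $\varphi\mapsto\phi$ and $\phi\mapsto\phi\iota$, giving a bijection. It remains to check compatibility with the semigroup operations. On $K\minspec M=\hom(M,K)$ the operation is the pointwise one of Remark \ref{RemBHom}, namely $(\varphi\ast\varphi')(a)=\varphi(a)\varphi'(a)$ with $N=K$ taken multiplicatively; the matching operation on $K\minSpec K[M]$ is the convolution $(\phi\ast\phi')(T^{a})=\phi(T^{a})\phi'(T^{a})$, which is well-defined and lands in $K\minSpec K[M]$ precisely because the basis elements are group-like, $T^{a}T^{b}=T^{a+b}$. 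Since under the bijection $\phi$ and $\phi\iota$ carry the same data on the generators $T^{a}=\iota(a)$, both operations are governed by pointwise multiplication of the values at these elements, so $\varphi\mapsto\phi$ intertwines them, establishing the semigroup isomorphism.

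The universal-property half is essentially routine once $A$ is viewed as a multiplicative binoid, so I expect the only point requiring genuine care to be the supplement: one must fix the (convolution) semigroup structure on $K\minSpec K[M]$ coming from the group-like basis $T^{a}$ and verify that it is exactly the transport of the pointwise structure on $\hom(M,K)$. Once both operations are expressed through the values on $\iota(M)$, the compatibility is immediate, so no real obstacle remains beyond this bookkeeping.
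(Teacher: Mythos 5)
Your proof is correct and follows essentially the same route as the paper: invoke the universal property of the monoid algebra (Proposition \ref{PropUnivPropMonoidA}) and factor through $KM/(T^{\infty})$ using $\varphi(\infty)=0$. The paper's own proof actually leaves the semigroup compatibility in the supplement unaddressed, so your explicit identification of both operations through their values on the group-like elements $\iota(a)=T^{a}$ is a harmless and welcome addition.
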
 
\begin {proof}
Consider the composition of the canonical maps $M\rto KM\stackrel{\pi}{\rto}K[M]$. By the universal property of the monoid algebra, cf. Proposition \ref{PropUnivPropMonoidA}, $\varphi$ induces a $K\mina$algebra homomorphism $\tilde{\varphi}:KM\rto A$ with  $rT^{a}\mto\alpha(r)\varphi(a)$, where $\alpha:K\rto A$ is the structure homomorphism, $r\in K$, and $a\in M$. Since $\ker\pi=KT^{\infty}\subseteq\ker\tilde{\varphi}$, the $K\mina$algebra homomorphism $\tilde{\varphi}$ induces a ring homomorphism $\phi:K[M]\rto A$ with $\phi\pi=\tilde{\varphi}$ such that $rT^{a}\mto\alpha(r)\varphi(a)$, $r\in K$, $a\in M$, which shows that $\phi$ is a $K\mina$algebra homo\-morphism.
\end {proof}

The binoid algebra is uniquely determined by its universal property.

\begin {Corollary} \label {CorUnivPropBinoidA}
Given a ring homomorphism $\alpha: K\rto L$ and a binoid homomorphism $\varphi: M\rto M^{\prime}$, there is a unique ring homomorphism $\phi:K[M]\rto L[M^{\prime}]$ with $\phi(rT^{a})=\alpha(r)T^{\varphi(a)}$, $r\in K$, $a\in M\opkt$.
\end {Corollary}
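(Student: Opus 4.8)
The plan is to deduce this functoriality statement directly from the universal property of the binoid algebra, Proposition \ref{PropUnivPropBinoidA}. The first step is to equip the target $L[M^{\prime}]$ with the structure of a $K\mina$algebra. Its canonical $L\mina$algebra structure is given by the structure homomorphism $\beta:L\rto L[M^{\prime}]$, $s\mapsto sT^{0}$, and precomposing with $\alpha$ yields a ring homomorphism $\beta\alpha:K\rto L[M^{\prime}]$, $r\mapsto\alpha(r)T^{0}$, whose image consists of constants and therefore lies in the center of $L[M^{\prime}]$. Thus $L[M^{\prime}]$ becomes a $K\mina$algebra, and this is exactly the kind of target to which Proposition \ref{PropUnivPropBinoidA} applies.

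Next I would produce the binoid homomorphism feeding into the universal property. Composing $\varphi$ with the canonical embedding $\iota_{M^{\prime}}:M^{\prime}\rto L[M^{\prime}]$, $a\mapsto T^{a}$ (a binoid homomorphism into $(L[M^{\prime}],\cdot,1,0)$ by the remarks following the definition of $K[M]$), gives a map $\psi:=\iota_{M^{\prime}}\varphi:M\rto L[M^{\prime}]$, $a\mapsto T^{\varphi(a)}$. Since both $\varphi$ and $\iota_{M^{\prime}}$ are binoid homomorphisms, so is $\psi$; in particular $\psi(\infty_{M})=T^{\varphi(\infty_{M})}=T^{\infty_{M^{\prime}}}=0$, so the absorbing element of $M$ is sent to the absorbing element of the multiplicative binoid of $L[M^{\prime}]$.

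Now I apply Proposition \ref{PropUnivPropBinoidA} with $A=L[M^{\prime}]$ (viewed as a $K\mina$algebra as above) and the binoid homomorphism $\psi$, obtaining a unique $K\mina$algebra homomorphism $\phi:K[M]\rto L[M^{\prime}]$ with $\phi\iota_{M}=\psi$, that is, $\phi(T^{a})=T^{\varphi(a)}$ for all $a\in M$. Being a $K\mina$algebra homomorphism, $\phi$ is compatible with the structure homomorphisms, so $\phi(rT^{0})=\alpha(r)T^{0}$; combining this with multiplicativity and the identity $rT^{a}=(rT^{0})\cdot T^{a}$ in $K[M]$ gives $\phi(rT^{a})=\alpha(r)T^{\varphi(a)}$ for all $r\in K$ and $a\in M\opkt$, as claimed. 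For uniqueness I would note that $\{T^{a}\mid a\in M\opkt\}$ is a $K\mina$module generating set of $K[M]$, so every element is a finite sum $\sum_{a}r_{a}T^{a}$; any ring homomorphism with the prescribed values on the $rT^{a}$ is then determined by additivity, forcing $\phi(\sum_{a}r_{a}T^{a})=\sum_{a}\alpha(r_{a})T^{\varphi(a)}$.

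The verification steps are all routine; the only point requiring a little care is checking the hypotheses of Proposition \ref{PropUnivPropBinoidA}, namely that $L[M^{\prime}]$ is genuinely a $K\mina$algebra under $\beta\alpha$ (centrality of the constants) and that $\psi$ respects the absorbing elements. As an alternative I could bypass the universal property entirely and invoke Corollary \ref{CorPropMonoidA}(1) on the monoid level to obtain a ring homomorphism $KM\rto LM^{\prime}$ with $rT^{a}\mapsto\alpha(r)T^{\varphi(a)}$, observe that it carries $T^{\infty_{M}}$ to $T^{\infty_{M^{\prime}}}$ and hence the ideal $(T^{\infty_{M}})$ into $(T^{\infty_{M^{\prime}}})$, and then pass to the quotients $K[M]=KM/(T^{\infty_{M}})$ and $L[M^{\prime}]=LM^{\prime}/(T^{\infty_{M^{\prime}}})$.
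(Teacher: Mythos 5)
Your proof is correct and rests on the same key fact as the paper, namely the universal property of the binoid algebra (Proposition \ref{PropUnivPropBinoidA}); the paper merely organizes the argument as a two-step factorization $K[M]\rto K[M^{\prime}]\rto L[M^{\prime}]$ (first the induced $K\mina$algebra homomorphism $T^{a}\mto T^{\varphi(a)}$, then the coefficient change $\tilde{\alpha}:rT^{a}\mto\alpha(r)T^{a}$), whereas you apply the universal property once with $L[M^{\prime}]$ regarded as a $K\mina$algebra via $\beta\alpha$ -- a purely cosmetic difference. Your alternative route through $KM\rto LM^{\prime}$ and passage to the quotients by $(T^{\infty})$ would work equally well.
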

\begin {proof}
By Proposition \ref {PropUnivPropBinoidA}, $\varphi$ induces the $K\mina$algebra homomorphism $\tilde{\phi}$ of the following commutative diagram
$$\xymatrix{
M\ar[d]_{\iota_{M}}\ar[r]^{\varphi}&M^{\prime}\ar[d]_{\iota_{M^{\prime}}}&\\
K[M]\ar[r]^{\tilde{\phi}}&K[M^{\prime}]\ar[r]^{\tilde{\alpha}}&L[M^{\prime}]\pkt}$$
Then $\phi=\tilde{\alpha}\tilde{\phi}$ is the unique ring homomorphism, where $\tilde{\alpha}:rT^{a}\mto\alpha(r)T^{a}$.
\end {proof}

\begin {Remark}\label {RemUniPropSpecial}
There are two special cases of Corollary \ref {CorUnivPropBinoidA}.
\begin {ListeTheorem}
\item  If $\varphi=\id_{M}$, there is a unique ring homomorphism $\alpha[M]:K[M]\rto L[M]$.  It is easily verified that every subset of $L$ that generates $L$ as a $K\mina$module (or $K\mina$algebra) generates $L[M]$ as a $K[M]\mina$module (or $K[M]\mina$algebra), and that every linear independent set of elements of $L$ over $K$ is linearly independent in $L[M]$ over $K[M]$. In particular, bases retain unchanged while switching to binoid algebras. Moreover, if $\alpha$ is surjective, then so is $\alpha[M]$.
\item The other specialization is when $L=K$ . In this case there is a unique $K\mina$algebra (!) homomorphism 
$$K[\varphi]:K[M]\Rto K[N]\pkt$$
$\varphi$ is injective or surjective if and only if $K[\varphi]$ is so. In particular, $K[-]$ is a covariant functor from the category of binoids to the category of graded $K\mina$algebras.
\end {ListeTheorem}
\end {Remark}

\begin {Example}
Let $M$ be a commutative binoid. By Corollary \ref {CorSubdirectProd}, there is an embedding $M\embto\prod_{P\in\min M}M/P$ for every reduced binoid $M$, which induces an injective $K\mina$algebra homomorphism $K[M]\rto K\big[\prod_{P\in\min M}M/P\big]$.
\end{Example}

\begin {Example} \label{ExpProjLimitAlgebras}
Let $((M_{i})_{i\in I},(\varphi_{ji})_{i\ge j})_{i,j\in I}$ be an inverse system of binoids. Then $(A_{i},(f_{ji})_{i\ge j})_{i,j\in I}$ with $A_{i}:=K[M_{i}]$ and $f_{ji}:=K[\varphi_{ji}]:A_{i}\rto A_{j}$ defines an inverse system of $K\mina$algebras. The projective limit
$$\varprojlim K[M_{i}]\,=\,\Big\{(F_{i})_{i\in I}\in\prod_{i\in I}K[M_{i}]\,\,\Big|\,\, f_{ji}(F_{i})=F_{j}\text{ for all }i\ge j\Big\}$$
might but need not coincide with $K[\varprojlim M_{i}]$. However, there is always an embedding
$$K[\varprojlim M_{i}]\Rto\varprojlim K[M_{i}]\komma\quad T^{(a_{i})_{i\in I}}=\prod_{i\in I}T_{i}^{a_{i}}\lto(T_{i}^{a_{i}})_{i\in I}\komma$$
coming from the family $g_{j}:\varprojlim M_{i}\rto M_{j}\rto K[M_{j}]$, $j\in I$, of binoid homomorphisms which satisfy $f_{ij}g_{j}=g_{i}$ for all $i,j\in I$. Hence, there is a unique binoid embedding 
$$\varphi:\varprojlim M_{i}\Rto \varprojlim K[M_{i}]\quad\text{with}\quad(a_{i})_{i\in I}\lto(T_{i}^{a_{i}})_{i\in I}$$
that factors through $K[\varprojlim M_{i}]$ by Proposition \ref{PropUnivPropBinoidA}.

As an example, consider the inverse system $((M/\Pcal)_{\Pcal\in I},(\varphi_{\Pcal\Qcal})_{\Qcal\subseteq\Pcal})_{\Pcal,\Qcal\in I}$ of Example \ref{ExpProjLim}, where $M$ is a commutative binoid, $I=\spec M$, and $\varphi_{\Pcal\Qcal}:M/\Qcal\rto M/\Pcal$ are the canonical projections for $\Qcal\subseteq\Pcal$. Returning to the two particular cases discussed there, we obtain the following:

If $\min M=\{\Qcal\}$, then $\varprojlim M/\Pcal=M/\Qcal$, and similarly we get
$$\varprojlim_{\Pcal\in I} K[M/\Pcal]\,=\,\Big\{(f_{\Pcal\Qcal}(F))_{\Pcal\in I}\in\prod_{\Pcal\in I}K[M/\Pcal]\,\,\Big|\, F\in K[M] \Big\}\,\,\cong\,\, K[M/\Qcal]\komma$$
where $f_{\Pcal\Qcal}$ is the $K\mina$algebra epimorphism $K[\varphi_{\Pcal\Qcal}]:K[M/\Qcal]\rto K[M/\Pcal]$ induced by the canonical projection $\varphi_{\Pcal\Qcal}:M/\Qcal\rto M/\Pcal$.
In particular, $K[\varprojlim_{\Pcal\in I}M\Pcal]=\varprojlim K[M/\Pcal]$.

If $M=\,\,\bigcupbidot_{i=1}^{n}\N^{\infty}$, then $\varprojlim M/\Pcal=(\N_{\ge1}^{\infty})^{n}\cup\{(0\kpkt 0)\}$, cf.\ Example \ref{ExpProjLim}. Therefore,
$$K[\varprojlim M/\Pcal]\,=\,K\oplus\Big(\bigoplus_{\infty\not=a\in(\N_{\ge1}^{\infty})^{n}}KX^{a}\Big)\komma$$
where $X^{a}=X_{1}^{a_{1}}\cdots X_{n}^{a_{n}}$ for $\infty\not=a=(a_{1}\kpkt a_{n})\in(\N_{\ge1}^{\infty})^{n}$. Here the inverse system of $K\mina$algebras is given by $A_{0}:=K[M/M\Uplus]=K$ and $A_{i}:=K[\N^{\infty}]\cong K[X]$, $i\in\{1\kpkt n\}$, with $K\mina$algebra homomorphisms $f_{ii}=\id_{A_{i}}$, $i\in\{0,1\kpkt n\}$ and $f_{i0}:K[X]\rto K$, $F\rto\const(F)$, $i\not=0$, where $\const(F)$ denotes the constant term of $F\in K[M]$. \nomenclature[const]{$\const(F)$}{constant term of $F\in K[M]$}Thus, 
$$\varprojlim_{\Pcal\in I} K[M/\Pcal]\,=\,\Big\{(c,F_{1}\kpkt F_{n})\in K\times\prod_{i=1}^{n}K[M_{i}]\,\,\Big|\, F_{i}\in K[X], \const(F_{i})=c\in K,i\in\{1\kpkt n\}\Big\}\pkt$$
In particular, $\varprojlim K[M/\Pcal]\not=K[\varprojlim M/\Pcal]$ for $n\ge2$ since for instance $(0,X\kpkt 2X)\in\varprojlim K[M/\Pcal]$ lies not in the image of $K[\varprojlim M/\Pcal]\embto\varprojlim K[M/\Pcal]$.
\end {Example}

\begin {Corollary} \label{CorPropBinoidA}
Let $M$ be a binoid.
\begin {ListeTheorem}
\item If $N$ is a subbinoid of $M$, the binoid algebra $K[N]$ is a $K\mina$subalgebra of $K[M]$.
\item If $\aideal$ is an ideal in $K$, then $(K/\aideal)[M]\cong K[M]/\aideal K[M]$.
\item If $M$ is commutative and $\Ical$ an ideal in $M$, then $K[M/\Ical]\cong K[M]/K[\Ical]\cong KM/K\Ical$.
\item If $M$ is commutative and $S$ is a subbinoid of $M$, then $\widetilde{S}:=\{T^{a}\mid a\in S\}$ defines a multiplicative system in $K[M]$ and there is an isomorphism $\widetilde{S}^{-1}(K[M])\cong K[M_{S}]$.
\item If $A$ is a $K\mina$algebra, then $A\otimes_{K}K[M]\cong A[M]$.
\item If $S$ is a multiplicative system in $K$, then $K[M]_{S}\cong K_{S}[M]$.
\end {ListeTheorem}
\end {Corollary}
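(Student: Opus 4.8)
The plan is to derive all six statements from two sources already available: the functoriality and universal property of the binoid algebra (Proposition \ref{PropUnivPropBinoidA}, Corollary \ref{CorUnivPropBinoidA}, Remark \ref{RemUniPropSpecial}) and the corresponding facts for monoid algebras collected in Corollary \ref{CorPropMonoidA}. The bridge between the two is the defining presentation $K[M]=KM/(T^{\infty})$ together with the $K\mina$module decomposition $K[M]\cong\bigoplus_{a\in M\opkt}KT^{a}$, and the observation that $(T^{\infty})=KT^{\infty}$ as a $K\mina$submodule of $KM$. Most parts then amount to checking that the operation in question (restriction to a subbinoid, base change along $K\to K/\aideal$ or $K\to A$, Rees quotient, localization) commutes with killing $T^{\infty}$.

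For (1), I would note that the inclusion $\iota\colon N\embto M$ is a binoid homomorphism, so Remark \ref{RemUniPropSpecial}(2) produces a $K\mina$algebra homomorphism $K[\iota]\colon K[N]\to K[M]$ which is injective because $\iota$ is; concretely it identifies $K[N]=\bigoplus_{a\in N\opkt}KT^{a}$ with the $K\mina$subalgebra of $K[M]$ spanned by the $T^{a}$, $a\in N\opkt$. For (2), Corollary \ref{CorPropMonoidA}(2) gives $KM/\aideal KM\cong(K/\aideal)M$; passing to the quotient by $T^{\infty}$ on both sides and using the third isomorphism theorem, I would match $K[M]/\aideal K[M]=KM/\big(\aideal KM+(T^{\infty})\big)$ with $(K/\aideal)[M]=\big(KM/\aideal KM\big)/(T^{\infty})$. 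For (3), the canonical projection $\pi_{\Ical}\colon M\to M/\Ical$ (a binoid epimorphism by Proposition \ref{PropHomCong}) induces a surjection $K[\pi_{\Ical}]\colon K[M]\to K[M/\Ical]$; since $(M/\Ical)\opkt$ is in bijection with $M\setminus\Ical$ and $T^{a}\mapsto T^{[a]}$ equals $T^{a}$ for $a\notin\Ical$ and $0$ for $a\in\Ical$, its kernel is exactly $K[\Ical]=\bigoplus_{\infty\neq a\in\Ical}KT^{a}$, giving $K[M/\Ical]\cong K[M]/K[\Ical]$; the second isomorphism $K[M]/K[\Ical]\cong KM/K\Ical$ follows from $K[\Ical]=K\Ical/(T^{\infty})$ and $(T^{\infty})\subseteq K\Ical$ (as $\infty\in\Ical$).

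Parts (5) and (6) I would obtain by applying an exact functor to the short exact sequence $0\to(T^{\infty})\to KM\to K[M]\to 0$. For (5), right-exactness of $A\otimes_{K}-$ gives $A\otimes_{K}K[M]\cong(A\otimes_{K}KM)/\im(A\otimes_{K}(T^{\infty}))$; under the isomorphism $A\otimes_{K}KM\cong AM$ of Corollary \ref{CorPropMonoidA}(5) the image of $A\otimes_{K}KT^{\infty}$ is the ideal $(T^{\infty})$ of $AM$, whence $A\otimes_{K}K[M]\cong AM/(T^{\infty})=A[M]$. For (6), exactness of localization at $S\subseteq K$ gives $K[M]_{S}\cong(KM)_{S}/(T^{\infty})_{S}$, and Corollary \ref{CorPropMonoidA}(3) identifies $(KM)_{S}\cong K_{S}M$ carrying $(T^{\infty})_{S}$ to $(T^{\infty})$, so $K[M]_{S}\cong K_{S}M/(T^{\infty})=K_{S}[M]$.

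The main obstacle is (4), which genuinely concerns localization of $M$ itself rather than base change on $K$, so it is the one place where the universal property of binoid localization (Proposition \ref{PropUnivPropLocalization}) enters. First I would check that $\widetilde{S}=\{T^{a}\mid a\in S\}$ is a multiplicative system: $T^{0}=1\in\widetilde{S}$ and $T^{a}T^{b}=T^{a+b}$ with $a+b\in S$. To build the isomorphism I would construct two maps and show they are mutually inverse. In one direction, $K[\iota_{S}]\colon K[M]\to K[M_{S}]$ sends each $T^{s}$, $s\in S$, to $T^{s\minus 0}\in K[M_{S}]\okreuz$, so by the universal property of ring localization it factors through $\widetilde{S}^{-1}K[M]$. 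In the other direction, the binoid homomorphism $M\to(\widetilde{S}^{-1}K[M],\cdot)$, $a\mapsto T^{a}/1$, carries $S$ into units, so Proposition \ref{PropUnivPropLocalization} yields $M_{S}\to\widetilde{S}^{-1}K[M]$ and then Proposition \ref{PropUnivPropBinoidA} gives a $K\mina$algebra map $K[M_{S}]\to\widetilde{S}^{-1}K[M]$; comparing generators $T^{a}/T^{s}\leftrightarrow T^{a\minus s}$ shows the two composites are the identity. I would flag one bookkeeping subtlety here: a subbinoid contains $\infty$, so $T^{\infty}=0\in\widetilde{S}$ forces $\widetilde{S}^{-1}K[M]=0$, matching $M_{S}=\zero$ and $K[M_{S}]=0$; the substantive content is the case $\infty\notin S$, and the argument above covers both uniformly since it never assumes $\infty\notin S$.
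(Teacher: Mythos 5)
Your proof is correct, and its overall architecture coincides with the paper's: every part is reduced to the universal property and functoriality of $K[-]$ (Proposition \ref{PropUnivPropBinoidA}, Corollary \ref{CorUnivPropBinoidA}, Remark \ref{RemUniPropSpecial}) and to the monoid-algebra facts of Corollary \ref{CorPropMonoidA}, by checking that the operation in question commutes with killing $T^{\infty}$. The differences are in execution, chiefly in (4): both proofs obtain the map $\widetilde{S}^{-1}(K[M])\rto K[M_{S}]$ from the universal property of ring localization, but for the inverse the paper rewrites an arbitrary element $\sum_{j}r_{j}T^{a_{j}\minus f_{j}}$ of $K[M_{S}]$ over a common denominator $T^{f}$, $f=\sum_{i}f_{i}\in S$, whereas you construct it structurally by composing Proposition \ref{PropUnivPropLocalization} (applied to $a\mto T^{a}/1$, which sends $S$ into the units of $\widetilde{S}^{-1}K[M]$) with Proposition \ref{PropUnivPropBinoidA}; this avoids the explicit fraction bookkeeping and makes well-definedness of the inverse automatic. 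In (6) the paper quotes part (5) via $K[M]_{S}\cong K_{S}\otimes_{K}K[M]$, while you localize the sequence $0\rto(T^{\infty})\rto KM\rto K[M]\rto 0$ and use Corollary \ref{CorPropMonoidA}(3); in (2) the paper computes the kernel of $K[\pi]$ directly where you use the third isomorphism theorem — these are equivalent. Finally, your observation that a subbinoid $S$ literally contains $\infty$, so $0=T^{\infty}\in\widetilde{S}$ and both sides of (4) vanish, addresses a point the paper passes over silently (its proof, like yours in the substantive case, really treats $S$ as a submonoid), and your argument covers both readings uniformly.
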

\begin {proof}
(1) Is clear. Let $\pi:K\rto K/\aideal$ be the canonical surjection. The kernel of the induced ring epimorphism $K[\pi]:K[M]\rto(K/\aideal)[M]$, cf.\ Corollary \ref {CorUnivPropBinoidA}, consists of all $f\in K[M]$ with coefficients in $\aideal$, which proves (2). (3) The binoid epimorphism $M\rto M/\Ical$ induces a $K\mina$algebra homomorphism $K[M]\rto K[M/\Ical]$ by Corollary \ref {CorUnivPropBinoidA}. Its kernel is given by $\bigoplus_{a\in I\opkt}KT^{a}=K[\Ical]$, hence $K[M/\Ical]\cong K[M]/K[\Ical]$. The latter isomorphism is clear. 
(4) By Corollary \ref{CorUnivPropBinoidA}, the canonical map $\iota_{S}:M\rto M_{S}$ induces a $K\mina$algebra homomorphism $\tilde{\iota}_{S}:K[M]\rto K[M_{S}]$. Let $\iota:K[M]\rto \widetilde{S}^{-1}(K[M])$ denote the canonical ring homomorphism. Since $\iota(S)\subseteq(\widetilde{S}^{-1}(K[M]))^{\times}$, there is by the universal property of localization (for rings) the following commutative diagram
$$\xymatrix{
K[M]\ar[d]_{\tilde{\iota}_{S}}\ar[r]^{\iota}&\widetilde{S}^{-1}(K[M])\ar[dl]^{\psi}\komma\\
K[M_{S}]&}$$
where the induced $K\mina$algebra homomorphism $\psi$ is an isomorphism with inverse given by rewriting elements of $K[M_{S}]$ in the following way 
$$\sum_{j=1}^{n}r_{j}T^{a_{j}\minus f_{j}}=\sum_{j=1}^{n}\frac{r_{j}T^{a_{j}\minus 0}\prod_{i\not=j}T^{f_{i}\minus 0}}{\prod_{i=1}^{n}T^{f_{i}\minus 0}}\quad\lto\quad\sum_{j=1}^{n}\frac{r_{j}T^{a_{j}}\prod_{i\not=j}^{n}T^{f_{i}}}{\prod_{i=1}^{n}T^{f_{i}}}=\frac{\sum_{j=1}^{n}r_{j}T^{a^{\prime}}}{T^{f}}$$
with $a^{\prime}=a_{j}+\sum_{i\not=j}f_{i}\in M$ and $f=\sum_{i=1}^{n}f_{i}\in S$. This is an element in $\widetilde{S}^{-1}(K[M])$. (5) We have $A\otimes_{K}K[M]\cong (A\otimes_{K}KM)/(1\otimes T^{\infty})\cong AM/(T^{\infty})\cong A[M]$, where the isomorphism in the middle is due to Corollary \ref{CorPropMonoidA}(2). (6) We have $K[M]_{S}\cong K_{S}\otimes_{K}K[M]\cong K_{S}[M]$, where the latter isomorphism is due to (5).
\end {proof}

\begin {Corollary}
Let $\Ical\subseteq M$ be an ideal and $e\in\Ical$ an idempotent element such that $K\Ical$ is a $K\mina$algebra with identity $e$. Then $\varphi:KM\rto K\Ical\times K[M/\Ical]$ defined by $\phi(x)=(ex,\pi(x))$, where $\pi:KM\rto K[M/\Ical]$ denotes the canonical homomorphism, is an isomorphism of algebras. In particular, $KM\cong K\times K[M]$ as $K\mina$algebras.
\end {Corollary}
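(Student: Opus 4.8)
The plan is to show directly that $\phi$ is a bijective $K\mina$algebra homomorphism, exploiting two facts that together encode the hypothesis. First, that $T^{e}$ is a two-sided identity on $K\Ical$, i.e.\ $T^{e}w = w = wT^{e}$ for every $w\in K\Ical$ (in particular $T^{e}$ is idempotent). Second, that $\Ical$ being an ideal forces $e+b\in\Ical$ for all $b\in M$, so that $T^{e}T^{b}\in K\Ical$ and hence $(T^{e}T^{b})T^{e} = T^{e}T^{b}$ by the right-identity property. Throughout I identify the idempotent $e\in\Ical$ with its image $T^{e}\in KM$, and I use that $\ker\pi = K\Ical$, which is immediate from the $K\mina$module decomposition $KM = K\Ical\oplus\bigoplus_{a\in M\setminus\Ical}KT^{a}$ together with the fact that $\pi(T^{a}) = 0$ exactly when $a\in\Ical$ (since then $[a]=\infty$ and $T^{\infty}=0$ in $K[M/\Ical]$).

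First I would check that $\phi$ is a homomorphism of $K\mina$algebras. Additivity and $K\mina$linearity are immediate, and the identity is preserved since $\phi(T^{0}) = (T^{e}T^{0},\pi(T^{0})) = (T^{e},1)$. Multiplicativity in the second coordinate is just that $\pi$ is a ring homomorphism, so the only real computation is in the first coordinate, where I must verify $T^{e}xy = (T^{e}x)(T^{e}y)$. By $K\mina$bilinearity it suffices to take $x = T^{b}$ and $y = T^{c}$, where the claim reduces to $T^{e}T^{b}T^{e}T^{c} = T^{e}T^{b}T^{c}$, i.e.\ to the identity $T^{e}T^{b}T^{e} = T^{e}T^{b}$ noted above. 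This is the step I expect to be the main obstacle, since it is precisely where the ideal property and the two-sided identity hypothesis are needed, and it is the only place where non-commutativity of $M$ could conceivably cause trouble; the point is that $e+b\in\Ical$ lets me apply the right-identity property to $T^{e}T^{b}$.

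Next I would prove injectivity: if $\phi(x) = 0$ then $\pi(x) = 0$ gives $x\in\ker\pi = K\Ical$, whence $T^{e}x = x$ by the left-identity property, and $T^{e}x = 0$ forces $x = 0$. For surjectivity, given $(y,z)\in K\Ical\times K[M/\Ical]$, I would lift $z$ to the element $\tilde z = \sum_{a\in M\setminus\Ical}r_{a}T^{a}$ lying in the complement of $K\Ical$, so that $\pi(\tilde z) = z$ and $T^{e}\tilde z\in K\Ical$ (again because $e+a\in\Ical$), and then set $x := \tilde z + (y - T^{e}\tilde z)$. Since $y - T^{e}\tilde z\in K\Ical = \ker\pi$ we get $\pi(x) = z$, and the short computation $T^{e}x = T^{e}\tilde z + T^{e}y - T^{e}T^{e}\tilde z = y$, using idempotency of $T^{e}$ and $T^{e}y = y$, gives $T^{e}x = y$; hence $\phi(x) = (y,z)$.

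Finally, the displayed special case follows by specializing to $\Ical = \zero = \{\infty\}$ and $e = \infty$: this $\Ical$ is an ideal, $\infty$ is idempotent, and $K\zero = KT^{\infty}$ is a $K\mina$algebra with identity $T^{\infty}$ that is isomorphic to $K$ via $rT^{\infty}\mapsto r$. Since the Rees quotient $M/\zero$ equals $M$, we have $K[M/\zero] = K[M]$, and the general isomorphism specializes to $KM\cong K\times K[M]$.
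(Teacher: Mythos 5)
Your proof is correct, and it takes a recognizably different route from the paper's. The paper argues in two steps: it first invokes the standard decomposition of a ring along a central idempotent, $KM\cong K\Ical\times(1-e)KM$ via $x\mto(ex,(1-e)x)$ (which holds because $K\Ical$ is a two-sided ideal with identity $e$), and then identifies $(1-e)KM$ with $K[M/\Ical]$ by checking that $\pi$ restricted to $(1-e)KM$ is bijective, using $\pi((1-e)x)=\pi(x)$ and $\ker\pi\cap(1-e)KM=K\Ical\cap(1-e)KM=0$. You never introduce the element $1-e$ or the intermediate algebra $(1-e)KM$; instead you verify directly that the explicit map $\phi(x)=(ex,\pi(x))$ is a bijective $K\mina$algebra homomorphism, handling surjectivity by the explicit lift $x=\tilde z+(y-T^{e}\tilde z)$. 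What your version buys is self-containedness and a precise localization of where the hypothesis enters: the single identity $T^{e}T^{b}T^{e}=T^{e}T^{b}$ (valid because $e+b\in\Ical$ and $e$ is a right identity on $K\Ical$) is in substance the centrality of $T^{e}$ that the paper's appeal to the idempotent decomposition leaves implicit. What the paper's version buys is brevity. The two arguments produce the same isomorphism, since composing the paper's two identifications gives exactly $x\mto(ex,\pi(x))$, and both treat the supplement identically by specializing to $\Ical=\zero$, $e=\infty$, $K\zero=KT^{\infty}\cong K$ and $K[M/\zero]=K[M]$.
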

\begin {proof}
By assumption, there is a $K\mina$algebra isomorphism $KM\cong K\Ical\times (1-e)KM$, and hence $(1-e)KM\cong KM/K\Ical\cong K[M/\Ical]$ by Corollary \ref {CorPropBinoidA}. Since $\pi((1-e)x)=\pi(x)$ for every $x\in KM$ and $\ker\pi\cap(1-e)KM=K\Ical\cap(1-e)KM=0$, the restriction of $\pi$ to $(1-e)KM$ is an isomorphism $(1-e)KM\cong K[M/\Ical]$ from which the statement follows. The case $\Ical=KT^{\infty}$ proves the supplement.
\end {proof}

\begin {Theorem} \label {ThBinoidADomain}
The binoid algebra $K[M]$ is a domain if and only if $K$ is a domain and $M$ a regular torsion-free binoid.
\end {Theorem}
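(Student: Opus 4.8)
The plan is to reduce the assertion to the corresponding criterion for monoid algebras, Proposition~\ref{PropMonoidADomain}, through the identification $K[M]\cong KM\opkt$ that holds whenever $M$ is integral. First I would clear away the degenerate cases: a domain is by definition nonzero, so if $K=0$ or $M=\zero$ then $K[M]=0$ is not a domain, while on the other side $\zero$ is excluded from the notions of integral and cancellative binoid, so the equivalence holds trivially there. Assuming $K\neq0$ and $M\neq\zero$, the ring $K$ sits inside $K[M]$ as the homogeneous part of degree $0$.

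For the forward implication, suppose $K[M]$ is a domain. Being a subring of a domain, $K$ is a domain. I then claim $M$ is integral: otherwise there would be $a,b\in M\opkt$ with $a+b=\infty$, and then $T^{a},T^{b}\neq0$ while $T^{a}T^{b}=T^{a+b}=T^{\infty}=0$, contradicting the absence of zero divisors. Granting integrality, the module decomposition $K[M]\cong\bigoplus_{a\in M\opkt}KT^{a}$ together with the multiplication rule identifies $K[M]$ with the monoid algebra $KM\opkt$ of the monoid $M\opkt$. Since $\infty$ is absorbing on $M\opkt\cup\zero$, one has $(M\opkt)^{\infty}=M$, and Proposition~\ref{PropMonoidADomain} applied to $M\opkt$ tells us that $KM\opkt$ is a domain if and only if $K$ is a domain and $(M\opkt)^{\infty}=M$ is torsion-free and cancellative. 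Hence $M$ is cancellative and torsion-free, which combined with integrality means $M$ is regular and torsion-free. The converse runs backwards along the same chain: if $M$ is regular and torsion-free, then $M$ is in particular integral, so again $K[M]\cong KM\opkt$ and $(M\opkt)^{\infty}=M$; regularity supplies cancellativity and the hypothesis supplies torsion-freeness, so Proposition~\ref{PropMonoidADomain} shows $KM\opkt\cong K[M]$ is a domain.

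The only genuine bookkeeping is matching the binoid notions of cancellativity and torsion-freeness (Section~\ref{SecAdditivityProp}) with the monoid notions used in Proposition~\ref{PropMonoidADomain}; for integral $M$ these restrict to the corresponding conditions on $M\opkt$, and $(M\opkt)^{\infty}$ faithfully recovers $M$, so the translation is immediate. I do not expect any real obstacle here: the substantive ring-theoretic content is already packaged into Proposition~\ref{PropMonoidADomain}, and the binoid-specific input consists only of the clean isomorphism $K[M]\cong KM\opkt$ in the integral case and the instantaneous appearance of a zero divisor from any relation $a+b=\infty$ with $a,b\neq\infty$.
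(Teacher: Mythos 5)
Your proof is correct and follows the same route as the paper: observe that $K[M]$ being a domain forces $M$ to be integral (any relation $a+b=\infty$ with $a,b\in M\opkt$ yields zero divisors $T^{a}T^{b}=0$), identify $K[M]\cong KM\opkt$, and then invoke Proposition~\ref{PropMonoidADomain}. The paper's proof is just a terser version of exactly this reduction, so there is nothing to add.
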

\begin {proof}
If $K[M]$ is a domain, the binoid $M$ has to be integral; that is, $K[M]\cong KM\opkt$. The theorem follows now from Proposition \ref {PropMonoidADomain}.
\end {proof}

\begin {Corollary} \label{CorBinoidADomain}
If $M$ is a torsion-free binoid and $K$ a domain, then $K[M/M\Uplus]$ is a domain.
\end {Corollary}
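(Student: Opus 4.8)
The plan is to apply Theorem~\ref{ThBinoidADomain} to the binoid $M/M\Uplus$. Since $K$ is already a domain, it suffices to show that $M/M\Uplus$ is a regular torsion-free binoid; the conclusion that $K[M/M\Uplus]$ is a domain is then immediate from that theorem.

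The first step is to identify the Rees quotient $M/M\Uplus$ explicitly. By Example~\ref{ExMaxIdeal} the maximal ideal $M\Uplus=M\setminus M\okreuz$ is prime, so its complement is exactly the unit group $M\okreuz$. Following Remark~\ref{RemIndIsom}, the quotient $M/M\Uplus$ is obtained by collapsing $M\Uplus$ to the single element $\infty$ and leaving $M\okreuz$ untouched; concretely it may be identified with the subbinoid $M\okreuz\cup\zero=(M\okreuz)^{\infty}$ of $M$. The induced addition agrees on units with the addition of $M$: for $u,v\in M\okreuz$ the sum $u+v$ is again a unit, hence $u+v\notin M\Uplus$ and $[u]+[v]=[u+v]$, while any sum involving an element of $M\Uplus$ becomes $\infty$. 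Thus $M/M\Uplus\cong(M\okreuz)^{\infty}$ as binoids.

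It then remains to verify the two required properties for $(M\okreuz)^{\infty}$. This binoid is a binoid group, since its set of non-absorbing elements is precisely the group $M\okreuz$; as a group is a cancellative monoid, the characterization that a binoid is regular if and only if its non-absorbing part is a cancellative monoid shows at once that $(M\okreuz)^{\infty}$ is regular. For torsion-freeness I would use that $(M\okreuz)^{\infty}$ is a subbinoid of the torsion-free binoid $M$: any equation $na=nb$ with $a,b\in M\okreuz\cup\zero$ already holds in $M$ and therefore forces $a=b$, so torsion-freeness is inherited directly from the definition. With regularity and torsion-freeness in hand, Theorem~\ref{ThBinoidADomain} applies and gives that $K[M/M\Uplus]$ is a domain. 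The only point that deserves care is the identification $M/M\Uplus\cong(M\okreuz)^{\infty}$ --- that collapsing the maximal ideal leaves exactly the unit group and that the Rees addition restricts to the binoid-group addition; once this is in place, regularity and the inheritance of torsion-freeness by a subbinoid are purely formal, so I expect no genuine obstacle.
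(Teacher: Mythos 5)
Your argument is correct and follows the paper's own route: both identify $M/M\Uplus$ with the binoid group $(M\okreuz)^{\infty}$ (regular because its non-absorbing part is a group, hence a cancellative monoid) and then invoke Theorem~\ref{ThBinoidADomain}. The only, harmless, divergence is in the torsion-freeness step, where the paper cites Lemma~\ref{LemModuloRadical} (quotients by radical ideals preserve torsion-freeness, $M\Uplus$ being prime and hence radical), whereas you observe directly that $(M\okreuz)^{\infty}$ is a subbinoid of $M$ on which the Rees addition agrees with that of $M$, so torsion-freeness is inherited; both verifications are valid.
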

\begin {proof}
This is an immediate consequence of Theorem \ref{ThBinoidADomain} since $M/M\Uplus=M^{\times}\cup\{\infty\}$ is regular and, by Lemma \ref{LemModuloRadical}, torsion-free.
\end {proof}

Here are two important examples of non-commutative algebras that can be realized as binoid algebras.

\begin {Example}(Matrix algebras) 
For $V=\{1\kpkt n\}$, $n\ge1$, let $M=\{e_{i,j}\mid (i,j)\in V\times V\}\cup\zero$ be the binoid with addition given by
$$e_{i,j}+e_{k,l}\,=\,\begin {cases}
e_{i,l}&\text{, if }j=k\komma\\
\infty&\text{, otherwise.}
\end {cases}$$
If $M_{n}(K)$\nomenclature[M3]{$M_{n}(K)$}{algebra of $(n\times n)\mina$matrices} denotes the $K\mina$algebra of $(n\times n)\mina$matrices, then there is a $K\mina$algebra isomorphism
$$K[M]\stackrel{\sim}{\Rto} M_{n}(K)\komma\quad T^{e_{i,j}}\lto E_{ij}$$
where $E_{ij}=(a_{kl})_{1\le k,l\le n}$ is the elementary matrix with $a_{kl}=1$ if $(k,l)=(i,j)$ and $0$ otherwise.
\end {Example}

\begin {Example} (Path algebras)
A finite \gesperrt{quiver} \index{quiver}$\Qscr$ is a finite directed graph possibly with multiple arrows and loops. In other words, $\Qscr=(V,A,s,t)$ is a quadruple consisting of the finite sets of vertices $V$ and arrows $A$ between them, and two maps $s,t:A\rto V$, which associate to each arrow $\beta\in A$ its source $s(\beta)\in V$ and its target $t(\beta)\in V$. Let $a,b\in V$ be two vertices. A path from $a$ to $b$ in $\Qscr$ is a sequence $(a|\beta_{1}\cdots\beta_{\ell}|b)$, where $\beta_{i}\in A$, $i\in\{1\kpkt\ell\}$, with
$s(\beta_{1})=a$ and $s(\beta_{i+1})=t(\beta_{i})$ for $1\le i<\ell$, and $t(\beta_{\ell})=b$, which may be  briefly denoted by $(\beta_{1}\cdots\beta_{\ell})$ or illustrated as follows
$$a=a_{0}\stackrel{\beta_{1}}{\Rto}a_{1}\stackrel{\beta_{2}}{\Rto}a_{2}\stackrel{\beta_{3}}{\Rto}\cdots\stackrel{\beta_{\ell}}{\Rto}a_{\ell}=b\pkt$$
The number $\ell$ is called the length of $(\beta_{1}\cdots\beta_{\ell})$. Furthermore, we associate to each vertice $a\in V$ a stationary path at $a$ of lenght zero, denoted by $\varepsilon_{a}=(a||a)$. If $Q_{\ell}$ denotes the set of all paths of lengh $\ell$, then $Q_{0}\cong V$ and $Q_{1}\cong A$. Let $n=|A|$. Consider the set 
$$M_{\ell}\,=\,\{(i_{1}\kpkt i_{\ell})\in \{1\kpkt n\}^{\ell} \mid(\beta_{i_{1}}\cdots\beta_{i_{\ell}})\in Q_{\ell}\}\cup\zero$$
of $\ell\mina$tuples which belong to a path in $\Qscr_{\ell}$ with $\infty$ adjoint. Tuples with no corresponding path in $\Qscr$ will be identified with $\infty$. Such tuples do not exist if and only if $A=\{\beta\}$ and $s(\beta)=t(\beta)$. Otherwise, the tuple $\infty$ has every length. It follows that 
$$M_{^{_{\Qscr}}}\,:=\,\bigoplus_{\ell\ge 0}M_{\ell}$$
is a binoid with respect to $(i_{1}\kpkt i_{\ell})\circ(j_{1}\kpkt j_{k})=(i_{1}\kpkt i_{\ell},j_{1}\kpkt j_{k})$ if this tuple belongs to a path in $Q_{\ell+k}$ and $\infty$ otherwise. The corresponding binoid algebra 
$$K[M_{^{_{\Qscr}}}]\,=\,\bigoplus_{(i_{1}\kpkt i_{\ell})\in M_{^{_{\Qscr}}}\opkt}KX_{i_{1}}\cdots X_{i_{\ell}}$$
is given by the quotient $K\langle X_{1}\kpkt X_{n}\rangle/KX^{\infty}$ of the free associative $K\mina$algebra with non-commuting variables $X_{1}\kpkt X_{n}$ and 
$KX^{\infty}={_{K}(}\:X_{i_{1}}\cdots X_{i_{\ell}}\mid (i_{1}\kpkt i_{\ell})\notin M_{\ell}, \ell\ge0)$. If $K[\Qscr]$ denotes the path algebra of $\Qscr$ with $K\mina$basis $\{(\beta_{i_{1}}\cdots\beta_{i_{\ell}})\in Q_{\ell}\mid\ell>0\}$, then the natural map given by $(\beta_{i_{1}}\cdots\beta_{i_{k}})\mto X_{i_{1}}\cdots X_{i_{k}}$ is a $K\mina$algebra isomorphism 
$$K[\Qscr]\stackrel{\sim}{\Rto} K[M_{^{_{\Qscr}}}]\pkt$$
In case $K$ is an algebraically closed field, the 
preceding definition of the path algebra associated to a quiver agrees with the classical one from representation theory (\cite[Chapter 4]{Benson}). \index{path algebra}It is a well-known fact that  $K[\Qscr]$ has a unit, namely $1_{K[\Qscr]}=\sum_{a\in V}\varepsilon_{a}$, if and only if $V$ is finite. In this case, $\{\varepsilon_{a}\mid a\in V\}$ is a complete set of primitive orthogonal idempotents for $K[\Qscr]$. 

It might be useful to have another, slightly different definition of a (finite) quiver changing this situation. For this, let $(V,A,s,t)$ as before, but instead of associating to each vertice a stationary path, we consider only the empty (or totally stationary) path $\emptyset$ subject to the rule
$$\emptyset\beta\,=\,\beta\emptyset\,=\,\beta$$
for all $\beta\in A$ (i.e.\ $Q_{0}=\{\emptyset\}$). Accordingly, we call a quiver defined in this way a \gesperrt{quiver with} $1$ ($=\emptyset$)\index{quiver!-- with $1$}, and denote it by $\Qscr^{1}$. This definition makes even more sense when passing to the binoid algebra of $M_{^{_{\Qscr^{1}}}}$ because the unit element of the quiver coincides with the unit of the $K\mina$algebra $K[\Qscr^{1}]$. If $V=\{a\}$, then $\Qscr=\Qscr^{1}$ with $1=\varepsilon_{a}$.
\end {Example}

\bigskip

\section {Ideals in binoid algebras} \label{SecIdealsBinoidAlgebra}
\markright{\ref{SecIdealsBinoidAlgebra} Ideals in binoid algebras}

\begin {Convention}
In this section, arbitrary binoids are assumed to be \emph{commutative}.
\end {Convention}

Recall that every ideal $\Ical$ in a binoid $M$ defines a monomial ideal in $K[M]$, namely\index{monomial ideal}\index{ideal!monomial --}
$$K[\Ical]\,=\,\bigoplus_{a\in\Ical}KT^{a}\pkt$$
Conversely, to each ideal $\aideal\subseteq K[M]$ there is an ideal of exponents in $M$, $$\Ical(\aideal)\,:=\,\{a\in M\mid T^{a}\in\aideal\}\pkt$$

\begin {Lemma}\label{LemMonIdealCorresp}
\begin {ListeTheorem}
\item[]
\item Let $\Ical$ and $\Jcal$ be two ideals in $M$.
\begin {ListeTheorem}
\item [(a)] $K[\Ical\cup\Jcal]\,=\,K[\Ical]+K[\Jcal]$.
\item [(b)] $K[\Ical\cap\Jcal]\,=\,K[\Ical]\cap K[\Jcal]$.
\item [(c)] $K[\Ical+\Jcal]\,=\,K[\Ical]\cdot K[\Jcal]$.
\end {ListeTheorem}
\item [(2)]$K[\Ical(\aideal)]\subseteq\aideal$ for every ideal $\aideal$ in $K[M]$. Moreover, if $\aideal$ and $\bideal$ are monomial ideals in $K[M]$, then
\begin {ListeTheorem}
\item [(a)] $K[\Ical(\aideal)]=\aideal$. In particular, $\Ical(-)$ establishes a bijection between the set of ideals of $M$ and the set of monomial ideals of $K[M]$ with inverse $K[-]$, namely $\Ical\mto K[\Ical]$ and $\Ical(\aideal)\mapsfrom\aideal$.
\item [(b)] $\bideal\subseteq\aideal$ if and only if $\Ical(\bideal)\subseteq\Ical(\aideal)$.
\item [(c)] If $\aideal$ is a radical ideal then so is $\Ical(\aideal)$.
\end {ListeTheorem}
\end {ListeTheorem}
\end {Lemma}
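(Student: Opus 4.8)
The plan is to let everything rest on one structural fact: for $K\neq 0$ the family $(T^{a})_{a\in M\opkt}$ is a $K\mina$module basis of $K[M]$ (and in the degenerate case $K=0$ the algebra and all its ideals are zero, so every assertion is trivial). Consequently $K[\Ical]=\bigoplus_{a\in\Ical\opkt}KT^{a}$ is exactly the $K\mina$span of the monomials $T^{a}$ with $a\in\Ical\opkt$, and every $f\in K[M]$ has a well-defined support with respect to this basis. I would first dispatch part (1) purely on the level of supports and generating monomials. For (1a) I use $(\Ical\cup\Jcal)\opkt=\Ical\opkt\cup\Jcal\opkt$ together with the fact that the span of a union of basis elements is the sum of the spans. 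For (1c) I compute the products of generators: $T^{a}\cdot T^{b}=T^{a+b}$ when $a+b\neq\infty$ and $0$ otherwise, so the products $T^{a}T^{b}$ with $a\in\Ical$, $b\in\Jcal$ range precisely over the monomials $T^{c}$ with $c\in(\Ical+\Jcal)\opkt$ (recall $\Ical+\Jcal=\{a+b\mid a\in\Ical,\,b\in\Jcal\}$ from Example \ref{ExSpIdeals}(4)), giving $K[\Ical]\cdot K[\Jcal]=K[\Ical+\Jcal]$. For (1b) the inclusion $\subseteq$ is immediate, and for $\supseteq$ I invoke linear independence: an $f\in K[\Ical]\cap K[\Jcal]$ has support inside $\Ical\opkt$ and inside $\Jcal\opkt$, hence inside $(\Ical\cap\Jcal)\opkt$, so $f\in K[\Ical\cap\Jcal]$.

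For part (2) I would begin by checking that $\Ical(\aideal)$ is genuinely an ideal in $M$: one has $\infty\in\Ical(\aideal)$ since $T^{\infty}=0\in\aideal$, and $a\in\Ical(\aideal)$, $m\in M$ force $T^{a+m}=T^{a}T^{m}\in\aideal$, so $a+m\in\Ical(\aideal)$. The inclusion $K[\Ical(\aideal)]\subseteq\aideal$ is then automatic, because every generating monomial $T^{a}$ with $a\in\Ical(\aideal)$ lies in $\aideal$ by definition. For (2a), if $\aideal$ is monomial, say generated by $\{T^{a}\mid a\in S\}$, then each such $a$ satisfies $T^{a}\in\aideal$, hence $a\in\Ical(\aideal)$ and $T^{a}\in K[\Ical(\aideal)]$; as $K[\Ical(\aideal)]$ is an ideal containing the generators of $\aideal$, we get $\aideal\subseteq K[\Ical(\aideal)]$ and thus equality. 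Combining this with the complementary identity $\Ical(K[\Ical])=\Ical$ (which follows once more from linear independence, since $T^{a}\in K[\Ical]$ holds exactly when $a\in\Ical$), I conclude that $\Ical\mapsto K[\Ical]$ and $\aideal\mapsto\Ical(\aideal)$ are mutually inverse, establishing the claimed bijection between ideals of $M$ and monomial ideals of $K[M]$.

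Finally I would treat (2b) and (2c). For (2b) the forward direction needs nothing: $a\in\Ical(\bideal)$ gives $T^{a}\in\bideal\subseteq\aideal$, so $a\in\Ical(\aideal)$. The converse uses the evident monotonicity of $K[-]$ (clear from the direct-sum description) together with (2a): $\Ical(\bideal)\subseteq\Ical(\aideal)$ yields $\bideal=K[\Ical(\bideal)]\subseteq K[\Ical(\aideal)]=\aideal$. The step I expect to require the most care is (2c), since it bridges two \emph{different} notions of radical: the binoid radical $\sqrt{\Ical(\aideal)}=\{a\in M\mid na\in\Ical(\aideal)\text{ for some }n\ge1\}$, defined additively, and the algebra radical of $\aideal$, defined by powers. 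The link is the identity $T^{na}=(T^{a})^{n}$: if $a\in\sqrt{\Ical(\aideal)}$ with $na\in\Ical(\aideal)$, then $(T^{a})^{n}=T^{na}\in\aideal$, so $T^{a}\in\sqrt{\aideal}=\aideal$ since $\aideal$ is radical, whence $a\in\Ical(\aideal)$ and $\Ical(\aideal)$ is a radical ideal. The one degenerate case $na=\infty$ is harmless, because then $(T^{a})^{n}=T^{\infty}=0\in\aideal$ exhibits $T^{a}$ as nilpotent, and any radical ideal contains the nilradical, so again $T^{a}\in\aideal$.
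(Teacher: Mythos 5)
Your proof is correct and is precisely the routine verification that the paper leaves to the reader (its own proof of this lemma reads only ``All assertions are easily verified''): everything reduces, as you say, to the fact that $K[\Ical]$ is the $K\mina$span of the basis monomials $T^{a}$ with $a\in\Ical\opkt$, together with the identity $(T^{a})^{n}=T^{na}$ (with $T^{\infty}=0$) for part (2c). The only micro-caveat is your opening remark that for $K=0$ ``every assertion is trivial'': the identities all hold, but the bijection claim in (2a) does not (all monomial ideals collapse to $0$ while $M$ may have many ideals), so that statement tacitly presupposes $K\neq 0$ -- an assumption the paper itself makes silently, e.g.\ whenever it uses the embedding $M\embto K[M]$.
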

\begin {proof}
All assertions are easily verified.
\end {proof}

\begin {Proposition}\label{PropBinoidMax=AlgebraMax}
Let $K$ be a field. If $M$ is positive, then $K[M\Uplus]$ is a maximal ideal in $K[M]$. 
\end {Proposition}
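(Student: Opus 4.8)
The plan is to identify the quotient ring $K[M]/K[M\Uplus]$ with the field $K$, so that maximality of $K[M\Uplus]$ follows from the standard fact that an ideal is maximal exactly when its quotient ring is a field. First I would recall that $M\Uplus$ is the maximal ideal of the binoid $M$ (Example \ref{ExSpIdeals}(2), Example \ref{ExMaxIdeal}(1)), so that $K[M\Uplus]=\bigoplus_{a\in M\Uplus}KT^{a}$ is genuinely a monomial ideal of $K[M]$; in particular it is a well-defined object to test for maximality.

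Second, I would apply Corollary \ref{CorPropBinoidA}(3) to the ideal $\Ical=M\Uplus$, which yields a ring (indeed $K\mina$algebra) isomorphism
$$K[M]/K[M\Uplus]\,\,\cong\,\,K[M/M\Uplus]\pkt$$
The key computation is then to determine the Rees quotient $M/M\Uplus$. Here positivity enters: since $M$ is positive, its unit group is trivial, $M\okreuz=\{0\}$, and hence $M\setminus M\Uplus=M\okreuz=\{0\}$. Collapsing the ideal $M\Uplus$ to the absorbing element therefore leaves exactly the two classes $[0]$ and $[\infty]$, and these are distinct because $M$ is nonzero (positivity being defined only for nonzero binoids, so $0\not=\infty$ in $M$). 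Thus $M/M\Uplus\cong\trivial$, the trivial binoid.

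Finally, since $K[\trivial]=K$, I would conclude $K[M]/K[M\Uplus]\cong K$. As $K$ is a field, the quotient is a field and in particular nonzero, so $K[M\Uplus]$ is a proper, maximal ideal of $K[M]$. The argument is essentially a two-line reduction once the dictionary between binoid ideals and monomial ideals is in place; the only point requiring care---and the step I would treat as the main (minor) obstacle---is verifying that under positivity the Rees quotient $M/M\Uplus$ really collapses to $\trivial$, which hinges precisely on $M\okreuz=\{0\}$ forcing $M\setminus M\Uplus$ to be the singleton $\{0\}$ rather than some larger submonoid.
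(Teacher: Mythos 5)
Your proof is correct and follows exactly the route the paper takes: identify $K[M]/K[M\Uplus]\cong K[M/M\Uplus]$ via Corollary \ref{CorPropBinoidA}(3), observe that positivity forces $M/M\Uplus=\trivial$, and conclude that the quotient is $K[\trivial]=K$, a field. Your extra care in verifying $M\setminus M\Uplus=\{0\}$ is a sound elaboration of a step the paper leaves implicit.
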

\begin{proof} We have $K\,\cong\, K[\trivial]=K[M/M\Uplus]=K[M]/K[M\Uplus]$, where the latter identity is due to Corollary \ref{CorPropBinoidA}(3).
\end{proof}

This result fails for non-positive binoids. As an easy example consider the binoid group $M=(\Z/n\Z)^{\infty}$ with $n\ge2$. Then $K[M]/K[M\Uplus]=K[M/M\Uplus]=K[M]\cong K[X]/(X^{n}-1)$ is not a field since $X-1$ is a zero-divisor. In particular, $K[M\Uplus]=K[\zero]=0$ is not a maximal ideal in $K[M]$.

\begin {Corollary}\nomenclature[Dimension6]{$\dim_{K}V$}{dimension of the $K\mina$module $V$}
Let $K$ be a field. If $M$ is finitely generated and positive, then
$$\dim_{K}K[M]/(K[M\Uplus])^{n}=\opH(n,M)\pkt\footnote{Here, we use the convention $\aideal^{0}:=R$ for an ideal $\aideal$ in a ring $R$.}$$
\end {Corollary}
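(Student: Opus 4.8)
The plan is to reduce the dimension computation to a counting problem in a Rees quotient, by passing through the dictionary between monomial ideals of $K[M]$ and ideals of $M$. First I would note that since $M\Uplus$ is an ideal of $M$ (Example \ref{ExSpIdeals}(2)), its $n$-fold sum $nM\Uplus=M\Uplus\pluspkt M\Uplus$ is again an ideal (Example \ref{ExSpIdeals}(4)). Then, using the multiplicativity relation $K[\Ical+\Jcal]=K[\Ical]\cdot K[\Jcal]$ from Lemma \ref{LemMonIdealCorresp}(1)(c), an immediate induction on $n$ would give
$$(K[M\Uplus])^{n}\,=\,K[nM\Uplus]$$
for all $n\ge 1$.

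Next I would invoke Corollary \ref{CorPropBinoidA}(3), which provides the $K\mina$algebra isomorphism $K[M]/K[\Ical]\cong K[M/\Ical]$ for every ideal $\Ical\subseteq M$. Applying it with $\Ical=nM\Uplus$ yields
$$K[M]/(K[M\Uplus])^{n}\,=\,K[M]/K[nM\Uplus]\,\cong\,K[M/nM\Uplus]\komma$$
and since $K$ is a field, the dimension of the binoid algebra $K[M/nM\Uplus]=\bigoplus_{a\in(M/nM\Uplus)\opkt}KT^{a}$ is exactly the number of its non-absorbing basis elements, i.e.\ $\#\big((M/nM\Uplus)\opkt\big)$.

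It then remains to identify this cardinality with $\opH(n,M)$. Using the identification of $M/nM\Uplus$ with $(M\setminus nM\Uplus)\cup\zero$ from Remark \ref{RemIndIsom}, the non-absorbing elements of $M/nM\Uplus$ correspond bijectively to $M\setminus nM\Uplus$, so that $\#\big((M/nM\Uplus)\opkt\big)=\#(M\setminus nM\Uplus)=\opH(n,M)$ for $n\ge 1$; equivalently, this is the relation $\opH(n,M)=\#(M/nM\Uplus)-1$ recorded in Example \ref{ExCompositionsIdeal}(2). For the degenerate case $n=0$, the convention $\aideal^{0}:=K[M]$ forces $K[M]/(K[M\Uplus])^{0}=0$, which matches $\opH(0,M)=0$ by definition.

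The argument is a chain of previously established isomorphisms, so there is no deep obstacle; the only point requiring genuine care is that the left-hand side is actually a finite-dimensional $K\mina$vector space. This I would secure from the hypotheses that $M$ is positive and finitely generated: by the Lemma preceding the definition of the Hilbert-Samuel function, $M\Uplus\setminus nM\Uplus$ is finite for all $n\ge 1$, and since positivity gives $M\okreuz=\{0\}$ and hence $M\setminus nM\Uplus=\{0\}\cup(M\Uplus\setminus nM\Uplus)$, the set $M\setminus nM\Uplus$ is finite, making $\opH(n,M)$ and the dimension both finite.
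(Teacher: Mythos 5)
Your proposal is correct and follows essentially the same route as the paper: identify $(K[M\Uplus])^{n}$ with $K[nM\Uplus]$, pass to $K[M/nM\Uplus]=\bigoplus_{a\in(M/nM\Uplus)\opkt}KT^{a}$, and count basis elements to get $\opH(n,M)$. You merely make explicit some steps the paper leaves implicit (the induction via Lemma \ref{LemMonIdealCorresp}(1)(c), the finiteness of the dimension, and the $n=0$ convention), which is fine.
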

\begin {proof}
We have
$$K[M]/(K[M\Uplus])^{n}\,\,=\,\,K[M]/K[nM\Uplus]\,\,=\,\,K[M/nM\Uplus]\,\,=\,\,\bigoplus_{a\in(M/nM\Uplus)\opkt}KT^{a}$$
as $K\mina$vector spaces, hence $\dim_{K}K[M]/(K[M\Uplus])^{n}=\#(M/nM\Uplus)-1=\opH(n,M)$.
\end {proof}

\begin{Proposition} \label{PropPrimeICorrespondence}
Let $M$ be a binoid. If $\pideal\in\spec K[M]$, then $\Ical(\pideal)\in\spec(M)$. In particular, every monomial prime ideal in $K[M]$ is of the form $K[\Pcal]$ for some prime ideal $\Pcal\in\spec M$. Conversely, if $M$ is torsion-free and regular, $\Pcal\in\spec M$, and $K$ a domain, then $K[\Pcal]$ is a (monomial) prime ideal in $K[M]$.
\end{Proposition}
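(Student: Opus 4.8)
The plan is to treat the three assertions in turn, the first two being quite direct and the converse requiring the structural results on quotients.

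For the first assertion, let $\pideal\in\spec K[M]$ be a ring prime ideal and set $\Ical(\pideal)=\{a\in M\mid T^{a}\in\pideal\}$. I would first check that $\Ical(\pideal)$ is a binoid ideal: it contains $\infty$ because $T^{\infty}=0\in\pideal$, and if $a\in\Ical(\pideal)$ and $b\in M$ then $T^{a+b}=T^{a}T^{b}\in\pideal$ (recall $T^{\infty}=0$), so $a+b\in\Ical(\pideal)$. It is proper since $T^{0}=1\notin\pideal$, whence $0\notin\Ical(\pideal)$ and thus $\Ical(\pideal)\neq M$. Finally, the prime property is immediate from the ring-primality of $\pideal$: if $a+b\in\Ical(\pideal)$, then $T^{a}T^{b}=T^{a+b}\in\pideal$, so $T^{a}\in\pideal$ or $T^{b}\in\pideal$, i.e.\ $a\in\Ical(\pideal)$ or $b\in\Ical(\pideal)$. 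By the characterization in Lemma \ref{LemCharacterizationPrime}(5) this shows $\Ical(\pideal)\in\spec M$. The second assertion then follows at once: if $\aideal\subseteq K[M]$ is a monomial prime ideal, Lemma \ref{LemMonIdealCorresp}(2a) gives $\aideal=K[\Ical(\aideal)]$, and the first part shows $\Pcal:=\Ical(\aideal)\in\spec M$.

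For the converse, which is the substantive part, I would pass to the quotient. By Corollary \ref{CorPropBinoidA}(3) we have $K[M]/K[\Pcal]\cong K[M/\Pcal]$, so $K[\Pcal]$ is a prime ideal of $K[M]$ precisely when $K[M/\Pcal]$ is a domain. Theorem \ref{ThBinoidADomain} reduces this to verifying that $K$ is a domain (true by hypothesis) and that $M/\Pcal$ is regular and torsion-free. Since $\Pcal$ is prime, $M/\Pcal$ is integral by Lemma \ref{LemCharacterizationPrime}; and because $M$ is regular, hence cancellative, the quotient $M/\Pcal$ stays cancellative by Lemma \ref{LemQuotientCancPosRepF} (as $\Pcal\neq M$), so $M/\Pcal$ is regular.

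The one genuinely delicate point is torsion-freeness of $M/\Pcal$: Lemma \ref{LemQuotientTorF} only yields torsion-freeness up to nilpotence for an arbitrary ideal, which is not enough. The fix is to exploit that a prime ideal is a radical ideal, so that $M/\Pcal$ is reduced by Lemma \ref{LemRadikalReduced}; combined with torsion-freeness up to nilpotence this gives genuine torsion-freeness, which is exactly the content of Lemma \ref{LemModuloRadical} applied to the radical ideal $\Pcal$. Hence $M/\Pcal$ is regular and torsion-free, $K[M/\Pcal]$ is a domain, and $K[\Pcal]$ is a monomial prime ideal of $K[M]$. I expect this recognition that $\Pcal$ is radical --- rather than just applying the generic quotient lemma --- to be the main obstacle.
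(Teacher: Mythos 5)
Your proof is correct and follows essentially the same route as the paper: primality of $\Ical(\pideal)$ from ring-primality of $\pideal$, the monomial case via Lemma \ref{LemMonIdealCorresp}(2a), and the converse via $K[M]/K[\Pcal]\cong K[M/\Pcal]$ together with Theorem \ref{ThBinoidADomain}. Your handling of torsion-freeness is in fact more careful than the paper's own proof, which cites Lemma \ref{LemQuotientTorF} directly even though that lemma only yields torsion-freeness up to nilpotence; your observation that $\Pcal$ is a radical ideal, so that Lemma \ref{LemModuloRadical} applies, is exactly the right way to close that step.
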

\begin {proof}
Let $\pideal\in\spec K[M]$ and $a,b\in M$ with $a+b\in\Ical(\pideal)$. Then $T^{a+b}=T^{a}\cdot T^{b}\in\pideal$, which implies that $T^{a}\in\pideal$ or $T^{b}\in\pideal$ by the prime property. Hence, $a\in\Ical(\pideal)$ or $b\in\Ical(\pideal)$. This shows that $\Ical(\pideal)$ is prime. The additional observation follows now from Lemma \ref{LemMonIdealCorresp}(2a). Conversely, if $\Pcal$ is a prime ideal, then $M/\Pcal$ is integral and again torsion-free and cancellative by Lemma \ref{LemQuotientCancPosRepF} and Lemma \ref{LemQuotientTorF}. Hence, $K[M/\Pcal]\cong K[M]/K[\Pcal]$ is a domain by Theorem \ref{ThBinoidADomain}, which implies that $K[\Pcal]\in\spec K[M]$.
\end {proof}

\begin {Corollary}
Let $K$ be a domain and $M$ a torsion-free cancellative binoid. If $\pideal$ is a minimal prime ideal of $K[M]$, then $\Ical(\pideal)$ is a minimal prime in $M$.
\end {Corollary}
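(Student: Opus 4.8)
The plan is to argue by contradiction, exploiting the correspondence $\Ical(-)$ between prime ideals of $M$ and monomial prime ideals of $K[M]$. By Proposition \ref{PropPrimeICorrespondence} we already know that $\Ical(\pideal)\in\spec M$, so the only thing left is to show that no prime ideal of $M$ lies strictly below $\Ical(\pideal)$. Suppose, to the contrary, that there is a prime ideal $\Qcal\in\spec M$ with $\Qcal\subsetneq\Ical(\pideal)$. I would then manufacture from $\Qcal$ a prime ideal of $K[M]$ strictly contained in $\pideal$, contradicting the minimality of $\pideal$.

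The candidate is the monomial ideal $K[\Qcal]$, and to see that it is prime I would pass to the quotient binoid $M/\Qcal$. Since $\Qcal$ is prime, $M/\Qcal$ is integral by Lemma \ref{LemCharacterizationPrime}, hence reduced; since $M$ is cancellative, $M/\Qcal$ is cancellative by Lemma \ref{LemQuotientCancPosRepF}, so $M/\Qcal$ is regular; and since $M$ is torsion-free, $M/\Qcal$ is torsion-free up to nilpotence by Lemma \ref{LemQuotientTorF}, which together with reducedness makes it torsion-free. Thus $M/\Qcal$ is regular and torsion-free, and because $K$ is a domain, Theorem \ref{ThBinoidADomain} shows that $K[M/\Qcal]$ is a domain. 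Using the identification $K[M/\Qcal]\cong K[M]/K[\Qcal]$ from Corollary \ref{CorPropBinoidA}(3), this says precisely that $K[\Qcal]\in\spec K[M]$.

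It then remains to establish the inclusions $K[\Qcal]\subsetneq K[\Ical(\pideal)]\subseteq\pideal$. The right-hand inclusion is Lemma \ref{LemMonIdealCorresp}(2), applied to the ideal $\pideal$. The left-hand strict inclusion follows because $K[-]$ is an inclusion-preserving bijection between ideals of $M$ and monomial ideals of $K[M]$ with inverse $\Ical(-)$ (Lemma \ref{LemMonIdealCorresp}(2a) and (2b)): the inclusion $\Qcal\subsetneq\Ical(\pideal)$ forces $K[\Qcal]\subsetneq K[\Ical(\pideal)]$, since equality of the images would, after applying $\Ical(-)$, return $\Qcal=\Ical(\pideal)$. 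Hence $K[\Qcal]$ is a prime ideal of $K[M]$ properly contained in $\pideal$, the desired contradiction. The only genuinely delicate point is the middle step: $M$ is assumed merely cancellative, not regular, so the converse part of Proposition \ref{PropPrimeICorrespondence} does not apply to $M$ itself; the remedy is exactly to quotient by $\Qcal$, which restores integrality (and hence regularity) before invoking Theorem \ref{ThBinoidADomain}.
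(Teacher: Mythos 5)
Your proof is correct and follows essentially the same route as the paper: assume a prime $\Qcal\subsetneq\Ical(\pideal)$, show $K[\Qcal]$ is a prime ideal of $K[M]$ strictly contained in $\pideal$, and contradict minimality. You are in fact slightly more careful than the paper's own proof, which writes $K[\Ical(\pideal)]=\pideal$ (true only for monomial $\pideal$) where your inclusion $K[\Qcal]\subsetneq K[\Ical(\pideal)]\subseteq\pideal$ suffices, and which cites the converse of Proposition \ref{PropPrimeICorrespondence} despite its stated regularity hypothesis — a gap you correctly close by passing to $M/\Qcal$ exactly as in that proposition's proof.
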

\begin {proof}
By Proposition \ref{PropPrimeICorrespondence}, only the minimality of $\Ical(\pideal)$ needs to be verified. So suppose that $\Qcal\subseteq M$ is a prime ideal with $\Qcal\subsetneq\Ical(\pideal)$. Then $K[\Qcal]\in\spec K[M]$ by Proposition \ref{PropPrimeICorrespondence} with $K[\Qcal]\subsetneq K[\Ical(\pideal)]=\pideal$, which contradicts the minimality of $\pideal$. Hence, $\Ical(\pideal)$ is a minimal prime ideal.
\end {proof}

\bigskip

\section {$K[N]\mina$modules} \label{SecBinoidModules}
\markright{\ref{SecBinoidModules} $K[N]\mina$modules}

The concept of binoid algebras for binoids can be generalized to arbitrary $N\mina$sets $(S,p)$; that is, to every $N\mina$set $(S,p)$ one can associate a $K[N]\mina$module $K[S]$. The following definition of $K[S]$ suggests what the subsequent two results show: many of the results on binoid algebras given in Section \ref{SecBinoidAlgebra} can be generalized to $N\mina$sets and their associated $K[N]\mina$modules since every binoid is a $\trivial\mina$set. In this thesis, the results are not needed in their more general module theoretic form, but rather as they are stated in Section \ref{SecBinoidAlgebra} for binoid algebras. However, there is a purely module theoretic result, cf.\ Proposition \ref{PropNsetsDecompositionDirectSum}, even when the $N\mina$sets are given by binoids (i.e.\ $N=\trivial$). We close this section by applying this result to an important example, the blowup binoid.

\begin {Definition}
For an $N\mina$set $(S,p)$ let $K[S]$ denote the $K[N]\mina$module given by the set of all formal sums
$\sum_{s\in T}r_{s}X^{s}$ with $T\subseteq S\opkt$ finite, $r_{s}\in K$, and scalar multiplication defined by\nomenclature[K5]{$K[S]$}{$K[N]\mina$module of the $N\mina$set $(S,p)$}
$$r_{a}X^{a}\cdot r_{s}X^{s}:=\begin {cases}
r_{a}r_{s}X^{a+s}&\text{, if }a+s\not=p\komma\\
0&\text{, otherwise,}
\end{cases}$$
where $a\in N$ and $s\in S$.
\end {Definition}

\begin {Example}
\begin {ListeTheorem}
\item[]
\item The ideal $K[\Ical]$ is a $K[M]\mina$module that can be understood as coming from the $M\mina$set $(\Ical,\infty)$.
\item Let $Q\subseteq\Z^{d}$ be a submonoid and $F$ a proper filter of $Q^{\infty}$. We consider the subset
$$F-Q=\{f-q\mid f\in F,q\in Q\}$$
of $\Z^{d}$, and its translates $a+F-Q$ for $a\in\Z^{d}$. The pointed set $M_{a}:=(a+F-Q)^{\infty}$ is a $Q^{\infty}\mina$set with respect to the operation $Q^{\infty}\times M_{a}\Rto M_{a}$,
$$(q,u)\lto\begin {cases}
q+u&\text{, if }q+u\in M_{a}\komma\\
\infty&\text{, otherwise.}
\end {cases}$$
The set $F-Q$ is called the \emph{injective hull} of the filter $F$, and the $K[Q^{\infty}]\mina$module $K[M_{a}]$ an \emph{indecomposable injective} of $Q$, cf.\ \cite[Definition 11.7]{MillerSturmfels}.
\end {ListeTheorem}
\end {Example}

\begin {Lemma}
$S$ is a finitely generated $N\mina$set if and only if $K[S]$ is a finitely generated $K[N]\mina$module.
\end {Lemma}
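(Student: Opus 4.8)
The plan is to exploit the fact that, as a $K\mina$module, $K[S]=\bigoplus_{s\in S\opkt}KX^{s}$ is free with basis the monomials $X^{s}$, $s\in S\opkt$, and that the $K[N]\mina$action is completely determined on this basis by $T^{a}\cdot X^{s}=X^{a+s}$ (with the convention $X^{p}:=0$), for $a\in N$. Thus the single combinatorial identity $s=a+t$ in $S$ with $a+t\neq p$ translates into the module identity $X^{s}=T^{a}\cdot X^{t}$, and this dictionary is essentially the whole content of the lemma. Throughout I assume $K\neq0$, so that $X^{s}\neq0$ for $s\in S\opkt$ and distinct monomials are $K\mina$linearly independent. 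I would record these remarks first, since everything else is bookkeeping against this grading.

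For the implication ``$S$ finitely generated $\Rarrow K[S]$ finitely generated'', I would start from a finite generating set $T=\{t_{1}\kpkt t_{n}\}$ of $S$, i.e.\ $S=\bigcup_{i}(N+t_{i})$, and claim that the corresponding monomials $X^{t_{1}}\kpkt X^{t_{n}}$ generate $K[S]$ as a $K[N]\mina$module. It suffices to hit each basis element $X^{s}$, $s\in S\opkt$: writing $s=a+t_{i}$ for suitable $a\in N$ and index $i$, and noting $a+t_{i}=s\neq p$, the action gives $X^{s}=T^{a}\cdot X^{t_{i}}$. An arbitrary $f=\sum_{s}r_{s}X^{s}\in K[S]$ is then the $K[N]\mina$combination $\sum_{s}r_{s}T^{a_{s}}\cdot X^{t_{i(s)}}$, so the $X^{t_{i}}$ indeed generate.

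For the converse I would take finitely many module generators $g_{1}\kpkt g_{m}$ of $K[S]$ and let $T\subseteq S\opkt$ be the finite set of all exponents $s'$ occurring with nonzero coefficient in some $g_{j}$; I claim $T$ generates $S$ as an $N\mina$set. Fixing $s\in S\opkt$, I would write $X^{s}=\sum_{j}c_{j}g_{j}$ with $c_{j}\in K[N]$, expand each $c_{j}=\sum_{a}r_{j,a}T^{a}$ and each $g_{j}=\sum_{s'\in T}d_{j,s'}X^{s'}$, and use $T^{a}\cdot X^{s'}=X^{a+s'}$ to obtain $X^{s}=\sum r_{j,a}d_{j,s'}X^{a+s'}$. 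Comparing the coefficient of $X^{s}$ on both sides forces some term with $a+s'=s$, $s'\in T$, $a\in N$; hence $s\in N+s'\subseteq\bigcup_{t\in T}(N+t)$. Finally the distinguished point is covered by $p=\infty+t$ for any $t\in T$ (property (2) of the operation), using that $T\neq\emptyset$ because $K[S]\neq0$. Thus $S=\bigcup_{t\in T}(N+t)$, as required.

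The only genuine obstacle is the coefficient comparison in the converse, which is where the $S\mina$graded structure of $K[S]$ (the decomposition $K[S]=\bigoplus_{s}KX^{s}$ together with $T^{a}\cdot KX^{s'}\subseteq KX^{a+s'}$) must be invoked: one has to be sure that the monomial $X^{s}$ can only be produced from a generator $g_{j}$ through an exponent $s'$ actually appearing in $g_{j}$, rather than through some cancellation among different $a$'s and $s'$'s. The freeness of $K[S]$ over $K$ on $\{X^{s}\}$ settles this as soon as $K\neq0$; the degenerate case $K=0$ makes $K[S]=0$ for every $S$ and is simply excluded.
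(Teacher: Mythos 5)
Your proof is correct; both directions are exactly the bookkeeping via the decomposition $K[S]=\bigoplus_{s\in S\opkt}KX^{s}$ and the dictionary $s=a+t\leftrightarrow X^{s}=T^{a}\cdot X^{t}$ that the paper has in mind when it dismisses the lemma with ``This is clear.'' Your explicit coefficient comparison in the converse (and the remarks on $K\neq0$ and on covering the distinguished point) supply precisely the routine details the paper omits.
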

\begin {proof}
This is clear.
\end {proof}

Every $K[N]\mina$module $V$ is also an $N\mina$set with respect to the (left) operation of $N$ on $(V,0)$ given by
$$N\times V\Rto V\komma\quad (a,v)\lto a+v:=X^{a}\cdot v\pkt$$

In particular, $K[S]$ is again an $N\mina$set for every $N\mina$set $(S,p)$, and unless $K=0$ there is a canonical injective $N\mina$map
$$\iota_{S}:S\Rto K[S]\komma\quad s\lto X^{s}\pkt$$

The associated $K[N]\mina$module of an $N\mina$set is a universal object and therefore unique up to isomorphism.

\begin {Proposition} \label{PropUnivPropKNmodule}
Let $(S,p)$ be an $N\mina$set and $V$ a $K[N]\mina$module. Every $N\mina$map $\varphi:(S,p)\rto (V,0)$ gives rise to a unique $K[N]\mina$module homomorphism $\phi:K[S]\rto V$ such that the diagram
$$\xymatrix{
S\ar[d]_{\iota_{S}}\ar[r]^{\varphi}&V&\\
K[S]\ar[ur]_{\phi}&}$$
commutes. 
\end {Proposition}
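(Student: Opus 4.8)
The plan is to proceed exactly as in the proof of the universal property of the binoid algebra (Proposition \ref{PropUnivPropBinoidA}), replacing the multiplicative structure used there by the module structure here. The map $\phi$ is forced upon us: since $\phi\iota_{S}=\varphi$ must hold and $\iota_{S}(s)=X^{s}$, we are compelled to set $\phi(X^{s}):=\varphi(s)$ for every $s\in S\opkt$, and since a $K[N]\mina$module homomorphism is in particular $K\mina$linear, $\phi$ is obliged to act by $\sum_{s}r_{s}X^{s}\mapsto\sum_{s}r_{s}\varphi(s)$. This observation already delivers uniqueness once existence is secured.

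First I would construct $\phi$. By definition $K[S]$ is the free $K\mina$module on the set $\{X^{s}\mid s\in S\opkt\}$, so there is a well-defined $K\mina$linear map $\phi\colon K[S]\rto V$ with $\phi(X^{s})=\varphi(s)$; by construction $\phi(\iota_{S}(s))=\varphi(s)$, and the pointed condition $\varphi(p)=0$ is compatible with the convention $X^{p}=0$ in $K[S]$.

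Next I would verify that $\phi$ is a homomorphism of $K[N]\mina$modules, i.e.\ that $\phi(X^{a}\cdot w)=X^{a}\cdot\phi(w)$ for all $a\in N$ and $w\in K[S]$. By $K\mina$linearity it suffices to check this on basis elements $w=X^{s}$, $s\in S\opkt$, where two cases arise from the module action on $K[S]$. If $a+s\not=p$, then $\phi(X^{a}\cdot X^{s})=\phi(X^{a+s})=\varphi(a+s)=a+\varphi(s)=X^{a}\cdot\phi(X^{s})$, the third equality being the defining property of the $N\mina$map $\varphi$ and the last the operation of $N$ on $V$. If $a+s=p$, then $\phi(X^{a}\cdot X^{s})=\phi(0)=0$, while $X^{a}\cdot\phi(X^{s})=a+\varphi(s)=\varphi(a+s)=\varphi(p)=0$, again using that $\varphi$ is an $N\mina$map and that it is pointed.

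The argument is almost entirely routine; the one point requiring care—and the only place where both hypotheses on $\varphi$ (being an $N\mina$map and being pointed) are genuinely used—is the degenerate case $a+s=p$, in which the vanishing of $X^{a}\cdot X^{s}$ in $K[S]$ must be matched by the vanishing of $a+\varphi(s)$ in $V$. Once this compatibility is checked, uniqueness follows from the fact that the elements $X^{s}$, $s\in S\opkt$, generate $K[S]$ as a $K[N]\mina$module (indeed already as a $K\mina$module), together with the forced values $\phi(X^{s})=\varphi(s)$.
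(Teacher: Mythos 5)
Your proof is correct and follows essentially the same route as the paper: define $\phi$ on the $K$-basis $\{X^{s}\mid s\in S\opkt\}$ by $\phi(X^{s})=\varphi(s)$, extend $K$-linearly, and verify $K[N]$-linearity via the identity $\varphi(a+s)=a+\varphi(s)=X^{a}\cdot\varphi(s)$. Your explicit treatment of the degenerate case $a+s=p$ is a point the paper's computation glosses over, but it is the same argument.
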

\begin {proof}
The map $\phi$ defined by $\phi(\sum_{s\in T}r_{s}X^{s}):=\sum_{s\in T}r_{s}\varphi(s)$ with $r_{s}\in K$, $s\in T$, and $T\subseteq S\opkt$ finite, is a well-defined $K\mina$module homomorphism with $\phi\iota_{S}=\varphi$. To verify that $\phi$ is also a homomorphism of $K[N]\mina$modules observe that  $\varphi(a+s)=a+\varphi(s)=X^{a}\cdot\varphi(s)$ for all $a\in N$ and $s\in S$. Therefore, if $F=\sum_{a\in A}r_{a}X^{a}\in K[N]$ and $\sum_{s\in T}r_{s}X^{s}\in K[S]$ for finite sets $A\subseteq N\opkt$ and $T\subseteq S\setminus\{p\}$, then
\begin {align*}
\phi\bigg(F\cdot\sum_{s\in T}r_{s}X^{s}\bigg)&=\phi\bigg(\sum_{a\in A,s\in T}r_{a}r_{s}X^{a+s}\bigg)\\
&=\sum_{a\in A,s\in T}r_{a}r_{s}\varphi(a+s)\\
&=\sum_{a\in A,s\in T}r_{a}r_{s}X^{a}\varphi(s)\\
&=F\cdot\sum_{s\in T}r_{s}\varphi(s)\\
&=F\cdot\phi\bigg(\sum_{s\in T}r_{s}X^{s}\bigg)\pkt\qedhere
\end {align*}
\end {proof}

\begin {Corollary} \label{CorIndKNmodulehom}
Given an $N\mina$map $\varphi:S\rto T$ of $N\mina$sets $(S,p)$ and $(T,q)$, there exists a unique $K[N]\mina$module homomorphism $\phi:K[S]\rto K[T]$ such that the diagram
$$\xymatrix{
S\ar[d]_{\iota_{S}}\ar[r]^{\varphi}&T\ar[d]^{\iota_{T}}&\\
K[S]\ar[r]^{K[\varphi]}&K[T]}$$
commutes. In particular, there is a binoid homomorphism 
$$(\map_{p}S,\circ,\id_{S},\varphi_{\infty})\Rto(\End_{K}K[S], \circ,\id_{K[S]},0_{K[S]})$$ 
with $\varphi\mto K[\varphi]$, where $\End_{K}K[S]$ denotes the ring of all $K\mina$module homomorphisms $K[S]\rto K[S]$ and $0_{K[S]}$ the zero map.\nomenclature[End]{$\End_{K}V$}{ring of all $K\mina$module endomorphisms of $V$}
\end {Corollary}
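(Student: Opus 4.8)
The first assertion is meant to be read off the universal property of $K[S]$ established in Proposition \ref{PropUnivPropKNmodule}. The plan is to feed the right map into that proposition: consider the composite $\iota_{T}\varphi:S\rto K[T]$, where $K[T]$ is regarded as an $N\mina$set via $a+v=X^{a}\cdot v$. First I would check that $\iota_{T}\varphi$ is an $N\mina$map into $(K[T],0)$; this is immediate, since it is the composite of the given $N\mina$map $\varphi$ with the canonical $N\mina$map $\iota_{T}$, and composites of $N\mina$maps are $N\mina$maps. Proposition \ref{PropUnivPropKNmodule} then yields a unique $K[N]\mina$module homomorphism $\phi=:K[\varphi]:K[S]\rto K[T]$ with $\phi\iota_{S}=\iota_{T}\varphi$, i.e.\ $K[\varphi](X^{s})=X^{\varphi(s)}$ for all $s\in S\opkt$. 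This is exactly the commutativity of the displayed square, and the uniqueness is part of the universal property.

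For the supplement I would reduce to the case already treated by taking $N=\trivial$ and $T=S$. The key preliminary observation is that every pointed map $\varphi\in\map_{p}S$ is automatically a $\trivial\mina$map: since $\trivial$ operates on $(S,p)$ by $0+s=s$ and $\infty+s=p$, the condition $\varphi(a+s)=a+\varphi(s)$ holds trivially for $a=0$ and, for $a=\infty$, reduces to $\varphi(p)=p$, which is built into $\varphi$ being pointed. Because $K[\trivial]=K$, the first part then assigns to each $\varphi\in\map_{p}S$ a $K\mina$module endomorphism $K[\varphi]\in\End_{K}K[S]$, characterized by $K[\varphi](X^{s})=X^{\varphi(s)}$.

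It then remains to verify that $\varphi\mto K[\varphi]$ is a binoid homomorphism, which I would do entirely on the $K\mina$basis $\{X^{s}\mid s\in S\opkt\}$, all maps involved being $K\mina$linear so that agreement on the basis suffices. One has $K[\id_{S}](X^{s})=X^{s}$, whence $K[\id_{S}]=\id_{K[S]}$; and $K[\varphi_{\infty}](X^{s})=X^{\varphi_{\infty}(s)}=X^{p}=0$, so the absorbing element $\varphi_{\infty}$ maps to the zero map $0_{K[S]}$. For compatibility with composition, for $\varphi,\psi\in\map_{p}S$ one computes
$$K[\psi\varphi](X^{s})=X^{\psi(\varphi(s))}=K[\psi](X^{\varphi(s)})=(K[\psi]K[\varphi])(X^{s})\komma$$
and agreement on the basis gives $K[\psi\varphi]=K[\psi]K[\varphi]$.

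The argument is essentially routine once the setup is in place; the only step needing a moment's care is the reduction in the supplement, namely recognizing that an arbitrary pointed self-map of $S$ already qualifies as a $\trivial\mina$map, which is precisely what licenses applying the first part in the required generality. Everything else is a direct invocation of Proposition \ref{PropUnivPropKNmodule} followed by checks on basis elements, so I do not expect a genuine obstacle.
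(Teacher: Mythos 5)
Your proof is correct and follows essentially the same route as the paper: apply Proposition \ref{PropUnivPropKNmodule} to the composite $\iota_{T}\varphi$, then read off the supplement from the explicit formula $K[\varphi](X^{s})=X^{\varphi(s)}$ on basis elements. Your extra remark that an arbitrary pointed self-map is automatically a $\trivial\mina$map (so the first part applies with $N=\trivial$) is a small point the paper leaves implicit, and it is a worthwhile clarification rather than a deviation.
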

\begin {proof}
This is an immediate consequence of Proposition \ref{PropUnivPropKNmodule} applied to the $N\mina$map given by the composition $\iota_{T}\varphi:S\rto T\rto K[T]$. In particular, $K[\varphi]:K[S]\rto K[T]$ is given by $K[\varphi](F)=\sum_{s\in S^{\prime}}r_{s}X^{\varphi(s)}$ for $F=\sum_{s\in S^{\prime}}r_{s}X^{s}\in K[S]$ with $r_{s}\in K$, $s\in S^{\prime}$, and $S^{\prime}\subseteq S$ finite. This shows that $K[\varphi\circ\psi]=K[\varphi]\circ K[\psi]$, $K[\id_{S}]=\id_{K[S]}$, and $K[\varphi_{\infty}]=0_{K[S]}$, which implies the supplement.
\end {proof}

\begin {Proposition}
Let $(S,p)$ be an $N\mina$set. $K[S]$ is a $K[N]\mina$module such that there is a commutative diagram
$$\xymatrix{
N\ar[r]^{\!\!\!\!\!\!\!\!\varphi}\ar[d]_{\iota_{N}}&\map_{p}S\ar[d]^{K[\minus]}\\
K[N]\ar[r]^{\!\!\!\!\!\!\!\!\phi}&\End_{K}K[S]\komma}$$
where $\varphi$ is a binoid homomorphism and $\phi$ a ring homomorphism.
\end {Proposition}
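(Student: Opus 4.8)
The plan is to read off the four maps in the square and to obtain the ring homomorphism $\phi$ together with the commutativity of the diagram simultaneously from the universal property of the binoid algebra, Proposition \ref{PropUnivPropBinoidA}. Concretely, the top arrow $\varphi\colon N\rto\map_{p}S$ is the binoid homomorphism $a\mto\varphi_{a}$ of Lemma \ref{LemNsetHom}, where $\varphi_{a}\colon s\mto a+s$ is the translation by $a$; the left arrow $\iota_{N}\colon N\rto K[N]$ is the canonical binoid embedding $a\mto X^{a}$; and the right arrow $K[-]\colon\map_{p}S\rto\End_{K}K[S]$ is the binoid homomorphism $\psi\mto K[\psi]$ furnished by Corollary \ref{CorIndKNmodulehom}. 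It then remains to produce $\phi$ and to check that it is a ring homomorphism making the square commute, and that it encodes the given module structure.

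First I would record that the composite $K[-]\circ\varphi\colon N\rto\End_{K}K[S]$ is a binoid homomorphism, being a composite of the two binoid homomorphisms above. Viewing the (not necessarily commutative) $K$-algebra $\End_{K}K[S]$ as a binoid via $(\End_{K}K[S],\circ,\id_{K[S]},0_{K[S]})$ — its scalars $r\mto r\cdot\id_{K[S]}$ lie in the center, so it genuinely is a $K$-algebra in the sense of the definition — I can apply Proposition \ref{PropUnivPropBinoidA} with $M=N$ and $A=\End_{K}K[S]$ to the binoid homomorphism $K[-]\circ\varphi$. This yields a \emph{unique} $K$-algebra homomorphism $\phi\colon K[N]\rto\End_{K}K[S]$ with $\phi\circ\iota_{N}=K[-]\circ\varphi$. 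Thus $\phi$ is a ring homomorphism and the square commutes, in one stroke; note also that this is compatible with the binoid structures since $\varphi(\infty_{N})=\varphi_{\infty}$ is sent by $K[-]$ to $0_{K[S]}$.

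Finally I would verify that this $\phi$ is exactly the structure map of the $K[N]$-module $K[S]$, that is, $\phi(F)(v)=F\cdot v$ for all $F\in K[N]$ and $v\in K[S]$. Since both sides are $K$-bilinear and $\phi$ is a $K$-algebra homomorphism, it suffices to check this on monomials: for $a\in N$ and $s\in S$ one has
$$\phi(X^{a})(X^{s})=\bigl(K[\varphi_{a}]\bigr)(X^{s})=X^{\varphi_{a}(s)}=X^{a+s},$$
with the convention $X^{p}=0$, which is precisely $X^{a}\cdot X^{s}$ as defined on $K[S]$. Hence $\phi$ reproduces the given scalar action, confirming that $K[S]$ is a $K[N]$-module compatible with the diagram.

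I expect no genuine difficulty here: the only point that needs care is the bookkeeping in the last step, namely that the map $\phi$ produced abstractly by the universal property agrees on monomials with the pre-defined scalar multiplication on $K[S]$ — in particular that the distinguished point $p$ is handled consistently, with $X^{p}=0$ matching $K[\varphi_{\infty}]=0_{K[S]}$. Everything else is a formal consequence of Lemma \ref{LemNsetHom}, Corollary \ref{CorIndKNmodulehom}, and Proposition \ref{PropUnivPropBinoidA}.
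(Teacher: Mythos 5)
Your proof is correct, but it is organized differently from the paper's. The paper constructs the $K[N]$-action on $K[S]$ directly by bilinear extension of $(r_{a}T^{a},r_{s}X^{s})\mto r_{a}r_{s}X^{a+s}$ and then simply reads off $\phi$ as the map sending $F$ to multiplication by $F$, leaving the ring-homomorphism property and the commutativity of the square as easy, monomial-level verifications. You instead obtain $\phi$ from the universal property of the binoid algebra (Proposition \ref{PropUnivPropBinoidA}) applied to the composite binoid homomorphism $K[-]\circ\varphi\colon N\rto(\End_{K}K[S],\circ,\id,0)$, which delivers the ring-homomorphism property and the identity $\phi\circ\iota_{N}=K[-]\circ\varphi$ in one stroke; the price is the extra (and correctly executed) step of checking on monomials that this abstract $\phi$ reproduces the pre-defined scalar multiplication, $\phi(X^{a})(X^{s})=X^{a+s}$ with $X^{p}=0$ matching $K[\varphi_{\infty}]=0_{K[S]}$. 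Your route has the small additional advantage that the module axioms for $K[S]$ come for free from the ring homomorphism $\phi$, whereas the paper's one-line construction leaves them implicit. Both arguments ultimately reduce to the same monomial computation, so the difference is one of bookkeeping rather than substance.
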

\begin{proof}
The operation $N\times S\rto S$, $(a,s)\mto a+s$, induces a canonical operation $K[N]\times K[S]\rto K[S]$ generated by $(r_{a}T^{a},r_{s}T^{s})\mto r_{a}r_{s}T^{a+s}$. Thus, there is a ring homomorphism $\phi:K[N]\rto\End_{K}K[S]$ with $F\mto(\phi(F):G\lto G\cdot F)$, which makes the diagram commutative.
\end{proof}

\begin {Proposition} \label{PropNsetsDecompositionDirectSum}
Given a family $(S_{i},p_{i})_{i\in I}$ of $N\mina$sets, there is a $K[N]\mina$module isomorphism:
$$K\Big[\bigcupdot_{i\in I}S_{i}\Big]\,\,\cong\,\,\bigoplus_{i\in I}K[S_{i}]\pkt\\[-1mm]$$
\end {Proposition}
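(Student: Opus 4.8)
The plan is to exhibit mutually inverse $K[N]\mina$module homomorphisms by means of the universal properties established in Proposition \ref{PropUnivPropKNmodule} and Corollary \ref{CorIndKNmodulehom}. The guiding observation is purely combinatorial: since $\sim_{\infty}$ collapses all the distinguished points $(p_{i};i)$ to a single point and leaves everything else untouched (Lemma \ref{LemGlueingPoints}), the nonzero elements of $\bigcupdot_{i\in I}S_{i}$ are exactly the elements $(a;i)$ with $a\in S_{i}\opkt$ and $i\in I$. Hence, as a $K\mina$module, $K[\bigcupdot_{i\in I}S_{i}]$ has $K\mina$basis $\{X^{(a;i)}\mid a\in S_{i}\opkt, i\in I\}$, which is in evident bijection with the $K\mina$basis $\bigcup_{i\in I}\{X^{a}\mid a\in S_{i}\opkt\}$ of $\bigoplus_{i\in I}K[S_{i}]$. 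It remains only to promote this bijection of bases to a $K[N]\mina$linear isomorphism, for which I would check compatibility with the $N\mina$action at the level of the defining $N\mina$maps.

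First I would build the forward map. Recall from Lemma \ref{LemProductsNsets} that $\bigcupdot_{i\in I}S_{i}$ is an $N\mina$set with $b+(a;i)=(b+a;i)$, and that $\bigoplus_{i\in I}K[S_{i}]$, being a $K[N]\mina$module, is an $N\mina$set via $b+v:=X^{b}\cdot v$. I would define the pointed map
$$\varphi:\bigcupdot_{i\in I}S_{i}\Rto\bigoplus_{i\in I}K[S_{i}]\komma\quad (a;i)\lto X^{a}\in K[S_{i}]\komma$$
sending the distinguished point to $0$ and $X^{a}$ into the $i$-th summand. To see that $\varphi$ is an $N\mina$map one distinguishes two cases: if $b+a\neq p_{i}$ then $\varphi(b+(a;i))=\varphi((b+a;i))=X^{b+a}=X^{b}\cdot X^{a}=b+\varphi((a;i))$; if $b+a=p_{i}$ then $b+(a;i)$ is the distinguished point, so $\varphi(b+(a;i))=0=X^{b}\cdot X^{a}=b+\varphi((a;i))$ by the defining scalar multiplication on $K[S_{i}]$. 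Proposition \ref{PropUnivPropKNmodule} then yields a unique $K[N]\mina$module homomorphism $\phi:K[\bigcupdot_{i\in I}S_{i}]\rto\bigoplus_{i\in I}K[S_{i}]$ with $\phi(X^{(a;i)})=X^{a}$.

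For the inverse, I would use that each canonical inclusion $\iota_{k}:S_{k}\embto\bigcupdot_{i\in I}S_{i}$, $a\mapsto(a;k)$, is an $N\mina$map; by Corollary \ref{CorIndKNmodulehom} it induces a $K[N]\mina$module homomorphism $K[\iota_{k}]:K[S_{k}]\rto K[\bigcupdot_{i\in I}S_{i}]$ with $X^{a}\mapsto X^{(a;k)}$. The universal property of the direct sum (coproduct) in the category of $K[N]\mina$modules assembles the family $(K[\iota_{k}])_{k\in I}$ into a single $K[N]\mina$module homomorphism $\psi:\bigoplus_{i\in I}K[S_{i}]\rto K[\bigcupdot_{i\in I}S_{i}]$. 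Finally I would verify $\phi\psi=\id$ and $\psi\phi=\id$ by evaluating on the respective $K\mina$bases: $\phi\psi$ sends $X^{a}$ in the $i$-th summand to $\phi(X^{(a;i)})=X^{a}$, and $\psi\phi$ sends $X^{(a;i)}$ to $\psi(X^{a})=X^{(a;i)}$. Since both composites fix a generating set and are $K[N]\mina$linear, they are the identity.

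There is no genuine obstacle here; the statement is essentially bookkeeping once the universal properties are in place. The only point demanding care is the well-definedness of $\varphi$ as an $N\mina$map at arguments where $b+a=p_{i}$, that is, making sure that the collapsing of the distinguished points in the pointed union matches the \emph{multiply-by-zero} convention built into the scalar multiplication of $K[S_{i}]$ --- this is exactly the case analysis carried out above, and it is what makes the two sides agree.
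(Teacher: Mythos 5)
Your proof is correct and takes essentially the same route as the paper's: the homomorphism $\bigoplus_{i}K[S_{i}]\rto K[\bigcupdot_{i}S_{i}]$ is built exactly as in the paper from the inclusions $S_{k}\embto\bigcupdot_{i\in I}S_{i}$ via Corollary \ref{CorIndKNmodulehom} and the coproduct property, while for the other direction the paper assembles the maps induced by the projections $\bigcupdot_{i\in I}S_{i}\rto S_{k}$ where you instead apply Proposition \ref{PropUnivPropKNmodule} to a directly defined $N\mina$map into the direct sum --- a cosmetic difference. Your case analysis at $b+a=p_{i}$ is precisely the verification the paper dismisses with ``it is easily checked''.
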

\begin {proof}
By Corollary \ref{CorIndKNmodulehom}, the family $(\varphi_{i})_{i\in I}$ of injective $N\mina$maps $S_{i}\embto \bigcupdot_{i\in I}S_{i}=:S$ with  $s\mto(s;i)$, $i\in I$, induces a family $\iota_{i}:K[S_{i}]\rto K[S]$, $i\in I$, of $K[N]\mina$module homomorphisms, which gives rise to a $K[N]\mina$module homomorphism
$$\phi:\bigoplus_{i\in I}K[S_{i}]\Rto K[S]$$
with $\phi((F_{i})_{i\in I})=\sum_{i\in I}\iota_{i}(F_{i})$, cf.\ \cite[Satz 39.1]{SchejaStorch}. On the other hand, the surjections $S\rto S_{i}$, $i\in I$, with $(s;k)\mto(s;i)$ if $k=i$ and $p_{i}$ otherwise, yield by Corollary \ref{CorIndKNmodulehom} $K[N]\mina$module homomorphisms $K[S]\rto K[S_{i}]$, $i\in I$, which induce a $K[N]\mina$module homomorphism
$$\psi:K[S]\Rto\bigoplus_{i\in I}K[S_{i}]$$
with $\sum_{a\in A}r_{a}X^{a}\mto\sum_{i\in I}\sum_{a\in S_{i}\cap A}r_{a}X^{a}$, where $A\subseteq\bigcupdot_{i\in I}S_{i}$ is a finite set. It is easily checked that $\phi$ and $\psi$ are inverse to each other.
\end {proof}

\begin {Remark}\label{RemBlowupBinoid}
For a binoid $M$ and an ideal $\Ical\subseteq M$ define the \gesperrt{blowup binoid} \index{blowup binoid}\index{binoid!blowup --}$R_{\Ical}$ of $\Ical$ to be the pointed union of the family $(n\Ical,\infty)_{n\ge0}$ of pointed sets, i.e. 
\begin {align*}
R_{\Ical}&:=\bigcupdot_{n\ge0}n\Ical\komma\\[-5 mm]
\intertext{with addition defined by}\\[-10 mm]
(f;n)+(g;m)&:=(f+g;n+m)\pkt
\end {align*}
The identity element is then given by $(0;0)$ and the absorbing element by the element that arises by glueing the elements $(\infty;n)$, $n\ge0$, together. Thus, $R_{\Ical}$ is generated by $M=0\cdot\Ical$ and $\Ical=1\cdot\Ical$, and can be considered as a subbinoid of $M\wedge \N^{\infty}$ with respect to $(a;n)\mto a\wedge n$. By Proposition \ref{PropNsetsDecompositionDirectSum}, it follows that the binoid algebra of the blowup binoid
$$K[R_{\Ical}]\,=\,\bigoplus_{n\ge0}K[n\Ical]\,=\,\bigoplus_{n\ge0}K[\Ical]^{n}$$
is the blowup algebra of $K[\Ical]\subseteq K[M]$, cf.\ \cite[Chapter 5.2]{Eisenbud}. Moreover, $(n\Ical,\infty)$ is an $M\mina$set for every $n\ge0$ and $R_{\Ical}$ is an $M\mina$binoid via the embedding $M\embto R_{\Ical}$, $a\mto(a;0)$. Hence, we have a commutative diagram
$$\xymatrix{
&R_{\Ical}\ar[dr]&\\
M\ar[ru]^{\iota}\ar[rd]_{\pi}&&\!\!\!\!\!\!\!\!\!\!\!\!R_{\Ical}\wedge_{M}(M/\Ical)\!\!\!\!\!\!\!\!\!\!\!\!\!\!\!\!&\cong\,\,\bigcupdot\limits_{n\ge0}n\Ical\big/(n+1)\Ical\komma\\
&M/\Ical\ar[ur]&}$$
where the isomorphism is given as $M\mina$binoids as one easily checks (where the addition on the pointed union is defined as for $R_{\Ical}$), cf.\ Proposition \ref{PropSmashPointed}. The $M\mina$binoid $\bigcupdot_{n\ge0}n\Ical\big/(n+1)\Ical=:\opgr_{M}(\Ical)$ is called the \gesperrt{associated graded binoid} \index{associated graded!-- binoid}\nomenclature[GR]{$\opgr_{M}(\Ical)$}{associated graded binoid of $M$ with respect to $\Ical\subsetneq M$}of the binoid $M$ with respect to the ideal $\Ical\subseteq M$. The terminology is justified because for the binoid algebra we obtain
\begin {align*}
K[R_{\Ical}\wedge_{M}(M/\Ical)]&\,\,\,\cong\,\,\, K[R_{\Ical}]\otimes_{K[M]}K[M/\Ical]\\
&\,\,\,\cong\,\,\,\bigg(\bigoplus_{n\ge0}K[\Ical]^{n}\bigg)\otimes_{K[M]}\big(K[M]\big/K[\Ical]\big)\\
&\,\,\,\cong\,\,\,\bigoplus_{n\ge0}\Big(K[\Ical]^{n}\otimes_{K[M]}\big(K[M]\big/K[\Ical]\big)\Big)\\
&\,\,\,\cong\,\,\,\bigoplus_{n\ge0}\big(K[\Ical]^{n}\big/K[\Ical]^{n+1}\big)\\
&\,\,\,=:\opgr_{K[M]}(K[I])\komma
\end {align*}
where $\opgr_{A}(\aideal)$ denotes the \gesperrt{associated graded ring} \index{associated graded!-- ring}\nomenclature[GR]{$\opgr_{A}(\aideal)$}{associated graded ring of $A$ with respect to $\aideal\subsetneq A$}of the ring $A$ with respect to the (ring) ideal $\aideal\subsetneq A$, cf.\ \cite[Chapter 5.1]{Eisenbud}. An important case is when $\Ical=M\Uplus$. If $M$ is positive, then
$$\opgr_{K[M]}(K[M\Uplus])\,\,=\,\, K[R_{M\Uplus}]\otimes_{K[M]}K\komma$$
where $K$ is a $K[M]\mina$algebra with respect to $K[M]\rto K$, $T^{a}\mto 1$ if $a=0$ and $0$ otherwise.
\end {Remark}

\begin {Example} \label{ExpNoMonoidAlgebra}
Consider the integral binoid $M:=\N_{\ge2}^{\infty}\cup\{0\}\subseteq\N^{\infty}$.
The blowup binoid of $M$ with respect to $M\Uplus=\N_{\ge2}^{\infty}$ is given by
$$R_{M\Uplus}\,=\,\bigcupdot_{n\ge0}nM\Uplus\,=\,\bigcupdot_{n\ge0}\N_{\ge 2n}^{\infty}\pkt$$
Since $nM/(n+1)M=\N_{\ge2n}^{\infty}/\N_{\ge2n+2}^{\infty}=\{2n,2n+1\}^{\infty}$, we have $(2n+1;n)+(2n+1;n)=(4n+2;2n)=\infty$ though $(2n+1;n)\not=\infty$ in
$$\opgr_{M}(M\Uplus)\,=\,\bigcupdot_{n\ge0}\{(2n;n),(2n+1;n)\}^{\infty}\pkt$$
Thus, the associated graded binoid $\opgr_{M}M\Uplus$ is not reduced, hence not integral. In particular, $$K[\opgr_{M}(M\Uplus)]\,=\,\opgr_{KM\opkt}(M\opkt\Uplus)$$
is a binoid algebra that is not an integral domain.
\end {Example}

\bigskip

\section {Binoid algebras of $N\mina$binoids} \label{SecNBinoidAlgebra}
\markright{\ref{SecNBinoidAlgebra} Binoid algebras of $N\mina$binoids}

\begin {Convention}
In this section, $N$ always denotes a \emph{commutative} binoid.
\end {Convention}

By Remark \ref{RemUniPropSpecial}(2), the structure homomorphism $\varphi:N\rto M$ of an $N\mina$binoid $M$ induces a ring homomorphism $K[\varphi]:K[N]\rto K[M]$. This defines a $K[N]\mina$algebra structure on $K[M]$.

\begin {Corollary}\label{CorIndNBinoidHom}
Every homomorphism $\varphi: M\rto M^{\prime}$ of $N\mina$binoids induces a unique $K[N]\mina$ algebra homomorphism $\phi:K[M]\rto K[M^{\prime}]$ with $\phi(rT^{a})=rT^{\varphi(a)}$, $r\in K$, $a\in M\opkt$.
\end {Corollary}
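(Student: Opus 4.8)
The plan is to obtain $\phi$ directly from the functoriality of $K[-]$ and then merely to check that it respects the relative (i.e.\ $K[N]\mina$algebra) structures. First I would note that an $N\mina$binoid homomorphism $\varphi:M\rto M^{\prime}$ is in particular an ordinary binoid homomorphism, so by Remark \ref{RemUniPropSpecial}(2) (the case $L=K$) there is a unique $K\mina$algebra homomorphism $\phi:=K[\varphi]:K[M]\rto K[M^{\prime}]$ determined by $\phi(rT^{a})=rT^{\varphi(a)}$ for $r\in K$ and $a\in M\opkt$. This settles both the existence of a map with the prescribed effect on the monomials $rT^{a}$ and the uniqueness of such a map among $K\mina$algebra homomorphisms. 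Since every $K[N]\mina$algebra homomorphism is a fortiori a $K\mina$algebra homomorphism, the uniqueness asserted in the corollary is immediate from this.

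It then remains to verify that $\phi$ is compatible with the structure homomorphisms. Write $\varphi_{M}:N\rto M$ and $\varphi_{M^{\prime}}:N\rto M^{\prime}$ for the structure homomorphisms defining the two $N\mina$binoid structures; as recorded in the paragraph preceding the corollary, the $K[N]\mina$algebra structures on $K[M]$ and $K[M^{\prime}]$ are given precisely by $K[\varphi_{M}]$ and $K[\varphi_{M^{\prime}}]$. I must show $\phi\circ K[\varphi_{M}]=K[\varphi_{M^{\prime}}]$. By the definition of an $N\mina$binoid homomorphism one has $\varphi\circ\varphi_{M}=\varphi_{M^{\prime}}$, and applying the covariant functor $K[-]$ together with its compatibility with composition (again Remark \ref{RemUniPropSpecial}(2)) yields $\phi\circ K[\varphi_{M}]=K[\varphi]\circ K[\varphi_{M}]=K[\varphi\circ\varphi_{M}]=K[\varphi_{M^{\prime}}]$, which is exactly the required compatibility. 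Hence $\phi$ is a $K[N]\mina$algebra homomorphism.

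The only point requiring any care, and thus the main (if modest) obstacle, is the bookkeeping between the two universal properties in play: the functoriality of $K[-]$ stated for ordinary binoid homomorphisms, versus the relative notion of a $K[N]\mina$algebra homomorphism. Once one observes that the $K[N]\mina$algebra structure on $K[M]$ is literally defined through $K[\varphi_{M}]$, the whole verification collapses to the functorial identity $K[\varphi\circ\varphi_{M}]=K[\varphi]\circ K[\varphi_{M}]$, and no genuine computation with coefficients is needed. As an alternative route one could invoke the universal property of the binoid algebra (Proposition \ref{PropUnivPropBinoidA}) applied to the binoid homomorphism $\iota_{M^{\prime}}\varphi:M\rto K[M^{\prime}]$, regarding $K[M^{\prime}]$ as a $K\mina$algebra; but passing through the already-established functoriality is the cleaner path and I would present the argument in that form.
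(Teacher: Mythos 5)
Your proposal is correct and follows essentially the same route as the paper: the paper also obtains $\phi=K[\varphi]$ from Corollary \ref{CorUnivPropBinoidA} (with $K=L$, $\alpha=\id_{K}$, of which Remark \ref{RemUniPropSpecial}(2) is precisely the restatement you cite) and then appeals to the commutative triangle $\varphi\circ\varphi_{M}=\varphi_{M^{\prime}}$ of structure homomorphisms. You merely spell out the functoriality step $K[\varphi]\circ K[\varphi_{M}]=K[\varphi\circ\varphi_{M}]$ that the paper leaves implicit, which is a fine (and slightly more careful) presentation of the same argument.
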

\begin {proof}
This is clear by Corollary \ref{CorUnivPropBinoidA} (with $K=L$ and $\alpha=\id_{K}$) and the commutative diagram
$$\xymatrix{
&N\ar[dl]_{\psi}\ar[dr]^{\psi^{\prime}}&\\
M\ar[rr]^{\varphi}&&M^{\prime}\komma}$$
where $\psi$ and $\psi^{\prime}$ are the structure homomorphisms of $M$ and $M^{\prime}$, respectively.
\end {proof}

The following corollary can be generalized to $N\mina$sets, which we will not need here.

\begin {Corollary} \label{CorSmash=Tensor}
Let $(M_{i})_{i\in I}$ be a finite family of commutative $N\mina$binoids. Then
$$K\Big[\bigwedge_{i\in I} \!\!{}_{_{N}}M_{i}\Big]\,\,\cong\,\, \bigotimes_{i\in I} \!\!{}_{_{K[N]}}K[M_{i}]$$
as $K\mina$algebras. In particular, 
$$K\Big[\bigwedge_{i\in I}M_{i}\Big]\,\,\cong\,\, \bigotimes_{i\in I} \!\!{}_{_{K}}K[M_{i}]\pkt$$
\end {Corollary}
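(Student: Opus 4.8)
The plan is to realize both sides as the coproduct of the family $(K[M_i])_{i\in I}$ in the category of commutative $K[N]\mina$algebras, exploiting that $K[-]$ carries the coproduct of commutative $N\mina$binoids (the smash product over $N$) to the tensor product over $K[N]$. Recall first that $K[\bigwedge_{N}M_{i}]$ is a commutative $K[N]\mina$algebra: it is commutative because $\bigwedge_{N}M_{i}$ is commutative (Lemma \ref{LemSmashRules}(1) applied to the underlying smash product, together with the fact that smashing over $N$ is a quotient of it), and its $K[N]\mina$algebra structure comes via $K[\psi]$ from the structure homomorphism $\psi:N\rto\bigwedge_{N}M_{i}$, cf.\ Remark \ref{RemUniPropSpecial}. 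On the other hand, $\bigotimes_{K[N]}K[M_{i}]$ is, by the standard universal property of the tensor product of commutative algebras, the coproduct of the $K[M_{i}]$ in commutative $K[N]\mina$algebras, with canonical maps $\lambda_{k}:K[M_{k}]\rto\bigotimes_{K[N]}K[M_{i}]$ into the $k$th factor.

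First I would construct a $K[N]\mina$algebra homomorphism $\Phi:\bigotimes_{K[N]}K[M_{i}]\rto K[\bigwedge_{N}M_{i}]$. The canonical inclusions $\iota_{k}:M_{k}\rto\bigwedge_{N}M_{i}$ are $N\mina$binoid homomorphisms (Corollary \ref{CorCoProdSmashN}), so by Corollary \ref{CorIndNBinoidHom} they induce $K[N]\mina$algebra homomorphisms $K[\iota_{k}]:K[M_{k}]\rto K[\bigwedge_{N}M_{i}]$. Since the target is a commutative $K[N]\mina$algebra, the coproduct property of the tensor product gives the unique $K[N]\mina$algebra homomorphism $\Phi$ with $\Phi\lambda_{k}=K[\iota_{k}]$, explicitly $\Phi(\bigotimes_{i}T^{a_{i}})=\prod_{i}T^{\iota_{i}(a_{i})}=T^{\wedge_{N}a_{i}}$.

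Conversely, to build $\Psi$ the other way, I would compose the binoid embeddings $\iota_{M_{i}}:M_{i}\rto K[M_{i}]$ with the $\lambda_{i}$ to obtain maps $M_{i}\rto(\bigotimes_{K[N]}K[M_{j}],\cdot)$, where the target is viewed as an $N\mina$binoid through $N\rto K[N]\rto\bigotimes_{K[N]}K[M_{j}]$. One has to check these are $N\mina$binoid homomorphisms, which reduces to the identity $\lambda_{i}(\iota_{M_{i}}(\varphi_{i}(a)))=h(T^{a})$ for $a\in N$ (with $\varphi_{i}$ the structure homomorphism of $M_{i}$ and $h$ that of the tensor product); this holds precisely because each $\lambda_{i}$ is balanced over $K[N]$ and $K[\varphi_{i}](T^{a})=T^{\varphi_{i}(a)}$. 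As the tensor product is a commutative $N\mina$binoid, Corollary \ref{CorCoProdSmashN} yields a unique $N\mina$binoid homomorphism $\bigwedge_{N}M_{i}\rto(\bigotimes_{K[N]}K[M_{j}],\cdot)$ sending $\wedge_{N}a_{i}\mto\bigotimes_{i}T^{a_{i}}$, and Proposition \ref{PropUnivPropBinoidA} promotes it to a $K\mina$algebra homomorphism $\Psi$. Comparing the two descriptions on the $K\mina$module generators $T^{\wedge_{N}a_{i}}$ of $K[\bigwedge_{N}M_{i}]$ and the elementary tensors $\bigotimes_{i}T^{a_{i}}$ then shows $\Phi\Psi=\id$ and $\Psi\Phi=\id$.

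The hard part is not a computation but the matching of defining relations, which is exactly the content of the statement: shifting a summand $\varphi_{i}(a)$ with $a\in N$ between two slots, as allowed by the congruence $\sim_{\wedge_{_{\!N}}}$ defining $\bigwedge_{N}M_{i}$, corresponds on the algebra side to moving $T^{a}=K[\varphi_{i}](T^{a})$ across the tensor sign, which is the balancing relation of $\otimes_{K[N]}$. Invoking the universal properties above (rather than verifying well-definedness by hand) is precisely what packages this correspondence and guarantees that both $\Phi$ and $\Psi$ are well defined. Finally, the second isomorphism follows by specializing $N=\trivial$: then $\bigwedge_{N}M_{i}=\bigwedge_{i\in I}M_{i}$ and $K[N]=K[\trivial]=K$, so $\bigotimes_{K[N]}K[M_{i}]=\bigotimes_{K}K[M_{i}]$.
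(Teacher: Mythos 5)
Your proof is correct and follows essentially the same route as the paper's: one direction via the universal property of the tensor product applied to the induced $K[N]\mina$algebra homomorphisms $K[\iota_{k}]$, the other via a binoid homomorphism $\bigwedge_{N}M_{i}\rto\bigotimes_{K[N]}K[M_{i}]$ promoted by Proposition \ref{PropUnivPropBinoidA}. The only differences are cosmetic: the paper reduces to two factors by induction and simply asserts that $\wedge_{N}a_{i}\mto\bigotimes_{i}T^{a_{i}}$ is well defined, whereas you treat the general finite family directly and justify well-definedness via Corollary \ref{CorCoProdSmashN}, which is a slightly more careful packaging of the same argument.
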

\begin {proof}
Using an inductive argument, it suffices to prove the statement for $I=\{1,2\}$. So let $M_{1}=:M$ and $M_{2}=:M^{\prime}$ be commutative binoids. There is a commutative diagram of $K\mina$algebras,
$$\xymatrix{
&K[M]\ar[d]^{\iota}\ar[dr]^{\varphi}&\\
K[N]\ar[ur]\ar[dr]&K[M]\otimes_{K[N]}K[M^{\prime}]&K[M\wedge_{N}M^{\prime}]\\
&K[M^{\prime}]\ar[u]_{\iota^{\prime}}\ar[ur]_{\varphi^{\prime}}&}$$
where $\iota$ and $\iota^{\prime}$ are the canonical inclusions, and $\varphi$ and $\varphi^{\prime}$ are the $K[N]\mina$algebra homomorphisms induced by the inclusions $M\embto M\wedge_{N}M^{\prime}$ and $M^{\prime}\embto M\wedge_{N}M^{\prime}$, cf.\  Corollary \ref{CorIndNBinoidHom}. By the universal property of the tensor product, cf.\ \cite[Satz 80.9]{SchejaStorch}, we have a $K\mina$algebra homomorphism
$$\psi:K[M]\otimes_{K[N]}K[M^{\prime}]\Rto K[M\wedge_{N}M^{\prime}]$$
with $\psi(F\otimes_{K[N]}G)=\varphi(F)\varphi^{\prime}(G)$, which is even a $K[N]\mina$algebra homomorphism since $\varphi$ and $\varphi^{\prime}$ are so. On the other hand, the well-defined binoid homomorphism
$$M\wedge_{N}M^{\prime}\Rto K[M]\otimes_{K[N]}K[M^{\prime}]\quad\text{with}\quad a\wedge_{N}a^{\prime}\lto T^{a}\otimes_{K[N]}T^{a^{\prime}}$$
induces by Proposition \ref{PropUnivPropBinoidA} a $K\mina$algebra homomorphism $K[M\wedge_{N}M^{\prime}]\rto K[M]\otimes_{K[N]}K[M^{\prime}]$  with $r(a\wedge_{N}a^{\prime})\mto r(T^{a}\otimes_{K[N]}T^{a^{\prime}})$, which is also a $K[N]\mina$algebra homomorphism and inverse to $\psi$.
\end {proof}

\begin {Corollary}
If $M$ is a finitely generated commutative $N\mina$binoid, then $K[M]$ is a finitely generated $K[N]\mina$algebra.
\end {Corollary}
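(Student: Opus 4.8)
The plan is to realize $K[M]$ as a homomorphic image of a polynomial ring over $K[N]$ in finitely many variables. Let $\varphi\colon N\rto M$ be the structure homomorphism and let $x_{1}\kpkt x_{r}$ be generators of $M$ as an $N\mina$binoid, so that every $f\in M$ has the form $f=\varphi(a)+\sum_{i=1}^{r}n_{i}x_{i}$ with $a\in N$ and $n_{i}\in\N$. As observed in the remark preceding this corollary, this data yields a canonical $N\mina$binoid epimorphism $\varepsilon\colon N\wedge(\N^{r})^{\infty}\rto M$, $a\wedge(n_{1}\kpkt n_{r})\mapsto\varphi(a)+\sum_{i=1}^{r}n_{i}x_{i}$, where $N\wedge(\N^{r})^{\infty}$ carries the $N\mina$binoid structure given by the inclusion of the first factor; indeed $\varepsilon(a\wedge 0)=\varphi(a)$, so $\varepsilon$ is an $N\mina$binoid homomorphism.

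Applying the functor $K[-]$ and invoking Corollary \ref{CorIndNBinoidHom}, the map $\varepsilon$ induces a $K[N]\mina$algebra homomorphism $K[\varepsilon]\colon K[N\wedge(\N^{r})^{\infty}]\rto K[M]$, which is surjective because $\varepsilon$ is (Remark \ref{RemUniPropSpecial}(2)). By Corollary \ref{CorSmash=Tensor} (the smash product over $\trivial$) together with the identification $K[(\N^{r})^{\infty}]\cong K[X_{1}\kpkt X_{r}]$, we have $K[N\wedge(\N^{r})^{\infty}]\cong K[N]\otimes_{K}K[X_{1}\kpkt X_{r}]\cong K[N][X_{1}\kpkt X_{r}]$, a polynomial ring over $K[N]$ in $r$ variables. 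This is a finitely generated $K[N]\mina$algebra, and any homomorphic image of a finitely generated algebra is finitely generated; hence so is $K[M]$.

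The only point needing care is that the isomorphism $K[N\wedge(\N^{r})^{\infty}]\cong K[N][X_{1}\kpkt X_{r}]$ be read as one of $K[N]\mina$algebras rather than merely of $K\mina$algebras: one checks that, under the map of Corollary \ref{CorSmash=Tensor}, the canonical inclusion $K[N]\rto K[N\wedge(\N^{r})^{\infty}]$ coming from $a\mapsto a\wedge 0$ corresponds to $F\mapsto F\otimes 1$, so the two $K[N]\mina$structures agree. Alternatively one can bypass the tensor identification and argue directly that the images $T^{x_{1}}\kpkt T^{x_{r}}$ generate $K[M]$ as a $K[N]\mina$algebra: for $\infty\not=f=\varphi(a)+\sum_{i=1}^{r}n_{i}x_{i}$ one has $T^{f}=T^{\varphi(a)}\cdot\prod_{i=1}^{r}(T^{x_{i}})^{n_{i}}=K[\varphi](T^{a})\cdot\prod_{i=1}^{r}(T^{x_{i}})^{n_{i}}$, where no factor collapses to $0$ precisely because $f\not=\infty$, and every element of $K[M]$ is a $K\mina$, hence $K[N]\mina$, linear combination of such monomials $T^{f}$. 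Either way the work is routine; there is no genuine obstacle beyond keeping track of the $K[N]\mina$module structure.
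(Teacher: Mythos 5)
Your proof is correct and follows essentially the same route as the paper: both pass through the canonical $N\mina$binoid epimorphism $N\wedge(\N^{r})^{\infty}\rto M$, identify $K[N\wedge(\N^{r})^{\infty}]$ with the polynomial ring $K[N][X_{1}\kpkt X_{r}]$ via Corollary \ref{CorSmash=Tensor}, and conclude by surjectivity of the induced $K[N]\mina$algebra homomorphism. Your extra care about the $K[N]\mina$structure (and the alternative direct generator argument) is a sound addition but not a different method.
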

\begin {proof}
Let $\{x_{1}\kpkt x_{r}\}$ be a generating set of $M$. By Remark \ref{RemUniPropSpecial}(2) and Corollary \ref{CorSmash=Tensor}, the binoid epimorphism
$$\varphi:N\wedge(\N^{r})^{\infty}\Rto M\komma$$
$a\wedge(n_{1}\kpkt n_{r})\mto \varphi(a)+n_{1}x_{1}\pluspkt n_{r}x_{r}$, induces a $K\mina$algebra epimorphism
$$K[\varphi]:K[N][X_{1}\kpkt X_{r}]\Rto K[M]\komma$$
with $rT^{a}X^{\nu}\mto rT^{\varphi(a)}X^{\nu}$, $\nu=(n_{1}\kpkt n_{r})$. Obviously, $K[\varphi]$ is a $K[N]\mina$algebra homomorphism. This proves the statement.
\end {proof}

\begin {Remark} \label {RemCompositionsAlgebra}
\begin{ListeTheorem}
\item[]
\item The supplement of Corollary \ref {CorSmash=Tensor} can be obtained from earlier results when taking extra assumptions on the binoids into account. If, for instance, $M$ and $M^{\prime}$ are integral, then $M\wedge M^{\prime}\cong(M\opkt\!\!\times M^{\prime^{_{\bullet}}})^{\infty}$ by Lemma \ref{LemSmashRules}(3). Therefore, 
\begin {align*}
K[M\wedge M^{\prime}]&\,\,\cong\,\, K(M\opkt\!\!\times M^{\prime^{_{\bullet}}})\\
&\,\,\cong\,\,KM\opkt\!\!\otimes_{K}KM^{\prime^{_{\bullet}}}\\
&\,\,\cong\,\,K[M]\otimes_{K}K[M^{\prime}]\komma
\end {align*}
where the isomorphism in the middle is due to Corollary \ref {CorPropMonoidA}(4). However, for arbitrary (not necessarily integral) commutative binoids this also follows from Corollary \ref{CorPropBinoidA}(3) since $M\wedge M^{\prime}\cong (M\times M^{\prime})/\Ical$ with $\Ical=\{(a,b)\mid a=\infty$ or $b=\infty\}$, which yields
\begin {align*}
K[M\wedge M^{\prime}]&\,\,\cong\,\, K(M\times M^{\prime})/(T^{c}\mid c\in\Ical)\\
&\,\,\cong\,\, (KM\otimes_{K}KM^{\prime})/(KT^{\infty}\otimes_{K}KM^{\prime}+KM\otimes_{K}KT^{\infty})\\
&\,\,\cong\,\, (KM/(T^{\infty}))\otimes_{K}(KM^{\prime}/(T^{\infty}))\\
&\,\,\cong\,\, K[M]\otimes_{K}K[M^{\prime}]\pkt
\end {align*}
The isomorphism $K[(M\times M^{\prime})/\Ical]\cong K[M]\otimes_{K}K[M^{\prime}]$ can already be found in \cite[Chapter 4, Lemma 10]{OkninskiSA}.
\item Corollary \ref{CorSmash=Tensor} has shown that the tensor product of binoid algebras comes from the smash product of the involved binoids, while for the product we get
\begin {align*}
K[M\times M^{\prime}]&\,\,=\,\,K(M\times M^{\prime})/(T^{(\infty,\infty)})\\
&\,\,\cong\,\, KM\otimes_{K}KM^{\prime}/(U^{\infty}\otimes_{K}V^{\infty})\komma
\end {align*}
cf.\ Corollary \ref{CorPropMonoidA}(4). By Proposition \ref {PropUnivPropBinoidA}, there is a commutative diagram
$$\xymatrix{
M\times M^{\prime}\ar[r]^{\iota_{M\times M^{\prime}}}\ar[d]_{\iota:=(\iota_{M},\iota_{M^{\prime}})}&K[M\times M^{\prime}]\ar[dl]^{\tilde{\iota}}\\
K[M]\times K[M^{\prime}]&}$$
with $\tilde{\iota}\iota_{M\times M^{\prime}}=\iota$. Thus, $(U^{a},V^{a^{\prime}})=\iota(a,a^{\prime})=\tilde{\iota}\iota_{M\times M^{\prime}}(a,a^{\prime})=\tilde{\iota}(T^{(a,a^{\prime})})$. Clearly, $\tilde{\iota}$ is surjective, but it is not injective if $M^{\prime}$ and $M$ are nonzero, since then $\tilde{\iota}(T^{(0,0)})=(1,1)=\tilde{\iota}(T^{(0,\infty)}+T^{(\infty,0)})$ while $T^{(0,0)}\not=T^{(0,\infty)}+T^{(\infty,0)}$.
\item The natural binoid epimorphisms $\pi_{\wedge}:M\times M^{\prime}\rto M\wedge M^{\prime}$ and $\pi_{\cupbidot}:M\wedge M^{\prime}\rto M\cupbidot M^{\prime}$, cf.\ Remark \ref{RemProdEpisPos}, induce by Corollary \ref {CorUnivPropBinoidA} the $K\mina$algebra epimorphisms
$$K[M\times M^{\prime}]\stackrel{K[\pi_{\wedge}]}{\Rto}K[M\wedge M^{\prime}]\stackrel{K[\pi_{\cupbidot}]}{\Rto}K[M\cupbidot M^{\prime}]\pkt$$
If $M$ and $M^{\prime}$ are commutative, then $M\cupbidot M^{\prime}=(M\wedge M^{\prime})/J$ with $J=\{a\wedge a^{\prime}\mid a\not=0$ and $a^{\prime}\not=0\}$, cf.\ Example \ref{ExCompositionsIdeal}. By Corollary \ref{CorPropBinoidA}(3), this gives
$$K[M\cupbidot M^{\prime}]\,\,=\,\,K[M]\otimes_{K}K[M^{\prime}]/(U^{a}\otimes V^{a^{\prime}}\mid a,a^{\prime}\not=0)\pkt$$
\end{ListeTheorem}
\end {Remark}

\begin {Example}
We apply the results of the preceding remark to $M=M^{\prime}=\N^{\infty}$. This yields the following $K\mina$algebra isomorphisms:
\begin {align*}
K[\N^{\infty}\times\N^{\infty}]&\,\,=\,\, K\N^{\infty}\otimes_{K}K\N^{\infty}/(U^{\infty}\otimes_{K}V^{\infty})\vspace{1.5cm} \\
&\,\,\cong\,\, (K\times K[X])\otimes_{K}(K\times K[Y])/((1,0)\otimes (1,0))\vspace{1.5cm} \\
&\,\,\cong\,\, K[X]\times K[Y]\times K[X,Y]\vspace{1.5cm} \\
K[\N^{\infty}\wedge\N^{\infty}]&\,\,\cong\,\, K[X]\otimes_{K}K[Y]\cong K[X,Y]\vspace{1.5cm} \\
K[\N^{\infty}\cupbidot\N^{\infty}]&\,\,\cong\,\, K[X]\otimes_{K}K[Y]/(X\otimes Y)\,\,\cong\,\, K[X,Y]/(XY)\komma\vspace{1.5cm} 
\end {align*}
where $K[X]$, $K[Y]$, and $K[X,Y]$ denote the polynomial rings with indeterminates $X$ and/or $Y$.
\end {Example}

\bigskip

\chapter {Topology of commutative binoids} \label{ChapTopology}
\markright{\ref{ChapTopology} Topology og binoids}

In this chapter, we investigate the spectrum and the $K\mina$spectrum, $K$ a field, of a commutative binoid as topological spaces. The topology on the spectrum is defined analogous to the Zariski topology on the spectrum of a ring. All results  resemble those from ring theory, and the same applies to the dimension theory of binoids, with which we deal in the third section. We introduce the booleanization of a binoid $M$, which is a fairly easier binoid to study, and show that its spectrum is homeomorphic to that of $M$. Similar to a result on characters of semigroups, we give a criterion when the $K\mina$points of a binoid $M$ separate the elements of $M$, cf.\ Proposition \ref{PropFuncEquiv}. The topology on $K\minspec M$ comes from that on $K\minSpec K[M]$, and the description of its properties will be continued in the next chapter. Here we mainly focus on connectedness properties, in particular of hypersurfaces, and show that $K\minspec M$ is the union of its cancellative components, which serve as the key tool to prove our main criterion for connectedness, cf.\ Theorem \ref{ThConnectedness}.

\begin{Convention}
In this chapter, arbitrary binoids are assumed to be \emph{commutative} unless otherwise stated.
\end{Convention}

\bigskip

\section {Spectrum} \label {SecTopology}
\markright {\ref{SecTopology} Spectrum}

In this section, we define a topology on the spectrum of a binoid $M$ similar to the Zariski topology on the spectrum of a ring $R$. An excellent description of the latter can be found in \cite[Chapter 3.A]{PatilStorch}, which we will partly follow since many results translate directly to binoids and their spectra. The reader may also consult \cite[Chapter I \S1]{EGA}, \cite[Chapter II \S 1]{Mumford}, or \cite[Chapter II \S 4.1]{BourbakiCA} for spectra of rings. The spectrum of a binoid equipped with the Zariski topology and its subspace consisting of all minimal prime ideals have been studied in detail in \cite{Kist}. See also \cite{CortinasWeibelCDH}.

\medskip

Recall, cf.\ Section \ref{SecPrime}, that the semibinoid $(\spec M,\cup, M\Uplus)$ is a binoid with identity element $\zero$ if $M$ is integral. However, one can always turn $(\spec M,\cup, M\Uplus)$ into a binoid by adjoining $\{\emptyset\}$ as an identity element irrespectively of the integrality of $M$.

\begin {Definition}
Let $M$ be  binoid. The commutative binoid $(\spec M\cup\{\emptyset\},\cup, \emptyset, M\Uplus)$ is called the \gesperrt{extended spectrum} \index{spectrum!extended --}of $M$, which will be denoted by $\spec^{\emptyset}M$\nomenclature[Spec1]{$\spec^{\emptyset}M$}{extended spectrum of the binoid $M$}.\end {Definition}

\begin {Corollary} \label {CorSpecDualFilt}
For $M\not=\zero$, the boolean binoids $\specE M$, $M\dual$, and $\Fcal(M)_{\cap}$ are isomorphic.
\end {Corollary}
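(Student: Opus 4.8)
The plan is to realize each of the three binoids as the corresponding semibinoid from Corollary \ref{CorHomFiltSpec} with a single identity element adjoined, and then to extend the semibinoid isomorphisms found there by matching up those three adjoined identities.

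First I would record the relevant identifications. By Remark \ref{RemBHom} we have $\hom(M,\trivial)=\trivial\minspec M$, the constant map $\chi_{M}:a\mapsto 0$ acts as an identity for $+$, and $\chi_{M\okreuz}$ is the absorbing element by Lemma \ref{LemPosHom}; since $\chi_{M}$ sends $\infty$ to $0$ it is not itself a binoid homomorphism, so $M\dual$ is exactly the semibinoid $\trivial\minspec M$ with the identity $\chi_{M}$ adjoined. Likewise $\specE M$ is $(\spec M,\cup,M\Uplus)$ with the identity $\emptyset$ adjoined (the absorbing element being $M\Uplus$, the largest prime by Proposition \ref{PropPrimeUnion}), and $\Fcal(M)_{\cap}$ is $(\Fcal(M)\setminus\{M\},\cap,M\okreuz)$ with the identity $M$ adjoined. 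Here I would note that $\Fcal(M)\setminus\{M\}$ is closed under $\cap$, since $F\cap G\subseteq F\subsetneq M$ whenever $F\not=M$, and that $M\okreuz$ is its absorbing element because every filter contains $M\okreuz$.

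Next I would invoke Corollary \ref{CorHomFiltSpec}, which supplies semibinoid isomorphisms $\trivial\minspec M\cong\spec M\cong\Fcal(M)\setminus\{M\}$ given by $\chi\mapsto\ker\chi$ and $\Pcal\mapsto M\setminus\Pcal$, and which in particular matches the absorbing elements $\chi_{M\okreuz}\leftrightarrow M\Uplus\leftrightarrow M\okreuz$. The key observation is that the very same two formulas extend verbatim to the adjoined identities: $\ker\chi_{M}=\emptyset$ and $M\setminus\emptyset=M$, so the extended maps $M\dual\to\specE M\to\Fcal(M)_{\cap}$ carry $\chi_{M}\mapsto\emptyset\mapsto M$, that is, identity to identity. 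Since adjoining an identity element to isomorphic semibinoids yields isomorphic binoids, I would conclude that these extended maps are binoid isomorphisms. Booleanness is inherited, each operation ($+$ on indicator homomorphisms, $\cup$, $\cap$) being idempotent and an adjoined identity being idempotent as well.

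There is essentially no hard step here: the substance is entirely contained in Corollary \ref{CorHomFiltSpec}. The only thing demanding care is the bookkeeping that the three adjoined identities correspond correctly (made transparent by $\ker\chi_{M}=\emptyset$ and $M\setminus\emptyset=M$) and that the operations involving the identity are respected, for instance $\ker(\chi_{M}+\psi)=\emptyset\cup\ker\psi=\ker\psi$ in agreement with $\chi_{M}+\psi=\psi$, and dually $F\cap M=F$ on the filter side.
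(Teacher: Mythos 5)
Your proposal is correct and follows essentially the same route as the paper: the paper's proof likewise extends the semibinoid isomorphisms of Corollary \ref{CorHomFiltSpec} by the correspondence $(\chi_{M}=)\,\alpha_{\emptyset}\leftrightarrow\emptyset\leftrightarrow M$ between the adjoined identity elements. Your additional checks (that $\ker\chi_{M}=\emptyset$, $M\setminus\emptyset=M$, and that the operations involving the identities are respected) are exactly the bookkeeping the paper leaves implicit.
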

\begin {proof}
Extending the maps from Corollary \ref {CorHomFiltSpec} to these binoids by $(\chi_{M}=)\,\alpha_{\emptyset}\leftrightarrow\emptyset\leftrightarrow M$ gives the well-defined isomorphisms.
\end {proof}

\begin {Definition}
For a subset $A\subseteq M$ let
$$\opV(A):=\{\Pcal\in\spec M\mid A\subseteq\Pcal\}\pkt$$
\nomenclature[V]{$\opV(-)$}{closed set in the Zariski topology on $\spec M$}The complements in $\spec M$ and $\specE M$ are denoted by
$$\opD(A):=\spec M\setminus\opV(A)=\{\Pcal\in\spec M\mid A\not\subseteq\Pcal\}\quad\text{and}\quad\opDE(A):=\specE M\setminus\opV(A)=\opD(A)\cup\{\emptyset\}\pkt$$\nomenclature[D]{$\opD(-)$}{open set in the Zariski topology on $\spec M$}\nomenclature[D]{$\opDE(-)$}{open set in the Zariski topology on $\specE M$}
If $A=\{f\}$, we shall write $\opV(f)$ instead of $\opV(\{f\})$ and the same for the complements $\opD(f)$ and $\opDE(f)$.
\end {Definition}

\begin {Lemma} \label {LemBcalV}
Let $M$ be a binoid, $f,g\in M$, and $A,A_{i},B\subseteq M$, $i\in I$, subsets.
\begin {ListeTheorem}
\item $\bigcap_{i\in I}\opV(A_{i})=\opV(\bigcup_{i\in I}A_{i})$.
\item $\opV(f)\cup\opV(g)=\opV(f+g)$.
\item If $A\subseteq B$, then $\opV(B)\subseteq\opV(A)$.
\item $\opV(f)=\emptyset$ if and only if $f\in M^{\times}$.
\item $\opV(f)=\spec M$ if and only if $f$ is nilpotent.
\item $\opV(A)=\opV(_{_{M}}\!\langle A\rangle)=\opV(\sqrt{_{_{M}}\!\langle A\rangle})$.
\end {ListeTheorem}
\end {Lemma}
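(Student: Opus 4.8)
The plan is to verify each of the six claims in Lemma~\ref{LemBcalV} directly from the definitions of $\opV(-)$, prime ideals, and the relevant notions (unit, nilpotent, radical), exploiting the characterizations of prime ideals collected in Lemma~\ref{LemCharacterizationPrime} and the description of the ideal generated by a set together with the radical from Proposition~\ref{PropRadical}. None of the parts is individually hard; the work is in assembling the right one-line argument for each.

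For the set-theoretic statements I would proceed as follows. For (1), a prime ideal $\Pcal$ lies in $\bigcap_{i\in I}\opV(A_i)$ iff $A_i\subseteq\Pcal$ for every $i$, which is exactly $\bigcup_{i\in I}A_i\subseteq\Pcal$, giving the equality. Part (3) is immediate: if $A\subseteq B$ and $B\subseteq\Pcal$, then $A\subseteq\Pcal$. For (2), the inclusion $\subseteq$ is clear since $f,g$ are both summands of $f+g$, so $f+g\in\Pcal$ whenever $f\in\Pcal$ or $g\in\Pcal$ (ideals are closed under adding arbitrary elements of $M$); conversely, if $f+g\in\Pcal$ then the prime property (Lemma~\ref{LemCharacterizationPrime}(5)) forces $f\in\Pcal$ or $g\in\Pcal$, so $\Pcal\in\opV(f)\cup\opV(g)$.

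For (4) and (5) I would use the distinguished prime ideals. For (4): $\opV(f)=\emptyset$ means no prime ideal contains $f$; since the maximal ideal $M\Uplus$ is prime (Example~\ref{ExMaxIdeal}) and equals $M\setminus M\okreuz$, we have $f\notin M\Uplus$ iff $f\in M\okreuz$, and I would note that a unit lies in no proper ideal at all while a nonunit always lies in the prime ideal $M\Uplus$. For (5): $\opV(f)=\spec M$ means $f$ lies in every prime ideal, which by Corollary~\ref{CorMinPrimeIntersection} (or Proposition~\ref{PropRadical} applied to $\zero$) is equivalent to $f\in\bigcap_{\Pcal\in\spec M}\Pcal=\nil(M)$, i.e.\ $f$ is nilpotent. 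Finally, for (6) I would argue that $\opV(A)=\opV(\langle A\rangle)$ because a prime ideal containing $A$ contains the smallest ideal generated by $A$ and conversely, and that $\opV(\langle A\rangle)=\opV(\sqrt{\langle A\rangle})$ because, by Proposition~\ref{PropRadical}, $\sqrt{\langle A\rangle}$ is the intersection of the primes containing $\langle A\rangle$, so a prime contains $\langle A\rangle$ iff it contains $\sqrt{\langle A\rangle}$.

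The only mild subtlety — the closest thing to an obstacle — is keeping straight in (4) and (5) that these are statements about \emph{prime} ideals specifically, so one must invoke the existence of the relevant primes ($M\Uplus$ for (4), the family $\min M$ or all of $\spec M$ for (5)) rather than reasoning about arbitrary ideals; once the correct prime-ideal facts from Section~\ref{SecPrime} are cited, each equivalence is a two-line check. I expect the entire proof to read as a short enumerated list mirroring the six items, each dispatched by the corresponding definitional observation.
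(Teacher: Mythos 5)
Your proof is correct and follows essentially the same route as the paper, which simply declares (1)--(4) straightforward and derives (5) from Corollary~\ref{CorMinPrimeIntersection}. The only divergence is in (6), where the paper unwinds the definition of the radical directly (if $nf\in{}_{_{M}}\langle A\rangle$ then $nf=a+m$ with $a\in A\subseteq\Pcal$, hence $f\in\Pcal$ by the prime property) rather than invoking Proposition~\ref{PropRadical}; both arguments are valid and rest on the same facts.
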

\begin {proof}
(1)-(4) are straightforward and (5) follows immediatley from Corollary \ref{CorMinPrimeIntersection}.
(6) Set $_{_{M}}\!\langle A\rangle=:\Acal$. We always have $A\subseteq\Acal\subseteq\sqrt{\Acal}$, and hence $\opV(\sqrt{\Acal})\subseteq\opV(\Acal)\subseteq\opV(A)$ by (3). For the other inclusions, it suffices to show that $\opV(A)\subseteq\opV(\sqrt{\Acal})$. Given $\Pcal\in\opV(A)$, we have to verify that $\sqrt{\Acal}\subseteq\Pcal$. So let $f\in\sqrt{\Acal}$. This means $nf\in\Acal$ for some $n\ge 1$, which implies that $nf=a+m$ with $a\in A\subseteq\Pcal$ and $m\in M$. In particular, $nf\in\Pcal$, and therefore $f\in\Pcal$ by the prime property.
\end {proof}

By taking complements, Lemma \ref {LemBcalV} translates to:

\begin {Lemma} \label {LemBcalD}
Let $M$ be a binoid, $f,g\in M$, and $A,A_{i},B\subseteq M$, $i\in I$, subsets.
\begin {ListeTheorem}
\item $\bigcup_{i\in I}\opD(A_{i})=\opD(\bigcup_{i\in I}A_{i})$.
\item $\opD(f)\cap\opD(g)=\opD(f+g)$.
\item If $A\subseteq B$, then $\opD(A)\subseteq\opD(B)$.
\item $\opD(f)=\spec M$ if and only if $f\in M^{\times}$.
\item $\opD(f)=\emptyset$ if and only if $f$ is nilpotent.
\item $\opD(A)=\opD(_{_{M}}\!\langle A\rangle)=\opD(\sqrt{_{_{M}}\!\langle A\rangle})$.
\end {ListeTheorem}
\end {Lemma}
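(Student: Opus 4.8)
The plan is to obtain Lemma \ref{LemBcalD} purely formally from Lemma \ref{LemBcalV} by taking complements in $\spec M$, exploiting the fact that $\opD(A)$ is by definition $\spec M\setminus\opV(A)$. Since Lemma \ref{LemBcalV} has already been proved, each item should follow from the corresponding item there together with elementary set-theoretic duality (De Morgan's laws) and the observation that complementation reverses inclusions. No new geometric input about prime ideals is needed; the work is to record which statement dualizes to which.

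First I would treat the statements that dualize directly. For (1), applying the complement to $\bigcap_{i\in I}\opV(A_{i})=\opV(\bigcup_{i\in I}A_{i})$ from Lemma \ref{LemBcalV}(1) and using $\spec M\setminus\bigcap_{i\in I}\opV(A_{i})=\bigcup_{i\in I}(\spec M\setminus\opV(A_{i}))=\bigcup_{i\in I}\opD(A_{i})$ yields $\bigcup_{i\in I}\opD(A_{i})=\opD(\bigcup_{i\in I}A_{i})$. Likewise (2) comes from Lemma \ref{LemBcalV}(2) via $\opD(f)\cap\opD(g)=\spec M\setminus(\opV(f)\cup\opV(g))=\spec M\setminus\opV(f+g)=\opD(f+g)$. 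For (3), the inclusion $A\subseteq B$ gives $\opV(B)\subseteq\opV(A)$ by Lemma \ref{LemBcalV}(3), and taking complements reverses this to $\opD(A)\subseteq\opD(B)$. For (6), the three sets $\opV(A)$, $\opV(_{_{M}}\langle A\rangle)$, and $\opV(\sqrt{_{_{M}}\langle A\rangle})$ coincide by Lemma \ref{LemBcalV}(6), so their complements $\opD(A)$, $\opD(_{_{M}}\langle A\rangle)$, and $\opD(\sqrt{_{_{M}}\langle A\rangle})$ coincide as well.

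The remaining items (4) and (5) are the ones where I would be slightly careful, since the roles of $\emptyset$ and $\spec M$ are swapped relative to Lemma \ref{LemBcalV}. For (4), Lemma \ref{LemBcalV}(4) states $\opV(f)=\emptyset$ if and only if $f\in M^{\times}$; complementing gives $\opD(f)=\spec M$ if and only if $f\in M^{\times}$, which is exactly (4). For (5), Lemma \ref{LemBcalV}(5) states $\opV(f)=\spec M$ if and only if $f$ is nilpotent; complementing gives $\opD(f)=\emptyset$ if and only if $f$ is nilpotent, which is (5). Here I would note that the equivalences are preserved because passing to complements within the fixed ambient set $\spec M$ is a bijection on subsets, so equalities of subsets are preserved.

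I do not anticipate a genuine obstacle, as the whole statement is a mechanical transcription of Lemma \ref{LemBcalV} under complementation; the only point deserving a sentence of care is that the ``$\emptyset$'' and ``$\spec M$'' cases in (4) and (5) interchange, so one must quote the correct item of Lemma \ref{LemBcalV} for each. The proof would therefore be short, essentially a remark that each assertion is the complement in $\spec M$ of the corresponding assertion in Lemma \ref{LemBcalV}, which is in fact precisely how the excerpt frames it with the phrase \emph{By taking complements}.
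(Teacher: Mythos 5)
Your proof is correct and takes exactly the route the paper does: the paper's proof of Lemma \ref{LemBcalD} simply states that all assertions are immediate consequences of Lemma \ref{LemBcalV}, i.e.\ obtained by taking complements in $\spec M$. Your more detailed write-up, including the careful note that the $\emptyset$ and $\spec M$ cases swap between items (4) and (5), is a faithful expansion of that one-line argument.
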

\begin {proof}
All statements are immediate consequences of Lemma \ref {LemBcalV}.
\end {proof}

The preceding lemmata show that $\spec M$ and $\specE M$ are topological spaces for every binoid $M$, where the closed sets are given by $\opV(A)$, $A\subseteq M$. In either case this topology will be called the \gesperrt{Zariski topology}\index{Zariski topology!-- on $\spec M$}\index{topology!Zariski --}. The complements $\opD(A)$ and $\opDE(A)$ are the open sets, where $(\opD(f))_{f\in M}$ and $(\opDE(f))_{f\in M}$ define a basis of open sets of the Zariski topology on $\spec M$ and $\specE M$, respectively. From now on, when we consider the (extended) spectrum of a binoid as a topological space, we mean the (extended) spectrum together with the Zariski topology.

\begin {Remark}\label{RemLowUpperTop}
\begin {ListeTheorem}
\item []
\item There are two natural topologies on a finite poset $(X,\subseteq)$: the \emph{lower topology}\index{lower topology}\index{topology!lower --}, where the open sets are given by the subset-closed sets of $X$. In other words,
$$D\subseteq X\quad\text{open}\quad:\eq\quad(A\in D\text{ and }B\subseteq A\Rarrow B\in D)$$
(which means that the closed sets are given by the superset-closed sets), and the \emph{upper topology}\index{upper topology}\index{topology!upper --}, where the open sets are given by the superset-closed sets of $X$. In other words,
$$D\subseteq X\quad\text{open}\quad:\eq\quad(A\in D\text{ and }A\subseteq B\Rarrow B\in D)\pkt$$
Thus, if $\spec M$ is a finite set, for instance when $M$ is finitely generated, then the Zariski topology on $\spec M$ coincides with the lower topology on $(\spec M,\subseteq)$.
\item By Corollary \ref{CorHomFiltSpec}, $\spec M$ and $\Fcal(M)\setminus\{M\}$ are isomorphic as semigroups by taking the complements. Hence, considering a topology on $\spec M$ or on $\Fcal(M)$ is essentially the same. Those topological spaces that arise as the filtrum of a commutative monoid have been characterized in \cite[Satz 2.3.2]{BrennerFilter}.
\item When dealing with monoids the empty set is usually considered as a prime ideal to ensure that the spectrum is never empty, which were the case if $M$ is a group. By this convention, the set of all minimal prime ideals of $M$ is always a singleton. In particular, the spectrum admits always a unique generic point and is irreducible, cf.\ \cite[Chapter 1.4]{Ogus}, which still holds true if one adjoins an absorbing element to the monoid, cf.\ Corollary \ref{CorIrredCompMinPrime}.
\end{ListeTheorem}
\end {Remark}

\begin {Corollary}
Let $M$ be a binoid.
\begin {ListeTheorem}
\item If $\Ical$ is an ideal in $M$, then 
$$\spec(M/\Ical)\,\,\cong\,\,\opV(\Ical)\,\,\subseteq\spec M\pkt$$
\item If $S\subseteq M$ is a submonoid of $M$, then
$$\spec M_{S}\,\,\cong\,\,\{\Pcal\in\spec M\mid\Pcal\cap S=\emptyset\}\pkt$$
If, in addition, $S$ can be generated by a finite set $A\subseteq S$, then
$$\spec M_{S}\,\,\cong\,\,\opD(f)\,\,\subseteq\spec M\komma$$
where $f=\sum_{g\in A}g$. In particular, $\spec M_{f}=\opD(f)$ for every $f\in M$. 
\item The following statements are equivalent for $\Pcal\in\spec M$:
\begin {ListeTheorem}
\item [(a)] $M_{\Pcal}=M_{f}$ for some $f\in M$.
\item [(b)] $\spec M_{\Pcal}=\opD(f)$ for some $f\in M$.
\item [(c)] $\spec M_{\Pcal}$ is open in $\spec M$.
\end {ListeTheorem}
\end {ListeTheorem}
\end {Corollary}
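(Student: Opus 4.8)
The plan is to reduce all three parts to earlier results and then promote the semigroup/semibinoid isomorphisms already obtained to homeomorphisms by checking the Zariski topology. The one preliminary fact I would record is that for any binoid homomorphism $\varphi:M\rto N$ the induced map $\varphi^{\ast}:\spec N\rto\spec M$, $\Qcal\mto\varphi^{-1}(\Qcal)$, is continuous, since $(\varphi^{\ast})^{-1}(\opD(A))=\opD(\varphi(A))$ for every $A\subseteq M$ (because $A\subseteq\varphi^{-1}(\Qcal)$ iff $\varphi(A)\subseteq\Qcal$).

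For part (1), by definition $\opV(\Ical)=\{\Pcal\in\spec M\mid\Ical\subseteq\Pcal\}$, and Corollary \ref{CorExtIdealPrime} already gives an order-preserving bijection $\pi^{\ast}:\spec(M/\Ical)\rto\opV(\Ical)$ induced by the projection $\pi:M\rto M/\Ical$. Continuity of $\pi^{\ast}$ comes from the fact above; for the inverse, I would compute $\pi^{\ast}(\opD(\bar g))=\opD(g)\cap\opV(\Ical)$ for $\bar g\in M/\Ical$ with preimage $g$, which shows $\pi^{\ast}$ is open onto the subspace $\opV(\Ical)$, hence a homeomorphism.

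For part (2), the first isomorphism is handled identically, using Corollary \ref{CorIndSpecLoc} for the order-preserving bijection $\iota_S^{\ast}:\spec M_S\rto\{\Pcal\mid\Pcal\cap S=\emptyset\}$, continuity from the fact above applied to $\iota_S$, and the image computation on basic opens to get a homeomorphism onto the subspace. For the finitely generated case I set $f:=\sum_{g\in A}g$ and claim $\{\Pcal\mid\Pcal\cap S=\emptyset\}=\opD(f)$. The inclusion $\subseteq$ is immediate since $f\in S$. For $\supseteq$, if $f\notin\Pcal$ then no generator $g\in A$ lies in $\Pcal$ (each $g$ is a summand of $f$ and $\Pcal$ is an ideal), whence no sum of generators lies in $\Pcal$ by the prime property, so $S\cap\Pcal=\emptyset$. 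Combining gives $\spec M_S\cong\opD(f)$, and the special case $S=\{nf\mid n\in\N\}$, $A=\{f\}$ yields $\spec M_f=\opD(f)$; one may also note $\opFilt(S)=\opFilt(f)$, so that $M_S\cong M_f$ by Corollary \ref{CorLocal}.

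For part (3), the implication (a)$\Rarrow$(b) is immediate from $\spec M_f=\opD(f)$ of part (2), and (b)$\Rarrow$(c) holds because $\opD(f)$ is a basic open set. The hard direction is (c)$\Rarrow$(a), where coincidence of spectra must be upgraded to equality of localizations. First, part (2) applied to $S=M\setminus\Pcal$ identifies $\spec M_\Pcal$ with the downset $U:=\{\Qcal\in\spec M\mid\Qcal\subseteq\Pcal\}$. Assuming $U$ open, I would pick a basic open $\opD(f)$ with $\Pcal\in\opD(f)\subseteq U$; then $f\notin\Pcal$, and since $\Qcal\subseteq\Pcal$ forces $f\notin\Qcal$, I obtain $U\subseteq\opD(f)$ and hence $\opD(f)=U$. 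To conclude $M_\Pcal=M_f$, I would invoke Proposition \ref{PropExistencePrimes} to produce the unique maximal prime $\Qcal_0=M\setminus\opFilt(f)$ avoiding $f$: maximality gives $\Pcal\subseteq\Qcal_0$, while $\Qcal_0\in\opD(f)=U$ gives $\Qcal_0\subseteq\Pcal$, so $\Pcal=M\setminus\opFilt(f)$, i.e. $\opFilt(f)=M\setminus\Pcal$. Then $M_\Pcal=M_{M\setminus\Pcal}=M_{\opFilt(f)}=M_f$ by Corollary \ref{CorLocal} together with the injectivity of $F\mto M_F$. The main obstacle is exactly this final upgrade: equality of the two spectra alone is not enough, and it is the maximality property of $\Qcal_0$ that pins down $\Pcal$ and forces the two defining filters to agree.
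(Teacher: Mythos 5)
Your proof is correct and, for parts (1), (2) and the bulk of (3), follows essentially the same route as the paper: (1) and the first half of (2) are reductions to Corollary \ref{CorExtIdealPrime} and Corollary \ref{CorIndSpecLoc}, your computation $\{\Pcal\mid\Pcal\cap S=\emptyset\}=\bigcap_{g\in A}\opD(g)=\opD(f)$ is the paper's use of Lemma \ref{LemBcalD}(2) in different words, and in (3) the extraction of a single basic open set $\opD(f)$ with $\Pcal\in\opD(f)\subseteq\spec M_{\Pcal}$, followed by the observation that $\Qcal\subseteq\Pcal$ forces $\Qcal\in\opD(f)$, is exactly the paper's argument.

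The one place where you genuinely go beyond the paper is the last step of (3). The paper's proof stops once it has shown $\spec M_{\Pcal}=\opD(f)$, i.e.\ it establishes $(c)\Rarrow(b)$ and leaves the return to $(a)$ implicit, so the cycle $(a)\Rarrow(b)\Rarrow(c)\Rarrow(b)$ it exhibits does not by itself close the equivalence. You supply the missing argument: since $\Pcal\in\opD(f)$, the element $f$ is not nilpotent, so $M\setminus\opFilt(f)$ is the largest prime ideal avoiding $f$ (Remark \ref{RemFilterPrime}, resting on Proposition \ref{PropExistencePrimes}); combining $\Pcal\subseteq M\setminus\opFilt(f)$ with $M\setminus\opFilt(f)\in\opD(f)=\{\Qcal\mid\Qcal\subseteq\Pcal\}$ pins down $\Pcal=M\setminus\opFilt(f)$, whence $M_{\Pcal}=M_{\opFilt(f)}=M_{f}$ by Corollary \ref{CorLocal}. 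This is precisely the non-trivial content of $(b)\Rarrow(a)$ --- equality of the two open subsets of $\spec M$ must be upgraded to equality of the underlying filters --- and your identification of $\Pcal$ as the unique maximal prime in $\opD(f)$ is the right way to do it. All other steps (the continuity of $\varphi^{\ast}$, the image computations on basic opens, the reduction $S=\{nf\mid n\in\N\}$ for the special case) check out.
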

\begin {proof}
(1) and the first part of (2) are just restatements of Corollary \ref{CorExtIdealPrime} and Corollary \ref{CorIndSpecLoc}. If $S$ is generated by the finite set $A\subseteq S$, we obtain from the first part and Lemma \ref{LemBcalD}(2) that
$$\spec M_{S}\,\,\cong\,\,\{\Pcal\in\spec M\mid g\not\in\Pcal\text{ for all }g\in A\}\,=\,\bigcap_{g\in A}\opD(g)\,=\,\opD(f)$$
with $f=\sum_{g\in A}g$. (3) The implication $(a)\Rightarrow(b)$ is clear by (2), and $(b)\Rightarrow(c)$ is trivial. So let $\spec M_{\Pcal}$ be open in $\spec M$. Then there is a subset $B\subseteq M$ such that $\spec M_{\Pcal}=\opD(B)=\bigcup_{f\in B}\opD(f)$. On the other hand, $\spec M_{\Pcal}=\{\Qcal\in\spec M\mid \Qcal\subseteq\Pcal\}$ by (2). Hence, $\Pcal\in\opD(f)$ for some $f\in B$, but then $\spec M_{\Pcal}\subseteq\opD(f)$, which implies that $\spec M_{\Pcal}=\opD(f)$.
\end {proof}

The equivalences of the following proposition will become very useful in Section \ref{SecBooleanization}.

\begin {Proposition} \label{PropBcal}
Let $M$ be a binoid and $f,g\in M$. The following statements are equivalent:
\begin {ListeTheorem}
\item $\opV(g)\subseteq\opV(f)$.
\item $\opD(f)\subseteq\opD(g)$.
\item $\opFilt(g)\subseteq\opFilt(f)$.
\item $nf=g+x$ for some $n\in\N$ and $x\in M$; that is, $_{_{M}}\langle f\rangle\subseteq\sqrt{_{_{M}}\langle g\rangle}$.
\end {ListeTheorem}
In particular, for a boolean binoid $M$, one has $\opD(f)=\opD(g)$ if and only if $f=g$.
\end {Proposition}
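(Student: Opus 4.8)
The plan is to run a short cycle of implications, peeling off the two essentially trivial equivalences first. Since $\opD(f)$ and $\opV(f)$ are complements in $\spec M$ by definition, the equivalence $(1)\Leftrightarrow(2)$ is immediate: passing to complements merely reverses the inclusion. Likewise, because $\opFilt(g)$ is by definition the smallest filter containing $g$, the inclusion $\opFilt(g)\subseteq\opFilt(f)$ holds if and only if $g\in\opFilt(f)$ (forward, $g\in\opFilt(g)$; backward, $\opFilt(f)$ is a filter containing $g$ and hence contains the smallest such). Remark \ref{RemFilterOfF} then records that $g\in\opFilt(f)$ is equivalent to $g+x=nf$ for some $x\in M$ and $n\in\N$, which is exactly condition $(4)$. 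So $(3)\Leftrightarrow(4)$ requires no further work.

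It remains to tie the topological conditions to the arithmetic one, which I would do by establishing $(4)\Rightarrow(1)$ and $(1)\Rightarrow(4)$. For $(4)\Rightarrow(1)$, assume $nf=g+x$ and take any $\Pcal\in\opV(g)$, so $g\in\Pcal$; the ideal property gives $g+x=nf\in\Pcal$, and the prime property then forces $f\in\Pcal$, whence $\Pcal\in\opV(f)$. The degenerate case $n=0$ makes $g$ a unit, so $\opV(g)=\emptyset$ by Lemma \ref{LemBcalV} and the inclusion holds vacuously. For the converse $(1)\Rightarrow(4)$, the key tool is Proposition \ref{PropRadical}, which identifies $\sqrt{{}_{M}\langle g\rangle}$ with the intersection of all primes containing $g$, that is, with $\bigcap_{\Pcal\in\opV(g)}\Pcal$. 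If $\opV(g)\subseteq\opV(f)$, then $f$ lies in every prime of $\opV(g)$, hence in this intersection, so $f\in\sqrt{{}_{M}\langle g\rangle}$; by the definition of the radical this means $nf\in{}_{M}\langle g\rangle$, i.e.\ $nf=g+x$. This closes the loop and establishes the equivalence of all four statements.

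For the boolean supplement, I would combine $(2)\Leftrightarrow(4)$ in both directions: $\opD(f)=\opD(g)$ amounts to $f\in\sqrt{{}_{M}\langle g\rangle}$ together with $g\in\sqrt{{}_{M}\langle f\rangle}$. Since a boolean binoid is reduced (Lemma \ref{LemBool=>PosRed}) and its Rees quotients remain boolean (images of idempotents are idempotent), the principal ideals ${}_{M}\langle f\rangle$ and ${}_{M}\langle g\rangle$ are radical by Lemma \ref{LemRadikalReduced}. Thus the two conditions simplify to $f=g+x$ and $g=f+y$ for suitable $x,y\in M$. Idempotency then finishes it: from $f=g+x$ one gets $f+g=(g+g)+x=g+x=f$, and from $g=f+y$ one gets $f+g=(f+f)+y=f+y=g$, whence $f=f+g=g$. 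The reverse implication is trivial.

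I expect the only delicate points to be the bookkeeping around the degenerate case $n=0$ (equivalently, $f$ or $g$ a unit) in $(4)\Rightarrow(1)$, and remembering to invoke the radical reduction in the boolean case rather than attempting to cancel multiples of $f$ directly. Neither is a genuine obstacle, so the proof amounts to assembling the cited results — the complement duality of $\opD$ and $\opV$, Remark \ref{RemFilterOfF}, Proposition \ref{PropRadical}, and the characterization of radical ideals — in the right order.
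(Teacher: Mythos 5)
Your proof is correct, and it closes the cycle of implications along a genuinely different path from the paper's. The paper argues $(1)\Leftrightarrow(2)$, then $(2)\Rightarrow(3)$ by passing from the prime ideals in $\opD(f)\subseteq\opD(g)$ to their complementary filters and using that $\opFilt(h)$ is the intersection of all filters containing $h$, then $(3)\Rightarrow(4)$ via Remark \ref{RemFilterOfF}, and dismisses $(4)\Rightarrow(1)$ as obvious. You instead establish $(3)\Leftrightarrow(4)$ as a self-contained equivalence and connect the topology to the arithmetic through $(1)\Leftrightarrow(4)$, letting Proposition \ref{PropRadical} (the radical as $\bigcap_{\Pcal\in\opV(g)}\Pcal$) do the work that the filter-intersection description of $\opFilt$ does in the paper; these two bridging facts are complementary formulations of the same prime--filter correspondence, so neither route is shorter, but yours makes both directions of $(3)\Leftrightarrow(4)$ explicit instead of recovering one from the cycle, and it spells out the degenerate case $n=0$ (where $g$ is a unit and $\opV(g)=\emptyset$) that the paper passes over. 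In the boolean supplement you land on the same idempotency computation as the paper; your preliminary detour through radical principal ideals is sound but dispensable, since for $n\ge 1$ idempotency gives $nf=f$ directly and $n=0$ forces $g=0$ by positivity, so $nf=g+x$ already yields $f=g+x$ without invoking Lemma \ref{LemRadikalReduced}.
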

\begin {proof}
Clearly, (1) and (2) are equivalent. So assume that $\opD(f)\subseteq\opD(g)$ for $f,g\in M$. By taking complements of the prime ideals in these basic open sets, we get 
$$F:=\{H\in\Fcal(M)\mid f\in H\}\,\subseteq\,\{H\in\Fcal(M)\mid g\in H\}=:G\komma$$
which implies that $\opFilt(g)=\bigcap_{H\in G}H\subseteq\bigcap_{H\in F}H=\opFilt(f)$. Thus, (3) follows from (2). The implica\-tion $(3)\Rarrow(4)$ was observed in Remark \ref{RemFilterOfF}. $(4)\Rarrow(1)$ is obvious. The if part of the supplement is trivial (and holds for arbitrary binoids). So let $M$ be boolean and $\opD(f)=\opD(g)$ for some $f,g\in M$. By the equivalence of (2) and (4), there are elements $x,y\in M$ with $f=g+x$ and $g=f+y$. It follows 
$$f=f+f=f+g+x=f+g+g+x=f+g+f=f+g$$
and by symmetry $g=f+g$. Thus, $f=g$.
\end {proof}

\begin {Remark} \label{RemFilterPrime}
If $A$ is a subset such that $\opFilt(A)$, which is the smallest filter containing $A$, is a proper filter, then $M\setminus\opFilt(A)=:\Pcal_{A}$ is a prime ideal in $M$ with $A\not\subseteq\Pcal_{A}$. Since every prime ideal is the complement of a (proper) filter by Corollary \ref{CorHomFiltSpec}, $\Pcal_{A}$ is the largest prime ideal with $A\not\subseteq\Pcal_{A}$. Therefore, $\Pcal_{A}\in\opD(A)$ and $\Pcal\subseteq\Pcal_{A}$ for every $\Pcal\in\opD(A)$, or in other words $\Pcal_{A}=\bigcup_{\Pcal\in\opD(A)}\Pcal$.
\end {Remark}

\begin {Lemma} \label {LemOpenSetsProperties}
Let $U\subseteq\spec M$ be an open set, $\Pcal,\Pcal^{\prime}\in\spec M$, and $f\in M$.
\begin {ListeTheorem}
\item If $\Pcal\in U$, then $\Pcal^{\prime}\subseteq\Pcal$ implies $\Pcal^{\prime}\in U$.
\item $M\setminus\opFilt(f)\in U$ if and only if $\opD(f)\subseteq U$.
\item If $\opD(f)\subseteq\bigcup_{i\in I}U_{i}$ is an open cover, then $\opD(f)\subseteq U_{i}$ for one $i\in I$.
\end {ListeTheorem}
\end {Lemma}
\begin {proof}
(1) Since $U$ is open, there is an $A\subseteq M$ such that $U=\opD(A)=\{\Pcal\in\spec M\mid A\not\subseteq\Pcal\}$. Then $A\not\subseteq\Pcal^{\prime}$ if $\Pcal^{\prime}\subseteq\Pcal\in U$. The assertions (2) and (3) are clear if $f$ is nilpotent, since then $\opD(f)=\emptyset$ and $\opV(f)=M$ by Lemma \ref{LemBcalD}(5) and Lemma \ref{LemBcalV}(5). On the other hand, if $f\not\in\nil(M)$, then $\opFilt(f)\not=M$. By Remark \ref {RemFilterPrime}, $\opD(f)=\{\Pcal\in\spec M\mid\Pcal\subseteq\Qcal\}$, where $\Qcal:=M\setminus\opFilt(f)$. This proves (2) and (3).
\end {proof}

\begin {Corollary}
Let $M$ be a binoid. The spaces $\spec M$ and $\specE M$ are quasi-compact and, if $M\not=\zero$, connected. Moreover, the Zariski topology satisfies the separation axiom $\opT_{0}$.
\end {Corollary}
\begin {proof}
The first two properties are immediate consequences of Lemma \ref {LemOpenSetsProperties}(3) using the identifications $\spec M=\opD(0)$ and $\specE M=\opDE(0)$. To show the separation axiom let $\Pcal,\Qcal\in\spec M$ with $\Pcal\not=\Qcal$. Choose $f\in\Pcal$ with $f\not\in\Qcal$. Then $\opD(f)$ is an open neighborhood of $\Qcal$ with $\Pcal\not\in\opD(f)$. This also proves the statement for $\specE M$.
\end {proof}

For a subset $E$ in a topological space $X$, the \gesperrt{closure} \index{closure}$\overline{E}$ of $E$ with respect to the topology on $X$ is the smallest closed subset of $X$ containing $E$. In $\spec M$, there is an easy description of these closures.

\begin {Definition}
Given a subset $E\subseteq\spec M$, we denote the ideal $\bigcap_{\Pcal\in E}\Pcal$ by $\Jscr(E)$\nomenclature[J]{$\Jscr(E)$}{$=\bigcap_{\Pcal\in E}\Pcal$ for $E\subseteq\spec M$}.
\end {Definition}

\begin {Proposition} \label{PropClosedSets}
$\overline{E}=\opV(\Jscr(E))$ for every subset $E\subseteq\spec M$.
\end {Proposition}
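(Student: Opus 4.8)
The plan is to unwind the definition of closure directly: $\overline{E}$ is by definition the smallest closed subset of $\spec M$ containing $E$, and in the Zariski topology every closed set is of the form $\opV(A)$ for some $A\subseteq M$. So I would verify that $\opV(\Jscr(E))$ is a closed set containing $E$ which sits inside every other closed set containing $E$, and conclude by the minimality characterization of the closure.

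First I would check containment $E\subseteq\opV(\Jscr(E))$. For any $\Pcal\in E$ we have $\Jscr(E)=\bigcap_{\Qcal\in E}\Qcal\subseteq\Pcal$, which is exactly the condition $\Pcal\in\opV(\Jscr(E))$ by the definition of $\opV$. That $\opV(\Jscr(E))$ is closed is immediate, since sets of the form $\opV(A)$ are precisely the closed sets (here $A=\Jscr(E)$, which is an ideal as an intersection of ideals). Hence $\overline{E}\subseteq\opV(\Jscr(E))$.

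For the reverse inclusion I would take an arbitrary closed set containing $E$, say $\opV(A)\supseteq E$, and show $\opV(\Jscr(E))\subseteq\opV(A)$. Since $E\subseteq\opV(A)$, every $\Pcal\in E$ satisfies $A\subseteq\Pcal$, and therefore $A\subseteq\bigcap_{\Pcal\in E}\Pcal=\Jscr(E)$. Now Lemma \ref{LemBcalV}(3) applied to $A\subseteq\Jscr(E)$ yields $\opV(\Jscr(E))\subseteq\opV(A)$. As $\opV(A)$ was an arbitrary closed set containing $E$, this shows $\opV(\Jscr(E))$ is contained in the intersection of all such sets, i.e.\ in $\overline{E}$, giving $\opV(\Jscr(E))\subseteq\overline{E}$ and hence equality.

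There is no real obstacle here; the statement is the binoid analogue of the standard fact for the Zariski topology on $\Spec$ of a ring, and the only point requiring a moment's care is the minimality argument, where one must pass from ``every prime in $E$ contains $A$'' to ``$A\subseteq\Jscr(E)$'' and then invoke the order-reversing behavior of $\opV$ from Lemma \ref{LemBcalV}(3). Everything else is a direct translation of definitions.
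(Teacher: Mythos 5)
Your proof is correct and follows essentially the same route as the paper: both directions rest on the order-reversing property of $\opV$ from Lemma \ref{LemBcalV}(3), the containment $E\subseteq\opV(\Jscr(E))$ from the definition of $\Jscr$, and the observation that any closed set $\opV(A)\supseteq E$ forces $A\subseteq\Jscr(E)$. The only cosmetic difference is that you verify $\Pcal\in\opV(\Jscr(E))$ directly from the definition of $\opV$, where the paper routes through $\opV(\Pcal)\subseteq\opV(\Jscr(E))$; the substance is identical.
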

\begin {proof}
For the inclusion $\subseteq$, take an arbitrary $\Pcal\in E$. By definition, we have $\Jscr(E)\subseteq\Pcal$, which implies that $\opV(\Pcal)\subseteq\opV(\Jscr(E))$ by Lemma \ref {LemBcalV}(3). Now the inclusion follows since $\Pcal\in\opV(\Pcal)$. For the other inclusion, we need to show that every closed subset $V\subseteq\spec M$ with $E\subseteq V$ contains $\opV(\Jscr(E))$. If $V\subseteq\spec M$ is a closed subset, then $V=\opV(\Ical)$ for some ideal $\Ical\subseteq M$. If, in addition, $E\subseteq V$, then  $\Ical\subseteq\Pcal$ for all $\Pcal\in E$. Hence, $\Ical\subseteq\Jscr(E)$. Now $\opV(\Jscr(E))\subseteq V$ follows from Lemma \ref {LemBcalV}(3).
\end {proof}

Since the intersection of radical ideals is again a radical ideal, $\Jscr(E)$ is a radical ideal for every $E\subseteq\spec M$. Proposition \ref {PropRadical} now translates to:

\begin {Corollary} \label{CorRadicalClosed}
$\Jscr(\opV(\Ical))=\sqrt{\Ical}$ for every ideal $\Ical\subseteq M$. In particular, the inclusion reversing assignments
$$A\longmapsto\Jscr(A)\quad\text{and}\quad\opV(\Ical)\longmapsfrom\Ical$$are inverse bijections between the closed subsets of $\spec M$ and the radical ideals of $M$.
\end {Corollary}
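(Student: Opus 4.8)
The plan is to derive the displayed equality first and then read off the bijection as a formal consequence of it together with Proposition \ref{PropClosedSets}. For the identity $\Jscr(\opV(\Ical))=\sqrt{\Ical}$ I would simply unwind the two definitions involved: by definition $\opV(\Ical)=\{\Pcal\in\spec M\mid\Ical\subseteq\Pcal\}$, so that
$$\Jscr(\opV(\Ical))=\bigcap_{\Pcal\in\opV(\Ical)}\Pcal=\bigcap_{\Ical\subseteq\Pcal\in\spec M}\Pcal\pkt$$
By Proposition \ref{PropRadical} the right-hand side is exactly $\sqrt{\Ical}$, which gives the first claim with no further work.

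For the supplement, I first check that the two assignments are well-defined between the stated sets. The map $\Ical\mapsto\opV(\Ical)$ always produces a closed subset of $\spec M$ by the very definition of the Zariski topology, and the map $A\mapsto\Jscr(A)=\bigcap_{\Pcal\in A}\Pcal$ always produces a radical ideal, since every prime ideal is radical and an intersection of radical ideals is again radical, as already remarked before the corollary. Both maps are inclusion-reversing: for $\opV(-)$ this is Lemma \ref{LemBcalV}(3), and for $\Jscr(-)$ it is immediate, because $A\subseteq B$ forces $\bigcap_{\Pcal\in B}\Pcal\subseteq\bigcap_{\Pcal\in A}\Pcal$.

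It then remains to verify that the two composites are the identity. Starting from a radical ideal $\Ical$, the first part gives $\Jscr(\opV(\Ical))=\sqrt{\Ical}=\Ical$, where the last equality uses that $\Ical$ is radical. Starting from a closed subset $A\subseteq\spec M$, Proposition \ref{PropClosedSets} yields $\opV(\Jscr(A))=\overline{A}$, and since $A$ is closed we have $\overline{A}=A$, whence $\opV(\Jscr(A))=A$. Together with the inclusion-reversing property this shows the two maps are mutually inverse bijections.

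There is no serious obstacle here; the only point requiring attention is the bookkeeping of making sure each assignment lands in the correct class (closed sets versus radical ideals), so that the compositions make sense, after which everything reduces to Proposition \ref{PropRadical} and Proposition \ref{PropClosedSets}.
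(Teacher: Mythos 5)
Your proof is correct and follows essentially the same route as the paper: the equality $\Jscr(\opV(\Ical))=\sqrt{\Ical}$ is read off from Proposition \ref{PropRadical}, and the two composites are identified via this equality and Proposition \ref{PropClosedSets}. The only addition is your explicit check that the assignments land in the right classes, which the paper handles by the remark preceding the corollary.
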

\begin {proof}
The equality is just a restatement of Proposition \ref {PropRadical} and implies $\Ical\mto\opV(\Ical)\mto\Jscr(\opV(\Ical))=\Ical$. Conversely we have $A\mto\Jscr(A)\mto\opV(\Jscr(A))=A$ by Proposition \ref{PropClosedSets}.
\end {proof}

Recall that a topological space $X$ is called \gesperrt {irreducible} \index{irreducible}\index{space!irreducible --}if  $X\not=\emptyset$ and any two nonempty open subsets of $X$ intersect. Equivalently, $X\not=\emptyset$ and $X$ is not the union of two proper closed subsets of $X$. If $X$ is an irreducible topological space with $X=\overline{\{x\}}$, $x\in X$, then $x$ is called a \gesperrt{generic point} \index{generic point}of $X$. A maximal irreducible subset of a topological space $X$ with respect to $\subseteq$ is called an \gesperrt{irreducible component} \index{component!irreducible --}\index{irreducible!-- component}of $X$. To study the irreducible subsets and components of $\spec M$, we need the following results.

\begin {Lemma} \label{LemIrredIFclosure}
Let $X$ be a topological space.
\begin {ListeTheorem}
\item A subset $Y\subseteq X$ is irreducible if and only if its closure $\overline{Y}$ is irreducible.
\item Every irreducible subset of $X$ is contained in an irreducible component of $X$.
\end {ListeTheorem}
\end {Lemma}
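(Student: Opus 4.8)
The plan is to reformulate irreducibility of a subset $Y\subseteq X$ entirely in terms of the closed sets of the ambient space $X$, which makes both parts transparent. Since the closed subsets of $Y$ in the subspace topology are exactly the sets $C\cap Y$ with $C$ closed in $X$, a nonempty $Y$ fails to be irreducible precisely when $Y\subseteq C_{1}\cup C_{2}$ for closed sets $C_{1},C_{2}\subseteq X$ with $Y\not\subseteq C_{1}$ and $Y\not\subseteq C_{2}$. Thus I would first record the working criterion: a nonempty $Y$ is irreducible if and only if, whenever $Y\subseteq C_{1}\cup C_{2}$ with $C_{1},C_{2}$ closed in $X$, one has $Y\subseteq C_{1}$ or $Y\subseteq C_{2}$.

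For part (1), suppose first that $Y$ is irreducible and that $\overline{Y}\subseteq C_{1}\cup C_{2}$ with $C_{1},C_{2}$ closed. Then $Y\subseteq C_{1}\cup C_{2}$, so the criterion gives $Y\subseteq C_{1}$ (say); since $C_{1}$ is closed and $\overline{Y}$ is the smallest closed set containing $Y$, this forces $\overline{Y}\subseteq C_{1}$, and $\overline{Y}\neq\emptyset$ because $Y\neq\emptyset$. Conversely, if $\overline{Y}$ is irreducible and $Y\subseteq C_{1}\cup C_{2}$, then $\overline{Y}\subseteq C_{1}\cup C_{2}$ (the right-hand side being closed), so $\overline{Y}\subseteq C_{i}$ for some $i$, whence $Y\subseteq C_{i}$. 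Both directions are thus one line each once the criterion is in place.

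For part (2), I would run Zorn's lemma on the set $\Sigma$ of all irreducible subsets of $X$ containing the given irreducible $Y$, ordered by inclusion; $\Sigma$ is nonempty as $Y\in\Sigma$. The one genuine point to verify is that the union $Z=\bigcup_{i}Y_{i}$ of a chain in $\Sigma$ is again irreducible, so that chains have upper bounds. Here I would argue by contradiction using the criterion: if $Z\subseteq C_{1}\cup C_{2}$ with $Z\not\subseteq C_{1}$ and $Z\not\subseteq C_{2}$, pick witnesses $y_{1}\in Z\setminus C_{1}$ and $y_{2}\in Z\setminus C_{2}$; since the $Y_{i}$ form a chain, both $y_{1},y_{2}$ lie in a common member $Y_{j}$, and then $Y_{j}\subseteq C_{1}\cup C_{2}$ with $Y_{j}\not\subseteq C_{1}$ and $Y_{j}\not\subseteq C_{2}$ contradicts the irreducibility of $Y_{j}$. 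A maximal element $W$ of $\Sigma$ is then an irreducible component: any irreducible $W'\supseteq W$ contains $Y$, hence lies in $\Sigma$, so $W'=W$ by maximality. The main (and only) obstacle is the chain-union step; everything else is formal, and this step is exactly where the chain condition is used to pull the two witnesses into a single irreducible set.
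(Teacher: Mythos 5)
Your proof is correct and complete. The paper itself gives no argument for this lemma, deferring instead to \cite{PatilStorch} (Propositions 3.A.10 and 3.A.13), so there is no in-text proof to compare against; your closed-set criterion for irreducibility, the two one-line directions in part (1), and the Zorn's lemma argument with the chain-union step in part (2) constitute exactly the standard self-contained proof, with the one genuinely non-formal point (pulling the two witnesses into a single member of the chain) handled correctly.
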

\begin {proof}
See \cite[Proposition 3.A.10 and Proposition 3.A.13]{PatilStorch}.
\end {proof}

The first statement of Lemma \ref {LemIrredIFclosure} shows that every irreducible component of $X$ is closed, and by the second, we have that $X$ is the union of its components since every point $\{x\}$, $x\in X$, of a topological space $X$ is irreducible. The description of the irreducible components of $\spec M$ follows from the next result.

\begin {Proposition} \label {PropClosedIrred}
The closed irreducible subsets of $\spec M$ are the sets $\opV(\Pcal)$, $\Pcal\in\spec M$. In particular, every closed irreducible subset of $\spec M$ has a unique generic point.
\end {Proposition}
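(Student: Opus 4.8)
The plan is to exploit the inclusion-reversing bijection between closed subsets of $\spec M$ and radical ideals furnished by Corollary \ref{CorRadicalClosed}, together with the multiplicativity $\opV(f+g)=\opV(f)\cup\opV(g)$ from Lemma \ref{LemBcalV}(2), thereby reducing the topological assertion to the purely algebraic fact that a radical ideal is prime exactly when its vanishing set is irreducible. This runs parallel to the ring-theoretic argument.

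First I would show that each $\opV(\Pcal)$, $\Pcal\in\spec M$, is closed and irreducible with generic point $\Pcal$. By Proposition \ref{PropClosedSets} the closure of a singleton is $\overline{\{\Pcal\}}=\opV(\Jscr(\{\Pcal\}))=\opV(\Pcal)$, since $\Jscr(\{\Pcal\})=\Pcal$. A one-point subspace is irreducible, so by Lemma \ref{LemIrredIFclosure}(1) its closure $\opV(\Pcal)$ is irreducible, and by construction $\Pcal$ is a generic point of it.

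Conversely, let $Y\subseteq\spec M$ be closed and irreducible. By Corollary \ref{CorRadicalClosed} we may write $Y=\opV(\Ical)$ with $\Ical:=\Jscr(Y)$ a radical ideal, and the claim is that $\Ical$ is prime. Since $Y\neq\emptyset$, the ideal $\Ical$ lies inside some prime ideal and is hence proper. Given $a+b\in\Ical$, Lemma \ref{LemBcalV} gives $Y=\opV(\Ical)\subseteq\opV(a+b)=\opV(a)\cup\opV(b)$, so $Y=(Y\cap\opV(a))\cup(Y\cap\opV(b))$ is a union of two relatively closed subsets. Irreducibility of $Y$ forces one of them to equal $Y$, say $Y\subseteq\opV(a)$; then $a\in\Pcal$ for every $\Pcal\in Y$, whence $a\in\bigcap_{\Pcal\in Y}\Pcal=\Jscr(Y)=\Ical$. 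By the characterization in Lemma \ref{LemCharacterizationPrime}(5) this makes $\Ical$ prime, so $Y=\opV(\Ical)$ with $\Ical\in\spec M$.

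For uniqueness of the generic point, any generic point $\Qcal$ of $Y=\opV(\Pcal)$ satisfies $\opV(\Qcal)=\overline{\{\Qcal\}}=Y=\opV(\Pcal)$; applying the bijection of Corollary \ref{CorRadicalClosed} to the radical (indeed prime) ideals $\Pcal$ and $\Qcal$ yields $\Qcal=\Jscr(\opV(\Qcal))=\Jscr(\opV(\Pcal))=\Pcal$. The one step requiring genuine care is the passage from irreducibility of $Y$ to primality of $\Jscr(Y)$ via the decomposition above; the remaining bookkeeping is a direct translation through the established dictionary between closed sets and radical ideals.
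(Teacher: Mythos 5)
Your proof is correct and follows essentially the same route as the paper: both reduce to the bijection of Corollary \ref{CorRadicalClosed} between closed sets and radical ideals and use $\opV(f+g)=\opV(f)\cup\opV(g)$ together with irreducibility to show $\Jscr(Y)$ is prime. A minor bonus on your side is that you spell out the uniqueness of the generic point via $\Jscr(\opV(\Qcal))=\Qcal$, a step the paper's proof leaves implicit.
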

\begin {proof}
The irreducibility of $\opV(\Pcal)$ follows from Lemma \ref {LemIrredIFclosure} since every point in a topological space is irreducible. Conversely, we have to show that every closed irreducible subset is of this form. If $E$ is such a subset of $\spec M$, then $E=\opV(\Ical)\not=\emptyset$ for a radical ideal $\Ical\not=M$ by Corollary \ref{CorRadicalClosed}. To verify the prime property of $\Ical$ assume that $f+g\in\Ical$ for some $f,g\in M$. If $\Pcal$ is a prime ideal containing $\Ical$, then $\langle f\rangle\cup\Ical\subseteq\Pcal$ or $\langle g\rangle\cup\Ical\subseteq\Pcal$. Thus, $\opV(\Ical)=\opV(\langle f\rangle\cup\Ical)\cup\opV(\langle g\rangle\cup\Ical)$, which implies that $\opV(\Ical)=\opV(\langle f\rangle\cup\Ical)$ or $\opV(\Ical)=\opV(\langle g\rangle\cup\Ical)$ by the irreducibility of $\opV(\Ical)$. Hence, $\sqrt{\Ical}=\sqrt{\langle f\rangle\cup\Ical}$ or $\sqrt{\Ical}=\sqrt{\langle g\rangle\cup\Ical}$ by Corollary \ref{CorRadicalClosed}. Thus, $f\in\Ical$ or $g\in\Ical$. This proves that $\Ical$ is a prime ideal. By Proposition \ref{PropClosedSets}, $\opV(\Pcal)=\opV(\Jscr(\{\Pcal\}))=\overline{\{\Pcal\}}$, which shows that the closed irreducible subset $\opV(\Pcal)$ has a generic point.
\end {proof}

\begin {Corollary} \label{CorIrredCompMinPrime}
The irreducible components of $\spec M$ are given by the sets $\opV(\Pcal)$ with $\Pcal\in\min M$.
\end {Corollary}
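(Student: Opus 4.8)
The plan is to combine two facts already established in the excerpt. First, Proposition \ref{PropClosedIrred} identifies the closed irreducible subsets of $\spec M$ precisely as the sets $\opV(\Pcal)$ for $\Pcal\in\spec M$. Second, by the remarks following Lemma \ref{LemIrredIFclosure}, every irreducible component is closed (being irreducible, it equals its own closure, which is irreducible, hence by maximality equals the component) and $\spec M$ is the union of its irreducible components. So the irreducible components are exactly the maximal members of the family $\{\opV(\Pcal)\mid\Pcal\in\spec M\}$ with respect to inclusion, and the whole task reduces to determining which $\opV(\Pcal)$ are maximal.

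First I would record the order-reversing behaviour of $\opV$: by Proposition \ref{PropBcal} (equivalence of (1) and (4), or directly from Lemma \ref{LemBcalV}(3) together with Corollary \ref{CorRadicalClosed}) one has, for $\Pcal,\Qcal\in\spec M$, the equivalence $\opV(\Pcal)\subseteq\opV(\Qcal)\Leftrightarrow\Qcal\subseteq\Pcal$, since both $\Pcal$ and $\Qcal$ are radical ideals and $\opV$ restricted to radical ideals is an inclusion-reversing bijection onto the closed sets. Consequently $\opV(\Pcal)$ is maximal among the $\opV(\Qcal)$ if and only if $\Pcal$ is minimal among the prime ideals, i.e.\ $\Pcal\in\min M$. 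This is the heart of the argument and is essentially a one-line consequence of the bijection in Corollary \ref{CorRadicalClosed}; the only mild subtlety is to make sure the inclusion-reversal is applied to prime ideals (which are radical by the Example following the definition of radical ideal), so that the correspondence applies verbatim.

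Putting these together: the irreducible components of $\spec M$ are the maximal closed irreducible subsets; by Proposition \ref{PropClosedIrred} these are the maximal sets $\opV(\Pcal)$; and by the order-reversing bijection these correspond exactly to the minimal primes $\Pcal\in\min M$. I would also note that $\min M\neq\emptyset$ for $M\neq\zero$ (by the Proposition asserting existence of minimal primes) so the family of components is nonempty, and that distinct minimal primes give distinct components since $\opV$ is injective on radical ideals. I expect no real obstacle here: the entire proof is a short deduction from Proposition \ref{PropClosedIrred} and Corollary \ref{CorRadicalClosed}, and the write-up can be a single short paragraph citing these two results and the maximality characterisation of irreducible components.

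\begin{proof}
By Proposition \ref{PropClosedIrred}, the closed irreducible subsets of $\spec M$ are exactly the sets $\opV(\Pcal)$ with $\Pcal\in\spec M$. Since every irreducible component is a maximal irreducible subset and, by Lemma \ref{LemIrredIFclosure}(1), is closed, the irreducible components are precisely the maximal elements of the family $\{\opV(\Pcal)\mid\Pcal\in\spec M\}$ with respect to inclusion. Now prime ideals are radical ideals, so by Corollary \ref{CorRadicalClosed} the assignment $\Pcal\mto\opV(\Pcal)$ is inclusion-reversing and injective; hence $\opV(\Pcal)$ is maximal if and only if $\Pcal$ is minimal in $\spec M$, that is, $\Pcal\in\min M$. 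Therefore the irreducible components of $\spec M$ are exactly the sets $\opV(\Pcal)$ with $\Pcal\in\min M$.
\end{proof}
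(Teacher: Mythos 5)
Your proof is correct and follows essentially the same route as the paper, which simply cites Proposition \ref{PropClosedIrred} together with the inclusion-reversing property of $\opV$ (Lemma \ref{LemBcalV}(3)). Your additional appeal to Corollary \ref{CorRadicalClosed} to get the reverse implication $\opV(\Pcal)\subseteq\opV(\Qcal)\Rightarrow\Qcal\subseteq\Pcal$ (using that primes are radical) is a sensible way to make the "maximal closed irreducible sets correspond to minimal primes" step fully explicit.
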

\begin {proof}
This is clear by Proposition \ref {PropClosedIrred} and Lemma \ref{LemBcalV}(3).
\end {proof}

By Corollary \ref{CorPrimeIdealHom}, every binoid homomorphism $\varphi:M\rto N$ induces a map $\varphi^{\ast}:\spec N\rto\spec M$ with $\Pcal\mto\varphi^{-1}(\Pcal)$. This significant, albeit elementary result translates to the category $\Top$ of topological spaces.\nomenclature[T]{$\Top$}{category of topological spaces}

\begin {Proposition} \label {PropInduecedspec}
Given a binoid homomorphism $\varphi:M\rto N$, the induced semigroup homomorphism $\varphi^{\ast}:\spec N\rto\spec M$, $\Pcal\mto\varphi^{-1}(\Pcal)$, is continuous, namely 
$$(\varphi^{\ast})^{-1}(\opD(A))=\opD(\varphi(A))\quad\text{and}\quad(\varphi^{\ast})^{-1}(\opV(A))=\opV(\varphi(A))$$
for all $A\subseteq M$. In particular, $\spec:\cBsf\rto\Top$ is a contravariant functor from the category of commutative binoids into the category of topological spaces.
\end {Proposition}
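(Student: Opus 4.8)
The plan is to reduce everything to a single set-theoretic identity for preimages of the basic closed sets, since the semigroup-homomorphism structure of $\varphi^{\ast}$ and the fact that $\varphi^{-1}(\Pcal)$ is again prime have already been established in Corollary \ref{CorPrimeIdealHom}. Thus the only genuine content here is the continuity, and for that it suffices to verify that the preimage under $\varphi^{\ast}$ of every basic closed set is closed.

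First I would prove the identity $(\varphi^{\ast})^{-1}(\opV(A))=\opV(\varphi(A))$ for an arbitrary subset $A\subseteq M$. Unwinding the definitions, a prime ideal $\Pcal\in\spec N$ lies in $(\varphi^{\ast})^{-1}(\opV(A))$ precisely when $A\subseteq\varphi^{\ast}(\Pcal)=\varphi^{-1}(\Pcal)$. By the elementary equivalence $A\subseteq\varphi^{-1}(\Pcal)\eq\varphi(A)\subseteq\Pcal$ (each $a\in A$ satisfies $a\in\varphi^{-1}(\Pcal)$ iff $\varphi(a)\in\Pcal$), this is exactly the condition $\Pcal\in\opV(\varphi(A))$, giving the claimed equality. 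Taking complements in $\spec N$ and $\spec M$ and using $\opD(B)=\spec M\setminus\opV(B)$ immediately yields $(\varphi^{\ast})^{-1}(\opD(A))=\opD(\varphi(A))$. Since the closed sets of $\spec M$ are exactly the $\opV(A)$, $A\subseteq M$, the first identity shows that preimages of closed sets are closed, so $\varphi^{\ast}$ is continuous; alternatively one may invoke only the basic open sets $\opD(f)$, which form a basis of the Zariski topology.

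For the functoriality statement I would check the two defining properties of a contravariant functor. For the identity $\id_M$ one has $\id_M^{\ast}=\id_M^{-1}=\id_{\spec M}$, and for a composite $M\stackrel{\varphi}{\rto}N\stackrel{\psi}{\rto}L$ the relation $(\psi\varphi)^{-1}(\Pcal)=\varphi^{-1}(\psi^{-1}(\Pcal))$ for $\Pcal\in\spec L$ gives $(\psi\varphi)^{\ast}=\varphi^{\ast}\psi^{\ast}$, which reverses the arrows as required. Each $\varphi^{\ast}$ being continuous by the previous paragraph, it is a morphism in $\Top$, so $\spec\colon\cBsf\rto\Top$ is a well-defined contravariant functor.

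There is no real obstacle in this argument; it is a direct translation of the corresponding fact for the Zariski spectrum of a ring, and the whole proof rests on the containment equivalence above together with Corollary \ref{CorPrimeIdealHom}. The only point requiring a moment of care is to use the correct direction of $A\subseteq\varphi^{-1}(\Pcal)\eq\varphi(A)\subseteq\Pcal$; working with all closed sets $\opV(A)$ simultaneously, as above, sidesteps any need to argue separately about a basis.
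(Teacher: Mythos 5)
Your proof is correct and follows essentially the same route as the paper: a direct unwinding of the definitions via the equivalence $A\subseteq\varphi^{-1}(\Pcal)\eq\varphi(A)\subseteq\Pcal$, with the other identity obtained by taking complements (the paper happens to start from the basic open sets $\opD(f)$ and complements to get the closed sets, whereas you start from the closed sets $\opV(A)$, but this is the same computation). Your explicit verification of the functoriality axioms is a harmless addition that the paper leaves implicit.
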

\begin {proof}
To prove that $\varphi^{\ast}$ is continuous, it suffices to consider the basic open sets $\opD(f)$, $f\in M$, of the topology. For such a set, we obtain $(\varphi^{\ast})^{-1}(\opD(f))=\{\Pcal\in\spec N\mid \varphi^{-1}(\Pcal)\in\opD(f)\}$. This proves the first equality because $\varphi^{-1}(\Pcal)\in\opD(f)$ is equivalent to $\varphi(f)\not\in\Pcal$ by the definition of $\opD(f)$. The second identity for the closed subsets follows by taking complements.
\end {proof}

\begin {Corollary} \label {CorInducedEmb}
If $\varphi:M\rto N$ is binoid epimorphism, the semigroup homomorphism $\varphi^{\ast}:\spec N\rto\spec M$ is a continuous embedding on $\im\varphi^{\ast}\subseteq\opV(\ker\varphi)$. If $N=M/\Ical$ for an ideal $\Ical\subseteq M$, then $\spec M/\Ical\cong\opV(\Ical)$ a semigroups and as topological spaces, where $\opV(\Ical)$ carries the induced subspace topology.
\end {Corollary}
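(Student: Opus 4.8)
The plan is to assemble the statement from three preceding results: Corollary~\ref{CorPrimeIdealHom} supplies an injective semigroup homomorphism, Proposition~\ref{PropInduecedspec} supplies continuity, and Corollary~\ref{CorExtIdealPrime} identifies the image in the quotient case. The only genuinely new point is upgrading the continuous injection $\varphi^{\ast}$ to a topological embedding, i.e.\ a homeomorphism onto its image.

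First I would record the easy facts. Since $\varphi$ is an epimorphism, the induced map $\varphi^{\ast}:\spec N\rto\spec M$, $\Pcal\mto\varphi^{-1}(\Pcal)$, is an injective semigroup homomorphism by Corollary~\ref{CorPrimeIdealHom}, and it is continuous by Proposition~\ref{PropInduecedspec}. For the inclusion $\im\varphi^{\ast}\subseteq\opV(\ker\varphi)$, I would observe that every $\Pcal\in\spec N$ contains $\infty_{N}$, so $\ker\varphi=\varphi^{-1}(\infty_{N})\subseteq\varphi^{-1}(\Pcal)$; hence $\varphi^{-1}(\Pcal)\in\opV(\ker\varphi)$.

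Next comes the heart of the matter, the embedding property. Let $E\subseteq\spec N$ be closed, say $E=\opV(B)$ for some $B\subseteq N$. Using the surjectivity of $\varphi$, choose $A\subseteq M$ with $\varphi(A)=B$ (e.g.\ $A=\varphi^{-1}(B)$). Proposition~\ref{PropInduecedspec} then yields $(\varphi^{\ast})^{-1}(\opV(A))=\opV(\varphi(A))=E$, and since $\varphi^{\ast}$ is a bijection onto $\im\varphi^{\ast}$ the standard identity $f(f^{-1}(S))=S\cap f(X)$ gives
$$\varphi^{\ast}(E)\,=\,\varphi^{\ast}\big((\varphi^{\ast})^{-1}(\opV(A))\big)\,=\,\opV(A)\cap\im\varphi^{\ast}\komma$$
which is closed in the subspace topology of $\im\varphi^{\ast}$. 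Thus $\varphi^{\ast}$ is a closed map onto its image, and being a continuous closed bijection onto its image it is a homeomorphism, hence an embedding.

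Finally, for the quotient case $N=M/\Ical$ with canonical projection $\pi:M\rto M/\Ical$, Corollary~\ref{CorExtIdealPrime} shows that $\pi^{\ast}$ is a semigroup isomorphism onto $\{\Pcal\in\spec M\mid\Ical\subseteq\Pcal\}=\opV(\Ical)$. Here $\ker\pi=\Ical$, so the embedding argument above applies with image exactly $\opV(\Ical)$, giving that $\pi^{\ast}$ is a homeomorphism onto $\opV(\Ical)$ with the subspace topology; this is the asserted isomorphism $\spec M/\Ical\cong\opV(\Ical)$ as semigroups and as topological spaces. I expect the embedding step to be the main obstacle, since continuity and bijectivity alone do not force a homeomorphism; the essential input is the surjectivity of $\varphi$, which lets me realize every closed set of $\spec N$ as $\opV(\varphi(A))$ and push it forward via the pullback formula of Proposition~\ref{PropInduecedspec}.
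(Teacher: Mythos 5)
Your proposal is correct and follows essentially the same route as the paper: injectivity from Corollary~\ref{CorPrimeIdealHom}, continuity from Proposition~\ref{PropInduecedspec}, the kernel containment from $\infty_{N}\in\Pcal$, and Corollary~\ref{CorExtIdealPrime} for the quotient case. The one difference is that the paper simply asserts the embedding by citing Proposition~\ref{PropInduecedspec}, whereas you explicitly verify that $\varphi^{\ast}$ is closed onto its image via $\varphi^{\ast}(E)=\opV(A)\cap\im\varphi^{\ast}$ with $A=\varphi^{-1}(B)$; this step is sound (note the identity $f(f^{-1}(S))=S\cap\im f$ holds for any map, so injectivity is only needed to conclude that a continuous closed bijection onto the image is a homeomorphism), and it correctly isolates surjectivity of $\varphi$ as the input that the paper leaves implicit.
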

\begin {proof}
The continuous embedding $\varphi^{\ast}$ is given by Proposition \ref{PropInduecedspec}.
Since $\ker\varphi\subseteq\varphi^{-1}(\Pcal)=\varphi^{\ast}(\Pcal)$ for all $\Pcal\in\spec N$, we have $\im\varphi^{\ast}\subseteq\opV(\ker\varphi)$. The statement for the special case $N=M/\Ical$ for an ideal  $\Ical\subseteq M$, and $\varphi=\pi:M\rto M/\Ical$ follows from Corollary \ref{CorExtIdealPrime}.
\end {proof}

\begin {Example}
Let $M$ be a binoid. 
\begin{ListeTheorem}
\item The semigroup isomorphism $\spec M\cong\spec M_{\opred}$ given in Corollary \ref {CorSpecMred} can also be deduced from Corollary \ref{CorInducedEmb} since $M_{\opred}=M/\nil(M)$, cf.\ Example \ref{ExCompositionsIdeal}, and $\opV(\nil(M))=\spec M$ by Lemma \ref{LemBcalV}. Furthermore, this is an isomorphism of topological spaces by Corollary \ref{CorInducedEmb}. Similarly, we obtain $\spec M_{\opint}\cong\opV(\nonint(M))$ as semigroups and as topological spaces from the identification  $M_{\opint}=M/\nonint(M)$.
\item Let $N$ be an integral binoid. Every binoid homomorphism $\varphi:M\rto N$ factors through $M/\Qcal$, where $\Qcal$ is the prime ideal $\ker\varphi=\varphi^{-1}(\{\infty_{N}\})$. In particular, $\varphi$ factors through every minimal prime ideal $\Pcal\subseteq\Qcal$, which implies that $\varphi^{\ast}:\spec N\rto\spec M$ factors through the irreducible component $\pi_{\Pcal}^{\ast}(\spec M/\Pcal)=\opV(\Pcal)$ of $\spec M$ since 
$$\xymatrix{
M\ar[r]^{\varphi}\ar[d]_{\pi}&N&\text{ induces }&\spec M&\spec N\pkt\ar[l]_{\,\,\varphi^{\ast}}\ar[dl]^{\bar{\varphi}^{\ast}}\\
 M/\Pcal\ar[ur]_{\bar{\varphi}}&&&\spec M/\Pcal\ar[u]^{\pi^{\ast}}&}$$
\item Let $M\not=\zero$. If $M$ is reduced, there is a binoid embedding $M\rto\prod_{\Pcal\in\min M}M/\Pcal$ by Corollary \ref{CorSubdirectProd}. In other words, two elements of a reduced binoid $M$ coincide if and only if they coincide on every irreducible component of $M$.
\end {ListeTheorem}
\end {Example}

\bigskip

\section {Booleanization} \label{SecBooleanization}
\markright {\ref{SecBooleanization} Booleanization}

In this section, we introduce the booleanization of a binoid $M$ which can be defined for all, not necessarily commutative, binoids and is in either case a universal object. However, for a commutative binoid $M$, the booleanization has an explicit realization in terms of the basic open sets $\opD(f)$, $f\in M$, of the Zariski topology on $\spec M$, which we described at the end of the last section.

For the moment we consider not necessarily commutative binoids but return very soon to the commutative situation.

\begin {Definition}
Let $M$ be an arbitrary (not necessarily commutative) binoid and $\sim_{\bool}$ \nomenclature[ACongruenceBool]{$\sim_{\bool}$}{congruence on a binoid}the congruence generated by $$f\,\sim_{\bool}\,f+f$$
for $f\in M$. The binoid $M/\!\sim_{\bool}=:M_{\bool}$ \nomenclature[MACongruence]{$M_{\bool}$}{booleanization of $M$}together with the canonical projection $\pi_{\bool}:M\rto M_{\bool}$ is called the \gesperrt{booleanization} \index{booleanization}\index{binoid!booleanization of a --}of $M$.
\end {Definition}

If $X$ is a generating set of the (not necessarily commutative) binoid $M$, then every element $f\in M$ can be written as $f=n_{1}x_{1}\pluspkt n_{r}x_{r}$
with $(x_{1},x_{2}\kpkt x_{r})\in X^{r}$, $n_{i}\ge1$, $i\in\{1\kpkt r\}$, $r\in\N$, such that $x_{i}\not=x_{i+1}$ for $i\in\{1\kpkt r-1\}$. Then $\pi_{\bool}(f)=x_{1}\pluspkt x_{r}$. 

The booleanization is a universal object as the following proposition shows.

\begin {Proposition} \label{PropMboolUniversalProp}
Let $M$ be a (not necessarily commutative) binoid. $M_{\bool}$ is a boolean binoid such that $\nil(M)\subseteq\ker\pi_{\bool}$, and whenever $\varphi:M\rto B$ is a binoid homomorphism with $B$ boolean, there exists a unique binoid homomorphism $\bar{\varphi}:M_{\bool}\rto B$ such that the diagram
$$\xymatrix{
M\ar[r]^{\varphi}\ar[d]_{\pi_{\bool}}&B\\
M_{\bool}\ar[ur]_{\bar{\varphi}}&}$$
commutes; that is, $\bar{\varphi}\pi_{\bool}=\varphi$.
\end {Proposition}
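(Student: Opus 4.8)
The plan is to verify the three claims in the statement --- that $M_{\bool}$ is boolean, that $\nil(M)\subseteq\ker\pi_{\bool}$, and that the universal property holds --- in that order, exploiting the fact that a congruence generated by relations has an explicit description via the operator $R\mapsto\sim_{R}$ introduced in Section~\ref{SecCong}. Throughout I keep track of the relation $R=\{(f,f+f)\mid f\in M\}$ that generates $\sim_{\bool}$.

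First I would prove that $M_{\bool}$ is boolean. Since $\pi_{\bool}$ is a binoid epimorphism and $M_{\bool}=M/\!\sim_{\bool}$, every element of $M_{\bool}$ is of the form $[f]$ for some $f\in M$, and the operation is $[f]+[g]=[f+g]$. By the defining relation $f\sim_{\bool}f+f$ we have $[f]=[f+f]=[f]+[f]$ for all $f\in M$, so every element of $M_{\bool}$ is idempotent; hence $M_{\bool}$ is boolean by Definition~\ref{DefIdempotent}. This is the quickest of the three parts.

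Next I would establish $\nil(M)\subseteq\ker\pi_{\bool}$. If $f\in\nil(M)$, then $nf=\infty$ for some $n\ge1$ by definition. Using $[f]=[f]+[f]=2[f]$ repeatedly in the boolean binoid $M_{\bool}$ gives $[f]=n[f]=[nf]=[\infty]=\infty_{M_{\bool}}$, so $f\in\ker\pi_{\bool}$. The only point to note is that $\pi_{\bool}$ sends $\infty_{M}$ to $\infty_{M_{\bool}}$, which holds because $\pi_{\bool}$ is a binoid homomorphism.

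Finally, for the universal property, I would invoke Lemma~\ref{LemIndCong}: a binoid homomorphism $\varphi\colon M\rto B$ factors (uniquely) through $M/\!\sim_{\bool}$ precisely when $\sim_{\bool}\,\le\,\sim_{\varphi}$. So the crux is to show that $\sim_{\bool}\,\le\,\sim_{\varphi}$ whenever $B$ is boolean. Since $\sim_{\bool}$ is the smallest congruence containing the generating relations $f\sim f+f$, it suffices to check that $\varphi(f)=\varphi(f+f)$ for every $f\in M$; indeed $\varphi(f+f)=\varphi(f)+\varphi(f)=\varphi(f)$ because $\varphi(f)$ is an idempotent of the boolean binoid $B$. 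Once $\sim_{\bool}\,\le\,\sim_{\varphi}$ is established, Lemma~\ref{LemIndCong} delivers the unique $\bar\varphi\colon M_{\bool}\rto B$ with $\bar\varphi\pi_{\bool}=\varphi$, and its uniqueness follows from the surjectivity of $\pi_{\bool}$. The main (and only mildly subtle) obstacle is the passage from ``the generators of $\sim_{\bool}$ lie in $\sim_{\varphi}$'' to ``$\sim_{\bool}\,\le\,\sim_{\varphi}$'', which is exactly the content of $\sim_{\bool}$ being the congruence \emph{generated} by $R$; this is immediate from the minimality built into the construction of $\sim_{R}$ in Section~\ref{SecCong}, so no extra work with the explicit chains defining $\sim_{R}$ is needed.
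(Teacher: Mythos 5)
Your proposal is correct and follows essentially the same route as the paper: booleanness and $\nil(M)\subseteq\ker\pi_{\bool}$ come directly from the generating relations $f\sim_{\bool}f+f$ (hence $f\sim_{\bool}nf$), and the universal property is obtained by checking the generators land in $\sim_{\varphi}$ and then applying Lemma~\ref{LemIndCong}. Your write-up is simply a more detailed version of the paper's argument.
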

\begin {proof}
By definition, $M_{\bool}$ is a boolean binoid such that $f\sim_{\bool} nf$ for all $n\ge1$. In particular, $\nil(M)\subseteq\ker\pi_{\bool}$. The existence of the induced binoid homomorphism follows from Lemma \ref {LemIndCong} since $\sim_{\bool}\,\le\,\sim_{\varphi}$. Indeed, this need only be checked for the generating relations $f\sim_{\bool}f+f$, for which $\varphi(f)=2\varphi(f)=\varphi(f+f)$ obviously holds.
\end {proof}

The congruence $\sim_{\bool}$ can be characterized more explicitly in the commutative situation.

\begin {Lemma} \label{LemBoolCharacterization}
For $f,g\in M$, the following statements are equivalent:
\begin {ListeTheorem}
\item $f\sim_{\bool}g$.
\item There are $n,m\in\N$ and $x,y\in M$ such that $nf=g+x$ and $mg=f+y$.
\item $\opD(f)=\opD(g)$.
\item $\opV(f)=\opV(g)$.
\item $\sqrt{_{_{M}}\!\langle f\rangle}=\sqrt{_{_{M}}\!\langle g\rangle}$.
\item $\opFilt(f)=\opFilt(g)$.
\end {ListeTheorem}
\end {Lemma}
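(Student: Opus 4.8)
The plan is to treat the equivalences $(2)\eq(3)\eq(4)\eq(5)\eq(6)$ as a bookkeeping consequence of Proposition \ref{PropBcal}, and then to attach $(1)$ to this block by closing the short cycle $(2)\Rarrow(1)\Rarrow(4)$. This way the genuinely new input is only the link between the abstract congruence $\sim_{\bool}$ and the ideal-theoretic and topological data, while the mutual equivalence of the latter is already available.

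First I would unpack Proposition \ref{PropBcal}, which identifies the four one-sided conditions $\opV(g)\subseteq\opV(f)$, $\opD(f)\subseteq\opD(g)$, $\opFilt(g)\subseteq\opFilt(f)$, and ${}_{_{M}}\langle f\rangle\subseteq\sqrt{{}_{_{M}}\langle g\rangle}$. Applying this statement once with the pair $(f,g)$ and once with $(g,f)$, and intersecting the two halves, turns each symmetric equality in $(3)$, $(4)$, $(6)$, and $(2)$ into the conjunction of the two corresponding inclusions; since these conjunctions coincide term by term, the four conditions are equivalent. The only extra remark needed is that ${}_{_{M}}\langle f\rangle\subseteq\sqrt{{}_{_{M}}\langle g\rangle}$ is the same as $\sqrt{{}_{_{M}}\langle f\rangle}\subseteq\sqrt{{}_{_{M}}\langle g\rangle}$, because $\sqrt{{}_{_{M}}\langle g\rangle}$ is a radical ideal; hence the two inclusions packaged in $(2)$ assemble precisely into the equality $\sqrt{{}_{_{M}}\langle f\rangle}=\sqrt{{}_{_{M}}\langle g\rangle}$ of $(5)$, adding $(5)$ to the block.

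Next, for $(1)\Rarrow(4)$ I would invoke the universal property of the booleanization. For a prime ideal $\Pcal\in\spec M$ the anti-indicator function $\alpha_{\Pcal}=\chi_{M\setminus\Pcal}:M\rto\trivial$ is a binoid homomorphism by Lemma \ref{LemCharacterizationPrime}, and $\trivial$ is boolean, so by Proposition \ref{PropMboolUniversalProp} it factors as $\alpha_{\Pcal}=\bar{\alpha}_{\Pcal}\pi_{\bool}$. If $f\sim_{\bool}g$, i.e. $\pi_{\bool}(f)=\pi_{\bool}(g)$, then $\alpha_{\Pcal}(f)=\alpha_{\Pcal}(g)$ for every $\Pcal$; since $\alpha_{\Pcal}(f)=\infty$ holds exactly when $f\in\Pcal$, this reads $f\in\Pcal\eq g\in\Pcal$ for all primes, that is, $\opV(f)=\opV(g)$.

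Finally, $(2)\Rarrow(1)$ is a direct computation inside the boolean binoid $M_{\bool}$. Since every element of $M_{\bool}$ is idempotent, $\pi_{\bool}(nf)=\pi_{\bool}(f)$ for all $n\ge1$, so the relations $nf=g+x$ and $mg=f+y$ project to $a=b+u$ and $b=a+v$ with $a:=\pi_{\bool}(f)$, $b:=\pi_{\bool}(g)$, $u:=\pi_{\bool}(x)$, $v:=\pi_{\bool}(y)$. Using $a+a=a$ and $b+b=b$ one computes $a+b=(b+u)+b=b+u=a$ and likewise $a+b=a+(a+v)=a+v=b$, whence $a=b$ and thus $f\sim_{\bool}g$. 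I expect the main (though modest) obstacle to be exactly this passage from the two inclusions of $(2)$ to the honest equality $(1)$: the cancellation it requires is unavailable in $M$ itself but becomes free in $M_{\bool}$ through idempotence, which is precisely why the argument must be run after projecting along $\pi_{\bool}$.
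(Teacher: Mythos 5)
Your proof is correct, and for most of the lemma it follows the same route as the paper: the paper likewise disposes of the equivalences $(2)$--$(6)$ with a single appeal to Proposition \ref{PropBcal} (your explicit remark that the two inclusions ${}_{_{M}}\langle f\rangle\subseteq\sqrt{{}_{_{M}}\langle g\rangle}$ and ${}_{_{M}}\langle g\rangle\subseteq\sqrt{{}_{_{M}}\langle f\rangle}$ assemble into $(5)$ is left implicit there), and its argument for $(2)\Rarrow(1)$ is exactly your idempotence computation, only phrased with the congruence in $M$ itself --- showing $f\sim_{\bool}f+g$ and $g\sim_{\bool}f+g$ --- rather than as equalities in the quotient $M_{\bool}$. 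The one genuine divergence is how you attach $(1)$ to the block: the paper proves $(1)\Rarrow(2)$ directly, by checking that the relation defined by condition $(2)$ is a congruence containing the generating relations $f\sim f+f$, so that $\sim_{\bool}\,\le\,\sim$; you instead prove $(1)\Rarrow(4)$ by factoring each characteristic point $\alpha_{\Pcal}$ through $\pi_{\bool}$ via Proposition \ref{PropMboolUniversalProp} and Lemma \ref{LemCharacterizationPrime}. Both are sound; the paper's version is self-contained but asks the reader to verify transitivity and additive compatibility of an auxiliary relation, while yours trades that verification for the universal property and the identification of $\opV(f)$ with $\{\Pcal\in\spec M\mid\alpha_{\Pcal}(f)=\infty\}$, which is shorter and more conceptual. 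The only point worth tidying in your write-up is the degenerate case $n=0$ or $m=0$ in $(2)$, where $\pi_{\bool}(nf)=\pi_{\bool}(f)$ fails because $0f=0$ by convention; there the hypothesis forces $f$ and $g$ to be units, hence both map to the identity of the positive binoid $M_{\bool}$, and the conclusion still holds.
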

\begin {proof}
By Proposition \ref{PropBcal}, only the equivalence of (1) and (2) need to be shown. To prove $(1)\Rarrow(2)$ one easily checks that the relation $\sim$ on $M$ defined by $f\sim g$, $f,g\in M$, if (2) is satisfied is a congruence with $\sim_{\bool}\,\le\,\sim$. Conversely, assume that $nf=g+x$ and $mg=f+y$ for some $n,m\in\N$ and $x,y\in M$. Similar to the proof (of the supplement) of Proposition \ref{PropBcal}, we get $f\sim_{\bool}f+g$ and $g\sim_{\bool}f+g$, hence $f\sim_{\bool}g$.
\end {proof}

From now on we focus again on commutative binoids. Note that in this case the we the equality $\nil(M)=\ker\pi_{\bool}$ by the characterization of $\sim_{\bool}$ in (2) above.

\begin {Corollary}
Given a boolean binoid $B$, there is a canonical binoid embedding 
$$B\Rto\Pset(\spec B)_{\cap}\quad\text{with}\quad f\lto\opD(f)\pkt$$
\end {Corollary}
\begin {proof}
This is an immediate consequence of Lemma \ref{LemBoolCharacterization}.
\end {proof}

By applying the preceding result to the binoids $\Pset(V)_{\cap}$ and $\Pset(V)_{\cup}$ for a finite set $V$, we obtain the embeddings
$$\Pset(V)_{\cap}\Rto\Pset(\spec\Pset(V)_{\cap})_{\cap}\komma\quad J\lto\opD(J)=\{\Pcal_{I,\cap}\mid I\subseteq J\}$$
and
$$\Pset(V)_{\cup}\Rto\Pset(\spec\Pset(V)_{\cup})_{\cap}\komma\quad J\lto\opD(J)=\{\Pcal_{I,\cup}\mid J\subseteq I\}\komma$$
cf.\ Example \ref{ExSpecPowerset}. Now we give an explicit topological description of the booleanization.

\begin {Corollary} \label {CorBooleanization}
Let $M$ be a binoid. The set $\Bcal(M)=\{\opD(f)\mid f\in M\}$\nomenclature[Boolean]{$\Bcal(M)$}{$=\{\opD(f)\mid f\in M\}$} defines a commutative boolean binoid, namely\nomenclature[Boolean]{$\Bcal(M)_{\cap}$}{$=(\Bcal(M),\cap,\spec M, \emptyset)$}
$$(\Bcal(M),\cap,\spec M,\emptyset)\,\,=:\,\,\Bcal(M)_{\cap}\komma$$
which is isomorphic to $M_{\bool}$.
\end {Corollary}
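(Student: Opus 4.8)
The plan is to first verify directly that $\Bcal(M)_{\cap}$ is a commutative boolean binoid, drawing entirely on the computational rules of Lemma \ref{LemBcalD}, and then to identify it with $M_{\bool}$ by exploiting the universal property of the booleanization (Proposition \ref{PropMboolUniversalProp}) together with the characterization of $\sim_{\bool}$ in Lemma \ref{LemBoolCharacterization}. I expect no genuinely hard step; the work is in organizing the known facts, and the one point needing care is the injectivity of the induced map.

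First I would check the binoid structure. The operation $\cap$ is well-defined on $\Bcal(M)$ because $\opD(f)\cap\opD(g)=\opD(f+g)$ by Lemma \ref{LemBcalD}(2), so $\Bcal(M)$ is closed under $\cap$; associativity and commutativity are inherited from set intersection. Since $0\in M\okreuz$, Lemma \ref{LemBcalD}(4) gives $\spec M=\opD(0)\in\Bcal(M)$, and $\opD(0)\cap\opD(f)=\opD(0+f)=\opD(f)$ shows it is the identity. Likewise $\infty$ is nilpotent, so Lemma \ref{LemBcalD}(5) gives $\emptyset=\opD(\infty)\in\Bcal(M)$, and $\emptyset\cap\opD(f)=\emptyset$ shows it is absorbing. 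Finally, every element is idempotent because $\opD(f)\cap\opD(f)=\opD(f)$ trivially (this is consistent with Lemma \ref{LemBcalD}(2), which forces $\opD(2f)=\opD(f)$). Hence $\Bcal(M)_{\cap}$ is a commutative boolean binoid.

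Next I would set up the isomorphism. The assignment $\psi\colon M\Rto\Bcal(M)_{\cap}$, $f\lto\opD(f)$, is a binoid homomorphism: it is multiplicative by Lemma \ref{LemBcalD}(2), sends $0$ to the identity $\spec M$ and $\infty$ to the absorbing element $\emptyset$ by the points just noted, and it is surjective by the very definition of $\Bcal(M)$. Since $\Bcal(M)_{\cap}$ is boolean, Proposition \ref{PropMboolUniversalProp} yields a unique binoid homomorphism $\bar{\psi}\colon M_{\bool}\Rto\Bcal(M)_{\cap}$ with $\bar{\psi}\pi_{\bool}=\psi$, and $\bar{\psi}$ is surjective because $\psi$ is. The crux is injectivity: if $\bar{\psi}([f])=\bar{\psi}([g])$, then $\opD(f)=\opD(g)$, which by the equivalence $(3)\eq(1)$ of Lemma \ref{LemBoolCharacterization} means $f\sim_{\bool}g$, i.e.\ $[f]=[g]$ in $M_{\bool}$. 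Thus $\bar{\psi}$ is bijective, hence a binoid isomorphism $M_{\bool}\cong\Bcal(M)_{\cap}$, which completes the proof.
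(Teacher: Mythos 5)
Your proof is correct and follows the same route as the paper: verify the boolean binoid structure from Lemma \ref{LemBcalD}, obtain $\bar{\psi}$ from the universal property in Proposition \ref{PropMboolUniversalProp}, and deduce injectivity from the equivalence $(1)\Leftrightarrow(3)$ of Lemma \ref{LemBoolCharacterization}. Your write-up merely spells out the structure verification that the paper dismisses as obvious.
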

\begin {proof}
Obviously, $(\Bcal(M),\cap,\spec M,$ $\emptyset)$ is a commutative boolean binoid. Thus, the canonical binoid epimorphism $\beta:M\rto \Bcal(M)$, $f\mto \opD(f)$, factors through $\bar{\beta}:M_{\bool}\rto\Bcal(M)$, $[f]\mto\opD(f)$, by Proposition \ref{PropMboolUniversalProp}. Clearly, $\bar{\beta}$ is surjective and the injectivity follows from Lemma \ref {LemBoolCharacterization}.
\end {proof}

\begin {Example}
By Example \ref{ExpPrimeLatticeNinftyN}, we have $\spec(\N^{n})^{\infty}=\{\langle e_{i}\rangle_{i\in I}\mid I\subseteq\{1\kpkt n\}\}$. Thus,
$$\opD(f)=\{\Pcal\in\spec(\N^{n})^{\infty}\mid f\not\in\Pcal\}=\{\langle e_{i}\rangle_{i\in I}\mid \supp f\cap I=\emptyset\}\komma$$
which yields $\Bcal((\N^{n})^{\infty})\cong\Pset_{n}$.
\end {Example}

Observe that the canonical binoid epimorphism $\beta:M\rto\Bcal(M)$ with $f\mto\opD(f)$, fulfills (like the projection $\pi_{\bool}$)
$$\ker\beta=\{f\in M\mid\opD(f)=\emptyset\}=\{f\in M\mid f\text{ nilpotent}\}=\nil(M)\komma$$
where the equality in the middle is due to Lemma \ref{LemBcalD}(5). In what follows, we will not distinguish between the booleanization $(M_{\bool},\pi_{\bool})$ and its topological realization $(\Bcal(M),\beta)$.

\begin {Corollary} \label{CorSpecBool=Spec}
The canonical binoid epimorphism $\beta:M\rto\Bcal(M)$, $f\mto\opD(f)$, induces a continuous semigroup isomorphism 
$$\beta^{\ast}:\spec\Bcal(M)\stackrel{\sim}{\Rto}\spec M\pkt$$
\end {Corollary}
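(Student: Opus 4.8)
The plan is to prove that $\beta^{\ast}$ is a continuous, bijective semigroup homomorphism; since the inverse of a bijective semigroup homomorphism is automatically a homomorphism, this already yields the asserted isomorphism. Continuity together with the homomorphism property is immediate from Proposition \ref{PropInduecedspec} applied to the binoid homomorphism $\beta:M\rto\Bcal(M)$, $f\mto\opD(f)$. Thus the only real content is the bijectivity of $\beta^{\ast}$.

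The cleanest route I would take is to pass to the isomorphic description $\spec\cong\trivial\minspec$ from Corollary \ref{CorHomFiltSpec}. Under this identification a prime ideal $\Qcal\in\spec\Bcal(M)$ corresponds to the anti-indicator homomorphism $\alpha_{\Qcal}=\chi_{\Bcal(M)\setminus\Qcal}$, and $\beta^{\ast}$ becomes precomposition $\psi\mto\psi\beta$, that is, exactly the induced map $\beta^{\ast}:\trivial\minspec\Bcal(M)\rto\trivial\minspec M$ of Proposition \ref{PropIndHomNspec} with $L=\trivial$; indeed $\ker(\alpha_{\Qcal}\beta)=\beta^{-1}(\Qcal)$, so the two descriptions of $\beta^{\ast}$ agree. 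Injectivity is then free: since $\beta$ is surjective, $\psi\beta=\psi^{\prime}\beta$ forces $\psi=\psi^{\prime}$, which is precisely the injectivity clause of Proposition \ref{PropIndHomNspec}.

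For surjectivity, which I expect to be the main point, I would invoke the universal property of the booleanization. The trivial binoid $\trivial$ is boolean, so by Proposition \ref{PropMboolUniversalProp} every binoid homomorphism $\varphi:M\rto\trivial$ factors uniquely as $\varphi=\bar{\varphi}\,\pi_{\bool}$ through a homomorphism $\bar{\varphi}:M_{\bool}\rto\trivial$. Transporting this factorization along the isomorphism $M_{\bool}\cong\Bcal(M)$ of Corollary \ref{CorBooleanization} (under which $\pi_{\bool}$ corresponds to $\beta$) shows that each $\varphi\in\trivial\minspec M$ has the form $\psi\beta$ for some $\psi\in\trivial\minspec\Bcal(M)$. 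Hence $\beta^{\ast}$ is surjective, and combined with the previous step it is a bijection.

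Finally I would assemble the pieces: $\beta^{\ast}$ is a continuous semigroup homomorphism (Proposition \ref{PropInduecedspec}) that is bijective, hence a semigroup isomorphism. The main obstacle, surjectivity, dissolves once one recognizes that prime ideals are the same data as $\trivial\mina$valued homomorphisms and that $\trivial$ is boolean, so that the defining universal property of $\Bcal(M)$ applies verbatim and no computation with the explicit sets $\opD(f)$ is needed. If one moreover wants the stronger statement that $\beta^{\ast}$ is a homeomorphism, I would note from Proposition \ref{PropInduecedspec} that $(\beta^{\ast})^{-1}(\opD(f))=\opD(\beta(f))$; since $\beta$ is surjective and $\beta^{\ast}$ bijective, $\beta^{\ast}$ carries each basic open set $\opD(\beta(f))$ onto $\opD(f)$, so $\beta^{\ast}$ is also open.
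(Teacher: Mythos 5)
Your proof is correct, but the key step---surjectivity of $\beta^{\ast}$---is handled by a genuinely different argument from the paper's. The paper proceeds by explicit construction: given $\Pcal\in\spec M$ it sets $\Qcal:=\{\opD(f)\mid f\in\Pcal\}$, checks via the identity $\opD(f)\cap\opD(g)=\opD(f+g)$ that $\Qcal$ is a prime ideal of $\Bcal(M)$, and observes that $\beta^{\ast}(\Qcal)=\beta^{-1}(\Qcal)=\Pcal$. You instead pass to the identification $\spec\cong\trivial\minspec$ of Corollary \ref{CorHomFiltSpec} and use that $\trivial$ is boolean, so that the universal property of the booleanization (Proposition \ref{PropMboolUniversalProp}), transported along $M_{\bool}\cong\Bcal(M)$ from Corollary \ref{CorBooleanization}, factors every $\trivial\mina$point of $M$ through $\beta$. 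Both routes are sound and non-circular (the universal property and Corollary \ref{CorBooleanization} precede this corollary in the text); yours is more conceptual and makes transparent \emph{why} the spectrum is insensitive to booleanization, namely because the target $\trivial$ of the homomorphisms classifying prime ideals is itself boolean, whereas the paper's computation with the sets $\opD(f)$ is more self-contained and exhibits the inverse map concretely. The injectivity and continuity parts coincide in substance with the paper's appeal to Corollary \ref{CorInducedEmb}, and your closing observation that $\beta^{\ast}$ is in fact open (hence a homeomorphism) is a correct strengthening that the paper does not state here.
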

\begin {proof}
By Corollay \ref {CorInducedEmb}, $\beta^{\ast}$ is a continuous embedding on $\im\beta^{\ast}$. To prove that $\im\beta^{\ast}=\spec M$ let $\Pcal\in\spec M$ and define $\Qcal:=\{\opD(f)\mid f\in\Pcal\}$. The fact that $\opD(f)\cap\opD(g)=\opD(f+g)$ shows that $\Qcal$ is an ideal in $\Bcal(M)$, which is even prime since $\opD(f)\cap\opD(g)\in\Qcal$ implies that $f\in\Pcal$ or $g\in\Pcal$, hence $\opD(f)\in\Qcal$ or $\opD(g)\in\Qcal$. This shows that $\Qcal\in\spec\Bcal(M)$ with $\beta^{\ast}(\Qcal)=\Pcal$.
\end {proof}

\begin {Remark}
The $N$-spectra of a binoid and of its booleanization are usually not isomorphic. For instance, we have $N\minspec\N^{\infty}\cong N$ by Lemma \ref{LemNspec} but 
$$N\minspec(\N^{\infty})_{\bool}\,\,=\,\,N\minspec\free(x)/(2x=x)\,\,\cong\,\,\bool(N)$$ 
by Example \ref{ExpBHomBoolean}(1). In particular, if $N=K$ is a field of characteristic $\not=2$, then $K\minspec(\N^{\infty})_{\bool}=\{\chi_{\{0\}},\chi_{M\opkt}\}\cong\trivial$ but $K\minspec\N^{\infty}\cong K$.
\end {Remark}

\begin {Corollary} \label{CorBoolFunctor}
Every binoid homomorphism $\theta:M\rto N$ induces a unique binoid homomorphism $\Bcal(\theta):\Bcal(M)\rto\Bcal(N)$ such that the diagram
$$\xymatrix{
M\ar[r]^{\theta}\ar[d]_{\beta_{M}}&N\ar[d]^{\beta_{N}}\\
\Bcal(M)\ar[r]^{\Bcal(\theta)}&\Bcal(N)}$$
commutes. In particular, $\Bcal:\cBsf\rto\bBsf$ is a covariant functor from the category of commutative binoids into the category of commutative boolean binoids $\bBsf$.\nomenclature[BB]{$\bBsf$}{category of commutative boolean binoids}
\end {Corollary}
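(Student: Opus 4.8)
The plan is to obtain $\Bcal(\theta)$ from the universal property of the booleanization, which gives existence, uniqueness, and functoriality in one stroke, and then to record the explicit formula $\opD(f)\mapsto\opD(\theta(f))$ as the concrete content. First I would observe that $\Bcal(N)$ is a boolean binoid by Corollary \ref{CorBooleanization}, so that the composition $\beta_N\theta\colon M\rto\Bcal(N)$ is a binoid homomorphism from $M$ into a boolean binoid. By Corollary \ref{CorBooleanization} the pair $(\Bcal(M),\beta_M)$ is isomorphic to $(M_{\bool},\pi_{\bool})$ via the canonical isomorphism $\bar\beta$ with $\bar\beta\pi_{\bool}=\beta_M$, so $(\Bcal(M),\beta_M)$ inherits the universal property of Proposition \ref{PropMboolUniversalProp}: every binoid homomorphism from $M$ into a boolean binoid factors uniquely through $\beta_M$. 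Applying this to $\beta_N\theta$ yields a unique binoid homomorphism $\Bcal(\theta)\colon\Bcal(M)\rto\Bcal(N)$ with $\Bcal(\theta)\beta_M=\beta_N\theta$, which is precisely the commuting square together with its uniqueness.

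Next I would make the map explicit, which is where the real content sits. Tracing the factorization shows $\Bcal(\theta)(\opD(f))=\opD(\theta(f))$ for all $f\in M$. The key point to verify is that this is well defined, i.e.\ that $\theta$ respects $\sim_{\bool}$: if $\opD(f)=\opD(g)$, then by Lemma \ref{LemBoolCharacterization} there are $n,m\in\N$ and $x,y\in M$ with $nf=g+x$ and $mg=f+y$; applying $\theta$ gives $n\theta(f)=\theta(g)+\theta(x)$ and $m\theta(g)=\theta(f)+\theta(y)$, so $\opD(\theta(f))=\opD(\theta(g))$ again by Lemma \ref{LemBoolCharacterization}. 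That $\Bcal(\theta)$ is a binoid homomorphism is then immediate: it sends the identity $\opD(0_M)$ to $\opD(0_N)$ and the absorbing element $\opD(\infty_M)$ to $\opD(\infty_N)$, and it is compatible with $\cap$ because $\opD(f)\cap\opD(g)=\opD(f+g)$ by Lemma \ref{LemBcalD}(2), whence $\Bcal(\theta)(\opD(f)\cap\opD(g))=\opD(\theta(f+g))=\opD(\theta(f))\cap\opD(\theta(g))$.

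Finally, functoriality follows formally from the uniqueness clause. The identity $\id_M$ induces a homomorphism satisfying $\Bcal(\id_M)\beta_M=\beta_M=\id_{\Bcal(M)}\beta_M$, so $\Bcal(\id_M)=\id_{\Bcal(M)}$ by uniqueness. For $\theta\colon M\rto N$ and $\psi\colon N\rto L$ both $\Bcal(\psi)\Bcal(\theta)$ and $\Bcal(\psi\theta)$ satisfy the defining relation $(-)\,\beta_M=\beta_L\psi\theta$, since $\Bcal(\psi)\Bcal(\theta)\beta_M=\Bcal(\psi)\beta_N\theta=\beta_L\psi\theta$, so they coincide by uniqueness; hence $\Bcal$ is a covariant functor $\cBsf\rto\bBsf$.

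I expect the main obstacle to be conceptual rather than computational: recognizing that the entire statement is a formal consequence of the universal property of $\beta_M$, and that the only genuine verification — the well-definedness of $\opD(f)\mapsto\opD(\theta(f))$ — is exactly the assertion that binoid homomorphisms preserve $\sim_{\bool}$, which is supplied by the symmetric characterization in Lemma \ref{LemBoolCharacterization}. Everything else reduces to the surjectivity of $\beta_M$ and the basic identities for $\opD(-)$.
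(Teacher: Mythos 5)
Your proposal is correct and follows the same route as the paper, which simply applies the universal property of the booleanization (Proposition \ref{PropMboolUniversalProp}) to the composition $\beta_{N}\theta$ to obtain existence and uniqueness of $\Bcal(\theta)$. The additional material you supply — the explicit formula $\opD(f)\mapsto\opD(\theta(f))$, its well-definedness via Lemma \ref{LemBoolCharacterization}, and the formal verification of functoriality — is accurate but goes beyond what the paper records.
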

\begin {proof}
By Proposition \ref {PropMboolUniversalProp}, the composition $\beta_{N}\theta$ induces the desired unique binoid homomorphism between the booleanizations.
\end {proof}

Now we relate the bidual of a binoid, which was studied in detail in Section \ref{SecHom}, to its booleanization.

\begin {Corollary} \label {CorBooleanizationFG}
Let $M$ be a finitely generated binoid. The map $\Bcal(M)\rto M\bidual$ with $\opD(f)\mto\delta_{f}$, where $\delta_{f}(\chi)=\chi(f)$ for $\chi\in M\dual$, is a binoid isomorphism. In particular, $M\bidual\cong M$ if and only if $M$ is boolean.
\end {Corollary}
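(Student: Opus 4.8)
The plan is to reduce the claim to the case of a \emph{finite} boolean binoid and then, via the filter description of the dual, to recognise the double dual as a binoid all of whose filters are principal. For the reduction, recall $\Bcal(M)\cong M_{\bool}$ (Corollary \ref{CorBooleanization}), so under the identification $\opD(f)\leftrightarrow[f]$ the map in question becomes the canonical evaluation $[f]\mto\delta_{f}$. Since $\trivial$ is boolean, every $\chi\in\hom(M,\trivial)$ factors uniquely through $\pi_{\bool}$ by Proposition \ref{PropMboolUniversalProp}, whence $M\dual\cong(M_{\bool})\dual$ and therefore $M\bidual\cong(M_{\bool})\bidual$, and moreover $\chi(f)$ depends only on $[f]$. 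Thus it suffices to prove that for $B:=M_{\bool}$ the canonical map $\delta\colon B\rto B\bidual$, $f\mto\delta_{f}$, is an isomorphism. As $M$ is finitely generated, so is $B$, and since $B$ is boolean each one-generated subbinoid is finite (of type $\{0,x,\infty\}$); hence $B$ is finite.

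Injectivity and the homomorphism property are the easy steps. Indeed $\delta_{f}=\delta_{g}$ means $\chi(f)=\chi(g)$ for all $\chi\in B\dual$, i.e.\ $f$ and $g$ lie in exactly the same filters, i.e.\ $\opD(f)=\opD(g)$, which forces $f=g$ by Proposition \ref{PropBcal} because $B$ is boolean. That $\delta$ respects the operation follows from $(\delta_{f}+\delta_{g})(\chi)=\chi(f)+\chi(g)=\chi(f+g)=\delta_{f+g}(\chi)$, which matches $\opD(f)\cap\opD(g)=\opD(f+g)$; the identity $\chi_{B\dual}$ of $B\bidual$ is $\delta_{0}$ and the absorbing element $\chi_{(B\dual)\okreuz}$ is $\delta_{\infty}$, both by Remark \ref{RemBidual} (using that boolean binoids are reduced, Lemma \ref{LemBool=>PosRed}).

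The surjectivity is the crux. Using Corollary \ref{CorSpecDualFilt} with Corollary \ref{CorHomFiltSpec} I would identify $B\dual\cong\Fcal(B)_{\cap}$ via $\chi_{F}\leftrightarrow F$; under this identification $\delta_{f}=\chi_{G_{f}}$ with $G_{f}=\{F\in\Fcal(B)\mid f\in F\}$. A non-trivial element of $B\bidual$ is then $\chi_{G}$ for a proper filter $G$ of the finite boolean binoid $\Fcal(B)_{\cap}$. The key elementary fact (the finite-binoid analogue of Example \ref{ExpFilterPowerset}) is that in a finite boolean binoid every filter is principal: a filter is closed under its idempotent operation and summand-closed, so the meet of all its elements (a finite meet, hence again in the filter) is a least element $x_{0}$, and summand-closedness yields the filter equals $\opFilt(x_{0})=\{x\mid x_{0}\le x\}$. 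Applying this to $\Fcal(B)_{\cap}$ gives a least element $F_{0}\in\Fcal(B)$ of $G$, so $G=\{F\mid F_{0}\subseteq F\}$; applying it to $B$ gives $f_{0}\in B$ with $F_{0}=\opFilt(f_{0})$. Then $f_{0}\in F\iff\opFilt(f_{0})\subseteq F\iff F_{0}\subseteq F$, so $G_{f_{0}}=G$ and $\delta_{f_{0}}=\chi_{G}$. Hence $\delta$ is onto. The supplement then follows: if $M\bidual\cong M$ then $M$ is isomorphic to the boolean binoid $\Bcal(M)$, so $M$ is boolean, while if $M$ is boolean then $\Bcal(M)=M$ and the isomorphism gives $M\bidual\cong M$. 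I expect the main obstacle to be exactly this last step: correctly reading off the filters of the double dual and proving that finiteness forces every filter to be principal, which is precisely where finite generation enters (and where it must fail, as for $\Pset(V)_{\cap}$ with $V$ infinite).
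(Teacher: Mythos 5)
Your proof is correct, and it is essentially the order-theoretic mirror of the paper's argument, executed on the complementary side of the prime--filter duality. The paper does not booleanize: it reads $M\dual$ as $\specE M$ via $\alpha_{\Pcal}\leftrightarrow\Pcal$ (Corollary \ref{CorSpecDualFilt}), notes that $\specE M$ is finite by Proposition \ref{PropFgPrimes}, and for surjectivity takes $\psi\in M\bidual$, forms the union $\bar{\Qcal}$ of all primes outside $\ker\psi$ --- prime again by Proposition \ref{PropPrimeUnion}, and not in the ideal $\ker\psi$ --- and then exhibits the preimage as $f=\sum_{\Pcal\in\ker\psi}f_{\Pcal}$ with $f_{\Pcal}\in\Pcal\setminus\bar{\Qcal}$. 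Under the complementation of Corollary \ref{CorHomFiltSpec}, your least filter $F_{0}=\bigcap_{F\in G}F$ is exactly the complement of the paper's $\bar{\Qcal}$, and your key lemma that every filter of a finite boolean binoid is principal is the filter-side restatement of ``a union of prime ideals is prime'' plus finiteness. What your route buys is a cleaner witness: in the booleanization the preimage of $\chi_{G}$ is simply the top element $f_{0}=\sum_{x\in F_{0}}x$ of the principal filter $F_{0}=\opFilt(f_{0})$, and the reusable lemma on principal filters makes explicit exactly where finite generation enters (and why the statement fails for $\Pset(V)_{\cap}$ with $V$ infinite). What it costs is the preliminary reduction $M\dual\cong(M_{\bool})\dual$ and the finiteness of $M_{\bool}$, which the paper avoids by producing the preimage directly as a $\opD(f)$ with $f\in M$ --- the form in which the corollary is applied later, e.g.\ in Corollary \ref{CorBidualDual}. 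Both arguments use finite generation at the same point and to the same effect, so neither is more general; your version is a legitimate, slightly more structural alternative.
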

\begin {proof}
The map $\Bcal(M)\rto M\bidual$ of the statement is the binoid homomorphism $\bar{\delta}$ induced by the canonical binoid homomorphism $\delta:M\rto M\bidual$, $f\mto\delta_{f}$, where $\delta_{f}:M\dual\rto\trivial$, $\delta_{f}(\chi)=\chi(f)$, cf.\ Proposition \ref {PropMboolUniversalProp} (see also Remark \ref{RemBidual}). In other words, there is a commutative diagram
$$\xymatrix{
M\ar[r]^{\delta}\ar[d]_{\beta}&M\bidual\\
\Bcal(M)\ar[ur]_{\bar{\delta}}&}$$
with $\bar{\delta}(\opD(f))=\delta_{f}$. To prove the bijectivity of $\bar{\delta}$, we use the identification $M\dual\cong\specE M$ via $\alpha_{\Pcal}\leftrightarrow\Pcal$, $\Pcal\in\specE M$, cf.\ Corollary \ref{CorSpecDualFilt}. For the injectivity let $f,g\in M$ and suppose that $\delta_{f}(\alpha_{\Pcal})=\delta_{g}(\alpha_{\Pcal})$ (i.e.\ $\alpha_{\Pcal}(f)=\alpha_{\Pcal}(g)$) for all $\Pcal\in\specE M$. Thus,  $f\in\Pcal$ if and only if $g\in\Pcal$ for every $\Pcal\in\spec M$, which implies that $\opD(f)=\opD(g)$. For the surjectivity let $\psi\in M\bidual$, $\psi:\specE M\rto\trivial$. Note that $\specE M$ is a finite set by Proposition \ref{PropFgPrimes} because $M$ is a finitely generated binoid. Since $\psi$ is a binoid homomorphism, $\ker\psi$ is a superset-closed subset of $\specE M$. In particular, if $A:=\{\Qcal\in\specE M\mid\Qcal\not\in\ker\psi\}$, then $\bar{\Qcal}:=\bigcup_{\Qcal\in A}\Qcal$ is a  prime ideal that does not lie in $\ker\psi$. So there is an element $f_{\Pcal}\in\Pcal$ for every $\Pcal\in\ker\psi$ such that $f_{\Pcal}\not\in\bar{\Qcal}$. Hence, $\psi=\delta_{f}=\bar{\delta}(\opD(f))$ with $f:=\sum_{\Pcal\in\ker\psi}f_{\Pcal}$.
\end {proof}

\begin {Corollary} \label {CorBidualDual}
If $M$ is a finitely generated binoid, then $(M\dual)\bidual=M\dual$.
\end {Corollary}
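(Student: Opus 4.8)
The plan is to deduce the statement directly from Corollary \ref{CorBooleanizationFG} applied to the binoid $M\dual$ in place of $M$. That corollary asserts, for a finitely generated binoid, that the canonical homomorphism $\delta$ into its bidual is an isomorphism precisely when the binoid is boolean. Since $M\dual$ is boolean by its very definition, the only hypothesis that remains to be checked is that $M\dual$ is itself finitely generated.

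To verify this, I would first recall that $M$ being finitely generated forces its spectrum to be finite: by Proposition \ref{PropFgPrimes}, every prime ideal is generated by a subset of a fixed finite generating set $V\subseteq M$, so $\#\spec M\le 2^{\#V}$, and hence $\#\specE M\le 2^{\#V}+1$ is finite as well. By Corollary \ref{CorSpecDualFilt}, there is a binoid isomorphism $M\dual\cong\specE M$, so $M\dual$ is a finite binoid and therefore finitely generated.

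With both hypotheses in place, Corollary \ref{CorBooleanizationFG} applied to $M\dual$ yields that the canonical homomorphism $\delta\colon M\dual\rto(M\dual)\bidual$, $\chi\mapsto\delta_{\chi}$, is an isomorphism, which is exactly the asserted identification $(M\dual)\bidual=M\dual$.

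The argument is essentially a one-line reduction, so I do not expect a serious obstacle; the only points requiring care are the bookkeeping that $M\dual$ inherits finite generation from $M$ (through the finiteness of $\specE M$), and the observation that the equality sign in the statement is to be read as the canonical isomorphism $\delta$ rather than a set-theoretic identity.
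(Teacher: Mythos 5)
Your argument is correct and is exactly the paper's own proof: finite generation of $M\dual$ is obtained from Proposition \ref{PropFgPrimes} together with Corollary \ref{CorSpecDualFilt}, and then Corollary \ref{CorBooleanizationFG} applied to the boolean binoid $M\dual$ gives the claim. The added care about reading the equality as the canonical isomorphism $\delta$ is a reasonable clarification but does not change the argument.
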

\begin {proof}
If $M$ is finitely generated, then so is $M\dual$ by Proposition \ref{PropFgPrimes} and Corollary \ref{CorSpecDualFilt}. Now the statement follows from Corollary \ref{CorBooleanizationFG}
\end {proof}



\bigskip

\section {Dimension} \label{SecDimension}
\markright {\ref{SecDimension} Dimension}

\begin {Definition}
The \gesperrt{dimension} \index{dimension!-- of a binoid}\index{binoid!dimension of a --}of a nonzero binoid $M$, denoted by $\dim M$,\nomenclature[Dimension1]{$\dim M$}{dimension of the binoid $M$} is defined to be the \gesperrt{combinatorial dimension} \index{dimension!combinatorial --}of the space $\spec M$ endowed with the Zariski topology, which is given by the supremum of the lengths of all chains of irreducible closed subsets in $\spec M$. The dimension of the zero binoid is $-1$ by convention.
\end {Definition}

So far, we have seen that the ideal theory of a commutative binoid $M$ resembles that of a ring, and the same is true for the topological theory of their spectra. Therefore, the following results will appear familiar again.

\begin {Lemma}
Let $M$ be a binoid. Then
$$\dim M=\sup\{\ell\mid\Pcal_{0}\subset\Pcal_{1}\subset\cdots\subset\Pcal_{\ell}, \Pcal_{i}\in\spec M\}\pkt$$
\end {Lemma}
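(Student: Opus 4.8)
The plan is to translate the defining chains of irreducible closed subsets of $\spec M$ directly into chains of prime ideals by means of the correspondence established in Proposition \ref{PropClosedIrred}. First I would dispose of the degenerate case: if $M=\zero$, then $\spec M=\emptyset$ has no irreducible (hence nonempty) subsets and $M$ has no prime ideals, so both sides equal $-1$ by the stated convention; thus I may assume $M\neq\zero$ from now on.

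For nonzero $M$, recall from Proposition \ref{PropClosedIrred} that the closed irreducible subsets of $\spec M$ are exactly the sets $\opV(\Pcal)$ with $\Pcal\in\spec M$, and that each such set has a \emph{unique} generic point. I would first record that the assignment $\Pcal\mapsto\opV(\Pcal)$ is an inclusion-reversing bijection from $\spec M$ onto the set of closed irreducible subsets of $\spec M$: surjectivity and the irreducibility of the image are Proposition \ref{PropClosedIrred}, the inclusion-reversing property is Lemma \ref{LemBcalV}(3), and injectivity follows from Corollary \ref{CorRadicalClosed}, since $\opV(\Pcal)=\opV(\Qcal)$ forces $\Pcal=\sqrt{\Pcal}=\Jscr(\opV(\Pcal))=\Jscr(\opV(\Qcal))=\sqrt{\Qcal}=\Qcal$, prime ideals being radical. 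Because this bijection is order-reversing and injective, it carries strict inclusions to strict inclusions.

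With this in hand, the two suprema match chain by chain. Given a strictly ascending chain $\Pcal_{0}\subsetneq\Pcal_{1}\subsetneq\cdots\subsetneq\Pcal_{\ell}$ of prime ideals, applying $\opV$ yields a strictly descending chain $\opV(\Pcal_{0})\supsetneq\opV(\Pcal_{1})\supsetneq\cdots\supsetneq\opV(\Pcal_{\ell})$ of closed irreducible subsets, that is, a chain of length $\ell$; conversely, any chain $E_{0}\subsetneq\cdots\subsetneq E_{\ell}$ of closed irreducible subsets is, by surjectivity, of the form $\opV(\Pcal_{0})\subsetneq\cdots\subsetneq\opV(\Pcal_{\ell})$, whence $\Pcal_{0}\supsetneq\cdots\supsetneq\Pcal_{\ell}$ is a strictly descending chain of primes of the same length. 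Thus the lengths of chains of irreducible closed subsets and the lengths of chains of prime ideals range over exactly the same set of integers, and taking suprema gives the asserted equality.

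I do not expect a genuine obstacle here; the only points requiring care are the bookkeeping of the order reversal (ascending chains of primes corresponding to descending chains of closed sets) and the verification that the correspondence is strict and bijective on prime ideals, both of which are immediate from the already-established Proposition \ref{PropClosedIrred} and Corollary \ref{CorRadicalClosed}. The substantive content of the lemma has effectively already been proved in the identification of the irreducible closed subsets with the prime ideals.
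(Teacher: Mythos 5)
Your proof is correct and follows essentially the same route as the paper: both arguments invoke Proposition \ref{PropClosedIrred} to identify the closed irreducible subsets with the sets $\opV(\Pcal)$, $\Pcal\in\spec M$, and Lemma \ref{LemBcalV}(3) to turn chains of primes into chains of irreducible closed subsets and back. You merely spell out the injectivity (via Corollary \ref{CorRadicalClosed}) and the $M=\zero$ case, which the paper leaves implicit.
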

\begin {proof}
By Lemma \ref{LemBcalV}(3) and Proposition \ref {PropClosedIrred}, every chain of prime ideals $\Pcal_{0}\subset\cdots\subset\Pcal_{\ell}$ in $M$ gives rise to a chain of irreducible closed subsets $\opV(\Pcal_{\ell})\subset\cdots\subset\opV(\Pcal_{0})$ in $\spec M$ and vice versa. This shows the equality.
\end {proof}

\begin {Example}
\begin {ListeTheorem}
\item []
\item By Example \ref {ExpPrimeLatticeNinftyN}(1)-(3), we have for $n\ge1$,
$$\dim(\N^{n})^{\infty}=n\komma\quad\dim(\N^{\infty})^{n}=2n-1\komma\quad\text{and}\quad\dim\bigcupbidot_{i=1}^{n}\N^{\infty}=1\pkt$$
\item By Example \ref  {ExSpecPowerset}, $\dim\Pset_{n,\cap}=\dim\Pset_{n,\cup}=n-1$.
\end {ListeTheorem}
\end {Example}

\begin {Corollary}
$\dim M=\dim M_{\opred}$ for every nonzero binoid $M$.
\end {Corollary}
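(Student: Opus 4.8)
The plan is to exploit the fact that the dimension of $M$ is, by definition, a topological invariant of $\spec M$: it is the combinatorial dimension of $\spec M$ equipped with the Zariski topology, equivalently (by the lemma preceding this corollary) the supremum of the lengths of all chains of prime ideals. Hence to prove $\dim M = \dim M_{\opred}$ it suffices to show that $\spec M$ and $\spec M_{\opred}$ are homeomorphic, and this has essentially already been done in the excerpt.

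Concretely, I would first recall from Example \ref{ExCompositionsIdeal} that $M_{\opred} = M/\nil(M)$. Applying Corollary \ref{CorInducedEmb} to the ideal $\Ical = \nil(M)$ yields an isomorphism $\spec M_{\opred} = \spec M/\nil(M) \cong \opV(\nil(M))$, both as semigroups and as topological spaces, where $\opV(\nil(M))$ carries the subspace topology inherited from $\spec M$. The next step is to observe that $\opV(\nil(M)) = \spec M$: since $\nil(M) = \sqrt{\zero}$ is contained in every prime ideal of $M$ (equivalently, by Lemma \ref{LemBcalV}, $\opV(\nil(M)) = \opV(\sqrt{\,_{_{M}}\!\langle\zero\rangle\,}) = \opV(\zero) = \spec M$). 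Therefore $\spec M_{\opred}$ is homeomorphic to the whole of $\spec M$, not merely to a closed subspace of it.

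Finally, since the combinatorial dimension is preserved under homeomorphism — a chain of irreducible closed subsets in one space corresponds bijectively and length-preservingly to such a chain in the homeomorphic space — we conclude $\dim M_{\opred} = \dim M$. The case $M \neq \zero$ guarantees $\spec M \neq \emptyset$ so that the dimension is a genuine (nonnegative) invariant, and $M_{\opred}$ is likewise nonzero, so no degenerate situation arises.

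I do not expect any real obstacle here: the only point requiring care is recognizing that the homeomorphism $\spec M \cong \spec M_{\opred}$, rather than a mere bijection or order isomorphism, is exactly what Corollary \ref{CorInducedEmb} supplies, and that $\opV(\nil(M))$ is all of $\spec M$ so that the identification is with the full spectrum. Once these are in place the equality of dimensions is immediate from the topological definition of $\dim$.
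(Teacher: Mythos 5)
Your proof is correct and follows essentially the same route as the paper: the paper simply cites Corollary \ref{CorSpecMred} ($\spec M\cong\spec M_{\opred}$), and that identification is exactly the one you reconstruct via $M_{\opred}=M/\nil(M)$, Corollary \ref{CorInducedEmb}, and $\opV(\nil(M))=\spec M$ (an argument the paper itself records as an alternative derivation of Corollary \ref{CorSpecMred}). Your extra care in noting that the identification is a homeomorphism, not merely a bijection, is exactly the point the paper leaves implicit.
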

\begin {proof}
This follows from Corollary \ref{CorSpecMred}.
\end {proof}

\begin {Corollary}
Let $M$ be a subbinoid of $N$. If $N$ is integral over $M$ with respect to the inclusion $\iota:M\embto N$, then $\dim M=\dim N$. In particular, for an arbitrary subbinoid $M$ of $N$, one has $\dim M=\dim\overline{M}^{N}$.
\end {Corollary}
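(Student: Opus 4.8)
The plan is to reduce the whole statement to the poset isomorphism of spectra already furnished by Corollary \ref{CorSpecIntegralSubbinoid}. Recall that, by the lemma characterizing the dimension, $\dim M$ equals the supremum of the lengths $\ell$ of strictly increasing chains $\Pcal_{0}\subset\Pcal_{1}\subset\cdots\subset\Pcal_{\ell}$ of prime ideals in $M$, and likewise for $N$. Hence it suffices to show that the bijection $\spec N\leftrightarrow\spec M$ supplied by Corollary \ref{CorSpecIntegralSubbinoid} carries strictly increasing chains of prime ideals to strictly increasing chains of the same length, in both directions.

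First I would observe that this bijection is in fact an isomorphism of posets. By Corollary \ref{CorSpecIntegralSubbinoid} the map $\psi:\spec N\rto\spec M$, $\Qcal\mapsto M\cap\Qcal$, is a \emph{semibinoid} isomorphism with respect to the join operation $\cup$ (with inverse $\Pcal\mapsto\{a\in N\mid ka\in\Pcal\text{ for some }k\}$). Since the partial order on each spectrum is recovered from $\cup$ via $\Pcal\subseteq\Qcal\iff\Pcal\cup\Qcal=\Qcal$, and $\psi$ commutes with $\cup$, we obtain $\Pcal\subseteq\Qcal\iff\psi(\Pcal)\subseteq\psi(\Qcal)$ for all $\Pcal,\Qcal\in\spec N$. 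Thus $\psi$ and its inverse are order-preserving bijections, i.e.\ $\psi$ is an order isomorphism. Consequently a chain $\Qcal_{0}\subset\cdots\subset\Qcal_{\ell}$ in $\spec N$ is strictly increasing if and only if $\psi(\Qcal_{0})\subset\cdots\subset\psi(\Qcal_{\ell})$ is strictly increasing in $\spec M$, so chains of length $\ell$ correspond bijectively. Taking suprema over all such chains yields $\dim N=\dim M$.

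For the supplement I would apply the equality just proved to the pair $M\subseteq\overline{M}^{N}$. By definition $\overline{M}^{N}=\{a\in N\mid na\in M\text{ for some }n\ge1\}$, which is a subbinoid of $N$ (as noted after its definition), and every $a\in\overline{M}^{N}$ satisfies $na\in M$ for some $n\ge1$; hence $\overline{M}^{N}$ is integral over $M$ with respect to the inclusion $M\embto\overline{M}^{N}$. Applying the established equality with $N$ replaced by $\overline{M}^{N}$ then gives $\dim M=\dim\overline{M}^{N}$.

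The only point that needs any care is verifying that the spectral bijection genuinely respects \emph{strict} inclusions; but this is immediate once one notes that it is a $\cup$-semibinoid isomorphism, so no computation with the explicit formula for the inverse map is required. Everything else is formal, which is why I expect no real obstacle beyond correctly invoking Corollary \ref{CorSpecIntegralSubbinoid}.
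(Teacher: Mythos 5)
Your proposal is correct and follows exactly the paper's route: the paper also deduces the corollary directly from Corollary \ref{CorSpecIntegralSubbinoid}, the only difference being that you spell out why the semibinoid isomorphism of spectra is an order isomorphism and hence preserves lengths of chains. Both the main claim and the supplement are handled as the paper intends, so there is nothing to add.
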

\begin {proof}
This follows from Corollary \ref{CorSpecIntegralSubbinoid}.
\end {proof}

\begin {Corollary}
If a binoid $M$ admits a generating set with $n$ elements, then $\dim M\le n$.
\end {Corollary}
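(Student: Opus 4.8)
The plan is to reduce the dimension bound to a purely combinatorial statement about chains of subsets of the generating set, with Proposition \ref{PropFgPrimes} as the only structural input. Write $V=\{x_{1}\kpkt x_{n}\}$ for the generating set. If $M=\zero$ there is nothing to prove, since $\dim\zero=-1$, so I assume $M\not=\zero$. By the lemma characterizing $\dim M$ as the supremum of the lengths of chains of prime ideals, it suffices to show that every chain $\Pcal_{0}\subsetneq\Pcal_{1}\subsetneq\cdots\subsetneq\Pcal_{\ell}$ in $\spec M$ satisfies $\ell\le n$.

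First I would attach to each prime ideal $\Pcal$ the canonical subset $A_{\Pcal}:=V\cap\Pcal\subseteq V$ and establish the key identity $\Pcal=\langle A_{\Pcal}\rangle$. The inclusion $\langle V\cap\Pcal\rangle\subseteq\Pcal$ is immediate because $V\cap\Pcal\subseteq\Pcal$. For the reverse inclusion, Proposition \ref{PropFgPrimes} yields $\Pcal=\langle A\rangle$ for some $A\subseteq V$; as the generators of an ideal lie in the ideal, $A\subseteq\Pcal$, hence $A\subseteq V\cap\Pcal$, and therefore $\Pcal=\langle A\rangle\subseteq\langle V\cap\Pcal\rangle$.

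With this identity the assignment $\Pcal\mapsto A_{\Pcal}$ from $\spec M$ to $\Pset(V)$ is injective, since a prime is recovered from $A_{\Pcal}$ via $\Pcal=\langle A_{\Pcal}\rangle$, and it is evidently order preserving. It is in fact \emph{strictly} order preserving: if $\Pcal\subsetneq\Qcal$ then $A_{\Pcal}\subseteq A_{\Qcal}$, and equality $A_{\Pcal}=A_{\Qcal}$ would give $\Pcal=\langle A_{\Pcal}\rangle=\langle A_{\Qcal}\rangle=\Qcal$, a contradiction. Consequently a chain of primes of length $\ell$ maps to a strictly increasing chain $A_{\Pcal_{0}}\subsetneq\cdots\subsetneq A_{\Pcal_{\ell}}$ of subsets of $V$.

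Finally I would invoke the elementary fact that along a strictly increasing chain of subsets of an $n$-element set the cardinalities strictly increase, so $\#A_{\Pcal_{0}}<\cdots<\#A_{\Pcal_{\ell}}$ are $\ell+1$ distinct integers lying in $\{0\kpkt n\}$, forcing $\ell\le n$. Taking the supremum over all chains gives $\dim M\le n$. The only genuinely substantive step is the canonical representation $\Pcal=\langle V\cap\Pcal\rangle$, which is exactly what makes $\Pcal\mapsto A_{\Pcal}$ strictly monotone; everything else is bookkeeping, and I do not expect any real obstacle, since Proposition \ref{PropFgPrimes} already does the heavy lifting.
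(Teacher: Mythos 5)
Your proof is correct and follows exactly the route the paper intends: the paper's proof is the single line ``this follows from Proposition \ref{PropFgPrimes},'' and your argument — representing each prime as $\Pcal=\langle V\cap\Pcal\rangle$ so that a strictly increasing chain of primes yields a strictly increasing chain of subsets of the $n$-element generating set — is precisely the omitted detail. Nothing is missing; the canonical representation step and the cardinality bookkeeping are both sound.
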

\begin {proof}
This follows from Proposition \ref{PropFgPrimes}.
\end {proof}

\begin {Proposition}
A binoid $M$ is a binoid group if and only if it is integral and $\dim M=0$
\end {Proposition}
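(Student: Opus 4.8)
The plan is to translate both conditions into statements about $\spec M$ and then compare them. Throughout I would assume $M\neq\zero$, which is harmless: a binoid group cannot be the zero binoid, since its set $M\opkt$ of non-absorbing elements would be empty and hence not a group, and the hypothesis $\dim M=0$ excludes $\zero$ as well because $\dim\zero=-1$ by convention. For such $M$ the maximal ideal $M\Uplus=M\setminus M\okreuz$ is prime and is the largest proper ideal, so it contains every prime ideal of $M$, cf.\ Example~\ref{ExMaxIdeal}; moreover, applying Lemma~\ref{LemCharacterizationPrime} to $\Pcal=\zero$ (where $M/\zero=M$) shows that the zero ideal $\zero$ is prime if and only if $M$ is integral. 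The key observation I would exploit is that, by the description of the dimension as the supremum of the lengths of chains of prime ideals, $\dim M=0$ is equivalent to $\spec M$ consisting of a single point.

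For the direction \emph{binoid group implies integral and $\dim M=0$}, I would first note that in a binoid group every $a\in M\opkt$ is a unit, and units are integral ($M\okreuz\subseteq\opint(M)$), so $M$ is integral. Then $M\Uplus=M\setminus M\okreuz=M\setminus M\opkt=\zero$, and since every prime ideal is contained in the maximal ideal $M\Uplus=\zero$ while $\zero$ is contained in every ideal, it follows that $\spec M=\{\zero\}$. Hence $\dim M=0$.

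For the converse I would start from $M$ integral, which makes $\zero$ a prime ideal, while $M\Uplus$ is a prime ideal as well. Since $\zero\subseteq M\Uplus$, the hypothesis $\dim M=0$ forbids a strict inclusion $\zero\subsetneq M\Uplus$, forcing $M\Uplus=\zero=\{\infty\}$. Rewriting $M\Uplus=M\setminus M\okreuz$ then gives $M\okreuz=M\setminus\{\infty\}=M\opkt$, that is, $M\opkt=M\okreuz$ is a group, which is exactly the definition of a binoid group.

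I do not expect a serious obstacle here; the statement is essentially a bookkeeping exercise once the two equivalences ($\zero$ prime $\Leftrightarrow$ $M$ integral, and $\dim M=0\Leftrightarrow\#\spec M=1$) are in place. The only point requiring mild care is the degenerate case $M=\zero$, which must be ruled out consistently on both sides so that the biconditional holds without exceptions.
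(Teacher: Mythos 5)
Your proof is correct and follows essentially the same route as the paper: integrality makes $\zero$ a prime ideal, $\dim M=0$ forces it to coincide with the unique maximal prime $M\Uplus$, and hence $M\opkt=M\okreuz$. The extra care you take with the zero binoid and the explicit forward direction (which the paper dismisses as immediate) is fine but does not change the argument.
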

\begin {proof}
The only if part is immediate. On the other hand, if $M$ is integral, then $\zero$ is a prime ideal, and since $\dim M=0$, it is the only prime ideal. Hence, $\zero=M\Uplus=M\setminus M\okreuz$.
\end {proof}

\begin {Definition}
Let $M\not=\zero$. The \gesperrt{height} \index{prime ideal!height of a --}of a prime ideal $\Pcal\in\spec M$ is the supremum of the lenghts of strictly increasing finite chains in $\spec M$ that end with $\Pcal$. In other words,
$$\height\Pcal:=\sup\{\ell\mid\Pcal_{0}\subset\cdots\subset\Pcal_{\ell}=\Pcal, \Pcal_{i}\in\spec M\}\pkt$$
\nomenclature[Height]{$\height\Pcal$}{height of $\Pcal$}The \gesperrt{dimension} \index{prime ideal!dimension of a --}\index{dimension!-- of a prime ideal}of $\Pcal$ is the supremum of the lenghts of strictly increasing finite chains in $\spec M$ that start with $\Pcal$. In other words,
$$\dim\Pcal:=\sup\{\ell\mid\Pcal=\Pcal_{0}\subset\cdots\subset\Pcal_{\ell}, \Pcal_{i}\in\spec M\}\pkt$$
\nomenclature[Dimension2]{$\dim\Pcal$}{dimension of the prime ideal $\Pcal$}For $\dim M=:d<\infty$, let $F_{i}$ denote the number of all prime ideals $\Pcal\in\spec M$ of dimension $i$, $i\in\N$. Then $F_{k}=0$ for all $k>d$ and the $(d+1)$-tuple 
$$F(M):=(F_{0}(M),F_{1}(M)\kpkt F_{d}(M))$$\nomenclature[f3]{$F_{i}(M)$}{$=\#\{\Pcal\in\spec M\mid\dim\Pcal=i\}$}\nomenclature[f4]{$F(M)$}{$=(F_{0}(M),F_{1}(M)\kpkt F_{d}(M))$, $F\mina$vector of $M$}is called the \gesperrt{$F\mina$vector} \index{binoid!F@$F\mina$vector of a --}\index{F@$F\mina$vector}of $M$.
\end {Definition}

Minimal primes have height $0$ and $\height M\Uplus=\dim M$, which implies $F_{d}\le\#\min M$ and $F_{0}(M)=1$.

\begin {Lemma}
Let $\Pcal$ be a prime ideal of the binoid $M$. Then
$$\height\Pcal=\dim M_{\Pcal}\quad\text{and}\quad\dim\Pcal=\dim M/\Pcal\pkt$$
\end {Lemma}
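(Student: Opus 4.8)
The plan is to derive both equalities from the two order-preserving bijections of spectra already at our disposal, namely Corollary \ref{CorIndSpecLoc} for the localization and Corollary \ref{CorExtIdealPrime} for the Rees quotient, together with the order-theoretic description of $\dim$ proved just above (which rewrites $\dim N$ as the supremum of lengths $\ell$ of strict chains $\Qcal_0\subset\cdots\subset\Qcal_\ell$ in $\spec N$). Since $\height\Pcal$, $\dim\Pcal$, and $\dim$ are all suprema over chains in the poset $(\spec-,\subseteq)$, it suffices to transport these chains along the said bijections and to check that the relevant chains can always be normalized to pass through $\Pcal$.

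First I would treat the height. By definition $M_{\Pcal}$ is the localization at the submonoid $S:=M\setminus\Pcal$, so Corollary \ref{CorIndSpecLoc} yields an order-preserving bijection $\spec M_{\Pcal}\cong\{\Qcal\in\spec M\mid\Qcal\cap S=\emptyset\}=\{\Qcal\in\spec M\mid\Qcal\subseteq\Pcal\}$. Applying the order-theoretic description of $\dim$ to $M_{\Pcal}$ therefore gives $\dim M_{\Pcal}=\sup\{\ell\mid\Qcal_0\subset\cdots\subset\Qcal_\ell,\ \Qcal_i\in\spec M,\ \Qcal_\ell\subseteq\Pcal\}$. Every chain occurring here either ends in $\Pcal$ or can be lengthened (without shortening it) by appending $\Pcal$ as a top element, since $\Pcal$ is the largest prime in the subposet; hence the supremum is unchanged if one restricts to chains with $\Qcal_\ell=\Pcal$, which is exactly $\height\Pcal$.

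The second identity is dual, and I would argue the same way with Corollary \ref{CorExtIdealPrime}: taking the ideal to be $\Pcal$ itself gives an order-preserving bijection $\spec M/\Pcal\cong\{\Qcal\in\spec M\mid\Pcal\subseteq\Qcal\}$, so $\dim M/\Pcal=\sup\{\ell\mid\Qcal_0\subset\cdots\subset\Qcal_\ell,\ \Pcal\subseteq\Qcal_0\}$, and here every chain may be normalized to start at $\Pcal$ (the smallest element of the subposet), yielding $\dim\Pcal$. The only point requiring care — and the sole potential obstacle — is this chain-normalization step: one must confirm that prepending or appending $\Pcal$ to a realizing chain produces again a strictly increasing chain of prime ideals, i.e.\ that $\Pcal$ is genuinely comparable to, and distinct from, the relevant endpoint unless it already coincides with it. This is immediate because $\Pcal$ is respectively the unique maximal and unique minimal element of the two subposets above, but it is precisely the step that legitimizes the passage from ``chains below/above $\Pcal$'' to ``chains ending/beginning at $\Pcal$'', and hence deserves explicit verification.
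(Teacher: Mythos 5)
Your proof is correct and follows exactly the paper's route: the paper's own proof consists of the single line ``This follows from Corollary \ref{CorIndSpecLoc} and Corollary \ref{CorExtIdealPrime}'', which are precisely the two order-preserving bijections you invoke. Your explicit verification of the chain-normalization step (that chains in the subposet may be assumed to end at, respectively begin with, $\Pcal$) is a detail the paper leaves implicit, but it is the right thing to check and you have checked it correctly.
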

\begin {proof}
This follows from Corollary \ref {CorIndSpecLoc} and Corollary \ref{CorExtIdealPrime}.
\end {proof}

\begin {Remark}
As pointed out by Anderson and Johnsen in \cite{AndersonIT}, Krull's principal ideal theorem, \index{Krull's principal ideal theorem}which states that in a noetherian ring $R$ every prime ideal $\pideal$ that is minimal over the ideal $(r_{1}\kpkt r_{k})\subsetneq R$ has $\height\pideal\le k$ (\cite[Theorem 10.2]{Eisenbud}), need not be true for binoids that satisfy the ascending chain condition (a.c.c.) on ideals, or equivalently, in which all ideals are finitely generated. The binoid 
$$M=\free(x,y)/(x+y=2x)$$
is finitely generated, hence fulfills the a.c.c.\ on ideals (and on congruences, cf.\ Remark \ref{RemNoetherian=FG}). The prime ideals of $M$ are given by
$$\zero\subsetneq\langle x\rangle\subsetneq\langle x,y\rangle=M\Uplus\komma$$
which shows that $M\Uplus$ is minimal over $\langle y\rangle$, but $\height M\Uplus=2>1$. However, in \cite[Theorem 4.4]{AndersonIT} it is shown that Krull's principal ideal theorem is true for binoids that satisfy the a.c.c.\ on ideals and the following weaker cancellation property 
$$a+b=a+c\not=\infty\,\,\Rightarrow\,\, c=u+b\text{ for some }u\in M\okreuz\pkt$$

For cancellative\footnote{\, Cancellative in the sense of monoid theory, which means  $a+b=a+c$ implies $b=c$.} monoids that satisfy the a.c.c.\ on ideals, Krull's principal ideal theorem has been proved by Kobsa, cf.\ \cite[Satz 13.7]{Kobsa}.
\end {Remark}

\bigskip

\section {$K\mina$points} \label{SecKpoints}
\markright {\ref{SecKpoints} $K\mina$points}

\begin {Convention}
In this section, $K$ always denotes a field.
\end {Convention}

As the title of this section indicates, we are going to pursue a topological approach to binoids via their $K\mina$spectra, which were introduced in Section \ref{SecNspectra} (for arbitrary binoids $K$).\footnote{\, Note that arbitrary binoids are written additively, but a field $K$ is a binoid with respect to the multiplication.} For this, we tacitly assume basic knowledge of the following geometric objects in commutative algebra: the \gesperrt{spectrum} of a ring $R$,\index{spectrum!-- of a ring}\nomenclature[Spec2]{$\Spec R$}{set of all prime ideals in the ring $R$ (spectrum of $R$)}
$$\Spec R:=\{\pideal\subseteq R\mid\pideal\text{ prime (ring) ideal in }R\}\komma$$
which is a topological space equipped with the Zariski topology, and the $K\mina$\gesperrt{spectrum} of a commutative $K\mina$algebra $A$, 
$$K\minSpec A\,=\,\Hom_{K\minus\opalg}(A,K)\komma$$
\index{K@$K\mina$spectrum}\index{spectrum!K@$K\mina$--}which is a topological subspace of $\Spec A$ via
$$K\minSpec A\Rto\Spec A$$
with $\varphi\mto\ker\varphi$. For a detailed treatment of the topological spaces $\Spec A$ and $K\minSpec A$, we refer to \cite[Chapter 2 and 3]{PatilStorch}.

\medskip

By Proposition \ref{PropUnivPropBinoidA}, we have the following isomorphism
$$K\minspec M\,\,\cong\,\, K\minspec K[M]\komma$$
where a $K\mina$point\index{point!$K\mina$--|SEE{$N\mina$point}}\index{K@$K\mina$point|SEE{$N\mina$point}} $\varphi:M\rto K$ of $M$ corresponds one-to-one to the $K\mina$algebra homomorphism $\tilde{\varphi}:K[M]\rto K$ with
$$\tilde{\varphi}(F)=\sum_{a\in M}r_{a}\varphi(a)\in K$$
for $F=\sum_{a\in M}r_{a}T^{a}\in K[M]$. In particular, a characteristic point $\alpha_{\Pcal}$, $\Pcal\in\spec M$, corresponds to
$$\tilde{\alpha}_{\Pcal}:\sum_{a\in M}r_{a}T^{a}\lto\sum_{a\in M\setminus\Pcal}r_{a}\pkt$$
The above identification allows for regarding $K\minspec M$ as a topological space\index{Zariski topology!-- on $K\minSpec K[M]$}\index{topology!Zariski --}, where the closed sets are given by the affine algebraic sets\nomenclature[V]{$\opV_{K}(-)$}{closed set in the Zariski topology on $K\minspec M$}
$$\opV_{K}(F)\,\,=\,\,\{\varphi\in K\minspec M\mid\tilde{\varphi}(F)=0\}$$
for $F\in K[M]$ and
$$\opV_{K}(\{F_{i}\}_{i\in I}):=\bigcap_{i\in I}\opV_{K}(F_{i})$$
for a family $(F_{i})_{ i\in I}$ of elements in $K[M]$. The open subsets are given by the complements 
$$\opD_{K}(\{F_{i}\}_{i\in I}):=K\minspec K[M]\setminus\opV_{K}(\{F_{i}\}_{i\in I})\komma$$
for $F_{i}\in K[M]$, $i\in I$.\nomenclature[D]{$\opD_{K}(-)$}{open set in the Zariski topology on  $K\minspec M$}

\begin {Lemma} \label{LemVKProperties}
Let $M$ be a binoid.
\begin {ListeTheorem}
\item $\opV_{K}(\bigcup_{i\in I}A_{i})=\bigcap_{i\in I}\opV_{K}(A_{i})$ for a family $A_{i}\subseteq K[M]$, $i\in I$, of subsets.
\item $\opV_{K}(\sum_{j\in J}\aideal_{j})=\bigcap_{j\in J}\opV_{K}(\aideal_{j})$ for a family $\aideal_{j}\subseteq K[M]$, $j\in J$, of ideals.
\item $\opV_{K}(FG)=\opV_{K}(F)\cup\opV_{K}(G)$ for $F,G\in K[M]$.
\item $\opV_{K}((F,G))=\opV_{K}(F)\cap\opV_{K}(G)$, where $(F,G)$ is the ideal in $K[M]$ generated by $F,G\in K[M]$.
\item $\opV_{K}(\aideal\bideal)=\opV_{K}(\aideal\cap\bideal)=\opV_{K}(\aideal)\cup\opV_{K}(\bideal)$ for ideals $\aideal,\bideal\subseteq K[M]$.
\item $\opV_{K}(A)\subseteq\opV_{K}(B)$ for subsets $A,B\subseteq K[M]$ with $B\subseteq A$.
\item $\opV_{K}(1)=\emptyset$ and $\opV_{K}(0)=K\minspec K[M]$.
\end {ListeTheorem}
\end {Lemma}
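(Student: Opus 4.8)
The plan is to reduce everything to the identification $K\minspec M\cong K\minSpec K[M]$ from Proposition \ref{PropUnivPropBinoidA}, under which a point $\varphi$ corresponds to the $K\mina$algebra homomorphism $\tilde{\varphi}:K[M]\rto K$. The only structural facts I would exploit are that each $\tilde{\varphi}$ is a ring homomorphism (so it preserves $0$, $1$, sums, and products) and that $K$, being a field, is an integral domain. First I would record the reformulation that for a subset $A\subseteq K[M]$ one has $\opV_{K}(A)=\{\varphi\mid\tilde{\varphi}(F)=0\text{ for all }F\in A\}$, which is immediate from the defining equation $\opV_{K}(\{F_{i}\}_{i\in I})=\bigcap_{i\in I}\opV_{K}(F_{i})$. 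With this reformulation the monotonicity statement (6) is clear, since imposing vanishing on the larger set $A\supseteq B$ is a stronger condition, and (7) is clear because every $K\mina$algebra homomorphism satisfies $\tilde{\varphi}(1)=1\neq0$ and $\tilde{\varphi}(0)=0$.

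Before turning to the remaining parts I would establish the single preliminary lemma that carries all the weight of the ideal statements: if $\aideal=(S)$ is the ideal generated by a set $S\subseteq K[M]$, then $\opV_{K}(\aideal)=\opV_{K}(S)$. The inclusion $\opV_{K}(\aideal)\subseteq\opV_{K}(S)$ follows from (6) since $S\subseteq\aideal$, and the converse holds because if $\tilde{\varphi}$ vanishes on $S$, then for any $H=\sum_{k}G_{k}F_{k}$ with $F_{k}\in S$ and $G_{k}\in K[M]$ one computes $\tilde{\varphi}(H)=\sum_{k}\tilde{\varphi}(G_{k})\tilde{\varphi}(F_{k})=0$ using that $\tilde{\varphi}$ is a ring homomorphism. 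Granting this, the purely set-theoretic identity (1) is immediate from the reformulation above, and (2) follows at once by combining it with the preliminary lemma, since $\sum_{j\in J}\aideal_{j}$ is the ideal generated by $\bigcup_{j\in J}\aideal_{j}$, so $\opV_{K}(\sum_{j}\aideal_{j})=\opV_{K}(\bigcup_{j}\aideal_{j})=\bigcap_{j}\opV_{K}(\aideal_{j})$. Likewise (4) is $\opV_{K}((F,G))=\opV_{K}(\{F,G\})=\opV_{K}(F)\cap\opV_{K}(G)$.

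The genuinely multiplicative statements (3) and (5) are where the domain property of $K$ enters, and the reverse inclusion in (5) is the step I expect to require the most care. For (3) I would argue pointwise: $\tilde{\varphi}(FG)=\tilde{\varphi}(F)\tilde{\varphi}(G)$, and since $K$ is a domain this vanishes if and only if one of the two factors does, giving $\opV_{K}(FG)=\opV_{K}(F)\cup\opV_{K}(G)$. For (5), the chain of inclusions $\aideal\bideal\subseteq\aideal\cap\bideal\subseteq\aideal$ (and the analogous one with $\bideal$) together with monotonicity (6) yields $\opV_{K}(\aideal)\cup\opV_{K}(\bideal)\subseteq\opV_{K}(\aideal\cap\bideal)\subseteq\opV_{K}(\aideal\bideal)$ for free. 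The remaining inclusion $\opV_{K}(\aideal\bideal)\subseteq\opV_{K}(\aideal)\cup\opV_{K}(\bideal)$ I would prove by contraposition: if $\varphi\notin\opV_{K}(\aideal)\cup\opV_{K}(\bideal)$, choose $F\in\aideal$ and $G\in\bideal$ with $\tilde{\varphi}(F)\neq0$ and $\tilde{\varphi}(G)\neq0$; then $FG\in\aideal\bideal$ and $\tilde{\varphi}(FG)=\tilde{\varphi}(F)\tilde{\varphi}(G)\neq0$ by the domain property, so $\varphi\notin\opV_{K}(\aideal\bideal)$. Closing this loop forces the three sets in (5) to coincide. None of the steps is deep, but I would be careful to invoke that $K$ is a field (not merely a ring) precisely at the two points where a product of nonzero evaluations must be nonzero.
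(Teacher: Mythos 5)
Your proof is correct, and it matches the paper's treatment: the paper disposes of this lemma with the remark that all assertions are easily verified or follow from the corresponding statements on $\Spec K[M]$, and your writeup simply carries out those routine verifications directly at the level of $K\mina$points, invoking the domain property of $K$ exactly where it is needed in (3) and (5). Nothing further is required.
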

\begin {proof}
All assertions are easily verified or follow from the corresponding statements on $\Spec K[M]$.
\end {proof}

\begin {Proposition}\label {PropIndHomeoKspec}
For a binoid homomorphism $\varphi:M\rto N$, the induced semigroup homomorphism $\varphi^{\ast}:K\minspec N\rto K\minspec M$, $\psi\mto\psi\varphi$, is continuous, namely, if $\aideal$ is an ideal in $K[M]$, then
$$(\varphi^{\ast})^{-1}(\opD_{K}(\aideal))\,=\,\opD_{K}(\aideal K[N])\quad\text{and}\quad(\varphi^{\ast})^{-1}(\opV_{K}(\aideal))\,=\,\opV_{K}(\aideal K[N])\komma$$
where $\aideal K[N]$ is the extended ideal $\phi(\aideal)K[N]$ under the $K\mina$algebra homomorphism $\phi:K[M]\rto K[N]$ with $r_{a}T^{a}\mto r_{a}T^{\varphi(a)}$, $a\in M\opkt$.
\end {Proposition}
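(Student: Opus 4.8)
The plan is to reduce everything to the already-established correspondence $K\minspec M\cong K\minSpec K[M]$ of Proposition \ref{PropUnivPropBinoidA} together with the functoriality of $K[-]$ recorded in Remark \ref{RemUniPropSpecial}(2). First I would make explicit the compatibility between $\varphi^{\ast}$ and the induced $K\mina$algebra homomorphism $\phi=K[\varphi]:K[M]\rto K[N]$, $r_{a}T^{a}\mto r_{a}T^{\varphi(a)}$. Writing $\tilde{\psi}$ for the $K\mina$algebra homomorphism corresponding to a $K\mina$point $\psi$, the claim is that $\widetilde{\psi\varphi}=\tilde{\psi}\circ\phi$ for every $\psi\in K\minspec N$. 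This is checked on the $K\mina$module basis $\{T^{a}\mid a\in M\opkt\}$ of $K[M]$: both sides send $T^{a}$ to $\psi(\varphi(a))$, and since all maps involved are $K\mina$algebra homomorphisms the identity follows on all of $K[M]$. Thus $\varphi^{\ast}$ is nothing but the restriction to $K\mina$points of the $\Spec$-map induced by $\phi$, which places us in the familiar setting of affine algebraic sets.

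Next I would compute the preimage of a basic closed set. Fix an ideal $\aideal\subseteq K[M]$. A point $\psi\in K\minspec N$ lies in $(\varphi^{\ast})^{-1}(\opV_{K}(\aideal))$ if and only if $\widetilde{\psi\varphi}(F)=0$ for all $F\in\aideal$, which by the compatibility above is equivalent to $\tilde{\psi}(\phi(F))=0$ for all $F\in\aideal$. The heart of the argument is to upgrade this from vanishing on $\phi(\aideal)$ to vanishing on the whole extended ideal $\aideal K[N]=\phi(\aideal)K[N]$. One inclusion is immediate, since $\phi(F)\in\aideal K[N]$. For the converse, a general element of $\aideal K[N]$ has the form $\sum_{j}G_{j}\phi(F_{j})$ with $G_{j}\in K[N]$ and $F_{j}\in\aideal$; applying the $K\mina$algebra homomorphism $\tilde{\psi}$ and using its multiplicativity gives $\tilde{\psi}(\sum_{j}G_{j}\phi(F_{j}))=\sum_{j}\tilde{\psi}(G_{j})\tilde{\psi}(\phi(F_{j}))=0$. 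Hence $(\varphi^{\ast})^{-1}(\opV_{K}(\aideal))=\opV_{K}(\aideal K[N])$, and passing to complements yields the stated formula for $\opD_{K}$.

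Finally, continuity is a formal consequence: every closed subset of $K\minspec M$ is of the form $\opV_{K}(A)$ for some $A\subseteq K[M]$, and since $\tilde{\chi}$ respects both addition and multiplication one has $\opV_{K}(A)=\opV_{K}((A))$ for the ideal $(A)$ of $K[M]$ generated by $A$; so the preimage computation for ideals already suffices to show that $\varphi^{\ast}$ pulls closed sets back to closed sets. The semigroup-homomorphism property of $\varphi^{\ast}$ is supplied by Proposition \ref{PropIndHomNspec} (with $L=K$). I do not anticipate a genuine obstacle here; the only point requiring care is the passage from $\phi(\aideal)$ to the extended ideal $\aideal K[N]$, where one must invoke both that $\aideal$ is an ideal and that $\tilde{\psi}$ is multiplicative, not merely additive.
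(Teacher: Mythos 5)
Your proposal is correct and follows essentially the same route as the paper: identify $\varphi^{\ast}$ with the map induced by the $K\mina$algebra homomorphism $\phi=K[\varphi]$ via the correspondence of Proposition \ref{PropUnivPropBinoidA}, compute the preimage of a basic closed set, and pass to complements. The paper merely compresses this by reducing to principal ideals $\aideal=(F)$, whereas you handle a general ideal directly and spell out the (correct) passage from $\phi(\aideal)$ to the extended ideal $\aideal K[N]$ using multiplicativity of $\tilde{\psi}$ --- a useful expansion of the paper's rather terse argument, but not a different proof.
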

\begin {proof}
By taking complements, it suffices to prove the statement for the closed sets. Moreover, we only need to consider ideals of the form $\aideal=(F)$ for one $F\in K[M]$. For these ideals, we have $\varphi^{\ast}(\opV_{K}(F))=\{\psi\in K\minspec N\mid\tilde{\psi}(\varphi(F))=0\}$, where $\tilde{\psi}(G):=\sum_{a\in M\opkt}r_{a}\psi(a)$ if $G=\sum_{a\in M\opkt}r_{a}T^{a}$, which implies the statement.
\end {proof}

\begin {Proposition} \label{PropKspecIdealLocalization}
Let $M$ be a binoid.
\begin {ListeTheorem}
\item If $\Ical$ is an ideal in $M$, then
$$K\minspec(M/\Ical)\,\,\cong\,\,\opV_{K}(K[\Ical])\pkt$$
\item For $f\in M$, we have
$$K\minspec M_{f}\,\,\cong\,\,\opD_{K}(T^{f})\,\,\cong\,\,\{\varphi\in K\minspec M\mid \varphi(f)\not=0\}\pkt$$
\end {ListeTheorem}
\end {Proposition}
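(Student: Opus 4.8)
The plan is to reduce both parts to standard facts about the $K$-spectra of quotient and localized algebras, using the dictionary $K\minspec M \cong K\minSpec K[M]$ from Proposition \ref{PropUnivPropBinoidA}. Under this identification a $K$-point $\varphi\colon M\rto K$ is recorded by the $K$-algebra homomorphism $\tilde{\varphi}$ with $\tilde{\varphi}(T^a)=\varphi(a)$. Since the absorbing element of the binoid $(K,\cdot)$ is $0\in K$, one has $\ker\varphi=\{a\in M\mid\varphi(a)=0\}$, and this is the elementary translation I would use throughout to rewrite conditions on $\varphi$ as vanishing conditions on $\tilde{\varphi}$.

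For part (1) I would first invoke Corollary \ref{CorPropBinoidA}(3) to write $K[M/\Ical]\cong K[M]/K[\Ical]$, so that $K\minspec(M/\Ical)\cong K\minSpec(K[M]/K[\Ical])$. The canonical surjection $K[M]\rto K[M]/K[\Ical]$ then realizes the latter, via $\chi\mto\chi\circ(\text{projection})$, as the closed subset $\{\chi\in K\minSpec K[M]\mid\chi(K[\Ical])=0\}$ of $K\minSpec K[M]$. Because $K[\Ical]$ is generated by the monomials $T^a$, $a\in\Ical$, a homomorphism $\tilde{\varphi}$ kills $K[\Ical]$ exactly when $\varphi(a)=\tilde{\varphi}(T^a)=0$ for all $a\in\Ical$, i.e.\ when $\Ical\subseteq\ker\varphi$; this closed set is precisely $\opV_{K}(K[\Ical])$. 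The same computation identifies the image set $\{\varphi\mid\Ical\subseteq\ker\varphi\}$ furnished by Proposition \ref{PropNspecModIdeal} (with $N=K$), which secures the semigroup structure simultaneously.

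For part (2) I would observe that the submonoid $\langle f\rangle=\{nf\mid n\in\N\}$ of $M$ corresponds to the multiplicative system $\{(T^f)^n\mid n\in\N\}$ in $K[M]$, and that the argument of Corollary \ref{CorPropBinoidA}(4) applies to give $K[M_f]\cong K[M]_{T^f}$; hence $K\minspec M_f\cong K\minSpec(K[M]_{T^f})$. By the standard description of the $K$-spectrum of a localization at one element, the localization map $K[M]\rto K[M]_{T^f}$ induces a homeomorphism of $K\minSpec(K[M]_{T^f})$ onto the basic open set $\opD_{K}(T^f)=\{\varphi\mid\tilde{\varphi}(T^f)\neq0\}$, and $\tilde{\varphi}(T^f)=\varphi(f)$ rewrites this as $\{\varphi\mid\varphi(f)\neq0\}$. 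The set-theoretic and semigroup content is also immediate from Corollary \ref{CorNSpecLoc} with $N=K$, since $\varphi(\langle f\rangle)\subseteq K\okreuz$ holds if and only if $\varphi(f)\neq0$.

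The main obstacle I anticipate is not the set-level bijections, which drop out of Proposition \ref{PropNspecModIdeal} and Corollary \ref{CorNSpecLoc}, but upgrading them to homeomorphisms; I would handle this by passing to the algebra isomorphisms above and appealing to the well-known correspondences for $K\minSpec$ of quotients and of one-element localizations (cf.\ \cite[Chapters 2 and 3]{PatilStorch}), with continuity of the comparison maps already guaranteed by Proposition \ref{PropIndHomeoKspec}. A secondary technical point in (2) is the submonoid-versus-subbinoid subtlety: Corollary \ref{CorPropBinoidA}(4) is phrased for subbinoids, so I must record that its proof applies verbatim to the submonoid $\langle f\rangle$, which must \emph{not} be enlarged to contain $\infty$, as inverting $T^{\infty}=0$ would collapse the localization to the zero ring.
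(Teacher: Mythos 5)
Your proposal is correct and follows essentially the same route as the paper: both reduce the statement to the algebra isomorphisms $K[M/\Ical]\cong K[M]/K[\Ical]$ and $K[M_{f}]\cong K[M]_{T^{f}}$ from Corollary \ref{CorPropBinoidA}(3)\&(4) and then invoke the standard description of $K\minSpec$ for quotients and one-element localizations. Your closing remark about not enlarging $\langle f\rangle$ to a subbinoid (so as not to invert $T^{\infty}=0$) is a sensible precaution that the paper glosses over but does not affect the argument.
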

\begin {proof}
Both statements follow from the general fact that for a $K\mina$algebra homomorphism $\phi:A\rto A^{\prime}$ the induced map $\phi^{\ast}:K\minSpec A^{\prime}\rto K\minSpec A$, $\psi\mto\phi\psi$, is continuous so that for an ideal $\aideal$ in $A$ one has $(\phi^{\ast})^{-1}(\opV_{K}(\aideal))=\opV_{K}(\phi(\aideal)A^{\prime})$, where $\phi(\aideal)A^{\prime}$ is the extended ideal of $\aideal$ in $A^{\prime}$ via $\phi$, cf.\ \cite[Proposition 2.B.15]{PatilStorch}. Applying this to the $K\mina$algebra homomorphisms
$$K[M]\Rto K[M/\Ical]\cong K[M]/K[\Ical]\quad\text{and}\quad K[M]\Rto K[M_{f}]\cong K[M]_{\{T^{nf}\mid n\in\N\}}\komma$$
cf.\ Corollary \ref{CorPropBinoidA}(3)\&(4), proves the proposition.
\end {proof}

\begin {Remark} \label{RemAffineEmbeddingKspec}
If $x_{1}\kpkt x_{n}$ generate the binoid $M$, the canonical binoid epimorphism $(\N^{n})^{\infty}\rto M$, $e_{i}\mto x_{i}$ with $i\in\{1\kpkt n\}$, induces a surjective $K\mina$algebra homomorphism 
$$K[X_{1}\kpkt X_{n}]\Rto K[M]$$
with $X_{i}\mto T^{x_{i}}$, $i\in\{1\kpkt n\}$. Hence, there is an embedding 
$$K\minspec M\Rto\A^{n}(K)$$
with $\varphi\mto(\varphi(x_{1})\kpkt\varphi(x_{n}))$, which shows that the topology on $K\minspec M$ can also be given as the subspace topology of $\A^{n}(K)$ (this is also true for the natural topology if $K=\R$ or $\C$). In particular,
$$K\minspec\free_{n}\,\,\cong\,\,\A^{n}(K)\pkt$$

Under the above embedding $K\minspec M\embto\A^{n}(K)$, the subset $\opV_{K}((F_{i})_{i\in I})\subseteq K\minspec M$, $F_{i}\in K[M]$, $i\in I$, can be identified with
$$\{a\in\A^{n}(K)\mid F_{i}(a)=0\text{ for all }i\in I\}\komma$$
and the characteristic points $\alpha_{\Pcal}:M\rto\{1,0\}\subseteq(K,\cdot,1,0)$ for $\Pcal\in\spec M$, which are independent of $K$, are given by the $0\,$-$1\mina$points 
$$(\alpha_{\Pcal}(x_{1})\kpkt\alpha_{\Pcal}(x_{n}))\in\A^{n}(K)\pkt$$ 
For instance, if $M$ is positive, then $\alpha_{M\Uplus}$ corresponds to $(0\kpkt 0)\in\A^{n}(K)$. Moreover, one has $\Pcal\subseteq\Pcal^{\prime}$ for two prime ideals $\Pcal$ and $\Pcal^{\prime}$ if and only if $\alpha_{\Pcal}(x_{i})\ge\alpha_{\Pcal^{\prime}}(x_{i})$ for all $i\in\{1\kpkt n\}$.
\end {Remark}

\begin {Example} \label{ExpKpoints1}
The pictures below visualize the $\R\mina$spectra of three different commutative binoids in $\A^{2}(\R)$. The marked points are the characteristic points ($\spec M$ as a set) sitting inside the $\R\mina$spectra.
\begin {center}
\begin {pspicture} (-1.5,-2.5)(2.5,2)
\qdisk (0.6,0.6){1.75pt}
\psline [linewidth=0.5 pt, linestyle=dotted] (0,-1.5)(0,1.5)
\psline [linewidth=0.5 pt, linestyle=dotted] (-1.5,0)(1.5,0)
\pcarc [arcangleA=350, arcangleB=335, linewidth=.5pt] (0.2,1.5)(0.6,0.6)
\pcarc [arcangleA=335, arcangleB=350, linewidth=.5pt] (0.6,0.6)(1.5,0.2)
\pcarc [arcangleA=10, arcangleB=25, linewidth=.5pt] (-1.5,-0.2)(-0.6,-0.6)
\pcarc [arcangleA=25, arcangleB=10, linewidth=.5pt]  (-0.6,-0.6)(-0.2,-1.5)
\uput [0] (-2.15,-2.2){\small{$\R\minspec\free(x,y)/(x+y=0)$}}
\end {pspicture}
\quad\quad\quad
\begin {pspicture} (-1.5,-2.5)(2.5,2)
\qdisk (0,0){1.75pt}\qdisk (0.6,0.6){1.75pt}
\psline [linewidth=0.5 pt, linestyle=dotted] (0,-1.5)(0,1.5)
\psline [linewidth=0.5 pt] (-1.5,0)(1.5,0)
\pcarc [arcangleA=60, arcangleB=30, linewidth=.5pt] (0,0)(0.6,0.6)
\pcarc [arcangleA=5, arcangleB=5, linewidth=.5pt] (0.6,0.6)(1.5,0.75)
\pcarc [arcangleA=300, arcangleB=330, linewidth=.5pt] (0,0)(0.6,-0.6)
\pcarc [arcangleA=355, arcangleB=355, linewidth=.5pt] (0.6,-0.6)(1.5,-0.75)
\uput [0] (-2.2,-2.2){\small{$\R\minspec\free(x,y)/(x+y=3y)$}}
\end {pspicture}
\quad\quad\quad
\begin {pspicture} (-1.5,-2.5)(2.5,2)
\qdisk (0,0){1.75pt}\qdisk (0,0.6){1.75pt}\qdisk (0.6,0){1.75pt}\qdisk (0.6,0.6){1.75pt}
\psline [linewidth=0.5 pt] (0,-1.5)(0,1.5)
\psline [linewidth=0.5 pt] (-1.5,0)(1.5,0)
\pcarc [arcangleA=350, arcangleB=335, linewidth=.5pt] (0.2,1.5)(0.6,0.6)
\pcarc [arcangleA=335, arcangleB=350, linewidth=.5pt] (0.6,0.6)(1.5,0.2)
\pcarc [arcangleA=10, arcangleB=20, linewidth=.5pt] (0.2,-1.5)(0.6,-0.6)
\pcarc [arcangleA=20, arcangleB=10, linewidth=.5pt] (0.6,-0.6)(1.5,-0.2)
\pcarc [arcangleA=10, arcangleB=20, linewidth=.5pt] (-1.5,-0.2)(-0.6,-0.6)
\pcarc [arcangleA=20, arcangleB=10, linewidth=.5pt]  (-0.6,-0.6)(-0.2,-1.5)
\pcarc [arcangleA=350, arcangleB=335, linewidth=.5pt] (-1.5,0.2)(-0.6,0.6)
\pcarc [arcangleA=335, arcangleB=350, linewidth=.5pt] (-0.6,0.6)(-0.2,1.5)
\uput [0] (-2.75,-2.2){\small{$\R\minspec\free(x,y)/(2(x+y)=4(x+y))$}}
\end {pspicture}
\end {center}
\end{Example}

Note that the $\C\mina$spectrum of the binoid $\free(x,y)/(x+y=0)$ is irreducible in the Zariski and in the complex topology, whereas the $\R\mina$spectrum is not even connected in the real topology as shown in the example above. This is a good example for the following result. 

\begin {Lemma} \label{LemFgTfRalgclosedIrred}
Let $M$ be a torsion-free regular finitely generated binoid and $K$ algebraically closed. Then $K\minspec M$ is irreducible.
\end {Lemma}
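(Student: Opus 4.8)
The plan is to transport the statement to the coordinate ring $K[M]$ and then invoke the classical dictionary between irreducibility of an affine algebraic set and primeness of its defining ideal. By Proposition \ref{PropUnivPropBinoidA} there is a bijection $K\minspec M\cong K\minSpec K[M]$, and since the topology on $K\minspec M$ was defined to be exactly the one coming from $K\minSpec K[M]$, this bijection is a homeomorphism. Hence it suffices to prove that $K\minSpec K[M]$ is irreducible.

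First I would record that $K[M]$ is a finitely generated $K\mina$algebra. If $x_{1}\kpkt x_{n}$ generate $M$, then the canonical epimorphism $(\N^{n})^{\infty}\rto M$ induces a surjective $K\mina$algebra homomorphism $K[X_{1}\kpkt X_{n}]\rto K[M]$ with $X_{i}\mto T^{x_{i}}$, so that $K[M]\cong K[X_{1}\kpkt X_{n}]/\Iideal$ for $\Iideal:=\ker$, cf.\ Remark \ref{RemAffineEmbeddingKspec}. Under the embedding $K\minspec M\embto\A^{n}(K)$ described there, the space $K\minSpec K[M]$ is identified with the affine algebraic set $\opV_{K}(\Iideal)\subseteq\A^{n}(K)$ carrying the subspace topology. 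Next I would bring in the hypotheses on $M$: since $K$ is a field, hence an integral domain, and $M$ is regular and torsion-free, Theorem \ref{ThBinoidADomain} yields that $K[M]$ is an integral domain. Consequently $\Iideal$ is a prime ideal of $K[X_{1}\kpkt X_{n}]$, and in particular $\Iideal$ is radical, so $\sqrt{\Iideal}=\Iideal$.

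Finally, because $K$ is algebraically closed, the Nullstellensatz provides the inclusion-reversing bijection between radical ideals of $K[X_{1}\kpkt X_{n}]$ and affine algebraic sets in $\A^{n}(K)$, under which an affine algebraic set is irreducible precisely when the associated radical ideal is prime (cf.\ \cite{PatilStorch}). As $\Iideal$ is prime, the set $\opV_{K}(\Iideal)$ is irreducible, and therefore so is $K\minspec M$. The only point requiring care is the bookkeeping of the two topologies involved, namely the abstract Zariski topology on $K\minSpec K[M]$ versus the subspace topology inherited from $\A^{n}(K)$; these agree by the descriptions of the open and closed sets in Proposition \ref{PropKspecIdealLocalization} and Remark \ref{RemAffineEmbeddingKspec}. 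Once that identification is in place, the argument is nothing more than the standard domain/irreducibility correspondence, so I expect no substantial obstacle beyond this routine verification.
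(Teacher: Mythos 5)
Your proof is correct and follows essentially the same route as the paper: the key input in both is Theorem \ref{ThBinoidADomain}, which makes $K[M]$ a domain, after which algebraic closedness of $K$ is used to transfer irreducibility to $K\minspec M\cong K\minSpec K[M]$. The only (cosmetic) difference is that the paper phrases the last step as ``$\Spec K[M]$ is irreducible and $\Spm K[M]$ is dense in it,'' whereas you invoke the Nullstellensatz dictionary between prime ideals and irreducible affine algebraic sets -- both are the same classical fact in different clothing.
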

\begin{proof}
By Theorem \ref{ThBinoidADomain}, $K[M]$ is a domain, so $(0)\in\Spec K[M]$ is the only minimal prime ideal. Hence, $\Spec K[M]$ is irreducible (this is true for all fields) by a result on $\Spec K[M]$ analogous to Corollary \ref{CorIrredCompMinPrime}. See for instance \cite[Corollary 3.A.14]{PatilStorch}. Since $K$ is algebraically closed, $K\minspec M\cong K\minSpec K[M]$ coincides with the set of all maximal ideals of $K[M]$, denoted by $\Spm K[M]$\nomenclature[Spec2]{$\Spm R$}{set of all maximal ideals in the ring $R$ (maximal spectrum of $R$)}, cf.\ \cite[Theorem 2.A.2]{PatilStorch}. This implies that $\Spm K[M]$ is a dense open subset of $\Spec K[M]$, cf.\ \cite[Theorem 2.B.12 and Exercise 3.A.20]{PatilStorch}. Therefore, $\Spm K[M]=K\minspec M$ is irreducible as well.
\end{proof}

By Remark \ref{RemMonoidDecomposition}, the semigroup $N\minspec M$ decomposes into subsemigroups that are monoids. This decomposition transfers to the topological situation (if $N$ is a field).

\begin {Proposition}\label{PropUnionCanComp}
Let $M$ be a binoid. Then
$$K\minspec M\,\,=\bigcup_{\Pcal\in\spec M}K\minspec(M/\Pcal)_{\opcan}\komma$$
where $K\minspec(M/\Pcal)_{\opcan}$ are closed subsets of $K\minspec M$. In particular, if $M$ is cancellative, then
$$K\minspec M\,\,=\bigcup_{\Pcal\in\min M}\opV_{K}(K[\Pcal])\pkt$$
\end {Proposition}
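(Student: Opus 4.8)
The plan is to realize each $K\minspec(M/\Pcal)_{\opcan}$ as the image of a canonical continuous injection into $K\minspec M$ and then to show that these images cover $K\minspec M$. For $\Pcal\in\spec M$ I would consider the composition $\pi_{\Pcal}:M\rto M/\Pcal\rto(M/\Pcal)_{\opcan}$ of the Rees projection with the cancellation projection; this is a binoid epimorphism, so by Proposition \ref{PropIndHomNspec} (with $L=K$) it induces an injective semigroup homomorphism $\pi_{\Pcal}^{\ast}:K\minspec(M/\Pcal)_{\opcan}\rto K\minspec M$, $\psi\mto\psi\pi_{\Pcal}$, which is continuous by Proposition \ref{PropIndHomeoKspec}. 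Using the identification $K\minspec(-)\cong K\minSpec K[-]$ of Proposition \ref{PropUnivPropBinoidA}, and noting that $\pi_{\Pcal}$ being an epimorphism forces $K[\pi_{\Pcal}]:K[M]\rto K[(M/\Pcal)_{\opcan}]$ to be surjective (Remark \ref{RemUniPropSpecial}(2)), the induced map on $K\mina$spectra is a closed embedding with image $\opV_{K}(\aideal_{\Pcal})$ for $\aideal_{\Pcal}:=\ker K[\pi_{\Pcal}]$, exactly as in the proof of Proposition \ref{PropKspecIdealLocalization}. Thus each $K\minspec(M/\Pcal)_{\opcan}$ is identified with a closed subset of $K\minspec M$.

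Next I would prove the covering $K\minspec M=\bigcup_{\Pcal\in\spec M}\im\pi_{\Pcal}^{\ast}$. The inclusion $\supseteq$ is immediate. For $\subseteq$, take $\varphi\in K\minspec M$. Since $K$ is a field, its underlying multiplicative binoid is a binoid group, hence regular, and by Lemma \ref{LemFactoriazationKer} the quotient $M/\ker\varphi$ is integral, so $\Pcal:=\ker\varphi$ is a prime ideal. Now Proposition \ref{PropCancellation}(2), applied with the regular binoid $C=K$, gives a factorization $\varphi=\tilde{\varphi}\pi_{\Pcal}$ with $\tilde{\varphi}:(M/\Pcal)_{\opcan}\rto K$, that is $\varphi=\pi_{\Pcal}^{\ast}(\tilde{\varphi})\in\im\pi_{\Pcal}^{\ast}$. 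Combining the two steps yields the first equality together with the asserted closedness.

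For the supplement, assume $M$ is cancellative. Then for every $\Pcal\in\spec M$ the quotient $M/\Pcal$ is integral (it is prime) and cancellative (Lemma \ref{LemQuotientCancPosRepF}), hence regular. I would check that a regular binoid equals its own cancellation: the congruence $\sim_{\opcan,\opint(M/\Pcal)}$ is trivial, since $[a]+s=[b]+s=\infty$ with $s$ integral forces $[a]=[b]=\infty$, while $[a]+s=[b]+s\neq\infty$ forces $[a]=[b]$ by cancellativity; thus $(M/\Pcal)_{\opcan}=M/\Pcal$. Consequently $K\minspec(M/\Pcal)_{\opcan}=K\minspec(M/\Pcal)\cong\opV_{K}(K[\Pcal])$ by Proposition \ref{PropKspecIdealLocalization}(1), so the first equality reads $K\minspec M=\bigcup_{\Pcal\in\spec M}\opV_{K}(K[\Pcal])$. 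Finally I would reduce the index set to $\min M$: if $\Qcal\subseteq\Pcal$ then $K[\Qcal]\subseteq K[\Pcal]$, whence $\opV_{K}(K[\Pcal])\subseteq\opV_{K}(K[\Qcal])$ by Lemma \ref{LemVKProperties}(6); since every prime ideal contains a minimal prime ideal, each $\opV_{K}(K[\Pcal])$ lies in some $\opV_{K}(K[\Qcal])$ with $\Qcal\in\min M$, so the union over $\spec M$ collapses to the union over $\min M$.

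The main obstacle I anticipate is the bookkeeping in the first step: one must verify that $\pi_{\Pcal}$ really has kernel exactly $\Pcal$, so that the cancellation projection collapses nothing outside $\Pcal$, and that the identification $K\minspec(M/\Pcal)_{\opcan}\cong\opV_{K}(\aideal_{\Pcal})$ is a homeomorphism onto a \emph{closed} subset rather than merely a continuous bijection; this rests on the surjectivity of $K[\pi_{\Pcal}]$ and the standard behaviour of $K\minSpec$ under quotient maps. The remaining arguments are a routine assembly of the universal properties of the Rees quotient, the cancellation, and the binoid algebra functor.
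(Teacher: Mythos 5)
Your proposal is correct and follows essentially the same route as the paper's proof: the closed embeddings induced by the epimorphisms $M\rto(M/\Pcal)_{\opcan}$, the factorization of an arbitrary $K\mina$point through $(M/\ker\varphi)_{\opcan}$ via Proposition \ref{PropCancellation}(2) (using that $K$ is a regular binoid), and the supplement via Proposition \ref{PropKspecIdealLocalization}(1) together with the inclusion-reversing behaviour of $\opV_{K}$. The extra verifications you flag (surjectivity of $K[\pi_{\Pcal}]$, and that a regular binoid equals its own cancellation) are sound and merely make explicit what the paper leaves implicit.
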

\begin {proof}
On the one hand, we have for every $\Pcal\in\spec M$ a closed embedding
$$K\minspec(M/\Pcal)_{\opcan}\Rto K\minspec M$$
induced by the surjection $\pi_{\Pcal}:M\rto(M/\Pcal)_{\opcan}$, cf.\ Proposition \ref{PropIndHomNspec}. On the other hand, every $K\mina$point $\varphi:M\rto K$ of $M$ factors through $(M/\ker\varphi)_{\opcan}$ by Proposition \ref {PropCancellation}(2), which proves 
$$K\minspec M\,\,=\bigcup_{\Pcal\in\spec M}K\minspec(M/\Pcal)_{\opcan}\pkt$$
The supplement follows from Proposition \ref{PropKspecIdealLocalization}(1) and the fact that $\Pcal\subseteq\Qcal$ for $\Pcal,\Qcal\in\spec M$ implies that $K[\Pcal]\subseteq K[\Qcal]$, and hence $\opV_{K}(K[\Qcal])\subseteq\opV_{K}(K[\Pcal])$ by Lemma \ref {LemVKProperties}(6).
\end {proof}

\begin {Definition}
Let $M$ be a binoid. We call $K\minspec(M/\Pcal)_{\opcan}$, $\Pcal\in\spec M$, the \gesperrt{cancellative $K\mina$parts}, \index{cancellative!-- $K\mina$part}and those that are maximal with respect to set inclusion are called the \gesperrt{cancellative $K\mina$components}\index{component!cancellative $K\mina$--}\index{cancellative!-- $K\mina$component}.
\end {Definition}

Cancellative components need not be irreducible though the name suggests so, cf.\  Example \ref{ExpKpoints2}.

\begin {Definition}
Let $M$ be a binoid, $\Pcal\in\spec M$, and $\varphi_{\Pcal}:=\iota\pi$ the canonical binoid homomorphism 
$$M\stackrel{\pi}{\Rto} M/\Pcal\stackrel{\iota}{\Rto}\diff(M/\Pcal)\komma$$
where $\pi$ is the canonical projection onto the integral binoid $M/\Pcal$ and $\iota$ the canonical binoid homomorphism to the difference group $(M/\Pcal)_{(M/\Pcal)\opkt}\not=\zero$. With this notation, two elements $f,g\in M$ are called \gesperrt{functionally equivalent} \index{functionally equivalent}\index{element!functionally equivalent --s}if $\varphi_{\Pcal}(f)=\varphi_{\Pcal}(g)$ for every $\Pcal\in\spec M$. The relation $\sim_{\opfe}\!$\nomenclature[ACongruenceFE]{$\sim_{\opfe}$}{congruence on a binoid} on $M$ given by $f\sim_{\opfe}\!g$ if $f$ and $g$ are functionally equivalent defines a congruence on $M$.
\end {Definition}

The name stems from the characterization of functionally equivalent elements in Proposition \ref{PropFuncEquiv}(3) below.

\begin {Example}
Nilpotent elements are functionally equivalent to $\infty$ because $nf=\infty$ implies that $n\varphi_{\Pcal}(f)=\varphi_{\Pcal}(nf)=\infty$ for every $\Pcal\in\spec M$. Hence, $\varphi_{\Pcal}(f)=\infty$ since $\diff(M/\Pcal)$ is a binoid group. In particular, the class $[\infty]$ in $M/\sim_{\opfe}$ consists of all nilpotent elements by Corollary \ref{CorMinPrimeIntersection}.
\end {Example}

\begin {Proposition} \label{PropFuncEquiv}
Let $M$ be a finitely generated binoid and $f,g\in M$. The following statements are equivalent.
\begin {ListeTheorem}
\item $f$ and $g$ are functionally equivalent.
\item For every commutative binoid group $G$ and every $G\mina$point $\varphi:M\rto G$, one has $\varphi(f)=\varphi(g)$.
\item For every field $K$ and every $K\mina$point $\varphi:M\rto K$, one has $\varphi(f)=\varphi(g)$.
\item For every (some) algebraically closed field $K$ with $\opchar K=0$ and every $K\mina$point $\varphi:M\rto K$, one has $\varphi(f)=\varphi(g)$.
\end {ListeTheorem}
\end {Proposition}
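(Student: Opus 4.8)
The plan is to prove the chain $(1)\Rightarrow(2)\Rightarrow(3)\Rightarrow(4)\Rightarrow(1)$, reading $(4)$ first in its ``for every'' form on the way down and closing the loop from its ``for some'' form; since algebraically closed fields of characteristic $0$ exist (e.g.\ $\C$), the two readings of $(4)$ coincide automatically. The implications $(2)\Rightarrow(3)$ and $(3)\Rightarrow(4)$ are essentially free once one observes that a field $K=(K,\cdot,1,0)$ is a commutative binoid group, because $K\opkt=K\setminus\{0\}=K\okreuz$, and that a $K\mina$point of $M$ is by definition a binoid homomorphism $M\rto K$. Hence the condition over all commutative binoid groups specializes to all fields, and the condition over all fields specializes to the algebraically closed ones of characteristic $0$.

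For $(1)\Rightarrow(2)$, let $\varphi\colon M\rto G$ be a $G\mina$point into a commutative binoid group and set $\Pcal:=\ker\varphi$. Being a binoid group, $G$ is integral, so $\zero$ is prime in $G$ and $\Pcal\in\spec M$ by Corollary \ref{CorPrimeIdealHom}. By Corollary \ref{CorUnivPropDiff}, $\varphi$ factors as $M\stackrel{\beta}{\rto}\diff((M/\Pcal)_{\opcan})\stackrel{\psi}{\rto}G$ through the canonical composite $\beta$. The only point requiring care is the identification of $\beta$ with $\varphi_{\Pcal}$: since localizing $M/\Pcal$ at $(M/\Pcal)\opkt$ already enforces the cancellation congruence, there is a canonical isomorphism $\diff(M/\Pcal)\cong\diff((M/\Pcal)_{\opcan})$ compatible with the two canonical maps out of $M$. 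Granting this, $\varphi_{\Pcal}(f)=\varphi_{\Pcal}(g)$ yields $\beta(f)=\beta(g)$, whence $\varphi(f)=\psi(\beta(f))=\psi(\beta(g))=\varphi(g)$.

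The crux is $(4)\Rightarrow(1)$. Fix $\Pcal\in\spec M$ and suppose, for contradiction, that $\varphi_{\Pcal}(f)\neq\varphi_{\Pcal}(g)$ in the binoid group $G:=\diff(M/\Pcal)$. As $M$ is finitely generated, so is $M/\Pcal$ (Lemma \ref{LemQuotientCancPosRepF}), and hence so is $G$ (Corollary \ref{CorFGlocalization}); thus $H:=G\okreuz=G\opkt$ is a finitely generated abelian group and $G=H^{\infty}$. The plan is to show that the $K\mina$points of $G$ separate its elements, so that some $\psi\in K\minspec G$ has $\psi(\varphi_{\Pcal}(f))\neq\psi(\varphi_{\Pcal}(g))$; then $\psi\circ\varphi_{\Pcal}\colon M\rto K$ is a $K\mina$point of $M$ not identifying $f$ and $g$, contradicting $(4)$. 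Now a $K\mina$point of $H^{\infty}$ sends $\infty\mapsto 0$ and restricts to a character $H\rto K^{\times}$, since units map to units by Lemma \ref{LemHomProperties}(3); so it suffices to separate $\infty$ from every other element (automatic, as non-$\infty$ elements land in $K^{\times}$) and to separate distinct elements of $H$ by characters $H\rto K^{\times}$. Writing $H\cong\Z^{s}\oplus T$ with $T$ finite, this is exactly where $\opchar K=0$ and $K$ algebraically closed enter: $K^{\times}$ contains an element of infinite order (e.g.\ $2$) to separate the free part, and a primitive $n$-th root of unity for every $n$ to separate the torsion summands of $T$. This character-theoretic separation is the main obstacle; once it is in place, the contradiction closes the cycle.

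Finally, I would record why the hypotheses on $K$ are not cosmetic: in characteristic $p>0$ torsion of order divisible by $p$ cannot be detected in $K^{\times}$, and over a non-algebraically-closed field the requisite roots of unity may be absent — compare the finiteness phenomena collected in Remark \ref{RemMonoidDecomposition}. Because the separation argument uses only a single such field at a time, the ``for every'' and ``for some'' readings of $(4)$ become equivalent, completing the proof.
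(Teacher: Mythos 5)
Your proposal is correct and follows essentially the same route as the paper: the trivial specializations $(2)\Rightarrow(3)\Rightarrow(4)$, the factorization of a $G\mina$point through $\diff(M/\Pcal)$ via Corollary \ref{CorUnivPropDiff} for $(1)\Rightarrow(2)$, and for $(4)\Rightarrow(1)$ the structure theorem for $\diff(M/\Pcal)\opkt$ together with embeddings of the cyclic factors into $K\okreuz$ (which is exactly where $\opchar K=0$ and algebraic closedness enter). The only cosmetic deviations are that you route $(1)\Rightarrow(2)$ through $(M/\Pcal)_{\opcan}$ and then invoke the (correct) identification $\diff((M/\Pcal)_{\opcan})\cong\diff(M/\Pcal)$, where the paper applies the first part of Corollary \ref{CorUnivPropDiff} directly to the integral quotient, and that you phrase the separation step as general character separation of a finitely generated abelian group rather than comparing a single coordinate as the paper does.
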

\begin {proof}
The implications $(2)\Rightarrow(3)\Rightarrow(4)$ are trivial. For $(1)\Rightarrow(2)$ assume that $f$ and $g$ are functionally equivalent and that $\varphi:M\rto G$ is a binoid homomorphism to a commutative binoid group $G$. By Lemma \ref{LemFactoriazationKer}, $\varphi$ factors through $M/\ker\varphi$, where $\ker\varphi=:\Pcal$ is a prime ideal because $G$ is a binoid group. In particular, we have the following diagram
$$\xymatrix{
M\ar@/^/[rrd]^{\varphi}\ar[rd]^{\pi}\ar@/_/[ddr]_{\varphi_{\Pcal}}&&\\
&M/\Pcal\ar[r]^{\!\!\tilde{\varphi}}\ar[d]_{\iota}&G\komma\\
&\diff(M/\Pcal)&}$$
where $\ker\tilde{\varphi}=\zero$. By Corollary \ref{CorUnivPropDiff}, there is a unique binoid homomorphism $\phi:\diff(M/\Pcal)\rto G$ with $\phi\iota=\tilde{\varphi}$, hence
$$\varphi(f)\,=\,\tilde{\varphi}(\pi(f))\,=\,\phi(\iota\pi(f))\,=\,\phi(\varphi_{\Pcal}(f))\,=\,\phi(\varphi_{\Pcal}(g))\,=\,\phi(\iota\pi(g))\,=\,\tilde{\varphi}(\pi(g))\,=\,\varphi(g)\pkt$$
For $(4)\Rightarrow(1)$ take an algebraic closed field $K$ of characteristic $0$. Assuming that $f$ and $g$ are not functionally equivalent, we find a prime ideal $\Pcal\in\spec M$ with $\varphi_{\Pcal}(f)\not=\varphi_{\Pcal}(g)$ in 
$$\diff(M/\Pcal)\,\,\cong\,\,\Z^{r}\times\Z/p_{1}^{r_{1}}\Z\timespkt\Z/p_{s}^{r_{s}}\Z\komma$$ 
where $p_{i}$ is a prime number and $r_{i}\ge 1$, $i\in\{1\kpkt s\}$. Thus, for at least one $k\in\{1\kpkt r+s\}$ the $k$th entries of $\varphi_{\Pcal}(f)$ and $\varphi_{\Pcal}(g)$ in $\Z^{r}\times\Z/p_{1}^{r_{1}}\Z\timespkt\Z/p_{s}^{r_{s}}\Z$, denoted by $(\varphi_{\Pcal})_{k}(f) $ and $(\varphi_{\Pcal})_{k}(g)$, do not coincide. Since $K$ is algebraically closed of characteristic $0$, there are injective group homomorphisms 
$$\iota_{0}:\Z\Rto K\okreuz\quad\text{and}\quad\iota_{i}:\Z/p_{i}^{r_{i}}\Z\Rto K\okreuz\komma$$
$i\in\{1\kpkt s\}$. Hence, $\iota_{k}\circ(\varphi_{\Pcal})_{k}$ is a $K\mina$point which separates $f$ and $g$.
\end {proof}

The condition on the characteristic of the field $K$ in (4) of the preceding result is necessary because in $G=(\Z/p\Z)^{\infty}$, $p$ prime, any two different elements are not functionally equivalent since $\id_{G}$ is a $G\mina$point, but the only $K\mina$point of $G$ into an (algebraically closed) field $K$ of characteristic $p$ is $\chi_{G\opkt}$.

\begin {Remark} \label{RemAsymEquiv=>FuncEquiv}
In monoid theory, a monoid homomorphism $\varphi:M\rto K$ is called a \gesperrt{$K\mina$character} \index{character!$K\mina$--}of $M$. In case $K=\C$ or $K=S^{1}=\{z\in\C\mid |z|=1\}\subseteq\C$, such a homomorphism is simply called a \gesperrt{character}, \index{character}\index{monoid!c@($K$-) character of a --}\index{K@$K\mina$character|SEE{monoid}}and the set of all characters is a well-studied object of interest, see for instance \cite[Chapter IV.2]{GrilletCS} or \cite[Chapter 5.5]{CliffordPreston}. A fundamental result, cf.\ \cite[Theorem 5.58]{CliffordPreston}, says that the characters of a commutative group $G$ separates its elements; that is, for any two elements $a,b\in G$ there is a character $\varphi$ such that $\varphi(a)\not=\varphi(b)$. Similarly, cf.\ \cite[Theorem 5.59]{CliffordPreston}, the characters of a commutative monoid $M$ separates its elements if and only if $M$ is \gesperrt{separative}; \index{monoid!separative --}that is, $2a=a+b=2b$ for $a,b\in M$ implies $a=b$. By \cite[Theorem 9.13]{Gilmer}, a commutative monoid $M$ is separative if and only if $M$ is free of asymptotic torsion. In this context, the preceding proposition suits the general monoid theory since asymptotically equivalent elements of a binoid are functionally equivalent (i.e.\ $\sim_{\opfe}\,\,\le\,\,\sim_{\opae}$). Indeed, $f\sim_{\opae}g$ is equivalent to $nf=ng$ and $(n+1)f=(n+1)g$ for some $n\ge1$ by Lemma \ref{LemAsymEquiv}. If $\varphi:M\rto K$ is a $K\mina$point,
then either $\varphi(f)=\varphi(g)=0$ or $\varphi(f),\varphi(g)\in K\okreuz$ with
$$\varphi(f)\,\,=\,\,\frac{\varphi(f)^{n+1}}{\varphi(f)^{n}}\,\,=\,\,\frac{\varphi(g)^{n+1}}{\varphi(g)^{n}}\,\,=\,\,\varphi(g)\pkt$$
Hence, $f\sim_{\opfe}g$ by Proposition \ref{PropFuncEquiv}.
\end {Remark}

We close this section with another characterization of functionally equivalent elements.

\begin {Lemma}
Two elements $f,g\in M$ are functionally equivalent if and only if $X^{f}-X^{g}$ is nilpotent in $K[M]$ for every field $K$.
\end {Lemma}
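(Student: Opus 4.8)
The plan is to reduce the whole statement to Proposition \ref{PropFuncEquiv}, which characterizes functional equivalence of $f$ and $g$ by the condition that $\varphi(f)=\varphi(g)$ for every field $K$ and every $K\mina$point $\varphi\colon M\rto K$ (this is the equivalence of (1) and (3) there, and it is exactly where the finite generation of $M$ is used, namely through the implication $(3)\Rarrow(1)$). The bridge to the algebra is Proposition \ref{PropUnivPropBinoidA}: a $K\mina$point $\varphi$ of $M$ corresponds to the $K\mina$algebra homomorphism $\tilde{\varphi}\colon K[M]\rto K$ with $\tilde{\varphi}(\sum_{a}r_{a}T^{a})=\sum_{a}r_{a}\varphi(a)$, so that $\tilde{\varphi}(T^{f}-T^{g})=\varphi(f)-\varphi(g)$.

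For the implication ``$T^{f}-T^{g}$ nilpotent in $K[M]$ for every field $K$ $\Rarrow$ $f$ and $g$ functionally equivalent'' I would fix a field $K$ and a $K\mina$point $\varphi$, extend it to $\tilde{\varphi}$ as above, and observe that $\tilde{\varphi}(T^{f}-T^{g})=\varphi(f)-\varphi(g)$ is then a nilpotent element of $K$; since $K$ is a field, hence reduced, this forces $\varphi(f)=\varphi(g)$. As $K$ and $\varphi$ are arbitrary, condition (3) of Proposition \ref{PropFuncEquiv} holds, whence $f$ and $g$ are functionally equivalent.

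For the converse I would assume $f\sim_{\opfe}g$, fix an arbitrary field $K$, and aim to show $T^{f}-T^{g}\in\nil(K[M])$. Since the nilradical of a commutative ring is the intersection of all its prime ideals, it suffices to prove $T^{f}-T^{g}\in\pideal$ for each $\pideal\in\Spec K[M]$. Given such a $\pideal$, the residue domain $D:=K[M]/\pideal$ embeds into its field of fractions $L$, and the composite $M\rto K[M]\rto D\embto L$ is a binoid homomorphism into the multiplicative binoid of the field $L$, i.e.\ an $L\mina$point $\psi$ of $M$ whose value $\psi(a)$ is the image of $T^{a}$ in $L$. Functional equivalence together with Proposition \ref{PropFuncEquiv} (applied to the field $L$) yields $\psi(f)=\psi(g)$, so the image of $T^{f}-T^{g}$ in $L$ vanishes; as $D\embto L$ is injective, $T^{f}-T^{g}\in\pideal$. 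Intersecting over all primes gives $T^{f}-T^{g}\in\nil(K[M])$.

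The only genuinely delicate point is this converse direction: translating ``all field-valued points agree'' into nilpotency requires membership in \emph{every} prime ideal, not merely in every $K\mina$rational point, and the device that makes Proposition \ref{PropFuncEquiv} applicable is to pass from $\pideal$ to the residue domain $K[M]/\pideal$ and then to its fraction field, producing a bona fide field-valued point. I would also verify the small compatibility that $M\rto K[M]/\pideal$ is indeed a binoid homomorphism, being the composite of the binoid embedding $\iota_{M}$ with the multiplicative monoid homomorphism underlying the ring projection (which sends $1$ to $1$ and $0$ to $0$). No estimates or further structure theory are needed beyond these citations.
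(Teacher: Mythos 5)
Your proof is correct and follows essentially the same route as the paper's: both directions rest on Proposition \ref{PropUnivPropBinoidA} to identify $K\mina$points of $M$ with $K\mina$algebra homomorphisms $K[M]\rto K$, and on Proposition \ref{PropFuncEquiv} to translate functional equivalence into agreement of all field-valued points. The only cosmetic difference lies in the converse: the paper argues contrapositively, localizing $K[M]$ at a non-nilpotent $X^{f}-X^{g}$ to manufacture a single separating field-valued point, whereas you run through all primes $\pideal\in\Spec K[M]$ and invoke $\nil(K[M])=\bigcap_{\pideal}\pideal$ --- the same standard fact in an equivalent guise.
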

\begin {proof}
Let $F:=X^{f}-X^{g}$ be nilpotent. If $\varphi:M\rto K$ is a $K\mina$point, there is by Proposition \ref{PropUnivPropBinoidA} a unique $K\mina$algebra homomorphism $\Phi:K[M]\rto K$ such that $\varphi=\Phi\iota$, where $\iota:M\rto K[M]$ is the canonical binoid homomorphism $a\mto X^{a}$, $a\in M$. Since $F$ is nilpotent and $K$ a field, $0=\Phi(F)=\Phi(X^{f})-\Phi(X^{g})$, which gives $\varphi(f)=\Phi(X^{f})=\Phi(X^{g})=\varphi(g)$. Conversely, if $F$ is not nilpotent in $K[M]$ for some field $K$, then $\{F^{n}\mid n\ge0\}$ is a multiplicative system in $K[M]$ such that $K[M]_{F}\not=0$. In particular, $\Spec K[M]_{F}\not=\emptyset$. Hence, there is a ring homomorphism $\Phi:K[M]_{F}\rto Q(K[M]_{F}/\pideal)=L$ into the quotient field of $K[M]_{F}/\pideal$, $\pideal\in\Spec K[M]_{F}$, with $\Phi(F)\not=0$. This is equivalent to $\Phi(X^{f})\not=\Phi(X^{g})$, which implies that
$$\varphi:M\stackrel{\iota}{\Rto}K[M]\stackrel{\iota_{F}}{\Rto} K[M]_{F}\stackrel{\Phi}{\Rto}L$$
is an $L\mina$point of $M$ with $\varphi(f)\not=\varphi(g)$.
\end {proof}

Any two different elements $f,g\in (\Z/p\Z)^{\infty}$, $p$ prime, are not functionally equivalent as remarked after Proposition \ref{PropFuncEquiv}. However, $X-1=X^{1}-X^{0}$ is nilpotent in $K[(\Z/p\Z)^{\infty}]\cong K[X]/(X^{p}-1)$ for every field $K$ with characteristic $p$ because $(X-1)^{p}=X^{p}-1$. This shows that $X^{f}-X^{g}$ being nilpotent needs to be checked for every field. In particular, the ring $K[(\Z/p\Z)^{\infty}$ is not reduced if $\opchar K=p$, whereas $(\Z/p\Z)^{\infty}$ is a reduced binoid. 

As another example consider the (reduced) binoid
$$M:=\free(a,b)/(2a=a+b=2b)\pkt$$
Since 
$$3a\,=\,a+2a\,=\,a+2b\,=\,(a+b)+b\,=\,2b+b\,=\,3b$$
the generators $a$ and $b$ are asymptotically equivalent, hence functionally equivalent by Remark \ref{RemAsymEquiv=>FuncEquiv}. Thus, $X^{a}-X^{b}$ is a nilpotent element  in $K[M]$  for every field $K$ by the preceding lemma.

\bigskip

\section {Connectedness properties of $K\mina$spectra} \label{SecConnectKspectra}
\markright {\ref{SecConnectKspectra} Connectedness properties of $K\mina$spectra}

\begin {Convention}
In this section, $K$ always denotes a field.
\end {Convention}

Recall that a nonempty topological space is said to be \gesperrt{connected} \index{connected}\index{space!connected --}if it is not the union of two disjoint nonempty open (resp.\ closed) sets, otherwise the space is said to be \gesperrt{disconnected}\index{disconnected}\index{space!disconnected --}.

We want to know which assumptions on a binoid $M$ ensure connectedness of $K\minspec M$. Of course, every idempotent $e$ in $M$ yields an idempotent in $K[M]$, namely $X^{e}$. Then, $K\minspec M$ is not connected since it is the disjoint union of the closed sets $\opV_{K}(X^{e})=\{\varphi\in K\minspec M\mid \varphi(e)=0\}$ and $\opV_{K}(X^{e}-1)=\{\varphi\in K\minspec M\mid \varphi(e)=1\}$, which are not empty since $K\minspec M/\langle e\rangle$ and $K\minspec M_{e}$ are not empty (this follows from the fact that both binoids, $M/\langle e\rangle$ and $M_{e}$, are not empty, and hence $\emptyset\not=\spec\subseteq K\minspec$). On the other hand, idempotents in $K[M]$ do not need to come from idempotents in $M$ as the subsequent example shows. 

At the end of this section we will show that in case of hypersurfaces, the non-existence of non-trivial combinatorial idempotents is equivalent to connectedness in the torsion-free situation, cf.\ Corollary \ref{CorIdempConnect}. First we state a criterion for connectedness and study hypersurfaces in general. 

\begin {Example}
The binoid $(\Z/2\Z)^{\infty}$ is not torsion-free and contains no idempotent elements, whereas its $K\mina$algebra $K[(\Z/2\Z)^{\infty}]= K[X]/(X^{2}-1)$ contains the idempotent $\frac{1}{2}(X+1)$ if $\opchar K\not=2$ since
$$\bigg(\frac{X+1}{2}\bigg)^{2}\,\,=\,\,\frac{X^{2}+2X+1}{4}\,\,=\,\,\frac{1+2X+1}{4}\,\,=\,\,\frac{X+1}{2}\pkt$$
In particular, if $\opchar K\not=2$, then $K\minspec(\Z/2\Z)^{\infty}$ is not connected.
\end {Example}

The following result is our main combinatorial criterion for connectedness.

\begin {Theorem}\label{ThConnectedness}
Let $M$ be a finitely generated torsion-free binoid and $K$ algebraically closed. The following statements are equivalent:
\begin {ListeTheorem}
\item $K\minspec M$ is connected.
\item For any two cancellative $K\mina$components $X$ and $Y$ of $M$, there is a sequence $X(1)\kpkt X(n)$ of cancellative $K\mina$components with $X(1)=X$ and $X(n)=Y$ such that $X(i)\cap X(i+1)$ contains a $K\mina$point, $i\in\{1\kpkt n-1\}$.
\item For any two cancellative $K\mina$components $X$ and $Y$ of $M$, there is a sequence $X(1)\kpkt X(n)$ of cancellative $K\mina$components with $X(1)=X$ and $X(n)=Y$ such that $X(i)\cap X(i+1)$ contains a characteristic point, $i\in\{1\kpkt n-1\}$.
\end {ListeTheorem}
\end {Theorem}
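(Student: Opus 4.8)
The plan is to realize $K\minspec M$ as a finite union of connected closed pieces and to reduce all three conditions to the connectivity of the associated intersection graph. First I would record the structural input from Proposition \ref{PropUnionCanComp}: since $M$ is finitely generated, $\spec M$ is finite by Proposition \ref{PropFgPrimes}, so there are only finitely many cancellative $K$-parts $K\minspec(M/\Pcal)_{\opcan}$, each of them closed in $K\minspec M$, and every part is contained in a cancellative $K$-component. Hence $K\minspec M = C_{1}\cup\cdots\cup C_{m}$, where $C_{1}\kpkt C_{m}$ are the finitely many cancellative $K$-components, each closed.

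The first substantial step is to show that each cancellative $K$-component is connected. For this I would first verify that $(M/\Pcal)_{\opcan}$ is torsion-free: the binoid $M/\Pcal$ is integral (as $\Pcal$ is prime) and torsion-free (by Lemma \ref{LemQuotientTorF} together with the fact that an integral binoid is reduced), and whenever $nx+s=ny+s$ with $s$ integral, adding $(n-1)s$ gives $n(x+s)=n(y+s)$, so $x+s=y+s$ by torsion-freeness, i.e.\ $[x]=[y]$ in $(M/\Pcal)_{\opcan}$. Since $(M/\Pcal)_{\opcan}$ is also regular and finitely generated, Lemma \ref{LemFgTfRalgclosedIrred} (here $K$ algebraically closed is used) shows that $K\minspec(M/\Pcal)_{\opcan}$ is irreducible, hence connected. (This is compatible with the general failure of irreducibility for cancellative components, which occurs only over non-closed fields.)

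With each $C_{i}$ connected and closed, I would invoke the standard topological fact that a finite union of closed connected subsets is connected if and only if its intersection graph, having the $C_{i}$ as vertices and an edge whenever $C_{i}\cap C_{j}\not=\emptyset$, is connected: a connected $C_{i}$ lies entirely on one side of any clopen partition, which forces adjacent components onto the same side, while conversely the connected components of the graph split the space. As every point of $K\minspec M$ is a $K$-point, the phrase ``$X(i)\cap X(i+1)$ contains a $K$-point'' is equivalent to $X(i)\cap X(i+1)\not=\emptyset$, and a chain of components with consecutive nonempty intersections is exactly a path in the graph. This establishes the equivalence of (1) and (2).

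Finally, $(3)\Rarrow(2)$ is immediate since a characteristic point is a $K$-point, so the crux is $(2)\Rarrow(3)$: upgrading a common $K$-point to a common characteristic point. Given $\varphi\in X\cap Y$ with $X=K\minspec(M/\Pcal)_{\opcan}$ and $Y=K\minspec(M/\Qcal)_{\opcan}$, I would set $\Rcal:=\ker\varphi\in\spec M$ and claim $\alpha_{\Rcal}\in X\cap Y$. Membership $\alpha_{\Rcal}\in X$ means that $\alpha_{\Rcal}$ factors through $\pi_{\Pcal}:M\rto(M/\Pcal)_{\opcan}$, i.e.\ $\sim_{\pi_{\Pcal}}\,\le\,\sim_{\alpha_{\Rcal}}$; so let $a\sim_{\pi_{\Pcal}}b$. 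Because $\varphi\in X$ we have $\sim_{\pi_{\Pcal}}\,\le\,\sim_{\varphi}$, hence $\varphi(a)=\varphi(b)$, and since $\alpha_{\Rcal}$ takes values in $\{0,1\}$ with $\alpha_{\Rcal}(c)=0\eq c\in\Rcal=\ker\varphi\eq\varphi(c)=0$, it follows that $\alpha_{\Rcal}(a)=\alpha_{\Rcal}(b)$; symmetrically $\alpha_{\Rcal}\in Y$. I expect this last step, passing from an arbitrary common point to a common \emph{boolean} point via $\alpha_{\ker\varphi}$, to be the main obstacle, alongside the torsion-freeness of the cancellation in the first step, which is precisely what makes the components connected in the algebraically closed setting.
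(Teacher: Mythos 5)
Your proof is correct, and while the equivalence $(1)\Leftrightarrow(2)$ follows essentially the paper's route (Proposition \ref{PropUnionCanComp} plus Lemma \ref{LemFgTfRalgclosedIrred} plus the standard gluing argument for finite unions of closed connected sets -- your explicit verification that $(M/\Pcal)_{\opcan}$ is torsion-free and regular, which the paper leaves implicit, is a welcome addition), your argument for $(2)\Rightarrow(3)$ is genuinely different from the paper's. The paper observes that a common $K\mina$point $\varphi\in X\cap Y$ yields, via Corollary \ref{CorCoProdSmashN}, an $M\mina$binoid homomorphism $(M/\Pcal)_{\opcan}\wedge_{M}(M/\Qcal)_{\opcan}\rto K$; hence this smash product is a nonzero binoid, so it admits a characteristic point, whose pullback to $M$ is the desired common boolean point. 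You instead take the concrete boolean point $\alpha_{\ker\varphi}$ attached to the given $\varphi$ and check directly that $\sim_{\pi_{\Pcal}}\,\le\,\sim_{\varphi}\,\le\,\sim_{\alpha_{\ker\varphi}}$ (the second inequality because $\alpha_{\ker\varphi}(c)$ depends only on whether $\varphi(c)=0$), so that $\alpha_{\ker\varphi}$ inherits both factorizations of $\varphi$. Your route is more elementary -- it avoids the coproduct machinery entirely and produces an explicit characteristic point rather than a bare existence statement -- while the paper's argument records the structural byproduct that $(M/\Pcal)_{\opcan}\wedge_{M}(M/\Qcal)_{\opcan}\not=\zero$ whenever the components meet. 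Both are complete proofs.
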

\begin {proof}
The equivalence $(1)\Leftrightarrow(2)$ follows from the definition of connectedness and from Proposition \ref{PropUnionCanComp} since all cancellative $K\mina$components of $M$ are  irreducible closed subsets of $K\minspec M$ by Lemma \ref{LemFgTfRalgclosedIrred}. The implication $(3)\Rightarrow(2)$ is trivial.

To prove $(2)\Rightarrow(3)$, it is enough to show that any two cancellative $K\mina$components, say $X=K\minspec(M/\Pcal)_{\opcan}$ and $Y=K\minspec(M/\Qcal)_{\opcan}$ for $\Pcal,\Qcal\in\spec M$, with a nonempty intersection contain a common characteristic point of $M$. Assuming $X\cap Y\not=\emptyset$, we have a $K\mina$point of $M$ that factors through $(M/\Pcal)_{\opcan}$ and through $(M/\Qcal)_{\opcan}$ via $M\mina$binoid homomorphisms
$$\xymatrix{
&(M/\Pcal)_{\opcan}\ar[dr]^{\varphi_{\Pcal}}&\\
M\ar[ru]^{\pi_{\Pcal}}\ar[rd]_{\pi_{\Qcal}}&&K\pkt\\
&(M/\Qcal)_{\opcan}\ar[ur]_{\varphi_{\Qcal}}&}$$
By Corollary \ref{CorCoProdSmashN}, there is an $M\mina$binoid homomorphism $$(M/\Pcal)_{\opcan}\wedge_{M}(M/\Qcal)_{\opcan}\Rto K\komma$$
in particular, $(M/\Pcal)_{\opcan}\wedge_{M}(M/\Qcal)_{\opcan}$ is not the zero binoid. Thus, there is a characteristic point $\alpha:(M/\Pcal)_{\opcan}\wedge_{M}(M/\Qcal)_{\opcan}\rto\{1,0\}\subseteq K$, and therefore 
$$\xymatrix{
&(M/\Pcal)_{\opcan}\ar[d]^{\iota_{\Pcal}}&\\
M\ar[ru]^{\pi_{\Pcal}}\ar[rd]_{\pi_{\Qcal}}\ar[r]&(M/\Pcal)_{\opcan}\wedge_{M}(M/\Qcal)_{\opcan}\ar[r]^{\quad\quad\quad\quad\alpha}&\{1,0\}\komma\\
&(M/\Qcal)_{\opcan}\ar[u]_{\iota_{\Qcal}}&}$$
where $\alpha\iota_{\Pcal}\pi_{\Pcal}=\alpha\iota_{\Qcal}\pi_{\Qcal}:M\rto K$ is a common characteristic point of $M$ on $X$ and $Y$.
\end {proof}

\begin {Example}
The binoid $M:=\free(x,y)/(x+y=y)$ satisfies all assumptions of Theorem \ref{ThConnectedness}. Its $\R\mina$spectrum
\begin {center}
\begin {pspicture} (-2,-2.5)(2,2)
\qdisk (0,0){1.75pt}\qdisk (0.7,0){1.75pt}\qdisk (0.7,0.7){1.75pt}
\psline [linewidth=0.5 pt, linestyle=dotted] (0,-1.5)(0,1.5)
\psline [linewidth=0.5 pt] (-1.5,0)(1.5,0)
\psline [linewidth=0.5 pt] (0.7,-1.5)(0.7,1.5)
\uput [0] (-2.15,-2.2){\small{$\R\minspec\free(x,y)/(x+y=y)$}}
\end {pspicture}
\end {center}
is connected and the two cancellative $\R\mina$components meet in a characteristic point. To see this consider $\spec M$, which consists of three prime ideals, namely
$$\zero\,\subsetneq\,\langle y\rangle\,\subsetneq\,\langle x,y\rangle\,=\,\langle x\rangle\pkt$$
These yield the three cancellative parts:
$$\R\minspec M_{\opcan}\,\,\cong\,\,\R\minspec\N^{\infty}\,\,\cong\,\,\R\komma$$
which corresponds to the cancellative $\R\mina$component given by the vertical line since $x\sim_{\opcan}0$, cf.\ Lemma \ref{LemNspec},
$$\R\minspec (M/\langle y\rangle)_{\opcan}\,\,\cong\,\,\R\minspec\N^{\infty}\,\,\cong\,\,\R\komma$$
which corresponds to the cancellative $\R\mina$component given by the horizontal line, and 
$$\R\minspec (M/\langle x,y\rangle)_{\opcan}\,\,=\,\,\R\minspec\trivial\,\,\cong\,\,\zero\komma$$
which corresponds to the special point, cf.\ Example \ref{ExpBHomBoolean}(2).
\end {Example}

\begin {Example}\label{ExpKpoints2}
If $K$ contains the 8$th$ roots of unity $\zeta_{0}\kpkt\zeta_{7}$, for instance if $K=\C$, one may imagine the $K\mina$spectrum of the binoid $M:=\free(x,y)/(8x+y=y)$ in the following way:
\begin {center}
\begin {pspicture} (-2.5,-3)(2.5,2.5)
\qdisk (0,0){1.75pt}\qdisk (1.225,0){1.75pt}\qdisk (1.225,1.225){1.75pt}
\psline [linewidth=0.5 pt, linestyle=dotted] (0,-1.8)(0,1.8)
\psline [linewidth=0.5 pt] (-2,0)(2,0)
\psline [linewidth=0.5 pt] (1.225,-1.8)(1.225,1.8)
\psline [linewidth=0.5 pt] (0.875,-1.8)(0.875,1.8)
\psline [linewidth=0.5 pt] (0.525,-1.8)(0.525,1.8)
\psline [linewidth=0.5 pt] (0.175,-1.8)(0.175,1.8)
\psline [linewidth=0.5 pt] (-1.225,-1.8)(-1.225,1.8)
\psline [linewidth=0.5 pt] (-0.875,-1.8)(-0.875,1.8)
\psline [linewidth=0.5 pt] (-0.525,-1.8)(-0.525,1.8)
\psline [linewidth=0.5 pt] (-0.175,-1.8)(-0.175,1.8)
\uput [0] (-2.25,-2.2){\small{$K\minspec\free(x,y)/(8x+y=y)$}}
\end {pspicture}
\end {center}
The binoid satisfies all assumptions of Theorem \ref{ThConnectedness} except the torsion-freeness since $8y=(8x+y)+7y=8(x+y)$. However, its $K\mina$spectra is connected and all cancellative $K\mina$components are connected by characteristic points. To see this consider $\spec M$, which is given by the three prime ideals 
$$\zero\,\subsetneq\,\langle y\rangle\,\subsetneq\,\langle x,y\rangle\,=\,\langle x\rangle\pkt$$
These yield the three cancellative parts:
$$K\minspec M_{\opcan}\,\,\cong\,\,K\minspec((\Z/8\Z)^{\infty}\wedge\N^{\infty})\,\,\cong\,\,\{\zeta_{0}\kpkt\zeta_{7}\}\times K\komma$$
which corresponds to the cancellative $K\mina$component given by the $8$ vertical lines since $8x\sim_{\opcan}0$, cf.\ Lemma \ref{LemNoRelations} and Proposition \ref{PropNspecSmash}, 
$$K\minspec (M/\langle y\rangle)_{\opcan}\,\,\cong\,\,K\minspec\N^{\infty}\,\,\cong\,\,K\komma$$
which corresponds to the cancellative $K\mina$component given by the horizontal line, cf.\ Lemma \ref{LemNspec}, and 
$$K\minspec (M/\langle x\rangle)_{\opcan}\,\,\cong\,\,K\minspec\trivial\,\,\cong\,\,\zero\komma$$
which corresponds to the special point, cf.\ Example \ref{ExpBHomBoolean}(2).
\end {Example}

Now we study the case of hypersurfaces.

\begin{Proposition}\label{Prop1MonEquConnectedness}
Let $K$ be a field of characteristic zero and consider the binoid 
$$M\,\,:=\,\,\free(x_{1}\kpkt x_{n})/(f=g)$$
with $f=f_{1}x_{1}\pluspkt f_{n}x_{n}$ and $g=g_{1}x_{1}\pluspkt g_{n}x_{n}$, so that
$$K[M]\,\,=\,\,K[X]/(X^{f}-X^{g})\komma$$
where $X^{\nu}:=X_{1}^{\nu_{1}}\cdots X_{n}^{\nu_{n}}$ for $\nu=(\nu_{1}\kpkt\nu_{n})\in\N^{n}$.
\begin {ListeTheorem}
\item If $g=0$, so that 
$$K[M]\,\,=\,\,K[X]/(X^{f}-1)\komma$$
then $K\minspec M$ is connected if and only if $M$ is torsion-free. In this case, there are $r,s\ge0$ such that
$$K\minspec M\,\,=\,\,\A^{r}(K)\times (K\okreuz)^{s}\pkt$$
\item If $g\not=0$ and $f=\tilde{f}+g$, so that 
$$K[M]\,\,=\,\,K[X]/X^{g}(X^{\tilde{f}}-1)\komma$$
then $K\minspec M$ is disconnected if $\supp g\subseteq\supp\tilde{f}$ and connected otherwise.
\item If $f=\tilde{f}+h$ and $g=\tilde{g}+h$ such that $\supp\tilde{f},\supp\tilde{g}\not=\emptyset$ with $\supp\tilde{f}\cap\supp\tilde{g}=\emptyset$, i.e.\
$$K[M]\,\,=\,\,K[X]/X^{h}(X^{\tilde{f}}-X^{\tilde{g}})\komma$$
then $K\minspec M$ is connected.
\end {ListeTheorem}
\end{Proposition}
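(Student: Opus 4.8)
The plan is to pass to the geometric picture and analyze the zero locus of the defining binomial. By Remark \ref{RemAffineEmbeddingKspec}, the generators $x_{1}\kpkt x_{n}$ give an embedding $K\minspec M\embto\A^{n}(K)$, $\varphi\mto(\varphi(x_{1})\kpkt\varphi(x_{n}))$, whose image is precisely the affine hypersurface $H=\opV_{K}(X^{f}-X^{g})=\{a\in\A^{n}(K)\mid a^{f}=a^{g}\}$, where $a^{\nu}:=a_{1}^{\nu_{1}}\cdots a_{n}^{\nu_{n}}$. In each case I would factor $X^{f}-X^{g}$ and decompose $H$ by Lemma \ref{LemVKProperties}(3), $\opV_{K}(FG)=\opV_{K}(F)\cup\opV_{K}(G)$, using repeatedly the two elementary facts that a union of connected subsets with a common point is connected, and that a cover by two disjoint nonempty closed subsets is a disconnection. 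A recurring tool is that the image of a morphism $\A^{1}(K)\rto H$ is connected (as $\A^{1}(K)$ is irreducible), which lets me join $K$-points by explicit $K$-rational curves.

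For (1) we have $H=\{a\mid a^{f}=1\}$. Separating coordinates according to $\supp f$, those with $f_{i}=0$ are free (a factor $\A^{r}(K)$, $r=n-\#\supp f$), while those in $\supp f$ must be nonzero with $\prod_{i\in\supp f}a_{i}^{f_{i}}=1$; thus $H\cong\A^{r}(K)\times W$, where $W$ is the kernel of the homomorphism of tori $(K\okreuz)^{\#\supp f}\rto K\okreuz$, $b\mto b^{f'}$, $f'=(f_{i})_{i\in\supp f}$. Since $\A^{r}(K)$ is connected, $H$ is connected iff $W$ is. If $\gcd(f_{i})=1$, then $f'$ is part of a $\Z$-basis and a unimodular (monomial) change of coordinates identifies $W$ with $(K\okreuz)^{s}$, $s=\#\supp f-1$, giving both connectedness and the asserted form $\A^{r}(K)\times(K\okreuz)^{s}$. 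If $d:=\gcd(f_{i})\ge2$, then $b\mto b^{f'}$ factors through the $d$-th power map and (over an algebraically closed field, where the $d$-th roots of unity are available) $W$ splits into $d\ge2$ disjoint nonempty closed pieces, so $H$ is disconnected. It remains to match $d=1$ with torsion-freeness: the relation $f=0$ forces every $x_{i}$ with $f_{i}>0$ to be a unit, so $M$ is the smash product of the binoid group on $\Z^{\#\supp f}/\Z f'$ with a free part (cf.\ Lemma \ref{LemNoRelations}), and this is torsion-free iff $f'$ is primitive, i.e.\ iff $d=1$ (the easy direction being that $d\cdot(f/d)=0=d\cdot0$ with $f/d\ne0$ exhibits a torsion element).

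For (2), write $X^{f}-X^{g}=X^{g}(X^{\tilde f}-1)$, so $H=\opV_{K}(X^{g})\cup\opV_{K}(X^{\tilde f}-1)$. Here $\opV_{K}(X^{g})=\bigcup_{i\in\supp g}\{a_{i}=0\}$ is connected (all hyperplanes pass through the origin) and $\opV_{K}(X^{\tilde f}-1)$ is nonempty (it contains $(1\kpkt1)$). If $\supp g\subseteq\supp\tilde f$, any $a$ with $a^{\tilde f}=1$ has $a_{i}\ne0$ for all $i\in\supp\tilde f\supseteq\supp g$, hence $a^{g}\ne0$; the two closed sets are then disjoint and nonempty, a disconnection. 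If instead some $j_{0}\in\supp g\setminus\supp\tilde f$ exists, then for any $a$ with $a^{\tilde f}=1$ the point $a'$ obtained by setting the $j_{0}$-coordinate to $0$ still satisfies $(a')^{\tilde f}=1$ (as $j_{0}\notin\supp\tilde f$) and now $(a')^{g}=0$; moving the $j_{0}$-coordinate of $a$ to $0$ is a $K$-rational curve inside $\opV_{K}(X^{\tilde f}-1)$, so every point of $\opV_{K}(X^{\tilde f}-1)$ is joined to the connected set $\opV_{K}(X^{g})$, whence $H$ is connected.

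The heart of the proposition is (3), where I expect the real work. Now $X^{f}-X^{g}=X^{h}(X^{\tilde f}-X^{\tilde g})$, so $H=\opV_{K}(X^{h})\cup V$ with $V=\{a\mid a^{\tilde f}=a^{\tilde g}\}$; since $\supp\tilde f,\supp\tilde g\ne\emptyset$, both $V$ and (when $h\ne0$) the connected set $\opV_{K}(X^{h})$ contain the origin, so it suffices to show that $V$ is connected. My plan is to join every $a\in V$ to the origin inside $V$. Put $c:=a^{\tilde f}=a^{\tilde g}$, $p:=\sum_{i}\tilde f_{i}$, $q:=\sum_{i}\tilde g_{i}$. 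The disjointness $\supp\tilde f\cap\supp\tilde g=\emptyset$ is what lets me scale the two coordinate blocks independently: the morphism $\A^{1}(K)\rto V$ sending $\lambda$ to the point that multiplies the $\supp\tilde f$-coordinates by $\lambda^{q}$ and the $\supp\tilde g$-coordinates by $\lambda^{p}$ stays in $V$, because both sides of the equation become $\lambda^{pq}c$; at $\lambda=0$ it reaches a point $a'$ with $(a')^{\tilde f}=(a')^{\tilde g}=0$. Then $\lambda\mto\lambda a'$ (scaling all coordinates) lies in $V$ and joins $a'$ to the origin. Chaining these two curves connects $a$ to $0$, so $V$ is connected. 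The main obstacle is precisely producing a curve inside $V$ that drives $c$ to $0$ while preserving $a^{\tilde f}=a^{\tilde g}$; the independent rescaling above is the device that uses the disjoint-support hypothesis, and I would still need to treat the degenerate cases $h=0$ and $c=0$ separately and to confirm that the connectedness statements in (1) are read over an algebraically closed field, in line with Theorem \ref{ThConnectedness}.
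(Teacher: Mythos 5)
Your proof is correct, and in part (1) and the connected half of (2) it follows essentially the paper's route: the decomposition $K\minspec M\cong\A^{r}(K)\times(K\okreuz)^{s}\times(\text{torsion part})$ in (1) (the paper obtains it from the structure theorem for finitely generated abelian groups rather than your explicit unimodular coordinate change, but the content is the same), and in (2) the coordinate line through a $j_{0}\in\supp g\setminus\supp\tilde{f}$ joining every point of $\opV_{K}(X^{\tilde{f}}-1)$ to the connected set $\opV_{K}(X^{g})$. Your caveat that the ``disconnected'' direction of (1) needs the relevant roots of unity in $K$ is well taken: the paper's own proof has the same dependence, and the author concedes the point in the remark immediately following the proposition. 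You genuinely diverge in two places. For the disconnected case of (2), the paper manufactures an explicit idempotent $k\tilde{f}$ in the binoid $M$ itself (choosing $k$ with $k\tilde{f_{i}}\ge g_{i}$ and using $g=k\tilde{f}+g$ to get $2k\tilde{f}=k\tilde{f}$), so that $X^{k\tilde{f}}$ is idempotent in $K[M]$; your direct observation that $\opV_{K}(X^{g})$ and $\opV_{K}(X^{\tilde{f}}-1)$ are disjoint nonempty closed sets covering the space is shorter and equally valid, though the paper's version exhibits the disconnection as a purely combinatorial phenomenon and is what feeds Corollary \ref{CorIdempConnect}. For (3), the paper simply cites Corollary \ref{CorConectedHyperplane}, a forward reference to the separatedness/grading machinery of Chapter \ref{ChapSepGradings} (disjoint supports make $M$ separated, hence positively $\N$-graded, and the induced $\A^{1}(K)$-action contracts every $K$-point to the special point); your two-step rational curve --- rescaling the $\supp\tilde{f}$- and $\supp\tilde{g}$-blocks by $\lambda^{q}$ and $\lambda^{p}$ respectively, then scaling everything to the origin --- is a self-contained elementary instance of exactly that contraction, with the advantage of keeping the argument inside Chapter \ref{ChapTopology}.
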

\begin {proof}
(1) In this situation, we have by the structure theorem for finitely generated commutative groups
$M\cong(\N^{r}\times\Z^{s}\times\Z/k_{1}\Z\timespkt\Z/k_{l}\Z)^{\infty}$, where $r:=\#\{i\mid f_{i}=0\}$. Thus, $K[M]=K[(\N^{r})^{\infty}]\otimes_{K}K[(\Z^{s})^{\infty}]\otimes_{K}K[T^{\infty}]$, which gives
$$K\minspec M\,\,=\,\,\A^{r}(K)\times (K\okreuz)^{s}\times K\minspec T^{\infty}\komma$$
where $T:=\Z/k_{1}\Z\timespkt\Z/k_{l}\Z$ is a finite torsion group. In particular, $M$ is not torsion-free if and only if $T\not=\{0\}$, which is equivalent to $K\minspec T^{\infty}$ is a finite set with more than one point. Therefore, $K\minspec M$ is connected if and only if $M$ is torsion-free.

(2) First let $\supp g\subseteq\supp\tilde{f}$ and choose $k\ge1$ such that $k\tilde{f_{i}}\ge g_{i}$ for all $i\in\supp\tilde{f}$. By assumption, we have $g=f=\tilde{f}+g$, which implies that $g=k\tilde{f}+g$, where $k\tilde{f}=\sum_{i\in\supp\tilde{f}}k\tilde{f_{i}}x_{i}$. Hence, $k\tilde{f}$ is an idempotent element in $M$ because
$$2k\tilde{f}=(k\tilde{f}+g)+(k\tilde{f}-g)=g+(k\tilde{f}-g)=k\tilde{f}\komma$$
in particular $X^{k\tilde{f}}$ is idempotent in $K[M]$, so $K\minspec M$ is not connected. Now let $\supp g\not\subseteq\supp\tilde{f}$, say $\supp g=\{1\kpkt r\}$ but $1\not\in\supp\tilde{f}$. Then
$$K\minspec M\,\,=\,\,\opV_{K}(X^{g}(X^{\tilde{f}}-1))\,\,=\,\,\opV_{K}(X^{g})\cup\opV_{K}(X^{\tilde{f}}-1)\komma$$
where $\opV_{K}(X^{g})=\bigcup_{i=1}^{r}\opV_{K}(X_{i}^{g_{i}})=\bigcup_{i=1}^{r}\opV_{K}(X_{i})$ is connected because the origin
$(0\kpkt 0)\in\A^{n}(K)$ lies in each $\opV_{K}(X_{i}^{g_{i}})$, $i\in\supp g$. Moreover, though $\opV_{K}(X^{\tilde{f}}-1)$ might not be connected itself, every $K\mina$point $a=(a_{1},a_{2}\kpkt a_{n})\in\opV_{K}(X^{\tilde{f}}-1)$ lies on the line $L_{a}:=\{(t,a_{2}\kpkt a_{n})\mid t\in\A^{1}(K)\}\subseteq\opV_{K}(X^{f}-1)$ since $X_{1}$ does not occur in $X^{\tilde{f}}$, on which the $K\mina$point $(0,a_{2}\kpkt a_{n})\in\opV_{K}(X_{1}^{g_{1}})$ lies as well. Therefore, every $K\mina$point of $\opV_{K}(X^{\tilde{f}}-1)$ is connected with $\opV_{K}(X^{g})$.

(3) Here,
$$K\minspec M\,\,=\,\,\opV_{K}(X^{h})\cup\opV_{K}(X^{\tilde{f}}-X^{\tilde{g}})\,\,=\,\,\Big(\bigcup_{i\in\supp h}\opV_{K}(X_{i}^{h_{i}})\Big)\cup\opV_{K}(X^{\tilde{f}}-X^{\tilde{g}})\komma$$
where $\opV_{K}(X^{\tilde{f}}-X^{\tilde{g}})$ is connected and contains the origin $(0\kpkt 0)$, cf.\ Corollary \ref{CorConectedHyperplane}, which also lies in every $\opV_{K}(X_{i}^{h_{i}})$, $i\in\supp h$.
\end {proof}

Note that the assumption on $K$ being algebraically closed of characteristic zero in the preceding proposition may make things worse, that is to say, in this case the $K\mina$spectrum of a binoid is more likely to be disconnected. For instance, in dimension $0$ or in case of one generator, the $K\mina$spectrum $\opV_{K}(X^{n}-1)$ of the (non-torsion-free) binoid $(\Z/n\Z)^{\infty}$ with $n\ge2$ such that $n\equiv1\mod2$ and $\opchar K=0$, consists only of a single point and is therefore connected if $K$ does not contain the $n$th roots of unity. Otherwise, $K\minspec(\Z/n\Z)^{\infty}$ is a set with $n\ge2$ points, hence hausdorff, cf.\ Example \ref{ExpCspecZmod8}. On the other hand, if $\opchar K=p>0$, then $K\minspec(\Z/p^{r}\Z)^{\infty}=\opV_{K}(X^{p^{r}}-1)=\opV_{K}((X-1)^{p^{r}})$ is a singleton as well. 

\begin {Example}
We have
$$K\minspec\free(x,y,z)/(x+y+2z=x+y)=\opV_{K}(XY(Z^{2}-1))=\opV_{K}(X)\cup\opV_{K}(Y)\cup\opV_{K}(Z^{2}-1)\komma$$
and these planes are connected for $K$ infinite. In this case, the connected sets $\opV_{K}(X)$ and $\opV_{K}(Y)$ intersect in the line $L=\{(0,0,a)\mid a\in\A^{1}(K)\}$. Though the closed set $\opV_{K}(Z^{2}-1)$ decomposes further into the disjoint connected sets $\opV_{K}(Z-1)$ and $\opV_{K}(Z+1)$ (i.e.\ $\opV_{K}(Z^{2}-1)$ is not connected), $K\minspec\free(x,y,z)/(x+y+2z=x+y)$ is connected since $L$ intersects with $\opV_{K}(Z-1)$ in $(0,0,1)$ and with $\opV_{K}(Z+1)$ in $(0,0,-1)$.

\begin{center}
\resizebox{0.33\linewidth}{!}{\begin {pspicture} (-5,-4)(5,4)
\qdisk (0,0){3pt}\qdisk (0,1.5){3pt}\qdisk (1.2,0){3pt}\qdisk (-0.72,0.96){3pt}
\qdisk (1.2,1.5){3pt}\qdisk (-0.72,-0.54){3pt}\qdisk (0.48,0.96){3pt}
\psline [linewidth=0.5 pt, linestyle=dashed] (-2.9,1.5)(-2.9,3.5)
\psline [linewidth=0.5 pt, linestyle=dashed] (-2.9,3.5)(2.9,3.5)            
\psline [linewidth=0.5 pt, linestyle=dashed] (2.9,3.5)(2.9,-3.5)            
\psline [linewidth=0.5 pt, linestyle=dashed] (2.9,-3.5)(0,-3.5)              
\psline [linewidth=0.5 pt, linestyle=dashed] (-1.6,-3.5)(-2.9,-3.5)         
\psline [linewidth=0.5 pt, linestyle=dashed] (-2.9,-3.5)(-2.9,-2.7)      
\psline [linewidth=0.5 pt, linestyle=dashed] (-2.9,-1.5)(-2.9,0.3)      

\psline [linewidth=0.5 pt, linestyle=dashed] (0,-3.5)(-1.6,-4.7)
\psline [linewidth=0.5 pt, linestyle=dashed] (-1.6,-4.7)(-1.6,2.3)          
\psline [linewidth=0.5 pt, linestyle=dashed] (-1.6,2.3)(1.6,4.7)             
\psline [linewidth=0.5 pt, linestyle=dashed] (1.6,4.7)(1.6,3.5)             

\psline [linewidth=0.5 pt, linestyle=dashed] (-2.9,1.5)(-4.5,0.3)
\psline [linewidth=0.5 pt, linestyle=dashed] (-4.5,0.3)(1.3,0.3)           
\psline [linewidth=0.5 pt, linestyle=dashed] (1.3,0.3)(4.5,2.7)             
\psline [linewidth=0.5 pt, linestyle=dashed] (4.5,2.7)(2.9,2.7)            
\psline [linewidth=0.5 pt, linestyle=dashed] (-2.9,-1.5)(-4.5,-2.7)
\psline [linewidth=0.5 pt, linestyle=dashed] (-4.5,-2.7)(1.3,-2.7)           
\psline [linewidth=0.5 pt, linestyle=dashed] (1.3,-2.7)(4.5,-0.3)             
\psline [linewidth=0.5 pt, linestyle=dashed] (4.5,-0.3)(2.9,-0.3)            
\psline [linewidth=0.5 pt] (0,1.5)(0,3.5)
\psline [linewidth=0.5 pt] (0,0.3)(0,-1.5)               
\psline [linewidth=0.5 pt] (0,-2.7)(0,-3.5)               
\psline [linewidth=0.5 pt, linestyle=dotted] (-1.6,0)(-2.9,0)
\psline [linewidth=0.5 pt, linestyle=dotted] (0,0)(2.9,0)
\psline [linewidth=0.5 pt] (-2.9,1.5)(-1.6,1.5)
\psline [linewidth=0.5 pt] (0,1.5)(2.9,1.5)
\psline [linewidth=0.5 pt] (-2.9,-1.5)(-1.6,-1.5)
\psline [linewidth=0.5 pt] (0,-1.5)(2.9,-1.5)
\psline [linewidth=0.5 pt, linestyle=dotted] (-1.6,-1.2)(0,0)
\psline [linewidth=0.5 pt] (-1.6,0.3)(0,1.5)
\psline [linewidth=0.5 pt] (-1.6,-2.7)(0,-1.5)
\end {pspicture}}
\end {center}
$$\text{\small{$\R\minspec\free(x,y,z)/(x+y+2z=x+y)$}}$$
\end {Example}

\begin {Example}
For $K$ infinite, we have
\begin {align*}
K\minspec\free(x,y,z)/(2x+y+z=x+z)&=\opV_{K}(XZ(XY-1))\\
&=\opV_{K}(X)\cup\opV_{K}(Z)\cup\opV_{K}(XY-1)\pkt
\end {align*}

The connected sets $\opV_{K}(X)$ and $\opV_{K}(Z)$ intersect in the line $L=\{(0,t,0)\mid t\in\A^{1}(K)\}$ but $\opV_{K}(X)$ and $\opV_{K}(XY-1)\cong K\okreuz$ are disjoint. However, every $K\mina$point $(a,b,c)\in\opV_{K}(XY-1)$ lies on the line $L_{ab}:=\{(a,b,t)\mid t\in\A^{1}(K)\}$ which intersects $\opV_{K}(Z)$ in $(a,b,0)$.

\begin{center}
\resizebox{0.5\linewidth}{!}{\begin {pspicture} (-7,-3)(7,4)
\qdisk (0,0){3pt}\qdisk (1,0.5){3pt}\qdisk (2.3,0.5){3pt}\qdisk (2.3,1.8){3pt}
\psline [linewidth=0.5 pt, linestyle=dashed] (2,-1.5)(-2,-3.5)
\psline [linewidth=0.5 pt, linestyle=dashed] (-2,-3.5)(-2,1.5)          
\psline [linewidth=0.5 pt, linestyle=dashed] (-2,1.5)(2,3.5)             
\psline [linewidth=0.5 pt, linestyle=dashed] (2,3.5)(2,1)             
\psline [linewidth=0.5 pt, linestyle=dashed] (2,-1.5)(2,-1)             
\psline [linewidth=0.5 pt, linestyle=dashed] (-6.5,-1)(-5.1,-0.3)
\psline [linewidth=0.5 pt, linestyle=dashed] (-2.3,0.85)(-2,1)    
\psline [linewidth=0.5 pt, linestyle=dashed] (-6.5,-1)(2.5,-1)       
\psline [linewidth=0.5 pt, linestyle=dashed] (2.5,-1)(6.5,1)         
\psline [linewidth=0.5 pt, linestyle=dashed] (6.5,1)(4.85,1) 
\psline [linewidth=0.5 pt, linestyle=dashed] (2.3,1)(2,1) 
\pcarc [arcangleA=30, arcangleB=10, linewidth=1 pt, linestyle=dashed] (2.3,2.5)(2.5,2.8)
\pcarc [arcangleA=5, arcangleB=0, linewidth=0.5 pt, linestyle=dashed] (2.5,2.8)(2.6,2.9)
\pcarc [arcangleA=300, arcangleB=340, linewidth=0.5 pt, linestyle=dashed] (2.3,2.5)(2.8,2.25)
\pcarc [arcangleA=355, arcangleB=355, linewidth=0.5 pt, linestyle=dashed] (2.8,2.25)(4.85,2.2)
\pcarc [arcangleA=300, arcangleB=340, linewidth=0.5 pt] (2.3,0.5)(2.8,0.25)
\pcarc [arcangleA=355, arcangleB=355, linewidth=0.5 pt] (2.8,0.25)(4.85,0.2)
\pcarc [arcangleA=300, arcangleB=340, linewidth=0.5 pt, linestyle=dashed] (2.3,-1.5)(2.8,-1.75)
\pcarc [arcangleA=355, arcangleB=355, linewidth=0.5 pt, linestyle=dashed] (2.8,-1.75)(4.85,-1.8)
\psline [linewidth=0.5 pt, linestyle=dashed] (2.6,2.9)(2.6,2.3) 
\psline [linewidth=0.5 pt, linestyle=dashed] (4.85,2.2)(4.85,-1.8) 
\psline [linewidth=0.5 pt, linestyle=dashed] (2.3,0.5)(2.3,2.5)  
\psline [linewidth=0.5 pt, linestyle=dashed] (2.3,-1)(2.3,-1.5)   
\pcarc [arcangleA=0, arcangleB=10, linewidth=0.5 pt, linestyle=dashed] (-5.1,1.7)(-2.7,1.6) 
\pcarc [arcangleA=10, arcangleB=25, linewidth=0.5 pt, linestyle=dashed] (-2.7,1.6)(-2.3,1.4)
\pcarc [arcangleA=25, arcangleB=25, linewidth=0.5 pt, linestyle=dashed] (-2.3,1.4)(-2.4,1.2)
\pcarc [arcangleA=5, arcangleB=15, linewidth=0.5 pt, linestyle=dashed] (-2.4,1.2)(-2.7,1)
\pcarc [arcangleA=0, arcangleB=10, linewidth=0.5 pt] (-5.1,-0.3)(-2.7,-0.4) 
\pcarc [arcangleA=25, arcangleB=25, linewidth=0.5 pt] (-2.3,-0.5)(-2.4,-0.8)
\pcarc [arcangleA=5, arcangleB=15, linewidth=0.5 pt] (-2.4,-0.8)(-2.7,-1)
\pcarc [arcangleA=0, arcangleB=10, linewidth=0.5 pt, linestyle=dashed] (-5.1,-2.3)(-2.7,-2.4)
\pcarc [arcangleA=25, arcangleB=25, linewidth=0.5 pt, linestyle=dashed] (-2.3,-2.5)(-2.4,-2.8) 
\pcarc [arcangleA=5, arcangleB=15, linewidth=0.5 pt, linestyle=dashed] (-2.4,-2.8)(-2.7,-3)
\psline [linewidth=0.5 pt, linestyle=dashed] (-2.7,1)(-2.7,-3)    
\psline [linewidth=0.5 pt] (-2.3,1.4)(-2.3,-0.6) 
\psline [linewidth=0.5 pt] (-2.3,-1)(-2.3,-2.6)  
\psline [linewidth=0.5 pt, linestyle=dashed] (-5.1,1.7)(-5.1,-0.3) 
\psline [linewidth=0.5 pt, linestyle=dashed] (-5.1,-2.3)(-5.1,-1) 
\psline [linewidth=0.5 pt, linestyle=dotted] (0,0)(0,2.5)
\psline [linewidth=0.5 pt, linestyle=dotted] (0,-2.5)(0,-1)
\psline [linewidth=0.5 pt, linestyle=dotted] (-2.3,0)(-2,0)
\psline [linewidth=0.5 pt, linestyle=dotted] (0,0)(4.25,0)
\psline [linewidth=0.5 pt] (-2,-1)(2,1)
\end {pspicture}}
\end {center}
$$\text{\small{$\R\minspec\free(x,y,z)/(2x+y+z=x+z)$}}$$
\end {Example}

\begin {Corollary} \label{CorIdempConnect}
Let $M$ be a torsion-free binoid such that $M\cong\free(x_{1}\kpkt x_{n})/(f=g)$. If $K$ is a field of characteristic zero, then $K\minspec M$ is connected if and only if $M$ contains no non-trivial idempotent elements.
\end {Corollary}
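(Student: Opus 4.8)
The plan is to derive this from Proposition \ref{Prop1MonEquConnectedness} together with the observation, recorded at the beginning of this section, that a non-trivial idempotent in $K[M]$ disconnects $K\minspec M$. Throughout I keep the standing hypotheses that $M\cong\free(x_{1}\kpkt x_{n})/(f=g)$ is torsion-free and that $\opchar K=0$, the latter being needed only where I invoke Proposition \ref{Prop1MonEquConnectedness}.

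First I would settle the implication that holds over any field and any binoid: if $e\in M$ is a non-trivial idempotent (that is, $2e=e$ with $e\neq 0,\infty$), then $X^{e}\in K[M]$ satisfies $X^{e}(X^{e}-1)=X^{2e}-X^{e}=0$, so $X^{e}$ is idempotent, and it is non-trivial since the canonical embedding $\iota_{M}\colon M\embto K[M]$, $a\mapsto X^{a}$, forces $X^{e}\neq 0,1$. Note that $e$ is a non-unit (a unit idempotent equals $0$) and non-nilpotent (an idempotent $\neq\infty$ cannot be nilpotent), so $M/\langle e\rangle$ and $M_{e}$ are nonzero. Hence, using Proposition \ref{PropKspecIdealLocalization} and $\varphi(e)\in\{0,1\}$ for every $\varphi$, the space $K\minspec M$ is the disjoint union of the two nonempty closed sets $\opV_{K}(X^{e})\cong K\minspec(M/\langle e\rangle)$ and $\opV_{K}(X^{e}-1)\cong K\minspec M_{e}$, so it is disconnected. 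This proves that connectedness implies the absence of non-trivial idempotents.

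For the converse I would argue by contraposition and reduce the relation $f=g$ to the three normal forms of Proposition \ref{Prop1MonEquConnectedness}. Setting $h:=\min(f,g)$ componentwise and $\tilde f:=f-h$, $\tilde g:=g-h$ gives $\supp\tilde f\cap\supp\tilde g=\emptyset$ and $K[M]=K[X]/(X^{h}(X^{\tilde f}-X^{\tilde g}))$. If $\tilde f=\tilde g=0$ then $M\cong\free_{n}$ and $K\minspec M\cong\A^{n}(K)$ is connected; if $\tilde f,\tilde g\neq 0$ we are in case (3) and $K\minspec M$ is connected; if exactly one vanishes, say $\tilde g=0\neq\tilde f$ (the case $\tilde f=0$ being symmetric, as $M$ is symmetric in $f$ and $g$), then $g=h$, and either $g=0$, landing in case (1) where torsion-freeness forces connectedness, or $g\neq 0$, landing in case (2). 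Consequently the only branch in which $K\minspec M$ can fail to be connected is case (2) with $\supp g\subseteq\supp\tilde f$; but there the proof of Proposition \ref{Prop1MonEquConnectedness}(2) produces an integer $k\geq 1$ with $2k\tilde f=k\tilde f$, and since $X^{k\tilde f}$ is a non-trivial idempotent of $K[M]$, injectivity of $\iota_{M}$ makes $k\tilde f$ a non-trivial idempotent of $M$. Thus a disconnected $K\minspec M$ forces a non-trivial idempotent, which is the desired contrapositive.

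The main obstacle is not a hard computation but the bookkeeping in the reduction: I must check that the decomposition via $h=\min(f,g)$ is genuinely exhaustive, that the symmetry $f\leftrightarrow g$ is legitimately used, and that the single disconnected branch coincides \emph{exactly} with the branch in which Proposition \ref{Prop1MonEquConnectedness}(2) manufactures the idempotent. The one conceptual point demanding care is establishing non-triviality of $k\tilde f$ inside $M$ rather than merely inside $K[M]$; I would handle this by transporting along the embedding $\iota_{M}$ as above, which avoids any delicate direct analysis of units or loops within $M$.
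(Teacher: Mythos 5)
Your proposal is correct and follows essentially the same route as the paper's own proof: the forward direction via the idempotent $X^{e}\in K[M]$, and the converse by reducing the relation $f=g$ to the normal forms of Proposition \ref{Prop1MonEquConnectedness} and extracting the idempotent $k\tilde{f}$ from the proof of part (2). Your additional bookkeeping (the decomposition via $h=\min(f,g)$, the non-emptiness of the two closed sets, and the non-triviality of $k\tilde f$ in $M$) only makes explicit what the paper leaves implicit.
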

\begin {proof}
In general (i.e.\ regardless if $M$ is torsion-free), if $e\in M$ is a non-trivial idempotent element, then $X^{e}$ is idempotent in $K[M]$, so $K\minspec M$ is not connected. On the other hand, $K\minspec M$ is disconnected if and only if $e=\tilde{e}+g$ and $\supp g\subseteq\supp\tilde{e}$ by Proposition \ref{Prop1MonEquConnectedness}. In this case, the element  $k\tilde{e}$, where $k\ge1$ satisfies $k\tilde{f_{i}}\ge g_{i}$ for all $i\in\{1\kpkt n\}$, is an idempotent element of $M$, cf.\ the proof of \ref{Prop1MonEquConnectedness}(2).
\end {proof}

\begin {Example}
Let $K$ be an algebraically closed field. For any two elements $f,g\in\free(x_{1}\kpkt x_{n})$ with 
$i\in \supp f$ and $j\in\supp g$, but $j\not\in \supp f$ and $i\not\in\supp g$
for some $i,j\in\{1\kpkt n\}$, the binoids
$$\free(x_{1}\kpkt x_{n})/(mf=mg)\komma$$
$m\ge2$, are not torsion-free, but their $K\mina$spectra are connected if $\opchar K=0$ by Proposition \ref{Prop1MonEquConnectedness}(3).
\end {Example}

\bigskip

\chapter {Separation and gradings} \label {ChapSepGradings}
\markright{\ref{ChapKpointsGradings} Separation and gradings}

This chapter is concerned with the question when the special point is contained in every cancellative component. For this, we first study the separating ideal $\bigcap_{n\ge1}nM\Uplus$ of a commutative binoid. We are  interested in the case when this ideal is $\zero$, in which we call $M$ a separated binoid. Under certain conditions the separating ideal can easily be described, cf.\ Proposition \ref{PropSepIdeal}, which yields the notion of (un-) separated elements. We show that the existence of an unseparated element gives a necessary condition to our question, cf. Proposition \ref{PropReducedSepNoSP}. The second part of this chapter deals with gradings on a binoid (by a monoid). We are particularly interested in positive $\N^{k}\mina$gradings, whose existence is equivalent to $M$ being separated under certain assumptions, cf. Theorem \ref {ThSepGraded}, and which yield a positive result to our initial question, cf.\ Theorem \ref{ThGradedConnectedSpecialPoint}.

\bigskip

\section {Separated binoids} \label {SecSepBinoids}
\markright {\ref{SecSepBinoids} Separated binoids}

\begin {Convention}
In this section, arbitrary binoids are assumed to be \emph{commutative}.
\end {Convention}

The main focus of this section is the ideal $\bigcap_{n\ge 1}nM\Uplus$. As in ring theory, it is of interest to know when this is the zero ideal because this has topological consequences regarding the $K\mina$spectra. For a treatment of this ideal within commutative algebra of binoids, the reader may consult \cite{AndersonIT}, and \cite{Kobsa} for that of monoids.

\begin {Definition}
The ideal $\bigcap_{n=1}^{\infty}nM\Uplus$ is called the \gesperrt{separating} \index{ideal!separating --}ideal of $M$. A nonzero binoid $M$ is called \gesperrt{separated} \index{binoid!separated --}if $$\bigcap_{n\ge 1}nM\Uplus\,\,=\,\,\zero\pkt$$
The \gesperrt{separated dimension} \index{dimension!separated --}of a binoid is defined to be $\dim (M/\bigcap_{n=1}^{\infty}nM\Uplus)$ and will be denoted by $\sepdim M$.\nomenclature[Dimension3]{$\sepdim M$}{separated dimension of the binoid $M$}
\end {Definition}

If $M$ is separated, then $\dim M=\sepdim M$, but the converse need not be true, cf. Example \ref{ExpSepDim1}.

\begin{Example}
The binoid $\N^{\infty}$ is separated because $n(\N^{\infty})\Uplus=\N^{\infty}_{\ge n}$, and hence
$$\bigcap_{n\ge 1}n(\N^{\infty})\Uplus\,=\,\bigcap_{n\ge 1}(\N^{\infty}_{\ge n})\Uplus\,=\,\zero\pkt$$
\end {Example}

\begin {Remark} \label{RemCompletion}
The definition of being separated stems from the terminology in ring theory, which is based on the fact that the $\aideal\mina$adic topology defined on a commutative ring $R$, $\aideal\subseteq R$ an ideal, is separated (hausdorff) if and only if 
$$\bigcap_{n\ge 1}\aideal^{n}\,\,=\,\,\{0\}\komma$$ 
cf.\ \cite[Chapter 10]{AtiyahMacDonald} or \cite[Chapter 0 \S7.2]{EGA}. Recall that the $\aideal\mina$adic completion \index{completion!$\aideal\mina$adic -- (of a ring)}of $R$ is defined to be the projective limit of the inverse system $(R/\aideal^{n},\phi_{nm}:R/\aideal^{m}\rto R/\aideal^{n})_{m\ge n\ge0}$ and is denoted by\nomenclature[R]{$\hat{R}$}{$\aideal\mina$adic completion of the ring $R$} 
$$\varprojlim R/\aideal^{n}\,\,:=\,\hat{R}\pkt$$
Similarly, every binoid $M$ defines an inverse system $(M_{n},\varphi_{nm})_{m\ge n\ge 0}$ of binoids, where
$$M_{n}:=M/nM\Uplus\quad\text{and}\quad\varphi_{nm}:M_{m}\rto M_{n}\komma\quad\komma m\ge n\komma$$ 
are the canonical projections. The projective limit
$$\varprojlim M_{n}\,=\,\Big\{(a_{n})_{n\ge 0}\in\prod_{n\ge 0}M_{n}\,\,\Big|\,\,\varphi_{mn}(a_{n})=a_{m}\komma m\ge n\Big\}\komma$$
cf.\ Lemma \ref{LemProjectiveLimit}, may be considered as the \gesperrt{completion} \index{completion!-- of a binoid}of $M$. The inverse system $(M_{n},\varphi_{nm})_{m\ge n\ge 0}$ can be illustrated as follows
$$\cdots\Rto\,M/(k+1)M\Uplus\,\,\stackrel{\!\!\!\!\varphi_{k,k+1}}{\Rto}\,\,M/kM\Uplus\,\Rto\cdots\Rto\, M/2M\Uplus\stackrel{\!\!\!\!\varphi_{12}}{\Rto}\,M/M\Uplus\,\stackrel{\!\!\!\!\varphi_{01}}{\Rto}\,\trivial\pkt$$
Since the binoid homomorphisms $\varphi_{nm}$, $m\ge n$, are injective on $M_{m}\setminus nM\Uplus$, every $\infty\not=(a_{n})_{n\ge0}\in\varprojlim M_{n}$ is of the form 
$$(\infty\kpkt\infty,a,a,a,\ldots)$$
(by abuse of notation we write $a$ for the class $[a]_{n}$ in $M_{n}$ if $a\not\in nM\Uplus$). More precisely, $a_{n}=\infty$ for $0\le n\le k$ and $a_{n}=a\in M_{n}$ for $n\ge k+1$ and some $a\in M\opkt$, where $k\ge0$ is the index such that $a\in kM\Uplus$ but $a\not\in(k+1)M\Uplus$. In particular, if $M$ is separated, there is a one-to-one correspondence $a\leftrightarrow ([a]_{n})_{n\ge0}$, which yields 
$$M\,\,\cong\,\,\varprojlim M_{n}\pkt$$

If, in addition, $M$ is positive, then $K[M\Uplus]=:\mideal$ is a maximal ideal in $K[M]$ by Proposition \ref{PropBinoidMax=AlgebraMax}. Therefore,
$$\varprojlim K[M_{n}]\,=\,\varprojlim K[M]/\mideal^{n}\,=\, \widehat{K[M]}\pkt$$
In particular, if $K[M]$ is not complete (i.e.\ $K[M]\not=\widehat{K[M]}$), then $K[\varprojlim M_{n}]\not=\varprojlim K[M_{n}]$.
\end {Remark}

\begin {Lemma} \label{LemSubbinoidSep}
Let $M$ be a nonzero binoid.
\begin {ListeTheorem}
\item $M/\bigcap_{n\ge 1}nM\Uplus$ is separated.
\item If $M$ is integral with $\dim M=\sepdim M<\infty$, then $M$ is separated.
\item If $M$ is positive and separated, then so is every subbinoid of $M$.
\item If $M$ is separated, then so is $M/\Ical$ for every ideal $\Ical\subseteq M$.
\end {ListeTheorem}
\end {Lemma}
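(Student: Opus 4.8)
The plan is to reduce (1) and (4) to a single observation about quotients by ideals that contain the separating ideal, to settle (3) by a direct comparison of sets of nonunits, and to handle (2) by a counting argument on chains of prime ideals. Throughout I write $\Ical_{0}:=\bigcap_{n\ge 1}nM\Uplus$ for the separating ideal and $\pi:M\rto M/\Ical$ for the canonical projection. First I would record the elementary facts that each $nM\Uplus$ is an ideal (Example \ref{ExSpIdeals}(4)) and that $(n+1)M\Uplus=nM\Uplus+M\Uplus\subseteq nM\Uplus$, so that $(nM\Uplus)_{n\ge 1}$ is a descending chain; moreover $\Ical_{0}\subseteq M\Uplus\subsetneq M$, so $M/\Ical_{0}\not=\zero$.

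For (1) and (4) I would prove the common statement that if $\Ical$ is an ideal with $\Ical_{0}\subseteq\Ical\subsetneq M$, then $M/\Ical$ is separated. The key computation is that $(M/\Ical)\Uplus=\pi(M\Uplus)$, and hence $n(M/\Ical)\Uplus=\pi(nM\Uplus)$ for all $n$. The first equality holds because $\Ical\subseteq M\Uplus$ forces $\pi$ to restrict to a bijection $M\okreuz\stackrel{\sim}{\rto}(M/\Ical)\okreuz$: an image $\pi(a)$ can be a unit only if $a+b=0$ for some $b$, using that $0\not\in\Ical$ together with the description of the Rees congruence in Remark \ref{RemIndIsom}; the second equality then follows from the surjectivity of $\pi$ and the additivity of the operation. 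Now suppose $\bar{a}\in\bigcap_{n}n(M/\Ical)\Uplus$ with $\bar{a}\not=\bar{\infty}$; then $\bar{a}$ has a unique representative $a\in M\setminus\Ical$, and for each $n$ we may write $\bar{a}=\pi(b_{n})$ with $b_{n}\in nM\Uplus$. Since $a\not\in\Ical$, the Rees congruence forces $a=b_{n}$, whence $a\in\bigcap_{n}nM\Uplus=\Ical_{0}\subseteq\Ical$, a contradiction. Thus $\bigcap_{n}n(M/\Ical)\Uplus=\{\bar{\infty}\}=\zero$. Statement (1) is the case $\Ical=\Ical_{0}$, and (4) is the case of an arbitrary ideal $\Ical\subsetneq M$, where separatedness of $M$ gives $\Ical_{0}=\zero\subseteq\Ical$ (the case $\Ical=M$ being vacuous, as $M/M=\zero$).

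For (3), let $N$ be a subbinoid of the positive binoid $M$. First I note that $N$ is again positive: any $u\in N\okreuz$ has its inverse in $N\subseteq M$, so $u\in M\okreuz=\{0\}$. Consequently $N\Uplus=N\setminus\{0\}=N\cap M\Uplus$, and since the operation on $N$ is the restriction of that on $M$, we obtain $nN\Uplus\subseteq nM\Uplus$ for every $n\ge 1$. Intersecting over $n$ and using that $M$ is separated yields $\bigcap_{n}nN\Uplus\subseteq\bigcap_{n}nM\Uplus=\zero$; as $\infty$ lies in every $nN\Uplus$, this is an equality, so $N$ is separated. Here $N\not=\zero$ because $N$ contains the distinct elements $0_{M}$ and $\infty_{M}$.

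The main obstacle is (2), which I would argue by contradiction on chains of prime ideals. By Corollary \ref{CorExtIdealPrime} together with the characterization of $\dim$ as the supremum of lengths of strictly ascending chains of prime ideals, $\sepdim M=\dim(M/\Ical_{0})$ equals the supremum of lengths of such chains consisting of primes containing $\Ical_{0}$. Suppose $M$ is integral with $\dim M=\sepdim M=:d<\infty$ but $\Ical_{0}\not=\zero$. Integrality gives $\zero\in\spec M$, and any prime $\Pcal\supseteq\Ical_{0}\supsetneq\zero$ satisfies $\Pcal\not=\zero$. Choosing a chain $\Pcal_{0}\subsetneq\cdots\subsetneq\Pcal_{d}$ of primes all containing $\Ical_{0}$ that realizes the supremum $d$ (attained since $d<\infty$) and prepending $\zero\subsetneq\Pcal_{0}$ produces a chain of length $d+1$ in $\spec M$, whence $\dim M\ge d+1>d$, contradicting $\dim M=\sepdim M$. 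Therefore $\Ical_{0}=\zero$ and $M$ is separated. I expect the only delicate points to be the identification of $(M/\Ical)\Uplus$ with $\pi(M\Uplus)$ in the first part and the legitimacy of prepending the generic point $\zero$ in (2), both of which rest on integrality or on $\Ical$ containing no units.
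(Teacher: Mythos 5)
Your proof is correct and follows essentially the same route as the paper: identifying $n(M/\Ical)\Uplus$ with the image of $nM\Uplus$ and using the Rees congruence for (1) and (4), comparing $nN\Uplus$ with $nM\Uplus$ via positivity for (3), and prepending the generic point $\zero$ to a maximal chain in $\opV(\bigcap_{n}nM\Uplus)$ for (2). The only cosmetic difference is that you unify (1) and (4) into a single statement about quotients by ideals containing the separating ideal, which the paper proves separately.
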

\begin {proof}
(1) Set $\Ical:=\bigcap_{n\ge 1}nM\Uplus$. By Corollary \ref{CorExtIdealPrime}, $(M/\Ical)\Uplus=M\Uplus/\Ical$ which implies the assertion. (2) If $\bigcap_{n=1}^{\infty}nM\Uplus\not=\zero$, then the prime ideal $\zero$ is contained in $\bigcap_{n=1}^{\infty}nM\Uplus$, and hence $\sepdim M<\dim M$ by Corollary \ref {CorExtIdealPrime}, contrary to our assumption. (3) Let $N\subseteq M$ be a subbinoid and $\iota:N\embto M$ the canonical binoid embedding. We have $\iota(N\Uplus)\subseteq M\Uplus$ by the positivity of $M$, which implies that $\iota(\bigcap_{n\ge 1}nN\Uplus)\subseteq\bigcap_{n\ge 1}nM\Uplus=\zero$. Thus, $N$ is separated as well (the positivity is trivial). (4) Suppose that $[f]\not=[\infty]$ for some $[f]\in\bigcap_{n\ge 1}n(M\Uplus/\Ical)$. For every $n\ge 1$, we find $a_{1}\kpkt a_{n}\in(M\Uplus)\setminus\Ical$ such that $f=a_{1}\pluspkt a_{n}\not=\infty$ in $M$. This means $\infty\not=f\in nM\Uplus$ for every $n\ge 1$, a contradiction to $M$ separated.
\end {proof}

\begin {Lemma}
Let $(M_{i})_{i\in I}$ be a finite family of nonzero binoids.
\begin {ListeTheorem}
\item If all $M_{i}$ are separated, then $\bigwedge_{i\in I}M_{i}$ is also separated. Conversely, if $\bigwedge_{i\in I}M_{i}$ is positive and separated, then all $M_{i}$ are so.
\item If all $M_{i}$ are positive, then $\bigcupbidot_{i\in I}M_{i}$ is separated if and only if all $M_{i}$ are so.
\end {ListeTheorem}
\end {Lemma}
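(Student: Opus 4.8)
The proof rests on the description of units in the two constructions together with the behaviour of the separating ideal $\bigcap_{n\ge 1}nM\Uplus$ under the canonical inclusions. Recall that $M$ is separated precisely when $\bigcap_{n\ge 1}nM\Uplus=\zero$, and that $M\Uplus$ is an ideal (Example \ref{ExSpIdeals}), so that $(n+1)M\Uplus\subseteq nM\Uplus$ and, more usefully, any sum of elements of $M$ containing at least $m$ nonunits already lies in $mM\Uplus$ (absorb the surplus terms into one nonunit). For the smash product I would use that $\bigl(\bigwedge_{i\in I}M_{i}\bigr)\okreuz\cong\prod_{i\in I}M_{i}\okreuz$ by Lemma \ref{LemSmashRules}(2), so that $\wedge_{i\in I}a_{i}$ is a nonunit exactly when at least one $a_{i}$ is, and that for $x\ne\infty_{\wedge}$ the representation $x=\wedge_{i\in I}c_{i}$ with all $c_{i}\ne\infty_{i}$ is unique, addition being componentwise.

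First I would dispose of both converse implications at once, via the contrapositive, using the canonical inclusion $\iota_{k}$ of $M_{k}$ into the relevant construction. If $M_{k}$ is not separated, choose $a\ne\infty_{k}$ in $\bigcap_{n\ge 1}nM_{k}\Uplus$. Since $\iota_{k}$ is a binoid homomorphism sending nonunits to nonunits (its slot-$k$ entry is a nonunit while the others equal $0_{i}\ne\infty_{i}$), the element $\iota_{k}(a)$ lies in the separating ideal of the construction yet $\iota_{k}(a)\ne\infty$, so the construction is not separated. In the bipointed case positivity gives $a\ne 0_{k}$ (a sum of nonunits is a nonunit, hence $\ne 0$), so that $(a;k)$ is neither $0$ nor $\infty$; for the smash product no positivity is in fact needed.

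The main work is the forward implication of (1). Assume all $M_{i}$ are separated and suppose $x=\wedge_{i\in I}c_{i}\in\bigcap_{n\ge 1}n\bigl(\bigwedge_{i\in I}M_{i}\bigr)\Uplus$ with $x\ne\infty_{\wedge}$, aiming for a contradiction. For each $n$ write $x=\sum_{j=1}^{n}\wedge_{i\in I}a_{i}^{(j)}$ with every summand a nonunit; componentwise addition and uniqueness of the representation force $c_{i}=\sum_{j=1}^{n}a_{i}^{(j)}$ in $M_{i}$, and each summand being a nonunit of the smash lets me pick $i(j)$ with $a_{i(j)}^{(j)}\in M_{i(j)}\Uplus$. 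By the pigeonhole principle some $i_{0}\in I$ occurs for at least $n/\#I$ indices $j$, so $c_{i_{0}}$ is a sum containing at least $\lceil n/\#I\rceil$ nonunits and therefore lies in $\lceil n/\#I\rceil M_{i_{0}}\Uplus$. As $I$ is finite, one fixed $i_{0}$ recurs for infinitely many $n$, forcing $c_{i_{0}}\in\bigcap_{m\ge 1}mM_{i_{0}}\Uplus=\zero$, whence $c_{i_{0}}=\infty_{i_{0}}$ and $x=\infty_{\wedge}$, a contradiction. The delicate point, which I expect to be the main obstacle, is exactly this counting: a single summand may carry several nonunit entries and the distinguished component varies with $j$, so one cannot isolate a single fixed factor $M_{i}$ in which to apply separatedness without the pigeonhole argument.

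The forward implication of (2) is easier, since the bipointed union forces the relevant data into one component. Assuming all $M_{i}$ separated, let $x\in\bigcap_{n\ge 1}n\bigl(\bigcupbidot_{i\in I}M_{i}\bigr)\Uplus$; as a sum of nonunits $x\ne 0$, so $x=(c;k)$ with $c\ne 0_{k}$. Writing $x=\sum_{j=1}^{n}y_{j}$ with each $y_{j}$ a nonunit (hence $y_{j}\ne 0$ and $y_{j}\ne\infty$), I observe that two summands lying in distinct components would make the partial sum $\infty$ by the addition rule of Lemma \ref{LemPointedUnion}; since $x\ne\infty$ all $y_{j}=(a_{j};k)$ with $a_{j}\ne 0_{k}$, so that $c=\sum_{j=1}^{n}a_{j}\in nM_{k}\Uplus$ for every $n$. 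Separatedness of $M_{k}$ then gives $c=\infty_{k}$, i.e. $x=\infty$, as required; throughout, positivity of the $M_{i}$ is used to identify $M_{i}\Uplus$ with $M_{i}\setminus\{0_{i}\}$ and to ensure that $\bigcupbidot_{i\in I}M_{i}$ is positive (Lemma \ref{LemPointedComposition}).
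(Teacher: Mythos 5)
Your proof is correct and follows essentially the same route as the paper's: the forward direction of (1) is the same counting argument on nonunit components (the paper fixes $m=\sum_{i}m_{i}$ in advance and shows $f\notin mM\Uplus$ directly, rather than letting $n\to\infty$ with a pigeonhole bound), and the converses rest on the canonical embeddings $M_{k}\embto\bigwedge_{i\in I}M_{i}$, exactly as the paper does via Lemma \ref{LemSubbinoidSep}(3) and Lemma \ref{LemSmashRules}(2). The paper dismisses (2) as easily verified; your supplied details are correct.
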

\begin {proof}
(1) Set $M:=\bigwedge_{i\in I}M_{i}$. First let all $M_{i}$ be separated and assume that $f\in nM\Uplus$ for every $n\ge1$, where $\infty_{\wedge}\not=f=\wedge_{i\in I}f_{i}\in M$, $f_{i}\in M_{i}\opkt$. Since $M_{i}$ is separated, there is for every $i\in I$ an $m_{i}\ge1$ with $f_{i}\not\in l(M_{i})\Uplus$ for all $l\ge m_{i}$. Let $m:=\sum_{i\in I}m_{i}$ and $I=\{1\kpkt r\}$. By the description of the prime ideals in the smash product, cf.\ Corollary \ref{CorSpecSmash}, we have 
$$mM\Uplus\,=\,m\Big(\bigcup_{k\in I}M_{1}\wedgepkt(M_{k})\Uplus\wedgepkt M_{r}\Big)\pkt$$ 
Thus, for every $g=\wedge_{i\in I}g_{i}\in mM\Uplus$ there is at least one 
$i\in I$ such that $g_{i}\in lM_{i}$ for some $l\ge m_{i}$, which means $f$ cannot be contained in $mM\Uplus$, a contradiction. For the converse let $M$ be positive and separated. Each $M_{k}$ can be considered as a subbinoid of $M$ via the canonical binoid embedding $\iota:M_{k}\embto M$, $a\mto a\widehat{e}_{k}$. Hence, all $M_{k}$ are separated and positive by Lemma \ref{LemSubbinoidSep}(3) and Lemma \ref{LemSmashRules}(2). The second statement is easily verified.
\end {proof}

\begin {Proposition} \label{PropSepIdeal}
Let $M$ be a binoid. Then
$$\{f\in M\mid f=f+g\text{ for some }g\in M\Uplus\}\,\,\subseteq\,\,\bigcap_{n\ge 1}nM\Uplus\komma$$
and equality holds if $M$ is finitely generated. In particular, if $M$ is positive and finitely generated, then
$$\bigcap_{n\ge 1}nM\Uplus\,\,=\,\,\{f\in M\mid f=f+g\text{ for some }g\not=0\}\pkt$$
\end {Proposition}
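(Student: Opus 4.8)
The plan is to establish the two inclusions separately and then deduce the positive case at the end. For the inclusion that holds in general, suppose $f = f + g$ for some $g \in M\Uplus$. Iterating this relation yields $f = f + ng$ for every $n \ge 1$. Since $ng = g \pluspkt g$ is a sum of $n$ nonunits, we have $ng \in nM\Uplus$, and because $nM\Uplus$ is an ideal (Example \ref{ExSpIdeals}(4)), the element $f = f + ng$ lies in $nM\Uplus$ as well. As $n$ is arbitrary, $f \in \bigcap_{n\ge1} nM\Uplus$.

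The substance lies in the reverse inclusion under finite generation. First I would fix a finite generating set $x_{1}\kpkt x_{r}$ of $M$ and let $S\subseteq\{1\kpkt r\}$ index the nonunit generators. Given $f\in\bigcap_{n\ge1}nM\Uplus$, the case $f=\infty$ is immediate (take $g=\infty\in M\Uplus$), so assume $f\not=\infty$. For each $n$ write $f = a_{1}\pluspkt a_{n}$ with $a_{l}\in M\Uplus$; none of the $a_{l}$ can equal $\infty$ (otherwise $f=\infty$), so each $a_{l}$ is a finite sum of generators, and since a sum of units is again a unit, each $a_{l}$ must involve at least one generator indexed by $S$. Collecting all summands produces an expression $f = \sum_{i=1}^{r} C_{i}^{(n)} x_{i}$ whose nonunit degree $\sum_{i\in S} C_{i}^{(n)}$ is at least $n$, hence unbounded as $n$ varies. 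The plan is then to invoke Dickson's Lemma: after passing to a subsequence along which $\sum_{i\in S} C_{i}^{(n)}$ is strictly increasing, the finitely supported exponent vectors $C^{(n)}\in\N^{r}$ admit two comparable terms $C^{(n)}\le C^{(n')}$ (componentwise) with $n<n'$ in this subsequence. Setting $g := \sum_{i=1}^{r}\bigl(C_{i}^{(n')}-C_{i}^{(n)}\bigr)x_{i}$, I obtain $f = \sum_{i} C_{i}^{(n')} x_{i} = \bigl(\sum_{i} C_{i}^{(n)} x_{i}\bigr) + g = f + g$, and the difference vector has a positive coordinate at some $i\in S$, so $g = x_{i} + (\cdots)\in M\Uplus$ since $M\Uplus$ is an ideal (Example \ref{ExSpIdeals}(2)).

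The hard part is precisely guaranteeing that the translation $g$ is a genuine nonunit rather than a unit, while leaving no uncontrolled remainder behind: a naive pigeonhole on the multiset of nonunit summands $a_{l}$ strands a tail in the possibly infinite binoid $M$ that cannot be matched across different values of $n$. The remedy is to expand $f$ \emph{entirely} into the finitely many generators, so that Dickson's Lemma controls finitely supported exponent vectors with no tail, and to choose the subsequence with strictly growing nonunit degree so that the comparable pair necessarily differs in an $S$-coordinate. Finally, the concluding \emph{in particular} is immediate: a positive binoid satisfies $M\okreuz=\{0\}$, whence $M\Uplus = M\setminus\{0\}$, so the condition $g\in M\Uplus$ is the same as $g\not=0$, and the stated equality follows from the general one just proved.
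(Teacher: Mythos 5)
Your proof is correct and follows essentially the same route as the paper's: the easy inclusion by iterating $f=f+g$, and the reverse inclusion by expanding $f\in\bigcap_{n\ge1}nM\Uplus$ over a finite generating set with unbounded nonunit degree and applying Dickson's Lemma to produce two comparable exponent vectors whose difference supplies the nonunit $g$. The only divergence is bookkeeping: the paper folds the unit generators into a single unit $u_{r}$ and places the correction $u_{r}+(\minus u_{s})$ inside $g$, whereas you keep the unit generators as extra coordinates of the exponent vector and force the comparable pair to differ in a nonunit coordinate by passing to a subsequence of strictly increasing nonunit degree; both versions are sound.
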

\begin {proof}
If $f=f+g$ for some $g\in M\Uplus$, then in particular $f=f+g\in M\Uplus$, which gives $f=f+ng\in nM\Uplus$ for all $n\ge 1$. This proves the inclusion. For the equality, we may assume that $\{x_{1}\kpkt x_{l}\}$ is a generating set of $M$ with $x_{1}\kpkt x_{k}\in M\Uplus$ and $x_{k+1}\kpkt x_{l}\in M\okreuz$. If $f\in\bigcap_{n\ge 1}nM\Uplus$, there is for every $n\ge1$ at least one $k\mina$tuple $r=(r_{1}\kpkt r_{k})\in\N^{k}$ such that $f=r_{1}x_{1}\pluspkt r_{k}x_{k}+u_{r}$ with  $u_{r}\in M\okreuz$ and $r_{1}\pluspkt r_{k}\ge n$. In particular, the set 
$$K=\{(r_{1}\kpkt r_{k})\in\N^{k}\mid f=r_{1}x_{1}\pluspkt r_{k}x_{k}+u_{r}\text{ for some }u_{r}\in M\okreuz\}$$ 
is infinite. By Dicksons's Lemma\index{Dickson's Lemma}, cf.\ \cite[Theorem 5.1]{GarciaRosales}, the subset $K^{\prime}\subseteq K$ of minimal elements in $K$ with respect to the product order is finite, in particular $K^{\prime}\subsetneq K$. By the definition of a minimal element and an easy finiteness argument, see \cite[Corollary 5.4]{GarciaRosales}, there is for every $r\in K$ an $s\in K^{\prime}$ with $s\le r$. Thus, we find a pair $(s,r)\in K^{\prime}\times K$ with $s\not= r$ and $s\le r$, which yields
$$f\,=\,r_{1}x_{1}\pluspkt r_{k}x_{k}+u_{r}\,=\,s_{1}x_{1}\pluspkt s_{k}x_{k}+u_{s}$$ 
for some $u_{r},u_{s}\in M\okreuz$. This shows that $f=f+g$ with $g=u_{r}+(\minus u_{s})+\sum_{i=1}^{k}(r_{i}-s_{i})x_{i}\in M\Uplus$. The supplement is clear.
\end {proof}

\begin {Example}\label{ExpSepDim1}
The prime ideal lattice of the binoid $M=\free(x,y,z)/(x+y=x+y+z, 2z=\infty)$ is given by 
$$\xymatrix{
&\langle x,y,z\rangle&\\
\langle x,z\rangle\!\!\!\!\!\!\!\!\!\!\!\!\!\!\!\!\!\!\!\!\!\!\!\!\!\!\!\!\!\!\ar@{}[ur]|{\subsetur\!\!\!\!\!\!\!\!\!\!\!\!\!}&&\!\!\!\!\!\!\!\!\!\!\!\!\!\!\!\!\!\!\!\!\!\!\!\!\!\!\!\!\!\!\langle y,z\rangle\ar@{}[ul]|{\!\!\!\!\!\!\!\!\!\!\!\!\!\supsetul}\\
&\langle z\rangle\ar@{}[ur]|{\!\!\!\!\!\!\!\!\!\!\!\!\!\subsetur}\ar@{}[ul]|{\supsetul\!\!\!\!\!\!\!\!\!\!\!\!\!}&\\
}$$
By Proposition \ref{PropSepIdeal}, the separating ideal of $M$ is $\langle x+y\rangle$. Since there is no $\Pcal\in\spec M$ with $\Pcal\subseteq\langle x+y\rangle$, the prime ideal lattices of $M$ and $M/\langle x+y\rangle$ are one-to-one by Corollary \ref {CorExtIdealPrime}. In particular, $\dim M=\sepdim M=2$.

Consider now the integral binoid $N=\free(x,y,z)/(x+y=x+y+z)$ (i.e.\ omit the relation $2z=\infty$), whose separating ideal is $\langle x+y\rangle$ by Proposition \ref{PropSepIdeal}, and whose prime spectrum is $\spec N=\{\langle J\rangle\mid J\in\Pset(\{x,y,z\})\}$. In particular, $\zero$ is a prime ideal contained in $\langle x+y\rangle$. This gives different dimensions, namely $\dim N=3$ and $\sepdim N=2$.
\end {Example}

\begin {Example}
Here is an example, where $\dim M$, $\sepdim M$, and the dimension $\Dim R[M]$ of the ring $R[M]$ are pairwise different.\nomenclature[Dimension7]{$\Dim R$}{dimension of the ring $R$} The prime ideal lattice of the binoid $M=\free(x,y,z)/(y+x=y, z+x=z)$ is given by 
$$\xymatrix{
&\langle x,y,z\rangle=\langle x\rangle&\\
&\langle y,z\rangle\ar@{}[u]|{\subsetu}&\\
\langle y\rangle\!\!\!\!\!\!\!\!\!\!\!\!\!\!\!\!\!\!\!\!\!\!\!\!\!\!\!\!\!\!\ar@{}[ur]|{\subsetur\!\!\!\!\!\!\!\!\!\!\!\!\!}&&\!\!\!\!\!\!\!\!\!\!\!\!\!\!\!\!\!\!\!\!\!\!\!\!\!\!\!\!\!\!\langle z\rangle\ar@{}[ul]|{\!\!\!\!\!\!\!\!\!\!\!\!\!\supsetul}\\
&\{\infty\}\ar@{}[ur]|{\!\!\!\!\!\!\!\!\!\!\!\!\!\subsetur}\ar@{}[ul]|{\supsetul\!\!\!\!\!\!\!\!\!\!\!\!\!}&\\
}$$
Hence, $\dim M=3$. By Proposition \ref{PropSepIdeal}, the separating ideal is $\langle y,z\rangle$, so $\sepdim M=1$. Note that for a field $K$ the Krull dimension $\Dim K[M]=\Dim K[X,Y,Z]/(YX-Y,ZX-Z)$ is $2$. 
\end {Example}

Proposition \ref{PropSepIdeal} gives rise to the following definition.

\begin {Definition}
An element $f\in M$ is called \gesperrt{weakly separated} \index{element!weakly (un-) separated --}\index{element!separated --}\index{element!unseparated --}if $f=\infty$ or $f=f+g$ for some $g\in M$ implies $g\in M\okreuz$, otherwise $f$ is \gesperrt{weakly unseparated}. We say $f\in M$ is \gesperrt{separated} if $f=\infty$ or $f=f+g$ for some $g\in M$ implies $g=0$, otherwise $f$ is \gesperrt{unseparated}.
\end {Definition}

Of course, separated implies weakly separated and the converse holds true if $M$ is positive. When dealing with positive binoids, we will not always mention this (and omit ``weakly''). Cancellative elements are always separated and so are the absorbing and the identity element of a binoid, whereas idempotent elements $\not\in\trivial$ are not even weakly separated. A binoid in which all elements $\not=0$ are nilpotent contains only the trivial separated elements since $x=x+y$ for some $y\not=0$ implies that $ny=\infty$ for some $n\ge2$, and hence $x=x+y=x+2y=\cdots=x+ny=\infty$.

\begin {Corollary}\label{CorSeperated}
In a separated binoid $M$ all elements are weakly separated, and the converse holds true if $M$ is finitely generated. In particular, a positive and finitely generated binoid is separated if and only if all elements are separated.
\end {Corollary}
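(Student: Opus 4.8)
The statement to prove is Corollary~\ref{CorSeperated}, which asserts three things: in a separated binoid every element is weakly separated; the converse holds when $M$ is finitely generated; and in the positive finitely generated case separatedness is equivalent to all elements being separated.

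The plan is to read off everything from Proposition~\ref{PropSepIdeal} together with the definition of a separated binoid. First I would prove the first implication directly from the inclusion
$$\{f\in M\mid f=f+g\text{ for some }g\in M\Uplus\}\,\subseteq\,\bigcap_{n\ge1}nM\Uplus$$
of Proposition~\ref{PropSepIdeal}. If $M$ is separated, the right-hand side is $\zero$, so the left-hand side is contained in $\zero$; that is, the only element $f$ admitting an equation $f=f+g$ with $g\in M\Uplus$ is $f=\infty$. Hence for every $f\ne\infty$, an equation $f=f+g$ forces $g\notin M\Uplus$, i.e. $g\in M\okreuz$, which is precisely the definition of $f$ being weakly separated. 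I should also note the trivial case $f=\infty$, which is weakly separated by definition.

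For the converse, I would invoke the equality in Proposition~\ref{PropSepIdeal}, which holds for finitely generated $M$:
$$\bigcap_{n\ge1}nM\Uplus\,=\,\{f\in M\mid f=f+g\text{ for some }g\in M\Uplus\}\pkt$$
Suppose every element of $M$ is weakly separated. Then the set on the right contains no element besides $\infty$: any $f$ with $f=f+g$ for $g\in M\Uplus$ would have to satisfy $g\in M\okreuz$ by weak separatedness, contradicting $g\in M\Uplus=M\setminus M\okreuz$ unless $f=\infty$. Therefore the separating ideal equals $\zero$, so $M$ is separated. The last assertion is then immediate: when $M$ is additionally positive, separated and weakly separated coincide with separated (since positivity gives $M\okreuz=\{0\}$, so $g\in M\okreuz$ means $g=0$), and the equivalence of separatedness with ``all elements separated'' follows by combining the two implications already established.

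I do not expect any genuine obstacle here, since the corollary is essentially a restatement of Proposition~\ref{PropSepIdeal} in the language of (weakly) separated elements. The only point requiring a little care is bookkeeping around the unit group: I must keep track of the distinction between $g\in M\okreuz$ (weakly separated) and $g=0$ (separated), and verify that positivity collapses the two, so that the final sentence about positive finitely generated binoids is correctly deduced rather than merely asserted. Handling the degenerate cases $f=\infty$ and $f=0$ explicitly will make the argument airtight.
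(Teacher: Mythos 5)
Your proof is correct and takes exactly the route the paper intends: the paper's own proof simply states that the corollary is an immediate consequence of Proposition~\ref{PropSepIdeal}, and your write-up supplies precisely the bookkeeping (inclusion for one direction, equality in the finitely generated case for the other, positivity collapsing $M\okreuz$ to $\{0\}$) that makes this immediate. No gaps.
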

\begin {proof}
This is an immediate consequence of Proposition \ref{PropSepIdeal}.
\end {proof}

\begin {Example}
Consider the positive binoid $(\Q_{\ge 0}^{\infty},+,0,\infty)$, which is not finitely generated, cf.\ Example \ref{ExpSemifree}(5). We have $(\Q_{\ge 0}^{\infty})\Uplus=\Q_{>0}^{\infty}$ and $a/b=n(a/nb)\in n\Q_{>0}^{\infty}$ for every $a/b\in\Q_{>0}^{\infty}$ and $n\ge1$. This shows that $\Q_{\ge 0}^{\infty}$ is not separated because
$$\bigcap_{n\ge 1}n(\Q_{>0}^{\infty})\Uplus\,=\,\,\Q_{>0}^{\infty}\komma$$
but all elements are separated since $\Q_{\ge 0}^{\infty}$ is cancellative.
\end {Example}

\begin {Lemma} 
Let $M$ be a nonzero binoid and $\Ical\subsetneq M$ an ideal. If $f\in M$ is (weakly) separated, then so is $[f]\in M/\Ical$.
\end {Lemma}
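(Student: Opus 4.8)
The plan is to pull any witnessing relation in $M/\Ical$ back to $M$ and then invoke the corresponding hypothesis on $f$ there. The structural fact I would isolate first is that, since $\Ical\subsetneq M$ is a proper ideal, it contains no units and hence $\Ical\subseteq M\Uplus$; in particular $0\notin\Ical$, so $[0]=\{0\}$ is the identity of $M/\Ical$. More generally, by the description of the Rees quotient in Remark \ref{RemIndIsom}, every class $[a]$ with $a\notin\Ical$ is the singleton $\{a\}$, while all elements of $\Ical$ are collapsed to $[\infty]$. This singleton property is exactly what allows a relation $[f]=[f]+[h]$ to be lifted to an honest equation in $M$.

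First I would dispose of the degenerate case: if $[f]=[\infty]$ (that is, $f=\infty$ or $f\in\Ical$), then $[f]$ is (weakly) separated by the very definition, since the defining implication is only demanded when the element is $\neq\infty$. So assume $[f]\neq[\infty]$, i.e.\ $f\notin\Ical$ and $f\neq\infty$; then $[f]=\{f\}$. Suppose now $[f]=[f]+[h]=[f+h]$ for some $[h]\in M/\Ical$. Because $[f]$ is a singleton and $[f+h]=[f]$, we must have $f+h\notin\Ical$ and $f+h=f$ in $M$. Since $f\neq\infty$, the hypothesis on $f$ applies directly in $M$: if $f$ is weakly separated it yields $h\in M\okreuz$, and if $f$ is separated it yields $h=0$.

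It remains to transport these conclusions into the quotient. For the separated case, $h=0$ gives $[h]=[0]$, the identity of $M/\Ical$, so $[f]$ is separated. For the weakly separated case, $h\in M\okreuz$ gives $[h]=\pi(h)\in(M/\Ical)\okreuz$ by Lemma \ref{LemHomProperties}(3), where $\pi\colon M\rto M/\Ical$ denotes the canonical projection; hence $[f]$ is weakly separated. This settles both statements simultaneously, the only divergence between them being the target condition ($g=0$ versus $g\in M\okreuz$).

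I do not expect a serious obstacle here; the one point demanding care is the reduction from the quotient back to $M$, which hinges on $f\notin\Ical$ forcing $[f]$ to be a singleton, so that $[f+h]=[f]$ genuinely returns the equation $f+h=f$ in $M$ rather than merely $f+h\sim_{\Ical}f$. Were $f\in\Ical$ we would only have $[f]=[\infty]$, which is precisely why that situation is peeled off at the start.
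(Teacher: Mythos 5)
Your proof is correct and follows essentially the same route as the paper's (one-sentence) argument: the relation $[f]=[f]+[g]$ lifts to $f=f+g$ in $M$ unless $f\in\Ical$ (in which case $[f]=[\infty]$ and there is nothing to prove), and since the proper ideal $\Ical$ meets no units, units of $M$ map to units of $M/\Ical$. Your version merely spells out the singleton-class observation that the paper leaves implicit.
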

\begin {proof}
This is immediate since $[f]=[f]+[g]$ in $M/\Ical$ is equivalent to $f=f+g$ or $f\in\Ical$, and since $\Ical\not=M$ implies that $\Ical\cap M\okreuz=\emptyset$, we have $(M/\Ical)\okreuz=\{[u]\mid u\in M\okreuz\}$.
\end {proof}

\begin{Lemma}
Let $(M_{i})_{i\in I}$ be a finite family of nonzero binoids. 
\begin {ListeTheorem}
\item An element $\wedge_{i\in I}f_{i}\in\bigwedge_{i\in I}M_{i}$ is (weakly) separated if and only if all $f_{i}\in M_{i}$ are so. 
\item If all $M_{i}$ are positive, then $(f;k)\in\,\,\bigcupbidot_{i\in I}M_{i}$ is separated if and only if $f\in M_{k}$ is so.
\end {ListeTheorem}
\end{Lemma}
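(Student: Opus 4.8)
The statement to prove concerns when elements of a smash product or bipointed union are (weakly) separated. Let me sketch the proof plan.

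\medskip

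For part (1), the plan is to translate the defining property of (weak) separation directly through the explicit description of the smash product. Recall from Lemma \ref{LemSmashRules}(2) that $\left(\bigwedge_{i\in I}M_{i}\right)\okreuz\cong\prod_{i\in I}M_{i}\okreuz$, so a unit in the smash product is precisely a class $\wedge_{i\in I}u_{i}$ with every $u_{i}\in M_{i}\okreuz$. First I would unwind what it means for $\wedge_{i\in I}f_{i}$ to be weakly separated: either $\wedge_{i\in I}f_{i}=\infty_{\wedge}$ (equivalently, some $f_{k}=\infty_{k}$ by the definition of $\sim_{\wedge}$), or $\wedge_{i\in I}f_{i}=\wedge_{i\in I}f_{i}+\wedge_{i\in I}g_{i}=\wedge_{i\in I}(f_{i}+g_{i})$ forces $\wedge_{i\in I}g_{i}$ to be a unit. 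Since addition in the smash product is componentwise (away from the absorbing class), the equation $\wedge_{i\in I}f_{i}=\wedge_{i\in I}(f_{i}+g_{i})$ with both sides $\neq\infty_{\wedge}$ is equivalent to $f_{i}=f_{i}+g_{i}$ for all $i\in I$. The key step is then the logical equivalence: every $f_{i}$ is weakly separated if and only if each such componentwise equation forces each $g_{i}\in M_{i}\okreuz$, which by the unit description is exactly the condition that $\wedge_{i\in I}g_{i}$ is a unit. I would handle the case where $\wedge_{i\in I}f_{i}=\infty_{\wedge}$ separately, noting that if some $f_k=\infty_k$ then both sides are trivially separated. For the ``separated'' (non-weak) version, the same argument applies verbatim with $M_{i}\okreuz$ replaced by $\{0_i\}$, using that $\wedge_{i\in I}g_{i}=0_{\wedge}$ if and only if every $g_{i}=0_{i}$.

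\medskip

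For part (2), where all $M_{i}$ are positive, the plan is to use the addition rule for the bipointed union from Lemma \ref{LemPointedUnion}. An element $(f;k)\neq\infty,0$ lies in a single component, and the equation $(f;k)=(f;k)+(g;j)$ must be analyzed by cases on $j$. If $j=k$, this reads $(f;k)=(f+g;k)$, i.e. $f=f+g$ in $M_{k}$. If $j\neq k$ and $g\neq 0$, the sum is $\infty$ (since distinct nonzero components annihilate), so $(f;k)=\infty$, contradicting $f\neq\infty_k$; and if $g=0$ then $(g;j)=0$ trivially. Thus the only way to have $(f;k)=(f;k)+(g;j)$ with $(g;j)\neq 0$ is $j=k$ and $f=f+g$ in $M_{k}$ with $g\neq 0_{k}$. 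Since $M_{k}$ is positive, this shows $(f;k)$ is separated exactly when $f$ is separated in $M_{k}$. The cases $(f;k)=0$ and $(f;k)=\infty$ are handled by noting these are the identity and absorbing elements, which are always separated.

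\medskip

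The main obstacle, though modest, will be the careful bookkeeping of the absorbing-element cases and making sure the equivalence is stated symmetrically in both directions. In particular, one must verify that an equation $\wedge_{i\in I}f_{i}=\wedge_{i\in I}h_{i}$ with the right side genuinely of the form $\wedge_{i\in I}(f_{i}+g_{i})$ cannot secretly collapse to $\infty_{\wedge}$ on one side while being a proper class on the other; this is where the precise description of $\sim_{\wedge}$ (a class is a singleton unless some entry is $\infty_{i}$) is essential. Once this is pinned down, both parts reduce to routine verification, so I would phrase the final proof as ``This follows from the description of the addition and the units in the smash product (Lemma \ref{LemSmashRules}) and the bipointed union (Lemma \ref{LemPointedUnion}), together with the positivity assumption in the latter case.''
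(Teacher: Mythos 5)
Your proposal is correct and follows essentially the same route as the paper: componentwise analysis of the addition in the smash product together with the description of its unit group $\left(\bigwedge_{i\in I}M_{i}\right)\okreuz\cong\prod_{i\in I}M_{i}\okreuz$, and the case analysis on components for the bipointed union. The only point the paper makes explicit that your ``key step'' biconditional leaves implicit is the converse direction of (1): to deduce that a single $f_{k}$ is (weakly) separated from the separation of $\wedge_{i\in I}f_{i}$, one must test against the padded element $g_{k}\widehat{e_{k}}=0\wedge\cdots\wedge g_{k}\wedge\cdots\wedge 0$, which promotes the one-component equation $f_{k}=f_{k}+g_{k}$ to an equation $\wedge_{i\in I}f_{i}=\wedge_{i\in I}f_{i}+g_{k}\widehat{e_{k}}$ in the smash product.
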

\begin {proof}
(1) If $\wedge_{i\in I}f_{i}=\infty_{\wedge}$ or (equivalently) $f_{i}=\infty$ for some $i\in I$, the statement is trivial. So assume that all $f_{i}\not=\infty$ are (weakly) separated and 
$$\wedge_{i\in I}f_{i}=(\wedge_{i\in I}f_{i})+(\wedge_{i\in I}g_{i})=\wedge_{i\in I}(f_{i}+g_{i})$$ 
for some $g_{i}\in M_{i}$, $i\in I$. By the definition of the smash product, $\infty\not=f_{i}=f_{i}+g_{i}$ for all $i\in I$ which implies that $g_{i}=0$ ($g_{i}\in M\okreuz$) by assumption, and hence $\wedge_{i\in I}g_{i}=0_{\wedge}$ ($\wedge_{i\in I}g_{i}\in(\bigwedge_{i\in I}M_{i})\okreuz$, cf. Lemma \ref{LemSmashRules}(2)). Conversely, if $\infty_{\wedge}\not=\wedge_{i\in I}f_{i}$ is (weakly) separated and $f_{k}=f_{k}+g_{k}$ for some $k\in I$ and $g_{k}\in M_{k}$, then $f_{k}\not=\infty$ and $\infty_{\wedge}\not=\wedge_{i\in I}f_{i}=\wedge_{i\in I}f_{i}+g_{k}\widehat{e}_{k}$. This implies that $g_{k}\widehat{e}_{k}=0_{\wedge}$ ($g_{k}\widehat{e}_{k}\in(\bigwedge_{i\in I}M_{i})\okreuz$) by assumption, hence $g_{k}=0$ ($g_{k}\in M_{k}\okreuz$). (2) is trivial.
\end {proof}

\begin {Lemma} \label{LemSeparatedProp2}
Let $M$ be a binoid consisting only of (weakly) separated elements and $S\subseteq M$ a submonoid with $\infty\not\in S$.
\begin {ListeTheorem}
\item  If $M$ is positive, then so is $M_{\opcan, S}$.
\item If $S\subseteq\opint(M)$, then $M_{\opcan, S}$ consists only of (weakly) separated elements as well.
\end {ListeTheorem}
\end {Lemma}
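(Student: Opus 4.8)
The plan is to reduce every statement about the quotient $M_{\opcan,S}$ to a statement about $M$ by \emph{absorbing} the witness of the congruence. Recall from Lemma~\ref{LemCancellation} that $[a]=[b]$ in $M_{\opcan,S}$ means $a+s=b+s$ for some $s\in S$, and that $\infty\notin S$ guarantees $M_{\opcan,S}\neq\zero$. The guiding observation is that whenever a relation in $M_{\opcan,S}$ is written out, it produces, after adding the witness $s$, an honest equation in $M$ of the shape $(x+s)=(x+s)+g$; since $x+s$ is an element of $M$ and all elements of $M$ are (weakly) separated, such an equation is tightly controlled. Throughout I will use that $s\neq\infty$ (because $\infty\notin S$) so that the defining clause of (weak) separatedness applies to $x+s$.

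For (1), I would suppose $[a]$ is a unit of $M_{\opcan,S}$, say $[a]+[b]=[0]$. Unwinding this, $a+b\sim_{\opcan,S}0$, i.e.\ $s+(a+b)=s$ for some $s\in S$. Here $s$ is weakly separated and $s\neq\infty$, so the relation $s=s+(a+b)$ forces $a+b\in M\okreuz$; as $M$ is positive, $a+b=0$, hence $a\in M\okreuz=\{0\}$ and $a=0$. Therefore $[a]=[0]$, proving that $M_{\opcan,S}$ has trivial unit group, i.e.\ is positive. Note that this part needs only positivity and $\infty\notin S$, not the integrality hypothesis.

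For (2), I would first record the clean consequence of $S\subseteq\opint(M)$: since each $s\in S$ is integral and $s\neq\infty$, an equation $f+s=\infty$ forces $f=\infty$, so $[f]=[\infty]$ holds if and only if $f=\infty$; in particular $[f]\neq[\infty]$ yields $f+s\neq\infty$ for every $s\in S$. Now let $[f]\neq[\infty]$ and suppose $[f]=[f]+[g]=[f+g]$. Choosing a witness, $f+s=f+g+s=(f+s)+g$ with $f+s\neq\infty$ by the previous remark. Applying (weak) separatedness of the element $f+s\in M$ gives $g=0$ in the separated case and $g\in M\okreuz$ in the weakly separated case (and $g\neq\infty$ is automatic, since $g=\infty$ would force $f+s=\infty$). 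Passing to the quotient, $[g]=[0]$ respectively $[g]\in(M_{\opcan,S})\okreuz$ by Lemma~\ref{LemHomProperties}(3), which is exactly the assertion that $[f]$ is (weakly) separated.

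The only genuine subtlety, and hence the main thing to get right, is the interplay with the absorbing element: one must ensure that the lifted equation $(f+s)=(f+s)+g$ really takes place away from $\infty$, for otherwise the separatedness clause is vacuous. This is precisely where $\infty\notin S$ (for part (1)) and $S\subseteq\opint(M)$ (for part (2)) enter, by guaranteeing $s\neq\infty$ and $f+s\neq\infty$, respectively. Everything else is a routine unwinding of the definition of $\sim_{\opcan,S}$, so no lengthy computation is required.
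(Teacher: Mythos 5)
Your proposal is correct and follows essentially the same route as the paper: in both parts one adds the witness $s\in S$ of the congruence to obtain an honest equation of the form $x=x+g$ in $M$, applies (weak) separatedness of $x$ (using $\infty\notin S$ in part (1) and $S\subseteq\opint(M)$ in part (2) to keep the equation away from $\infty$), and descends back to the quotient. The only cosmetic difference is that in part (2) you dispose of the case $[f]=[\infty]$ up front via integrality, whereas the paper invokes integrality after applying separatedness; the content is identical.
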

\begin {proof}
(1) Note that in a positive binoid the notions of weakly separated and separated are equivalent. To show that $M_{\opcan, S}$ is positive suppose that $[0]=[f]+[g]=[f+g]$ in $M_{\opcan,S}$ for $f,g\in M$. This is equivalent to $f+g+s=s$ in $M$ for some $s\in S$. By assumption, $s=\infty$ or $f+g=0$, and since $\infty\not\in S$ the latter holds. Thus, $f=g=0$ by the positivity of $M$, hence $[f]=[g]=[0]$. (2) Assume that $[f]=[f]+[g]=[f+g]$ in $M_{\opcan,S}$. Then $f+s=g+f+s$ in $M$ for some $s\in S$, which implies that  $g=0$ ($g\in M\okreuz$) or $f+s=\infty$ by assumption. Since $s$ is an integral element, we obtain $g=0$ ($g\in M\okreuz$) or $f=\infty$. Thus, $[g]=0$ ($[g]\in (M_{\opcan,S})\okreuz$) or $[f]=[\infty]$.
\end {proof}

\begin {Definition}
An \gesperrt{order function} \index{order function}on a binoid $M$ is a map $\ord:M\opkt\rto \N$ such that 
$$\ord(f+g)\ge\ord(f)+\ord(g)$$ 
for all $f,g\in M$ with $f+g\not=\infty$. Such a function is called \gesperrt{positive} \index{order function!positive --}if $\ord(f)>0$ for all $f\in M\Uplus$.
\end {Definition}

Since there are no idempotent elements $\not=0$ in $\N$, every order function satisfies $\ord(0)=0$, which implies $\ord(u)=0$ for all $u\in M\okreuz$.

\begin {Proposition}
If $M$ is separated, then 
$$\delta:M\opkt\Rto \N\komma\quad f\lto\max\{k\in\N\mid f\in kM\Uplus\}\komma$$
where $0M\Uplus:=M$, is a positive order function on $M$. Conversely, a binoid is separated if it is finitely generated and admits a positive order function.
\end {Proposition}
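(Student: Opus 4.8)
The statement has two directions, and I would treat them separately.

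\textbf{First direction (separated $\Rightarrow$ $\delta$ is a positive order function).} The plan is to show three things: that $\delta$ is well-defined (takes values in $\N$, not $\infty$), that it satisfies the superadditivity inequality, and that it is positive. Well-definedness is exactly where separatedness enters: for $f\in M\opkt$, the set $\{k\in\N\mid f\in kM\Uplus\}$ is bounded above, since if it were unbounded we would have $f\in\bigcap_{n\ge 1}nM\Uplus=\zero$, forcing $f=\infty$, contrary to $f\in M\opkt$. Hence the maximum exists. For the superadditivity, suppose $\delta(f)=k$ and $\delta(g)=\ell$ with $f+g\neq\infty$; then $f=a_1\pluspkt a_k$ and $g=b_1\pluspkt b_\ell$ with all $a_i,b_j\in M\Uplus$, so $f+g=a_1\pluspkt a_k+b_1\pluspkt b_\ell\in(k+\ell)M\Uplus$, which gives $\delta(f+g)\ge k+\ell=\delta(f)+\delta(g)$. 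For positivity, note $f\in M\Uplus$ means precisely $f\in 1\cdot M\Uplus$, so $\delta(f)\ge 1>0$ for every nonunit $f\neq\infty$; and one should check the boundary cases $\delta(u)=0$ for units and that the definition of $\ord$ only requires values on $M\opkt$.

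\textbf{Second direction (finitely generated $+$ positive order function $\Rightarrow$ separated).} Here I would argue by contradiction using Proposition \ref{PropSepIdeal}. Suppose $M$ is finitely generated, carries a positive order function $\ord$, but is not separated, so $\bigcap_{n\ge 1}nM\Uplus\neq\zero$. By Proposition \ref{PropSepIdeal}, since $M$ is finitely generated, there is an element $f\neq\infty$ with $f=f+g$ for some $g\in M\Uplus$ (in the positive case $g\neq 0$; but even without positivity the description $\{f\mid f=f+g,\ g\in M\Uplus\}$ applies). Applying $\ord$ to $f=f+g$ and using superadditivity gives $\ord(f)=\ord(f+g)\ge\ord(f)+\ord(g)$, hence $\ord(g)\le 0$, so $\ord(g)=0$. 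But $g\in M\Uplus$ and $\ord$ is positive, forcing $\ord(g)>0$ — a contradiction. Therefore $\bigcap_{n\ge 1}nM\Uplus=\zero$ and $M$ is separated.

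\textbf{Main obstacle.} The only genuinely delicate point is the well-definedness of $\delta$ in the first direction, namely ensuring $\delta(f)<\infty$, which is exactly the content of separatedness and must be invoked carefully. Everything else is a short manipulation. One subtlety to watch is that the definition of an order function restricts to the domain $M\opkt$ and the inequality is only required when $f+g\neq\infty$, so I would make sure the superadditivity argument only uses instances where the relevant sums are finite; since $\delta$ is defined on $M\opkt$ and we assume $f+g\neq\infty$, this matches. I would also remark explicitly, as the paper does, that $\delta(u)=0$ for $u\in M\okreuz$ (consistent with $0M\Uplus:=M$), so that positivity is stated only for genuine nonunits.
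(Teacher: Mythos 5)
Your proof is correct and follows essentially the same route as the paper: the forward direction is the direct verification the paper leaves as "easily checked" (with separatedness entering exactly where you say, to bound $\{k\mid f\in kM\Uplus\}$), and the converse is the paper's argument via Proposition \ref{PropSepIdeal} and $\ord(f)\ge\ord(f)+\ord(g)$, merely recast as a contradiction instead of showing directly that every weakly unseparated element is $\infty$.
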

\begin {proof}
It is easily checked that $\delta$ defines a positive order function because the convention $0M\Uplus=M$ implies that $\delta(f)=0$ if and only if $f$ is a unit. For the converse let $\ord:M\opkt\rto\N$ be a positive order function. Since $M$ is finitely generated, we have
$$\{f\in M\mid f=f+g\text{ for some }g\in M\Uplus\}\,=\,\bigcap_{n\ge 1}nM\Uplus$$
by Proposition \ref{PropSepIdeal}. Therefore, we only need to show that $\infty$ is the only weakly unseparated element. For this assume that $f=f+g$ for some $f,g\in M\opkt$. Then $\ord(f)\ge\ord(f)+\ord(g)$ in $\N$ which implies that $\ord(g)=0$, and hence $g\in M\okreuz$.
\end {proof}

\begin {Proposition}\label{PropReducedSepNoSP}
Let $M$ be a finitely generated reduced binoid and $K$ a field. If $M$ contains an unseparated element, then there exists an irreducible component of $K\minspec M$ that does not contain the special point.
\end{Proposition}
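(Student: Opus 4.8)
The plan is to exploit the defining relation of unseparatedness directly in the binoid algebra $K[M]$ and to split $K\minspec M$ into two closed pieces, one of which will carry an entire irreducible component that avoids the special point.

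Let $f\in M$ be unseparated, so that $f\neq\infty$ and $f=f+g$ for some $g\neq 0$. First I would arrange that the witness $g$ is a \emph{non-unit}; this is automatic when $M$ is positive (where $M\okreuz=\{0\}$), and it is the relevant situation, since by Proposition \ref{PropSepIdeal} it is exactly a non-unit witness that places $f$ in the separating ideal $\bigcap_{n\ge1}nM\Uplus$. The equality $f=f+g$ translates into $T^{f}\cdot T^{g}=T^{f+g}=T^{f}$ in $K[M]$, hence $T^{f}(T^{g}-1)=0$; since $f\neq\infty$ and $g\neq 0$ neither factor vanishes, so this is a genuine zero-divisor relation. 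Evaluating at an arbitrary $\varphi\in K\minspec M$ gives $\varphi(f)(\varphi(g)-1)=0$, and therefore
$$K\minspec M=\opV_{K}(T^{f})\cup\opV_{K}(T^{g}-1)=:A\cup B,$$
a union of two closed subsets.

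Next I would locate a point of $B$ lying outside $A$, using the characteristic points that sit inside $K\minspec M$ independently of $K$. Because $M$ is reduced and $f\neq\infty$, the element $f$ is not nilpotent, so by Corollary \ref{CorMinPrimeIntersection} there is a (minimal) prime $\Pcal\in\spec M$ with $f\notin\Pcal$. Since $f=f+g\notin\Pcal$ and $\Pcal$ is an ideal, the prime property forces $g\notin\Pcal$ as well. Hence the characteristic point $\alpha_{\Pcal}=\chi_{M\setminus\Pcal}$ takes the values $\alpha_{\Pcal}(f)=1$ and $\alpha_{\Pcal}(g)=1$ (in $\{1,0\}\subseteq K$), so $\alpha_{\Pcal}\in B\setminus A$; in particular $B\setminus A\neq\emptyset$. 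Finally I would pass to an irreducible component: by Lemma \ref{LemIrredIFclosure} the point $\alpha_{\Pcal}$ lies in some irreducible component $C$ of $K\minspec M$, and as $C$ is irreducible and contained in the union of the two closed sets $A$ and $B$, it lies entirely in one of them; since $\alpha_{\Pcal}\in C\setminus A$, necessarily $C\subseteq B$. On the other hand the special point $\chi_{M\okreuz}$ sends the non-unit $g$ to the absorbing element, i.e.\ $\chi_{M\okreuz}(g)=0\neq 1$, so $\chi_{M\okreuz}\notin B\supseteq C$, and $C$ is the component we seek.

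The main obstacle is the preliminary reduction that the witness $g$ may be taken to be a non-unit. For a unit witness the relation $f=f+g$ reflects only torsion inside $M\okreuz$ (it disappears upon passing to $M_{\oppos}$), and in that degenerate case the special point can in fact meet every component; thus this case genuinely has to be excluded, and it does not arise in the positive setting that is of interest. Once $g$ is a non-unit, the two points that still need care are that $\alpha_{\Pcal}$ really separates $B$ from $A$ — which rests on $f$ being non-nilpotent together with the ideal property forcing $g\notin\Pcal$ — and the elementary topological fact that an irreducible subset of $A\cup B$ must lie in $A$ or in $B$.
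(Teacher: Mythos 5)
Your proof is essentially the paper's own argument: the same factorization $T^{f}(T^{g}-1)=0$ giving the closed cover $K\minspec M=\opV_{K}(T^{f})\cup\opV_{K}(T^{g}-1)$, the same choice of a characteristic point $\alpha_{\Pcal}$ with $f\notin\Pcal$ (hence $g\notin\Pcal$) lying in the second piece but not the first, and the same conclusion that the irreducible component through that point avoids the special point because $g$ is a non-unit. The caveat you raise about the witness $g$ possibly being a unit is a genuine one — the paper's proof also silently uses $g\in M\Uplus$ in its final step — so your explicit reduction to a non-unit witness is, if anything, more careful than the original.
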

\begin {proof}
By assumption, there are elements $f,g\in M$ with $\infty\not=f=f+g$ and $g\not=0$. Since $M$ is reduced, the element $f+g$ is not nilpotent, which implies that $\opD(f+g)\not=\emptyset$ by Lemma \ref{LemBcalD}(5). Take any $\Pcal\in\opD(f+g)$ and let $Q:=\alpha_{\Pcal}:M\rto K$ be the (characteristic) $K\mina$point corresponding to $\Pcal$, which satisfies $\alpha_{\Pcal}(f+g)=\alpha_{\Pcal}(f)=1$ and $\alpha_{\Pcal}(g)=1$. Let $S:=\alpha_{M\Uplus}:M\rto K$ denote the special point. We have $T^{f}=T^{f}T^{g}$ in $K[M]$, and therefore $\opV_{K}(T^{f})\cup\opV_{K}(T^{g}-1)=K\minspec M$ by Lemma \ref{LemVKProperties}(3)\&(7). By the above considerations, the $K\mina$point $Q$ lies in $\opV_{K}(T^{g}-1)$ but is not contained in $\opV_{K}(T^{f})$. In particular, there is an irreducible component $C\subseteq\opV_{K}(T^{g}-1)$ with $Q\in C$. Then for every $K\mina$point $\varphi\in C$, one has $\varphi(T^{g})=1$, and therefore $S\not\in C$ since $g\in M\Uplus$. Thus, $C$ is the irreducible component in question.
\end {proof}

\begin {Example} 
Consider the following $\R\mina$spectra:
\begin {center}
\quad\quad
\begin {pspicture} (-1.5,-2.5)(2.5,2)
\qdisk (0,0){1.75pt}\qdisk (0,0.7){1.75pt}\qdisk (0.7,0){1.75pt}\qdisk (0.7,0.7){1.75pt}
\psline [linewidth=0.5 pt] (0,-1.5)(0,1.5)
\psline [linewidth=0.5 pt] (-1.5,0)(1.5,0)
\psline [linewidth=0.5 pt] (-1.3,-1.3)(1.3,1.3)
\uput [0] (-2.6,-2.2){\small{$\R\minspec\free( x,y)/(x+2y=2x+y)$}}
\end {pspicture}
\quad\quad\,\,
\begin {pspicture} (-1.5,-2.5)(2.5,2)
\qdisk (0,0){1.75pt}\qdisk (0,0.7){1.75pt}\qdisk (0.7,0.7){1.75pt}
\psline [linewidth=0.5 pt] (0,-1.5)(0,1.5)
\psline [linewidth=0.5 pt, linestyle=dotted] (-1.5,0)(1.5,0)
\psline [linewidth=0.5 pt] (-1.5,0.7)(1.5,0.7)
\uput [0] (-2.15,-2.2){\small{$\R\minspec\free(x,y)/(x+y=x)$}}
\end {pspicture}
\quad\quad\,\,
\begin {pspicture} (-1.5,-2.5)(2.5,2)
\qdisk (0,0){1.75pt}\qdisk (0,0.7){1.75pt}
\psline [linewidth=0.5 pt] (0,-1.5)(0,1.5)
\psline [linewidth=0.5 pt, linestyle=dotted] (-1.5,0)(1.5,0)
\uput [0] (-2.65,-2.2){\small{$\R\minspec\free(x,y)/(x+y=x,2x=\infty)$}}\end {pspicture}\end {center}
The binoid that yields the $\R\mina$spectrum on the left-hand side is separated, whereas the other binoids are not. These two only differ in being reduced or not and show that this condition is necessary in the preceding proposition. 
\end {Example}

\begin {Corollary}
Let $M$ be a finitely generated binoid that is positive and reduced. If $M$ is not separated, then there exists an irreducible component of $K\minspec M$ that does not contain the special point. 
\end {Corollary}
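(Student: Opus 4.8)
The plan is to reduce this corollary directly to the preceding Proposition~\ref{PropReducedSepNoSP}. The statement of that proposition requires only that $M$ be finitely generated and reduced, and concludes precisely that if $M$ contains an unseparated element, then some irreducible component of $K\minspec M$ avoids the special point. So the whole task is to show that, under the present hypotheses (positive, reduced, finitely generated, and \emph{not} separated), the binoid $M$ must contain an unseparated element. Once this is established, the conclusion is immediate by invoking Proposition~\ref{PropReducedSepNoSP}.

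First I would exploit the positivity. By Corollary~\ref{CorSeperated}, a positive and finitely generated binoid is separated if and only if all its elements are separated. Reading this contrapositively: since $M$ is positive, finitely generated, and by assumption \emph{not} separated, there must exist at least one element $f\in M$ that is not separated, i.e.\ an unseparated element. Concretely, unfolding the definitions, this means there are $f\in M$ with $f\neq\infty$ and some $g\in M$ with $f=f+g$ but $g\neq 0$; in the positive case the notions of weakly separated and separated coincide, so there is no ambiguity here. This is exactly the hypothesis needed to feed into the earlier proposition.

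The proof therefore runs as follows. I would first note that $M$ is finitely generated and reduced, so Proposition~\ref{PropReducedSepNoSP} applies as soon as an unseparated element is produced. Then I would produce such an element by the argument of the previous paragraph: positivity together with Corollary~\ref{CorSeperated} turns the failure of separatedness into the existence of an unseparated element. Finally, Proposition~\ref{PropReducedSepNoSP} yields an irreducible component of $K\minspec M$ not containing the special point, which is the desired conclusion.

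I do not anticipate a genuine obstacle here, since the corollary is essentially a specialization of Proposition~\ref{PropReducedSepNoSP} using the characterization of separatedness for positive finitely generated binoids. The only point demanding slight care is the logical bookkeeping: one must invoke Corollary~\ref{CorSeperated} in the correct direction (the ``all elements separated $\iff$ separated'' equivalence holds precisely because of positivity and finite generation), and confirm that ``unseparated'' in the sense required by Proposition~\ref{PropReducedSepNoSP} is exactly the negation furnished by that equivalence. Both are guaranteed by the hypotheses, so the argument is a short and clean reduction rather than a computation.
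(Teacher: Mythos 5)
Your argument is exactly the paper's proof: the paper states that the corollary is an immediate consequence of Proposition~\ref{PropReducedSepNoSP} and Corollary~\ref{CorSeperated}, and you have simply unfolded that reduction, correctly using positivity to identify weakly separated with separated. The proposal is correct and takes the same route.
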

\begin {proof}
This is an immediate consequence of Proposition \ref{PropReducedSepNoSP} and Corollary \ref{CorSeperated}.
\end {proof}

\bigskip

\section {Graded binoids} \label{SecGraded}
\markright {\ref{SecGraded} Graded binoids}

In this section, we consider binoids that admit a grading and study their $K\mina$spectra for a field $K$. Graded binoids also appear in \cite{CortinasWeibelCDH}.

\begin {Definition}
Let $M$ be a binoid and $D$ a monoid. We call $M$ a $D\mina$\gesperrt{graded} \index{binoid!graded --}binoid if there is a map $\delta:M\opkt\rto D$ such that $\delta(0)=0$ and for all $x,y\in M\opkt$:
$$\delta(x+y)\,=\,\delta(x)+\delta(y)\quad\text{when}\quad x+y\not=\infty\pkt$$
Then $M$ is \gesperrt{graded} by $D$ with respect to $\delta$, and we call $(D,\delta)$ a \gesperrt{grading} \index{grading}of $M$. The image $\delta(x)$ is the \gesperrt{degree} \index{degree}\index{element!degree of an --}of $x$. $M$ is \gesperrt{positively} graded \index{binoid!positively graded --}\index{grading!positive --}by $D$ if in addition nonunits are mapped to nonunits; that is,  $\delta(M\opkt\Uplus)\subseteq D\setminus D\okreuz $.
\end {Definition}

By definition, every grading $\delta:M\opkt\rto D$ maps units and idempotent elements of $M$ to units and idempotent elements of $D$, respectively; that is, one always has $\delta(M\okreuz)\subseteq D\okreuz$ and $\delta((\bool M)\opkt)\subseteq\bool(D)$. 

\begin {Example}
\begin {ListeTheorem}
\item []
\item Given a binoid $M$ and a monoid $D$, the constant map $x\mto 0$ is a $D\mina$grading of $M$, which is positive if and only if $M$ is a binoid group. In particular, if $M$ is commutative, $\delta(M)$ need not necessarily be contained in the center $\{d\in D\mid d+c=c+d$ for all $c\in D\}$ of $D$.
\item An integral binoid $M$ is $(D,\delta)\mina$graded if and only if $\delta:M\opkt\rto D$ is a monoid homomorphism. 
\item The blowup binoid associated to a commutative binoid $M$ and an ideal $\Ical\subseteq M$, cf.\ Remark \ref{RemBlowupBinoid}, is an $\N\mina$graded binoid via
$$\delta:R_{\Ical}\opkt\Rto\N\komma\quad (a;n)\lto n\pkt$$
This $\N\mina$grading of $R_{\Ical}$ is positive if and only if $M$ is a binoid group.
\end {ListeTheorem}
\end {Example}

\begin {Remark}
If $\delta:M\opkt\rto D$ is a $D\mina$grading of $M$, then $M$ decomposes into the  pointed union
$$M\,=\,\bigcupdot_{d\in D}M_{d}$$
of the pointed sets $(M_{d},\infty)$, where $M_{d}=\{a\in M\mid\delta(a)=d\}\cup\zero$, $d\in D$, with $M_{d}+M_{e}\subseteq M_{d+e}$ for $d,e\in D$. By Proposition \ref{PropNsetsDecompositionDirectSum},
$$K[M]\,=\,\bigoplus_{d\in D}K[M_{d}]$$
as $K\mina$modules, that is to say, $K[M]$ is a $D\mina$graded $K\mina$algebra. In particular, if $M$ is a positive binoid with a positive $\N\mina$grading $\delta$, then
$$K[M]\,=\,\bigoplus_{n\in\N}R_{n}\quad\text{, where}\quad R_{n}\,:=\,\bigoplus_{\delta(a)=n}KT^{a}$$
is a positively graded $K\mina$algebra.
\end {Remark}

\begin {Convention}
In what follows, arbitrary binoids are assumed to be \emph{commutative}.
\end {Convention}

\begin {Lemma} \label{LemPropGraded}
Let $M$ be a binoid, $D$ a monoid, and $\delta:M\opkt\rto D$ a grading.
\begin{ListeTheorem}
\item Given an ideal $\Ical$, the quotient $M/\Ical$ is $D\mina$graded by $[a]\mto\delta(a)$, $a\in M\setminus\Ical$. In particular, if $M$ is positively graded, then so is $M/\Ical$.
\item Every binoid homomorphism $\varphi:N\rto M$ with $\ker\varphi=\zero$ yields a $D\mina$grading of $N$, namely $(\delta\varphi)_{|N\opkt}$. If $\delta$ is positive and $\varphi$ local, then $\delta\varphi_{|N\opkt}$ is also a positive grading.
\item Every monoid homomorphism $\varphi:D\rto E$ yields an $E\mina$grading of $M$, namely $\varphi\delta$. If $\varphi(D\setminus D\okreuz)\subseteq E\setminus E\okreuz$, then
 $\varphi\delta$ is positive if $\delta$ is so.
\end{ListeTheorem}
\end {Lemma}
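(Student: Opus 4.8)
The plan is to treat the three parts separately, in each case directly verifying the two defining conditions of a grading (namely $\delta(0)=0$ and additivity on those sums that avoid $\infty$) for the proposed degree map, and then checking the positivity refinement. In all three cases the only genuine issue is to make sure that the proposed map has the correct domain — a grading is only defined on the non-absorbing part — and that the proviso ``$x+y\neq\infty$'' survives the passage through the relevant map, so that the additivity of $\delta$ may be invoked; everything else is bookkeeping.

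For (1) I would use the identification of the Rees quotient $M/\Ical$ with $(M\setminus\Ical)\cup\zero$ from Remark~\ref{RemIndIsom}, under which each class $[a]\neq[\infty]$ is the singleton $\{a\}$ with $a\in M\setminus\Ical$. This makes $\bar\delta([a]):=\delta(a)$ well-defined on $(M/\Ical)\opkt$, and the two axioms follow because $0\notin\Ical$ (a proper ideal contains no unit, and the case $\Ical=M$ is vacuous) and because $[a]+[b]\neq[\infty]$ means $a+b\notin\Ical$, hence in particular $a+b\neq\infty$, so that the additivity of $\delta$ applies and $\bar\delta([a+b])=\delta(a)+\delta(b)$. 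For positivity I would use $(M/\Ical)\okreuz=\{[u]\mid u\in M\okreuz\}$, so that a nonunit class in $(M/\Ical)\opkt\Uplus$ lifts to a nonunit of $M\opkt\Uplus$, to which positivity of $\delta$ applies.

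For (2) the crucial point is that $\ker\varphi=\zero$ says precisely $\varphi(a)=\infty$ if and only if $a=\infty$; hence $\varphi$ restricts to a map $N\opkt\to M\opkt$ and $\delta\varphi$ is defined on $N\opkt$, while $x+y\neq\infty$ in $N$ forces $\varphi(x)+\varphi(y)=\varphi(x+y)\neq\infty$ in $M$, which is exactly what is needed to carry additivity over from $\delta$. Positivity then follows by combining locality ($\varphi(N\Uplus)\subseteq M\Uplus$) with $\ker\varphi=\zero$ to place a nonunit $a\neq\infty$ of $N$ into $M\opkt\Uplus$, where positivity of $\delta$ yields $\delta(\varphi(a))\in D\setminus D\okreuz$. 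Part (3) has no domain problem, since $\varphi\delta$ is defined on all of $M\opkt$; both grading axioms are immediate from $\varphi$ being a monoid homomorphism (so $\varphi(0)=0$ and $\varphi(\delta(x)+\delta(y))=\varphi\delta(x)+\varphi\delta(y)$), and the positivity statement is a direct consequence of the hypothesis $\varphi(D\setminus D\okreuz)\subseteq E\setminus E\okreuz$.

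I do not expect any real obstacle here: the entire content lies in correctly tracking the two provisos ``$x+y\neq\infty$'' and ``defined on the non-absorbing part'' through the Rees congruence in part (1) and through $\varphi$ in part (2). The hypotheses ``$\Ical$ an ideal'' and ``$\ker\varphi=\zero$'' are tailored precisely to make this tracking go through, and once they are used at the right moment the verifications reduce to the grading axioms for $\delta$ together with the elementary description of units in quotients and images.
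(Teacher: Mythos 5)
Your verification is correct and is exactly the direct check the paper leaves to the reader (its proof reads only ``All statements are easily verified''). You identify the right points where the hypotheses enter --- $\infty\in\Ical$ guaranteeing $a+b\neq\infty$ in (1), $\ker\varphi=\zero$ letting $\varphi$ restrict to $N\opkt\rto M\opkt$ in (2), and the unit descriptions for the positivity claims --- so nothing further is needed.
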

\begin {proof}
All statements are easily verified.
\end {proof}

\begin {Lemma}\label{LemWunsepPosGrad}
Let $n\ge1$. A positively $\N^{n}\mina$graded binoid contains only weakly separated elements.
\end {Lemma}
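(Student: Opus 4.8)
The plan is to unwind the definitions and reduce weak separation to the cancellativity of $\N^{n}$. Fix an element $f\in M$ and suppose $f\not=\infty$ and $f=f+g$ for some $g\in M$; the goal is to show $g\in M\okreuz$. First I would check that $g\not=\infty$: if $g=\infty$, then $f+g=\infty$ because $\infty$ is absorbing, contradicting $f+g=f\not=\infty$. Hence both $f$ and $g$ lie in $M\opkt$, so their degrees $\delta(f),\delta(g)\in\N^{n}$ are defined, and since $f+g=f\not=\infty$ the additivity rule for $\delta$ applies to this sum.

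The key step is then a degree computation. Applying $\delta$ to the equation $f=f+g$ and using $\delta(f+g)=\delta(f)+\delta(g)$ yields $\delta(f)=\delta(f)+\delta(g)$ in $\N^{n}$. Since $(\N^{n},+)$ is cancellative, this forces $\delta(g)=0$.

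Finally I would invoke positivity of the grading. By hypothesis $\delta(M\opkt\Uplus)\subseteq\N^{n}\setminus(\N^{n})\okreuz$, and $(\N^{n})\okreuz=\{0\}$, so $\delta$ sends every non-absorbing nonunit to a nonzero element. Were $g$ a nonunit, then $g\in M\opkt\Uplus$ and hence $\delta(g)\not=0$, contradicting the previous step. Therefore $g\in M\okreuz$, which is exactly the condition for $f$ to be weakly separated. As $f$ was arbitrary, every element of $M$ is weakly separated.

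There is no real obstacle in this argument; it is essentially a one-line consequence of the defining properties of a positive grading together with the cancellativity of $\N^{n}$. The only points that require a little care are the bookkeeping showing $g\not=\infty$ (so that $\delta(g)$ is defined and the additivity of $\delta$ is legitimately applicable to $f+g$) and the observation that $(\N^{n})\okreuz=\{0\}$, which is precisely what makes the positivity hypothesis force $\delta(g)=0\Rightarrow g\in M\okreuz$.
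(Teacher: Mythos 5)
Your argument is correct and is precisely the proof given in the paper: apply $\delta$ to $f=f+g$, cancel in $\N^{n}$ to get $\delta(g)=0$, and invoke positivity of the grading to conclude $g\in M\okreuz$. The extra bookkeeping you include (ruling out $g=\infty$ so that $\delta(g)$ is defined) is a sensible elaboration the paper leaves implicit.
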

\begin {proof}
If there are any elements $f, g\in M\opkt$ with $\infty\not=f=f+g$, then $\delta(f)=\delta(f)+\delta(g)$, which implies  that $\delta(g)=0\in(\N^{n})\okreuz$ since $\N^{n}$ is cancellative, hence $g\in M\okreuz$ by the positivity of $\delta$.
\end {proof}

\begin {Example} \label{ExpGradedSep}
The binoid $M:=\free(x,y)/(2x=x+y=3y)$ admits an unseparated element, namely $f=2x+2y$ because $f=f+y$ and $y\not\in M\okreuz=\{0\}$. Thus, $M$ cannot be positively $\N^{n}\mina$graded for every $n\ge1$ by the preceding lemma. Indeed, the only $\N^{n}\mina$grading $\delta$ is the trivial one with $x,y\mto0$ because $2\delta(x)=\delta(x)+\delta(y)$ implies  that $\delta(x)=\delta(y)$ by the cancellativity of $\N^{n}$, and therefore  $2\delta(x)=3\delta(y)=3\delta(x)$. Hence, $0=\delta(x)=\delta(y)$.
\end {Example}

\begin {Theorem} \label{ThSepGraded}
Let $M$ be a positive integral finitely generated binoid. The following conditions are equivalent.
\begin {ListeTheorem}
\item $M$ is separated.
\item $M$ is positively $\N^{k}\mina$graded for some $k\ge1$.
\item $M$ is positively $\N\mina$graded.
\end {ListeTheorem}
\end {Theorem}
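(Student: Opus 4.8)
The plan is to prove the three equivalences through the cheap equivalence $(2)\Leftrightarrow(3)$ together with the cheap implication $(2)\Rightarrow(1)$, isolating $(1)\Rightarrow(2)$ as the only substantial step. For $(3)\Rightarrow(2)$ there is nothing to do, since a positive $\N\mina$grading is a positive $\N^{1}\mina$grading. For $(2)\Rightarrow(3)$ I would take a positive $\N^{k}\mina$grading $\delta\colon M\opkt\rto\N^{k}$ and compose it with the sum homomorphism $\sigma\colon\N^{k}\rto\N$, $(n_{1}\kpkt n_{k})\mto n_{1}\pluspkt n_{k}$; as $\sigma^{-1}(0)=\{0\}$, i.e.\ $\sigma(\N^{k}\setminus\{0\})\subseteq\N\setminus\{0\}$, Lemma \ref{LemPropGraded}(3) shows that $\sigma\delta$ is again a positive grading. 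For $(2)\Rightarrow(1)$, Lemma \ref{LemWunsepPosGrad} says a positively $\N^{k}\mina$graded binoid consists only of weakly separated elements, and since $M$ is positive and finitely generated, Corollary \ref{CorSeperated} then gives that $M$ is separated.

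The heart is $(1)\Rightarrow(2)$, and the strategy is to reduce to the cancellative case via the cancellation $M_{\opcan}=M_{\opcan,\opint M}$ and then invoke the structure theorem for regular positive binoids. Since $M$ is integral, $\opint M=M\opkt$, so the cancelling submonoid is $S=M\opkt$, which avoids $\infty$. First I would record the properties of $M_{\opcan}$: as a congruence quotient of a finitely generated commutative binoid it is finitely generated (cf.\ the corollary following Proposition \ref{PropHomCong}); it is integral, because $[a]+[b]=[\infty]$ forces $a+b+s=\infty$ for some $s\in M\opkt$, and integrality of $M$ together with $s\neq\infty$ gives $a=\infty$ or $b=\infty$; by Lemma \ref{LemCancellation} every class $[s]$ with $s\in\opint M$ is cancellative, hence $M_{\opcan}$ is cancellative and thus regular; and, since separation with Corollary \ref{CorSeperated} guarantees that every element of $M$ is weakly separated, Lemma \ref{LemSeparatedProp2}(1) shows $M_{\opcan}$ is positive.

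Next I would grade $M_{\opcan}$. Being regular and positive, Proposition \ref{PropClassFgRegPos}(4) embeds it into $(\N^{d}\times\prod_{k=1}^{r}\Z/n_{k}\Z)^{\infty}$; composing this embedding $\iota$ with the projection onto $\N^{d}$ gives a monoid homomorphism $\bar{\delta}\colon M_{\opcan}\opkt\rto\N^{d}$, hence a grading of the integral binoid $M_{\opcan}$. It is positive: if $\bar{\delta}(a)=0$ then $\iota(a)=(0,t)$ with $t$ in the torsion part $T$, so $\iota(na)=(0,nt)=(0,0)$ for $n=\ord T$, whence $na=0$ by injectivity of $\iota$, and positivity of $M_{\opcan}$ forces $a=0$. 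Finally I would pull $\bar{\delta}$ back along the canonical projection $\pi\colon M\rto M_{\opcan}$ by setting $\delta:=\bar{\delta}\pi\colon M\opkt\rto\N^{d}$, again a grading of the integral binoid $M$.

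The main obstacle is verifying that this pullback $\delta$ is positive, and this is precisely where separation is used. If $\delta(a)=\bar{\delta}(\pi(a))=0$ for some $a\in M\opkt$, then positivity of $\bar{\delta}$ gives $\pi(a)=0$ in $M_{\opcan}$, i.e.\ $a\sim_{\opcan,M\opkt}0$, so there is an $s\in M\opkt$ with $a+s=0+s=s$, that is $s=s+a$ with $s\neq\infty$. Since $M$ is separated, positive, and finitely generated, its elements are weakly separated (Corollary \ref{CorSeperated}), so $s=s+a$ forces $a\in M\okreuz=\{0\}$; hence $\delta(a)=0$ only for $a=0$, and $\delta$ is a positive $\N^{d}\mina$grading. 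The delicate points to check are that the cancellation congruence indeed produces an equation $a+s=s$ of exactly this shape, and that integrality, positivity, finite generation, and the weakly-separated property descend to $M_{\opcan}$ as claimed; granting these, the cycle $(1)\Rightarrow(2)\Rightarrow(3)\Rightarrow(2)\Rightarrow(1)$ closes.
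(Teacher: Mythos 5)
Your proof is correct and follows essentially the same route as the paper: for $(1)\Rightarrow(2)$ you reduce to the regular positive quotient $M_{\opcan}$, embed it via Proposition~\ref{PropClassFgRegPos}(4), project onto the free part $\N^{d}$, and pull the grading back along the canonical projection $\pi$. The only cosmetic difference is that you verify positivity of the pulled-back grading by hand from weak separatedness of $M$, whereas the paper packages exactly this step as Lemma~\ref{LemPropGraded}(2) applied to $\pi$ (which is local with $\ker\pi=\zero$), and you close the cycle via $(2)\Rightarrow(1)$ instead of $(3)\Rightarrow(1)$.
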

\begin {proof}
To prove $(1)\Rarrow(2)$, we may assume that $M$ is regular. To see this consider the finitely generated regular binoid $M_{\opcan}=M/\sim_{\opcan,M^{\opkt}}$. By Lemma \ref{LemSeparatedProp2}, $M_{\opcan}$ is again separated and positive, and by Lemma \ref{LemPropGraded}(2) every grading of $M_{\opcan}$ extends to a grading of $M$ by the canonical projection $\pi:M\rto M_{\opcan}$.  
Thus, let $M$ be regular. By Proposition \ref{PropClassFgRegPos}(2), there exists an embedding
$$\varphi:M\embto(\N^{k}\times\Z/n_{1}\Z\timespkt\Z/n_{\ell}\Z)^{\infty}\komma\quad f\lto\varphi(f)=(\delta(f),\varphi_{1}(f)\kpkt\varphi_{\ell}(f))\komma$$
where $n_{1}\kpkt n_{\ell}\ge2$ and $k\ge1$. We claim $\delta:M\opkt\rto\N^{k}$ is a positive grading of $M$. Since $\delta$ is a monoid homomorphism, we only need to show that $\delta(f)=0\in\N^{k}$ implies $f=0$. Assume that $\varphi(f)=$ $(0,\varphi_{1}(f)\kpkt\varphi_{\ell}(f))$ for some $f\in M$. Then $\varphi(mf)=(0,0\kpkt0)=\varphi(0)$ with $m=n_{1}\cdots n_{\ell}$, and hence $mf=0$ by the injectivity of $\varphi$, which implies $f=0$ by the positivity of $M$. (2) implies (3) since the positive $\N^{k}\mina$grading of $M$ yields a positive $\N\mina$grading of $M$ via the evaluation homomorphism $\N^{k}\rto\N$, $(a_{1}\kpkt a_{k})\mto a_{1}\pluspkt a_{k}$, cf.\ Lemma \ref{LemPropGraded}(3). (1) follows from (3) by Lemma \ref{LemWunsepPosGrad}.
\end {proof}

The binoid of Example \ref{ExpGradedSep} fulfills all conditions of the preceding theorem but is not separated and admits only the trivial $\N^{n}\mina$grading, namely the constant map $a\mto0$.

\begin {Example}
The integrality condition in Theorem \ref{ThSepGraded} cannot be ommited. To see this consider first the integral binoid $M=\free(x,y,z)/(\Rcal_{1},\Rcal_{2},\Rcal_{3})$, where
\begin {align*}
\Rcal_{1}:\,\,\, a_{1}:=y+2z\,\,\,=\,\,\, 2y+z=:b_{1}\komma\\
\Rcal_{2}:\,\,\, a_{2}:=x+2z\,\,\,=\,\,\, 2x+z=:b_{2}\komma\\
\Rcal_{3}:\,\,\, a_{3}:=x+3y\,\,\,=\,\,\, 2x+y=:b_{3}\pkt
\end {align*}
The binoid $M$ fulfills all conditions of Theorem \ref{ThSepGraded} but cannot be positively $\N^{k}\mina$graded for $k\ge1$. In general, every grading $\delta:M\rto D$ by a cancellative monoid $D$ is trivial because $\Rcal_{1}$ and $\Rcal_{2}$ imply that $\delta(x)=\delta(y)=\delta(z)$ and $\Rcal_{3}$ implies that $4\delta(x)=3\delta(x)$, hence $0=\delta(x)=\delta(y)=\delta(z)$. In particular, $M$ is not separated by Theorem \ref{ThSepGraded}. An unseparated element in $M$ is given by 
\begin {align*}
f\,\,\,:=\,\,\,4x+5y+6z&\,\,\,=\,\,\,2b_{1}+2a_{2}+b_{3}\\
&\,\,\,=\,\,\,2a_{1}+2b_{2}+a_{3}\,\,\,=\,\,\,5x+5y+6z\,\,\,=\,\,\,x+f\pkt
\end {align*}
Now the binoid $M/(x+y+z=\infty)$ also fulfills all conditions of Theorem \ref{ThSepGraded} except the integrality, and for the same reason as $M$ it cannot be $\N^{k}\mina$graded, $k\ge1$, but it is separated. Elements of the form $nx+mz$ and $ny+mz$, $n,m\ge0$, are separated because
$$nx+mz=rx+sz\quad\text{or}\quad ny+mz=ry+sz$$
if and only if $r+s=n+m$ as one easily verifies. The elements $nx+my$ are separated since
\begin {align*}
x+ry&\,\,\,=\,\,\,2x+(r-2)y\\
&\,\,\,=\,\,\,3x+(r-4)y\\
&\,\,\,=\,\,\,4x+(r-6)y\,\,\,=\,\,\,\cdots\,\,\,=\,\,\,\begin {cases}
sx+2y&\text{, if }r=2s+2\komma s\ge1\komma\\
sx+y&\text{, if }r=2s+1\komma s\ge1\pkt
\end {cases}
\end {align*}
\end {Example}

Though the absorbing element has no degree by definition, every grading $\delta:M\opkt\rto D$ of an arbitrary binoid $M$ gives rise to a well-defined binoid homomorphism
$$\theta:M\Rto M\wedge D^{\infty}\quad\text{with}\quad\theta(x)=\begin {cases}
x\wedge\delta(x)&\text{, if }x\in M\opkt\!\!\komma\\
\infty_{\wedge}&\text{, otherwise,}\end {cases}$$
which in turn induces the following semigroup homomorphism
$$K\minspec D^{\infty}\times K\minspec M\Rto K\minspec M\komma\quad(\varphi_{1},\varphi_{2})\lto(x\mto\varphi_{1}(\delta(x))\cdot\varphi_{2}(x))\komma$$
cf.\ Proposition \ref{PropIndHomNspec} and Proposition \ref{PropNspecSmash}. In particular, if $M$ is $\N^{n}\mina$graded by $\delta:M\opkt\rto\N^{n}$, $a\mto(\delta_{1}(a)\kpkt\delta_{n}(a))$, and $K$ a field, there is a continuous $\A^{n}(K)\mina$action on $K\minspec M=:X$, namely
$$\A^{n}(K)\times X\Rto X\komma\quad (t,\Qcal)\lto t\Qcal\komma$$
where $t=(t_{1}\kpkt t_{n})\in\A^{n}(K)$ is given by $\varphi_{(t)}:(\N^{n})^{\infty}\rto K$ with $e_{i}\mto t_{i}$, and the $K\mina$point $\Qcal$ by $\varphi_{\Qcal}:M\rto K$. Hence, $t\Qcal$ is the $K\mina$point
$$\varphi_{t\Qcal}:a\lto t^{\delta(a)}\varphi_{\Qcal}(a)\,\,\,=\,\,\,t_{1}^{\delta_{1}(a)}\cdots t_{n}^{\delta_{n}(a)}\cdot\varphi_{\Qcal}(a)$$
in $X$. Note that $1\Qcal=\Qcal$ always holds.

\begin {Definition}
Let $K$ be a field and $M$ an $\N^{n}\mina$graded binoid. With the above notation, a $K\mina$point $\Qcal$ is called $\A^{n}(K)\mina$\gesperrt{connected}\index{connected!$\A^{n}(K)\mina$--}\index{A@$\A^{n}(K)\mina$connected} with another $K\mina$point $\Pcal$ if there is a $t\in\A^{n}(K)$ such that $t\Qcal=\Pcal$.
\end {Definition}

\begin {Theorem}\label{ThGradedConnectedSpecialPoint}
Let $K$ a field. If $M$ is a positive binoid that is positively $\N^{n}\mina$graded, then every $K\mina$point is $\A^{n}(K)\mina$connected with the special point. In particular, if $M$ is also finitely generated, then every cancellative $K\mina$component of $K\minspec M$ contains the special point.
\end {Theorem}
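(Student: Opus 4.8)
The plan is to treat the two assertions in turn, deriving the supplement from the first statement. For the first assertion, fix a $K\mina$point $\Qcal$ with associated binoid homomorphism $\varphi_{\Qcal}:M\rto K$ and take $t=0\in\A^{n}(K)$; I claim that $0\Qcal$ is exactly the special point $\chi_{M\okreuz}$. By the definition of the action, $\varphi_{0\Qcal}(a)=0^{\delta(a)}\varphi_{\Qcal}(a)=0^{\delta_{1}(a)}\cdots0^{\delta_{n}(a)}\varphi_{\Qcal}(a)$ for $a\in M\opkt$. Since $\delta(0)=0$ and $\varphi_{\Qcal}(0)=1$, the identity is sent to $1$; for $a\in M\opkt\Uplus$, positivity of the grading forces $\delta(a)\in\N^{n}\setminus\{0\}$, so at least one exponent is strictly positive and $0^{\delta(a)}=0$, whence $\varphi_{0\Qcal}(a)=0$. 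As $M$ is positive, $M\okreuz=\{0\}$, so $\varphi_{0\Qcal}$ coincides with $\chi_{M\okreuz}$. Thus every $K\mina$point is $\A^{n}(K)\mina$connected to the special point via $t=0$, which settles the first claim. Here only positivity of $M$ and of the grading enter.

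For the supplement I would show that each cancellative $K\mina$component $C=K\minspec(M/\Pcal)_{\opcan}$, $\Pcal\in\spec M$, is stable under the $\A^{n}(K)\mina$action, i.e.\ $tC\subseteq C$ for all $t\in\A^{n}(K)$. Granting this, the conclusion is immediate: finite generation makes $\spec M$ finite, so the maximal cancellative $K\mina$parts (the components) exist and each is nonempty (it contains characteristic points); picking any $\Qcal\in C$, the first part gives $\chi_{M\okreuz}=0\Qcal$, which lies in $C$ by stability applied with $t=0$. Hence the special point lies in every component.

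The heart of the matter is therefore the stability of $C$. Viewing $C$, via the embedding induced by $\pi_{\Pcal}:M\rto(M/\Pcal)_{\opcan}$, as $\{\varphi\in K\minspec M\mid\sim_{\pi_{\Pcal}}\,\le\,\sim_{\varphi}\}$ (Lemma \ref{LemIndCong}), I must show that $\varphi\in C$ implies $\sim_{\pi_{\Pcal}}\,\le\,\sim_{t\varphi}$ for every $t$. So suppose $\pi_{\Pcal}(x)=\pi_{\Pcal}(y)$; then $\varphi(x)=\varphi(y)$, and $\Pcal\subseteq\ker\varphi$ since the Rees congruence of $\Pcal$ is below $\sim_{\pi_{\Pcal}}\,\le\,\sim_{\varphi}$. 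If $\varphi(x)=\varphi(y)=0$, then $\varphi_{t\varphi}(x)=t^{\delta(x)}\cdot0=0=\varphi_{t\varphi}(y)$. Otherwise $x,y\notin\ker\varphi\supseteq\Pcal$, so $[x],[y]\not=[\infty]$ in the integral binoid $M/\Pcal$, and $\pi_{\Pcal}(x)=\pi_{\Pcal}(y)$ means $[x]+[s]=[y]+[s]$ for some integral $[s]$ (so $s\notin\Pcal$). Integrality of $M/\Pcal$ gives $x+s,y+s\notin\Pcal$ and hence $x+s=y+s$ in $M$; applying the grading and cancelling $\delta(s)$ in $\N^{n}$ yields $\delta(x)=\delta(y)$. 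Consequently $t^{\delta(x)}=t^{\delta(y)}$ and $\varphi_{t\varphi}(x)=t^{\delta(x)}\varphi(x)=t^{\delta(y)}\varphi(y)=\varphi_{t\varphi}(y)$, as required.

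The main obstacle is exactly this last step: making precise that the cancellation congruence defining $(M/\Pcal)_{\opcan}$ only identifies elements of equal degree, which is what allows the degree twist $t^{\delta(-)}$ to pass through the congruence. The key inputs are that $M/\Pcal$ is integral (so $\Pcal$ being prime prevents $x+s$ from landing in $\Pcal$ unexpectedly) and that $\delta$ is a grading into the cancellative monoid $\N^{n}$. Notably, positivity of the grading is \emph{not} needed for stability; it is used only in the first part to identify $0\Qcal$ with $\chi_{M\okreuz}$, while finite generation enters only to guarantee that the cancellative $K\mina$components are well defined.
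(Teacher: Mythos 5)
Your proof of the first assertion --- evaluating the action at $t=0$ and using positivity of $M$ and of the grading to identify $0\Qcal$ with the special point $\chi_{M\okreuz}$ --- is exactly the paper's argument. For the supplement you take a genuinely different route. The paper reduces to an infinite (algebraically closed) field and argues topologically: the orbit $\{t\Qcal\mid t\in\A^{n}(K)\}$ is a continuous image of the irreducible space $\A^{n}(K)$, hence irreducible, hence contained in a single one of the finitely many closed cancellative $K\mina$components, which therefore contains $0\Qcal$. You instead prove the stronger, purely algebraic statement that every cancellative $K\mina$part $K\minspec(M/\Pcal)_{\opcan}$ is stable under the whole $\A^{n}(K)\mina$action; the key observation, which you carry out correctly, is that the cancellation congruence modulo a prime only identifies elements of equal degree (since $M/\Pcal$ is integral, representatives outside $\Pcal$ satisfy an honest equation $x+s=y+s\not=\infty$ in $M$, and $\N^{n}$ is cancellative), so the twist $t^{\delta(-)}$ descends through the congruence. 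Your argument is sound and buys several things: no passage to the algebraic closure, no appeal to irreducibility of orbits, validity over any field directly, and a precise accounting of where each hypothesis enters (positivity of the grading only at $t=0$, finite generation only so that maximal cancellative parts exist). The paper's argument is shorter given the topological machinery already in place, but yours yields the stability of every cancellative $K\mina$part, which is a cleaner and slightly stronger intermediate statement than what the paper's orbit argument records.
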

\begin {proof}
First, we claim that for an arbitrary $K\mina$point $\Qcal\in K\minspec M$ the $K\mina$point $0\Qcal$ given by 
$$\varphi_{0\Qcal}:a\lto 0^{\delta(a}\varphi_{\Qcal}(a)\,\,\,=\,\,\,0^{\delta_{1}(a)}\cdots 0^{\delta_{n}(a)}\cdot\varphi_{\Qcal}(a)\komma$$
is the special point; that is, $\varphi_{0\Qcal}(a)=1$ if $a=0$ and $1$ otherwise.
The first is obvious because $\delta(0)=(0\kpkt 0)\in\N^{n}$ and therefore 
$$\varphi_{t\Qcal}(0)=0^{0}\cdot\varphi_{\Qcal}(0)=\varphi_{\Qcal}(0)=1\pkt$$
To show that $\varphi_{0\Qcal}$ vanishes on $M\setminus\{0\}$, note that $\delta(a)=(\delta_{1}(a)\kpkt\delta_{n}(a))\not=0\in\N^{n}$ for every $a\in M\setminus\{0\}=M\Uplus$ since $M$ is positively graded. Thus for every $i\in\{1\kpkt r\}$ there is a $j\in\{1\kpkt n\}$ such that $\delta_{j}(a)\not=0$ and therefore $0^{\delta_{j}(a)}=0$, which implies that  $\varphi_{0\Qcal}(a)=0$. For the supplement, we may assume that $K$ is infinite, otherwise consider the algebraic closure $L$ of $K$ and the diagram
$$\xymatrix{
\quad\quad\quad\A^{n}(K)\times K\minspec M\ar[r]\ar[d]_{\iota}&K\minspec M\ar[d]^{\iota}\\
\quad\quad\quad\A^{n}(L)\times L\minspec M\ar[r]&L\minspec M\pkt}\quad\quad\quad$$
For an arbitrary $K\mina$point $\Qcal$, the orbit $O:=\{t\Qcal\mid t\in\A^{n}(K)\}$ of $\Qcal$ under the $\A^{n}(K)\mina$operation on $X:=K\minspec M$ is given by the image of the continuous map $\A^{n}(K)\rto X$, $t\mto t\Qcal$. Hence, $O$ is an irreducible subset of $X$ because $\A^{n}(K)$ is so. By Proposition \ref{PropUnionCanComp}, $X=\bigcup_{i\in I}X_{i}$ is the (finite) union of the closed cancellative $K\mina$components $X_{i}$, and the orbit $O$ of $\Qcal$ has to be contained in one of these components since otherwise $O=\bigcup_{i\in I}(O\cap X_{i})$ would be a non-trivial decomposition of $O$.
\end {proof}

\begin {Example} \label{ExpCspecZmod8}
The group homomorphism $\id:\Z/8\Z\rto\Z/8\Z$ defines a positive $\Z/8\Z\mina$grading of the binoid $(\Z/8\Z)^{\infty}$. The $\C\mina$spectrum is given by
\begin {center}
\begin {pspicture} (-1.5,-2.5)(2.5,2)
\qdisk (0.6,0.6){1.75pt}\qdisk (-0.6,0.6){1.75pt}\qdisk (-0.6,-0.6){1.75pt}\qdisk (0.6,-0.6){1.75pt}
\qdisk (0.85,0){1.75pt}\qdisk (0,0.85){1.75pt}\qdisk (-0.85,0){1.75pt}\qdisk (0,-0.85){1.75pt}
\psline [linewidth=0.5 pt, linestyle=dotted] (0,-1.5)(0,1.5)
\psline [linewidth=0.5 pt, linestyle=dotted] (-1.5,0)(1.5,0)
\uput [0] (-1.25,-2.2){\small{$\C\minspec(\Z/8\Z)^{\infty}$.}}
\end {pspicture}
\end {center}
\end {Example}

\begin {Corollary} \label{CorConectedHyperplane}
Let $K$ be an infinite field and $M=\free(x_{1}\kpkt x_{n})/(f=g)$ so that
$$K[M]=K[X]/(X^{f}-X^{g})\pkt$$
If $\supp f\cap\supp g=\emptyset$, then every cancellative component contains the special point. In particular, $K\minspec M$ is connected.
\end {Corollary}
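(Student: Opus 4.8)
The plan is to realize $M$ as a positive, finitely generated, positively $\N\mina$graded binoid and then to invoke Theorem \ref{ThGradedConnectedSpecialPoint}. I treat the principal case $f,g\neq 0$ (both supports nonempty), the one in which all the hypotheses of that theorem can be met; the degenerate situation where one of $f,g$ equals $0$ makes certain generators invertible, so $M$ is no longer positive and the statement must instead be read off Proposition \ref{Prop1MonEquConnectedness}(1)--(2). First I would check that $M$ is positive. Writing $\varepsilon:\free_{n}\rto M$ for the canonical epimorphism, the defining congruence $\sim_{\varepsilon}$ is generated by the single binomial relation $f\sim g$. Since neither $f$ nor $g$ is $0$ in the positive binoid $\free_{n}$, no elementary swap (replacing $f+c$ by $g+c$ or vice versa) can produce $0$ or start from $0$; hence the class $[0]$ is a singleton. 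Therefore $u+v=0$ in $M$ forces $u+v=0$ in $\free_{n}$ and thus $u=v=0$, so $M\okreuz=\{0\}$ and $M$ is positive.

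Next I would build the grading out of the disjointness of the supports. Write $f=\sum_{i}f_{i}x_{i}$ and $g=\sum_{i}g_{i}x_{i}$, set $A:=\supp f$ and $B:=\supp g$ (with $A\cap B=\emptyset$), and put $P:=\sum_{i\in A}f_{i}\ge 1$ and $Q:=\sum_{i\in B}g_{i}\ge 1$. Assign degrees
\[
d_{i}=\begin{cases}Q,&i\in A,\\ P,&i\in B,\\ 1,&i\notin A\cup B.\end{cases}
\]
By Lemma \ref{LemHomFreeCom} the assignment $x_{i}\mto d_{i}$ extends uniquely to a binoid homomorphism $\Delta:\free_{n}\rto\N^{\infty}$. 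Since
\[
\Delta(f)=\sum_{i\in A}f_{i}Q=PQ=\sum_{i\in B}g_{i}P=\Delta(g),
\]
the generating relation of $M$ lies in $\sim_{\Delta}$, so $\sim_{\varepsilon}\,\le\,\sim_{\Delta}$ and $\Delta$ factors through $M$ by Lemma \ref{LemIndCong}, say $\Delta=\bar{\Delta}\varepsilon$. As $\bar{\Delta}(a)=\infty$ if and only if $a=\infty$, the restriction $\delta:=\bar{\Delta}_{|M\opkt}$ is an $\N\mina$grading of $M$. Because every $d_{i}\ge 1$, any nonunit $a=\sum n_{i}x_{i}\neq 0$ satisfies $\delta(a)=\sum n_{i}d_{i}\ge 1$, so $\delta(M\opkt\Uplus)\subseteq\N\setminus\{0\}$ and the grading is positive.

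Finally, $M$ is positive, finitely generated, and positively $\N\mina$graded (i.e.\ $\N^{1}\mina$graded), so Theorem \ref{ThGradedConnectedSpecialPoint} applies with $n=1$: every cancellative $K\mina$component of $K\minspec M$ contains the special point, which is the first assertion. For the supplement, recall from the proof of that theorem that for any $K\mina$point $\Qcal$ the orbit map $\A^{1}(K)\rto K\minspec M$, $t\mto t\Qcal$, is continuous with irreducible, hence connected, image $O_{\Qcal}$, and that $1\cdot\Qcal=\Qcal$ while $0\cdot\Qcal$ is the special point. Thus every $K\mina$point lies in a connected subset of $K\minspec M$ containing the special point, and since $K\minspec M=\bigcup_{\Qcal}O_{\Qcal}$ is a union of connected sets all passing through the common special point, it is connected.

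The main obstacle I expect is the bookkeeping at the two ends rather than any deep argument: verifying that $M$ is genuinely positive (which is exactly what forces $f,g\neq 0$ and separates off the degenerate cases), and confirming that the symmetric degree assignment respects the relation while keeping all degrees strictly positive. Once the positive grading is in place, the conclusion follows directly from Theorem \ref{ThGradedConnectedSpecialPoint}, with no further inspection of the individual cancellative components needed.
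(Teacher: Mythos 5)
Your proof is correct, and it reaches Theorem \ref{ThGradedConnectedSpecialPoint} by a genuinely different route than the paper. The paper's argument is: for $f,g\not=\infty$ the binoid $M$ is positive, integral, finitely generated and \emph{separated} (the last because $\supp f\cap\supp g=\emptyset$), so Theorem \ref{ThSepGraded} supplies a positive $\N\mina$grading abstractly, and then Theorem \ref{ThGradedConnectedSpecialPoint} finishes. You instead build the positive grading by hand, assigning degree $Q=\sum_{i\in\supp g}g_{i}$ to the variables in $\supp f$, degree $P=\sum_{i\in\supp f}f_{i}$ to those in $\supp g$, and degree $1$ elsewhere; disjointness of the supports makes this well defined and $\delta(f)=PQ=\delta(g)$ makes it descend to $M$. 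This buys you two things: you bypass Theorem \ref{ThSepGraded} entirely (and hence the structure theory of Proposition \ref{PropClassFgRegPos} on which it rests), and you get separatedness and positivity of $M$ as free byproducts of the explicit grading rather than as hypotheses to be checked separately -- note the paper never actually verifies the separatedness claim it uses, whereas your construction makes it automatic via Lemma \ref{LemWunsepPosGrad}. The price is a little more bookkeeping up front (your direct verification that $[0]$ is a singleton class, which the paper leaves implicit), and your explicit unpacking of the connectedness of the orbit closure, which the paper compresses into ``in particular.'' Your remark that the genuinely degenerate situation is $f=0$ or $g=0$ (where positivity fails) rather than $f=\infty$ or $g=\infty$ is also a sharper identification of where the hypotheses of the graded machinery break down than the paper's own case split.
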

\begin {proof}
For $f$ or/and $g=\infty$, the statement is clear. So let $f,g\not=\infty$. Then $M$ satisfies all assumptions of Theorem \ref{ThSepGraded} and is separated since $\supp f\cap\supp g=\emptyset$. Hence, $M$ is positively $\N\mina$graded, and therefore every $K\mina$point is $\A^{1}(K)\mina$connected with the special point by Theorem \ref{ThGradedConnectedSpecialPoint}. In particular, $K\minspec M$ is connected.
\end {proof}

\bigskip

\chapter {Binoids defined by a simplicial complex and simplicial binoids} \label{ChapSimplicial}
\markright{\ref {ChapSimplicial} Binoids defined by a simplicial complex and simplicial binoids}

Simplicial complexes and their associated (Stanley-Reisner) algebras play a major role in combinatorial commutative algebra. Stanley-Reisner algebras are special monomial algebras and like every monomial algebra they can be realized as binoid algebras, cf.\ Remark \ref{RemMonSemifree} and Lemma \ref{LemSRalgebraBinoid}. In this chapter, we treat these objects within the framework of binoid theory. 

We start with simplicial complexes by recalling basic definitions. With respect to the union and intersection of faces, every simplicial complex $\Delta$ defines two binoids, $\Delta_{\cup}^{\infty}$ and $\Delta_{\cap}\onull$, which are usually very different as we show by studying their generating sets. The Zariski topology on their spectra can be described in detail and related to the simplicial complex $\Delta$. For simplicial complexes $\Delta$ and $\tilde{\Delta}$ on $V$ and $\tilde{V}$, respectively, we study those morphisms $\tilde{V}\rto V$ that induce canonical homomorphisms between the binoids defined by $\Delta$ and $\tilde{\Delta}$ and between their $N\mina$spectra, which we introduce in the third section. Finally, we define the binoid $M\UDelta$ associated to a simplicial complex $\Delta$, whose binoid algebra is the Stanley-Reisner algebra of $\Delta$, and characterize those binoids that yield Stanley-Reisner algebras.

\begin {Convention}
In this chapter, arbitrary vertex sets are assumed to be \emph{finite}.
\end {Convention}

\section{Simplicial complexes} \label{SecSimplCompl}
\markright{\ref {SecSimplCompl} Simplicial complexes}

In this section, we generalize the binoid structures defined on $\Pset(V)$ to arbitrary simplicial complexes. For this we start with recalling basic definitions concerning them.

\begin {Definition}
A (\gesperrt{finite}) \gesperrt{simplicial complex} \index{simplicial complex}$\Delta$ on $V$ is a subset-closed subset of $\Pset(V)$ that contains all singletons $\{v\}$, $v\in V$. We refer to $\Pset(V)$ as the \gesperrt{full} simplicial complex on $V$. The elements of $\Delta$ are called \gesperrt{faces} \index{face}\index{simplicial complex!face of a --}and maximal faces under set inclusion are called \gesperrt{facets}\index{facet}\index{simplicial complex!facet of a --}. The \gesperrt{dimension} \index{dimension!-- of a face}\index{face!dimension of a --}\index{dimension!-- of a simplicial complex}\index{simplicial complex!dimension of a --}of a face $F\in\Delta$ and that of the simplicial complex $\Delta$ are defined by\nomenclature[Dimension4]{$\dim F$}{dimension of the face $F$} \nomenclature[Dimension5]{$\dim\Delta$}{dimension of the simplicial complex $\Delta$}
$$\dim F:=\#F-1\quad\text{and}\quad\dim\Delta:=\max\{\dim F\mid F\in\Delta\}\komma$$
respectively. Elements of $\Pset(V)\setminus\Delta$ are \gesperrt{nonfaces}\index{nonface}\index{simplicial complex!nonface of a --}. A simplicial complex is \gesperrt{pure} \index{simplicial complex!pure --}if all its facets have the same dimension. Faces of dimension $0$ and $1$ are called \gesperrt{vertices} \index{vertice!-- of a simplicial complex}\index{simplicial complex!vertice of a --}and \gesperrt{edges}\index{edge}\index{simplicial complex!edge of a --}, respectively. We denote by $f_{i}(\Delta)$\nomenclature[f1]{$f_{i}(\Delta)$}{$=\#\{F\in\Delta\mid\dim F=i\}$} the number of $i\mina$dimensional faces in $\Delta$. Then the $(d+1)\mina$tuple\nomenclature[f2]{$f(\Delta)$}{$=(f_{-1}(\Delta),f_{0}(\Delta)\kpkt f_{d}(\Delta))$, $f\mina$vector of $\Delta$}
$$f(\Delta)=(f_{-1}(\Delta), f_{0}(\Delta)\kpkt f_{d}(\Delta))$$ is called the \gesperrt{$f\mina$vector} \index{F@$f\mina$vector}\index{simplicial complex!F@$f\mina$vector of a --}of $\Delta$, where $d:=\dim\Delta$.
\end {Definition}

By definition, every simplicial complex on $V$ is nonempty and contains the face $\emptyset$, which has dimension $-1$. With respect to set inclusion, $\{\emptyset,\{v\}\mid v\in V\}$ is the smallest simplicial complex on $V$ and $\Pset(V)$ is the largest, and they coincide if and only if $V$ is the empty set or a singleton. We have $\dim\Pset(\emptyset)=-1$ and $\dim\Pset(\{v\})=0$. In general, $0\le\dim\Delta\le \#V-1$ for every simplicial complex $\Delta$ on $V$ with $\#V\ge 2$, and $\dim\Delta=0$ if and only if $\Delta=\{\emptyset,\{v\}\mid v\in V\}$. Furthermore, $f_{0}(\Delta)=\#V$ and $f_{d}(\Delta)\le\#\{$facets of $\Delta\}$, where $d=\dim\Delta$, and equality holds if and only if $\Delta$ is pure. 

A simplicial complex is uniquely determined by its facets since
$$\Delta\,\,=\!\bigcup_{F\text{ facet of }\Delta}\Pset(F)\pkt$$

The full simplicial complex $\Pset(V)$ on $V$ defines two binoids given by
$$\Pset(V)_{\cap}=(\Pset(V),\cap,V,\emptyset)\quad\text{and}\quad\Pset(V)_{\cup}=(\Pset(V),\cup,\emptyset,V)\komma$$
which have been studied in detail before. Their ideal lattices, filtra, and spectra have been described in Example \ref{ExpPsetIdeals}, Example \ref{ExpFilterPowerset}, and Example \ref{ExSpecPowerset}, respectively. These results can be generalized to arbitrary simplicial complexes $\Delta$ on $V$.

\begin {Definition}
For a simplicial complex $\Delta$ on $V$ denote by\nomenclature[ADelta]{$\Delta_{\cap}$}{semibinoid defined by $\Delta$ with respect to $\cap$} \nomenclature[ADelta]{$\Delta_{\cup}^{\infty}$}{binoid defined by $\Delta$ with respect to $\cup$}
$$\Delta_{\cap}:=(\Delta,\cap,\emptyset)$$ the boolean semibinoid with respect to the intersection of faces, and by
$$\Delta_{\cup}^{\infty}:=(\Delta\cup\{\infty\},\cup,\emptyset,\infty)$$
the boolean binoid with respect to the union of faces, where $F\cup G:=\infty$ if this is not a face of $\Delta$.
\end {Definition}

The definition of $\Delta_{\cap}$ is in line with our notation for the binoid $\Pset(V)_{\cap}$ introduced in Example \ref{ExpBinoidsPowerSet} because $\Delta_{\cap}=\Pset(V)_{\cap}$ if $\Delta=\Pset(V)$, while $\Delta_{\cup}$ is only defined for $\Delta=\Pset(V)$. However, we have the following identifications
$$\Delta_{\cap}\onull\,\,\cong\,\,(\Delta\cup\{V\},\cap,V,\emptyset)\quad\text{and}\quad\Delta_{\cup}^{\infty}\,\,\cong\,\,(\Delta\cup\{V\},\cup,\emptyset,V)\komma$$
which give rise to natural binoid embeddings $\Delta_{\cap}\onull\embto\Pset(V)_{\cap}$ and $\Delta_{\cup}\onull\embto\Pset(V)_{\cup}$ for every simplicial complex $\Delta\subseteq\Pset(V)$. In particular, if $\Delta=\Pset(V)\setminus\{V\}$, then $\Delta_{\cap}\onull\cong\Pset(V)_{\cap}$ and $\Delta_{\cup}^{\infty}\cong\Pset(V)_{\cup}$. Moreover, $\Delta_{\cap}\onull$ and $\Delta_{\cup}^{\infty}$ are isomorphic to $\zero$ if $V=\emptyset$, and to $\trivial$ if $V=\{v\}$. Let $\#V\ge2$. Then $\Delta_{\cap}$ and $\Delta_{\cup}^{\infty}$ are not cancellative since they contain non-trivial idempotent elements. The semibinoid $\Delta_{\cap}$ is not integral and so is the binoid $\Delta_{\cup}^{\infty}$ unless $\Delta\not=\Pset(V)$. In general, $\Delta_{\cap}\onull$ and $\Delta_{\cup}^{\infty}$ are very different binoids. To describe them in terms of generators, we need the following definition.

\begin {Definition}
 Let $M$ be a boolean commutative binoid. We say $M$ is \gesperrt{semifree up to idempotence} \index{semibinoid!semifree up to idempotence --}\index{binoid!semifree up to idempotence --}\index{semifree!-- up to idempotence}if there exists a generating set $\{a_{i}\mid i\in I\}$ such that every element $f\in M\opkt$ can be written uniquely as $f=\sum_{i\in J}a_{i}$ for a finite set $J\subseteq I$. By abuse of notation, cf.\ Definition \ref{DefRepFree}, $(a_{i})_{i\in I}$ is called a \gesperrt{semibasis} \index{semibasis}of $M$ and the set $\{a_{i}\mid i\in J\}$ the \gesperrt{support} \index{element!support of an --}\index{support}of $f$, denoted by $\supp(f)$.
\end {Definition}

\begin {Lemma}
The binoid $\Delta_{\cup}^{\infty}$ is semifree up to idempotence with semibasis  $\{v\}$, $v\in V$. Conversely, if $B$ is a finitely generated boolean binoid that is semifree up to idempotence, then $B\cong\Delta_{\cup}^{\infty}$ for some simplicial complex $\Delta$.
\end {Lemma}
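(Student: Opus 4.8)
The plan is to prove both directions separately, using the identification $\Delta_{\cup}^{\infty}\cong(\Delta\cup\{V\},\cup,\emptyset,V)$ throughout.

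\emph{The forward direction.} First I would verify that the singletons $\{v\}$, $v\in V$, generate $\Delta_{\cup}^{\infty}$: every nonempty face $F\in\Delta$ is the union $\bigcup_{v\in F}\{v\}$, and the absorbing element $\infty$ (corresponding to $V$, or to any nonface) arises as $\{v\}\cup\{w\}$ for a suitable pair with $\{v,w\}\notin\Delta$. The defining property of being semifree up to idempotence then follows from the fact that the operation is $\cup$: for a face $F\neq\emptyset$ one has $F=\bigcup_{v\in F}\{v\}$, and this is the \emph{unique} way to write $F$ as a union of singletons, since $\supp$ of such a union recovers precisely the set of vertices $v$ with $\{v\}\subseteq F$, i.e.\ the elements of $F$. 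Hence $(\{v\})_{v\in V}$ is a semibasis and $\supp(F)=\{\{v\}\mid v\in F\}$.

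\emph{The converse direction.} Let $B$ be a finitely generated boolean binoid that is semifree up to idempotence, with semibasis $(a_{i})_{i\in I}$, $I$ finite by the finite generation (and since a semibasis is a minimal generating set, cf.\ the analogous remark after Definition \ref{DefRepFree}). The plan is to realize $B$ as $\Delta_{\cup}^{\infty}$ with $V:=I$. I would set
$$\Delta\,:=\,\{\supp(f)\mid f\in B\opkt\}\cup\{\emptyset\}\,\subseteq\,\Pset(I)\komma$$
identifying each $f\neq\infty$ with the subset $\supp(f)\subseteq I$ and $0$ with $\emptyset$. The key points to check are that $\Delta$ is a simplicial complex on $I$ and that the assignment $f\mapsto\supp(f)$ is a binoid isomorphism $B\to\Delta_{\cup}^{\infty}$. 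That $\Delta$ contains all singletons $\{i\}$ is clear since $\supp(a_{i})=\{i\}$. For subset-closedness I would argue that if $f=\sum_{i\in J}a_{i}\neq\infty$ and $J'\subseteq J$, then $g:=\sum_{i\in J'}a_{i}$ satisfies $g+\sum_{i\in J\setminus J'}a_{i}=f\neq\infty$; since $B$ is boolean (hence positive and reduced by Lemma \ref{LemBool=>PosRed}), no proper partial sum of an expression summing to a non-absorbing element can be absorbing, so $g\neq\infty$ and $\supp(g)=J'\in\Delta$. Bijectivity of $f\mapsto\supp(f)$ is exactly the uniqueness in the definition of semifree up to idempotence. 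Finally, the homomorphism property amounts to checking $\supp(f+g)=\supp(f)\cup\supp(g)$ whenever $f+g\neq\infty$, which holds because $B$ is boolean: $f+g=\sum_{i\in\supp(f)\cup\supp(g)}a_{i}$ after collapsing repeated summands via $a_{i}+a_{i}=a_{i}$, and when $f+g=\infty$ this union is not a face, matching $\infty$ in $\Delta_{\cup}^{\infty}$.

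\emph{Main obstacle.} The delicate step is the converse: showing that $\Delta$ is genuinely subset-closed, which relies on the claim that no proper sub-sum of a representation of a non-absorbing element is absorbing. I expect this to be the crux, and I would isolate it as the one place where positivity and reducedness of a boolean binoid (Lemma \ref{LemBool=>PosRed}) are essential; without booleanness one could have $a_{i}+a_{i}=\infty$ while $a_{i}\neq\infty$, and the support description would break down. Once that claim is secured, verifying that $\supp(f+g)=\supp(f)\cup\supp(g)$ and that the map is inverse to $F\mapsto\sum_{i\in F}a_{i}$ is routine.
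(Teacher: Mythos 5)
Your proof is correct and follows essentially the same route as the paper: the complex you build from the supports coincides with the paper's $\Delta=\{F\subseteq V\mid\sum_{v\in F}v\neq\infty\text{ in }B\}$, and your map $f\mapsto\supp(f)$ is just the inverse of the paper's homomorphism $F\mapsto\sum_{v\in F}v$. The only quibble is that the step you single out as the crux --- that no proper sub-sum of a representation of a non-absorbing element can be absorbing --- needs nothing beyond the fact that $\infty$ is absorbing (if a partial sum were $\infty$, adding the remaining summands would keep it $\infty$), so the appeal to Lemma \ref{LemBool=>PosRed} is not actually needed there.
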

\begin {proof}
Only the converse is not trivial. Let $V\subseteq B$ be a finite generating set that is also a semibasis. Obviously, $\Delta:=\{F\subseteq V\mid\sum_{v\in F}v\not=\infty$ in $B\}$ is a simplicial complex on $V$, and $\Delta_{\cup}^{\infty}\rto B$ with $F\mto \sum_{v\in F}v$ is a surjective binoid homomorphism, which is also injective since the elements of $V$ form a semibasis. Hence, $B\cong\Delta_{\cup}^{\infty}$.
\end {proof}

\begin {Example}\label{ExpBasisDeltaCap} 
\begin {ListeTheorem}
\item []
\item Let $\Delta=\Pset(V)\setminus\{V\}$. A semibasis of $\Delta_{\cap}\onull=\Pset(V)_{\cap}$ is given by the $n$ facets $V\setminus\{v\}$, $v\in V$, and we have $\Pset(V)_{\cup}=\Delta_{\cup}\onull\cong\Delta_{\cap}^{\infty}=\Pset(V)_{\cap}$ via
$$\Pset(V)_{\cup}\stackrel{\sim}{\Rto}\Pset(V)_{\cap}\komma\quad \{v\}\lto V\setminus \{v\}\pkt$$
\item Let $\Delta=\{\emptyset,\{v\}\mid v\in V\}$. A semibasis of $\Delta_{\cap}\onull$ is given by the $n$ facets $\{v\}$, $v\in V$, and we have $\Delta_{\cup}\onull\cong\Delta_{\cap}^{\infty}$ via
$$\Delta_{\cup}^{\infty}\stackrel{\sim}{\Rto}\Delta_{\cap}\onull\komma\quad\{v\}\lto\{v\}\pkt$$
\item Let $V=\{0\kpkt n-1\}$ and let $\Delta$ be the one-dimensional pure simplicial complex defined by the $n$ facets $F_{i}:=\{i,i+1\}$, $i\in\{0\kpkt n-2\}$, and $F_{n-1}:=\{n-1,0\}$. Then $\Delta_{\cap}\onull$ is semifree up to idempotence with a semibasis given by these facets, and we have $\Delta_{\cup}\onull\cong\Delta_{\cap}^{\infty}$ via
$$\Delta_{\cup}^{\infty}\stackrel{\sim}{\Rto}\Delta_{\cap}\onull\komma\quad\{i\}\lto F_{(k+i\mod n)}\pkt$$
\end {ListeTheorem}
In Theorem \ref{ThDeltaCapIsomCup}, we will show that $\Delta_{\cup}^{\infty}$ and $\Delta_{\cap}\onull$ are isomorphic if and only if $\Delta$ is composed of such simplicial complexes.
\end {Example}

\begin {Proposition} \label{PropDeltaBinosSemifree}
Let $\Delta$ be a simplicial complex on $V$. The binoid $\Delta_{\cap}\onull$ always admits a unique minimal generating set $X$. Moreover, if $\Delta_{\cap}\onull$ is semifree up to idempotence, then $X$ is given by the set of facets of $\Delta$.
\end {Proposition}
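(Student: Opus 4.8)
<br>

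The plan is to prove the two assertions separately, exploiting the boolean structure of $\Delta_{\cap}\onull$. First I would establish the existence of a unique minimal generating set. Recall that $\Delta_{\cap}\onull$ is a boolean, hence positive (Lemma~\ref{LemBool=>PosRed}), finitely generated commutative binoid. The obstacle is that Proposition~\ref{PropUniqueMinSyst}, which guarantees a unique minimal generating set via $M\Uplus\setminus 2M\Uplus$, requires cancellativity, and $\Delta_{\cap}\onull$ is \emph{not} cancellative for $\#V\ge 2$ since it has non-trivial idempotents. So I would instead argue directly using idempotence: in a boolean binoid every element satisfies $2a=a$, so $2M\Uplus=M\Uplus$ and the criterion of Proposition~\ref{PropUniqueMinSyst} degenerates. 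The key observation is that the partial order induced by the operation (here $F\le G:\eq F\cap G=F$, i.e.\ reverse inclusion of faces) makes $\Delta_{\cap}\onull$ a meet-semilattice, and I would show that the minimal generating set is forced to be the set of \emph{meet-irreducible} elements — those faces $F\ne V$ that cannot be written as $G\cap H$ with $G,H\supsetneq F$ proper faces. An element is redundant precisely when it is a meet of strictly larger faces, so the meet-irreducibles must lie in every generating set and themselves generate; uniqueness follows because boolean idempotence prevents the ``multiplicity'' ambiguities that occur in the cancellative case.

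For the second assertion, I would suppose $\Delta_{\cap}\onull$ is semifree up to idempotence with semibasis $(a_i)_{i\in I}$. By the preceding lemma's analogue (and the definition), this semibasis is a minimal generating set, so by the uniqueness just proved it must coincide with the set $X$ of meet-irreducibles. The task then reduces to identifying $X$ with the set of facets of $\Delta$. Here I would use the concrete realization $\Delta_{\cap}\onull\cong(\Delta\cup\{V\},\cap,V,\emptyset)$: the identity is $V$, the absorbing element is $\emptyset$, and the unique expression property forces every face $F\ne\emptyset$ to be written uniquely as $F=\bigcap_{j\in J}a_j$. I expect the main obstacle to be showing that under the \emph{semifree} hypothesis the meet-irreducibles are exactly the facets. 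The facets are clearly meet-irreducible (a facet is maximal, so cannot be a proper meet of larger faces, as there are none). The converse — that semifreeness forces \emph{only} facets to be meet-irreducible — is the delicate point: in a general simplicial complex a non-facet face can be meet-irreducible, but then the unique-expression requirement fails. I would argue by contradiction: if some non-facet $F$ were in the semibasis, pick a facet $G\supsetneq F$; then $G$ must be a product of semibasis elements whose meet equals $G$, and combining this with the expression for $F$ produces two distinct representations of some face, violating uniqueness.

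Concretely, I would organize the write-up as: (1) record that $\Delta_{\cap}\onull$ is boolean positive finitely generated, and that boolean-ness gives $2a=a$; (2) define meet-irreducibles, show they generate and lie in every generating set, yielding existence and uniqueness of $X$; (3) under the semifree hypothesis, invoke uniqueness to identify the semibasis with $X$, and then prove $X$ equals the facet set by the two inclusions sketched above, using the counting/uniqueness contradiction for the harder inclusion. The examples in Example~\ref{ExpBasisDeltaCap} serve as sanity checks: in each case the listed semibasis consists exactly of the facets, consistent with the claim, and Theorem~\ref{ThDeltaCapIsomCup} (to which the statement points forward) tells us these semifree cases are special, which is why the facet description need not hold without the semifreeness hypothesis.
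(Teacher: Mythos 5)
Your proposal is correct and follows essentially the same route as the paper: the unique minimal generating set is identified with the meet-irreducible faces (those not expressible as $G\cap G'$ with $G,G'\ne F$), and under the semifree hypothesis a non-facet meet-irreducible $F$ together with a facet $G\supsetneq F$ yields the two expressions $F=F$ and $F=G\cap F$, contradicting uniqueness. The paper's proof is just a terser version of exactly this argument.
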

\begin {proof}
Obviously, the minimal generating set is given by the set consisting of all faces $F\in\Delta$ for which there are no faces $G,G^{\prime}\in\Delta\setminus\{F\}$ with $F=G\cap G^{\prime}$. For the supplement denote this set by $X$. Since every semibasis is a minimal generating set, $X$ has to be a semibasis if $\Delta_{\cap}\onull$ is semifree up to idempotence. However, $X$ cannot be a semibasis if it contains a nonfacet. Indeed, if $G\in X$ is a nonfacet and $F\in\Delta$ a facet with $G\subsetneq F$, then $G=F\cap G$ are two different expressions of $G$ in terms of elements of $X$ since the facets are always contained in $X$.
\end {proof}

\begin {Example}
The converse of the supplement of Proposition \ref{PropDeltaBinosSemifree} need not be true. If $\Delta$ is the simplicial complex on $\{1,2,3,4\}$ with facets $\{1,2\}$, $\{1,3\}$, $\{1,4\}$, $\{2,3\}$ and $\{3,4\}$, then the unique minimal generating set of $\Delta_{\cap}\onull$ is given by the set of facets but it is not a semibasis since, for instance, $\{1,2\}\cap\{1,3\}=\{1,2\}\cap\{1,4\}=\{1,3\}\cap\{1,4\}=\{1\}$.
\end {Example}

\begin {Remark}
In contrast to $\Delta_{\cup}^{\infty}$, the unique minimal generating set of $\Delta_{\cap}\onull$ need not be a semibasis. If $\Delta_{\cap}\onull$ is not semifree up to idempotence, the minimal generating set has to be constructed. For the construction note that every subset $X\subseteq\Delta_{\cap}\onull$ that generates $\Delta_{\cap}\onull$ has to contain the facets and the faces $F\setminus\{v\}$, $v\in F$, $F$ a facet, that cannot be generated by the facets (i.e.\ for which there is no facet $G\in\Delta$ with $G\cap F=F\setminus\{v\}$).

Now the unique minimal generating set is constructed in the following way: let $\{\dim F\mid F\in\Delta$ facet$\}=\{d_{1}\kpkt d_{s}\}$, where we may assume that $\dim\Delta=d_{1}>\cdots>d_{s}\ge 0$. We start with the set $Y_{1}=\{F\in\Delta$ facet$\mid\dim F=d_{1}\}$. Since $\dim(F\cap G)<\min\{\dim F,\dim G\}$ for any two facets $F,G\in\Delta$, the faces $F\setminus\{v\}$, $v\in F$, $F\in Y_{1}$, can only be generated by elements of $Y_{1}$, and those that cannot need to be added to $Y_{1}$. Denote the resulting set by $X_{1}$ and let $Y_{2}:=\{F\in\Delta$ facet$\mid\dim F=d_{2}\}$. The same argument as above shows that any face $F\setminus\{v\}$, $v\in F$, $F\in Y_{2}$, that is not contained in $\Pset(F)$ for $F\in Y_{1}$ can only be generated by the facets $F\in Y_{2}$, otherwise they need to be added to $X_{1}\cup Y_{2}$. Denote the resulting set by $X_{2}$ and proceed in the same manner till $X_{s}$ is attained. By construction, $X_{s}$ is the unique minimal generating set. 
\end {Remark}

Since the unique minimal generating set of $\Delta_{\cap}\onull$ always contains the facets, it obviously may have strictly more than $\#V$ elements, but this may also happen if there are less than $\#V$ facets, which means that $\Delta_{\cap}\onull$ and $\Delta_{\cup}^{\infty}$ cannot be isomorphic. Evidently, the binoids $\Delta_{\cap}\onull$ and $\Delta_{\cup}^{\infty}$ may be very different.

\begin {Example}
Let $\Delta\not=\Pset(V)$ be a simplicial complex on $V$ defined by pairwise disjoint facets $F_{1}\kpkt F_{r}$, $r\ge 2$, and let $n:=\#F_{1}\pluspkt\#F_{r}=\#V$. In this case, the unique minimal generating set of $\Delta_{\cap}\onull$ has $n+r$ elements, namely $F_{k}$ and $F_{k}\setminus\{v\}$ for $v\in F_{k}$, $k\in\{1\kpkt r\}$, and is not a semibasis because $F_{k}\setminus\{v\}=F_{k}\cap F_{k}\setminus\{v\}$. In particular, $\Delta_{\cap}\onull\not\cong\Delta_{\cup}^{\infty}$.
\end {Example}

\begin {Lemma} \label{LemSimplComplConstructions1}
Let  $I$ be finite, $\Delta_{i}$ a simplicial complex on $V_{i}$, $i\in I$, and $V:=\biguplus_{i\in I}V_{i}$ the disjoint union. Then the \gesperrt{disjoint union} \index{simplicial complex!disjoint union of --s}\index{disjoint union!-- of simplicial complexes}\nomenclature[AProductDisjointUnionSC]{$\biguplus_{i\in I}\Delta_{i}$}{disjoint union of a family of simplicial complexes}
$$\biguplus_{i\in I}\Delta_{i}\,\,=\,\,\{(F_{k};k)\mid F_{k}\in\Delta_{k}\setminus\{\emptyset\},k\in I\}\cup\{\emptyset\}\,\,\subseteq\,\,\Big(\biguplus_{i\in I}\Pset(V_{i})\Big)\cup\{\emptyset\}\,\,\subseteq\,\,\Pset(V)$$
of $(\Delta_{i})_{i\in I}$ is a simplicial complex on $V$, where
$$\Big(\biguplus_{i\in I}\Delta_{i}\Big)_{\cap}\onull\,=\,\,\,\bigcupbidot_{i\in I}\Delta_{i,\cap}\onull\quad\text{and}\quad\Big(\biguplus_{i\in I}\Delta_{i}\Big)_{\cup}^{\infty}\,=\,\,\,\bigcupbidot_{i\in I}\Delta_{i,\cup}^{\infty}\pkt$$
Also, the \gesperrt{product} \index{simplicial complex!product of --s}\index{product!-- of simplicial complexes}\nomenclature[AProductProductSC]{$\prod_{i\in I}\Delta_{i}$}{product of a family of simplicial complexes}
$$\prod_{i\in I}\Delta_{i}\,=\,\,\{(F_{i})_{i\in I}\mid F_{i}\in\Delta_{i},i\in I\}\,\,\subseteq\,\,\prod_{i\in I}\Pset(V_{i})\cong\Pset(V)$$
of $(\Delta_{i})_{i\in I}$ is a simplicial complex on $V$, where
$$\Big(\prod_{i\in I}\Delta_{i}\Big)_{\cap}\onull\,=\,\prod_{i\in I}\Delta_{i,\cap}\onull\quad\text{and}\quad\Big(\prod_{i\in I}\Delta_{i}\Big)_{\cup}^{\infty}\,=\,\bigwedge_{i\in I}\Delta_{i,\cup}^{\infty}\pkt$$
\end {Lemma}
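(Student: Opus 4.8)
The plan is to verify each of the four claimed identities directly by unwinding the definitions of the disjoint union and product of simplicial complexes together with the combinatorial descriptions of the bipointed union, smash product, and product of binoids established in Chapter~\ref{Chap1Basics}. All four statements are essentially bookkeeping about which faces map to the absorbing element, so the proof will be a sequence of short verifications rather than a single conceptual argument. First I would confirm that $\biguplus_{i\in I}\Delta_{i}$ and $\prod_{i\in I}\Delta_{i}$ really are simplicial complexes on $V$: both are clearly subset-closed (a subset of a face $(F_{k};k)$ is again concentrated in the $k$th component, and a subset of $(F_{i})_{i\in I}$ is componentwise a subset of some $F_{i}\in\Delta_{i}$), and both contain all singletons of $V$ since each $\Delta_{i}$ contains the singletons of $V_{i}$.

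For the disjoint union, the two binoid identities follow from Lemma~\ref{LemGlueingPoints} and Lemma~\ref{LemPointedUnion}. I would observe that a face of $\biguplus_{i\in I}\Delta_{i}$ is either $\emptyset$ or concentrated in a single component, so that the meet operation $(F_{k};k)\cap(G_{\ell};\ell)$ equals $(F_{k}\cap G_{k};k)$ when $k=\ell$ and equals $\emptyset$ otherwise, which is exactly the addition rule of the bipointed union $\bigcupbidot_{i\in I}\Delta_{i,\cap}\onull$ (here $\emptyset$ plays the role of the glued identity elements $(0_{i};i)$). Dually, the join $(F_{k};k)\cup(G_{\ell};\ell)$ lands in the component $k$ when $k=\ell$ and becomes $\infty$ when $k\neq\ell$ (two faces in different components cannot unite to a face), matching the bipointed union $\bigcupbidot_{i\in I}\Delta_{i,\cup}^{\infty}$ with the absorbing elements $(\infty_{i};i)$ glued together. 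The positivity hypothesis of Lemma~\ref{LemPointedUnion} is satisfied because all the $\Delta_{i,\cap}\onull$ and $\Delta_{i,\cup}^{\infty}$ are boolean, hence positive by Lemma~\ref{LemBool=>PosRed}.

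For the product, the identity $(\prod_{i\in I}\Delta_{i})_{\cap}\onull=\prod_{i\in I}\Delta_{i,\cap}\onull$ is the most transparent: intersection of faces in $\prod_{i\in I}\Pset(V_{i})$ is componentwise, the top face $V=(V_{1}\kpkt V_{n})$ serves as the identity, and $\emptyset=(\emptyset\kpkt\emptyset)$ as the absorbing element, so this is literally the product binoid of Corollary~\ref{CorProduct}. The remaining identity $(\prod_{i\in I}\Delta_{i})_{\cup}^{\infty}=\bigwedge_{i\in I}\Delta_{i,\cup}^{\infty}$ is the one requiring the most care and is the main obstacle: the union of two tuples $(F_{i})_{i\in I}\cup(G_{i})_{i\in I}=(F_{i}\cup G_{i})_{i\in I}$ is a face precisely when every componentwise union $F_{i}\cup G_{i}$ is a face of $\Delta_{i}$, and otherwise the result is $\infty$. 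I would match this against the smash product description from Lemma~\ref{LemSmashRules}, where $\sim_{\wedge}$ collapses a tuple to $\infty_{\wedge}$ exactly when at least one entry equals $\infty_{i}$; the subtlety is to check that ``some component leaves the complex'' on the simplicial side corresponds precisely to ``some entry becomes $\infty_{i}$'' in $\bigwedge_{i\in I}\Delta_{i,\cup}^{\infty}$. This correspondence works because in $\Delta_{i,\cup}^{\infty}$ a union $F_{i}\cup G_{i}$ is $\infty_{i}$ exactly when it is a nonface, so I would conclude by exhibiting the binoid isomorphism sending $(F_{i})_{i\in I}$ to $\wedge_{i\in I}F_{i}$ and verifying it respects the operations and the identity and absorbing elements.
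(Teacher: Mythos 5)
Your overall strategy --- unwinding the definitions and checking the four identities componentwise --- is exactly what the paper intends; its own proof is just ``this is easily verified,'' so there is no competing argument to compare against. The verification that both constructions are simplicial complexes, the two $\cup$-identities (including the smash-product case, which you correctly reduce to the observation that a componentwise union is $\infty_{i}$ in $\Delta_{i,\cup}^{\infty}$ precisely when it is a nonface of $\Delta_{i}$), and the appeal to Lemma \ref{LemBool=>PosRed} for the positivity hypothesis of the bipointed union are all fine. Two points need attention.

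First, a labelling slip in the $\cap$-case of the disjoint union: in $\Delta_{i,\cap}\onull$ the face $\emptyset$ is the \emph{absorbing} element and the adjoined $\mathbf{0}$ (alias $V_{i}$) is the identity, so under the identification $(\biguplus_{i\in I}\Delta_{i})_{\cap}\onull=\bigcupbidot_{i\in I}\Delta_{i,\cap}\onull$ the common face $\emptyset$ corresponds to the glued absorbing elements $(\infty_{i};i)$ of Lemma \ref{LemPointedUnion}, not to the glued identities $(0_{i};i)$ as your parenthetical claims. Your actual computation (``equals $\emptyset$ otherwise'' for faces in different components) matches the rule $(a;i)+(b;j)=\infty$ for $i\not=j$, so only the parenthetical needs fixing --- but confusing identity with absorbing element in the $\cap$-binoid is exactly the kind of sign error that propagates.

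Second, and more substantively: the identity $(\prod_{i\in I}\Delta_{i})_{\cap}\onull=\prod_{i\in I}\Delta_{i,\cap}\onull$ is \emph{not} ``literally the product binoid of Corollary \ref{CorProduct}'' applied to the binoids $\Delta_{i,\cap}\onull$. That product has underlying set $\prod_{i\in I}(\Delta_{i}\cup\{\mathbf{0}_{i}\})$ and so contains mixed tuples such as $(\mathbf{0}_{1},F_{2}\kpkt)$; already for two one-point vertex sets with full complexes it has $9$ elements, whereas $(\prod_{i\in I}\Delta_{i})_{\cap}\onull$ has $5$. Likewise $(V_{1}\kpkt V_{n})$ is a ``top face'' of $\prod_{i\in I}\Delta_{i}$ only when every $\Delta_{i}$ is the full complex; in general the identity on the left is a freshly adjoined $\mathbf{0}$. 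The statement has to be read as: form the product of the \emph{semibinoids} $\Delta_{i,\cap}$ (underlying set $\prod_{i\in I}\Delta_{i}$, componentwise $\cap$, absorbing element $(\emptyset\kpkt\emptyset)$) and only then adjoin a single identity to the whole product. With that reading your componentwise computation of $\cap$ does establish the claim; with the reading you describe, the two sides are not even equipotent, so the verification as written would fail. Ironically, it is this $\cap$-product identity, not the smash-product one, that is the real obstacle in the lemma.
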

\begin {proof}
This is easily verified.
\end {proof}

Recall that a \gesperrt{partition} \index{partition!-- of a set}of a set $V$ is a family $(V_{i})_{i\in I}$ of subsets such that $V=\bigcup_{i\in I}V_{i}$ and $V_{i}\cap V_{j}=\emptyset$ for $i\not=j$. Every simplicial complex $\Delta$ on $V$ admits a partition $(V_{i})_{i\in I}$ of $V$ such that $\Delta=\biguplus_{i\in I}\Delta_{i}$, where $\Delta_{i}$ is a simplicial complex on $V_{i}$, $i\in I$. Of course this partition need not be unique or might be trivial; that is when $\#I=1$. However, there exists always a (unique) \emph{finest} partition.

\begin {Definition}
A simplicial complex $\Delta$ on $V$ that admits no non-trivial partition of $V$ is said to be \gesperrt{connected}\index{simplicial complex!connected --}.
\end {Definition}

\begin {Proposition}\label{PropConnectedSimplCompl}
Let $\Delta$ be a simplicial complex on $V$.
\begin{ListeTheorem}
\item $\Delta$ is connected if and only if for any two vertices $v,w\in V$ there are facets $F_{1}\kpkt F_{s}\in\Delta$ with $v\in F_{1}$ and $w\in F_{s}$ such that $F_{i}\cap F_{i+1}\not=\emptyset$ for all $i\in\{1\kpkt s-1\}$.
\item There exists a unique partition $(V_{i})_{i\in I}$ of $V$ such that $\Delta=\biguplus_{i\in I}\Delta_{i}$, where $\Delta_{i}$ is a connected simplicial complex on $V_{i}$, $i\in I$.
\end{ListeTheorem}
\end {Proposition}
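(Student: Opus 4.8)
The plan is to prove the two statements by translating the combinatorial notion of connectedness into the language of the equivalence relation generated by facet-overlap, and then establishing existence and uniqueness of the finest partition.

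For part (1), I would argue both directions. For the forward implication, suppose $\Delta$ is connected (admits no non-trivial partition of $V$) but that the facet-chain condition fails for some pair $v,w$. Then I would define a relation on $V$ by declaring two vertices \emph{linked} if they are joined by a chain of facets with successive nonempty intersections (as in the statement), and observe this is an equivalence relation: reflexivity and symmetry are immediate, and transitivity follows by concatenating chains. If the chain condition fails for some $v,w$, this equivalence relation has at least two classes, say $W_1,\ldots,W_m$ with $m\ge 2$. The key observation is that every facet lies entirely within a single class, since any two vertices of a facet $F$ are linked via the length-one chain $F$ itself; hence every face (being a subset of a facet) lies within a single class. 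This yields a non-trivial partition $(W_i)_{i}$ of $V$ with $\Delta=\biguplus_i \Delta_{W_i}$, contradicting connectedness. For the converse, if the chain condition holds for all pairs but $\Delta=\biguplus_{i\in I}\Delta_i$ is a non-trivial partition, pick vertices $v\in V_i$ and $w\in V_j$ with $i\ne j$; any facet-chain from $v$ to $w$ must at some step cross from one block to another, but facets in the disjoint union never meet across blocks (their vertex sets live in disjoint $V_i$), so no such chain exists, a contradiction.

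For part (2), the plan is to use exactly the equivalence relation from part (1). I would let $\sim$ be the equivalence relation on $V$ generated by $v\sim w$ whenever $v,w$ lie in a common facet, let $(V_i)_{i\in I}$ be its equivalence classes, and set $\Delta_i:=\{F\in\Delta\mid F\subseteq V_i\}$. The main verifications are: first, that each $\Delta_i$ is a simplicial complex on $V_i$ (it contains $\emptyset$, all singletons $\{v\}$ with $v\in V_i$, and is subset-closed since $\Delta$ is); second, that $\Delta=\biguplus_{i\in I}\Delta_i$, which amounts to checking that every nonempty face $F\in\Delta$ lies in exactly one $\Delta_i$ — this holds because all vertices of $F$ are mutually linked and hence lie in one class; third, that each $\Delta_i$ is connected, which follows from part (1) since within a single $\sim$-class any two vertices are by construction joined by a facet-chain (here one must check the chain can be taken within $\Delta_i$, which is automatic as the facets of $\Delta_i$ are precisely the facets of $\Delta$ contained in $V_i$).

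The main obstacle I anticipate is the \emph{uniqueness} claim, which is the only genuinely nontrivial point. Suppose $(V_i')_{i\in I'}$ is another partition with $\Delta=\biguplus \Delta_i'$ and each $\Delta_i'$ connected. I would show each $V_i'$ is a union of $\sim$-classes (since any facet, being a face of some $\Delta_i'$, has all vertices in one $V_i'$, so $\sim$ refines the partition into the $V_i'$) and conversely that the connectedness of each $\Delta_i'$ forces $V_i'$ to be a single $\sim$-class (any two vertices of $V_i'$ are facet-chain-linked within $\Delta_i'$ by part (1), hence $\sim$-equivalent). Combining these, the two partitions coincide. The delicate part is keeping straight that the relation $\sim$ is simultaneously the coarsest relation refining every admissible partition and the one realized by the connected pieces; I would phrase this as ``$\sim$ is the finest partition into connected subcomplexes'' and verify both the refinement and the reverse inclusion carefully.
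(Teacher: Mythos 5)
Your proposal is correct and follows essentially the same route as the paper: both define the equivalence relation on $V$ generated by facet-chains with successive nonempty intersections, observe that every facet (hence every face) lies in a single class, and read off both the characterization of connectedness and the decomposition from the equivalence classes. Your treatment of uniqueness in part (2) is in fact more explicit than the paper's, which dispatches it with a one-line remark, but the underlying argument is the same.
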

\begin {proof}
Let $A\subseteq\Delta$ be the subset of facets of $\Delta$ and consider the equivalence relation on $V$ given by 
$$v\sim w\,\,:\eq\,\,\exists F_{1}\kpkt F_{s}\in A\text{ such that }v\in F_{1}, w\in F_{s}\text{ and }F_{i}\cap F_{i+1}\not=\emptyset, \forall i\in\{1\kpkt s-1\}\pkt$$
Denote by $V_{1}\kpkt V_{l}$, $l\ge1$, the equivalence classes. Now (1) translates to:  $\Delta$ is connected if and only if $l=1$. Clearly, if $\Delta$ is not connected, then $l\ge 2$. Conversely, if $l\ge 2$ then $(V_{i})_{i\in I}$, $I=\{1\kpkt l\}$, is a non-trivial partition of $V$ such that $\Delta=\biguplus_{i\in I}\Delta_{i}$, where $\Delta_{i}$ is the simplicial complex $\{F\in\Delta\mid F\subseteq V_{i}\}$ on $V_{i}$, $i\in I$. This also proves (2) since the $\Delta_{i}$ are connected by (1). 
\end {proof}

\begin{Example}
The full simplicial complex $\Pset(V)$ and the simplicial complex of Example \ref{ExpBasisDeltaCap} (3) are connected, whereas $\Delta=\{\emptyset,\{v\}\mid v\in V\}$ is not connected for $\#V\ge2$ since $\Delta=\biguplus_{v\in V}\Pset(\{v\})$.
\end{Example}

\begin {Theorem} \label{ThDeltaCapIsomCup}
Let $\Delta$ be a simplicial complex on $V$. The binoid $\Delta_{\cap}\onull$ is semifree up to idempotence if and only if $\Delta$ decomposes into connected simplicial complexes of the form
$$\Delta^{(1)}:=\Pset(W)\setminus\{W\}\,\,\text{ or }\,\,\Delta^{(2)}:=\Pset(\{v\})\,\,\text{ or }\,\,\Delta^{(3)}:=\{\emptyset,\{i\},\{i,i+1\mod n\}\mid i\in\{0\kpkt n-1\}\}\pkt$$
In this case (and only in this case), $\Delta_{\cup}^{\infty}$ and $\Delta_{\cap}\onull$ are isomorphic.
\end {Theorem}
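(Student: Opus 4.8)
本The plan is to prove both directions of the equivalence, using the structural results on connected simplicial complexes (Proposition \ref{PropConnectedSimplCompl}) together with the decomposition of the binoids $\Delta_{\cap}\onull$ and $\Delta_{\cup}^{\infty}$ under disjoint unions (Lemma \ref{LemSimplComplConstructions1}). The overall strategy is to reduce everything to the connected case via the finest partition $\Delta=\biguplus_{i\in I}\Delta_{i}$, and then classify exactly which connected complexes make $\Delta_{\cap}\onull$ semifree up to idempotence.

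First I would establish that the property ``$\Delta_{\cap}\onull$ is semifree up to idempotence'' is compatible with disjoint unions. Since $(\biguplus_{i\in I}\Delta_{i})_{\cap}\onull=\bigcupbidot_{i\in I}\Delta_{i,\cap}\onull$ by Lemma \ref{LemSimplComplConstructions1}, and the unique minimal generating set of a bipointed union is the (disjoint) union of the minimal generating sets of the components (cf.\ Lemma \ref{LemGenSystPointedUnion} and Proposition \ref{PropDeltaBinosSemifree}), one checks that $\Delta_{\cap}\onull$ is semifree up to idempotence if and only if each $\Delta_{i,\cap}\onull$ is. Concretely, the support of an element $(F;i)$ lives entirely within the $i$th component, so unique expressibility in the whole binoid is equivalent to unique expressibility in each component. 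This reduces the theorem to the connected case, where I must show that the only connected complexes with $\Delta_{\cap}\onull$ semifree up to idempotence are $\Delta^{(1)}=\Pset(W)\setminus\{W\}$, $\Delta^{(2)}=\Pset(\{v\})$, and the ``cycle'' $\Delta^{(3)}$.

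For the connected classification, the forward direction is the heart of the argument. Assuming $\Delta$ is connected and $\Delta_{\cap}\onull$ is semifree up to idempotence, Proposition \ref{PropDeltaBinosSemifree} tells me the unique minimal generating set $X$ must equal the set of facets, and this set must be a semibasis. I would extract strong combinatorial constraints from the semibasis property: for any two facets $F,G$, the intersection $F\cap G$ must itself have a \emph{unique} expression as an intersection of facets, and no face may be writable as an intersection of facets in two genuinely different ways. Analyzing the possible overlaps of facets — in particular showing that any two distinct facets can meet in at most one vertex, and that a vertex can lie in at most two facets (otherwise triple intersections produce non-unique expressions, as in the example following Proposition \ref{PropDeltaBinosSemifree}) — should force the facet structure to be either a single full simplex minus its top face (giving $\Delta^{(1)}$), a single vertex (giving $\Delta^{(2)}$), or a closed chain of edges meeting consecutively in single vertices, i.e.\ a cycle (giving $\Delta^{(3)}$). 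This case analysis, ruling out branching and higher-order incidences, is the step I expect to be the main obstacle, since one must carefully verify that every forbidden configuration genuinely destroys unique expressibility.

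For the reverse direction I would verify directly that each of $\Delta^{(1)}$, $\Delta^{(2)}$, $\Delta^{(3)}$ has $\Delta_{\cap}\onull$ semifree up to idempotence, which is essentially done in Example \ref{ExpBasisDeltaCap}(1)--(3) where explicit semibases (the facets) are exhibited; disjoint unions of these then remain semifree up to idempotence by the reduction above. Finally, for the last sentence of the theorem, I would combine this classification with the isomorphisms $\Delta_{\cup}^{\infty}\cong\Delta_{\cap}\onull$ established for each of the three connected types in Example \ref{ExpBasisDeltaCap}(1)--(3). Since both $\Delta_{\cup}^{\infty}$ and $\Delta_{\cap}\onull$ respect disjoint unions (Lemma \ref{LemSimplComplConstructions1}, with $\bigcupbidot$ on both sides), a componentwise isomorphism assembles into a global one; conversely, if $\Delta$ is \emph{not} of this composed form, then $\Delta_{\cup}^{\infty}$ is always semifree up to idempotence with the vertices as semibasis, whereas $\Delta_{\cap}\onull$ fails to be, and an isomorphism would transport the semibasis property, a contradiction. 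This yields the claimed ``if and only if'' for the isomorphism as well.
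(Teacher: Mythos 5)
Your overall architecture (reduction to connected components via the bipointed-union decomposition, the reverse direction via Example \ref{ExpBasisDeltaCap}, and the transport of the semibasis property to settle the ``only in this case'' clause for the isomorphism) is sound and matches the paper's proof. The problem is the forward direction in the connected case, which you correctly identify as the heart of the matter but for which your proposed structural constraints are false. You claim that the semibasis property forces ``any two distinct facets to meet in at most one vertex'' and ``each vertex to lie in at most two facets.'' Both claims fail for $\Delta^{(1)}=\Pset(W)\setminus\{W\}$ with $\#W\ge 4$, which is one of the three allowed types: for $W=\{1,2,3,4\}$ the facets $\{1,2,3\}$ and $\{1,2,4\}$ meet in the two-element set $\{1,2\}$, and the vertex $1$ lies in three facets, yet $\Delta^{(1)}_{\cap}\onull$ \emph{is} semifree up to idempotence (every $A\subsetneq W$ is uniquely $\bigcap_{v\notin A}(W\setminus\{v\})$, cf.\ Example \ref{ExpBasisDeltaCap}(1)). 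A vertex lying in three facets does not by itself produce two expressions of the same face as an intersection of facets --- the example after Proposition \ref{PropDeltaBinosSemifree} fails for a more specific reason (there $\{1\}$ equals three \emph{distinct pairwise} intersections). So the case analysis you sketch would wrongly exclude the higher-dimensional simplex boundaries and cannot yield the stated classification.

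The paper's argument for $\dim\Delta=d\ge2$ proceeds differently: fix a top-dimensional facet $F=\{v_{1}\kpkt v_{d+1}\}$; since the facets form a semibasis, each codimension-one face $F\setminus\{v_{i}\}$ is uniquely an intersection of facets, which produces facets $F_{i}=(F\setminus\{v_{i}\})\cup\{w_{i}\}$ with $F\cap F_{i}=F\setminus\{v_{i}\}$; comparing $F_{i}\cap F_{j}$ with $F\cap F_{i}\cap F_{j}$ forces all the $w_{i}$ to coincide with a single vertex $w$, so that $\Pset(W)\setminus\{W\}\subseteq\Delta$ for $W=F\cup\{w\}$; finally connectedness together with the semibasis property forces $V=W$. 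You would need an argument of this kind (or some other correct one) in place of your incidence constraints; the low-dimensional cases ($\dim\Delta\le1$, giving $\Delta^{(2)}$ and $\Delta^{(3)}$) are where your ``at most two facets per vertex'' reasoning actually does apply.
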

\begin{proof}
Of course, $\Delta_{\cap}\onull$ is semifree up to idempotence if it decomposes in such simplicial complexes because they are semifree up to idempotence by Example \ref{ExpBasisDeltaCap} and then the union of their semibases yields a semibasis of $\Delta_{\cap}\onull$. Moreover, if $V=\biguplus_{i}V_{i}$ and $\Delta=\biguplus_{i\in I}\Delta_{i}$, where $\Delta_{i}$ is one of these three simplicial complexes on $V_{i}$, $i\in I$, the isomorphism
$$\Delta_{\cup}^{\infty}\,=\,\,\,\bigcupbidot_{i\in I}\Delta_{i,\cup}^{\infty}\stackrel{\sim}{\Rto}\,\,\bigcupbidot_{i\in I}\Delta_{i,\cap}\onull\,=\,\Delta_{\cap}\onull$$
is given componentwise $\Delta_{i,\cup}^{\infty}\rto\Delta_{i,\cap}\onull$ as in Example \ref{ExpBasisDeltaCap}. For the converse, it suffices to show that if $\Delta_{\cap}\onull$ is connected and semifree up to idempotence, then $\Delta$ is of the form $\Delta^{(r)}$ for $r=1,2,$ or $3$. If $\#V\le 2$, there is nothing to show. So let $\#V\ge 3$. Note that every zero-dimensional simplicial complex is of the form $\Delta^{(2)}$. Furthermore, every one-dimensional connected simplicial complex that is semifree up to idempotence is of the form $\Delta^{(3)}$. Indeed, in such a simplicial complex every facet need to have dimension 1 by the connectedness, and every vertex has to be contained in precisely two facets because the simplicial complex is semifree up to idempotence. Thus, we may assume that $\dim\Delta=:d\ge2$. We need to show that $\Delta=\Pset(V)\setminus\{V\}$. Denote by $A$ the set of facets of $\Delta$. By Proposition \ref{PropDeltaBinosSemifree}, $A$ is a semibasis of $\Delta_{\cap}\onull$. In particular, given a facet $F$ of dimension $d$, say $F=\{v_{i}\mid i\in I\}$, $I=\{1\kpkt d+1\}$, there are facets $F_{1}\kpkt F_{d+1}\in A$ with (after a suitable renumbering) $F\cap F_{i}=F\setminus\{v_{i}\}$ and $\dim F_{i}=d$, $i\in I$. Hence, $F_{i}=F\setminus\{v_{i}\}\cup\{w_{i}\}$ for some $w_{i}\in V\setminus F$, $i\in I$. For for $i\not=j$ this gives
$$F_{i}\cap F_{j}\,=\,\begin {cases}
F\setminus\{v_{i},v_{j}\}&\text{, if }w_{i}\not=w_{j}\komma\\
(F\setminus\{v_{i},v_{j}\}\uplus\{w\}&\text{, if }w_{i}=w_{j}=:w\pkt
\end {cases}$$
In the first case, one has $F\cap F_{i}\cap F_{j}=F_{i}\cap F_{j}$, which is a contradiction to $A$ being a semibasis. Hence, $w_{i}=w$ for all $i\in I$ and therefore $\Pset(W)\setminus\{W\}\subseteq\Delta$ with $W:=\{v_{1}\kpkt v_{d+1},w\}\subseteq V$. To show that $W=V$ suppose that $v\in V\setminus W$. Since $\Delta$ is connected we find by Proposition \ref{PropConnectedSimplCompl} for any $w\in W$ facets $G_{1}\kpkt G_{s}\in A$ with $w\in G_{1}$, $v\in G_{s}$, and $G_{j}\cap G_{j+1}\not=\emptyset$, $j\in\{1\kpkt s-1\}$. We may assume that $G_{1}\subseteq W$; otherwise extend the sequence $G_{1}\kpkt G_{s}$ by a facet $G_{0}\subseteq W$ with $w\in G_{0}$. Then there is an $r\in\{1\kpkt s-1\}$ such that $G_{r}\subseteq W$ but $G_{r+1}\not\subseteq W$. This is again a contradiction to $A$ being a semibasis because $\emptyset\not=G_{r}\cap G_{r+1}\subseteq W$ is also the intersection of certain $F_{i}$ from above. Thus, $V=W$ and therefore $\Delta=\Pset(V)\setminus\{V\}$.
\end{proof}

\bigskip

\section{Simplicial morphisms} \label{SecSimplMorph}
\markright{\ref {SecSimplMorph} Simplicial morphisms}

In this section, we consider morphisms $\tilde{V}\rto V$ between vertex sets of simplicial complexes $\tilde{\Delta}$ and $\Delta$ and investigate when such morphisms induce homomorphisms between the binoids defined by these simplicial complexes. This yields the definition of simplicial, $\alpha$-simplicial, and $\beta$-simplicial morphisms for which we will give various examples that will appear later on.

\begin {Lemma} \label{LemCharacterizationCosimpMor}
Let $\Delta$ and $\tilde{\Delta}$ be simplicial complexes on $V$ and $\tilde{V}$, respectively, and $\lambda:\tilde{V}\rto V$ a map. The following statements are equivalent.
\begin {ListeTheorem}
\item Every image of a nonface is a nonface; that is, $F\not\in\tilde{\Delta}$ implies $\lambda(F)\not\in\Delta$.
\item Every preimage of a face is a face; that is, $F\in\Delta$ implies $\lambda^{-1}(F)\in\tilde{\Delta}$.
\end {ListeTheorem}
\end {Lemma}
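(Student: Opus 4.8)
The statement is a clean equivalence about a set map $\lambda:\tilde V\to V$ and the two simplicial complexes it connects, so I would prove it by establishing each implication via contraposition, which turns both directions into elementary set-theoretic manipulations. The key observation to exploit is the subset-closedness of a simplicial complex: if $F\in\Delta$ and $G\subseteq F$, then $G\in\Delta$, and likewise for $\tilde\Delta$. This is what will convert a statement about arbitrary faces/nonfaces into a statement I can test on the relevant subsets.

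\textbf{Direction $(1)\Rightarrow(2)$.} I would argue by contraposition: suppose $\lambda^{-1}(F)\notin\tilde\Delta$ for some $F\in\Delta$. Setting $\tilde G:=\lambda^{-1}(F)$, this $\tilde G$ is a nonface of $\tilde\Delta$, so by hypothesis $(1)$ its image $\lambda(\tilde G)$ is a nonface of $\Delta$. But $\lambda(\lambda^{-1}(F))\subseteq F\in\Delta$, and since $\Delta$ is subset-closed, every subset of $F$ is a face; in particular $\lambda(\tilde G)\in\Delta$, contradicting that it is a nonface. Hence $\lambda^{-1}(F)\in\tilde\Delta$.

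\textbf{Direction $(2)\Rightarrow(1)$.} Again by contraposition: suppose $\lambda(\tilde F)\in\Delta$ for some $\tilde F\subseteq\tilde V$; I want to show $\tilde F\in\tilde\Delta$. Put $F:=\lambda(\tilde F)\in\Delta$. By hypothesis $(2)$, $\lambda^{-1}(F)\in\tilde\Delta$. Now $\tilde F\subseteq\lambda^{-1}(\lambda(\tilde F))=\lambda^{-1}(F)$, and since $\tilde\Delta$ is subset-closed, $\tilde F\in\tilde\Delta$. This is exactly the contrapositive of $(1)$.

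\textbf{Anticipated obstacle.} There is essentially no deep obstacle here; the only points requiring care are the two standard but non-equality set inclusions $\lambda(\lambda^{-1}(F))\subseteq F$ and $\tilde F\subseteq\lambda^{-1}(\lambda(\tilde F))$, together with remembering that $\lambda$ need not be injective or surjective, so these are genuine inclusions rather than equalities. The whole argument rides on pairing each of these inclusions with the appropriate subset-closedness property (of $\Delta$ in the first direction, of $\tilde\Delta$ in the second). I would also note for completeness that the empty set and the singletons present no special cases, since every simplicial complex contains $\emptyset$ and all $\{v\}$, so the contrapositive arguments never degenerate.
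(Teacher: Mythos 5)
Your proof is correct and follows essentially the same route as the paper's: both directions hinge on the inclusions $\lambda(\lambda^{-1}(F))\subseteq F$ and $\tilde F\subseteq\lambda^{-1}(\lambda(\tilde F))$ combined with the subset-closedness of $\Delta$ and $\tilde\Delta$, respectively. Your version merely makes the contrapositive framing and the use of subset-closedness more explicit than the paper does.
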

\begin {proof}
$(1)\Rarrow(2)$ Let $F\in\Delta$. Since $\lambda(\lambda^{-1}(F))\subseteq F$ the image $\lambda(\lambda^{-1}(F))$ is also a face of $\Delta$. Hence, $\lambda^{-1}(F)\in\tilde{\Delta}$ since otherwise $\lambda(\lambda^{-1}(F))\not\in\Delta$ by assumption on $\lambda$. $(2)\Rarrow(1)$ Let $F\not\in\tilde{\Delta}$. Since $F\subseteq\lambda^{-1}(\lambda(F))$ the preimage $\lambda^{-1}(\lambda(F))$ is not a face of $\tilde{\Delta}$. Hence, $\lambda(F)\not\in\Delta$ since otherwise $\lambda^{-1}(\lambda(F))\in\tilde{\Delta}$ by assumption on $\lambda$.
\end {proof}

\begin {Definition}
Let $\Delta$ and $\tilde{\Delta}$ be simplicial complexes on $V$ and $\tilde{V}$, respectively, and $\lambda:\tilde{V}\rto V$ a map.
\begin {ListeTheorem}
\item $\lambda$ is a \gesperrt{simplicial} morphism \index{simplicial morphism}if every image of a face is a face; that is, $F\in\tilde{\Delta}$ implies $\lambda(F)\in{\Delta}$.
\item $\lambda$ is an \gesperrt{$\alpha$-simplicial} morphism \index{simplicial morphism!A@$\alpha\mina$ --}if $\lambda$ satisfies one of the equivalent conditions of Lemma \ref{LemCharacterizationCosimpMor}.
\item $\lambda$ is a \gesperrt{$\beta$-simplicial} morphism \index{simplicial morphism!B@$\beta\mina$ --}if every preimage of a nonface is a nonface; that is, $F\not\in\Delta$ implies $\lambda^{-1}(F)\not\in\tilde{\Delta}$.
\end {ListeTheorem}
Since these properties depend on the chosen simplicial complexes, we will write
$$\lambda:(\tilde{V},\tilde{\Delta})\Rto (V,\Delta)\pkt$$
The category of simplicial complexes together with simplicial, $\alpha$-simplicial and $\beta$-simplicial morphisms will be denoted by $\SCsf$, $\SCsf^{\alpha}$, and $\SCsf^{\beta}$, respectively.\nomenclature[SC]{$\SCsf$}{category of simplicial complexes together with simplicial morphisms}\nomenclature[SC]{$\SCsf^{\alpha}$}{category of simplicial complexes together with $\alpha$-simplicial morphisms}\nomenclature[SC]{$\SCsf^{\beta}$}{category of simplicial complexes together with $\beta$-simplicial morphisms}
\end {Definition}

Our definition of a simplicial morphism is consistent with the terminology in topology of abstract simplicial complexes, cf.\ \cite[Chapter II.2]{FerrarioPiccinini}.

\begin {Example}\label{ExpMorOnlyOne}
Here are examples of morphisms that satisfy only one property, simplicial or $\alpha$-simplicial or $\beta$-simplicial, but none of the other both. For this let $V=\{1\kpkt n\}$, $n\ge3$.
\begin {ListeTheorem}
\item Let $T=V\setminus\{n\}$ and $F=\{n-1,n\}$. The map
$$\lambda:(T,\Pset(T))\Rto(V,\Pset(V)\setminus\{V,F\})\quad\text{with}\quad v\lto v$$
is simplicial because $\Pset(T)\subseteq\Pset(V)\setminus\{V,F\}$. Since $\lambda^{-1}(F)=\{n-1\}\in\Pset(T)\setminus\{T\}$ but $F\not\in\Pset(V)\setminus\{V,F\}$ it is not $\alpha$-simplicial, and it is not $\beta$-simplicial because $\lambda^{-1}(V)=T\in\Pset(T)$ but $V\not\in\Pset(V)\setminus\{V,F\}$.
\item Let $\Delta=\{\emptyset,\{v\}\mid v\in V\setminus\{n\}\}$. The map 
$$\lambda:(V,\Pset(V)\setminus\{V\})\Rto(V\setminus\{n\},\Delta)\quad\text{with}\quad v\lto\begin{cases}
v&\text{, if }v\not=n\komma\\
n-1&\text{, otherwise,}
\end{cases}$$
is $\alpha$-simplicial because $V$ is the only nonface of $\Pset(V)\setminus\{V\}$ and $\lambda(V)=V\not\in\Delta$. On the other hand, it is not simplicial because  $\lambda(\{1,2\})=\{1,2\}\not\in\Delta$ but $\{1,2\}\in\Pset(V)\setminus\{V\}$, and since $\lambda^{-1}(\{1,2\})=\{1,2\}\in\Pset(V)\setminus\{V\}$ but $\{1,2\}\not\in\Delta$ it is not $\beta$-simplicial.
\item Let $n=3$, $\Delta=\Pset(V)\setminus\{V,\{1,3\}\}$, and $\tilde{\Delta}=\{\emptyset,\{1\},\{2\}\}$. The map
$$\lambda:(V,\Delta)\Rto(\{1,2\},\tilde{\Delta})\quad\text{with}\quad v\lto\begin {cases}
1\text{, if }v\in\{1,3\}\komma\\
2\text{, otherwise,}
\end {cases}$$
is $\beta$-simplicial because the only nonface in $\tilde{\Delta}$ is $\{1,2\}$ and $\lambda^{-1}(\{1,2\})=V\not\in\Delta$. On the other hand, it is not simplicial because $\lambda(\{1,2\})=\{1,2\}\not\in\tilde{\Delta}$ but $\{1,2\}\in\Delta$, and it is not $\alpha$-simplicial because $\lambda^{-1}(\{1\})=\{1,3\}\not\in\Delta$ but $\{1\}\in\tilde{\Delta}$.
\end {ListeTheorem}
\end {Example}

\begin {Example}\label{ExpMorTwoOneNot}
Here are examples of morphisms that satisfy all except one property.
Let $V=\{1\kpkt n\}$ with $n\ge2$. 
\begin {ListeTheorem}
\item Let $\Delta$ and $\tilde{\Delta}$ be a simplicial complexes on $V$ such that  $\tilde{\Delta}\subsetneq\Delta$. Then
$$\id:(V,\tilde{\Delta})\Rto(V,\Delta)$$
is simplicial since $\tilde{\Delta}\subseteq\Delta$. It is $\beta$-simplicial since nonfaces of $\Delta$ are nonfaces of $\tilde{\Delta}$, but it is not $\alpha$-simplicial because there is by assumption a face $F\in\Delta$ with  $F\not\in\tilde{\Delta}$, hence $\lambda^{-1}(F)=F\not\in\tilde{\Delta}$.
\item Let $T\subsetneq V$. The morphism 
$$\lambda:(T,\Pset(T))\Rto(V,\Pset(V)\setminus\{V\})\quad\text{with}\quad v\lto v$$
is simplicial since $\Pset(T)\subseteq\Pset(V)$, and it is $\alpha$-simplicial since $\Pset(T)$ has no nonfaces. However, $\lambda$ is not $\beta$-simplicial because $\lambda^{-1}(V)=T\in\Pset(T)$ but $V\not\in\Pset(V)\setminus\{V\}$.
\item Let $T=V\setminus\{n\}$. The morphism
$$\lambda:(V,\Pset(V)\setminus\{V\})\Rto(T,\Pset(T)\setminus\{T\})\quad\text{with}\quad v\lto\begin {cases}
v&\text{, if }v\not=n\komma\\
n-1&\text{, otherwise,}
\end{cases}$$
is $\alpha$-simplicial since $V$ is the only nonface of $\Pset(V)\setminus\{V\}$ and $\lambda(V)=T\not\in\Pset(T)\setminus\{T\}$, and since $T$ is the only nonface of $\Pset(T)\setminus\{T\}$ and $\lambda^{-1}(T)=V\not\in\Pset(V)\setminus\{V\}$ it is also $\beta$-simplicial. However, $\lambda$ is not simplicial because $\lambda(T)=T\not\in\Pset(T)\setminus\{T\}$ but $T\in\Pset(V)\setminus\{V\}$.
\end {ListeTheorem}
\end {Example}

\begin {Lemma} \label{LemSimplBetaSimpl}
Let $\Delta$ and $\tilde{\Delta}$ be simplicial complexes on $V$ and $\tilde{V}$, respectively.
\begin {ListeTheorem}
\item If $\lambda:(\tilde{V},\tilde{\Delta})\rto(V,\Delta)$ is surjective and simplicial, then it is $\beta$-simplicial.
\item If $\lambda:(\tilde{V},\tilde{\Delta})\rto(V,\Delta)$ is injective and $\beta$-simplicial, then it is simplicial.
\end {ListeTheorem}
\end {Lemma}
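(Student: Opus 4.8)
\textbf{Plan for Lemma \ref{LemSimplBetaSimpl}.}
The strategy is to unpack the definitions directly in each part and exploit the set-theoretic inclusions $\lambda(\lambda^{-1}(F))\subseteq F$ and $F\subseteq\lambda^{-1}(\lambda(F))$, which become equalities precisely under surjectivity and injectivity, respectively. Recall from Lemma \ref{LemCharacterizationCosimpMor} that being $\beta$-simplicial (every preimage of a nonface is a nonface) is a separate notion from being $\alpha$-simplicial, so I cannot route through that equivalence; I must argue each implication from scratch. The key point is that surjectivity lets me recover a face from its image, and injectivity lets me recover a face from its preimage.

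For part (1), suppose $\lambda$ is surjective and simplicial, and let $F\notin\Delta$ be a nonface of $\Delta$; I must show $\lambda^{-1}(F)\notin\tilde\Delta$. First I would assume for contradiction that $\lambda^{-1}(F)\in\tilde\Delta$. Applying the simplicial property to this face gives $\lambda(\lambda^{-1}(F))\in\Delta$. By surjectivity of $\lambda$ we have $\lambda(\lambda^{-1}(F))=F$, whence $F\in\Delta$, contradicting the choice of $F$ as a nonface. Hence $\lambda^{-1}(F)\notin\tilde\Delta$, so $\lambda$ is $\beta$-simplicial.

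For part (2), suppose $\lambda$ is injective and $\beta$-simplicial, and let $F\in\tilde\Delta$; I must show $\lambda(F)\in\Delta$. Again I would argue by contradiction: assume $\lambda(F)\notin\Delta$, i.e.\ $\lambda(F)$ is a nonface of $\Delta$. By the $\beta$-simplicial property, $\lambda^{-1}(\lambda(F))\notin\tilde\Delta$. But injectivity of $\lambda$ forces $\lambda^{-1}(\lambda(F))=F$, so $F\notin\tilde\Delta$, contradicting $F\in\tilde\Delta$. Therefore $\lambda(F)\in\Delta$, and $\lambda$ is simplicial.

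I do not anticipate a genuine obstacle here, since both implications are short contradiction arguments resting only on the two elementary image/preimage inclusions and on surjectivity (resp.\ injectivity) turning one of them into an equality. The only point demanding care is making sure I invoke the correct direction of the inclusion in each part: surjectivity is needed for $\lambda(\lambda^{-1}(F))=F$ in part (1), while injectivity is needed for $\lambda^{-1}(\lambda(F))=F$ in part (2). Once those equalities are in place, the definitions close the argument immediately.
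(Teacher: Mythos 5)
Your argument is correct and matches the paper's proof in substance: both parts rest on exactly the same two facts, namely that surjectivity gives $\lambda(\lambda^{-1}(F))=F$ and injectivity gives $\lambda^{-1}(\lambda(F))=F$, combined with the relevant definition. The only cosmetic difference is that you phrase each step as a contradiction where the paper argues by contraposition; the content is identical.
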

\begin {proof}
(1) If $\lambda$ is surjective, we have $F=\lambda(\lambda^{-1}(F))$ for all $F\subseteq V$. Thus, if $\lambda$ is simplicial and $\lambda^{-1}(F)\in\tilde{\Delta}$, then $\lambda(\lambda^{-1}(F))\in\Delta$ and therefore $F\in\Delta$. (2) If $\lambda$ is injective we have $F=\lambda^{-1}(\lambda(F))$ for all $F\subseteq\tilde{V}$.  Thus, if $\lambda$ is $\beta$-simplicial and $\lambda(F)\not\in\Delta$, then $F=\lambda^{-1}(\lambda(F))\not\in\tilde{\Delta}$.
\end {proof}

\begin {Example}\label{ExpSimplBetasimpl}
Let $V=\{1\kpkt n-1\}$, $n\ge2$. Consider $\Delta=\Pset(F_{0})\cup\Pset(F_{1})$, where $F_{0}=\{1\kpkt k\}$ and $F_{1}=\{s\kpkt n-1\}$ for some $s\in\{2\kpkt k+1\}$, and $\tilde{\Delta}=\Pset(F_{0})\cup\Pset(F_{1}\cup\{n\})$. The surjection
$$\lambda:(V\cup\{n\},\tilde{\Delta})\Rto(V,\Delta\})\quad\text{with}\quad v\lto\begin {cases}
v&\text{, if }v\not=n\komma\\
n-1&\text{, otherwise,}
\end{cases}$$
is simplicial and $\beta$-simplicial but not injective. On the other hand, for $\Delta_{0}=\{\emptyset,\{v\}\mid v\in V\}$ and $\tilde{\Delta}_{0}=\{\emptyset,\{v\},\{v,n\}\mid v\in V\cup\{n\}\}$ the injective map 
$$\lambda:(V,\Delta_{0})\Rto(V\cup\{n\},\tilde{\Delta}_{0})\quad\text{with}\quad v\lto v$$
is simplicial and $\beta$-simplicial but not surjective. Note that both morphisms are $\alpha$-simplicial.
\end {Example}

\begin {Lemma} \label{LemIndHomUnion}
Let $\Delta$ and $\tilde{\Delta}$ be simplicial complexes on $V$ and $\tilde{V}$, respectively.
\begin{ListeTheorem}
\item If $\lambda:(\tilde{V},\tilde{\Delta})\Rto (V,\Delta)$ is simplicial, then\nomenclature[Aphi1]{$\varphi_{\lambda}$}{binoid homomorphism induced by a simplicial morphism $\lambda$}
$$\varphi_{\lambda}:\Delta_{\cup}^{\infty}\Rto\tilde{\Delta}_{\cup}^{\infty}\quad\text{with}\quad F\lto\begin {cases}
\emptyset&\text{, if }F=\emptyset,\\
\infty&\text{, if }F=\infty\text{ or }\lambda^{-1}(v)=\emptyset\text{ for some }v\in F\komma\\
\lambda^{-1}(F)&\text{, otherwise,}
\end {cases}$$
is a binoid homomorphism, which is surjective if $\lambda$ is injective.
\item $\lambda:(\tilde{V},\tilde{\Delta})\Rto (V,\Delta)$ is $\alpha$-simplicial if and only if\nomenclature[Aphi2]{$\varphi_{\lambda}^{\alpha}$}{binoid homomorphism induced by an $\alpha$-simplicial morphism $\lambda$} 
$$\varphi_{\lambda}^{\alpha}:\tilde{\Delta}^{\infty}_{\cup}\Rto\Delta^{\infty}_{\cup}\quad\text{with}\quad F\lto\begin {cases}
\infty&\text{, if }F=\infty\text{ or }\lambda(F)\not\in\Delta\komma\\
\lambda(F)&\text{, otherwise,}
\end {cases}$$
is a binoid homomorphism, which is surjective if $\lambda$ is so. If $\lambda$ is $\alpha$-simplicial, simplicial, and injective, then $\varphi_{\lambda}^{\alpha}$ is also injective.
\item $\lambda:(\tilde{V},\tilde{\Delta})\Rto (V,\Delta)$ is $\beta$-simplicial if and only if\nomenclature[Aphi3]{$\varphi_{\lambda}^{\beta}$}{binoid homomorphism induced by a $\beta$-simplicial morphism $\lambda$} 
$$\varphi_{\lambda}^{\beta}:\Delta^{\infty}_{\cup}\Rto\tilde{\Delta}^{\infty}_{\cup}\quad\text{with}\quad F\lto\begin {cases}
\infty&\text{, if }F=\infty\text{ or }\lambda^{-1}(F)\not\in\tilde{\Delta}\komma\\
\lambda^{-1}(F)&\text{, otherwise,}
\end {cases}$$
is a binoid homomorphism, which is surjective if $\lambda$ is bijective.
\end{ListeTheorem}
\end {Lemma}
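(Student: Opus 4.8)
The plan is to verify each of the three parts by checking that the stated maps are well-defined binoid homomorphisms and then handling the equivalences and the surjectivity/injectivity claims. Throughout, the boolean structure of $\Delta_{\cup}^{\infty}$ means that a map is a binoid homomorphism precisely when it sends $\emptyset\mto\emptyset$, sends $\infty\mto\infty$, and is compatible with $\cup$ in the sense that $\varphi(F\cup G)=\varphi(F)\cup\varphi(G)$ whenever the relevant unions are taken in the respective binoids (recalling that $F\cup G=\infty$ in $\Delta_{\cup}^{\infty}$ when $F\cup G\notin\Delta$). Since unions of faces are the only operation, the key identities to exploit are $\lambda^{-1}(F\cup G)=\lambda^{-1}(F)\cup\lambda^{-1}(G)$ and $\lambda(F\cup G)=\lambda(F)\cup\lambda(G)$, which hold for arbitrary maps $\lambda$ and which make all the compatibility checks essentially formal.

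For part (1), first I would check that $\varphi_{\lambda}$ is well-defined, i.e.\ that its values actually land in $\tilde{\Delta}_{\cup}^{\infty}$: when $\lambda^{-1}(v)\neq\emptyset$ for all $v\in F$ and $F\in\Delta$, the preimage $\lambda^{-1}(F)$ is a face of $\tilde{\Delta}$ by Lemma \ref{LemCharacterizationCosimpMor}(2) (note $\lambda$ simplicial implies $\alpha$-simplicial is \emph{not} assumed here, so I must instead argue directly that $\lambda^{-1}(F)\in\tilde{\Delta}$ follows from $F\in\Delta$ together with simpliciality, or restrict attention to the genuinely nonempty-fibre case). The homomorphism property then reduces to the preimage identity $\lambda^{-1}(F\cup G)=\lambda^{-1}(F)\cup\lambda^{-1}(G)$, with the three-way case distinction ($\emptyset$, $\infty$, generic) tracked carefully; the $\infty$ cases are absorbing and cause no trouble. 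For surjectivity when $\lambda$ is injective, I would observe that injectivity forces every fibre $\lambda^{-1}(v)$ to be a singleton or empty, so $\lambda^{-1}$ restricted to the image hits every face of $\tilde{\Delta}$ whose vertices lie in $\lambda(\tilde{V})$, and the remaining faces map to $\infty$ already.

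Parts (2) and (3) are genuine equivalences, so each needs both directions. For the forward directions I would show that $\alpha$-simpliciality (resp.\ $\beta$-simpliciality) is exactly the condition making $\varphi_{\lambda}^{\alpha}$ (resp.\ $\varphi_{\lambda}^{\beta}$) well-defined: for $\varphi_{\lambda}^{\alpha}$, the map sends $F\in\tilde{\Delta}$ to $\lambda(F)$, and this lands in $\Delta_{\cup}^{\infty}$ correctly under the case split precisely because the condition ``$F\in\tilde{\Delta}\Rightarrow\lambda(F)$ is a face or we record $\infty$'' is consistent with the $\cup$-structure exactly when images of nonfaces are nonfaces; similarly $\varphi_{\lambda}^{\beta}$ uses ``$F\notin\Delta\Rightarrow\lambda^{-1}(F)\notin\tilde{\Delta}$''. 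For the converse directions I would take the contrapositive: if $\lambda$ fails the relevant condition, exhibit faces witnessing that the candidate map violates $\varphi(F\cup G)=\varphi(F)\cup\varphi(G)$ or fails to respect $\infty$, so it cannot be a binoid homomorphism. The surjectivity claims follow from $\lambda$ surjective (resp.\ bijective) using $\lambda(\lambda^{-1}(F))=F$, and the injectivity of $\varphi_{\lambda}^{\alpha}$ under the extra hypotheses uses that $\lambda$ injective makes $\lambda$ restricted to faces injective as a set map.

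The main obstacle I expect is the bookkeeping in part (2): the well-definedness of $\varphi_{\lambda}^{\alpha}$ requires knowing that $\lambda(F\cup G)\in\Delta$ \emph{fails} in exactly the right circumstances so that the $\infty$-convention is respected under $\cup$, and the cleanest way to see the ``only if'' direction is to choose $F$ a minimal nonface of $\tilde{\Delta}$ and track where it goes; the interplay between $\alpha$-simpliciality (phrased via preimages in Lemma \ref{LemCharacterizationCosimpMor}) and the image-based definition of $\varphi_{\lambda}^{\alpha}$ is the subtle point, and I would want to state explicitly which of the two equivalent formulations of $\alpha$-simpliciality I am invoking at each step. Everything else is a routine three-case verification that the maps respect $\emptyset$, $\infty$, and $\cup$, combined with the elementary set-theoretic identities for image and preimage of unions.
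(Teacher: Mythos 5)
Your overall strategy coincides with the paper's: everything reduces to the identities $\lambda^{-1}(F\cup G)=\lambda^{-1}(F)\cup\lambda^{-1}(G)$ and $\lambda(F\cup G)=\lambda(F)\cup\lambda(G)$, the forward directions of (2) and (3) are the routine case checks you describe, and the converses are obtained by decomposing a nonface into its singletons (which are always faces) and playing the homomorphism property against the $\infty$-convention; your ``minimal nonface'' variant is an inessential modification of the paper's argument, and your surjectivity/injectivity supplements use the same identities $\lambda(\lambda^{-1}(F))=F$ resp.\ $\lambda^{-1}(\lambda(G))=G$ as the paper does.

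There is, however, one step in your plan for (1) that cannot be carried out. You propose to ``argue directly that $\lambda^{-1}(F)\in\tilde{\Delta}$ follows from $F\in\Delta$ together with simpliciality.'' That implication is precisely condition (2) of Lemma \ref{LemCharacterizationCosimpMor}, i.e.\ $\alpha$-simpliciality, and it genuinely fails for morphisms that are only simplicial: take $\tilde{V}=\{1,2\}$ with $\tilde{\Delta}=\{\emptyset,\{1\},\{2\}\}$, $V=\{1\}$ with $\Delta=\Pset(V)$, and $\lambda$ the constant map; then $\lambda$ is simplicial and every fibre is nonempty, yet $\lambda^{-1}(\{1\})=\{1,2\}\notin\tilde{\Delta}$. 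So neither of your two escape routes (a direct proof, or restricting to nonempty fibres) is available. What is actually needed, and what the paper verifies, is only the implication in the other direction restricted to the relevant situation: if $H\subseteq V$ has all fibres nonempty and $\lambda^{-1}(H)\in\tilde{\Delta}$, then $H=\lambda(\lambda^{-1}(H))\in\Delta$ because $\lambda$ is simplicial; contrapositively, $H\notin\Delta$ forces $\lambda^{-1}(H)\notin\tilde{\Delta}$, which is exactly what guarantees $\varphi_{\lambda}(F)\cup\varphi_{\lambda}(G)=\infty$ whenever $F\cup G=\infty$ in $\Delta_{\cup}^{\infty}$. (For a face $F\in\Delta$ whose preimage happens not to be a face, the value $\lambda^{-1}(F)$ must be read as the element $\infty$ of $\tilde{\Delta}_{\cup}^{\infty}$, as the paper's own phrasing ``$\varphi_{\lambda}(F\cup G)=\infty$, or equivalently $\lambda^{-1}(F\cup G)\notin\tilde{\Delta}$'' indicates; the homomorphism property is unaffected.) Once you replace your well-definedness step by this contrapositive check, the rest of your argument goes through as in the paper.
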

\begin {proof}
(1) To verify that $\varphi_{\lambda}$ is well-defined, we need to check that $\varphi_{\lambda}(F\cup G)=\infty$ (or equivalently $\lambda^{-1}(F\cup G)\not\in\tilde{\Delta}$), whenever $F\cup G\not\in\Delta$ for $F,G\in\Delta$ and $\lambda^{-1}(v)\not=\emptyset$ for all $v\in F\cup G$. Indeed, the latter assumption yields $\lambda(\lambda^{-1}(F\cup G))=F\cup G$. Hence, $\lambda^{-1}(F\cup G)\not\in\tilde{\Delta}$ since otherwise, we had $F\cup G\in\Delta$ because $\lambda$ is simplicial. Moreover, $\varphi_{\lambda}$ is a binoid homomorphism because $\lambda^{-1}(F\cup G)=\lambda^{-1}(F)\cup\lambda^{-1}(G)$ for all $F,G\in\Delta$, and $\varphi_{\lambda}(\infty)=\infty$ and $\varphi_{\lambda}(\emptyset)=\emptyset$ by definition. If $\lambda$ is injective, then $G=\lambda^{-1}(\lambda(G))$ for all $G\subseteq\tilde{V}$. In particular, if $G\in\tilde{\Delta}$, then $\lambda(G)\in\Delta$ since $\lambda$ is simplicial, and hence $\varphi_{\lambda}(\lambda(G))=\lambda^{-1}(\lambda(G))=G$, which shows that $\varphi_{\lambda}$ is surjective if $\lambda$ is injective.

(2) Clearly, $\varphi_{\lambda}^{\alpha}$ is a well-defined binoid homomorphism if $\lambda$ is $\alpha$-simplicial. For the converse, we show that every image of a nonface is a nonface. So let $F\not\in\tilde{\Delta}$. This means $F=\infty$ in $\tilde{\Delta}_{\cup}^{\infty}$, and hence 
$$\lambda(F)\,=\,\bigcup_{v\in F}\{\lambda(v)\}\,\,=\,\,\bigcup_{v\in F}\{\varphi_{\lambda}^{\alpha}(\{v\})\}\,=\,\varphi_{\lambda}^{\alpha}(F)\,=\,\varphi_{\lambda}^{\alpha}(\infty)=\infty$$
in $\Delta^{\infty}_{\cup}$ because $\varphi_{\lambda}^{\alpha}$ is a binoid homomorphism. Thus, $\lambda(F)\not\in\Delta$. If $\lambda$ is surjective and $F\in\Delta_{\cup}^{\infty}$, then $\varphi_{\lambda}^{\alpha}(\lambda^{-1}(F))=\lambda(\lambda^{-1}(F))=F$, which shows that $\varphi_{\lambda}^{\alpha}$ is also surjective. For the supplement note that since $\lambda$ is simplicial and $\alpha$-simplicial, we have $\varphi_{\lambda}^{\alpha}(F)=\infty$ in $\Delta_{\cup}^{\infty}$ if and only if $F=\infty$ in $\tilde{\Delta}_{\cup}^{\infty}$. So we only need to consider $F,G\in\tilde{\Delta}$ with $\varphi_{\lambda}^{\alpha}(F)=\varphi_{\lambda}^{\alpha}(G)$ for the injectivity. Then $\lambda(F)=\lambda(G)$, which implies $F=G$ by the injectivity of $\lambda$. Thus, $\varphi_{\lambda}^{\alpha}(F)=$ is also injective.

(3) Clearly, $\varphi_{\lambda}^{\beta}$ is a well-defined binoid homomorphism if $\lambda$ is $\beta$-simplicial. Conversely, if $\varphi_{\lambda}^{\beta}$ is a binoid homomorphism and $F\not\in\Delta$, then $F=\infty$ in $\Delta_{\cup}^{\infty}$, and hence $\varphi_{\lambda}^{\beta}(F)=\varphi_{\lambda}^{\beta}(\infty)=\infty$. This shows that $\lambda^{-1}(F)\not\in\tilde{\Delta}$, since otherwise $\varphi_{\lambda}^{\beta}(F)=\lambda^{-1}(F)\not=\infty$ in $\tilde{\Delta}_{\cup}^{\infty}$. If $\lambda$ is bijective, then $G=\lambda^{-1}(\lambda(G))$ for every $G\in\tilde{\Delta}$. This proves the surjectivity of $\varphi_{\lambda}^{\beta}$ because $\lambda(G)\in\Delta$ by Lemma \ref{LemSimplBetaSimpl}(2).
\end {proof}

\begin {Remark}
\begin {ListeTheorem}
\item[]
\item For a simplicial morphism $\lambda$ that is surjective but not injective $\varphi_{\lambda}$ need not be surjective as Example \ref{ExpSimplBetasimpl} shows. Here $F_{1}$ has no preimage under $\varphi_{\lambda}:\Delta_{\cup}^{\infty}\rto\tilde{\Delta}_{\cup}^{\infty}$ because every $G\in\Delta$ with $F_{1}=\varphi_{\lambda}(G)=\lambda^{-1}(G)$ fulfills $n-1\in G$, but then $n\in\lambda^{-1}(G)$, which is not contained in $F_{1}$. Hence, $F_{1}\subsetneq\lambda^{-1}(G)$.
\item Example \ref{ExpMorTwoOneNot}(2) shows that $\varphi_{\lambda}^{\beta}$ 
is usually no binoid homomorphism, whether $\lambda:\tilde{V}\rto V$ is simplicial or $\alpha$-simplicial.
\item If $\lambda$ is $\beta$-simplicial, then $\varphi_{\lambda}^{\beta}$ need not be surjective if $\lambda$ is surjective but not injective. An example is given by the $\beta$-simplicial morphism $\lambda:(V,\Delta)\rto(\{1,2\},\tilde{\Delta})$ of Example \ref{ExpMorOnlyOne}(3). Here the facet $\{1,2\}\in\Delta$ has no preimage under $\varphi_{\lambda}^{\beta}:\tilde{\Delta}_{\cup}^{\infty}\rto\Delta_{\cup}^{\infty}$.
\end {ListeTheorem}
\end {Remark}

\begin {Lemma}\label{LemIndHomIntersection}
Let $\Delta$ and $\tilde{\Delta}$ be simplicial complexes on $V$ and $\tilde{V}$, respectively, and $\lambda:\tilde{V}\rto V$ a map.
\begin {ListeTheorem}
\item If $\lambda:(\tilde{V},\tilde{\Delta})\rto(V,\Delta)$ is simplicial and injective, then\nomenclature[Apsi1]{$\psi_{\lambda}$}{binoid homomorphism induced by a simplicial morphism $\lambda$}  
$$\psi_{\lambda}:\tilde{\Delta}_{\cap}\onull\Rto\Delta_{\cap}\onull\quad\text{with}\quad F\lto\begin {cases}
0&\text{, if }F=0\komma\\
\lambda(F)&\text{, otherwise,}\end {cases}$$
is a binoid embedding.
\item If $\lambda:(\tilde{V},\tilde{\Delta})\rto(V,\Delta)$ is $\alpha$-simplicial, then\nomenclature[Apsi2]{$\psi_{\lambda}^{\alpha}$}{binoid homomorphism induced by an $\alpha$-simplicial morphism $\lambda$} 
$$\psi_{\lambda}^{\alpha}:\Delta_{\cap}\onull\Rto\tilde{\Delta}_{\cap}\onull\quad\text{with}\quad F\lto\begin {cases}
0&\text{, if }F=0\komma\\
\lambda^{-1}(F)&\text{, otherwise,}\end {cases}$$
is a binoid homomorphism.
\end {ListeTheorem}
\end {Lemma}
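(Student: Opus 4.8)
The plan is to verify each of the two induced maps is a well-defined binoid homomorphism, checking in turn that it respects the absorbing element, the identity element, and the operation $\cap$, and then to establish the embedding property in (1).

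For part (1), first I would check well-definedness: the only subtle point is that $\lambda(F)$ must actually be a face of $\Delta$ whenever $F\in\tilde\Delta$, which is precisely the simpliciality of $\lambda$. The map sends the absorbing element $0=V\setminus$(nothing)$\mapsto 0$ and $\emptyset\mapsto\emptyset$ (the identity element of $\tilde\Delta_\cap\onull$), so the endpoint conditions hold. For the homomorphism property I would compute $\psi_\lambda(F\cap G)=\lambda(F\cap G)$ and compare with $\lambda(F)\cap\lambda(G)$. In general $\lambda(F\cap G)\subseteq\lambda(F)\cap\lambda(G)$, but the reverse inclusion requires injectivity of $\lambda$: if $\lambda$ is injective then $\lambda(F\cap G)=\lambda(F)\cap\lambda(G)$ for all subsets $F,G\subseteq\tilde V$. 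This is exactly where the injectivity hypothesis enters, so I would record that identity explicitly. Finally, injectivity of $\psi_\lambda$ itself follows again from the injectivity of $\lambda$, since $\lambda(F)=\lambda(G)$ with $\lambda$ injective forces $F=G$ (and the class $0$ is hit only by $0$).

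For part (2), the map $\psi_\lambda^\alpha$ sends $F\mapsto\lambda^{-1}(F)$. Here the key well-definedness point is that $\lambda^{-1}(F)$ must lie in $\tilde\Delta$ whenever $F\in\Delta$, which is exactly condition (2) of Lemma \ref{LemCharacterizationCosimpMor}, i.e.\ the $\alpha$-simpliciality of $\lambda$. The endpoint conditions are clear: $0\mapsto 0$ by definition, and the identity $\emptyset=V$ of $\Delta_\cap\onull$ maps to $\lambda^{-1}(V)=\tilde V$, the identity of $\tilde\Delta_\cap\onull$. The operation is respected without any further hypothesis, because preimages always commute with intersection: $\lambda^{-1}(F\cap G)=\lambda^{-1}(F)\cap\lambda^{-1}(G)$ holds for arbitrary maps $\lambda$ and subsets $F,G$. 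Thus no injectivity or surjectivity assumption is needed in (2), in contrast to (1).

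The main obstacle, and the only genuinely non-formal point, is the identity $\lambda(F\cap G)=\lambda(F)\cap\lambda(G)$ needed in part (1): this is false for general maps (images only satisfy $\subseteq$) and is precisely what the injectivity hypothesis repairs. Everything else reduces to the elementary set-theoretic behaviour of images and preimages together with the definitions of simplicial and $\alpha$-simplicial morphisms, so I expect the proof to conclude with a brief remark that all assertions are straightforward once these image/preimage identities are in place.
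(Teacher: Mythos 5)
Your proof is correct and follows the same route as the paper, whose own argument consists precisely of the observation that $\lambda(F\cap G)=\lambda(F)\cap\lambda(G)$ for all $F,G\subseteq\tilde{V}$ holds exactly when $\lambda$ is injective, everything else being immediate from the definitions of simplicial and $\alpha$-simplicial morphisms. One small slip: in $\Delta_{\cap}\onull\cong(\Delta\cup\{V\},\cap,V,\emptyset)$ the adjoined element $0$ (corresponding to $V$) is the \emph{identity} and $\emptyset$ is the \emph{absorbing} element, not the other way around as you label them -- but since you verify that both are preserved, this does not affect the argument.
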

\begin {proof}
For (1) note that $\lambda(F\cap G)=\lambda(F)\cap\lambda(G)$ for all $F,G\subseteq\tilde{V}$ if and only if $\lambda$ is injective. Therefore, the injectivity is necessary. The rest of the lemma follows from the definitions of simplicial and $\alpha$-simplicial morphisms.
\end {proof}

\begin {Remark}
For $\lambda$ being only injective, the map $\psi:\tilde{\Delta}_{\cap}\onull\rto\Delta_{\cap}\onull$ with 
$$F\lto\begin {cases}
0&\text{, if }F=0\text{ or }\lambda(F)\not\in\Delta\komma\\
\lambda(F)&\text{, otherwise,}\end {cases}$$
is no binoid homomorphism if there is an $F\in\tilde{\Delta}$ with $F\not=\tilde{V}$ and $\lambda(F)\not\in\Delta$ because for $v\in \tilde{V}\setminus F$ one has $\lambda(v)\in\Delta$, and hence
$$\psi(F\cap\{v\})\,=\,\psi(\emptyset)\,=\,\emptyset\quad\text{but}\quad\psi(F)\cap\psi(\{v\})\,=\,\psi(\{v\})\,=\,\lambda(v)\pkt$$
Similarly, the map $\psi:\Delta_{\cap}\onull\rto\tilde{\Delta}_{\cap}\onull$ with 
$$F\lto\begin {cases}
0&\text{, if }F=0\text{ or }\lambda^{-1}(F)\not\in\tilde{\Delta}\komma\\
\lambda^{-1}(F)&\text{, otherwise,}\end {cases}$$
is no binoid homomorphism if $\lambda$ is not $\alpha$-simplicial. Take, for instance, $\Delta=\Pset(V)$, $\tilde{\Delta}=\{\emptyset,\{v\}\mid v\in V\}$, and $\lambda=\id_{V}$. For pairwise different elements $u,v,w\in V$ one has
$$\psi(\{u,w\}\cap\{v,w\})\,=\,\psi(\{w\})\,=\,\{w\}\quad\text{but}\quad\psi(\{u,w\})\cap\psi(\{v,w\})\,=\,0\cap0\,=\,0\pkt$$
\end {Remark}

We close this section by applying Lemma \ref{LemIndHomUnion} and Lemma \ref{LemIndHomIntersection} to three important examples that will appear later on.

\begin {Example}\label{ExpSubcomplexSameV}
Let $\Delta$ and $\tilde{\Delta}$ be simplicial complexes on $V$ such that $\tilde{\Delta}\subseteq\Delta$. The morphism $$\id:(V,\Delta)\Rto(V,\tilde{\Delta})$$
is $\alpha$-simplicial (and not simplicial or $\beta$-simplicial if $\tilde{\Delta}\subsetneq\Delta$). We thus get the binoid epimorphism
$$\varphi_{\id}^{\alpha}:\Delta_{\cup}^{\infty}\Rto\tilde{\Delta}_{\cup}^{\infty}\quad\text{with}\quad F\lto\begin {cases}
\infty&\text{, if }F\not\in\tilde{\Delta}\komma\\
F&\text{, otherwise,}
\end {cases}$$
and the binoid embedding 
$$\psi_{\id}^{\alpha}:\tilde{\Delta}_{\cap}\onull\Rto\Delta_{\cap}\onull\quad\text{with}\quad v\lto v\pkt$$
Both binoid homomorphisms can also be deduced form the identity
$$\widetilde{\id}:(V,\tilde{\Delta})\Rto(V,\Delta)\komma$$
which is simplicial and $\beta$-simplicial (but not $\alpha$-simplicial if $\tilde{\Delta}\subsetneq\Delta$), cf.\ Example \ref{ExpMorTwoOneNot}(1). Note that here $\varphi_{\widetilde{\id}}^{\beta}$ and $\varphi_{\widetilde{\id}}$ coincide with $\varphi_{\id}^{\alpha}$.
\end {Example}

\begin {Example} \label{ExpSubcomplexDifferntV}
Let $\Delta$ be a simplicial complex on $V$. If $T\subseteq V$ and $\Delta(T)=\{F\in\Delta\mid F\subseteq T\}$, then
$$\iota:(T,\Delta(T))\Rto(V,\Delta)\quad\text{with}\quad v\lto v$$
is simplicial and $\alpha$-simplicial (and not $\beta$-simplicial if $T\subsetneq V$, see below). We thus get the binoid epimorphism 
$$\varphi_{\iota}:\Delta_{\cup}^{\infty}\Rto\Delta(T)_{\cup}^{\infty}\quad\text{with}\quad F\lto\begin {cases}
F&\text{, if }F\in\Delta(T)\komma\\
\infty&\text{, otherwise,}
\end {cases}$$
and the binoid embedding
$$\quad\varphi_{\iota}^{\alpha}:\Delta(T)_{\cup}^{\infty}\Rto\Delta_{\cup}^{\infty}\quad\text{with}\quad F\lto F\pkt$$
In particular, we have 
$$\Delta(T)_{\cup}^{\infty}\stackrel{\!\!\!\varphi_{\iota}^{\alpha}}{\Rto}\Delta_{\cup}^{\infty}\stackrel{\varphi_{\iota}}{\Rto}\Delta(T)_{\cup}^{\infty}$$
with $\varphi_{\iota}\varphi_{\iota}^{\alpha}=\id$. Moreover, we have the binoid embedding
$$\psi_{\iota}:\Delta(T)_{\cap}\onull\Rto\Delta_{\cap}\onull\quad\text{with}\quad F\lto F$$
and the binoid epimorphism
$$\psi_{\iota}^{\alpha}:\Delta_{\cap}\onull\Rto\Delta(T)_{\cap}\onull\quad\text{with}\quad F\lto\begin {cases}
0&\text{, if }F=0\komma\\
F\cap T&\text{, otherwise.}
\end {cases}$$
If $T\subsetneq V$, then $\iota$ is not $\beta$-simplicial unless $\Delta\not=\Pset(V)$. To see this take a facet $F\in\Delta(T)$. Since $\Delta\not=\Pset(V)$ there are $v_{1}\kpkt v_{r}\in V\setminus T$ such that $G:=F\cup\{v_{1}\kpkt v_{r}\}\not\in\Delta$ but $\iota^{-1}(G)=F\in\Delta(T)$. If $\Delta=\Pset(V)$, the map $\iota$ is also $\beta$-simplicial, hence
$$\varphi_{\iota}^{\beta}:\Pset(V)_{\cup}^{\infty}\Rto\Delta(T)_{\cup}^{\infty}\quad\text{with}\quad F\lto\begin {cases}
\infty&\text{, if }F=\infty\komma\\
F\cap T&\text{, otherwise,}
\end {cases}$$
is a binoid homomorphism which is different from $\varphi_{\iota}:\Pset(V)_{\cup}^{\infty}\rto\Delta(T)_{\cup}^{\infty}$ because $\varphi_{\iota}(\{v\})=\{v\}=\varphi_{\iota}^{\beta}(\{v\})$ for $v\in T$, but if $v\not\in T$, then
$$\varphi_{\iota}(\{v\})=\infty\quad\text{and}\quad\varphi_{\iota}^{\beta}(\{v\})=\emptyset\pkt$$
\end {Example}

\begin {Example} \label {ExpProducts}
Let $\Delta_{i}$ be a simplicial complex on $V_{i}$, $i\in I$, where $I$ is finite and $V_{i}\cap V_{j}=\emptyset$ for $i\not=j$. For every $k\in I$, the injective morphism
$$\iota_{k}:\Big(V_{k},\Delta_{k}\Big)\Rto\Big(\biguplus_{i\in I}V_{i},\biguplus_{i\in I}\Delta_{i}\Big)\quad\text{with}\quad v\lto (v;k)$$
is simplicial and $\alpha$-simplicial. We thus get the binoid epimorphism 
$$\varphi_{\iota_{k}}:\,\,\bigcupbidot_{i\in I}\Delta_{i,\cup}^{\infty}\Rto\Delta_{k,\cup}^{\infty}\quad\text{with}\quad(F;i)\lto\begin {cases}
\infty&\text{, if }i\not=k\komma\\
F&\text{, otherwise,}
\end {cases}$$
and the binoid embedding 
$$\varphi_{\iota_{k}}^{\alpha}:\Delta_{k,\cup}^{\infty}\Rto\,\,\bigcupbidot_{i\in I}\Delta_{i,\cup}^{\infty}\quad\text{with}\quad F\lto(F;k)\pkt$$
Moreover, we have the binoid embedding
$$\psi_{\iota_{k}}:\Delta_{k,\cap}\onull\Rto\,\,\bigcupbidot_{i\in I}\Delta_{i,\cap}\onull\quad\text{with}\quad F\lto(F;k)$$
and the binoid epimorphism 
$$\psi_{\iota_{k}}^{\alpha}:\bigcupbidot_{i\in I}\Delta_{i,\cap}\onull\Rto\Delta_{k,\cap}\onull\quad\text{with}\quad(F;i)\lto\begin {cases}
0&\text{, if }(F,i)=0\komma\\
(F;k)&\text{, if }i=k\komma\\
\emptyset&\text{, otherwise.}
\end {cases}$$
The injective morphism
$$\tilde{\iota}_{k}:\Big(V_{k},\Delta_{k}\Big)\Rto\Big(\biguplus_{i\in I}V_{i},\prod_{i\in I}\Delta_{i}\Big)\quad\text{with}\quad v\lto (v;k)$$
is only $\alpha$-simplicial for $\#I\ge2$. Here we obtain the binoid embedding
$$\varphi_{\tilde{\iota}_{k}}^{\alpha}:\Delta_{k,\cup}^{\infty}\Rto\bigwedge_{i\in I}\Delta_{i,\cup}^{\infty}\quad\text{with}\quad F\lto\emptyset\wedge\cdots\wedge\emptyset\wedge F\wedge\emptyset\wedge\cdots\wedge\emptyset\komma$$
where $F$ is the $k$th entry, and the binoid epimorphism
$$\psi_{\tilde{\iota}_{k}}^{\alpha}:\bigwedge_{i\in I}\Delta_{i,\cap}\onull\Rto\Delta_{k,\cap}\onull\quad\text{with}\quad \wedge_{i\in I}F_{i}\lto F_{k}\pkt$$
\end {Example}

\bigskip

\section{$N\mina$points of a simplicial complex} \label{SecNpointsSimplCompl}
\markright{\ref {SecNpointsSimplCompl} $N\mina$points of a simplicial complex}

This section deals with $N\mina$points of a simplicial complex $\Delta$, which will later be related to the $N\mina$points of the (simplicial) binoid defined by $\Delta$

\begin {Convention}
In this section, $N$ always denotes a \emph{commutative} nonzero binoid.
\end {Convention}

\begin {Definition}
Let $\Delta$ be a simplicial complex on $V$. An \gesperrt{$N\mina$point} \index{simplicial complex!N@$N\mina$point of a --}\index{N@$N\mina$point!-- of a simplicial complex}of $\Delta$ is a map
$$\rho:V\Rto N\quad\text{such that}\quad\sum_{v\in A}\rho(v)=\infty\quad\text{if}\quad A\not\in \Delta\pkt$$
The \gesperrt{$N\mina$spectrum}\index{simplicial complex!N@$N\mina$spectrum of a --}\index{spectrum!N@$N\mina$-- of a simplicial complex}\index{N@$N\mina$spectrum!-- of a simplicial complex} of $\Delta$ is the set of all $N\mina$points of $\Delta$, denoted by $N\minspec\Delta$. With respect to the addition $N\minspec\Delta$ is a semibinoid with absorbing element $\varepsilon:v\mto\infty$, $v\in V$.\nomenclature[N]{$N\minspec\Delta$}{set of all $N\mina$points of $\Delta$ ($N\mina$spectrum of $\Delta$)}
\end {Definition}

If $N$ is integral the property $\sum_{v\in A}\rho(v)=\infty$ for all $A\not\in\Delta$ is equivalent to: for every $A\not\in\Delta$ there is a $v\in A$ such that $\rho(v)=\infty$, and if $N$ is a multiplicatively written, this condition on $\rho:V\rto N$ translates to $\prod_{v\in A}\rho(v)=0$ for every nonface $A$ of $\Delta$. In particular, if $K$ is a field, the map $\rho:V\rto K$ is a $K\mina$point if and only if for every $A\not\in\Delta$ there is a $v\in A$ such that $\rho(v)=0$. Later we will prove that $K\minspec\Delta\cong K\minSpec K[\Delta]$, where $K[\Delta]$ is the Stanley-Reisner algebra of $\Delta$, cf. Proposition \ref{PropUnivPropSimplBinoid}.

\begin {Lemma} \label{LemIndicatorPointFace}
Let $\Delta$ be a simplicial complex on $V$. A subset $F\subseteq V$ is a face of $\Delta$ if and only if $\chi_{F}:V\rto\trivial$ is an $N\mina$point (with respect to $\trivial\embto N$).
\end {Lemma}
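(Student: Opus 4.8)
The plan is to unwind both directions directly from the definition of an $N\mina$point, recalling that $\chi_{F}:V\rto\trivial$ is the indicator function of $F$, that is, $\chi_{F}(v)=0$ if $v\in F$ and $\chi_{F}(v)=\infty$ otherwise, viewed as a map into $N$ via the canonical embedding $\trivial\embto N$ (so $0\mto 0_{N}$ and $\infty\mto\infty_{N}$). The key computation in both directions is to evaluate the defining sum $\sum_{v\in A}\chi_{F}(v)$ for an arbitrary subset $A\subseteq V$. Since $N$ is positive when restricted to the image of $\trivial$ (the only values taken are $0_{N}$ and $\infty_{N}$), one has
$$\sum_{v\in A}\chi_{F}(v)\,=\,\begin{cases}0_{N}&\text{, if }A\subseteq F\komma\\\infty_{N}&\text{, otherwise,}\end{cases}$$
because a single summand equal to $\infty_{N}$ forces the whole sum to be $\infty_{N}$ (as $\infty_{N}$ is absorbing), while if every summand is $0_{N}$ the sum is $0_{N}$.

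First I would prove the forward implication: assume $F\in\Delta$. To check that $\chi_{F}$ is an $N\mina$point I must verify $\sum_{v\in A}\chi_{F}(v)=\infty_{N}$ for every nonface $A\not\in\Delta$. If $A\not\in\Delta$, then $A\not\subseteq F$, since $\Delta$ is subset-closed and $F\in\Delta$ would otherwise force $A\in\Delta$. By the displayed computation, $A\not\subseteq F$ gives $\sum_{v\in A}\chi_{F}(v)=\infty_{N}$, which is exactly the required condition. Hence $\chi_{F}\in N\minspec\Delta$.

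For the converse, I would assume $\chi_{F}$ is an $N\mina$point and show $F\in\Delta$. The natural move is to apply the $N\mina$point condition contrapositively: suppose toward a contradiction that $F\not\in\Delta$. Then, taking $A=F$ in the defining property, $\sum_{v\in F}\chi_{F}(v)=\infty_{N}$. But by the displayed computation with $A=F\subseteq F$, this sum equals $0_{N}$, so $0_{N}=\infty_{N}$ in $N$, contradicting the standing hypothesis that $N$ is a nonzero binoid. Therefore $F\in\Delta$.

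I expect the argument to be almost entirely routine; the only points requiring care are the bookkeeping around the embedding $\trivial\embto N$ (making sure the sum genuinely lands in $\{0_{N},\infty_{N}\}$ and that $\infty_{N}$ is absorbing so a single $\infty_{N}$ summand dominates) and the explicit use of $N\not=\zero$ in the converse, which is what rules out the degenerate identification $0_{N}=\infty_{N}$. The mild subtlety worth flagging is that the forward direction leans on $\Delta$ being subset-closed (so that $A\not\in\Delta$ indeed implies $A\not\subseteq F$ whenever $F\in\Delta$), a defining feature of simplicial complexes that I would invoke explicitly rather than silently.
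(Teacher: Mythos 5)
Your proof is correct and follows essentially the same route as the paper: the forward direction uses subset-closedness of $\Delta$ to see that a nonface $A$ cannot be contained in $F$, forcing an $\infty$ summand, and the converse evaluates the sum over $F$ itself to get $0\not=\infty$. The only cosmetic difference is that you phrase the converse as a contradiction while the paper states it directly, and you make the standing hypothesis $N\not=\zero$ explicit where the paper leaves it implicit.
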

\begin {proof}
Obviously, $F$ is a face if $\chi_{F}$ is an $N\mina$point because $\sum_{v\in F}\chi_{F}(v)=0\not=\infty$. Conversely, if $F\in\Delta$ and $A\not\in\Delta$, then $A\not\subseteq F$. Thus, we find a $w\in A\setminus(A\cap F)$, but this means $\chi_{F}(w)=\infty$ and therefore $\sum_{v\in A}\chi_{F}(v)=\infty$.
\end {proof}

\begin {Proposition}\label{PropAlphaSimplIndNSpec}
Let $\Delta$ and $\tilde{\Delta}$ be simplicial complexes on $V$ and $\tilde{V}$, respectively. The following conditions on a map $\lambda:\tilde{V}\rto V$ are equivalent.
\begin {ListeTheorem}
\item $\lambda:(\tilde{V},\tilde{\Delta})\rto(V,\Delta)$ is an $\alpha$-simplicial morphism.
\item For every $\rho\in N\minspec\Delta$, $N$ a commutative binoid, $\rho\lambda:\tilde{V}\rto N$ is an $N\mina$point of $\tilde{\Delta}$.
\item For every $\rho\in K\minspec\Delta$, $K$ a field, $\rho\lambda:\tilde{V}\rto K$ is a $K\mina$point of $\tilde{\Delta}$.
\end {ListeTheorem}
In particular, every $\alpha$-simplicial morphism $\lambda:(\tilde{V}, \tilde{\Delta})\rto(V,\Delta)$ induces a semibinoid homomorphism
$$\lambda^{\ast}:N\minspec\Delta\Rto N\minspec\tilde{\Delta}$$
with $\rho\mto\rho\lambda$, which is injective if $\lambda$ is surjective.
\end {Proposition}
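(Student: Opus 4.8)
The plan is to prove the cyclic chain of implications $(1)\Rarrow(2)\Rarrow(3)\Rarrow(1)$, and then deduce the supplement about the induced semibinoid homomorphism. The implication $(2)\Rarrow(3)$ is immediate, since every field $K$ is a commutative binoid (with respect to its multiplication) and the condition in $(2)$ applied to $N=K$ is exactly the condition in $(3)$.

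For $(1)\Rarrow(2)$, I would take an $\alpha$-simplicial morphism $\lambda$ and an arbitrary $N\mina$point $\rho\in N\minspec\Delta$, and verify that $\rho\lambda:\tilde{V}\rto N$ is an $N\mina$point of $\tilde{\Delta}$. So let $A\not\in\tilde{\Delta}$; I must show $\sum_{v\in A}(\rho\lambda)(v)=\infty$ in $N$. The key is the defining property of $\alpha$-simplicial morphisms from Lemma \ref{LemCharacterizationCosimpMor}: $A\not\in\tilde{\Delta}$ implies $\lambda(A)\not\in\Delta$. Now I would reorganize the sum by grouping the vertices of $A$ according to their image under $\lambda$, writing
$$\sum_{v\in A}\rho(\lambda(v))\,=\,\sum_{w\in\lambda(A)}\Big(\sum_{v\in A, \lambda(v)=w}\rho(w)\Big)\komma$$
which shows that $\opFilt$-style, $\sum_{v\in A}\rho(\lambda(v))$ is a sum in which each term $\rho(w)$ for $w\in\lambda(A)$ appears (at least once). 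Hence $\sum_{w\in\lambda(A)}\rho(w)$ is a summand of $\sum_{v\in A}\rho(\lambda(v))$. Since $\lambda(A)\not\in\Delta$ and $\rho$ is an $N\mina$point of $\Delta$, we have $\sum_{w\in\lambda(A)}\rho(w)=\infty$, and because $\infty$ is absorbing, adding the remaining summands keeps the total equal to $\infty$. Thus $\sum_{v\in A}(\rho\lambda)(v)=\infty$, as required.

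For $(3)\Rarrow(1)$, I would argue contrapositively: suppose $\lambda$ is not $\alpha$-simplicial, so by Lemma \ref{LemCharacterizationCosimpMor} there is a nonface $A\not\in\tilde{\Delta}$ with $\lambda(A)\in\Delta$. The goal is to produce a field $K$ and a $K\mina$point $\rho\in K\minspec\Delta$ such that $\rho\lambda$ is \emph{not} a $K\mina$point of $\tilde{\Delta}$, i.e.\ $\prod_{v\in A}\rho(\lambda(v))\not=0$. The natural candidate is the indicator point: take any field $K$ and set $\rho:=\chi_{\lambda(A)}:V\rto\{1,0\}\subseteq K$. Since $\lambda(A)\in\Delta$, Lemma \ref{LemIndicatorPointFace} guarantees $\chi_{\lambda(A)}$ is a $K\mina$point of $\Delta$. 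But for every $v\in A$ one has $\lambda(v)\in\lambda(A)$, whence $\rho(\lambda(v))=1\not=0$, so the product over $v\in A$ is $1\not=0$, and $\rho\lambda$ fails to be a $K\mina$point of $\tilde{\Delta}$. This contradicts $(3)$ and completes the cycle.

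For the supplement, once $(1)\Rarrow(2)$ is established, the assignment $\rho\mto\rho\lambda$ is a well-defined map $\lambda^{\ast}:N\minspec\Delta\rto N\minspec\tilde{\Delta}$; that it is a semibinoid homomorphism (compatible with the componentwise addition and sending the absorbing element $\varepsilon:v\mto\infty$ to the absorbing element $\tilde{\varepsilon}:v\mto\infty$) is a routine check, precomposition being additive. The injectivity when $\lambda$ is surjective follows from the standard argument already used in Proposition \ref{PropIndHomNspec}: if $\rho\lambda=\rho^{\prime}\lambda$ then for every $w\in V$ we may pick $v\in\tilde{V}$ with $\lambda(v)=w$ (by surjectivity), giving $\rho(w)=\rho(\lambda(v))=\rho^{\prime}(\lambda(v))=\rho^{\prime}(w)$, hence $\rho=\rho^{\prime}$. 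I expect the only genuinely delicate point to be the bookkeeping in $(1)\Rarrow(2)$ showing that $\sum_{w\in\lambda(A)}\rho(w)$ really is a summand of $\sum_{v\in A}\rho(\lambda(v))$ when $\lambda$ is not injective on $A$; everything else is formal.
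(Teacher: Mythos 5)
Your proposal is correct and follows essentially the same route as the paper: the regrouping $\sum_{v\in A}\rho(\lambda(v))=\sum_{w\in\lambda(A)}n_{w}\rho(w)$ with $n_{w}\ge1$ for $(1)\Rarrow(2)$, and the indicator point $\chi_{\lambda(A)}$ via Lemma \ref{LemIndicatorPointFace} for $(3)\Rarrow(1)$, are exactly the paper's arguments. The extra detail you supply for the supplement (additivity of precomposition and the surjectivity argument for injectivity of $\lambda^{\ast}$) is correct and merely spells out what the paper leaves as ``clear.''
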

\begin {proof}
$(1)\Rarrow(2)$ For an arbitrary nonface  $A\subseteq\tilde{V}$ of $\tilde{\Delta}$, we have
$$\sum_{v\in A}\rho\lambda(v)\,\,=\sum_{w\in\lambda(A)}n_{w}\rho(w)$$
for certain $n_{w}\ge1$, $w\in\lambda(A)$. By assumption, $\lambda(A)\not\in\Delta$ and $\rho$ is an $N\mina$point, hence $\sum_{w\in\lambda(A)}\rho(w)=\infty$, but this is a summand of $\sum_{v\in A}\rho\lambda(v)$. $(2)\Rarrow(3)$ is obvious. $(3)\Rarrow(1)$ Suppose that $A\subseteq\tilde{V}$ is a nonface of $\tilde{\Delta}$ but $\lambda(A)\in\Delta$. By Lemma \ref{LemIndicatorPointFace}, the map $\chi_{\lambda(A)}:V\rto \{1,0\}\subseteq K$ is a $K\mina$point of $\Delta$, which yields the $K\mina$point $\chi_{\lambda(A)}\lambda$ of $\tilde{\Delta}$ with $\prod_{v\in A}\chi_{\lambda(A)}(\lambda(v))=1$, a contradiction to $A\not\in\tilde{\Delta}$. The supplement is clear.
\end {proof}

\begin {Example}\label{ExpAlphaSimplIndNSpec}
The $\alpha$-simplicial morphism $\id:(V,\Delta)\rto(V,\tilde{\Delta})$ from Example \ref{ExpSubcomplexSameV}, where $\Delta$ and $\tilde{\Delta}$ are simplicial complexes on $V$ such that $\tilde{\Delta}\subseteq\Delta$, induces a semibinoid embedding
$$N\minspec\tilde{\Delta}\Rto N\minspec\Delta\quad\text{with}\quad\rho\lto\rho\pkt$$
The $\alpha$-simplicial morphisms $\iota:(T,\Delta(T))\rto(V,\Delta)$ from Example \ref{ExpSubcomplexDifferntV}, where $T\subseteq V$ and $\Delta(T)=\{F\in\Delta\mid F\subseteq T\}$, induces the semibinoid homomorphism
$$N\minspec\Delta\Rto N\minspec\Delta(T)\quad\text{with}\quad\rho\lto\rho_{|T}\pkt$$
Finally, the $\alpha$-simplicial morphisms $\iota_{k}:(V_{k},\Delta_{k})\rto\big(V,\biguplus_{i\in I}\Delta_{i}\big)$ and $\tilde{\iota}_{k}:(V_{k},\Delta_{k})\rto\big(V,\prod_{i\in I}\Delta_{i}\big)$, $k\in I$, from Example \ref{ExpProducts}, where $\Delta_{i}$ is a simplicial complex on $V_{i}$, $i\in I$, and $V=\biguplus_{i\in I}V_{i}$, induce the semibinoid homomorphisms
$$N\minspec\biguplus_{i\in I}\Delta_{i}\Rto N\minspec\Delta_{k}\quad\text{with}\quad\rho\lto\rho_{|(V_{k};k)}$$
and 
$$N\minspec\prod_{i\in I}\Delta_{i}\Rto N\minspec\Delta_{k}\quad\text{with}\quad\rho\lto\rho_{|(V_{k};k)}\pkt$$
\end {Example}

\begin {Proposition}\label{PropBetaSimpNSpec}
Let $\Delta$ and $\tilde{\Delta}$ be simplicial complexes on $V$ and $\tilde{V}$, respectively. The following conditions on a map $\lambda:\tilde{V}\rto V$ are equivalent.
\begin {ListeTheorem}
\item $\lambda:(\tilde{V},\tilde{\Delta})\rto(V,\Delta)$ is a $\beta$-simplicial morphism.
\item For every $\rho\in N\minspec\tilde{\Delta}$, $N$ a commutative binoid, the map 
$$\widehat{\rho}:V\Rto N\quad\text{with}\quad v\lto\sum_{w\in\lambda^{-1}(v)}p(w)$$ 
is an $N\mina$point of $\Delta$.
\item For every $\rho\in K\minspec\tilde{\Delta}$, $K$ a field, the map 
$$\widehat{\rho}:V\Rto K\quad\text{with}\quad v\lto\prod_{w\in\lambda^{-1}(v)}p(w)$$ 
is a $K\mina$point of $\Delta$.
\end {ListeTheorem}
In particular, every $\beta$-simplicial morphism $\lambda:(\tilde{V}, \tilde{\Delta})\rto(V,\Delta)$ induces a semibinoid homomorphism
$$\lambda^{\ast}:N\minspec\tilde{\Delta}\Rto N\minspec\Delta$$
with $\rho\mto\widehat{\rho}$, which is injective if $\lambda$ is so.
\end {Proposition}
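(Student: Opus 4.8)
The structure here exactly parallels the proof of Proposition \ref{PropAlphaSimplIndNSpec}, so the plan is to establish the cycle of implications $(1)\Rarrow(2)\Rarrow(3)\Rarrow(1)$ and then verify the supplement. The only genuinely new ingredient compared to the $\alpha$-simplicial case is that the induced assignment on points now pushes preimages \emph{forward} via the fibre sum $\widehat{\rho}(v)=\sum_{w\in\lambda^{-1}(v)}\rho(w)$ rather than precomposing, so I must check that this fibre-sum construction respects the defining condition of an $N\mina$point.

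\textbf{Key steps.} First, for $(1)\Rarrow(2)$, I would take an arbitrary nonface $A\not\in\Delta$ and a $\beta$-simplicial $\lambda$; by definition $\lambda^{-1}(A)\not\in\tilde{\Delta}$, and since $\rho\in N\minspec\tilde{\Delta}$ this gives $\sum_{w\in\lambda^{-1}(A)}\rho(w)=\infty$. The point is then to recognize that this sum is exactly $\sum_{v\in A}\widehat{\rho}(v)$, because the fibres $\lambda^{-1}(v)$ for $v\in A$ partition $\lambda^{-1}(A)$; hence $\sum_{v\in A}\widehat{\rho}(v)=\infty$, which is precisely the condition that $\widehat{\rho}$ is an $N\mina$point of $\Delta$. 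The implication $(2)\Rarrow(3)$ is immediate by specializing $N$ to a field $K$ (written multiplicatively, so the sum becomes the product in the statement). For $(3)\Rarrow(1)$ I would argue contrapositively in the spirit of Proposition \ref{PropAlphaSimplIndNSpec}: suppose $\lambda$ is not $\beta$-simplicial, so there is a nonface $A\not\in\Delta$ with $\lambda^{-1}(A)\in\tilde{\Delta}$. By Lemma \ref{LemIndicatorPointFace} the indicator map $\chi_{\lambda^{-1}(A)}:\tilde{V}\rto\{1,0\}\subseteq K$ is a $K\mina$point of $\tilde{\Delta}$, and computing $\widehat{\chi_{\lambda^{-1}(A)}}$ on the nonface $A$ should yield a nonzero product (each $v\in A$ has a witness $w\in\lambda^{-1}(v)\subseteq\lambda^{-1}(A)$ with $\chi_{\lambda^{-1}(A)}(w)=1$), contradicting that $\widehat{\rho}$ must be a $K\mina$point of $\Delta$.

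\textbf{Main obstacle.} The delicate point, and the step I would handle most carefully, is the $(3)\Rarrow(1)$ direction: I need the product $\prod_{w\in\lambda^{-1}(v)}\chi_{\lambda^{-1}(A)}(w)$ to be $1$ for \emph{every} $v\in A$ simultaneously, so that $\widehat{\chi_{\lambda^{-1}(A)}}$ does not vanish on $A$. Since $\lambda^{-1}(v)\subseteq\lambda^{-1}(A)$ for $v\in A$, every $w$ in that fibre satisfies $\chi_{\lambda^{-1}(A)}(w)=1$, so each factor is $1$ and the product is $1$ (with the empty-product convention giving $1$ as well when a fibre is empty); this forces $\sum_{v\in A}\widehat{\chi}(v)\ne\infty$, the desired contradiction. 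I would double-check that empty fibres cause no trouble, using the convention that the empty product is $1$. Finally, for the supplement, the induced map $\lambda^{\ast}:\rho\mto\widehat{\rho}$ is a semibinoid homomorphism because the fibre sum is additive in $\rho$ and sends the absorbing point $\varepsilon$ to $\varepsilon$; its injectivity when $\lambda$ is injective follows since then each fibre $\lambda^{-1}(v)$ is at most a singleton, so $\widehat{\rho}(\lambda(w))=\rho(w)$ recovers $\rho$ on $\im\lambda$, and one recovers $\rho$ completely.
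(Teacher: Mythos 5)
Your proposal is correct and follows essentially the same route as the paper's proof: the cycle $(1)\Rarrow(2)\Rarrow(3)\Rarrow(1)$ with the fibre-partition identity $\sum_{v\in A}\widehat{\rho}(v)=\sum_{w\in\lambda^{-1}(A)}\rho(w)$ for the first implication, the indicator point $\chi_{\lambda^{-1}(A)}$ from Lemma \ref{LemIndicatorPointFace} for the contrapositive of $(3)\Rarrow(1)$, and recovery of $\rho$ from $\widehat{\rho}$ via singleton fibres for the injectivity claim. Your extra care about empty fibres and the empty-product convention is a harmless refinement of the same argument.
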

\begin {proof}
$(1)\Rarrow(2)$ For an arbitrary nonface $A\subseteq V$ of $\Delta$, we have
$$\sum_{v\in A}\widehat{\rho}(v)\,\,=\,\,\sum_{v\in A}\sum_{w\in\lambda^{-1}(v)}\rho(w)\,\,=\sum_{w\in\lambda^{-1}(A)}\rho(w)\,=\,\infty$$
because $\rho$ is an $N\mina$point of $\tilde{\Delta}$ and $\lambda^{-1}(A)\not\in\tilde{\Delta}$ by assumption on $\lambda$. Hence, $\widehat{\rho}$ is an $N\mina$point of $\Delta$. $(2)\Rarrow(3)$ is obvious. To show $(3)\Rarrow(1)$ suppose that $A\subseteq V$ is a nonface of $\Delta$ but $\lambda^{-1}(A)\in\tilde{\Delta}$. By Lemma \ref{LemIndicatorPointFace}, the map $\chi_{\lambda^{-1}(A)}:\tilde{V}\rto \{1,0\}\subseteq K$ is a $K\mina$point of $\tilde{\Delta}$, so by assumption 
$$\widehat{\chi}_{\lambda(A)}:V\Rto K\quad\text{with}\quad v\lto\prod_{w\in\lambda^{-1}(v)}\chi_{\lambda^{-1}(A)}(w)\komma$$ 
is a $K\mina$point of $\Delta$ with 
$$\prod_{v\in A}\widehat{\chi}_{\lambda(A)}(v)\,\,=\,\,\prod_{v\in A}\prod_{w\in\lambda^{-1}(v)}\chi_{\lambda^{-1}(A)}(w)\,\,=\,\,\prod_{w\in\lambda^{-1}(A)}\chi_{\lambda^{-1}(A)}(w)\,=\,1\komma$$ 
which is a contradiction to $A\not\in\Delta$. For the supplement assume that $\rho_{1},\rho_{2}\in N\minspec\tilde{\Delta}$ such that $\widehat{\rho_{1}}=\widehat{\rho_{2}}$. Then $\widehat{\rho_{1}}(v)=\widehat{\rho_{2}}(v)$ for all $v\in V$, but by the injectivity of $\lambda$ we have
$$\widehat{\rho_{i}}(v)\,=\,\sum_{w\in\lambda^{-1}(v)}\rho_{i}(w)\,=\,\rho_{i}(\lambda^{-1}(v))$$
for all $v\in V$, $i\in\{1,2\}$. Thus, $\rho_{1}=\rho_{2}$.
\end {proof}

\begin {Example}
The $\beta$-simplicial morphism $\widetilde{\id}:(V,\tilde{\Delta})\Rto(V,\Delta)$ from Example \ref{ExpSubcomplexSameV}, where $\Delta$ and $\tilde{\Delta}$ are simplicial complexes on $V$ such that $\tilde{\Delta}\subseteq\Delta$, induces a semibinoid embedding
$$N\minspec\tilde{\Delta}\Rto N\minspec\Delta\quad\text{with}\quad\rho\lto\rho\komma$$
which is the semibinoid embedding of Example \ref{ExpAlphaSimplIndNSpec}. In particular, we always have an embedding
$$N\minspec\tilde{\Delta}\Rto N\minspec\Pset(V)=N^{\#V}\pkt$$
\end {Example}

\begin {Remark}
By Lemma \ref{LemSimplBetaSimpl}(1), a surjective simplicial morphism $\lambda:(\tilde{V},\tilde{\Delta})\rto(V,\Delta)$ is also $\beta$-simplicial. Hence, Proposition \ref{PropBetaSimpNSpec} applies to $\lambda$ in this case and gives a semibinoid embedding
$$\lambda^{\ast}:N\minspec\tilde{\Delta}\Rto N\minspec\Delta\quad\text{with}\quad\rho\lto\widehat{\rho}\pkt$$
\end {Remark}


\bigskip

\section{Topology of simplicial complexes} \label{SecSimplComplTop}
\markright{\ref {SecSimplComplTop} Topology of simplicial complexes}

In this section, we study the ideal lattices of $\Delta_{\cup}^{\infty}$ and $\Delta_{\cap}\onull$ and describe the spectra of these binoids.

\medskip

If $\Delta$ is not the full simplicial complex on $V$ (i.e.\ $\Delta\not=\Pset(V)$), we will use the identification
$$\Delta_{\cap}\onull\,\,\,\cong\,\,\,(\Delta\cup\{V\},\cap,V,\emptyset)\quad\text{\large{and}}\quad\Delta_{\cup}^{\infty}\,\,\,\cong\,\,\,(\Delta\cup\{V\},\cup,\emptyset,V)$$
throughout this section. For a simplicial complex $\Delta\not=\Pset(V)$ on $V$, the canonical binoid embedding and epimorphism induced by the $\alpha$-simplicial morphism $\id:(V,\Pset(V)\setminus\{V\})\rto(V,\Delta)$, cf.\ Example \ref{ExpSubcomplexSameV}, read as follows
$$\begin {array} {rclcrcl}
\iota\UDelta:\Delta_{\cap}\onull\!\!\!&\Rto&\!\!\!\Pset(V)_{\cap}\!\!\!&\quad\text{and}\quad&\!\!\!\pi\UDelta:\Pset(V)_{\cup}\!\!\!&\Rto&\!\!\!\Delta_{\cup}^{\infty}\\
F\!\!\!&\lto&F\!\!\!&&\!\!\!F\!\!\!&\lto&\!\!\!
\begin {cases}
F&\text{, if }F\in\Delta,\\
V&\text{, otherwise.}
\end {cases}
\end {array}$$

\begin {Proposition}
Let $\Delta\not=\Pset(V)$ be a simplicial complex on $V$.
\begin {ListeTheorem}
\item  $\Delta$ is an ideal in $\Pset(V)_{\cap}$. 
\item A subset $A\subseteq\Pset(V)$ is a prime ideal in $\Pset(V)_{\cap}$ if and only if $A\not=\Pset(V)$ defines a pure simplicial complex on $V$ of dimension $\#V-2$; that is, $A=\bigcup_{v\in G}\Pset(V\setminus\{v\})$ for some $\emptyset\not=G\subseteq V$.
\item $\Pset(V)\setminus\Delta=\Delta^{\opc}$ is an ideal in $\Pset(V)_{\cup}$ with $\Pset(V)_{\cup}/\Delta^{\opc}\,\,\cong\,\,\Delta_{\cup}^{\infty}$ via $\pi\UDelta$. In particular, $\spec\Delta_{\cup}^{\infty}\cong\opV(\Delta^{\opc})=\{\Pcal\in\spec\Pset(V)_{\cup}\mid\Delta^{\opc}\subseteq\Pcal\}$.
\end {ListeTheorem}
\end {Proposition}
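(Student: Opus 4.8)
The plan is to treat the three parts in turn, each reducing to the subset-closure of $\Delta$ together with facts already established for $\Pset(V)_{\cap}$ and $\Pset(V)_{\cup}$.

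For (1), I recall that in $\Pset(V)_{\cap}=(\Pset(V),\cap,V,\emptyset)$ the absorbing element is $\emptyset$, which lies in $\Delta$ since $\Delta$ is a simplicial complex. To check the defining property of an ideal (Definition \ref{DefIdeal}), I would fix $F\in\Delta$ and $G\in\Pset(V)$ and observe $F\cap G\subseteq F$; as $\Delta$ is subset-closed this forces $F\cap G\in\Delta$. Hence $F+\Pset(V)=\{F\cap G\mid G\subseteq V\}\subseteq\Delta$, so $\Delta$ is an ideal.

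For (2), the list of prime ideals is already available: by Example \ref{ExSpecPowerset} the prime ideals of $\Pset(V)_{\cap}$ are exactly the $\Pcal_{J,\cap}=\bigcup_{j\in J}\Pset(V\setminus\{j\})$ with $\emptyset\neq J\subseteq V$, so the content of (2) is to translate this into the combinatorial phrasing. Alternatively, for self-containment I would rederive it via complementary filters: by Lemma \ref{LemCharacterizationPrime}, $A\subsetneq\Pset(V)$ is prime iff $\Pset(V)\setminus A$ is a filter of $\Pset(V)_{\cap}$, and by Example \ref{ExpFilterPowerset} every such filter is $\opFilt(J)=\{B\mid J\subseteq B\}$ for a unique $J\subseteq V$; since an ideal contains the absorbing element $\emptyset$, its complement omits $\emptyset$, which forces $J\neq\emptyset$. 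Taking complements yields $A=\{B\mid J\not\subseteq B\}=\bigcup_{v\in J}\Pset(V\setminus\{v\})$, and conversely each such set is prime (directly: $F\cap F'\in\Pcal_{J,\cap}$ means some $v\in J$ lies outside $F\cap F'$, hence outside $F$ or outside $F'$). Setting $G:=J$, the maximal elements of $A$ under inclusion are precisely the $V\setminus\{v\}$, $v\in G$, all of cardinality $\#V-1$, whence $A$ is a pure simplicial complex of dimension $\#V-2$. I expect this identification of the algebraic description with the combinatorial one to be the only delicate point; in particular, one must note that for a singleton $G=\{v\}$ the set $\Pset(V\setminus\{v\})$ is a simplicial complex on the reduced vertex set $V\setminus\{v\}$ rather than on all of $V$.

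For (3), in $\Pset(V)_{\cup}=(\Pset(V),\cup,\emptyset,V)$ the absorbing element is $V$; since $\Delta\neq\Pset(V)$ is subset-closed, $V\notin\Delta$, i.e.\ $V\in\Delta^{\opc}$. For the ideal property I would take $F\in\Delta^{\opc}$ and $G\in\Pset(V)$: were $F\cup G$ a face, then $F\subseteq F\cup G$ would force $F\in\Delta$, a contradiction, so $F\cup G\in\Delta^{\opc}$; thus $\Delta^{\opc}$ is an ideal. The isomorphism $\Pset(V)_{\cup}/\Delta^{\opc}\cong\Delta_{\cup}^{\infty}$ is then immediate from the concrete description of Rees quotients in Remark \ref{RemIndIsom}: the quotient is $(\Pset(V)\setminus\Delta^{\opc})\cup\{\infty\}=\Delta\cup\{\infty\}$ with addition $F\cup G$ when $F\cup G\notin\Delta^{\opc}$ (i.e.\ when $F\cup G$ is a face) and $\infty$ otherwise, which is verbatim the definition of $\Delta_{\cup}^{\infty}$. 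The realizing map is exactly $\pi\UDelta$, since $\ker\pi\UDelta=\Delta^{\opc}$ and $\pi\UDelta$ restricts to the identity on $\Delta$, so the congruence it induces is the Rees congruence of $\Delta^{\opc}$. Finally, the spectrum statement follows by applying Corollary \ref{CorExtIdealPrime} to $M=\Pset(V)_{\cup}$ and $\Ical=\Delta^{\opc}$, which gives $\spec(M/\Ical)\cong\{\Pcal\in\spec M\mid\Ical\subseteq\Pcal\}=\opV(\Delta^{\opc})$ as semibinoids.
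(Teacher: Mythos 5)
Your proof is correct and follows essentially the same route as the paper, which simply cites Example \ref{ExpPsetIdeals} / Example \ref{ExSpecPowerset} for (1) and (2) and Corollary \ref{CorInducedEmb} for the spectrum statement in (3); you merely spell out the subset-closure arguments and the Rees-quotient identification that those references encapsulate. Your side remark that for $G=\{v\}$ the set $\Pset(V\setminus\{v\})$ lives on the reduced vertex set is a fair observation about the statement's phrasing, but it does not affect the validity of the argument.
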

\begin {proof}
(1) and (2) follow immediately from Example \ref{ExSpecPowerset}, and (3) from Example \ref{ExpPsetIdeals} and Corollary \ref{CorInducedEmb}.
\end {proof}

\begin {Proposition} \label{PropFilterSimplCompl}
Let $\Delta$ be a simplicial complex on $V$. The one-to-one correspondences
$$\begin {array} {rcccl}
\Fcal(\Delta_{\cap}\onull)\!\!\!&\longleftrightarrow&\!\!\!\Delta\cup\{V\}\!\!\!&\longleftrightarrow&\!\!\!\Fcal(\Delta_{\cup}^{\infty})\\
S_{F,\cap}:=\{G\in\Delta_{\cap}\onull\mid F\subseteq G\}\!\!\!&\longleftrightarrow&\!\!\! F\!\!\!&\longleftrightarrow&\!\!\!\{G\in\Delta_{\cup}^{\infty}\mid G\subseteq F\}=:S_{F,\cup}
\end {array}$$
yield binoid isomorphisms $\Fcal(\Delta_{\cap}\onull)_{\cap}\cong\Delta_{\cup}^{\infty}$ and $\Fcal(\Delta_{\cup}^{\infty})_{\cap}\cong\Delta_{\cap}\onull$. In particular, 
$$\begin {array} {rclcrcl}
\spec\Delta_{\cap}\onull\!\!\!&\stackrel{\sim}{\longleftrightarrow}&\!\!\!\Delta_{\cup}^{\infty}\setminus\{\emptyset\}&\quad\text{and}\quad&\spec\Delta_{\cup}^{\infty}\!\!\!&\stackrel{\sim}{\longleftrightarrow}&\!\!\!\Delta_{\cap}\\
\Pcal_{F,\cap}:=\{G\in\Delta_{\cap}\onull\mid F\not\subseteq G\}\!\!\!&\longleftrightarrow&\!\!\!F&&\Pcal_{F,\cup}:=\Delta_{\cup}^{\infty}\setminus\Pset(F)\!\!\!&\longleftrightarrow&\!\!\!F
\end {array}$$
are semibinoid isomorphisms, the latter inclusion preserving and the first reversing. In particular, $\specE\Delta_{\cap}\onull\cong\Delta_{\cup}^{\infty}$ and $\specE\Delta_{\cup}^{\infty}\cong\Delta_{\cap}\onull$ as binoids.
\end {Proposition}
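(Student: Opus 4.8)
The plan is to mirror the full-simplex case treated in Example \ref{ExpFilterPowerset}: first classify the filters of $\Delta_{\cap}\onull$ and $\Delta_{\cup}^{\infty}$ explicitly, then recognize the classification maps as binoid isomorphisms onto $\Delta_{\cup}^{\infty}$ and $\Delta_{\cap}\onull$, and finally deduce the two spectrum statements and the closing dual statements formally from Corollary \ref{CorHomFiltSpec} and Corollary \ref{CorSpecDualFilt}. Throughout I would use the identifications $\Delta_{\cap}\onull\cong(\Delta\cup\{V\},\cap,V,\emptyset)$ and $\Delta_{\cup}^{\infty}\cong(\Delta\cup\{V\},\cup,\emptyset,V)$ and the fact that $V$ is finite, so $\Delta\cup\{V\}$ is finite.

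First I would classify $\Fcal(\Delta_{\cap}\onull)$. Every filter $S$ is finite and closed under $\cap$ (filter axiom), hence contains its infimum $F:=\bigcap_{G\in S}G\in\Delta\cup\{V\}$; summand-closedness then gives $S=S_{F,\cap}=\{G\mid F\subseteq G\}$, since $F\subseteq H$ means $F=F\cap H\in S$ forces $H\in S$. Conversely each $S_{F,\cap}$ is a filter (subset-closedness of $\Delta$ keeps intersections inside $\Delta\cup\{V\}$), giving the bijection $S_{F,\cap}\leftrightarrow F=\min_{\subseteq}S$. The same scheme handles $\Fcal(\Delta_{\cup}^{\infty})$, with the one twist that a union of two faces may be a nonface, i.e. the absorbing element $\infty=V$. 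A proper filter $S$ omits $\infty$ by Remark \ref{RemFilterOfF}, so all unions of its members remain faces; finiteness and closure under $\cup$ then place the supremum $F:=\bigcup_{G\in S}G\in\Delta$ inside $S$, and summand-closedness yields $S=S_{F,\cup}=\Pset(F)$, while the improper filter $\Delta_{\cup}^{\infty}$ corresponds to $F=V$. This gives $S_{F,\cup}\leftrightarrow F=\max_{\subseteq}S$.

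Next I would check that the two classification maps are binoid homomorphisms for the structure $\Fcal(-)_{\cap}$. For $\Fcal(\Delta_{\cap}\onull)_{\cap}\to\Delta_{\cup}^{\infty}$, $S_{F,\cap}\mapsto F$, the identity filter $\Delta_{\cap}\onull=S_{\emptyset,\cap}$ maps to the identity $\emptyset$, the smallest filter $(\Delta_{\cap}\onull)\okreuz=\{V\}=S_{V,\cap}$ (positivity of the boolean binoid, Lemma \ref{LemBool=>PosRed}) maps to the absorbing $\infty=V$, and the crucial computation is $S_{F,\cap}\cap S_{F',\cap}=\{G\mid F\cup F'\subseteq G\}$, which equals $S_{F\cup F',\cap}$ when $F\cup F'\in\Delta$ and equals $\{V\}=S_{V,\cap}$ when $F\cup F'$ is a nonface — matching exactly the collapsed union in $\Delta_{\cup}^{\infty}$. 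Symmetrically, $S_{F,\cup}\mapsto F$ sends $\cap$ of filters to $\cap$ of faces via $S_{F,\cup}\cap S_{F',\cup}=S_{F\cap F',\cup}$ (no collapse, $\Delta$ being subset-closed), with identity and absorbing matched. Bijectivity comes from the classification, yielding $\Fcal(\Delta_{\cap}\onull)_{\cap}\cong\Delta_{\cup}^{\infty}$ and $\Fcal(\Delta_{\cup}^{\infty})_{\cap}\cong\Delta_{\cap}\onull$. The main obstacle is precisely this operation check: correctly tracking the collapse of nonface unions to the absorbing element $\infty=V$ so that the boolean union in $\Delta_{\cup}^{\infty}$ is reproduced by the filter intersection.

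Finally the spectrum statements follow by complementation. By Corollary \ref{CorHomFiltSpec}, $\spec M\cong\Fcal(M)\setminus\{M\}$ via $\Pcal\leftrightarrow M\setminus\Pcal$; the improper filter of $\Delta_{\cap}\onull$ is $S_{\emptyset,\cap}\leftrightarrow\emptyset$ and that of $\Delta_{\cup}^{\infty}$ is $S_{V,\cup}\leftrightarrow V$, so their removal produces $\Delta_{\cup}^{\infty}\setminus\{\emptyset\}$ and $\Delta_{\cap}$ respectively. Taking complements of the classified filters gives $\Pcal_{F,\cap}=\{G\mid F\not\subseteq G\}$ and $\Pcal_{F,\cup}=\Delta_{\cup}^{\infty}\setminus\Pset(F)$, and the inclusion-preserving resp.\ -reversing behaviour is read off directly from these complement descriptions. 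The closing isomorphisms $\specE\Delta_{\cap}\onull\cong\Delta_{\cup}^{\infty}$ and $\specE\Delta_{\cup}^{\infty}\cong\Delta_{\cap}\onull$ are then immediate from Corollary \ref{CorSpecDualFilt} (which identifies $\specE M\cong\Fcal(M)_{\cap}$ for $M\neq\zero$) combined with the two filter isomorphisms established above.
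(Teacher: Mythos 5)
Your proposal is correct and follows essentially the same route as the paper's proof: classify the filters of $\Delta_{\cap}\onull$ and $\Delta_{\cup}^{\infty}$ by their unique minimal resp.\ maximal element, verify that filter intersection matches the face operations (including the collapse of a nonface union to the absorbing element $V=\infty$, which you track more explicitly than the paper does), and pass to the spectra by complementation via Corollary \ref{CorHomFiltSpec} and to the extended spectra via Corollary \ref{CorSpecDualFilt}. No gaps.
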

\begin {proof}
By definition, every filter $S$ of $\Delta_{\cap}\onull$ has to contain the identity element $V$ and since $A,B\in S$ is equivalent to $A\cap B\in S$, there is a unique minimal element $F\in S$ with respect to $\subseteq$ and all supersets of $F$ in $\Delta\cup\{V\}$ lie in $S$. This proves the correspondence $S_{F,\cap}\leftrightarrow F$. Similarly, every filter $S$ of $\Delta_{\cup}^{\infty}$ admits a maximal element $F\in S$ with respect to $\subseteq$ and all subsets of $F$ in $\Delta\cup\{V\}$ lie in $S$, which proves the correspondence $F\leftrightarrow S_{F,\cup}$. By taking complements, we obtain the results for the spectra of $\Delta_{\cap}\onull$ and $\Delta_{\cup}^{\infty}$. Moreover, for two faces $F,F^{\prime}\in\Delta$ one has 
$$S_{F,\cap}\cap S_{F^{\prime},\cap}=S_{F\cup F^{\prime},\cap}\quad\text{and}\quad S_{F,\cup}\cap S_{F^{\prime},\cup}=S_{F\cap F^{\prime},\cup}\komma$$
and (with respect to the union on $\Delta_{\cup}^{\infty}$)
$$\Pcal_{F,\cap}\cup\Pcal_{F^{\prime},\cap}=\Pcal_{F\cup F^{\prime},\cap}\quad\text{and}\quad \Pcal_{F,\cup}\cup\Pcal_{F^{\prime},\cup}=\Pcal_{F\cap F^{\prime},\cup}\pkt$$
In particular, $S_{\emptyset,\cap}=\Delta\cup{V}=S_{V,\cup}$, $S_{V,\cap}=\{V\}$, and $S_{\emptyset,\cup}=\{\emptyset\}$, which gives $\Pcal_{V,\cap}=\Delta=(\Delta_{\cap}\onull)\Uplus$ and $\Pcal_{\emptyset,\cup}=\Delta_{\cup}^{\infty}\setminus\{\emptyset\}=(\Delta_{\cup}^{\infty})\Uplus$. This implies the binoid and semibinoid isomorphisms. Clearly, if $F\subseteq F^{\prime}$, then $\Pcal_{F^{\prime},\cup}\subseteq\Pcal_{F,\cup}$. On the other hand, if $\Pcal_{F^{\prime},\cup}\subseteq\Pcal_{F,\cup}$, then $F\subseteq F^{\prime}$ since $F^{\prime}\in\Pcal_{F^{\prime},\cup}$. Similarly, $F\subseteq F^{\prime}$ is equivalent to $\Pcal_{F,\cap}\subseteq\Pcal_{F^{\prime},\cap}$.
\end {proof}

Note that for $\Delta=\Pset(V)\setminus\{V\}$ the preceding lemma includes all results for $\Pset(V)_{\cap}$ and $\Pset(V)_{\cup}$, cf.\ Example \ref{ExpFilterPowerset} and Example \ref{ExSpecPowerset}. The semibinoid homeomorphisms induced by $\iota\UDelta$ and $\pi\UDelta$ are given by the epimorphism
$$\iota\UDelta^{\ast}:\spec\Pset(V)_{\cap}\Rto\spec\Delta_{\cap}\onull\quad\text{with}\quad\{J\subseteq V\mid F\not\subseteq J\}\lto\Pcal_{F,\cap}$$
for $\emptyset\not= F\subseteq V$, and by the embedding 
$$\pi\UDelta^{\ast}:\spec\Delta_{\cup}^{\infty}\Rto\spec\Pset(V)_{\cup}\quad\text{with}\quad\Pcal_{F,\cup}\lto\Pset(V)\setminus\Pset(F)$$
for $F\in\Delta$.

\begin{Corollary}
$\dim\Delta+1=\dim\Delta_{\cap}\onull=\dim\Delta_{\cup}^{\infty}$ for every simplicial complex $\Delta\not=\Pset(V)$ on $V.$
\end{Corollary}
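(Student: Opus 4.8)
The plan is to compute each dimension as the supremum of the lengths of chains of prime ideals, and then to translate such chains into chains of faces by means of the semibinoid isomorphisms of Proposition \ref{PropFilterSimplCompl}. Recall that $\dim M$ equals $\sup\{\ell\mid\Pcal_{0}\subset\cdots\subset\Pcal_{\ell}\text{ in }\spec M\}$, and that any order-preserving or order-reversing bijection of posets preserves the lengths of chains; hence I may freely pass through the correspondences of Proposition \ref{PropFilterSimplCompl} without worrying about whether they preserve or reverse inclusion (the statement there records one orientation, but only the bijectivity matters for chain lengths). Throughout I write $d:=\dim\Delta$, so that by definition a facet of maximal dimension has cardinality $d+1$, and I use the identifications $\Delta_{\cap}\onull\cong(\Delta\cup\{V\},\cap,V,\emptyset)$ and $\Delta_{\cup}^{\infty}\cong(\Delta\cup\{V\},\cup,\emptyset,V)$ valid since $\Delta\neq\Pset(V)$.

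First I would treat $\Delta_{\cup}^{\infty}$. By Proposition \ref{PropFilterSimplCompl} the assignment $\Pcal_{F,\cup}\leftrightarrow F$ is a bijection between $\spec\Delta_{\cup}^{\infty}$ and $\Delta_{\cap}$, whose underlying set is all of $\Delta$ (including $\emptyset$). Thus chains of prime ideals in $\Delta_{\cup}^{\infty}$ correspond bijectively and strictly to chains of faces of $\Delta$. The longest such chain is a maximal flag $\emptyset\subsetneq F_{0}\subsetneq\cdots\subsetneq F_{d}$ ending in a facet $F_{d}$ of dimension $d$; since $\Delta$ is subset-closed this is indeed a chain of faces, it has $d+2$ members, and no strictly increasing chain of subsets of $V$ contained in $\Delta$ can be longer, because cardinality increases at each step and is bounded by $d+1$ on $\Delta$. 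Hence $\dim\Delta_{\cup}^{\infty}=d+1=\dim\Delta+1$.

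Next I would treat $\Delta_{\cap}\onull$, where the hypothesis $\Delta\neq\Pset(V)$ becomes essential. By Proposition \ref{PropFilterSimplCompl} the assignment $\Pcal_{F,\cap}\leftrightarrow F$ is a bijection between $\spec\Delta_{\cap}\onull$ and $\Delta_{\cup}^{\infty}\setminus\{\emptyset\}$, whose underlying set is $(\Delta\setminus\{\emptyset\})\cup\{V\}$; so chains of prime ideals now correspond to chains in this poset. A maximal flag of nonempty faces $\{v_{0}\}\subsetneq\cdots\subsetneq F_{d}$ up to a facet of maximal dimension has $d+1$ members, and since $\Delta\neq\Pset(V)$ we have $V\notin\Delta$, whence $F_{d}\subsetneq V$ and $V$ is a genuine top element that may be adjoined, producing a chain of $d+2$ members; no longer chain exists, for the same cardinality bound applies. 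Therefore $\dim\Delta_{\cap}\onull=d+1=\dim\Delta+1$, which together with the previous paragraph completes the proof. The only point requiring real care is the bookkeeping of the two distinguished elements: the empty face is present in the $\cup$-chains but absent from the $\cap$-chains, while the full vertex set $V$ enters the $\cap$-chains as a new maximal element—legitimately so precisely because $\Delta\neq\Pset(V)$—and in both cases these endpoints conspire to give exactly one more than a maximal flag of faces, i.e.\ $\dim\Delta+1$.
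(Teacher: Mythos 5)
Your proof is correct and follows exactly the route the paper takes: the paper's own proof simply cites Proposition \ref{PropFilterSimplCompl}, and your argument is the careful spelling-out of that reduction, translating chains of primes into chains of faces and counting. The bookkeeping of the endpoints ($\emptyset$ present on the $\cup$-side, $V$ adjoined on the $\cap$-side using $\Delta\neq\Pset(V)$) is handled correctly.
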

\begin{proof}
This is an immediate consequence of Proposition \ref {PropFilterSimplCompl}.
\end{proof}

The preceding results show that $\spec\Delta_{\cap}\onull$ and $\spec\Delta_{\cup}^{\infty}$ always have the same cardinality and the same dimension as topological spaces endowed with the Zariski topology. However, if $\Delta_{\cap}\onull$ and $\Delta_{\cup}^{\infty}$ are not isomorphic as binoids, the Zariski topologies on their spectra are dual to each other as the following corollary shows.

\begin {Corollary}
Consider $\spec\Delta_{\cup}^{\infty}$ and $\spec\Delta_{\cap}\onull$ as topological spaces with respect to the Zariski topology. The bijection 
$$h:\Delta\Rto\spec\Delta_{\cup}^{\infty}$$
with $F\mto\Pcal_{F,\cup}$, is a homeomorphism if the poset $(\Delta,\subseteq)$ is endowed with the upper topology, and the bijection 
$$g:\Delta\Rto\spec\Delta_{\cap}\onull$$
with $F\mto\Pcal_{F,\cap}$, is a homeomorphism if the poset $(\Delta,\subseteq)$ is endowed with the lower topology
\end {Corollary}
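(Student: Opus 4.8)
The plan is to verify that each of the two bijections is a homeomorphism by checking directly that it identifies the open sets of the two topologies. Since both maps are already established to be bijections in Proposition \ref{PropFilterSimplCompl}, the only work is to match the topologies. I would treat the two statements in parallel, since they are formally dual, and I expect most of the effort to go into one of them (say the $\cup$-case), with the other following by the analogous argument.

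First I would recall the relevant descriptions. On the one hand, by Remark \ref{RemLowUpperTop}(1), since $\Delta$ is finite the Zariski topology on $\spec\Delta_{\cup}^{\infty}$ (resp. $\spec\Delta_{\cap}\onull$) coincides with the lower topology on the poset $(\spec\Delta_{\cup}^{\infty},\subseteq)$ (resp. on $(\spec\Delta_{\cap}\onull,\subseteq)$), where the open sets are exactly the subset-closed subsets. On the other hand, I would invoke the order-theoretic content of Proposition \ref{PropFilterSimplCompl}: the bijection $h:F\mapsto\Pcal_{F,\cup}$ is \emph{inclusion preserving}, whereas $g:F\mapsto\Pcal_{F,\cap}$ is \emph{inclusion reversing}. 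So $h$ is an isomorphism of posets $(\Delta,\subseteq)\to(\spec\Delta_{\cup}^{\infty},\subseteq)$, and $g$ is an anti-isomorphism of posets $(\Delta,\subseteq)\to(\spec\Delta_{\cap}\onull,\subseteq)$.

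The key step is then purely formal: a bijection between finite posets that preserves order is automatically a homeomorphism for the lower topology on both sides, because a subset of a finite poset is subset-closed precisely when its image under an order isomorphism is subset-closed. Applying this to $h$ gives that $h$ is a homeomorphism when $(\Delta,\subseteq)$ carries the lower topology and $\spec\Delta_{\cup}^{\infty}$ carries the Zariski (= lower) topology; but the statement phrases this by transporting the topology, saying $h$ is a homeomorphism when $(\Delta,\subseteq)$ is endowed with the \emph{upper} topology. The reconciliation is that $h$ reverses the roles only through the chosen indexing: concretely, $U\subseteq\Delta$ is open in the upper topology iff it is superset-closed, and I would check that $h(U)=\{\Pcal_{F,\cup}\mid F\in U\}$ is subset-closed in $\spec\Delta_{\cup}^{\infty}$ iff $U$ is superset-closed in $\Delta$, using that $\Pcal_{F',\cup}\subseteq\Pcal_{F,\cup}\Leftrightarrow F\subseteq F'$ (Proposition \ref{PropFilterSimplCompl}). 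Dually, for $g$ I would check that $g(U)$ is subset-closed in $\spec\Delta_{\cap}\onull$ iff $U$ is subset-closed in $\Delta$, using $\Pcal_{F,\cap}\subseteq\Pcal_{F',\cap}\Leftrightarrow F\subseteq F'$.

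The main obstacle is therefore not any deep argument but bookkeeping about which direction each correspondence reverses, and making sure the assignment of ``upper'' to $h$ and ``lower'' to $g$ comes out correctly from the preservation/reversal statements in Proposition \ref{PropFilterSimplCompl}. Once the equivalences $\Pcal_{F',\cup}\subseteq\Pcal_{F,\cup}\Leftrightarrow F\subseteq F'$ and $\Pcal_{F,\cap}\subseteq\Pcal_{F',\cap}\Leftrightarrow F\subseteq F'$ are pinned down, both continuity and openness of each map follow simultaneously because the maps are bijective and the open sets on every side are characterized entirely by the order. I would present the $\cup$-case in full and remark that the $\cap$-case is obtained by replacing ``superset-closed/upper'' with ``subset-closed/lower'' throughout, invoking the inclusion-reversing nature of $g$.
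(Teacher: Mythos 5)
Your argument follows the same route as the paper's proof: identify the Zariski topology on the finite spectrum with the lower topology via Remark~\ref{RemLowUpperTop}, then match open sets using the order equivalences from Proposition~\ref{PropFilterSimplCompl}; the verification you sketch with $\Pcal_{F',\cup}\subseteq\Pcal_{F,\cup}\Leftrightarrow F\subseteq F'$ and $\Pcal_{F,\cap}\subseteq\Pcal_{F',\cap}\Leftrightarrow F\subseteq F'$ is exactly what the paper does with the basic open sets $D_{\Pcal}$ and $D_{F}$. However, your framing paragraph has the preservation/reversal assignments backwards, and this creates an internal contradiction that you only repair implicitly. From $\Pcal_{F,\cup}=\Delta_{\cup}^{\infty}\setminus\Pset(F)$ one sees that $F\subseteq F'$ gives $\Pset(F)\subseteq\Pset(F')$ and hence $\Pcal_{F',\cup}\subseteq\Pcal_{F,\cup}$, so $h$ is inclusion \emph{reversing}; dually, $g$ is inclusion \emph{preserving}. (The prose of Proposition~\ref{PropFilterSimplCompl} asserts the opposite, but its proof establishes precisely the displayed equivalences you end up using.) If $h$ really were a poset isomorphism, then your own ``purely formal'' key step would make it a homeomorphism for the lower topology on both sides, and the corollary's claim with the \emph{upper} topology on $\Delta$ would be false whenever the order on $\Delta$ is nontrivial --- so the ``reconciliation through the chosen indexing'' is not a reconciliation; it is a sign that the premise was wrong. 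The clean fix is to start from the correct statement that $h$ is an order anti-isomorphism and $g$ an order isomorphism: then $h$ carries superset-closed (upper-open) sets to subset-closed (Zariski-open) sets and $g$ carries subset-closed sets to subset-closed sets, which is exactly the check you perform at the end.
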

\begin {proof}
Since $\spec\Delta_{\cup}^{\infty}$ and $\spec\Delta_{\cap}\onull$ are finite sets, the Zariski topology coincides with the lower topology (with respect to $\subseteq$) on them, cf.\ Remark \ref{RemLowUpperTop}. Hence, the basic open sets of $\spec\Delta_{\cup}^{\infty}$ are given by $\emptyset$ and 
$$D_{\Pcal}=\{\Qcal\in\spec\Delta_{\cup}^{\infty}\mid\Qcal\subseteq\Pcal\}$$
for $\Pcal\in\spec\Delta_{\cup}^{\infty}$. The basic open sets in the upper topology of the poset $(\Delta,\subseteq)$ are given by $\emptyset$ and 
$$D_{F}=\{G\in\Delta\mid F\subseteq G\}$$
for $F\in\Delta$. This shows that $h^{-1}(D_{\Pcal_{F,\cup}})=D_{F}$ because $G\subseteq F$ is equivalent to $\Pcal_{F,\cup}\subseteq\Pcal_{G,\cup}$ by Proposition \ref{PropFilterSimplCompl}. Similarly, $g$ is a homeomorphism if $(\Delta,\subseteq)$ is endowed with the lower topology, where the basic open subsets are given by
$$D_{F}=\{G\in\Delta\mid G\subseteq F\}$$
for $F\in\Delta$. By Proposition \ref{PropFilterSimplCompl}, $G\subseteq F$ is equivalent to $\Pcal_{G,\cap}\subseteq\Pcal_{F,\cap}$, and therefore $g^{-1}(D_{\Pcal_{F,\cap}})=D_{F}$, where $D_{\Pcal}=\{\Qcal\in\spec\Delta_{\cap}\onull\mid\Qcal\subseteq\Pcal\}$ for $\Pcal\in\spec\Delta_{\cap}\onull$ are the basic open sets of $\spec\Delta_{\cap}\onull$.
\end {proof}

\bigskip

\section{Simplicial binoids} \label{SecSimplBinos}
\markright{\ref {SecSimplBinos} Simplicial binoids}

Now we define the binoid associated to a simplicial complex $\Delta$ such that its binoid algebra is the Stanley-Reisner algebra of $\Delta$. Conversely, we characterize those binoids that yield Stanley-Reisner algebras, cf.\ Theorem \ref {ThClassSimplB}. Then the tools and notations developed in the last sections will be applied to these binoids.

\begin {Definition}
Let $\Delta$ be a simplicial complex on $V$. The binoid associated to $\Delta$ is defined to be 
$$M\UDelta\,:=\,\free(V)/\Ical\UDelta\komma$$
\nomenclature[M]{$M\UDelta$}{binoid associated to $\Delta$}where $\Ical\UDelta$ denotes the ideal $\{f\in\free(V)\mid\supp(f)\not\subseteq\Delta\}$ in the free commutative binoid $\free(V)$. A binoid $M$ is called a \gesperrt{simplicial binoid} \index{binoid!simplicial --}\index{simplicial! binoid}if $M\cong M\UDelta$ for a simplicial complex $\Delta$. In this case, the simplicial complex $\Delta$ is unique and called the \gesperrt{underlying simplicial complex} \index{simplicial complex!underlying --}of $M$. Simplicial binoids together with binoid homomorphisms form a full subcategory of the category $\cBsf$ of commutative binoids, which will be denoted by $\sBsf$.\nomenclature[SB]{$\sBsf$}{category of simplicial binoids}
\end {Definition}

Equivalently, one may define the binoid $M\UDelta$ by
$$M\UDelta:=\{\varphi:V\rto\N\text{ map}\mid\supp\varphi\in\Delta\}\cup\{\infty\}\komma$$
where $\supp\varphi=\{v\in V\mid\varphi(v)\not=0\}$, with addition defined by
$$\varphi+\psi:=
\begin{cases}
\varphi+\psi&\text{, if }\supp(\varphi+\psi)\in\Delta\komma\\
\infty&\text{, otherwise,}
\end{cases}$$
or by 
$$M\UDelta:=\Big\{\sum_{v\in F}n_{v}v\,\,\Big|\,\, F\in\Delta,n_{v}\in\N\Big\}\,\cup\,\{\infty\}$$ 
with addition defined by
$$\sum_{v\in F}n_{v}v+\sum_{v\in G}m_{v}v\,:=\,
\begin{cases}
\sum_{v\in F\cup G}(n_{v}+m_{v})v&\text{, if }F\cup G\in\Delta\komma\\
\infty&\text{, otherwise,}
\end{cases}$$
where $n_{v}=0$ if $v\not\in F$ and $m_{v}=0$ if $v\not\in G$. In particular, for every $f\in M\UDelta\opkt$ there is a unique face $F\in\Delta$ such that $f=\sum_{v\in F}n_{v}v$ with $n_{v}\ge1$ for all $v\in F$.

\begin {Example}
$M_{^{_{\Pset(V)}}}$ coincides with the free binoid $\free(V)\cong(\N^{n})^{\infty}$.
\end {Example}

\begin {Example}
If $\Delta=\{\emptyset,\{v\}\mid v\in V\}$, then $M\UDelta=\,\,\bigcupbidot_{v\in V}\free(\{v\})$. In general, if the facets of a simplicial complex $\Delta$ are pairwise disjoint, then $M\UDelta=\,\,\bigcupbidot_{ F\text{ facet of }\Delta}\free(F)$. See also Corollary \ref {CorSimplCompositions} below.
\end {Example}

Now we recall the definition of the Stanley-Reisner algebra of a simplicial complex, cf.\ \cite[Chapter 5.1]{BrunsHerzog} or \cite[Chapter 5.3]{Villarreal}.

\begin{Definition}
Let $\Delta$ be a simplicial complex on $V=\{v_{1}\kpkt v_{n}\}$ and $R[X_{1}\kpkt X_{n}]$ the polynomial algebra over the ring $R$. The \gesperrt{Stanley-Reisner ideal} \index{Stanley-Reisner ideal}\index{ideal!Stanley-Reisner --}$\Iideal\UDelta$\nomenclature[I]{$\Iideal\UDelta$}{Stanley-Reisner ideal of $\Delta$} of $\Delta$ is the ideal in $R[X_{1}\kpkt X_{n}]$ generated by all monomials $X_{j_{1}}\cdots X_{j_{s}}$ such that $\{v_{j_{1}}\kpkt v_{j_{s}}\}\not\in\Delta$. Then the \gesperrt{Stanley-Reisner algebra} \index{Stanley-Reisner algebra} (also called \gesperrt{face ring}\index{face!-- ring}) of $\Delta$ over $R$ is given by the graded $R\mina$algebra $R[X_{1}\kpkt X_{n}]/\Iideal\UDelta=:R[\Delta]$\nomenclature[R1]{$R[\Delta]$}{Stanley-Reisner algebra of $\Delta$}.
\end{Definition}

\begin {Example}
If $\Delta=\Pset(V)$, then $\Iideal_{\Delta}=(0)$, hence $R[\Delta]$ is the polynomial algebra $R[X_{v}\mid v\in V]$.
\end {Example}

\begin {Remark}
By definition, $\Iideal\UDelta$ is generated by squarefree monomials. On the other hand, for every ideal $\Iideal$ in $R[X_{1}\kpkt X_{n}]$ that is generated by squarefree monomials, there is a simplicial complex on $V$, $\# V=n$, such that $R[X_{1}\kpkt X_{n}]/\Iideal\cong R[\Delta]$. Moreover, if $\Delta$ and $\Delta^{\prime}$ are simplicial complexes on the same vertex set $V$, then $\Delta\subseteq\Delta^{\prime}$ if and only if $\Iideal_{\Delta^{\prime}}\subseteq \Iideal_{\Delta}$.
\end {Remark}

As announced, every Stanley-Reisner algebra can be realized as a binoid algebra.

\begin {Lemma} \label {LemSRalgebraBinoid}
If $\Delta$ is a simplicial complex and $R$ a ring, then $R[\Delta]\cong R[M\UDelta]$. 
\end {Lemma}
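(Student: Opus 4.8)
The plan is to exhibit both $R[\Delta]$ and $R[M\UDelta]$ as the quotient of one and the same polynomial ring by one and the same ideal. First I would use that the free commutative binoid $\free(V) \cong (\N^{n})^{\infty}$ (with $n = \#V$) has binoid algebra $R[\free(V)] \cong R[X_{1}\kpkt X_{n}]$ under the identification $T^{v_{i}} \mto X_{i}$, so that a monomial $T^{a}$ with exponent vector $a = \sum_{v \in V} a_{v}v$ is sent to $X^{a} = \prod_{v \in V} X_{v}^{a_{v}}$. Since $M\UDelta = \free(V)/\Ical\UDelta$ by definition, I would then apply Corollary \ref{CorPropBinoidA}(3) to the commutative binoid $\free(V)$ and its ideal $\Ical\UDelta$, which gives
$$R[M\UDelta] \,=\, R[\free(V)/\Ical\UDelta] \,\cong\, R[\free(V)]/R[\Ical\UDelta] \,\cong\, R[X_{1}\kpkt X_{n}]/R[\Ical\UDelta],$$
where under the isomorphism above $R[\Ical\UDelta]$ becomes the monomial ideal generated by the $X^{a}$ with $a \in \Ical\UDelta$. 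It then remains to check that this ideal is exactly the Stanley-Reisner ideal $\Iideal\UDelta$.

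The main step will be the identification $R[\Ical\UDelta] = \Iideal\UDelta$ inside $R[X_{1}\kpkt X_{n}]$. By definition $\Ical\UDelta$ consists of the $a \in \free(V)$ with $\supp(a) \notin \Delta$, so $R[\Ical\UDelta]$ is generated by the monomials $X^{a}$ with $\supp(a) \notin \Delta$, whereas $\Iideal\UDelta$ is generated by the squarefree monomials $X_{F} := \prod_{v \in F} X_{v}$ with $F \notin \Delta$. For the inclusion $\Iideal\UDelta \subseteq R[\Ical\UDelta]$ I would observe that each generator $X_{F}$ equals $X^{a}$ for $a = \sum_{v \in F} v$, which has $\supp(a) = F \notin \Delta$ and hence lies in $R[\Ical\UDelta]$. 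For the reverse inclusion I would note that any generator $X^{a}$ of $R[\Ical\UDelta]$ has $F := \supp(a) \notin \Delta$ and is divisible by $X_{F}$, which is one of the defining generators of $\Iideal\UDelta$; thus $X^{a} \in \Iideal\UDelta$.

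Combining the two isomorphisms then yields $R[M\UDelta] \cong R[X_{1}\kpkt X_{n}]/\Iideal\UDelta = R[\Delta]$, as claimed. There is no genuinely hard step; the part deserving the most care is the support-and-divisibility bookkeeping in the second paragraph, where it matters that $\Iideal\UDelta$ is generated by the monomials of \emph{all} nonfaces (not only the minimal ones), so that the squarefree ``radical'' $X_{\supp(a)}$ of an arbitrary generator $X^{a}$ of $R[\Ical\UDelta]$ is itself among the generators of $\Iideal\UDelta$. I would also verify at the outset that the functoriality of $R[-]$ (Remark \ref{RemUniPropSpecial}) makes the identification of $R[\Ical\UDelta]$ with a monomial ideal of $R[X_{1}\kpkt X_{n}]$ compatible with the isomorphism $R[\free(V)] \cong R[X_{1}\kpkt X_{n}]$, but this is immediate.
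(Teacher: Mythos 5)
Your proof is correct and follows essentially the same route as the paper's: both pass through $R[M\UDelta]=R[\free(V)]/R[\Ical\UDelta]$ via Corollary \ref{CorPropBinoidA}(3) and then identify the monomial ideal $R[\Ical\UDelta]$ with $\Iideal\UDelta$ by reducing an arbitrary generator $X^{a}$ to its squarefree radical $X_{\supp(a)}$. Your explicit two-inclusion bookkeeping, including the remark that $\Iideal\UDelta$ is generated by the monomials of \emph{all} nonfaces, is exactly the point the paper compresses into the observation that $X^{a}\in\Iideal$ if and only if $X^{\tilde{a}}\in\Iideal$.
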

\begin {proof}
If $\Delta$ be a simplicial complex on $V=\{1\kpkt n\}$, then $R[M\UDelta]\,=\,R[\free(V)]/R[\Ical_{\Delta}]$ with
$$R[\Ical_{\Delta}]\,=\,(T^{a}\mid a\in\Ical_{\Delta})\,=\,(X^{a}\mid a\in\free(V),\supp a\not\subseteq\Delta)=:\Iideal\komma$$
where $X^{a}=X_{1}^{a_{1}}\cdots X_{n}^{a_{n}}$ for $a=\sum_{i\in V}a_{i}i$. Since $X^{a}\in\Iideal$ if and only if $X^{\tilde{a}}\in\Iideal$ for $\tilde{a}=\sum_{i\in \supp(a)}i$, we get 
$$\Iideal=\Big(X^{a}\,\,\Big|\,\, a=\sum_{i\in J\subseteq V}i,J\not\subseteq\Delta\Big)=(X_{j_{1}}\cdots X_{j_{s}}\mid \{j_{1}\kpkt j_{s}\}\not\subseteq\Delta)=\Iideal\UDelta\pkt$$
Thus, $R[M\UDelta]$ is the Stanley-Reisner algebra of $\Delta$.
\end {proof}

Simplicial binoids can be characterized as follows.

\begin {Theorem} \label{ThClassSimplB}
Let $\Delta$ be a simplicial complex on $V$. The binoid $M\UDelta$ is finitely generated by $\# V=n$ elements, semifree, and reduced. Conversely, every commutative binoid $M$ satisfying these properties is a simplicial binoid. More precisely, $M$ is isomorphic to $M\UDelta$, where $\Delta:=\{F\subseteq W\mid\sum_{w\in F}w\not=\infty\}$ for a minimal generating set $W$ of $M$.
\end {Theorem}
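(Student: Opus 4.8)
The plan is to prove the two directions separately. For the forward direction, I would establish the three claimed properties of $M\UDelta=\free(V)/\Ical\UDelta$ one at a time. Finite generation by the $n=\#V$ elements $v\in V$ is immediate from the definition, since $\free(V)$ is generated by $V$ and taking a quotient preserves generation. Reducedness follows from the observation that $\Ical\UDelta=\{f\in\free(V)\mid\supp(f)\not\subseteq\Delta\}$ is a radical ideal: if $nf=f\pluspkt f\in\Ical\UDelta$ then $\supp(nf)=\supp(f)\not\subseteq\Delta$, so $f\in\Ical\UDelta$ already; hence $\free(V)/\Ical\UDelta$ is reduced by Lemma \ref{LemRadikalReduced}. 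For semifreeness, I would use the explicit description of $M\UDelta$ as $\{\sum_{v\in F}n_{v}v\mid F\in\Delta, n_{v}\ge1\}\cup\{\infty\}$: every nonabsorbing element is, by the uniqueness of expression in the free commutative binoid $\free(V)$, uniquely written as $\sum_{v\in V}n_{v}v$ with $n_{v}=0$ for almost all $v$, and this expression survives in the quotient precisely because the Rees congruence of $\Ical\UDelta$ only collapses elements into $\infty$. Thus $V$ serves as a semibasis and $M\UDelta$ is semifree by Definition \ref{DefRepFree}.

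For the converse, let $M$ be a commutative binoid that is finitely generated by $n$ elements, semifree, and reduced. By Lemma \ref{LemPropSemifree}, $M$ is positive and cancellative, so by Proposition \ref{PropUniqueMinSyst} it admits a unique minimal generating set $W$, which is also the semibasis (a semibasis is always a minimal generating set, as noted after Definition \ref{DefRepFree}). I would then define $\Delta:=\{F\subseteq W\mid\sum_{w\in F}w\not=\infty\}$ and check that $\Delta$ is a simplicial complex on $W$: it is subset-closed because if $\sum_{w\in F}w\not=\infty$ and $G\subseteq F$, then $\sum_{w\in G}w$ is a summand of $\sum_{w\in F}w$ and hence also $\not=\infty$ by integrality considerations; and it contains all singletons $\{w\}$, $w\in W$, since the generators $w$ are nonunits in a positive binoid and thus $\not=\infty$ (a generator equal to $\infty$ would be superfluous, contradicting minimality). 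The empty face corresponds to the identity $0\not=\infty$.

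To finish, I would construct the isomorphism $M\UDelta\cong M$. Using Lemma \ref{LemHomFreeCom}, the family $(w)_{w\in W}$ in $M$ induces a surjective binoid homomorphism $\varepsilon:\free(W)\rto M$ with $w\mto w$, which is surjective because $W$ generates $M$. The key point is to identify $\ker$ of the induced map and match it with $\Ical\UDelta$. Concretely, I would show that the congruence $\sim_{\varepsilon}$ is exactly the Rees congruence of $\Ical\UDelta=\{f\in\free(W)\mid\supp(f)\not\subseteq\Delta\}$. For this, semifreeness of $M$ is decisive: an element $f=\sum_{w\in W}n_{w}w\in\free(W)$ maps to $\infty$ in $M$ if and only if its support $\supp(f)$ satisfies $\sum_{w\in\supp(f)}w=\infty$ in $M$, which by definition of $\Delta$ means $\supp(f)\not\in\Delta$, i.e.\ $f\in\Ical\UDelta$; and two nonabsorbing elements map to the same element of $M$ only if they are equal, by the uniqueness clause in the definition of semifreeness. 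Hence $\varepsilon$ factors through $\free(W)/\Ical\UDelta=M\UDelta$ and the induced map $M\UDelta\rto M$ is bijective by Proposition \ref{PropHomCong} and Remark \ref{RemIndIsom}. The main obstacle I anticipate is precisely this last step: verifying carefully that semifreeness forces $\sim_{\varepsilon}$ to be an ideal congruence (so that no genuine binomial relations survive) and that it coincides with the Rees congruence of $\Ical\UDelta$ rather than merely containing it; this is where the hypothesis that $M$ is semifree (as opposed to merely positive and cancellative) does the essential work, ruling out relations like $2x=3y$ that would destroy the monomial structure.
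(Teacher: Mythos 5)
Your forward direction is fine and matches the paper's: reducedness comes down to $\supp(nf)=\supp(f)$, and semifreeness/finite generation are inherited from $\free(V)$ through the Rees quotient (the paper packages this as Corollary \ref{CorQuotientRepFree}).

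The converse, however, has a genuine gap at exactly the step you flag as "decisive". You assert that $f=\sum_{w\in W}n_{w}w$ maps to $\infty$ in $M$ if and only if $\sum_{w\in\supp(f)}w=\infty$, and you attribute this to semifreeness. Semifreeness does not give this: the binoid $\free(x)/(2x=\infty)$ is finitely generated, positive, cancellative and semifree (every element $\not=\infty$ is uniquely $0$ or $x$), yet $2x=\infty$ while $x\not=\infty$, so the kernel of $\varepsilon$ is not determined by supports and the quotient is not of the form $\free(W)/\Ical\UDelta$. What semifreeness actually buys you is only the injectivity on elements $\not=\infty$ (no surviving binomial relations, as you correctly note at the end). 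The support-determinacy of $\ker\varepsilon$ is where the \emph{reduced} hypothesis must enter -- and your argument never uses it. The paper's proof closes this by invoking Lemma \ref{LemStrongRed=Red}: a reduced commutative binoid is strongly reduced, i.e.\ $a+a+b=\infty$ implies $a+b=\infty$, and then an induction on the multiplicities $n_{u}\ge2$ (peeling off one copy of $u$ at a time from $\sum_{w\in T}n_{w}w=\infty$) brings every exponent down to $1$, showing $\sum_{w\in T}w=\infty$. Without this step your identification of $\sim_{\varepsilon}$ with the Rees congruence of $\Ical\UDelta$ does not go through, and indeed the theorem is false if "reduced" is dropped (the paper's own example $\free(x,y)/(2x+y=\infty)$ after the theorem). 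Supplying the strong-reducedness induction repairs the argument.
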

\begin {proof}
By Corollary \ref {CorQuotientRepFree} and Lemma \ref{LemStrongRed=Red}, we only need to show that $M\UDelta$ is reduced. This follows immediately from the fact that $\supp(f+f)=\supp(f)\cup\supp(f)=\supp(f)$. For the converse note that such a generating set exists by Proposition \ref{PropUniqueMinSyst}. So take a semibasis $W$ of $M$ and consider the canonical binoid epimorphism $\varphi:\free(W)\Rto M$ with $w\mto w$. By Lemma \ref{LemPropSemifree}, $M$ is cancellative. Therefore, if $f,g\in\free(W)$ with $f=\sum_{w\in W}n_{w}w$ and $g=\sum_{w\in W}m_{w}w$, then  $\varphi(f)=\varphi(g)$ is equivalent to  $\varphi(f)=\varphi(g)=\infty$ or $n_{w}=m_{w}$ for all $w\in W$, and hence $f=g$. Furthermore, we have by Lemma \ref{LemStrongRed=Red} that $f+f+g=\infty$ implies that $ f+g=\infty$ for all $f,g\in M$. Applying this successively to $\sum_{w\in T}n_{w}w=\infty$, where $T\subseteq W$, $n_{w}\ge1$, $w\in T$, yields $\sum_{w\in T}w=\infty$ because if $u\in T$ with $n_{u}\ge2$, then
\begin {align*}
\infty\,=\,\sum_{w\in T}n_{w}w&\,\,=\,\,u+u+\Big((n_{u}-2)u+\sum_{T\setminus\{u\}}n_{w}w\Big)\\
&\,\,=\,\,u+\Big((n_{u}-2)u+\sum_{T\setminus\{u\}}n_{w}w\Big)\\
&\,\,=\,\,\ldots\\
&\,\,=\,\,u+\sum_{T\setminus\{u\}}n_{w}w\pkt
\end {align*}
Thus, we have shown that $\sim_{\varphi}\,=\,\sim_{\Ical}$, where $\Ical=\{\sum_{w\in T}w\in\free(W)\mid\sum_{w\in T}w=\infty$ in $M\}$, which gives $\free(W)/\Ical\cong M$ by Proposition \ref{PropHomCong}. Since 
$$\Ical=\Big\{\sum_{w\in F}w\in\free(W)\,\Big| F\not\subseteq\Delta\Big\}\komma$$
where $\Delta:=\{F\subseteq W\mid\sum_{w\in F}w\not=\infty\}$ is a simplicial complex on $W$, we obtain $\Ical=\Ical_{\Delta}$. Thus, $M\cong\free(W)/\Ical\UDelta$.
\end {proof}

\begin {Example}
The binoid $\free(x,y)/(2x+y=\infty)$ is not a simplicial binoid since it is not (strongly) reduced, cf.\ Lemma \ref{LemStrongRed=Red}(2).
\end {Example}

\begin {Corollary}
Let $\Ical$ be a radical ideal in $M$. If $M$ is a simplicial binoid, then so is $M/\Ical$.
\end {Corollary}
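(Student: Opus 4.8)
The goal is to show that the class of simplicial binoids is closed under taking Rees quotients by radical ideals. By Theorem~\ref{ThClassSimplB}, being a simplicial binoid is equivalent to being finitely generated (by $n$ elements), semifree, and reduced. The plan is therefore to verify that each of these three properties is inherited by $M/\Ical$ when $\Ical$ is a radical ideal, and then to invoke the characterization in Theorem~\ref{ThClassSimplB} to conclude that $M/\Ical$ is simplicial.

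First I would observe that $M/\Ical$ is finitely generated whenever $M$ is, and in fact is generated by the images of a generating set of $M$; this is immediate (cf.\ Corollary~\ref{CorQuotientRepFree}, where the generators of $\free(V)/\Ical$ are exhibited explicitly). Next, the semifreeness of $M/\Ical$ follows from Lemma~\ref{LemQuotientCancPosRepF}, which states that if $M$ is finitely generated, positive, semifree, or cancellative, then so is $M/\Ical$ for any ideal $\Ical\subsetneq M$; in particular semifreeness passes to the quotient without any hypothesis on $\Ical$. The crucial use of the radical hypothesis comes in the third property: I would apply Lemma~\ref{LemRadikalReduced}, which says precisely that $\Ical$ is a radical ideal if and only if $M/\Ical$ is reduced. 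Thus the radical assumption is exactly what guarantees that $M/\Ical$ remains reduced.

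Having established that $M/\Ical$ is finitely generated, semifree, and reduced, the converse direction of Theorem~\ref{ThClassSimplB} applies directly and yields that $M/\Ical$ is a simplicial binoid, with underlying simplicial complex $\Delta'=\{F\subseteq W'\mid\sum_{w\in F}w\neq\infty\}$ for a minimal (semibasis) generating set $W'$ of $M/\Ical$. I should note the harmless degenerate case $\Ical=M$ separately, since a simplicial binoid is nonzero by convention and the quotient by the unit ideal is the zero binoid; so one assumes $\Ical\subsetneq M$ throughout, which is also the standing hypothesis in Lemma~\ref{LemQuotientCancPosRepF}.

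The main (and really the only) obstacle is conceptual rather than computational: one must recognize that reducedness is the single property among the three defining conditions that is \emph{not} automatically inherited by an arbitrary Rees quotient, and that the radical hypothesis on $\Ical$ is precisely the right condition to restore it, via Lemma~\ref{LemRadikalReduced}. Everything else is an assembly of already-proven preservation lemmas, so the proof is short and the work lies entirely in citing the correct results in the correct order.
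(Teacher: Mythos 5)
Your proof is correct and follows essentially the same route as the paper: check that the three conditions of Theorem~\ref{ThClassSimplB} (finitely generated, semifree, reduced) pass to $M/\Ical$, using Lemma~\ref{LemQuotientCancPosRepF} for the first two and the radical hypothesis for reducedness. The only cosmetic difference is that you cite Lemma~\ref{LemRadikalReduced} directly (which is in fact the more apt reference), whereas the paper routes this through Lemma~\ref{LemModuloRadical} and additionally spells out the strong-reducedness property $f+f+g=\infty\Rightarrow f+g=\infty$, which for commutative binoids is equivalent to reducedness by Lemma~\ref{LemStrongRed=Red} and hence already covered by your argument.
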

\begin {proof}
By the characterization of simplicial binoids given in Theorem \ref{ThClassSimplB} above, the statement follows from Lemma \ref {LemQuotientCancPosRepF} and Lemma \ref {LemModuloRadical}, and the simple fact that $f+f+g=\infty$ implies $f+g=\infty$ also holds in $M/\Ical$.
\end {proof}

\begin {Example}
Let $\Delta$ be a simplicial complex on $V$ and $\Pcal\in\spec M\UDelta$. By Corollary \ref{CorFacesPrimes} below, $\Pcal=\Pcal_{\tilde{F}}=\langle v\in V\mid v\not\in\tilde{F}\rangle$ for some face $\tilde{F}\in\Delta$. If $A=\{F\in\Delta\mid F$ facet$\}$ so that $\Delta=\bigcup_{F\in A}\Pset(F)$, then 
$M\UDelta/\Pcal_{F}\,=\,M_{^{_{\tilde{\Delta}}}}$, where $\tilde{\Delta}=\bigcup_{F\in A}\Pset(F\cap\tilde{F})$.
\end {Example}

\begin {Lemma} \label{LemSpecBool=SimplCompl}
The booleanization of $M\UDelta$ is isomorphic to $\Delta_{\cup}^{\infty}$.
\end {Lemma}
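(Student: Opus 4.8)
The plan is to exhibit an explicit isomorphism between the booleanization $(M\UDelta)_{\bool}$ and $\Delta_{\cup}^{\infty}$ by realizing both as quotients of $\free(V)$ and comparing the defining congruences. Recall from Corollary \ref{CorBooleanization} that $(M\UDelta)_{\bool}\cong\Bcal(M\UDelta)$, and more usefully that the booleanization is characterized by the universal property of Proposition \ref{PropMboolUniversalProp}: it is the universal boolean quotient. Since $\Delta_{\cup}^{\infty}$ is a boolean binoid (every face satisfies $F\cup F=F$), and there is a canonical binoid epimorphism $\theta:M\UDelta\rto\Delta_{\cup}^{\infty}$ sending $\sum_{v\in F}n_{v}v$ to $F$ (and $\infty$ to $\infty$), I would first verify that $\theta$ is a well-defined binoid homomorphism: this amounts to checking that $\supp(f+g)\in\Delta$ exactly when $F\cup G\in\Delta$, where $F=\supp(f)$, $G=\supp(g)$, which is immediate since $\supp(f+g)=\supp(f)\cup\supp(g)$ in $M\UDelta$.

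Next I would invoke the universal property of the booleanization. Since $\Delta_{\cup}^{\infty}$ is boolean, $\theta$ factors uniquely through $\pi_{\bool}:M\UDelta\rto(M\UDelta)_{\bool}$, giving a binoid homomorphism $\bar{\theta}:(M\UDelta)_{\bool}\rto\Delta_{\cup}^{\infty}$ with $\bar{\theta}\pi_{\bool}=\theta$. This map is surjective because $\theta$ is. For injectivity I would argue that $\sim_{\bool}$ already collapses $M\UDelta$ down to precisely $\Delta_{\cup}^{\infty}$: every element $f=\sum_{v\in F}n_{v}v\in M\UDelta\opkt$ satisfies $f\sim_{\bool}\sum_{v\in F}v$ (applying $w\sim_{\bool}w+w$ repeatedly to reduce each coefficient to $1$, which is legitimate since $\supp$ is unchanged under such moves and hence all intermediate sums stay $\not=\infty$). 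Thus $(M\UDelta)_{\bool}$ is generated by the classes $[v]$, $v\in V$, each idempotent, and the class of $f$ is determined by $\supp(f)=F\in\Delta$. This shows the induced map $\bar{\theta}$ is also injective, since two elements of $M\UDelta$ with the same support have the same image under $\theta$, and the only additional relation needed is exactly that $[f]$ depends only on $\supp(f)$.

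The cleanest route is probably to use Theorem \ref{ThClassSimplB} together with the semifree-up-to-idempotence description: by Theorem \ref{ThClassSimplB}, $M\UDelta$ is semifree with semibasis $V$, so every nonabsorbing element has a unique expression $\sum_{v\in F}n_{v}v$ with $F\in\Delta$; passing to the booleanization forces all coefficients to $1$, and the resulting binoid has a semibasis $\{[v]\mid v\in V\}$ in the boolean sense, so it is semifree up to idempotence with underlying simplicial complex $\Delta$. Comparing with the Lemma preceding Example \ref{ExpBasisDeltaCap}, which states that a finitely generated boolean binoid that is semifree up to idempotence is isomorphic to $\Delta'^{\infty}_{\cup}$ for its underlying simplicial complex $\Delta'$, and identifying $\Delta'=\Delta$, yields the claim. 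The main obstacle I anticipate is the bookkeeping in the injectivity step: one must be careful that the reduction $f\sim_{\bool}\sum_{v\in F}v$ never passes through $\infty$ and that no faces are accidentally identified, i.e.\ that $\sum_{v\in F}v\sim_{\bool}\sum_{v\in G}v$ forces $F=G$. This follows from Lemma \ref{LemBoolCharacterization} (equivalence of $f\sim_{\bool}g$ with $\opFilt(f)=\opFilt(g)$) combined with the semifreeness, but making the argument airtight is where the real work lies; everything else is formal consequence of the universal property.
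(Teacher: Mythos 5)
Your proposal is correct and follows essentially the same route as the paper: the paper also applies the universal property of the booleanization (Proposition \ref{PropMboolUniversalProp}) to the canonical map $\sum_{v\in F}n_{v}v\mto F$ and then declares the induced map $\bar{\varphi}:(M\UDelta)_{\bool}\rto\Delta_{\cup}^{\infty}$ ``obviously bijective.'' Your more careful treatment of injectivity --- reducing coefficients to $1$ without passing through $\infty$ and checking via Lemma \ref{LemBoolCharacterization} that distinct faces are not identified --- simply fills in what the paper leaves to the reader.
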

\begin {proof}
By Proposition \ref{PropMboolUniversalProp}, the binoid homomorphism $\varphi:M\UDelta\rto\Delta_{\cup}^{\infty}$ with $\sum_{v\in F}n_{v}v\mto F$, where $F\in\Delta$ and $n_{v}\ge1$, induces a binoid homomorphism $\bar{\varphi}:(M\UDelta)_{\bool}\rto\Delta_{\cup}^{\infty}$ with $\sum_{v\in F}v\mto F$, where $F\in\Delta$,
which is obviously bijective.
\end {proof}

\begin {Corollary}  \label{CorFacesPrimes}
Let $\Delta$ be a simplicial complex on $V$. Every face $F\in\Delta$ defines a filter in $M\UDelta$, namely $\opFilt(F)=\{f\in M\UDelta\mid\supp(f)\subseteq F\}$, and hence a prime ideal $M\UDelta\setminus\opFilt(F)$ such that the canonical maps
$$\begin {array}{rcccl}
\Fcal(M\UDelta)\setminus\{M\UDelta\}\!\!\!&\stackrel{\sim}{\longleftrightarrow}&\!\!\! \Delta_{\cap}\!\!\!&\stackrel{\sim}{\longleftrightarrow}&\!\!\!\spec M\UDelta\\
S\!\!\!&\lto&\!\!\! \{v\in V\mid v\in S\}&&\\
\opFilt(F)\!\!\!&\longmapsfrom&\!\!\! F\!\!\!&\lto&\!\!\!\langle v\in V\mid v\not\in F\rangle\\
&&\{v\in V\mid v\not\in\Pcal\}\!\!\!&\longmapsfrom&\!\!\!\Pcal
\end {array}$$
are semibinoid isomorphism. In particular, the binoids $\Delta_{\cap}\onull$, $\Fcal(M\UDelta)_{\cap}$, and $\specE M\UDelta$ are isomorphic.
\end {Corollary}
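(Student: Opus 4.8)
The plan is to obtain all three correspondences as specialisations, to the simplicial binoid $M\UDelta$, of the general filter--prime dictionary of Corollary \ref{CorHomFiltSpec}, after first pinning down the explicit shape of the filters $\opFilt(F)$.

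First I would verify that for a face $F\in\Delta$ the set $S_{F}:=\{f\in M\UDelta\mid\supp(f)\subseteq F\}$ is precisely $\opFilt(F)$ and is a proper filter. Since $F\in\Delta$, two elements with support in $F$ have a sum with support $\supp(f)\cup\supp(g)\subseteq F\in\Delta$, hence $\neq\infty$, giving the additivity condition; the summand condition is immediate from $\supp(f),\supp(g)\subseteq\supp(f+g)$ whenever $f+g\neq\infty$; and $0\in S_{F}$. As $\supp$ is undefined on $\infty$ we have $\infty\not\in S_{F}$, so $S_{F}\neq M\UDelta$ by Remark \ref{RemFilterOfF}. That $S_{F}=\opFilt(F)$ holds because any filter containing the vertices $v\in F$ must contain each $f=\sum_{v\in\supp f}n_{v}v$ with $\supp f\subseteq F$, while $S_{F}$ is itself such a filter.

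Next I would show that $F\mapsto\opFilt(F)$ is a bijection $\Delta\Rto\Fcal(M\UDelta)\setminus\{M\UDelta\}$ with inverse $S\mapsto\{v\in V\mid v\in S\}$. Injectivity is clear since $v\in\opFilt(F)$ is equivalent to $v\in F$ for a vertex $v$. For surjectivity, given a proper filter $S$ put $F:=\{v\in V\mid v\in S\}$; then $F\in\Delta$, for otherwise $\sum_{v\in F}v=\infty$ would lie in $S$ by additive closure, contradicting $\infty\not\in S$ (Remark \ref{RemFilterOfF}), and summand-closedness of $S$ then gives $S=\opFilt(F)$ as above. Because $\opFilt(F)\cap\opFilt(F')=\{f\mid\supp f\subseteq F\cap F'\}=\opFilt(F\cap F')$ and $\opFilt(\emptyset)=\{0\}=M\UDelta\okreuz$, this is an isomorphism of semibinoids $\Delta_{\cap}\cong(\Fcal(M\UDelta)\setminus\{M\UDelta\},\cap,M\UDelta\okreuz)$. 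Composing with the inclusion-reversing semibinoid isomorphism $\Fcal(M\UDelta)\setminus\{M\UDelta\}\cong\spec M\UDelta$, $S\mapsto M\UDelta\setminus S$, of Corollary \ref{CorHomFiltSpec}, and noting $M\UDelta\setminus\opFilt(F)=\{f\mid\supp f\not\subseteq F\}\cup\{\infty\}=\langle v\in V\mid v\not\in F\rangle$, yields the second correspondence $\Delta_{\cap}\cong\spec M\UDelta$, which sends $\cap$ to $\cup$ (since $\langle v\not\in F\cap F'\rangle=\langle v\not\in F\rangle\cup\langle v\not\in F'\rangle$) and the absorbing face $\emptyset$ to $M\UDelta\Uplus$.

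Finally, the "in particular" follows by adjoining identities: the semibinoid isomorphism just built extends to a binoid isomorphism $\Delta_{\cap}\onull\cong\Fcal(M\UDelta)_{\cap}$ sending the adjoined top element $V$ to the full filter $M\UDelta$ (the identity of $\Fcal(M\UDelta)_{\cap}$), and $\Fcal(M\UDelta)_{\cap}\cong\specE M\UDelta$ is then Corollary \ref{CorSpecDualFilt}. As a structural cross-check, the same isomorphisms arise through the booleanization, since $(M\UDelta)_{\bool}\cong\Delta_{\cup}^{\infty}$ by Lemma \ref{LemSpecBool=SimplCompl}, $\spec M\UDelta\cong\spec\Delta_{\cup}^{\infty}$ by Corollary \ref{CorSpecBool=Spec}, and $\spec\Delta_{\cup}^{\infty}\cong\Delta_{\cap}$, $\specE\Delta_{\cup}^{\infty}\cong\Delta_{\cap}\onull$ by Proposition \ref{PropFilterSimplCompl}. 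I expect no single deep step here; the main obstacle is the bookkeeping of which operation ($\cap$ versus $\cup$) and which distinguished element (identity versus absorbing, and the genuine face $\emptyset\in\Delta$ versus the adjoined symbols of $\Delta_{\cap}\onull$ and $\specE M\UDelta$) matches along each arrow, keeping these consistent across all three objects and across the passage from semibinoids to binoids.
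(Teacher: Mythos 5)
Your proof is correct, and for the first correspondence it coincides with the paper's argument (direct verification that $\opFilt(F\cap F')=\opFilt(F)\cap\opFilt(F')$ and $\opFilt(\emptyset)=M\UDelta\okreuz$), though you spell out the bijectivity and the identification $\opFilt(F)=\{f\mid\supp(f)\subseteq F\}$ in more detail than the paper, which essentially asserts these. Where you genuinely diverge is the second correspondence $\Delta_{\cap}\cong\spec M\UDelta$: you obtain it by composing the filter correspondence with the general complement dictionary of Corollary \ref{CorHomFiltSpec} and computing $M\UDelta\setminus\opFilt(F)=\langle v\in V\mid v\not\in F\rangle$ directly, whereas the paper routes through the booleanization, chaining $\spec M\UDelta\cong\spec\Bcal(M\UDelta)\cong\spec\Delta_{\cup}^{\infty}\cong\Delta_{\cap}$ via Corollary \ref{CorSpecBool=Spec}, Lemma \ref{LemSpecBool=SimplCompl}, and Proposition \ref{PropFilterSimplCompl} --- the route you relegate to a cross-check. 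Your route is more self-contained and has the advantage of producing the explicit formulas $F\mapsto\langle v\not\in F\rangle$ and $\Pcal\mapsto\{v\mid v\not\in\Pcal\}$ appearing in the statement without having to trace them through three intermediate isomorphisms; the paper's route buys economy by reusing machinery already established for booleanizations and for $\Delta_{\cup}^{\infty}$. Both are valid, and your handling of the distinguished elements (the face $\emptyset$ versus $M\UDelta\okreuz$ versus $M\UDelta\Uplus$, and the adjoined identities in the ``in particular'') is consistent throughout.
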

\begin {proof}
Since $\opFilt(G\cap G^{\prime})=\opFilt(G)\cap\opFilt(G^{\prime})$ and $\opFilt(\emptyset)=M\UDelta\okreuz$, the canonical maps on the left-hand side are semibinoid homomorphism, which are inverse to each other. The rest follows from the first part and the following identifications
$$\spec M\UDelta\,\,\cong\,\,\spec\Bcal(M\UDelta)\,\,\cong\,\,\spec\Delta_{\cup}^{\infty}\,\,\cong\,\,\Delta_{\cap}\komma$$
which are due to Corollary \ref{CorSpecBool=Spec}, Lemma \ref{LemSpecBool=SimplCompl}, and Proposition \ref{PropFilterSimplCompl}.
\end {proof}

\begin {Corollary}\label{CorSpecDelta}
Let $\Delta$ be a simplicial complex on $V$ and $\Pcal_{F}\in\spec M\UDelta$, $F\in\Delta$. Then
\begin {ListeTheorem}
\item $\dim\Pcal_{F}=\dim F+1$. In particular, $\dim M\UDelta=\dim\Delta+1$.
\item $F_{i}(M\UDelta)=f_{i-1}(\Delta)$ for all $i\in\{0,1\kpkt d\}$, $d:=\dim M\UDelta$; that is, $ F(M_{\Delta})=f(\Delta)$.
\item The minimal prime ideals of $M\UDelta$ are of the form $\Pcal_{F}=\{f\in M\UDelta\mid\supp(f)\subseteq V\setminus F\}$ for some facet $F\in\Delta$; that is, the irreducible components of $\spec M\UDelta$ correspond to the facets of $\Delta$.
\end {ListeTheorem}
\end {Corollary}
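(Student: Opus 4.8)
The plan is to leverage the isomorphism $\spec M\UDelta\cong\Delta_{\cap}$ established in Corollary~\ref{CorFacesPrimes}, which identifies each prime ideal of $M\UDelta$ with a face $F\in\Delta$ via $\Pcal_{F}=\langle v\in V\mid v\not\in F\rangle=\{f\in M\UDelta\mid\supp(f)\subseteq F\}^{\opc}$, i.e.\ $\Pcal_{F}=M\UDelta\setminus\opFilt(F)$. The three assertions will then be read off from the order-theoretic structure of this correspondence, which reverses inclusion: for faces $F,G\in\Delta$ one has $F\subseteq G$ if and only if $\opFilt(F)\subseteq\opFilt(G)$, hence if and only if $\Pcal_{G}\subseteq\Pcal_{F}$.

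For part~(1), I would compute $\dim\Pcal_{F}=\dim M\UDelta/\Pcal_{F}$ using the lemma relating $\dim\Pcal$ to $\dim M/\Pcal$, together with the fact (from Corollary~\ref{CorExtIdealPrime}) that $\spec M\UDelta/\Pcal_{F}\cong\{\Qcal\in\spec M\UDelta\mid\Pcal_{F}\subseteq\Qcal\}$. Under the inclusion-reversing bijection with $\Delta$, the prime ideals containing $\Pcal_{F}$ correspond exactly to the faces contained in $F$, i.e.\ to $\Pset(F)\cap\Delta=\Pset(F)$. Thus chains of primes above $\Pcal_{F}$ correspond to chains of faces below $F$, and the longest such chain $\emptyset\subsetneq\{v_{1}\}\subsetneq\cdots\subsetneq F$ has length $\#F=\dim F+1$, giving $\dim\Pcal_{F}=\dim F+1$. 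Taking $F$ to range over all faces and using $\dim M\UDelta=\max_{\Pcal}\dim\Pcal$ (equivalently $\dim\Pcal_{\emptyset}=\dim M\UDelta\setminus\{0\}$, the dimension realized by the smallest prime) yields $\dim M\UDelta=\dim\Delta+1$; this also matches the corollary already recording $\dim M\UDelta=\dim\Delta+1$ after Proposition~\ref{PropFilterSimplCompl}.

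Part~(2) is then a direct counting consequence: by definition $F_{i}(M\UDelta)=\#\{\Pcal\in\spec M\UDelta\mid\dim\Pcal=i\}$, and by part~(1) the primes of dimension $i$ are precisely the $\Pcal_{F}$ with $\dim F=i-1$. Since $F\mapsto\Pcal_{F}$ is a bijection $\Delta\rto\spec M\UDelta$, counting gives $F_{i}(M\UDelta)=\#\{F\in\Delta\mid\dim F=i-1\}=f_{i-1}(\Delta)$, which is the claimed equality $F(M\UDelta)=f(\Delta)$. For part~(3), I would identify the minimal primes as those $\Pcal_{F}$ that are minimal with respect to inclusion; since the bijection reverses inclusion, these correspond to the maximal faces of $\Delta$, namely the facets. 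Then $\Pcal_{F}=\{f\in M\UDelta\mid\supp(f)\subseteq V\setminus F\}$ is exactly $M\UDelta\setminus\opFilt(F)$ for $F$ a facet, and by Corollary~\ref{CorIrredCompMinPrime} the irreducible components of $\spec M\UDelta$ are the sets $\opV(\Pcal)$ with $\Pcal\in\min M\UDelta$, which therefore correspond one-to-one to the facets of $\Delta$.

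I do not anticipate a serious obstacle here, since all the structural work is already done in Corollary~\ref{CorFacesPrimes} and the dimension lemmata; the only point requiring a little care is tracking the inclusion-reversal correctly (so that minimal primes match \emph{maximal} faces and the dimension shift by $1$ comes out right rather than off by one), and confirming that the chains of primes above $\Pcal_{F}$ are genuinely in length-preserving bijection with chains of subfaces of $F$, including the empty face contributing the $+1$.
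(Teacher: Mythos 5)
Your proposal is correct and follows essentially the same route as the paper: both rest on Corollary \ref{CorFacesPrimes} to identify $\spec M\UDelta$ with $\Delta$ via $F\leftrightarrow\Pcal_{F}$ in an inclusion-reversing way, translate chains of primes above $\Pcal_{F}$ into chains of subfaces of $F$ (the paper writes the chain $\Pcal_{r}\subsetneq\cdots\subsetneq\Pcal_{0}$ explicitly, you phrase it through $\dim\Pcal=\dim M/\Pcal$ and Corollary \ref{CorExtIdealPrime}), and finish (3) with Corollary \ref{CorIrredCompMinPrime}. Your bookkeeping of the inclusion reversal and the $+1$ from the empty face is accurate.
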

\begin {proof}
By Corollary \ref{CorFacesPrimes}, every prime ideal in $\spec M\UDelta$ is of the form $\Pcal_{F}:=\langle v\mid v\in V\setminus F\rangle$ for some $F\in\Delta$. In particular, if $\dim F=r-1$, say $F=\{v_{1}\kpkt v_{r}\}$, then 
$$\Pcal_{F}=\Pcal_{r}\subsetneq\Pcal_{r-1}\subsetneq\cdots\subsetneq\Pcal_{1}\subsetneq\Pcal_{0}\komma$$
where $\Pcal_{i}=\langle v\in V\mid v\not\in\{v_{1}\kpkt v_{i}\}\rangle$, $i\in\{1\kpkt r\}$, and $\Pcal_{0}=\langle v\mid v\in V\rangle=(M\UDelta)\Uplus$. Hence, every strictly increasing chain of faces is equivalent to a strictly decreasing chain of prime ideals and vice versa. This implies all statements, where the supplement of (3) is due to Corollary \ref {CorIrredCompMinPrime}.
\end {proof}

\begin {Remark}
Let $\Delta$ be a simplicial complex on $V=\{1\kpkt n\}$ and $K$ a field. Now we can easily visualize the $K\mina$spectrum of a simplicial binoid $M\UDelta$ for an arbitrary simplicial complex $\Delta$ on $V$. By Proposition \ref{PropUnionCanComp}, we have
$$K\minspec M\UDelta\,\,\cong\bigcup_{F\in\Delta\text{ facet}} \opV_{K}(K[\Pcal_{F}])\komma$$
where $\Pcal_{F}=\langle v\in V\mid v\not\in F\rangle$ by Corollary \ref{CorSpecDelta}(3), and
$$\opV_{K}(K[\Pcal_{F}])\,\,\cong\,\,\{(a_{1}\kpkt a_{n})\in\A^{n}(K)\mid a_{i}=0\text{ if }i\not\in F\text{ and }a_{i}\in K\text{ otherwise}\}$$
by Remark \ref{RemAffineEmbeddingKspec}. Hence,
$$K\minspec M\UDelta\,\,\cong\bigcup_{F\in\Delta\text{ facet}}A(F)\quad\,\,\,\,\,\big(\subseteq\,\A^{n}(K)\,\,\cong\,\, K\minspec M_{^{_{\Pset(V)}}}\big)\komma$$
where $A(F)=A_{1}\timespkt A_{n}$ with
$$A_{i}:=\begin {cases}
\A^{1}(K)&\text{, if }i\in F\komma\\
\{0\}&\text{, otherwise,}
\end {cases}$$
In particular, every union of coordinate planes is realizable as the $K\mina$spectrum of a simplicial binoid, and therefore (see below) as the $K\mina$spectrum of its underlying simplicial complex. For instance, if $n=3$, then

\begin{center}
\resizebox{0.33\linewidth}{!}{\begin {pspicture} (-5,-6)(5,4.25)
\qdisk (0,0){3pt}\qdisk (0,1.2){3pt}\qdisk (1.2,0){3pt}\qdisk (-0.72,0.76){3pt}
\qdisk (1.2,1.2){3pt}\qdisk (-0.72,-0.54){3pt}\qdisk (0.48,-0.54){3pt}
\psline [linewidth=0.5 pt, linestyle=dashed] (-2.9,2.9)(2.9,2.9)
\psline [linewidth=0.5 pt, linestyle=dashed] (2.9,2.9)(2.9,-2.9)            
\psline [linewidth=0.5 pt, linestyle=dashed] (2.9,-2.9)(0,-2.9)              
\psline [linewidth=0.5 pt, linestyle=dashed] (-1.6,-2.9)(-2.9,-2.9)         
\psline [linewidth=0.5 pt, linestyle=dashed] (-2.9,-2.9)(-2.9,-1.2)      
\psline [linewidth=0.5 pt, linestyle=dashed] (-2.9,0)(-2.9,2.9)      

\psline [linewidth=0.5 pt, linestyle=dashed] (0,-2.9)(-1.6,-4.1)
\psline [linewidth=0.5 pt, linestyle=dashed] (-1.6,-4.1)(-1.6,1.7)          
\psline [linewidth=0.5 pt, linestyle=dashed] (-1.6,1.7)(1.6,4.1)             
\psline [linewidth=0.5 pt, linestyle=dashed] (1.6,4.1)(1.6,2.9)             

\psline [linewidth=0.5 pt, linestyle=dashed] (-2.9,0)(-4.5,-1.2)
\psline [linewidth=0.5 pt, linestyle=dashed] (-4.5,-1.2)(1.3,-1.2)           
\psline [linewidth=0.5 pt, linestyle=dashed] (1.3,-1.2)(4.5,1.2)             
\psline [linewidth=0.5 pt, linestyle=dashed] (4.5,1.2)(2.9,1.2)            
\psline [linewidth=0.5 pt] (0,0)(0,2.9)
\psline [linewidth=0.5 pt] (0,-1.2)(0,-2.9)    
\psline [linewidth=0.5 pt] (-1.6,0)(-2.9,0)
\psline [linewidth=0.5 pt] (0,0)(2.9,0)
\psline [linewidth=0.5 pt] (0,0)(-1.6,-1.2)
\uput [0] (-2,-5.25){\LARGE{$\R\minspec M_{^{_{\Pset(V)\setminus{V}}}}$}}
\end {pspicture}}
\end {center}

In topology, the geometric realization of a simplicial complex $\Delta$ on $V=\{1\kpkt n\}$ is given by
$$\{\varphi\in\R\minspec\Delta\mid\varphi\,\,\widehat{=}\,\,(a_{1}\kpkt a_{n})\in\A^{n}(\R),a_{i}\ge0,a_{1}\pluspkt a_{n}=1\}\komma$$
cf.\ \cite[Chapter II.2.1]{FerrarioPiccinini}.
\end {Remark}

\begin {Proposition} \label{PropUnivPropSimplBinoid}
Let $\Delta$ be a simplicial complex on $V$ and $N$ a commutative binoid. Every $N\mina$point $\rho:V\rto N$ of $\Delta$ factors uniquely through $M\UDelta$; that is, there is a unique binoid homomorphism $\tilde{\rho}:M\UDelta\rto N$ such that $\tilde{\rho}\iota=\rho$, where $\iota:V\rto M\UDelta$ denotes the canonical map $v\mto v$. In particular, 
$$N\minspec M\UDelta\,\cong\,\ N\minspec\Delta$$
as semibinoids and 
$$R\minSpec R[\Delta]\,\,\cong\,\,R\minspec\Delta$$
for a ring $R$.
\end {Proposition}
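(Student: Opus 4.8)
The plan is to prove the universal factorization first and then harvest the two displayed isomorphisms as formal consequences. Given an $N$-point $\rho\colon V\rto N$ of $\Delta$, I would define $\tilde{\rho}\colon M\UDelta\rto N$ on the concrete model $M\UDelta=\{\sum_{v\in F}n_{v}v\mid F\in\Delta,n_{v}\ge1\}\cup\{\infty\}$ by
$$\tilde{\rho}\Big(\sum_{v\in F}n_{v}v\Big):=\sum_{v\in F}n_{v}\rho(v)\quad\text{and}\quad\tilde{\rho}(\infty):=\infty\pkt$$
The first task is to check this is well defined and a binoid homomorphism. Well-definedness is immediate since each $f\in M\UDelta\opkt$ has a unique support expression. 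For the homomorphism property I would split into cases according to whether $F\cup G\in\Delta$: if it is, then additivity of $\tilde{\rho}$ follows from the definition of addition in $M\UDelta$; if $F\cup G\notin\Delta$, then $f+g=\infty$ in $M\UDelta$, and I must verify $\tilde{\rho}(f)+\tilde{\rho}(g)=\infty$ in $N$. This is exactly the place where the $N$-point property of $\rho$ enters: since $F\cup G\notin\Delta$ the sum $\sum_{v\in F\cup G}\rho(v)=\infty$, and as $\tilde{\rho}(f)+\tilde{\rho}(g)$ has this as a summand, the absorbing property forces it to be $\infty$. Uniqueness follows because $V$ generates $M\UDelta$ and $\tilde{\rho}\iota=\rho$ pins down the images of the generators, so $\tilde{\rho}$ is determined by Lemma \ref{LemHomFreeCom}.

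Next I would establish the converse direction, namely that every binoid homomorphism $\varphi\colon M\UDelta\rto N$ restricts to an $N$-point of $\Delta$. Setting $\rho:=\varphi\iota$, I must check $\sum_{v\in A}\rho(v)=\infty$ whenever $A\notin\Delta$. But for a nonface $A$ the element $\sum_{v\in A}v$ equals $\infty$ in $M\UDelta$ by the very definition of $\Ical\UDelta$, so applying the homomorphism $\varphi$ yields $\sum_{v\in A}\rho(v)=\varphi(\sum_{v\in A}v)=\varphi(\infty)=\infty$. These two constructions $\rho\mapsto\tilde{\rho}$ and $\varphi\mapsto\varphi\iota$ are mutually inverse, giving a bijection $\hom(M\UDelta,N)=N\minspec M\UDelta\cong N\minspec\Delta$. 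To upgrade this bijection to a semibinoid isomorphism I would verify it respects the additive semigroup structure: for $N$-points $\rho_{1},\rho_{2}$ of $\Delta$ one checks $\widetilde{\rho_{1}+\rho_{2}}=\tilde{\rho_{1}}+\tilde{\rho_{2}}$ directly on generators, and that the absorbing element $\varepsilon\colon v\mto\infty$ of $N\minspec\Delta$ corresponds to the characteristic point $\chi_{(M\UDelta)\okreuz}$.

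Finally, the ring-theoretic statement $R\minSpec R[\Delta]\cong R\minspec\Delta$ is obtained by chaining three identifications. By Lemma \ref{LemSRalgebraBinoid} one has $R[\Delta]\cong R[M\UDelta]$, so $R\minSpec R[\Delta]\cong R\minSpec R[M\UDelta]$. By Proposition \ref{PropUnivPropBinoidA} applied with $A=R$, there is a semigroup isomorphism $R\minspec M\UDelta\cong R\minSpec R[M\UDelta]$. Composing with the first part of the present proposition gives
$$R\minSpec R[\Delta]\,\cong\,R\minspec M\UDelta\,\cong\,R\minspec\Delta\pkt$$
I expect the main obstacle to be the homomorphism-property verification in the first step, specifically the nonface case: one has to argue cleanly that the absorbing behavior on the combinatorial side (a sum over a nonface collapsing to $\infty$ in $M\UDelta$) transports correctly through $\tilde{\rho}$ to the absorbing behavior in $N$, and here the precise formulation of the $N$-point condition $\sum_{v\in A}\rho(v)=\infty$ together with the universal absorbing property of $\infty_{N}$ must be used carefully. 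Everything else is routine bookkeeping on generators and the invocation of the already-established universal properties.
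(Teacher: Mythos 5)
Your proposal is correct and follows essentially the same route as the paper: the same explicit formula for $\tilde{\rho}$ on the normal form $\sum_{v\in F}n_{v}v$, the same case split on whether $F\cup G\in\Delta$, the same use of $\sum_{v\in F\cup G}\rho(v)=\infty$ as an absorbing summand in the nonface case, and the same derivation of the two displayed isomorphisms from Lemma \ref{LemSRalgebraBinoid} and Proposition \ref{PropUnivPropBinoidA}. Your explicit verification that $\varphi\iota$ is an $N$-point (needed for the inverse map to be well defined) is a detail the paper leaves implicit, but it is not a different argument.
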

\begin {proof}
Since every element $f\in M\UDelta\opkt$ can be written uniquely as $f=\sum_{v\in F}n_{v}v$ with $n_{v}\ge1$, $v\in F$, for some $F\in\Delta$, the map $\tilde{\rho}:M\UDelta\rto N$ with $\tilde{\rho}(f):=\sum_{v\in F}n_{v}\rho(v)$ and $\tilde{\rho}(\infty):=\infty$ is well-defined and satisfies $\tilde{\rho}\iota=\rho$ by definition. To show that $\tilde{\rho}$ is a binoid homomorphism let $f=\sum_{v\in F}n_{v}v$ and $g=\sum_{v\in g}m_{v}v$ with $F,G\in\Delta$ such that $n_{v},m_{v}\ge1$. Then 
$$
\tilde{\rho}(f)+\tilde{\rho}(g)\,=\,\sum_{v\in F}n_{v}\rho(v)+\sum_{v\in F}m_{v}\rho(v)\,\,=\sum_{v\in F\cup G}(n_{v}+m_{v})\rho(v)$$
in $N$, where $n_{v}:=0$ if $v\not\in F$ and $m_{v}:=0$ if $v\not\in G$. In particular, $n_{v}+m_{v}-1\ge0$ for all $v\in F\cup G$. Thus, if $F\cup G$ is not a face in $\Delta$, then $f+g=\infty$ in $M\UDelta$ and 
\begin {align*}
\tilde{\rho}(f)+\tilde{\rho}(g)&\,=\,\sum_{v\in F\cup G}\rho(v)\,\,+\sum_{v\in F\cup G}(n_{v}+m_{v}-1)\rho(v)\\
&\,=\,\,\infty\,\,+\sum_{v\in F\cup G}(n_{v}+m_{v}-1)\rho(v)\\
&\,=\,\,\infty
\end {align*}
since $\rho$ is an $N\mina$point. Hence, $\tilde{\rho}(f)+\tilde{\rho}(g)=\infty=\tilde{\rho}(\infty)=\tilde{\rho}(f+g)$. If, on the other hand, $F\cup G\in\Delta$, then $f+g=\sum_{v\in F\cup G}(n_{n}+m_{v})v\in M\UDelta$, and therefore $\tilde{\rho}(f+g)=\sum_{v\in F\cup G}(n_{n}+m_{v})\rho(v)=\tilde{\rho}(f)+\tilde{\rho}(g)$ as above. For the supplement consider the canonical map 
$$\psi:N\minspec M\UDelta\Rto N\minspec\Delta\quad\text{with}\quad\varphi\lto\varphi\iota=:\varphi_{|V}\komma$$
where $\iota:V\rto M\UDelta$, $v\mto v$, as in the proposition. The bijectivity of $\psi$ follows from the first part. Obviously, $\psi$ is a semigroup homomorphism which satisfies $\chi_{\{0\}}\mto\varepsilon$, where $\chi_{\{0\}}$ is the absorbing element of $N\minspec M\UDelta$ (because $M\UDelta$ is positive) and $\varepsilon:v\mto\infty$, $v\in V$, that of $N\minspec\Delta$. The statement on the $R\mina$points of the Stanley-Reisner algebra $R[\Delta]$ and $\Delta$ follows from Proposition \ref{PropUnivPropBinoidA} and Lemma \ref{LemSRalgebraBinoid}.
\end {proof}

\begin {Proposition} \label{PropIndBinoidHomDeltaSimpl}
Every binoid homomorphism $\varphi:\tilde{\Delta}_{\cup}^{\infty}\rto\Delta_{\cup}^{\infty}$ gives rise to a homomorphism $\phi:M_{^{_{\tilde{\Delta}}}}\rto M_{^{_{\Delta}}}$ of the simplicial binoids defined by $\Delta$ and $\tilde{\Delta}$. This binoid homomorphism commutes via $(M\UDelta)_{\bool}\cong\Delta_{\cup}^{\infty}$ with $\varphi$. Hence, $\varphi=\Bcal(\phi)$ and there is a commutative diagram
$$\xymatrix{
M_{^{_{\tilde{\Delta}}}}\ar[r]^{\phi}\ar[d]_{\pi_{\bool}}&M\UDelta\ar[d]^{\pi_{\bool}}\komma\\
\tilde{\Delta}_{\cup}^{\infty}\ar[r]^{\Bcal(\phi)}&\Delta_{\cup}^{\infty}}$$
where $\pi_{\bool}$ is the canonical binoid homomorphism $\sum_{v\in F}n_{v}v\mto F$. Moreover, $\Bcal(\phi)$ is surjective if and only if $\phi$ is so.
\end {Proposition}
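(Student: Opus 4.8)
The plan is to build $\phi$ by prescribing its values on the vertices of $\tilde{\Delta}$ via the universal property of $M_{^{_{\tilde{\Delta}}}}$ (Proposition \ref{PropUnivPropSimplBinoid}), then to identify $\Bcal(\phi)$ with $\varphi$ using functoriality of the booleanization (Corollary \ref{CorBoolFunctor}), and finally to treat the surjectivity equivalence separately, where the only non-formal input will be that the vertices form the unique minimal generating set of a simplicial binoid.

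First I would set, for each $w\in\tilde{V}$, $F_{w}:=\varphi(\{w\})\in\Delta_{\cup}^{\infty}$, which is either a face of $\Delta$ or the absorbing element $\infty$, and define a map $\rho\colon\tilde{V}\rto M\UDelta$ by $\rho(w):=\sum_{v\in F_{w}}v$ if $F_{w}\in\Delta$ and $\rho(w):=\infty$ if $F_{w}=\infty$. The claim is that $\rho$ is an $M\UDelta\mina$point of $\tilde{\Delta}$, i.e.\ $\sum_{w\in A}\rho(w)=\infty$ for every nonface $A\not\in\tilde{\Delta}$. Indeed, $A\not\in\tilde{\Delta}$ means $\bigcup_{w\in A}\{w\}=\infty$ in $\tilde{\Delta}_{\cup}^{\infty}$, so applying the binoid homomorphism $\varphi$ yields $\bigcup_{w\in A}F_{w}=\infty$ in $\Delta_{\cup}^{\infty}$; this says either some $F_{w}=\infty$ or the set-theoretic union $\bigcup_{w\in A}F_{w}$ is a nonface of $\Delta$. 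In the first case $\rho(w)=\infty$ forces the sum to be $\infty$, and in the second case the support of $\sum_{w\in A}\rho(w)$ would be $\bigcup_{w\in A}F_{w}\not\in\Delta$, which again forces the sum to be $\infty$ in the simplicial binoid $M\UDelta$. By Proposition \ref{PropUnivPropSimplBinoid}, $\rho$ factors uniquely as $\rho=\phi\iota$ for a binoid homomorphism $\phi:=\tilde{\rho}\colon M_{^{_{\tilde{\Delta}}}}\rto M\UDelta$ with $\phi(w)=\rho(w)$ for all $w\in\tilde{V}$, where $\iota\colon\tilde{V}\rto M_{^{_{\tilde{\Delta}}}}$ is the canonical map $w\mapsto w$.

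Next I would invoke Corollary \ref{CorBoolFunctor}, which makes the square commute with $\Bcal(\phi)$ along the bottom, that is $\pi_{\bool}\phi=\Bcal(\phi)\pi_{\bool}$ under the identifications $(M_{^{_{\tilde{\Delta}}}})_{\bool}\cong\tilde{\Delta}_{\cup}^{\infty}$ and $(M\UDelta)_{\bool}\cong\Delta_{\cup}^{\infty}$ of Lemma \ref{LemSpecBool=SimplCompl}. To get $\Bcal(\phi)=\varphi$ I would compare the two homomorphisms on the generating set $\{\{w\}\mid w\in\tilde{V}\}$ of $\tilde{\Delta}_{\cup}^{\infty}$: since $\pi_{\bool}(w)=\{w\}$, commutativity gives $\Bcal(\phi)(\{w\})=\pi_{\bool}(\phi(w))=\pi_{\bool}(\rho(w))$, and by the defining rule $\sum_{v\in F}n_{v}v\mapsto F$ of $\pi_{\bool}$ one has $\pi_{\bool}(\rho(w))=F_{w}=\varphi(\{w\})$, whether $F_{w}$ is a face or $F_{w}=\infty$. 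Hence $\Bcal(\phi)$ and $\varphi$ agree on generators and therefore coincide.

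For the surjectivity equivalence, one direction is formal: if $\phi$ is surjective, then $\Bcal(\phi)\pi_{\bool}=\pi_{\bool}\phi$ is a composite of surjections, so $\Bcal(\phi)=\varphi$ is surjective. The converse is the step I expect to be the main obstacle, since there is no general principle by which booleanization reflects surjectivity; here one must use that $M\UDelta$ is a simplicial binoid, so its vertices $V$ form the unique minimal generating set, namely a semibasis (Theorem \ref{ThClassSimplB}), whence $\phi$ is surjective iff every $v\in V$ lies in $\im\phi$ — and a semibasis element can only be generated by itself. If $\varphi$ is surjective, then each singleton $\{v\}\in\Delta$ equals $\varphi(G)=\bigcup_{w\in G}F_{w}$ for some face $G\in\tilde{\Delta}$ (note $G\neq\infty$ because $\{v\}\neq\infty$); as the union is $\{v\}$ and contains no $F_{w}=\infty$, every $F_{w}\subseteq\{v\}$ and at least one $F_{w}=\{v\}$, giving a vertex $w$ with $\phi(w)=\rho(w)=v$. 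Thus $V\subseteq\im\phi$ and $\phi$ is surjective, completing the proof.
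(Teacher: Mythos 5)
Your proof is correct, and the map you construct is the same one the paper writes down: on a vertex $w$ it sends $w$ to $\sum_{v\in\varphi(\{w\})}v$ (or to $\infty$), and on a general element it extends additively with the same case distinction. The difference lies in how the two arguments justify that this is a homomorphism and in how they treat the supplement. The paper defines $\phi$ by an explicit formula and verifies $\phi(f+g)=\phi(f)+\phi(g)$ by a direct computation with supports; you instead package the vertex assignment as an $M\UDelta\mina$point $\rho$ of $\tilde{\Delta}$ and invoke the universal property of Proposition \ref{PropUnivPropSimplBinoid}, which replaces that computation by the (short) check that $\rho$ kills nonfaces — a cleaner route that also makes the uniqueness of $\phi$ with $\phi(w)=\rho(w)$ automatic. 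The identification $\Bcal(\phi)=\varphi$ via Corollary \ref{CorBoolFunctor} and comparison on the generators $\{w\}$ is the same in both. Finally, the paper dismisses the surjectivity equivalence as clear, whereas you actually prove the nontrivial direction: if $\varphi$ is surjective, each singleton $\{v\}$ has a face preimage $G$, forcing some $w\in G$ with $\varphi(\{w\})=\{v\}$ and hence $\phi(w)=v$, so that $V\subseteq\im\phi$ and surjectivity follows because $V$ generates $M\UDelta$; this is exactly the argument the paper leaves implicit, and your observation that booleanization does not reflect surjectivity in general is a fair justification for spelling it out.
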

\begin {proof}
Let $\varphi:\tilde{\Delta}_{\cup}^{\infty}\rto\Delta_{\cup}^{\infty}$ be a binoid homomorphism. For $f=\sum_{v\in F}n_{v}v\in M_{^{_{\tilde{\Delta}}}}\opkt$ with $n_{v}\ge1$ and $F\in\tilde{\Delta}$ define 
$$\phi(f):=\begin {cases}
\sum_{v\in F}n_{v}\big(\sum_{w\in\varphi(\{v\})}w\big)&\text{, if }\varphi(F)\in\Delta\komma\\
\infty&\text{, otherwise (i.e.\ when $\varphi(F)=\infty$ in $\Delta_{\cup}^{\infty}$)}\pkt
\end {cases}$$ 
If $g\in M_{^{_{\tilde{\Delta}}}}\opkt$ with $g=\sum_{v\in G}m_{v}v$, $m_{v}\ge1$, $G\in\tilde{\Delta}$, then
$$f+g\,=\,\sum_{v\in F}n_{v}v+\sum_{v\in G}m_{v}v\,\,=\sum_{v\in F\cup G}(n_{v}+m_{v})v\komma$$
where $n_{v}:=0$ if $v\not\in F$ and $m_{v}:=0$ if $v\not\in G$. Thus, for $f+g\not=\infty$
\begin {align*}
\phi(f+g)&\,=\sum_{v\in F\cup G}(n_{v}+m_{v})\Big(\sum_{w\in\varphi(v)}w\Big)\\
&\,\,=\sum_{v\in F\cup G}n_{v}\Big(\sum_{w\in\varphi(v)}w\Big)\,\,+\sum_{v\in F\cup G}m_{v}\Big(\sum_{w\in\varphi(v)}w\Big)\\
&\,\,=\,\,\,\sum_{v\in F}n_{v}\Big(\sum_{w\in\varphi(v)}w\Big)\,+\,\sum_{v\in G}m_{v}\Big(\sum_{w\in\varphi(v)}w\Big)\,\,=\,\phi(f)+\phi(g)\pkt
\end {align*}
On the other hand, $f+g=\infty$ means $F\cup G\not\in\tilde{\Delta}$ (or equivalently $F\cup G=\infty$ in $\tilde{\Delta}_{\cup}^{\infty}$), which implies that $\varphi(F\cup G)=\infty$ since $\varphi$ is a binoid homomorphism. Hence, $\phi(f+g)=\infty$. By Corollary \ref{CorBoolFunctor} and Lemma \ref{LemSpecBool=SimplCompl}, every binoid homomorphism $\phi:M_{^{_{\tilde{\Delta}}}}\rto M_{^{_{\Delta}}}$ induces such a commutative diagram with $\Bcal(\phi)(\{v\})=\pi_{\bool}\phi(v)$. Thus, $\varphi=\Bcal(\phi)$ by the first part. The supplement is clear.
\end {proof}

Recall from Section \ref{SecSimplMorph}, cf.\ Lemma \ref{LemIndHomUnion}, that we have the following binoid homomorphisms:
$$\varphi_{\lambda}:\Delta_{\cup}^{\infty}\Rto\tilde{\Delta}_{\cup}^{\infty}\quad\text{with}\quad F\lto\begin {cases}
\emptyset&\text{, if }F=\emptyset\komma\\
\infty&\text{, if }F=\infty\text{ or }\lambda^{-1}(v)=\emptyset\text{ for some }v\in F\komma\\
\lambda^{-1}(F)&\text{, otherwise,}
\end {cases}$$
if $\lambda:(\tilde{V},\tilde{\Delta})\rto(V,\Delta)$ is simplicial.

$$\varphi_{\lambda}^{\alpha}:\tilde{\Delta}_{\cup}^{\infty}\Rto\Delta_{\cup}^{\infty}\quad\text{with}\quad F\lto\begin {cases}
\infty&\text{, if }F=\infty\text{ or }\lambda(F)\not\in\Delta\komma\\
\lambda(F)&\text{, otherwise,}
\end {cases}$$
if $\lambda:(\tilde{V},\tilde{\Delta})\rto(V,\Delta)$ is $\alpha$-simplicial.

$$\varphi_{\lambda}^{\beta}:\Delta_{\cup}^{\infty}\Rto\tilde{\Delta}_{\cup}^{\infty}\quad\text{with}\quad
F\lto\begin {cases}
\infty&\text{, if }F=\infty\text{ or }\lambda^{-1}(F)\not\in\tilde{\Delta}\komma\\
\lambda^{-1}(F)&\text{, otherwise,}
\end {cases}$$
if $\lambda:(\tilde{V},\tilde{\Delta})\rto(V,\Delta)$ is $\beta$-simplicial.

\begin {Corollary} \label{CorCosimpIndBinoidHom}
\begin {ListeTheorem}
\item[]
\item Every simplicial morphism $\lambda:(\tilde{V},\tilde{\Delta})\rto(V,\Delta)$ induces a binoid homomorphism 
$$\phi_{\lambda}:M\UDelta\Rto M_{^{_{\tilde{\Delta}}}}\komma\quad\sum_{v\in F}n_{v}v\lto\begin {cases}
\infty&\text{, if }\lambda^{-1}(v)=\emptyset\text{ for some }v\in F\komma\\
\sum_{w\in\lambda^{-1}(F)}n_{\lambda(w)}w&\text{, otherwise,}
\end {cases}$$ 
that is, $\Delta\mto M\UDelta$ is a contravariant functor $\SCsf\rto\sBsf$. If $\lambda$ is injective, then $\phi_{\lambda}$ is surjective.
\item Every $\alpha$-simplicial morphism $\lambda:(\tilde{V},\tilde{\Delta})\rto(V,\Delta)$ induces a binoid homomorphism 
$$\phi_{\lambda}^{\alpha}:M_{^{_{\tilde{\Delta}}}}\Rto M\UDelta\komma\quad\sum_{v\in F}n_{v}v\lto
\sum_{v\in F}n_{v}\lambda(v)\komma$$ 
that is, $\Delta\mto M\UDelta$ is a covariant functor $\SCsf^{\alpha}\rto\sBsf$. If $\lambda$ is surjective, then so is $\phi_{\lambda}^{\alpha}$.
\item Every $\beta$-simplicial morphism $\lambda:(\tilde{V},\tilde{\Delta})\rto(V,\Delta)$ induces a binoid homomorphism 
$$\phi_{\lambda}^{\beta}:M\UDelta\Rto M_{^{_{\tilde{\Delta}}}}\komma\quad\sum_{v\in F}n_{v}v\lto
\sum_{w\in\lambda^{-1}(F)}n_{\lambda(w)}w\komma$$ 
that is, $\Delta\mto M\UDelta$ is a contravariant functor $\SCsf^{\beta}\rto\sBsf$. If $\lambda$ is bijective, then $\phi_{\lambda}^{\beta}$ is surjective.
\end {ListeTheorem}

In particular, there are three commuting diagrams
$$\xymatrix{
M\UDelta\ar[r]^{\phi_{\lambda}}\ar[d]_{\pi_{\bool}}&M_{^{_{\tilde{\Delta}}}}\ar[d]^{\pi_{\bool}}\\
\Delta_{\cup}^{\infty}\ar[r]^{\varphi_{\lambda}}&\tilde{\Delta}_{\cup}^{\infty}}\quad\quad\quad
\xymatrix{
M_{^{_{\tilde{\Delta}}}}\ar[r]^{\phi_{\lambda}^{\alpha}}\ar[d]_{\pi_{\bool}}&M\UDelta\ar[d]^{\pi_{\bool}}\\
\tilde{\Delta}_{\cup}^{\infty}\ar[r]^{\varphi_{\lambda}^{\alpha}}&\Delta_{\cup}^{\infty}}\quad\quad\quad
\xymatrix{
M\UDelta\ar[r]^{\phi_{\lambda}^{\beta}}\ar[d]_{\pi_{\bool}}&M_{^{_{\tilde{\Delta}}}}\ar[d]^{\pi_{\bool}}\komma\\
\Delta_{\cup}^{\infty}\ar[r]^{\varphi_{\lambda}^{\beta}}&\tilde{\Delta}_{\cup}^{\infty}}
$$
where $\varphi_{\lambda}$, $\varphi_{\lambda}^{\alpha}$, and $\varphi_{\lambda}^{\beta}$ are the binoid homomorphisms from above and $\pi_{\bool}$ the canonical binoid homomorphism $\sum_{v\in F}n_{v}v\mto F$. 
\end {Corollary}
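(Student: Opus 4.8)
The statement claims three things for each type of morphism: that the displayed map $\phi_\lambda$ (resp.\ $\phi_\lambda^\alpha$, $\phi_\lambda^\beta$) is a well-defined binoid homomorphism, that it is functorial, and that the corresponding square commutes with the booleanization maps $\pi_{\bool}$. My approach is to treat the three cases uniformly by realizing each induced map as the homomorphism $\phi$ associated to the already-constructed binoid homomorphism $\varphi$ on the booleanizations $\Delta^\infty_\cup$, via Proposition~\ref{PropIndBinoidHomDeltaSimpl}. That proposition takes an arbitrary binoid homomorphism between the $\Delta^\infty_\cup$'s and produces a homomorphism of the simplicial binoids together with exactly the commuting square asserted here, and says moreover that the induced map is surjective iff the original is. Since Lemma~\ref{LemIndHomUnion} already provides $\varphi_\lambda,\varphi_\lambda^\alpha,\varphi_\lambda^\beta$ (with their surjectivity criteria), most of the work is done once I check that the explicit formulas for $\phi_\lambda,\phi_\lambda^\alpha,\phi_\lambda^\beta$ displayed in the corollary agree with the formula for $\phi$ produced by the proof of Proposition~\ref{PropIndBinoidHomDeltaSimpl}.

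\textbf{Key steps, in order.} First I would recall the recipe from Proposition~\ref{PropIndBinoidHomDeltaSimpl}: given $\varphi:\tilde\Delta^\infty_\cup\to\Delta^\infty_\cup$, the induced $\phi$ sends $f=\sum_{v\in F}n_v v$ to $\sum_{v\in F}n_v\bigl(\sum_{w\in\varphi(\{v\})}w\bigr)$ if $\varphi(F)\in\Delta$ and to $\infty$ otherwise. Second, for the simplicial case I substitute $\varphi=\varphi_\lambda$ from Lemma~\ref{LemIndHomUnion}, so $\varphi_\lambda(\{v\})=\lambda^{-1}(\{v\})$ (which is $\infty$ exactly when $\lambda^{-1}(v)=\emptyset$); summing $\lambda^{-1}(\{v\})$ over $v\in F$ gives $\lambda^{-1}(F)$, and the coefficient attached to a vertex $w\in\lambda^{-1}(F)$ is $n_{\lambda(w)}$, which reproduces the displayed formula for $\phi_\lambda$ exactly. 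Third, for the $\alpha$-case I substitute $\varphi=\varphi_\lambda^\alpha$, where $\varphi_\lambda^\alpha(\{v\})=\{\lambda(v)\}$ is a single vertex (never $\infty$ on vertices, so the inner sum collapses to $\lambda(v)$), giving $\phi_\lambda^\alpha(f)=\sum_{v\in F}n_v\lambda(v)$ as claimed. Fourth, for the $\beta$-case I substitute $\varphi=\varphi_\lambda^\beta$, with $\varphi_\lambda^\beta(\{v\})=\lambda^{-1}(\{v\})$, reproducing $\phi_\lambda^\beta$ just as in the simplicial case (the two differ only in which hypothesis on $\lambda$ makes $\varphi$ a homomorphism). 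The well-definedness and homomorphism property then come for free from Proposition~\ref{PropIndBinoidHomDeltaSimpl}, and the three commuting squares are precisely the square of that proposition with $\Bcal(\phi)=\varphi$.

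\textbf{Functoriality and variances.} Next I would verify the functorial statements. Variance is dictated by the variance of $\lambda\mapsto\varphi$ on the booleanizations: $\lambda\mapsto\varphi_\lambda$ and $\lambda\mapsto\varphi_\lambda^\beta$ reverse direction (so $\SCsf\to\sBsf$ and $\SCsf^\beta\to\sBsf$ are contravariant), while $\lambda\mapsto\varphi_\lambda^\alpha$ preserves direction (so $\SCsf^\alpha\to\sBsf$ is covariant). I would check that identities go to identities (immediate from the formulas, since $\id^{-1}(F)=F$ and $\id(F)=F$) and that composites behave correctly; the cleanest route is to observe that $\lambda\mapsto\varphi_\lambda$ etc.\ are functorial into the category of booleanizations and that $\phi\mapsto\Bcal(\phi)$ is faithful on simplicial binoids, so compatibility of the $\varphi$'s with composition forces compatibility of the $\phi$'s. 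The surjectivity claims ("$\phi_\lambda$ surjective if $\lambda$ injective", etc.) transfer directly: by Proposition~\ref{PropIndBinoidHomDeltaSimpl} $\phi$ is surjective iff $\Bcal(\phi)=\varphi$ is, and Lemma~\ref{LemIndHomUnion} records exactly when each $\varphi$ is surjective.

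\textbf{Main obstacle.} The routine verifications (bilinearity of the coefficient bookkeeping, that $\lambda^{-1}$ distributes over unions) are all already packaged inside the cited proposition and lemma, so the only genuine point requiring care is the \emph{bookkeeping of multiplicities}: the formula from Proposition~\ref{PropIndBinoidHomDeltaSimpl} attaches to each $w\in\varphi(\{v\})$ the coefficient $n_v$, and I must confirm that when several vertices $v,v'\in F$ have overlapping images the coefficients add up in a way consistent with the displayed formula $\sum_{w\in\lambda^{-1}(F)}n_{\lambda(w)}w$. In the simplicial and $\beta$ cases the fibres $\lambda^{-1}(\{v\})$ are disjoint as $v$ ranges over $F$, so no collision occurs and the index $\lambda(w)$ is unambiguous for each $w$; in the $\alpha$ case several $v$ may share the image $\lambda(v)$, and I must check that the total coefficient of a vertex $u\in\lambda(F)$ is $\sum_{v\in F,\ \lambda(v)=u}n_v$, which is exactly what $\sum_{v\in F}n_v\lambda(v)$ records in the free-commutative-binoid sum. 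Confirming this matching of coefficients across the three formulas is the heart of the argument, after which everything reduces to invoking the earlier results.
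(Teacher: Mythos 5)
Your proposal is correct and follows essentially the same route as the paper: the paper's proof simply invokes Proposition~\ref{PropIndBinoidHomDeltaSimpl} applied to the homomorphisms $\varphi_{\lambda}$, $\varphi_{\lambda}^{\alpha}$, $\varphi_{\lambda}^{\beta}$ from Lemma~\ref{LemIndHomUnion}, which is exactly your plan. Your additional checks (matching the explicit coefficient formulas, the disjointness of fibres versus possible collisions in the $\alpha$-case, and the transfer of surjectivity through $\Bcal(\phi)=\varphi$) are precisely the verifications the paper leaves implicit.
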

\begin {proof}
All results follow from Proposition \ref{PropIndBinoidHomDeltaSimpl} using the binoid homomorphisms $\varphi_{\lambda}$, $\varphi_{\lambda}^{\alpha}$, and $\varphi_{\lambda}^{\beta}$ from above to obtain $\phi_{\lambda}$, $\phi_{\lambda}^{\alpha}$, and $\phi_{\lambda}^{\beta}$, respectively.
\end {proof}

\begin {Remark}
The preceding corollary combined with Proposition \ref{PropIndHomeoKspec} yields the following homeomorphisms between $K\mina$spectra.
\begin {ListeTheorem}
\item 
If $\lambda:(\tilde{V},\tilde{\Delta})\rto(V,\Delta)$ is simplicial, then 
$$K\minspec M\UDelta\stackrel{\phi_{\lambda}^{\ast}}{\Rto} K\minspec M_{^{_{\tilde{\Delta}}}}\quad\text{with}\quad\varphi\lto\tilde{\varphi}\komma$$
where
$$\tilde{\varphi}\Big(\sum_{v\in F}n_{v}v\Big)\,=\,\begin {cases}
\infty&\text{, if }\lambda^{-1}(v)=\emptyset\text{ for some }v\in F\komma\\
\prod_{w\in\lambda^{-1}(F)}\varphi(w)^{n_{\lambda(w)}}&\text{, otherwise.}
\end {cases}$$
\item 
If $\lambda:(\tilde{V},\tilde{\Delta})\rto(V,\Delta)$ is $\alpha$-simplicial, then
$$K\minspec M_{^{_{\tilde{\Delta}}}}\stackrel{(\phi_{\lambda}^{\alpha})^{\ast}}{\Rto} K\minspec M\UDelta\quad\text{with}\quad\varphi\lto\Big(\tilde{\varphi}:\sum_{v\in F}n_{v}v\lto\prod_{v\in F}\varphi(\lambda(v))^{n_{v}}\Big)\pkt$$
\item 
If $\lambda:(\tilde{V},\tilde{\Delta})\rto(V,\Delta)$ is $\beta$-simplicial, then 
$$K\minspec M\UDelta\stackrel{(\phi_{\lambda}^{\beta})^{\ast}}{\Rto} K\minspec M_{^{_{\tilde{\Delta}}}}\quad\text{with}\quad\varphi\lto\Big(\tilde{\varphi}:\sum_{v\in F}n_{v}v\lto\prod_{w\in\lambda^{-1}(F)}\varphi(w)^{n_{\lambda(w)}}\Big)\pkt$$
\end {ListeTheorem}
\end {Remark}

\begin {Corollary} \label {CorSimplCompositions}
If $\Delta_{i}$ is a simplicial complex on $V_{i}$, $i\in I$, $I$ finite, then
$$M_{^{_{\prod_{i\in I}\Delta_{i}}}}\,\cong\,\,\bigwedge_{i\in I}M_{^{_{\Delta_{i}}}}\quad\text{and}\quad M_{^{_{\biguplus_{i\in I}\Delta_{i}}}}\,\cong\,\,\,\bigcupbidot_{i\in I}M_{^{_{\Delta_{i}}}}\pkt$$
In particular, if $N$ is a commutative binoid, then 
$$N\minspec M_{^{_{\prod_{i\in I}\Delta_{i}}}}\,\cong\,\,\prod_{i\in I}N\minspec M_{^{_{\Delta_{i}}}}\komma$$
and if $N$ an integral binoid, then
$$N\minspec M_{^{_{\biguplus_{i\in I}\Delta_{i}}}}\,\cong\,\,\bigcupdot_{i\in I}N\minspec M_{^{_{\Delta_{i}}}}\pkt$$
\end {Corollary}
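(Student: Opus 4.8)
The plan is to prove the two binoid isomorphisms first; the two assertions about $N\mina$spectra then follow at once. Once $M_{^{_{\prod_{i\in I}\Delta_{i}}}}\cong\bigwedge_{i\in I}M_{^{_{\Delta_{i}}}}$ is known, the isomorphism $N\minspec M_{^{_{\prod_{i\in I}\Delta_{i}}}}\cong\prod_{i\in I}N\minspec M_{^{_{\Delta_{i}}}}$ is Proposition \ref{PropNspecSmash} applied to the commutative binoid $N$ and the family $(M_{^{_{\Delta_{i}}}})_{i\in I}$ of nonzero binoids; and once $M_{^{_{\biguplus_{i\in I}\Delta_{i}}}}\cong\bigcupbidot_{i\in I}M_{^{_{\Delta_{i}}}}$ is known, the isomorphism $N\minspec M_{^{_{\biguplus_{i\in I}\Delta_{i}}}}\cong\bigcupdot_{i\in I}N\minspec M_{^{_{\Delta_{i}}}}$ is Corollary \ref{CorNSpecPUnion} applied to the integral binoid $N$ and the family $(M_{^{_{\Delta_{i}}}})_{i\in I}$ of positive binoids. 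Here each $M_{^{_{\Delta_{i}}}}$ is semifree, hence positive and in particular nonzero, so both cited results apply. Thus the whole statement reduces to the two binoid isomorphisms.

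For these I would not build maps by hand but invoke the intrinsic characterization of simplicial binoids in Theorem \ref{ThClassSimplB}: a commutative binoid is simplicial if and only if it is finitely generated, semifree, and reduced, and then its underlying complex is $\{F\subseteq W\mid\sum_{w\in F}w\not=\infty\}$ for any semibasis $W$. Since a simplicial binoid is determined by its underlying complex, it suffices to show that $\bigwedge_{i\in I}M_{^{_{\Delta_{i}}}}$ and $\bigcupbidot_{i\in I}M_{^{_{\Delta_{i}}}}$ are simplicial binoids whose underlying complexes are $\prod_{i\in I}\Delta_{i}$ and $\biguplus_{i\in I}\Delta_{i}$, respectively. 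For the smash product: it is finitely generated by the inclusions $\iota_{i}(v)$ of the vertices $v\in V_{i}$; it is reduced because $\nil(\bigwedge_{i\in I}M_{^{_{\Delta_{i}}}})=\{\wedge_{i\in I}a_{i}\mid a_{k}\in\nil(M_{^{_{\Delta_{k}}}})\text{ for some }k\}$ by Lemma \ref{LemSmashRules}(4) and each $M_{^{_{\Delta_{i}}}}$ is reduced, so this set is $\{\infty_{\wedge}\}$; and it is semifree with semibasis $\{\iota_{i}(v)\mid v\in V_{i},\,i\in I\}$, since every element $\not=\infty_{\wedge}$ equals $\wedge_{i\in I}f_{i}$ with all $f_{i}\not=\infty$, two such elements agree if and only if they agree componentwise (the congruence $\sim_{\wedge}$ only glues tuples with an $\infty$ entry), and each $f_{i}$ has a unique expression in the semibasis $V_{i}$ of $M_{^{_{\Delta_{i}}}}$. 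Finally, for a subset $F\subseteq\biguplus_{i\in I}V_{i}$ the sum of the corresponding generators is $\not=\infty_{\wedge}$ exactly when $F\cap V_{i}\in\Delta_{i}$ for every $i$, which by Lemma \ref{LemSimplComplConstructions1} is precisely the condition $F\in\prod_{i\in I}\Delta_{i}$; hence the underlying complex is $\prod_{i\in I}\Delta_{i}$.

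The bipointed union is handled identically. It is finitely generated by $\{(v;i)\mid v\in V_{i},\,i\in I\}$ by Lemma \ref{LemGenSystPointedUnion}; it is reduced because $\nil(\bigcupbidot_{i\in I}M_{^{_{\Delta_{i}}}})=\bigcupbidot_{i\in I}\nil(M_{^{_{\Delta_{i}}}})=\zero$ by Lemma \ref{LemPointedComposition}(3); and it is semifree with semibasis $\{(v;i)\}$, since a nonzero element $\not=\infty$ lies in a single component and is of the form $(f;i)$ with $f\not=0,\infty$, inheriting the unique expression of $f$ in the semibasis $V_{i}$, whereas any sum mixing two components equals $\infty$. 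The same face computation shows that the sum of the generators corresponding to a subset $F$ is $\not=\infty$ exactly when $F\subseteq V_{i}$ for some $i$ with $F\in\Delta_{i}$, which by Lemma \ref{LemSimplComplConstructions1} is the defining condition of $\biguplus_{i\in I}\Delta_{i}$; hence the underlying complex is $\biguplus_{i\in I}\Delta_{i}$, and $\bigcupbidot_{i\in I}M_{^{_{\Delta_{i}}}}\cong M_{^{_{\biguplus_{i\in I}\Delta_{i}}}}$.

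The routine but essential part — and the step I expect to require the most care — is the verification of semifreeness together with the identification of the underlying complexes, where one must track the gluing rule of $\sim_{\wedge}$ (resp.\ of the bipointed union) against the face conditions ``$F\cap V_{i}\in\Delta_{i}$ for all $i$'' and ``$F\subseteq V_{i}$ with $F\in\Delta_{i}$'' that describe $\prod_{i\in I}\Delta_{i}$ and $\biguplus_{i\in I}\Delta_{i}$ in Lemma \ref{LemSimplComplConstructions1}. Everything else is a direct appeal to the cited results, and it is worth noting that booleanization alone would not suffice: although $\bool$ preserves the coproduct and hence $(\bigwedge_{i\in I}M_{^{_{\Delta_{i}}}})_{\bool}\cong(M_{^{_{\prod_{i\in I}\Delta_{i}}}})_{\bool}$, this only compares booleanizations, so the semifree multiplicity data genuinely has to be carried through.
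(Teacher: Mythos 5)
Your proposal is correct, but it takes a genuinely different route from the paper. The paper constructs the isomorphisms explicitly: the $\alpha$-simplicial inclusions $(V_{k},\Delta_{k})\embto(\biguplus_{i}V_{i},\prod_{i}\Delta_{i})$ and $(V_{k},\Delta_{k})\embto(\biguplus_{i}V_{i},\biguplus_{i}\Delta_{i})$ induce, via Corollary \ref{CorCosimpIndBinoidHom}(2), binoid homomorphisms out of each $M_{^{_{\Delta_{k}}}}$, which the universal properties of the smash product (Proposition \ref{PropCoproductComBinoid}) and of the bipointed union (Proposition \ref{PropUnivPropPUnion}) assemble into maps $\bigwedge_{i}M_{^{_{\Delta_{i}}}}\rto M_{^{_{\prod_{i}\Delta_{i}}}}$ and $\bigcupbidot_{i}M_{^{_{\Delta_{i}}}}\rto M_{^{_{\biguplus_{i}\Delta_{i}}}}$; bijectivity is then read off from the unique normal form $h=\sum_{i}\sum_{v\in F_{i}}n_{v}\iota_{i}(v)$. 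You instead build no maps at all and appeal to the intrinsic classification of Theorem \ref{ThClassSimplB}, verifying that the smash product and the bipointed union are finitely generated, reduced, and semifree, and that their underlying complexes (computed from the semibasis of vertex generators) are $\prod_{i}\Delta_{i}$ and $\biguplus_{i}\Delta_{i}$. Your verifications are sound: the reducedness citations (Lemma \ref{LemSmashRules}(4), Lemma \ref{LemPointedComposition}(3)) are the right ones, the semifreeness arguments correctly track the gluing rules of $\sim_{\wedge}$ and of the bipointed union, and the face conditions match Lemma \ref{LemSimplComplConstructions1}; the uniqueness of the underlying complex that you implicitly use is guaranteed because a semifree binoid is positive and cancellative, hence has a unique minimal generating set (Proposition \ref{PropUniqueMinSyst}). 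What each approach buys: yours is conceptually cleaner and reuses the classification theorem, but it identifies the binoids only up to isomorphism; the paper's construction produces the specific canonical isomorphism (restriction to components), which is what makes the induced identifications of $N\mina$spectra in the supplement transparent. In the end both arguments rest on the same normal-form observation, just packaged differently. Your reduction of the two spectral statements to Proposition \ref{PropNspecSmash} and Corollary \ref{CorNSpecPUnion} coincides with the paper's.
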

\begin {proof}
The $\alpha$-simplicial morphisms $(V_{k},\Delta_{k})\embto(\biguplus_{i\in I}V_{i},\prod_{i\in I}\Delta_{i})$ with $v\mto(v;k)$, $k\in I$, cf.\
Example \ref{ExpProducts}, induce by Corollary \ref{CorCosimpIndBinoidHom}(2) binoid homomorphisms
$\iota_{k}:M_{^{_{\Delta_{k}}}}\rto M_{^{_{\prod_{i\in I}\Delta_{i}}}}$, $k\in I$,
which give rise to the binoid homomorphism $\bigwedge_{i\in I}M_{^{_{\Delta_{i}}}}\rto M_{^{_{\prod_{i\in I}\Delta_{i}}}}$ with $\wedge_{i\in I}f_{i}\mto\sum_{i\in I}\iota_{i}(f_{i})$
by Proposition \ref{PropCoproductComBinoid}. Since every $h\in  M_{^{_{\prod_{i\in I}\Delta_{i}}}}$ can be written as $h=\sum_{i\in I}\sum_{v\in F_{i}}n_{v}\iota_{i}(v)$ for unique faces $F_{i}\in\Delta_{i}$ such that $n_{v}\ge1$ for all $v\in F_{i}$, $i\in I$, this binoid homomorphism is an isomorphism. Similarly, the $\alpha$-simplicial morphisms $(V_{k},\Delta_{k})\embto(\biguplus_{i\in I}V_{i},\biguplus_{i\in I}\Delta_{i})$ with $v\mto(v;k)$, $k\in I$, cf.\ Example \ref{ExpProducts}, induce by Corollary \ref{CorCosimpIndBinoidHom}(2) binoid homomorphisms $M_{^{_{\Delta_{k}}}}\rto M_{^{_{\biguplus_{i\in I}\Delta_{i}}}}$, $k\in I$, which give rise to the binoid homomorphism 
$\bigcupbidot_{i\in I}M_{^{_{\Delta_{i}}}}\rto M_{^{_{\biguplus_{i\in I}\Delta_{i}}}}$ with $(f;k)\mto f$
by Proposition \ref{PropUnivPropPUnion}. Since every $h\in  M_{^{_{\biguplus_{i\in I}\Delta_{i}}}}$ can be written as $h=\sum_{v\in F}n_{v}v$ for a unique face $F\in\Delta_{i}$, $i\in I$, such that $n_{v}\ge1$ for all $v\in F$, this binoid homomorphism is also an isomorphism. The isomorphisms of the supplement follow from Proposition \ref{PropNspecSmash} and Corollary \ref{CorNSpecPUnion}.
\end {proof}

\begin {Remark}
In combinatorial commutative algebra, the product of two simplicial complexes $\Delta_{1}$ and $\Delta_{2}$ is called the \gesperrt{join} \index{simplicial complex!join of --s} and is usually denoted by $\Delta_{1}\ast\Delta_{2}$, cf.\ \cite[Definition 5.2.4]{Villarreal}. It is known that the Stanley-Reisner algebra $K[\Delta_{1}\ast\Delta_{2}]$ of the join is given by the tensor product $K[\Delta_{1}]\otimes_{K}K[\Delta_{2}]$, with which one proves, for instance, that $\Delta_{1}\ast\Delta_{2}$ is Cohen-Macaulay if and only if $\Delta_{1}$ and $\Delta_{2}$ are Cohen-Macaulay, cf.\ \cite[Proposition 5.3.16]{Villarreal}. The result for the tensor product can be deduced elegantly from the theory developed in this chapter. By Lemma \ref{LemSRalgebraBinoid} and Corollary \ref{CorSimplCompositions}, we have $K[\Delta]=K[M_{^{_{\Delta}}}]$ and $M_{^{_{\prod_{i\in I}\Delta_{i}}}}=\bigwedge_{i\in I}M_{^{_{\Delta}}}$. Hence,
$$K[\Delta_{1}\times\Delta_{2}]\,=\,K[M_{^{_{\Delta_{1}\times\Delta_{2}}}}]\,=\,K[M_{^{_{\Delta_{1}}}}\wedge M_{^{_{\Delta_{2}}}}]\,=\,K[M_{^{_{\Delta_{1}}}}]\otimes_{K}K[M_{^{_{\Delta_{2}}}}]\,=\, K[\Delta_{1}]\otimes_{K}K[\Delta_{2}]\komma$$
where the third equality is due to Corollary \ref{CorSmash=Tensor}.
\end {Remark}

\begin {Example}
Let $\Delta$ and $\tilde{\Delta}$ be simplicial complexes on $V$ such that $\tilde{\Delta}\subseteq\Delta$. The morphisms 
$$(V,\Delta)\stackrel{\id}{\Rto}(\tilde{V},\tilde{\Delta})\stackrel{\id}{\Rto}(V,\Delta)\komma$$
where the first morphism is $\alpha$-simplicial and the latter simplicial and $\beta$-simplicial, cf.\ Example \ref{ExpSubcomplexSameV}, induce the same binoid epimorphism
$$M\UDelta\Rto M_{^{_{\tilde{\Delta}}}}\quad\text{with}\quad\sum_{v\in F}n_{v}v\lto\sum_{v\in F}n_{v}v\pkt$$
In particular, for every simplicial complex $\Delta$ on $V$ there is a binoid epimorphism $M_{^{_{\Pset(V)}}}\rto M\UDelta\komma$
which yields the canonical $K\mina$algebra epimomorphism
$$K[\Pset(V)]\,\,=\,\,K[X_{v}\mid v\in V]\Rto K[X_{v}\mid v\in V]/\Iideal\UDelta\,\,=\,\,K[M_{\Delta}]\,\,=\,\,K[\Delta]\komma$$
$K$ a ring. Note that if $\Delta\subsetneq\Pset(V)$ there is no binoid embedding $M\UDelta\embto M_{^{_{\Pset(V)}}}$.
\end {Example}

\begin{Example}
Let $\Delta$ be a simplicial complex on $V$, $T\subseteq V$, and $\Delta(T)=\{F\in\Delta\mid F\subseteq T\}$. The simplicial and $\alpha$-simplicial morphism, cf.\ Example \ref{ExpSubcomplexDifferntV},
$$\lambda:(T,\Delta(T))\rto(V,\Delta)\quad\text{with}\quad v\lto v$$
induces the binoid homomorphisms
$$M_{^{_{\Delta(T)}}}\stackrel{\phi_{\lambda}^{\alpha}}{\Rto}M\UDelta\stackrel{\phi_{\lambda}}{\Rto}M_{^{_{\Delta(T)}}}$$
with
$$\sum_{v\in F}n_{v}v\lto\sum_{v\in F}n_{v}v\lto\begin {cases}
\sum_{v\in F}n_{v}v&\text{, if }F\in\Delta(T)\komma\\
\infty&\text{, otherwise.}
\end {cases}$$
$\phi_{\lambda}^{\alpha}$ is a binoid epimorphism and $\phi_{\lambda}$ a binoid embedding, which satisfy $\phi_{\lambda}\circ\phi_{\lambda}^{\alpha}=\id_{M_{^{_{\Delta(T)}}}}$. We consider two special cases. If $T=F\in\Delta$ with $\#F=r$, then $\Delta(T)=\Pset(F)$ and $M_{^{_{\Pset(F)}}}\cong(\N^{r})^{\infty}$, which gives
$$(\N^{r})^{\infty}\stackrel{\phi_{\lambda}^{\alpha}}{\Rto}M\UDelta\stackrel{\phi_{\lambda}}{\Rto}(\N^{r})^{\infty}\komma$$
where $\phi_{\lambda}$ is the canonical projection $M\UDelta\rto M\UDelta/\Pcal_{F}$ with $\Pcal_{F}=\langle v\in V\mid v\not\in F\rangle\in\spec M\UDelta$. By Proposition \ref {PropIndHomNspec} and Remark \ref{RemAffineEmbeddingKspec}, this yields for a field $K$ 
$$\A^{r}(K)\stackrel{\phi_{\lambda}^{\ast}}{\Rto}K\minspec M\UDelta\stackrel{(\phi_{\lambda}^{\alpha})^{\ast}}{\Rto}\A^{r}(K)\pkt$$
The other special case is when $\Delta=\Pset(V)$. Then $M\UDelta\cong(\N^{n})^{\infty}$, where $n=\# V$. As observed in Example \ref{ExpSubcomplexDifferntV}, the morphism $\lambda:(T,\Delta(T))\rto(V,\Pset(V))$ is also $\beta$-simplicial in this case. Hence,
$$\phi_{\lambda}, \phi_{\lambda}^{\beta}:(\N^{n})^{\infty}\Rto M_{^{_{\Delta(T)}}}$$
are the two binoid epimorphisms generated by
$$\phi_{\lambda}(e_{i})=\begin {cases}
i&\text{, if }i\in T\komma\\
\infty&\text{, otherwise,}
\end {cases}\quad\quad\text{and}\quad\quad\phi_{\lambda}^{\beta}(e_{i})=\begin {cases}
i&\text{, if }i\in T\komma\\
0&\text{, otherwise.}
\end {cases}$$
These yield two different embeddings 
$$\phi_{\lambda}^{\ast}, (\phi_{\lambda}^{\beta})^{\ast}:K\minspec M_{^{_{\Delta(T)}}}\Rto\A^{n}(K)\pkt$$
If $V=\{1,2\}$, $T=\{1\}$, and $K=\R$, these two embeddings $\A^{1}(\R)\embto\A^{2}(\R)$ are given by
\begin {center}
\begin {pspicture}  (-2,-2.25)(2,2)
\qdisk (0,0){1.75pt}\qdisk (0.7,0){1.75pt}
\psline [linewidth=0.5 pt, linestyle=dotted] (0,-1.5)(0,1.5)
\psline [linewidth=0.5 pt] (-1.5,0)(1.5,0)
\uput [0] (-0.75,-2.2){\small{via $\phi_{\lambda}^{\ast}$}}
\end {pspicture}
\quad\quad\quad
\begin {pspicture} (-2,-2.25)(2,2)
\qdisk (0,0.7){1.75pt}\qdisk (0.7,0.7){1.75pt}
\psline [linewidth=0.5 pt, linestyle=dotted] (0,-1.5)(0,1.5)
\psline [linewidth=0.5 pt, linestyle=dotted]  (-1.5,0)(1.5,0)
\psline [linewidth=0.5 pt]  (-1.5,0.7)(1.5,0.7)
\uput [0] (-0.75,-2.2){\small{via $(\phi_{\beta})^{\ast}$}}
\end {pspicture}
\end {center}
\end {Example}

\begin {Example}
Let $I$ be finite and $\Delta_{i}$ a simplicial complex on $V_{i}$, $i\in I$. For every $k\in I$, the simplicial and $\alpha$-simplicial morphism $\iota_{k}:(V_{k},\Delta_{k})\rto(\biguplus_{i\in I}V_{i},\biguplus_{i\in I}\Delta_{i})$, $(v;k)\mto(v;k)$, cf.\ Example \ref{ExpProducts}, induces the binoid homomorphisms
$$M_{\Delta_{k}}\stackrel{(\phi_{\iota_{k}}^{\alpha})^{\ast}}{\Rto}\bigcupbidot_{i\in I}M_{^{_{\Delta_{i}}}}\stackrel{\phi_{\iota_{k}}^{\ast}}{\Rto}M_{\Delta_{k}}$$
with
$$f\lto(f;k)\quad\text{and}\quad(f;i)\lto\begin {cases}
f&\text{, if }i=k\komma\\
\infty&\text{, otherwise.}
\end {cases}$$
$\phi_{\iota_{k}}^{\alpha}$ is a binoid epimorphism and $\phi_{\iota_{k}}$ a binoid embedding. Moreover, the $\alpha$-simplicial morphism $\tilde{\iota}_{k}(V_{k},\Delta_{k})\rto(\biguplus_{i\in I}V_{i},\prod_{i\in I}\Delta_{i})$ with $(v;k)\mto(v;k)$, cf.\ Example \ref{ExpProducts}, induces a binoid embedding 
$$\phi_{\iota_{k}}^{\alpha}:M_{\Delta_{k}}\Rto\bigwedge_{i\in I}M_{^{_{\Delta_{i}}}}\quad\text{with}\quad f\lto0\wedge\cdots\wedge0\wedge f\wedge0\wedge\cdots\wedge0\komma$$
where $f$ is the $k$th entry.
\end {Example}

\newpage

\markright{Index}
\addcontentsline{toc}{chapter}{Index}
\printindex

\markright{Symbols}
\printnomenclature [3cm]

\markright{Bibliography}

\end{document}